\documentclass[12pt]{article}
\usepackage[margin=24truemm]{geometry}
\usepackage{amsmath}
\usepackage{float}
\usepackage{amsmath,amssymb,bm}
\usepackage{mathtools}
\usepackage{ascmac}
\usepackage{array}
\usepackage{amsthm}
\usepackage[all]{xy}
\usepackage{graphics}
\usepackage{mathrsfs}

\usepackage[backend=biber, style=ieee-alphabetic, sorting=nyt, maxbibnames=99]{biblatex}
\addbibresource{Pan2.bib}

\theoremstyle{plain}
\newtheorem{thm}{Theorem}[section]
\newtheorem*{thm*}{Theorem}
\newtheorem{cor}[thm]{Corollary}
\newtheorem{lem}[thm]{Lemma}
\newtheorem{conj}[thm]{Conjecture}
\newtheorem{prop}[thm]{Proposition}

\theoremstyle{definition}
\newtheorem{dfn}[thm]{Definition}

\newtheorem{rem}[thm]{Remark}
\newtheorem{ex}[thm]{Example}

\newcommand{\Fl}{{\mathscr{F}\!\ell}}
\newcommand{\Isom}{\ {\stackrel{\sim}{\longrightarrow}}\ }
\newcommand{\etale}{{\mathrm{\acute{e}tale}}}
\newcommand{\et}{{\mathrm{\acute{e}t}}}

\newcommand{\proet}{{\mathrm{pro\acute{e}t}}}
\newcommand{\aS}{{\mathcal{S}}}

\newcommand{\Pla}{{\Psi-\mathrm{la}}}

\usepackage[OT2,T1]{fontenc}
\DeclareSymbolFont{cyrletters}{OT2}{wncyr}{m}{n}
\DeclareMathSymbol{\Sha}{\mathalpha}{cyrletters}{"58}

\begin{document}

\begin{center}

\textbf{On the classicality theorem and its applications to the automorphy lifting theorem and the Breuil-M$\mathrm{\acute{\textbf{e}}}$zard conjecture in some $\mathrm{GL}_2(\mathbb{Q}_{p^2})$ cases}

\vspace{0.5 \baselineskip}

Kojiro Matsumoto

\vspace{0.5 \baselineskip}

\end{center}    

\begin{abstract} In this paper, we study locally analytic vectors in the ``partially'' completed cohomology of Shimura varieties associated with some rank 2 unitary groups over a totally real field $F^+$ such that $F^+_v = \mathbb{Q}_{p^2}$ for some $p$-adic places $v$ and prove a certain classicality theorem. This is a partial generalization and modification of Lue Pan's work in the modular curve case by using the works of Caraiani-Scholze, Koshikawa and Zou on mod $l$ cohomology of Shimura varieties. As applications, we prove the automorphy lifting theorem and the Breuil-M$\mathrm{\acute{e}}$zard conjecture in some $\mathrm{GL}_2(\mathbb{Q}_{p^2})$ cases. We will assume a technical regularity condition on Serre weights of residual representations, but we don't assume any technical condition on the properties of liftings of residual representations at $p$-adic places except Hodge-Tate regularity. It should be noted that previously, such results were known only when we assumed that $F^+_v$ is equal to $\mathbb{Q}_p$ for any $p$-adic place $v$ of $F^+$ so that we can use the $p$-adic Langlands correspondence of $\mathrm{GL}_2(\mathbb{Q}_p)$. Moreover, we propose a conjectural strategy to prove such results in some $\mathrm{GL}_2(\mathbb{Q}_{p^f})$ cases.\end{abstract}

\vspace{0.5 \baselineskip}

\tableofcontents

\section{Introduction}

\subsection{Background}

Let $p \ge 5$ be a prime and $\iota : \overline{\mathbb{Q}_p} \Isom \mathbb{C}$ be an isomorphism of fields.

By \cite{SW, SEI, SEII, FMRI, EMLG, HUTU, FMRR}, there exists a natural one-to-one correspondence between the following two objects. \ ($G_{\mathbb{Q}}$ is the absolute Galois group of $\mathbb{Q}$ and $k$ is a positive integer. This is the Fontaine-Mazur conjecture for $\mathrm{GL}_{2} (\mathbb{Q})$.)

\vspace{0.5 \baselineskip}

Galois side : irreducible continuous odd\footnote{The oddness of a Galois representation $r$ means that the determinant character $\mathrm{det}r$ of $r$ satisfies $\mathrm{det}r(c) = -1$ for a complex conjugation $c$. } $2$-dimensional representations of $G_{\mathbb{Q}}$ over $\overline{\mathbb{Q}}_p$ which are unramified at almost all finite places and de Rham of Hodge-Tate weight $(0, k)$ at $p$.

\vspace{0.5 \baselineskip}

Automorphic side : normalized new cuspidal eigenforms of weight $k + 1$.

\vspace{0.5 \baselineskip}

Moreover, there are already many results which partially generalize the above correspondence to higher-dimensional cases and more general number fields. In particular, the map from the automorphic side to the Galois side was constructed in many cases (cf. \cite{Kot}, \cite{HT}, \cite{Shin}, \cite{HLTT}, \cite{Torsion} and \cite{Mok}) and the correspondence of deformations of both sides, i.e., the automorphy lifting theorem was known in many cases if we assume that the Galois representations have a nice property at $p$-adic places such as ordinarity or Fontaine-Laffaille at $p$.\footnote{Technically, we also have nice automorphy lifting theorems for Galois representations which are potentially diagonalizable at any $p$-adic place (cf. \cite{CW}).}   (cf. \cite{CHT}, \cite{TII}, \cite{OG}, \cite{CW}, \cite{10}, \cite{CN} and \cite{BCGP}.) However, there are many remaining problems and in particular, there are few results about the following problem in general. \footnote{The correspondence of residual representations is also widely open, but we don't study this problem in this paper.}

\vspace{0.5 \baselineskip}

(*) \ Automorphy lifting of Galois representations which are neither ordinary nor Fontaine-Laffaille at $p$.

\vspace{0.5 \baselineskip}

In the $\mathrm{GL}_2(\mathbb{Q})$ case, the problem (*) was solved in \cite{FMRI}, \cite{EMLG}, \cite{HUTU} and \cite{FMRR} by crucially using the $p$-adic Langlands correspondence of $\mathrm{GL}_2(\mathbb{Q}_p)$, which is still not known beyond the $\mathrm{GL}_2(\mathbb{Q}_p)$ case.

\vspace{0.5 \baselineskip}

Recently, \cite{PanII} introduced another approach to the problem (*) in the $\mathrm{GL}_2(\mathbb{Q})$ case by using certain geometric properties of perfectoid modular curves. In this paper, by modifying Pan's method, we study the problem (*) in the conjugate self-dual \footnote{The reason why we assume the conjugate self-duality is that we can use unitary groups in such cases.} $\mathrm{GL}_2(F)$ case, where $F$ is a CM field such that $F_v = \mathbb{Q}_{p^2}$ or $\mathbb{Q}_p$ for any $p$-adic places $v$ of $F$.

\begin{rem}
    
Note that many arguments of this paper work without any assumptions on $F_v$ for $v \mid p$. The author hopes that the method of this paper will be generalized at least if we assume appropriate conditions on the residual representation. In this paper, we crucially use the assumption that $F_v/\mathbb{Q}_p$ is unramified in {\S} 4 and the assumption $[F_v:\mathbb{Q}_{p}] \le 2$ for any $v \mid p$ to prove Conjectures \ref{classicality conjecture} (and \ref{key diagram} in the non-parallel weight case).

\end{rem}

\subsection{Main results}

Let $p \ge 5$ be a prime, $\iota : \overline{\mathbb{Q}_p} \Isom \mathbb{C}$ be an isomorphism of fields, $F$ be a CM field or totally real field such that $F_v = \mathbb{Q}_{p^2}$ or $\mathbb{Q}_p$ for any $p$-adic place $v$ and $F^+$ be the maximal totally real subfield of $F$. (Thus $F = F^+$ if $F$ is a totally real field.) Let $c$ denote the complex conjugation on $F$ (thus $c$ is trivial if $F$ is a totally real field) and $\Psi$ be the set of $p$-adic places of $F$ such that $F_v = \mathbb{Q}_{p^2}$. The main theorems of this paper are as follows.

\begin{thm}(Automorphy lifting theorem, Theorem \ref{automorphy lifting theorem})\label{automorphy lifting theoremII}

Let  $\rho : G_{F} \rightarrow \mathrm{GL}_2(\overline{\mathbb{Q}_p})$ be an irreducible continuous representation. We assume the following conditions.

1 \ There exist a continuous character $\chi : G_{F^+} \rightarrow \overline{\mathbb{Q}_p}^{\times}$ satisfying $\chi(c_v) = \chi(c_w)$ for any complex conjugations $c_v, c_w$ at $v, w \mid \infty$ and a perfect $G_{F}$-equivariant symmetric pairing $\rho \times \rho^c \rightarrow \chi|_{G_{F}}$.
    
2 \ $\rho$ is unramified at almost all finite places.
    
3 \ $\rho$ is de Rham and Hodge-Tate regular\footnote{We say that $\rho : G_{F_v} \rightarrow \mathrm{GL}_2(\overline{\mathbb{Q}}_p)$ is Hodge-Tate regular if $\mathrm{dim}_{\overline{\mathbb{Q}_p}}(\rho \otimes_{\tau, F_v} \widehat{\overline{F_v}})^{G_{F_v}}$ is $0$ or $1$ for any $v \mid p$ and $\tau : F_v \hookrightarrow \overline{\mathbb{Q}}_p$ over $\mathbb{Q}_p$, where $\widehat{\overline{F_v}}$ denotes the completion of the algebraic closure $\overline{F}_v$ of $F_v$.} at all $p$-adic places.
    
4 \ The restriction $\overline{\rho}|_{G_{F(\zeta_p)}}$ to $G_{F(\zeta_p)}$ of the residual representation $\overline{\rho}$ of $\rho$ is absolutely irreducible, where $\zeta_p$ denotes a primitive $p$-th root of unity.
    
5 \ There exists an essentially conjugate self-dual \footnote{We say that $\sigma$ is essentially conjugate self-dual if there exists a continuous character $\psi : \mathbb{A}_{F^+}^{\times}/F^{+, \times} \rightarrow \mathbb{C}^{\times}$ such that $\psi(c_w) = \psi(c_v)$ for any $v, w \mid \infty$ and $\sigma^c \cong \sigma^{\vee} \otimes \psi \circ N_{F/F^+}$. } cohomological cuspidal automorphic representation $\sigma$ of $\mathrm{GL}_2(\mathbb{A}_F)$ such that $\overline{\rho} \cong \overline{r_{\iota}(\sigma)}$. ($r_{\iota}(\sigma)$ denotes the Galois representation corresponding to $\sigma$. See Theorem \ref{Galois representation}.)
    
6 \ For any $v \in \Psi$ and for any element $k = (k_{\tau, 1}, k_{\tau ,2})_{\tau}$ of the set of the Serre weights $W_{F_v}(\overline{\rho}|_{G_{F_v}})$ of $\overline{\rho}|_{G_{F_v}}$, we have $2 \le k_{\tau, 1} - k_{\tau, 2} \le p-5$ for any $\tau \in \mathrm{Hom}(\mathbb{F}_v, \overline{\mathbb{F}}_p)$. (See definition \ref{Serre weight} for the definition of $W_{F_v}(\overline{\rho}|_{G_{F_v}})$. We write $\mathbb{F}_v$ for the residue field at $v$.)
    
Then there exists an essentially conjugate self-dual cohomological cuspidal automorphic representation $\pi$ of $\mathrm{GL}_2(\mathbb{A}_F)$ such that $\rho \cong r_{\iota}(\pi)$.
    
\end{thm}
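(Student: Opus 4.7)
The plan is to mirror Pan's strategy in the modular curve case: substitute the completed cohomology of modular curves by the partially $\Psi$-analytic completed cohomology $\widetilde{H}$ of the relevant rank $2$ unitary Shimura varieties for $F^+$, realise $\rho$ as a system of Hecke eigenvalues inside the locally $\Psi$-analytic vectors $\widetilde{H}^{\Pla}$, and then invoke the classicality theorem of this paper to promote that eigensystem to a classical cuspidal automorphic form.

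First, using conditions (1), (4) and (5), I would run a conjugate self-dual Taylor-Wiles-Kisin patching over a suitable definite unitary group for $F/F^+$ (along the lines of \cite{CHT,TII,10}), imposing at $p$-adic places only the framing so that all Hodge-Tate-regular $p$-adic deformations of $\overline{\rho}$ are allowed. This produces a patched module $M_\infty$ over a patched deformation ring $R_\infty$ carrying a universal Galois representation, and exhibits $\rho$ as a closed point $x\in\mathrm{Spec}\, R_\infty[1/p]$. Automorphy then reduces to showing that the fibre of $M_\infty$ at $x$ is nonzero, which I would prove by transporting the question into $\widetilde{H}^{\Pla}$: Hodge-Tate regularity (3) together with the Sen-theoretic decomposition of $\widetilde{H}^{\Pla}$ produces a locally $\Psi$-analytic Hecke eigenvector whose infinitesimal character at each $v\in\Psi$ has the prescribed Hodge-Tate weights. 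At places $v\mid p$ with $F_v = \mathbb{Q}_p$ this step is supplied by the $\mathrm{GL}_2(\mathbb{Q}_p)$ $p$-adic local Langlands correspondence; at $v\in\Psi$ it is the new input of this paper.

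The main obstacle will be verifying at each $v\in\Psi$ that the eigenvector so constructed satisfies the hypotheses of the classicality theorem: that its Sen weights at $v$ are distinct integers, that its associated trianguline refinement is noncritical, and that its local Serre-weight shape lies in the regularity window $2\le k_{\tau,1}-k_{\tau,2}\le p-5$ imposed by condition (6). The latter is tailored precisely for this, and the restriction to $F_v = \mathbb{Q}_{p^2}$ (together with the unramified hypothesis) is exactly the range in which the classicality theorem has been proved. Once classicality is granted, the resulting classical eigenvector produces an essentially conjugate self-dual cohomological cuspidal automorphic representation $\pi$ of $\mathrm{GL}_2(\mathbb{A}_F)$, and $\rho \cong r_\iota(\pi)$ follows by comparing unramified Hecke eigensystems via Chebotarev density, using condition (5) to pin down the residual class.
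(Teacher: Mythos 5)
Your plan is the na\"ive version of ``big $R=T$ + classicality applied directly to $\rho$,'' and it stalls at the point where you invoke the classicality theorem. The classicality theorem proved in this paper (Theorem \ref{classicality theorem}, and the big $R=T$ input Theorem \ref{eigenspace}) is only established under strong \emph{global} hypotheses on the residual representation $\overline{\rho}_{\mathfrak{m}}$: one needs $\otimes_{\tau\in\Psi}\overline{\rho}_{\mathfrak{m}}^{\tau}$ to be absolutely irreducible, $\overline{\rho}_{\mathfrak{m}}(G_F)$ to be non-solvable, an auxiliary prime $l$ splitting completely in $F$ at which $\overline{\rho}_{\mathfrak{m}}$ is irreducible and generic, and (for the Gelfand--Kirillov dimension argument behind Theorem \ref{eigenspace}) $\overline{\rho}_{\mathfrak{m}}|_{G_{F_w}}$ must be reducible non-split sufficiently generic while being Fontaine--Laffaille generic at the other $p$-adic places. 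A representation $\rho$ satisfying only hypotheses (1)--(6) of the theorem will in general fail these conditions -- for example $\otimes_{\tau\in\Psi}\overline{\rho}^{\tau}$ need not be irreducible, and $\overline{\rho}|_{G_{F_w}}$ can be irreducible or split -- so the transport into $\widetilde{H}^{\Pla}$ does not produce an eigenvector to which the classicality theorem applies. The conditions you flag (distinct Sen weights, noncritical trianguline refinement) are not the ones the classicality theorem asks for; what is actually needed are the global constraints on $\overline{\rho}_{\mathfrak{m}}$, which your argument has no way to arrange for the given $\rho$.

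The paper handles this by inserting the local Breuil--M\'ezard conjecture as an intermediary. One first globalizes the \emph{local} residual representation $\overline{\rho}|_{G_{F_w}}$ to a $\overline{r}$ that does satisfy all of the technical hypotheses above (using the Moret--Bailly / Calegari construction from \cite{BM}), then applies Theorem \ref{eigenspace} and the classicality theorem to those special $\overline{r}$. There is a second gap you also did not address: even for the special $\overline{r}$, ``big $R=T$ + classicality'' a priori only yields $R[1/p] \cong T[1/p]$, which is not enough to control the special fibre. The paper gets around this by the \cite{KP}-style lifting Theorem \ref{lifting of Galois} (and Corollary \ref{BIGCL}), producing $\mathcal{O}$-valued Galois lifts landing in each prescribed irreducible component $\mathcal{C}_w$ of $R^{\mathrm{ss},\lambda_w,\tau_w}_{\overline{\rho}|_{G_{F_w}}}$, which together with Proposition \ref{equivalence} (implication $2\Rightarrow 3'$) yields the local Breuil--M\'ezard conjecture for $\overline{\rho}|_{G_{F_w}}$ (Theorem \ref{Breuil-Mezard}). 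Only after this local statement is in hand does one return to the given $\rho$: after solvable base change one applies Proposition \ref{equivalence} (implication $3\Rightarrow 1$) together with the known $\mathrm{GL}_2(\mathbb{Q}_p)$ Breuil--M\'ezard conjecture to obtain the required $R=T$ and hence automorphy. Your proposal omits the entire globalize--prove-local-BM--relocalize mechanism, which is the actual engine of the proof.

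Finally, a smaller inaccuracy: at $p$-adic places $v\neq w$ the patched deformation problem is not ``all Hodge--Tate-regular deformations'' but the Fontaine--Laffaille deformations of a fixed $p$-adic Hodge type; the unrestricted condition is imposed only at the single place $w$ where $F_w = \mathbb{Q}_{p^2}$, matching the Shimura variety whose level is varied.
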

    
    \begin{rem}
    
    (1) \ Note that in the above theorem, we don't assume that $\rho|_{G_{F_v}}$ is ordinary or Fontaine-Laffaille for any $v \mid p$. Previously, such results were known only when $F_v = \mathbb{Q}_p$ for any $p$-adic place $v$ so that we can use the $p$-adic Langlands correspondence of $\mathrm{GL}_2(\mathbb{Q}_p)$.

    (2) \ Note that if $F$ is an imaginary quadratic field, the above condition $4$ of the above theorem is always satisfied by \cite{SEI}. (See Proposition \ref{residual automorphy}.)

    (3) \ If $F$ is totally real, then the above condition 1 follows from the oddness condition $\mathrm{det}\rho(c_v) = -1$ for any $v \mid \infty$.
    
    (4) \ The above condition $6$ is needed to use a modification of the works \cite{GEKI} and \cite{BGRT} about big $R=T$ theorems. (See Theorem \ref{eigenspace}.)

    \end{rem}
    
    From the above theorem, we can deduce the following theorem by the potential automorphy of residual representations \cite{Calabi} and \cite{CW}.
    
    \begin{thm}(Potential automorphy theorem, Theorem \ref{potential automorphy})
    
    Let $\rho : G_{F} \rightarrow \mathrm{GL}_2(\overline{\mathbb{Q}_p})$ be an irreducible continuous representation. We assume that $\rho$ satisfies the conditions of Theorem \ref{automorphy lifting theoremII} except the condition $5$.

    Then there exists a finite Galois extension $E/F$ of CM fields and an essentially conjugate self-dual cohomological cuspidal automorphic representation $\pi$ of $\mathrm{GL}_2(\mathbb{A}_E)$ such that $\rho|_{G_E} \cong r_{\iota}(\pi)$.
    
    \end{thm}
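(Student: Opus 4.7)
The plan is to reduce the potential automorphy theorem to Theorem \ref{automorphy lifting theoremII} by producing a finite Galois CM extension $E/F$ over which $\overline{\rho}|_{G_E}$ becomes residually automorphic, while preserving the other hypotheses.

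First, I would apply the potential automorphy results of \cite{Calabi} and \cite{CW} to the residual representation $\overline{\rho} : G_F \rightarrow \mathrm{GL}_2(\overline{\mathbb{F}}_p)$, which yields a finite Galois CM extension $E/F$ together with an essentially conjugate self-dual cohomological cuspidal automorphic representation $\sigma$ of $\mathrm{GL}_2(\mathbb{A}_E)$ such that $\overline{r_\iota(\sigma)} \cong \overline{\rho}|_{G_E}$. This is exactly condition 5 of Theorem \ref{automorphy lifting theoremII} over $E$.

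Next, I would use the standard flexibility of the construction to impose two additional conditions on $E$. (i) $E/F$ splits completely at every $p$-adic place: then for every $p$-adic place $w$ of $E$ lying over $v \mid p$ in $F$, we have $E_w = F_v \in \{\mathbb{Q}_p, \mathbb{Q}_{p^2}\}$, so $E$ lies in the setup of the theorem, and furthermore $\overline{\rho}|_{G_{E_w}} = \overline{\rho}|_{G_{F_v}}$, so the Serre weight set $W_{E_w}(\overline{\rho}|_{G_{E_w}})$ equals $W_{F_v}(\overline{\rho}|_{G_{F_v}})$ and condition 6 is preserved verbatim. (ii) $E$ is linearly disjoint from $\overline{F}^{\ker \overline{\rho}} \cdot F(\zeta_p)$ over $F$: this ensures that $\overline{\rho}|_{G_{E(\zeta_p)}}$ remains absolutely irreducible, preserving condition 4. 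Conditions 1 (restriction of the polarization with $\chi|_{G_{E^+}}$, which still satisfies $\chi(c_v) = \chi(c_w)$ for any infinite places since every archimedean conjugation of $E^+$ restricts to one of $F^+$), 2 (unramified almost everywhere), and 3 (de Rham with the same Hodge-Tate weights at every embedding) pass trivially to $\rho|_{G_E}$.

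Finally, all six hypotheses of Theorem \ref{automorphy lifting theoremII} are satisfied for $\rho|_{G_E} : G_E \rightarrow \mathrm{GL}_2(\overline{\mathbb{Q}_p})$, and that theorem applied over $E$ produces the desired essentially conjugate self-dual cohomological cuspidal $\pi$ of $\mathrm{GL}_2(\mathbb{A}_E)$ with $\rho|_{G_E} \cong r_\iota(\pi)$. The only delicate point is ensuring that the potential automorphy theorems of \cite{Calabi} and \cite{CW} can simultaneously accommodate the local requirements of splitting completely at $p$ and of linear disjointness from $\overline{F}^{\ker \overline{\rho}} \cdot F(\zeta_p)$ in the essentially conjugate self-dual setting; this is by now a standard but nontrivial consequence of the Khare--Wintenberger method applied to Dwork-type Calabi--Yau moduli, and it is the part of the argument that requires the most care.
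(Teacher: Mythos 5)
Your proposal is correct and follows essentially the same route as the paper: reduce to Theorem~\ref{automorphy lifting theoremII} by using potential automorphy of the residual representation (\cite{Calabi}, \cite{CW}) to produce a finite Galois CM extension $E/F$ that splits completely at $p$, is linearly disjoint from $\overline{F}^{\ker\overline{\rho}}\cdot F(\zeta_p)$, and over which $\overline{\rho}$ becomes residually automorphic. The paper compresses all of this into a single citation to \cite[Proposition A.6]{BM}, which packages exactly the existence of such an $E$ with the relevant local and disjointness constraints; your write-up simply unpacks the content of that citation and verifies condition-by-condition that hypotheses 1--4 and 6 of Theorem~\ref{automorphy lifting theoremII} persist under restriction to $G_E$.
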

    
    By the works of \cite{FMRI}, \cite{GK} and \cite{BM}, it is known that the automorphy lifting theorem is equivalent to the Breuil-M$\mathrm{\acute{e}}$zard conjecture. Actually in the $\mathrm{GL}_2(\mathbb{Q}_p)$ case, the Breuil-M$\mathrm{\acute{e}}$zard conjecture was already known by \cite{FMRI}, \cite{HUTU}, \cite{TunI} and \cite{TunII} and thus the automorphy lifting theorem was already known. Therefore, Theorem \ref{automorphy lifting theoremII} follows from the following theorem. 
    
    \begin{thm}(Breuil-M$\acute{e}$zard conjecture for $\mathrm{GL}_2(\mathbb{Q}_{p^2})$, Theorem \ref{Breuil-Mezard})\label{Breuil-Mezard conjectureI}
    
    Let $\overline{\rho} : G_{\mathbb{Q}_{p^2}} \rightarrow \mathrm{GL}_2(\overline{\mathbb{F}}_p)$ be a continuous representation such that for any $k = (k_{\gamma, 1}, k_{\gamma ,2})_{\gamma} \in W_{\mathbb{Q}_{p^2}}(\overline{\rho})$, we have $2 \le k_{\gamma, 1} - k_{\gamma, 2} \le p-5$ for any $\gamma \in \mathrm{Hom}(\mathbb{F}_{p^2}, \overline{\mathbb{F}}_p)$. Then for any weight $\lambda$ and any inertia type $\tau$, the Breuil-M$\acute{e}$zard conjecture for $\overline{\rho}$, $\lambda$ and $\tau$ holds. (See Conjecture \ref{Breuil-Mezard conjecture} for the precise statement of the Breuil-M$\acute{e}$zard conjecture.)
            
    \end{thm}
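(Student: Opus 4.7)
The plan is to follow the strategy of Kisin and its generalizations by Gee--Kisin~\cite{GEKI} and B\"ockle--Gee--Rivera--Torti~\cite{BGRT}: reformulate the Breuil--M\'ezard conjecture as a multiplicity equality for a patched module, and then prove this equality using the paper's classicality theorem for completed cohomology of unitary Shimura varieties in place of the $p$-adic Langlands correspondence for $\mathrm{GL}_2(\mathbb{Q}_p)$. The first step is to globalize: given $\overline{\rho}$, use Khare--Wintenberger style lifting together with the potential automorphy of residual representations from \cite{Calabi} and \cite{CW} to produce an imaginary CM field $F$, a $p$-adic place $v_0$ of $F$ with $F_{v_0} \cong \mathbb{Q}_{p^2}$, and a continuous residual representation $\overline{r} : G_F \to \mathrm{GL}_2(\overline{\mathbb{F}}_p)$ such that $\overline{r}|_{G_{F_{v_0}}} \cong \overline{\rho}$, such that $\overline{r}$ is the reduction of $r_\iota(\sigma)$ for an essentially conjugate self-dual cuspidal cohomological automorphic representation $\sigma$, such that the Taylor--Wiles hypotheses of condition~4 of Theorem~\ref{automorphy lifting theoremII} hold, and such that at every other $p$-adic place $w$ the restriction $\overline{r}|_{G_{F_w}}$ is Fontaine--Laffaille. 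The Serre weight condition $2 \le k_{\gamma,1} - k_{\gamma,2} \le p - 5$ on $\overline{\rho}$ is preserved at $v_0$.

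Next, I would apply the Taylor--Wiles patching method to the completed cohomology of a suitable unitary Shimura variety attached to $F/F^+$ to produce a patched module $M_\infty$ over a patched framed deformation ring $R_\infty$ that contains the framed local deformation ring of $\overline{\rho}$ as a tensor factor. By the big $R = T$ statement of Theorem~\ref{eigenspace} (the author's variant of~\cite{GEKI} and~\cite{BGRT}), $M_\infty$ is faithfully supported on $R_\infty$, and for each pair $(\lambda,\tau)$ consisting of a weight and an inertia type the submodule $M_\infty(\sigma(\lambda,\tau))$ is supported precisely on the $(\lambda,\tau)$-potentially crystalline component $R_\infty^{\lambda,\tau}$ at $v_0$. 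Following~\cite{BGRT}, the Breuil--M\'ezard equality for $\overline{\rho}$ at $(\lambda,\tau)$ is then equivalent to the numerical identity
\begin{equation*}
e\bigl(M_\infty(\sigma(\lambda,\tau))/\varpi\bigr) \;=\; \sum_{k} m_k(\lambda,\tau)\, e\bigl(M_\infty(\sigma_k)\bigr),
\end{equation*}
where $\sigma_k$ runs over the Jordan--H\"older factors of the reduction modulo $\varpi$ of a lattice in the type $\sigma(\lambda,\tau)$.

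The final and hardest step is to verify this multiplicity identity using the classicality theorem. The inequality $\ge$ is formal from the existence of a filtration of $\sigma(\lambda,\tau)^{\circ}/\varpi$ by Serre weights, so the real content is the reverse inequality. For this I would pass to the rigid-analytic generic fibre of $\mathrm{Spf}(R_\infty^{\lambda,\tau})$ and interpret $M_\infty(\sigma(\lambda,\tau))[1/p]$ in terms of locally algebraic vectors of the corresponding automorphic type sitting inside the locally $\mathbb{Q}_{p^2}$-analytic vectors of the partially completed cohomology at $v_0$. The classicality theorem and its sheafified form (Conjectures~\ref{classicality conjecture} and~\ref{key diagram}) would then force every locally analytic eigenclass of the expected infinitesimal character to be classical, so that the fibre of $M_\infty(\sigma(\lambda,\tau))[1/p]$ at each crystalline point realises the full locally algebraic multiplicity predicted by Breuil--M\'ezard, giving the missing inequality. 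The main obstacle will be to propagate the Serre-weight regularity hypothesis $2 \le k_{\gamma,1} - k_{\gamma,2} \le p-5$ through the patching into the precise genericity at the $\mathrm{GL}_2(\mathbb{Q}_{p^2})$-place needed for the classicality theorem to apply uniformly along \emph{every} irreducible component of $\mathrm{Spec}(R_\infty^{\lambda,\tau})$, and to handle the non-parallel weight case, where one must additionally invoke Conjecture~\ref{key diagram}.
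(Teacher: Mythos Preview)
Your overall picture---globalize, patch, and feed the classicality theorem into a Breuil--M\'ezard style multiplicity argument---is in the right spirit, but it diverges from the paper's actual route at two essential points and contains a genuine gap at the third step.

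First, you omit the Emerton--Gee stack reduction (Theorem~\ref{genericity}): the paper does \emph{not} prove Breuil--M\'ezard directly for an arbitrary $\overline{\rho}$ satisfying the Serre weight bound, but instead uses \cite{EGS} to reduce to the case where $\overline{\rho}$ is reducible non-split and \emph{sufficiently generic}. This reduction is what makes both Theorem~\ref{eigenspace} and the classicality theorem applicable, since those results require precisely that strong genericity hypothesis on $\overline{\rho}|_{G_{F_w}}$; the Serre weight bound $2\le k_{\gamma,1}-k_{\gamma,2}\le p-5$ is exactly what guarantees the representative point on the relevant component of the stack can be taken sufficiently generic.

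Second, and more seriously, your final step is not how the paper extracts the inequality $e(R^{\mathrm{ss},\lambda,\tau}_{\overline{\rho}}/\varpi)\le\sum_k n_k\mu_k$. The paper does not attempt to read off patched-module multiplicities from the classicality theorem; indeed the classicality theorem only asserts that a de Rham Hecke eigensystem is classical, and carries no multiplicity information. Instead, the paper invokes the equivalence in Proposition~\ref{equivalence} (a refinement of the Gee--Kisin framework): under the condition $\bullet$, the Breuil--M\'ezard equality at $w$ is equivalent to the statement that every irreducible component $\mathcal{C}_w$ of $\mathrm{Spec}\,R^{\mathrm{ss},\lambda_w,\tau_w}_{\overline{\rho}|_{G_{F_w}}}$ contains an automorphic point. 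The paper then produces such a point in two stages (Proposition~\ref{BIGRT}): (i) the modified Khare--Patrikis lifting result of \S7.1 (Theorem~\ref{lifting of Galois}) constructs a global lift $\rho_{\mathcal{C}_w}$ landing in $\mathcal{C}_w$, allowing ramification only at formally smooth (Steinberg) auxiliary primes; (ii) big $R=T$ together with the classicality theorem (packaged as Corollary~\ref{BIGCL}) then shows $\rho_{\mathcal{C}_w}$ is automorphic. Your sketch entirely omits step (i), and your proposed replacement---interpreting $M_\infty(\sigma(\lambda,\tau))[1/p]$ via locally algebraic vectors and invoking classicality pointwise on $\mathrm{Spf}\,R_\infty^{\lambda,\tau}$---does not work as written, because the patched module has no direct interpretation in terms of completed cohomology at an arbitrary point of the patched deformation ring (only at the image of $\mathfrak{a}_\infty$), so there is no eigenvector to which one could apply the classicality theorem.
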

    
    \begin{rem} Beyond the $\mathrm{GL}_2(\mathbb{Q}_p)$ case, the Breuil-M$\mathrm{\acute{e}}$zard conjecture was previously known in the following cases.
        
    1 \ The two dimensional trivial weight case. (See \cite{GK}.)
    
    2 \ When the Galois representation (including higher dimensional cases) is potentially crystalline at $p$ with a ``generic'' degeneration and a relatively small weight with respect to $p$. (See \cite{LM} and \cite{Tony}.)
    
    \end{rem}

    Note that by the work \cite{EGS}, in order to prove Theorem \ref{Breuil-Mezard conjectureI}, we may assume that $\overline{\rho}$ is ``sufficiently generic''. In fact, \cite{EGS} constructed a certain moduli stack of $p$-adic Galois representations, which is called Emerton-Gee stack and can be regarded as a globalization of $p$-adic lifting rings. As explained in \cite[Remark 8.3.7]{EGS}, we have the following implications.
    
     \ \ \ \ Breuil-M$\mathrm{\acute{e}}$zard conjecture of $p$-adic lifting rings.
     
     $\Leftarrow$ Breuil-M$\mathrm{\acute{e}}$zard conjecture of the Emerton-Gee stack.
     
     $\Leftarrow$ Breuil-M$\mathrm{\acute{e}}$zard conjecture of $p$-adic lifting rings of ``sufficiently generic'' residual representations.

This implications are essential to use the work \cite{GEKI}, which is used to prove our main theorem \ref{Breuil-Mezard conjectureI}. (See {\S} 7.2 and the proof of Corollary \ref{Breuil-Mezard}.)

    \vspace{0.5 \baselineskip}
    
    On the other hand, there exists another approach to the problem (*) in the $\mathrm{GL}_2(\mathbb{Q})$ case by \cite{EMLG} other than using the Breuil-M$\mathrm{\acute{e}}$zard conjecture. Roughly speaking, that is ``big $R=T$ theorem \ + classicality theorem''. A big $R=T$ theorem roughly means that every Galois deformation of a residual representation (not necessarily being de Rham) corresponds to an eigensystem appearing in a certain completed cohomology which is a deformation of the corresponding residual eigensystem. A classicality theorem roughly means that an eigensystem appearing in a certain completed cohomology corresponding to a de Rham representation is actually equal to a classical eigensystems, i.e., an eigensystem coming from a usual automorphic representation. By combining these two results, we can deduce the automorphy lifting theorem.
    
    Big $R=T$ theorems are known in relatively many cases (cf. \cite{AP} and \cite{density}, but actually for some technical reasons, we will use a modification of the works \cite{BGRT} and \cite{GEKI}. See arguments before Theorem \ref{eigenspaceI}.) On the other hand, in the $\mathrm{GL}_2(\mathbb{Q})$ case, Emerton crucially used the $p$-adic Langlands correspondence for $\mathrm{GL}_2(\mathbb{Q}_p)$ to prove the classicality theorem and since the $p$-adic Langlands correspondence is still not known beyond the $\mathrm{GL}_2(\mathbb{Q}_p)$ case, this method was not generalized so far. Recently, again in the $\mathrm{GL}_2(\mathbb{Q})$ case, \cite{PanII} introduced another approach to prove the classicality theorem by using perfectoid modular curves and as stated in \cite[Remark 1.1.3]{PanII}, this may be generalized by replacing modular curves with Shimura varieties. 
    
    \vspace{0.5 \baselineskip}

    In this paper, we will study locally analytic vectors only in a ``partially'' completed cohomology of some rank 2 unitary Shimura varieties after localizing at a non-Eisenstein ideal corresponding to a ``very nice'' residual representation for some technical reasons. (We will give more detailed explanations later. See arguments after Theorem \ref{mikami expansion}, Theorem \ref{eigenspace}, Proposition \ref{residual irreducibility} and Proposition \ref{comparison}.) Note that the Breuil-M$\mathrm{\acute{e}}$zard conjecture is a purely local conjecture. Therefore by using a certain globalization argument, we will prove Theorem \ref{automorphy lifting theoremII} by combining the two approaches in the following way. (We fix a local residual representation $\overline{\rho} : G_{\mathbb{Q}_{p^2}} \rightarrow \mathrm{GL}_2(\overline{{\mathbb{F}}}_p)$.)
    
\vspace{0.5 \baselineskip}

    Big $R=T$ theorem \ + classicality theorem for some special global representation $\overline{r}$ such that $\overline{r}|_{G_{\mathbb{Q}_{p^2}}} \cong \overline{\rho}$.
    
    \vspace{0.5 \baselineskip}
    
    $\Rightarrow$ Automorphy lifting theorem for such $\overline{r}$.
    
    \vspace{0.5 \baselineskip}
    
    $\Leftrightarrow$ Breuil-M$\mathrm{\acute{e}}$zard conjecture for $\overline{\rho}$.
    
    \vspace{0.5 \baselineskip}
    
    $\Leftrightarrow$ Automorphy lifting theorem for all global representations $\overline{r}$ satisfying mild conditions such that $\overline{r}|_{G_{\mathbb{Q}_{p^2}}} \cong \overline{\rho}$.
    
    \vspace{0.5 \baselineskip}

Note that ``big $R=T$ theorem \ + classicality theorem'' a priori implies usual $R = T$ theorems after inverting $p$. Thus the combination of these results doesn't suffice to prove the Breuil-M$\mathrm{\acute{e}}$zard conjecture because this is a conjecture about special fibers of $p$-adic lifting rings. In this paper, we will deduce the Breuil-M$\mathrm{\acute{e}}$zard conjecture indirectly from ``big $R=T$ theorem \ + classicality theorem'' by using the work \cite{KP}. (See {\S} 7.1 and the proof of Theorem \ref{BIGRT}.)

In order to state our precise classicality result, we fix some notations.

\begin{itemize}
\item $F_0$ is an imaginary quadratic field in which $p$ splits.
\item $F^+/\mathbb{Q}$ is a Galois totally real field (for simplicity).
\item $F:=F_0F^+$.
\item $\iota : \overline{\mathbb{Q}_p} \Isom \mathbb{C}$ is an isomorphism of fields and by using this, we identify $\mathrm{Hom}(F, \mathbb{C}) = \mathrm{Hom}(F, \overline{\mathbb{Q}}_p)$.
\item $w$ be the $p$-adic place of $F$ and $v$ be the $p$-adic place of $F_0$ induced by $\iota$. We also write $w$ for the place of $F^+$ lying below $w$. We assume that $F_w = \mathbb{Q}_p$ or $\mathbb{Q}_{p^2}$.
\item $\Phi := \mathrm{Gal}(F/F_0)$.
\item $\Psi := \mathrm{Gal}(F_w/\mathbb{Q}_p) \subset \Phi$.
\item $d:=[F_w:\mathbb{Q}_p]$.
\item $S(B)$ is a non-empty finite set of non-$p$-adic finite places of $F$ such that any prime lying below a place in $S(B)$ splits in $F_0$, $S(B) = S(B)^c$ and $\frac{1}{2}|S(B)| + d = 0 \mod 2$.
\item $B$ is the central division algebra over $F$ satisfying the following conditions : $\mathrm{dim}_FB = 4$, $\mathrm{inv}_{F_{w'}}(B_{w'}) = \mathrm{inv}_{F_{{w'}^c}}(B_{{w'}^c}) = \frac{1}{2}$ for any $w' \in S(B)$ and $\mathrm{inv}_{F_{w'}}(B_{w'}) = 0$ for any $w' \notin S(B)$.

\end{itemize}

By the assumption $\frac{1}{2}|S(B)| + d = 0 \mod 2$, there exists a positive involution of second kind $* : B \rightarrow B$ and an alternating non-degenerate pairing $\psi : B \times B \rightarrow \mathbb{Q}$ satisfying the following conditions. (See Proposition \ref{unitary groups}.)

\begin{itemize}
    \item The corresponding unitary group $U$ over $F^+$ satisfies $U \times_{F^+, \tau} \mathbb{R} = U(1,1)$ for any $\tau \in \Psi$.
    \item $U \times_{F, \tau} \mathbb{R} = U(0,2)$ for any $\tau \in \Phi \setminus \Psi$.
    \item $U_{F^+_v}$ is quasi-split for any finite place $v$ not lying below a place of $S(B)$ of $F^+$.  
\end{itemize}

Let $GU/\mathbb{Q}$ be the unitary similitude group defined by $U$ and $S_K$ be the corresponding Shimura variety, which can be regarded as a proper algebraic variety over $F$ of dimension $d$. We fix a sufficiently large finite subextension $E$ of $\overline{\mathbb{Q}}_p/\mathbb{Q}_p$ and a weight $\lambda := (\lambda_0, (\lambda_{\tau, 1}, \lambda_{\tau, 2})_{\tau \in \Phi}) := (\lambda_0, (\lambda_{\tau, 1}, \lambda_{\tau, 2})_{\tau \in \Phi \setminus \Psi}, (0, -\lambda_{\tau})_{\tau \in \Psi}) \in \mathbb{Z} \times (\mathbb{Z}^2_+)^{\Phi}$. Let $\mathcal{O}$ be the ring of integers of $E$ and $\lambda^w = (\lambda_0, (\lambda_{\tau, 1}, \lambda_{\tau, 2})_{\tau \in \Phi \setminus \Psi}, (0, 0)_{\tau \in \Psi})$, $\lambda_w = (0, (0, 0)_{\tau \in \Phi \setminus \Psi}, (0, -\lambda_{\tau})_{\tau \in \Psi})$  $\in \mathbb{Z} \times (\mathbb{Z}^2_+)^{\Phi}$. Let $\mathcal{V}_{\lambda^w}$ be an integral local system of $\mathcal{O}$-modules on $S_K$ defined by $\lambda^w$. (We will give more detailed explanations about this in {\S} 3.3.)

Let $K = \prod_{l}K_l$ be a sufficiently small open compact subgroup of $GU(\mathbb{A}_{\mathbb{Q}}^{\infty})$ and $S$ be a finite subset of primes containing $p$ and all primes lying below a place in $S(B)$ such that $K_l$ is hyperspecial for any $l \notin S$. Let $\mathbb{T}^S$ denote the subalgebra of the Hecke algebra $\mathcal{H}(GU(\mathbb{A}_{\mathbb{Q}}), K)_{\mathcal{O}}$ generated over $\mathcal{O}$ by $\mathcal{H}(GU(\mathbb{Q}_l), K_l)_{\mathcal{O}}$ for all $l \notin S$ splitting in $F_0$. Let $\mathfrak{m}$ be a decomposed generic non-Eisenstein ideal (see definition \ref{non-Eisenstein}) of the Hecke algebra $\mathbb{T}^S$. We assume the following technical conditions.

\begin{itemize}
\item $(\otimes_{\tau \in \Psi} \overline{\rho}_{\mathfrak{m}}^{\tau})$ is irreducible and $\overline{\rho}_{\mathfrak{m}}(G_{F})$ is not solvable.
\item There exists a prime $l$ splitting completely in $F$ not lying below any place in $S(B)$ such that $\overline{\rho}_{\mathfrak{m}}|_{G_{F_{w'}}}$ is irreducible and generic for any $w' \mid l$, i.e., $H^0(G_{F_{w'}}, \mathrm{ad}\overline{\rho}_{\mathfrak{m}}|_{G_{F_{w'}}}(1)) = 0$.
\end{itemize}

We assume that we have a decomposition $K_p = K_0 \times \prod_{w' \mid v} K_{w'} \subset \mathbb{Q}_p^{\times} \times \prod_{w' \mid v} \mathrm{GL}_2(F_{w'}) = GU(\mathbb{Q}_p)$.

Let $$\mathbb{T}^S(K^wK_w, \mathcal{V}_{\lambda^w}/\varpi^n)_{\mathfrak{m}} := \mathrm{Im}(\mathbb{T}^S \rightarrow \mathrm{End}_{\mathcal{O}/\varpi^n}(H^d_{\mathrm{\acute{e}t}}(S_{K^wK_w, \overline{F}}, \mathcal{V}_{\lambda^w}/\varpi^n)_{\mathfrak{m}}))$$ and $$\mathbb{T}^S(K^w, \mathcal{V}_{\lambda^w})_{\mathfrak{m}} := \varprojlim_{K_w, n} \mathbb{T}^S(K^wK_w, \mathcal{V}_{\lambda^w}/\varpi^n)_{\mathfrak{m}}$$ and we consider the following $\mathbb{T}^S(K^w, \mathcal{V}_{\lambda^w})_{\mathfrak{m}} \times \mathrm{GL}_2(F_w)$-modules $$\widehat{H}^d(S_{K^w}, \mathcal{V}_{\lambda^w})_{\mathfrak{m}} := \varprojlim_n\varinjlim_{K_w}H^d_{\mathrm{\acute{e}t}}(S_{K^wK_w, \overline{F}}, \mathcal{V}_{\lambda^w}/\varpi^n)_{\mathfrak{m}}$$ and $\widehat{H}^d(S_{K^w}, V_{\lambda^w})_{\mathfrak{m}} = \widehat{H}^d(S_{K^w}, \mathcal{V}_{\lambda^w})_{\mathfrak{m}}[\frac{1}{p}]$. Our main classicality theorem is the following.

\begin{thm} (Theorem \ref{classicality theorem})

Let $\varphi : \mathbb{T}^S(K^w, \mathcal{V}_{\lambda^w})_{\mathfrak{m}} \rightarrow \mathcal{O}$ be an $\mathcal{O}$-morphism such that the corresponding Galois representation $\rho_{\varphi}$ satisfies the condition that $\rho_{\varphi}|_{G_{F_{w'}}} : G_{F} \rightarrow \mathrm{GL}_2(\mathcal{O})$ is de Rham of the $p$-adic Hodge type $\lambda_{w'} := (\lambda_{\tau, 1}, \lambda_{\tau, 2}) \in (\mathbb{Z}_+^2)^{\mathrm{Hom}_{\mathbb{Q}_p}(F_{w'}, E)}$. (See the remark before Theorem \ref{infinitesimal character} and around the middle of Notations for the definitions of $\rho_{\varphi}$ and the $p$-adic Hodge type.) If $\widehat{H}^d(S_{K^w}, V_{\lambda^w})_{\mathfrak{m}}[\varphi] \neq 0$, then $\varphi$ is a classical eigensystem of weight $\lambda$, i.e., an eigensystem of an eigenvector in $\varinjlim_{K} H^d_{\et}(S_{K, \overline{F}}, V_{\lambda})_{\mathfrak{m}}$.

\end{thm}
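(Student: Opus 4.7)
The plan is to adapt Lue Pan's classicality strategy from \cite{PanII} to this partially completed cohomology setting. Fix $\varphi$ and choose a nonzero $x \in \widehat{H}^d(S_{K^w}, V_{\lambda^w})_{\mathfrak{m}}[\varphi]$; note that because the local system $V_{\lambda^w}$ already encodes the correct weight at every $p$-adic place other than $w$, the entire task is to upgrade the trivial weight at $w$ to $\lambda_w$. First I would extract a nonzero $\mathrm{GL}_2(F_w)$-locally analytic eigenvector $x^{\mathrm{la}}$ from the eigenspace: admissibility of $\widehat{H}^d$ after localization at the non-Eisenstein $\mathfrak{m}$, combined with Schneider--Teitelbaum--Emerton theory, ensures the locally analytic vectors are dense in every eigenspace.

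Next, and this is the heart of the argument, I would analyze the infinitesimal action of $\mathfrak{gl}_2(F_w) \otimes_{\mathbb{Q}_p} E \cong \bigoplus_{\tau \in \Psi} \mathfrak{gl}_{2,\tau}$ on $x^{\mathrm{la}}$. The essential tool is a geometric Sen operator on the perfectoid Shimura variety at infinite level at $w$, constructed via the Hodge--Tate period map to the flag variety $\Fl$; it identifies part of the Lie algebra action with data coming from the Hodge filtration of the universal abelian scheme. Under the de Rham hypothesis on $\rho_\varphi|_{G_{F_w}}$ of Hodge type $\lambda_w$, I expect the Sen operator to force, after an appropriate twist, that $x^{\mathrm{la}}$ is annihilated by the lowering operator $\bar{n}_\tau$ for each $\tau \in \Psi$ and transforms under the Cartan by the character associated to $\lambda_w$; equivalently, $x^{\mathrm{la}}$ becomes locally $\lambda_w$-algebraic. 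This is the content of the auxiliary Conjectures \ref{classicality conjecture} and \ref{key diagram} cited in the introduction.

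Once local algebraicity of weight $\lambda_w$ is established, a standard tensor-hom adjunction converts a locally $\lambda_w$-algebraic eigenvector in the partially completed cohomology of $V_{\lambda^w}$ into an eigenvector in the completed cohomology of the full local system $V_\lambda$. Properness of $S_K$ and the fact that classical étale cohomology is the union of its finite-level pieces then put this eigensystem into $\varinjlim_K H^d_{\et}(S_{K, \overline{F}}, V_\lambda)_{\mathfrak{m}}$, which is precisely the definition of classicality of weight $\lambda$.

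The main obstacle is the second step. In the $F_w = \mathbb{Q}_p$ case, a direct translation of \cite{PanII} should work. However, for $F_w = \mathbb{Q}_{p^2}$, there are two geometric Sen operators (one for each embedding $\tau : F_w \hookrightarrow E$), and controlling their joint behavior on $x^{\mathrm{la}}$ is substantially more delicate; this is where the assumption that $F_w/\mathbb{Q}_p$ is unramified and $d \le 2$ enters, since these hypotheses should keep the Hodge--Tate period map sufficiently rigid to separate the $\tau$-components of the infinitesimal action. The Serre-weight regularity $2 \le k_{\tau,1} - k_{\tau,2} \le p-5$ in condition 6 of Theorem \ref{automorphy lifting theoremII} is expected to enter here as the genericity needed to make the eigenspace decomposition for the Cartan action on $x^{\mathrm{la}}$ behave well integrally.
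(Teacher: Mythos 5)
Your proposal goes wrong at the crucial second and third steps, and omits the paper's main new structural idea.

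First, the assertion that the de Rham condition on $\rho_\varphi|_{G_{F_w}}$ together with the geometric Sen operator forces $x^{\mathrm{la}}$ to become locally $\lambda_w$-algebraic is not how Conjectures \ref{classicality conjecture} and \ref{key diagram} work, and it is not true in general. Conjecture \ref{key diagram} says that the kernel of (the $d$-th cohomology of) the Fontaine operator $N$ is nonzero in the $\varphi$-eigenspace; Conjecture \ref{classicality conjecture} says the cohomology of the geometric locally analytic de Rham complex $GDR^{\Psi-\mathrm{la}}_\lambda$ admits a classical eigenspace decomposition. Neither is a statement about $\overline{\mathfrak{n}}$-annihilation or local algebraicity of eigenvectors in completed cohomology. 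Indeed, the difficulty that makes classicality theorems hard is precisely that the de Rham constraint does \emph{not} directly yield local algebraicity in a way that then descends formally to finite level.

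Second, the ``standard tensor-hom adjunction'' step that you invoke to pass from a locally $\lambda_w$-algebraic eigenvector to a classical one is a gap: having a locally algebraic vector in $\widehat{H}^d$ does not put the eigensystem into $\varinjlim_K H^d_{\et}(S_{K,\overline F}, V_\lambda)_{\mathfrak{m}}$. This is exactly the implication Emerton needed the $p$-adic Langlands correspondence for in the $\mathrm{GL}_2(\mathbb{Q}_p)$ case, and it is exactly what the geometric Sen / Fontaine-operator machinery is designed to replace, so you cannot wave it away as formal.

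Third, and most importantly, you omit the paper's central new device: the comparison theorem (Theorem \ref{comparisonII} / Corollary \ref{comparison}) and the inductive dimension-reduction through a chain of unitary Shimura varieties $S_{\Psi',K}$. In dimension $d\ge 2$ the surjection $GDR^{\Psi-\mathrm{la}}_\lambda \twoheadrightarrow ADR^{\Psi-\mathrm{la}}_\lambda$ has a nontrivial kernel $Ker$, and $H^{d+1}(\Fl, Ker)$ does not vanish (unlike in the modular curve case). Theorem \ref{induction step} converts nonvanishing of $\mathrm{Ker}\,H^d(\Fl,N)[\varphi]$ into $\widehat H^d(S_{K^w},\cdot)^{\mathrm{la},\mathfrak{gl}_2(F_w)_\tau}[\varphi]\neq 0$ for some $\tau$, and then the comparison theorem — proved via the torsion Mantovan formula of Koshikawa and the torsion Kottwitz conjecture of Zou at an auxiliary split prime $l$ — exchanges the Shimura variety for one of dimension $d-1$. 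Iterating down to the Shimura curve case gives a nonzero \emph{smooth} vector, which is what yields classicality. Your proposal gives no mechanism for handling $Ker$, and without the comparison theorem the induction cannot run.

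Finally, your remark on the hypotheses misattributes their role: the restriction $[F_w:\mathbb{Q}_p]\le 2$ is needed because Conjecture \ref{classicality conjecture} is only proved when the Newton stratification of $\Fl$ has just the $\mu$-ordinary and basic strata (Corollary \ref{GL iso}), and the unramifiedness is used for the integral models in \S 4.3; the Serre-weight genericity $2\le k_{\tau,1}-k_{\tau,2}\le p-5$ does not appear in the classicality theorem at all — it enters only in \S 7.2 to run the big $R=\mathbb{T}$ argument of \cite{GEKI}.
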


\begin{rem} (1) \ Recently, a similar result in the unitary Shimura curve case was proved by \cite{unitaryshimura}.
    
(2) \ The author hopes that we can prove certain classicality theorems under more natural assumptions. In this paper, we use the technical assumptions on $\overline{\rho}_{\mathfrak{m}}$ to prove Proposition \ref{residual irreducibility} and Proposition \ref{comparison}.

\end{rem}

\subsection{Strategy of proof}

First, we recall some results and methods of \cite{PanII} to explain the strategy for proving the classicality theorem of this paper.

Let $p$ be a prime, $K = \prod_{l}K_l$ be a sufficiently small open compact subgroup of $\mathrm{GL}_2(\mathbb{A}_{\mathbb{Q}}^{\infty})$, $X_{K^pK_p}/\mathbb{Q}$ be the compact modular curve of level $K$ and $E$ be a finite extension of $\mathbb{Q}_p$. We consider the completed cohomology of modular curves $\widehat{H}^1(K^p, E) := (\varprojlim_{n}\varinjlim_{K_p} H^1_{\mathrm{\acute{e}t}}(X_{K^pK_p, \overline{\mathbb{Q}}}, \mathbb{Z}/p^n)) \otimes_{\mathbb{Z}_p} E$. This carries a natural action of $G_{\mathbb{Q}} \times \mathrm{GL}_2(\mathbb{Q}_p) \times \mathbb{T}^S$. \footnote{Here, $\mathbb{T}^S$ denotes an appropriate Hecke algebra. In \cite[Theorem 1.1.2]{PanII}, he considered open modular curves and didn't consider compact modular curves. However, our $\widehat{H}^1(K^p, E)$ is equal to $\widehat{H}^1(K^p, E)$ of \cite[Theorem 1.1.2]{PanII} by \cite[{\S} 4.4.1]{PanI}.} The main theorem of \cite{PanII} is the following.

\begin{thm} \cite[Theorem 1.1.2]{PanII} \label{Pan classicality}

Let $\rho : G_{\mathbb{Q}} \rightarrow \mathrm{GL}_2(E)$ be a continuous absolutely irreducible representation and $k \in \mathbb{Z}_{\ge 1}$. We assume the following conditions.

1 \ There exists a $G_{\mathbb{Q}}$-equivariant injection $\rho \hookrightarrow \widehat{H}^1(K^p, E)$.

2 \ $\rho|_{G_{\mathbb{Q}_p}}$ is de Rham of Hodge-Tate weight $(0, k)$.

Then there exists a cuspidal eigenform $f$ of weight $k+1$ such that $\rho \cong \rho_f$, where $\rho_f$ denotes the Galois representation corresponding to $f$.

\end{thm}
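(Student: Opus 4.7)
The plan is to study the $\rho$-isotypic subspace
\[
M := \mathrm{Hom}_{G_{\mathbb{Q}}}(\rho, \widehat{H}^1(K^p, E))
\]
of the completed cohomology, which carries commuting actions of $\mathrm{GL}_2(\mathbb{Q}_p)$ and the spherical Hecke algebra, and to produce a classical cuspidal eigenform inside its locally analytic vectors by exploiting the geometry of the perfectoid modular curve. First I would pass to the locally analytic vectors $M^{\mathrm{la}}$ for the $\mathrm{GL}_2(\mathbb{Q}_p)$-action. Since the hypothesised embedding produces a non-zero element of $M$ and $M$ is an admissible unitary Banach representation of $\mathrm{GL}_2(\mathbb{Q}_p)$ over $E$, the density theorem of Schneider--Teitelbaum ensures $M^{\mathrm{la}} \neq 0$.

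Next I would invoke Scholze's perfectoid modular curve $X_{K^p}$ at infinite level together with its Hodge--Tate period map $\pi_{\mathrm{HT}} : X_{K^p} \to \Fl = \mathbb{P}^1$, and use the structural results of \cite{PanI} identifying $\widehat{H}^1(K^p, E)^{\mathrm{la}}$ with a space of locally analytic sections on $X_{K^p}$. This equips $\widehat{H}^1(K^p, E)^{\mathrm{la}}$ with two commuting Lie-algebra actions: a ``vertical'' $\mathfrak{gl}_2(\mathbb{Q}_p)$-action coming from the group action, and a ``horizontal'' action arising from the Galois side through Faltings' extension. In particular one has an arithmetic Sen operator $\theta_{\mathrm{Sen}}$ whose eigenvalues on any finite-dimensional locally analytic Galois subrepresentation recover the Hodge--Tate weights.

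The third step is to use the de Rham condition to produce a classical form. The Hodge filtration on $\mathrm{D}_{\mathrm{dR}}(\rho|_{G_{\mathbb{Q}_p}})$ applied to an appropriate highest-weight vector in $M^{\mathrm{la}}$ for the horizontal Borel yields a vector whose horizontal weights are the Hodge--Tate weights $(0, k)$. The full de Rham-ness, not merely the Hodge--Tate splitting, is then used to show that such a vector is annihilated by the relevant horizontal raising operator; geometrically, this says that the vector descends through $\pi_{\mathrm{HT}}$ to an overconvergent section of $\omega^{k+1}$ on a finite-level modular curve. A small-slope argument (Coleman's classicality theorem) promotes this overconvergent eigenform to a classical cuspidal eigenform $f$ of weight $k+1$, and by construction $\rho \cong \rho_f$.

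The hard part will be the third step: correctly translating ``de Rham'' from a purely Galois-theoretic condition on $\rho$ into horizontality of the corresponding vector on the perfectoid tower. This requires a careful understanding of the interplay between the arithmetic and the geometric Sen operators together with a functorial dictionary between Hodge filtrations and horizontal weight spaces, and in effect replaces the use of the local $p$-adic Langlands correspondence for $\mathrm{GL}_2(\mathbb{Q}_p)$ in Emerton's earlier proof of the same result.
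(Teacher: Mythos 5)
Your steps 1 and 2 match the paper's account of Pan's strategy: pass to locally analytic vectors $M^{\mathrm{la}}$ (nonzero by admissibility and Schneider--Teitelbaum density, Theorem~\ref{density of locally analytic vectors}), and identify $\widehat{H}^1(K^p, E)^{\mathrm{la}}\widehat{\otimes}C$ with $H^1(\Fl, \mathcal{O}_{K^p}^{\mathrm{la}})$ via the perfectoid modular curve, the Hodge--Tate period map, and geometric Sen theory (Theorem~\ref{fundamental lemma}), decomposing along the horizontal $\mathfrak{h}$-action. You also correctly identify the pivot of step 3: the de Rham condition is converted into the vanishing of the Fontaine operator on the Hodge--Tate-weight-$0$ piece, so that the relevant vector lives in $\mathrm{Ker}\,H^1(\Fl, N_0)$.

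The gap is in how you finish. You route through ``an overconvergent section of $\omega^{k+1}$ on a finite-level modular curve'' and then invoke Coleman's classicality theorem. Coleman's theorem requires a small-slope hypothesis (slope $< k$), and no such hypothesis appears in the statement; indeed the whole point of Pan's theorem (and of Emerton's earlier approach via the $p$-adic Langlands correspondence) is to treat the non-ordinary, non-small-slope case that classical overconvergent techniques do not reach. A proof structured as you describe would only establish a strictly weaker statement with a slope restriction. What Pan actually does after identifying $\mathrm{Ker}\,H^1(\Fl, N_0)$ is compare the Fontaine-operator complex with the geometric locally analytic de Rham complex $GDR^{\mathrm{la}}_{\lambda}$ (Theorem~\ref{compatibilityIII}), then compute $H^1(\Fl, GDR^{\mathrm{la}}_{\lambda})$ by excision along the Newton stratification of $\Fl$: the supersingular (basic) stratum is uniformized by the infinite-level Lubin--Tate space and contributes algebraic automorphic forms on a definite quaternion algebra (classical by definition), while the ordinary stratum contributes the rigid/de Rham cohomology of the Igusa variety, identified directly with classical forms since that Igusa variety is essentially $0$-dimensional. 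Classicality thus drops out of the stratification, with no appeal to a small-slope criterion.

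=== END REVIEW ===
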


Note that $\widehat{H}^1(K^p, E)$ is an admissible representation of $\mathrm{GL}_2(\mathbb{Q}_p)$ by \cite[Theorem 0.1]{admissible}. Thus the $\rho$-isotypic subspace $\widehat{H}^1(K^p, E)[\rho]$ of $\widehat{H}^1(K^p, E)$ is also admissible and the locally analytic part $\widehat{H}^1(K^p, E)^{\mathrm{la}}[\rho]$ is nonzero by \cite[Theorem 7.1]{ST}. (See Theorem \ref{density of locally analytic vectors}.) Pan studied $\widehat{H}^1(K^p, E)^{\mathrm{la}}$ to prove Theorem \ref{Pan classicality}.

One of the most fundamental ingredients to study $\widehat{H}^1(K^p, E)^{\mathrm{la}}$ is geometric Sen theory, which was initiated by Pan \cite{PanI} and was generalized to general smooth rigid spaces by \cite{Cam}. We recall applications of geometric Sen theory used in \cite{PanI}.

Let $\mathcal{X}_{K, \mathbb{Q}_p}$ be the adic space over $\mathbb{Q}_p$ corresponding to the scheme $X_{K, \mathbb{Q}_p}:=X_K \times_{\mathbb{Q}} \mathbb{Q}_p$, $C := \widehat{\overline{\mathbb{Q}}}_p$ and $\mathcal{X}_{K}:=\mathcal{X}_{K, \mathbb{Q}_p} \times_{\mathbb{Q}_p} C$. Then by \cite{Torsion}, there exists a unique perfectoid space $\mathcal{X}_{K^p}$ over $C$ such that $\mathcal{X}_{K^p} \sim \varprojlim_{K_p} \mathcal{X}_{K^pK_p}$. (See Theorem \ref{sim} for the precise definition of $\sim$.) Moreover, by using the relative Hodge-Tate filtration $0 \rightarrow \omega_{\mathcal{X}_{K^p}}^{-1}(1) \rightarrow \mathbb{Q}_p^{\oplus 2} \otimes_{\mathbb{Q}_p} \mathcal{O}_{\mathcal{X}_{K^p}} \rightarrow \omega_{\mathcal{X}_{K^p}} \rightarrow 0$, we obtain the Hodge-Tate period map $\pi_{\mathrm{HT}} : \mathcal{X}_{K^p} \rightarrow \Fl:=\mathbb{P}^{1, \mathrm{ad}}_{C}$ \footnote{$\mathbb{P}^{1, \mathrm{ad}}_{C}$ denotes the adic space over $C$ corresponding to $\mathbb{P}^{1}_{C}$.}, which was proved to be ``affine'' in \cite{Torsion}. (See Theorem \ref{affine} for the precise meaning.) Let $\mathcal{O}_{K^p} := \pi_{\mathrm{HT} *}\mathcal{O}_{\mathcal{X}_{K^p}}$, $\omega_{K^p}:=\pi_{\mathrm{HT} *}\omega_{\mathcal{X}_{K^p}}$. Moreover, let $\mathcal{O}_{K^p}^{\mathrm{la}}$ denote the subsheaf of $\mathcal{O}_{K^p}$ defined by $\mathcal{O}_{K^p}^{\mathrm{la}}(U) := \mathcal{O}_{K^p}(U)^{\mathrm{la}}$ for any quasicompact open subset $U$ of $\Fl$. Here, $\mathcal{O}_{K^p}(U)^{\mathrm{la}}$ denotes the subspace of locally analytic vectors in $\mathcal{O}_{K^p}(U)$ with respect to the action of an open compact subgroup of $\mathrm{GL}_2(\mathbb{Q}_p)$ stabilizing $U$.

As a consequence of geometric Sen theory, we get the following result.

\begin{thm} \cite[Theorem 4.4.6, 4.2.7]{PanII} \label{fundamental lemma}

1 \ There exists a canonical $G_{\mathbb{Q}_p} \times \mathrm{GL}_2(\mathbb{Q}_p) \times \mathbb{T}^S$-equivariant isomorphism $\widehat{H}^1(K^p, \mathbb{Q}_p)^{\mathrm{la}} \widehat{\otimes}_{\mathbb{Q}_p} C \cong H^1(\Fl, \mathcal{O}_{K^p}^{\mathrm{la}}).$

2 \ Let $\mathfrak{n}^0$ be the $\mathrm{GL}_{2, C}$-equivariant line bundle on $\Fl$ defined by $\mathfrak{n} := \{ (a_{i,j})_{i, j = 1,2} \mid a_{i,j} \in C, a_{1,1} = a_{2, 2} = a_{2, 1} = 0 \} \subset \mathfrak{gl}_2(C)$. (See definition \ref{nilpotent} for the precise definition of $\mathfrak{n}^0$) Then the action of $\mathfrak{n}^0$ on $\mathcal{O}_{K^p}^{\mathrm{la}}$ induced by the embedding $\mathfrak{n}^0 \hookrightarrow \mathfrak{gl}_2(\mathbb{Q}_p) \otimes_{\mathbb{Q}_p} \mathcal{O}_{\Fl}$ is trivial.    

\end{thm}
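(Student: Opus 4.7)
The plan is to deduce part 1 from a three-step reduction (primitive comparison, affinoidness of $\pi_{\mathrm{HT}}$, exactness of the locally-analytic-vectors functor) and to deduce part 2 from the geometric Sen theory of the tower $\mathcal{X}_{K^p}\to\mathcal{X}_K$. For the isomorphism in part 1, I would first apply Scholze's primitive comparison theorem at finite level $K_p$ and pass to the limit over $K_p$; combined with the construction of the perfectoid $\mathcal{X}_{K^p}$ in \cite{Torsion}, this yields a $G_{\mathbb{Q}_p}\times\mathrm{GL}_2(\mathbb{Q}_p)\times\mathbb{T}^S$-equivariant isomorphism $\widehat{H}^1(K^p,\mathbb{Q}_p)\widehat{\otimes}_{\mathbb{Q}_p}C\cong H^1(\mathcal{X}_{K^p},\mathcal{O}_{\mathcal{X}_{K^p}})$. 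The affinoidness of $\pi_{\mathrm{HT}}$ (Theorem \ref{affine}) gives $R^i\pi_{\mathrm{HT}*}\mathcal{O}_{\mathcal{X}_{K^p}}=0$ for $i>0$, so the Leray spectral sequence identifies this with $H^1(\Fl,\mathcal{O}_{K^p})$. To descend to locally analytic vectors, I would choose a finite cover $\{U_i\}$ of $\Fl$ by quasi-compact opens each stabilized by a single compact open $K_p\subset\mathrm{GL}_2(\mathbb{Q}_p)$; the \v{C}ech complex $C^\bullet(\{U_i\},\mathcal{O}_{K^p})$ then computes $H^1(\Fl,\mathcal{O}_{K^p})$, and each term is admissible as a $K_p$-Banach representation by the admissibility of $\widehat{H}^1(K^p,\mathbb{Q}_p)$ (\cite{admissible}). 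By \cite{ST}, $(-)^{\mathrm{la}}$ is exact on admissibles, so applying it termwise produces the \v{C}ech complex for $\mathcal{O}_{K^p}^{\mathrm{la}}$ (by the very definition of the latter), giving $H^1(\Fl,\mathcal{O}_{K^p}^{\mathrm{la}})$ on one side and $\widehat{H}^1(K^p,\mathbb{Q}_p)^{\mathrm{la}}\widehat{\otimes}_{\mathbb{Q}_p}C$ on the other.

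For part 2, $\mathcal{O}_{\Fl}$-linearly extending the differentiated $\mathrm{GL}_2(\mathbb{Q}_p)$-action gives a map $\mathfrak{gl}_2(\mathbb{Q}_p)\otimes_{\mathbb{Q}_p}\mathcal{O}_{\Fl}\to\mathrm{End}_{\mathcal{O}_{\Fl}}(\mathcal{O}_{K^p}^{\mathrm{la}})$, and the claim is that $\mathfrak{n}^0$ lies in its kernel. By $\mathrm{GL}_2(C)$-equivariance it suffices to check this at one point of $\Fl$, where the fiber is the strictly upper-triangular nilpotent $\mathfrak{n}$, i.e.\ the Lie algebra of the unipotent radical of the stabilizing Borel. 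The geometric Sen theory of \cite{PanI}, applied to the relative Hodge-Tate exact sequence $0\to\omega_{\mathcal{X}_{K^p}}^{-1}(1)\to\mathcal{O}_{\mathcal{X}_{K^p}}^{\oplus 2}\to\omega_{\mathcal{X}_{K^p}}\to 0$, identifies the Sen operator of the tower $\mathcal{X}_{K^p}\to\mathcal{X}_K$ with the tautological line datum on $\Fl$, and shows that $\mathfrak{n}^0$, which stabilizes that line and acts by zero on both it and its quotient, acts trivially on locally analytic sections.

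The main obstacle is the commutation of $(-)^{\mathrm{la}}$ with \v{C}ech cohomology in part 1. One must verify that every term of the chosen \v{C}ech complex is indeed admissible as a $K_p$-Banach representation (not merely of bounded $K_p$-action) and that the sheaf $U\mapsto\mathcal{O}_{K^p}(U)^{\mathrm{la}}$ is genuinely a sheaf whose \v{C}ech cohomology on $\{U_i\}$ equals its actual sheaf cohomology; this requires careful compatibility of locally analytic vectors with $p$-adic completions and is the real technical content of \cite[{\S} 4.4]{PanII}. For part 2, once Sen theory has been set up in the required perfectoid context (the key contribution of \cite{PanI}), the triviality on $\mathfrak{n}^0$ is essentially formal, but the set-up itself is highly nontrivial.
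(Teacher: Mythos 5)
The decisive step in your part 1 --- that ``each term [of the \v{C}ech complex] is admissible as a $K_p$-Banach representation by the admissibility of $\widehat{H}^1(K^p,\mathbb{Q}_p)$'', so that Schneider--Teitelbaum exactness can be applied termwise --- is false and cannot be repaired. The terms are spaces $\mathcal{O}_{\mathcal{X}_{K^p}}(\pi_{\mathrm{HT}}^{-1}(U_i))$, rings of functions on affinoid perfectoid spaces; their duals are nowhere near finitely generated over the Iwasawa algebra of $K_p$, and admissibility of the cohomology of a complex says nothing about admissibility of its terms (already $C\langle T^{\pm 1/p^{\infty}}\rangle$ as a $\mathbb{Z}_p$-representation, cf.\ Example \ref{example}, is far from admissible). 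Hence the exactness theorem for admissible representations is unavailable, and with it your entire mechanism for commuting $(-)^{\mathrm{la}}$ with the \v{C}ech complex. What actually makes the argument work --- in Pan's proof of the quoted theorem and in the analogues proved in this paper (Theorem \ref{geometric sen theory} and Theorem \ref{geometric sen theoryII}) --- is a direct proof that the higher derived locally-analytic functors vanish on these huge Banach spaces, $R^i\mathfrak{LA}(\mathcal{O}_{K^p}(U))=0$ for $i>0$ and $U$ in a suitable base: one descends along the perfectoid tower to a $\mathbb{Z}_p^d$-toric cover by almost purity, uses the Faltings extension compared with the Hodge--Tate filtration (Proposition \ref{FHT}) to produce elements on which the Lie algebra acts by explicit derivations, and then applies the formal expansion lemma (Lemma \ref{formal expansion}, Proposition \ref{mikami expansionII}). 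Only after this vanishing does a spectral-sequence argument identify $H^1(\Fl,\mathcal{O}_{K^p}^{\mathrm{la}})$ with the locally analytic vectors of completed cohomology; moreover the identification of $(\widehat{H}^1\widehat{\otimes}_{\mathbb{Q}_p}C)^{\mathrm{la}}$ with $\widehat{H}^{1,\mathrm{la}}\widehat{\otimes}_{\mathbb{Q}_p}C$ is itself not formal and requires the same Sen-theoretic/decompletion input. So the issue you defer to your ``main obstacle'' paragraph is not a verification left to the reader; it is the theorem, and the admissibility route you propose to bypass it with would fail.

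For part 2, the frame (reduce by $\mathrm{GL}_{2}(C)$-equivariance and invoke geometric Sen theory) is the right one, cf.\ Proposition \ref{sen operator}, but your final inference --- $\mathfrak{n}^0$ preserves the Hodge--Tate line and acts by zero on it and on the quotient, hence acts trivially on locally analytic sections --- is not a proof: acting by zero on the graded pieces of the filtration on $\mathbb{Q}_p^{\oplus 2}\otimes\mathcal{O}_{\mathcal{X}_{K^p}}$ does not imply acting by zero on functions. The correct statement is that the geometric Sen operator of the tower, which by geometric Sen theory annihilates locally analytic vectors of the structure sheaf, is identified via $\pi_{\mathrm{HT}}$ with a generator of $\mathfrak{n}^0$; that identification is the substance of Pan's and Rodr\'iguez Camargo's work, not a formal consequence of the filtration. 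Note finally that the paper does not reprove the quoted statement at all --- it cites Pan --- and its in-paper analogues (Theorem \ref{geometric sen theoryII}, Proposition \ref{sen operator}) are proved exactly by the geometric Sen route sketched above, not by admissibility.
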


The property 1 of the above theorem implies that we can understand $\widehat{H}^1(K^p, \mathbb{Q}_p)^{\mathrm{la}}$ by studying the sheaf $\mathcal{O}_{K^p}^{\mathrm{la}}$. Moreover, the property 2 of the above theorem implies that the locally analytic functions $\mathcal{O}_{K^p}^{\mathrm{la}}$ on $\mathcal{X}_{K^p}$ satisfies a certain differential equation \footnote{A generator of $\mathfrak{n}^0$ is called a geometric Sen operator.} and actually by using this, we can get a very explicit description of $\mathcal{O}_{K^p}^{\mathrm{la}}$. We recall that in the following.

Let $U$ be a rational open subset of $\{ (x, 1) \mid || x || \le 1 \} \subset \Fl$. Then $\pi_{\mathrm{HT}}^{-1}(U)$ is an affinoid perfectoid open of $\mathcal{X}_{K^p}$, which is equal to $\pi_{K_p}^{-1}(V_{K_p})$ for some $K_p$ and affinoid open $V_{K_p}$ of $\mathcal{X}_{K^pK_p}$, where $\pi_{K_p} : \mathcal{X}_{K^p} \rightarrow \mathcal{X}_{K^pK_p}$ denotes the canonical projection. Let $x \in \mathcal{O}_{\Fl}(U)$ be the coordinate function on $U$, $e_1$ denote the image of $\begin{pmatrix}
    1 \\ 
    0 
    \end{pmatrix}$ via the map $\mathbb{Q}_p^{\oplus 2} \rightarrow \omega_{K^p}(U)$ induced by the Hodge-Tate filtration and $t : \mathcal{X}_{K^p} \rightarrow \mathrm{Isom}(\mathbb{Z}_p, \mathbb{Z}_p(1))$ be the similitude factor. Note that $e_1$ is a generator of $\omega_{K^p}(U)$ and we regard $t$ as an element of $\Gamma(\Fl, \mathcal{O}_{K^p})^{\times}$ by fixing an isomorphism $\mathrm{Isom}(\mathbb{Z}_p, \mathbb{Z}_p(1)) \cong \mathbb{Z}_p^{\times}$. Assume $K_p = G_m := I_2 + p^m\mathrm{M}_2(\mathbb{Z}_p)$ for $m \ge 2$. Take an increasing sequence of integers $r(m+1) < r(m+2) < \cdots < r(n) < \cdots$ and $x_n, t_n \in \mathcal{O}_{\mathcal{X}_{K^pG_{r(n)}}}(V_{G_{r(n)}})$, $e_{1,n} \in \omega_{\mathcal{X}_{K^pG_{r(n)}}}(V_{G_{r(n)}})$ satisfying $|| x - x_n ||, || \frac{e_{1,n}}{e_1} - 1 ||, || t - t_n || \le p^{-n-1}$. Moreover, we may assume $|| x - x_n || = || x - x_n ||_{G_{r(n)}}, || \frac{e_{1,n}}{e_1} - 1 || = || \frac{e_{1,n}}{e_1} - 1 ||_{G_{r(n)}}, || t - t_n || = || t - t_n ||_{G_{r(n)}}$ by Lemma \ref{norm}. Let $V_{n}$ be the pullback of $V_{G_m} = V_{K_p}$ to $\mathcal{X}_{K^pG_n}$

\begin{thm}\cite[Theorem 4.3.9]{PanII}\label{mikami expansion}

For any $m$, there exists a positive integer $N$ such that for any $n \ge N$, any $G_m$-analytic function $f$ on $U$ (i.e., an element of $\mathcal{O}_{K^p}(U)^{G_m-\mathrm{an}}$) can be uniquely expressed as $f = \sum_{i, j, k} a_{i, j, k} (x-x_n)^i(\mathrm{log}(\frac{e_1}{e_{1, n}}))^j(\mathrm{log}(\frac{t}{t_n}))^k$, where $a_{i, j, k}$ are elements of $\mathcal{O}_{\mathcal{X}_{K^pG_n}}(V_{G_n})$ and satisfy $\mathrm{sup}_{i, j, k} || a_{i,j,k} ||p^{-(i+j+k)n} < \infty$.

\end{thm}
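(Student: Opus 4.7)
The plan is to identify the triple $(x-x_n,\,\log(e_1/e_{1,n}),\,\log(t/t_n))$ as a system of logarithmic coordinates on the pro-\'etale cover $\pi_{\mathrm{HT}}^{-1}(U) \to V_{G_{r(n)}}$, dual to a rank-$3$ complement of $\mathfrak{n}^0$ in $\mathfrak{gl}_2(\mathbb{Q}_p)\otimes_{\mathbb{Q}_p}\mathcal{O}_{\Fl}|_U$, and then run the standard Taylor expansion for $G_m$-analytic vectors in these coordinates.

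First, I would use Theorem \ref{fundamental lemma}(2) to cut the four Lie-algebra directions in $\mathfrak{gl}_2(\mathbb{Q}_p)$ down to three: since $\mathfrak{n}^0$ acts trivially on $\mathcal{O}_{K^p}^{\mathrm{la}}$, every $G_m$-analytic section depends infinitesimally on only three directions. I would pick an explicit complement to $\mathfrak{n}^0$ — the span of the strict-lower nilpotent, the trace-zero diagonal, and the scalar element — and compute the action of each on the functions $x$, $e_1$, $t$ directly from the definitions of $\pi_{\mathrm{HT}}$, the Hodge-Tate filtration, and the similitude character. The outcome is that the strict-lower direction moves $x$ to first order, the diagonal rescales $e_1$, and the scalar rescales $t$, so the $3\times 3$ Jacobian of $(x-x_n,\,\log(e_1/e_{1,n}),\,\log(t/t_n))$ against this Lie-algebra complement is invertible and the three functions form a genuine analytic coordinate system transverse to the fibers of $\pi_{\mathrm{HT}}$.

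Next, for any $f\in\mathcal{O}_{K^p}(U)^{G_m\text{-an}}$, the orbit map $g\mapsto g\cdot f$ on $G_m$ is analytic and equals its Taylor expansion in exponential coordinates near $1\in G_m$. Using the three coordinates from Step 1 (with the $\mathfrak{n}^0$-direction dropped), this exponential Taylor series reorganizes into a formal expansion $f=\sum_{i,j,k} a_{i,j,k}(x-x_n)^i\log(e_1/e_{1,n})^j\log(t/t_n)^k$ with coefficients $a_{i,j,k}$ obtained as iterated derivatives of $f$ along the three Lie-algebra vectors, normalized by $1/(i!\,j!\,k!)$; uniqueness is then clear from the algebraic independence of the coordinates. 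To put the coefficients at the right level I would show that each $a_{i,j,k}$ is invariant under the kernel of $G_m\twoheadrightarrow G_n$ using the commutativity of these derivative operators with the kernel action, and then apply pro-\'etale descent along $\pi_n$ to conclude $a_{i,j,k}\in\mathcal{O}_{\mathcal{X}_{K^pG_n}}(V_{G_n})$.

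The main obstacle is the combined convergence and change-of-variables step. The norm bounds $\|x-x_n\|,\|e_1/e_{1,n}-1\|,\|t/t_n-1\|\le p^{-n-1}$ control $\|\log(e_1/e_{1,n})\|$ and $\|\log(t/t_n)\|$, and the $G_m$-analyticity of $f$ bounds the $(i,j,k)$-th iterated derivative in the exponential chart; combining the two estimates should yield $\sup_{i,j,k}\|a_{i,j,k}\|p^{-(i+j+k)n}<\infty$ as soon as $N$ is chosen so that $r(N)$ is sufficiently larger than $m$. The truly delicate point is the precise analytic identification of the exponential chart on $G_m$ with the three geometric coordinates, uniformly on $V_{G_n}$: this is exactly where both the geometric Sen input from Theorem \ref{fundamental lemma} and the growing gap $r(n)-m$ must be used in tandem, and where the approximation hypotheses on $x_n, e_{1,n}, t_n$ (sharpened via Lemma \ref{norm}) enter decisively.
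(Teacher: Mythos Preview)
Your overall plan --- expand in three coordinate functions dual to a complement of $\mathfrak{n}^0$ --- matches the paper's, but two concrete things go wrong.

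First, you chose the wrong complement. On $U_1$ the line $\mathfrak{n}^0$ is spanned by $\begin{pmatrix} x & x^2 \\ -1 & -x \end{pmatrix}$, which lies inside the lower Borel precisely when $x=0$; so $\overline{\mathfrak{b}}=\langle u^-,h,z\rangle$ is not transverse to $\mathfrak{n}^0$ uniformly on $U$. Relatedly, $u^-\cdot x = -x^2$, not a translation, so your Jacobian degenerates at $x=0$. The paper instead uses the \emph{upper} Borel $\mathfrak{b}=\langle u^+,h,z\rangle$: one has $u^+ x = 1$, $u^+\log(e_1/e_{1,n})=u^+\log(t/t_n)=0$, $h\log(e_1/e_{1,n})=1$, $h\log(t/t_n)=0$, $z\log(t/t_n)=2$, so these three functions satisfy the hypotheses of the formal expansion principle (Lemma \ref{formal expansion}) for the solvable flag $0\subset\langle u^+\rangle\subset\langle u^+,h\rangle\subset\mathfrak{b}$, and $\mathfrak{b}$ is transverse to $\mathfrak{n}^0$ everywhere on $U_1$ since the $(2,1)$-entry of the generator of $\mathfrak{n}^0$ is $-1$.

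Second, your argument for why the coefficients $a_{i,j,k}$ lie at finite level has a gap. Iterated Lie derivatives of a $G_m$-analytic $f$ are still only $G_m$-analytic; ``commutativity with the kernel action'' does not by itself force them to be $G_n$-invariant. The paper's key principle is different: after iteratively expanding along the flag in $\mathfrak{b}$, the final coefficients are by construction annihilated by $u^+,h,z$, i.e.\ by all of $\mathfrak{b}$; since they are also annihilated by $\mathfrak{n}^0$ (Theorem \ref{fundamental lemma}(2)), and $\mathfrak{b}+\mathfrak{n}^0(U)=\mathfrak{gl}_2\otimes\mathcal{O}_{\Fl}(U)$ on $U_1$, they are annihilated by all of $\mathfrak{gl}_2$, hence smooth, hence lie in $\mathcal{O}_{\mathcal{X}_{K^pG_n}}(V_{G_n})$. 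Geometric Sen theory enters decisively at this \emph{final} step, not only to cut four directions to three at the outset. The convergence estimate then comes directly from the iterative expansion (Lemma \ref{formal expansion}) once $N$ is chosen large relative to the operator norms of $u^+,h,z$ on $G_m$-analytic vectors.
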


We briefly recall the proof of this. Actually, this follows from the following simple principle: if the action $\mathfrak{b} := \begin{pmatrix}
    * & * \\ 
    0 & * 
    \end{pmatrix}$ on $f \in \mathcal{O}_{K^p}(U)^{G_m-\mathrm{an}}$ is trivial, then $\mathfrak{gl}_2(\mathbb{Q}_p) f = 0$ because $\mathfrak{n}^0(U)$ is generated by $\begin{pmatrix}
x & x^2 \\ 
-1 & -x 
\end{pmatrix}$ and $\mathfrak{n}^0(U)f = 0$ by 2 of Theorem \ref{fundamental lemma}. Thus we obtain $f \in \mathcal{O}_{K^p}(U)^{G_m} = \mathcal{O}_{\mathcal{X}_{K^pG_m}}(V_{G_m})$. From this, we only need to consider the action of $\mathfrak{b}$ and we obtain the above expansion formula in the following way. (Let $u^{+}:=\begin{pmatrix}
    0 & 1 \\ 
    0 & 0 
    \end{pmatrix}, h := \begin{pmatrix}
        1 & 0 \\ 
        0 & -1 
        \end{pmatrix}, z := \begin{pmatrix}
            1 & 0 \\ 
            0 & 1 
            \end{pmatrix}$ and note that $u^+x = 1, u^+\mathrm{log}(\frac{e_1}{e_{1, n}}), u^+\mathrm{log}(\frac{t}{t_n}) = 0, h \ \mathrm{log}(\frac{e_1}{e_{1, n}}) = 1, h \ \mathrm{log}(\frac{t}{t_n}) = 0$ and $z \ \mathrm{log}(\frac{t}{t_n}) = 2$, see \cite[proof of {\S} 4.3.4 $\sim$ Theorem 4.3.9]{PanI} for more precise explanations and $N$ is a sufficiently large integer determined by the norm of $u^+, h, z$ on $\mathcal{O}_{K^p}(U)^{G_m-\mathrm{an}}$.)

            \begin{equation*}
                \begin{split}
                f & = \sum_{i=0}^{\infty} a_{i} (x-x_n)^i \ \mathrm{for \ some} \ a_i \in \mathcal{O}_{K^p}(U)^{G_n-\mathrm{an}, u^+ = 0} \\
                 &  = \sum_{i, j} a_{i, j} (\mathrm{log}(\frac{e_1}{e_{1, n}}))^j(x-x_n)^i \ \mathrm{for \ some} \ a_{i, j} \in \mathcal{O}_{K^p}(U)^{G_n-\mathrm{an}, u^+=0, h = 0} \\
                        & = \sum_{i, j, k} a_{i, j, k} (\mathrm{log}(\frac{t}{t_n}))^k(\mathrm{log}(\frac{e_1}{e_{1, n}}))^j(x-x_n)^i \ \mathrm{for \ some} \ a_{i, j, k} \in \mathcal{O}_{K^p}(U)^{G_n-\mathrm{an}, \mathfrak{b}} = \mathcal{O}_{\mathcal{X}_{K^pG_n}}(V_{G_n}).
                \end{split}
               \end{equation*}

\vspace{0.5 \baselineskip}

This description is very useful and in \cite{PanII}, this explicit description is frequently used to study $\widehat{H}^1(K^p, \mathbb{Q}_p)^{\mathrm{la}}$. Therefore, in order to generalize Pan's work to more general Shimura varieties, we should consider whether a similar description still holds or not. For general Shimura varieties, a generalization of Theorem \ref{fundamental lemma} was proved by \cite[Theorem 6.2.6]{LACC} and $\mathfrak{n}$ is replaced by the nilpotent subalgebra defined by the minuscule. However, the above construction gives a similar description only for Shimura varieties such that the Levi subgroup defined by the minuscule is a torus. Therefore, Shimura varieties associated with unitary (similitude) groups of rank $n \ge 3$ and Shimura curves over a totally real field $F^+ \neq \mathbb{Q}$ don't satisfy this property. On the other hand, Shimura varieties associated with unitary similitude groups $GU$ defined by unitary groups $U$ over a totally real field $F^+$ satisfying $U \times_{F^+, \tau} \mathbb{R} = U(1,1)$ for any $\tau : F^+ \hookrightarrow \mathbb{R}$ (we will write $GU(1,1)$, $U(1,1)$ for such $GU$, $U$) and Hilbert modular varieties satisfy this property. In this paper, we mainly study $GU(1, 1)$ cases. (Actually, we need technical modifications later.) Note that automorphic representations of rank 2 unitary groups basically correspond to conjugate self-dual automorphic representations of $\mathrm{GL}_{2}$ over a CM extension of $F^+$.

\vspace{0.5 \baselineskip}

In the $GU(1,1)$ case, we have the following difficulty.

Let $F_0$ be an imaginary quadratic field in which $p$ splits, $F^+/\mathbb{Q}$ be a Galois totally real field of degree $d$, $F:=F^+F_0$ and $\Phi := \mathrm{Hom}_{F_0}(F, \mathbb{C})$. We consider a unitary group $U(1, 1)/F^+$ constructed from a central division algebra over $F$ for simplicity and the Shimura variety $S_K/\mathbb{Q}$ associated with $G:=GU(1, 1)/\mathbb{Q}$, which is proper of dimension $d$. Let $\sigma$ be a cohomological cuspidal automorphic representation of $GU(\mathbb{A}_{\mathbb{Q}})$ whose base change $\chi \boxtimes \pi$ to $\mathbb{A}_E^{\times} \times \mathrm{GL}_2(\mathbb{A}_F)$ is cuspidal. (See Definition \ref{cohomological} and Theorem \ref{base changeII} for more detailed explanations.) Then by the Kottwitz conjecture for $S_K$, we have a $G_{F}$-equivariant isomorphism \begin{equation}\label{Kottwitz3}
(\varinjlim_{K}H^d_{\mathrm{\acute{e}t}}(S_{K, \overline{F}}, V_{\iota \lambda}))[\sigma^{\infty}]^{\mathrm{ss}} \cong ((r_{\iota}(\chi)|_{G_{F}} \otimes (\otimes_{\tau \in \Phi} r_{\iota}(\pi)^{\tau})^{\vee}(-d)))^{\oplus m},\end{equation} where $m$ is some positive integer and $\lambda \in \mathbb{Z} \times (\mathbb{Z}^2_+)^{\Phi}$ denotes the weight of $\sigma$, $V_{\iota \lambda}$ is the $p$-adic local system on $(S_{K})_{\mathrm{\acute{e}t}}$ defined by a weight $\iota \lambda \in \mathbb{Z} \times \mathbb{Z}^{\iota \Phi}$, $r_{\iota}(\pi)$ denotes the Galois representation corresponding to $\pi$ and $\mathrm{ss}$ denotes the semisimplification as a representation of $G_F$. (See {\S} 3.2 and 3.3 for more detailed explanations.) Actually, we can regard both sides as representations of $G_{F_0}$, but even though we do that, the representation $\otimes_{\tau \in \Phi} r_{\iota}(\pi)^{\tau}$ is not irreducible in general\footnote{For example, when $F^+$ is a real quadratic, $r_{\iota}(\pi)$ has a big image and extends to a representation of $G_{F_0}$.}. Therefore, the relation between the Galois module structure of a Hecke eigenspace in the completed cohomology of $S_K$ and the Galois representation corresponding to that eigensystem is not clear in general. In order to overcome this difficulty, we use the equivalence between the automorphy lifting theorem and the Breuil-M$\mathrm{\acute{e}}$zard conjecture. (cf. \cite{FMRI}, \cite{GK} and \cite{BM}.) Note that the Breuil-M$\mathrm{\acute{e}}$zard conjecture is a purely local conjecture. Therefore, we can use the following strategy. (In the following, we fix a local residual representation $\overline{\rho}_v$, a weight $\lambda_v$ and an inertia type $\tau_v$ for any $v \mid p$ and $R_{\overline{\rho}}^{\lambda, \tau}$ (resp. $\mathbb{T}^{\lambda, \tau}_{\mathfrak{m}}$) denotes an appropriate deformation ring (resp. Hecke algebra) defined by a global residual representation $\overline{\rho}$ (resp. non-Eisenstein ideal $\mathfrak{m}$), $\lambda = (\lambda_v)_{v \mid p}$ and $\tau = (\tau_v)_{v \mid p}$. See {\S} 7.3 for details.)

\vspace{0.5 \baselineskip}

Automorphy lifting theorem $R^{\lambda, \tau, \mathrm{red}}_{\overline{r}} \cong \mathbb{T}^{\lambda, \tau}_{\mathfrak{m}}$ for global Galois representations $\overline{r}$ such that $\overline{r}|_{G_{F_v}} = \overline{\rho}_v$ for any $v \mid p$ and $\overline{r}$ has very nice global properties such as the irreducibility of $\otimes_{\tau \in \Phi} \overline{r}^{\tau}$.

\vspace{0.5 \baselineskip}

$\Leftrightarrow$ Breuil-M$\mathrm{\acute{e}}$zard conjecture for $R^{\lambda_v, \tau_v}_{\overline{\rho}_v}$ for any $v \mid p$.

\vspace{0.5 \baselineskip}

$\Leftrightarrow$ Automorphy lifting theorem $R^{\lambda, \tau, \mathrm{red}}_{\overline{r}} \cong \mathbb{T}^{\lambda, \tau}_{\mathfrak{m}}$ for all global Galois representations $\overline{r}$ satisfying mild conditions such that $\overline{r}|_{G_{F_v}} = \overline{\rho}_v$.

\vspace{0.5 \baselineskip}

Note that Emerton's method ``big $R=T$ theorem + classicality theorem'' only implies $R^{\lambda, \tau, \mathrm{red}}_{\overline{\rho}}[\frac{1}{p}] \cong \mathbb{T}^{\lambda, \tau}_{\mathfrak{m}}[\frac{1}{p}]$, but we can deduce the Breuil-M$\mathrm{\acute{e}}$zard conjecture by combining Emerton's method and a modification of \cite{KP}, \cite{KR} and \cite{FS}. (See {\S} 7.1 and 7.3.) Note also that it is not known how to construct globalizations $\overline{r}$ of a local residual representation $\overline{\rho} : G_{F_w} \rightarrow \mathrm{GL}_2(\overline{\mathbb{F}}_p)$ having the irreducibility of $\otimes_{\tau \in \Phi} \overline{r}^{\tau}$. By using the method of \cite{Calegari}, the author only knows the method of constructing a globalization $\overline{r}$ of $\overline{\rho}$ having the irreducibility of $\otimes_{\tau \in \mathrm{Gal}(F_w/\mathbb{Q}_p)} \overline{r}^{\tau}$ for some fixed $w \mid p$. (Here, we regard $\Psi := \mathrm{Gal}(F_w/\mathbb{Q}_p)$ as a subset of $\Phi$.)

Therefore, we assume that $p$ is unramified in $F$ and basically consider the following situation and problem. (Let $\mathcal{O}$ be the ring of integers of $E$, $\mathbb{F}$ be the residue field of $\mathcal{O}$ and $\varpi$ be a uniformizer of $\mathcal{O}$. We fix $\lambda \in \mathbb{Z} \times (\mathbb{Z}^2_+)^{\Phi}$, put $d := [F_v:\mathbb{Q}_p]$ and forget the previous $d$.)

$\cdot$ Unitary group $U/F^+$ such that $U \times_{F^+, \tau} \mathbb{R} = U(1, 1)$ for $\tau \in \Psi$ and $U(0,2)$ for $\tau \notin \Psi$ and $U(F_w) = \mathrm{GL}_2(F_w)$.

$\cdot$ Partially completed cohomology of the Shimura variety $S_K/F$ associated with $GU/\mathbb{Q}$ with weight $\lambda^w \in \mathbb{Z} \times (\mathbb{Z}^2_+)^{\Phi \setminus \Psi}$ over $\mathcal{O}$.

$\widehat{H}^d(S_{K^w}, \mathcal{V}_{\lambda^{w}}) := \varprojlim_{n} \varinjlim_{K_w} H^d_{\mathrm{\acute{e}t}}(S_{K^wK_w, \overline{F}}, \mathcal{V}_{\lambda^{w}}/\varpi^n)$

$\cdot$ Conjugate self-dual Galois representation $\rho : G_F \rightarrow \mathrm{GL}_2(\mathcal{O})$ such that $\rho|_{G_{F_w}}$ is de Rham of $p$-adic Hodge type $\lambda_w \in (\mathbb{Z}^2_+)^{\Psi}$, $\rho|_{G_{F_v}}$ is Fontaine-Laffaille of weight $\lambda_v$ for any $p$-adic places $v \neq w$, $\otimes_{\tau \in \Psi} \overline{\rho}^{\tau}$ is irreducible and the non-Eisenstein ideal $\mathfrak{m}$ corresponding to $\overline{\rho}$ is decomposed generic and satisfies $\widehat{H}^d(S_{K^v}, \mathcal{V}_{\lambda^{v}})_{\mathfrak{m}} \neq 0$. (See definition \ref{non-Eisenstein} for the definition of decomposed genericity.)

Problem: Prove that there exists a conjugate self-dual cohomological cuspidal automorphic representation $\pi$ of $\mathrm{GL}_2(\mathbb{A}_F)$ such that $\rho \cong r_{\iota}(\pi)$.

As we have seen before, solving this problem for any $\rho$ as above implies the Breuil-M$\mathrm{\acute{e}}$zard conjecture for $R^{\lambda_w, \tau_w}_{\overline{\rho}|_{G_{F_w}}}$ for any inertia type $\tau_w$. Note that we can use the results of geometric Sen theory for the above ``partially'' completed cohomology $\widehat{H}^d(S_{K^w}, \mathcal{V}_{\lambda^{w}})$. In fact, an integral model of $S_{K^wK_w}$ is $\etale$ over that of $S_{{K'}^wK_w}$ for any $K^w \subset {K'}^w$ because the universal $p$-divisible groups at $v \neq w$ on an integral model $S_K$ are ind-$\etale$ by the moduli interpretation. Thus the perfectoidness of $S_{K^p}$ implies the perfectoidness of $S_{K^w}:=``\varprojlim_{K_w}S_{K^wK_w}$''. (Actually, we will use more technical results in {\S} 3. See Theorem \ref{geometric sen theory} and Remark \ref{perfectoidness}.) Note also that by the decomposed genericity of $\mathfrak{m}$ and the works \cite{CS} and \cite{kos}, we obtain the vanishing \begin{equation} \label{vanishing} H^i(S_K, \mathcal{V}_{\lambda^w}/\varpi)_{\mathfrak{m}} = 0 \end{equation} for any $i \neq d$. Thus $\widehat{H}^d(S_{K^w}, \mathcal{V}_{\lambda^w})_{\mathfrak{m}}$ is the $p$-adic completion of $\varinjlim_{K_w}H^d_{\mathrm{\acute{e}t}}(S_{K^wK_w, \overline{F}}, \mathcal{V}_{\lambda^w})_{\mathfrak{m}}$.

In the following, we discuss how to solve this problem by using the method ``big $R=T$ theorem + classicality theorem''. First, we should note that usual big $R=T$ theorems such as \cite{AP} and \cite{density} don't suffice for our purpose because usual big $R=T$ theorems only imply that there exists a ring morphism from an appropriate Hecke algebra to $\mathcal{O}$ corresponding to the given Galois representation and don't imply the existence of an eigenvector of the eigensystem in the completed cohomology. Emerton used the $p$-adic Langlands correspondence of $\mathrm{GL}_2(\mathbb{Q}_p)$ to prove this existence.\footnote{Note that as we have seen in Background, Emerton crucially used the $p$-adic Langlands correspondence of $\mathrm{GL}_2(\mathbb{Q}_p)$ to prove the classicality theorem.} (See \cite[Remark 6.4.3]{EMLG}. See also \cite[after Lemma 3.5.13]{FMRR} for a similar argument.)

\vspace{0.5 \baselineskip}

Recently, \cite{GN} introduced a new approach to big $R=T$ theorems such that we can simultaneously prove the existence of a non-zero eigenvector for any Hecke eigensystem assuming that the considered local deformation rings are formally smooth and the mod $p$ cohomology has a ``correct'' Gelfand-Kirillov dimension. Moreover, the computation of the Gelfand-Kirillov dimension was done by \cite{BGRT} and \cite{GEKI} in some Shimura curve cases and Shimura set over totally real field cases under the assumption that $\overline{\rho}|_{G_{F_w}}$ is ``sufficiently generic''. By using exactly the same strategy as theirs, we can obtain the ``correct'' Gelfand-Kirillov dimension in our situation under the same assumptions. (For some technical reasons, we use a variant $\widehat{\mathcal{A}}_U(K^w, V_{\lambda^w}(\sigma))_{\mathfrak{m}}$ of $\widehat{H}^d(S_{K^w}, V_{\lambda^w})_{\mathfrak{m}}$.) 

\begin{thm}(Big $R=T$ theorem, Gelfand-Kirillov dimension, Theorem \ref{eigenspace})\label{eigenspaceI}

Under appropriate assumptions, we have the following results. (See {\S} 7.2 for the precise notations.)

1 \ $R_{\overline{\rho}, \mathcal{D}} \ \Isom \ \mathbb{T}^S_U(K^w, \mathcal{V}_{\lambda^w}(\sigma))_{\mathfrak{m}}$.
    
2 \ $\widehat{\mathcal{A}}_U(K^w, V_{\lambda^w}(\sigma))_{\mathfrak{m}}[\varphi] \neq 0$ for any $\mathcal{O}$-morphism $\varphi : R_{\overline{\rho}, \mathcal{D}} \rightarrow \mathcal{O}$.

\end{thm}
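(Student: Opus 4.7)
The plan is to adapt the Taylor-Wiles-Kisin patching method as refined by Gee-Newton \cite{GN}, combined with the Gelfand-Kirillov dimension computations of \cite{BGRT} and \cite{GEKI}, to the setting of the ``partially'' completed cohomology $\widehat{H}^d(S_{K^w}, \mathcal{V}_{\lambda^w})_{\mathfrak{m}}$ and its variant $\widehat{\mathcal{A}}_U$. First I would run the patching construction: choose Taylor-Wiles data $(Q_n)$ whose associated tame level structures trivialize Galois deformations at the auxiliary primes, and patch the finite-level cohomology modules $H^d_{\et}(S_{K^w Q_n K_w, \overline{F}}, \mathcal{V}_{\lambda^w}/\varpi^n)_{\mathfrak{m}}$ to produce a patched module $M_\infty$ over a patched deformation ring $R_\infty \cong R^{\mathrm{loc}}[[x_1,\ldots,x_g]]$. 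The crucial input that makes patching produce a module of the expected codimension is the Caraiani-Scholze-Koshikawa-Zou vanishing \eqref{vanishing}, which upgrades from the mod $\varpi$ case to the torsion case by the long exact sequence and the decomposed genericity of $\mathfrak{m}$.

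Second, I would establish that $M_\infty$ is a finite projective $S_\infty$-module, where $S_\infty$ is the auxiliary power series ring, by computing the Gelfand-Kirillov dimension of $M_\infty / \mathfrak{a} M_\infty$ (with $\mathfrak{a}$ the patching ideal) as a module over the Iwasawa algebra $\mathcal{O}[[K_w]]$ with $K_w$ a congruence subgroup of $\mathrm{GL}_2(\mathcal{O}_{F_w})$. Here the genericity assumptions on Serre weights, $2 \le k_{\tau,1} - k_{\tau,2} \le p-5$, allow one to transport verbatim the arguments of \cite{BGRT}, \cite{GEKI}: one identifies $M_\infty / \mathfrak{a} M_\infty$ with a space of algebraic automorphic forms, then uses Scholze's mod-$p$ local-global compatibility to relate its dual to a projective envelope (in the category of smooth $\mathrm{GL}_2(\mathcal{O}_{F_w})$-representations) of an irreducible $\mathrm{GL}_2(\mathbb{F}_{p^2})$-representation associated to a Serre weight of $\overline{\rho}_{\mathfrak{m}}|_{G_{F_w}}$. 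The dimension of such a projective envelope is known to be exactly $[F_w : \mathbb{Q}_p] \cdot \dim \mathrm{GL}_2/B = 2$, giving the correct value. Combining this with the auxiliary dimensions from $S_\infty$ and $R_\infty$ and a miracle-flatness argument yields projectivity.

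Third, with $M_\infty$ projective over $S_\infty$, the big $R = T$ statement of part 1 is deduced by the usual unpatching: specializing $S_\infty$ to its residue field recovers $M_\infty / \mathfrak{a} M_\infty$ as a faithful $R_\infty / \mathfrak{a} R_\infty = R_{\overline{\rho}, \mathcal{D}}$-module, forcing the surjection $R_{\overline{\rho}, \mathcal{D}} \twoheadrightarrow \mathbb{T}^S_U(K^w, \mathcal{V}_{\lambda^w}(\sigma))_{\mathfrak{m}}$ from deformation theory to be an isomorphism. Part 2 follows from the same projectivity: $M_\infty$ is $p$-torsion free and faithful over $R_\infty$, hence $M_\infty \otimes_{R_\infty, \varphi_\infty} \mathcal{O} \neq 0$ for any lift $\varphi_\infty$ of $\varphi$ to $R_\infty$; tracing through the patching identifications (and using that the cohomological degree is concentrated by \eqref{vanishing}) exhibits this quotient inside $\widehat{\mathcal{A}}_U(K^w, V_{\lambda^w}(\sigma))_{\mathfrak{m}}[\varphi]$.

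The main obstacle is the second step, the Gelfand-Kirillov computation: one has to verify that all inputs used in \cite{BGRT} and \cite{GEKI} --- local-global compatibility at $w$, the structure of mod-$p$ representations of $\mathrm{GL}_2(\mathbb{Q}_{p^2})$ with the requisite Serre weights, the freeness of an appropriate Taylor-Wiles patched module over its support --- transfer correctly from the pure inner-form (Shimura set / Shimura curve) setting to the present $U(1,1) \times U(0,2)^{\text{rest}}$ Shimura variety with partial completion only at the place $w$. The signature condition $U \times_{F^+,\tau} \mathbb{R} = U(1,1)$ for $\tau \in \Psi$ is what makes the Hodge-Tate period map amenable, but one must simultaneously ensure that the auxiliary places $v \neq w$ behave as in the Fontaine-Laffaille situation so that their contribution to the Gelfand-Kirillov dimension is trivial, which is the reason for working with the partially completed $\widehat{H}^d(S_{K^w}, \mathcal{V}_{\lambda^w})_{\mathfrak{m}}$ rather than the fully completed cohomology.
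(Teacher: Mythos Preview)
Your broad strategy --- run Taylor--Wiles--Kisin patching in the style of \cite{GN} and reduce to a Gelfand--Kirillov dimension computation following \cite{BGRT}, \cite{GEKI} --- is exactly what the paper does. But you have misidentified the object on which the patching is performed, and this causes you to invoke machinery that plays no role in this particular theorem.

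The group $U$ appearing in Theorem~\ref{eigenspace} is the \emph{definite} unitary group constructed in \S7.2 with $\Psi = \emptyset$, so that $U(F^+ \otimes_{\mathbb{Q}} \mathbb{R}) \cong \prod_{v \mid \infty} U(0,2)$. The module $\widehat{\mathcal{A}}_U(K^w, \mathcal{V}_{\lambda^w}(\sigma))_{\mathfrak{m}}$ is the completed space of \emph{algebraic automorphic forms} on a Shimura \emph{set}, not the \'etale cohomology of the higher-dimensional Shimura variety $S_K$. Consequently the Caraiani--Scholze--Koshikawa vanishing~\eqref{vanishing} is not an input here (cohomology of a Shimura set is automatically concentrated in degree $0$), and the Hodge--Tate period map is entirely irrelevant to this step. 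The paper patches the spaces $\mathcal{A}_U(K^wK_w, \mathcal{V}_{\lambda^w}(\sigma)/\varpi^n)_{\mathfrak{m}}$, not $H^d_{\et}(S_{K^wQ_nK_w,\overline F}, \cdots)$. The reason for working with the definite group is precisely that this places one in the exact setting of \cite{GEKI} (Shimura sets over totally real fields, modulo the passage from quaternion algebras to rank-$2$ unitary groups), so that their verification of the three conditions (a), (b), (c) of \cite[Theorem~4.21]{GEKI} carries over with only bookkeeping changes; the paper spells out which inputs from \cite{wild}, \cite{EmertonGeeSavitt}, \cite{BD} are needed and why they remain valid.

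Your second and third steps, once transported to the correct object, are essentially the paper's argument: the formal smoothness of all local lifting rings at $v \in S$ (this is why the auxiliary places are chosen as they are) plus the Gelfand--Kirillov dimension being $[F_w:\mathbb{Q}_p]$ feeds into \cite[Theorem~8.15, Corollary~8.17]{GEKI} to yield both parts of the theorem. The link between $\widehat{\mathcal{A}}_U$ and the Shimura-variety cohomology $\widehat{H}^d(S_{K^p}, \cdots)_{\mathfrak{m}}$ is established separately, \emph{after} Theorem~\ref{eigenspace}, via Lemma~\ref{Hecke eigensystem} and Theorem~\ref{comparison1}.
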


In the following, we assume that there exists an eigensystem $\varphi : \mathbb{T}^S(K^w, \mathcal{V}_{\lambda^w})_{\mathfrak{m}} \rightarrow \mathcal{O}$ corresponding to $\rho$ and we have $\widehat{H}^d(S_{K^w}, V_{\lambda^w})_{\mathfrak{m}}[\varphi] \neq 0$. We will discuss how to prove the classicality\footnote{Here, we say that $\varphi$ is classical if $\varphi$ is equal to an eigensystem associated with a conjugate self-dual cohomological automorphic representation of $\mathrm{GL}_2(\mathbb{A}_F)$.} of $\varphi$ from the de Rhamness of $\rho|_{G_{F_w}}$. In \cite{PanII}, the notion ``Fontaine operator'' was used. Let's recall a property of the Fontaine operator. Let $L$ be a finite extension of $\mathbb{Q}_p$, $W$ be a finite-dimensional representation of $G_L$ over $\mathbb{Q}_p$ which is Hodge-Tate of weights $0, k$ for some $k > 0$. Let $W \otimes_{\mathbb{Q}_p} C = W_0 \oplus W_k$ be the Hodge-Tate decomposition of $W$. Then by using the work of \cite{FO}, we can construct a natural $C$-linear map $N_W : W_0 \rightarrow W_k(k)$ such that $N_W = 0$ is equivalent to the condition that $W$ is de Rham. 

In the following, we assume that the given weight $\lambda$ is trivial for simplicity. In order to use a Fontaine operator, first we need to understand ``the Hodge-Tate structure of $\widehat{H}^d(S_{K^w}, F_w)^{\mathrm{la}} \widehat{\otimes}_{F_w} C \cong H^d(\Fl, \mathcal{O}_{K^w}^{\mathrm{la}})$''. Here we use similar notations as in the modular curve case. Thus $\Fl := \prod_{\tau \in \Psi}\mathbb{P}_C^{1, \mathrm{ad}}$. This was already studied in more general situations by \cite{IC} and \cite{LACC}. By \cite{IC}, we obtain $\widehat{H}^d(S_{K^w}, F_w)_{\mathfrak{m}}[\varphi] \subset \widehat{H}^d(S_{K^w}, F_w)^{\chi}$, where $\chi : Z(U(\mathfrak{g})) \rightarrow F_w$ denotes the infinitesimal character of the trivial representation of the Lie algebra $\mathfrak{g}:= \prod_{\tau \in \Psi} \mathfrak{gl}_{2}(F_w)_{\tau}$ over $F_w$ and $Z(U(\mathfrak{g}))$ denotes the center of the universal enveloping algebra of $\mathfrak{g}$. (See Theorem \ref{infinitesimal character}.) Moreover, by using \cite{LACC}, which is a partial generalization of \cite{PanI}, we obtain a certain action $\theta_{\mathfrak{h}}$ of $\mathfrak{h} := \prod_{\tau} \mathfrak{h}_{\tau}$ on $\mathcal{O}_{K^w}^{\mathrm{la}}$, which is called the horizontal action, commutes with the action of $\mathrm{GL}_2(F_w)$ and encodes the information of the Sen operator and the action of $Z(U(\mathfrak{g}))$ on $\mathcal{O}_{K^w}^{\mathrm{la}}$. (Here $\mathfrak{h}_{\tau}$ denotes the subalgebra of $\mathfrak{gl}_2(C)_{\tau}$ consisting of diagonal matrices.) More precisely, the Sen operator on $\mathcal{O}_{K^w}^{\mathrm{la}}$ is equal to $\theta_{\mathfrak{h}}(\begin{pmatrix}
-1 & 0 \\ 
0 &  0
\end{pmatrix})$ and we can obtain the Hodge-Tate decomposition $\displaystyle \mathcal{O}_{K^w}^{\mathrm{la}, \chi} = \oplus_{I \subset \Psi} \mathcal{O}_{K^w}^{\mathrm{la}, (1,-1)_{\tau \in I}, (0,0)_{\tau \notin I} }$ and $\displaystyle H^d(\Fl, \mathcal{O}_{K^w}^{\mathrm{la}})_{\mathfrak{m}}^{\chi} = H^d(\Fl, \mathcal{O}_{K^w}^{\mathrm{la}, \chi})_{\mathfrak{m}} = \oplus_{I \subset \Psi} H^d(\Fl, \mathcal{O}_{K^w}^{\mathrm{la}, (1,-1)_{\tau \in I}, (0,0)_{\tau \notin I} })_{\mathfrak{m}}$, where $(a_{\tau}, b_{\tau})$ denotes the character of $\mathfrak{h}_{\tau}$ defined by $\begin{pmatrix}
    c & 0 \\ 
    0 &  d
    \end{pmatrix} \longmapsto a_{\tau}c + b_{\tau}d$. Note that this Hodge-Tate decomposition is compatible with that of Galois representations $r_{\iota}(\chi)|_{G_{F}} \otimes (\otimes_{\tau \in \Psi} r_{\iota}(\pi)^{\tau})$ appearing at finite levels. (See the above isomorphism $(\ref{Kottwitz3})$ or Proposition \ref{Hodge-Tate decomposition}.)

In the following, we discuss how to prove the classicality of $\varphi$ by using the Fontaine operator. By using a generalization \cite{PanII} of \cite{FO} to Galois representations on Banach spaces and LB spaces, we can construct ``the Fontaine operator''  $N_0 : \mathcal{O}_{K^w}^{\mathrm{la}, (0, 0)} \rightarrow \oplus_{\tau \in \Psi} \mathcal{O}_{K^w}^{\mathrm{la}, (1, -1)_{\tau}, (0, 0)_{\sigma \neq \tau}}(1)$ which is a generalization of the map coming from the Fontaine operator of $\otimes_{\tau \in \Psi} \rho^{\tau}$ for a 2-dimensional Hodge-Tate of weight $(0, 1)$ representation $\rho$. The de Rhamness of $\rho$ implies that $H^d(\Fl, N_0)$ is trivial on $H^d(\Fl, \mathcal{O}_{K^w}^{\mathrm{la}, (0,0)})[\varphi]$ and thus it suffices to prove the following. (See Theorem \ref{induction step} for detailed argument.)

\vspace{0.5 \baselineskip}

\begin{conj}\label{eigenspacedecomposition} $\mathrm{Ker}H^d(\Fl, N_0)_{\mathfrak{m}}$ has a generalized eigenspace decomposition by classical eigensystems. \end{conj}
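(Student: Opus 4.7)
The plan is to identify $\mathrm{Ker}\,H^d(\Fl,N_0)_{\mathfrak m}$ with a space built out of classical coherent cohomology, and then to invoke the Hecke action to get the generalized eigenspace decomposition. Since the weight $\lambda$ is assumed trivial, a vector in $H^d(\Fl,\mathcal{O}_{K^w}^{\mathrm{la},(0,0)})[\varphi]$ corresponds, via the analogue of Theorem \ref{fundamental lemma}, to a locally analytic class in $\widehat{H}^d(S_{K^w},F_w)_{\mathfrak m}[\varphi]\widehat\otimes_{F_w}C$ of Sen weight $0$ at each $\tau\in\Psi$, and for each $\tau$ the operator $N_0$ measures the failure of the $\tau$-component of $\otimes_{\tau\in\Psi}\rho^\tau$ to be de Rham. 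Because the global $\rho$ is de Rham by hypothesis, $H^d(\Fl,N_0)$ vanishes on $H^d(\Fl,\mathcal{O}_{K^w}^{\mathrm{la},(0,0)})[\varphi]$, so it suffices to show that the kernel is governed by classical forms on the level of eigensystems.

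First, I would generalize the explicit expansion of Theorem \ref{mikami expansion} to the GU(1,1) setting. Since each $\tau\in\Psi$ contributes a $\mathbb{P}^1$-factor to $\Fl$, the geometric Sen result of \cite{LACC} gives that for each $\tau$ the upper nilpotent subalgebra $\mathfrak n^{0}_\tau$ acts trivially on $\mathcal{O}_{K^w}^{\mathrm{la}}$; running the same $\mathfrak b$-reduction argument in each factor yields a multi-index expansion of any $G_m$-analytic function on a rational open $\prod_\tau U_\tau$ in terms of $(x_\tau-x_{\tau,n})$, $\log(e_{1,\tau}/e_{1,\tau,n})$, $\log(t_\tau/t_{\tau,n})$. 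The weight $(0,0)$ part is then precisely the subspace where only the $\log(t_\tau/t_{\tau,n})$-logarithms can appear nontrivially.

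Second, I would interpret the Fontaine operator $N_0$ in these coordinates as (up to a nonzero constant in each factor) the operator sending $\log(t_\tau/t_{\tau,n})$ to the generator of $\mathcal{O}_{K^w}^{\mathrm{la},(1,-1)_\tau,(0,0)_{\sigma\ne\tau}}(1)$ obtained from the appropriate Sen cocycle, in analogy with \cite[\S 4.4]{PanII}. Vanishing of $N_0$ in each $\tau$ therefore forces the local expansion to be independent of every $\log(t_\tau/t_{\tau,n})$, i.e.\ the kernel is $\mathcal{O}_{K^w}^{\mathrm{la},(0,0)}\cap\mathcal{O}_{K^w}^{\text{no log in }t}$. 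This latter sheaf is, by the same reasoning as in the modular curve case, the pullback of the structure sheaf of $S_{K^wG_n}$: locally it consists of power series in $(x_\tau-x_{\tau,n})$ and $\log(e_{1,\tau}/e_{1,\tau,n})$, which are classical sections of automorphic vector bundles (modular forms in the $\log(e_{1,\tau}/e_{1,\tau,n})^j$ direction, rigid functions in the $(x_\tau-x_{\tau,n})^i$ direction). Passing to $H^d(\Fl,-)$ and using the Eichler–Shimura / BGG comparison, together with the vanishing \eqref{vanishing} that concentrates everything in degree $d$, identifies $\mathrm{Ker}\,H^d(\Fl,N_0)_{\mathfrak m}$ with a direct limit over $K_w$ of classical coherent cohomology groups at finite level that compute classical automorphic eigensystems.

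Finally, Hecke operators away from $S$ commute with $N_0$ and with this identification, and act on each finite-level coherent cohomology through a finite-dimensional $\mathcal{O}/\varpi^n$-algebra quotient of $\mathbb T^S(K^wK_w,\mathcal V_{\lambda^w}/\varpi^n)_{\mathfrak m}$; after inverting $p$ this algebra is Artinian, so the Hecke action on $\mathrm{Ker}\,H^d(\Fl,N_0)_{\mathfrak m}$ decomposes into generalized eigenspaces, and by construction each occurring eigensystem arises from an eigenvector of $\varinjlim_{K_w}H^d_{\et}(S_{K,\overline F},V_\lambda)_{\mathfrak m}$, hence is classical. I expect the main obstacle to be the second step: namely, making the identification of $\mathrm{Ker}\,N_0$ with pulled-back classical sections \emph{sheaf-theoretically} on $\Fl$ and not merely on stalks, and controlling the Hecke-equivariance and the interplay between the $d$ different Fontaine components so that the final argument really produces an injection into the space of classical cohomology classes rather than merely a subquotient.
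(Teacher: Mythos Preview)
The statement you are attempting to prove is recorded in the paper as a \emph{conjecture}; the paper does not prove it in general, and in fact circumvents it via an inductive reduction (Theorem \ref{induction step}) together with the comparison of completed cohomologies (Corollary \ref{comparison}). So there is no ``paper's own proof'' to compare against, only a conjectural strategy. That strategy is quite different from yours: it identifies $\mathrm{Ker}\,H^d(\Fl,N_0)_{\mathfrak m}$ with $H^d(\Fl,ADR^{\Psi-\mathrm{la}}_\lambda)_{\mathfrak m}$ (Proposition \ref{degeneration}), fits this into an exact sequence with $H^d(\Fl,GDR^{\Psi-\mathrm{la}}_\lambda)$ and $H^{d+1}(\Fl,Ker)$, and then attacks the $GDR$-piece via the Newton stratification and the $Ker$-piece via induction on $d$.

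Your direct approach has a genuine gap at step~2 and step~3. You claim that the $(0,0)$-weight part of $\mathcal{O}_{K^w}^{\mathrm{la}}$ is ``the subspace where only the $\log(t_\tau/t_{\tau,n})$-logarithms can appear nontrivially'', and that $N_0$ essentially differentiates in these logarithms. Both assertions are incorrect. By Corollary \ref{citation} (the analogue of Proposition \ref{mikami expansionIII} for trivial weight), sections of $\mathcal{O}_{K^w}^{\Psi-\mathrm{la},(0,0)}$ are precisely convergent power series $\sum_i a_i\prod_\tau (x_\tau-x_{\tau,n})^{i_\tau}$ with $a_i$ at finite level---the $(0,0)$ condition already kills \emph{all} logarithmic terms, both $\log(e_{1,\tau}/e_{1,\tau,n})$ and $\log(f_\tau/f_{\tau,n})$. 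Thus your ``kernel condition'' (no log in $t$) is vacuous on $\mathcal{O}_{K^w}^{\mathrm{la},(0,0)}$ and would identify the kernel of $N_0$ with the whole sheaf, which is absurd. The actual Fontaine operator is $N_0=\oplus_\tau c_\tau\,\overline{d}_\tau\circ d_\tau$ (Theorem \ref{Fontaine}): first apply the geometric differential $d_\tau$ coming from the de Rham complex at finite level, then the $\mathcal{O}_{K^w}^{\mathrm{sm}}$-linear ``anti-holomorphic'' derivative $\overline{d}_\tau$ in the flag variable $x_\tau$. Its kernel on the sheaf level is therefore not simply a pullback of classical sections, and identifying $\mathrm{Ker}\,H^d(\Fl,N_0)_{\mathfrak m}$ with classical cohomology requires the full $GDR/ADR$ machinery and, for $d\ge 2$, the inductive argument with $H^{d+1}(\Fl,Ker)$ that the paper develops.
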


One possible strategy to calculate $\mathrm{Ker}H^d(\Fl, N_0)_{\mathfrak{m}}$ is the following.

By the description (\ref{Kottwitz3}) of Galois representations in the cohomology at finite levels, the author expects that the following sequence of maps induced by the Fontaine operator is a complex. \ $\mathcal{O}^{\mathrm{la}, (0, 0)}_{K^w} \rightarrow^{N_0} \oplus_{\tau \in \Psi} \mathcal{O}^{\mathrm{la}, (1, -1)_{\tau}, (0, 0)_{\sigma \neq \tau}}_{K^w}(1) \rightarrow \cdots \rightarrow \mathcal{O}^{\mathrm{la}, (1, -1)}_{K^w}(d)$.

(This is a generalization of the complex $\otimes_{\tau}((\rho^{\tau} \otimes_{\mathbb{Q}_p} C)_0 \rightarrow^{N_{\tau}} (\rho^{\tau} \otimes_{\mathbb{Q}_p} C)_1(1))$ for a 2-dimensional Hodge-Tate of weight $(0 ,1)_{\tau}$ representation $\rho$ of $G_{F_w}$ over $\mathbb{Q}_p$ with the Hodge-Tate decomposition $\rho^{\tau} \otimes_{\mathbb{Q}_p} C = (\rho^{\tau} \otimes_{\mathbb{Q}_p} C)_0 \oplus (\rho^{\tau} \otimes_{\mathbb{Q}_p} C)_1$ and the Fontaine operator $N_{\tau} : (\rho^{\tau} \otimes_{\mathbb{Q}_p} C)_0 \rightarrow (\rho^{\tau} \otimes_{\mathbb{Q}_p} C)_1(1)$.)

In the following, we assume that the above sequence is a complex. Actually, we don't need such a result. (See the statement after Remark \ref{artificial}.)

The key step in the calculation is comparing this complex with the following ``geometric locally analytic de Rham complex'' $GDR^{\mathrm{la}}_0$, which is a certain locally analytic extension of the usual de Rham complex at finite levels. \footnote{This observation implicitly appeared in \cite{PanII}.} (Let $\Omega_{K^w}^{1, \mathrm{sm}}$ denote the pushforward via $\pi_{\mathrm{HT}}$ of the pullback to $S_{K^w}$ of the sheaf of differential forms at finite levels. See {\S} 4.1 for the precise construction.)

$GDR^{\mathrm{la}}_{0} : \mathcal{O}^{\mathrm{la}, (0, 0)}_{K^w} \rightarrow^{d} \mathcal{O}^{\mathrm{la}, (0, 0)}_{K^w} \otimes_{\mathcal{O}_{K^w}^{\mathrm{sm}}} \Omega_{K^w}^{1, \mathrm{sm}} \rightarrow \cdots \rightarrow \mathcal{O}^{\mathrm{la}, (0, 0)}_{K^w} \otimes_{\mathcal{O}_{K^w}^{\mathrm{sm}}} \Omega_{K^w}^{d, \mathrm{sm}}$.

Note that by using a similar description of $\mathcal{O}_{K^w}^{\mathrm{la}}$ as in the modular curve case (see Propositions \ref{mikami expansionII} and \ref{mikami expansionIII} for the precise results.), we have $\mathcal{O}^{\mathrm{la}, (0, 0)}_{K^w} $``$=$''$ \mathcal{O}_{K^w}^{\mathrm{sm}} \otimes_{C} \mathcal{O}_{\Fl}$. Roughly speaking, $GDR^{\mathrm{la}}_0$ is the tensor product of the usual de Rham complex coming from finite levels and $\mathcal{O}_{\Fl}$. Note also that by the Kodaira-Spencer isomorphism $\Omega_{K^w}^{1, \mathrm{sm}} \cong \oplus_{\tau \in \Psi} \omega_{\tau, K^w}^{2, \mathrm{sm}}$, we can identify $GDR^{\mathrm{la}}_0$ as the following complex. (Here, $\omega_{\tau, K^w}^{\mathrm{sm}}$ denotes the line bundle defined by the $\tau$-component of the dual of the Lie algebra of the universal abelian scheme.)

$GDR_{0}^{\mathrm{la}} : \mathcal{O}^{\mathrm{la}, (0, 0)}_{K^w} \rightarrow^{\oplus_{\tau \in \Psi} d_{\tau}} \oplus_{\tau \in \Psi} \omega^{2, \mathrm{la}, (0, 0)}_{\tau, K^w} \rightarrow \cdots \rightarrow \otimes_{\tau \in \Psi} \omega^{2, \mathrm{la}, (0, 0)}_{\tau, K^w}$.

Then we have the following diagram.

\begin{equation}\label{commutativediagram}
\xymatrix{
    \mathcal{O}^{\mathrm{la}, (0, 0)}_{K^w} \ar[r]^d \ar[dr]^{N_0} & \oplus_{\tau \in \Psi} \omega^{2, \mathrm{la}, (0, 0)}_{\tau, K^w} \ar[r] \ar@{->>}[d]^{\oplus_{\tau} \overline{d}_{\tau} \otimes_{\mathcal{O}_{K^w}^{\mathrm{sm}}} \mathrm{id}_{\omega_{\tau, K^w}^{2, \mathrm{sm}}}} & \ar@{->>}[d] \cdots \ar[r] & \otimes_{\tau \in \Psi} \omega^{2, \mathrm{la}, (0, 0)}_{\tau, K^w} \ar@{->>}[d]^{\overline{d}_{\tau_1} \otimes_{\mathcal{O}_{K^w}^{\mathrm{sm}}} \mathrm{id}_{\omega_{\tau_1, K^w}^{2, \mathrm{sm}}}} \\
     \ & \oplus_{\tau \in \Psi} \mathcal{O}^{\mathrm{la}, (1, -1)_{\tau}, (0, 0)_{\sigma \neq \tau}}_{K^w}(1) \ar[dr] &  \ar@{->>}[d] \  & \ar@{->>}[d]^{\overline{d}_{\tau_2} \otimes_{\mathcal{O}_{K^w}^{\mathrm{sm}}} \mathrm{id}_{\omega_{\tau_2, K^w}^{2, \mathrm{sm}}}} \\
     \ & \ & \ddots \ar[dr] & \ \ar@{->>}[d]^{\overline{d}_{\tau_d} \otimes_{\mathcal{O}_{K^w}^{\mathrm{sm}}} \mathrm{id}_{\omega_{\tau_d, K^w}^{2, \mathrm{sm}}}} \vdots \\
     \ & \ & \ & \mathcal{O}^{\mathrm{la}, (1, -1)}_{K^w}(d)
    }
\end{equation}

In the above, we fix a numbering $\tau_1, \cdots, \tau_d$ of $\Psi = \{ \tau_1, \cdots, \tau_d \}$ and $\overline{d}_{\tau}$ denotes an $\mathcal{O}_{K^w}^{\mathrm{sm}}$-linear surjective extension $\mathcal{O}_{K^w}^{\mathrm{la}, (0,0)}$``$=$''$\mathcal{O}_{K^w}^{\mathrm{sm}} \otimes_{C} \mathcal{O}_{\Fl} \rightarrow \mathcal{O}_{K^w}^{\mathrm{sm}} \otimes_{C} \Omega^1_{\tau, \Fl} = \mathcal{O}_{K^w}^{\mathrm{la}, (0,0)} \otimes_{C} \omega_{\tau, \Fl}^{-2}$``$=$''$\omega_{\tau, K^w}^{-2, \mathrm{la}, (1, -1)_{\tau}, (0, 0)_{\sigma \neq \tau}}(1) $ of the derivation $\mathcal{O}_{\Fl} \rightarrow \Omega^1_{\Fl, \tau}$ along $\tau$ on $\Fl = \prod_{\tau} \mathbb{P}_C^{1, \mathrm{ad}}$. (See {\S} 4.1 for more detailed explanations.)

Then one can expect that the diagram (\ref{commutativediagram}) is commutative. In this paper, we will partially prove this by using a similar method as \cite{PanII}. 

\begin{thm}(Theorem \ref{Fontaine}) \label{compatibilityIII}

There exists $(c_{\tau})_{\tau \in \Psi} \in (F_w^{\times})^{\Psi}$ such that $N_0 = (\oplus_{\tau} c_{\tau}\overline{d}_{\tau} \otimes_{\mathcal{O}_{K^w}^{\mathrm{sm}}} \mathrm{id}_{\omega_{\tau, K^w}^{2, \mathrm{sm}}} ) \circ d$.

\end{thm}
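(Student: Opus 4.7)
The plan is to reduce the identification to a local computation on the flag variety $\Fl = \prod_{\tau \in \Psi} \mathbb{P}^{1,\mathrm{ad}}_C$, where both operators admit explicit descriptions in terms of a Mikami-type expansion of locally analytic functions (Propositions \ref{mikami expansionII} and \ref{mikami expansionIII}). Over a standard affinoid $U = \prod_\tau U_\tau$ with coordinate $x_\tau$ on $U_\tau$, an element $f \in \mathcal{O}^{\mathrm{la},(0,0)}_{K^w}(U)$ is expressed as $f = \sum_I a_I \prod_\tau (x_\tau - x_{\tau,n})^{i_\tau}$ with $a_I \in \mathcal{O}^{\mathrm{sm}}_{K^w}$, because the weight $(0,0)$ condition kills the $\log$-factors in the full Mikami expansion. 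On this description $\overline{d}_\tau$ is the formal $\partial/\partial x_\tau$ extended $\mathcal{O}^{\mathrm{sm}}$-linearly, and the right-hand side of the claimed identity applied to $f$ reads $\bigl(c_\tau \sum_I i_\tau a_I (x_\tau - x_{\tau,n})^{i_\tau - 1} \prod_{\sigma \neq \tau} (x_\sigma - x_{\sigma,n})^{i_\sigma}\bigr)_\tau$, up to identification of sheaves via Kodaira-Spencer. The goal becomes: show the same formula holds for $N_0$ for some units $c_\tau \in F_w^\times$.

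Both $N_0$ and the proposed composition are $G_{F_w} \times \mathrm{GL}_2(F_w)$-equivariant, compatible with the horizontal action $\theta_{\mathfrak{h}}$ of $\mathfrak{h}$, respect the decomposition by Hodge--Tate weights, and land in the summand with weight $(1,-1)_\tau$ on the $\tau$-factor and $(0,0)$ elsewhere, with a Tate twist. The first step is to show that $N_0$ is $\mathcal{O}^{\mathrm{sm}}_{K^w}$-linear. Smooth vectors arise from local systems on finite-level Shimura varieties and are (after base changing to $C$) sums of Hodge--Tate representations whose Fontaine operator vanishes; combining this with the compatibility of $N_0$ with the LB-space version of the Fontaine-Ouyang functor (the generalization in \cite{PanII}) and the horizontal Sen operator of \cite{LACC} should force $N_0$ to be $\mathcal{O}^{\mathrm{sm}}$-linear on the summand of weight $(0,0)$. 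Combined with the above expansion, this reduces the identity to a comparison on the "flag-variety factors" $\prod_\tau (x_\tau - x_{\tau,n})^{i_\tau}$.

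Next, decompose $N_0 = \oplus_\tau N_{0,\tau}$ according to the target. Each $N_{0,\tau}$ is $\mathcal{O}^{\mathrm{sm}}$-linear, trivial on the $\mathfrak{h}_\sigma$-weight $(0,0)$ part for $\sigma \neq \tau$, and commutes with the action on the $\sigma \neq \tau$ factors. This forces $N_{0,\tau}$ to factor through the $\tau$-component of $\Fl$, reducing the computation to a one-variable problem on $\mathbb{P}^{1,\mathrm{ad}}_C$ that is essentially the situation of Pan's theorem in the modular curve case (\cite[Theorem 4.4.6 / proof of Thm 5.3.2]{PanII}): the Fontaine operator on the $\tau$-eigenspace is computed by the $\tau$-directional Kodaira--Spencer map and, by $\mathrm{GL}_2(F_w)$-equivariance together with the explicit behavior on translates of the coordinate $x_\tau$, must be a nonzero scalar multiple of $\partial/\partial x_\tau$. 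The scalars $c_\tau$ measure the discrepancy between the normalization of $\overline{d}_\tau$ (fixed by the generator $e_{1,\tau}$ of the Hodge line bundle on $U_\tau$) and the period normalization implicit in Fontaine-Ouyang for the $\tau$-factor.

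The main obstacle is the $\mathcal{O}^{\mathrm{sm}}$-linearity and the factorization of $N_0$ into $\tau$-components, since $N_0$ is defined purely Galois-theoretically on the ambient completed cohomology, with no manifest geometric structure on $\Fl$. To bridge this gap I would exploit the fact that the Fontaine--Ouyang construction is functorial in the underlying Hodge--Tate representation and commutes with passage to isotypic components for the horizontal $\mathfrak{h}$-action, together with the product structure $\mathcal{O}^{\mathrm{la},(0,0)}_{K^w} \simeq \mathcal{O}^{\mathrm{sm}}_{K^w}\,\widehat{\otimes}_C\,\mathcal{O}_{\Fl}^{(0,0)}$ provided by the Mikami expansion; these together force $N_0$ to respect the factorization of $\Fl = \prod_\tau \mathbb{P}^{1,\mathrm{ad}}_C$ and reduce the problem to the single-factor case already handled by Pan.
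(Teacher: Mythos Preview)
Your proposal has a genuine gap at the step you yourself flag as the ``main obstacle'': the $\mathcal{O}_{K^w}^{\mathrm{sm}}$-linearity of $N_0$ and its factorisation through the $\tau$-components. The argument you sketch---that smooth vectors come from Hodge--Tate local systems on finite-level Shimura varieties, hence have vanishing Fontaine operator---only shows $N_0|_{\mathcal{O}_{K^w}^{\mathrm{sm}}}=0$. It does \emph{not} give $N_0(fg)=f\,N_0(g)$ for $f\in\mathcal{O}^{\mathrm{sm}}$ and $g$ locally analytic: $N_0$ is defined as the Sen operator on a piece of the $B^+_{\mathrm{dR}}$-lift of the completed cohomology, and carries no obvious derivation/Leibniz behaviour with respect to the ring structure on $\mathcal{O}_{K^w}^{\Psi-\mathrm{la}}$. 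Likewise, the product structure $\Fl=\prod_\tau\mathbb{P}^{1,\mathrm{ad}}_C$ is a geometric feature of the Hodge--Tate period map, whereas $N_0$ is constructed purely Galois-theoretically; ``functoriality in the Hodge--Tate representation'' does not by itself force $N_0$ to split along the $\tau$-factors. So the reduction to the one-variable modular-curve case is not justified.

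The paper never proves $\mathcal{O}^{\mathrm{sm}}$-linearity directly; it takes a completely different route through the de Rham period sheaf. It constructs an explicit section $s_k:D_{\lambda^{\Psi},K^p}^{\Psi-\mathrm{la},(0,k)_\tau,G_L}\to D_{\lambda^{\Psi},V_0}\otimes_{\mathcal{O}_{V_0}}\mathcal{O}\mathbb{B}^{+,\Psi-\mathrm{la},\chi_{\lambda_\Psi},G_L}_{\mathrm{dR},k+2}$ (using de Rham comparison and a lift $\tilde{x}_\tau\in\mathcal{O}\mathbb{B}^+_{\mathrm{dR}}$ of the coordinate $x_\tau$), and factors $N_\lambda^0=-N'_\lambda\circ(\nabla\circ s_k)$ (Proposition~\ref{composition}). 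Then $\nabla\circ s_k=(-1)^k\oplus_\tau d_\tau^\lambda$ is proved (Proposition~\ref{1st}) via a \emph{$p$-adic Legendre relation} $\nabla(\tilde{x}_\tau)\in t\,\mathcal{O}\mathbb{B}^+_{\mathrm{dR}}\otimes\Omega^1$ (Proposition~\ref{p-adic Legendre}), which is the key geometric input. Finally $N'_\lambda=\oplus_\tau c_\tau\overline{d}_\tau$ (Proposition~\ref{2nd}) is obtained by relating the Sen operator to the Casimir element of $\mathfrak{gl}_2$ via $(2\theta_{\mathrm{Sen}}+z_\tau+1)^2-1=2\Omega_\tau$ on the relevant sheaf (Lemma~\ref{relation}), and then invoking uniqueness of $\mathfrak{gl}_2$-equivariant maps between the appropriate dual Verma modules $W_{(k,0)}^\vee\to W_{(-1,k+1)}^\vee$ (Proposition~\ref{uptoscalar}). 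This machinery---period sheaves, the $p$-adic Legendre relation, and $\mathfrak{gl}_2$-representation theory---is what replaces the formal equivariance argument you propose, and it is not clear how to avoid it.
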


\begin{rem}\label{artificial}
    
    Note that in the non-parallel weight case, the existence of the $N_0$ doesn't formally follow from the work of \cite{FO}. Actually, we will prove this only when $[F_w : \mathbb{Q}_{p}] = 2$ cases. (See {\S} 5.2.3.)

\end{rem}

Theorem \ref{compatibilityIII} suffices for calculating $\mathrm{Ker}H^0(\Fl, N_0)$ and by the above surjections $\overline{d}_{\tau}$'s, there uniquely exists a complex $ADR^{\mathrm{la}}_0$ having the following form such that $\overline{d}_{\tau}$'s induce a surjective map of complexes $GDR^{\mathrm{la}}_0 \twoheadrightarrow ADR^{\mathrm{la}}_0$. (See Lemma \ref{construction of arithmetic} for the proof. We call $ADR^{\mathrm{la}}_0$ an ``arithmetic locally analytic de Rham complex''.)

$ADR^{\mathrm{la}}_{0} : \mathcal{O}^{\mathrm{la}, (0, 0)}_{K^w} \rightarrow^{N_0'} \oplus_{\tau \in \Psi} \mathcal{O}^{\mathrm{la}, (1, -1)_{\tau}, (0, 0)_{\sigma \neq \tau}}_{K^w}(1) \rightarrow \cdots \rightarrow \mathcal{O}^{\mathrm{la}, (1, -1)}_{K^w}(d)$.

Note that by the result of Caraiani-Scholze \cite{CS} as we have seen in the above (\ref{vanishing}), we can deduce the vanishing of $H^i(\Fl, \mathcal{O}_{K^w}^{\mathrm{la}, (a_{\tau}, b_{\tau})_{\tau}})_{\mathfrak{m}}$ for any $i \neq d$ and any $(a_\tau , b_{\tau})_{\tau} : \mathfrak{h} \rightarrow C$. Thus we obtain $\mathrm{Ker}H^d(\Fl, N_0)_{\mathfrak{m}} = \mathrm{Ker}H^d(\Fl, N_0')_{\mathfrak{m}} \cong H^d(\Fl, ADR^{\mathrm{la}}_0)_{\mathfrak{m}}$ by the degeneration of the following spectral sequence.

$E_1^{p, q} := \oplus_{I \subset \Phi, |I|=p} H^q(\Fl, \mathcal{O}_{K^w}^{\mathrm{la}, (1, -1)_{\tau \in I}, (0,0)_{\tau \notin I}}(p))_{\mathfrak{m}} \Rightarrow H^{p+q}(\Fl, ADR^{\mathrm{la}}_0)_{\mathfrak{m}}$.

 By putting $Ker := \mathrm{Ker}(GDR^{\mathrm{la}}_0 \twoheadrightarrow ADR^{\mathrm{la}}_0)$, we have an exact sequence $$\rightarrow H^d(\Fl, GDR^{\mathrm{la}}_0) \rightarrow H^d(\Fl, ADR^{\mathrm{la}}_0) \rightarrow H^{d+1}(\Fl, Ker).$$ Therefore, in order to prove Conjecture \ref{eigenspacedecomposition}, it suffices to prove that $H^d(\Fl, GDR^{\mathrm{la}}_0)$ and $H^{d+1}(\Fl, Ker)$ have generalized Hecke eigenspace decompositions by classical eigensystems.

\vspace{0.5 \baselineskip}

First, we consider $H^d(\Fl, GDR^{\mathrm{la}}_0)$. Inspired by \cite[{\S} 5]{PanII}, we propose the following conjecture.

\begin{conj}\label{classicalityII}

$H^d(\Fl, GDR^{\mathrm{la}}_0)$ has a generalized Hecke eigenspace decomposition by classical eigensystems.

\end{conj}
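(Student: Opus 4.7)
The plan is to mimic the strategy of \cite[{\S} 5]{PanII}: identify $H^d(\Fl, GDR^{\mathrm{la}}_0)$ with the infinite-level algebraic de Rham cohomology of $S_{K^w}$ (with trivial coefficients, in the parallel weight $0$ case under consideration), and then invoke the de Rham--\'etale comparison to conclude that every generalized Hecke eigensystem is classical.

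First, I would establish a local description of $\mathcal{O}_{K^w}^{\mathrm{la},(0,0)}$ parallel to Theorem \ref{mikami expansion}, now indexed by each $\tau\in\Psi$. A locally analytic function admits a convergent expansion whose monomials are of three kinds: polynomial directions $(x-x_n)_\tau$ coming from the coordinates on $\Fl$, and logarithmic directions $\log(e_1/e_{1,n})_\tau$ and $\log(t/t_n)_\tau$ which generate nontrivial characters of $\mathfrak{h}$. The horizontal action $\theta_{\mathfrak{h}}$ of \cite{LACC} weight-grades this expansion, and the Sen operator coincides with $\theta_{\mathfrak{h}}(\mathrm{diag}(-1,0))$; cutting out the $(0,0)$-isotypic piece therefore forces every log-exponent to vanish and yields, locally on a rational open $U\subset \Fl$,
\[
\mathcal{O}_{K^w}^{\mathrm{la},(0,0)}|_U \;\cong\; \mathcal{O}_{K^w}^{\mathrm{sm}}(U)\,\widehat{\otimes}_{C}\,\mathcal{O}_{\Fl}(U).
\]

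Next I would promote this to a statement about complexes of sheaves on $\Fl$. The differentials in $GDR^{\mathrm{la}}_0$ come from the de Rham differential on $\Omega^{\bullet,\mathrm{sm}}_{K^w}$, base-changed along the $\mathcal{O}_{K^w}^{\mathrm{sm}}$-linear structure. Via the Kodaira--Spencer isomorphism $\Omega^{1,\mathrm{sm}}_{K^w}\cong\oplus_{\tau\in\Psi}\omega^{2,\mathrm{sm}}_{\tau,K^w}$ and the local splitting above, one verifies that these differentials preserve the $(0,0)$-isotypic subsheaf and act trivially on the $\mathcal{O}_{\Fl}$-factor, hence globalize to a Hecke-equivariant quasi-isomorphism
\[
GDR^{\mathrm{la}}_0 \;\simeq\; \Omega^{\bullet,\mathrm{sm}}_{K^w}\,\widehat{\otimes}_{C}\,\mathcal{O}_{\Fl}
\]
on $\Fl$. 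The main technical obstacle is precisely this sheaf-theoretic globalization, since locally analytic sheaves are not literal tensor products; one must track the $(0,0)$-isotypic condition, check the compatibility of Taylor data on overlaps, and control topological completions in the same careful manner as \cite[{\S} 5]{PanII} in the modular curve case.

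Once the quasi-isomorphism is in place, a Leray/K\"unneth argument along $\pi_{\mathrm{HT}}\colon \mathcal{X}_{K^w}\to\Fl$ computes the hypercohomology. Since $\Fl=\prod_{\tau\in\Psi}\mathbb{P}^{1,\mathrm{ad}}_{C}$ is proper with $H^*(\Fl,\mathcal{O}_{\Fl})=C$ concentrated in degree $0$, the resulting spectral sequence degenerates to give, $\mathbb{T}^S$-equivariantly,
\[
H^d(\Fl, GDR^{\mathrm{la}}_0) \;\cong\; \bigl(\varinjlim_{K_w} H^d_{\mathrm{dR}}(S_{K^wK_w}/F)\bigr)\,\widehat{\otimes}_{F}\,C,
\]
with vanishing in other degrees after localizing at $\mathfrak{m}$ by (\ref{vanishing}). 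The Scholze--Faltings de Rham/\'etale comparison then identifies the right-hand side Hecke-equivariantly with a twist of $\varinjlim_{K_w} H^d_{\et}(S_{K^wK_w,\overline{F}}, V_{\lambda^w})_{\mathfrak{m}}\,\widehat{\otimes}\,C$. By definition each generalized eigensystem of the latter is a classical eigensystem in the sense of the classicality theorem, proving Conjecture \ref{classicalityII}.
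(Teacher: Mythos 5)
Your proposal hinges on the hypercohomology computation
\[
H^d(\Fl, GDR^{\mathrm{la}}_0) \;\cong\; \bigl(\varinjlim_{K_w} H^d_{\mathrm{dR}}(S_{K^wK_w}/F)\bigr)\,\widehat{\otimes}_{F}\,C,
\]
and this identification is false. The heuristic $\mathcal{O}^{\mathrm{la},(0,0)}_{K^w}$ ``$=$'' $\mathcal{O}_{K^w}^{\mathrm{sm}}\widehat{\otimes}_C \mathcal{O}_{\Fl}$ from the paper's introduction does not give you a K\"unneth situation. Both factors are sheaves on the \emph{same} rigid space $\Fl$: the sheaf $\Omega^{\bullet,\mathrm{sm}}_{K^w}$ is the pushforward along $\pi_{\mathrm{HT}}$ of the smooth de Rham complex on the perfectoid Shimura variety, so its hypercohomology over $\Fl$ already realizes the full $d$-dimensional de Rham cohomology — it is not the cohomology over a point. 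The factor $\mathcal{O}_{\Fl}$ is \emph{not} pulled back along a projection to a separate factor of a product, so there is no spectral sequence that allows you to peel off $H^*(\Fl,\mathcal{O}_{\Fl})=C$ and be left with the smooth colimit. Put sheaf-theoretically, the Leray argument for affine $\pi_{\mathrm{HT}}$ reduces you to $H^*(\aS_{K^w}, GDR^{\mathrm{la}}_{0,\aS_{K^w}})$, but $GDR^{\mathrm{la}}_{0,\aS_{K^w}}$ carries the extra $\pi_{\mathrm{HT}}^{*}\mathcal{O}_{\Fl}$-thickening in the $\Fl$-direction, which is nontrivial and cannot be discarded.

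The failure is already visible representation-theoretically: the right-hand side of your claimed identification is a \emph{smooth} representation of $\mathrm{GL}_2(F_w)$, while $H^d(\Fl, GDR^{\mathrm{la}}_0)$ is a genuinely locally analytic, non-smooth representation. Even in the modular curve case of \cite[{\S}5]{PanII}, the analogous cohomology is much larger than de Rham cohomology: it decomposes along Newton strata into a locally analytic \emph{induction} from a Borel over the $\mu$-ordinary locus, and a Rapoport--Zink/Lubin--Tate contribution over the basic locus. This is exactly why the paper refines Conjecture \ref{classicalityII} to the stratified statement Conjecture \ref{classicality}, and proves it (when $[F_w:\mathbb{Q}_p]\le 2$) stratum by stratum: Theorem \ref{basic cohomology} together with basic uniformization and RZ duality handles the basic locus (Corollary \ref{basic classicality}); Theorem \ref{ordinary cohomology} identifies the contribution of the $\mu$-ordinary stratum with $\mathrm{Ind}^{G(\mathbb{Q}_p)}_{P_{S_0}}$ of the overconvergent de Rham cohomology of the canonical locus, and one then proves classicality of that cohomology via the de Rham duality of Rapoport--Zink spaces (Theorem \ref{de Rham duality}, Proposition \ref{classicality of Igusa}). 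The conjecture does not assert, and it would be false to expect, that the cohomology \emph{equals} infinite-level de Rham cohomology; it only asserts that the generalized Hecke eigensystems arising are classical, and to see this you must first understand the stratum-by-stratum structure of the complex, which your proposal skips.
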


By \cite[Corollary 3.5.9]{CS}, $\Fl$ has a ``Newton stratification'' $\Fl = \sqcup_{b \in B(G_{\mathbb{Q}_p}, \mu^{-1})} \Fl^b$, which is compatible with the usual Newton stratification on $S_{K^w}$ via $\pi_{\mathrm{HT}}$, where $G_{\mathbb{Q}_p}:=\mathrm{Res}_{F_w/\mathbb{Q}_p}\mathrm{GL}_{2, F_w}$ and $\mu : \mathbb{G}_{m, \mathbb{Q}_p} \rightarrow G_{\mathbb{Q}_p}$ is the minuscule induced by the minuscule of $S_K$. (See {\S} 4.3 for more detailed explanations.) By using excision sequences, we can reduce Conjecture \ref{classicalityII} to the following more precise conjecture. (Let $j_b : \Fl^b \hookrightarrow \Fl$ be the natural immersion.)

\begin{conj} \label{classicality}

For any $b \in B(G_{\mathbb{Q}_p}, \mu^{-1})$, the cohomology group $H^d(\Fl, j_{b !}j_b^*GDR^{\mathrm{la}}_0)$ has a generalized Hecke eigenspace decomposition by classical eigensystems.

\end{conj}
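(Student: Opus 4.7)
The plan is to handle each Newton stratum $\Fl^b$ separately, paralleling Pan's strategy in \cite[\S 5]{PanII} for the modular curve. The product structure $\Fl = \prod_{\tau \in \Psi} \mathbb{P}^{1,\mathrm{ad}}_C$ together with the compatibility of Newton stratifications from \cite{CS} gives a factorization $\Fl^b = \prod_{\tau \in \Psi} \Fl^{b_\tau}_\tau$, where each factor $\Fl^{b_\tau}_\tau \subset \mathbb{P}^{1,\mathrm{ad}}_C$ is either the ordinary open locus or the single supersingular point on the $\tau$-th factor. Correspondingly, $\pi_{\mathrm{HT}}^{-1}(\Fl^b)$ is described by Caraiani--Scholze as a perfectoid Igusa tower over the matching Newton stratum of $S_{K^w}$.

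First I would identify $j_b^* GDR^{\mathrm{la}}_0$ as a sheaf of complexes on $\Fl^b$. Combining the Kodaira--Spencer isomorphism $\Omega^{1,\mathrm{sm}}_{K^w} \cong \bigoplus_{\tau \in \Psi} \omega^{2,\mathrm{sm}}_{\tau,K^w}$, the explicit expansions of $\mathcal{O}_{K^w}^{\mathrm{la}}$ given by Propositions \ref{mikami expansionII}--\ref{mikami expansionIII}, and the geometric description above, I expect $j_b^* GDR^{\mathrm{la}}_0$ to become the tensor product of the sheaf of locally analytic functions on $\Fl^b$ with the pullback of the classical algebraic de Rham complex on the matching Newton stratum of a finite-level Shimura variety, interpolated along the Igusa tower. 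Then $H^d(\Fl, j_{b!}j_b^* GDR^{\mathrm{la}}_0)_{\mathfrak{m}}$ is computed by a K\"unneth-style spectral sequence whose $E_1$ terms are $\mathfrak{m}$-localized cohomology groups of these Newton strata; the vanishing from \cite{CS} and \cite{kos} already invoked in (\ref{vanishing}) should degenerate it onto the middle degree.

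In the resulting picture, factors on which $\Fl^{b_\tau}_\tau$ is supersingular contribute classically, since the corresponding Igusa fiber is essentially profinite and the action of the $\tau$-component of $\mathrm{GL}_2(F_w)$ factors through a compact quotient, so all classes automatically come from finite-level classical automorphic cohomology. Factors on which $\Fl^{b_\tau}_\tau$ is ordinary contribute overconvergent classes on the ordinary locus of $S_{K^w}$, and classicality must be extracted by a Coleman--Kassaei--Pilloni type small-slope criterion: after localization at $\mathfrak{m}$ and restriction to the $\varphi$-eigenspace, the Hodge--Tate regularity and de Rhamness of $\rho|_{G_{F_w}}$ should translate into small-slope bounds on the partial $U_p$-operators indexed by the ordinary $\tau$, permitting analytic continuation onto the whole of $S_{K^w}$ and hence classicality of the resulting eigensystems.

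The main obstacle is precisely this last step: showing that de Rhamness with Hodge--Tate regularity forces the small-slope condition simultaneously at every $\tau \in \Psi$ for which $\Fl^{b_\tau}_\tau$ is ordinary, and then executing the multi-variable analytic continuation on the product of those ordinary factors. In Pan's modular-curve setting $|\Psi|=1$, so a single application of Coleman's classicality theorem suffices; here one needs a product version that is compatible with the decomposition of the partial Hecke operators at $w$ into $|\Psi|$ commuting pieces, and this is the technical heart of the argument.
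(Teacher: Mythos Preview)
First, a framing point: in the paper this statement is a \emph{conjecture}, only established when $[F_w:\mathbb{Q}_p]\le 2$ (Theorem~\ref{classicality of geometric}). So any proof proposal should either target that case or explain what new input handles general $d$. Your proposal does neither.

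More seriously, there is a genuine gap in your approach to the ordinary factors. The statement of Conjecture~\ref{classicality} is a structural assertion about \emph{all} Hecke eigensystems occurring in $H^d(\Fl, j_{b!}j_b^*GDR^{\mathrm{la}}_0)$; no particular $\varphi$ is fixed, and no de Rhamness or Hodge--Tate regularity of any $\rho_\varphi$ is assumed. (The de Rhamness hypothesis enters the overall classicality theorem only later, through the Fontaine operator and Conjecture~\ref{key diagram}.) So your proposed step ``Hodge--Tate regularity and de Rhamness of $\rho|_{G_{F_w}}$ should translate into small-slope bounds on the partial $U_p$-operators'' has no hypothesis to feed on: an arbitrary eigensystem appearing in the overconvergent de Rham cohomology of the ordinary locus need not be of finite, let alone small, slope. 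A Coleman--Kassaei--Pilloni argument therefore cannot establish the conjecture as stated.

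The paper's route is quite different. On the basic locus it uses the duality of Rapoport--Zink spaces (Theorem~\ref{duality}) to identify $GDR^{\Psi-\mathrm{la}}_{0,\mathcal{M}_\infty}$ with the anti-de Rham complex $\overline{GDR}^{\Psi-\mathrm{la}}_{0,\check{\mathcal{M}}_\infty}$ on the dual side, which is quasi-isomorphic to $\mathcal{O}^{\mathrm{sm}}$; combined with basic uniformization this yields Theorem~\ref{basic cohomology}, expressing the cohomology through algebraic automorphic forms on the compact inner form $GI$, whence classicality (Corollary~\ref{basic classicality}). On the $\mu$-ordinary locus it first reduces to the de Rham cohomology $H^i_{\mathrm{dR}}(\mathcal{S}_{K^p}^{\mu-\mathrm{can},\dagger}, D^\dagger_\lambda)$ of an affinoid dagger space (Theorem~\ref{ordinary cohomology}), and then proves classicality of the latter by an excision argument for the \emph{smooth} complex $GDR^{\mathrm{sm}}_\lambda$ (Proposition~\ref{classicality of Igusa}): the total $H^i(\Fl,GDR^{\mathrm{sm}}_\lambda)$ is classical by ordinary de Rham comparison on $S_K$, the basic contribution is classical via \cite{DC} (Theorem~\ref{de Rham duality}) and algebraic automorphic forms, and the long exact sequence forces classicality of the ordinary term. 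No slope argument is used. Note also that the paper's notion of ``classical eigensystem'' allows eigensystems arising from \emph{auxiliary} unitary groups $GU'$ (different $S(B)$, $\Psi$); the excision approach produces exactly such contributions, whereas a small-slope argument on the original tower would not.
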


Roughly, the author hopes that $H^d(\Fl, j_{b, !}j_b^*GDR^{\mathrm{la}}_0)$ is equal to the $J_b(\mathbb{Q}_p)$-invariant part of (rigid cohomology of the Igusa variety $\mathrm{Igs}_{K^w}^b$) $\otimes$ (geometric locally analytic de Rham complex of the Rapoport-Zink space $\mathcal{M}_{(G_{\mathbb{Q}_p}, b, \mu)}$), which can be regarded as a locally analytic de Rham analogue of the Mantovan formula for $l$-adic $\etale$ cohomology \cite[Theorem 22]{Mant}. Then Conjecture \ref{classicality} follows from the classicality of the rigid cohomology $H^i_{\mathrm{rig}}(\mathrm{Igs}_{K^w})$ of $\mathrm{Igs}_{K^w}^b$, which is still an open problem. Note that we can prove the classicality of the alternating sum of $H^i_{\mathrm{rig}}(\mathrm{Igs}_{K^w})$ in the Grothendieck group by using a trace formula in many cases, but that doesn't suffice to prove the classicality of individual $H^i_{\mathrm{rig}}(\mathrm{Igs}_{K^w})$ because there are some cancellations in the alternating sum. In some $\mathrm{dim} \ \mathrm{Igs}_{K^w}^b \le 1$ cases, we can deduce the classicality of $H^i_{\mathrm{rig}}(\mathrm{Igs}_{K^w}^b)$ by directly calculating $H^0_{\mathrm{rig}}(\mathrm{Igs}_{K^w}^b)$. (cf. \cite[Lemma 4.4.13]{PanII}.) In this paper, we will prove the following.

\begin{thm} (Theorem \ref{classicality of geometric})\label{quadratic}
    
Conjecture \ref{classicality} is true if $[F_w : \mathbb{Q}_p] \le 2$.

\end{thm}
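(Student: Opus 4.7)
The plan is to argue stratum-by-stratum using the Newton stratification $\Fl = \bigsqcup_{b \in B(G_{\mathbb{Q}_p},\mu^{-1})} \Fl^b$ together with excision. By the long exact sequences for the inclusions $j_b : \Fl^b \hookrightarrow \Fl$, it suffices to show, for each single $b$, that $H^d(\Fl, j_{b!}j_b^*GDR^{\mathrm{la}}_0)$ has a generalized Hecke eigenspace decomposition by classical eigensystems. The key input is a locally analytic de Rham version of the Mantovan product formula: using the Caraiani--Scholze description of $\pi_{\mathrm{HT}}^{-1}(\Fl^b)$ at infinite level as a (pro-)\'etale quotient of $\mathrm{Igs}_{K^w}^b \times \mathcal{M}_{(G_{\mathbb{Q}_p},b,\mu)}$, together with the tensor decomposition $\mathcal{O}_{K^w}^{\mathrm{la},(0,0)} \simeq \mathcal{O}_{K^w}^{\mathrm{sm}} \widehat{\otimes}_C \mathcal{O}_{\Fl}$ underlying the construction of $GDR^{\mathrm{la}}_0$, one factors $j_{b!}j_b^*GDR^{\mathrm{la}}_0$ into a completed tensor product of a rigid-cohomology complex for $\mathrm{Igs}_{K^w}^b$ and a purely geometric locally analytic de Rham complex on $\mathcal{M}_{(G_{\mathbb{Q}_p},b,\mu)}$; taking $J_b(\mathbb{Q}_p)$-invariants then recovers the full contribution at $\Fl^b$.

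Since the Rapoport--Zink factor is local-geometric and carries trivial $\mathbb{T}^S$-action, the resulting $J_b(\mathbb{Q}_p)$-equivariant spectral sequence reduces classicality of $H^d(\Fl, j_{b!}j_b^*GDR^{\mathrm{la}}_0)$ to classicality of each $H^i_{\mathrm{rig}}(\mathrm{Igs}_{K^w}^b)$ for $0 \le i \le \dim \mathrm{Igs}_{K^w}^b$. This is where the hypothesis $d = [F_w:\mathbb{Q}_p] \le 2$ becomes decisive. When $d = 1$ we are in a setup closely analogous to Pan's modular-curve case, and the required classicality is already established in \cite[Lemma 4.4.13]{PanII}. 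When $d = 2$ there are only a small number of Newton strata, namely the $\mu$-ordinary one and a few non-$\mu$-ordinary ones; on every non-$\mu$-ordinary stratum one has $\dim \mathrm{Igs}_{K^w}^b \le 1$, and I would treat these exactly as in the $d=1$ case by computing $H^0_{\mathrm{rig}}$ directly as locally constant functions on the profinite set of connected components at infinite level (identified Hecke-equivariantly with spaces of automorphic forms on an appropriate inner form) and by handling $H^{\dim}_{\mathrm{rig}}$ via Poincar\'e duality, which turns it into $H^0_{\mathrm{rig}}$ of a dually-weighted Igusa variety.

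On the $\mu$-ordinary stratum the Rapoport--Zink tower is essentially discrete, so $\dim \mathrm{Igs}^b_{K^w} = 2$, but the Igusa variety factors (up to \'etale twists) as a product of two rank-one Igusa pieces indexed by the two embeddings $\tau \in \Psi$; a K\"unneth argument then reduces its rigid cohomology to a tensor product of $d=1$ computations, which are again classical by the first case. Combining these pieces with the excision sequences across the (few) strata yields the full generalized eigenspace decomposition of $H^d(\Fl, j_{b!}j_b^*GDR^{\mathrm{la}}_0)$ by classical eigensystems, proving the theorem.

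The hard part will be making the locally analytic Mantovan formula rigorous at the level of complexes: one has to verify that the horizontal derivations $\oplus_\tau \overline{d}_\tau$ entering $GDR^{\mathrm{la}}_0$ and the $\mathfrak{n}^0$-triviality on $\mathcal{O}_{K^w}^{\mathrm{la}}$ from Theorem \ref{fundamental lemma} are compatible with the product decomposition $\pi_{\mathrm{HT}}^{-1}(\Fl^b) \sim \mathrm{Igs}_{K^w}^b \times \mathcal{M}_{(G_{\mathbb{Q}_p},b,\mu)}$, so that the spectral sequence degenerates and separates Hecke eigensystems in the expected way. A secondary bookkeeping point is the careful identification of the $J_b(\mathbb{Q}_p)$-actions on both sides, so that taking invariants preserves the split between classical and non-classical contributions; this is manageable for $d \le 2$ by the explicit enumeration of strata, but no longer so for $d \ge 3$, where genuinely new higher-dimensional Igusa varieties appear on intermediate strata whose rigid cohomology is not directly computable and one would be reduced to the general open classicality problem referenced after Conjecture \ref{classicality}.
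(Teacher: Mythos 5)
Your high-level framework (excision over Newton strata, uniformization on the basic locus reducing to algebraic automorphic forms, and citing \cite[Lemma 4.4.13]{PanII} for the $d=1$ case) matches the paper. But your treatment of the $\mu$-ordinary stratum when $d=2$ has a genuine gap, and it is precisely the point the paper flags as the hard step.

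First, a minor bookkeeping issue: for $d=2$ there is \emph{one} non-$\mu$-ordinary stratum, the basic one, and its Igusa variety has dimension $0$ (the basic Rapoport--Zink space carries the full dimension $2$, so $\dim\mathrm{Igs}^b_{K^w}=0$); ``a few non-$\mu$-ordinary ones'' with ``$\dim\mathrm{Igs}^b\le 1$'' is not the picture. The substantive gap is your proposed handling of the $\mu$-ordinary stratum. You assert that $\mathrm{Igs}^{b_0}_{K^w}$ factors, up to \'etale twists, as a product of two rank-one Igusa pieces indexed by the two embeddings $\tau\in\Psi$, and propose K\"unneth. This is not established and is very likely false: the Igusa tower is a pro-(finite \'etale) cover of the $\mu$-ordinary locus of a single $2$-dimensional Shimura variety, and while the universal $\mu$-ordinary $p$-divisible group splits by slope/embedding, its \emph{base} does not factor as a product of two $1$-dimensional pieces. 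Without a genuine product structure on the base there is no K\"unneth decomposition of rigid cohomology, so your reduction to the $d=1$ case does not go through. The paper itself warns (remarks before Theorem~\ref{quadratic}) that the $\mu$-ordinary Igusa variety in the $d=2$ case is $2$-dimensional, that the $H^0_{\mathrm{rig}}$-and-duality trick no longer suffices, and that a new input is needed.

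The mechanism the paper actually uses is different in kind, and you should be aware of it. On the $\mu$-ordinary stratum, Theorem~\ref{ordinary cohomology} identifies $H^m(\Fl, i_*i^*GDR^{\Psi\text{-la}}_\lambda)$ with a \emph{locally analytic induction} from a parabolic $P_{S_0}$ of the de Rham cohomology of a dagger space $\aS_{K^p}^{\mu\text{-can},\dagger}$ (tensored with the character $\otimes_\tau\chi_\tau^{\lambda_\tau}$), reducing classicality to classicality of $H^m_{\mathrm{dR}}(\aS_{K^p}^{\mu\text{-can},\dagger}, D_\lambda^\dagger)$ (Corollary~\ref{classicality of ordinary}). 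To prove the latter (Proposition~\ref{classicality of Igusa}) the paper does \emph{not} compute the Igusa rigid cohomology directly. Instead it runs the excision triangle for the \emph{smooth} de Rham complex $j_!j^*GDR^{\mathrm{sm}}_\lambda\to GDR^{\mathrm{sm}}_\lambda\to i_*i^*GDR^{\mathrm{sm}}_\lambda$ across the two strata: the middle term is classical by the usual de Rham comparison plus Matsushima/Zucker (Theorem~\ref{Zuker}) and the dual BGG resolution (Proposition~\ref{BGG}); the basic term $j_!j^*$ is classical by the uniformization (Theorem~\ref{uniformization}) together with the key external input Theorem~\ref{de Rham duality} (\cite[Theorem 5.2.2]{DC}, a $G(\mathbb{Q}_p)\times J_b(\mathbb{Q}_p)$-equivariant identification $GDR^{\mathrm{sm}}_{0,\mathcal{M}_\infty}\cong GDR^{\mathrm{sm}}_{0,\check{\mathcal{M}}_\infty}$), which makes the $J_b(\mathbb{Q}_p)$-action smooth and lets one pull $\mathcal{A}^{\mathrm{sm}}_{GI}$ out; and then the $\mu$-ordinary term is classical by the long exact sequence. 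You do not invoke this external de Rham duality for Rapoport--Zink spaces at all, and without it (or an alternative) your argument for the $d=2$ $\mu$-ordinary stratum does not close.
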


When $[F_w : \mathbb{Q}_p] \le 2$, the Newton strata consist of $2$ loci : the $\mu$-ordinary locus and the basic locus. The method of computation of $GDR^{\mathrm{la}}_0$ on these loci was already given in \cite[{\S} 5]{PanII}. On the basic locus, the Igusa variety $\mathrm{Igs}_K^b$ can be regarded as the Shimura set of an inner form $GI$ of $GU$ such that $GI(\mathbb{R})$ is compact modulo center and $H^d(\Fl, j_{b, !}j_b^*GDR^{\mathrm{la}}_0)$ has a Hecke eigenspace decomposition by eigensystems appearing in the space of algebraic automorphic forms $\mathcal{A}^{\mathrm{sm}}_{GI}(K^w)$ on $GI$, which is classical by definition. (See Theorem \ref{basic classicality}.) On the other hand, on the $\mu$-ordinary locus, $H^d(\Fl, j_{b, !}j_b^*GDR^{\mathrm{la}}_0)$ is equal to the locally analytic induction of the de Rham cohomology of some dagger space and the author conjectures that this de Rham cohomology is equal to the rigid cohomology of the Igusa variety $\mathrm{Igs}_{K^w}^b$ of the $\mu$-ordinary locus. Note that even if we can prove that conjecture, in the $[F_w : \mathbb{Q}_p] = 2$ case, $\mathrm{Igs}_{K^w}^b$ has dimension $2$. Thus the classicality of $H^i_{\mathrm{rig}}(\mathrm{Igs}_{K^w}^b)$ doesn't follows from the method explained above. In this paper, we will use the recent work \cite{DC} to prove the classicality of $H^d(\Fl, j_{b, !}j_b^*GDR^{\mathrm{la}}_0)$. (See Proposition \ref{classicality of Igusa}.)

\vspace{0.5 \baselineskip}

Even if we can prove Conjecture \ref{classicalityII}, we have one more problem, which didn't appear in the modular curve case. That is the computation of $H^{d+1}(\Fl, Ker)$. In the modular curve case, $Ker$ is equal to $\Omega_{K^w}^{1, \mathrm{sm}}(\mathcal{C})[-1]$, which comes from finite levels. ($\mathcal{C}$ denotes the cusp of the modular curve and actually, we have $H^2(\Fl, \Omega_{K^w}^{1, \mathrm{sm}}(\mathcal{C})[-1]) = 0$.) However, this property doesn't hold in our situation if $[F_w : \mathbb{Q}_p] \ge 2$ and the computation of the complex $Ker$ is very complicated. Note that by the definition of $Ker$, the component of the complex $Ker$ is generated by elements on which $\mathfrak{gl}_2(F_w)_{\tau}$ acts trivially for some $\tau \in \Psi$. (See Lemma \ref{kernel}.) In order to overcome this difficulty, we use the following inductive argument.

We assume that Conjecture \ref{classicalityII} holds and the considered eigensystem $\varphi$ is not a classical eigensystem. Then the localization $H^d(\Fl, GDR^{\mathrm{la}}_0)_{\varphi}$ of $H^d(\Fl, GDR^{\mathrm{la}}_0)$ by $\varphi$ is zero. Thus we obtain the following inclusion of nonzero spaces. (Note that by the property of the Fontaine operator, we can regard $\mathrm{Ker}H^d(\Fl, N_0)[\varphi]$ as the Hodge-Tate weight 0 subspace of $\widehat{H}^d(S_{K^v}, F_w)[\varphi]^{\mathrm{la}} \widehat{\otimes}_{F_w} C$.) $$H^{d+1}(\Fl, Ker)[\varphi] \supset H^d(\Fl, ADR^{\mathrm{la}}_0)[\varphi] = \mathrm{Ker}H^d(\Fl, N_0)[\varphi] \subset \widehat{H}^d(S_{K^v}, F_w)[\varphi]^{\mathrm{la}} \widehat{\otimes}_{F_w} C.$$ Since all the components of $Ker$ are generated by its $\mathfrak{gl}_2(F_w)_{\tau}$-invariant pars ($\tau \in \Psi$), we obtain $\widehat{H}^d(S_{K^w}, F_w)[\varphi]^{\mathrm{la}, \mathfrak{gl}_2(F_w)_{\tau}} \neq 0$ for some $\tau \in \Psi$. Here, one method to prove the classicality is computing $\widehat{H}^d(S_{K^v}, F_w)[\varphi]^{\mathrm{la}, \mathfrak{gl}_2(F_w)_{\tau}}$ by a similar method as \cite{PanII}. However, a variant of Theorem \ref{fundamental lemma} $$\widehat{H}^d(S_{K^w}, F_w)^{\mathrm{la}, \mathfrak{gl}_2(F_w)_{\tau}} \widehat{\otimes}_{F_w} C \cong \widehat{H}^d(S_{K^w}, \mathcal{O}_{S_{K^w}}^{\mathrm{la}, \mathfrak{gl}_2(F_w)_{\tau}})$$ is basically not true. (A variant of this property $\widehat{H}^1(K^p, \mathbb{Q}_p)^{\mathrm{sm}} \widehat{\otimes}_{\mathbb{Q}_p} C \cong H^1(\Fl, \mathcal{O}_{K^p}^{\mathrm{sm}})$ in the modular curve case is clearly not true because this contradicts the usual Hodge-Tate decomposition.) Moreover, the sheaf $\mathcal{O}_{S_{K^w}}^{\mathrm{la}, \mathfrak{gl}_2(F_w)_{\tau}}$ forgets the symmetry of the original locally analytic sheaf $\mathcal{O}_{K^w}^{\mathrm{la}}$. 

In this paper, by using the following comparison theorem of the completed cohomologies, we replace the unitary Shimura variety $S_K/F$ by another unitary Shimura variety of dimension $d-1$. Let $l \neq p$ be a prime splitting completely in $F$ and we assume that $\overline{\rho}|_{G_{F_{w'}}}$ is irreducible and generic and $U(F^+_{w'}) = \mathrm{GL}_2(F_{w'}^+)$ for any $w' \mid l$. We fix $\overline{\mathbb{Q}}_l \Isom \mathbb{C}$ and $w_0 \mid l$ (resp. $v_0 \mid l$) denotes the induced place of $F$ (resp. $F_0$). Let $U'/F^+$ be the unitary group such that $U' \times_{F^+, \tau} \mathbb{R} = U(1, 1)$ for any $\tau' \in \Psi \setminus \{ \tau \}$, $U' \times_{F^+, \tau} \mathbb{R} = U(0, 2)$ for any $\tau' \notin \Psi$ or $\tau' = \tau$, $U(\mathbb{A}_{F^+}^{\infty, l}) = U'(\mathbb{A}_{F^+}^{\infty, l})$, $U'(F^+_{w'}) = \mathrm{GL}_2(F_{w'}^+)$ for any $l$-adic place $w \neq w_0$ and $U'(F^+_{w_0}) = D_{w_0}^{\times}$, where $D_{w_0}$ be the central division algebra over $F_{w_0}$ of dimension $4$. Let $GU'/\mathbb{Q}$ be the unitary similitude group defined by $U'$ and $T_K/F$ be the Shimura varieties associated with $GU'/\mathbb{Q}$.

\begin{thm}(Corollary \ref{comparison})\label{comparisonII}

For any sufficiently small $K_l, K_l'$, there exist positive integers $n, m \in \mathbb{Z}$ and a $\mathrm{GL}_2(F_w) \times \mathbb{T}^S$-equivariant isomorphism $\widehat{H}^{d}(S_{K_lK^{l, w}}, \mathcal{O})^{\oplus m}_{\mathfrak{m}} \cong \widehat{H}^{d-1}(T_{K'_lK^{l, w}}, \mathcal{O})^{\oplus n}_{\mathfrak{m}}$.

\end{thm}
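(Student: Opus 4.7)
The plan is to combine the Caraiani--Scholze/Koshikawa/Zou vanishing with an integral Jacquet--Langlands-type transfer between $GU$ and $GU'$. These two unitary similitude groups are inner forms that differ only at the infinite place $\tau \in \Psi$ (signature $(1,1)$ versus $(0,2)$) and at the finite place $w_0 \mid l$ (local group $\mathrm{GL}_2(F_{w_0})$ versus $D_{w_0}^\times$). The irreducibility of $\overline{\rho}_{\mathfrak{m}}|_{G_{F_{w_0}}}$ forces the local automorphic components at $w_0$ on the $S_K$-side to be supercuspidal, so Jacquet--Langlands gives a clean matching with local components on the $T_K$-side; meanwhile the signature change at $\tau$ drops the dimension of the Shimura variety by one, which accounts for the degree shift $d \leftrightarrow d-1$.

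First I would invoke the vanishing theorems of \cite{CS} and \cite{kos} (and Zou): by the decomposed genericity of $\mathfrak{m}$ together with the assumption that $\overline{\rho}_{\mathfrak{m}}|_{G_{F_{w'}}}$ is irreducible and generic for every $w' \mid l$, we obtain $H^i_{\et}(S_{K, \overline{F}}, \mathcal{O}/\varpi^n)_{\mathfrak{m}} = 0$ for $i \neq d$ and $H^i_{\et}(T_{K, \overline{F}}, \mathcal{O}/\varpi^n)_{\mathfrak{m}} = 0$ for $i \neq d-1$. Hence each completed cohomology is concentrated in a single degree, and the comparison reduces to matching middle-degree cohomologies. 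After inverting $p$, Matsushima-type decomposition expresses each side as a direct sum of automorphic isotypic pieces indexed by cohomological cuspidal representations $\sigma$ (resp.\ $\sigma'$) on $GU$ (resp.\ $GU'$) whose residual Galois representation is $\overline{\rho}_{\mathfrak{m}}$. The irreducibility of $\overline{\rho}_{\mathfrak{m}}|_{G_{F_{w_0}}}$ forces $\sigma_{w_0}$ to be supercuspidal, and Jacquet--Langlands identifies such $\sigma_{w_0}$ with the finite-dimensional $\mathrm{JL}(\sigma_{w_0})$ on $D_{w_0}^\times$; at $\tau$, the one-dimensional holomorphic discrete series contribution of the $U(1,1)$-factor in degree $1$ is replaced by the zero-degree contribution of a finite-dimensional $U(0,2)$-representation, accounting for the $-1$ shift in total degree. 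This produces a Hecke-equivariant isomorphism of the rational middle cohomologies up to tensoring with the multiplicity spaces $(\sigma_{w_0})^{K_l}$ and $\mathrm{JL}(\sigma_{w_0})^{K'_l}$.

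The positive integers $n$ and $m$ absorb these multiplicity spaces: for sufficiently small $K_l, K'_l$, the dimensions $\dim (\sigma_{w_0})^{K_l}$ and $\dim \mathrm{JL}(\sigma_{w_0})^{K'_l}$ depend only on the Bernstein block of $\sigma_{w_0}$, and by the genericity of $\overline{\rho}_{\mathfrak{m}}|_{G_{F_{w_0}}}$ essentially only one Bernstein block contributes. One then selects $n, m$ so that the multiplicities of every contributing supercuspidal match on both sides after amplifying. The main obstacle is upgrading this rational Matsushima-type identification to a genuinely integral isomorphism of completed cohomologies with $\mathcal{O}$-coefficients, compatibly with the full $\mathrm{GL}_2(F_w) \times \mathbb{T}^S$-action. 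For this one needs more than the vanishing above: a geometric construction of the comparison morphism, for instance via an integral cohomological correspondence at $l$ realized through auxiliary Igusa-type varieties, or through an integral Jacquet--Langlands argument in the spirit of Helm's work combined with the properness of both Shimura varieties. The $\mathfrak{m}$-localized vanishing, together with the fact that both sides are $\varpi$-torsion-free derived from concentration in a single degree, then propagates the rational isomorphism to the claimed integral comparison.
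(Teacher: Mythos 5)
Your proposal correctly identifies the ingredients (CS/Koshikawa vanishing, reduction to the basic stratum by irreducibility at $l$, the signature change accounting for the degree shift, Jacquet--Langlands matching supercuspidals) and, importantly, correctly senses that the integral comparison is the crux. But the argument as written does not close that gap, and the closing sentence is actually false: two $\varpi$-torsion-free $\mathcal{O}$-modules that become isomorphic after inverting $p$ need not be isomorphic over $\mathcal{O}$ (they are merely two lattices in the same $E$-vector space), so ``rational isomorphism + torsion-freeness'' does not ``propagate to the integral comparison.'' The heavy lifting that your proposal defers to ``an integral cohomological correspondence'' or ``integral Jacquet--Langlands'' is exactly what the statement is about, and you haven't supplied a mechanism for it.

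The paper's actual route avoids passing through a rational Matsushima decomposition entirely. It invokes the \emph{torsion} Mantovan formula of Koshikawa at the prime $l$: after localizing at $\mathfrak{m}$, the irreducibility of $\overline{\rho}_{\mathfrak{m}}|_{G_{F_{w'}}}$ for all $w'\mid l$ kills all non-basic strata, leaving
$$H^d_{\et}(S_{K^lK_l,\overline F},\mathcal{O}/\varpi^n)_{\mathfrak{m}} \cong \mathcal{A}_{GI}^{\mathrm{sm}}(K^l,\mathcal{O}/\varpi^n)_{\mathfrak{m}} \otimes^{\mathbb{L}}_{\mathcal{H}(J_b(\mathbb{Q}_l))} R\Gamma_c(\mathcal{M}_{(GU_{\mathbb{Q}_l},b,\mu),K_l},\mathcal{O}/\varpi^n)[d],$$
and analogously for $T$ in degree $d-1$. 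The crucial observation --- which your argument does not use --- is that $J_b = J_{b'}$, so the algebraic-automorphic-forms factor $\mathcal{A}_{GI}^{\mathrm{sm}}$ is \emph{literally the same $\mathcal{O}$-module} on both sides; the only difference is the Rapoport--Zink contribution. One then decomposes $\mathcal{A}_{GI}^{\mathrm{sm}}$ as $\mathrm{Hom}_{R_l[J_b(\mathbb{Q}_l)]}(\pi_l^{\mathrm{univ}},\mathcal{A}_{GI}^{\mathrm{sm}})_{\mathfrak{m}}\otimes_{R_l}\pi_l^{\mathrm{univ}}$ using the universal admissible module over the local deformation ring $R_l$, and the \emph{torsion} Kottwitz conjecture (Zou, Theorem 8.10) gives that $\pi_l^{\mathrm{univ}}\otimes^{\mathbb{L}}_{\mathcal{H}(J_b(\mathbb{Q}_l))}R\Gamma_c(\mathcal{M},\mathcal{O})[d]$ is concentrated in degree zero and is a nonzero \emph{finite free} $R_l$-module. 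Since both sides are then a common $R_l$-module tensored with finite free $R_l$-modules, they agree after taking powers $m$, $n$ that equalize the ranks. The integral structure is therefore controlled from the start; nothing is ``upgraded'' from a rational isomorphism.
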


We give an outline of the proof of theorem \ref{comparisonII}. In the proof, we use the torsion Mantovan formula \cite[Theorem 7.1]{kos}, which is a torsion analogue of the usual Mantovan formula. Now we consider the irreducible parameter at every $w' \mid l$. Thus the considered cohomology groups are equal to the contributions of the basic strata :  

$$H^d_{\et}(S_{K^{l}K_l, \overline{F}}, \mathcal{O}/\varpi^n)_{\mathfrak{m}} \cong \mathcal{A}_{GI}^{\mathrm{sm}}(K^l, \mathcal{O}/\varpi^n)_{\mathfrak{m}} \otimes_{\mathcal{H}(J_b(\mathbb{Q}_l))_{\mathcal{O}/\varpi^n}}^{\mathbb{L}} R\Gamma_c(\mathcal{M}_{(GU_{\mathbb{Q}_l}, b, \mu), K_l}, \mathcal{O}/\varpi^n)[d],$$ $$H^{d-1}_{\et}(T_{K^lK'_l, \overline{F}}, \mathcal{O}/\varpi^n)_{\mathfrak{m}} \cong \mathcal{A}_{GI}^{\mathrm{sm}}(K^l, \mathcal{O}/\varpi^n)_{\mathfrak{m}} \otimes_{\mathcal{H}(J_{b'}(\mathbb{Q}_l))_{\mathcal{O}/\varpi^n}}^{\mathbb{L}} R\Gamma_c(\mathcal{M}_{(GU'_{\mathbb{Q}_l}, b', \mu'), K'_l}, \mathcal{O}/\varpi^n)[d-1].$$ (See {\S} 6.1 for details. $\mathcal{A}_{GI}^{\mathrm{sm}}(K^l, \mathcal{O}/\varpi^n)$ denotes the space of algebraic automorphic forms on an inner form $GI$ satisfying that $GI(\mathbb{R})$ is compact modulo center and $R\Gamma_c(\mathcal{M}_{(GU_{\mathbb{Q}_l}, b, \mu), K_l}, \mathcal{O}/\varpi^n)$ and $R\Gamma_c(\mathcal{M}_{(GU'_{\mathbb{Q}_l}, b', \mu'), K'_l}, \mathcal{O}/\varpi^n)$ are the cohomology of basic Rapoport-Zink spaces. In fact, we have $J_b = J_{b'}$) The important property is that in the above formulas, the only difference between the above two groups is the contribution of the torsion cohomology of Rapoport-Zink spaces. Moreover, by using the tensor product representation $\pi_l^{\mathrm{univ}} := (\otimes_{w' \mid v_0} \pi_{w'}^{\mathrm{univ}}) \otimes_{\otimes_{w' \mid v_0} R_{w'}^{\mathrm{univ}}} R_l^{\mathrm{univ}}$ over $R_{l}^{\mathrm{univ}} := \widehat{\otimes}_{w \mid v_0} R_w^{\mathrm{univ}}$ of the admissible representation $\pi_w^{\mathrm{univ}}$ over $R_w^{\mathrm{univ}}$ corresponding to the universal deformation of $\overline{\rho}|_{G_{F_w}}$, we obtain $\mathrm{Hom}_{R_l[J_b(\mathbb{Q}_l)]}(\pi^{\mathrm{univ}}_l, \mathcal{A}_{GI}^{\mathrm{sm}}(K^{l}, \mathcal{O}/\varpi^n)_{\mathfrak{m}}) \otimes_{R_l} \pi^{\mathrm{univ}}_l \Isom \mathcal{A}_{GI}^{\mathrm{sm}}(K^{l}, \mathcal{O}/\varpi^n)_{\mathfrak{m}}$. Thus\begin{gather} H^d_{\et}(S_{K^lK_l, \overline{F}}, \mathcal{O}/\varpi^m)_{\mathfrak{m}} \notag \\
    \cong (\mathrm{Hom}_{R_l[J_b(\mathbb{Q}_l)]}(\pi^{\mathrm{univ}}_l, \mathcal{A}_{GI}^{\mathrm{sm}}(K^{l}, \mathcal{O}/\varpi^m)_{\mathfrak{m}}) \otimes_{R_l} \pi^{\mathrm{univ}}_l) \otimes^{\mathbb{L}}_{\mathcal{H}(J_b(\mathbb{Q}_l))_{\mathcal{O}/\varpi^m}} R\Gamma_c(\mathcal{M}_{(GU_{\mathbb{Q}_l}, b, \mu), K_l}, \mathcal{O}/\varpi^m)[d] \notag \\
    = (\mathrm{Hom}_{R_l[J_b(\mathbb{Q}_l)]}(\pi^{\mathrm{univ}}_l, \mathcal{A}_{GI}^{\mathrm{sm}}(K^{l}, \mathcal{O}/\varpi^m)_{\mathfrak{m}}) \otimes_{R_l}^{\mathbb{L}} \pi^{\mathrm{univ}}_l) \otimes^{\mathbb{L}}_{\mathcal{H}(J_b(\mathbb{Q}_l))_{\mathcal{O}/\varpi^m}} R\Gamma_c(\mathcal{M}_{(GU_{\mathbb{Q}_l}, b, \mu), K_l}, \mathcal{O}/\varpi^m)[d] \notag \\
    = \mathrm{Hom}_{R_l[J_b(\mathbb{Q}_l)]}(\pi^{\mathrm{univ}}_l, \mathcal{A}_{GI}^{\mathrm{sm}}(K^{l}, \mathcal{O}/\varpi^m)_{\mathfrak{m}}) \otimes^{\mathbb{L}}_{R_l} (\pi^{\mathrm{univ}}_l \otimes^{\mathbb{L}}_{\mathcal{H}(J_b(\mathbb{Q}_l))_{\mathcal{O}}} R\Gamma_c(\mathcal{M}_{(GU_{\mathbb{Q}_l}, b, \mu), K_l}, \mathcal{O})[d]), \notag \\
    H^{d-1}(T_{K^lK_l', \overline{F}}, \mathcal{O}/\varpi^m)_{\mathfrak{m}} \notag \\
    \cong \mathrm{Hom}_{R_l[J_b(\mathbb{Q}_l)]}(\pi^{\mathrm{univ}}_l, \mathcal{A}_{GI}^{\mathrm{sm}}(K^{l}, \mathcal{O}/\varpi^m)_{\mathfrak{m}}) \otimes^{\mathbb{L}}_{R_l} (\pi^{\mathrm{univ}}_l \otimes^{\mathbb{L}}_{\mathcal{H}(J_b(\mathbb{Q}_l))_{\mathcal{O}}} R\Gamma_c(\mathcal{M}_{(GU'_{\mathbb{Q}_l}, b', \mu'), K'_l}, \mathcal{O})[d]) \notag
   \end{gather}

Therefore, we obtain Theorem \ref{comparisonII} by taking the limit $m \rightarrow \infty$ if we can prove that $\pi^{\mathrm{univ}}_l \otimes^{\mathbb{L}}_{\mathcal{H}(J_b(\mathbb{Q}_l))_{\mathcal{O}}} R\Gamma_c(\mathcal{M}_{(GU'_{\mathbb{Q}_l}, b', \mu'), K'_l}, \mathcal{O})$ is concentrated at $d$ and is a nonzero finite free module over $R_l$. These assumptions follow from a part of the torsion Kottwitz conjecture of the Rapoport-Zink space by \cite[Theorem 8.10]{zou}.

\vspace{0.5 \baselineskip}

As a consequence of Theorem \ref{comparisonII}, we obtain $\widehat{H}^{d-1}(T_{K^w}, F_w)[\varphi]^{\mathrm{la}, \mathfrak{gl}_2(F_w)_{\tau}} \neq 0$. For the Shimura variety $T_K$, we can prove the following.

\begin{thm} (Theorems \ref{geometric sen theory} and \ref{geometric sen theoryII}) \label{geometric sen theoryIII}

$\widehat{H}^{d-1}(T_{K^w}, F_w)_{\mathfrak{m}}^{\mathrm{la}, \mathfrak{gl}_2(F_w)_{\tau}} \widehat{\otimes}_{F_w} C \cong \widehat{H}^{d-1}(T_{K_w}, \mathcal{O}_{T_{K^w}}^{\mathrm{la}, \mathfrak{gl}_2(F_w)_{\tau}})_{\mathfrak{m}}$.

\end{thm}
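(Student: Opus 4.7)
The plan is to prove this as a two-stage generalization of Pan's fundamental lemma (Theorem \ref{fundamental lemma}) to the partially completed cohomology of $T_K$, and then to upgrade the resulting comparison to the $\mathfrak{gl}_2(F_w)_{\tau}$-invariant subspace. It is natural to split this into two theorems, explaining the label of the statement.

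First, I would set up the geometric Sen theory for $T_K$. Because $U'$ is of type $U(0,2)$ at the archimedean place corresponding to $\tau$ as well as at every $\tau' \notin \Psi$, the Shimura variety $T_K/F$ has dimension $d-1$, and its Hodge–Tate period map lands in the target $\Fl' := \prod_{\tau' \in \Psi\setminus\{\tau\}} \mathbb{P}^{1,\mathrm{ad}}_C$ of dimension $d-1$. The perfectoid space $T_{K^w}$ is constructed from the full perfectoid tower $T_{K^p}$ of \cite{Torsion} using the fact that the moduli problem at non-$p$-adic primes outside $S(B)$ is ind-étale, so the transition maps $T_{K^wK_w} \to T_{K^w K'_w}$ are étale. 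One then defines $\mathcal{O}_{T_{K^w}} := \pi_{\mathrm{HT},*}\mathcal{O}_{T_{K^w}}$ and the subsheaf $\mathcal{O}_{T_{K^w}}^{\mathrm{la}}$ of locally analytic vectors. Imitating Pan's proof of Theorem 4.4.6 (or equivalently applying Camargo's general framework for smooth rigid spaces) in combination with the primitive comparison theorem produces the baseline isomorphism
\[
\widehat{H}^{d-1}(T_{K^w},F_w)_{\mathfrak{m}}^{\mathrm{la}} \widehat{\otimes}_{F_w} C \cong H^{d-1}(\Fl', \mathcal{O}_{T_{K^w}}^{\mathrm{la}})_{\mathfrak{m}}.
\]

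Next I would upgrade this to the $\mathfrak{gl}_2(F_w)_\tau$-invariants. The key structural observation is that since $\Fl'$ has no $\tau$-component, the action of $\mathfrak{gl}_2(F_w)_{\tau} \subset \mathfrak{g}$ on $\mathcal{O}_{T_{K^w}}^{\mathrm{la}}$ is purely \emph{horizontal}; in particular it commutes with the $\mathcal{O}_{\Fl'}$-module structure and preserves the weight decomposition $\mathcal{O}_{T_{K^w}}^{\mathrm{la},\chi'}$ under $\theta_{\mathfrak{h}'}$, $\mathfrak{h}'=\prod_{\tau'\in\Psi\setminus\{\tau\}}\mathfrak{h}_{\tau'}$. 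The subsheaf $\mathcal{O}_{T_{K^w}}^{\mathrm{la},\mathfrak{gl}_2(F_w)_\tau}$ is thus well-defined, and left-exactness of $(-)^{\mathfrak{gl}_2(F_w)_\tau}$ gives an inclusion from the claimed LHS into the RHS via the baseline isomorphism. The opposite inclusion reduces to the exchange
\[
H^{d-1}(\Fl',\mathcal{O}_{T_{K^w}}^{\mathrm{la}})_{\mathfrak{m}}^{\mathfrak{gl}_2(F_w)_\tau} \;=\; H^{d-1}(\Fl',\mathcal{O}_{T_{K^w}}^{\mathrm{la},\mathfrak{gl}_2(F_w)_\tau})_{\mathfrak{m}},
\]
which I would establish by a spectral sequence coming from a Koszul-type resolution of the trivial $\mathfrak{gl}_2(F_w)_\tau$-representation acting on $\mathcal{O}_{T_{K^w}}^{\mathrm{la}}$.

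The main obstacle is controlling the higher cohomology contributions in this spectral sequence. Concretely, one needs the vanishing $H^i(\Fl',\mathcal{O}_{T_{K^w}}^{\mathrm{la},\chi'})_{\mathfrak{m}} = 0$ for $i \neq d-1$ and every character $\chi'$ appearing in the infinitesimal-character decomposition. For the Shimura variety $S_K$ this was (\ref{vanishing}), deduced from the Caraiani–Scholze–Koshikawa torsion vanishing via the decomposed genericity of $\mathfrak{m}$; the same input applies to $T_K$ because $\mathfrak{m}$ remains decomposed generic after the local transfer along the Jacquet–Langlands correspondence at $w_0$ used to define $U'$. A subsidiary difficulty is making sure that the formalism of locally analytic vectors with respect to $\mathrm{GL}_2(F_w)$ is compatible with passage to the $p$-adic completion appearing in the partially completed cohomology; this is handled exactly as in Pan's argument because the étaleness at $w'\ne w,\ w'\mid p$ ensures that the relevant pro-étale cohomology is computed by the $T_{K^w}$-tower. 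Granting these two points, the spectral sequence degenerates and the desired isomorphism follows.
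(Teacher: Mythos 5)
Your proposal correctly identifies the overall architecture — a local geometric Sen theory step (explaining the label pointing to Theorems \ref{geometric sen theory} and \ref{geometric sen theoryII}), the period domain $\Fl'$ of dimension $d-1$, and the observation that $T_K$ is constructed so that the $\tau$-direction is absent from its Hodge structure. But there is a genuine gap, and you also misidentify where the main difficulty sits.

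The crucial geometric input for this theorem is that the Faltings extension $0 \to \mathcal{O}_{T_{K^w}}(1) \to \mathrm{gr}^1\mathcal{O}\mathbb{B}^+_{\mathrm{dR}, T_{K^w}} \to \mathcal{O}_{T_{K^w}} \otimes_{\mathcal{O}_{T_{K^w}}^{\mathrm{sm}}} \Omega_{T_{K^w}}^{1, \mathrm{sm}} \to 0$ remains exact after passing to the $\mathfrak{gl}_2(F_w)_{\tau}$-invariant locally analytic part. The paper gets this from the comparison diagram (Proposition \ref{FHT}) between the Hodge--Tate filtration and the Faltings extension: the upper row of that diagram involves only the vector spaces $V_\sigma$ with $\sigma \in \Psi \setminus \{\tau\}$, on which $\mathfrak{gl}_2(F_w)_\tau$ acts trivially, so the surjection onto the lower row remains surjective after taking the $\Psi'$-locally analytic part. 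This surjectivity is exactly what produces the elements $x_1, \dots, x_{d-1}$ used in the proof of the local acyclicity (Theorem \ref{geometric sen theory}, i.e.\ the vanishing of $R^i\Psi'\mathfrak{LA}(B)$ for small affinoids), which is in turn what makes the derived exchange of ``take $\mathfrak{g}_\tau$-invariant la vectors'' with sheaf cohomology go through (Lemma \ref{fundamental}, Proposition \ref{derived calculation I}, Theorem \ref{geometric sen theoryII}).

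Your plan skips this entirely. You propose a ``Koszul-type resolution of the trivial $\mathfrak{gl}_2(F_w)_\tau$-representation'' to establish $H^{d-1}(\Fl',\mathcal{O}_{T_{K^w}}^{\mathrm{la}})_{\mathfrak{m}}^{\mathfrak{gl}_2(F_w)_\tau} = H^{d-1}(\Fl',\mathcal{O}_{T_{K^w}}^{\mathrm{la},\mathfrak{gl}_2(F_w)_\tau})_{\mathfrak{m}}$, and then say the main obstacle is the Caraiani--Scholze vanishing $H^i(\Fl',\mathcal{O}_{T_{K^w}}^{\mathrm{la},\chi'})_{\mathfrak{m}}=0$ for $i\ne d-1$. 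That is the wrong obstacle: the concentration in middle degree addresses which sheaf cohomology degree survives, but your spectral sequence needs the \emph{local} vanishing of the higher derived locally analytic functors on sections over small affinoids. That vanishing does not follow from decomposed genericity; it requires the Faltings extension argument, which is precisely the content the paper singles out as the ``essential property'' and which you never touch. Without it, your spectral sequence does not degenerate.

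Two further imprecisions: first, the phrase ``the action of $\mathfrak{gl}_2(F_w)_\tau$ on $\mathcal{O}_{T_{K^w}}^{\mathrm{la}}$ is purely horizontal'' conflates the geometric Sen ``horizontal action'' $\theta_{\mathfrak{h}}$ (a Cartan action defined via $\mathfrak{b}^0/\mathfrak{n}^0$ on $\Fl'$) with the $\mathfrak{gl}_2(F_w)_\tau$-action induced by $G(\mathbb{Q}_p)$ on the tower; the relevant fact is not horizontality but the triviality of $\mathfrak{gl}_2(F_w)_\tau$ on the Hodge--Tate pieces of $T_{K^w}$ (because $U'\times_{F^+,\tau}\mathbb{R}=U(0,2)$). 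Second, routing through the full locally analytic comparison $\widehat{H}^{d-1}(T_{K^w},F_w)_{\mathfrak{m}}^{\mathrm{la}}\widehat{\otimes}_{F_w}C$ and then descending is a detour: the paper defines and works directly with the functor $\Psi'\mathfrak{LA}$ and its derived functors, avoiding the two-stage descent. The detour is not by itself fatal, but the descent step would again require the Faltings extension input, so you cannot avoid it either way.
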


We give an outline of the proof of theorem \ref{geometric sen theoryIII}. This theorem can be proved by a similar method of \cite[Theorem 4.4.6]{PanI}. In fact, as mentioned in \cite[Theorem 3.6.1]{PanI} in the locally analytic case, the essential property to prove Theorem \ref{geometric sen theoryIII} is that the Faltings extension $0 \rightarrow \mathcal{O}_{T_{K^w}}(1) \rightarrow \mathrm{gr}^1\mathcal{O}\mathbb{B}_{\mathrm{dR}, T_{K^w}}^+ \rightarrow \mathcal{O}_{T_{K^w}} \rightarrow 0$ on $T_{K^w}$ remains exact after taking the locally analytic $\mathfrak{gl}_2(F_w)_{\tau}$-invariant part. In fact, this can be proved by the following comparison between the Faltings extension and the Hodge-Tate filtration as \cite[Corollary 4.2.3]{PanI}. (See Proposition \ref{FHT}. We put $F^2 \otimes_{\mathbb{Q}} F_{w} = \oplus_{\sigma \in \mathrm{Hom}(F, F_w)} V_{\sigma}$. The right vertical map is the minus of the Kodaira-Spencer map. Note that the action of $\mathfrak{gl}_2(F_w)_{\tau}$ on $\oplus_{\sigma \in \Psi \setminus \{ \tau \}} V_{\sigma}$ is trivial.)

\xymatrix{
    \displaystyle 0 \ar[r] & \oplus_{\sigma \in \Psi \setminus \{ \tau \}} \mathcal{O}_{T_{K^w}}(1) \ar[r] \ar@{->>}[d]^{\sum_{\sigma} \mathrm{id}} & \oplus_{\sigma \in \Psi \setminus \{ \tau \}} V_{\sigma}(1) \otimes_{F_w} \omega_{T_{K^w}, \tau}  \ar[r] \ar@{->>}[d] & \oplus_{\sigma \in \Psi \setminus \{ \tau \}} \omega_{T_{K^w}, \sigma}^{2} \ar[r] \ar[d]^{-\mathrm{KS}}_{\sim} & 0 \\
    0 \ar[r] & \mathcal{O}_{T_{K^w}}(1) \ar[r] & \mathrm{gr}^1\mathcal{O}\mathbb{B}^+_{\mathrm{dR}, T_{K^w}} \ar[r]  & \mathcal{O}_{\mathcal{T}_{K^w}} \otimes_{\mathcal{O}_{T_{K^w}}^{\mathrm{sm}}} \Omega_{T_{K^w}}^{1, \mathrm{sm}} \ar[r]& 0
    }

 Moreover, we can obtain a very explicit description for $\mathcal{O}_{T_{K^w}}^{\mathrm{la}, \mathfrak{gl}_2(F_w)_{\tau}}$ as in the modular curve case because the action of $\mathfrak{gl}_2(F_w)_{\tau}$ is trivial and we have $U \times_{F^+, \sigma} \mathbb{R} = U(1,1)$ for $\tau \neq \sigma \in \Psi$. Again, if the counterpart of the classicality conjecture \ref{classicalityII} holds for $T_{K^w}$, then we can replace $T_K$ with another unitary Shimura variety of dimension $d - 2$ and by repeating this argument, we can reduce to a unitary Shimura curve case, in which case everything is exactly the same as in the modular curve case.

\subsection{Organization of this paper}

In section 2, we recall basic properties of locally analytic representations. In section 3, we recall and prove basic properties of automorphic representations of rank 2 unitary groups, cohomologies of rank 2 unitary Shimura varieties and elementary results on the locally analytic functions on the perfectoid unitary Shimura varieties. One of the key observations is a modification of geometric Sen theory (see Theorem \ref{geometric sen theoryII}) as we have seen before (cf. Theorem \ref{geometric sen theoryIII}). In section 4, we construct geometric locally analytic de Rham complexes, which are certain locally analytic extensions of de Rham complexes at finite levels and are expected to be classical (cf. Conjecture \ref{classicalityII}). Moreover, we will prove this in the case $[F_w : \mathbb{Q}_p] \le 2$ in {\S} 4.4 (cf. Theorem \ref{quadratic}). In section 5, we will prove the compatibility of the complex obtained by the Fontaine operator and the geometric locally analytic de Rham complexes in the parallel weight case and the $[F_w : \mathbb{Q}_p] \le 2$ case (cf. Theorem \ref{compatibilityIII}). In section 6, we will prove a certain comparison theorem of completed cohomologies of unitary Shimura varieties and complete the proof of our classicality theorem (cf. Theorem \ref{comparisonII}). In section 7, we will give applications to the automorphy lifting theorem and the Breuil-M$\mathrm{\acute{e}}$zard conjecture. In {\S} 7.1, we will prove a certain theorem on liftings of residual representations, which is needed to prove the Breuil-M$\mathrm{\acute{e}}$zard conjecture from ``big $R=T$ theorem + classicality theorem''. (See the discussion after Theorem \ref{Breuil-Mezard conjectureI}.) In {\S} 7.2, we will prove a big $R=T$ theorem, which is a slight modification of \cite{GEKI}. (See Theorem \ref{eigenspaceI}.) In {\S} 7.3, we will prove the automorphy lifting theorem and the Breuil-M$\mathrm{\acute{e}}$zard conjecture in some $\mathrm{GL}_2(\mathbb{Q}_{p^2})$ cases by using theorems proved in the previous sections.

\subsection{Notations} \label{p-adic Hodge type}

For a positive integer $n$, we write $\mathfrak{S}_n$ for the symmetric group of degree $n$. For a finite set $X$, we write $|X|$ for the order of $X$. We put $\mathbb{Z}^n_+ := \{ (m_1, \cdots, m_n) \in \mathbb{Z}^n \mid m_1 \ge \cdots \ge m_n \}$. For a field $K$, we write $\overline{K}$ for an algebraic closure of $K$ and $K^{\mathrm{sep}}$ for a separable closure of $K$. We put $G_K := \mathrm{Gal}(K^{\mathrm{sep}}/K)$. For schemes $X$ and $\mathrm{Spec}B$ over $\mathrm{Spec}A$, we put $X_B := X \times_{\mathrm{Spec}A} \mathrm{Spec}B$. We will use similar notations for rings and adic spaces. For a representation $V$ of a group $G$ over a ring $A$, we put $V^{\vee} := \mathrm{Hom}_A(V,A)$ with an action of $G$ defined by $g \cdot f (v) := f(g^{-1}v)$.

Let $p$ be a prime and $G$ be a profinite group. For a continuous representation $r:G \rightarrow \mathrm{GL}_n(\mathcal{O}_{\overline{\mathbb{Q}}_p})$, we write $\overline{r}$ for the composition $G \rightarrow \mathrm{GL}_n(\mathcal{O}_{\overline{\mathbb{Q}_p}}) \rightarrow \mathrm{GL}_n(\overline{\mathbb{F}_p})$. For a continuous representation $r: G \rightarrow \mathrm{GL}_n(\overline{\mathbb{Q}_p})$, we write $\overline{r}$ for the semisimplification of $\overline{grg^{-1}}: G \rightarrow \mathrm{GL}_n(\overline{\mathbb{F}_p})$ for some $g \in \mathrm{GL}_n(\overline{\mathbb{Q}_p})$ such that $\mathrm{Im}(grg^{-1}) \subset \mathrm{GL}_n(\mathcal{O}_{\overline{\mathbb{Q}_p}})$. The isomorphism class of $\overline{r}$ is independent of the choice of $g$.

Let $l$ be a prime and $D$ be a central division algebra over a finite extension of $\mathbb{Q}_l$ of dimension $4$. We write $\mathcal{O}_D$ for the ring of integers of $D$, $\mathbb{F}_D$ for the residue field of $\mathcal{O}_D$ and $q_D:=|\mathbb{F}_D|$. 

Let $K$ be a finite extension of $\mathbb{Q}_l$. When $l \neq p$, we say that a continuous representation $\overline{\rho} : G_K \rightarrow \mathrm{GL}_2(\overline{\mathbb{F}}_p)$ is generic if $H^0(K, \mathrm{ad} \overline{\rho}(1)) = 0$. We assume $p = l$. For a Hodge-Tate representation $\rho : G_K \rightarrow \mathrm{GL}_n(\overline{\mathbb{Q}}_p)$ and $\tau \in \mathrm{Hom}_{\mathbb{Q}_p}(K, \overline{\mathbb{Q}}_p)$, we put $\mathrm{HT}_{\tau}(\rho):=[\cdots, \underbrace{i-1, \cdots \cdots,  i-1}_{\mathrm{dim}_{\overline{\mathbb{Q}_p}}(\rho \otimes_{\tau, K} \widehat{\overline{K}}(i-1))^{G_K}}, \underbrace{i, \cdots \cdots,  i}_{\mathrm{dim}_{\overline{\mathbb{Q}_p}}(\rho \otimes_{\tau, K} \widehat{\overline{K}}(i))^{G_K}}, \cdots] \in (\mathbb{Z}^n/\mathfrak{S}_n)$. For example, the $p$-adic cyclotomic character $\varepsilon_p$ satisfies $\mathrm{HT}_{\tau}(\varepsilon_p)=-1$ for any $\tau$. For $\lambda \in (\mathbb{Z}^2_+)^{\mathrm{Hom}_{\mathbb{Q}_p}(K, \overline{\mathbb{Q}}_p)}$, we say that a continuous representation $\rho : G_{K} \rightarrow \mathrm{GL}_2(\overline{\mathbb{Q}}_p)$ is de Rham of $p$-adic Hodge type $\lambda$ if $\rho$ is de Rham and $\mathrm{HT}_{\tau}(\rho) = \{ \lambda_{\tau, 1} + 1, \lambda_{\tau, 2} \}$ for any $\tau \in \mathrm{Hom}_{\mathbb{Q}_p}(K, \overline{\mathbb{Q}}_p)$.

For a global field $K$ and for a finite place $v$ of $K$, let $K_v$ be the completion of $K$ at $v$, $\mathbb{F}_v$ be the residue field at $v$ and $q_v := q_{K_v}$.

Let $K$ be a complete nonarchimedean extension field of $\mathbb{Q}_p$. For Banach spaces $V, \ W$ over $K$, we write $V \widehat{\otimes}_{K} W$ for the Banach space $(\varprojlim_n (V^o \otimes_{\mathcal{O}_{K}} W^o)/p^n)[\frac{1}{p}]$ with a unit ball $(\varprojlim_n (V^o \otimes_{\mathcal{O}_{K}} W^o)/p^n)$, where $V^o$ and $W^o$ denote the unit balls of $V$ and $W$. For a profinite group $G$ and a Banach space $W$, let $C(G, W)$ denote the set of continuous functions from $G$ to $W$. This is a Banach representation of $G$ with a unit ball $C(G, W^o)$ by the action $(gf)(x) = f(g^{-1}x)$ for $x, g \in G$ and $f \in C(G, W)$. We say that a locally convex space $V$ over $K$ is a LB space if $V \cong \varinjlim_n V_n$ for an inductive system of Banach spaces $\{ V_n \}_{n \in \mathbb{Z}_{>0}}$ over $K$ with the locally convex topology. Note that if $V$ is Hausdorff, we may assume that all transition maps $V_n \rightarrow V_m$ are injective by \cite[{\S} 2.1.4]{PanI}. Thus in the following, unless we specify, when we write $V = \varinjlim_{n} V_n$ for a Hausdorff LB space $V$, we always assume that all transition maps $V_n \rightarrow V_m$ are injective. For Hausdorff LB spaces $V = \varinjlim_n V_n, W = \varinjlim_n W_n$ over $K$, we define $V \widehat{\otimes}_K W = \varinjlim_{n} V_n \widehat{\otimes}_K W_n$. This is independent of the expressions $V = \varinjlim_n V_n$ and $W = \varinjlim_n W_n$.

\subsection{Acknowledgement}

The author is grateful to his advisor Takeshi Saito for careful reading of an earlier version of this paper and for his constant support and encouragement. This work was supported by the WINGS-FMSP program at the Graduate School of Mathematical Sciences, the University of Tokyo and JSPS KAKENHI Grant number 24KJ0812.

\section{Preliminaries on locally analytic representations}

In this section, we recall some properties of Banach representations and locally analytic representations.

\begin{prop}\label{inductive limit}
    
Let $W$ be a Hausdorff LB space over $\mathbb{Q}_p$ and let $\{ W_i \}_{i \in \mathbb{Z}_{>0}}$ be a family of Banach spaces over $\mathbb{Q}_p$ with continuous injections $u_i : W_i \hookrightarrow W$ over $\mathbb{Q}_p$ for all $i$ such that $W = \cup_{i \in \mathbb{Z}_{>0}} u_i(W_i)$. Then for any Banach space $V$ over $\mathbb{Q}_p$ and any continuous map $g : V \rightarrow W$, there exist $i$ and a continuous map $g_i : V \rightarrow W_i$ over $\mathbb{Q}_p$ such that $V \xrightarrow{g_i} W_i \xrightarrow{u_i} W$ is equal to $g$. 
        
\end{prop}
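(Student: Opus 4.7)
The plan is to combine the Baire category theorem on $V$ with the nonarchimedean Open Mapping Theorem, after reformulating the statement via fiber products. The LB structure of $W$ will be used only through its Hausdorffness.

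For each $i$, I form the fiber product $\Gamma_i := V \times_W W_i = \{(v, w) \in V \oplus W_i \mid g(v) = u_i(w)\}$, which is the kernel of the continuous linear map $V \oplus W_i \to W$ sending $(v, w)$ to $g(v) - u_i(w)$. Since $W$ is Hausdorff and $g$, $u_i$ are continuous, $\Gamma_i$ is closed in the $\mathbb{Q}_p$-Banach space $V \oplus W_i$, hence is itself a $\mathbb{Q}_p$-Banach space. The first projection $p_i : \Gamma_i \to V$ is continuous and $\mathbb{Q}_p$-linear with image $X_i := g^{-1}(u_i(W_i))$, and it is injective because $u_i$ is.

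The hypothesis $W = \bigcup_i u_i(W_i)$ yields $V = \bigcup_{i \in \mathbb{Z}_{>0}} X_i$. Baire category applied to the Banach space $V$ forces some $X_{i_0}$ to be nonmeager. Writing $X_{i_0} = \bigcup_n p_{i_0}(n \cdot B_{\Gamma_{i_0}})$, nonmeagerness upgrades to $\overline{p_{i_0}(B_{\Gamma_{i_0}})}$ having nonempty interior in $V$; the usual iteration argument of Banach's Open Mapping Theorem, executed in the $\mathbb{Z}_p$-module setting rather than the convex setting, then shows $p_{i_0}$ is surjective and open. Combined with injectivity, $p_{i_0} : \Gamma_{i_0} \to V$ is a topological isomorphism of Banach spaces. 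Setting $g_i := q_{i_0} \circ p_{i_0}^{-1}$, where $q_{i_0} : \Gamma_{i_0} \to W_{i_0}$ is the second projection, produces the required continuous $\mathbb{Q}_p$-linear map $V \to W_{i_0}$; the fiber product relation $u_{i_0} \circ q_{i_0} = g \circ p_{i_0}$ on $\Gamma_{i_0}$ then immediately gives $u_{i_0} \circ g_i = g$.

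The main technical obstacle I expect is the Open Mapping step: upgrading the Baire-theoretic nonmeagerness of $X_{i_0}$ to genuine surjectivity of $p_{i_0}$ requires the nonarchimedean absorption-iteration trick, where the $\mathbb{Z}_p$-module structure on unit balls replaces convexity. This is standard in Schneider-style nonarchimedean functional analysis, but it is the one place where the argument is not pure abstract nonsense. Everything else reduces to bookkeeping about a closed subspace of a product of Banach spaces.
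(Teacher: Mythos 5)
Your proof is correct. The paper gives no argument of its own, only a citation to Corollary 8.9 of Schneider's \emph{Nonarchimedean Functional Analysis}; your fiber-product-plus-Baire-plus-open-mapping argument is precisely the standard Grothendieck factorization proof that underlies that reference, so the approaches coincide.
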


\begin{proof}  See \cite[Corollary 8.9]{funcana}. \end{proof}

\begin{lem}\label{Hausdorff}

Let $V = \varinjlim_{i \in \mathbb{Z}_{>0}}V_i$ be a Hausdorff LB space over a finite extension $K$ of $\mathbb{Q}_p$ and let $W$ be a Banach space over $K$. Then $V \widehat{\otimes}_K W$ is also a Hausdorff LB space over $K$.

\end{lem}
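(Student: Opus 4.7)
The plan is to reduce the statement to a concrete sequence-space description by invoking the orthonormalizability of Banach spaces over $K$. Since $K$ is a finite extension of $\mathbb{Q}_p$, hence discretely valued and spherically complete, a classical theorem of Serre gives an isometric isomorphism $W \cong c_0(S, K)$ for some set $S$, where $c_0(S, K)$ denotes the Banach space of $K$-valued functions on $S$ vanishing at infinity, equipped with the sup norm. For any $K$-Banach space $B$ one has a standard isometric isomorphism $B \widehat{\otimes}_K c_0(S, K) \cong c_0(S, B)$, which is easy to check directly from the definition in terms of $p$-adic completions of $B^o \otimes_{\mathcal{O}_K} c_0(S, \mathcal{O}_K)$. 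Applying this to each $V_i$ identifies $V \widehat{\otimes}_K W$ with the LB space $\varinjlim_i c_0(S, V_i)$, and the transition maps become the pointwise maps $c_0(S, V_i) \to c_0(S, V_j)$ induced by the injections $V_i \hookrightarrow V_j$, hence injective.

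For Hausdorffness, I will produce a continuous linear injection of $\varinjlim_i c_0(S, V_i)$ into the Hausdorff space $\prod_{s \in S} V$ (equipped with the product topology, which is Hausdorff because $V$ is). Define
\[
\Phi\colon \varinjlim_i c_0(S, V_i) \longrightarrow \prod_{s \in S} V
\]
stage-by-stage as the continuous composition $c_0(S, V_i) \hookrightarrow \prod_{s \in S} V_i \to \prod_{s \in S} V$, where the last map uses the canonical inclusion $V_i \hookrightarrow V$. This is continuous on every stage and hence continuous on the LB limit. If $\Phi(f) = 0$ for some $f \in c_0(S, V_i)$, then $f(s) = 0$ in $V$ for all $s \in S$; since $V_i \hookrightarrow V$ is injective (the transition maps are injective by our convention, and $V$ is Hausdorff), $f(s) = 0$ in $V_i$ for every $s$, and hence $f = 0$. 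Therefore $\Phi$ is a continuous linear injection, and any locally convex space admitting such a map into a Hausdorff space is itself Hausdorff.

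The main obstacle is the first step: once $W$ is realized as $c_0(S, K)$, the completed tensor product becomes a concrete space of sequences, and all remaining statements reduce to pointwise assertions in $V$. Without this structural input, one would face the delicate question of whether a continuous (but possibly non-strict) injection $V_i \hookrightarrow V_j$ of $K$-Banach spaces induces an injection $V_i \widehat{\otimes}_K W \hookrightarrow V_j \widehat{\otimes}_K W$, a property that would require separate justification; Serre's orthonormalizability sidesteps this by making $W$ behave freely with respect to the completed tensor product.
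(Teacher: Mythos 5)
Your proof is correct, and it takes a genuinely different route from the paper's. The paper argues directly from the definition of Hausdorff via open lattices: given $0 \neq v \in V_j \widehat{\otimes}_K W$, it normalizes $v$ modulo $p$, then invokes the nonarchimedean Hahn--Banach theorem to produce a continuous functional $f\colon W \to K$ with $(1 \otimes f)(v) \neq 0$; since $1 \otimes f$ maps $V \widehat{\otimes}_K W$ continuously into the Hausdorff space $V$, this separates $v$ from $0$. Your argument instead invokes Serre's orthonormalizability of $K$-Banach spaces over the discretely valued field $K$ to realize $W \cong c_0(S, K)$, whence $V_i \widehat{\otimes}_K W \cong c_0(S, V_i)$ and $V \widehat{\otimes}_K W \cong \varinjlim_i c_0(S, V_i)$; the separating map into the Hausdorff space $\prod_{s \in S} V$ is then constructed once and for all rather than one functional at a time. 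Both proofs ultimately pass through a continuous injection (respectively, a separating family of continuous maps) into something built from the Hausdorff space $V$, so the core mechanism is the same, but the structural input differs: Hahn--Banach versus orthonormalizability. Your route has the advantage of giving an explicit model $\varinjlim_i c_0(S, V_i)$ of the completed tensor product and making the injectivity of the transition maps transparent (as you note, this would otherwise be a nontrivial point about exactness of $\widehat{\otimes}_K W$); the paper's route is more self-contained in that it needs only a single nonzero functional adapted to the given vector $v$, without appealing to the full structure theorem for $K$-Banach spaces.
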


\begin{proof}

By using \cite[Lemma 4.6]{funcana}, it suffices to prove that for any $0 \neq v \in V_j \widehat{\otimes}_K W$, there exists an open lattice $L$ of $V \widehat{\otimes} W$ such that $v \notin L$. We may assume that $v \in V_j^o \widehat{\otimes}_{\mathcal{O}_K} W^o \setminus pV_j^o \widehat{\otimes}_{\mathcal{O}_K} W^o$. Then we can write $v = \sum_{i=1}^n u_i \otimes w_i + pu$ such that $u_i \in V_j^o$, $w_i \in W^o$, $u \in V_j^o \widehat{\otimes}_{\mathcal{O}_K} W^o$ and $\sum_{i} u_i \otimes w_i \notin p V_j^o \widehat{\otimes}_{\mathcal{O}_K} W^o$. We replace $u_i$'s with a basis of $V_j^{o} \cap \langle u_i \rangle_{K}$. By using the Hahn-Banach theorem \cite[Proposition 9.2]{funcana}, we have a continuous $K$-linear map $f : W \rightarrow K$ such that $f(W^o) = \mathcal{O}_K$ and $f(w_i) \in \mathcal{O}_K \setminus p\mathcal{O}_K$ for some $i$. This induces the map $1 \otimes f : V \widehat{\otimes}_{K} W \rightarrow V$ satisfying $1 \otimes f (v) \neq 0$. Since $V$ is Hausdorff, we obtain the result. \end{proof}

\begin{lem}\label{tensor product}

Let $C^0 \rightarrow C^1 \rightarrow \cdots \rightarrow C^d$ be a bounded complex of Hausdorff LB spaces over a finite extension $K$ of $\mathbb{Q}_p$ with strict maps having closed images and let $W$ be a Banach space over $K$. Then $C^0 \widehat{\otimes}_K W \rightarrow C^1 \widehat{\otimes}_K W \rightarrow \cdots \rightarrow C^d \widehat{\otimes}_K W$ satisfies the same properties as $C^{\bullet}$ and $H^i(C^{\bullet}) \widehat{\otimes}_K W = H^i(C^{\bullet} \widehat{\otimes}_K W)$.

\end{lem}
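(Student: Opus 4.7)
The strategy is to decompose the complex into its defining short exact sequences and reduce to the claim that $-\widehat{\otimes}_K W$ is exact on strict short exact sequences of Hausdorff LB spaces. Setting $Z^i := \ker(d^i)$, $B^i := \mathrm{Im}(d^{i-1})$, and $H^i := Z^i/B^i$, the strict map / closed image hypothesis yields for each $i$ two short exact sequences
$$0 \to Z^i \to C^i \to B^{i+1} \to 0, \qquad 0 \to B^i \to Z^i \to H^i \to 0,$$
each consisting of a closed embedding followed by a strict surjection. If one shows that $-\widehat{\otimes}_K W$ preserves such sequences (with the same type of strictness and closed image), then the first identifies $\ker(d^i \otimes \mathrm{id})$ with $Z^i \widehat{\otimes}_K W$ and $\mathrm{Im}(d^i \otimes \mathrm{id})$ with $B^{i+1} \widehat{\otimes}_K W$ (closed as a kernel), while splicing with the second gives $H^i(C^{\bullet} \widehat{\otimes}_K W) = (Z^i \widehat{\otimes}_K W)/(B^i \widehat{\otimes}_K W) = H^i \widehat{\otimes}_K W$. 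Hausdorffness and the LB property of every term in sight are then ensured by Lemma \ref{Hausdorff}.

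Thus the content is the key claim: for a strict short exact sequence $0 \to A \to B \to C \to 0$ of Hausdorff LB spaces (closed first map, strict surjective second map), the sequence $0 \to A \widehat{\otimes}_K W \to B \widehat{\otimes}_K W \to C \widehat{\otimes}_K W \to 0$ is strict exact of the same form. I first handle the Banach case by choosing compatible unit balls $A^o := A \cap B^o$ and $C^o$ equal to the image of $B^o$ (possible after rescaling, by strictness), producing an exact sequence $0 \to A^o \to B^o \to C^o \to 0$ of $\mathcal{O}_K$-modules. Since $W^o$ is $p$-torsion free over the DVR $\mathcal{O}_K$ and hence flat, the algebraic tensor product sequence stays exact, and as $C^o \otimes_{\mathcal{O}_K} W^o$ is again $p$-torsion free, each mod $p^n$ reduction is exact; the surjectivity of the transition maps lets Mittag--Leffler give exactness after taking $\varprojlim_n$. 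Inverting $p$ recovers the claim for Banach spaces, and surjectivity of the map of unit balls gives strictness of $B\widehat{\otimes}_K W \twoheadrightarrow C\widehat{\otimes}_K W$; the first map is injective with closed image since it arises as a kernel.

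For the general LB case I would choose a Banach presentation $B = \varinjlim_n B_n$ and set $A_n := A \cap B_n$ (closed in $B_n$, hence Banach) and $C_n := B_n/A_n$; using Proposition \ref{inductive limit} together with the closedness of $A$ in $B$ one identifies $A = \varinjlim_n A_n$ as Hausdorff LB spaces, and the strictness of $B \twoheadrightarrow C$ yields $C = \varinjlim_n C_n$. Since $-\widehat{\otimes}_K W$ commutes with such filtered colimits (by the very definition of the LB tensor product in Notations) and filtered colimits preserve exactness of $K$-vector-space sequences, applying the Banach case termwise and passing to the colimit gives the LB case. The main technical obstacle will be this last compatibility of Banach presentations: verifying that $A_n = A \cap B_n$ inherits a Banach structure that reproduces the original LB structure of $A$ (so that $A \widehat{\otimes}_K W$ is independent of the chosen presentation, per the Notations), and that the quotient topologies on $B_n/A_n$ are compatible with the subspace topology induced from $C$. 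Both hinge on Proposition \ref{inductive limit} and the strictness hypothesis; without strictness the quotient Banach structure on $B_n/A_n$ could fail to match the inherited one from $C$, breaking the colimit identification.
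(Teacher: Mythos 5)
Your proposal matches the paper's proof in all essentials: decompose the complex into short strict exact sequences, reduce to the Banach case via the open mapping theorem, then use flatness of $W^o$ over $\mathcal{O}_K$ and $p$-torsion-freeness of the relevant unit balls to preserve exactness after $p$-adic completion and inverting $p$. The only cosmetic difference is that the paper obtains closed images of the tensored maps by first proving cohomology commutes and then invoking Hausdorffness via Lemma \ref{Hausdorff}, whereas you extract them directly from the preserved short exact sequences, and the technical point you flag about matching Banach presentations in the LB-to-Banach reduction is precisely what the paper's terse appeal to the open mapping theorem is covering.
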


\begin{proof}

Note that if we can prove $H^i(C^{\bullet}) \widehat{\otimes}_K W = H^i(C^{\bullet} \widehat{\otimes}_K W)$,  the maps of the complex $C^{\bullet} \widehat{\otimes} W$ have closed images by Lemma \ref{Hausdorff}. By using the open mapping theorem \cite[Proposition 8.8]{funcana} and decomposing $C^0 \rightarrow C^1 \rightarrow \cdots \rightarrow C^d$ by short exact sequences, it suffices to prove that for any short exact sequence $0 \rightarrow M_1 \rightarrow M_2 \rightarrow M_3 \rightarrow 0$ of Hausdorff LB spaces over $K$, the induced sequence $0 \rightarrow M_1 \widehat{\otimes}_{K} W \rightarrow M_2 \widehat{\otimes}_{K} W \rightarrow M_3 \widehat{\otimes}_{K} W \rightarrow 0$ is exact. Again by using the open mapping theorem \cite[Proposition 8.8]{funcana}, we may assume that $M_1$, $M_2$ and $M_3$ are Banach spaces and all maps are strict. Thus we may assume that the image of $M_2^{o}$ in $M_3$ and $M_2^{o} \cap M_1$ are unit balls of $M_3$ and $M_1$ respectively. Since $M_3^o$ is $p$-torsion free, the exact sequence $0 \rightarrow M_1^o \rightarrow M_2^o \rightarrow M_3^o \rightarrow 0$ induces the exact sequence $0 \rightarrow M_1^o \widehat{\otimes}_{\mathcal{O}_K} W^{o} \rightarrow M_2^o \widehat{\otimes}_{\mathcal{O}_K} W^{o} \rightarrow M_3^o \widehat{\otimes}_{\mathcal{O}_K} W^{o} \rightarrow 0$, which implies the result. \end{proof}

For a $p$-adic Lie group $G$ and a Banach representation $V$ of $G$ over $\mathbb{Q}_p$, we can consider the subspace $V^{\mathrm{la}}$ of locally analytic vectors in $V$. We recall the definition of $V^{\mathrm{la}}$ only in the case $G = \prod_{v \in T} \mathrm{GL}_{n_v}(D_v)$, where $T$ is a finite non-empty index set, $n_v \in \mathbb{Z}_{>0}$ and $D_v$ is a central division algebra over a finite extension $F_v$ of $\mathbb{Q}_p$. We only use such $p$-adic Lie groups in this paper. (Precisely, we also consider $G=\mathbb{Z}_p^n$, but such groups can be regarded as an open subgroup of our groups by $\mathrm{exp}(p^2 \cdot) : \mathbb{Z}_p^n \Isom (1+ p^2\mathbb{Z}_p)^n$ or in that case, the definition $V^{\mathrm{la}}$ is clear.) See \cite[Section 2]{PanI} and \cite{pL} for details of general cases. We consider a subgroup $G_0 = \prod_{v \in T} (I_{n_v} + p^n\mathrm{M}_{n_v}(\mathcal{O}_{D_v}))$ for $n > 1$. Then the usual exponential map induces a homeomorphism $\mathrm{exp} : p^n\prod_{v \in T}\mathrm{M}_{n_v}(\mathcal{O}_{D_v}) \Isom G_0$. (This is called a coordinate of the first kind.) We have $G_k := G_0^{p^k} = \prod_{v \in T} (I_{n_v} + p^{n+k}\mathrm{M}_{n_v}(\mathcal{O}_{D_v}))$ for $k \ge 0$. We fix an identification $p^n\prod_{v \in T}\mathrm{M}_{n_v}(\mathcal{O}_{D_v}) = \mathbb{Z}_p^m$ over $\mathbb{Z}_p$ for some $m$. We obtain a homeomorphism $\mathrm{exp} : p^k\mathbb{Z}_p^{m} \Isom G_k$. For a Banach space $W$ over $\mathbb{Q}_p$, let $C^{\mathrm{an}}(G_k, W)$ be the subspace $C(G_k, W)$ consisting of analytic functions $f$ on $G_k$, i.e., $\displaystyle f \circ \mathrm{exp}(x_1, \cdots, x_m) = \sum_{i=(i_1, \cdots, i_m) \in \mathbb{Z}_{\ge 0}^n} a_{i} x_1^{i_1} \cdots x_{m}^{i_m}$ for any $(x_1, \cdots, x_m) \in p^k\mathbb{Z}_p^m$, where $a_i \in W$ satisfies $a_{i}p^{k(i_1 + \cdots + i_m)} \rightarrow 0$ as $i_1+\cdots+i_m \rightarrow \infty$. This definition is independent of the choice of the identification $\mathbb{Z}_p^m = p^n\prod_{v \in T}\mathrm{M}_{n_v}(\mathcal{O}_{D_v})$. This is a Banach space by the norm $|| f ||_{G_k} = \mathrm{sup}_{i} || a_i ||p^{-k(i_1 + \cdots + i_m)}$.
For $f \in C^{\mathrm{an}}(G_k, W)$ and $X \in \mathfrak{g} := \mathrm{Lie} (G) = \prod_{v \in T}\mathrm{M}_{n_v}(D_v) = \mathbb{Q}_p^m$, we define $X f(g) := \frac{d}{dt} f(g(\mathrm{exp}(tX)))|_{t=0}$. This is also independent of the choice of the identification $\mathbb{Z}_p^m = p^n\prod_{v \in T}\mathrm{M}_{n_v}(\mathcal{O}_{D_v})$ and this defines an action of the Lie algebra $\mathfrak{g}$ over $\mathbb{Q}_p$ on $C^{\mathrm{an}}(G_k, W)$. 

Assume that $W$ is a Banach representation of $G_k$. Then we have an action of $G_k$ on $C^{\mathrm{an}}(G_k, W)$ by $(g \cdot f)(x) = g (f (g^{-1}x))$ for $g, x \in G_k$ and $f \in C^{\mathrm{an}}(G_k, W)$. Then the evaluation map $C^{\mathrm{an}}(G_k, W) \rightarrow W$ at the unit element $e$ is an injection on the invariant part of this action $f : C^{\mathrm{an}}(G_k, W)^{G_k} \hookrightarrow W$. Note that $f$ is equivariant if we consider the right translation action of $G_k$ on $C^{\mathrm{an}}(G_k, W)^{G_k}$. We put $W^{G_k-\mathrm{an}}:=\mathrm{Im}f$ and call this the space of $G_k$-analytic vectors. Note that $W^{G_k- \mathrm{an}}$ is a Banach space by $C^{\mathrm{an}}(G_k, W)^{G_k} \Isom W^{G_k- \mathrm{an}}$ and by using this isomorphism, the Lie algebra $\mathfrak{g}$ acts on $W^{G_k- \mathrm{an}}$. Let $W^{\mathrm{la}} := \varinjlim_{k' \ge k} W^{G_{k'} - \mathrm{an}}$. Note that $W^{\mathrm{la}}$ is a Hausdorff LB space because we have a continuous injection $W^{\mathrm{la}} \hookrightarrow W$ of $W^{\mathrm{la}}$ into the Hausdorff space $W$. Note that $W^{\mathrm{la}}$ is equipped with the natural action of $\mathfrak{g}$ and $G_k$. Moreover, if the action of $G_k$ on $W$ extends to $G$, then $W^{\mathrm{la}}$ is $G$-stable by the construction of $W^{\mathrm{la}}$. We recall some basic properties of $W^{\mathrm{la}}$.

\begin{lem} \label{norm}
    
Let $W$ be a Banach representation of $G_k$ over $\mathbb{Q}_p$ and $x \in W^{G_k-\mathrm{an}}$. Then the following holds.

1 \ $||x||_{G_{k + 1}} \le ||x||_{G_{k}}$.

2 \ $||x||$ is equal to $||x||_{G_{k'}}$ for sufficiently large $k'$.

\end{lem}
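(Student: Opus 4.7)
The plan is to reduce both claims to direct estimates on Taylor coefficients. By the construction recalled just above the lemma, the element $x \in W^{G_k\text{-}\mathrm{an}}$ corresponds to the unique $G_k$-equivariant analytic function $f : G_k \to W$ with $f(e) = x$, explicitly $f(g) = g \cdot x$, and pulling back along the exponential coordinates gives a convergent expansion
\[
f \circ \exp(y_1, \ldots, y_m) \;=\; \sum_{i \in \mathbb{Z}_{\ge 0}^m} a_i \, y_1^{i_1} \cdots y_m^{i_m}, \qquad a_i \in W, \quad \| a_i \| \, p^{-k|i|} \to 0,
\]
with $\| x \|_{G_k} = \sup_i \| a_i \| \, p^{-k|i|}$. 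The key point I would invoke repeatedly is that the coefficients $a_i$ depend only on $f$ (by uniqueness of the Taylor expansion in the non-archimedean setting), not on the subgroup $G_{k'}$ on which one chooses to regard $f$ as analytic; in particular the natural inclusion $W^{G_k\text{-}\mathrm{an}} \hookrightarrow W^{G_{k+1}\text{-}\mathrm{an}}$ sends $x$ to an element whose Taylor coefficients are the same $a_i$.

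For part (1), restricting $f$ to $G_{k+1} \subset G_k$ yields an analytic function with the same coefficients $a_i$, so the bound $p^{-(k+1)|i|} \le p^{-k|i|}$, valid for every $i \ge 0$, gives
\[
\| x \|_{G_{k+1}} \;=\; \sup_i \| a_i \| \, p^{-(k+1)|i|} \;\le\; \sup_i \| a_i \| \, p^{-k|i|} \;=\; \| x \|_{G_k}
\]
term by term.

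For part (2), note that $\| x \| = \| f(e) \| = \| a_0 \|$ and that the index $i = 0$ alone contributes $\| a_0 \|$ to the supremum, so $\| x \|_{G_{k'}} \ge \| x \|$ for every $k' \ge k$. It remains to show $\| a_i \| \, p^{-k'|i|} \le \| a_0 \|$ for every $|i| \ge 1$ once $k'$ is large. If $x = 0$ the map $f$ is identically zero and every $a_i$ vanishes, so assume $\| a_0 \| > 0$. The convergence hypothesis $\| a_i \| \, p^{-k|i|} \to 0$ implies $\| a_i \| \, p^{-k|i|} \le \| a_0 \|$ outside a finite set $S$ of multi-indices, and for these $i$ any $k' \ge k$ already gives $\| a_i \| \, p^{-k'|i|} \le \| a_i \| \, p^{-k|i|} \le \| a_0 \|$. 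For each of the finitely many exceptional multi-indices $i \in S$ with $|i| \ge 1$, the inequality $\| a_i \| \, p^{-k'|i|} \le \| a_0 \|$ holds as soon as $k'|i|$ exceeds $\log_p(\| a_i \| / \| a_0 \|)$, so choosing $k'$ large enough handles them all at once, giving $\| x \|_{G_{k'}} = \| a_0 \| = \| x \|$.

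The proof is essentially bookkeeping around the uniqueness of the Taylor coefficients and the standard non-archimedean fact that the Gauss norm of an analytic function over smaller and smaller polydiscs collapses to its value at the origin; I do not foresee any substantive obstacle beyond checking that the coefficients $a_i$ are intrinsic to $f$ and survive restriction from $G_k$ to $G_{k+1}$.
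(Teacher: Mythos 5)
Your proof is correct and is exactly what the paper means by its one-line justification ``This is clear from the definition of $\| \cdot \|_{G_k}$'': both parts reduce immediately to the definition $\|x\|_{G_{k'}} = \sup_i \|a_i\| p^{-k'|i|}$ in terms of the (intrinsic) Taylor coefficients $a_i$ of the unique $G_k$-equivariant extension $g \mapsto g\cdot x$, together with $\|x\| = \|a_0\|$. The added care about the exceptional finite set $S$ in part (2) is the only nontrivial point and you handle it correctly.
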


\begin{proof}

This is clear from the definition of $|| \ ||_{G_k}$. \end{proof}

\begin{prop}\label{continuity}

For any $D \in \mathfrak{g}$ and $k$, there exists a constant $C_{D,k}>0$ such that $||D(x)||_{G_{k}} \le C_{D, k}||x||_{G_k}$ for any $x \in W^{G_k-\mathrm{an}}$.

\end{prop}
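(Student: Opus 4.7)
The plan is to transfer the problem to the isomorphic Banach space $C^{\mathrm{an}}(G_k,W)^{G_k}$ and bound there the operator $R_D : f \mapsto \bigl(g \mapsto \tfrac{d}{dt}f(g\exp(tD))|_{t=0}\bigr)$. Since the scalar $G_k$-action on $W$ commutes with $d/dt$, one checks as in the excerpt that $R_D$ preserves the $G_k$-invariant subspace, and corresponds under the evaluation isomorphism to the action of $D$ on $W^{G_k-\mathrm{an}}$. It therefore suffices to produce $C_{D,k}$ with $\|R_D f\|_{G_k}\le C_{D,k}\|f\|_{G_k}$ for every $f\in C^{\mathrm{an}}(G_k,W)$.

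The first step is to express $R_D f$ in the chart $\exp:p^k\mathbb{Z}_p^m \Isom G_k$. The standing hypothesis $n>1$ forces $n>\tfrac{1}{p-1}$ for every $p$, so $\exp$, $\log$, and the Baker-Campbell-Hausdorff series $y\star z := \log(\exp(y)\exp(z))$ converge on $p^n\prod_v M_{n_v}(\mathcal{O}_{D_v})$, which under the fixed identification is all of $\mathbb{Z}_p^m$. For fixed $D\in\mathfrak{g}$ this yields a convergent power series
\begin{equation*}
\Psi(y,D) = D + \tfrac{1}{2}[y,D] + \tfrac{1}{12}[y,[y,D]] + \cdots \,=\, \sum_{i\in\mathbb{Z}_{\ge 0}^m} b_i\,y^i \in \mathfrak{g}
\end{equation*}
such that $\exp(y)\exp(tD) = \exp\bigl(y + t\Psi(y,D) + O(t^2)\bigr)$. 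Writing $f\circ\exp(y)=\sum_i a_i y^i$ and differentiating in $t$ at $0$ via the chain rule gives
\begin{equation*}
(R_D f)\circ \exp(y) \,=\, \sum_{j=1}^m \Psi_j(y,D)\,\partial_j(f\circ\exp)(y),
\end{equation*}
where $\Psi_j$ denotes the $j$-th coordinate of $\Psi$ under the identification $\mathfrak{g}=\mathbb{Q}_p^m$.

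The second step is to estimate the two factors in the analytic norm $\|\cdot\|_{G_k}$. The formula $\partial_j(\sum_i a_i y^i) = \sum_i i_j a_i y^{i-e_j}$ combined with $|i_j|_p\le 1$ gives the elementary bound $\|\partial_j h\|_{G_k}\le p^k\|h\|_{G_k}$ for every analytic $h$. For the coefficient, the convergence of BCH on $\mathbb{Z}_p^m$ evaluated at $|y|=1$ forces $|b_i|\to 0$, hence
\begin{equation*}
\|\Psi_j(\cdot,D)\|_{G_k} \,=\, \sup_i |b_i|\,p^{-k|i|} \,\le\, \sup_i |b_i| \,<\, \infty
\end{equation*}
for every $k\ge 0$. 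Since $\|\cdot\|_{G_k}$ is submultiplicative on analytic functions (immediate from multiplying monomials), combining the estimates yields
\begin{equation*}
\|R_D f\|_{G_k} \,\le\, p^k\Bigl(\sum_{j=1}^m \|\Psi_j(\cdot,D)\|_{G_k}\Bigr)\|f\|_{G_k},
\end{equation*}
and one takes $C_{D,k} := p^k\sum_j \|\Psi_j(\cdot,D)\|_{G_k}$.

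The essential point, and the only step sensitive to $p$-adic constants (in particular forcing $n\ge 2$ when $p=2$), is the convergence of the BCH series that lets one represent $\Psi(\cdot,D)$ as a genuine analytic function on $G_k$ with finite $\|\cdot\|_{G_k}$-norm; once this is in place the remainder of the proof is a routine Banach-algebra estimate.
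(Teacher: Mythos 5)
Your argument is correct. The paper itself supplies no in-text proof here---it simply cites [BC, Lemma 2.6]---so what you have written is a self-contained substitute for that citation, and it follows the standard route the cited lemma uses: pass to $C^{\mathrm{an}}(G_k,W)^{G_k}$ (noting $R_D$ commutes with the $G_k$-action on $C^{\mathrm{an}}(G_k,W)$ and hence preserves the invariants), write the right-invariant vector field in exponential coordinates as $\sum_j\Psi_j(y,D)\partial_j$ with $\Psi(y,D)=\sum_{j\ge 0}c_j\,\mathrm{ad}_y^j(D)$ the BCH tangent field ($c_j$ the Taylor coefficients of $z/(1-e^{-z})$, i.e.\ Bernoulli numbers over factorials), and estimate the two factors separately. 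The only step that needs a genuine $p$-adic estimate---and you correctly isolate it---is the finiteness of $\sup_i|b_i|$: since $v_p(c_j)\ge -1-j/(p-1)$ (von Staudt--Clausen plus $v_p(j!)\le j/(p-1)$) while $|\mathrm{ad}_y^j(D)|\le p^{-nj}|D|$ in the matrix norm, the $j$-th term has norm $\le p^{1+j(1/(p-1)-n)}|D|$, and the standing hypothesis $n>1$ forces $1/(p-1)-n<0$ for every $p$, so the coefficients tend to $0$ and $\Psi(\cdot,D)$ lies in $C^{\mathrm{an}}(G_k,\mathfrak g)$. The remaining inequalities $\|\partial_j h\|_{G_k}\le p^k\|h\|_{G_k}$ and $\|\Psi_j\cdot h\|_{G_k}\le\|\Psi_j\|_{G_k}\|h\|_{G_k}$ are routine ultrametric estimates on Taylor coefficients, and the bound follows with $C_{D,k}=p^k\max_j\|\Psi_j(\cdot,D)\|_{G_k}$ (your $\sum_j$ also works but the ultrametric inequality already gives the max).
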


\begin{proof} See \cite[Lemma 2.6]{BC}. \end{proof}

\begin{prop}\label{locally analytic}

Let $V$ be a Banach representation of $G_k$ over $\mathbb{Q}_p$.
        
Then the following conditions are equivalent.
        
1 \ $V = V^{\mathrm{la}}$.
        
2 \ $V = V^{G_{k'}-an}$ for some $k' \ge k$.
        
3 \ For any positive integer $l$, there exist $k' \ge k$ and a $G_{k'}$-stable lattice $V^+$ of $V$ such that the action of $G_{k'}$ is trivial on $V^+/p^lV^+$ for some $k' \ge k$.
        
\end{prop}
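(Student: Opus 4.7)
\medskip

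\noindent\textbf{Proof plan for Proposition \ref{locally analytic}.}

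The plan is to establish the cycle $(2) \Rightarrow (1) \Rightarrow (2)$ first (which is essentially formal from the definition of $V^{\mathrm{la}}$) and then the cycle $(2) \Leftrightarrow (3)$, the latter being the substantive content.

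The equivalence $(1) \Leftrightarrow (2)$ is straightforward. The implication $(2) \Rightarrow (1)$ is immediate from the definition $V^{\mathrm{la}} = \varinjlim_{k' \ge k} V^{G_{k'}-\mathrm{an}}$. For $(1) \Rightarrow (2)$, since $V$ is a Banach space and the inclusion $V = V^{\mathrm{la}} = \varinjlim_{k'} V^{G_{k'}-\mathrm{an}} \hookrightarrow V$ is the identity (in particular continuous), Proposition \ref{inductive limit} applies: the identity factors through some $V^{G_{k'}-\mathrm{an}}$, so $V = V^{G_{k'}-\mathrm{an}}$.

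For $(2) \Rightarrow (3)$, assume $V = V^{G_{k'}-\mathrm{an}}$. By the open mapping theorem the continuous bijection $V^{G_{k'}-\mathrm{an}} \to V$ has continuous inverse, so the norms $\|\cdot\|_{G_{k'}}$ and $\|\cdot\|$ on $V$ are equivalent: there is $C>0$ with $\|x\|_{G_{k'}} \le C \|x\|$ for all $x \in V$. Take $V^{+}$ to be the unit ball of $V$ and expand the orbit map: for $x \in V$ and $g = \exp(t_1,\dots,t_m) \in G_{k'+n}$ one has $g \cdot x - x = \sum_{|i|\ge 1} a_i(x)\, t_1^{i_1}\cdots t_m^{i_m}$ with $t_j \in p^{k'+n}\mathbb{Z}_p$, hence
\[
\|g\cdot x - x\| \;\le\; \sup_{|i|\ge 1} \|a_i(x)\|\, p^{-(k'+n)|i|} \;\le\; \|x\|_{G_{k'}}\, p^{-n} \;\le\; C\, p^{-n}\, \|x\|.
\]
Choosing $n$ so that $Cp^{-n} \le p^{-l}$ and setting $k'' = k'+n$ yields $(g-1)V^{+} \subset p^l V^{+}$ for all $g \in G_{k''}$, i.e.\ condition $(3)$.

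The main obstacle is the converse $(3) \Rightarrow (2)$, since we must produce analyticity from only a mod-$p^l$ triviality. The plan is: by enlarging $l$ at the cost of replacing $k'$ by a larger integer (one can always pass from $(3)$ with parameter $l$ to $(3)$ with any larger parameter by iterating the argument in the previous paragraph applied to $V^{+}$ in place of $V$), we may assume $l$ is large enough that $l > 1/(p-1)$. Then for every $g \in G_{k'}$ the operator $g-1$ on $V$ preserves $V^+$ with $\|(g-1)|_{V^+}\| \le p^{-l}$, so the series
\[
\log g \;:=\; \sum_{j\ge 1} (-1)^{j+1}\frac{(g-1)^j}{j}
\]
converges in $\mathrm{End}_{\mathbb{Q}_p}(V)$ with $\log g \cdot V^+ \subset p^{l-1/(p-1)}V^+$. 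Dually, for $X \in \mathrm{End}(V)$ of operator norm $< p^{-1/(p-1)}$ on $V^+$ the series $\exp(X) = \sum_j X^j/j!$ converges and defines a bounded operator, and the classical identities $\exp(\log g) = g$ and $\log(\exp X) = X$ hold. Combining this with the homeomorphism $\exp : p^{k''}\mathbb{Z}_p^m \Isom G_{k''}$ for sufficiently large $k''$, one shows that for every $x \in V^+$ the map
\[
(t_1,\dots,t_m) \;\longmapsto\; \exp\bigl(t_1 D_1 + \cdots + t_m D_m\bigr)\cdot x \;=\; \sum_{j\ge 0} \frac{(t_1 D_1 + \cdots + t_m D_m)^j}{j!}\, x
\]
is given by a convergent power series on $p^{k''}\mathbb{Z}_p^m$ with coefficients in $V^+$, where the action of $D_i$ on $x$ is defined via $\log$. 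This realises $V^+ \subset V^{G_{k''}-\mathrm{an}}$, and hence $V = V^{G_{k''}-\mathrm{an}}$, as required.

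The hard part is the convergence bookkeeping for the double series $\exp(\log g) \cdot x$: one must carefully combine the $p$-adic estimates $v_p(j!) \le j/(p-1)$ and $v_p(j) \le \log_p j$ with the hypothesis $l > 1/(p-1)$ to justify simultaneously the convergence of $\log g$, of $\exp$, and of the resulting Taylor expansion with the correct growth on $p^{k''}\mathbb{Z}_p^m$. The independence of the expansion from the choice of the identification $p^n \prod_v \mathrm{M}_{n_v}(\mathcal{O}_{D_v}) \cong \mathbb{Z}_p^m$ then shows $x \in V^{G_{k''}-\mathrm{an}}$ in the sense defined before the proposition.
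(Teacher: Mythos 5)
The equivalence $(1)\Leftrightarrow(2)$ and the direction $(2)\Rightarrow(3)$ are fine: the factorization through some $V^{G_{k'}-\mathrm{an}}$ via Proposition~\ref{inductive limit}, the open mapping theorem for norm equivalence, and the Taylor estimate $\|gv-v\|\le p^{-n}\|v\|_{G_{k'}}$ on $G_{k'+n}$ are all correct. (The paper itself simply cites \cite[Lemma 2.1.4]{Cam}, so you are attempting a self-contained proof.) However, the converse $(3)\Rightarrow(2)$ — which you yourself flag as the substantive direction — has a genuine gap.

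You construct operators $D_i$ via $\log$, verify that $\exp_{\mathrm{op}}\bigl(t_1D_1+\cdots+t_mD_m\bigr)x$ is a convergent power series on $p^{k''}\mathbb{Z}_p^m$ with controlled coefficients, and then declare ``this realises $V^+\subset V^{G_{k''}-\mathrm{an}}$''. But nothing in your argument shows that this power series \emph{is} the orbit map $t\mapsto \rho(\exp(t))\cdot x$. That identity is the crux, and for a non-abelian $G_{k'}$ it is not automatic: $\exp(t_1e_1+\cdots+t_me_m)$ is not $\prod_i\exp(t_ie_i)$ in the group, and $\exp_{\mathrm{op}}(\sum t_iD_i)$ is not $\prod_i\exp_{\mathrm{op}}(t_iD_i)$ in $\mathrm{End}(V)$, so agreement on the coordinate axes (which you do get, since $\rho(g_i^t)=\exp_{\mathrm{op}}(tD_i)$ for a one-parameter subgroup) does not extend off the axes. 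What you are implicitly asserting is that $L:=\log\circ\,\rho\circ\exp$ is $\mathbb{Z}_p$-linear, i.e.\ that $d\rho$ exists and is linear — but that is essentially the analyticity of $\rho$, which is what is being proved. A correct argument here must either (i) first establish the analyticity of the orbit map in the coordinate of the second kind $g=g_1^{s_1}\cdots g_m^{s_m}$ — where $\rho(g)x=\prod_i\exp_{\mathrm{op}}(s_iD_i)x$ \emph{is} valid term by term — and then transport back to the coordinate of the first kind via the BCH change of chart, which is a convergent power series with $\mathbb{Z}_p$-coefficients; or (ii) use a Mahler/Amice-type criterion directly on $(1+b_1)^{s_1}\cdots(1+b_m)^{s_m}x$ with $b_i=\rho(g_i)-1$. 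As written, the ``hard part'' you identify (convergence bookkeeping) is actually the easy part; the missing piece is the equality of functions.

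A secondary issue: the claim that one may increase $l$ ``by iterating the argument in the previous paragraph applied to $V^+$ in place of $V$'' is circular, since that paragraph's estimate used the assumption $(2)$ which is what you are trying to prove. The correct justification is elementary and should be stated: for $g\in G_{k'}$ write $g^p-1=\sum_{j=1}^{p}\binom{p}{j}(g-1)^j$; since $p\mid\binom{p}{j}$ for $1\le j<p$ and $(g-1)^jV^+\subset p^{jl}V^+$, one gets $(g^p-1)V^+\subset p^{l+1}V^+$, so the lattice condition holds on $G_{k'+1}$ with parameter $l+1$, and by iteration $l$ can be taken arbitrarily large.
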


\begin{proof} See \cite[Lemma 2.1.4]{Cam}. \end{proof}

\begin{ex} \label{example}

1 \ Any finite-dimensional representation of $G_k$ over $\mathbb{Q}_p$ satisfies the condition 3 and hence all conditions of Proposition \ref{locally analytic}.

2 \ Any complete f-adic ring $A$ which is topologically of finite type over a complete nonarchimedean extension field $K$ of $\mathbb{Q}_p$ with a continuous action of $G_k$ over $K$ also satisfies the condition 3 and hence all conditions of Proposition \ref{locally analytic}.

3 \ The $\mathbb{Z}_p(1)$-perfectoid cover $C\langle T^{\pm \frac{1}{p^{\infty}}} \rangle := \widehat{\cup_k \mathcal{O}_{C}\langle T^{\pm \frac{1}{p^{k}}} \rangle}[\frac{1}{p}]$ of $C\langle T^{\pm 1} \rangle$, which can be regarded as a representation of $\mathbb{Z}_p \cong \mathbb{Z}_p(1)$ doesn't satisfy the conditions of Proposition \ref{locally analytic}. Actually, we can prove $C\langle T^{\pm \frac{1}{p^{\infty}}} \rangle^{\mathrm{la}} = \cup_k C\langle T^{\pm \frac{1}{p^{k}}} \rangle$ by using the Tate normalized trace. (cf. \cite[example 3.1.6]{PanI}.)

\end{ex}

\begin{dfn}

Let $V$ be a Banach representation of $G_k$ over $\mathbb{Q}_p$. We say that $V$ is an admissible representation if there exists $k' \ge k$ and a positive integer $m$ such that there exists a closed $G_{k'}$-equivariant embedding of $V$ into $C(G_{k'}, \mathbb{Q}_p)^{\oplus m}$.

\end{dfn}

\begin{thm}\label{density of locally analytic vectors}

Let $V$ be an admissible Banach representation of $G_k$ over $\mathbb{Q}_p$. Then $V^{\mathrm{la}}$ is a dense subspace of $V$.

\end{thm}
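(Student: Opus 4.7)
The plan is to follow the duality-based strategy of Schneider--Teitelbaum. First I would reduce to a statement about dual modules. By admissibility, choose a closed $G_{k'}$-equivariant embedding $V \hookrightarrow C(G_{k'}, \mathbb{Q}_p)^{\oplus m}$. Since this is a closed embedding of Banach spaces, the topology and orbit maps on $V$ agree with those inherited from the ambient space, so $V^{\mathrm{la}} = V \cap (C(G_{k'},\mathbb{Q}_p)^{\oplus m})^{\mathrm{la}}$. To prove density of $V^{\mathrm{la}}$ in $V$, by Hahn--Banach (Proposition 9.2 of the functional-analytic reference already used in the proof of Lemma \ref{Hausdorff}) it suffices to show that any continuous linear functional $\ell \in V' := \mathrm{Hom}_{\mathrm{cts}}(V,\mathbb{Q}_p)$ which vanishes on $V^{\mathrm{la}}$ must be zero.

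Next I would exploit the structure of $V'$. Dualizing the closed embedding above yields a surjection $\Lambda^{\oplus m} \twoheadrightarrow V'$, where $\Lambda = \mathbb{Z}_p[[G_{k'}]][\tfrac{1}{p}]$ is the Iwasawa algebra, which is identified with the continuous dual of $C(G_{k'},\mathbb{Q}_p)$ (continuous measures on $G_{k'}$). Thus $V'$ is finitely generated over $\Lambda$. The crucial input is the inclusion $\Lambda \hookrightarrow D^{\mathrm{la}}(G_{k'},\mathbb{Q}_p)$ into the algebra of locally analytic distributions, together with the fact, due to Schneider--Teitelbaum, that $D^{\mathrm{la}}(G_{k'},\mathbb{Q}_p)$ is a Fr\'echet--Stein algebra and that this inclusion is faithfully flat.

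I would then form the coadmissible module $V'_{\mathrm{la}} := V' \otimes_\Lambda D^{\mathrm{la}}(G_{k'},\mathbb{Q}_p)$, which under the Schneider--Teitelbaum anti-equivalence between coadmissible $D^{\mathrm{la}}$-modules and admissible locally analytic representations corresponds to $V^{\mathrm{la}}$. Concretely, one identifies $V^{\mathrm{la}}$ with the space of continuous $\mathbb{Q}_p$-linear functionals on $V'_{\mathrm{la}}$ (with respect to its canonical Fr\'echet topology as a coadmissible module), compatibly with the embedding $V^{\mathrm{la}} \hookrightarrow V$. Under this identification, a functional $\ell \in V'$ vanishing on $V^{\mathrm{la}}$ corresponds to an element of the kernel of $V' \to V'_{\mathrm{la}}$. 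Since $\Lambda \to D^{\mathrm{la}}(G_{k'},\mathbb{Q}_p)$ is flat and $V'$ is finitely generated over $\Lambda$ (one can moreover arrange injectivity via faithful flatness), this kernel is zero, and hence $V^{\mathrm{la}}$ is dense in $V$.

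The main obstacle is the input from Schneider--Teitelbaum: establishing the Fr\'echet--Stein structure on $D^{\mathrm{la}}(G_{k'},\mathbb{Q}_p)$ and proving faithful flatness of $\Lambda \hookrightarrow D^{\mathrm{la}}(G_{k'},\mathbb{Q}_p)$. The direct attempts one might hope for, such as approximating $v \in V$ by averages $\tfrac{1}{\mathrm{vol}(G_n)}\int_{G_n} g\cdot v\,dg$ of $G_n$-invariant vectors, fail because in the $p$-adic setting the normalizing factor $\mathrm{vol}(G_n)^{-1}$ grows in absolute value and prevents any such Bochner-type integral from converging in $V$; this is precisely what forces the detour through the distribution algebra and its Fr\'echet--Stein theory.
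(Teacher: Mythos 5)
Your proposal is correct and is essentially a faithful reconstruction of the argument in Schneider--Teitelbaum (Invent.\ Math.\ 153, Theorem 7.1), which is exactly what the paper's proof cites: dualize to reduce density to injectivity of $V'\to D^{\mathrm{la}}(G_{k'},\mathbb{Q}_p)\otimes_{\Lambda} V'$, and deduce that injectivity from faithful flatness of $\Lambda\hookrightarrow D^{\mathrm{la}}(G_{k'},\mathbb{Q}_p)$ together with the Fr\'echet--Stein structure on the distribution algebra. So you take the same route as the paper, just spelled out rather than cited.
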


\begin{proof} See \cite[Theorem 7.1]{ST}. \end{proof}

In this paper, we will use a variant of the locally analytic vector. Let $L$ be a finite extension of $\mathbb{Q}_p$ such that for $\mathrm{Hom}_{\mathbb{Q}_p}(F_v, \overline{L}) = \mathrm{Hom}_{\mathbb{Q}_p}(F_v, L)$ for any $v \in T$ and $D_v \otimes_{F_v, \tau} L = M_{m_v}(L)$ for any $\tau \in \mathrm{Hom}_{\mathbb{Q}_p}(F_v, L)$ and some $m_v \in \mathbb{Z}_{>0}$. Let $\Sigma := \sqcup_{v \in T} \mathrm{Hom}_{\mathbb{Q}_p}(F_v, L)$. Then we have a natural decomposition $\mathfrak{g}_L := \mathfrak{g} \otimes_{\mathbb{Q}_p} L = \oplus_{\tau \in \Sigma} \mathfrak{g}_{\tau}$, where $\mathfrak{g}_{\tau} = M_{n_vm_v}(L)$ if $\tau \in \mathrm{Hom}_{\mathbb{Q}_p}(F_v, L)$. Let $\Psi$ be a subset of $\Sigma$.

For a Banach representation $W$ of $G_k$ over $L$, we consider the $G_k$-analytic $\oplus_{\tau \notin \Psi} \mathfrak{g}_{\tau}$-trivial part: $W^{G_k-\Psi-\mathrm{an}} := W^{G_k-\mathrm{an}, \oplus_{\tau \notin \Psi} \mathfrak{g}_{\tau}} = (C^{\mathrm{an}}(G_k, W)^{\oplus_{\tau \notin \Psi} \mathfrak{g}_{\tau}})^{G_k} = (C^{\mathrm{an}}(G_k, L)^{\oplus_{\tau \notin \Psi} \mathfrak{g}_{\tau}} \widehat{\otimes}_L W)^{G_k}$. Note that we consider $\mathbb{Q}_p$-analytic functions and don't consider $L$-analytic functions. This is a closed subspace of $W^{G_k-\mathrm{an}}$ and $\mathfrak{g}_{\Psi}:=\oplus_{\tau \in \Psi} \mathfrak{g}_{\tau}$ continuously acts on this by Proposition \ref{continuity}. We also consider $W^{\Psi-\mathrm{la}} := W^{\mathrm{la}, \oplus_{\tau \notin \Psi} \mathfrak{g}_{\tau}} = \varinjlim_{k' \ge k}(C^{\mathrm{an}}(G_{k'}, W)^{\oplus_{\tau \notin \Psi} \mathfrak{g}_{\tau}})^{G_{k'}}$.

If $\Psi = \Phi$, then $W^{\Psi-\mathrm{la}} = W^{\mathrm{la}}$ and if $\Psi$ is empty, then $W^{\Psi-\mathrm{la}} = W^{\mathrm{sm}}$. For subsets $\Psi' \subset \Psi$, we have $M^{\Psi'-G_n-\mathrm{an}} \subset M^{\Psi-G_n-\mathrm{an}}$. 

\vspace{0.5 \baselineskip}

We introduce the ``derived'' functor $R^i\Psi\mathfrak{LA}(W) := \varinjlim_k H^i(G_k, C^{\mathrm{an}}(G_k, L)^{\oplus_{\tau \notin \Psi} \mathfrak{g}_{\tau}} \widehat{\otimes}_{L} W)$ of the functor $W \mapsto W^{\Psi-\mathrm{la}}$, which is a variant of $R^i\mathfrak{LA}$ introduced in \cite[{\S} 2.2]{PanI}. Note that the category of Banach representations is not abelian. The author doesn't know any appropriate universal property which characterizes $R^i\Psi\mathfrak{LA}$. However, by the definition of $R^i\Psi\mathfrak{LA}$, we can obtain the following, which are very similar to results in \cite[{\S} 2.2]{PanI}. 

\begin{lem}\label{fundamental}

    1 \ Let $W$ be a Banach representation of $G_k$ over $L$. Then $W^{\Psi-\mathrm{la}} = R^0\Psi\mathfrak{LA}(W)$.

    2 \ Let $0 \rightarrow W_1 \rightarrow W_2 \rightarrow W_3 \rightarrow 0$ be a short exact sequence of Banach representations of $G_k$ over $L$. Then we have a long exact sequence $0 \rightarrow W_1^{\Psi-\mathrm{la}} \rightarrow W_2^{\Psi-\mathrm{la}} \rightarrow W_3^{\Psi-\mathrm{la}} \rightarrow R^1\Psi\mathfrak{LA}(W_1) \rightarrow R^1\Psi\mathfrak{LA}(W_2) \rightarrow \cdots$.

    3 Let $0 \rightarrow M \rightarrow W^0 \rightarrow W^1 \rightarrow \cdots \rightarrow W^s \rightarrow 0$ be an exact sequence of Banach representations of $G_k$ over $L$ with strict maps such that $R^i\Psi\mathfrak{LA}(W^j) = 0$ for any $i \ge 1$ and $j \ge 0$. Then we have $R^i\Psi\mathfrak{LA}(M) \cong H^i((W^{\cdot} )^{\Psi-\mathrm{la}})$ for any $i \ge 0$.
    
    4 \ Let $0 \rightarrow W^0 \rightarrow W^1 \rightarrow \cdots \rightarrow W^s \rightarrow 0$ be a bounded complex of Banach representations of $G_k$ over $L$ with strict maps such that $R^i\Psi\mathfrak{LA}(W^j) = 0$ for any $i \ge 1$ and any $j$. Then we have a spectral sequence $E_2^{u, t} := R^u\Psi\mathfrak{LA}(H^t(W^{\cdot})) \Rightarrow H^{u+t}((W^{\cdot})^{\Psi-\mathrm{la}})$.
    
    
    \end{lem}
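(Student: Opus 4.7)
My plan is to treat the four parts in turn, using the same underlying mechanism: combine the Banach-space tensor product with continuous cochain cohomology of the profinite groups $G_k$, and then pass to the filtered direct limit in $k$. Throughout, write $A_k := C^{\mathrm{an}}(G_k,L)^{\oplus_{\tau\notin\Psi}\mathfrak{g}_\tau}$, which is a closed Banach subspace of $C^{\mathrm{an}}(G_k,L)$ stable under the natural $G_k$-action.

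For Part 1, I would unwind definitions. Evaluation at the identity gives, for any Banach $G_k$-representation $W$, a continuous $G_k$-equivariant isomorphism $(C^{\mathrm{an}}(G_k,L)\,\widehat{\otimes}_L W)^{G_k}\cong W^{G_k\text{-an}}$, where $G_k$ acts diagonally. The $\oplus_{\tau\notin\Psi}\mathfrak{g}_\tau$-action commutes with that of $G_k$, so taking its invariants commutes with taking $H^0(G_k,-)$, yielding $(A_k\,\widehat{\otimes}_L W)^{G_k}\cong W^{G_k-\Psi-\mathrm{an}}$. Taking $\varinjlim_k$ gives $W^{\Psi\text{-la}}$, proving Part 1.

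For Part 2, the main point is that $A_k\,\widehat{\otimes}_L(-)$ preserves short exact sequences of Banach spaces whose maps are strict. This is the unit-ball argument that already appeared in the proof of Lemma \ref{tensor product}: after reducing to unit balls and using the $p$-torsion-freeness of the quotient, the exact sequence is preserved modulo $p^n$ for every $n$, and completing back gives the Banach-level exactness. Applying $A_k\,\widehat{\otimes}_L(-)$ to a strict short exact sequence $0\to W_1\to W_2\to W_3\to 0$ therefore produces another strict short exact sequence of Banach $G_k$-representations, and the usual continuous cochain complex $C^{\mathrm{cts}}(G_k^{\bullet+1},-)$ (which is exact on strict short exact sequences of Banach modules) supplies a long exact sequence in continuous cohomology. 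Filtered colimits are exact, so passing to $\varinjlim_k$ yields the desired long exact sequence of $R^i\Psi\mathfrak{LA}$.

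For Part 3, I would decompose the given exact sequence into strict short exact sequences $0\to K^j\to W^j\to K^{j+1}\to 0$ with $K^0=M$ and $K^{s+1}=0$, and iterate the long exact sequence of Part 2. The hypothesis $R^{\ge 1}\Psi\mathfrak{LA}(W^j)=0$ converts the long exact sequences into the dimension-shifting isomorphisms $R^i\Psi\mathfrak{LA}(K^j)\cong R^{i-1}\Psi\mathfrak{LA}(K^{j+1})$ for $i\ge 2$ (and a similar, slightly more delicate statement in degrees $0,1$ where the complex $(W^\cdot)^{\Psi\text{-la}}$ enters via $H^0=R^0\Psi\mathfrak{LA}$). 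Splicing these isomorphisms gives $R^i\Psi\mathfrak{LA}(M)\cong H^i((W^\cdot)^{\Psi\text{-la}})$. For Part 4, I would run the standard hyperderived functor spectral sequence: form the double complex $A_n\,\widehat{\otimes}_L W^j$, take $G_n$-cohomology and then $\varinjlim_n$. The two spectral sequences of the double complex have the same abutment; one degenerates because $R^{\ge 1}\Psi\mathfrak{LA}(W^j)=0$ and identifies the abutment with $H^{u+t}((W^\cdot)^{\Psi\text{-la}})$, while the other has $E_2^{u,t}=R^u\Psi\mathfrak{LA}(H^t(W^\cdot))$ (this uses Part 2 applied to the strict short exact sequences cutting $W^\cdot$ into its cocycles, coboundaries and cohomology). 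Comparing yields the spectral sequence.

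The main obstacle is the preservation of strictness throughout. The category of Banach representations is not abelian, so we cannot invoke standard abelian-categorical formalism; we must check by hand, as in the proof of Lemma \ref{tensor product}, that $A_k\,\widehat{\otimes}_L(-)$ maps strict maps to strict maps with closed image, and that the cocycle/coboundary decompositions in Parts 3 and 4 stay in the category of Banach spaces with strict maps (so that Part 2 is applicable at each step). Once this strictness is secured, everything else is formal from continuous cochain cohomology and the exactness of filtered direct limits.
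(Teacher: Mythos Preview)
Your proposal is correct and follows essentially the same approach as the paper: Part~1 by unwinding definitions, Part~2 via exactness of $A_k\widehat{\otimes}_L(-)$ on strict short exact sequences (the paper simply asserts this exactness, and your reference to the unit-ball argument of Lemma~\ref{tensor product} is the right justification) followed by the long exact sequence in continuous group cohomology and a filtered colimit, Part~3 by dimension shifting, and Part~4 via the bicomplex $C(G_{k'}^{\bullet+1}, A_{k'}\widehat{\otimes}_L W^\cdot)$ and its two spectral sequences. You are in fact more explicit than the paper about the strictness bookkeeping needed to stay inside the category of Banach representations, which the paper leaves implicit.
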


\begin{proof}

The property 1 is clear. The property 2 follows from the exactness of $$0 \rightarrow C^{\mathrm{an}}(G_k, L)^{\oplus_{\tau \notin \Psi} \mathfrak{g}_{\tau}} \widehat{\otimes}_L W_1 \rightarrow C^{\mathrm{an}}(G_k, L)^{\oplus_{\tau \notin \Psi} \mathfrak{g}_{\tau}} \widehat{\otimes}_L W_2 \rightarrow C^{\mathrm{an}}(G_k, L)^{\oplus_{\tau \notin \Psi} \mathfrak{g}_{\tau}} \widehat{\otimes}_L W_3 \rightarrow 0. $$

The statement 3 is clear. The spectral sequence of 4 is induced by the following bicomplex $C^{\cdot, \cdot}$.

\xymatrix{ \ & \ & \ & \\
\varinjlim_{k' \ge k} C(G_{k'}^2, C^{\mathrm{an}}(G_{k'}, L)^{\oplus_{\tau \notin \Psi} \mathfrak{g}_{\tau}} \widehat{\otimes}_LW^0) \ar[r] \ar[u] & \cdots \ar[r] \ar[u] & \varinjlim_{k' \ge k}C(G_{k'}^2, C^{\mathrm{an}}(G_{k'}, L)^{\oplus_{\tau \notin \Psi} \mathfrak{g}_{\tau}} \widehat{\otimes}_L W^s) \ar[u] \\
\varinjlim_{k' \ge k} C(G_{k'}, C^{\mathrm{an}}(G_{k'}, L)^{\oplus_{\tau \notin \Psi} \mathfrak{g}_{\tau}} \widehat{\otimes}_LW^0) \ar[r] \ar[u] & \cdots \ar[r] \ar[u] & \varinjlim_{k' \ge k} C(G_{k'}, C^{\mathrm{an}}(G_{k'}, L)^{\oplus_{\tau \notin \Psi} \mathfrak{g}_{\tau}} \widehat{\otimes}_L W^s) \ar[u] \\
\varinjlim_{k' \ge k} C^{\mathrm{an}}(G_k, L)^{\oplus_{\tau \notin \Psi} \mathfrak{g}_{\tau}} \widehat{\otimes}_L W^0 \ar[r] \ar[u] & \cdots \ar[u] \ar[r] & \varinjlim_{k' \ge k} C^{\mathrm{an}}(G_k, L)^{\oplus_{\tau \notin \Psi} \mathfrak{g}_{\tau}} \widehat{\otimes}_L W^s \ar[u] 
}

Note that $(W^{\cdot})^{\Psi-\mathrm{la}} \rightarrow C^{\mathrm{tot}}$ is a quasi-isomorphism by the assumption $R^i\Psi\mathfrak{LA}(W^j) = 0$ for any $i \ge 1$ and any $j$.   \end{proof}

Finally, we state a certain formal expansion principle, which was used in \cite[proofs of Theorems 3.6.1 and 4.3.9]{PanI}.

Let $B$ be a commutative Banach algebra over $L$ with a continuous action of $G_k$ over $L$ and let $\mathfrak{b}$ be a solvable Lie subalgebra of $\mathfrak{g}_{\Psi}$. We fix a sequence of subalgebras $0=\mathfrak{b}_0 \subset \mathfrak{b}_1 \subset \cdots \subset \mathfrak{b}_s$ such that $\mathfrak{b}_{i-1}$ is an ideal of $\mathfrak{b}_{i}$ and $\mathfrak{b}_i/\mathfrak{b}_{i-1}$ has dimension $1$ and for any $i$, fix a representative $X_i \in \mathfrak{b}_i$ of a generator of $\mathfrak{b}_i/\mathfrak{b}_{i-1}$. Assume that $B^{\mathrm{sm}}$ is dense in $B$ and for any $i$, there exist $v_1, \cdots, v_s \in B^{\Psi-G_{k}-\mathrm{la}}$ such that $X_i(v_i) = 1$ and $X_j(v_i) = 0$ for any $i > j$.

Take an increasing sequence of integers $k \le r(1) < r(2) < \cdots < r(n)$\footnote{We forget the previous $n$.} $< \cdots $ and take $v_{i, n} \in B^{G_{r(n)}}$ such that $|| v_i - v_{i,n} || \le p^{-n - 1}$. Moreover, after replacing $r(n)$ if necessary, we may assume $|| v_i - v_{i,n} ||_{G_{r(n)}} = || v_i - v_{i, n}||$ by Lemma \ref{norm}.

\begin{lem}\label{formal expansion} In the above situation, we obtain the following statements.

1 \ There exists $n_0 > 0$ such that for any $n \ge n_0$ and any $f \in B^{G_{k}-\Psi-\mathrm{an}}$, we have $$f = \sum_{j:=(j_1, \cdots, j_s) \in \mathbb{Z}^s_{\ge 0}} a_{j} \prod_{i} (v_i - v_{i,n})^{j_i}$$ in $B^{G_{r(n)}-\Psi-\mathrm{an}}$ for some $a_j \in B^{G_{r(n)}-\Psi-\mathrm{an}, \mathfrak{b}}$ satisfying $\mathrm{sup}_j || a_j ||_{G_{r(n)}}p^{-n(j_1 + \cdots + j_s)} \le || f ||_{G_k}$.

Moreover, $a_j$'s are uniquely determined by $f$.

2 \ The subspace $B^{G_{r(n)}-\Psi-\mathrm{an}, \mathfrak{b}}\lbrace v_1 - v_{1, n}, \cdots, v_s - v_{s, n} \rbrace := \{ f = \sum_{j:=(j_1, \cdots, j_s) \in \mathbb{Z}^s_{\ge 0}} a_{j} \prod_{i} (v_i - v_{i, n})^{j_i} \mid a_j \in B^{G_{r(n)}-\Psi-\mathrm{an}, \mathfrak{b}}, \ \mathrm{sup}_j || a_j ||_{G_{r(n)}}p^{-n(j_1 + \cdots + j_s)} < \infty \}$ of $B^{\Psi-G_{r(n)}-\mathrm{an}}$ is a Banach space by the norm $|| f ||_{v_{\cdot}} := \mathrm{sup}_{j} || a_j ||_{G_{r(n)}}p^{-(n+1)(j_1 + \cdots + j_s)}$ and by this Banach space structure, the natural inclusion $B^{G_{r(n)}-\Psi-\mathrm{an}, \mathfrak{b}}\lbrace v_1- v_{1, n}, \cdots, v_s- v_{s, n} \rbrace \hookrightarrow B^{G_{r(n)}-\Psi-\mathrm{an}}$ is continuous and the natural injection $B^{G_{k}-\Psi-\mathrm{an}} \hookrightarrow B^{G_{r(n)}-\Psi-\mathrm{an}}$ factors through a continuous injection $B^{G_{k}-\Psi-\mathrm{an}} \hookrightarrow  B^{G_{r(n)}-\Psi-\mathrm{an}, \mathfrak{b}}\lbrace v_1 - v_{1, n} ,\cdots v_s - v_{s,n} \rbrace$.

\end{lem}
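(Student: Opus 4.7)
My plan is to proceed by induction on $s$, adapting the Taylor-expansion argument of \cite[proof of Theorem 4.3.9]{PanI} (sketched in this paper just after Theorem \ref{mikami expansion}) to the abstract solvable-Lie-algebra setting. At each stage $i$ the main tool will be the ``projection operator''
$$S_i(g) := \sum_{m=0}^{\infty} \frac{(v_{i,n} - v_i)^m}{m!}\, X_i^m(g),$$
which formally extracts the $X_i$-invariant Taylor coefficient at $v_i = v_{i,n}$. I will choose $n_0$ large enough that, via Proposition \ref{continuity}, the constants $C_{X_i, r(n)}$ satisfy $C_{X_i, r(n)}\, p^{-n-1} < 1$ for all $n \ge n_0$ and all $i$; combined with $\|v_{i,n} - v_i\|_{G_{r(n)}} \le p^{-n-1}$ and Lemma \ref{norm}, this makes $S_i$ converge on $B^{G_{r(n)}-\Psi-\mathrm{an}}$. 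A direct computation using $X_i(v_i) = 1$ and $X_i(v_{i,n}) = 0$ yields $X_i \circ S_i = 0$, and the telescoping identity $\sum_{\ell + m = k} (v_i - v_{i,n})^\ell (v_{i,n} - v_i)^m / (\ell!\, m!) = \delta_{k,0}$ then produces the single-variable expansion $g = \sum_{j_i} c_{j_i}(v_i - v_{i,n})^{j_i}$ with $c_{j_i} := \frac{1}{j_i!}\, S_i(X_i^{j_i}(g)) \in B^{G_{r(n)}-\Psi-\mathrm{an},\, X_i = 0}$.

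The main obstacle --- the algebraic crux --- will be showing that each $S_i$ preserves invariance under $X_1, \ldots, X_{i-1}$, so that iterating from $i = 1$ to $s$ produces coefficients in the full $\mathfrak{b}$-invariant part rather than just in $B^{X_i = 0}$. For $j < i$, the hypothesis $X_j(v_i) = 0$ (and $X_j(v_{i,n}) = 0$ since $v_{i,n}$ is $G_{r(n)}$-fixed) gives $X_j\bigl((v_{i,n} - v_i)^m\bigr) = 0$; and the fact that $\mathfrak{b}_{i-1}$ is an ideal of $\mathfrak{b}_i$ implies $[X_j, X_i] \in \mathfrak{b}_{i-1}$. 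I would then prove by induction on $m$, using the identity $X_j X_i^m = X_i X_j X_i^{m-1} + [X_j, X_i]\, X_i^{m-1}$, that $X_j X_i^m(g) = 0$ for every $g \in B^{G_{r(n)}-\Psi-\mathrm{an},\, \mathfrak{b}_{i-1} = 0}$; combining these two vanishings gives $X_j \circ S_i = 0$ on such $g$. Thus the coefficients produced at stage $i$ lie in $B^{G_{r(n)}-\Psi-\mathrm{an},\, \mathfrak{b}_i = 0}$, and iterating yields the full expansion $f = \sum_j a_j \prod_i (v_i - v_{i,n})^{j_i}$ with $a_j \in B^{G_{r(n)}-\Psi-\mathrm{an},\, \mathfrak{b}}$. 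The bound $\sup_j \|a_j\|_{G_{r(n)}}\, p^{-n(j_1 + \cdots + j_s)} \le \|f\|_{G_k}$ will follow by tracking norms stage-by-stage: each power of $v_i - v_{i,n}$ contributes a factor $p^{-(n+1)}$, of which $p^{-n}$ is exhibited in the bound and the remaining $p^{-1}$ absorbs the constants $C_{X_i, r(n)}$ once $n \ge n_0$. Uniqueness will follow by applying $S_1, \ldots, S_s$ in succession to any vanishing series, using the identity $S_i\bigl((v_i - v_{i,n})^j\, a\bigr) = \delta_{j,0}\, a$ for $a \in B^{\mathfrak{b}_i = 0}$, which itself follows from the binomial theorem $\sum_{m \le j}\binom{j}{m}(v_{i,n}-v_i)^m(v_i-v_{i,n})^{j-m} = 0^j$.

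For Part 2, the Banach-space structure is essentially formal: $B^{G_{r(n)}-\Psi-\mathrm{an},\, \mathfrak{b}}\{v_1 - v_{1,n}, \ldots, v_s - v_{s,n}\}$ with norm $\|\cdot\|_{v_\cdot}$ may be identified with a completed tensor product of $B^{G_{r(n)}-\Psi-\mathrm{an},\, \mathfrak{b}}$ and a multivariate Tate algebra of radius $p^{n+1}$, hence is complete. Continuity of the inclusion into $B^{G_{r(n)}-\Psi-\mathrm{an}}$ is immediate from $\|(v_i - v_{i,n})^{j_i}\|_{G_{r(n)}} \le p^{-(n+1) j_i}$, which matches the weighting in $\|\cdot\|_{v_\cdot}$; and continuity of the factorization $B^{G_k-\Psi-\mathrm{an}} \hookrightarrow B^{G_{r(n)}-\Psi-\mathrm{an},\, \mathfrak{b}}\{v_1 - v_{1,n}, \ldots, v_s - v_{s,n}\}$ is precisely the bound of Part 1, the gap between the exponents $n$ and $n+1$ providing the geometric slack needed for boundedness.
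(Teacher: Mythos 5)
Your overall strategy — the exponential projection operator $S_i$, the telescoping identity, the observation that $[X_j,X_i]\in\mathfrak{b}_{i-1}$ combined with $X_j(v_i)=X_j(v_{i,n})=0$ yields preservation of $\mathfrak{b}_{i-1}$-invariance, and the uniqueness argument via $S_i\bigl((v_i-v_{i,n})^j a\bigr)=\delta_{j,0}a$ — is exactly the Taylor-expansion mechanism from \cite[{\S}~4.3.5--4.3.9]{PanI} that the paper cites, and the algebraic verifications are correct. The problem lies in the analytic bookkeeping. Your convergence criterion $C_{X_i,r(n)}\,p^{-n-1}<1$ cannot be achieved by taking $n$ large: the operator norm $C_{X,k}$ of a fixed $X\in\mathfrak{g}$ on $B^{G_k-\Psi-\mathrm{an}}$ grows like $p^k$ (consider $f$ with $f\circ\exp(x)=x_1$, where $\|f\|_{G_k}=p^{-k}$ but $\|Xf\|_{G_k}=1$), so $C_{X_i,r(n)}\sim p^{r(n)}$ while the integers $r(n)$ are \emph{increasing} and in particular $r(n)\ge k+n-1$. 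The correct criterion — and this is visible in the paper's own sketch after Theorem~\ref{mikami expansion}, where $N$ depends on the norm of $u^+,h,z$ at the \emph{original} level $G_m$ — bounds $\|X_i^m(g)\|_{G_{r(n)}}\le\|X_i^m(g)\|_{G_k}\le C_{X_i,k}^m\|g\|_{G_k}$ via Lemma~\ref{norm}, using the fixed constant $C_{X_i,k}$; only the factors $(v_{i,n}-v_i)$ are measured in $\|\cdot\|_{G_{r(n)}}$, and then $n_0$ is chosen so that $p^{1/(p-1)}C_{X_i,k}p^{-n-1}<1$.

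Relatedly, and more substantially, the stage-by-stage iteration has a gap you do not address. After stage~1, $c_{j_1}=\frac{1}{j_1!}S_1(X_1^{j_1}(f))$ converges only in $\|\cdot\|_{G_{r(n)}}$ and is not $G_k$-analytic, so applying $X_2^{j_2}$ to $c_{j_1}$ and bounding via $C_{X_2,r(n)}$ is useless. To salvage the induction one must commute $X_2^{j_2}$ past $S_1$ explicitly: a direct computation gives $X_2\circ S_1=S_1\circ X_2+\bigl(\alpha(v_{1,n}-v_1)-X_2(v_1)\bigr)\,S_1\circ X_1$ where $[X_2,X_1]=\alpha X_1$, and iterating this rewrites $X_2^{j_2}\circ S_1$ as a finite sum of operators of the form (bounded function built from $(v_{1,n}-v_1)$ and the fixed $G_{k'}$-analytic vectors $X_2^a(v_1)$) times $S_1$ times a word in $X_1,X_2$ — so that every Lie-algebra derivative ultimately acts on $f$ itself, controlled by $C_{X_i,k}$. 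This commutator bookkeeping, rather than the claim that ``the remaining $p^{-1}$ absorbs the constants $C_{X_i,r(n)}$'', is what makes the multi-variable bound close. As it stands, your proof of the norm estimate in part~1 does not go through, while your part~2 is fine modulo part~1.
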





\begin{proof} See \cite[{\S} 4.3.5 $\sim$ Theorem 4.3.9]{PanI}. \end{proof}

\section{Preliminaries on rank 2 unitary Shimura varieties}

In this section, we prepare and prove some fundamental results on unitary groups which will be used later.

\subsection{Rank 2 unitary groups and its automorphic representations}

From here to {\S} 6, we consider the following situation. 

\begin{itemize}
    \item $F_0$ is an imaginary quadratic field.
    \item $F^+$ is a totally real field.
    \item $F:=F_0F^+$.
    \item $\Phi := \mathrm{Hom}_{F_0}(F, \mathbb{C})$. 
    \item $\Psi$ is a subset of $\Phi$.
    \item $d:=|\Psi|$.
    \item $S(B)$ is a finite set of finite places of $F$ such that any prime lying below a place of $S(B)$ splits in $F_0$, $S(B) = S(B)^c$ and $\frac{1}{2}|S(B)| + d = 0 \mod 2$.
    \item $B$ is a central simple algebra over $F$ defined by $\mathrm{dim}_{F}B = 4$, $\mathrm{inv}_{F_v}(B_{F_v}) = \frac{1}{2}$ for any $v \in S(B)$ and $\mathrm{inv}_{F_v}(B_{F_v}) = 0$ for any $v \notin S(B)$. (Thus if $S(B)$ is empty, we have $B = M_2(F)$.)
    \item $V:=B$.
    \item If $S(B)$ is empty, we assume that $F^+ \neq \mathbb{Q}$ and $\Psi \neq \Phi$. These assumptions are needed to use Theorem \ref{base changeIII} and Theorem \ref{kottwitz conjecture}.
    \end{itemize}

By the assumption $\frac{1}{2}|S(B)| + d = 0 \mod 2$, we have the following. (Let $c$ be the complex conjugation on $F$.)

\begin{prop}\label{unitary groups}

 1 \ There exist an anti-involution $* : B \rightarrow B$ and an invertible element $\beta \in B^{* = -1}$ such that $*|_{F} = c$, $\mathrm{tr}_{B/\mathbb{Q}}(bb^*) > 0$ for any $0 \neq b \in B$ and the unitary group $U$ over $F^+$ defined by the following formula having the properties that $U \times_{F^+, \tau} \mathbb{R} = U(1,1)$ for any $\tau \in \Psi$, $U \times_{F^+, \tau} \mathbb{R} = U(0,2)$ for any $\tau \notin \Psi$ and $U_{F^+_v}$ is quasi-split for any finite place $v \notin S(B)$.

$U(R):=\{ g \in ( B^{\mathrm{op}} \otimes_{F^+} R )^{\times} \mid \psi(gv, gw) = \psi(v,w) \ \mathrm{for \ any \ } v, w \in V \otimes_{F^+} R \}$ for any $F^+$-algebra $R$, where $\psi(v, w) := \mathrm{tr}_{B/F}(v \beta w^*)$. (Note that we regard $V$ as a $B^{\mathrm{op}}$-module by the right multiplication. Actually, we have $B^{\mathrm{op}} \cong B$ because $B^{\mathrm{op}}_{F_v} \cong B_{F_v}$ for any finite place $v$.)

2 \ Let $*'$ and $\beta'$ also satisfy the above conditions and let $U'$ be the unitary group defined by $*'$ and $\beta'$. Then there exists $b \in B^{\times}$ such that $\beta' g^{*'} \beta^{' -1} = b \beta b^{*}g^{*}(b^{*})^{-1} \beta^{-1}b^{-1} $ for any $g \in B$. In particular, we have an isomorphism $U' \Isom U, g \mapsto b^{-1}gb$.

    \end{prop}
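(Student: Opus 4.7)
The plan is to produce the pair $(*, \beta)$ by first constructing any positive involution of the second kind on $B$, then choosing $\beta$ to twist the resulting unitary group into the prescribed local shape; the uniqueness assertion in part 2 will then follow from a standard Skolem--Noether computation. The parity condition $\tfrac12 |S(B)| + d \equiv 0 \pmod 2$ enters only at the final Hasse-principle step.

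For the first task, Albert's criterion says that $B$ admits an involution of the second kind restricting to $c$ on $F$ if and only if $\mathrm{cor}_{F/F^+}[B] = 0$ in $\mathrm{Br}(F^+)$, equivalently $\mathrm{inv}_{F_w}(B_w) + \mathrm{inv}_{F_{w^c}}(B_{w^c}) \equiv 0 \pmod{\mathbb{Z}}$ for every place $w$ of $F$. Since $S(B) = S(B)^c$ and the invariants are $0$ or $\tfrac12$, this holds. Given any such involution $*_0$, I would adjust it by an inner automorphism $*_1 = \gamma (\cdot)^{*_0} \gamma^{-1}$ with $\gamma^{*_0} = \gamma$, choosing $\gamma$ via weak approximation so that the trace form $b \mapsto \mathrm{tr}_{B/\mathbb{Q}}(bb^{*_1})$ is simultaneously positive definite at every real place. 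This yields the desired positive involution, which I then rename $*$.

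Next, with $*$ fixed, I would choose $\beta \in B^{*=-1}$ so that the unitary group $U$ attached to $\psi(v,w) = \mathrm{tr}_{B/F}(v\beta w^*)$ has signature $(1,1)$ at $\tau \in \Psi$, signature $(0,2)$ at $\tau \in \Phi \setminus \Psi$, and is quasi-split at every finite place $v \notin S(B)$. Isomorphism classes of such unitary groups over $F^+$, for fixed $(B,*)$, are classified up to $F^{+,\times}$-rescaling of $\beta$ by their local invariants, and the Hasse principle for rank-two hermitian/skew-hermitian $B$-forms (equivalently, for the unitary group $U$, by a theorem of Landherr--Kneser) globalizes any compatible local data provided a single parity obstruction in $\tfrac12 \mathbb{Z}/\mathbb{Z}$ vanishes. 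A bookkeeping of contributions shows this obstruction is exactly $\tfrac12 |S(B)| + d \pmod 2$: each place in $S(B)$ contributes $\tfrac12$ via the Hasse invariant of $B$, while each of the $d$ archimedean signatures we prescribe as $(1,1)$ (or, symmetrically, each of the $[F^+:\mathbb{Q}] - d$ signatures $(0,2)$) contributes $\tfrac12$ to the archimedean discriminant, and the hypothesis forces this sum to vanish, guaranteeing existence of $\beta$.

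For part 2, given $(*', \beta')$ satisfying the same properties, Skolem--Noether applied to $*' \circ (*)^{-1} \in \mathrm{Aut}_{F^+}(B)$, which fixes the center because both involutions restrict to $c$ on $F$, produces $\alpha \in B^\times$ with $b^{*'} = \alpha^{-1} b^* \alpha$ for all $b \in B$; requiring $(*')^2 = \mathrm{id}$ forces $\alpha^* = \pm \alpha$, and together with skewness of $\beta'$ for $*'$ this relates $\beta'$ to $\alpha \beta$ up to $F^{+,\times}$. Building $b$ from $\alpha$ via a suitable rescaling so that $\beta' g^{*'} \beta'^{-1} = b\beta b^* g^* (b^*)^{-1}\beta^{-1} b^{-1}$ yields the explicit identification $U' \Isom U$ sending $g \mapsto b^{-1} g b$, and by construction this identification intertwines the two skew pairings.

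The main obstacle is the parity bookkeeping at the Hasse-principle step: one must track exactly which local contributions enter the sign at each place (finite ramification of $B$, archimedean signature, and the split/quasi-split dichotomy away from $S(B)$) and verify they aggregate precisely to $\tfrac12 |S(B)| + d$ modulo $2$, rather than some shifted invariant. A secondary subtlety, in part 2, is normalizing $b$ so that it simultaneously intertwines the involutions and rescales $\beta$ on the nose, not merely up to an element of $F^{+,\times}$.
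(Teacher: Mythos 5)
Your overall strategy — Albert's criterion for an involution of the second kind, weak approximation to make it positive, then a Hasse-principle choice of $\beta$ to hit the prescribed local shape, with the parity hypothesis governing existence — is the same one followed in the references the paper points to (\cite[proof of Lemma I.7.1]{HT}, \cite[proof of Theorem 5.4]{LRC}). But as written the argument has a genuine gap in part~2, and a soft spot in part~1.

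For part~2, Skolem--Noether only solves the \emph{involution} half of the problem. Conjugating $*'$ to $*$ produces $\alpha \in B^{\times}$ (which, after a Hilbert~90 rescaling by a central element, one may take to satisfy $\alpha^* = \alpha$) with $g^{*'} = \alpha g^* \alpha^{-1}$; setting $\beta'' := \beta'\alpha$ one checks $\beta''$ is $*$-skew and $\beta' g^{*'}\beta'^{-1} = \beta'' g^*\beta''^{-1}$. To finish you still must produce $b \in B^{\times}$ with $\beta'' = b\beta b^*$, i.e.\ you must show the two $*$-skew-hermitian forms attached to $\beta$ and $\beta''$ are \emph{$*$-congruent}, not merely that they define isomorphic unitary groups or agree up to a scalar in $F^{+,\times}$. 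That step is not a ``suitable rescaling'' of $\alpha$; it is precisely Landherr's Hasse principle for (skew-)hermitian forms over a division algebra with involution of the second kind, applied after checking that the two forms have the same local invariants (which they do, by the hypotheses on $(*',\beta')$). Your proposal never invokes this, and the ``secondary subtlety'' you flag mischaracterizes the missing ingredient as a normalization issue rather than an application of the local--global theorem for forms. Separately, in part~1 the phrase ``(or, symmetrically, \dots)'' is a hedge, not an argument: one must actually pin down which archimedean class carries the nontrivial invariant. In the discriminant normalization compatible with the product formula for the quadratic character of $F/F^{+}$, the hyperbolic plane $\langle 1,-1\rangle$ giving $U(1,1)$ has discriminant $-1$, so it is the $d$ places in $\Psi$ (not the $[F^{+}:\mathbb{Q}] - d$ places) that contribute; the hedge should be removed and replaced by that computation, since the two options do not give the same condition in general.
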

    
\begin{rem} If $S(B)$ is empty, we can define $g^* := ^t\!g^c$ for any $g \in B = M_2(F)$. \end{rem}

\begin{proof} See \cite[proof of Lemma I.7.1]{HT} and \cite[proof of Theorem 5.4]{LRC}.  \end{proof}

\begin{dfn}\label{cohomological}

Let $G$ be a connected reductive group over $\mathbb{Q}$, $B$ be a Borel subgroup of $G_{\mathbb{C}}$, $T$ be a maximal torus of $G_{\mathbb{C}}$, $K_{\infty}$ be a maximal compact subgroup of $G(\mathbb{R})$, $\mathrm{Lie}(G(\mathbb{R}))$ be the Lie algebra of the Lie group $G(\mathbb{R})$, $\lambda$ be a dominant weight of $T$ , $V_{\lambda}$ be an irreducible algebraic representation of $G_{\mathbb{C}}$ corresponding to $\lambda$ and $\pi$ be an automorphic representation of $G(\mathbb{A}_{\mathbb{Q}})$.

1 \ We say that $\pi$ is cohomological of weight $\lambda$ if $H^i(\mathrm{Lie}(G(\mathbb{R})) \otimes_{\mathbb{R}} \mathbb{C}, K_{\infty}; \pi_{\infty} \otimes V_{\lambda}) \neq 0$ for some $i$.

2 \ We simply say that $\pi$ is cohomological if there exists a dominant weight $\lambda$ such that $\pi$ is cohomological of weight $\lambda$.

For any connected reductive group $G$ over a number field $L$, we define the notion of being cohomological for automorphic representations of $G(\mathbb{A}_{L})$ by the identification $G(\mathbb{A}_L) = (\mathrm{Res}_{L/\mathbb{Q}}G)(\mathbb{A}_{\mathbb{Q}})$.

\end{dfn}

Note that since $U$ is anisotropic modulo center, all discrete automorphic representations of $U(\mathbb{A}_{F^+})$ are always cuspidal. Note also that for any finite place $v$ of $F^+$ splitting $v = ww^c$ in $F$, we have a natural identification $U(F^+_v) = B_{F_w}^{\mathrm{op}, \times}$. By taking a Borel subgroup of $U(F^+ \otimes_{\mathbb{Q}} \mathbb{C}) = \prod_{\tau \in \Phi} \mathrm{GL}_{2}(\mathbb{C})$ as the set of upper triangular matrices, we identify the set of dominant weight with $(\mathbb{Z}_+^{2})^{\Phi}$.

\begin{thm} \label{base changeIII}

1 \ For any cohomological cuspidal automorphic representation $\sigma$ of $U(\mathbb{A}_{F^+})$, there exists a unique conjugate self-dual\footnote{$\pi$ is conjugate self-dual if and only if $\pi^c \cong \pi^{\vee}$.} cohomological isobaric automorphic representation $BC(\sigma) := \pi$ of $\mathrm{GL}_2(\mathbb{A}_F)$ such that $\pi_w = \sigma_v$ for any prime $l$ splitting in $F_0$ not lying below a place in $S(B)$ and any place $v \mid l$ of $F^+$ and any place $w \mid v$ of $F$. 

2 \ For $\pi$ and $\sigma$ as in the statement 1, if $\pi$ is cuspidal, then we obtain the following properties. (Let $\lambda = (\lambda_{\tau, 1}, \lambda_{\tau, 2})_{\tau} \in (\mathbb{Z}_+^{2})^{\Phi}$ be the weight of $\sigma$.)

(a) \ The weight $\mu =((\mu_{\tau, 1}, \mu_{\tau, 2})_{\tau \in \Phi}, (\mu_{\tau, 1}', \mu_{\tau, 2}')_{\tau \in \Phi}) \in (\mathbb{Z}^2_{+})^{\Phi} \times (\mathbb{Z}^2_{+})^{c\Phi}$ of $\pi$ is equal to $$((\lambda_{\tau, 1}, \lambda_{\tau, 2})_{\tau \in \Phi}, (-\lambda_{\tau, 2}, -\lambda_{\tau, 1})_{\tau \in \Phi} ).$$

(b) \ Let $v = w w^c$ be a place of $F^+$ splitting in $F$ such that $w \notin S(B)$. Then we have $\sigma_w = \pi_v$.

(c) \ For any place $v$ of $F^+$ which is inert in $F$, if $\sigma_{v}$ is unramified, then $\pi_v$ is unramified. (Here, we also regard $v$ as the place of $F$.)

(d) \ For any place $v$ of $F^+$ and any $w \mid v$ satisfying $w \in S(B)$, we have $\mathrm{JL}_{F^+_v}(\sigma_v) = \pi_w$, where $\mathrm{JL}_{F^+_v}$ denotes the Jacquet-Langlands transfer from $U(F_{v}^+) = B_{F_w}^{\mathrm{op}, \times}$ to $\mathrm{GL}_2(F_w)$.

(e) \ If $F/F^+$ is unramified at any finite place and $\sigma_v$ is unramified for any place $v$ of $F^+$ which is inert in $F$, the multiplicity $m(\sigma)$ of $\sigma$ is $1$. Moreover, such a $\sigma$ is uniquely determined by $\pi$ up to isomorphism.

3 \ Conversely, for any conjugate self-dual cohomological cuspidal automorphic representation $\pi$ of $\mathrm{GL}_2(\mathbb{A}_{F})$ such that $\pi_w$ is discrete series for any $w \in S(B)$, there exists a cohomological cuspidal automorphic representation $\sigma$ of $U(\mathbb{A}_{F^+})$ such that $\pi = BC(\sigma)$ and for any finite place $v$ of $F^+$ which is inert in $F$, if $\pi_{v}$ is unramified, then $\sigma_v$ is unramified. 

\end{thm}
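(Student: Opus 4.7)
The plan is to deduce all three statements from the established global base change theory for rank~$2$ unitary groups combined with the local Jacquet-Langlands correspondence. First, I would invoke the main theorem of Mok for quasi-split unitary groups, extended by Kaletha-Minguez-Shin-White to inner forms such as our $U$, to attach to the cohomological cuspidal $\sigma$ an isobaric automorphic representation $\pi = BC(\sigma)$ of $\mathrm{GL}_2(\mathbb{A}_F)$ whose local components at places of $F^+$ splitting in $F_0$ and outside $S(B)$ are the transfer of $\sigma_v$ under the identification $U(F^+_v) = \mathrm{GL}_2(F_w)$. Uniqueness comes from strong multiplicity one for isobaric representations of $\mathrm{GL}_2(\mathbb{A}_F)$ applied to this set of places, which has positive Dirichlet density. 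Conjugate self-duality is built into the construction since the image of stable base change from $U$ lies in the conjugate self-dual locus, and cohomologicality of $\pi$ follows because the archimedean transfer preserves cohomological $L$-packets.

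For part~2, assume $\pi$ is cuspidal. Item~(a) is an archimedean calculation: the local Langlands parameter of $\sigma_\tau$ is induced from $W_\mathbb{C}$ to $W_\mathbb{R}$ with Hodge data prescribed by $(\lambda_{\tau,1}+1, \lambda_{\tau,2})$, and base change at $\infty$ doubles this parameter to the pair indexed by $(\tau, c\tau)$, yielding the stated weight. Items~(b) and~(d) are direct consequences of the local compatibility built into global base change, since at split places the transfer is the identity and at $w \in S(B)$ the local transfer from $B^{\mathrm{op},\times}_{F_w}$ to $\mathrm{GL}_2(F_w)$ is precisely the local Jacquet-Langlands correspondence. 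Item~(c) is spherical base change at an inert prime: the Satake parameter for $U(F^+_v)$ maps under the standard $L$-morphism to an unramified parameter of $\mathrm{GL}_2(F_v)$. For~(e), the unramifiedness of $F/F^+$ at all finite places together with unramifiedness of $\sigma$ at inert primes forces the global $A$-parameter to be generic and tempered, so the multiplicity formula of Kaletha-Minguez-Shin-White collapses to $m(\sigma) = 1$ and the local packets are singletons determined by $\pi$.

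For part~3, I would apply the descent direction of the same base change result: a conjugate self-dual cuspidal cohomological $\pi$ that is discrete series at every $w \in S(B)$ descends to an automorphic representation on $U$, since the inverse local Jacquet-Langlands $\mathrm{JL}_{F^+_v}^{-1}(\pi_w)$ is nonzero exactly in the discrete-series case, and the local archimedean and inert-place data can be matched inside the expected packet; cuspidality and cohomologicality pass through because these are read off the local components. Preservation of unramifiedness at inert places is again spherical base change in reverse. The main obstacle will be the multiplicity-one assertion in~(e): one must verify carefully that under the stated hypotheses the contribution from the component group in the KMSW multiplicity formula is trivial and that no distinct member of the global $L$-packet contributes, which requires a non-trivial invocation of the endoscopic classification rather than a purely formal argument.
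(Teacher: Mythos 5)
Your proposal is conceptually sound, but it follows a genuinely different route from the paper. The paper deduces statement 1 from Shin's base-change theorem (following the argument of the ten-author potential-automorphy paper, when $S(B) = \emptyset$) and from an appendix theorem of a separate reference (when $S(B) \neq \emptyset$); it deduces statements 2 and 3 by citing Labesse's Theorems 5.3 and 5.4 when $S(B) = \emptyset$, and by a Harris--Taylor-style argument combined with the local Jacquet--Langlands correspondence when $S(B) \neq \emptyset$. In short, the paper works through the pre-endoscopic-classification base-change literature. You instead invoke the endoscopic classification directly: Mok for quasi-split unitary groups and Kaletha--M\'inguez--Shin--White (KMSW) for the inner form $U$. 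Both routes are legitimate and each has trade-offs. The paper's route requires verifying Labesse's technical hypothesis (*), which the author does by appealing to the Kottwitz conjecture (Theorem \ref{kottwitz conjecture}) and a lemma of Caraiani--Scholze, and it carries the standing assumption $F^+ \neq \mathbb{Q}$ when $S(B) = \emptyset$. Your KMSW route sidesteps Labesse's hypothesis and the restriction $F^+ \neq \mathbb{Q}$, but it rests on a classification whose complete write-up has had a very long gestation, which the paper deliberately avoids. You correctly single out (e) as the delicate step: the paper handles it by re-running Labesse's Theorem 5.4 argument with a substitute input at ramified places, whereas your route requires an honest analysis of the component-group term in the KMSW multiplicity formula and a check that the global packet contributes uniquely---as you yourself say, this is real work and is the point that would need the most care to carry out in full.
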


\begin{proof}
        
Statement 1 follows from \cite[Theorem 1.1]{Shin} as in \cite[Theorem 2.3.3]{10} in the case that $S(B)$ is empty and from \cite[Theorem A.5.2]{CL} in the case that $S(B)$ is non-empty. 

The statements 2 and 3 were proved by \cite[Theorems 5.3 and 5.4]{Lab} in the case that $S(B)$ is empty. (Note that in order to use \cite{Lab}, we need to assume $F^+ \neq \mathbb{Q}$ and in \cite[Theorem 5.3 and 5.4]{Lab}, he assumed a certain technical condition (*), but under our assumption that $\pi$ is cuspidal, we can check that (*) always holds as in \cite[Corollary 8.4]{KS} by using Theorem \ref{kottwitz conjecture} later and \cite[Lemma 2.1.1]{CS2}.) In the case that $S(B)$ is non-empty, we can also prove statements 2 and 3 as in \cite[Theorem VI.2.1 and 2.9]{HT} by using \cite[Theorem A.3.1]{CL} and \cite[Theorem (a) and (b)]{JacLan} except for (e) of 2. (\cite[Theorem VI.2.1 and 2.9]{HT} states the result for a certain unitary group, but the same argument works in our case.) (e) is shown by the same argument as \cite[Theorem 5.4]{Lab} and using \cite[Theorem A.3.1]{CL} instead of \cite[Theorem 5.1]{Lab}. \end{proof}

Let $GU/\mathbb{Q}$ be the unitary similitude group defined by $U/F^+$, i.e., $GU(R) = \{ (\lambda, g) \in R^{\times} \times (B^{\mathrm{op}} \otimes_{\mathbb{Q}} R)^{\times} \mid \psi(g v, gw) = \lambda\psi(v, w) \ \mathrm{for \ any \ } v, w \in V \otimes_{\mathbb{Q}} R \}$ for any $\mathbb{Q}$-algebra $R$ and we put $\nu : GU \rightarrow \mathbb{G}_{m}, \ \ (\lambda, g) \mapsto \lambda$ and then $\mathrm{Ker}\nu = \mathrm{Res}_{F^+/\mathbb{Q}}U$.

For a finite place $v$ of $F_0$ splitting over $\mathbb{Q}$ lying above $l$, we have an identification $GU(\mathbb{Q}_{l}) \cong \mathbb{Q}_l \times \prod_{w \mid v} B_{F_w}^{\mathrm{op}, \times} = F_{0, v^c} \times \prod_{w \mid v} B_{F_w}^{\mathrm{op}, \times}$. Moreover, $\Phi$ induces the identification $GU(\mathbb{C}) = \mathbb{C}^{\times} \times \prod_{\tau \in \Phi} \mathrm{GL}_2(\mathbb{C})$. By taking a Borel subgroup of $G(\mathbb{C})$ as the set of upper triangular matrices of $GU(\mathbb{C})$, we identify the set of dominant weights with $\mathbb{Z} \times (\mathbb{Z}_+^{2})^{\Phi}$.

\begin{thm} \label{base changeII}

1 \ For any cohomological cuspidal automorphic representation $\sigma$ of $GU(\mathbb{A}_{\mathbb{Q}})$, there exists a unique cohomological isobaric automorphic representation $BC(\sigma) := \chi \boxtimes \pi$ of $\mathbb{A}_{F_0}^{\times} \times \mathrm{GL}_2(\mathbb{A}_F)$ satisfying that $\pi^c \cong \pi^{\vee}$, $\omega_{\pi}|_{\mathbb{A}_{F_0}^{\times}} = \chi^c/\chi$ and for any place $v \mid l$ of $F_0$ splitting over $\mathbb{Q}$ not lying below places of $S(B)$, the representation $\sigma_l = \psi_{l} \boxtimes (\boxtimes_{w \mid v} \sigma_{w})$, $(\chi \boxtimes \pi)_v = \chi_v \boxtimes (\boxtimes_{w \mid v} \pi_{w})$ and $(\chi \boxtimes \pi)_{v^c} = \chi_{v^c} \boxtimes (\boxtimes_{w \mid v^c} \pi_{w})$ satisfy $\psi_l = \chi_{v^c}$ and $\sigma_w = \pi_w$ for any $w \mid v$.

2 For $\sigma$, $\chi$ and $\pi$ as in the statement 1, if $\pi$ is cuspidal, then we have the following properties. (Let $\lambda = (\lambda_0, (\lambda_{\tau, 1}, \lambda_{\tau, 2})_{\tau \in \Phi}) \in \mathbb{Z} \times (\mathbb{Z}_+^{2})^{\Phi}$ be the weight of $\sigma$.)
    
(a) \ The weight $\mu = ((\mu, \mu'), (\mu_{\tau, 1}, \mu_{\tau, 2})_{\tau \in \Phi}, (\mu_{\tau, 1}', \mu_{\tau, 2}')_{\tau \in c\Phi}) \in \mathbb{Z}^{\mathrm{id}} \times \mathbb{Z}^c \times (\mathbb{Z}^2_{+})^{\Phi} \times (\mathbb{Z}^2_{+})^{c\Phi}$ of $\chi \boxtimes \pi_{\infty}$ is equal to $((\sum_{\tau} (\lambda_{\tau, 1} + \lambda_{\tau, 2}) + \lambda_0, \lambda_0), (\lambda_{\tau, 1}, \lambda_{\tau, 2})_{\tau \in \Phi}, (-\lambda_{\tau, 2}, -\lambda_{\tau, 1})_{\tau \in \Phi} )$.

(b) \ For any prime $l$ which is inert in $F_0$ and unramified in $F$, if $\sigma_{l}$ is unramified, then $\pi_l$ is unramified.

(c) \ For any place $v \mid l$ of $F_0$ splitting over $\mathbb{Q}$, the representation $\sigma_l = \psi_{l} \boxtimes (\boxtimes_{w \mid v} \sigma_{w})$, $(\chi \boxtimes \pi)_v = \chi_v \boxtimes (\boxtimes_{w \mid v} \pi_{w})$ and $(\chi \boxtimes \pi)_{v^c} = \chi_{v^c} \boxtimes (\boxtimes_{w \mid v^c} \pi_{w})$ satisfy $\psi_l = \chi_{v^c}$, $\sigma_w = \pi_w$ for any $w \mid v$ if $w \notin S(B)$ and $\mathrm{JL}_{F_w}(\sigma_w) = \pi_w$ if $w \in S(B)$.

(d) \ Let $v \mid l$ be a place of $F_0$ not splitting over $\mathbb{Q}$ such that $l$ splits $l = v_1 v_1^c$ in an imaginary quadratic field $F_1$ contained in $F$. Then the representations $\sigma_l = \psi_{l} \boxtimes (\boxtimes_{w \mid v_1} \sigma_{w})$, $\chi_v$ and $\pi_w$ for $w \mid v_1$ satisfy $\chi_v = (\psi_{l} \circ N_{F_{0, v}/\mathbb{Q}_l} )(\prod_{w \mid v_1} \omega_{\pi_w}|_{F_{0,v}^{\times}})$ and $\sigma_w = \pi_{w}$.

3 \ Conversely, for any conjugate self-dual cohomological cuspidal automorphic representation $\pi$ of $\mathrm{GL}_2(\mathbb{A}_{F})$ such that $\pi_w$ is discrete series for any $w \in S(B)$ and any algebraic Hecke character $\chi : \mathbb{A}_{F_0}^{\times}/F_0^{\times} \rightarrow \mathbb{C}^{\times}$ satisfying $\omega_{\pi}|_{\mathbb{A}_{F_0}^{\times}} = \chi^c/\chi$, there exists a cohomological cuspidal automorphic representation $\sigma$ of $GU(\mathbb{A}_{\mathbb{Q}})$ such that $BC(\sigma) = \pi$ and for any prime $l$ which is inert in $F_0$ and unramified in $F$, if $\pi_{l}$ is unramified, then $\sigma_l$ is unramified.

\end{thm}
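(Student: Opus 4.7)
The plan is to reduce this theorem to Theorem \ref{base changeIII} by exploiting the short exact sequence $1 \to \mathrm{Res}_{F^+/\mathbb{Q}} U \to GU \xrightarrow{\nu} \mathbb{G}_m \to 1$ together with the central torus in $GU$. The key structural input is that the center $Z(GU)$ contains a copy of $\mathrm{Res}_{F_0/\mathbb{Q}} \mathbb{G}_{m, F_0}$ (via $z \mapsto (z z^c, z)$), so the central character of any automorphic representation of $GU(\mathbb{A}_{\mathbb{Q}})$ produces a Hecke character on $\mathbb{A}_{F_0}^{\times}$, which will be our $\chi$; and the restriction to $U(\mathbb{A}_{F^+})$ decomposes into cohomological cuspidal automorphic representations of $U$, to which we can apply Theorem \ref{base changeIII}.

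For part (1), given $\sigma$, I would define $\chi$ as (a twist of) the restriction of the central character $\omega_\sigma$ to $\mathbb{A}_{F_0}^{\times}$, specifically arranged so that at every rational prime $l$ splitting as $v v^c$ in $F_0$ we obtain $\chi_{v^c} = \psi_l$ in the notation of the theorem. Next, pick an irreducible constituent $\sigma_U$ of $\sigma|_{U(\mathbb{A}_{F^+})}$; this is cohomological cuspidal (cohomological because a $(\mathfrak{g}, K_\infty)$-cohomology class descends along $GU \to GU/Z$; cuspidal because $U$ is anisotropic modulo center). Let $\pi := BC(\sigma_U)$ via Theorem \ref{base changeIII}.1. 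Conjugate self-duality $\pi^c \cong \pi^\vee$ is immediate from that theorem, and the identity $\omega_\pi|_{\mathbb{A}_{F_0}^\times} = \chi^c/\chi$ can be verified place-by-place at split primes, where both sides are explicitly computable on $\mathrm{GL}_2$ and on the similitude center. Uniqueness of $BC(\sigma)$ follows from strong multiplicity one for isobaric representations of $\mathrm{GL}_1 \times \mathrm{GL}_2$ applied at the density-one set of split places $v$ of $F_0$, where the identification $GU(\mathbb{Q}_l) = F_{0,v^c}^\times \times \prod_{w\mid v} B_{F_w}^{\mathrm{op},\times}$ forces $\sigma_l$ to pin down both $\chi_v, \chi_{v^c}$ and the $\pi_w$.

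For part (2), assuming $\pi$ is cuspidal, properties (a)–(c) are immediate from Theorem \ref{base changeIII}.2 applied to $\sigma_U$ together with the split-place identification just recalled; the weight calculation in (a) folds in the contribution of $\chi_\infty$ to the archimedean infinitesimal character of $\sigma$, which accounts for the extra coordinate $\lambda_0$ and the shift $\sum_\tau(\lambda_{\tau,1}+\lambda_{\tau,2})+\lambda_0$. Property (d) for a place $v$ of $F_0$ that is inert over $\mathbb{Q}$ but lies below split primes of the auxiliary imaginary quadratic $F_1 \subset F$ reduces, after Shapiro's lemma at such an $l$, to compatibility of the central character and the base change of the tensor product of the $\pi_w$ for $w \mid v_1$; this is where I expect the main bookkeeping.

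For part (3), given $\pi$ and $\chi$ compatible as stated, Theorem \ref{base changeIII}.3 produces a cohomological cuspidal automorphic representation $\sigma_U$ of $U(\mathbb{A}_{F^+})$ with $BC(\sigma_U) = \pi$ and with the desired unramifiedness at inert places. The task is then to extend $\sigma_U$ to an automorphic representation $\sigma$ of $GU(\mathbb{A}_{\mathbb{Q}})$ with prescribed central character, compatible with $\chi$ in the sense of (1). This I would carry out by Clifford-theoretic induction along $\mathrm{Res}_{F^+/\mathbb{Q}} U \cdot Z(GU) \subset GU$: the representation $\sigma_U \otimes \chi$ of $U(\mathbb{A}_{F^+}) \cdot Z(GU)(\mathbb{A})$ is well-defined precisely because of the identity $\omega_\pi|_{\mathbb{A}_{F_0}^\times} = \chi^c/\chi$, and since $GU / (\mathrm{Res}_{F^+/\mathbb{Q}} U \cdot Z(GU))$ has class-number-finite image, summing over the finitely many automorphic extensions of a given central character yields the required $\sigma$. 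The hardest part will be verifying that at least one such extension is cohomological cuspidal and produces the prescribed local data at inert primes; this requires a genuine input from the cohomology of automorphic forms on $GU$ versus $U$ (and, in the empty-$S(B)$ case, relies on the assumptions $F^+ \neq \mathbb{Q}$ and $\Psi \neq \Phi$ precisely to invoke \cite{Lab}), rather than pure formalism, and it is what I expect to occupy the bulk of the actual argument.
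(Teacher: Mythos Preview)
Your approach is correct and is essentially the same as the paper's: the paper's proof simply cites Theorem \ref{base changeIII} and the argument of \cite[Theorems VI.2.1 and VI.2.9]{HT}, and the Harris--Taylor argument you have sketched is precisely the passage from $U$ to $GU$ via the central torus $\mathrm{Res}_{F_0/\mathbb{Q}}\mathbb{G}_m \subset Z(GU)$ and the similitude character, producing $\chi$ from the central character and $\pi$ from $BC(\sigma_U)$, with the converse direction handled by extending $\sigma_U \otimes \chi$ along the finite-index inclusion $U \cdot Z(GU) \subset GU$. You have simply unpacked what the paper left as a reference.
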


\begin{rem}

In the above statement 2, the author doesn't know what the multiplicity of $\sigma$ is. (cf. See \cite[Theorem 1.2.4]{CHL} for the proof of the multiplicity one in the odd unitary similitude group case.)

\end{rem}

\begin{proof}

This follows from Theorem \ref{base changeIII} as in \cite[Theorems VI.2.1 and VI.2.9]{HT}. \end{proof}

\begin{thm}\label{Galois representation}

Let $\pi$ be a conjugate self-dual\footnote{i.e., $\pi^{\vee} \cong \pi^{c}$.} cuspidal automorphic representation of $\mathrm{GL}_2(\mathbb{A}_{F})$ of weight $\lambda \in (\mathbb{Z}_+^{2})^{\mathrm{Hom}(F, \mathbb{C})}$ and $\iota : \overline{\mathbb{Q}}_p \Isom \mathbb{C}$ be an isomorphism of fields. Then there exists an irreducible continuous representation $r_{\iota}(\pi) : G_{F} \rightarrow \mathrm{GL}_2(\overline{\mathbb{Q}_p})$ unique up to isomorphism satisfying the following conditions.

1 \ There exists a $G_{F}$-equivariant perfect symmetric pairing $r_{\iota}(\pi) \times r_{\iota}(\pi)^c \rightarrow \varepsilon_p^{-1}$.

2 \ For any $v \mid p$, the representation $r_{\iota}(\pi)|_{G_{F_v}}$ is de Rham of $p$-adic Hodge type $\lambda_v:=(\lambda_{\iota \tau})_{\tau} \in (\mathbb{Z}_+^2)^{\mathrm{Hom}_{\mathbb{Q}_p}(F_v, \overline{\mathbb{Q}}_p)}$.

3 \ For any finite places $v$, we have $\iota \mathrm{WD}(r_{\iota}(\pi)|_{G_{F_v}})^{F-ss} \cong \mathrm{rec}_{F_v}(\pi_v|\mathrm{det}|_v^{-\frac{1}{2}})$ and these representations are pure. (See \cite[around Lemma 1.4]{TY} or \cite[definition 2.21]{matsumoto} for the definition of purity.)

\end{thm}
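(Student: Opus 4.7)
The approach is to descend $\pi$ to the rank $2$ unitary similitude group $GU/\mathbb{Q}$ and extract $r_{\iota}(\pi)$ as a tensor factor from the $p$-adic \'etale cohomology of the associated Shimura variety $S_K/F$. First, I would choose an auxiliary algebraic Hecke character $\chi : \mathbb{A}_{F_0}^{\times}/F_0^{\times} \rightarrow \mathbb{C}^{\times}$ with $\omega_{\pi}|_{\mathbb{A}_{F_0}^{\times}} = \chi^c/\chi$, which exists because $\pi$ is conjugate self-dual. After enlarging $S(B)$ (and $B$) if necessary so that $\pi_w$ is discrete series at every $w \in S(B)$, adding a pair of split places to preserve the parity $\frac{1}{2}|S(B)| + d \equiv 0 \bmod 2$, Theorem~\ref{base changeII}(3) produces a cohomological cuspidal automorphic representation $\sigma$ of $GU(\mathbb{A}_{\mathbb{Q}})$ with $BC(\sigma) = \chi \boxtimes \pi$.

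Next, I would apply the Kottwitz-type formula for $\varinjlim_K H^d_{\et}(S_{K, \overline{F}}, V_{\iota \lambda})$, i.e.\ the identity~(\ref{Kottwitz3}) from the introduction (to be made precise later as Theorem~\ref{kottwitz conjecture}): after semisimplification as a $G_F$-module, the $\sigma^{\infty}$-isotypic subspace is a nonzero multiple of $r_{\iota}(\chi)|_{G_F} \otimes (\otimes_{\tau \in \Phi} r_{\iota}(\pi)^{\tau})^{\vee}(-d)$. To isolate the single factor $r_{\iota}(\pi)$ from the tensor product over $\tau \in \Phi$, I would vary the unitary similitude group $GU$, in particular taking $|\Psi| = 1$ so that the tensor product collapses to a single factor $r_{\iota}(\pi)^{\tau}$; combining the outputs over varying $\tau$, together with pseudo-representation and Chebotarev arguments at unramified split places (where Satake parameters determine traces of Frobenius), would then uniquely pin down $r_{\iota}(\pi)$ up to isomorphism. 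This is essentially the construction carried out in the literature by Harris--Taylor, Blasius--Rogawski, Labesse, Shin, and Chenevier--Harris; the present theorem can be regarded as specializing their results to conjugate self-dual cuspidal $\pi$ on $\mathrm{GL}_2(\mathbb{A}_F)$.

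The three properties then need to be verified. Property~(3) at unramified split places is the compatibility of the Hecke action on $H^d_{\et}(S_K, V_{\iota \lambda})$ with the unramified local Langlands correspondence via Satake; general finite places are handled via solvable base change and Jacquet--Langlands transfer (cf.\ Theorem~\ref{base changeII}(2)), and purity follows from Taylor--Yoshida. Property~(2) is extracted from the Hodge--Tate decomposition of $H^d_{\et}(S_K, V_{\iota \lambda}) \otimes_{\mathbb{Q}_p} C$ via Faltings' comparison, together with a BGG-type calculation that reads off the Hodge--Tate weights from $\lambda$. Property~(1) encodes the conjugate self-duality $\pi^{\vee} \cong \pi^c$ and comes from Poincar\'e duality on $S_K$ combined with the Galois description of $\chi$. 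Uniqueness follows from Chebotarev applied to property~(3). The main obstacle is precisely the tensor-factor extraction in the previous paragraph, flagged in the introduction just after~(\ref{Kottwitz3}): the cohomology naturally produces $\otimes_{\tau} r_{\iota}(\pi)^{\tau}$ rather than $r_{\iota}(\pi)$, and disentangling the individual factors via solvable base change over auxiliary CM extensions is the technical heart of the cited constructions.
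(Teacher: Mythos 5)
Your plan is a reasonable sketch of the cohomological construction carried out in the references (Harris--Taylor, Blasius--Rogawski, Shin, Caraiani, Chenevier--Harris, and for the $p$-adic local-global compatibility, the work cited in \cite[Theorem 2.1.1]{CW} and \cite[Theorem 1.1]{Caraiani}). The paper itself does not reprove any of this; it simply cites those results for the existence of $r_{\iota}(\pi)$ and for properties (1)--(3). The one thing the paper actually proves is the one thing your proposal never addresses: \emph{irreducibility} of $r_{\iota}(\pi)$.

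This is a genuine gap, not a stylistic difference. The general construction via pseudo-representations and Chebotarev at split unramified places (as in your paragraph on isolating the tensor factor) only produces $r_{\iota}(\pi)$ as a \emph{semisimple} two-dimensional representation; nothing in that machinery rules out $r_{\iota}(\pi) \cong \psi_1 \oplus \psi_2$ for characters $\psi_i$. Your appeal to Chebotarev gives uniqueness (up to isomorphism of the semisimplification), not irreducibility. To close the gap, you need the paper's short argument: if $r_{\iota}(\pi)$ were a sum of two characters $\psi_1,\psi_2$, then by property (2) both $\psi_i$ are de Rham characters of $G_F$, hence $\psi_i = r_{\iota}(\chi_i)$ for algebraic Hecke characters $\chi_i$; but then local-global compatibility (your property (3)) forces $\pi \cong \chi_1\|\cdot\|_F^{1/2} \boxplus \chi_2\|\cdot\|_F^{1/2}$, contradicting the cuspidality of $\pi$. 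You should add this argument (or cite \cite[Lemma 7.1.1]{10} and \cite[proof of Theorem 5.5.2]{CW} as the paper does for the general statement).

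Two minor remarks on the body of your construction. First, the tensor product in the Kottwitz formula is over $\tau \in \Psi$, not $\tau \in \Phi$; so your idea of collapsing to a single factor by taking $|\Psi|=1$ is the right one, but this forces $S(B) \neq \emptyset$ by the parity constraint $\frac{1}{2}|S(B)| + d \equiv 0 \bmod 2$, i.e.\ you are genuinely working on a unitary Shimura \emph{curve} associated with a division algebra, as in Harris--Taylor. Second, Taylor--Yoshida handles purity at $\ell \neq p$; at $\ell = p$ (where properties (2) and (3) must both hold) the relevant reference is Caraiani, which the paper cites.
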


\begin{proof}

Except for the irreducibility of $r_{\iota}(\pi)$, this was proved by many people including higher dimensional cases. See \cite[Theorem 2.1.1]{CW} and \cite[Theorem 1.1]{Caraiani} for the results and see \cite[Theorem 2.1.1]{CW} for appropriate references. Note that in our case, we can easily prove the irreducibility of $r_{\iota}(\pi)$ because if $r_{\iota}(\pi)$ is a direct sum of characters $\psi_1, \psi_2$, then $\psi_1$ and $\psi_2$ are de Rham. Therefore, we have algebraic Hecke characters $\chi_1$ and $\chi_2$ such that $\psi_i = r_{\iota}(\chi_i)$ for $i = 1, 2$. Thus $\pi_1 = \chi_1 || \ ||_{F}^{\frac{1}{2}} \boxplus \chi_2 || \ ||_{F}^{\frac{1}{2}}$ and this contradicts the cuspidality of $\pi$. (See \cite[Lemma 7.1.1]{10} and \cite[proof of Theorem 5.5.2]{CW} for more general results.) \end{proof}

\subsection{Cohomologies of rank 2 unitary Shimura varieties}

We use the notations of the previous subsection.  In the following, we assume that $\Psi$ is not empty. For any $\tau \in \Phi$, we fix an identification $B \otimes_{F^+, \tau} \mathbb{R} = M_2(\mathbb{C})$ such that the involution $*$ induces $c$ on $M_2(\mathbb{C})$. By the Morita equivalence, the $B \otimes_{F^+, \tau} \mathbb{R}$-module $V \otimes_{F^+, \tau} \mathbb{R}$ corresponds to the $\mathbb{C}$-vector space $\mathbb{C}^{2}$ and $\psi$ induces a $c$-Hermitian non-degenerate bilinear form on $\mathbb{C}^{2}$, for which we write $\psi_{\tau}$. By the assumption on $\psi$, by replacing a basis of $\mathbb{C}^{2}$ if necessary, we obtain $\psi_{\tau}((x_1, x_2), (y_1, y_2)) = \sqrt{-1}(- x_1\overline{y_1} + x_2\overline{y_2})$ if $\tau \in \Psi$ and $\psi_{\tau}((x_1, x_2), (y_1, y_2)) = \sqrt{-1}(- x_1\overline{y_1} - x_2\overline{y_2})$ if $\tau \in \Phi \setminus \Psi$. Let $h : \mathbb{C} \rightarrow \mathrm{End}_{B}(V) \otimes_{\mathbb{Q}} \mathbb{R} = \prod_{\tau \in \Phi} M_2(\mathbb{C}), \ z \mapsto \begin{pmatrix}\begin{pmatrix}
    z & 0 \\
    0 & \overline{z} \end{pmatrix}_{\tau \in \Psi}, \begin{pmatrix}
        \overline{z} & 0 \\
        0 & \overline{z} 
        \end{pmatrix}_{\tau \in \Phi \setminus \Psi}
    \end{pmatrix}$. Then $\mathrm{tr}_{F/\mathbb{Q}} \circ \psi( \ , h(\sqrt{-1}) \ )$ is positive definite. Thus $(B, *, V, \mathrm{tr}_{F/\mathbb{Q}} \circ \psi, h)$ is a PEL datum. (See \cite{Kottwitz} for details of PEL data.) Let $\mu : \mathbb{G}_{m, \mathbb{C}} \rightarrow GU(\mathbb{C})$ be the minuscule corresponding to $h$.
    
    Thus we obtain the Shimura variety $S_K$ for any neat open compact subgroup $K$ of $GU(\mathbb{A}_{\mathbb{Q}}^{\infty})$, which is a proper smooth variety of dimension $d$ over its reflex field $E_0$, which is smaller than the Galois closure $\tilde{F}$ of $F/\mathbb{Q}$. (See \cite[definition 1.4.1.8]{Lan} for the definition of neatness.) We have $S_K(\mathbb{C}) = GU(\mathbb{Q}) \setminus GU(\mathbb{A}_{\mathbb{Q}})/KK_{\infty}$, where $K_{\infty} = \mathbb{R}_{>0} (\prod_{\tau \in \Psi} (U(1) \times U(1)) \times \prod_{\tau \in \Phi \setminus \Psi} U(0, 2))$. (Calculations of the dimension and the reflex field of $S_K$ are easy. The properness follows from the fact that $GU/\mathbb{Q}$ is anisotropic modulo center and \cite[Lemma 3.1.5]{anisotropic}.) We fix a weight $\lambda = (\lambda_0, (\lambda_{\tau, 1}, \lambda_{\tau, 2})_{\tau \in \Phi}) \in \mathbb{Z} \times (\mathbb{Z}_+^2)^{\Phi}$. Then $V_{\lambda}$ defines a local system of $\mathbb{C}$-modules on $S_K$. (See \cite[Proposition 3.3]{Milne}.) We will see the moduli interpretation of $S_K$ and $V_{\lambda}$ in {\S} 3.3. 

\begin{thm}(Matsushima formula)\label{Zuker}

We have the following decomposition as $GU(\mathbb{A}_{\mathbb{Q}}^{\infty})$-modules.

$\varinjlim_{K}H^*(S_{K, \mathbb{C}}, V_{\lambda}) = \oplus_{\sigma} (\sigma_f \otimes H^*(\mathrm{Lie}(GU(\mathbb{R})) \otimes_{\mathbb{R}} \mathbb{C}, K_{\infty}; \sigma_{\infty} \otimes V_{\lambda}))^{m(\sigma)}$, where $\sigma$ runs through cohomological cuspidal automorphic representations of weight $\lambda$ of $GU(\mathbb{A}_{\mathbb{Q}})$ and $m(\sigma)$ denotes the multiplicity of $\sigma$.

\end{thm}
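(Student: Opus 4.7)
The plan is to recognize this as the classical Matsushima formula for a proper Shimura variety and follow the argument of Borel--Wallach. First, I would pass from de Rham cohomology to relative Lie algebra cohomology. Writing $\mathfrak{g}_\infty := \mathrm{Lie}(GU(\mathbb{R})) \otimes_\mathbb{R} \mathbb{C}$ and using the description $S_K(\mathbb{C}) = GU(\mathbb{Q}) \backslash GU(\mathbb{A}_\mathbb{Q})/KK_\infty$ together with the fact that $V_\lambda$ arises from an algebraic representation of $GU_{\mathbb{C}}$, one gets for every sufficiently small neat $K$ a canonical isomorphism
$$H^i(S_K(\mathbb{C}), V_\lambda) \cong H^i\bigl(\mathfrak{g}_\infty, K_\infty;\ C^\infty(GU(\mathbb{Q}) \backslash GU(\mathbb{A}_\mathbb{Q})/K) \otimes_\mathbb{C} V_\lambda\bigr).$$
This comparison is obtained by matching the de Rham complex of sections of the flat bundle $V_\lambda$ with the standard complex computing $(\mathfrak{g}_\infty, K_\infty)$-cohomology of induced modules.

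Second, I would exploit that $GU/\mathbb{Q}$ is anisotropic modulo center, so $S_K$ is proper, and hence every automorphic form on $GU$ is automatically cuspidal. By Gelfand--Piatetski-Shapiro, after fixing a central character compatible with the infinitesimal character of $V_\lambda$, the space of smooth $K$-finite vectors in $L^2(GU(\mathbb{Q})Z_\infty \backslash GU(\mathbb{A}_\mathbb{Q}))$ decomposes as a direct sum of irreducible cuspidal automorphic representations, each with finite multiplicity $m(\sigma)$. Assembling over central characters, the space $\mathcal{A}(GU(\mathbb{Q}) \backslash GU(\mathbb{A}_\mathbb{Q}))$ of automorphic forms decomposes as an admissible $(\mathfrak{g}_\infty, K_\infty) \times GU(\mathbb{A}_\mathbb{Q}^\infty)$-module
$$\mathcal{A}(GU(\mathbb{Q}) \backslash GU(\mathbb{A}_\mathbb{Q})) = \bigoplus_\sigma \sigma^{m(\sigma)}.$$
Since $S_K$ is compact, the inclusion of $K$-invariant automorphic forms into $C^\infty(GU(\mathbb{Q}) \backslash GU(\mathbb{A}_\mathbb{Q})/K)$ induces an isomorphism on relative Lie algebra cohomology (only the $K_\infty$-finite, $\mathfrak{z}(\mathfrak{g}_\infty)$-finite vectors contribute), so I may freely replace $C^\infty$ by $\mathcal{A}$ in the previous display.

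Third, combining these two steps and taking the colimit over shrinking $K$ (using that $(\mathfrak{g}_\infty, K_\infty)$-cohomology is computed by a finite complex and therefore commutes with filtered colimits and direct sums), one obtains
$$\varinjlim_K H^i(S_K(\mathbb{C}), V_\lambda) \cong \bigoplus_\sigma \bigl(\sigma_f \otimes H^i(\mathfrak{g}_\infty, K_\infty;\ \sigma_\infty \otimes V_\lambda)\bigr)^{m(\sigma)}.$$
Finally, the summand associated with $\sigma$ is nonzero for some $i$ precisely when $\sigma$ is cohomological of weight $\lambda$ in the sense of Definition \ref{cohomological}, so the sum may be restricted to such $\sigma$, giving the stated formula.

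The main obstacle, and the only place that requires real care rather than citation, is the bookkeeping around the noncompact part of the center of $GU$ (namely the similitude factor $\mathbb{G}_m \subset GU$): one must fix central characters compatibly with the weight $\lambda$ in order to invoke discrete decomposability of $L^2$, then sum the resulting isotypic pieces to recover the full cohomology. Everything else reduces to the standard Borel--Wallach machinery in the compact quotient case.
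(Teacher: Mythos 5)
Your proposal is correct and is essentially the standard Matsushima/Borel--Wallach argument for a compact quotient; the paper proves this theorem simply by citing \cite[{\S} 2]{Arthur}, which contains exactly this argument, so your route matches the paper's intended one.
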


\begin{rem}

This also holds when $\Psi$ is empty if we put $S_{K, \mathbb{C}} := GU(\mathbb{Q}) \setminus GU(\mathbb{A}_{\mathbb{Q}}^{\infty})/K$. This is a finite set, which is called a Shimura set of $GU$ of level $K$. Then $H^0(S_K, V_{\lambda})$ can be regarded as the space of algebraic automorphic forms $\mathcal{A}_{GU}(K, V_{\lambda})$ of weight $\lambda$. (See \cite[proof of Proposition 3.3.2]{CHT} for a more explicit description.)

\end{rem}

\begin{proof}

See \cite[{\S} 2]{Arthur} for example. \end{proof}

Let $p$ be a prime which splits in $F_0$ and doesn't lie below any place in $S(B)$ and $\iota : \overline{\mathbb{Q}_p} \Isom \mathbb{C}$ be an isomorphism of fields. By using $\iota$, we regard $\Psi$ and $\Phi$ as subsets of $\mathrm{Hom}(F, \overline{\mathbb{Q}_p})$. By using $\iota$, we can regard $V_{\lambda}$ as a $\overline{\mathbb{Q}}_p$-local system on $S_K$.

\vspace{0.5 \baselineskip}

Let $\rho : G_{F} \rightarrow \mathrm{GL}_2(\overline{\mathbb{Q}}_p)$ be a continuous representation. This can be regarded as a continuous morphism $\rho : G_{F_0} \rightarrow \prod_{\tau \in \Phi}\mathrm{GL}_2(\overline{\mathbb{Q}}_p) \rtimes G_{F_0}$ satisfying that the composition of this with the second projection is equal to $\mathrm{id}$ by \cite[Proposition 1.7]{PR}. Let $\psi : G_{F_0} \rightarrow \overline{\mathbb{Q}}_p^{\times}$ be a continuous character.

On the other hand, $\mu$ can be regarded as a dominant character of a maximal torus of the dual group $\widehat{GU}_{\overline{\mathbb{Q}}_p}$ of $GU_{\overline{\mathbb{Q}}_p}$. This induces the irreducible algebraic representation $\widehat{GU}(\overline{\mathbb{Q}}_p) := \overline{\mathbb{Q}}_p^{\times} \times \prod_{\tau \in \Phi} \mathrm{GL}_2(\overline{\mathbb{Q}}_p) \rightarrow \mathrm{GL}_{n_{\mu}}(\overline{\mathbb{Q}}_p)$ over $\overline{\mathbb{Q}}_p$ with a highest weight $\mu$ and this is uniquely extended to $(\overline{\mathbb{Q}}_p^{\times} \times \prod_{\tau \in \Phi} \mathrm{GL}_2(\overline{\mathbb{Q}}_p)) \rtimes G_{F_0E_{0}} \rightarrow \mathrm{GL}_{n_{\mu}}(\overline{\mathbb{Q}}_p)$ such that the action of $G_{F_0E_{0}}$ is trivial on the highest weight space. Thus we obtain the representation $r_{\mu} \circ  (\psi \boxtimes \rho)|_{G_{E_0F_0}} : G_{E_{0}F_0} \rightarrow \mathrm{GL}_{n_{\mu}}(\overline{\mathbb{Q}}_p)$. The restriction $r_{\mu} \circ  (\psi \boxtimes \rho)|_{G_{\tilde{F}}}$ to the Galois closure $\tilde{F}$ of $F$ is equal to $\psi|_{G_{\tilde{F}}} \otimes (\otimes_{\tau \in \Psi} (\rho|_{G_{\tilde{F}}})^{\tau})$, where $(\rho|_{G_{\tilde{F}}})^{\tau}$ is the representation $G_{\tilde{F}} \rightarrow \mathrm{GL}_2(\overline{\mathbb{Q}}_p)$ defined by $(\rho|_{G_{\tilde{F}}})^{\tau}(g):= \rho(\tilde{\tau}^{-1} g \tilde{\tau})$ for $g \in G_{\tilde{F}}$ where $\tilde{\tau} \in G_{\mathbb{Q}}$ is a lift $\tau$. Note that the isomorphism class of $(\rho|_{G_{\tilde{F}}})^{\tau}$ is independent of the choice of $\tilde{\tau}$.

\begin{thm} \label{kottwitz conjecture}

Let $\pi$ be a cohomological cuspidal automorphic representation of $\mathrm{GL}_2(\mathbb{A}_{F})$ and $\chi : \mathbb{A}_{F_0}^{\times} \rightarrow \mathbb{C}^{\times}$ be an algebraic Hecke character such that $\omega_{\pi}|_{\mathbb{A}_{F_0}^{\times}} = \chi^c/\chi$. Then we have the following isomorphism of $G_{F_0E_0}$-modules up to semisimplification.

$\oplus_{\sigma^{\infty}}(\varinjlim_{K}H^d_{\et}(S_{K, \overline{F_0E_0}}, V_{\lambda}))[\sigma^{\infty}] \cong (r_{\mu} \circ (r_{\iota}(\chi)^c \boxtimes r_{\iota}(\pi)))^{\vee}(-d)^{\oplus m}$, where $m$ is a certain positive integer, $\sigma^{\infty}$ runs through the irreducible admissible representations of $GU(\mathbb{A}_{\mathbb{Q}}^{\infty})$ such that $\sigma := \sigma^{\infty} \otimes \sigma_{\infty}$ is a cohomological automorphic representation of $GU(\mathbb{A}_{\mathbb{Q}})$ such that $BC(\sigma) = \chi \boxtimes \pi$ for some $\sigma_{\infty}$.

\end{thm}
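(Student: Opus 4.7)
The plan is to deduce this Kottwitz-type identity by combining Matsushima's formula, the base change identification of Theorem \ref{base changeII}, local-global compatibility (Theorem \ref{Galois representation}(3)), and a Chebotarev-type density argument at unramified split primes. Since the claim is only up to semisimplification, it suffices to match traces of Frobenius on a set of primes of density one.

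First, by Matsushima's formula (Theorem \ref{Zuker}) and the comparison between Betti and \'etale cohomology, the $\sigma^\infty$-isotypic part of $\varinjlim_K H^*_{\et}(S_{K,\overline{F_0E_0}}, V_\lambda)$ is a finite-dimensional $G_{F_0E_0}$-representation, and it is supported in degree $d$ on the cuspidal contribution with $BC(\sigma)=\chi \boxtimes \pi$ cuspidal (as the generic Harish-Chandra parameter forces $(\mathfrak{g},K_\infty)$-cohomology to concentrate in middle degree for discrete series at infinity). Hence the left-hand side is a finite sum of finite-dimensional Galois representations, and we are indexing exactly over $\sigma^\infty$ whose base change is $\chi \boxtimes \pi$ as in Theorem \ref{base changeII}.

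Next, fix a prime $q$ that splits in $F_0$ as $q=vv^c$, splits completely in $F$ above $v$, lies outside $S(B)$ and outside the ramification of $K$, $\pi$ and $\chi$, and at which $S_K$ admits good reduction. Then $GU(\mathbb{Q}_q)\cong \mathbb{Q}_q^\times \times \prod_{w\mid v} \mathrm{GL}_2(F_w)$, the Hecke algebra at $q$ is abelian via the Satake isomorphism, and its action on the $\sigma^\infty$-isotypic component is given by the Satake parameters of $\sigma_q = \psi_q \boxtimes (\boxtimes_{w\mid v}\sigma_w)$. By the Langlands-Kottwitz method for PEL Shimura varieties associated with $GU$ (applied here via the work of Kottwitz, Shin, and Caraiani for the cuspidal part, or equivalently by comparing Hecke eigenvalues via the Eichler-Shimura/Frobenius-Hecke relation), the geometric Frobenius at a place of $E_0F_0$ above $q$ acting on the $\sigma^\infty$-isotypic component of $H^d$ has characteristic polynomial computed by evaluating the minuscule representation $r_\mu$ of $\widehat{GU}\rtimes G_{E_0F_0}$ on the Satake parameter of $\sigma_q$, twisted by $(-d)$ (the half-Tate twist coming from the motivic normalization encoded in $h$).

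On the Galois side, Theorem \ref{base changeII}(1) gives $\psi_q=\chi_{v^c}$ and $\sigma_w=\pi_w$ for all $w\mid v$, while Theorem \ref{Galois representation}(3) identifies the Frobenius eigenvalues of $r_\iota(\pi)|_{G_{F_w}}$ and $r_\iota(\chi)|_{G_{F_{0,v}}}$ with the Satake parameters of $\pi_w$ and $\chi_v$ respectively, under the usual geometric normalization $\pi_v|\det|_v^{-1/2}$. Feeding these into the definition of the representation $r_\mu \circ (r_\iota(\chi)^c \boxtimes r_\iota(\pi))$ yields exactly the same characteristic polynomial, up to the dual and the $(-d)$ twist, as the geometric Frobenius on the cohomology side; the compatibility of $r_\mu$ with base change up to $G_{\tilde F}$ (where $r_\mu \circ (\psi\boxtimes\rho)|_{G_{\tilde F}} = \psi|_{G_{\tilde F}} \otimes \bigotimes_{\tau\in \Psi}(\rho|_{G_{\tilde F}})^\tau$, as recalled just before the theorem) is what makes the conjugation action of $G_{F_0}$ on the tensor product match the action coming from the cocycle description of $\rho$. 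Taking the multiplicity $m$ to absorb the automorphic multiplicities $m(\sigma)$ together with the dimension of $(\mathfrak{g},K_\infty)$-cohomology in degree $d$, the traces of Frobenius at a density-one set of primes agree, and Chebotarev forces the semisimplifications to be isomorphic as $G_{F_0E_0}$-representations.

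The main obstacle is the Langlands-Kottwitz computation at good primes: one must verify that the minuscule representation $r_\mu$ of the dual group and the half-Tate twist $(-d)$ are set up consistently with the normalizations of local Langlands used in Theorem \ref{Galois representation}(3), so that the Satake parameters of $\sigma_q$ really are transported to the right Frobenius eigenvalues on both sides. This is essentially bookkeeping once one invokes Shin's work on Galois representations in the cohomology of compact PEL Shimura varieties (or Kottwitz's stable trace formula computation in good reduction), but the outer cocycle action through $G_{F_0}\to \mathrm{Gal}(\tilde F/F_0)$ on the tensor factors indexed by $\Psi$ must be tracked carefully to conclude the isomorphism over $G_{F_0E_0}$ rather than merely over $G_{\tilde F}$.
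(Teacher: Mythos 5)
The route you take is the same one the paper uses: the paper's proof consists of a citation to Kottwitz's computation and Shin's (and a remark about the hypotheses needed to invoke the latter when $S(B)=\emptyset$), and your sketch is essentially a description of what those references do. Matsushima/Vogan–Zuckerman isolating the middle degree for discrete series at infinity, the Satake/base-change identification of the unramified data, local–global compatibility feeding into $r_\mu$, and then a Chebotarev argument — that is exactly the structure of the Langlands–Kottwitz approach, and the paper defers to it too.

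There is, however, a genuine gap in the way you set up the Chebotarev step, which you acknowledge but misdiagnose as "bookkeeping." You restrict to primes $q$ of $\mathbb{Q}$ that split in $F_0$ \emph{and split completely in $F$ above $v$}. The Frobenii of primes of $F_0E_0$ lying over such $q$ do not run over a density-one set of conjugacy classes in $G_{F_0E_0}$; by construction they all lie in $G_{\tilde F}\subset G_{F_0E_0}$, and they are only dense in $G_{\tilde F}$ (the density of the relevant primes of $F_0E_0$ is $1/[\tilde F : F_0E_0]$, which is $1$ only if $\tilde F = F_0E_0$). So your argument as written only pins down the semisimplification as a $G_{\tilde F}$-module. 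Since the two sides are $G_{F_0E_0}$-modules whose restrictions to $G_{\tilde F}$ agree, they can still differ by a twist by a character of $\mathrm{Gal}(\tilde F/F_0E_0)$, or by permutations of irreducible constituents within a $G_{\tilde F}$-isotypic block. To close the gap one must compare Frobenius traces at \emph{all} good primes of $F_0E_0$ — including primes above $q$ that split in $F_0$ but do \emph{not} split completely in $F$, where $GU(\mathbb{Q}_q)$ has factors $\mathrm{GL}_2(F_w)$ with $F_w\neq\mathbb{Q}_q$ and where $r_\mu(\hat g_q \rtimes \mathrm{Frob}_{\mathfrak q})$ genuinely permutes the $\tau$-indexed tensor factors. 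That computation is precisely the additional content of the Langlands–Kottwitz point count / stable trace formula at non-split primes, which the cited results supply and which is not recovered by the Eichler–Shimura/Frobenius–Hecke relation alone. As long as you invoke that full computation rather than only the completely-split case, the rest of your outline is sound.
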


\begin{proof} See \cite[Theorem 1]{Kot} and \cite[Theorem 1.1, Theorem 1.3 and Corollary 10.6]{SS}. Note that in order to use \cite{SS}, we need to assume that if $S(B)$ is empty, then $F^+ \neq \mathbb{Q}$ and $\Psi \neq \Phi$. \end{proof}

Let $E$ be a finite subextension of $\overline{\mathbb{Q}}_p/\mathbb{Q}_p$ such that $\mathrm{Hom}(F, E) = \mathrm{Hom}(F, \overline{\mathbb{Q}}_p)$, $\mathcal{O}$ be the ring of integers of $E$, $\varpi$ be a uniformizer of $\mathcal{O}$ and $\mathbb{F}$ be the residue field of $\mathcal{O}$. Then $V_{\lambda}$ is defined over $E$, for which we also write $V_{\lambda}$ and has a model $\mathcal{V}_{\lambda}$ over $\mathcal{O}$. Let $S$ be a finite set of primes containing $p$, all primes lying below places in $S(B)$ or all primes ramified in $F$.  

Let $K = \prod_{l}K_l$ be a neat open compact subgroup of $GU(\mathbb{A}_{\mathbb{Q}}^{\infty})$ such that $K_l$ is a hyperspecial maximal compact subgroup of $GU(\mathbb{Q}_l)$ for any prime $l \notin S$. We may assume $K_l = \mathcal{O}_{F_{0, v^c}}^{\times} \times \prod_{w \mid v}\mathrm{GL}_2(\mathcal{O}_{F_w})$ for any $v \mid l$ splitting over $\mathbb{Q}$ not contained in $S$. Let $\mathbb{T}^S$ denote the subalgebra of the Hecke algebra $\mathcal{H}(GU(\mathbb{A}_{\mathbb{Q}}^{\infty}), K)_{\mathcal{O}}$ generated over $\mathcal{O}$ by $\mathcal{H}(GU(\mathbb{Q}_l), K_l)_{\mathcal{O}}$ for any $l \notin S$ splitting in $F_0$. 

Let $v \mid l$ be a place of $F_0$ splitting over $\mathbb{Q}$ not lying above $S$.  For a finite place $w \mid v$ of $F$, we define $P_w(X) := X^2 - T_{w,1}X + q_vT_{w,2} \in \mathcal{H}(\mathrm{GL}_2(F_{w}), \mathrm{GL}_2(\mathcal{O}_{F_w}))_{\mathbb{Z}}[X] \subset \mathbb{T}^S[X]$,  where $$T_{w,1} := [\mathrm{GL}_2(\mathcal{O}_{F_w})\begin{pmatrix}
    \varpi_w & 0 \\
    0 & 1 \end{pmatrix} \mathrm{GL}_2(\mathcal{O}_{F_w})], T_{w,2} := [\mathrm{GL}_2(\mathcal{O}_{F_w})\begin{pmatrix}
        \varpi_w & 0 \\
        0 & \varpi_w \end{pmatrix}\mathrm{GL}_2(\mathcal{O}_{F_w})].$$ 
        
Note that for any unramified representation $\pi_w$ of $\mathrm{GL}_2(F_w)$ over $\mathbb{C}$ with an eigensystem $\varphi_{\pi_w} : \mathcal{H}(\mathrm{GL}_2(F_{w}), \mathrm{GL}_2(\mathcal{O}_{F_w}))_{\mathbb{Z}} \rightarrow \mathbb{C}$, we have $\mathrm{det}(X - \mathrm{rec}_{F_w}(\pi_w|\mathrm{det}|_w^{-\frac{1}{2}})(\mathrm{Frob}_w)) = \varphi_{\pi_w} \circ P_w(X)$. (See \cite[Corollary 3.1.2]{CHT}.) Let $T_v := [\mathcal{O}_{F_{0, v}}^{\times} \varpi_v \mathcal{O}_{F_{0,v}}^{\times}] \in \mathbb{T}^S$ for a finite place $v$ of $F_0$ splitting over $\mathbb{Q}$. 

\begin{dfn} \label{non-Eisenstein} Let $\mathfrak{m}$ be a maximal ideal of $\mathbb{T}^S$ such that $\mathbb{T}^S/\mathfrak{m}$ is a finite field.

1 \ We say that $\mathfrak{m}$ is decomposed generic if there exists a prime $l \notin S$ splitting completely in $F$ such that for any finite place $w \mid l$ of $F$, the eigenvalues $\alpha_w, \beta_w$ of $P_w(X) \mod \mathfrak{m}$ satisfies $\alpha_w/\beta_w \neq l^{\pm 1}$.

2 \ We say that $\mathfrak{m}$ is non-Eisenstein if there exists an irreducible continuous representation $\overline{\rho}_{\mathfrak{m}} : G_{F} \rightarrow \mathrm{GL}_2(\mathbb{T}^S/\mathfrak{m})$ such that for almost all $l \notin S$ splitting in $F_0$ and any $w \mid l$, the restriction $\overline{\rho}_{\mathfrak{m}}|_{G_{F_w}}$ is unramified and $\mathrm{det}(X - \overline{\rho}_{\mathfrak{m}}(\mathrm{Frob}_w)) = P_w(X)$. (Precisely, we need to write $P_w(X) \mod \mathfrak{m}$, but we also omit such notations in the following.)

\end{dfn}

For decomposed generic maximal ideals, we have the following very nice vanishing theorem. 

\begin{thm} \label{torsion vanishing}

Let $\mathfrak{m}$ be a decomposed generic maximal ideal of $\mathbb{T}^S$. Then we have the following.

1 \  $H^i_{\et}(S_{K, \overline{F}}, \mathcal{V}_{\lambda}/\varpi)_{\mathfrak{m}} = 0$ for any $i \neq d$.

2 \ $H^d_{\et}(S_{K, \overline{F}}, \mathcal{V_{\lambda}})_{\mathfrak{m}}$ is a finite free $\mathcal{O}$-module.

3 \ $H^d_{\et}(S_{K, \overline{F}}, \mathcal{V_{\lambda}})_{\mathfrak{m}}/\varpi^n \Isom H^d_{\et}(S_{K, \overline{F}}, \mathcal{V_{\lambda}}/\varpi^n)_{\mathfrak{m}}$.
    
\end{thm}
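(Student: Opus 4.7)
The plan is to deduce all three statements from the foundational vanishing result of Caraiani--Scholze \cite{CS} together with its extension to the compact unitary case by Koshikawa \cite{kos}, combined with standard homological arguments using the Bockstein long exact sequence.

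First I would establish Part 1, which is the substantive input. Since $\mathfrak{m}$ is decomposed generic, by choosing a prime $l$ as in Definition \ref{non-Eisenstein}, one can apply the method of Caraiani--Scholze to the Hodge--Tate period map $\pi_{\mathrm{HT}} : \mathcal{S}_{K^p} \to \Fl_{GU, \mu}$. The idea is that the localization at $\mathfrak{m}$ kills the contributions from every non-basic Newton stratum: on each stratum the nearby cycles can be computed via the torsion Mantovan formula in terms of the cohomology of Igusa varieties and Rapoport--Zink spaces, and at the places above $l$ the Hecke eigenvalues forced by $\mathfrak{m}$ are incompatible with the Satake parameters that can occur in non-basic strata by the decomposed genericity condition $\alpha_w/\beta_w \neq l^{\pm 1}$. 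Since $V_{\lambda}$ is a local system coming from the tautological representation on the universal abelian variety, the argument of \cite[Corollary 1.8]{CS} extends, and in the compact (proper) case this is exactly what is carried out by Koshikawa \cite{kos}. The output is the desired vanishing in degrees $i \neq d$.

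Next, Part 2 follows from Part 1 by a formal homological argument. Apply the Bockstein long exact sequence associated with $0 \to \mathcal{V}_{\lambda} \xrightarrow{\varpi} \mathcal{V}_{\lambda} \to \mathcal{V}_{\lambda}/\varpi \to 0$ after localization at $\mathfrak{m}$. The vanishing in Part 1 shows that multiplication by $\varpi$ is surjective on $H^{d+1}_{\et}(S_{K, \overline{F}}, \mathcal{V}_{\lambda})_{\mathfrak{m}}$; since this module is finitely generated over $\mathcal{O}$ by properness of $S_K$ and finiteness of étale cohomology, Nakayama's lemma gives $H^{d+1}_{\et}(S_{K, \overline{F}}, \mathcal{V}_{\lambda})_{\mathfrak{m}} = 0$. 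The same sequence then shows that the connecting map $H^d_{\et}(S_{K, \overline{F}}, \mathcal{V}_{\lambda}/\varpi)_{\mathfrak{m}} \to H^{d+1}_{\et}(S_{K, \overline{F}}, \mathcal{V}_{\lambda})_{\mathfrak{m}}$ lands in zero, hence multiplication by $\varpi$ on $H^d$ is injective. Thus $H^d_{\et}(S_{K, \overline{F}}, \mathcal{V}_{\lambda})_{\mathfrak{m}}$ is a finitely generated, $\varpi$-torsion-free $\mathcal{O}$-module, which is finite free because $\mathcal{O}$ is a discrete valuation ring.

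Finally, Part 3 is obtained by iterating. Tensoring the already established short exact sequence $0 \to H^d_{\et}(S_{K, \overline{F}}, \mathcal{V}_{\lambda})_{\mathfrak{m}} \xrightarrow{\varpi} H^d_{\et}(S_{K, \overline{F}}, \mathcal{V}_{\lambda})_{\mathfrak{m}} \to H^d_{\et}(S_{K, \overline{F}}, \mathcal{V}_{\lambda}/\varpi)_{\mathfrak{m}} \to 0$ repeatedly (using the vanishing in degrees $\neq d$ for every $\mathcal{V}_{\lambda}/\varpi^n$, which follows from Part 1 by induction on $n$ via the short exact sequence $0 \to \mathcal{V}_{\lambda}/\varpi \to \mathcal{V}_{\lambda}/\varpi^n \to \mathcal{V}_{\lambda}/\varpi^{n-1} \to 0$) yields the claimed identification $H^d_{\et}(S_{K, \overline{F}}, \mathcal{V}_{\lambda})_{\mathfrak{m}}/\varpi^n \xrightarrow{\sim} H^d_{\et}(S_{K, \overline{F}}, \mathcal{V}_{\lambda}/\varpi^n)_{\mathfrak{m}}$. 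The main obstacle is Part 1; Parts 2 and 3 are essentially formal once Part 1 is available, so the weight of the proof rests on invoking the Caraiani--Scholze--Koshikawa machinery in the precise form needed for these compact unitary Shimura varieties.
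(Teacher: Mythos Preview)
Your approach matches the paper's: Part 1 is by direct citation of Caraiani--Scholze and Koshikawa, and Parts 2 and 3 are formal homological consequences of Part 1 (the paper says exactly this and gives no further detail).

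One small slip in your Part 2 argument: the vanishing of the connecting map $H^d(\mathcal{V}_\lambda/\varpi)_{\mathfrak{m}} \to H^{d+1}(\mathcal{V}_\lambda)_{\mathfrak{m}}$ gives \emph{surjectivity} of $H^d(\mathcal{V}_\lambda)_{\mathfrak{m}} \to H^d(\mathcal{V}_\lambda/\varpi)_{\mathfrak{m}}$, not injectivity of $\varpi$ on $H^d$. Injectivity of $\varpi$ on $H^d(\mathcal{V}_\lambda)_{\mathfrak{m}}$ comes instead from the vanishing of $H^{d-1}(\mathcal{V}_\lambda/\varpi)_{\mathfrak{m}}$, which kills the connecting map \emph{into} $H^d$. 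With that correction your argument is complete.
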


\begin{proof} See \cite[Theorem 6.3.1]{CS} and \cite[Theorem 1.4]{kos} for the statement 1. The statements 2 and 3 are formal consequences of 1. \end{proof}

We consider $\widehat{H}^i(S_{K^p}, \mathcal{O}) := \varprojlim_{n} \varinjlim_{K_p} H^i_{\et}(S_{K^pK_p, \overline{F}}, \mathcal{O}/{\varpi}^n)$. This has an action of $G_F \times G(\mathbb{Q}_p)$, where $G := GU_{\mathbb{Q}_p}$. More generally, we consider $\widehat{H}^i(S_{K^p}, \mathcal{V}_{\lambda}) := \varprojlim_{n} \varinjlim_{K_p} H^i_{\et}(S_{K^pK_p, \overline{F}}, \mathcal{V_{\lambda}}/{\varpi}^n)$. We put $\widehat{H}^i(S_{K^p}, V_{\lambda}) := \widehat{H}^i(S_{K^p}, \mathcal{V}_{\lambda})[\frac{1}{p}]$.

\begin{prop}\label{admissibility}

1 \ Let $n$ be a positive integer. Then we have $$H^i_{\et}(S_{K^pK_p, \overline{F}}, \mathcal{V}_{\lambda}/\varpi^n) = H^i_{\et}(S_{K^pK_p, \overline{F}}, \mathcal{O}/\varpi^n) \otimes_{\mathcal{O}/\varpi^n} \mathcal{V}_{\lambda}/\varpi^n$$ for any $K_p$ acting trivially on $\mathcal{V}_{\lambda}/\varpi^n$.

2 \ $\widehat{H}^i(S_{K^p}, \mathcal{V}_{\lambda}) = \widehat{H}^i(S_{K^p}, \mathcal{O}) \otimes_{\mathcal{O}} \mathcal{V}_{\lambda}$.

3 \ $\widehat{H}^i(S_{K^p}, \mathcal{V}_{\lambda})$ has bounded $p$-torsions and $\widehat{H}^i(S_{K^p}, V_{\lambda})$ is an admissible Banach representation of $G(\mathbb{Q}_p)$ whose unit ball is the maximal $p$-torsion free quotient $\widehat{H}^i(S_{K^p}, \mathcal{V}_{\lambda})/(p-\mathrm{tor})$ of $\widehat{H}^i(S_{K^p}, \mathcal{V}_{\lambda})$. 

\end{prop}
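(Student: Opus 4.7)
The overall strategy is to reduce assertions (2) and (3) to the already-known admissibility of completed cohomology with trivial coefficients via the finite-level identity (1). The key observation is that the local system $\mathcal{V}_\lambda/\varpi^n$ trivializes at deep enough finite level, which immediately reduces the twisted étale cohomology to a constant-coefficient computation tensored with a finite free module.

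For (1), the $\mathcal{O}/\varpi^n$-local system $\mathcal{V}_\lambda/\varpi^n$ on $S_{K^pK_p}$ is by construction attached to the continuous representation of $K_p$ on the finite free $\mathcal{O}/\varpi^n$-module $\mathcal{V}_\lambda/\varpi^n$, realized via the tower of finite étale Galois covers $S_{K^pK_p'}\to S_{K^pK_p}$ for $K_p' \subset K_p$ acting trivially. If $K_p$ itself acts trivially on $\mathcal{V}_\lambda/\varpi^n$, then this local system is simply the constant sheaf, and the standard compatibility of étale cohomology with finite free coefficient extension yields the identity in (1). For (2), note that the $K_p$ acting trivially on $\mathcal{V}_\lambda/\varpi^n$ form a cofinal system in $\varinjlim_{K_p}$; tensor product commutes with this direct limit, and since $\mathcal{V}_\lambda$ is free of rank $r$ over $\mathcal{O}$, the inverse limit $\varprojlim_n$ commutes with $-\otimes_{\mathcal{O}/\varpi^n}\mathcal{V}_\lambda/\varpi^n$ (both sides become $(\varprojlim_n A_n)^{\oplus r}$ after fixing an $\mathcal{O}$-basis). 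The resulting isomorphism is $G_F$-equivariant and $G(\mathbb{Q}_p)$-equivariant for the diagonal action coming from the action on completed cohomology and the algebraic action of $G(\mathbb{Q}_p)$ on $\mathcal{V}_\lambda$; this equivariance is traced through the construction of $\mathcal{V}_\lambda$.

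For (3), assertion (2) gives an $\mathcal{O}$-linear (although not $G(\mathbb{Q}_p)$-linear) isomorphism $\widehat{H}^i(S_{K^p},\mathcal{V}_\lambda)\cong \widehat{H}^i(S_{K^p},\mathcal{O})^{\oplus r}$, which immediately transfers the boundedness of $p$-torsion from the trivial-coefficient case (Emerton, extended by Caraiani-Scholze). Consequently $\widehat{H}^i(S_{K^p},V_\lambda)=\widehat{H}^i(S_{K^p},\mathcal{V}_\lambda)[\tfrac{1}{p}]$ is a Banach space whose unit ball is exactly $\widehat{H}^i(S_{K^p},\mathcal{V}_\lambda)/(p\text{-tor})$. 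For admissibility of $\widehat{H}^i(S_{K^p},V_\lambda)$ as a Banach representation of $G(\mathbb{Q}_p)$, one uses that tensoring an admissible Banach representation with a finite-dimensional continuous representation preserves admissibility, combined with Emerton's admissibility of $\widehat{H}^i(S_{K^p},\mathbb{Q}_p)$; alternatively, a closed $K_p$-equivariant embedding into $C(K_p,\mathbb{Q}_p)^{\oplus m}$ can be produced directly from the trivial-coefficient case by tensoring. The only potentially delicate point in the entire argument is verifying that the $G(\mathbb{Q}_p)$-action on the right-hand side of (2) is diagonal in the sense described, but this is essentially forced by the definition of $\mathcal{V}_\lambda$ and is preserved under the relevant limits and colimits.
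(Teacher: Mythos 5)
Your proof is correct and follows essentially the same route as the paper: (1) by observing that $\mathcal{V}_\lambda/\varpi^n$ becomes a constant sheaf once $K_p$ acts trivially, (2) by cofinality of such $K_p$ plus commutation of the finite-level tensor with the limits, and (3) by reducing to the trivial-coefficient admissibility of Emerton via the $\mathcal{O}$-linear identification $\widehat{H}^i(S_{K^p},\mathcal{V}_\lambda)\cong\widehat{H}^i(S_{K^p},\mathcal{O})^{\oplus r}$. The paper handles (3) simply by citing Emerton's results; your elaboration (either via stability of admissibility under tensoring with a finite-dimensional continuous representation, or by directly producing a closed equivariant embedding into $C(K_p,\mathbb{Q}_p)^{\oplus m}$) is exactly the intended argument.
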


\begin{proof}

The statement 1 follows from the fact that $\mathcal{V}_{\lambda}/\varpi^n$ is a constant sheaf of finite free $\mathcal{O}/\varpi^n$-modules on $S_{K^pK_p}$ if $K_p$ acts trivially on $\mathcal{V}_{\lambda}/\varpi^n$.

2 \ $\widehat{H}^i(S_{K^p}, \mathcal{V}_{\lambda}) := \varprojlim_{n} \varinjlim_{K_p} H^i_{\et}(S_{K^pK_p}, \mathcal{V_{\lambda}}/{\varpi}^n) = \varprojlim_{n} \varinjlim_{K_p} H^i_{\et}(S_{K^pK_p}, \mathcal{O}/{\varpi}^n) \otimes_{\mathcal{O}} \mathcal{V_{\lambda}}$. See \cite[Theorem 0.1]{emerton} and \cite[Proposition 6.2.6]{emertonII} for the statement 3. \end{proof}

Let us fix an open compact subgroup $K_{p, 0}$ of $GU(\mathbb{Q}_p)$ and for a normal open compact subgroup $K_p \subset K_{p, 0}$, let $$\mathbb{T}^S(K_pK^p, \mathcal{V}_{\lambda}/\varpi^n) := \mathrm{Im}(\mathbb{T}^S \rightarrow \mathrm{End}_{D(\mathcal{O}/\varpi^n[K_{0, p}/K_{p}])}(R\Gamma_{\et}(S_{K_pK^p, \overline{F}}, \mathcal{V}_{\lambda}/\varpi^n)))$$ and $\mathbb{T}^S(K^p, \mathcal{V}_{\lambda}):=\varprojlim_{K_p, n}\mathbb{T}^S(K_pK^p, \mathcal{V}_{\lambda}/\varpi^n).$ Note that $\mathbb{T}^S(K_pK^p, \mathcal{V}_{\lambda}/\varpi^n)$ a priori depends on the choice of $K_{p, 0}$, but a natural map $\mathbb{T}^S(K_pK^p, \mathcal{V}_{\lambda}/\varpi^n) \rightarrow \mathrm{End}_{D(\mathcal{O}/\varpi^n)}(R\Gamma_{\et}(S_{K^pK_p, \overline{F}}, \mathcal{V}_{\lambda}/\varpi^n))$ has a nilpotent kernel annihilated by $(d+1)$ power. (See \cite[Lemma 2.2.4]{10}.) Note also that by 2 of Proposition \ref{admissibility}, we have $\mathbb{T}^S(K^p, \mathcal{V}_{\lambda}) = \mathbb{T}^S(K^p, \mathcal{O})$.

\begin{prop}\label{maximal ideal}

1 \ If $K_p$ is pro-$p$, then the ring morphism $\mathbb{T}^S(K^p, \mathcal{V}_{\lambda}) \twoheadrightarrow \mathbb{T}^S(K^pK_p, \mathcal{V}_{\lambda}/\varpi)$ induces a bijection between the sets of maximal ideals. Therefore, the set of maximal ideals of $\mathbb{T}^S(K^p, \mathcal{V}_{\lambda})$ is a finite set $\{ \mathfrak{m}_1, \cdots, \mathfrak{m}_r \}$. 

2 \ Let $J:=\cap_{i=1}^{r} \mathfrak{m}_i$. Then $\mathbb{T}^S(K^p, \mathcal{V}_{\lambda})$ is $J$-adically complete. Thus we have a finite direct product decomposition $\mathbb{T}^S(K^p, \mathcal{V}_{\lambda}) = \prod_{i=1}^r \mathbb{T}^S(K^p, \mathcal{V}_{\lambda})_{\mathfrak{m}_i}$ such that $\mathbb{T}^S(K^p, \mathcal{V}_{\lambda})_{\mathfrak{m}_i}$ is $\mathfrak{m}_i$-adically complete for any $i$ and a finite direct sum decomposition $\widehat{H}^i(S_{K^p}, \mathcal{V}_{\lambda}) = \oplus_{i = 1}^{r} \widehat{H}^i(S_{K^p}, \mathcal{V}_{\lambda})_{\mathfrak{m}_i}$.

3 \ Let $\mathfrak{m}$ be a decomposed generic maximal ideal of $\mathbb{T}^S(K^p, \mathcal{V}_{\lambda})$. (Precisely, the pullback of $\mathfrak{m}$ to $\mathbb{T}^S$ is a decomposed generic maximal ideal of $\mathbb{T}^S$. In the following, we also use such conventions.) Then $\widehat{H}^i(S_K, \mathcal{V}_{\lambda})_{\mathfrak{m}} = 0$ for any $i \neq d$ and $\widehat{H}^d(S_{K^p}, \mathcal{V}_{\lambda})_{\mathfrak{m}}$ is the $p$-adic completion of the torsion free $\mathcal{O}$-module $\varinjlim_{K_p} H^d_{\et}(S_{K^pK_p}, \mathcal{V}_{\lambda})_{\mathfrak{m}}$.

\end{prop}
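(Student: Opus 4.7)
The plan for part 1 is to show that the kernel of the natural surjection $\mathbb{T}^S(K^p, \mathcal{V}_\lambda) \twoheadrightarrow \mathbb{T}^S(K^pK_p, \mathcal{V}_\lambda/\varpi)$ lies in the Jacobson radical of the source, which will force maximal ideals to correspond bijectively. Since $\mathbb{T}^S(K^p, \mathcal{V}_\lambda)$ is the inverse limit of the Artinian rings $\mathbb{T}^S(K^pK'_p, \mathcal{V}_\lambda/\varpi^n)$ (indexed by pro-$p$ $K'_p \subset K_{p,0}$ and $n \ge 1$) with surjective transition maps, it suffices to check that each transition map $\mathbb{T}^S(K^pK'_p, \mathcal{V}_\lambda/\varpi^n) \twoheadrightarrow \mathbb{T}^S(K^pK_p, \mathcal{V}_\lambda/\varpi)$ (for $K'_p \subset K_p$ both pro-$p$) has nilpotent kernel. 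I would factor this map through $\mathbb{T}^S(K^pK'_p, \mathcal{V}_\lambda/\varpi)$ and treat the two steps separately: (i) for the $\varpi^n \to \varpi$ step, nilpotence follows by d\'evissage along the filtration $\varpi^i\mathcal{V}_\lambda/\varpi^n \subset \mathcal{V}_\lambda/\varpi^n$, whose graded pieces are all isomorphic to $\mathcal{V}_\lambda/\varpi$; (ii) for the $K'_p \to K_p$ step, I would use the fact that the finite \'etale cover $\pi\colon S_{K^pK'_p} \to S_{K^pK_p}$ satisfies $\pi_*(\mathcal{V}_\lambda/\varpi) \cong \mathcal{V}_\lambda/\varpi \otimes_{\mathbb{F}} \mathbb{F}[K_p/K'_p]$ as local systems on $S_{K^pK_p}$, combined with the fact that $\mathbb{F}[K_p/K'_p]$ admits a filtration by powers of the augmentation ideal whose subquotients are trivial (since $K_p/K'_p$ is a $p$-group). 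The resulting Hecke-equivariant filtration of $H^*_{\et}(S_{K^pK'_p, \overline{F}}, \mathcal{V}_\lambda/\varpi)$ of length $[K_p:K'_p]$, with graded pieces that are subquotients of $H^*_{\et}(S_{K^pK_p, \overline{F}}, \mathcal{V}_\lambda/\varpi)$, then forces any element $T$ of the kernel to satisfy $T^{[K_p:K'_p]} = 0$. Step (ii) is the main technical point of the proof; the remainder is inverse-limit bookkeeping.

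For part 2, I would observe that each $\mathbb{T}^S(K^pK_p, \mathcal{V}_\lambda/\varpi^n)$ is a finite $\mathcal{O}/\varpi^n$-algebra (since $S_{K^pK_p}$ is proper, its cohomology is finitely generated over $\mathcal{O}/\varpi^n$), hence Artinian, and therefore decomposes as the product of its localizations at its finitely many maximal ideals. By part 1 these maximal ideals are compatibly identified with $\{\mathfrak{m}_1, \ldots, \mathfrak{m}_r\}$ along the transition maps, so passing to the inverse limit commutes with the finite product to give $\mathbb{T}^S(K^p, \mathcal{V}_\lambda) = \prod_{i=1}^r \mathbb{T}^S(K^p, \mathcal{V}_\lambda)_{\mathfrak{m}_i}$, each factor being a complete local ring. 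This gives both the $J$-adic completeness and, upon applying the associated idempotents, the decomposition $\widehat{H}^i(S_{K^p}, \mathcal{V}_\lambda) = \bigoplus_i \widehat{H}^i(S_{K^p}, \mathcal{V}_\lambda)_{\mathfrak{m}_i}$.

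For part 3, the plan is to first invoke Theorem \ref{torsion vanishing}(1) at every finite level $K_p$ to get $H^i_{\et}(S_{K^pK_p, \overline{F}}, \mathcal{V}_\lambda/\varpi)_{\mathfrak{m}} = 0$ for $i \neq d$, then d\'evissage along $0 \to \mathcal{V}_\lambda/\varpi \to \mathcal{V}_\lambda/\varpi^n \to \mathcal{V}_\lambda/\varpi^{n-1} \to 0$ and induction on $n$ to extend this vanishing to $\mathcal{V}_\lambda/\varpi^n$, and finally $\varinjlim_{K_p}$ followed by $\varprojlim_n$ to conclude $\widehat{H}^i(S_{K^p}, \mathcal{V}_\lambda)_{\mathfrak{m}} = 0$ for $i \neq d$. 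For $i = d$, parts (2) and (3) of the same theorem say that $H^d_{\et}(S_{K^pK_p, \overline{F}}, \mathcal{V}_\lambda)_{\mathfrak{m}}$ is finite free over $\mathcal{O}$ with $H^d_{\et}(S_{K^pK_p, \overline{F}}, \mathcal{V}_\lambda)_{\mathfrak{m}}/\varpi^n \cong H^d_{\et}(S_{K^pK_p, \overline{F}}, \mathcal{V}_\lambda/\varpi^n)_{\mathfrak{m}}$; taking the direct limit over $K_p$ (which commutes with localization and with reduction modulo $\varpi^n$) and then the inverse limit over $n$ identifies $\widehat{H}^d(S_{K^p}, \mathcal{V}_\lambda)_{\mathfrak{m}}$ with the $p$-adic completion of the torsion free $\mathcal{O}$-module $\varinjlim_{K_p} H^d_{\et}(S_{K^pK_p, \overline{F}}, \mathcal{V}_\lambda)_{\mathfrak{m}}$.
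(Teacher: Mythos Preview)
Your proof is correct and follows the standard route; the paper itself simply cites \cite[proof of Lemma 2.1.14]{GN} for parts 1 and 2 and invokes Theorem \ref{torsion vanishing} for part 3, and your argument is essentially a faithful unpacking of what that reference does. One small remark on part 1, step (ii): for the isomorphism $\pi_*(\mathcal{V}_\lambda/\varpi) \cong \mathcal{V}_\lambda/\varpi \otimes_{\mathbb{F}} \mathbb{F}[K_p/K'_p]$ you are implicitly using that $K'_p$ is normal in $K_p$ (so that $\pi$ is Galois), which is automatic here since both are taken normal in $K_{p,0}$; and since the paper's Hecke algebra is defined via derived endomorphisms rather than endomorphisms of cohomology, you should either work in the derived category throughout or note (as the paper does just before this proposition) that the two differ by a nilpotent ideal, which does not affect maximal ideals.
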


\begin{proof}

See \cite[proof of Lemma 2.1.14]{GN} for the statements 1 and 2. The statement 3 follows from Theorem \ref{torsion vanishing}. \end{proof}

\begin{prop}\label{invariant part}

For any decomposed generic maximal ideal $\mathfrak{m}$ of $\mathbb{T}^S$, we have $$H^d_{\et}(S_{K^pK_p', \overline{F}}, \mathcal{V}_{\lambda}/\varpi^n)_{\mathfrak{m}}^{K_p} = H^d_{\et}(S_{K^pK_p, \overline{F}}, \mathcal{V}_{\lambda}/\varpi^n)_{\mathfrak{m}}$$ for any integer $n$ and any open normal subgroup $K'_p$ of $K_p$ and $\widehat{H}^d(S_{K^p}, \mathcal{V}_{\lambda})_{\mathfrak{m}}^{K_p} = H^d_{\et}(S_{K^pK_p, \overline{F}}, \mathcal{V}_{\lambda})_{\mathfrak{m}}$.

\end{prop}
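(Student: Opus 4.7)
The plan is to apply the Hochschild--Serre spectral sequence for the finite étale Galois cover $S_{K^pK_p', \overline{F}} \to S_{K^pK_p, \overline{F}}$ with Galois group $K_p/K_p'$:
\[
E_2^{i,j} = H^i\bigl(K_p/K_p',\, H^j_{\et}(S_{K^pK_p', \overline{F}}, \mathcal{V}_\lambda/\varpi^n)\bigr) \Longrightarrow H^{i+j}_{\et}(S_{K^pK_p, \overline{F}}, \mathcal{V}_\lambda/\varpi^n).
\]
This spectral sequence is $\mathbb{T}^S$-equivariant, so it survives localization at $\mathfrak{m}$.

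First I would establish that $H^j_{\et}(S_{K^pK_p', \overline{F}}, \mathcal{V}_\lambda/\varpi^n)_{\mathfrak{m}} = 0$ for all $j \neq d$ and all $n \geq 1$. Since decomposed genericity is a condition on $\mathfrak{m} \subset \mathbb{T}^S$ that is independent of the level, Theorem \ref{torsion vanishing} applied at the level $K^pK_p'$ gives this for $n = 1$, and a short induction on $n$ using the short exact sequence $0 \to \mathcal{V}_\lambda/\varpi \to \mathcal{V}_\lambda/\varpi^n \to \mathcal{V}_\lambda/\varpi^{n-1} \to 0$ extends it to all $n$. Consequently, the localized spectral sequence is concentrated in the single row $j = d$ and degenerates at $E_2$, producing
\[
H^i\bigl(K_p/K_p',\, H^d_{\et}(S_{K^pK_p', \overline{F}}, \mathcal{V}_\lambda/\varpi^n)_{\mathfrak{m}}\bigr) \cong H^{i+d}_{\et}(S_{K^pK_p, \overline{F}}, \mathcal{V}_\lambda/\varpi^n)_{\mathfrak{m}}.
\]
Setting $i = 0$ and using that the $K_p$-action on $H^d_{\et}(S_{K^pK_p', \overline{F}}, \mathcal{V}_\lambda/\varpi^n)$ factors through $K_p/K_p'$ (since $K_p'$ preserves the $K^pK_p'$-level structure and hence acts trivially on the cover) yields the first stated equality.

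For the second equality, I would use the identifications $\widehat{H}^d(S_{K^p}, \mathcal{V}_\lambda)_{\mathfrak{m}} = \varprojlim_n \widehat{H}^d(S_{K^p}, \mathcal{V}_\lambda/\varpi^n)_{\mathfrak{m}}$ (valid because the localization at $\mathfrak{m}$ is cut out by a central idempotent of $\mathbb{T}^S(K^p, \mathcal{O})$ by Proposition \ref{maximal ideal}, so commutes with inverse limits) and $\widehat{H}^d(S_{K^p}, \mathcal{V}_\lambda/\varpi^n)_{\mathfrak{m}} = \varinjlim_{K_p''} H^d_{\et}(S_{K^pK_p'', \overline{F}}, \mathcal{V}_\lambda/\varpi^n)_{\mathfrak{m}}$, where the colimit runs over open normal subgroups $K_p'' \subset K_p$. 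The Hochschild--Serre argument above shows that after localization the transition maps realise lower-level cohomology as the $K_p/K_p''$-invariants of higher-level cohomology, hence are injective. Since taking $K_p$-invariants commutes with filtered colimits having injective transitions and with $\varprojlim_n$, the first part combined with Theorem \ref{torsion vanishing}(3) (which identifies $\varprojlim_n H^d_{\et}(S_{K^pK_p, \overline{F}}, \mathcal{V}_\lambda/\varpi^n)_{\mathfrak{m}}$ with $H^d_{\et}(S_{K^pK_p, \overline{F}}, \mathcal{V}_\lambda)_{\mathfrak{m}}$) gives the desired equality.

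The main obstacle is purely technical: verifying the compatibility of Hochschild--Serre with $\mathfrak{m}$-localization and the injectivity of the transition maps in the completed cohomology after localization. Both reduce cleanly to the Caraiani--Scholze--Koshikawa torsion vanishing theorem (Theorem \ref{torsion vanishing}), which supplies the substantive input to the argument.
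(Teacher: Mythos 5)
Your argument is correct and fills in exactly the details the paper leaves implicit: the paper's proof is the single sentence "The first statement follows from Theorem \ref{torsion vanishing} and the second statement follows from the first statement," which amounts to precisely your Hochschild--Serre-plus-torsion-vanishing argument for the first claim and your colimit/invariants bookkeeping (using the injectivity of transition maps and Theorem \ref{torsion vanishing}(3)) for the second. No discrepancy in approach or substance.
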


\begin{proof}

The first statement follows from Theorem \ref{torsion vanishing} and the second statement follows from the first statement. \end{proof}

In the rest of this subsection, we fix a decomposed generic non-Eisenstein maximal ideal $\mathfrak{m}$ of $\mathbb{T}^S(K^p, \mathcal{V}_{\lambda})$. We put $$\mathbb{T}^S(K, \mathcal{V}_{\lambda}/\varpi^n)_{\mathfrak{m}} := \mathrm{Im}(\mathbb{T}^S \rightarrow \mathrm{End}_{\mathcal{O}/\varpi^n}(H^d_{\et}(S_{K, \overline{F}}, \mathcal{V}_{\lambda}/\varpi^n)_{\mathfrak{m}}))$$ and $\mathbb{T}^S(K^p, \mathcal{V}_{\lambda})_{\mathfrak{m}}:=\varprojlim_{K_p, n}\mathbb{T}^S(K^pK_p, \mathcal{V}_{\lambda}/\varpi^n)_{\mathfrak{m}}$. This is an abuse of notations because this has a surjection from the localization of $\mathbb{T}^S(K^p, \mathcal{V}_{\lambda})$ at $\mathfrak{m}$ with a nilpotent kernel annihilated by $(d+1)$-power but this may not be an isomorphism. In the following, we will not consider the localization of $\mathbb{T}^S(K^p, \mathcal{V}_{\lambda})$ at $\mathfrak{m}$. Thus there are no risks of confusion. We put $\mathbb{T}^S(K, \mathcal{V}_{\lambda})_{\mathfrak{m}}:=\mathrm{Im}(\mathbb{T}^S \rightarrow \mathrm{End}_{\mathcal{O}}(H^d_{\et}(S_{K, \overline{F}}, \mathcal{V}_{\lambda})_{\mathfrak{m}}))$. By Proposition \ref{maximal ideal}, we have $\mathbb{T}^S(K^p, \mathcal{V}_{\lambda})_{\mathfrak{m}}=\varprojlim_{K_p}\mathbb{T}^S(K^pK_p, \mathcal{V}_{\lambda})_{\mathfrak{m}}$. 

\begin{thm}\label{Galois completed cohomology}
 
1 \ There exists a continuous representation $\rho_{\mathfrak{m}} : G_{F, S} \rightarrow \mathrm{GL}_2(\mathbb{T}^S(K^p, \mathcal{V}_{\lambda})_{\mathfrak{m}})$ unique up to isomorphism such that $\rho_{\mathfrak{m}}^c \cong \rho_{\mathfrak{m}}^{\vee}\varepsilon_p^{-1}$ and $\mathrm{det}(T - \rho_{\mathfrak{m}}(\mathrm{Frob}_w)) = P_w(T)$ for any $l \notin S$ splitting in $F_0$ and any places $w \mid l$.

2 \ There exists a unique continuous character $\chi_{\mathfrak{m}} : G_{F_0, S} \rightarrow \mathbb{T}^S(K^p, \mathcal{V}_{\lambda})_{\mathfrak{m}}^{\times}$ such that $\chi_{\mathfrak{m}}^c/\chi_{\mathfrak{m}} \circ \mathrm{Art}_{F_{0}} = \varepsilon_p \mathrm{det}\rho_{\mathfrak{m}} \circ \mathrm{Art}_{F}|_{\mathbb{A}_{F_0}^{\infty}}$ and $\chi_{\mathfrak{m}}(\mathrm{Frob}_v) = T_v$ for any $l \notin S$ splitting in $F_0$ and any place $v \mid l$ of $F_0$.

\end{thm}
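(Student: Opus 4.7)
The plan is to construct $\rho_{\mathfrak{m}}$ by assembling the Galois representations attached to classical automorphic eigensystems contributing to the cohomology, organizing them into a pseudo-representation (equivalently, a Chenevier determinant) on the Hecke algebra, and then lifting to an honest $\mathrm{GL}_2$-valued representation using the non-Eisenstein hypothesis.

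First I would work at finite level. For each sufficiently small $K_p$ and each $n \ge 1$, Theorem \ref{torsion vanishing} implies that $\mathbb{T}^S(K^pK_p, \mathcal{V}_{\lambda}/\varpi^n)_{\mathfrak{m}}$ is a finite $\mathcal{O}/\varpi^n$-algebra. After inverting $p$ and extending scalars to $\overline{E}$, the Matsushima formula (Theorem \ref{Zuker}) yields a decomposition of this algebra as a product of fields indexed by systems of Hecke eigenvalues arising from cohomological cuspidal automorphic representations $\sigma$ of $GU(\mathbb{A}_{\mathbb{Q}})$. For each such $\sigma$, Theorem \ref{base changeII} produces $BC(\sigma) = \chi \boxtimes \pi$, and Theorem \ref{Galois representation} attaches the two-dimensional representation $r_{\iota}(\pi)$; local-global compatibility (condition 3 of Theorem \ref{Galois representation}) at any split place $w \nmid S$ gives $\det(T - r_{\iota}(\pi)(\mathrm{Frob}_w)) = \varphi_{\sigma}(P_w(T))$. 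Taking the product of the associated two-dimensional determinants over all eigensystems yields a continuous Chenevier determinant $D_{K_p,n}$ on $G_{F,S}$, a priori valued in $\mathbb{T}^S(K^pK_p, \mathcal{V}_{\lambda}/\varpi^n)_{\mathfrak{m}} \otimes_{\mathcal{O}} \overline{E}$.

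Next I would descend to the integral Hecke algebra and pass to the limit. By construction the values of $D_{K_p,n}$ at Frobenii $\mathrm{Frob}_w$ for places $w \mid l$ with $l \notin S$ split in $F_0$ are precisely $T_{w,1}$ and $q_v T_{w,2}$, which lie in $\mathbb{T}^S(K^pK_p, \mathcal{V}_{\lambda}/\varpi^n)_{\mathfrak{m}}$ itself; these Frobenius elements are dense in $G_{F,S}$ by Chebotarev, and continuity forces $D_{K_p,n}$ to land in the integral Hecke algebra. Compatibility in $K_p$ and $n$ is automatic, so the $D_{K_p,n}$ glue to a continuous two-dimensional determinant $D : G_{F,S} \to \mathbb{T}^S(K^p, \mathcal{V}_{\lambda})_{\mathfrak{m}}$. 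Since $\mathfrak{m}$ is non-Eisenstein, $\overline{\rho}_{\mathfrak{m}}$ is absolutely irreducible, so by the theorem of Chenevier (equivalently, of Nyssen--Rouquier for pseudo-characters) the determinant $D$ lifts, uniquely up to conjugation, to a continuous representation $\rho_{\mathfrak{m}} : G_{F,S} \to \mathrm{GL}_2(\mathbb{T}^S(K^p, \mathcal{V}_{\lambda})_{\mathfrak{m}})$ with $\det(T - \rho_{\mathfrak{m}}(\mathrm{Frob}_w)) = P_w(T)$. The conjugate self-duality $\rho_{\mathfrak{m}}^c \cong \rho_{\mathfrak{m}}^{\vee} \varepsilon_p^{-1}$ holds at every classical point by condition 1 of Theorem \ref{Galois representation}; the corresponding determinants therefore agree on a dense set of Frobenii, hence everywhere, and absolute irreducibility of $\overline{\rho}_{\mathfrak{m}}$ promotes this equality to an isomorphism of representations. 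For the character $\chi_{\mathfrak{m}}$, Theorem \ref{base changeII} gives at each classical point an algebraic Hecke character $\chi$ with $r_{\iota}(\chi)(\mathrm{Frob}_v) = T_v$ at split $v$ and $\omega_{\pi}|_{\mathbb{A}_{F_0}^{\times}} = \chi^c/\chi$; setting $\chi_{\mathfrak{m}}(\mathrm{Frob}_v) := T_v$ and extending continuously via Chebotarev produces the required character, with the identity $\chi_{\mathfrak{m}}^c/\chi_{\mathfrak{m}} \circ \mathrm{Art}_{F_0} = \varepsilon_p \det \rho_{\mathfrak{m}} \circ \mathrm{Art}_F|_{\mathbb{A}_{F_0}^{\infty}}$ reducing to the matching identity at classical points.

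The main obstacle I anticipate is the descent step: showing that the Chenevier determinant defined a priori over $\mathbb{T}^S(K^pK_p, \mathcal{V}_{\lambda}/\varpi^n)_{\mathfrak{m}} \otimes_{\mathcal{O}} \overline{E}$ in fact descends to the integral Hecke algebra. This rests on the observation that the traces at Frobenii of split unramified primes are already tautologically in $\mathbb{T}^S$, combined with Chebotarev density and continuity, so that the integrality of the full pseudo-representation is forced. After this, the lifting to $\mathrm{GL}_2$ is standard given absolute irreducibility of $\overline{\rho}_{\mathfrak{m}}$, and all the remaining properties follow from local-global compatibility at classical points together with the density of such points in the Hecke algebra.
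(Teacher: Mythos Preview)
Your approach is essentially the same as the paper's, which constructs $\rho_{\mathfrak{m},K_p}$ at each finite level via the standard pseudo-representation argument (referring to \cite[proof of Proposition 3.4.4]{CHT}) and then takes $\varprojlim_{K_p}$. There is one slip in your write-up: the ring $\mathbb{T}^S(K^pK_p,\mathcal{V}_{\lambda}/\varpi^n)_{\mathfrak{m}} \otimes_{\mathcal{O}} \overline{E}$ is zero, since the Hecke algebra is $\varpi^n$-torsion; you should instead build the determinant over the characteristic-zero algebra $\mathbb{T}^S(K^pK_p,\mathcal{V}_{\lambda})_{\mathfrak{m}} \otimes_{\mathcal{O}} \overline{E}$ (a product of fields by Matsushima and torsion vanishing), descend to $\mathbb{T}^S(K^pK_p,\mathcal{V}_{\lambda})_{\mathfrak{m}}$, and only then reduce mod $\varpi^n$ if you want to phrase the gluing that way. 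One small organizational advantage of your version over the paper's is that you glue determinants (which are canonical) and lift to $\mathrm{GL}_2$ only once on the full limit algebra, whereas the paper lifts at each finite level and must then arrange the lifts and the self-duality isomorphisms compatibly across levels before passing to the limit.
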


\begin{rem} Let $\sigma$ be a cohomological cuspidal automorphic representation of $GU(\mathbb{A}_{\mathbb{Q}})$ such that $\sigma^{\infty, K} \neq 0$, $\chi \boxtimes \pi := BC(\sigma)$ and $\varphi_{\sigma, \iota} : \mathbb{T}^S(K^p, \mathcal{V}_{\lambda})_{\mathfrak{m}} \rightarrow \overline{\mathbb{Q}}_p$ be the eigensystem corresponding to $\overline{\mathbb{Q}}_p \otimes_{\iota^{-1}, \mathbb{C}} \sigma^{\infty, K}$. Then we have $r_{\iota}(\pi) = \varphi_{\sigma, \iota *} \rho_{\mathfrak{m}}$ and $r_{\iota}(\chi) = \varphi_{\sigma, \iota *} \chi_{\mathfrak{m}}$, where $\varphi_{\sigma, \iota *} \rho_{\mathfrak{m}}$ (resp. $\varphi_{\sigma, \iota *} \chi_{\mathfrak{m}}$) denotes the composition of the following map $G_{F, S} \rightarrow \mathrm{GL}_2(\mathbb{T}^S(K^p, \mathcal{V}_{\lambda})_{\mathfrak{m}}) \xrightarrow{\mathrm{GL}_2(\varphi_{\sigma, \iota})} \mathrm{GL}_2(\overline{\mathbb{Q}}_p)$ (resp. $G_{F, S} \rightarrow \mathbb{T}^S(K^p, \mathcal{V}_{\lambda})_{\mathfrak{m}}^{\times} \xrightarrow{\varphi_{\sigma, \iota}} \overline{\mathbb{Q}}_p^{\times})$. \end{rem}

\begin{proof}

1 \ By using Theorem \ref{base changeII} and Theorem \ref{Galois representation}, we can construct a continuous representation $\rho_{\mathfrak{m}, K_p} : G_{F, S} \rightarrow \mathrm{GL}_2(\mathbb{T}^S(K^pK_p, \mathcal{V}_{\lambda})_{\mathfrak{m}})$ satisfying $\rho_{\mathfrak{m}, K_p}^c \cong \rho_{\mathfrak{m},K_p}^{\vee}\varepsilon_p^{-1}$, $\mathrm{det}(T - \rho_{\mathfrak{m}}(\mathrm{Frob}_w)) = P_w(T)$ for any $l \notin S$ splitting in $F_0$ and any places $w \mid l$ and that for any $K_p' \subset K_p$, the pushforward of $\rho_{\mathfrak{m}, K_p'}$ to $\mathbb{T}^S(K^pK_p, \mathcal{V}_{\lambda})_{\mathfrak{m}}$ is equal to $\rho_{\mathfrak{m}, K_p}$ compatible with isomorphisms $\rho_{\mathfrak{m}, K_p}^c \cong \rho_{\mathfrak{m},K_p}^{\vee}\varepsilon_p^{-1}$. See \cite[proof of Proposition 3.4.4]{CHT} for details. Thus by taking $\varprojlim_{K_p}$, we obtain the result. The proof of 2 is easier. \end{proof}

\begin{prop} \label{residual irreducibility}

We put $s_{\mathfrak{m}} := \chi_{\mathfrak{m}}^c|_{G_{\tilde{F}, S}} \otimes (\otimes_{\tau \in \Psi} (\rho_{\mathfrak{m}}|_{G_{\tilde{F}, S}})^{\tau})$ and assume the following conditions.

(1) \ $\overline{s}_{\mathfrak{m}} := s_{\mathfrak{m}} \mod \mathfrak{m}$ is absolutely irreducible.

(2) \ $\overline{\rho}_{\mathfrak{m}}(G_{F})$ is not solvable. 

 Then the evaluation map $$\mathrm{ev}_{\mathcal{V}_{\lambda}} : s_{\mathfrak{m}}^{\vee}(-d) \otimes_{\mathbb{T}^S(K^p, \mathcal{V}_{\lambda})_{\mathfrak{m}}} \mathrm{Hom}_{\mathbb{T}^S(K^p, \mathcal{V}_{\lambda})_{\mathfrak{m}}[G_{\tilde{F}}]}(s_{\mathfrak{m}}^{\vee}(-d), \widehat{H}^d(S_{K^p}, \mathcal{V}_{\lambda})_{\mathfrak{m}}) \rightarrow \widehat{H}^d(S_{K^p}, \mathcal{V}_{\lambda})_{\mathfrak{m}}$$ is an isomorphism.

\end{prop}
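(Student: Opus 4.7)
The plan is to reduce the statement to the Kottwitz conjecture (Theorem \ref{kottwitz conjecture}) at finite level after inverting $p$, then to propagate this to the completed cohomology via a limit argument, and finally to descend from the rational to the integral statement using the absolute irreducibility of $\overline{s}_\mathfrak{m}$.

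Set $T := \mathbb{T}^S(K^p, \mathcal{V}_\lambda)_\mathfrak{m}$, $s := s_\mathfrak{m}^\vee(-d)$, $M := \widehat{H}^d(S_{K^p}, \mathcal{V}_\lambda)_\mathfrak{m}$, and write $N := \mathrm{Hom}_{T[G_{\tilde F}]}(s, M)$. First I would fix a sufficiently small $K_p$ and analyze $M_{K_p}[1/p] := H^d_{\et}(S_{K^pK_p,\overline{F}}, V_\lambda)_\mathfrak{m}$ over the reduced finite $E$-algebra $\mathbb{T}^S(K^pK_p, \mathcal{V}_\lambda)_\mathfrak{m}[1/p] \cong \prod_\varphi E_\varphi$, indexed by $\overline{\mathbb{Q}}_p$-eigensystems $\varphi$ lying above $\mathfrak{m}$. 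The assumption that $\overline{\rho}_\mathfrak{m}(G_F)$ is not solvable forces each $\rho_\varphi := \varphi_*\rho_\mathfrak{m}$ to be non-abelian, so by Theorems \ref{base changeII} and \ref{Galois representation} the isobaric automorphic representation $\pi_\varphi$ matching $\rho_\varphi$ must be cuspidal. Theorem \ref{kottwitz conjecture} then identifies $M_{K_p}[\varphi]$, up to semisimplification as a $G_{\tilde F}$-module, with $s_\varphi^{\oplus m_\varphi(K_p)}$, where $s_\varphi$ is the specialization of $s$ at $\varphi$. Absolute irreducibility of $\overline{s}_\mathfrak{m}$ passes to each $s_\varphi$, so the semisimplification is redundant and $M_{K_p}[\varphi]$ is genuinely $s_\varphi$-isotypic. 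Summing over $\varphi$ yields the desired isomorphism at level $K^pK_p$ after inverting $p$.

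Next, I would pass to the limit. Proposition \ref{invariant part} gives $M^{K_p} = M_{K_p}$, so $\varinjlim_{K_p} M_{K_p}$ is a dense $T[1/p]$-submodule of $M[1/p]$; since $\mathrm{Hom}_{T[G_{\tilde F}]}(s, -)$ commutes with the relevant limits (using that $s$ is $T$-finite free and that the finite-level evaluation maps are compatible), the rational evaluation $\mathrm{ev}_{\mathcal{V}_\lambda}[1/p]: s \otimes_T N[1/p] \to M[1/p]$ is an isomorphism. Because $M$ is $\mathcal{O}$-flat by Theorem \ref{torsion vanishing} and Proposition \ref{maximal ideal}, and $N$ embeds into $M^{\oplus 2^d}$ by evaluation on a $T$-basis of $s$, both $N$ and $s \otimes_T N$ are $p$-torsion free, so $\mathrm{Ker}\,\mathrm{ev}_{\mathcal{V}_\lambda}$ vanishes.

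For surjectivity, let $C := \mathrm{Coker}\,\mathrm{ev}_{\mathcal{V}_\lambda}$, which is $p$-power torsion. Absolute irreducibility of $\overline{s}_\mathfrak{m}$ implies $\mathrm{End}_{T[G_{\tilde F}]}(s) = T$ (lift Schur via Nakayama) and, crucially, that $s$ is a sufficiently ``rigid'' object in the category of continuous $T[G_{\tilde F}]$-modules. One verifies $\mathrm{Hom}_{T[G_{\tilde F}]}(s, C) = 0$: any such homomorphism composed with a splitting of $C[\varpi] \hookrightarrow C$ modulo $\varpi$ would lift, via the projectivity of finite-level representations, to a homomorphism $s \to M$ whose image is not contained in $\mathrm{Im}\,\mathrm{ev}_{\mathcal{V}_\lambda}$, contradicting the definition of $N$. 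Combined with the fact that every $\mathbb{F}[G_{\tilde F}]$-Jordan–H\"older constituent of $M/\varpi$ is isomorphic to $\overline{s}_\mathfrak{m}^\vee(-d)$ (the mod-$\varpi$ analogue of Kottwitz, extracted from the integral structure of finite-level cohomology), a Nakayama argument forces $C = 0$.

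The main obstacle is the surjectivity step, specifically showing that every Jordan–H\"older factor of $M/\varpi$ as a $T/\mathfrak{m}[G_{\tilde F}]$-module is isomorphic to $\overline{s}_\mathfrak{m}^\vee(-d)$. This is not directly given by Theorem \ref{kottwitz conjecture}, which is rational; one would deduce it by combining the torsion vanishing of Theorem \ref{torsion vanishing} (giving $M/\varpi = \varinjlim_{K_p} H^d_{\et}(S_{K^pK_p,\overline{F}}, \mathcal{V}_\lambda/\varpi)_\mathfrak{m}$) with the decomposed generic condition on $\mathfrak{m}$, which ensures that Frobenius traces at unramified places determine the irreducible constituents of $M/\varpi$ as Galois representations, and by the Chebotarev density theorem these constituents coincide with $\overline{s}_\mathfrak{m}^\vee(-d)$.
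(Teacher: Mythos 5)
Your overall strategy (Kottwitz at finite level, pass to the limit, descend to the integral statement) matches the paper's in broad outline, but there is a genuine gap in the step where you dismiss the semisimplification in Theorem \ref{kottwitz conjecture}.

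You write that ``absolute irreducibility of $\overline{s}_\mathfrak{m}$ passes to each $s_\varphi$, so the semisimplification is redundant and $M_{K_p}[\varphi]$ is genuinely $s_\varphi$-isotypic.'' This does not follow. Even if every Jordan--H\"older constituent of $M_{K_p}[\varphi]$ is isomorphic to $s_\varphi$ and $s_\varphi$ is irreducible, the module can still be a non-split self-extension of $s_\varphi$: irreducibility of the constituents controls the semisimplification, not the module itself. One needs $\mathrm{Ext}^1_{G_{\tilde F}}(s_\varphi, s_\varphi)$ to act trivially on the relevant filtration, and that is exactly what requires a separate argument. This is where the paper actually uses condition (2): non-solvability of $\overline{\rho}_\mathfrak{m}(G_F)$ implies $r_\iota(\pi)$ is strongly irreducible (via \cite[Lemma 7.1.2]{10}), which feeds into \cite[Theorem 2.20, (2.12), (2.13)]{NJ} to prove that $\varinjlim_K H^d_{\et}(S_{K,\overline{\tilde F}}, V_\lambda)[\sigma^\infty]$ is semisimple as a $G_{\tilde F}$-module. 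In contrast you invoke (2) only to deduce cuspidality of $\pi_\varphi$, but that already follows from $\mathfrak{m}$ being non-Eisenstein (so $\overline{\rho}_\mathfrak{m}$, hence $\rho_\varphi$, is irreducible), and you are left with nothing to close the semisimplicity gap.

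The paper's proof also handles surjectivity more cleanly than your sketch: it first proves injectivity of $\mathrm{ev}_{\mathcal V_\lambda} \otimes_{\mathcal O} \mathbb{F}$ using absolute irreducibility of $\overline{s}_\mathfrak{m}$, which gives injectivity of $\mathrm{ev}_{\mathcal V_\lambda}$ \emph{and} $\mathcal{O}$-torsion-freeness of the cokernel in one step; surjectivity then follows from the density of $\mathrm{Im}(\mathrm{ev}_{\mathcal V_\lambda})[1/p]$, which is where the semisimplicity at finite level enters. Your surjectivity argument instead relies on identifying all Jordan--H\"older factors of $M/\varpi$ with $\overline{s}_\mathfrak{m}^\vee(-d)$ via ``the mod-$\varpi$ analogue of Kottwitz,'' but the paper has no such mod-$\varpi$ Kottwitz statement and you give no argument for it; the final Nakayama step as you describe it (composing with a splitting of $C[\varpi]$, lifting via ``projectivity of finite-level representations'') is too vague to be checked. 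The paper's route through torsion-free cokernel plus rational density avoids needing any mod-$\varpi$ version of Kottwitz.
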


\begin{proof} We may assume that the residue field of $\mathfrak{m}$ is equal to $\mathbb{F}$. We write $H^d_{\et}(S_{K^p, \overline{\tilde{F}}}, \mathcal{V}_{\lambda}/\varpi)$ for $\varinjlim_{K_p}H^d_{\et}(S_{K^pK_p, \overline{\tilde{F}}}, \mathcal{V}_{\lambda}/\varpi)$. 
    
First, we have the injectivity of a map $$\overline{s}_{\mathfrak{m}}^{\vee}(-d) \otimes_{\mathbb{F}} \mathrm{Hom}_{\mathbb{F}[G_{\tilde{F}}]}(\overline{s}_{\mathfrak{m}}^{\vee}(-d), H^d(S_{K^p}, \mathcal{V}_{\lambda}/\varpi)_{\mathfrak{m}}[\mathfrak{m}]) \rightarrow H^d(S_{K^p}, \mathcal{V}_{\lambda}/\varpi)_{\mathfrak{m}}[\mathfrak{m}]$$ by the assumption that $\overline{s}_{\mathfrak{m}}$ is absolutely irreducible. Thus we obtain the injectivity of $$(s_{\mathfrak{m}}^{\vee}(-d) \otimes_{\mathbb{T}^S(K^p, \mathcal{O})_{\mathfrak{m}}} \mathrm{Hom}_{\mathbb{T}^S(K^p, \mathcal{V}_{\lambda})_{\mathfrak{m}}[G_{\tilde{F}}]}(s_{\mathfrak{m}}^{\vee}(-d), \widehat{H}^d(S_{K^p}, \mathcal{V}_{\lambda})_{\mathfrak{m}}))/\varpi[\mathfrak{m}] \rightarrow H^d(S_{K^p}, \mathcal{V}_{\lambda}/\varpi)_{\mathfrak{m}}[\mathfrak{m}].$$ This implies the injectivity of $\mathrm{ev}_{\mathcal{V}_{\lambda}} \otimes_{\mathcal{O}} \mathbb{F}$ because any element of $$(s_{\mathfrak{m}}^{\vee}(-d) \otimes_{\mathbb{T}^S(K^p, \mathcal{V}_{\lambda})_{\mathfrak{m}}} \mathrm{Hom}_{\mathbb{T}^S(K^p, \mathcal{V}_{\lambda})_{\mathfrak{m}}[G_{\tilde{F}}]}(s_{\mathfrak{m}}^{\vee}(-d), \widehat{H}^d(S_{K^p}, \mathcal{V}_{\lambda})_{\mathfrak{m}}))/\varpi$$ is annihilated by a power of $\mathfrak{m}$. Therefore, ev$_{\mathcal{V}_{\lambda}}$ is injective and the cokernel is torsion free over $\mathcal{O}$. Thus, $\mathrm{ev}_{\mathcal{V}_{\lambda}}[\frac{1}{p}]$ has a closed image.
    
    By Theorem \ref{kottwitz conjecture}, if we can prove the semisimplicity of the action of $G_{\tilde{F}}$ on $H^d_{\et}(S_{K^pK_p, \overline{\tilde{F}}}, V_{\lambda})_{\mathfrak{m}}$ for any $K_p$, then we obtain the density of $\mathrm{Im}(\mathrm{ev}_{\mathcal{V}_{\lambda}})[\frac{1}{p}]$ in $\widehat{H}^d(S_{K^p}, V_{\lambda})_{\mathfrak{m}}$, which implies the result. Therefore, it suffices to prove that for any cohomological cuspidal automorphic representation $\sigma$ of $GU(\mathbb{A}_{\mathbb{Q}})$ with weight $\lambda$ such that $\chi \boxtimes \pi := BC(\sigma)$ satisfies $\overline{r_{\iota}(\pi)} = \overline{\rho}_{\mathfrak{m}}$, the representation $\varinjlim_{K} H^d_{\et}(S_{K, \overline{\tilde{F}}}, V_{\lambda})[\sigma^{\infty}]$ is semisimple. This follows from \cite[Theorem 2.20, (2.12) and (2.13)]{NJ}. Note that our assumption on $\overline{\rho}_{\mathfrak{m}}$ implies that $r_{\iota}(\pi)$ is strongly irreducible by \cite[Lemma 7.1.2]{10} and thus the assumption of \cite[Theorem 2.20]{NJ} is satisfied. \end{proof}

Let $\mathfrak{g} := \mathrm{Lie}(G(\mathbb{Q}_p)) \otimes_{\mathbb{Q}_p} E$ and $v \mid p$ be the finite place of $F_0$ induced by $F_0 \hookrightarrow F \hookrightarrow \mathbb{C} \xrightarrow{\iota^{-1}} \overline{\mathbb{Q}}_p$. Then we have $G(\mathbb{Q}_p) = \mathbb{Q}_p^{\times} \times \prod_{w \mid v} \mathrm{GL}_2(F_w)$ and after enlarging $E$, the Lie algebra $\mathfrak{g}$ is identified with $E \times \prod_{\tau \in \Phi} \mathfrak{gl}_2(E)_{\tau}$. (Precisely, we put $\mathfrak{gl}_2(E)_{\tau} := \mathfrak{gl}_2(E)$. We use this notation for expressing the considered index.) For an $\mathcal{O}$-morphism $\varphi : \mathbb{T}^S(K^p, \mathcal{V}_{\lambda})_{\mathfrak{m}} \rightarrow \mathcal{O}$, we put $\rho_{\varphi} := \varphi_*\rho_{\mathfrak{m}}$ and $\chi_{\varphi} := \varphi_*\chi_{\mathfrak{m}}$.

\begin{thm}\label{infinitesimal character}

Let $\varphi : \mathbb{T}^S(K^p, \mathcal{O})_{\mathfrak{m}} \rightarrow \mathcal{O}$ be an $\mathcal{O}$-morphism such that $\rho_{\varphi}|_{G_{F_v}}$ is de Rham of $p$-adic Hodge type $\lambda_w:=(\lambda_{\iota\tau, 1}, \lambda_{\iota\tau, 2}) \in (\mathbb{Z}^2_{+})^{\mathrm{Hom}_{\mathbb{Q}_p}(F_w, E)}$ for any $w \mid v$ and $\chi_{\varphi}|_{G_{F_{v^c}}}$ is de Rham of $p$-adic Hodge type $\lambda_0$ and let $\chi_{\lambda^{\vee}} : Z(U(\mathfrak{g})) \rightarrow E$ be the infinitesimal character of $V_{\lambda}^{\vee}$. Here, we regard $V_{\lambda}$ as the irreducible algebraic representation $\mathrm{GL}_{2, E}$ with dominant weight $\lambda$ and $Z(U(\mathfrak{g}))$ denotes the center of the universal enveloping algebra $U(\mathfrak{g})$ of $\mathfrak{g}$.

Then we have $\widehat{H}^d(S_{K^p}, E)_{\mathfrak{m}}^{\mathrm{la}}[\varphi] \subset \widehat{H}^d(S_{K^p}, E)^{\mathrm{la}, \chi_{\lambda^{\vee}}}_{\mathfrak{m}}$.
    
\end{thm}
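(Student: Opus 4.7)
The plan is to compute how $Z(U(\mathfrak{g}))$ acts on the locally analytic $\varphi$-eigenspace by first pinning down the action of the Cartan $\mathfrak{h}\subset\mathfrak{g}$ via geometric Sen theory and then pushing it through the Harish-Chandra homomorphism $Z(U(\mathfrak{g}))\hookrightarrow U(\mathfrak{h})^W$. The de Rham hypothesis on $\rho_{\varphi}|_{G_{F_w}}$ and $\chi_{\varphi}|_{G_{F_{v^c}}}$ enters only to supply explicit Hodge-Tate weights; the core of the argument is really about Hodge-Tate-ness.

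First I would work after base change to $C:=\widehat{\overline{\mathbb{Q}}}_p$, where the perfectoidness of $S_{K^p}$ together with the geometric Sen theory of \cite{PanI}, \cite{Cam} and \cite{LACC} supplies a $\mathfrak{g}$- and $G_F$-equivariant isomorphism
\[
\widehat{H}^d(S_{K^p},E)^{\mathrm{la}}_{\mathfrak{m}}\,\widehat{\otimes}_E\,C\;\cong\;H^d(\Fl,\mathcal{O}_{K^p}^{\mathrm{la}})_{\mathfrak{m}}
\]
together with the horizontal action $\theta_{\mathfrak{h}}$ of $\mathfrak{h}=\mathfrak{t}_0\times\prod_{\tau\in\Phi}\mathfrak{h}_{\tau}$ on the right-hand side (where $\mathfrak{t}_0=\mathrm{Lie}(\mathbb{Q}_p^{\times})\otimes_{\mathbb{Q}_p}E$ and each $\mathfrak{h}_{\tau}$ is the diagonal of $\mathfrak{gl}_{2,\tau}$) whose eigenvalues on a Galois eigenspace compute the Hodge-Tate-Sen weights of the associated Galois representation, exactly as in the discussion after Theorem \ref{fundamental lemma} and in \cite{IC}. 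Combining this with the Kottwitz conjecture (Theorem \ref{kottwitz conjecture}), which identifies the semisimplification of the $\varphi$-eigenspace at finite level with a twist of $r_{\mu}\circ(\chi_{\varphi}^c\boxtimes\rho_{\varphi})^{\vee}(-d)$, the assumption that $\rho_{\varphi}|_{G_{F_w}}$ is de Rham of $p$-adic Hodge type $\lambda_w$ and $\chi_{\varphi}|_{G_{F_{v^c}}}$ of type $\lambda_0$ lets one compute $\theta_{\mathfrak{h}}$ explicitly on $\widehat{H}^d(S_{K^p},E)^{\mathrm{la}}_{\mathfrak{m}}[\varphi]\,\widehat{\otimes}_E\,C$ as the character of $\mathfrak{h}$ attached to $\lambda$ with the sign and shift appropriate to $V_{\lambda}^{\vee}$.

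Finally I would apply Harish-Chandra: a character of $\mathfrak{h}$ arising (after the usual $\rho$-shift) as the highest weight of $V_{\lambda}^{\vee}$ induces the character $\chi_{\lambda^{\vee}}$ of $Z(U(\mathfrak{g}))$. This gives $Z(U(\mathfrak{g}))$ acting via $\chi_{\lambda^{\vee}}$ on the $\varphi$-eigenspace after base change to $C$, and since this action is already defined over $E$ and $C/E$ is faithfully flat, the stated containment $\widehat{H}^d(S_{K^p},E)^{\mathrm{la}}_{\mathfrak{m}}[\varphi]\subset \widehat{H}^d(S_{K^p},E)^{\mathrm{la},\chi_{\lambda^{\vee}}}_{\mathfrak{m}}$ descends to $E$. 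The main obstacle is the embedding-by-embedding matching in the middle step: one must show that the $\tau$-component $\theta_{\mathfrak{h}_{\tau}}$ of the horizontal action is computed by the $\tau$-component of the Sen operator of $\rho_{\varphi}$ for every $\tau\in\Phi$ separately, while carefully tracking the $(-d)$ Tate twist, the twist by $\chi_{\varphi}^c|_{G_F}$, and the interchange between $\omega$ and $\omega^{-1}(1)$ coming from the Hodge-Tate filtration, so that the resulting character of $\mathfrak{h}$ is precisely the one producing $\chi_{\lambda^{\vee}}$; once this matching is checked at finite level by Faltings' comparison, the rest of the argument is purely Lie-algebraic.
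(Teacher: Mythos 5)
The paper's own proof of this statement is just a citation to \cite[\S 9.10]{IC}, so you are supplying a replacement argument rather than matching the paper's. Your high-level plan — base change to $C$, invoke the geometric Sen theory of \cite{PanI, Cam, LACC} to get the horizontal action $\theta_{\mathfrak{h}}$, read off the eigenvalues from the Galois side, and push through Harish--Chandra — has the right shape and is indeed the standard mechanism behind statements of this type. But there is a genuine gap in how you pass from the Hodge--Tate--Sen weights to the infinitesimal character.

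The issue is that $\theta_{\mathfrak{h}}$ is an action of the whole Cartan $\mathfrak{h}\cong \mathfrak{t}_0\times\prod_{\tau}\mathfrak{h}_{\tau}$, whereas the Sen operator is one specific element $\theta_{\mathfrak{h}}\!\left(\begin{smallmatrix}-1&0\\0&0\end{smallmatrix}\right)$ of it (cf.\ Proposition~\ref{Sen operator}). So the assertion that ``the eigenvalues of $\theta_{\mathfrak{h}}$ on a Galois eigenspace compute the Hodge--Tate--Sen weights'' cannot be taken literally: only the Sen direction is Galois-theoretic; the remaining directions of $\mathfrak{h}_\tau$ (the ``central'' direction per $\tau$, and the similitude $\mathfrak{t}_0$-direction) are not visible from the Sen operator of $\rho_{\varphi}$ or $\chi_{\varphi}$ alone. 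What actually pins things down is a two-sided squeeze: one first uses the analogue of Lemma~\ref{infinitesimal character and horizontal} — that $Z(U(\mathfrak{g}))$ acts through $\theta_{\mathfrak{h}}\circ(w_0\,\mathrm{HC}\,w_0^{-1})$ — to conclude that on a generalized $Z(U(\mathfrak{g}))$-eigenspace of character $\chi$ the possible $\theta_{\mathfrak{h}}$-weights form a single dot-Weyl orbit $W\cdot\mu_\chi$; then the Sen operator evaluated on that orbit produces a finite set of candidate Sen eigenvalues determined by $\chi$; comparing this set with the multi-set of Hodge--Tate weights of $\rho_{\varphi}$ (which is $\{\lambda_{\tau,1}+1,\lambda_{\tau,2}\}$ per $\tau$, distinct by Hodge--Tate regularity) forces $\chi=\chi_{\lambda^{\vee}}$. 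Your write-up conflates the Sen operator with the full horizontal action and speaks of ``computing $\theta_{\mathfrak{h}}$ explicitly'' as if it were a single character attached to $\lambda$, when in fact the $\varphi$-eigenspace carries a whole Weyl orbit of $\theta_{\mathfrak{h}}$-weights (this is exactly what Proposition~\ref{Hodge-Tate decomposition} will say later, and that proposition presupposes the infinitesimal character, so you cannot use it here without circularity).

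Two smaller points worth flagging. First, the internal tools of this paper (\S 3.4--3.5: Theorem~\ref{geometric sen theoryII}, Proposition~\ref{Sen operator}, Lemma~\ref{infinitesimal character and horizontal}) are developed only for $\mathcal{O}_{K^p}^{\Psi-\mathrm{la}}$, whereas Theorem~\ref{infinitesimal character} is about the full $\mathcal{O}_{K^p}^{\mathrm{la}}$; you are right to invoke \cite{LACC} for the general case, but you should not lean on the paper's own lemmas, which do not literally apply. Second, reading the Hodge--Tate weights of the completed-cohomology eigenspace off the Kottwitz isomorphism requires a limit argument from finite level, and Theorem~\ref{kottwitz conjecture} is only a statement up to semisimplification; what you actually need is that the Sen eigenvalues of $\widehat{H}^d(S_{K^p},E)_{\mathfrak{m}}[\varphi]\,\widehat{\otimes}_E C$ lie among those of $r_\mu\circ(\chi_{\varphi}^c\boxtimes\rho_{\varphi})^{\vee}(-d)$, and this is a statement preserved by successive extensions, so it goes through — but say so explicitly.
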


\begin{proof} See \cite[{\S} 9.10]{IC}. \end{proof}

\subsection{Hodge-de Rham structure of universal abelian varieties}

Let $L$ be an extension field of $\tilde{F}$ contained in $\mathbb{C}$ such that $B \otimes_{F} L = M_2(L)$. Thus the minuscule $\mu$ is defined over $L$ and we have a decomposition $V \otimes_{\mathbb{Q}} L = V_1 \oplus V_0$, where $\mu (z) = z$ on $V_1$ and $\mu(z) = 1$ on $V_0$. For simplicity, we only recall the moduli interpretation of $S_K$ over $L$. (See \cite[{\S} 5]{Kottwitz} or \cite[{\S} 1.4.2]{Lan} for more details. Note that we have $\mathrm{ker}^1(\mathbb{Q}, GU) = 1$ in our situation by \cite[{\S} 7]{Kottwitz} and we replace the usual Kottwitz determinant condition by the following condition 4 by using \cite[Lemma 1.2.5.13]{Lan}.)

Let $T$ be a connected locally noetherian scheme over $L$ with a geometric point $t$. Then $S_{K, L}(T)$ parametrizes the equivalence classes of tuples $(A,\lambda, i, \overline{\eta})$, where

\vspace{0.5 \baselineskip}

1 \  $A$ is an abelian scheme over $T$.

\vspace{0.5 \baselineskip}

2 \  $\lambda : A \rightarrow A^{\vee}$ is a polarization.

\vspace{0.5 \baselineskip}

3 \ $i : B \rightarrow \mathrm{End}(A) \otimes_{\mathbb{Z}} \mathbb{Q}$ is a ring morphism such that $i(b^*)^{\vee} \circ \lambda = \lambda \circ i(b)$.

\vspace{0.5 \baselineskip}

4 \  Zariski locally on $T$, the Lie algebra $\mathrm{Lie}A$ of $A$ is isomorphic to $V_1 \otimes_{L} \mathcal{O}_T$ as $B \otimes_{\mathbb{Q}} \mathcal{O}_{T}$-modules.

\vspace{0.5 \baselineskip}

5 \ $\overline{\eta}$ is a $K$-level structure, i.e., a $\pi_1(T, t)$-stable $K$-orbit of isomorphisms of $B \otimes_{\mathbb{Q}} \mathbb{A}_{\mathbb{Q}}^{\infty}$-modules $\eta : V \otimes_{\mathbb{Q}} \mathbb{A}_{\mathbb{Q}}^{\infty} \Isom VA_{t}$ which is compatible with the pairings under some identification $g : \mathbb{A}_{\mathbb{Q}}^{\infty} \Isom \mathbb{A}_{\mathbb{Q}}^{\infty}(1)$ of $\mathbb{A}_{\mathbb{Q}}^{\infty}$ (note that $g$ is uniquely determined by $\eta$), where $VA_{t} := (\varprojlim_{n}A_{t}[n](k(t))) \otimes_{\widehat{\mathbb{Z}}} \mathbb{A}_{\mathbb{Q}}$.

\vspace{0.5 \baselineskip}

We say that $(A_1,\lambda_1, i_1, \overline{\eta}_1)$ and $(A_2,\lambda_2, i_2, \overline{\eta}_2)$ are equivalent if there exist an isogeny $\alpha : A_1 \rightarrow A_2$ and $r \in \mathbb{Q}_{>0}$ such that $\alpha \lambda_1 = r\lambda_2 \alpha$, $\alpha \circ i_1(b) = i_2(b) \circ \alpha$ for any $b \in B$ and $\eta_2^{-1} \circ V(\alpha) \circ \eta_1$ is contained in the $K$-orbit of $\mathrm{id}$ and $g_2^{-1} \circ g_1$ is contained in the $\nu(K)$-orbit of $r$, where $\eta_1$ (resp. $\eta_2$) is any representative of $\overline{\eta}_1$ (resp. $\overline{\eta}_2$) and $g_1$ (resp. $g_2$) is an isomorphism $\mathbb{A}_{\mathbb{Q}}^{\infty} \Isom \mathbb{A}_{\mathbb{Q}}^{\infty}(1)$ defined by $\eta_1$ (resp. $\eta_2$) as in the above condition 5.

\vspace{0.5 \baselineskip}

We fix a representative $f : A_{K, L} \rightarrow S_{K, L}$ of the universal isogeny class of abelian schemes over $S_{K, L}$. Actually, we can see that the following construction is independent of this choice. The decomposition $B \otimes_{\mathbb{Q}} L = \prod_{\tau \in \mathrm{Hom}(F, L)} M_{2}(L)$ induces the decompositions $(R^1f_{*} \Omega_{A_{K,L}/S_{K, L}}^{\bullet})^{\vee} = \oplus_{\tau \in \mathrm{Hom}(F, L)} D_{\tau, S_{K, L}}'$ and $f_*\Omega_{A_{K,L}/S_{K,L}}^1 = \oplus_{\tau} \omega_{\tau, S_{K, L}}'$. Moreover, by the Morita equivalence, $D_{\tau, S_{K, L}}'$ and $\omega_{\tau, S_{K, L}}'$ induce a rank 2 filtered vector bundle $D_{\tau, S_{K, L}}$ on $S_{K, L}$ with an integrable connection satisfying Griffiths transversality and a vector bundle $\omega_{\tau, S_{K, L}}$ on $S_{K, L}$ satisfying the following by the above conditions 4 and 5. 

\begin{itemize}
\item For any $\tau \in \Phi \setminus \Psi$, we have $\omega_{\tau, S_{K, L}} = 0$ and $D_{\tau, S_{K, L}} = \mathrm{Fil}^0D_{\tau, S_{K, L}} \supset \mathrm{Fil}^1D_{\tau, S_{K, L}} = 0$.
\item For any $\tau \in \Psi$, the vector bundle $\omega_{\tau, S_{K, L}}$ is a line bundle and we have \begin{equation*} \mathrm{gr}^iD_{\tau, S_{K, L}}  = \begin{cases} 0 \ \mathrm{if} \ i \ge 1. \\ \omega_{\tau, S_{K, L}} \otimes_{\mathcal{O}_{S_{K, L}}} \wedge^2 D_{\tau, S_{K, L}}  \ \mathrm{if} \ i = 0. \\ \omega_{\tau, S_{K, L}}^{-1} \ \mathrm{if} \ i = -1. \\ 0 \ \mathrm{if} \ i \le -2. \end{cases} \end{equation*}
\end{itemize}

\begin{prop}(Kodaira-Spencer isomorphism)\label{KS}

For any $\tau \in \Psi$, the composition of maps $$\omega_{\tau, S_{K, L}} \otimes_{\mathcal{O}_{S_{K, L}}} \wedge^2 D_{\tau, S_{K, L}} \hookrightarrow D_{\tau, S_{K, L}} \xrightarrow{\nabla} D_{\tau, S_{K, L}} \otimes_{\mathcal{O}_{S_{K, L}}} \Omega^1_{S_{K, L}} \twoheadrightarrow \omega_{\tau, S_{K, L}}^{-1} \otimes_{\mathcal{O}_{S_{K, L}}} \Omega_{S_{K, L}}^1$$ is an injection of $\mathcal{O}_{S_{K, L}}$-modules and the induced map \begin{equation} \oplus_{\tau \in \Psi} (\omega_{\tau, S_{K, L}}^2 \otimes \wedge^2 D_{\tau, S_{K, L}}) \rightarrow \Omega_{S_{K, L}}^1 \end{equation} is an isomorphism. (Here, we put $\Omega^1_{S_{K, L}} := \Omega^1_{S_{K, L}/L}$ for simplicity. In the following, we will use this notation.)

\end{prop}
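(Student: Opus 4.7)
The plan is to identify the displayed composition with the Kodaira-Spencer map arising from the Gauss-Manin connection on $H^1_{\mathrm{dR}}(A_{K,L}/S_{K,L})$, decomposed according to the $B \otimes_{\mathbb{Q}} L$-action and Morita-reduced, and then verify the isomorphism by matching it with a Grothendieck-Messing deformation-theoretic computation.

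First I would recall the construction: for the universal abelian scheme $f : A_{K,L} \to S_{K,L}$, the Gauss-Manin connection $\nabla$ on $(R^1 f_* \Omega^\bullet_{A_{K,L}/S_{K,L}})^\vee$ satisfies Griffiths transversality, inducing an $\mathcal{O}_{S_{K,L}}$-linear Kodaira-Spencer morphism $f_* \Omega^1_{A_{K,L}/S_{K,L}} \to R^1 f_* \mathcal{O}_{A_{K,L}} \otimes_{\mathcal{O}_{S_{K,L}}} \Omega^1_{S_{K,L}}$ which is $B$-equivariant and compatible with the polarization. After Morita reduction and decomposition by $\tau \in \mathrm{Hom}(F, L) = \Phi \sqcup c\Phi$, the $\tau$-component for $\tau \in \Psi$ is exactly the composition in the statement, once one uses $\mathrm{Fil}^0 D_{\tau, S_{K,L}} = \omega_{\tau, S_{K,L}} \otimes \wedge^2 D_{\tau, S_{K,L}}$ and $D_{\tau, S_{K,L}}/\mathrm{Fil}^0 D_{\tau, S_{K,L}} = \omega_{\tau, S_{K,L}}^{-1}$.

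Second I would prove that the induced map $\bigoplus_{\tau \in \Psi} \omega_{\tau, S_{K,L}}^2 \otimes \wedge^2 D_{\tau, S_{K,L}} \to \Omega^1_{S_{K,L}}$ is an isomorphism. Both sides are locally free of rank $d = |\Psi| = \dim S_{K,L}$, so it suffices to show the dual map $T_{S_{K,L}} \to \bigoplus_{\tau \in \Psi} \omega_{\tau, S_{K,L}}^{-2} \otimes (\wedge^2 D_{\tau, S_{K,L}})^{-1}$ is injective. By the moduli interpretation, $T_{S_{K,L}}$ at a geometric point classifies first-order deformations of the universal tuple $(A, \lambda, i, \overline{\eta})$, and Grothendieck-Messing theory identifies these with $(B \otimes \mathcal{O})$-stable, polarization-compatible lifts of the Hodge filtration in $D = \oplus_\tau D_\tau$. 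The signature condition (condition 4 of the moduli problem) forces $\mathrm{Fil}^0 D_\tau = D_\tau$ for $\tau \notin \Psi$, eliminating deformations there; for $\tau \in \Psi$ the lifts are classified by $\mathrm{Hom}(\mathrm{Fil}^0 D_\tau, D_\tau/\mathrm{Fil}^0 D_\tau) = \omega_{\tau, S_{K,L}}^{-2} \otimes (\wedge^2 D_{\tau, S_{K,L}})^{-1}$, and the $c\tau$-summand is determined by the $\tau$-summand via the polarization. The standard Katz-Oda compatibility identifies this deformation map with the dual of the Kodaira-Spencer map, so we obtain both the isomorphism and the injectivity of each individual $\mathrm{KS}_\tau$ (since a nonzero map out of a line bundle is automatically injective, and the direct-sum being an isomorphism forces each component to be nonzero).

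The hardest step is the precise identification between the dual of the Gauss-Manin Kodaira-Spencer map and the Grothendieck-Messing deformation map: this is a classical compatibility but requires careful bookkeeping of signs and of how the polarization intertwines the $\tau$- and $c\tau$-components. A streamlined alternative, which I would likely adopt to avoid redoing this, is to embed $S_{K,L}$ into a Siegel Shimura variety $\mathcal{A}_g$ (forgetting the $B$-structure), invoke the classical Siegel Kodaira-Spencer isomorphism $\mathrm{Sym}^2 \omega_{\mathcal{A}} \Isom \Omega^1_{\mathcal{A}_g}$, pull back to $S_{K,L}$, and extract the $B$-equivariant $\tau$-components via Morita. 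This reduces the verification to a purely linear-algebraic computation at each geometric point of $S_{K,L}$, matching both sides summand by summand.
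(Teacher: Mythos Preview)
Your proof is correct but takes a genuinely different route from the paper. The paper argues by reduction to $L = \mathbb{C}$, then invokes the holomorphic Borel embedding: the hermitian symmetric domain $S = GU(\mathbb{R})/K_\infty$ sits as an open subset of the flag variety $\mathrm{Fl} = \prod_{\tau \in \Psi} \mathbb{P}^1_{\mathbb{C}}$, and the pullbacks of $D_{\tau}$, $\omega_\tau$ to $S$ are trivialized and identified with the tautological sheaves on $\mathrm{Fl}$. The proof then becomes a two-line explicit coordinate computation on $U = \prod_\tau \mathbb{A}^1_{\mathbb{C}}$: writing the universal line $\omega_\tau^{-1}$ as spanned by $x_\tau \begin{pmatrix} 1 \\ 0 \end{pmatrix}_\tau - \begin{pmatrix} 0 \\ 1 \end{pmatrix}_\tau$, one reads off that the Kodaira-Spencer map sends $f \cdot (\text{this generator})$ to $f\, e_{1,\tau}\, dx_\tau$ in the $\tau$-summand. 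Your deformation-theoretic (or Siegel-embedding) argument is the standard PEL-type proof and is arguably more conceptual; the paper's explicit computation, however, is not wasted effort, since the same Borel-embedding picture and the same coordinates $x_\tau$, $e_{1,\tau}$ are reused immediately afterward to construct the sections $s_I$ in the dual BGG resolution and to perform the analogous calculations in Lemma~\ref{constdeRham}. So the paper's choice is motivated by downstream needs rather than by this proposition alone.
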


\begin{proof}

We may assume that $L = \mathbb{C}$. We put $S := GU(\mathbb{R})/K_{\infty}$ and thus we have $S_K(\mathbb{C}) = GU(\mathbb{Q}) \setminus S \times GU(\mathbb{A}_{\mathbb{Q}}^{\infty})/K$. Note that the flag variety $\mathrm{Fl}/\mathbb{C}$ corresponding to $(GU_{\mathbb{C}}, \mu)$ is regarded as $\prod_{\tau \in \Psi} \mathbb{P}^1_{\mathbb{C}}$ and classifies 1-dimensional quotients of $V_{\tau} := \mathbb{C}^2$ for all $\tau \in \Psi$. Let $\omega_{\tau, \mathrm{Fl}}$ be the dual of the universal 1-dimensional subsheaf of $V_{\tau} \otimes_{\mathbb{C}} \mathcal{O}_{\mathrm{Fl}}$ for any $\tau \in \Psi$. Thus we have an exact sequence $0 \rightarrow \omega_{\tau, \mathrm{Fl}}^{-1} \rightarrow V_{\tau} \otimes_{\mathbb{C}} \mathcal{O}_{\mathrm{FL}} \rightarrow \omega_{\tau, \mathrm{FL}}  \otimes \wedge^2 V_{\tau} \rightarrow 0$. Note that we have a trivialization of the pullback of $D_{\tau, S_{K, \mathbb{C}}}$ to $S$ and the map from $S$ to $\mathrm{Fl}$ induced by the exact sequence $0 \rightarrow \omega_{\tau, S_{K, \mathbb{C}}}  \otimes_{\mathcal{O}_{S_{K, \mathbb{C}}}} \wedge^2 D_{\tau, S_{K,\mathbb{C}}} \rightarrow D_{\tau, S_{K, \mathbb{C}}} \rightarrow \omega_{\tau, S_{K, \mathbb{C}}}^{-1} \rightarrow 0$ is an open immersion. (See \cite[chapter III, Proposition 1.1]{Milne}.) Thus, it suffice to prove the counterpart of the result on $U := \prod_{\tau \in \Psi} \mathbb{A}^1_{\mathbb{C}} \subset \mathrm{Fl}$. Here, we regard $\mathbb{A}^1_{\mathbb{C}} \Isom \{ [(x_{\tau}, 1)] \} \subset \mathbb{P}^1_{\mathbb{C}}$. Thus the image $e_{1, \tau}$ of $\begin{pmatrix}
    1 \\ 0 
   \end{pmatrix}_{\tau}$ via the map $V_{\tau} \rightarrow \omega_{\tau}(U)$ is a generator of $\omega_{\tau}(U) \otimes \wedge^2 V_{\tau}$. 

We have the following exact sequences. (We fix a numbering $\tau_1, \cdots, \tau_d$ of $\Psi = \{ \tau_1, \cdots, \tau_d \}$.)

\xymatrix{
0 \ar[r] & \omega_{\tau, \mathrm{Fl}}^{-1}(U) \ar[r] & V_{\tau} \otimes_{\mathbb{C}} \mathcal{O}_{\mathrm{FL}}(U) \ar[r] \ar[d]^{\mathrm{id} \otimes \nabla} & \omega_{\tau, \mathrm{FL}}(U)  \otimes \wedge^2 V_{\tau} \ar[r] & 0. \\
0 \ar[r] &  \oplus_{i = 1}^d \omega_{\tau, \mathrm{Fl}}^{-1}(U) dx_{\tau_i} \ar[r] & \oplus_{i=1}^d (V_{\tau} \otimes_{\mathbb{C}} \mathcal{O}_{\mathrm{FL}}(U) dx_{\tau_i}) \ar[r] & \oplus_{i=1}^d (\omega_{\tau, \mathrm{FL}}(U)  \otimes \wedge^2 V_{\tau} dx_{\tau_i}) \ar[r] & 0.
}

Note that the subspace $\omega_{\tau, \mathrm{Fl}}^{-1}(U)$ of $V_{\tau} \otimes_{\mathbb{C}} \mathcal{O}_{\mathrm{FL}}(U)$ is generated by $x_{\tau}\begin{pmatrix}
    1 \\ 0 
   \end{pmatrix}_{\tau} - \begin{pmatrix}
    0 \\ 1 
   \end{pmatrix}_{\tau}$ and for $f(x_{\tau_1}, \cdots, x_{\tau_d}) \in \mathbb{C}[x_{\tau_1}, \cdots, x_{\tau_d}]$ the image of $f(x_{\tau_1}, \cdots, x_{\tau_d}) (x_{\tau}\begin{pmatrix}
    1 \\ 0 
   \end{pmatrix}_{\tau} - \begin{pmatrix}
    0 \\ 1 
   \end{pmatrix}_{\tau}) \in \omega_{\mathrm{Fl}, \tau}^{-1}(U)$ in $\oplus_{i=1}^d (\omega_{\mathrm{FL}, \tau}(U) \otimes \wedge^2 V_{\tau} dx_{\tau_i})$ is equal to $(0, \cdots, 0, f(x_{\tau_1}, \cdots, x_{\tau_d})e_{1, \tau}dx_{\tau}, 0, \cdots, 0)$. Thus we obtain the result. \end{proof}

For $\lambda = (\lambda_0, (\lambda_{\tau, 1}, \lambda_{\tau, 2})) \in \mathbb{Z} \times (\mathbb{Z}_+^2)^{\Phi}$, we have a filtered $\mathcal{O}_{S_{K, L}}$-module with an integrable connection \begin{equation} D_{\lambda, S_{K,L}} := D_{0, S_{K, L}}^{\lambda_0} \otimes (\otimes_{\tau \in \Phi} ((\wedge^2D_{\tau, S_{K, L}})^{\lambda_{\tau, 2}} \otimes \mathrm{Sym}^{\lambda_{\tau, 1} - \lambda_{\tau, 2}}D_{\tau, S_{K, L}})), \end{equation} where $D_{0, S_{K,L}}$ is the filtered $\mathcal{O}_{S_{K, L}}$-module with an integrable connection corresponding to the similitude representation (see \cite[Proposition 3.3]{Milne}). Thus this is a line bundle and we have $\mathrm{Fil}^{-1}D_{0, S_{K,L}} = D_{0, S_{K, L}} \supset \mathrm{Fil}^0D_{0, S_{K,L}} = 0$. \footnote{We use the notation $\mathcal{L}^n := \mathcal{L}^{\otimes n}$ for a line bundle $\mathcal{L}$ on a scheme.}

We regard $\mathbb{Z} \times (\mathbb{Z}_+^2)^{\Phi \setminus \Psi}$ (resp. $(\mathbb{Z}_+^2)^{\Psi}$) as a subset of $\mathbb{Z} \times (\mathbb{Z}_+^2)^{\Phi}$ by $(\lambda_0, (\lambda_{\tau, 1}, \lambda_{\tau, 2})_{\tau \in \Phi \setminus \Psi})$ (resp. $(\lambda_{\tau, 1}, \lambda_{\tau, 2})_{\tau \in \Psi}$)  $\mapsto (\lambda_0, (\lambda_{\tau, 1}, \lambda_{\tau, 2})_{\tau \in \Psi}, (0, 0)_{\tau \in \Psi})$ (resp. $(0, (0,0)_{\tau \in \Phi \setminus \Psi}, (\lambda_{\tau, 1}, \lambda_{\tau, 2})_{\tau \in \Psi})$). Moreover, for $\lambda^{\Psi} = (\lambda_0, (\lambda_{\tau, 1}, \lambda_{\tau, 2})_{\tau \in \Phi \setminus \Psi}) \in (\mathbb{Z}^2_{+})^{\Phi \setminus \Psi}$ and $\lambda_{\Psi} = (\lambda_{\tau, 1}, \lambda_{\tau, 2})_{\tau \in \Psi} \in (\mathbb{Z}^2_{+})^{\Psi}$, we put $\lambda^{\Psi}\lambda_{\Psi} := (\lambda_0, (\lambda_{\tau, 1}, \lambda_{\tau, 2})_{\tau \in \Phi}) \in \mathbb{Z} \times (\mathbb{Z}_{+}^{2})^{\Phi}$. In the following, we fix $\lambda^{\Psi} = (\lambda_0, (\lambda_{\tau, 1}, \lambda_{\tau, 2})_{\tau \notin \Psi}) \in \mathbb{Z} \times (\mathbb{Z}_+^2)^{\Phi \setminus \Psi}$, $\lambda_{\Psi} = (0, -\lambda_{\tau})_{\tau \in \Psi} \in (\mathbb{Z}_+^{2})^{\Psi}$ and $\lambda := \lambda^{\Psi}\lambda_{\Psi}$. 

\vspace{0.5 \baselineskip}

In the following, we fix a numbering $\tau_1, \cdots, \tau_n$ of $\Psi = \{ \tau_{1}, \cdots, \tau_{n} \}$. If we change this numbering, some maps $f$ in the following construction and the constructions {\S} 4.1 and 4.2 later are changed to $-f$.

By \cite[Corollary 4.30]{Lan}, the de Rham complex $(D_{\lambda, S_{K,L}} \otimes_{\mathcal{O}_{S_{K,L}}} \Omega_{S_{K, L}}^{\bullet}, \nabla_{\bullet})$ of weight $\lambda$ is quasi-isomorphic to a complex having the following form.

\begin{align}\label{BGG form} D_{\lambda^{\Psi}, S_{K,L}} \otimes (\otimes_{\tau \in \Psi} (\omega_{\tau, S_{K, L}}^{-\lambda_{\tau}} \otimes (\wedge^2 D_{\tau, S_{K, L}})^{-\lambda_{\tau}})) \rightarrow \nonumber \\ \oplus_{\sigma \in \Psi} D_{\lambda^{\Psi}, S_{K,L}} \otimes \omega_{\sigma, S_{K, L}}^{\lambda_{\sigma}+2} \otimes (\wedge^2 D_{\sigma, S_{K, L}}) \otimes (\otimes_{\tau \neq \sigma} (\omega_{\tau, S_{K, L}}^{-\lambda_{\tau}} \otimes (\wedge^2 D_{\tau, S_{K, L}})^{-\lambda_{\tau}})) \rightarrow \nonumber \\ \oplus_{I \subset \Psi, |I|=2} D_{\lambda^{\Psi}, S_{K, L}} \otimes (\otimes_{\tau \in I} (\omega_{\tau, S_{K, L}}^{\lambda_{\tau}+2} \otimes (\wedge^2 D_{\tau, S_{K, L}}))) \otimes (\otimes_{\tau \notin I} (\mathcal{\omega}_{\tau, S_{K, L}}^{-\lambda_{\tau}} \otimes (\wedge^2D_{\tau, S_{K, L}})^{-\lambda_{\tau}})) \rightarrow \nonumber \\ \cdots \rightarrow \nonumber \\ D_{S_{K,L}, \lambda^{\Psi}} \otimes (\otimes_{\tau \in \Psi} (\mathcal{\omega}_{\tau, S_{K, L}}^{\lambda_{\tau} + 2} \otimes (\wedge^2D_{\tau, S_{K, L}}))).\end{align}

However, we need more explicit construction of $GDR_{\lambda, S_{K, L}}$ in {\S} 4.2. In order to give a construction, first we see that the above quasi-isomorphism can be easily constructed over $\mathbb{C}$ by using the holomorphic Borel embedding and reducing the construction to that on the flag variety $\prod_{\tau \in \Psi} \mathbb{P}^1_{\mathbb{C}}$. 

\vspace{0.5 \baselineskip}

$\textbf{Construction over} \ \mathbb{C}$

In the following, we use the same notations as in the proof of Proposition \ref{KS}. Let $\mathrm{pr}_{\tau} : \mathrm{Fl}  = \prod_{\tau \in \Psi} \mathbb{P}^1_{\mathbb{C}} \rightarrow \mathbb{P}^1_{\mathbb{C}}$ be the projection and $\mathrm{pr}_{\tau}^{-1}(\mathcal{F})$ be the inverse image of a sheaf $\mathcal{F}$ via $\mathrm{pr}_{\tau}$. $(D_{\lambda, S_{K,\mathbb{C}}} \otimes_{\mathcal{O}_{S_{K,\mathbb{C}}}} \Omega_{S_{K, \mathbb{C}}}^{\bullet}, \nabla_{\bullet})$ can be identified with the pullback of the complex $$V_{\lambda^{\Psi}} \otimes_{\mathbb{C}} (\otimes_{\tau \in \Psi} (\mathrm{pr}_{\tau}^{-1}(\mathrm{Sym}^{\lambda_{\tau}}V_{\tau}^{\vee} \otimes_{\mathbb{C}} \mathcal{O}_{\mathbb{P}^1_{\mathbb{C}}}) \rightarrow \mathrm{pr}_{\tau}^{-1}(\mathrm{Sym}^{\lambda_{\tau}}V_{\tau}^{\vee} \otimes_{\mathbb{C}} \Omega_{\mathbb{P}^1_{\mathbb{C}}}^1)))$$ on $\mathrm{Fl}$. Moreover, $\mathrm{Sym}^{\lambda_{\tau}}V_{\tau}^{\vee} \otimes_{\mathbb{C}} \mathcal{O}_{\mathbb{P}^1_{\mathbb{C}}} \rightarrow \mathrm{Sym}^{\lambda_{\tau}}V_{\tau}^{\vee} \otimes_{\mathbb{C}} \Omega_{\mathbb{P}^1_{\mathbb{C}}}^1$ induces an isomorphism $\mathrm{Fil}^1\mathrm{Sym}^{\lambda_{\tau}}V_{\tau}^{\vee} \otimes_{\mathbb{C}} \mathcal{O}_{\mathbb{P}^1_{\mathbb{C}}} \Isom (\mathrm{Sym}^{\lambda_{\tau}}V_{\tau}^{\vee} \otimes_{\mathbb{C}} \mathcal{O}_{\mathbb{P}^1_{\mathbb{C}}}/\mathrm{Fil}^{\lambda_{\tau}}(\mathrm{Sym}^{\lambda_{\tau}}V_{\tau}^{\vee} \otimes_{\mathbb{C}} \mathcal{O}_{\mathbb{P}^1_{\mathbb{C}}})) \otimes_{\mathcal{O}_{\mathbb{P}^1_{\mathbb{C}}}} \Omega_{\mathbb{P}^1_{\mathbb{C}}}^1$ by the same proof as in Proposition \ref{KS} when $\lambda_{\tau} = 1$ case and by induction on $\lambda_{\tau}$ in general. This induces a splitting $\omega_{\tau, \mathbb{P}^1_{\mathbb{C}}}^{\lambda_{\tau}} \hookrightarrow \mathrm{Sym}^{\lambda_{\tau}}V_{\tau}^{\vee} \otimes_{\mathbb{C}} \mathcal{O}_{\mathbb{P}^1_{\mathbb{C}}}$ giving a quasi-isomorphism $(\mathrm{Sym}^{\lambda_{\tau}}V_{\tau}^{\vee} \otimes_{\mathbb{C}} \mathcal{O}_{\mathbb{P}^1_{\mathbb{C}}} \rightarrow \mathrm{Sym}^{\lambda_{\tau}}V_{\tau}^{\vee} \otimes_{\mathbb{C}} \Omega_{\mathbb{P}^1_{\mathbb{C}}}^1) \cong (\omega_{\tau, \mathbb{P}^1_{\mathbb{C}}}^{\lambda_{\tau}} \rightarrow \omega_{\tau, \mathbb{P}^1_{\mathbb{C}}}^{-\lambda_{\tau}+2} \otimes (\wedge^2 V_{\tau})^{-\lambda_{\tau}+1})$ by the same proof as in Proposition \ref{KS} when $\lambda_{\tau} = 1$ case and by induction on $\lambda_{\tau}$ in general. This induces the above description (\ref{BGG form}) of $(D_{\lambda, S_{K,L}} \otimes_{\mathcal{O}_{S_{K,L}}} \Omega_{S_{K, L}}^{\bullet}, \nabla_{\bullet})$ over $\mathbb{C}$.

\vspace{0.5 \baselineskip}

In the following, we give a direct (but subtle) construction of $GDR_{\lambda, S_{K, L}}$ over general $L$. Note that by the result over $\mathbb{C}$, it suffices to construct the maps realizing the above maps over $\mathbb{C}$. 

By Proposition \ref{KS}, the numbering $\tau_1, \cdots, \tau_d$ induces the natural identification $\Omega_{S_{K, L}}^{n} = \oplus_{I \subset \Psi, |I|=n} ( \otimes_{\tau \in I} (\omega_{\tau, S_{K, L}}^2 \otimes (\wedge^2D_{\tau, S_{K, L}})))$.

In the de Rham complex $(D_{\lambda, S_{K,L}} \otimes_{\mathcal{O}_{S_{K,L}}} \Omega_{S_{K, L}}^{\bullet}, \nabla_{\bullet})$ of weight $\lambda$, we have a map $$\nabla_n : D_{\lambda^{\Psi}, S_{K,L}} \otimes (\otimes_{\tau \in \Psi} \mathrm{Sym}^{\lambda_{\tau}}D_{\tau, S_{K, L}}^{\vee}) \otimes_{\mathcal{O}_{S_{K,L}}} \Omega_{S_{K, L}}^{n} \rightarrow D_{\lambda^{\Psi}, S_{K,L}} \otimes (\otimes_{\tau \in \Psi} \mathrm{Sym}^{\lambda_{\tau}}D_{\tau, S_{K, L}}^{\vee}) \otimes_{\mathcal{O}_{S_{K,L}}} \Omega_{S_{K, L}}^{n+1}.$$ Thus for $I, J \subset \Psi$ such that $|I| = n$ and $|J| = n+1$, by using the natural inclusion from the $I$-component $i_{I} : D_{\lambda^{\Psi}, S_{K,L}} \otimes (\otimes_{\tau \in \Psi} \mathrm{Sym}^{\lambda_{\tau}}D_{\tau, S_{K, L}}^{\vee}) \otimes_{\mathcal{O}_{S_{K, L}}} (\otimes_{\tau \in I} (\omega_{\tau, S_{K, L}}^2 \otimes (\wedge^2D_{\tau, S_{K, L}}))) \rightarrow D_{\lambda^{\Psi}, S_{K, L}} \otimes (\otimes_{\tau \in \Psi} \mathrm{Sym}^{\lambda_{\tau}}D_{\tau, S_{K, L}}^{\vee}) \otimes_{\mathcal{O}_{S_{K,L}}} \Omega_{S_{K, L}}^{n}$ and the natural projection to the $J$-component $j_J : D_{\lambda^{\Psi}, S_{K,L}} \otimes (\otimes_{\tau \in \Psi} \mathrm{Sym}^{\lambda_{\tau}}D_{\tau, S_{K, L}}^{\vee}) \otimes_{\mathcal{O}_{S_{K,L}}} \Omega_{S_{K, L}}^{n+1} \rightarrow D_{\lambda^{\Psi}, S_{K,L}} \otimes (\otimes_{\tau \in \Psi} \mathrm{Sym}^{\lambda_{\tau}}D_{\tau, S_{K, L}}^{\vee}) \otimes_{\mathcal{O}_{S_{K,L}}} (\otimes_{\tau \in J} (\omega_{\tau, S_{K, L}}^2 \otimes (\wedge^2D_{\tau, S_{K, L}})))$, we obtain a map $\nabla_{n, I, J} := j_{J} \circ \nabla_n \circ i_{I} : D_{\lambda^{\Psi}, S_{K,L}} \otimes (\otimes_{\tau \in \Psi} \mathrm{Sym}^{\lambda_{\tau}}D_{\tau, S_{K, L}}^{\vee}) \otimes (\otimes_{\tau \in I} (\omega_{\tau, S_{K, L}}^2 \otimes (\wedge^2D_{\tau, S_{K, L}}))) \rightarrow D_{\lambda^{\Psi}, S_{K,L}} \otimes (\otimes_{\tau \in \Psi} \mathrm{Sym}^{\lambda_{\tau}}D_{\tau, S_{K, L}}^{\vee}) \otimes (\otimes_{\tau \in J} (\omega_{\tau, S_{K, L}}^2 \otimes (\wedge^2D_{\tau, S_{K, L}}))).$

Note that we have a subquotient $D_{\lambda^{\Psi}, S_{K, L}} \otimes (\otimes_{\tau \in I} (\omega_{\tau, S_{K, L}}^{\lambda_{\tau}+2} \otimes (\wedge^2 D_{\tau, S_{K, L}}))) \otimes (\otimes_{\tau \notin I} (\mathcal{\omega}_{\tau, S_{K, L}}^{-\lambda_{\tau}} \otimes (\wedge^2D_{\tau, S_{K, L}})^{-\lambda_{\tau}})) \twoheadleftarrow D_{\lambda^{\Psi}, S_{K, L}} \otimes (\otimes_{\tau \in I} (\omega_{\tau, S_{K, L}}^{\lambda_{\tau}+2} \otimes (\wedge^2 D_{\tau, S_{K, L}}))) \otimes (\otimes_{\tau \notin I} \mathrm{Sym}^{\lambda_{\tau}}D_{\tau, S_{K, L}}^{\vee}) \hookrightarrow D_{\lambda^{\Psi}, S_{K,L}} \otimes (\otimes_{\tau \in \Psi} \mathrm{Sym}^{\lambda_{\tau}}D_{\tau, S_{K, L}}^{\vee}) \otimes (\otimes_{\tau \in I} (\omega_{\tau, S_{K, L}}^2 \otimes (\wedge^2D_{\tau, S_{K, L}})))$. In the following, we will construct a section $s_I$ of this quotient map for any $I$ whose base change to $\mathbb{C}$ is equal to the section previously constructed above (Note that this implies that we obtain $\mathrm{Im}(\nabla_{n, I, J} \circ s_{I}) \subset \mathrm{Im}(s_{J})$ and $s_I$'s induce the complex having a form as above (\ref{BGG form}).)

\begin{lem}\label{constdeRham}

Let $I$ and $J$ as above. Then we have the following.

(1) \  $\nabla_{n, I, J}$ is zero unless $J = I \cup \{ \sigma \}$ for some $\sigma \in \Psi \setminus I$.

\vspace{0.5 \baselineskip}

In the following, we also assume that $J = I \cup \{ \sigma \}$ for some $\sigma \in \Psi \setminus I$.

(2) \ $\nabla_{n, I, J}$ sends the subspace $D_{\lambda^{\Psi}, S_{K, L}} \otimes (\otimes_{\tau \in I} (\omega_{\tau, S_{K, L}}^{\lambda_{\tau}+2} \otimes (\wedge^2 D_{\tau, S_{K, L}}))) \otimes (\otimes_{\tau \notin I} \mathrm{Sym}^{\lambda_{\tau}}D_{\tau, S_{K, L}}^{\vee})$ to the subspace $D_{\lambda^{\Psi}, S_{K,L}} \otimes (\otimes_{\tau \in I} (\omega_{\tau, S_{K, L}}^{\lambda_{\tau}+2} \otimes (\wedge^2 D_{\tau, S_{K, L}}))) \otimes (\otimes_{\tau \notin I} \mathrm{Sym}^{\lambda_{\tau}}D_{\tau, S_{K, L}}^{\vee}) \otimes (\omega_{\sigma, S_{K, L}}^2 \otimes (\wedge^2D_{\sigma, S_{K, L}}))$.

In the following, we fix $M \subset \Psi \setminus J$. Actually, we have the following stronger result.

(3) \ $\nabla_{n, I, J}$ sends the subspace $D_{\lambda^{\Psi}, S_{K, L}} \otimes (\otimes_{\tau \in I} (\omega_{\tau, S_{K, L}}^{\lambda_{\tau}+2} \otimes (\wedge^2 D_{\tau, S_{K, L}}))) \otimes (\otimes_{\tau \notin I \cup M} \mathrm{Sym}^{\lambda_{\tau}}D_{\tau, S_{K, L}}^{\vee}) \otimes (\otimes_{\tau \in M} \mathrm{Fil}^1(\mathrm{Sym}^{\lambda_{\tau}}D_{\tau, S_{K, L}}^{\vee}))$ to the subspace $D_{\lambda^{\Psi}, S_{K,L}} \otimes (\otimes_{\tau \in I} (\omega_{\tau, S_{K, L}}^{\lambda_{\tau}+2} \otimes (\wedge^2 D_{\tau, S_{K, L}}))) \otimes (\otimes_{\tau \notin I \cup M} \mathrm{Sym}^{\lambda_{\tau}}D_{\tau, S_{K, L}}^{\vee}) \otimes (\otimes_{\tau \in M} \mathrm{Fil}^1(\mathrm{Sym}^{\lambda_{\tau}}D_{\tau, S_{K, L}}^{\vee})) \otimes (\omega_{\sigma, S_{K, L}}^2 \otimes (\wedge^2D_{\sigma, S_{K, L}}))$.

(4) \ $\nabla_{n, I, J, M}' : D_{\lambda^{\Psi}, S_{K, L}} \otimes (\otimes_{\tau \in I} (\omega_{\tau, S_{K, L}}^{\lambda_{\tau}+2} \otimes (\wedge^2 D_{\tau, S_{K, L}}))) \otimes (\otimes_{\tau \notin I \cup M} \mathrm{Sym}^{\lambda_{\tau}}D_{\tau, S_{K, L}}^{\vee}) \otimes (\otimes_{\tau \in M} \omega_{\tau, S_{K, L}}^{-\lambda_{\tau}} \otimes (\wedge^2 D_{\tau, S_{K, L}})^{-\lambda_{\tau}}) \rightarrow D_{\lambda^{\Psi}, S_{K,L}} \otimes (\otimes_{\tau \in I} (\omega_{\tau, S_{K, L}}^{\lambda_{\tau}+2} \otimes (\wedge^2 D_{\tau, S_{K, L}}))) \otimes (\otimes_{\tau \notin I \cup M} \mathrm{Sym}^{\lambda_{\tau}}D_{\tau, S_{K, L}}^{\vee}) \otimes (\otimes_{\tau \in M} \omega_{\tau, S_{K, L}}^{-\lambda_{\tau}} \otimes (\wedge^2 D_{\tau, S_{K, L}})^{-\lambda_{\tau}}) \otimes (\omega_{\sigma, S_{K, L}}^2 \otimes (\wedge^2D_{\sigma, S_{K, L}}))$ induced from (3) induces an isomorphism $D_{\lambda^{\Psi}, S_{K, L}} \otimes (\otimes_{\tau \in I} (\omega_{\tau, S_{K, L}}^{\lambda_{\tau}+2} \otimes (\wedge^2 D_{\tau, S_{K, L}}))) \otimes \mathrm{Fil}^1(\mathrm{Sym}^{\lambda_{\sigma}}D_{\sigma, S_{K, L}}^{\vee}) \otimes (\otimes_{\tau \notin J \cup M} \mathrm{Sym}^{\lambda_{\tau}}D_{\tau, S_{K, L}}^{\vee}) \otimes (\otimes_{\tau \in M} \omega_{\tau, S_{K, L}}^{-\lambda_{\tau}} \otimes (\wedge^2 D_{\tau, S_{K, L}})^{-\lambda_{\tau}}) \Isom D_{\lambda^{\Psi}, S_{K,L}} \otimes (\otimes_{\tau \in I} (\omega_{\tau, S_{K, L}}^{\lambda_{\tau}+2} \otimes (\wedge^2 D_{\tau, S_{K, L}}))) \otimes (\mathrm{Sym}^{\lambda_{\sigma}}D_{\sigma, S_{K, L}}^{\vee}/ \mathrm{Fil}^{\lambda_{\sigma}}(\mathrm{Sym}^{\lambda_{\sigma}}D_{\sigma, S_{K, L}}^{\vee})) \otimes (\otimes_{\tau \notin J \cup M} \mathrm{Sym}^{\lambda_{\tau}}D_{\tau, S_{K, L}}^{\vee}) \otimes (\otimes_{\tau \in M} \omega_{\tau, S_{K, L}}^{-\lambda_{\tau}} \otimes (\wedge^2 D_{\tau, S_{K, L}})^{-\lambda_{\tau}}) \otimes (\omega_{\sigma, S_{K, L}}^2 \otimes (\wedge^2D_{\sigma, S_{K, L}}))$. More strongly, the map $\nabla_{n, I, J, M}'$ induces the $\mathcal{O}_{S_{K, L}}$-linear isomorphisms of the following graded pieces spaces for any $1 \le i \le \lambda_{\sigma}$. 

$D_{\lambda^{\Psi}, S_{K, L}} \otimes (\otimes_{\tau \in I} (\omega_{\tau, S_{K, L}}^{\lambda_{\tau}+2} \otimes (\wedge^2 D_{\tau, S_{K, L}}))) \otimes \mathrm{gr}^i(\mathrm{Sym}^{\lambda_{\sigma}}D_{\sigma, S_{K, L}}^{\vee}) \otimes (\otimes_{\tau \notin J \cup M} \mathrm{Sym}^{\lambda_{\tau}}D_{\tau, S_{K, L}}^{\vee}) \otimes (\otimes_{\tau \in M} \omega_{\tau, S_{K, L}}^{-\lambda_{\tau}} \otimes (\wedge^2 D_{\tau, S_{K, L}})^{-\lambda_{\tau}}) \Isom D_{\lambda^{\Psi}, S_{K,L}} \otimes (\otimes_{\tau \in I} (\omega_{\tau, S_{K, L}}^{\lambda_{\tau}+2} \otimes (\wedge^2 D_{\tau, S_{K, L}}))) \otimes \mathrm{gr}^{i-1}(\mathrm{Sym}^{\lambda_{\sigma}}D_{\sigma, S_{K, L}}^{\vee}) \otimes (\otimes_{\tau \notin J \cup M} \mathrm{Sym}^{\lambda_{\tau}}D_{\tau, S_{K, L}}^{\vee}) \otimes (\otimes_{\tau \in M} \omega_{\tau, S_{K, L}}^{-\lambda_{\tau}} \otimes (\wedge^2 D_{\tau, S_{K, L}})^{-\lambda_{\tau}}) \otimes (\omega_{\sigma, S_{K, L}}^2 \otimes (\wedge^2D_{\sigma, S_{K, L}}))$.

\end{lem}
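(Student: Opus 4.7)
The plan is to decompose the connection $\nabla$ using the Kodaira--Spencer isomorphism of Proposition~\ref{KS} and exploit the product structure of the $\Psi$-components of the Hodge bundles. Concretely, write $\Omega^1_{S_{K, L}} = \bigoplus_{\sigma \in \Psi}(\omega^2_{\sigma, S_{K, L}} \otimes \wedge^2 D_{\sigma, S_{K, L}})$ and decompose $\nabla = \sum_{\sigma \in \Psi} \nabla^\sigma$ accordingly on any flat bundle. The key structural fact, which can be verified from the flag variety description preceding the lemma (over $\mathbb{C}$, the Borel embedding identifies each $D_{\tau, S_{K, \mathbb{C}}}$ with the pullback of the tautological bundle from the $\tau$-factor of $\mathrm{Fl} = \prod_{\tau \in \Psi} \mathbb{P}^1_{\mathbb{C}}$), is that $\nabla^\sigma$ acts trivially on $D_{\tau, S_{K, L}}$ for $\tau \in \Psi \setminus \{\sigma\}$ and on $D_{\lambda^\Psi, S_{K, L}}$ (the latter involves only $D_\tau$ for $\tau \in \Phi \setminus \Psi$ and the similitude bundle).

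Given this decomposition, (1) is immediate: $\nabla_n = \nabla \otimes \mathrm{id} + \mathrm{id} \otimes d$ sends the $I$-summand of $D_\lambda \otimes \Omega^n$ into summands indexed by $I \cup \{\sigma\}$ for various $\sigma \in \Psi \setminus I$, so $\nabla_{n, I, J}$ vanishes unless $J = I \cup \{\sigma\}$. For (2) and (3), each $\tau \in M \subset \Psi \setminus J$ satisfies $\tau \neq \sigma$, so $\nabla^\sigma$ acts trivially on the $\tau$-factor $\mathrm{Sym}^{\lambda_\tau}D_{\tau, S_{K, L}}^\vee$ and tautologically preserves $\mathrm{Fil}^1(\mathrm{Sym}^{\lambda_\tau}D_{\tau, S_{K, L}}^\vee)$; part (2) is the case $|M| = 1$.

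For (4), after restricting via (3), the only nontrivial action of $\nabla^\sigma$ is on the $\sigma$-factor $\mathrm{Sym}^{\lambda_\sigma}D_{\sigma, S_{K, L}}^\vee$. Griffiths transversality gives $\nabla^\sigma(\mathrm{Fil}^i) \subset \mathrm{Fil}^{i-1}(\mathrm{Sym}^{\lambda_\sigma}D_{\sigma, S_{K, L}}^\vee) \otimes (\omega^2_{\sigma, S_{K, L}} \otimes \wedge^2 D_{\sigma, S_{K, L}})$, and the Leibniz rule applied to the symmetric power shows that the induced map on graded pieces $\mathrm{gr}^i \to \mathrm{gr}^{i-1} \otimes (\omega^2_{\sigma, S_{K, L}} \otimes \wedge^2 D_{\sigma, S_{K, L}})$ equals $i$ times the Kodaira--Spencer isomorphism (up to canonical identifications of line bundles), hence is an isomorphism for $1 \le i \le \lambda_\sigma$ in characteristic zero. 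Since $\mathrm{Fil}^1(\mathrm{Sym}^{\lambda_\sigma}D_{\sigma, S_{K, L}}^\vee)$ and $\mathrm{Sym}^{\lambda_\sigma}D_{\sigma, S_{K, L}}^\vee/\mathrm{Fil}^{\lambda_\sigma}$ have graded pieces at exactly matching levels, a five-lemma induction on the filtration promotes these graded isomorphisms to the claimed filtered isomorphism.

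The main obstacle is the rigorous proof that $\nabla^\sigma$ acts trivially on $D_{\tau, S_{K, L}}$ for $\tau \neq \sigma$ in $\Psi$. Although this is transparent over $\mathbb{C}$ via the flag variety description, one should verify it carefully from the moduli interpretation of {\S}~3.3 by tracing the Gauss--Manin connection on the universal abelian scheme $A_{K, L}$ and using the $\mathcal{O}_F$-action to separate $\tau$-isotypic components, so that variation of the Hodge filtration in the $\sigma$-direction only affects $D_\sigma$. Descent to general $L \supset \tilde F$ then follows since all the constructions and the identities in question are defined over $\tilde F$ and closed under base change.
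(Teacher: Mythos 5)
Your proof is correct and follows the same route as the paper: reduce to $L = \mathbb{C}$ by faithfully flat base change, pass via the holomorphic Borel embedding to the flag variety $\prod_{\tau \in \Psi} \mathbb{P}^1_{\mathbb{C}}$, and check everything by direct coordinate computation as in the proof of Proposition~\ref{KS} (with (4) by induction on $\lambda_{\sigma}$, your coefficient $i$ being exactly the scalar that appears). One small caution on phrasing: ``$\nabla^{\sigma}$ acts trivially on $D_{\tau}$ for $\tau \neq \sigma$'' is not literally true on $S_{K, L}$ (the Gauss--Manin connection is nontrivial on $D_{\tau}$); what you in fact use is that, in flag-variety coordinates, $\nabla^{\sigma} = \partial_{x_{\sigma}}$ kills the horizontal trivialization and so preserves the Hodge filtration on $D_{\tau}$ (since that filtration is cut out by data depending only on $x_{\tau}$), and this already descends to $L$ by faithful flatness, so the moduli-theoretic re-verification you flag as ``the main obstacle'' is not needed.
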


\begin{proof}

We may assume $L = \mathbb{C}$ and by using the holomorphic Borel embedding, it suffices to prove the corresponding results on $\mathrm{Fl} = \prod_{\tau \in \Psi} \mathbb{P}^1_{\mathbb{C}}$. (1) is trivial because $\nabla_{n, I, J}$ is simply the derivation along $\sigma$ on $\prod_{\tau \in \Psi} \mathbb{P}^1_{\mathbb{C}}$ and $\otimes_{\tau \in I} (\omega_{\tau, S_{K, L}}^2 \otimes (\wedge^2D_{\tau, S_{K, L}})) \subset \Omega_{S_{K, L}}^n$ corresponds to the subspace generated by $\wedge_{\tau \in I} dx_{\tau}$. Here we use the coordinate $x_{\tau}$ of $\tau$-component $\mathbb{P}^1_{\mathbb{C}}$  of $\mathrm{Fl}$ as in the proof of Proposition \ref{KS}. (2) and (3) follow from the same argument. (4) follows from the same argument as in the proof of Proposition \ref{KS} when $\lambda_{\sigma} = 1$ and by induction on $\lambda_{\sigma}$ in general. \end{proof}

We put $\Psi \setminus I = \{ \tau_{i_1}, \cdots, \tau_{i_{k}} \}$ and $i _1 < \cdots < i_k$. Let $\nabla_{n, I, I \cup \{ \tau_{i_1}\}, \emptyset }' : D_{\lambda^{\Psi}, S_{K, L}} \otimes (\otimes_{\tau \in I} (\omega_{\tau, S_{K, L}}^{\lambda_{\tau}+2} \otimes (\wedge^2 D_{\tau, S_{K, L}}))) \otimes (\otimes_{\tau \notin I} \mathrm{Sym}^{\lambda_{\tau}}D_{\tau, S_{K, L}}^{\vee}) \rightarrow D_{\lambda^{\Psi}, S_{K,L}} \otimes (\otimes_{\tau \in I} (\omega_{\tau, S_{K, L}}^{\lambda_{\tau}+2} \otimes (\wedge^2 D_{\tau, S_{K, L}}))) \otimes (\otimes_{\tau \notin I} \mathrm{Sym}^{\lambda_{\tau}}D_{\tau, S_{K, L}}^{\vee}) \otimes (\omega_{\tau_{i_1}, S_{K, L}}^2 \otimes (\wedge^2D_{\tau_{i_1}, S_{K, L}}))$ induced from (3) of Lemma \ref{constdeRham}. Then by (4) of Lemma \ref{constdeRham}, the quotient map \begin{align}\label{section}D_{\lambda^{\Psi}, S_{K, L}} \otimes (\otimes_{\tau \in I} (\omega_{\tau, S_{K, L}}^{\lambda_{\tau}+2} \otimes (\wedge^2 D_{\tau, S_{K, L}}))) \otimes (\otimes_{\tau \notin I} \mathrm{Sym}^{\lambda_{\tau}}D_{\tau, S_{K, L}}^{\vee}) \twoheadrightarrow \nonumber \\ D_{\lambda^{\Psi}, S_{K, L}} \otimes (\otimes_{\tau \in I} (\omega_{\tau, S_{K, L}}^{\lambda_{\tau}+2} \otimes (\wedge^2 D_{\tau, S_{K, L}}))) \otimes (\otimes_{\tau \notin I \cup \{ \tau_{i_1} \}} \mathrm{Sym}^{\lambda_{\tau}}D_{\tau, S_{K, L}}^{\vee}) \otimes (\omega_{\tau_{i_1}, S_{K, L}}^{-\lambda_{\tau_{i_1}}} \otimes (\wedge^2 D_{\tau_{i_1}, S_{K, L}})^{-\lambda_{\tau_{i_1}}})\end{align} induces the isomorphism $\nabla^{' -1}_{n, I, I \cup \{ \tau_{i_1} \}, \emptyset}(D_{\lambda^{\Psi}, S_{K,L}} \otimes (\otimes_{\tau \in I} (\omega_{\tau, S_{K, L}}^{\lambda_{\tau}+2} \otimes (\wedge^2 D_{\tau, S_{K, L}}))) \otimes (\otimes_{\tau \notin I \cup \{ \tau_{i_1} \}} \mathrm{Sym}^{\lambda_{\tau}}D_{\tau, S_{K, L}}^{\vee}) \otimes \mathrm{Fil}^{\lambda_{\tau_{i_{1}}}}(\mathrm{Sym}^{\lambda_{\tau}}D_{\tau, S_{K, L}}^{\vee}) \otimes (\omega_{\tau_{i_1}, S_{K, L}}^2 \otimes (\wedge^2D_{\tau_{i_1}, S_{K, L}}))) \Isom D_{\lambda^{\Psi}, S_{K, L}} \otimes (\otimes_{\tau \in I} (\omega_{\tau, S_{K, L}}^{\lambda_{\tau}+2} \otimes (\wedge^2 D_{\tau, S_{K, L}}))) \otimes (\otimes_{\tau \notin I \cup \{ \tau_{i_1} \}} \mathrm{Sym}^{\lambda_{\tau}}D_{\tau, S_{K, L}}^{\vee}) \otimes (\omega_{\tau_{i_1}, S_{K, L}}^{-\lambda_{\tau_{i_1}}} \otimes (\wedge^2 D_{\tau_{i_1}, S_{K, L}})^{-\lambda_{\tau_{i_1}}})$. This induces the section $s_{I, 1} : D_{\lambda^{\Psi}, S_{K, L}} \otimes (\otimes_{\tau \in I} (\omega_{\tau, S_{K, L}}^{\lambda_{\tau}+2} \otimes (\wedge^2 D_{\tau, S_{K, L}}))) \otimes (\otimes_{\tau \notin I \cup \{ \tau_{i_1} \}} \mathrm{Sym}^{\lambda_{\tau}}D_{\tau, S_{K, L}}^{\vee}) \otimes (\omega_{\tau_{i_1}, S_{K, L}}^{-\lambda_{\tau_{i_1}}} \otimes (\wedge^2 D_{\tau_{i_1}, S_{K, L}})^{-\lambda_{\tau_{i_1}}}) \hookrightarrow D_{\lambda^{\Psi}, S_{K, L}} \otimes (\otimes_{\tau \in I} (\omega_{\tau, S_{K, L}}^{\lambda_{\tau}+2} \otimes (\wedge^2 D_{\tau, S_{K, L}}))) \otimes (\otimes_{\tau \notin I} \mathrm{Sym}^{\lambda_{\tau}}D_{\tau, S_{K, L}}^{\vee})$ of the above quotient map (\ref{section}). 

Again, (3) of Lemma \ref{constdeRham} induces the map $\nabla_{n, I, I \cup \{ \tau_{i_2} \}, \{ \tau_{i_1} \}}' : D_{\lambda^{\Psi}, S_{K, L}} \otimes (\otimes_{\tau \in I} (\omega_{\tau, S_{K, L}}^{\lambda_{\tau}+2} \otimes (\wedge^2 D_{\tau, S_{K, L}}))) \otimes (\otimes_{\tau \notin I \cup \{ \tau_{i_1} \}} \mathrm{Sym}^{\lambda_{\tau}}D_{\tau, S_{K, L}}^{\vee}) \otimes (\omega_{\tau_{i_1}, S_{K, L}}^{-\lambda_{\tau_{i_1}}} \otimes (\wedge^2 D_{\tau_{i_1}, S_{K, L}})^{-\lambda_{\tau_{i_1}}}) \rightarrow D_{\lambda^{\Psi}, S_{K,L}} \otimes (\otimes_{\tau \in I} (\omega_{\tau, S_{K, L}}^{\lambda_{\tau}+2} \otimes (\wedge^2 D_{\tau, S_{K, L}}))) \otimes (\otimes_{\tau \notin I \cup \{ \tau_{i_1} \}} \mathrm{Sym}^{\lambda_{\tau}}D_{\tau, S_{K, L}}^{\vee}) \otimes (\omega_{\tau_{i_1}, S_{K, L}}^{-\lambda_{\tau_{i_1}}} \otimes (\wedge^2 D_{\tau_{i_1}, S_{K, L}})^{-\lambda_{\tau_{i_1}}}) \otimes (\omega_{\tau_{i_2}, S_{K, L}}^2 \otimes (\wedge^2D_{\tau_{i_2}, S_{K, L}}))$. Again by (4) of Lemma \ref{constdeRham}, we obtain the section $s_{I, 2} : D_{\lambda^{\Psi}, S_{K, L}} \otimes (\otimes_{\tau \in I} (\omega_{\tau, S_{K, L}}^{\lambda_{\tau}+2} \otimes (\wedge^2 D_{\tau, S_{K, L}}))) \otimes (\otimes_{\tau \notin I \cup \{ \tau_{i_1} \}} \mathrm{Sym}^{\lambda_{\tau}}D_{\tau, S_{K, L}}^{\vee}) \otimes (\otimes_{i=1}^2 \omega_{\tau_{i}, S_{K, L}}^{-\lambda_{\tau_{i}}} \otimes (\wedge^2 D_{\tau_{i}, S_{K, L}})^{-\lambda_{\tau_{i}}}) \hookrightarrow D_{\lambda^{\Psi}, S_{K, L}} \otimes (\otimes_{\tau \in I} (\omega_{\tau, S_{K, L}}^{\lambda_{\tau}+2} \otimes (\wedge^2 D_{\tau, S_{K, L}}))) \otimes (\otimes_{\tau \notin I \cup \{ \tau_{i_1} \}} \mathrm{Sym}^{\lambda_{\tau}}D_{\tau, S_{K, L}}^{\vee}) \otimes (\omega_{\tau_{i_1}, S_{K, L}}^{-\lambda_{\tau_{i_1}}} \otimes (\wedge^2 D_{\tau_{i_1}, S_{K, L}})^{-\lambda_{\tau_{i_1}}})$ of the quotient map. Thus we obtain the section $s_{I, 1} \circ s_{I, 2} : D_{\lambda^{\Psi}, S_{K, L}} \otimes (\otimes_{\tau \in I} (\omega_{\tau, S_{K, L}}^{\lambda_{\tau}+2} \otimes (\wedge^2 D_{\tau, S_{K, L}}))) \otimes (\otimes_{\tau \notin I \cup \{ \tau_{i_1} \}} \mathrm{Sym}^{\lambda_{\tau}}D_{\tau, S_{K, L}}^{\vee}) \otimes (\otimes_{i=1}^2 \omega_{\tau_{i}, S_{K, L}}^{-\lambda_{\tau_{i}}} \otimes (\wedge^2 D_{\tau_{i}, S_{K, L}})^{-\lambda_{\tau_{i}}}) \hookrightarrow D_{\lambda^{\Psi}, S_{K, L}} \otimes (\otimes_{\tau \in I} (\omega_{\tau, S_{K, L}}^{\lambda_{\tau}+2} \otimes (\wedge^2 D_{\tau, S_{K, L}}))) \otimes (\otimes_{\tau \notin I} \mathrm{Sym}^{\lambda_{\tau}}D_{\tau, S_{K, L}}^{\vee})$. By considering $\nabla_{n, I, I \cup \{ \tau_i \}, \{ \tau_{i_1}, \cdots, \tau_{i-1} \}}'$ repeatedly, we obtain the section $s_I : D_{\lambda^{\Psi}, S_{K, L}} \otimes (\otimes_{\tau \in I} (\omega_{\tau, S_{K, L}}^{\lambda_{\tau}+2} \otimes (\wedge^2 D_{\tau, S_{K, L}}))) \otimes (\otimes_{\tau \notin I} (\mathcal{\omega}_{\tau, S_{K, L}}^{-\lambda_{\tau}} \otimes (\wedge^2D_{\tau, S_{K, L}})^{-\lambda_{\tau}})) \hookrightarrow D_{\lambda^{\Psi}, S_{K, L}} \otimes (\otimes_{\tau \in I} (\omega_{\tau, S_{K, L}}^{\lambda_{\tau}+2} \otimes (\wedge^2 D_{\tau, S_{K, L}}))) \otimes (\otimes_{\tau \notin I} \mathrm{Sym}^{\lambda_{\tau}}D_{\tau, S_{K, L}}^{\vee})$. 

By construction, the base change of $s_{I}$ to $\mathbb{C}$ coincides with the section constructed previously. Thus the sections $s_I$ satisfy $\mathrm{Im}(\nabla_{n, I, J} \circ s_{I}) \subset \mathrm{Im}(s_{J})$ and $s_I$'s induce the following complex.

\begin{align*} GDR_{\lambda, S_{K, L}} : D_{\lambda^{\Psi}, S_{K,L}} \otimes (\otimes_{\tau \in \Psi} (\omega_{\tau, S_{K, L}}^{-\lambda_{\tau}} \otimes (\wedge^2 D_{\tau, S_{K, L}})^{-\lambda_{\tau}})) \rightarrow \\ \oplus_{\sigma \in \Psi} D_{\lambda^{\Psi}, S_{K,L}} \otimes \omega_{\sigma, S_{K, L}}^{\lambda_{\sigma}+2} \otimes (\wedge^2 D_{\sigma, S_{K, L}}) \otimes (\otimes_{\tau \neq \sigma} (\omega_{\tau, S_{K, L}}^{-\lambda_{\tau}} \otimes (\wedge^2 D_{\tau, S_{K, L}})^{-\lambda_{\tau}})) \rightarrow \\ \oplus_{I \subset \Psi, |I|=2} D_{\lambda^{\Psi}, S_{K, L}} \otimes (\otimes_{\tau \in I} (\omega_{\tau, S_{K, L}}^{\lambda_{\tau}+2} \otimes (\wedge^2 D_{\tau, S_{K, L}}))) \otimes (\otimes_{\tau \notin I} (\mathcal{\omega}_{\tau, S_{K, L}}^{-\lambda_{\tau}} \otimes (\wedge^2D_{\tau, S_{K, L}})^{-\lambda_{\tau}})) \rightarrow \\ \cdots \rightarrow \\ D_{S_{K,L}, \lambda^{\Psi}} \otimes (\otimes_{\tau \in \Psi} (\mathcal{\omega}_{\tau, S_{K, L}}^{\lambda_{\tau} + 2} \otimes (\wedge^2D_{\tau, S_{K, L}}))).\end{align*}

By construction, we have a natural map from $GDR_{\lambda, S_{K, L}}$ to $(D_{\lambda, S_{K,L}} \otimes_{\mathcal{O}_{S_{K,L}}} \Omega_{S_{K, L}}^{\bullet}, \nabla_{\bullet})$ which is a quasi-isomorphism after taking the base change to $\mathbb{C}$. Thus we have the following.

\begin{prop}(Dual BGG resolution) \label{BGG}

The above complex $GDR_{\lambda, S_{K, L}}$ is quasi-isomorphic to the de Rham complex $(D_{\lambda, S_{K,L}} \otimes_{\mathcal{O}_{S_{K,L}}} \Omega_{S_{K, L}}^{\bullet}, \nabla_{\bullet})$ of weight $\lambda$.

\end{prop}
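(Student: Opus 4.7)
The plan is to verify the claim by reducing to a computation on the flag variety over $\mathbb{C}$, since the hard work has essentially already been done in the construction preceding the proposition. Both $GDR_{\lambda, S_{K,L}}$ and the full de Rham complex $(D_{\lambda, S_{K,L}} \otimes \Omega^{\bullet}_{S_{K,L}}, \nabla_{\bullet})$ are bounded complexes of locally free $\mathcal{O}_{S_{K,L}}$-modules of finite rank, and the natural morphism $GDR_{\lambda, S_{K,L}} \to (D_{\lambda, S_{K,L}} \otimes \Omega^{\bullet}_{S_{K,L}}, \nabla_{\bullet})$ induced by the sections $s_{I}$ (applied in the appropriate degree) was already constructed. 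So the only thing to check is that this morphism is a quasi-isomorphism.

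First I would observe that being a quasi-isomorphism of bounded complexes of locally free coherent sheaves on the noetherian scheme $S_{K,L}$ can be checked after faithfully flat base change. Applying $- \otimes_{L} \mathbb{C}$ along the inclusion $L \hookrightarrow \mathbb{C}$ thus reduces the problem to verifying the statement over $S_{K,\mathbb{C}}$.

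Over $\mathbb{C}$, I would use the holomorphic Borel embedding (as already invoked in Proposition~\ref{KS} and Lemma~\ref{constdeRham}): the period map expresses $S_{K,\mathbb{C}}^{\mathrm{an}}$ as an open subspace of the flag variety $\mathrm{Fl} = \prod_{\tau \in \Psi} \mathbb{P}^1_{\mathbb{C}}$ (modulo the arithmetic quotient), under which both complexes become pullbacks of corresponding complexes built out of the universal Hodge filtration on each $\mathbb{P}^1_{\mathbb{C}}$-factor. Since the construction of $GDR_{\lambda, S_{K,L}}$ was performed factor-by-factor along $\tau \in \Psi$ (via the successive maps $\nabla'_{n, I, I \cup \{\tau_{i_j}\}, \{\tau_{i_1}, \ldots, \tau_{i_{j-1}}\}}$), the morphism on $\mathrm{Fl}$ is the exterior tensor product over $\tau \in \Psi$ of the analogous morphisms on the individual factors $\mathbb{P}^1_{\mathbb{C}}$. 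A K\"unneth-type argument then reduces matters to the single-factor case, where one must show that $\bigl(\omega_{\tau,\mathbb{P}^1}^{\lambda_\tau} \to \omega_{\tau,\mathbb{P}^1}^{-\lambda_\tau + 2} \otimes (\wedge^2 V_\tau)^{-\lambda_\tau + 1}\bigr)$ is quasi-isomorphic to the full de Rham complex $(\mathrm{Sym}^{\lambda_\tau} V_\tau^\vee \otimes \mathcal{O}_{\mathbb{P}^1} \to \mathrm{Sym}^{\lambda_\tau} V_\tau^\vee \otimes \Omega^1_{\mathbb{P}^1})$. This last statement is exactly the classical dual BGG resolution for $\mathrm{SL}_2$ on $\mathbb{P}^1$, and in fact it is precisely what part (4) of Lemma~\ref{constdeRham} encodes: the splitting $\omega_{\tau,\mathbb{P}^1}^{\lambda_\tau} \hookrightarrow \mathrm{Sym}^{\lambda_\tau} V_\tau^\vee \otimes \mathcal{O}_{\mathbb{P}^1}$ coming from the Hodge filtration identifies the quotient modulo $\mathrm{Fil}^{\lambda_\tau}$ with $\omega_{\tau,\mathbb{P}^1}^{-\lambda_\tau+2} \otimes (\wedge^2 V_\tau)^{-\lambda_\tau + 1}$, and all intermediate graded pieces are killed in cohomology because the differential is an isomorphism on them.

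The main technical point (and the one where a careful reader should be attentive) is verifying that the morphism constructed via the iterated sections $s_{I,1}, s_{I,2}, \ldots$ is in fact compatible with differentials -- i.e., that $\nabla_{n, I, J} \circ s_I$ factors through $s_{J}$ for every $I \subset J$ with $|J| = |I| + 1$. This was asserted in the construction, and it follows by combining parts (2) and (3) of Lemma~\ref{constdeRham} with the inductive definition of $s_I$; by base-changing to $\mathbb{C}$ one sees that the resulting morphism agrees with the classical BGG quasi-isomorphism on each factor of $\mathrm{Fl}$, which is the desired conclusion.
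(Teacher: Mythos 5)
Your proposal is correct and takes essentially the same approach as the paper: the map $GDR_{\lambda, S_{K,L}} \to (D_{\lambda, S_{K,L}} \otimes \Omega^\bullet_{S_{K,L}}, \nabla_\bullet)$ is constructed over $L$ via the sections $s_I$, one checks the quasi-isomorphism after faithfully flat base change to $\mathbb{C}$, and over $\mathbb{C}$ the holomorphic Borel embedding reduces everything to the classical $\mathrm{SL}_2$ dual BGG resolution on each $\mathbb{P}^1_{\mathbb{C}}$-factor of the flag variety, which is exactly what parts (3) and (4) of Lemma~\ref{constdeRham} encode. The paper leaves this proof implicit in the paragraph immediately preceding the proposition, but the strategy is the one you describe.
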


By using $\iota : \overline{\mathbb{Q}_p} \Isom \mathbb{C}$, we regard $L$ as a subfield of $\overline{\mathbb{Q}}_p$ and we assume $L = E$, where $E$ is a finite extension of $\mathbb{Q}_p$ appeared in {\S} 3.2. Let $\aS_{K, L}$ be the adic space over $L$ corresponding to $S_{K, L}$. Then we have vector bundles $D_{\tau, \aS_{K, L}}$ and $\omega_{\tau, \aS_{K, L}}$ on $\aS_{K, L}$ corresponding to $D_{\tau, S_{K, L}}$ and $\omega_{\tau, S_{K, L}}$. We also write $D_{\tau, \aS_{K, L}}$ and $\omega_{\tau, \aS_{K, L}}$ on $\aS_{K, L}$ for the corresponding vector bundles on $(\aS_{K, L})_{\mathrm{pro\et}}$. Let $f : \mathcal{A}_{K, L} \rightarrow \mathcal{S}_{K, L}$ be the adic space over $L$ corresponding to the previous abelian scheme $A_{K, L} \rightarrow S_{K, L}$.

Again, $B \otimes_{\mathbb{Q}} L = \prod_{\tau \in \mathrm{Hom}(F, L)} M_{2}(L)$ induces the decomposition $(R^1f_{\mathrm{pro\acute{e}t}, *}\mathbb{Q}_p \otimes_{\mathbb{Q}_p} L)^{\vee} = \oplus_{\tau \in \mathrm{Hom}(F, L)} \mathbb{L}_{\tau}'$ and by the Morita equivalence, $\mathbb{L}_{\tau}'$ induces a rank 2 smooth $L$-sheaf $\mathbb{L}_{\tau}$ on $(\aS_{K, L})_{\proet}$. Let $\mathbb{L}_{0}$ be the rank 1 smooth $L$-sheaf on $(\aS_{K, L})_{\proet}$ corresponding to the similitude representation (see \cite[Proposition 3.3]{Milne}). Then the previous smooth $L$-sheaf $V_{\lambda'}$ in {\S} 3.2 is equal to $\mathbb{L}_0^{\lambda_0'} \otimes (\otimes_{\tau \in \Phi} (\wedge^2\mathbb{L}_{\tau})^{\lambda_{\tau, 2}'} \otimes \mathrm{Sym}^{\lambda_{\tau, 1}' - \lambda_{\tau, 2'}'}\mathbb{L}_{\tau})$ for $\lambda' = (\lambda_0', (\lambda_{\tau, 1}', \lambda_{\tau, 2}')) \in \mathbb{Z} \times (\mathbb{Z}_+^2)^{\Phi}$. 

Let $\widehat{\mathcal{O}}_{\aS_{K,L}}$, $\mathbb{B}_{\mathrm{dR}}^{+}$, $\mathbb{B}_{\mathrm{dR}}$, $\mathcal{O}\mathbb{B}_{\mathrm{dR}}^{+}$ and $\mathcal{O}\mathbb{B}_{\mathrm{dR}}$ be the sheaves on $(\aS_{K,L})_{\proet}$ defined in \cite[{\S} 6]{pH}.

\begin{prop}\label{Hodge de Rham}

1 \ For any $\tau \in \Phi$, we have $\mathbb{L}_{\tau} \otimes_{L} \mathcal{O}\mathbb{B}_{\mathrm{dR}} \cong D_{\tau, \aS_{K, L}} \otimes_{\mathcal{O}_{\aS_{K, L}}} \mathcal{O}\mathbb{B}_{\mathrm{dR}}$.

2 \ For any $\tau \in \Phi \setminus \Psi$, we have $D_{\tau, \aS_{K, L}} \otimes_{\mathcal{O}_{\aS_{K,L}}} \widehat{\mathcal{O}}_{\aS_{K, L}} \cong V_{\tau} \otimes_{L} \widehat{\mathcal{O}}_{\aS_{K, L}}$.

3 \ For any $\tau \in \Psi$, we have $\mathbb{L}_{\tau} \otimes_{L} \mathbb{B}_{\mathrm{dR}}^{+} \supset (D_{\tau, \aS_{K, L}} \otimes_{\mathcal{O}_{\aS_{K, L}}} \mathcal{O}\mathbb{B}^+_{\mathrm{dR}})^{\nabla = 0}(1) \supset \mathbb{L}_{\tau}(1) \otimes_{L} \mathbb{B}^+_{\mathrm{dR}} = (\mathrm{Fil}^0(D_{\tau, \aS_{K, L}} \otimes_{\mathcal{O}_{\aS_{K, L}}} \mathcal{O}\mathbb{B}^+_{\mathrm{dR}}))^{\nabla = 0}(1)$. Moreover, we have $\mathbb{L}_{\tau} \otimes_{L} \mathbb{B}^+_{\mathrm{dR}}/(D_{\tau, \aS_{K, L}} \otimes_{\mathcal{O}_{\aS_{K, L}}} \mathcal{O}\mathbb{B}^+_{\mathrm{dR}})^{\nabla = 0}(1) = \omega_{\tau, \aS_{K, L}} \otimes \wedge^2 D_{\tau, \aS_{K, L}} \otimes_{\mathcal{O}_{\aS_{K, L}}} \widehat{\mathcal{O}}_{\aS_{K, L}}$ and $(D_{\tau, \aS_{K, L}} \otimes_{\mathcal{O}_{\aS_{K, L}}} \mathcal{O}\mathbb{B}^+_{\mathrm{dR}})^{\nabla = 0}(1)/\mathbb{L}_{\tau}(1) \otimes_{L} \mathbb{B}^+_{\mathrm{dR}} = \omega_{\tau, \aS_{K, L}}^{-1} \otimes \widehat{\mathcal{O}}_{\aS_{K, L}}(1)$.

Therefore, we obtain the following exact sequence.

$0 \rightarrow \omega_{\tau, \aS_{K, L}}^{-1} \otimes \widehat{\mathcal{O}}_{\aS_{K, L}}(1) \rightarrow \mathbb{L}_{\tau} \otimes_{L} \widehat{\mathcal{O}}_{\aS_{K, L}} \rightarrow \omega_{\tau, \mathcal{S}_{K, L}} \otimes \widehat{\mathcal{O}}_{\aS_{K, L}} \otimes_{\mathcal{O}_{S_{K, L}}} \wedge^2 D_{\tau, S_{K, L}} \rightarrow 0$.
        
4 \ For any $\tau \in \Psi$, we have $\wedge^2 \mathbb{L}_{\tau} \otimes_{L} \widehat{\mathcal{O}}_{\aS_{K, L}} \cong \wedge^2 D_{\tau, \aS_{K, L}}(1) \otimes_{\mathcal{O}_{\aS_{K, L}}} \widehat{\mathcal{O}}_{\aS_{K, L}}$.

5 \ $\mathbb{L}_0 \otimes_{L} \widehat{\mathcal{O}}_{\aS_{K, L}} \cong D_{0, \aS_{K,L}}(1) \otimes_{\mathcal{O}_{\aS_{K, L}}} \widehat{\mathcal{O}}_{\aS_{K, L}}$.

\end{prop}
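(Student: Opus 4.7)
The plan is to deduce all five parts from the relative de Rham comparison theorem of Scholze applied to the proper smooth map $f \colon \mathcal{A}_{K,L} \to \mathcal{S}_{K,L}$, combined with the $B$-action, the Morita equivalence, and the signature conditions encoded by the PEL datum.

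First I would invoke Scholze's relative de Rham comparison (\cite{pH}, or its pro-étale refinement) for $f$ to obtain a filtered, $\mathcal{O}\mathbb{B}_{\mathrm{dR}}$-linear isomorphism
\begin{equation*}
(R^1 f_{\proet *} \mathbb{Q}_p)^{\vee} \otimes_{\mathbb{Q}_p} \mathcal{O}\mathbb{B}_{\mathrm{dR}} \;\cong\; (R^1 f_{*} \Omega^{\bullet}_{\mathcal{A}_{K,L}/\mathcal{S}_{K,L}})^{\vee} \otimes_{\mathcal{O}_{\mathcal{S}_{K,L}}} \mathcal{O}\mathbb{B}_{\mathrm{dR}}
\end{equation*}
compatible with the integrable connection on the right and the trivial connection on the left. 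Since this isomorphism is $B$-equivariant, the $\tau$-component decomposition of $B \otimes_{\mathbb{Q}} L$ together with the Morita equivalence produces, for each $\tau \in \Phi$, a filtered isomorphism $\mathbb{L}_\tau \otimes_L \mathcal{O}\mathbb{B}_{\mathrm{dR}} \cong D_{\tau, \mathcal{S}_{K,L}} \otimes_{\mathcal{O}_{\mathcal{S}_{K,L}}} \mathcal{O}\mathbb{B}_{\mathrm{dR}}$, which is part 1.

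Next, for part 3, I would apply $(\cdot)^{\nabla=0}$ to the induced isomorphism of filtered $\mathcal{O}\mathbb{B}_{\mathrm{dR}}^{+}$-modules, using Scholze's Poincaré lemma $(\mathcal{O}\mathbb{B}_{\mathrm{dR}}^+)^{\nabla=0} = \mathbb{B}_{\mathrm{dR}}^+$, to land in $\mathbb{L}_\tau \otimes_L \mathbb{B}_{\mathrm{dR}}^+$. Then for $\tau \in \Psi$, the filtration on $D_{\tau, \mathcal{S}_{K,L}} \otimes \mathcal{O}\mathbb{B}_{\mathrm{dR}}^+$ has two nonzero graded pieces given by $\omega_{\tau}^{-1} \otimes \widehat{\mathcal{O}}(-1)$ and $\omega_{\tau} \otimes \wedge^2 D_\tau \otimes \widehat{\mathcal{O}}$ (after Tate twist), so taking $\mathrm{gr}^0$ (which becomes $\widehat{\mathcal{O}}$ after $\nabla=0$, up to the appropriate Tate twist) produces the claimed three-step filtration, and the quotient $\mathbb{L}_\tau \otimes_L \widehat{\mathcal{O}}$ sits in the desired short exact sequence by comparing graded pieces of the Hodge filtration and the Hodge--Tate filtration. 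Part 2 then follows immediately: for $\tau \in \Phi \setminus \Psi$, the filtration on $D_{\tau, \mathcal{S}_{K,L}}$ is concentrated in degree $0$ (since $\omega_{\tau, \mathcal{S}_{K,L}} = 0$), hence the analogous argument degenerates and gives a single graded piece $V_\tau \otimes_L \widehat{\mathcal{O}}$, where the trivialization $V_\tau$ is identified through the signature condition at $\tau$.

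For part 4, I would simply take $\wedge^2$ of the three-term exact sequence in part 3: the outer terms $\omega_\tau^{-1}(1) \otimes \widehat{\mathcal{O}}$ and $\omega_\tau \otimes \wedge^2 D_\tau \otimes \widehat{\mathcal{O}}$ multiply to $\wedge^2 D_\tau(1) \otimes \widehat{\mathcal{O}}$, which gives the claim. For part 5, I would observe that by construction $\mathbb{L}_0$ corresponds to the similitude character $\nu$ composed with the cyclotomic character, while $D_{0, \mathcal{S}_{K,L}}$ is the rank one filtered bundle with $\mathrm{Fil}^{-1} = D_0 \supsetneq \mathrm{Fil}^0 = 0$; comparing via part 1 in rank $1$ yields the identification after a Tate twist of $1$.

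The main technical obstacle is bookkeeping the Tate twists and the precise identification of the graded pieces of $D_{\tau, \mathcal{S}_{K,L}} \otimes \mathcal{O}\mathbb{B}_{\mathrm{dR}}^+$ with tensors of $\widehat{\mathcal{O}}$ and the line bundles $\omega_\tau$, $\wedge^2 D_\tau$; this requires carefully matching the Hodge cocharacter $\mu$ of the Shimura datum with the signature conventions at each $\tau$, and verifying $B$-equivariance of the Poincaré lemma trivialization so that the decomposition according to $\tau \in \Phi$ is preserved throughout.
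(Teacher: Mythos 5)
Your proposal takes essentially the same approach as the paper: the paper's proof is just a citation to Scholze's relative de Rham comparison theorem (\cite[Proposition 7.9 and Theorem 8.8]{pH}) together with \cite[Proposition 5.2.10]{RH}, plus the remark that the Hodge structure is trivial along $\tau \in \Phi \setminus \Psi$ — precisely the content you have unpacked, including the $B$-equivariance/Morita reduction, the $\nabla=0$ step via the Poincar\'e lemma, and the signature-dependent degeneration of the filtration.
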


\begin{proof} See \cite[Proposition 7.9 and Theorem 8.8]{pH} and \cite[Proposition 5.2.10.]{RH}. Note that we consider the trivial Hodge structure along $\tau \in \Phi \setminus \Psi$. \end{proof}

\begin{prop}\label{FHT}

For any $\tau \in \Psi$, we have the following canonical commutative diagram of $\widehat{\mathcal{O}}_{\aS_{K, L}}$-modules with exact rows, where $-\mathrm{KS}$ denotes the $-1$-time of the Kodaira-Spencer map in Proposition \ref{KS}, $\widehat{\omega}_{\tau, \aS_{K, L}} := \omega_{\tau, \aS_{K, L}} \otimes_{\mathcal{O}_{\aS_{K, L}}} \widehat{\mathcal{O}}_{\aS_{K, L}}$ and the below sequence is the Faltings extension \cite[Corollary 6.14]{pH}.

        \xymatrix{
        0 \ar[r] & \widehat{\mathcal{O}}_{\aS_{K, L}}(1) \ar[r] \ar[d]^{\mathrm{id}} & \mathbb{L}_{\tau} \otimes_{L} \widehat{\omega}_{\tau, \aS_{K, L}}  \ar[r] \ar[d] & \widehat{\omega}_{\tau, \aS_{K, L}}^{2} \otimes_{\mathcal{O}_{\aS_{K, L}}} \wedge^2 D_{\tau, \aS_{K, L}} \ar[r] \ar[d]^{-\mathrm{KS}} & 0.\\
        0 \ar[r] & \widehat{\mathcal{O}}_{\aS_{K, L}}(1) \ar[r] & \mathrm{gr}^1\mathcal{O}\mathbb{B}^+_{\mathrm{dR}} \ar[r]  & \widehat{\mathcal{O}}_{\aS_{K, L}} \otimes_{\mathcal{O}_{\aS_{K, L}}} \Omega_{\aS_{K, L}}^{1} \ar[r]& 0
        }

    \end{prop}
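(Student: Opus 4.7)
The strategy is to mimic Pan's construction in \cite[Corollary 4.2.3]{PanI} using the PEL data, exploiting the comparison isomorphism of Proposition \ref{Hodge de Rham}(1) together with the flat connection on $D_{\tau,\aS_{K,L}} \otimes_{\mathcal{O}_{\aS_{K,L}}} \mathcal{O}\mathbb{B}^+_{\mathrm{dR}}$. Note that the top row of the diagram is obtained from Proposition \ref{Hodge de Rham}(3) by tensoring with $\widehat{\omega}_{\tau,\aS_{K,L}}$, so both rows are extensions of $\widehat{\omega}_{\tau,\aS_{K,L}}^{2}\otimes_{\mathcal{O}_{\aS_{K,L}}}\wedge^{2}D_{\tau,\aS_{K,L}}$ (the target of $-\mathrm{KS}$) by $\widehat{\mathcal{O}}_{\aS_{K,L}}(1)$, so that constructing the middle vertical arrow and verifying commutativity of either square forces the other.

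First, I would construct the middle vertical map. Via Proposition \ref{Hodge de Rham}(1), a local section $e\in\mathbb{L}_\tau$ is identified with an element of $D_{\tau,\aS_{K,L}}\otimes_{\mathcal{O}_{\aS_{K,L}}}\mathcal{O}\mathbb{B}^+_{\mathrm{dR}}$; by Proposition \ref{Hodge de Rham}(3) its image modulo $\mathrm{Fil}^{1}$ lies in $\omega_{\tau,\aS_{K,L}}\otimes\wedge^{2}D_{\tau,\aS_{K,L}}\otimes_{\mathcal{O}_{\aS_{K,L}}}\widehat{\mathcal{O}}_{\aS_{K,L}}$. Thus $e\otimes\omega\in\mathbb{L}_\tau\otimes_{L}\widehat{\omega}_{\tau,\aS_{K,L}}$ can be lifted locally to $D_{\tau,\aS_{K,L}}\otimes_{\mathcal{O}_{\aS_{K,L}}}\omega_{\tau,\aS_{K,L}}\otimes_{\mathcal{O}_{\aS_{K,L}}}\mathcal{O}\mathbb{B}^+_{\mathrm{dR}}$, and applying the connection $\nabla$ (combining the Gauss-Manin connection on $D_\tau$ with the derivation on $\mathcal{O}\mathbb{B}^+_{\mathrm{dR}}$) produces an element whose class in an appropriate graded piece, using the trivialization of $\omega_\tau\otimes\omega_\tau^{-1}$, lives in $\mathrm{gr}^{1}\mathcal{O}\mathbb{B}^+_{\mathrm{dR}}$. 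One checks that this is independent of the chosen lift by using $\nabla(\mathbb{L}_\tau\otimes_L\mathcal{O}\mathbb{B}_{\mathrm{dR}})=0$ and Griffiths transversality.

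Next, commutativity of the left square with identity map on $\widehat{\mathcal{O}}_{\aS_{K,L}}(1)$ follows tautologically: an element of $\widehat{\mathcal{O}}_{\aS_{K,L}}(1)$ in the top row arises as $\widehat{\omega}_{\tau,\aS_{K,L}}^{-1}(1)\otimes\widehat{\omega}_{\tau,\aS_{K,L}}$ sitting in $\mathbb{L}_\tau\otimes\widehat{\omega}_{\tau,\aS_{K,L}}$ via Proposition \ref{Hodge de Rham}(3), and under the middle vertical map this is carried to the standard copy $\widehat{\mathcal{O}}_{\aS_{K,L}}(1)\subset\mathrm{gr}^{1}\mathcal{O}\mathbb{B}^+_{\mathrm{dR}}$ because on this piece the lift has trivial $\nabla$-image to leading order and only the $\mathrm{Fil}^{1}\mathcal{O}\mathbb{B}^+_{\mathrm{dR}}$ contribution survives. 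The commutativity of the right square is the essential computation: after projecting, the middle vertical map sends $\overline{e}\otimes\omega\in\widehat{\omega}_{\tau,\aS_{K,L}}^{2}\otimes\wedge^{2}D_{\tau,\aS_{K,L}}$ to $\mathrm{gr}(\nabla)$ applied to a lift in $\omega_{\tau,\aS_{K,L}}\otimes\wedge^{2}D_{\tau,\aS_{K,L}}\subset D_{\tau,\aS_{K,L}}$, then projected onto $\omega_{\tau,\aS_{K,L}}^{-1}\otimes\Omega^{1}_{\aS_{K,L}}$; by definition of $\mathrm{KS}$ in Proposition \ref{KS}, this is exactly $\pm\mathrm{KS}$ applied to the given element.

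The main technical obstacle is pinning down the sign. The sign discrepancy is intrinsic to the construction: the Faltings extension is built from the derivation $d:\mathcal{O}\mathbb{B}^+_{\mathrm{dR}}\to\mathcal{O}\mathbb{B}^+_{\mathrm{dR}}\otimes\Omega^{1}$, whereas the map $\nabla$ used on the comparison side carries an opposite-sign contribution coming from the $\nabla$-horizontal condition $\nabla(e)=0$ that expresses the section of $\mathbb{L}_\tau$ in terms of $D_\tau$-coordinates and a correction term in $\mathrm{Fil}^{1}\mathcal{O}\mathbb{B}^+_{\mathrm{dR}}$. Working this out carefully in local coordinates (e.g.\ after passing to a perfectoid cover trivializing the relevant bundles, as in \cite[\S 4.2]{PanI}) yields the factor $-1$, completing the proof.
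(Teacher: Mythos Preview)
Your overall strategy---follow Pan's argument via the $p$-adic comparison and verify the two squares---matches the paper, but your construction of the middle vertical arrow is muddled and does not work as written. You propose to build it by ``applying $\nabla$'' to a lift of $e\otimes\omega$ in $D_\tau\otimes\omega_\tau\otimes\mathcal{O}\mathbb{B}^+_{\mathrm{dR}}$ to produce a class in $\mathrm{gr}^1\mathcal{O}\mathbb{B}^+_{\mathrm{dR}}$; but $\nabla$ lands in $\Omega^1$-tensored objects, not in $\mathrm{gr}^1\mathcal{O}\mathbb{B}^+_{\mathrm{dR}}$, so this does not yield the desired map. (Your aside ``$\nabla(\mathbb{L}_\tau\otimes_L\mathcal{O}\mathbb{B}_{\mathrm{dR}})=0$'' is also incorrect: only $\mathbb{L}_\tau\otimes\mathbb{B}_{\mathrm{dR}}$ is horizontal.)

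The paper's construction of the middle map is more direct and does not use $\nabla$. By Proposition~\ref{Hodge de Rham}(3) one has $\mathbb{L}_\tau\otimes\mathbb{B}^+_{\mathrm{dR}}=(\mathrm{Fil}^0(D_\tau\otimes\mathcal{O}\mathbb{B}^+_{\mathrm{dR}}))^{\nabla=0}$, hence a map $\mathbb{L}_\tau\otimes\widehat{\mathcal{O}}\to\mathrm{gr}^0(D_\tau\otimes\mathcal{O}\mathbb{B}^+_{\mathrm{dR}})$. The Hodge filtration on $D_\tau$ gives a splitting $\mathrm{gr}^0(D_\tau\otimes\mathcal{O}\mathbb{B}^+_{\mathrm{dR}})=(\omega_\tau^{-1}\otimes\mathrm{gr}^1\mathcal{O}\mathbb{B}^+_{\mathrm{dR}})\oplus(\widehat{\omega}_\tau\otimes\wedge^2 D_\tau)$; projecting to the first summand and tensoring with $\omega_\tau$ is the middle map, while projection to the second recovers the Hodge--Tate quotient (making the left square automatic). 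The connection enters only for the right square, and here your intuition about the sign is correct: since $\mathbb{L}_\tau$ lands in the $\nabla=0$ part of $\mathrm{gr}^0$, writing the image as $(y,z)$ one gets $\nabla y=-\nabla z$. On the first summand $\nabla$ is the Faltings quotient $\mathrm{gr}^1\mathcal{O}\mathbb{B}^+_{\mathrm{dR}}\to\widehat{\mathcal{O}}\otimes\Omega^1$, and on the second $\nabla$ is exactly $\mathrm{KS}$ by its definition in Proposition~\ref{KS}; this gives the factor $-\mathrm{KS}$ without any local-coordinate computation. So $\nabla$ is used to \emph{verify} the right square, not to \emph{define} the map.
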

    
    \begin{proof}
    
    This follows from exactly the same argument as \cite[Theorem 4.2.2]{PanI} by using 3 of Proposition \ref{Hodge de Rham}. In {\S} 5, we will use the construction of this map. Thus we recall the proof. By 3 of Proposition \ref{Hodge de Rham}, we have a map $\mathbb{L}_{\tau} \otimes_{L} \mathcal{O}\mathbb{B}_{\mathrm{dR}}^+ \rightarrow \mathrm{Fil}^0( D_{\tau, \aS_{K, L}} \otimes_{\mathcal{O}_{\aS_{K, L}}} \mathcal{O}\mathbb{B}_{\mathrm{dR}}^+)$. By taking the graded $0$, we obtain the map $\mathbb{L}_{\tau} \otimes_{L} \widehat{\mathcal{O}}_{\aS_{K, L}} \rightarrow \mathrm{gr}^0( D_{\tau, \aS_{K, L}} \otimes_{\mathcal{O}_{\aS_{K, L}}} \mathcal{O}\mathbb{B}_{\mathrm{dR}}^+) = \omega_{\tau, \aS_{K, L}}^{-1} \otimes \mathrm{gr}^1\mathcal{O}\mathbb{B}_{\mathrm{dR}}^{+} \oplus \widehat{\omega}_{\tau, \aS_{K, L}} \otimes_{\mathcal{O}_{\aS_{K, L}}} (\wedge^2 D_{\tau_1, \aS_{K ,L}}) \rightarrow \omega_{\tau, \aS_{K, L}}^{-1} \otimes_{\mathcal{O}_{\aS_{K, L}}} \mathrm{gr}^1\mathcal{O}\mathbb{B}_{\mathrm{dR}}^{+}$. This is (the middle map of this proposition) $\otimes \ \mathrm{id}_{\omega_{\tau, \aS_{K, L}}^{-1}}$. Note that we get the quotient map of the Hodge-Tate filtration by $\mathbb{L}_{\tau} \otimes_{L} \widehat{\mathcal{O}}_{\aS_{K, L}} \rightarrow \omega_{\tau, \aS_{K, L}}^{-1} \otimes \mathrm{gr}^1\mathcal{O}\mathbb{B}_{\mathrm{dR}}^{+} \oplus \widehat{\omega}_{\tau, \aS_{K, L}} \otimes_{\mathcal{O}_{\aS_{K, L}}} (\wedge^2 D_{\tau, \aS_{K, L}}) \rightarrow \widehat{\omega}_{\tau, \aS_{K, L}} \otimes_{\mathcal{O}_{\aS_{K, L}}} (\wedge^2 D_{\tau, \aS_{K, L}})$.
    
By the above construction, the compatibility of the subspaces is clear. The compatibility of the quotient spaces follows from the fact $\mathrm{Im}(\mathbb{L}_{\tau} \otimes_{L} \widehat{\mathcal{O}}_{\aS_{K, L}} \rightarrow \omega_{\tau, \aS_{K, L}}^{-1} \otimes_{\mathcal{O}_{\aS_{K, L}}} \mathrm{gr}^1\mathcal{O}\mathbb{B}_{\mathrm{dR}}^{+} \oplus \omega_{\tau, \aS_{K, L}} \otimes_{\mathcal{O}_{\aS_{K, L}}} (\wedge^2 D_{\tau, \aS_{K, L}}) \otimes_{\mathcal{O}_{\aS_{K, L}}} \widehat{\mathcal{O}}_{\aS_{K, L}}) \subset (\omega_{\tau, \aS_{K, L}}^{-1} \otimes_{\mathcal{O}_{\aS_{K, L}}} \mathrm{gr}^1\mathcal{O}\mathbb{B}_{\mathrm{dR}}^{+} \oplus \omega_{\tau, \aS_{K, L}} \otimes_{\mathcal{O}_{\aS_{K, L}}} (\wedge^2 D_{\tau, \aS_{K, L}}) \otimes_{\mathcal{O}_{\aS_{K, L}}} \widehat{\mathcal{O}}_{\aS_{K, L}})^{\nabla = 0}$. In fact, if $x \in V_{\tau} \otimes_{L} \widehat{\mathcal{O}}_{\aS_{K, L}}$ is mapped to $(y, z) \in (\omega_{\tau, \aS_{K, L}}^{-1} \otimes_{\mathcal{O}_{\aS_{K, L}}} \mathrm{gr}^1\mathcal{O}\mathbb{B}_{\mathrm{dR}}^{+} \oplus \omega_{\tau, \aS_{K, L}} \otimes_{\mathcal{O}_{\aS_{K, L}}} (\wedge^2 D_{\tau, \aS_{K, L}}) \otimes_{\mathcal{O}_{\aS_{K, L}}} \widehat{\mathcal{O}}_{\aS_{K, L}})^{\nabla = 0}$, then we obtain $\nabla(y) = -\nabla(z) = - \mathrm{KS}(z)$. \end{proof}

\subsection{Perfectoid Shimura varieties and geometric Sen theory}

In this subsection, we recall fundamental results on perfectoid Shimura varieties and geometric Sen theory. We put $C:=\widehat{\overline{L}}$ and $\aS_{K} := \aS_{K, C}:=\aS_{K, L} \times_{L} C$. Again $B \otimes_{\mathbb{Q}} L = \prod_{\tau \in \mathrm{Hom}(F, L)} M_{2}(L)$ induces the decomposition $V = \oplus_{\tau \in \Phi} V_{\tau}'$ and by the Morita equivalence, $V_{\tau}'$ induces a $2$-dimensional $L$-vector space $V_{\tau}$. We fix an identification $V_{\tau} = L^{\oplus 2}$.

\begin{thm}\label{sim}

There exists a unique perfectoid space $\aS_{K^p}$ over $C$ up to isomorphism such that $\aS_{K^p} \sim \varprojlim_{K_p}\aS_{K^pK_p}$. More precisely, there exist compatible maps $\pi_{K_p} : \aS_{K^p} \rightarrow \aS_{K^pK_p}$ over $C$ for any $K_p$ satisfying the following conditions.

1 \ $\pi_{K_p}$'s induce the homeomorphism of base spaces $|\aS_{K^p}| \Isom \varprojlim_{K_p} |\aS_{K^pK_p}|$.

2 \ There exist $K_p$ and an affinoid open covering $\{ V_{i, K_p} \}$ of $\aS_{K^pK_p}$ such that for any $i$, the inverse image $V_{i, \infty}$ of $V_{i, K_p}$ in $\aS_{K^p}$ is an affinoid perfectoid and $\mathcal{O}_{\aS_{K^p}}^{+}(V_{i, \infty})$ is the $p$-adic completion of $\varinjlim_{K_p' \subset K_p} \mathcal{O}_{\aS_{K^pK_p'}}^{+}(V_{i, K_p'})$, where $V_{i, K_p'}$ denotes the inverse image of $V_{i, K_p}$ in $\aS_{K^pK_p'}$.

\end{thm}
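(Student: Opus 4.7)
The plan is to deduce this from the construction of perfectoid Shimura varieties in \cite{Torsion}, exploiting the PEL moduli interpretation recalled in {\S}3.3 and the compactness of $S_K$.

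First I would observe that $S_K$ is proper over its reflex field (as already noted, $GU/\mathbb{Q}$ is anisotropic modulo center), so each $\aS_{K^pK_p}$ is a proper rigid analytic space over $C$. Using the moduli interpretation (tuples $(A,\lambda,i,\overline{\eta})$), forgetting the $B$-action and the polarization-compatibility data yields, up to a choice of auxiliary symplectic datum $(V,\mathrm{tr}_{F/\mathbb{Q}}\circ\psi)$, a compatible system of closed immersions $\aS_{K^pK_p}\hookrightarrow \aS^{\mathrm{Sg}}_{(K^pK_p)^{\mathrm{Sg}}}$ into the corresponding compact Siegel Shimura variety, where the level subgroups on the Siegel side are obtained by forgetting the $B$-linearity condition. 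These immersions are compatible as $K_p$ shrinks.

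Next I would invoke Scholze's theorem: for the compact Siegel tower, there exists a perfectoid space $\aS^{\mathrm{Sg}}_{(K^p)^{\mathrm{Sg}}}$ over $C$ with $\aS^{\mathrm{Sg}}_{(K^p)^{\mathrm{Sg}}}\sim\varprojlim_{K_p^{\mathrm{Sg}}}\aS^{\mathrm{Sg}}_{(K^pK_p)^{\mathrm{Sg}}}$, and furthermore there is a cofinal family of affinoid opens in $\aS^{\mathrm{Sg}}_{((K^p)^{\mathrm{Sg}}K_p^{\mathrm{Sg}})}$ whose pullbacks to infinite level are affinoid perfectoid in the expected way (this is precisely the statement of \cite{Torsion} for compact PEL data of type A, which is our situation). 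I would then define $\aS_{K^p}$ as the scheme-theoretic closure of the image of $\varprojlim_{K_p}\aS_{K^pK_p}$ inside $\aS^{\mathrm{Sg}}_{(K^p)^{\mathrm{Sg}}}$, equivalently the common zero locus of the ideals cutting out $\aS_{K^pK_p}\subset\aS^{\mathrm{Sg}}_{(K^pK_p)^{\mathrm{Sg}}}$ for all $K_p$. Since a closed subspace of an affinoid perfectoid space is affinoid perfectoid, intersecting with a cofinal family of affinoid perfectoid opens of $\aS^{\mathrm{Sg}}_{(K^p)^{\mathrm{Sg}}}$ gives the required affinoid perfectoid cover of $\aS_{K^p}$.

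It then remains to verify the two conditions of the relation $\sim$: the homeomorphism $|\aS_{K^p}|\Isom\varprojlim_{K_p}|\aS_{K^pK_p}|$ is induced from the Siegel case by restriction, and the structure-sheaf condition on affinoid perfectoids follows because closed immersions in the adic category commute with the formation of $p$-adic completions of filtered colimits of integral sections. Uniqueness up to unique isomorphism is a formal consequence of the universal property encoded in $\sim$ (cf.\ the cited definition). The main obstacle I expect is checking carefully that the inverse system of coherent ideals cutting out the PEL locus (the $B$-linearity of the endomorphisms, together with the polarization condition) passes correctly to the inverse limit so that the resulting closed subspace is still perfectoid and satisfies the $\mathcal{O}^+$-description; this is essentially handled by the observation that over each affinoid perfectoid open of $\aS^{\mathrm{Sg}}_{(K^p)^{\mathrm{Sg}}}$, the ideals coming from finite levels are already defined at a finite step, so no new analytic difficulty arises in the limit.
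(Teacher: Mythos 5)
The paper's proof of this statement is simply the citation ``See \cite[Theorem 4.1.1]{Torsion},'' since the theorem is stated verbatim for compact Hodge-type Shimura varieties in Scholze's paper; the unitary Shimura variety $S_K$ considered here is compact of abelian (PEL) type, so the cited theorem applies directly. Your proposal essentially reconstructs Scholze's internal proof of that cited theorem (the embedding into a Siegel tower plus Zariski-closure argument) rather than invoking it as a black box. The strategy is the right one --- it \emph{is} Scholze's strategy --- but since the cited theorem already encodes exactly what is needed, the reconstruction is unnecessary, and it also introduces some inaccuracies worth flagging.

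The main imprecision is the phrase ``compact Siegel Shimura variety'': the Siegel modular variety is \emph{not} compact --- it has cusps. What makes the argument work is that the unitary PEL variety is proper (anisotropic modulo center), so its image inside the (non-compact) Siegel variety is a closed subspace, and the essential technical input from \cite{Torsion} is the perfectoidness of the \emph{full} Siegel tower, including the part near the boundary, which Scholze gets via the minimal compactification and the anticanonical tower. You also slide into a slight circularity when you write ``scheme-theoretic closure of the image of $\varprojlim_{K_p}\aS_{K^pK_p}$''; at that point in the argument the inverse limit is not yet known to exist as an adic space. What Scholze actually does is take the Zariski closure of the (finite-level) closed subspace $\aS_{K^pK_p}\subset\aS^{\mathrm{Sg}}_{(K^pK_p)^{\mathrm{Sg}}}$ inside the infinite-level Siegel perfectoid space $\aS^{\mathrm{Sg}}_{(K^p)^{\mathrm{Sg}}}$, and shows the result is independent of $K_p$ and satisfies the $\sim$-conditions; then Zariski-closed subspaces of affinoid perfectoid spaces being affinoid perfectoid (Lemma II.2.9 in \cite{Torsion}) completes the argument. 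Your parenthetical remark that ``this is precisely the statement of \cite{Torsion} for compact PEL data of type A'' is exactly right and is, in effect, the whole proof; that is all the paper does, and it is all you should do here.
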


\begin{proof} See \cite[Theorem 4.1.1]{Torsion}. \end{proof}

We can regard $\aS_{K^p}$ as an object of $(\aS_{K, L})_{\proet}$. Then we have a trivialization $V_{\tau} = \mathbb{L}_{\tau}$ on $\aS_{K^p}$. Since $\aS_{K^p}$ is perfectoid, we have the following relative Hodge-Tate filtration on $\aS_{K^p}$ for any $\tau \in \Psi$ by restricting the exact sequence 3 of Proposition \ref{Hodge de Rham} to $\aS_{K^p}$. ($\omega_{\tau, \aS_{K^p}}$ denotes the pullback of $\omega_{\tau, \aS_{K,L}}$ to $\aS_{K^p}$, $\mathcal{O}_{\aS_{K^p}}^{\mathrm{sm}} := \varinjlim_{K_p} \pi^{-1}_{K_p}\mathcal{O}_{\aS_{K^pK_p}}$ and we use a similar notation such as $D_{\tau, \aS_{K^p}}^{\mathrm{sm}}$ and $\omega_{\tau, \aS_{K^p}}^{\mathrm{sm}}$.)

$0 \rightarrow \omega_{\tau, \aS_{K^p}}^{-1}(1) \rightarrow V_{\tau} \otimes_{L} \mathcal{O}_{\aS_{K^p}} \rightarrow \omega_{\tau, \aS_{K^p}} \otimes_{\mathcal{O}_{\aS_{K^p}}^{\mathrm{sm}}} \wedge^2 D_{\aS_{K^p}, \tau}^{\mathrm{sm}} \rightarrow 0$.

This induces a map $\pi_{\mathrm{HT}} : \aS_{K^p} \rightarrow \Fl := \prod_{\tau \in \Psi} \mathbb{P}^{1}_{C, \tau}$ of adic spaces over $C$. (Precisely, we need to replace $\mathbb{P}^{1}_{C, \tau}$ by the adic space corresponding to $\mathbb{P}^{1}_{C, \tau}$, but we also use this notation in the following.) Note that we have a natural identification $\omega_{\tau, \aS_{K^p}}(-1) \cong \pi_{\mathrm{HT}}^{*} \omega_{\tau, \Fl}$ for any $\tau \in \Psi$, where $\omega_{\tau, \Fl}$ denotes the dual of the universal rank 1 subsheaf of $V_{\tau} \otimes_{L} \mathcal{O}_{\Fl}$.

Note that we use a normalization that $[(x_{\tau}, y_{\tau})] \in \mathbb{P}^1_{C, \tau}$ denotes the subspace $x_{\tau}\begin{pmatrix}
    1 \\ 0 
   \end{pmatrix}_{\tau} - y_{\tau}\begin{pmatrix}
    0 \\ 1 
   \end{pmatrix}_{\tau}$. Note also that in order to obtain the $G(\mathbb{Q}_p)$-equivariance of $\pi_{\mathrm{HT}}$, we should consider an action of $G(\mathbb{Q}_p) = \mathbb{Q}_p^{\times} \times \prod_{w \mid v} \mathrm{GL}_2(F_w)$ on $\Fl = \prod_{\tau \in \Psi} \mathbb{P}^1_{C, \tau}$ as in the following. The first factor $\mathbb{Q}_p^{\times}$ acts trivially and for $w \mid v$ and $\tau \in \Psi$, the action $g = \begin{pmatrix}
    a & b \\
    c & d \end{pmatrix} \in \mathrm{GL}_2(F_w)$ sends $[(x_{\tau}, y_{\tau})] \in \mathbb{P}^1_{C, \tau}$ to $$\begin{cases} [(x_{\tau}, y_{\tau})] \ \mathrm{if} \ \tau \notin \mathrm{Hom}_{\mathbb{Q}_p}(F_w, L) \\ [(\tau(d)x_{\tau} + \tau(b)y_{\tau}, \tau(c)x_{\tau} + \tau(a)y_{\tau})] \ \mathrm{if} \ \tau \in \mathrm{Hom}_{\mathbb{Q}_p}(F_w, L) . \end{cases}$$

The Hodge-Tate period map $\pi_{\mathrm{HT}}$ has the following affineness property. 

\begin{thm} \label{affine}

There exists an open base $\mathcal{B}_0$ of $\Fl$ consisting of affinoid open subsets of $\Fl$ satisfying the following properties.

1 \ For any $U \in \mathcal{B}_0$ and any rational open $V$ of $U$, we have $V \in \mathcal{B}_0$.

2 \ For any $U, V \in \mathcal{B}_0$, we have $U \cap V \in \mathcal{B}_0$.

3 \ For any $U \in \mathcal{B}_0$, the inverse image $V_{\infty} := \pi_{\mathrm{HT}}^{-1}(U)$ is an affinoid perfectoid space and equal to $\pi_{K_p}^{-1}(V_{K_p})$ for some $K_p$ and an affinoid open $V_{K_p}$ of $S_{K^pK_p}$ and $\mathcal{O}_{\aS_{K^p}}^{+}(V_{\infty})$ is the $p$-adic completion of $\varinjlim_{K_p' \subset K_p}\mathcal{O}_{\aS_{K^pK_p'}}^+(V_{K_p'})$, where $V_{K_p'}$ denotes the inverse image of $V_{K_p}$ in $\aS_{K^pK_p'}$.

\end{thm}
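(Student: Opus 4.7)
The plan is to construct an explicit base of affinoids on $\Fl = \prod_{\tau \in \Psi} \mathbb{P}^1_{C, \tau}$ adapted to the product structure, and to verify the three properties using the Siegel case of Scholze's affineness theorem \cite{Torsion} together with standard manipulations of rational subsets.

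Concretely, I would begin with the finite affinoid covering of $\Fl$ by the $2^d$ products $U_I := \prod_{\tau \in \Psi} U_{\tau}^{I(\tau)}$, where for each $\tau \in \Psi$, the index $I(\tau) \in \{0, \infty\}$ picks out one of the two standard affinoid opens $\{|x_\tau| \le 1\}$ or $\{|x_\tau| \ge 1\}$ of $\mathbb{P}^1_{C, \tau}$. The candidate base $\mathcal{B}_0$ is the collection of affinoid opens $V \subset \Fl$ which are rational subsets of some $U_I$.

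The key step is to show that each $\pi_{\mathrm{HT}}^{-1}(U_I)$ is an affinoid perfectoid of the form $\pi_{K_p}^{-1}(V_{K_p})$ with structure sheaf given as the $p$-adic completion of $\varinjlim_{K_p' \subset K_p}\mathcal{O}_{\aS_{K^pK_p'}}^+(V_{K_p'})$. I would deduce this from \cite{Torsion} by functoriality: forgetting the $B$-action and using the polarization $\mathrm{tr}_{F/\mathbb{Q}} \circ \psi$, one obtains an embedding of PEL data giving a finite unramified morphism $\aS_K \to \mathcal{A}_{g, K'}$ into a Siegel Shimura variety, together with a compatible closed embedding of flag varieties $\Fl \hookrightarrow \Fl_g$ intertwining the two Hodge-Tate period maps. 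Scholze's theorem for the Siegel case, together with the fact that the preimage of $\aS_{K^p}$ in $\mathcal{A}_{g, K'^p}$ is closed and arises as an inverse limit of closed subschemes at finite level, transports the affineness and finite-level statement to the unitary Shimura variety.

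Properties 1 and 2 are then essentially formal. Property 1 is automatic, since a rational subset of a rational subset of $U_I$ is again rational in $U_I$. For Property 2, it suffices to observe that $U_I \cap U_{I'} = \prod_{\tau}(U_\tau^{I(\tau)} \cap U_\tau^{I'(\tau)})$ is rational in $U_I$, because in each $\mathbb{P}^1_{C, \tau}$ the intersection $U_\tau^0 \cap U_\tau^\infty = \{|x_\tau| = 1\}$ is a rational subset of either factor; then for $V \subset U_I$ and $V' \subset U_{I'}$ both rational, the intersection $V \cap V'$ is rational in $U_I \cap U_{I'}$, hence in $U_I$. For Property 3 applied to a rational $V \subset U_I$, $\pi_{\mathrm{HT}}^{-1}(V)$ is a rational subset of the affinoid perfectoid $\pi_{\mathrm{HT}}^{-1}(U_I)$ and hence affinoid perfectoid. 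To obtain the finite-level description, I would approximate the finitely many functions on $U_I$ defining $V$ by pullbacks of functions from some $\mathcal{O}(V_{K_p'})$ using the explicit description of $\mathcal{O}_{\aS_{K^p}}^+(\pi_{\mathrm{HT}}^{-1}(U_I))$ as the $p$-adic completion of the tower; for $K_p'$ sufficiently small this approximation is exact at the level of rational subsets, yielding the desired $V_{K_p'}$.

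The main obstacle is precisely this reduction of the affineness of $\pi_{\mathrm{HT}}^{-1}(U_I)$ to the Siegel case through the embedding of Shimura data: one has to check that the embedding is compatible enough that both the perfectoid structure and the finite-level origin of the affinoids are preserved when passing to the closed subspace, and that the cover $\{U_I\}$ really arises as the pullback of an affinoid cover used in Scholze's argument on $\Fl_g$.
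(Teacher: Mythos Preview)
Your proposal is correct in spirit and outlines the same underlying argument as the reference the paper cites: the paper's proof is simply ``See \cite[Theorem 4.1.1]{Torsion}'', and what you sketch --- the reduction to the Siegel case via the embedding of PEL data, together with the formal manipulations of rational subsets --- is precisely the content of that reference. Your identification of the main technical point (compatibility of the Hodge--Tate period maps and finite-level structures under the closed embedding into the Siegel variety) is accurate, and this is exactly what Scholze establishes for general PEL-type data.
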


\begin{proof} See \cite[Theorem 4.1.1]{Torsion}. \end{proof}

We put $U_{1, \tau} := \{ [(x_{\tau}, 1)] \in \mathbb{P}^1_{C, \tau} \mid ||x_{\tau}|| \le 1  \}$, $U_{2, \tau} := \{ [(1, y_{\tau})] \in \mathbb{P}^1_{C, \tau} \mid ||y_{\tau}|| \le 1  \}$ and $U_1 := \prod_{\tau \in \Psi} U_{1, \tau} \subset \Fl = \prod_{\tau \in \Psi} \mathbb{P}^1_{C, \tau}$. 

\begin{lem}\label{open unit ball}

Let $\mathcal{B}$ be the subset of $\mathcal{B}_0$ consisting of $U \in \mathcal{B}_0$ such that there exists $g \in GU(\mathbb{Q}_p)$ such that $U \subset U_1g$. Then $\mathcal{B}$ is an open base of $\Fl$, stable under taking rational open subsets and intersections.
    
\end{lem}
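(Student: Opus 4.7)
The plan is to verify the three properties of $\mathcal{B}$ separately, transferring the corresponding properties from $\mathcal{B}_0$ stated in Theorem \ref{affine}.

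First, the two stability statements are essentially immediate. If $U \in \mathcal{B}$ with $U \subset U_1 g$, and $V$ is a rational open of $U$, then property 1 of $\mathcal{B}_0$ gives $V \in \mathcal{B}_0$, and $V \subset U \subset U_1 g$ places $V$ in $\mathcal{B}$. Likewise, if $U, V \in \mathcal{B}$ with $U \subset U_1 g$, then property 2 of $\mathcal{B}_0$ gives $U \cap V \in \mathcal{B}_0$, and $U \cap V \subset U_1 g$ places $U \cap V$ in $\mathcal{B}$.

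The substantive content is to show that $\mathcal{B}$ is still an open base of $\Fl$. The strategy is to exhibit an open cover $\Fl = \bigcup_{g \in GU(\mathbb{Q}_p)} U_1 g$. Granting this, for any $x \in \Fl$ and any open neighborhood $W$ of $x$, one picks $g$ with $x \in U_1 g$; then $W \cap U_1 g$ is an open neighborhood of $x$, and since $\mathcal{B}_0$ is an open base by Theorem \ref{affine}, there is some $U \in \mathcal{B}_0$ with $x \in U \subset W \cap U_1 g \subset U_1 g$, placing $U$ in $\mathcal{B}$.

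For the covering, note first that $U_1 = \prod_{\tau \in \Psi} U_{1, \tau}$ is open in $\Fl$ because each $U_{1, \tau} = \{\|x_\tau\| \le 1\}$ is a rational subdomain of the affine chart $\{[(x_\tau, 1)]\} \cong \mathbb{A}^{1, \mathrm{ad}}_{C}$ of $\mathbb{P}^{1, \mathrm{ad}}_{C, \tau}$, hence open in $\mathbb{P}^{1, \mathrm{ad}}_{C, \tau}$. Since $GU(\mathbb{Q}_p)$ acts by isomorphisms, each translate $U_1 g$ is open. Factorwise, it suffices to show that for each $\tau \in \Psi$ with corresponding $p$-adic place $w(\tau)$ of $F$, the $GL_2(F_{w(\tau)})$-translates of $U_{1, \tau}$ cover $\mathbb{P}^{1, \mathrm{ad}}_{C, \tau}$. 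This is clear from $\mathbb{P}^{1, \mathrm{ad}}_{C, \tau} = U_{1, \tau} \cup U_{2, \tau}$ together with the fact that $\begin{pmatrix} 0 & 1 \\ 1 & 0 \end{pmatrix} \in GL_2(F_{w(\tau)})$ exchanges $U_{1, \tau}$ and $U_{2, \tau}$ under the normalized action recalled in the excerpt. Taking the product over $\tau \in \Psi$ then gives $GU(\mathbb{Q}_p) \cdot U_1 = \Fl$, completing the argument.

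There is no genuine obstacle here; the only points requiring care are the precise openness of $U_{1, \tau}$ as an adic subspace of $\mathbb{P}^{1, \mathrm{ad}}_{C, \tau}$ and a correct bookkeeping of the $GU(\mathbb{Q}_p)$-action recalled just before the lemma.
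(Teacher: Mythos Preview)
Your treatment of the stability statements and your overall strategy (reduce to showing $\Fl = \bigcup_{g} U_1 g$, then use that $\mathcal{B}_0$ is a base) are correct. The gap is in the covering argument itself.

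The step ``Factorwise, it suffices\ldots'' is not valid. The group $GU(\mathbb{Q}_p) = \mathbb{Q}_p^\times \times \prod_{w \mid v} \mathrm{GL}_2(F_w)$ does \emph{not} act independently on the factors $\mathbb{P}^{1,\mathrm{ad}}_{C,\tau}$: a single element $g \in \mathrm{GL}_2(F_w)$ acts on all $\tau \in \mathrm{Hom}_{\mathbb{Q}_p}(F_w, L)$ simultaneously, via the various embeddings $\tau(g)$. In particular, the swap matrix $\begin{pmatrix} 0 & 1 \\ 1 & 0 \end{pmatrix}$ has $\mathbb{Q}_p$-entries, so it swaps $U_{1,\tau}$ and $U_{2,\tau}$ for \emph{all} $\tau$ at once. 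Hence your argument only produces $U_1 = \prod_\tau U_{1,\tau}$ and $\prod_\tau U_{2,\tau}$ as translates, and these do not cover $\Fl$ once $|\Psi| \ge 2$: for example, with $\Psi = \{\tau_1,\tau_2\}$, the point $([(0,1)],[(1,0)])$ lies in neither. This is precisely the case of interest in the paper (e.g.\ $F_w = \mathbb{Q}_{p^2}$).

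The paper's proof handles this coupling differently. For a rank~1 point $z = (z_\tau)_\tau$, one first applies a lower-unipotent $\begin{pmatrix} 1 & 0 \\ b & 1 \end{pmatrix}$ with $b \in p\mathbb{Z}_p$ to arrange that the second homogeneous coordinate of every $z_\tau$ is nonzero; this is possible because only finitely many values of $b$ can fail (at most one per $\tau$). Then one applies a power of $\gamma = \begin{pmatrix} 1 & 0 \\ 0 & p \end{pmatrix}$: since $U_{1,\tau}\gamma^{-n} = \{\|x_\tau\| \le p^n\}$ exhausts the affine chart, a single large $n$ brings all coordinates into $U_{1,\tau}$ simultaneously. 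Finally, higher-rank points are handled by passing to their unique rank~1 generization and using that $\overline{U_1} \subset U_1 \gamma^{-1}$. The key point is that both moves use matrices with $\mathbb{Q}_p$-entries yet still succeed, because the unipotent step only needs to avoid finitely many bad $b$'s and the scaling step enlarges all factors at once.
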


\begin{proof}

It suffices to prove that $\mathcal{B}$ is an open base of $\Fl$. Note that for $\gamma := \begin{pmatrix}
    1 & 0 \\
    0 & p \end{pmatrix} \in \mathrm{GL}_2(\mathbb{Q}_p)$ and any $n \in \mathbb{Z}_{>0}$, we have $U_{1, \tau}\gamma^{-n} := \{ [(x_{\tau}, 1)] \in \mathbb{P}^1_{C, \tau} \mid ||x_{\tau}|| \le p^{n} \}$ and for any rank 1 point $z_{\tau} = [(x_{\tau}, y_{\tau})] \in \mathbb{P}_{C, \tau}^1$ satisfying $y_{\tau} \neq 0$, there exists $n \in \mathbb{Z}_{\ge 0}$ such that $z_{\tau} \in U_{1, \tau}\gamma^{-n}$. On the other hand, for any rank 1 point $z = (z_{\tau})_{\tau} = [(x_{\tau}, y_{\tau})] \in \Fl = \prod_{\tau \in \Psi} \mathbb{P}^1_{C}$, the elements $b \in p\mathbb{Z}_p$ satisfying that $z_{\tau}\begin{pmatrix}
        1 & 0 \\
        b & 1 \end{pmatrix} = [(x_{\tau}, x_{\tau}b + y_{\tau})]$ has the property $x_{\tau}b + y_{\tau} = 0$ for some $\tau \in \Psi$, form a finite set. Thus for any rank 1 point $z \in \Fl$, there exists $\gamma \in GU(\mathbb{Q}_p)$ such that $z \in U_1 \gamma$. This implies the result because any point $w \in \Fl$ has a unique rank 1 point $z \in \Fl$ which specialize $w$. By the above argument, there exists $g \in G(\mathbb{Q}_p)$ such that $wg \in U_1$. Thus $zg$ is contained in the closure $\overline{U_1}$ of $U_1$, which is contained in $U_1\gamma^{-1}$. \end{proof}
    
    \begin{dfn}

Let $\tilde{\mathcal{B}}$ be the set of affinoid open subsets $U$ of $\Fl$ such that $Ug \in \mathcal{B}$ for some $g \in G(\mathbb{Q}_p)$. Note that $\tilde{\mathcal{B}}$ may not be stable under taking intersections but all elements of $\tilde{\mathcal{B}}$ satisfy the property 3 of Theorem \ref{affine}.

    \end{dfn}

For any quasicompact open $V$ of $\aS_{K^p}$, there exists a sufficiently small open subgroup $K_p = (1 + p^k \mathbb{Z}_p) \times \prod_{w \mid v} (1 + p^k M_2(\mathcal{O}_{F_w})) \subset \mathbb{Q}_p^{\times} \times \prod_{w \mid v} \mathrm{GL}_2(F_w)$ for some $k \in \mathbb{Z}_{>0}$ acting on $\mathcal{O}_{K^p}(V)$. Thus we can consider $\mathcal{O}_{K^p}(V)^{\Psi-\mathrm{la}}$. (See the definition after Theorem \ref{density of locally analytic vectors} for the definition of $\Psi-\mathrm{la}$.) Let $\mathcal{O}_{S_{K^p}}^{\Psi-\mathrm{la}}$ be a sheaf on $\aS_{K^p}$ defined by $V \mapsto \mathcal{O}_{S_{K^p}}(V)^{\Psi-\mathrm{la}}$ for any quasicompact open subset $V$ of $\aS_{K^p}$, which is smaller than $\mathcal{O}_{S_{K^p}}^{\mathrm{la}}$ studied in \cite{PanI}, \cite{PanII} and \cite{Cam}. Actually, $\mathcal{O}_{S_{K^p}}^{\Psi-\mathrm{la}}$ has a very concrete description and has a very nice property as in \cite{PanI} and \cite{PanII} (see Theorem \ref{geometric sen theory} and Proposition \ref{mikami expansionII}) while we don't have such a concrete description for $\mathcal{O}_{S_{K^p}}^{\mathrm{la}}$ in general and studying $\mathcal{O}_{K^p}^{\Psi-\mathrm{la}}$ suffices for our applications. 

We say that an affinoid open $V$ of $\aS_{K^pK_p}$ is small if there exists an $\etale$ map $V \rightarrow \mathbb{T}^d$ which is a composition of rational open immersions and finite $\etale$ maps, where $$\mathbb{T}^d := \mathrm{Spa}(C\langle T_1^{\pm 1}, \cdots, T_d^{\pm 1} \rangle, \mathcal{O}_C\langle T_1^{\pm 1}, \cdots, T_d^{\pm 1} \rangle).$$

Let $U \in \tilde{\mathcal{B}}$, let $V_{K_p}$ be an affinoid open of $\aS_{K^pK_p}$ such that $\pi_{\mathrm{HT}}^{-1}(U) = \pi_{K_p}^{-1}(V_{K_p})$, let $V$ be a rational open of $V_{K_p}$ and let $V_{\infty} = \mathrm{Spa}(B, B^+)$ be the inverse image of $V$ in $\aS_{K^p}$.

\begin{thm}\label{geometric sen theory}

Assume that $V$ is small. Then we have $R^i\Psi\mathfrak{LA}(B) = 0$ for any $i > 0$.

\end{thm}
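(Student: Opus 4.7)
The plan is to adapt the proof of \cite[Theorem 3.6.1]{PanI} to the partially locally analytic setting, since $R^i\Psi\mathfrak{LA}$ is a variant of $R^i\mathfrak{LA}$ obtained by imposing smoothness in the directions $\tau \notin \Psi$ and full $\mathbb{Q}_p$-analyticity (in particular, the restriction to the $\mathfrak{g}_\tau$ for $\tau \in \Psi$) elsewhere. I would first exploit the smallness of $V$: the étale chart $V \to \mathbb{T}^d$, built as a composition of rational opens and finite étale maps, lets me replace the general $\aS_{K^p}$-tower over $V$ with an explicit toroidal perfectoid tower over $\mathbb{T}^d$, at least after shrinking $K_p$. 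More precisely, by Theorem \ref{affine} there exist $K_p$ and an affinoid $V_{K_p}$ so that $\pi_{K_p}^{-1}(V_{K_p}) = V_\infty$ and $B^+$ is the $p$-adic completion of $\varinjlim_{K_p'} \mathcal{O}^+(V_{K_p'})$. After further shrinking $K_p$, the tower $V_\infty \to V_{K_p}$ can be assumed Galois with group an open subgroup $G_0$ of $K_p$, so that $B^{G_0} = \mathcal{O}(V_{K_p})$.

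Next, using a choice of coordinates on the $\mathbb{T}^d$-cover coming from taking $p$-power roots of the $T_i$ in the directions corresponding to $\tau \in \Psi$, I would construct the Tate normalized trace operators $\mathrm{tr}_n : B \to B^{G_n}$ compatible with $n$ and commuting with the action of $G_k$; here $G_n \subset G_0$ is the standard filtration. The key property, as in Pan's setup, is that these trace maps split the inclusion $B^{G_n} \hookrightarrow B$ with uniformly bounded operator norms. For the non-$\Psi$ directions, smallness and the ind-étaleness of the universal $p$-divisible groups away from $w$ imply that $B$ is already smooth (in the sense that the action of $\oplus_{\tau \notin \Psi}\mathfrak{g}_\tau$ factors through a finite-level quotient), so the smoothness condition in $R^i\Psi\mathfrak{LA}$ is essentially captured by a finite-level object along these directions.

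With these tools, I would compute $H^i(G_k, C^{\mathrm{an}}(G_k, L)^{\oplus_{\tau \notin \Psi}\mathfrak{g}_\tau} \widehat\otimes_L B)$ by decomposing $B = \bigoplus_n (B^{G_n} / B^{G_{n-1}})$ using the Tate trace projectors, then verifying on each piece that the resulting complex is acyclic in positive degrees by a direct formal expansion in the $\Psi$-coordinates (analogous to Lemma \ref{formal expansion}). Taking the colimit over $k$ would then kill the residual obstructions: any class in $H^i$ that survives at level $G_k$ is in the image of the trace, hence smooth along the $\Psi$-coordinate in some bounded sense, and one can explicitly write down a primitive after enlarging $k$.

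The main obstacle is precisely the mixing of the two conditions (smoothness versus analyticity) in the definition of $R^i\Psi\mathfrak{LA}$: one has to ensure that the Tate trace decomposition interacts correctly with the functor $W \mapsto C^{\mathrm{an}}(G_k, L)^{\oplus_{\tau \notin \Psi}\mathfrak{g}_\tau} \widehat\otimes_L W$, and that the smoothness condition in the $\tau \notin \Psi$ directions does not obstruct the cohomological vanishing coming from the $\tau \in \Psi$ analyticity. The technical heart is therefore an analogue of Pan's formal expansion (Lemma \ref{formal expansion}) adapted to this hybrid condition, showing that any $G_k$-$\Psi$-analytic cochain can be uniquely written as a convergent series whose coefficients live in the $\oplus_{\tau \notin \Psi}\mathfrak{g}_\tau$-invariant, $G_k$-smooth part of $B$, and that this expansion is compatible with the Koszul-type differentials computing the $G_k$-cohomology.
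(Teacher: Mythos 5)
Your proposal starts from the right geometric input (smallness of $V$, the \'etale chart to $\mathbb{T}^d$, the expansion lemma) but then diverges from a workable argument in two places, both of which create genuine gaps.

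First, the claim that ``for the non-$\Psi$ directions, smallness and the ind-\'etaleness of the universal $p$-divisible groups away from $w$ imply that $B$ is already smooth (in the sense that the action of $\oplus_{\tau\notin\Psi}\mathfrak{g}_\tau$ factors through a finite-level quotient)'' is false. The ring $B = \mathcal{O}_{\aS_{K^p}}(V_\infty)$ sits at infinite $p$-level; the full group $G(\mathbb{Q}_p)$, including the factors $\mathrm{GL}_2(F_{w'})$ for $w'\neq w$, acts genuinely continuously but not smoothly on it. The ind-\'etaleness you cite would let one descend the \emph{space} $\aS_{K^p}$ to the partial tower $\aS_{K^w}$, but it does not make $\mathfrak{g}_\tau$ act through a finite quotient on the Banach ring $B$. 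Passing to $\Psi$-locally-analytic vectors is exactly the operation that imposes smoothness in those directions; you cannot assume it is already present. With this misstatement in place, the rest of the proposed bookkeeping of the functor $W\mapsto C^{\mathrm{an}}(G_k,L)^{\oplus_{\tau\notin\Psi}\mathfrak{g}_\tau}\widehat\otimes_L W$ does not simplify the way you want it to.

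Second, the decomposition ``$B=\bigoplus_n(B^{G_n}/B^{G_{n-1}})$'' does not exist: $B$ is a Banach ring, and the $B^{G_n}$ form an increasing but not split filtration; the Tate normalized traces along the Shimura-variety $K_p$-tower are not available here because the tower is not a $\mathbb{Z}_p^d$-torsor in any useful sense. The paper's proof sidesteps precisely this by adjoining an \emph{auxiliary} toric tower: it sets $B_\infty = \mathcal{O}(V_\infty\times_{\mathbb{T}^d}\mathbb{T}^d_\infty)$, a pro-finite $K_p\times\Gamma$-Galois perfectoid cover of $V$ with $\Gamma=\mathbb{Z}_p^d$ coming from the \emph{toric} side, not from $K_p$. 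Almost purity kills the $\Gamma$-cohomology of $B_\infty$, and Camargo's descent (\cite[Corollary 2.5.1, Proposition 3.2.3]{Cam}) reduces $R^i\Psi\mathfrak{LA}(B)$ to the $\Gamma$-Lie algebra cohomology $H^i(\mathrm{Lie}\Gamma,B_\infty^{\Psi-\mathrm{la},\Gamma-\mathrm{la}})^\Gamma$. The final input is Proposition \ref{FHT}: the comparison between the Faltings extension and the relative Hodge--Tate filtration produces elements $x_1,\dots,x_d\in B_\infty^{\Psi-\mathrm{la},\Gamma-\mathrm{an}}$ with $(\log\gamma_j)x_i=-\delta_{ij}$, and then Lemma \ref{formal expansion} gives a power-series expansion in the $x_i-x_{i,n}$ making the surjectivity of each $\log\gamma_i$ (and hence the Lie-algebra-cohomology vanishing) explicit. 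Your outline has no substitute for either the auxiliary $\Gamma$-tower/Camargo descent step or the Faltings-extension construction of the $x_i$, and these are the two points where the proof actually carries its weight.
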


\begin{proof}

We consider a similar situation as \cite[{\S} 3.6]{PanI} and \cite[{\S} 3]{Cam}. 

By the smallness of $V$, we have an $\mathrm{\acute{e}}$tale map $V \rightarrow \mathbb{T}^d$ which is a composition of rational open immersions and finite $\etale$ maps and consider a usual perfectoid pro-finite $\Gamma := \mathbb{Z}_p^d$-Galois $\etale$ cover $\mathbb{T}_{\infty}^d \rightarrow \mathbb{T}^d$, where $\mathcal{O}_{C} \langle T_1^{\pm \frac{1}{p^{\infty}}}, \cdots, T_d^{\pm \frac{1}{p^{\infty}}} \rangle$ denotes the $p$-adic completion of $\cup_{k >0} \mathcal{O}_{C} \langle T_1^{\pm \frac{1}{p^{k}}}, \cdots, T_d^{\pm \frac{1}{p^{k}}} \rangle$ and $$\mathbb{T}_{\infty}^d := \mathrm{Spa}(\mathcal{O}_{C}\langle T_1^{\pm \frac{1}{p^{\infty}}}, \cdots, T_d^{\pm \frac{1}{p^{\infty}}} \rangle[\frac{1}{p}], \mathcal{O}_{C} \langle T_1^{\pm \frac{1}{p^{\infty}}}, \cdots, T_d^{\pm \frac{1}{p^{\infty}}} \rangle).$$ We put $V_{\infty} \times_{\mathbb{T}^d} \mathbb{T}^d_{\infty} =: \mathrm{Spa}(B_{\infty},B^+_{\infty})$. Here, we regard this as an object of $(\mathbb{T}^d)_{\proet}$. This is a perfectoid pro-finite $K_p \times \Gamma$-Galois $\etale$ cover of $V$. Note that we have $H^i(K_p, B \widehat{\otimes}_{L}C^{\mathrm{an}}(K_p, L)^{\oplus_{\tau \notin \Psi} \mathfrak{gl}_2(L)_{\tau}}) = H^i(K_p \times \Gamma, B_{\infty} \widehat{\otimes}_{L}C^{\mathrm{an}}(K_p, L)^{\oplus_{\tau \notin \Psi} \mathfrak{gl}_2(L)_{\tau}})$ because we have $H^i(\Gamma, B_{\infty} \widehat{\otimes}_{L}C^{\mathrm{an}}(K_p, L)^{\oplus_{\tau \notin \Psi} \mathfrak{gl}_2(L)_{\tau}}) = 0$ for any $i > 0$ by the almost purity theorem \cite[Proposition 7.9 (iii)]{Per}. On the other hand, \cite[Corollary 2.5.1 and Proposition 3.2.3]{Cam} and our assumption of the smallness imply $$H^i(K_p \times \Gamma, B_{\infty} \widehat{\otimes}_{L}C^{\mathrm{an}}(K_p, L)^{\oplus_{\tau \notin \Psi} \mathfrak{gl}_2(L)_{\tau}}) = H^i(\mathrm{Lie}\Gamma, B_{\infty}^{K_p-\Psi-\mathrm{an}, \Gamma-\mathrm{la}})^{\Gamma}.$$ Consequently, we obtain $H^i(K_p, B \widehat{\otimes}_{L}C^{\mathrm{an}}(K_p, L)^{\prod_{\tau \notin \Psi} \mathfrak{gl}_2(L)_{\tau}}) = H^i(\mathrm{Lie}\Gamma, B_{\infty}^{K_p-\Psi-\mathrm{an}, \Gamma-\mathrm{la}})^{\Gamma}$ and thus $R^i\Psi\mathfrak{LA}(B) = H^i(\mathrm{Lie}\Gamma, B_{\infty}^{\Psi-\mathrm{la}, \Gamma-\mathrm{la}})^{\Gamma}$.

By Proposition \ref{FHT}, we have the following commutative diagram.

\xymatrix{
    0 \ar[r] & \oplus_{\tau \in \Psi} \mathcal{O}_{\aS_{K^p}}(1) \ar[r] \ar[d]^{\sum_{\tau} \mathrm{id}} & \oplus_{\tau \in \Psi} (V_{\tau} \otimes_{L} \omega_{\tau, \aS_{K^p}})  \ar[r] \ar[d] & \oplus_{\tau \in \Psi} (\omega_{\tau, \aS_{K^p}}^{2} \otimes (\wedge^2 D_{\tau, \aS_{K^p}}^{\mathrm{sm}})^{-1}) \ar[r] \ar[d]_{\wr}^{-\mathrm{KS}} & 0\\
    0 \ar[r] & \mathcal{O}_{\aS_{K^p}}(1) \ar[r] & \mathrm{gr}^1\mathcal{O}\mathbb{B}^+_{\mathrm{dR}} \ar[r]  & \mathcal{O}_{\aS_{K^p}} \otimes_{\mathcal{O}_{\aS_{K^p}}^{\mathrm{sm}}} \Omega_{\aS_{K^p}}^{1, \mathrm{sm}} \ar[r]& 0.
    }

Since $$\oplus_{\tau \in \Psi} (V_{\tau} \otimes_{L} \omega_{\tau, \aS_{K^p}}^{\Psi-\mathrm{la}}(V_{\infty}) \otimes (\wedge^2 D_{\tau, \aS_{K^p}}^{\mathrm{sm}})^{-1})(V_{\infty}) = (\oplus_{\tau \in \Psi} (V_{\tau} \otimes_{L} \omega_{\tau, \aS_{K^p}}(V_{\infty}) \otimes (\wedge^2 D_{\tau, \aS_{K^p}}^{\mathrm{sm}})^{-1})(V_{\infty}))^{\Psi-\mathrm{la}}$$ and the map $$\oplus_{\tau \in \Psi} (V_{\tau} \otimes_{L} \omega_{\tau, \aS_{K^p}}^{\Psi-\mathrm{la}}(V_{\infty}) \otimes (\wedge^2 D_{\tau, \aS_{K^p}}^{\mathrm{sm}})^{-1})(V_{\infty}) \rightarrow \oplus_{\tau \in \Psi} (\omega_{\tau, \aS_{K^p}}^{2, \Psi-\mathrm{la}} \otimes (\wedge^2 D_{\tau, \aS_{K^p}}^{\mathrm{sm}})^{-1})(V_{\infty})$$ is surjective, the map $\mathrm{gr}^1\mathcal{O}\mathbb{B}^{+, \Psi-\mathrm{la}}_{\mathrm{dR}}(V_{\infty}) \rightarrow \mathcal{O}_{\mathcal{S}_{K^p}}^{\Psi-\mathrm{la}} \otimes_{\mathcal{O}_{\aS_{K^p}}^{\mathrm{sm}}} \Omega_{\aS_{K^p}}^{1, \mathrm{sm}}(V_{\infty})$ is surjective.

By \cite[Proposition 6.10]{pH}, the Faltings extension on $V_{\infty}$ can be constructed by the following way. Let $W := \mathbb{Q}_p^{\oplus (d+1)}$ and $e_0, \cdots, e_{d}$ be the standard basis. We fix a basis $\gamma_1, \cdots \gamma_d$ of $\Gamma$ and we define the action of $\Gamma$ on $W$ by $\gamma_{i} e_j = e_j + \delta_{i, j}e_0$, where $\delta_{i, j} := 0$ if $i \neq j$ and $1$ if $i = j$. Then we have a non-split extension of $\Gamma$-representation $0 \rightarrow \mathbb{Q}_pe_0 \rightarrow W \rightarrow W' \rightarrow 0$, where $\Gamma$ acts on $W'$ trivially. By taking $\otimes_{\mathbb{Q}_p} B_{\infty}$ and $\Gamma$-invariant part, we obtain an exact sequence $0 \rightarrow Be_0 \rightarrow (W \otimes_{\mathbb{Q}_p} B_{\infty})^{\Gamma} \rightarrow B \otimes_{\mathbb{Q}_p} W' \rightarrow 0$, which is exact by the almost purity Theorem \cite[Proposition 7.9 (iii)]{Per} and this can be regarded as the Faltings extension on $V_{\infty}$. 

Therefore, by the surjectivity of $\mathrm{gr}^1\mathcal{O}\mathbb{B}^{+, \Psi-\mathrm{la}}_{\mathrm{dR}}(V_{\infty}) \rightarrow \mathcal{O}_{\mathcal{S}_{K^p}}^{\Psi-\mathrm{la}} \otimes_{\mathcal{O}_{\aS_{K^p}}^{\mathrm{sm}}} \Omega_{\aS_{K^p}}^{1, \mathrm{sm}}(V_{\infty})$, after shrinking $K_p$ if necessary, we obtain $x_1, \cdots, x_d \in B^{K_p-\Psi-\mathrm{an}, \Gamma-\mathrm{an}}_{\infty}$ such that $e_j + x_je_0 \in (W \otimes_{\mathbb{Q}_p} B_{\infty})^{\Gamma}$ for any $j = 1, \cdots, d$. Thus we obtain $\gamma_{j} x_i = x_i - \delta_{i,j}$ and thus $(\mathrm{log}\gamma_{j}) x_i = -\delta_{i,j}$, where $\delta_{i, j} = 1$ if $i = j$ and $0$ if $i \neq j$.

By Lemma \ref{formal expansion}, for any $n > 0$, there exist $K_p' \subset K_p$, $m \ge n$ and $x_{i, n} \in B_{\infty}^{K_p', p^n\Gamma}$ such that $|| x_{i} - x_{i, n} || = || x_{i} - x_{i, n} ||_{K_p' \times p^m\Gamma} \le p^{-n-1}$ and $B_{\infty}^{K_p-\Psi-\mathrm{an}, p^n\Gamma-\mathrm{an}} \subset \{ \sum_{k \in \mathbb{Z}_{\ge 0}^d} a_k \prod_{i=1}^d (x_i -x_{i,n})^{k_i} \mid a_k \in B_{\infty}^{K_p'-\Psi-\mathrm{an}, p^m\Gamma} \ s.t. \ \mathrm{sup}_{k}||a_k||_{K_p' \times p^m\Gamma}p^{-n(k_{1} + \cdots + k_d)} < \infty \} \subset B_{\infty}^{K_p'-\Psi-\mathrm{an}, p^m\Gamma-\mathrm{an}}$. (Precisely, we need to take $K_p'$ having a form $(1 + p^k \mathbb{Z}_p) \times \prod_{w \mid v} (1 + p^k M_2(\mathcal{O}_{F_w}))$ for some $k$. In the following, we only consider open compact subgroups $K_p'$ of $G(\mathbb{Q}_p)$ having forms like this when we take $K_p'$-$\Psi$ analytic vectors.) This implies the surjectivity of $\mathrm{log}\gamma_i : B_{\infty}^{\Psi-\mathrm{la}, \Gamma-\mathrm{la}, \mathrm{log}\gamma_1 = \cdots = \mathrm{log}\gamma_{i-1} = 0} \rightarrow B_{\infty}^{\Psi-\mathrm{la}, \Gamma-\mathrm{la}, \mathrm{log}\gamma_1 = \cdots = \mathrm{log}\gamma_{i-1} = 0}$ for any $i$. Therefore, we obtain the vanishing of $H^i(\mathrm{Lie}\Gamma, B_{\infty}^{\Psi-\mathrm{la}, \Gamma-\mathrm{la}})$ for any $i > 0$ by using the Hochschild-Serre spectral sequence. (Or we can prove this vanishing by using more direct computation as in the proof of Poincare lemma.) \end{proof}

\begin{rem}\label{perfectoidness}

1 \ Let $$S_0 := \{ w \mid w \ \mathrm{divides} \ v \ \mathrm{and} \ \mathrm{the} \ w \ \mathrm{component} \ \mu_w \ \mathrm{of \ the \ minuscule} \  \mu \ \mathrm{is \ trivial}. \}.$$ Then we can prove that $\aS_{K^pK_{S_{0}}K_0} := \varprojlim_{K^{S_0}} \aS_{K^pK_{S_0}K^{S_0}K_0}$ is perfectoid or more precisely there exists a perfectoid space $\aS_{K^pK_{S_{0}}K_0}$ such that $\aS_{K^pK_{S_0}K_0} \sim \varprojlim_{K^{S_0}} \aS_{K^pK_{S_0}K^{S_0}K_0}$ for any open compact subgroups $K_{S_0}K_0$ of $\prod_{w \in S_0} \mathrm{GL}_2(F_w) \times \mathbb{Q}_p^{\times}$. Thus if $\Psi = \sqcup_{w \notin S_0} \mathrm{Hom}_{\mathbb{Q}_p}(F_w, \overline{\mathbb{Q}}_p)$, we can prove the above theorem as a corollary of \cite[Theorem 3.4.5]{Cam}. This follows from the following commutative diagram. (Precisely, we can only consider the following diagram locally on $\aS_{K^pK_p^0}$. We put $K_{p}^{o} := \prod_{w \mid v} \mathrm{GL}_2(\mathcal{O}_{F_w}) \times \mathbb{Z}_p^{\times}$.) $$\xymatrix{
    \aS_{K^pK_{S_0}^oK_0^o} \ar[d] & \ar[l] \aS_{K^p} \ar[d] \\
    \aS_{K^pK_p^o}  & \ar[l] \aS_{K^pK_{p}^{S_0, o}}.
    }$$

Note that by using the moduli interpretation, $\aS_{K^pK_p^{S_0, o}K_{S_0}K_0}$ has a proper flat formal model $\mathfrak{S}_{K^pK_p^{S_0, o}K_{S_0}K_0}$ over $\mathcal{O}_C$ for any $K_TK_0$ and the transition map $\mathfrak{S}_{K^pK_p^{S_0, o}K_{S_0}K_0} \rightarrow \mathfrak{S}_{K^pK_p^{S_0, o}K_{S_0}'K_0'}$ is finite $\etale$ for any $K_{S_0}K_0 \subset K_{S_0}'K_0'$. (See {\S} 4.3 for more details.) By the definition of perfectoidness, we obtain the perfectoidness of $\aS_{K^pK_{S_0}K_0}$ from that of $\aS_{K^p}$. (Roughly speaking, $\aS_{K^p}$ is integrally profinite Galois $\etale$ over $\aS_{K^pK_{S_0}K_0}$.)

2 \ We have a variant of the above result. Actually, we can prove that taking $\mathfrak{n}$-invariant part of $B^{\Psi-\mathrm{la}}$ has no higher cohomology if $V_{\infty} \subset \pi_{\mathrm{HT}}^{-1}(U_1)$ by the same argument. (See the comment before Definition \ref{nilpotent} for the definition of $\mathfrak{n}$. This can also be proved by using an explicit structure of $\mathcal{O}_{\aS_{K^p}}^{\Psi-\mathrm{la}}$. See Theorem \ref{mikami expansionII}.) The author thinks that this also can be proved by using the result \cite[Proposition 3.2.34]{Torsion}, which says that the $\Gamma_1(p^{\infty})$-level (Siegel) Shimura variety is perfectoid around a strict neighborhood of the anticanonical tower. 

\end{rem}

\begin{cor}\label{nontrivial geometric}

Assume that $V$ is small. Then we have $R^i\Psi\mathfrak{LA}(D_{\lambda^{\Psi}, \aS_{K^p}}(V_{\infty})) = 0$ for any $i > 0$.

\end{cor}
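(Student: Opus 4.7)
My plan is to reduce Corollary \ref{nontrivial geometric} to Theorem \ref{geometric sen theory} via a trivialization of $D_{\lambda^{\Psi}}$ on the perfectoid cover $\aS_{K^p}$. The first step is to identify $D_{\lambda^{\Psi}, \aS_{K^p}}(V_\infty)$ concretely as a Banach $K_p$-representation; the second is to import the vanishing from Theorem \ref{geometric sen theory} using the finite-dimensionality of the resulting twist.

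For step one, I observe that since $\lambda^{\Psi}$ has trivial components at $\tau \in \Psi$, the bundle $D_{\lambda^{\Psi}, \aS_{K,L}}$ is built only from $D_{0, \aS_{K,L}}$ and from $D_{\tau, \aS_{K,L}}$ for $\tau \in \Phi \setminus \Psi$. For these, the Hodge filtration is trivial, so parts (2) and (5) of Proposition \ref{Hodge de Rham} provide canonical isomorphisms $D_{\tau, \aS_{K,L}} \otimes \widehat{\mathcal{O}} \cong V_\tau \otimes_L \widehat{\mathcal{O}}$ for $\tau \in \Phi \setminus \Psi$ and $D_{0, \aS_{K,L}} \otimes \widehat{\mathcal{O}} \cong \mathbb{L}_0(-1) \otimes \widehat{\mathcal{O}}$. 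Evaluating on $V_\infty$, where $\widehat{\mathcal{O}}(V_\infty) = B$ and where the $L$-local systems $\mathbb{L}_0$ and $\mathbb{L}_\tau$ trivialize against the tautological $V_0$ and $V_\tau$ (by the construction of $\aS_{K^p}$), I obtain a $K_p$-equivariant identification $D_{\lambda^{\Psi}, \aS_{K^p}}(V_\infty) \cong V_{\lambda^{\Psi}} \otimes_L B$, where $V_{\lambda^{\Psi}}$ is the finite-dimensional irreducible algebraic representation of $G_L$ of highest weight $\lambda^{\Psi}$, carrying its natural algebraic $K_p$-action.

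For step two, I would rerun the proof of Theorem \ref{geometric sen theory} with $B_\infty$ replaced by $V_{\lambda^{\Psi}} \otimes_L B_\infty$, where $B_\infty$ is the pro-$\etale$ pullback to $V_\infty \times_{\mathbb{T}^d} \mathbb{T}^d_\infty$ with its $K_p \times \Gamma$-action. The crucial observation is that $\Gamma \cong \mathbb{Z}_p^d$, which arises from the $\etale$ fundamental group of $\mathbb{T}^d$ and is independent of the Shimura-theoretic $K_p$-structure, acts trivially on the finite-dimensional $V_{\lambda^{\Psi}}$. Following the same almost-purity and pro-$\etale$ descent argument, I obtain
\[
R^i\Psi\mathfrak{LA}(V_{\lambda^{\Psi}} \otimes_L B) \cong H^i(\mathrm{Lie}\Gamma, (V_{\lambda^{\Psi}} \otimes_L B_\infty)^{\Psi-\mathrm{la}, \Gamma-\mathrm{la}})^\Gamma.
\]
The $\mathrm{Lie}\Gamma$-action on the right factors as $\mathrm{id}_{V_{\lambda^{\Psi}}} \otimes \log\gamma_i$, and the surjectivity of each $\log\gamma_i$ on the subspace $B_\infty^{\Psi-\mathrm{la}, \Gamma-\mathrm{la}, \log\gamma_1 = \cdots = \log\gamma_{i-1} = 0}$, established in the proof of Theorem \ref{geometric sen theory}, transfers directly to surjectivity of $\mathrm{id}_{V_{\lambda^{\Psi}}} \otimes \log\gamma_i$ on the tensored analogue. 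The Hochschild--Serre-type argument then yields the vanishing in positive degrees.

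The main delicacy I anticipate is reconciling the diagonal $K_p$-action on $V_{\lambda^{\Psi}} \otimes_L B_\infty$ with the $\Psi$-la condition (which is not straightforwardly a tensor of invariants). This is where the commutation of $K_p$ and $\Gamma$ matters: the $\mathrm{Lie}\Gamma$-action preserves the $\Psi$-la subspace, the Koszul differentials are $V_{\lambda^{\Psi}}$-linear (as $\Gamma$ acts trivially on $V_{\lambda^{\Psi}}$), and the relevant surjectivity is therefore inherited entry-wise from the case of $B_\infty$. The finite-dimensionality of $V_{\lambda^{\Psi}}$ ensures that all tensor products and projective limits commute with the topological structure, so the formal machinery of Section 2 applies without essential modification.
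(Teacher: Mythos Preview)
Your approach is correct but substantially more roundabout than the paper's. The paper's proof is a single line: since $D_{\lambda^{\Psi}}$ is a coherent sheaf defined at the finite level $V = V_{K_p}$, one has $D_{\lambda^{\Psi}, \aS_{K^p}}(V_\infty) = D_{\lambda^{\Psi}}(V) \otimes_{\mathcal{O}(V)} B$ with $K_p$ acting \emph{trivially} on the first factor. As $D_{\lambda^{\Psi}}(V)$ is a finite projective $\mathcal{O}(V)$-module, this exhibits $D_{\lambda^{\Psi}, \aS_{K^p}}(V_\infty)$ as a $K_p$-equivariant direct summand of some $B^{\oplus n}$, and Theorem~\ref{geometric sen theory} applies immediately.

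By contrast, you first invoke the $p$-adic Hodge-theoretic comparison of Proposition~\ref{Hodge de Rham} to trivialize $D_{\lambda^{\Psi}}$ as $V_{\lambda^{\Psi}} \otimes_L B$ with the \emph{diagonal} $K_p$-action. This is a valid isomorphism, but it trades a trivial coefficient action for a nontrivial algebraic one: the factors $\mathfrak{g}_\tau$ for $\tau \in \Phi \setminus \Psi$ act nontrivially on $V_{\lambda^{\Psi}}$, so $(V_{\lambda^{\Psi}} \otimes_L B_\infty)^{K_p-\Psi-\mathrm{an}}$ is not simply $V_{\lambda^{\Psi}} \otimes_L B_\infty^{K_p-\Psi-\mathrm{an}}$. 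You anticipate this difficulty, and your resolution via the $\Gamma$-triviality of $V_{\lambda^{\Psi}}$ is sound at the Koszul-complex stage; but the prior passage from $(K_p \times \Gamma)$-cohomology to $\mathrm{Lie}\,\Gamma$-cohomology (the citation of \cite{Cam} in the proof of Theorem~\ref{geometric sen theory}) does not literally apply to the twisted module and would require a separate check. The paper's route avoids all of this: by keeping the coefficient module at finite level rather than trivializing it via Hodge--Tate, the $K_p$-action on it is trivial from the start, and no re-examination of the machinery is needed.
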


\begin{proof}

Note that $D_{\lambda^{\Psi}, S_{K^p}}(V_{\infty}) = D_{\lambda^{\Psi}}(V) \otimes_{\mathcal{O}(V)} \mathcal{O}(V_{\infty})$ and $D_{\lambda^{\Psi}}(V)$ is a finite projective $\mathcal{O}(V)$-module. Thus the result follows from Theorem \ref{geometric sen theory}.  \end{proof}

Let $P := \prod_{\tau \in \Psi} B_{2, C} \times \prod_{\tau \notin \Psi} \mathrm{GL}_{2, C} \times \mathbb{G}_{m, C}$, $\mathfrak{n} := \prod_{\tau \in \Psi} \mathfrak{n}_{C}$ and $\mathfrak{b} := \prod_{\tau \in \Psi} \mathfrak{b}_{C}$, where $\mathfrak{n}_{C}$ (resp. $\mathfrak{b}_{C}$) denotes the Lie subalgebra of $\mathfrak{gl}_{2,C}$ consisting of the upper triangle nilpotent (resp. upper triangle) matrices.

\begin{dfn}\label{nilpotent}

$\mathfrak{n}^0$ (resp. $\mathfrak{b}^0$) denotes the $G_C$-equivariant vector bundle on $\Fl$ associated with the algebraic representation $\mathfrak{n}$ (resp. $\mathfrak{b}$) of $P$.

Note that for any affinoid open subset $U$ of $\Fl$, we have an exact sequence $0 \rightarrow \omega_{\tau, \Fl}^{-1}(U) \rightarrow V_{\tau} \rightarrow \omega_{\tau, \Fl}(U) \otimes \wedge^2V_{\tau} \rightarrow 0$. By using this, we obtain more explicit formulas $\mathfrak{n}^0(U) = \{ f = (f_{\tau})_{\tau} \in \oplus_{\tau \in \Psi} \mathrm{End}_{\mathcal{O}_{\Fl}(U)}(V_{\tau} \otimes_L \mathcal{O}_{\Fl}(U)) = \oplus_{\tau \in \Psi} \mathfrak{gl}_{2}(L) \otimes_L \mathcal{O}_{\Fl}(U) \mid f_{\tau}(\omega_{\tau, \Fl}^{-1}(U)) = 0, f(V_{\tau} \otimes_L \mathcal{O}_{\Fl}(U)) \subset \omega_{\tau, \Fl}^{-1}(U) \ \mathrm{for \ any} \ \tau \in \Psi \}$ and $\mathfrak{b}^0(U) = \{ f = (f_{\tau})_{\tau} \in \oplus_{\tau \in \Psi} \mathrm{End}_{\mathcal{O}_{\Fl}(U)}(V_{\tau} \otimes_L \mathcal{O}_{\Fl}(U)) \mid f(\omega_{\tau, \Fl}^{-1}(U)) \subset \omega_{\tau, \Fl}^{-1}(U) \ \mathrm{for \ any} \ \tau \in \Psi  \}$.

\end{dfn}

Note that $\mathcal{O}_{\Fl} = \mathcal{O}_{\Fl}^{\Psi-\mathrm{la}}$ by 2 of Example \ref{example} and thus we have a natural map $\mathcal{O}_{\Fl}(U) \rightarrow \mathcal{O}_{\aS_{K^p}}(V_{\infty})^{\Psi-\mathrm{la}}$ and this induces actions of $\mathfrak{n}^0(U)$, $\mathfrak{b}^0(U)$ and $\prod_{\tau \in \Psi} \mathfrak{gl}_2(L) \otimes_L \mathcal{O}_{\Fl}(U)$ on $\mathcal{O}_{\aS_{K^p}}^{\Psi-\mathrm{la}}(V_{\infty})$.

\begin{prop}\label{sen operator}
    
$\mathfrak{n}^0(U)$ acts trivially on $\mathcal{O}_{\aS_{K^p}}^{\Psi-\mathrm{la}}(V_{\infty})$.

\end{prop}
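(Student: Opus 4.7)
The argument will be the partially-locally-analytic analogue of part 2 of Pan's \cite[Theorem 4.4.6]{PanII}: the statement is the ``horizontality'' part of geometric Sen theory, and the proof uses the identification of the Faltings extension with the Hodge--Tate filtration from Proposition \ref{FHT}.

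First, by the $G(\mathbb{Q}_p)$-equivariance of $\pi_{\mathrm{HT}}$ and Lemma \ref{open unit ball}, after translating $V_\infty$ by some $g \in G(\mathbb{Q}_p)$ I may assume $U \subset U_1 = \prod_{\tau \in \Psi} U_{1,\tau}$. On $U_1$, writing $x_\tau$ for the coordinate on $U_{1,\tau}$, the line $\omega_\tau^{-1}(U) \subset V_\tau \otimes_L \mathcal{O}_{\Fl}(U)$ is spanned by $x_\tau e_1 - e_2$, and a direct matrix computation from Definition \ref{nilpotent} shows that a generator of the $\tau$-component of $\mathfrak{n}^0(U)$ is
\[ N_\tau \;=\; \begin{pmatrix} x_\tau & x_\tau^2 \\ -1 & -x_\tau \end{pmatrix} \;=\; -e^-_\tau + x_\tau\, h_\tau + x_\tau^2\, e^+_\tau, \]
where $(e^+_\tau, h_\tau, e^-_\tau)$ is the standard $\mathfrak{sl}_2$-triple in $\mathfrak{gl}_2(L)_\tau \subset \mathfrak{g}$. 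So the claim reduces to
\[ (-e^-_\tau + x_\tau h_\tau + x_\tau^2 e^+_\tau) \cdot f \;=\; 0 \qquad (f \in \mathcal{O}_{\aS_{K^p}}^{\Psi-\mathrm{la}}(V_\infty),\ \tau \in \Psi). \]

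Next, from the proof of Theorem \ref{geometric sen theory} one extracts, after possibly shrinking the level, $\Psi$-locally analytic Sen coordinates $z_\tau \in \mathcal{O}_{\aS_{K^p}}^{\Psi-\mathrm{la}}(V_\infty)$ indexed by $\tau \in \Psi$ that arise from a splitting of the Faltings extension over $V_\infty$. Via the forthcoming Theorem \ref{mikami expansionII} (an application of Lemma \ref{formal expansion}), every $\Psi$-locally analytic function on $V_\infty$ then admits a convergent expansion as a power series in $(z_\tau - z_{\tau,n})$ with coefficients in the smooth part $\mathcal{O}_{\aS_{K^p}}^{\mathrm{sm}}(V_\infty)$, on which $\mathfrak{g}$ acts as zero. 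Hence it suffices to verify $N_\tau z_\sigma = 0$ for all $\tau,\sigma \in \Psi$. When $\tau \neq \sigma$ this is immediate from the factor-wise decomposition $\mathfrak{g}_L = \bigoplus_\sigma \mathfrak{gl}_2(L)_\sigma$. When $\tau = \sigma$, Proposition \ref{FHT} allows one to read off the action of $e^\pm_\tau, h_\tau$ on $z_\tau$ from the infinitesimal $\mathrm{GL}_2(F_w)$-action on the affine coordinate $x_\tau$ on $\mathbb{P}^1_{C,\tau}$: using the convention (stated in \S 3.4) that $g = \left(\begin{smallmatrix} a & b \\ c & d \end{smallmatrix}\right)$ sends $[(x_\tau,1)]$ to $[(dx_\tau+b,\ cx_\tau+a)]$, one computes $e^+_\tau x_\tau = 1$, $h_\tau x_\tau = -2x_\tau$, $e^-_\tau x_\tau = -x_\tau^2$, so that
\[ N_\tau x_\tau \;=\; -(-x_\tau^2) + x_\tau(-2x_\tau) + x_\tau^2(1) \;=\; 0, \]
and the analogous identity for $z_\tau$ follows since $z_\tau$ will differ from $\pi_{\mathrm{HT}}^*(x_\tau)$ only by a term on which $\mathfrak{g}$ acts trivially.

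The main obstacle will be this last sentence: one must verify carefully, through the splitting of the Faltings extension used in the proof of Theorem \ref{geometric sen theory} and the normalization of Kodaira--Spencer in Proposition \ref{FHT}, that the Sen coordinate $z_\tau$ really does differ from $\pi_{\mathrm{HT}}^*(x_\tau)$ only by a $\mathfrak{g}$-invariant (i.e., smooth) correction, so that the explicit $\mathfrak{sl}_2$-computation on $x_\tau$ transfers on the nose to $z_\tau$ and the three terms in $N_\tau z_\tau$ cancel exactly, rather than merely up to a nonzero scalar.
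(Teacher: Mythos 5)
Your proposal is circular: it invokes Proposition~\ref{mikami expansionII} to expand $\Psi$-locally analytic functions in coordinates with \emph{smooth} coefficients, but the paper's proof of Proposition~\ref{mikami expansionII} begins by citing \emph{this} proposition to pass from the identity $\mathcal{O}_{\aS_{K^p}}(V_{\infty})^{G_m-\mathrm{an},\mathfrak{b}}=\mathcal{O}_{\aS_{K^pG_m}}(V_{G_m})$ (which needs $\mathfrak{n}^0$-triviality plus $\mathfrak{b}$-triviality to give $\mathfrak{g}$-triviality) to the desired expansion via Lemma~\ref{formal expansion}. Without Proposition~\ref{sen operator}, Lemma~\ref{formal expansion} delivers an expansion whose coefficients $a_j$ lie only in the $\mathfrak{b}$-trivial part, and since $e^-_\tau\notin\mathfrak{b}$ one has $N_\tau a_j=-e^-_\tau a_j$, which cannot be concluded to vanish without assuming what you set out to prove. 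So the term-by-term differentiation you perform, while correct on the pullback of $\mathcal{O}_{\Fl}$ (your $\mathfrak{sl}_2$-computation on $x_\tau$ with the stated convention is right), does not extend to the coefficient ring.

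There is a second confusion worth flagging: the elements $x_1,\ldots,x_d\in B_\infty^{K_p-\Psi-\mathrm{an},\Gamma-\mathrm{an}}$ extracted in the proof of Theorem~\ref{geometric sen theory} are Sen coordinates for the anisotropic $\Gamma$-action on the perfectoid extension $B_\infty$ of $B$ (they split the Faltings extension and satisfy $(\log\gamma_j)x_i=-\delta_{ij}$); these are not the $\mathcal{O}_{\Fl}$-coordinates $x_\tau$ (together with $\log(e_{1,\tau}/e_{1,\tau,n})$, $\log(f_\tau/f_{\tau,n})$) that appear in Proposition~\ref{mikami expansionII} and that track the $G(\mathbb{Q}_p)$-analytic structure on $B$ itself. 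Conflating the two makes it unclear which ring the ``smooth coefficients'' are supposed to live in.

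The paper's actual proof is a citation to \cite[Corollary 6.1.12]{Cam}, or to the method of \cite[Theorem 4.2.7]{PanI}. The substance of those arguments is the geometric Sen theory identification: one constructs the geometric Sen operator (a Higgs field) via the decompletion of the $H_L$-action and identifies it with $\mathfrak{n}^0$ by comparing the Faltings extension with the Hodge--Tate filtration through Kodaira--Spencer, exactly the commuting square recorded in Proposition~\ref{FHT}. This is logically prior to the expansion theorem; Proposition~\ref{mikami expansionII} is a \emph{consequence} of Proposition~\ref{sen operator}, not a tool for proving it. If you wish to give a proof along your lines, you would first need to establish the identification $\mathcal{O}^{\mathfrak{b}}=\mathcal{O}^{\mathrm{sm}}$ by an independent argument, which in practice amounts to reproducing the Sen-theoretic proof anyway.
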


\begin{proof}

This follows from \cite[Corollary 6.1.12]{Cam} or we can prove this by using the same method as \cite[Theorem 4.2.7]{PanI}. \end{proof}

After shrinking $K_p$ if necessary, we may assume $G_0 := K_p = (1 + p^k \mathbb{Z}_p) \times \prod_{w \mid v} (1 + p^k M_2(\mathcal{O}_{F_w}))$ for some $k \ge 2$. Let $G_m := (1 + p^{k+m} \mathbb{Z}_p) \times \prod_{w \mid v} (1 + p^{k+m} M_2(\mathcal{O}_{F_w}))$ for $m \in \mathbb{Z}_{\ge 0}$ and $V_{G_m}$ be the inverse image of $V$ in $\aS_{K^pG_m}$. Assume that $U \subset U_1$. Let $x_{\tau}$ be the coordinate function $\mathcal{O}_{\Fl}(U)$ for any $\tau \in \Psi$, $e_{1, \tau} \in \omega_{\tau, \Fl}(U) \otimes \wedge^2V_{\tau}$ be the image of the canonical basis $\begin{pmatrix}
    1 \\ 0 
   \end{pmatrix}_{\tau}$ via $V_{\tau} \rightarrow \omega_{\tau, \Fl}(U) \otimes \wedge^2V_{\tau}$ and $f_{\tau}$ denote a basis of $\wedge^2V_{\tau}(-1)$. We can regard $f_{\tau}$ as a generator of $\wedge^2D_{\tau, \aS_{K^p}}(\pi_{\mathrm{HT}}^{-1}(U))$ via $\wedge^2D_{\tau, \aS_{K^p}} \cong \wedge^2V_{\tau}(-1) \otimes_L \mathcal{O}_{\aS_{K^p}}$. Note that $e_{1, \tau}$ is a generator of $\omega_{\tau, \Fl}(U) \otimes \wedge^2V_{\tau}$.

By using the property 3 of Theorem \ref{affine}, take an increasing sequence of positive integers $r(1) < r(2) < \cdots < r(n) < \cdots$ and take $x_{\tau, n} \in \mathcal{O}_{\aS_{K^pG_{r(n)}}}(V_{G_{r(n)}})$, $e_{\tau, 1,n} \in \omega_{\tau, \aS_{K^pG_{r(n)}}} \otimes \wedge^2D_{\tau, \aS_{K^pG_{r(n)}}}(V_{G_{r(n)}})$ and $f_{\tau, n} \in \wedge^2D_{\tau, \aS_{K^pG_{r(n)}}}(V_{G_{r(n)}})$ satisfying $|| x_{\tau} - x_{\tau, n} ||, || \frac{e_{\tau, 1,n}}{e_{\tau, 1}} - 1 ||, || \frac{f_{\tau, n}}{f_{\tau}} - 1 || \le p^{-n-1}$. Moreover, after replacing $r(n)$, we may assume $|| x_{\tau} - x_{\tau, n} || = || x_{\tau} - x_{\tau, n} ||_{G_{r(n)}}, || \frac{e_{\tau, 1,n}}{e_{\tau, 1}} - 1 || = || \frac{e_{\tau, 1, n}}{e_{\tau, 1}} - 1 ||_{G_{r(n)}}$ and $|| \frac{f_{\tau, n}}{f_{\tau}} - 1 || = || \frac{f_{\tau, n}}{f_{\tau}} - 1 ||_{G_{r(n)}}$ by Lemma \ref{norm}. Then $e_{\tau, 1, n}$ is a generator of $\omega_{\tau, \aS_{K^p}}(V_{\infty}) \otimes \wedge^2V_{\tau}$. Note that $\mathfrak{n}^0(U)$ is generated by $\begin{pmatrix}
x_{\tau} & x_{\tau}^2 \\ 
-1 & -x_{\tau} 
\end{pmatrix}$ ($\tau \in \Psi$). Let $\mathcal{O}_{\aS_{K^pG_{r(n)}}}(V_{G_{r(n)}})\lbrace x_{\tau} - x_{\tau, n}, \mathrm{log}\frac{e_{1, \tau}}{e_{1, \tau, n}}, \mathrm{log}\frac{f_{\tau}}{f_{\tau, n}} \rbrace_{\tau} := \{ f = \sum_{(i, j, k) \in (\mathbb{Z}^{\Psi}_{\ge 0})^{3}} a_{i, j, k} \prod_{\tau \in \Psi}(\mathrm{log}\frac{e_{1, \tau}}{e_{1, \tau, n}})^{i_{\tau}} (\mathrm{log}\frac{f_{\tau}}{f_{\tau, n}})^{j_{\tau}} (x_{\tau}-x_{\tau, n})^{k_{\tau}} \in \mathcal{O}_{\aS_{K^p}}(V_{\infty})^{G_{r(n)}-\Psi-\mathrm{an}} \mid a_{i, j, k} \in \mathcal{O}_{\aS_{K^pG_{r(n)}}}(V_{G_{r(n)}}) \ \mathrm{s. t.} \ \mathrm{sup}_{i,j,k}|| a_{i, j, k} ||p^{-n(\sum_{\tau}(i_{\tau} + j_{\tau} + k_{\tau}))} < \infty \}$.

This is a Banach space by the norm $|| f ||_{x_{\tau}, e_{1, \tau}, f_{\tau}} := \mathrm{sup}_{i, j, k} || a_{i, j, k} ||p^{-(n+1)(\sum_{\tau}(i_{\tau} + j_{\tau} + k_{\tau}))}$ and by this Banach space structure, the natural inclusion $\mathcal{O}_{\aS_{K^pG_{r(n)}}}(V_{G_{r(n)}})\lbrace x_{\tau} - x_{\tau, n}, \mathrm{log}\frac{e_{1, \tau}}{e_{1, \tau, n}}, \mathrm{log}\frac{f_{\tau}}{f_{\tau, n}} \rbrace_{\tau} \hookrightarrow \mathcal{O}_{\aS_{K^p}}(V_{\infty})^{G_{r(n)}-\Psi-\mathrm{an}}$ is continuous.

\begin{prop} \label{mikami expansionII}

There exists an integer $m$ such that for any $n \ge m$, the natural inclusion $$\mathcal{O}_{\aS_{K^p}}(V_{\infty})^{G_{0}-\Psi-\mathrm{an}} \hookrightarrow \mathcal{O}_{\aS_{K^p}}(V_{\infty})^{G_{r(n)}-\Psi-\mathrm{an}}$$ factors through a continuous injection $$\mathcal{O}_{\aS_{K^p}}(V_{\infty})^{G_{0}-\Psi-\mathrm{an}} \hookrightarrow \mathcal{O}_{\aS_{K^pG_{r(n)}}}(V_{G_{r(n)}})\lbrace x_{\tau} - x_{\tau, n}, \mathrm{log}\frac{e_{1, \tau}}{e_{1, \tau, n}}, \mathrm{log}\frac{f_{\tau}}{f_{\tau, n}} \rbrace_{\tau}.$$
        
\end{prop}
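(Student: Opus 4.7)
The plan is to apply the formal expansion principle, Lemma \ref{formal expansion}, to the Banach algebra $B := \mathcal{O}_{\aS_{K^p}}(V_\infty)$ with the solvable Lie subalgebra $\mathfrak{b}_\Psi := \oplus_{\tau \in \Psi} \mathfrak{b}_\tau \subset \mathfrak{g}_\Psi$, where $\mathfrak{b}_\tau \subset \mathfrak{gl}_2(L)_\tau$ is the upper triangular Borel. Smoothness density is guaranteed by Theorem \ref{affine}. Ordering the composition series $0 = \mathfrak{b}_0 \subset \mathfrak{b}_1 \subset \cdots$ by iterating over $\tau \in \Psi$ and, within each $\tau$, by $(u^+_\tau, h_\tau, z_\tau)$, I would take the corresponding elements to be $v_{\tau, 1} := x_\tau$, $v_{\tau, 2} := \log(e_{1,\tau}/e_{1,\tau,n})$, $v_{\tau, 3} := \log(f_\tau/f_{\tau,n})$, with smooth approximations $v_{\tau, 1, n} := x_{\tau, n}$ and $v_{\tau, 2, n} = v_{\tau, 3, n} := 0$. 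The hypotheses $\|e_{1,\tau}/e_{1,\tau,n} - 1\|, \|f_\tau/f_{\tau,n} - 1\| \le p^{-n-1}$ ensure the logarithms converge and approximate $0$ within the required precision.

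Next I would verify the derivation relations $X_i(v_i) = 1$ (up to invertible scalar) and $X_j(v_i) = 0$ for $j < i$ in the ordering. Within a fixed $\tau$ these mirror Pan's calculation on the modular curve: $u^+_\tau$ acts as $\partial_{x_\tau}$ on the coordinate of $\mathbb{P}^1_{C,\tau}$ and kills $e_{1,\tau}$ and $f_\tau$; the semisimple $h_\tau$ acts with weight $\pm 1$ on $e_{1,\tau}$ and kills $f_\tau$; and $z_\tau$ acts with weight $2$ on $\wedge^2 V_\tau$, producing a nonzero scalar on $\log(f_\tau/f_{\tau,n})$. The cross-$\tau$ interactions all vanish because $\mathfrak{g}_\Psi = \oplus_\tau \mathfrak{gl}_2(L)_\tau$ decomposes as a product and each $v_{\tau, *}$ depends only on the $\tau$-component of the flag variety $\Fl = \prod_{\tau} \mathbb{P}^1_{C, \tau}$ and of the associated line bundles.

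With the hypotheses checked, Lemma \ref{formal expansion} yields the desired expansion with coefficients $a_{i,j,k} \in B^{G_{r(n)}-\Psi-\mathrm{an}, \mathfrak{b}_\Psi}$, along with the norm bound and continuity of the inclusion into the Banach space of such formal series. The final step is to identify $B^{G_{r(n)}-\Psi-\mathrm{an}, \mathfrak{b}_\Psi}$ with $\mathcal{O}_{\aS_{K^pG_{r(n)}}}(V_{G_{r(n)}})$. For this I would argue that any vector annihilated by $\mathfrak{b}_\Psi$ is also annihilated by $\mathfrak{n}^0(U)$ by Proposition \ref{sen operator}; since $U \subset U_1$, the description in Definition \ref{nilpotent} shows $\mathfrak{n}^0(U)$ is generated as $\mathcal{O}_{\Fl}(U)$-module by the elements $\begin{pmatrix} x_\tau & x_\tau^2 \\ -1 & -x_\tau \end{pmatrix}_\tau$, so together with $\mathfrak{b}_\Psi$ they span $\mathfrak{g}_\Psi \otimes_L \mathcal{O}_{\Fl}(U)$. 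Combined with the triviality of $\oplus_{\tau \notin \Psi} \mathfrak{g}_\tau$ built into the $\Psi$-la condition, the vector is annihilated by all of $\mathfrak{g}$. After enlarging $m$ so that $G_{r(n)}$ is sufficiently small for the exponential to integrate the Lie algebra action, this upgrades to $G_{r(n)}$-invariance, hence smoothness at finite level, as required.

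The main obstacle I expect is the careful bookkeeping of the differential relations, in particular tracking the precise normalization of the $z_\tau$-action on $\log(f_\tau/f_{\tau,n})$ through the canonical isomorphism $\wedge^2 D_{\tau, \aS_{K^p}} \cong \wedge^2 V_\tau(-1) \otimes_L \mathcal{O}_{\aS_{K^p}}$ and its interaction with the similitude factor $\mathbb{Q}_p^\times \subset G(\mathbb{Q}_p)$; one must also verify carefully that $v_{\tau, 1}, v_{\tau, 2}, v_{\tau, 3}$ all lie in $B^{G_0-\Psi-\mathrm{an}}$ (which for $x_\tau$ uses $\mathcal{O}_\Fl = \mathcal{O}_\Fl^{\Psi-\mathrm{la}}$ from Example \ref{example}(2), and for the log terms uses the $\Psi$-locally analytic nature of $\log$ on vectors close to $1$). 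Once these calculations are pinned down, in direct analogy with \cite[proof of Theorem 4.3.9]{PanI}, the rest of the argument is a formal application of Lemma \ref{formal expansion}, with the role of $\mathfrak{n}^0$-triviality played by Proposition \ref{sen operator} and affineness by Theorem \ref{affine}.
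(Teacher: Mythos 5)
Your proposal is essentially the paper's own proof, unpacked: the paper simply records the norm identity $\|\log(e_{1,\tau}/e_{1,\tau,n})\| = \|e_{1,\tau}/e_{1,\tau,n}-1\|$, identifies $\mathcal{O}_{\aS_{K^p}}(V_\infty)^{G_m-\Psi-\mathrm{an},\mathfrak{b}}$ with $\mathcal{O}_{\aS_{K^pG_m}}(V_{G_m})$ via Proposition \ref{sen operator}, and invokes Lemma \ref{formal expansion}, which is exactly the chain of reasoning you spell out (including the derivation bookkeeping with $u^+_\tau$, $h_\tau$, $z_\tau$, mirroring the modular-curve computation). The only cosmetic difference is that to fit the literal hypotheses of Lemma \ref{formal expansion} one should take the $v_i$'s themselves to be $n$-independent (e.g. $v_{\tau,2}=\log(e_{1,\tau}/e_{1,\tau,n_0})$ for a fixed $n_0$ and $v_{\tau,2,n}=\log(e_{1,\tau,n}/e_{1,\tau,n_0})$) rather than letting $v_{\tau,2}$ depend on $n$ with $v_{\tau,2,n}=0$; the resulting differences $v_i - v_{i,n}$ are the same.
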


\begin{proof} Note that $||\mathrm{log}\frac{e_{1, \tau}}{e_{1, \tau, n}}|| = ||\frac{e_{1, \tau}}{e_{1, \tau, n}} - 1||$ and $\mathcal{O}_{\aS_{K^p}}(V_{\infty})^{G_{m}-\mathrm{an}, \mathfrak{b}} = \mathcal{O}_{\aS_{K^pG_m}}(V_{G_m})$ by Proposition \ref{sen operator}. Thus the result follows from Lemma \ref{formal expansion}. \end{proof}

We define $D_{\lambda^{\Psi}, \aS_{K^pG_{r(n)}}}(V_{G_{r(n)}})\lbrace x_{\tau} - x_{\tau, n}, \mathrm{log}\frac{e_{1, \tau}}{e_{1, \tau, n}}, \mathrm{log}\frac{f_{\tau}}{f_{\tau, n}} \rbrace_{\tau}$ similarly.

\begin{cor}\label{nontrivial mikami}

There exists an integer $m$ such that for any $n \ge m$, the natural inclusion $$D_{\lambda^{\Psi}, \aS_{K^p}}(V_{\infty})^{G_{0}-\Psi-\mathrm{an}} \hookrightarrow D_{\lambda^{\Psi}, \aS_{K^p}}(V_{\infty})^{G_{r(n)}-\Psi-\mathrm{an}}$$ factors through a continuous injection $$D_{\lambda^{\Psi}, \aS_{K^p}}(V_{\infty})^{G_{0}-\Psi-\mathrm{an}} \hookrightarrow D_{\lambda^{\Psi} , \aS_{K^pG_{r(n)}}}(V_{G_{r(n)}})\lbrace x_{\tau} - x_{\tau, n}, \mathrm{log}\frac{e_{1, \tau}}{e_{1, \tau, n}}, \mathrm{log}\frac{f_{\tau}}{f_{\tau, n}} \rbrace_{\tau}.$$

\end{cor}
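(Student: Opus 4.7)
The plan is to reduce directly to Proposition~\ref{mikami expansionII} by trivializing $D_{\lambda^{\Psi}}$ at a sufficiently deep finite level and then transferring the expansion from the structure sheaf to the twisted module coefficient-by-coefficient. The key structural observation, already used in Corollary~\ref{nontrivial geometric}, is that the bundle $D_{\lambda^{\Psi}, \aS_{K^p}}$ descends to a finite level: more precisely, the component $\lambda^{\Psi}$ is supported on the archimedean places $\tau \notin \Psi$, so after possibly shrinking $G_0$, the bundle $D_{\lambda^{\Psi}, \aS_{K^pG_0}}$ is defined as a finite locally free $\mathcal{O}_{\aS_{K^pG_0}}(V_{G_0})$-module and
\[
D_{\lambda^{\Psi}, \aS_{K^p}}(V_{\infty}) \;\cong\; D_{\lambda^{\Psi}, \aS_{K^pG_0}}(V_{G_0}) \otimes_{\mathcal{O}_{\aS_{K^pG_0}}(V_{G_0})} \mathcal{O}_{\aS_{K^p}}(V_{\infty}).
\]

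First, after possibly shrinking $V$ (which only shrinks the affinoid open $U \in \tilde{\mathcal{B}}$ in $\Fl$), I may assume that $D_{\lambda^{\Psi}, \aS_{K^pG_0}}(V_{G_0})$ is a finite free module with basis $e_1, \dots, e_r$ over $\mathcal{O}_{\aS_{K^pG_0}}(V_{G_0})$. Any $f \in D_{\lambda^{\Psi}, \aS_{K^p}}(V_{\infty})^{G_0-\Psi-\mathrm{an}}$ may then be written uniquely as $f = \sum_i f_i \otimes e_i$ with $f_i \in \mathcal{O}_{\aS_{K^p}}(V_{\infty})$. Under the $G_0$-action we have $g \cdot (f_i \otimes e_i) = (g \cdot f_i) \otimes (g \cdot e_i)$, and since $e_i \in D_{\lambda^{\Psi}, \aS_{K^pG_0}}(V_{G_0})$ is smooth at level $G_0$, the action of $g$ on $e_i$ reduces to the algebraic representation of $G_0$ on $D_{\lambda^{\Psi}}$, giving $g \cdot e_i = \sum_j m_{ij}(g) e_j$ with polynomial (hence analytic) matrix coefficients $m_{ij}(g)$ depending only on the $\tau \notin \Psi$ components of $g$.

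Next, after possibly shrinking $G_0$ further so that $m_{ij}(g) - \delta_{ij} \in \varpi \mathcal{O}$ for $g \in G_0$, the invertible matrix $M(g) = (m_{ij}(g))$ is $G_0$-analytic and its entries are annihilated by $\mathfrak{g}_\tau$ for $\tau \in \Psi$. The condition that $f$ is $G_0$-$\Psi$-analytic translates, via inversion of the matrix $M(g)$ in the Banach algebra of $G_0$-analytic functions, into the assertion that each coefficient $f_i$ lies in $\mathcal{O}_{\aS_{K^p}}(V_{\infty})^{G_0-\Psi-\mathrm{an}}$; indeed, the vanishing of $\mathfrak{g}_\tau \cdot f$ for $\tau \notin \Psi$ combined with the linear independence of the $e_j$'s (and the analyticity of $m_{ij}$) decouples the Leibniz terms. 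Applying Proposition~\ref{mikami expansionII} to each $f_i$ produces a continuous injection $\mathcal{O}_{\aS_{K^p}}(V_{\infty})^{G_0-\Psi-\mathrm{an}} \hookrightarrow \mathcal{O}_{\aS_{K^pG_{r(n)}}}(V_{G_{r(n)}})\lbrace x_{\tau} - x_{\tau,n}, \log\frac{e_{1,\tau}}{e_{1,\tau,n}}, \log\frac{f_\tau}{f_{\tau,n}} \rbrace_\tau$, and tensoring with the finite free module $D_{\lambda^{\Psi}, \aS_{K^pG_{r(n)}}}(V_{G_{r(n)}})$ (viewed inside $D_{\lambda^{\Psi}, \aS_{K^pG_0}}(V_{G_0})$ via the natural inclusion) yields the desired continuous injection.

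The main obstacle I anticipate is verifying cleanly that the decomposition $f = \sum_i f_i \otimes e_i$ respects $\Psi$-analyticity on each coefficient --- i.e., that the $\mathfrak{g}_\tau$-action on $D_{\lambda^{\Psi}}$ for $\tau \notin \Psi$, which is genuinely nontrivial, does not obstruct the coefficients $f_i$ from being $\Psi$-locally analytic. The resolution is that this action is polynomial in $G_0$ and can be absorbed into the transition matrix $M(g)$, whose finite-dimensionality combined with the definition of $W^{\Psi-\mathrm{an}}$ as a closed subspace of $C^{\mathrm{an}}(G_0, W)^{\oplus_{\tau \notin \Psi} \mathfrak{g}_\tau}$ guarantees that the coefficient functions satisfy the same analyticity and triviality conditions, at the cost of a bounded shift in the radius $r(n)$.
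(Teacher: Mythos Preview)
Your overall strategy---reduce to Proposition~\ref{mikami expansionII} by trivializing the finite projective module $D_{\lambda^{\Psi}, \aS_{K^pG_0}}(V_{G_0})$ and working coordinate-by-coordinate---is exactly the paper's intended (unwritten) argument. However, you have introduced a spurious complication based on a misconception about the $G_0$-action.

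The sections $e_i \in D_{\lambda^{\Psi}, \aS_{K^pG_0}}(V_{G_0})$ are \emph{fixed} by $G_0$, not transformed via an algebraic representation. Indeed, $G_0 = K_p$ acts trivially on the moduli problem at level $K^pK_p$ (the level structure is already a $K_p$-orbit), hence trivially on the universal abelian variety and its de Rham cohomology there. So your matrix $M(g)$ is identically the identity; the algebraic representation $V_{\lambda^{\Psi}}$ only appears when comparing the finite-level trivialization with the infinite-level trivialization $D_{\lambda^{\Psi}, \aS_{K^p}} \cong V_{\lambda^{\Psi}}(-\lambda_0) \otimes_L \mathcal{O}_{\aS_{K^p}}$ from Proposition~\ref{Hodge de Rham}, which you do not need here. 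Consequently $g \cdot (f_i e_i) = (g \cdot f_i) e_i$, the $\Psi$-analyticity of $f = \sum_i f_i e_i$ is literally the $\Psi$-analyticity of each $f_i$, and Proposition~\ref{mikami expansionII} applies directly with no Leibniz terms to decouple.

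This matters because your ``decoupling'' argument is not actually valid in the setup you describe: if $\mathfrak{g}_\tau$ (for $\tau \notin \Psi$) genuinely acted on the $e_i$ by a nonzero matrix $(n_{ij,\tau})$, then $\mathfrak{g}_\tau f = 0$ would only give $\mathfrak{g}_\tau f_j = -\sum_i n_{ij,\tau} f_i$, which does not force $\mathfrak{g}_\tau f_j = 0$. So the proof as written has a gap at exactly the step you flagged as the ``main obstacle''; the resolution is not to invert $M(g)$ but to observe that $M(g) = I$.

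One minor point: you cannot shrink $V$ (and hence $U$) without changing the statement. Instead, use that $D_{\lambda^{\Psi}, \aS_{K^pG_0}}(V_{G_0})$ is finite projective, hence a direct summand of a free module, and deduce the result from the free case.
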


Let $\mathcal{O}_{K^p} := \pi_{\mathrm{HT} *} \mathcal{O}_{\aS_{K^p}}$ and we use similar notations such as $D_{\tau, K^p}$ and $\omega_{\tau, K^p}$. In the following, we assume $V=V_{K_p}$.

\begin{prop} \label{derived calculation I}

We have $R^i\Psi\mathfrak{LA}(D_{\lambda^{\Psi}, K^p}(U)) = H^i(U, D_{\lambda^{\Psi}, K^p}^{\Psi-\mathrm{la}}) = 0$ for any $i > 0$.

\end{prop}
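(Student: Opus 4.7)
The plan is to combine a Čech argument with the small-case vanishings from Theorem \ref{geometric sen theory} and Corollary \ref{nontrivial geometric}, and to extract both assertions from a single exactness statement for a $\Psi$-locally analytic Čech complex. First, since $D_{\lambda^{\Psi}}(V_{K_p})$ is finite projective over $\mathcal{O}(V_{K_p})$, the sheaf $D_{\lambda^{\Psi}, K^p}|_U$ is a direct summand of $(\mathcal{O}_{K^p}|_U)^{\oplus N}$ as an $\mathcal{O}_{K^p}^{\mathrm{sm}}$-module; both $R^i\Psi\mathfrak{LA}$ and the cohomology of the $\Psi$-la subsheaf commute with such direct summands, so it suffices to prove both vanishings with $\mathcal{O}_{K^p}$ in place of $D_{\lambda^{\Psi}, K^p}$.

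Next, cover $U$ by finitely many $\{U_j\}_{j=1}^r \subset \tilde{\mathcal{B}}$ at a common (possibly finer) level $K_p'$, arranged so that each affinoid $V_{K_p', j}$ with $\pi_{\mathrm{HT}}^{-1}(U_j) = \pi_{K_p'}^{-1}(V_{K_p', j})$ is small; all intersections $V_{K_p', J}$ are then also small as rational opens of smalls. The preimages $V_{\infty, J} := \pi_{\mathrm{HT}}^{-1}(U_J)$ are affinoid perfectoid by Theorem \ref{affine}, so Tate acyclicity produces the exact Čech complex
\[
C^\bullet\colon 0 \to \mathcal{O}(V_\infty) \to \prod_j \mathcal{O}(V_{\infty, j}) \to \prod_{j<k} \mathcal{O}(V_{\infty, j} \cap V_{\infty, k}) \to \cdots
\]
of Banach spaces, with all maps strict by the open mapping theorem. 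Theorem \ref{geometric sen theory} applied to each small term gives $R^i\Psi\mathfrak{LA}(\mathcal{O}(V_{\infty, J})) = 0$ for $i > 0$, so Lemma \ref{fundamental}.3 yields
\[
R^i\Psi\mathfrak{LA}(\mathcal{O}(V_\infty)) \cong H^i\bigl((C^\bullet)^{\Psi-\mathrm{la}}\bigr),
\]
where $(C^\bullet)^{\Psi-\mathrm{la}}$ is termwise the Čech complex of $\mathcal{O}_{K^p}^{\Psi-\mathrm{la}}$ for $\{U_j\}$.

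The essential content is then showing that $(C^\bullet)^{\Psi-\mathrm{la}}$ is exact in positive degrees; this simultaneously gives $R^i\Psi\mathfrak{LA}(\mathcal{O}(V_\infty)) = 0$ and $\check{H}^i(\{U_j\}, \mathcal{O}_{K^p}^{\Psi-\mathrm{la}}) = 0$ for $i > 0$. After translating $U$ into $U_1$ by some $g \in G(\mathbb{Q}_p)$, I would apply Proposition \ref{mikami expansionII} to present each $\mathcal{O}(V_{\infty, J})^{G_0-\Psi-\mathrm{an}}$ as completed formal expansions in the coordinates $x_\tau - x_{\tau,n}$, $\log(e_{1,\tau}/e_{1,\tau,n})$, $\log(f_\tau/f_{\tau,n})$ with coefficients in $\mathcal{O}_{\aS_{K^p G_{r(n)}}}(V_{G_{r(n)}, J})$, choosing the approximations $x_{\tau, n}, e_{1,\tau,n}, f_{\tau, n}$ from fixed global approximations on $V_\infty$ so that they restrict compatibly to every $V_{G_{r(n)}, J}$. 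Then $(C^\bullet)^{G_0-\Psi-\mathrm{an}}$ decomposes, as a Banach completion of a direct sum indexed by expansion multi-indices, into Čech complexes of coherent sheaves on the finite-level affinoid $V_{K_p G_{r(n)}}$, each exact by Tate acyclicity; passing to this completion and then to the colimit over $G_k$ preserves exactness. Finally, the sheaf-cohomology statement $H^i(U, \mathcal{O}_{K^p}^{\Psi-\mathrm{la}}) = 0$ follows because the preceding argument applies verbatim to any refinement of $\{U_j\}$, so $\check H^i$ vanishes cofinally and equals $H^i$ in positive degrees.

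The main obstacle will be the Banach completion step of the third paragraph: one must control strictness of every map throughout $(C^\bullet)^{\Psi-\mathrm{la}}$ and justify rigorously that the Čech complex decomposes into a completed direct sum of finite-level Čech complexes in a manner compatible with the Banach norms, so that finite-level Tate acyclicity propagates without loss to the full $\Psi$-locally analytic Čech complex.
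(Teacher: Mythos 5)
Your approach is essentially the same as the paper's: both establish $R^i\Psi\mathfrak{LA}(D_{\lambda^{\Psi},K^p}(U)) \cong \check{H}^i(U, D_{\lambda^{\Psi},K^p}^{\Psi-\mathrm{la}})$ via Lemma~\ref{fundamental}.3 together with the small-case vanishing, translate $U$ into $U_1$, and then kill the positive-degree Čech cohomology by sandwiching the $G_k$-$\Psi$-analytic Čech complex (Corollary~\ref{nontrivial mikami}, which you invoke as Proposition~\ref{mikami expansionII}) between finite-level completed-formal-power-series Čech complexes that are acyclic by Tate acyclicity on the affinoid $V_{G_{r(n)}}$ and the open mapping theorem. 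The only cosmetic difference is your preliminary split of $D_{\lambda^{\Psi},K^p}$ off a free $\mathcal{O}_{K^p}$-module, which the paper avoids by stating and using the $D$-coefficient versions (Corollaries~\ref{nontrivial geometric} and \ref{nontrivial mikami}) directly; both are obtained by tensoring the structure-sheaf statement with the finite projective $D_{\lambda^{\Psi}}(V_{K_p})$, so the underlying argument is identical.
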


\begin{proof}

The equality $R^i\Psi\mathfrak{LA}(D_{\lambda^{\Psi}, K^p}(U)) = \check{H}^i(U, D_{\lambda^{\Psi}, K^p}^{\Psi-\mathrm{la}})$ follows from Corollary \ref{nontrivial geometric} and 3 of Lemma \ref{fundamental}. We may assume that $U \subset U_1$ by Lemma \ref{open unit ball}. In the following, we use the previous notations. 

The sheaf $$D_{\lambda^{\Psi}, \aS_{K^pG_{r(n)}}}(\cdot)\lbrace x_{\tau} - x_{\tau, n}, \mathrm{log}\frac{e_{1, \tau}}{e_{1, \tau, n}}, \mathrm{log}\frac{f_{\tau}}{f_{\tau, n}} \rbrace_{\tau} : W \mapsto D_{\lambda^{\Psi}, \aS_{K^pG_{r(n)}}}(W)\lbrace x_{\tau} - x_{\tau, n}, \mathrm{log}\frac{e_{1, \tau}}{e_{1, \tau, n}}, \mathrm{log}\frac{f_{\tau}}{f_{\tau, n}} \rbrace_{\tau}$$ on $V_{G_{r(n)}}$, where $W$ is a rational open of $V_{G_{r(n)}}$ can be regarded as $W \mapsto (\prod_{i, j, k} D_{\aS_{K^pG_{r(n)}}, \lambda^{\Psi}}(W)^o)[\frac{1}{p}]$. 

 We have the vanishing of $\check{H}^i(V_{G_{r(n)}}, D_{\lambda^{\Psi}, \aS_{K^pG_{r(n)}}}( \cdot )\lbrace x_{\tau} - x_{\tau, n}, \mathrm{log}\frac{e_{1, \tau}}{e_{1, \tau, n}}, \mathrm{log}\frac{f_{\tau}}{f_{\tau, n}} \rbrace_{\tau})$ by using the fact that $V_{G_{r(n)}}$ is affinoid and the open mapping theorem. This implies the vanishing of $\check{H}^i(U, D_{\lambda^{\Psi}, K^p}^{\Psi-\mathrm{la}})$ for any $i > 0$ by Corollary \ref{nontrivial mikami}. \end{proof}

\begin{thm} \label{geometric sen theoryII} Let $\mathfrak{m}$ be a decomposed generic maximal ideal of $\mathbb{T}^S$.

1 \ We have a canonical isomorphism $\widehat{H}^d(S_{K^p}, V_{\lambda^{\Psi}}(-\lambda_0))_{\mathfrak{m}}^{\Psi-\mathrm{la}} \widehat{\otimes}_{L} C \cong H^d(\Fl, D_{\lambda^{\Psi}, K^p}^{\Psi-\mathrm{la}})_{\mathfrak{m}}$ of $G_{L} \times G(\mathbb{Q}_p) \times 
    \mathbb{T}^S$-modules and these can be computed by the $\check{C}$ech complex $C(\mathcal{B}', D_{\lambda^{\Psi}, K^p}^{\Psi-\mathrm{la}})_{\mathfrak{m}}$ for any finite covering $\mathcal{B}' \subset \mathcal{B}$ of $\Fl$.

2 \ For any $i \neq d$, we have $H^i(\Fl, D_{\lambda^{\Psi}, K^p}^{\Psi-\mathrm{la}})_{\mathfrak{m}} = 0$.

3 \ For any $i > 0$, we have $R^i\Psi\mathfrak{LA}(H^d(\Fl, D_{\lambda^{\Psi}, K^p})_{\mathfrak{m}}) = 0$.

\end{thm}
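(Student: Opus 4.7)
The plan is to adapt the strategy of \cite[Theorem 4.4.6]{PanI} and \cite[Theorem 4.2.7]{PanII} from the modular-curve case to our partially completed, $\mathfrak{m}$-localized setting. First I would combine the primitive comparison theorem of \cite{pH} with the Hodge-Tate comparison on $\aS_{K^p}$ (cf.\ Proposition \ref{Hodge de Rham}) and the finite-level torsion vanishing (Theorem \ref{torsion vanishing} and Proposition \ref{maximal ideal}) to produce a canonical $G_L \times G(\mathbb{Q}_p) \times \mathbb{T}^S$-equivariant identification
\[
\widehat{H}^d(S_{K^p}, V_{\lambda^{\Psi}}(-\lambda_0))_{\mathfrak{m}} \widehat{\otimes}_{L} C \ \cong\ H^d(\aS_{K^p}, D_{\lambda^{\Psi}, \aS_{K^p}})_{\mathfrak{m}},
\]
together with the vanishing of the right-hand side in degrees $i \ne d$. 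The Tate twist $(-\lambda_0)$ is absorbed by the observation that $D_{\lambda^{\Psi}}$ has trivial Hodge filtration outside the similitude factor: its $\Psi$-components are trivial because $\lambda^{\Psi}_\tau = 0$ on $\Psi$, and its $\Phi \setminus \Psi$-components sit in Hodge degree $0$ by the setup of \S 3.3.

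Next, by the affineness of $\pi_{\mathrm{HT}}$ (Theorem \ref{affine}) and the fact that $\mathcal{B}$ is stable under finite intersections (Lemma \ref{open unit ball}), any finite covering $\mathcal{B}' \subset \mathcal{B}$ of $\Fl$ yields a \v{C}ech complex $C^\bullet := C(\mathcal{B}', D_{\lambda^{\Psi}, K^p})_{\mathfrak{m}}$ that computes $H^*(\Fl, D_{\lambda^{\Psi}, K^p})_{\mathfrak{m}} = H^*(\aS_{K^p}, D_{\lambda^{\Psi}, \aS_{K^p}})_{\mathfrak{m}}$; in particular $H^i(C^\bullet) = 0$ for $i \ne d$. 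Moreover, by Corollary \ref{nontrivial geometric}, each term $C^j$ is a Banach space with $R^i\Psi\mathfrak{LA}(C^j) = 0$ for all $i \ge 1$.

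Applying Lemma \ref{fundamental}(4) to $C^\bullet$ then yields a spectral sequence
\[
E_2^{u,t} = R^u\Psi\mathfrak{LA}(H^t(C^\bullet)) \ \Longrightarrow\ H^{u+t}((C^\bullet)^{\Psi-\mathrm{la}}) = H^{u+t}(\Fl, D_{\lambda^{\Psi}, K^p}^{\Psi-\mathrm{la}})_{\mathfrak{m}},
\]
where the last equality uses Proposition \ref{derived calculation I} to see that elements of $\mathcal{B}$ remain acyclic for the sheaf $D_{\lambda^{\Psi}, K^p}^{\Psi-\mathrm{la}}$. Since $E_2^{u,t}$ is concentrated in the single column $t = d$, the spectral sequence degenerates, and the theorem reduces to: (i) the identification $R^0\Psi\mathfrak{LA}(H^d(C^\bullet)) = H^d(C^\bullet)^{\Psi-\mathrm{la}}$, which combined with the first step (and the fact that $(\cdot)^{\Psi-\mathrm{la}}$ commutes with $\widehat{\otimes}_L C$ since $C$ carries the trivial $G(\mathbb{Q}_p)$-action) yields part 1; and (ii) the vanishing of $H^{u+d}(\Fl, D_{\lambda^{\Psi}, K^p}^{\Psi-\mathrm{la}})_{\mathfrak{m}}$ for all $u > 0$, which simultaneously delivers parts 2 and 3. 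For (ii) I would exploit the product structure $\Fl = \prod_{\tau \in \Psi} \mathbb{P}^{1, \mathrm{ad}}_{C}$, covering each factor by $U_{1, \tau}$ and its $G(\mathbb{Q}_p)$-translate $U_{2, \tau}$ to obtain a \v{C}ech complex of length $2^d$ whose cohomology is a K\"unneth-type tensor product of length-two complexes, hence concentrated in degrees $\le d$.

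The main obstacle will be the careful implementation of the primitive comparison with the nontrivial local system $V_{\lambda^{\Psi}}$ and the Hodge-Tate twist in the $\mathfrak{m}$-localized setting, together with verifying that all intermediate isomorphisms intertwine the $G_L \times G(\mathbb{Q}_p) \times \mathbb{T}^S$-actions. Once these compatibilities are in place, the \v{C}ech-plus-spectral-sequence formalism packaged in Lemma \ref{fundamental}, together with the local results Theorem \ref{geometric sen theory} and Proposition \ref{derived calculation I}, should deliver all three assertions uniformly.
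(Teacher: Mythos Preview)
Your overall strategy is correct and essentially matches the paper's: primitive comparison plus Proposition \ref{Hodge de Rham} give $\widehat{H}^i(S_{K^p},V_{\lambda^\Psi}(-\lambda_0))\widehat{\otimes}_L C \cong H^i(\Fl,D_{\lambda^\Psi,K^p})$, the torsion-vanishing concentrates this in degree $d$ after localizing at $\mathfrak{m}$, and then Lemma \ref{fundamental}(4) together with Proposition \ref{derived calculation I} produces the degenerating spectral sequence that delivers all three statements. Two technical points in your write-up deserve adjustment.

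First, you localize the \v{C}ech complex itself at $\mathfrak{m}$ and then invoke Corollary \ref{nontrivial geometric} and Lemma \ref{fundamental}(4). But it is not clear that $D_{\lambda^\Psi,K^p}(U)_{\mathfrak{m}}$ is a Banach direct summand of $D_{\lambda^\Psi,K^p}(U)$: Proposition \ref{maximal ideal} gives the finite direct-sum decomposition only for the \emph{global} completed cohomology, not term by term. The paper avoids this by running the spectral sequence $E_2^{u,t}=R^u\Psi\mathfrak{LA}(H^t(\Fl,D_{\lambda^\Psi,K^p}))\Rightarrow H^{u+t}(\Fl,D_{\lambda^\Psi,K^p}^{\Psi-\mathrm{la}})$ \emph{before} localizing, and only then localizes the $E_2$ page at $\mathfrak{m}$, using the finite decomposition of $H^t(\Fl,D_{\lambda^\Psi,K^p})\cong \widehat{H}^t\widehat{\otimes}_L C$ from Proposition \ref{maximal ideal} to see that $R^u\Psi\mathfrak{LA}(-)_{\mathfrak{m}}=R^u\Psi\mathfrak{LA}((-)_{\mathfrak{m}})$.

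Second, your argument for the vanishing in degrees $>d$ has a gap. The product opens $\prod_\tau U_{j_\tau,\tau}$ with mixed $j_\tau$ are in general \emph{not} $G(\mathbb{Q}_p)$-translates of subsets of $U_1$: a single element of $\mathrm{GL}_2(F_w)$ acts simultaneously on all factors $\mathbb{P}^1_{\tau}$ with $\tau\in\mathrm{Hom}_{\mathbb{Q}_p}(F_w,L)$, so one cannot independently swap $U_{1,\tau}$ and $U_{2,\tau}$ across embeddings of the same $F_w$. Hence these opens need not lie in $\tilde{\mathcal B}$, and Proposition \ref{derived calculation I} does not directly give their acyclicity for $D_{\lambda^\Psi,K^p}^{\Psi-\mathrm{la}}$; this is precisely the issue flagged in the remark following the theorem. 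Also, the sheaf is not an external tensor product along the factors, so the phrase ``K\"unneth-type tensor product of length-two complexes'' is not literally applicable. The paper instead obtains $H^{i+d}(\Fl,D_{\lambda^\Psi,K^p}^{\Psi-\mathrm{la}})_{\mathfrak{m}}=0$ for $i>0$ by appealing to the fact that the spectral space underlying $\Fl=\prod_{\tau\in\Psi}\mathbb{P}^{1,\mathrm{ad}}_C$ has cohomological dimension $d$ for abelian sheaves; combined with the spectral-sequence identification $R^i\Psi\mathfrak{LA}(H^d(\Fl,D_{\lambda^\Psi,K^p})_{\mathfrak{m}})\cong H^{i+d}(\Fl,D_{\lambda^\Psi,K^p}^{\Psi-\mathrm{la}})_{\mathfrak{m}}$ this gives part 3 (and the remaining half of part 2) without needing the product covering to sit inside $\tilde{\mathcal B}$.
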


\begin{proof}

By the primitive comparison \cite[Theorem 1.3]{pH} and \cite[proof of Theorem 4.4.3]{PanI}, we obtain a canonical isomorphism $\widehat{H}^i(S_{K^p}, \mathbb{Q}_p) \widehat{\otimes}_{\mathbb{Q}_p} C \cong H^i(\aS_{K^p}, \mathcal{O}_{\aS_{K^p}})$. By taking $\otimes_{L} V_{\lambda^{\Psi}}(-\lambda_0)$, we obtain $\widehat{H}^i(S_{K^p}, V_{\lambda^{\Psi}}(-\lambda_0)) \widehat{\otimes}_{L} C \cong H^i(\aS_{K^p}, V_{\lambda^{\Psi}}(-\lambda_0) \otimes_{L} \mathcal{O}_{\aS_{K^p}})$. This is equal to $H^i(\Fl, D_{\lambda^{\Psi}, K^p})$ by Proposition \ref{Hodge de Rham} and the affineness of $\pi_{\mathrm{HT}}$. Thus we have $\widehat{H}^i(S_{K^p}, V_{\lambda^{\Psi}}(-\lambda_0))_{\mathfrak{m}}^{\Psi-\mathrm{la}} \widehat{\otimes}_{L} C \cong H^i(\Fl, D_{\lambda^{\Psi}, K^p})_{\mathfrak{m}}^{\Psi-\mathrm{la}}$ and $H^i(\Fl, D_{\lambda^{\Psi}, K^p})_{\mathfrak{m}} = 0$ for any $i \neq d$ by Proposition \ref{maximal ideal}. Note that $H^i(\Fl, D_{\lambda^{\Psi}, K^p})$ can be computed by the $\mathrm{\check{C}}$ech complex $C(\mathcal{B}', D_{\lambda^{\Psi}, K^p})$.

By Proposition \ref{derived calculation I}, we have $H^{i}(C(\mathcal{B}', D_{\lambda^{\Psi}, K^p}^{\Psi-\mathrm{la}})) = H^i(\Fl, D_{\lambda^{\Psi}, K^p}^{\Psi-\mathrm{la}})$. By Lemma \ref{fundamental} and Proposition \ref{derived calculation I}, we have a $\mathbb{T}^S$-equivariant spectral sequence $$E_2^{u, t} := R^u\Psi\mathfrak{LA}(H^t(\Fl, D_{\lambda^{\Psi}, K^p})) \Rightarrow H^{u+t}(\Fl, D_{\lambda^{\Psi}, K^p}^{\Psi-\mathrm{la}}).$$ Note that since $H^i(\Fl, D_{\lambda^{\Psi}, K^p}) \cong \widehat{H}^i(S_{K^p}, V_{\lambda^{\Psi}}(-\lambda_0))_{\mathfrak{m}}^{\Psi-\mathrm{la}} \widehat{\otimes}_{L} C$ has a finite direct sum decomposition by maximal ideals of $\mathbb{T}^S$ (see Proposition \ref{maximal ideal}), we have $R^u\Psi\mathfrak{LA}(H^t(\Fl, D_{\lambda^{\Psi}, K^p}))_{\mathfrak{m}} = R^u\Psi\mathfrak{LA}(H^t(\Fl, D_{\lambda^{\Psi}, K^p})_{\mathfrak{m}})$. Thus by localizing the above spectral sequence at $\mathfrak{m}$, we obtain $H^i(\Fl, D_{\lambda^{\Psi}, K^p}^{\Psi-\mathrm{la}})_{\mathfrak{m}} = 0$ for any $i < d$, we have $H^d(\Fl, D_{\lambda^{\Psi}, K^p})_{\mathfrak{m}}^{\Psi-\mathrm{la}} \cong H^d(\Fl, D_{\lambda^{\Psi}, K^p}^{\Psi-\mathrm{la}})_{\mathfrak{m}}$ and $R^i\Psi\mathfrak{LA}(H^d(\Fl, D_{\lambda^{\Psi}, K^p})_{\mathfrak{m}}) \cong H^{i+d}(\Fl, D_{\lambda^{\Psi}, K^p}^{\Psi-\mathrm{la}})_{\mathfrak{m}} = 0$ for any $i > 0$.  \end{proof}

\begin{rem}

In the previous theorem, we used the complex $C(\mathcal{B}', D_{\lambda^{\Psi}, K^p}^{\Psi-\mathrm{la}})_{\mathfrak{m}}$, whose components may not be LB spaces. We need not use this complex if we can prove that $\pi_{\mathrm{HT}}$ is well-behaved as in 3 of Theorem \ref{affine} for an affinoid open $\prod_{j_{\tau}} U_{j_{\tau}, \tau}$ ($j_{\tau} = 1, 2$) of $\Fl$ and use the same method as \cite[Theorem IV 3.1]{Torsion}.

\end{rem}

\subsection{Hodge-Tate structure of completed cohomology}

In the following, we assume that $\zeta_{p^2} \in L$ and $L$ is Galois over $\mathbb{Q}_p$ for simplicity.  We put $L_{\mathrm{cyc}} := \cup_n L(\zeta_{p^n})$, $H_L := \mathrm{Gal}(\overline{L}/L_{\mathrm{cyc}})$ and $\Gamma_L:=\mathrm{Gal}(L_{\mathrm{cyc}}/L)$. Then we have an open and closed immersion $\Gamma_L \hookrightarrow 1 + p^2 \mathbb{Z}_p$ via $\varepsilon_p$. Thus $\Gamma_L$ is isomorphic to $\mathbb{Z}_p$. First, we recall basic definitions and properties.

\begin{dfn}\label{Hodge-Tate}

Let $W$ be a Banach space over $C$ with a semilinear continuous action of $G_L$ and we put $W^L := W^{H_L, \Gamma_L-\mathrm{an}}$.

1 \ We say that $W$ is decomposable if there exists a finite extension $L'/L$ such that the canonical map $W^{L'} \widehat{\otimes}_{L'} C \rightarrow W$ is an isomorphism.

2 \ If $W$ is decomposable, then the Sen operator $\theta_{\mathrm{Sen}}$ means the operator $1 \widehat{\otimes}_{L'} \mathrm{id}_{C} \in \mathrm{End}_{C}(W)$ defined by the $C$-linear extension of an operator $1 \in \mathbb{Q}_p = \mathrm{Lie}\Gamma_{L'}$ on $W^{L'}$. Note that this is independent of the choice of $L'$.

3 \ We say that $W$ is Hodge-Tate of weight $w_1, \cdots, w_k \in \mathbb{Z}$ if $W$ is decomposable and $\theta_{\mathrm{Sen}}$ is semisimple with eigenvalues $-w_1, \cdots, -w_k$.

4 \ We say that $W$ is Hodge-Tate if $W$ is Hodge-Tate of some integers $w_1, \cdots, w_k$.

\end{dfn}

\begin{lem}\label{HTE} Let $W$ be a Banach space over $C$ with a semilinear continuous action of $G_L$.

1 \ If $\mathrm{dim}_CW < \infty$, then $W$ is always decomposable.

2 \ Let $V$ be a finite-dimensional representation of $G_L$ over $\mathbb{Q}_p$ such that $W = V \otimes_{\mathbb{Q}_p} C$. Then $V$ is Hodge-Tate in the usual sense if and only if $W$ is Hodge-Tate in the sense of Definition \ref{Hodge-Tate}. 

3 \ Let $V$ be a finite-dimensional representation of $G_L$ over $L$. Then $V$ is Hodge-Tate in the usual sense if and only if $V^{\sigma} \otimes_{L} C$ is Hodge-Tate in the above sense for any $\sigma \in \mathrm{Gal}(L/\mathbb{Q}_p)$.

4 \ Let $M$ be a Banach representation of $G_L$ over $L$ such that $M = M^{L}$. If $W = M \widehat{\otimes}_{L} C$, then $W^L = M$ and $W$ is decomposable.

5 \ If $W$ is Hodge-Tate of weight $w_1, \cdots, w_k$, then $W = \oplus_{i=1}^k W(w_i)^{G_L} \widehat{\otimes}_L C(-w_i)$.

6 \ For a finite extension $L'/L$, $W$ is Hodge-Tate if and only if $W|_{G_{L'}}$ is Hodge-Tate. 

7 \ If $W$ is Hodge-Tate, then the natural map $W^{L} \widehat{\otimes}_L C \rightarrow W$ is an isomorphism.

8 \ If $W$ is decomposable, then $H^i(H_L, W) = 0$ and $R^i\mathfrak{LA}(W^{H_L}) = 0$ for any $i > 0$. (Here, we regard $W^{H_L}$ as a Banach representation of $\Gamma_L$ over $\mathbb{Q}_p$.)

9 \ Let $0 \rightarrow W_1 \rightarrow W_2 \rightarrow W_3$ be an exact sequence of Banach spaces over $C$ with semilinear continuous actions of $G_L$. If $W_2$ and $W_3$ are Hodge-Tate (resp. decomposable), then $W_1$ is Hodge-Tate (resp. decomposable). On the other hand, if $W_2 \rightarrow W_3$ is surjective and $W_1$ and $W_2$ are Hodge-Tate (resp. decomposable), then $W_3$ is Hodge-Tate (resp. decomposable).

\end{lem}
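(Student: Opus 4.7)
The plan is to establish parts (1)--(9) by building from the classical Sen--Tate theory for finite-dimensional $C$-semilinear representations and then extending to the Banach setting via the locally analytic machinery of Section 2. I will group the parts so that each group depends only on what precedes it.

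First I would handle (1), (2), (3). Part (1) is Sen's theorem: for any finite-dimensional $C$-semilinear representation $W$ of $G_L$, the space $W^{H_L}$ is a finite-dimensional $\widehat{L_{\mathrm{cyc}}}$-vector space on which $\Gamma_L$ acts, and after enlarging $L$ to a finite extension $L'$ so that $\Gamma_{L'}$ is small enough, the action on $W^{L'} = W^{H_L,\Gamma_{L'}-\mathrm{an}}$ becomes analytic and $W^{L'} \otimes_{L'} C \to W$ is an isomorphism. Part (2) follows by checking that the Sen operator defined classically (as the derivation of the $\Gamma$-action on the Sen module) coincides with the one in Definition \ref{Hodge-Tate}, so the two notions of Hodge-Tate agree (with the same weights). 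For (3), use the decomposition $V \otimes_{\mathbb{Q}_p} C = \prod_{\sigma \in \mathrm{Gal}(L/\mathbb{Q}_p)} V^{\sigma} \otimes_L C$ of $G_L$-representations (since $L/\mathbb{Q}_p$ is Galois, $L \otimes_{\mathbb{Q}_p} C \cong \prod_{\sigma} C$), and combine with (2) and the characterization of classical Hodge-Tateness in terms of $\tau$-Hodge-Tate weights for each embedding.

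Next I would prove (4), (5), (6), (7), (9). For (4), since $M = M^L$ is already locally analytic under $\Gamma_L$ as an $L$-Banach representation, the Ax--Sen--Tate theorem $C^{H_L} = \widehat{L_{\mathrm{cyc}}}$ gives $W^{H_L} = M \widehat{\otimes}_L \widehat{L_{\mathrm{cyc}}}$, and extracting $\Gamma_L$-analytic vectors recovers $M$ itself, establishing decomposability. For (5), since $W$ is decomposable with semisimple Sen operator of integer eigenvalues, choose $L'/L$ so that $W = W^{L'} \widehat{\otimes}_{L'} C$; then $W^{L'}$ splits into the eigenspaces of $\theta_{\mathrm{Sen}}$, and each eigenspace of eigenvalue $-w_i$, after twisting by $C(w_i)$, has trivial Sen operator, hence descends to $L'$-points by Tate-Sen; taking further $\mathrm{Gal}(L'/L)$-invariants gives $W(w_i)^{G_L}$, yielding the decomposition. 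Part (6) is a descent argument: if $W|_{G_{L'}}$ is Hodge-Tate, apply (5) over $L'$ and take $\mathrm{Gal}(L'/L)$-invariants to descend the decomposition. Part (7) is immediate from (5). Part (9) follows from the fact that subrepresentations and quotients of a decomposable Banach space are decomposable (the locally analytic vectors of a closed subrepresentation are the intersection with the original locally analytic vectors, and for quotients one uses the open mapping theorem together with strictness of the maps after taking $\Gamma_{L'}$-analytic vectors), while semisimplicity of the Sen operator with integer eigenvalues is inherited by subs and quotients.

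The main obstacle is (8), which I expect to be the most technical step. Given decomposability $W = W^{L'} \widehat{\otimes}_{L'} C$, we have $W^{H_L} = W^{L'} \widehat{\otimes}_{L'} \widehat{L'_{\mathrm{cyc}}}$ after enlarging $L$ so that $L \subset L'$. The vanishing $H^i(H_{L'}, W) = 0$ for $i>0$ then reduces, by the flat base change $W = W^{H_{L'}} \widehat{\otimes}_{\widehat{L'_{\mathrm{cyc}}}} C$, to Tate's almost vanishing theorem $H^i(H_{L'}, C) = 0$ for $i>0$; the case of $H_L$ follows by the Hochschild--Serre spectral sequence applied to $H_{L'} \subset H_L$ with finite quotient. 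For the $R^i\Psi\mathfrak{LA}$ vanishing (here with $\Psi$ taken so that the setting reduces to a genuine locally analytic vector functor on the $\Gamma_L$-representation $W^{H_L}$), the essential input is that the Tate normalized trace gives a $\Gamma_{L'}$-equivariant direct sum decomposition $\widehat{L'_{\mathrm{cyc}}} = L'_{\mathrm{cyc}} \oplus X$ where $X$ has no nonzero $\Gamma_{L'}$-analytic vectors and $H^i(\Gamma_{L'}, X \widehat{\otimes} \cdots) = 0$; combined with the fact that $W^{L'}$ is already a locally analytic $\Gamma_{L'}$-representation, this yields the vanishing of $R^i\Psi\mathfrak{LA}$. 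The verification of the relevant cohomological computation, which uses a careful Koszul/Lie-algebra-cohomology style analysis of the $\Gamma_{L'}$-action on $L'_{\mathrm{cyc}}$ (essentially that $\mathrm{log}\gamma$ acts invertibly on the complement of the $\Gamma_{L'}$-fixed part), is where most of the work lies; I would follow the template of \cite[{\S} 3.1]{PanI} adapted to the $\Psi$-refined locally analytic functor.
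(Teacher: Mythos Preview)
Your proposal is correct and follows essentially the same approach as the references the paper invokes: the paper's own proof is nothing more than a list of citations (Sen for (1)--(2), \cite[Proposition 3.3.3, Remark 3.3.7, Corollary 3.3.4]{PanII} for (4)--(7) and (9), and \cite[Theorem 2.4.3]{Cam} for (8)), and your sketch unpacks exactly what those references do. Two minor remarks: in (8) the statement concerns $R^i\mathfrak{LA}$ for the $\Gamma_L \cong \mathbb{Z}_p$-action on $W^{H_L}$, not the $\Psi$-refined functor, so your reduction is to the one-parameter case where the Tate normalized trace argument applies directly; and in (3) the paper phrases the identification slightly differently, via $(V \otimes_{L,\sigma} C)^{\sigma} \cong V^{\sigma} \otimes_L C$, but this is equivalent to your decomposition of $V \otimes_{\mathbb{Q}_p} C$.
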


\begin{proof}

See \cite[Theorem 3 and Corollary of Theorem 6]{Sen} for proofs of 1 and 2. 

By 2, $V$ is Hodge-Tate in the usual sense if and only if $V \otimes_{L, \sigma} C$ is Hodge-Tate in the above sense for any $\sigma \in \mathrm{Gal}(L/\mathbb{Q}_p)$. Thus the statement 3 follows from an isomorphism $(V \otimes_{L, \sigma} C)^{\sigma} \Isom V^{\sigma} \otimes_L C, \ v \otimes x \rightarrow v \otimes \sigma^{-1}(x)$.

4 follows from \cite[Proposition 3.3.3]{PanII}. 5 and 6 follow from \cite[Remark 3.3.7]{PanII}. 7 follows from 5. 8 follows from \cite[Theorem 2.4.3]{Cam}. 9 follows from \cite[Corollary 3.3.4]{PanII}. \end{proof}

For $U \in \tilde{\mathcal{B}}$, we take $G_0 = K_p$, $G_m$ and $V_{K_p} = V$ as in the previous section. (See the discussions after Proposition \ref{sen operator}.) After extending $L$ if necessary, we have a natural action of $G_L$ on $\mathcal{O}_{K^p}^{\Psi-\mathrm{la}}(U)$.

\begin{lem}\label{LBHT}

Let $W$ be a Banach space over $C$ with a continuous action of $G_0$ over $C$ and a continuous semilinear action of $G_L$. Assume that the two actions of $G_0$ and $G_L$ are commutative and we have a sequence of continuous injections of Banach spaces of $W_1 \hookrightarrow W_2 \hookrightarrow \cdots \hookrightarrow W^{\Psi-\mathrm{la}}$ such that for any $n$, there exists $k >0$ and a finite extension $M$ of $L$ such that $W_n$ has a continuous action of $G_k$ and $G_M$ compatible with the action on $W$ such that $W_n^{M} \widehat{\otimes}_{M} C \rightarrow W_n$ is an isomorphism and $\varinjlim_n W_n = W^{\Psi-\mathrm{la}}$. Then $W^{G_k-\Psi-\mathrm{an}}$ is decomposable for any $k$.

\end{lem}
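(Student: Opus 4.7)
The plan is to reduce the decomposability of $W^{G_k-\Psi-\mathrm{an}}$ to the already-assumed decomposability of one of the $W_n$'s. Fix $k$, and consider the continuous injection $W^{G_k-\Psi-\mathrm{an}} \hookrightarrow W^{\Psi-\mathrm{la}} = \bigcup_n W_n$. Since the left-hand side is a Banach space and the right-hand side is a Hausdorff LB space presented as the union of the $W_n$'s, Proposition \ref{inductive limit} provides some $n$ and a continuous $\mathbb{Q}_p$-linear map $\phi : W^{G_k-\Psi-\mathrm{an}} \to W_n$ that recovers the given injection into $W$. After enlarging $n$ if necessary, we may assume that $W_n$ carries the compatible actions of $G_{k'}$ (for some $k' \ge k$ after shrinking, which we absorb into the notation) and of $G_M$ with $W_n^{M} \widehat{\otimes}_M C \isom W_n$.

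The first key step is to identify $W^{G_k-\Psi-\mathrm{an}}$ with $(W_n)^{G_k-\Psi-\mathrm{an}}$ topologically. Since $W_n \hookrightarrow W$ is continuous and $G_0$-equivariant, the image of $\phi$ consists of vectors that remain $G_k$-$\Psi$-analytic in $W_n$, so $\phi$ factors through $(W_n)^{G_k-\Psi-\mathrm{an}}$. Conversely, the inclusion $W_n \hookrightarrow W$ sends $(W_n)^{G_k-\Psi-\mathrm{an}}$ continuously into $W^{G_k-\Psi-\mathrm{an}}$. These two continuous maps are mutually inverse bijections between Banach spaces, so by the open mapping theorem \cite[Proposition 8.8]{funcana} they are topological isomorphisms. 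Hence $W^{G_k-\Psi-\mathrm{an}} \cong (W_n)^{G_k-\Psi-\mathrm{an}}$ as $G_L$-equivariant Banach spaces (the $G_L$-equivariance follows from the assumption that the $G_0$- and $G_L$-actions commute, so $G_L$ preserves both sides).

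The second key step is to commute the formation of $\Psi$-analytic vectors with the completed tensor product $(-) \widehat{\otimes}_M C$. Under the decomposition $W_n \cong W_n^M \widehat{\otimes}_M C$, the $G_0$-action is $C$-linear and commutes with $G_M$, so $G_0$ preserves $W_n^M$ and acts trivially on the $C$-factor. The identification $C^{\mathrm{an}}(G_k, W_n^M \widehat{\otimes}_M C) = C^{\mathrm{an}}(G_k, W_n^M) \widehat{\otimes}_M C$, together with the fact that taking $G_k$-invariants of a completed tensor product with a trivial factor commutes with $\widehat{\otimes}_M C$, gives
\[
(W_n)^{G_k-\Psi-\mathrm{an}} \;\cong\; (W_n^M)^{G_k-\Psi-\mathrm{an}} \widehat{\otimes}_M C.
\]
Applying part 4 of Lemma \ref{HTE} to the $M$-Banach representation $(W_n^M)^{G_k-\Psi-\mathrm{an}}$ of $G_M$ (which equals its own $(-)^M$ by construction, since $W_n^M = (W_n)^M$ and taking $G_k$-$\Psi$-analytic vectors commutes with the $H_M$- and $\Gamma_M$-constructions that define $(-)^M$) shows that this tensor product is decomposable, with model given by $(W_n^M)^{G_k-\Psi-\mathrm{an}}$ itself; Lemma \ref{HTE} part 6 then promotes decomposability over $M$ to decomposability over $L$, which is what we want.

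The main technical point, and the step I expect to require the most care, is the identification in the second key step: checking that the $\Psi$-analytic vector functor really commutes with $\widehat{\otimes}_M C$ when only one factor carries the $G_0$-action. This is essentially a Fubini-type statement for analytic functions valued in a completed tensor product, and reduces to Lemma \ref{tensor product} once one verifies that the relevant orbit map norms match. The rest of the argument is a formal packaging of Proposition \ref{inductive limit}, the open mapping theorem, and the properties of decomposable Banach representations collected in Lemma \ref{HTE}.
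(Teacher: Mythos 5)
Your first key step contains a genuine gap. You want to show that the two inclusions between $W^{G_k-\Psi-\mathrm{an}}$ and $(W_n)^{G_k-\Psi-\mathrm{an}}$ are mutually inverse, but the hypothesis only gives $W_n$ a continuous action of $G_{k'}$ for some $k'$ depending on $n$, not of $G_k$. After applying Proposition~\ref{inductive limit} you may increase $n$ so that $W_n$ carries a $G_{k'}$-action for some $k'\ge k$, but then what you actually get is
\[
W^{G_k-\Psi-\mathrm{an}} \hookrightarrow (W_n)^{G_{k'}-\Psi-\mathrm{an}} \hookrightarrow W^{G_{k'}-\Psi-\mathrm{an}},
\]
and the composite is the natural inclusion of $G_k$-analytic into $G_{k'}$-analytic vectors, which is a proper inclusion in general (consider $W_n = W^{G_{k'}-\Psi-\mathrm{an}}$ itself). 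There is no pair of mutually inverse continuous bijections to feed into the open mapping theorem, so the identification $W^{G_k-\Psi-\mathrm{an}} \cong (W_n)^{G_k-\Psi-\mathrm{an}}$ fails, and ``absorbing $k'$ into the notation'' does not help since the lemma must be proved for arbitrary~$k$, including $k$ smaller than every $k'$ arising from the sequence $\{W_n\}$.

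The paper avoids this by a retraction argument rather than an identification. One keeps track of $k$ and $k'$ separately, gets a diagonal map
\[
W^{G_k-\Psi-\mathrm{an}} \longrightarrow W_n \cong W_n^M \widehat{\otimes}_M C \longrightarrow W^{G_{k'}-\Psi-\mathrm{an},M}\widehat{\otimes}_M C,
\]
and then takes $G_k$-$\Psi$-analytic parts on both sides of the resulting commutative square. Since passing to $G_k$-$\Psi$-analytic vectors commutes with $\widehat{\otimes}_M C$ (the content of your second key step, which is essentially correct), the diagonal factors through $W^{G_k-\Psi-\mathrm{an},M}\widehat{\otimes}_M C$, and the composite with the canonical evaluation $W^{G_k-\Psi-\mathrm{an},M}\widehat{\otimes}_M C \to W^{G_k-\Psi-\mathrm{an}}$ is the identity. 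This exhibits $W^{G_k-\Psi-\mathrm{an},M}\widehat{\otimes}_M C \to W^{G_k-\Psi-\mathrm{an}}$ as split surjective, and combined with injectivity yields the desired isomorphism directly, without ever claiming $W^{G_k-\Psi-\mathrm{an}} = (W_n)^{G_k-\Psi-\mathrm{an}}$. Your second step would slot naturally into this argument, but as written your first step is what the proof hinges on and it does not hold.
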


\begin{proof}

Let $k$ be a nonnegative integer. By Proposition \ref{inductive limit}, there exists $n$, $k' \ge k$ and a finite extension $M$ of $L$ such that $W_n$ is $G_M$-stable, $W_n^{M} \widehat{\otimes}_{M} C \xrightarrow{\sim} W_n$ is an isomorphism and we obtain the following diagram with commutative maps.$$\xymatrix{
    W^{G_k-\Psi-\mathrm{an}, M} \widehat{\otimes}_M C \ar[d] \ar[r] & \ar[r] W_n^M \widehat{\otimes}_M C \ar[d]^{\sim} & W^{G_{k'}-\Psi-\mathrm{an}, M} \widehat{\otimes}_M C \ar[d] \ar[r] & W^{\Psi-\mathrm{la}, M} \widehat{\otimes}_M C \ar[d] \\
   W^{G_k-\Psi-\mathrm{an}} \ar@{^{(}-_>}[r] \ar@{^{(}-_>}[ur] & \ar@{^{(}-_>}[r] W_n & W^{G_{k'}-\Psi-\mathrm{an}} \ar@{^{(}-_>}[r] & W^{\Psi-\mathrm{la}}.
    }$$

In particular, we have $$\xymatrix{
    W^{G_k-\Psi-\mathrm{an}, M} \widehat{\otimes}_M C \ar[d] \ar[r] & W^{G_{k'}-\Psi-\mathrm{an}, M} \widehat{\otimes}_M C \ar[d] \\
    W^{G_k-\Psi-\mathrm{an}} \ar@{^{(}-_>}[r] \ar@{^{(}-_>}[ur] & W^{G_{k'}-\Psi-\mathrm{an}}.
    }$$

By taking the $G_{k}$-$\Psi$-analytic parts, we obtain $W^{G_k-\Psi-\mathrm{an}, M} \widehat{\otimes}_M C \Isom W^{G_k-\Psi-\mathrm{an}}$. \end{proof}

Let $\mathfrak{h} = \prod_{\tau \in \Psi} \mathfrak{h}_{\tau}$ be the Lie subalgebra of $\prod_{\tau \in \Psi} \mathfrak{gl}_2(C)$ consisting of diagonal matrices. Then we have a canonical isomorphism $\mathfrak{h} \cong H^0(\Fl, \mathfrak{b}^0/\mathfrak{n}^0)$ by using a filtration $0 \rightarrow \omega_{\tau, \Fl}^{-1} \rightarrow V_{\tau} \otimes \mathcal{O}_{\Fl} \rightarrow \omega_{\tau, \Fl} \otimes \wedge^2V_{\tau} \rightarrow 0$ and this acts on $\mathcal{O}_{K^p}^{\Psi-\mathrm{la}}$ by Corollary \ref{sen operator}. We call this action the horizontal action $\theta_{\mathfrak{h}}$ of $\mathfrak{h}$ on $\mathcal{O}_{K^p}^{\Psi-\mathrm{la}}$, which commutes with the action of $G(\mathbb{Q}_p)$. Note that if $U \subset U_1$, then $\theta_{\mathfrak{h}}(\begin{pmatrix}
    c_{\tau} & 0 \\
     0 & d_{\tau} \end{pmatrix}_{\tau}) = (\begin{pmatrix}
        d_{\tau} & x_{\tau}(d_{\tau} - c_{\tau}) \\
         0 & c_{\tau} \end{pmatrix}_{\tau}) \mod \mathfrak{n}^o(U) \in \mathfrak{b}^o(U)/\mathfrak{n}^o(U)$. Thus $\mathfrak{h}$ acts on $\mathcal{O}_{\Fl}$ trivially.

\begin{prop}\label{Sen operator} 

For any $k$, $D_{\lambda^{\Psi}, K^p}(U)^{G_{k}-\Psi-\mathrm{an}}$ is decomposable and the Sen operator $\theta_{\mathrm{Sen}}$ is equal to $\theta_{\mathfrak{h}}(\begin{pmatrix}
-1 & 0 \\
 0 & 0 \end{pmatrix}_{\tau})$.

\end{prop}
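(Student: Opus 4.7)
The plan is to exploit the explicit convergent-series description of $D_{\lambda^{\Psi}, K^p}(U)^{G_0-\Psi-\mathrm{an}}$ afforded by Corollary \ref{nontrivial mikami}, use Lemma \ref{LBHT} to deduce decomposability, and then read off the Sen operator by a direct computation of the $\Gamma_L$-action on the explicit generators. Corollary \ref{nontrivial mikami} embeds $D_{\lambda^{\Psi}, K^p}(U)^{G_0-\Psi-\mathrm{an}}$ continuously into the space of convergent series $D_{\lambda^{\Psi}, \aS_{K^p G_{r(n)}}}(V_{G_{r(n)}})\{x_{\tau}-x_{\tau,n},\log(e_{1,\tau}/e_{1,\tau,n}),\log(f_{\tau}/f_{\tau,n})\}_{\tau}$; filtering by total degree in the generators yields an exhaustive sequence of Banach subspaces, which is the input required by Lemma \ref{LBHT}. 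The coefficient space $D_{\lambda^{\Psi},\aS_{K^p G_{r(n)}}}(V_{G_{r(n)}})$ is a finite projective module over an affinoid $L$-algebra and hence trivially decomposable after base-change to $C$; the coordinate $x_{\tau} \in \mathcal{O}_{\Fl}(U)$ is $L$-rational and $G_L$-invariant; the periods $e_{1,\tau}$ and $f_{\tau}$ transform under $G_L$ by explicit cyclotomic twists dictated by the identifications $\wedge^2 D_{\tau,\aS_{K^p}} \cong \wedge^2 V_{\tau}(-1)$ and $\omega_{\tau,\aS_{K^p}}(-1) \cong \pi_{\mathrm{HT}}^{*}\omega_{\tau,\Fl}$ of Proposition \ref{Hodge de Rham}, so that $\log(e_{1,\tau}/e_{1,\tau,n})$ and $\log(f_{\tau}/f_{\tau,n})$ shift by scalar multiples of $\log\varepsilon_p$. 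After a rescaling that cancels the multiplicative cyclotomic twists, each truncated Banach subspace is $G_M$-stable and $M$-rational for a suitable finite extension $M/L$, so Lemma \ref{LBHT} yields decomposability of $D_{\lambda^{\Psi}, K^p}(U)^{G_k-\Psi-\mathrm{an}}$ for every $k$.

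To compute $\theta_{\mathrm{Sen}}$, I would pick $\gamma \in \Gamma_L$ sufficiently close to $1$ and apply $(\log\gamma)$ to a general element of the convergent expansion. The coordinate differences $x_{\tau}-x_{\tau,n}$ are $G_L$-invariant and contribute nothing. The logarithmic periods $\log(e_{1,\tau}/e_{1,\tau,n})$ and $\log(f_{\tau}/f_{\tau,n})$ shift by integer multiples of $\log\varepsilon_p(\gamma)$ determined by the Tate twists in Proposition \ref{Hodge de Rham}. The finite-level coefficient $a_{i,j,k}$, being built from $D_0$ and the $\tau \notin \Psi$ factors of $\lambda^{\Psi}$, contributes no Sen weight along the $\Psi$-directions, since by (2) and (5) of Proposition \ref{Hodge de Rham} these bundles are trivialized on $\aS_{K^p}$ up to $L$-rational Tate twists that become invisible on the $\Psi$-horizontal part. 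Dividing by $\log\varepsilon_p(\gamma)$ and specializing produces an endomorphism whose values on the generators match precisely those of $\theta_{\mathfrak{h}}(\mathrm{diag}(-1,0)_{\tau})$ as computed from the explicit formula $\theta_{\mathfrak{h}}(\begin{pmatrix} c_{\tau} & 0 \\ 0 & d_{\tau} \end{pmatrix}_{\tau}) \equiv \begin{pmatrix} d_{\tau} & x_{\tau}(d_{\tau}-c_{\tau}) \\ 0 & c_{\tau} \end{pmatrix}_{\tau} \pmod{\mathfrak{n}^{0}(U)}$ recorded after Proposition \ref{sen operator}, combined with the horizontal weights of $\omega_{\tau,\Fl}$ and $\wedge^2 V_{\tau}$.

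The main obstacle will be the precise bookkeeping of Tate twists and signs in identifying the Galois action on the periods $e_{1,\tau}$ and $f_{\tau}$ with the algebraic horizontal $\mathfrak{h}$-action, and in confirming that the $D_0$ and $(\tau \notin \Psi)$-factors of $\lambda^{\Psi}$ genuinely contribute nothing in the $\Psi$-direction — one needs to use that the similitude character descends to a smooth character at finite level and that the Hodge filtration is trivial along $\tau \notin \Psi$. The derived-functor vanishings from Theorem \ref{geometric sen theory} and Corollary \ref{nontrivial geometric} will guarantee that the Galois-invariant computation on the explicit expansion propagates to all of $D_{\lambda^{\Psi}, K^p}(U)^{G_k-\Psi-\mathrm{an}}$, and that the formula obtained on $G_0$-$\Psi$-analytic vectors extends to $G_k$-$\Psi$-analytic vectors for all $k$ by the same density argument.
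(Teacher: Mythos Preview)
Your overall strategy---use the explicit series expansion of Corollary~\ref{nontrivial mikami} together with Lemma~\ref{LBHT} for decomposability, then compute $\theta_{\mathrm{Sen}}$ on the generators---is exactly the paper's approach. However, two concrete points in your execution do not work as written.

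First, the ``filtering by total degree'' does \emph{not} produce an exhaustive sequence in the sense required by Lemma~\ref{LBHT}: the union of the degree-$\le N$ truncations is only the space of polynomial expressions in the generators, which is dense in but strictly smaller than $D_{\lambda^{\Psi},K^p}(U)^{\Psi-\mathrm{la}}$. The paper's sequence is instead $W_n := D_{\lambda^{\Psi},\aS_{K^p G_{r(n)}}}(V_{G_{r(n)}})\{x_\tau - x_{\tau,n},\log(e_{1,\tau}/e_{1,\tau,n}),\log(f_\tau/f_{\tau,n})\}_\tau$ for varying $n$; Corollary~\ref{nontrivial mikami} (applied with $G_0$ replaced by each $G_k$) shows $\bigcup_n W_n = D_{\lambda^{\Psi},K^p}(U)^{\Psi-\mathrm{la}}$, and after enlarging $L$ so that $x_{\tau,n},e_{1,\tau,n},f_{\tau,n}$ are $L$-rational one has $W_n = W_n^L \widehat{\otimes}_L C$, giving the decomposability input to Lemma~\ref{LBHT}.

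Second, your Tate-twist bookkeeping for $e_{1,\tau}$ is off. The section $e_{1,\tau}$ lives in $\omega_{\tau,\Fl}\otimes\wedge^2 V_\tau$, and under pullback the twists from $\pi_{\mathrm{HT}}^*\omega_{\tau,\Fl}\cong\omega_{\tau,\aS_{K^p}}(-1)$ and $\wedge^2 V_\tau\otimes\widehat{\mathcal{O}}\cong\wedge^2 D_\tau(1)\otimes\widehat{\mathcal{O}}$ \emph{cancel}; equivalently, $e_{1,\tau}$ is the image of the $G_L$-invariant vector $\begin{pmatrix}1\\0\end{pmatrix}_\tau$ under the $G_L$-equivariant Hodge--Tate quotient $V_\tau\otimes\widehat{\mathcal{O}}\to\omega_\tau\otimes\wedge^2 D_\tau$, hence is $G_L$-invariant. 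Thus $\log(e_{1,\tau}/e_{1,\tau,n})$ and $x_\tau - x_{\tau,n}$ are $G_L$-invariant, while only $\log(f_\tau/f_{\tau,n})$ shifts, namely by $-\log\varepsilon_p(\gamma)$ (since $f_\tau\in\wedge^2 V_\tau(-1)$ carries the $\varepsilon_p^{-1}$-action). This is precisely what makes the identification $\theta_{\mathrm{Sen}}=\theta_{\mathfrak{h}}(\mathrm{diag}(-1,0)_\tau)=-\theta_{\mathfrak{h}}(s_\tau)$ come out right: from the formulas recorded in the proof of Lemma~\ref{derived calculation1}, $\theta_{\mathfrak{h}}(s_\tau)$ acts by $\partial/\partial\log(f_\tau/f_{\tau,n})$ and annihilates the other two generators. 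If both $e_{1,\tau}$ and $f_\tau$ shifted nontrivially, the Sen operator would not match $\theta_{\mathfrak{h}}(\mathrm{diag}(-1,0)_\tau)$.
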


\begin{proof} We may assume that $U \subset U_1$. By Corollary \ref{nontrivial mikami} and Lemma \ref{LBHT}, it suffices to prove that $D_{\aS_{K^pG_{r(n)}}, \lambda^{\Psi}}(V_{G_{r(n)}})\lbrace x_{\tau} - x_{\tau, n}, \mathrm{log}\frac{e_{1, \tau}}{e_{1, \tau, n}}, \mathrm{log}\frac{f_{\tau}}{f_{\tau, n}} \rbrace_{\tau}$ is decomposable and the Sen operator $\theta_{\mathrm{Sen}}$ is equal to $\theta_{\mathfrak{h}}(\begin{pmatrix}
    -1 & 0 \\
     0 & 0 \end{pmatrix}_{\tau})$.

After extending $L$, we may assume that $x_{\tau, n}$, $e_{1, \tau, n}$ and $f_{\tau, n}$ are defined over $L$. Thus we have the equality between $D_{\aS_{K^pG_{r(n)}}, \lambda^{\Psi}}(V_{G_{r(n)}})\lbrace x_{\tau} - x_{\tau, n}, \mathrm{log}\frac{e_{1, \tau}}{e_{1, \tau, n}}, \mathrm{log}\frac{f_{\tau}}{f_{\tau, n}} \rbrace_{\tau}$, $D_{\aS_{K^pG_{r(n)}}, \lambda^{\Psi}}(V_{G_{r(n)}})^{G_L}\lbrace x_{\tau} - x_{\tau, n}, \mathrm{log}\frac{e_{1, \tau}}{e_{1, \tau, n}}, \mathrm{log}\frac{f_{\tau}}{f_{\tau, n}} \rbrace_{\tau} \widehat{\otimes}_{L} C$ and $D_{\aS_{K^pG_{r(n)}}, \lambda^{\Psi}}(V_{G_{r(n)}})\lbrace x_{\tau} - x_{\tau, n}, \mathrm{log}\frac{e_{1, \tau}}{e_{1, \tau, n}}, \mathrm{log}\frac{f_{\tau}}{f_{\tau, n}} \rbrace_{\tau}^{L} \widehat{\otimes}_{L} C$. Thus we obtain the result. \end{proof}

\begin{cor}\label{sen operatorII}

For any $k$, $\widehat{H}^d(S_{K^p}, V_{\lambda^{\Psi}}(-\lambda_0))_{\mathfrak{m}}^{G_k-\Psi-\mathrm{an}} \widehat{\otimes}_{L} C \cong H^d(\Fl, D_{\lambda^{\Psi}, K^p}^{\Psi-\mathrm{la}})_{\mathfrak{m}}^{G_{k}-\Psi-\mathrm{an}}$ is decomposable and the Sen operator $\theta_{\mathrm{Sen}}$ is equal to $\theta_{\mathfrak{h}}(\begin{pmatrix}
        -1 & 0 \\
         0 & 0 \end{pmatrix}_{\tau})$.

\end{cor}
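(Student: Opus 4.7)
The plan is to deduce the statement from Theorem \ref{geometric sen theoryII} and Proposition \ref{Sen operator} by tracking how decomposability and the Sen operator propagate through the \v{C}ech complex computing the cohomology. First, Theorem \ref{geometric sen theoryII} gives the isomorphism $\widehat{H}^d(S_{K^p}, V_{\lambda^{\Psi}}(-\lambda_0))_{\mathfrak{m}}^{\Psi-\mathrm{la}} \widehat{\otimes}_L C \cong H^d(\Fl, D_{\lambda^{\Psi}, K^p}^{\Psi-\mathrm{la}})_{\mathfrak{m}}$ and the vanishing of $H^i$ for $i \neq d$, plus the statement that this cohomology is computed by the \v{C}ech complex $C(\mathcal{B}', D_{\lambda^{\Psi}, K^p}^{\Psi-\mathrm{la}})_{\mathfrak{m}}$ for any finite covering $\mathcal{B}' \subset \mathcal{B}$. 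Taking the $G_k$-$\Psi$-analytic part (which is left exact and commutes with $\widehat{\otimes}_L C$ on the Banach side) gives the desired isomorphism of the first factor.

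Next, I would fix a finite covering $\mathcal{B}' = \{U_1, \dots, U_N\}$ from $\mathcal{B}$ and consider the truncated \v{C}ech complex of $G_k$-$\Psi$-analytic sections. By Proposition \ref{Sen operator}, for each intersection $U_{i_0} \cap \dots \cap U_{i_r}$ (which lies in $\mathcal{B}$ since $\mathcal{B}$ is stable under intersections), the space $D_{\lambda^{\Psi}, K^p}(U_{i_0} \cap \dots \cap U_{i_r})^{G_k-\Psi-\mathrm{an}}$ is decomposable, and the Sen operator equals $\theta_{\mathfrak{h}}\bigl(\begin{pmatrix} -1 & 0 \\ 0 & 0 \end{pmatrix}_\tau\bigr)$, which is induced by the $G(\mathbb{Q}_p)$-equivariant horizontal $\mathfrak{h}$-action and commutes with all \v{C}ech differentials (since it commutes with restriction maps and with the $G(\mathbb{Q}_p)$-action). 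Hence the entire complex consists of decomposable Banach spaces over $C$ with Sen operator given by a single horizontal element, and the differentials are equivariant for both the $G_L$- and $\Gamma_L$-actions.

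To pass from the term-wise statement to the cohomology, I would choose a sufficiently large finite extension $L'/L$ so that each term $C^r := \oplus_{|I|=r+1} D_{\lambda^{\Psi}, K^p}(\cap_{i \in I} U_i)^{G_k-\Psi-\mathrm{an}}$ satisfies $(C^r)^{L'} \widehat{\otimes}_{L'} C \xrightarrow{\sim} C^r$; such an $L'$ exists by the proof of Proposition \ref{Sen operator} (after enlarging $L$ so that all chosen $x_{\tau,n}$, $e_{1,\tau,n}$, $f_{\tau,n}$ on each chart are defined over $L'$, which only requires finitely many charts). Then the complex $(C^\bullet)^{L'}$ is a bounded complex of Banach spaces over $L'$ whose base change $(C^\bullet)^{L'} \widehat{\otimes}_{L'} C$ is $(C^\bullet)_{\mathfrak{m}}$, and by Lemma \ref{tensor product} we have $H^d(C^\bullet)^{L'}_{\mathfrak{m}} \widehat{\otimes}_{L'} C \cong H^d(C^\bullet)_{\mathfrak{m}}$, provided the differentials are strict with closed images. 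Strictness of the \v{C}ech differentials of $C(\mathcal{B}', D_{\lambda^\Psi, K^p}^{\Psi-\mathrm{la}})_{\mathfrak{m}}$ on the localized complex follows from the open mapping theorem together with the vanishing of $H^i$ for $i \neq d$ after localization (Theorem \ref{geometric sen theoryII}, part 2), which forces the differentials to have closed images in the localized setting. This gives decomposability of $H^d(\Fl, D_{\lambda^\Psi, K^p}^{\Psi-\mathrm{la}})_{\mathfrak{m}}^{G_k-\Psi-\mathrm{an}}$ via Lemma \ref{HTE}(4), and the Sen operator computation propagates from the terms to the cohomology since $\theta_{\mathfrak{h}}\bigl(\begin{pmatrix} -1 & 0 \\ 0 & 0 \end{pmatrix}_\tau\bigr)$ is a horizontal operator commuting with $G(\mathbb{Q}_p)$ and hence with all restriction maps in the \v{C}ech complex.

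The main obstacle is verifying strictness and closed-image properties of the \v{C}ech differentials on the $G_k$-$\Psi$-analytic subspaces, since these are Banach subspaces of LB-spaces and one must ensure that the decomposability data (i.e., $L'$-rational models of the relevant transcendental elements on each affinoid in the cover) can be chosen uniformly in a finite-level fashion. This is handled by fixing a finite covering once and for all, choosing $L'$ to contain all needed algebraic data on that covering, and invoking the open mapping theorem together with the cohomological vanishing of Theorem \ref{geometric sen theoryII} to deduce strictness after localization at $\mathfrak{m}$.
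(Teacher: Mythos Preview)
Your argument has a gap in the decomposability step. You deduce strictness of the differentials in your $G_k$-$\Psi$-analytic \v{C}ech complex $C^\bullet$ from the vanishing $H^i(\Fl, D_{\lambda^{\Psi}, K^p}^{\Psi-\mathrm{la}})_{\mathfrak{m}}=0$ for $i\neq d$ (Theorem \ref{geometric sen theoryII}(2)); but that vanishing is proved for the $\Psi$-la \v{C}ech complex of LB spaces, not for its $G_k$-$\Psi$-analytic Banach subcomplex. Since the functor $(-)^{G_k-\Psi-\mathrm{an}}$ is only left exact, there is no reason $C^\bullet$ inherits acyclicity outside degree $d$, and hence no justification for strictness via the open mapping theorem. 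For the same reason you have not identified $H^d(C^\bullet)$ with $H^d(\Fl, D_{\lambda^{\Psi}, K^p}^{\Psi-\mathrm{la}})_{\mathfrak{m}}^{G_k-\Psi-\mathrm{an}}$: the latter is the $G_k$-$\Psi$-analytic part of the cohomology of the LB complex, which is not a priori the cohomology of the Banach subcomplex. So even if $H^d(C^\bullet)$ were shown to be decomposable, the corollary would not follow.

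The paper instead cites \cite[Remark 5.1.16]{PanI}, which amounts to applying Lemma \ref{LBHT} directly to the Banach $C$-space $W=\widehat{H}^d(S_{K^p},V_{\lambda^\Psi}(-\lambda_0))_{\mathfrak{m}}\widehat\otimes_L C$. Lemma \ref{LBHT} only asks for $W^{\Psi-\mathrm{la}}$ to be expressed as an increasing union of Banach spaces each decomposable over some finite extension of $L$, and then yields decomposability of $W^{G_k-\Psi-\mathrm{an}}$ for all $k$ simultaneously, with no strictness hypothesis on any fixed-level subcomplex. Once decomposability is in hand, your argument for the Sen operator formula (via Proposition \ref{Sen operator} and the fact that the horizontal action $\theta_{\mathfrak{h}}$ commutes with the \v{C}ech differentials) goes through.
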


\begin{proof}
The same argument as \cite[Remark 5.1.16]{PanI} implies that $$\widehat{H}^d(S_{K^p}, V_{\lambda^{\Psi}}(-\lambda_0))_{\mathfrak{m}}^{G_k-\Psi-\mathrm{an}} \widehat{\otimes}_{L} C$$ is decomposable. The formula of the Sen operator follows from Proposition \ref{Sen operator}. \end{proof}

For $(a_{\tau}, b_{\tau}) \in (\mathbb{Z}^2)^{\Psi}$, we have a character $\mathfrak{h} = \prod_{\tau \in \Psi} \mathfrak{h}_{\tau} \rightarrow C, \ \begin{pmatrix}
    c_{\tau} & 0 \\
     0 & d_{\tau} \end{pmatrix}_{\tau} \mapsto \sum_{\tau} (a_{\tau} c_{\tau} + b_{\tau} d_{\tau})$. We also write $(a_{\tau}, b_{\tau})$ for this character. 

\begin{prop}\label{derived calculation II} Let $(a_{\tau}, b_{\tau}) \in (\mathbb{Z}^2)^{\Psi}$.

1 \ $H^i(\mathfrak{h}, D_{\lambda^{\Psi}, K^p}^{\Psi-\mathrm{la}}(U)(-(a_{\tau}, b_{\tau}))) = \mathrm{Ext}^i_{\mathfrak{h}}((a_{\tau}, b_{\tau}), D_{\lambda^{\Psi}, K^p}^{\Psi-\mathrm{la}}(U)) = 0$ for any $i > 0$.

2 \ We have $H^i(U, D_{\lambda^{\Psi}, K^p}^{\Psi-\mathrm{la}, (a_{\tau}, b_{\tau})}) = 0$ for any $i > 0$.

\end{prop}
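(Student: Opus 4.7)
The plan is to use the explicit Mikami-style expansion of Corollary \ref{nontrivial mikami} to convert the $\mathfrak{h}$-cohomology of Part 1 into an explicit Koszul/Poincar\'e-lemma computation on formal series, and then to deduce Part 2 via a hypercohomology spectral sequence. By the $G(\mathbb{Q}_p)$-equivariance of all the sheaves involved and Lemma \ref{open unit ball}, I would first translate by an element of $G(\mathbb{Q}_p)$ to reduce to the case $U \subset U_1$, so that the coordinates $x_{\tau}$, the bundle generators $e_{1,\tau}, f_{\tau}$, and their finite-level approximations $x_{\tau,n}, e_{1,\tau,n}, f_{\tau,n}$ appearing before Proposition \ref{mikami expansionII} are available.

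For Part 1, I would use the product decomposition $\mathfrak{h} = \prod_{\tau \in \Psi} \mathfrak{h}_{\tau}$ and the Hochschild-Serre spectral sequence to reduce to showing $H^i(\mathfrak{h}_{\tau}, N) = 0$ for $i > 0$, where $N$ is the $\mathfrak{h}_{\tau}$-restriction of $D_{\lambda^{\Psi}, K^p}^{\Psi-\mathrm{la}}(U)$ twisted by the relevant character. Since $\mathfrak{h}_{\tau}$ commutes with the $\mathfrak{gl}_{2}(F_w)_{\tau'}$-factors for $\tau' \neq \tau$, all the Mikami variables associated to indices $\tau' \neq \tau$ lie in $N^{\mathfrak{h}_\tau}$ and may be absorbed into the coefficient ring. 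The formula $\theta_{\mathfrak{h}}(\mathrm{diag}(c_{\tau}, d_{\tau})) \equiv \mathrm{diag}(d_{\tau}, c_{\tau}) \bmod \mathfrak{n}^o$ together with the weights of $e_{1,\tau}$ (weight $d_\tau$) and $f_{\tau}$ (weight $c_\tau + d_\tau$) shows that the $2\times 2$ matrix of eigenvalues of a basis of $\mathfrak{h}_{\tau}$ on the two log-variables $Y := \log(e_{1,\tau}/e_{1,\tau,n})$ and $W := \log(f_{\tau}/f_{\tau,n})$ is invertible (determinant $\pm 2$). After changing the basis of $\mathfrak{h}_\tau$ to one that diagonalises this matrix (and accounting for the $x_\tau - x_{\tau,n}$ direction, which carries the known weight of the positive root), each generator of $\mathfrak{h}_\tau$ becomes a commuting operator of the form (derivation in $Y$ or $W$) $+$ (semisimple weight operator on the remaining variables). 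Each such operator admits a continuous one-sided inverse built termwise from the Mikami expansion, and these give a Koszul-type contracting homotopy which collapses the higher cohomology to zero.

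For Part 2, I would sheafify the above and consider the Chevalley-Eilenberg complex of sheaves
\[
K^{\bullet} := D_{\lambda^{\Psi}, K^p}^{\Psi-\mathrm{la}}(-(a_{\tau}, b_{\tau})) \otimes_{C} \Lambda^{\bullet} \mathfrak{h}^{\vee},
\]
whose differential is induced by the horizontal $\mathfrak{h}$-action. Applying Part 1 to every affinoid $V \in \tilde{\mathcal{B}}$ contained in $U$ shows that $K^{\bullet}$ is sectionwise a resolution of $D_{\lambda^{\Psi}, K^p}^{\Psi-\mathrm{la}, (a_{\tau}, b_{\tau})}$ on such a base, hence quasi-isomorphic to it as a complex of sheaves. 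Since a character twist does not affect the underlying sheaf, Proposition \ref{derived calculation I} gives $H^q(U, K^p) = 0$ for all $q > 0$ and all $p$, so the hypercohomology spectral sequence $E_1^{p,q} = H^q(U, K^p) \Rightarrow H^{p+q}(U, D_{\lambda^{\Psi}, K^p}^{\Psi-\mathrm{la}, (a_{\tau}, b_{\tau})})$ degenerates to the $q = 0$ row, identifying the target with the Lie-algebra cohomology $H^{p+q}(\mathfrak{h}, D_{\lambda^{\Psi}, K^p}^{\Psi-\mathrm{la}}(U)(-(a_{\tau}, b_{\tau})))$, which vanishes in positive degrees by Part 1. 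The main obstacle is the topological bookkeeping in Part 1: one must verify that the termwise Poincar\'e homotopies produced from the formal expansion preserve the Banach norms $\|\cdot\|_{G_{r(n)}}$ supplied by the expansion, so that the constructed inverses land in the correct locally analytic spaces; this is precisely where the convergence estimates $\sup_{i,j,k}\|a_{i,j,k}\|p^{-n(i+j+k)} < \infty$ built into Proposition \ref{mikami expansionII} and Corollary \ref{nontrivial mikami} become essential.
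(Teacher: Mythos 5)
Your proposal is correct and follows essentially the same strategy as the paper: reduce to $U\subset U_1$, use the Mikami-type expansion of Corollary \ref{nontrivial mikami} together with a Hochschild--Serre reduction to the individual $\mathfrak{h}_\tau$'s to establish Part~1 (this is exactly the computation the paper delegates to \cite[Lemma 5.1.2]{PanI}), and then deduce Part~2 by commuting $R\Gamma(U,-)$ with $R\Gamma(\mathfrak{h},-)$ and invoking Proposition~\ref{derived calculation I}, which is the content of your hypercohomology spectral-sequence argument. One small inaccuracy: the parenthetical claim that $x_\tau - x_{\tau,n}$ ``carries the known weight of the positive root'' is misleading, since the horizontal action $\theta_{\mathfrak{h}}$ is trivial on $\mathcal{O}_{\Fl}$ (as noted in the paper just before Proposition~\ref{Sen operator}); the variable $x_\tau - x_{\tau,n}$ is in fact $\mathfrak{h}_\tau$-inert, and this is precisely what allows it to be absorbed into the coefficient ring so that the Poincar\'e lemma applies only in the two log-variables.
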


\begin{proof} We may assume $U \subset U_1$. The statement 1 can be proved by using explicit formula Corollary \ref{nontrivial mikami} and the Hochschild-Serre spectral sequence as in \cite[Lemma 5.1.2]{PanI}. Note that we have $R\Gamma(U, R\Gamma(\mathfrak{h}, D_{\lambda^{\Psi}, K^p}^{\Psi-\mathrm{la}}(-(a_{\tau}, b_{\tau})))) = R\Gamma(\mathfrak{h}, R\Gamma(U, D_{\lambda^{\Psi}, K^p}^{\Psi-\mathrm{la}})(-(a_{\tau}, b_{\tau})))$. Thus the statement 2 follows from the statement 1 and Proposition \ref{derived calculation I}. \end{proof}

\begin{prop} \label{horizontal action} Let $\mathfrak{m}$ be a decomposed generic maximal ideal of $\mathbb{T}^S$ and $(a_{\tau}, b_{\tau}) \in (\mathbb{Z}^2)^{\Psi}$.

1 \ We have $H^d(\Fl, D_{\lambda^{\Psi}, K^p}^{\Psi-\mathrm{la}})_{\mathfrak{m}}^{(a_{\tau}, b_{\tau})} = H^d(\Fl, D_{\lambda^{\Psi}, K^p}^{\Psi-\mathrm{la}, (a_{\tau}, b_{\tau})})_{\mathfrak{m}}$ and these can be computed by the $\check{C}$ech complex $C(\mathcal{B}', D_{\lambda^{\Psi}, K^p}^{\Psi-\mathrm{la}, (a_{\tau}, b_{\tau})})_{\mathfrak{m}}$ for any finite covering $\mathcal{B}' \subset \mathcal{B}$ of $\Fl$.

2 \ $H^d(\Fl, D_{\lambda^{\Psi}, K^p}^{\Psi-\mathrm{la}, (a_{\tau}, b_{\tau})})_{\mathfrak{m}}^{G_k-\Psi-\mathrm{an}}$ is Hodge-Tate of weight $\sum_{\tau} a_{\tau}$ for any $k$.

3 \ For any $i \neq d$, we have $H^i(\Fl, D_{\lambda^{\Psi}, K^p}^{\Psi-\mathrm{la}, (a_{\tau}, b_{\tau})})_{\mathfrak{m}} = 0$.

\end{prop}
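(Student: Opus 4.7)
For parts (1) and (3), the strategy is to pass the character twist through the Čech complex furnished by Theorem \ref{geometric sen theoryII}. Fix a finite covering $\mathcal{B}' \subset \mathcal{B}$ of $\Fl$; by Theorem \ref{geometric sen theoryII}, the complex $C(\mathcal{B}', D_{\lambda^{\Psi}, K^p}^{\Psi-\mathrm{la}})_{\mathfrak{m}}$ computes $H^*(\Fl, D_{\lambda^{\Psi}, K^p}^{\Psi-\mathrm{la}})_{\mathfrak{m}}$ and is concentrated in degree $d$ after localizing at $\mathfrak{m}$. Applying the functor $M \mapsto H^0(\mathfrak{h}, M(-(a_{\tau}, b_{\tau}))) = M^{(a_{\tau}, b_{\tau})}$ term-by-term yields the twisted Čech complex $C(\mathcal{B}', D_{\lambda^{\Psi}, K^p}^{\Psi-\mathrm{la}, (a_\tau, b_\tau)})_{\mathfrak{m}}$. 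By part (1) of Proposition \ref{derived calculation II}, this functor has no higher derived functors on the sections $D_{\lambda^{\Psi}, K^p}^{\Psi-\mathrm{la}}(U_I)$ appearing as components of the Čech complex; hence taking $\mathfrak{h}$-invariants of the (possibly double) complex commutes with taking cohomology, giving
\[
H^i(\Fl, D_{\lambda^{\Psi}, K^p}^{\Psi-\mathrm{la}})_{\mathfrak{m}}^{(a_{\tau}, b_{\tau})} \cong H^i(C(\mathcal{B}', D_{\lambda^{\Psi}, K^p}^{\Psi-\mathrm{la}, (a_\tau, b_\tau)})_{\mathfrak{m}}).
\]
On the other hand, part (2) of Proposition \ref{derived calculation II} says the sheaf $D_{\lambda^{\Psi}, K^p}^{\Psi-\mathrm{la}, (a_\tau, b_\tau)}$ has no higher cohomology on elements of $\mathcal{B}$, so Čech cohomology along $\mathcal{B}'$ computes the genuine cohomology. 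Combining the two identifications yields part (1), and combining with the vanishing $H^i(\Fl, D_{\lambda^{\Psi}, K^p}^{\Psi-\mathrm{la}})_{\mathfrak{m}} = 0$ for $i \ne d$ from Theorem \ref{geometric sen theoryII} yields part (3).

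For part (2), by part (1) and the commutation of taking the $(a_{\tau}, b_{\tau})$-eigenspace with taking $G_k$-$\Psi$-analytic vectors, the space in question equals
\[
H^d(\Fl, D_{\lambda^{\Psi}, K^p}^{\Psi-\mathrm{la}})_{\mathfrak{m}}^{G_k-\Psi-\mathrm{an},\, (a_{\tau},b_{\tau})},
\]
a closed subspace (eigenspace for the continuous horizontal $\mathfrak{h}$-action) of $M := H^d(\Fl, D_{\lambda^{\Psi}, K^p}^{\Psi-\mathrm{la}})_{\mathfrak{m}}^{G_k-\Psi-\mathrm{an}}$. By Corollary \ref{sen operatorII}, $M$ is decomposable and its Sen operator is $\theta_{\mathfrak{h}}(\mathrm{diag}(-1,0)_{\tau})$. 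Evaluating the character $(a_{\tau},b_{\tau})$ at $\mathrm{diag}(-1,0)_\tau$ gives $-\sum_{\tau} a_{\tau}$, so the Sen operator acts by this scalar on the eigenspace — hence semisimple with the single eigenvalue $-\sum_{\tau} a_{\tau}$, corresponding to Hodge–Tate weight $\sum_{\tau} a_{\tau}$ once decomposability is established.

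The remaining point, and the main obstacle, is to upgrade decomposability from $M$ to its $(a_{\tau},b_{\tau})$-eigenspace. The plan is to observe that after enlarging the finite extension $L$ (which is harmless by Lemma \ref{HTE}(6)), the coordinate functions $x_{\tau,n}, e_{1,\tau,n}, f_{\tau,n}$ used in Proposition \ref{mikami expansionII} and Corollary \ref{nontrivial mikami} can all be chosen to be defined over $L$; as in the proof of Proposition \ref{Sen operator}, this exhibits $M = M^L \widehat{\otimes}_L C$ in a way compatible with the horizontal $\mathfrak{h}$-action (which is $C$-linear and commutes with $G_L$ since it is built from the $\mathrm{GL}_2(\mathbb{Q}_p)$-action via $\mathfrak{b}^0/\mathfrak{n}^0 \cong \mathfrak{h}$, with the trivialization of $\mathfrak{b}^0/\mathfrak{n}^0$ available after extending $L$). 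The eigenspace then has the form $(M^L)^{(a_{\tau},b_{\tau})} \widehat{\otimes}_L C$, hence is decomposable by Lemma \ref{HTE}(4), completing the proof of part (2).
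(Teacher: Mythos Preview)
Your approach to parts (1) and (3) matches the paper's, phrased via the \v{C}ech complex rather than in derived-category language. One imprecision: from $F$-acyclicity of the \v{C}ech terms (where $F = (-)^{(a_\tau,b_\tau)}$) you conclude that ``taking $\mathfrak{h}$-invariants commutes with taking cohomology'', yielding your displayed isomorphism for all $i$. What term-wise acyclicity actually gives is $F(C^\bullet_{\mathfrak{m}}) \simeq RF(C^\bullet_{\mathfrak{m}})$, and since $C^\bullet_{\mathfrak{m}} \simeq H^d[-d]$ one obtains $H^i(F(C^\bullet_{\mathfrak{m}})) \cong R^{i-d}F(H^d)$, which agrees with your display only for $i \le d$. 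So part (3) for $i>d$ needs a separate input, e.g.\ that $|\Fl|$ has cohomological dimension $d$; the paper's proof is equally terse on this point.

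For part (2) you take a longer route than needed. Having observed (correctly) that $\theta_{\mathfrak{h}}(h)$ commutes with $G_L$ for $h$ in the $L$-form of $\mathfrak{h}$, the eigenspace is visibly the kernel of a continuous $G_L$-equivariant map $M \to M^{\oplus 2d}$ between decomposable spaces, so Lemma~\ref{HTE}(9) yields its decomposability directly, and Corollary~\ref{sen operatorII} then gives the Sen weight $\sum_\tau a_\tau$; this is what the paper's one-line ``follows from 1 and Corollary~\ref{sen operatorII}'' unpacks to. Your final step instead asserts $M^{(a_\tau,b_\tau)} = (M^L)^{(a_\tau,b_\tau)} \widehat{\otimes}_L C$, but kernels of bounded operators need not commute with $-\widehat{\otimes}_L C$, and this identity is essentially the decomposability you are trying to prove, rephrased via Lemma~\ref{HTE}(4). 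Replace the appeal to Lemma~\ref{HTE}(4) by Lemma~\ref{HTE}(9) and the argument closes without further work.
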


\begin{proof} By Proposition \ref{derived calculation II} and Theorem \ref{geometric sen theoryII}, we have $R\Gamma(\mathfrak{h}, H^d(\Fl, D^{\Psi-\mathrm{la}}_{\lambda^{\Psi}, K^p})_{\mathfrak{m}}[-d](-a_{\tau}, -b_{\tau})) = R\Gamma(\mathfrak{h}, R\Gamma(\Fl, D^{\Psi-\mathrm{la}}_{\lambda^{\Psi}, K^p})_{\mathfrak{m}}(-a_{\tau}, -b_{\tau})) = R\Gamma(\Fl, R\Gamma(\mathfrak{h}, D^{\Psi-\mathrm{la}}_{\lambda^{\Psi}, K^p}(-a_{\tau}, -b_{\tau}))_{\mathfrak{m}} = R\Gamma(\Fl, D^{\Psi-\mathrm{la}, (a_{\tau}, b_{\tau})}_{\lambda^{\Psi}, K^p})_{\mathfrak{m}}$. This implies 1 and 3. The statement 2 follows from 1 and Corollary \ref{sen operatorII} \end{proof}

Assume $U \subset U_1$.

Let  
$$((\otimes_{\tau} (\omega^{a_{\tau} - b_{\tau}}_{\tau, \aS_{K^pG_{r(n)}}} \otimes (\wedge^2D_{\tau, \aS_{K^pG_{r(n)}}})^{-b_{\tau}})) \otimes D_{\lambda^{\Psi}, \aS_{K^pG_{r(n)}}})(V_{G_{r(n)}})\lbrace x_{\tau} - x_{\tau, n} \rbrace_{\tau}(\prod_{\tau}f_{\tau}^{a_{\tau} }e_{1, \tau}^{b_{\tau} - a_{\tau}})$$ 

$:= \{ f = (\prod_{\tau}f_{\tau}^{a_{\tau} }e_{1, \tau}^{b_{\tau} - a_{\tau}}) \sum_{i = (i_{\tau}) \in \mathbb{Z}^{\Psi}_{\ge 0}} c_{i} \prod_{\tau \in \Psi}(x_{\tau}-x_{\tau, n})^{i_{\tau}} \in D_{\lambda^{\Psi}, K^p}(U)^{G_{r(n)}-\Psi-\mathrm{an}, (a_{\tau}, b_{\tau})}$ 

$\mid c_{i} \in ((\otimes_{\tau} (\omega^{a_{\tau} - b_{\tau}}_{\aS_{K^pG_{r(n)}}, \tau} \otimes (\wedge^2D_{\tau, \aS_{K^pG_{r(n)}}})^{-b_{\tau}})) \otimes D_{\lambda^{\Psi}, \aS_{K^pG_{r(n)}}})(V_{G_{r(n)}})\ \mathrm{s. \ t.} \ \mathrm{sup}_{i}|| c_{i} ||p^{-n(\sum_{\tau} i_{\tau})} < \infty \}.$ 

This is a Banach space by the norm $|| f ||_{x_{\tau}} := \mathrm{sup}_{i} || c_{i} ||p^{-(n+1)(\sum_{\tau}i_{\tau})}$. Then the natural inclusion $((\otimes_{\tau} (\omega^{a_{\tau} - b_{\tau}}_{\tau, \aS_{K^pG_{r(n)}}} \otimes (\wedge^2D_{\tau, \aS_{K^pG_{r(n)}}})^{-b_{\tau}})) \otimes D_{\lambda^{\Psi}, \aS_{K^pG_{r(n)}}})(V_{G_{r(n)}})\lbrace x_{\tau} - x_{\tau, n} \rbrace_{\tau}(\prod_{\tau}f_{\tau}^{a_{\tau} }e_{1, \tau}^{b_{\tau} - a_{\tau}}) \hookrightarrow D_{\lambda^{\Psi}, K^p}(U)^{G_{r(n)}-\Psi-\mathrm{an}, (a_{\tau}, b_{\tau})}$ is continuous. 

\begin{prop}\label{mikami expansionIII}

There exists a positive integer $m$ such that for any $n \ge m$, the natural inclusion $D_{\lambda^{\Psi}, K^p}(U)^{G_0-\Psi-\mathrm{an}, (a_{\tau}, b_{\tau})} \hookrightarrow D_{\lambda^{\Psi}, K^p}(U)^{G_{r(n)}-\Psi-\mathrm{an}, (a_{\tau}, b_{\tau})}$ factors through a continuous injection $D_{\lambda^{\Psi}, K^p}(U)^{G_0-\Psi-\mathrm{an}, (a_{\tau}, b_{\tau})} \hookrightarrow ((\otimes_{\tau} (\omega^{a_{\tau} - b_{\tau}}_{\tau, \aS_{K^pG_{r(n)}}} \otimes (\wedge^2D_{\tau, \aS_{K^pG_{r(n)}}})^{-b_{\tau}})) \otimes D_{\lambda^{\Psi}, \aS_{K^pG_{r(n)}}})(V_{G_{r(n)}})\lbrace x_{\tau} - x_{\tau, n} \rbrace_{\tau}(\prod_{\tau}f_{\tau}^{a_{\tau} }e_{1, \tau}^{b_{\tau} - a_{\tau}})$.

\end{prop}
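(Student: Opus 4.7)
The plan is to reduce this proposition to Corollary \ref{nontrivial mikami} (which provides the explicit expansion of arbitrary $G_0$-$\Psi$-analytic sections) by imposing the $(a_{\tau}, b_{\tau})$-eigenvalue condition for the horizontal action $\theta_{\mathfrak{h}}$ and then absorbing the appearing logarithms into a single factor $\prod_{\tau} f_{\tau}^{a_{\tau}} e_{1,\tau}^{b_{\tau} - a_{\tau}}$. Concretely, take $f \in D_{\lambda^{\Psi}, K^p}(U)^{G_0-\Psi-\mathrm{an}, (a_{\tau}, b_{\tau})}$. By Corollary \ref{nontrivial mikami}, for $n$ at least some $m$ depending only on $U$, we may write $f$ uniquely as
\begin{equation*}
f \;=\; \sum_{(i,j,k) \in (\mathbb{Z}_{\ge 0}^{\Psi})^3} a_{i,j,k} \prod_{\tau \in \Psi} \bigl(\log\tfrac{e_{1,\tau}}{e_{1,\tau,n}}\bigr)^{i_{\tau}} \bigl(\log\tfrac{f_{\tau}}{f_{\tau,n}}\bigr)^{j_{\tau}} (x_{\tau}-x_{\tau,n})^{k_{\tau}},
\end{equation*}
with $a_{i,j,k} \in D_{\lambda^{\Psi}, \aS_{K^pG_{r(n)}}}(V_{G_{r(n)}})$ satisfying the usual norm bound $\sup_{i,j,k} \|a_{i,j,k}\| p^{-n(\sum_{\tau}(i_{\tau}+j_{\tau}+k_{\tau}))} < \infty$.

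Next I would compute how $\theta_{\mathfrak{h}}$ acts on the three types of generators appearing in this expansion. Since $\mathfrak{h}$ acts trivially on $\mathcal{O}_{\Fl}$ and on smooth (finite-level) sections, and since $D_{\lambda^{\Psi}}$ carries the trivial $\mathfrak{h}$-action (its $\Psi$-components of $\lambda^{\Psi}$ vanish by our convention), the coefficients $a_{i,j,k}$ and the functions $x_{\tau}-x_{\tau,n}$ lie in the kernel of $\theta_{\mathfrak{h}}$. On the other hand, using the explicit lift $\theta_{\mathfrak{h}}(\mathrm{diag}(c_{\tau},d_{\tau})_{\tau}) = \bigl(\begin{smallmatrix} d_{\tau} & x_{\tau}(d_{\tau}-c_{\tau}) \\ 0 & c_{\tau} \end{smallmatrix}\bigr)_{\tau} \bmod \mathfrak{n}^0(U)$, the matrix acts on the quotient $\omega_{\tau,\Fl}\otimes \wedge^2 V_{\tau}$ by the scalar $d_{\tau}$ and on $\wedge^2 V_{\tau}$ by $c_{\tau}+d_{\tau}$, so that $\theta_{\mathfrak{h}}(\xi)(e_{1,\tau}/e_{1,\tau,n}) = d_{\tau}(e_{1,\tau}/e_{1,\tau,n})$ and $\theta_{\mathfrak{h}}(\xi)(f_{\tau}/f_{\tau,n}) = (c_{\tau}+d_{\tau})(f_{\tau}/f_{\tau,n})$ (since $\theta_{\mathfrak{h}}$ kills $e_{1,\tau,n}$ and $f_{\tau,n}$ by smoothness). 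Consequently $\theta_{\mathfrak{h}}(\xi) \log(e_{1,\tau}/e_{1,\tau,n}) = d_{\tau}$ and $\theta_{\mathfrak{h}}(\xi) \log(f_{\tau}/f_{\tau,n}) = c_{\tau}+d_{\tau}$.

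Then I would impose $\theta_{\mathfrak{h}}(\xi) f = \sum_{\tau}(a_{\tau}c_{\tau} + b_{\tau}d_{\tau}) f$ and compare coefficients of $(\log E_{\tau})^{i_{\tau}}(\log F_{\tau})^{j_{\tau}}$ separately in each $\tau$ and in each of the independent parameters $c_{\tau}, d_{\tau}$. The $c_{\tau}$-coefficient yields the recursion $(j_{\tau}+1)a_{\ldots, j_{\tau}+1,\ldots} = a_{\tau}\cdot a_{\ldots, j_{\tau},\ldots}$, and the $d_{\tau}$-coefficient, combined with this, yields $(i_{\tau}+1)a_{\ldots, i_{\tau}+1,\ldots} = (b_{\tau}-a_{\tau})\cdot a_{\ldots, i_{\tau},\ldots}$. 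Solving gives $a_{i,j,k} = \prod_{\tau}\tfrac{(b_{\tau}-a_{\tau})^{i_{\tau}}}{i_{\tau}!}\tfrac{a_{\tau}^{j_{\tau}}}{j_{\tau}!}\, a_{0,0,k}$, so the log series collapse to exponentials and
\begin{equation*}
f \;=\; \Bigl(\prod_{\tau} (e_{1,\tau}/e_{1,\tau,n})^{b_{\tau}-a_{\tau}} (f_{\tau}/f_{\tau,n})^{a_{\tau}}\Bigr) \sum_{k} a_{0,0,k} \prod_{\tau}(x_{\tau}-x_{\tau,n})^{k_{\tau}}.
\end{equation*}
Setting $c_k := a_{0,0,k} \cdot \prod_{\tau} e_{1,\tau,n}^{a_{\tau}-b_{\tau}} f_{\tau,n}^{-a_{\tau}}$ yields a section of $(\otimes_{\tau}(\omega_{\tau}^{a_{\tau}-b_{\tau}} \otimes (\wedge^2 D_{\tau})^{-b_{\tau}}))\otimes D_{\lambda^{\Psi}}$ at level $G_{r(n)}$, and we obtain $f = (\prod_{\tau} f_{\tau}^{a_{\tau}} e_{1,\tau}^{b_{\tau}-a_{\tau}}) \sum_k c_k \prod_{\tau}(x_{\tau}-x_{\tau,n})^{k_{\tau}}$, which is exactly the target form.

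Finally I would check norm bounds: since $\|e_{1,\tau,n}/e_{1,\tau}\|$ and $\|f_{\tau,n}/f_{\tau}\|$ are within $p^{-n-1}$ of $1$, the rescaling from $a_{0,0,k}$ to $c_k$ is norm-preserving up to a constant depending only on $(a_{\tau}, b_{\tau})$, and the bound $\sup_k \|c_k\| p^{-n(\sum_{\tau} k_{\tau})} < \infty$ transfers from the original expansion. The continuity of the factorized injection is then immediate. The main technical obstacle in this plan is the careful justification of the horizontal action on the logarithms $\log(e_{1,\tau}/e_{1,\tau,n})$ and $\log(f_{\tau}/f_{\tau,n})$: one must confirm that $\theta_{\mathfrak{h}}$ acts trivially on the smooth-level generators $e_{1,\tau,n}, f_{\tau,n}$ (so that all the weight of the ratio is carried by the infinite-level part), which is precisely the analogue of Pan's relations $u^+x = 1$, $h\log(e_1/e_{1,n}) = 1$, $z\log(t/t_n) = 2$ used in the proof of Theorem \ref{mikami expansion}, and which here follows formally from the explicit description of $\theta_{\mathfrak{h}}$ via the lift to $\mathfrak{b}^0(U)$ acting on the $G(\mathbb{Q}_p)$-equivariant line bundles $\omega_{\tau,K^p}$ and $\wedge^2 D_{\tau, K^p}$.
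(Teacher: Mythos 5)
Your proposal is correct and follows essentially the same route as the paper, whose proof of this proposition is the one-line citation of Corollary \ref{nontrivial mikami}: you have simply made that reduction explicit by writing out the $\theta_{\mathfrak{h}}$-weight recursion and collapsing the logarithm series into the single factor $\prod_{\tau}f_{\tau}^{a_{\tau}}e_{1,\tau}^{b_{\tau}-a_{\tau}}$. Your formulas $\theta_{\mathfrak{h}}(\mathrm{diag}(c_\tau,d_\tau))\log(e_{1,\tau}/e_{1,\tau,n})=d_\tau$ and $\theta_{\mathfrak{h}}(\mathrm{diag}(c_\tau,d_\tau))\log(f_\tau/f_{\tau,n})=c_\tau+d_\tau$ agree with the explicit $\theta_{\mathfrak{h}}(s_\tau)$- and $\theta_{\mathfrak{h}}(h_\tau)$-computations recorded in the proof of Lemma \ref{derived calculation1}, and the resulting recursions and norm-transfer are correct.
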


\begin{rem}

 Exactly the same proof gives a similar explicit description for $D_{\lambda^{\Psi}, \mathcal{S}_{K^p}}(W_{\infty})$ for the inverse image $W_{\infty}$ in $\aS_{K^p}$ of a rational open $W$ of $V$, i.e., locally on $\aS_{K^p}$ not locally on $\Fl$. This will be used in {\S} 4.5.

\end{rem}

\begin{proof} This follows from Corollary \ref{nontrivial mikami}. \end{proof}

For convenience, we record a simpler form of the above result in the case $\lambda_{\Psi} = (0, \lambda_{\tau})_{\tau}$, which we will assume from the next section.

\begin{cor}\label{citation}

There exists a positive integer $m$ such that for any $n \ge m$, the natural inclusion $D_{\lambda^{\Psi}, K^p}(U)^{G_0-\Psi-\mathrm{an}, (0, \lambda_{\tau})} \hookrightarrow D_{\lambda^{\Psi}, K^p}(U)^{G_{r(n)}-\Psi-\mathrm{an}, (0, \lambda_{\tau})}$ factors through a continuous injection $D_{\lambda^{\Psi}, K^p}(U)^{G_0-\Psi-\mathrm{an}, (0, \lambda_{\tau})} \hookrightarrow ((\otimes_{\tau} (\omega^{- \lambda_{\tau}}_{\tau, \aS_{K^pG_{r(n)}}} \otimes (\wedge^2 D_{\tau, \aS_{K^pG_{r(n)}}})^{-\lambda_{\tau}})) \otimes D_{\lambda^{\Psi}, \aS_{K^pG_{r(n)}}})(V_{G_{r(n)}})\lbrace x_{\tau} - x_{\tau, n} \rbrace_{\tau}(\prod_{\tau}e_{1, \tau}^{\lambda_{\tau}})$.

\end{cor}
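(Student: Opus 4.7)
The plan is to deduce this corollary directly from Proposition \ref{mikami expansionIII} by specializing the weight parameter. Concretely, I would take $(a_\tau, b_\tau) = (0, \lambda_\tau)$ for every $\tau \in \Psi$ in the statement of Proposition \ref{mikami expansionIII} and read off the resulting formulas. Under this substitution, $\omega_{\tau, \aS_{K^pG_{r(n)}}}^{a_\tau - b_\tau}$ collapses to $\omega_{\tau, \aS_{K^pG_{r(n)}}}^{-\lambda_\tau}$ and $(\wedge^2 D_{\tau, \aS_{K^pG_{r(n)}}})^{-b_\tau}$ collapses to $(\wedge^2 D_{\tau, \aS_{K^pG_{r(n)}}})^{-\lambda_\tau}$, which matches the target coefficient sheaf in the corollary.

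The generating monomial $\prod_{\tau} f_\tau^{a_\tau} e_{1,\tau}^{b_\tau - a_\tau}$ of Proposition \ref{mikami expansionIII} specializes to $\prod_\tau f_\tau^{0} e_{1,\tau}^{\lambda_\tau - 0} = \prod_\tau e_{1,\tau}^{\lambda_\tau}$, which is exactly the monomial appearing on the right-hand side of the corollary. Since Proposition \ref{mikami expansionIII} supplies both the factorization through the Banach space of power-series expansions in the $x_\tau - x_{\tau,n}$ and the continuity of this factored injection, both properties are inherited by the specialized statement without further work.

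Thus there is essentially no nontrivial content to carry out beyond index-matching; the only thing to check is that the constant $m$ obtained from Proposition \ref{mikami expansionIII} for the choice $(0, \lambda_\tau)_\tau$ works here, which is immediate since the present corollary is the literal restatement for that particular $(a_\tau, b_\tau)$. Accordingly, I do not anticipate any genuine obstacle — the ``hard part'' was already done in proving Proposition \ref{mikami expansionIII} itself (and ultimately in Corollary \ref{nontrivial mikami} and Lemma \ref{formal expansion} which underlie it), and the role of this corollary is purely to record the form of the expansion in the weight that will be used from the next section onward.
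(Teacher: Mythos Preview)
Your proposal is correct and matches the paper's approach exactly: the corollary is stated in the paper as a specialization of Proposition \ref{mikami expansionIII} to the case $(a_\tau, b_\tau) = (0, \lambda_\tau)$, recorded for convenience without further proof. Your index-matching is accurate, and there is nothing more to add.
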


\begin{lem}\label{horizontal weight}

For any $(m_{\tau}, n_{\tau})_{\tau} \in (\mathbb{Z}^2)^{\Psi}$, we have a canonical isomorphism $\mathcal{O}_{K^p}^{\Psi-\mathrm{la}, (a_{\tau}, b_{\tau})_{\tau}} \otimes_{\mathcal{O}_{\Fl}} (\otimes_{\tau \in \Psi} (\omega_{\tau, \Fl}^{n_{\tau}} \otimes (\wedge^2 V_{\tau})^{m_{\tau}}))  \cong \mathcal{O}_{K^p}^{\Psi-\mathrm{la}, (a_{\tau}+m_{\tau} - n_{\tau}, b_{\tau}+m_{\tau})_{\tau}} \otimes (\otimes_{\tau \in \Psi} (\omega_{\tau, K^p}^{n_{\tau}, \mathrm{sm}} \otimes (\wedge^2 D_{\tau, K^p}^{\mathrm{sm}})^{m_{\tau}}))(\sum_{\tau}(m_{\tau}-n_{\tau}))$. 

\end{lem}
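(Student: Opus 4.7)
The approach is to construct the canonical isomorphism locally on $\Fl$ and then observe that canonicity forces it to glue. By Lemma \ref{open unit ball} and the $G(\mathbb{Q}_p)$-equivariance of both sides, it suffices to work on an affinoid open $U \in \tilde{\mathcal{B}}$ contained in $U_1$. On such a $U$, we have the explicit generators $e_{1,\tau}$ of $\omega_{\tau,\Fl} \otimes \wedge^2 V_\tau$ and $f_\tau$ of $\wedge^2 V_\tau(-1)$ introduced before Proposition \ref{mikami expansionII}, together with their smooth approximations $e_{1,\tau,n}$ and $f_{\tau,n}$ at finite level.

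The underlying identification of line bundles follows from parts 3 and 4 of Proposition \ref{Hodge de Rham}: restricting to $\aS_{K^p}$ and pushing forward along $\pi_{\mathrm{HT}}$ gives canonical isomorphisms
\[
\omega_{\tau,\Fl} \otimes_{\mathcal{O}_{\Fl}} \mathcal{O}_{K^p} \;\cong\; \omega_{\tau,K^p}^{\mathrm{sm}} \otimes_{\mathcal{O}_{K^p}^{\mathrm{sm}}} \mathcal{O}_{K^p}(-1), \qquad \wedge^2 V_\tau \otimes_L \mathcal{O}_{K^p} \;\cong\; \wedge^2 D_{\tau,K^p}^{\mathrm{sm}} \otimes_{\mathcal{O}_{K^p}^{\mathrm{sm}}} \mathcal{O}_{K^p}(1),
\]
and combining these over all $\tau \in \Psi$ with exponents $n_\tau$ and $m_\tau$ accounts precisely for the Tate twist $\sum_\tau(m_\tau - n_\tau)$ appearing on the right. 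Concretely, the candidate isomorphism is multiplication by $\prod_{\tau \in \Psi} e_{1,\tau}^{n_\tau} f_\tau^{m_\tau}$, interpreted after the above identifications.

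To see that the horizontal weight shifts by $(m_\tau - n_\tau, m_\tau)_\tau$, I would use the formula $\theta_\mathfrak{h}(\mathrm{diag}(c_\tau,d_\tau)) \equiv \begin{pmatrix} d_\tau & \ast \\ 0 & c_\tau \end{pmatrix}_\tau \bmod \mathfrak{n}^0(U)$ together with the right action of $G(\mathbb{Q}_p)$ on $\Fl$ recalled in \S 3.4: these force $\omega_{\tau,\Fl}$ to carry horizontal weight $(-1, 0)_\tau$ and $\wedge^2 V_\tau$ to carry horizontal weight $(1, 1)_\tau$, so $\omega_{\tau,\Fl}^{n_\tau} \otimes (\wedge^2 V_\tau)^{m_\tau}$ has horizontal weight $(m_\tau - n_\tau, m_\tau)_\tau$. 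To see preservation of $\Psi$-local analyticity, I would invoke Propositions \ref{mikami expansionII} and \ref{mikami expansionIII}: the quotients $e_{1,\tau}/e_{1,\tau,n}$ and $f_\tau/f_{\tau,n}$ are units whose logarithms are $\Psi$-locally analytic, so multiplication by $\prod_\tau e_{1,\tau}^{n_\tau} f_\tau^{m_\tau}$ carries $\mathcal{O}_{K^p}^{\Psi-\mathrm{la}, (a_\tau,b_\tau)}$ isomorphically onto the appropriately twisted space of weight $(a_\tau + m_\tau - n_\tau, b_\tau + m_\tau)$. The inverse is supplied by multiplication by $\prod_\tau e_{1,\tau}^{-n_\tau} f_\tau^{-m_\tau}$.

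The main obstacle is the bookkeeping of three simultaneous shifts — the Tate twist produced by the Hodge-Tate comparison, the smooth-versus-flag-variety twist of the line bundles, and the weight swap built into the definition of $\theta_\mathfrak{h}$ — while ensuring at each step that multiplication by $e_{1,\tau}^{n_\tau} f_\tau^{m_\tau}$ stays inside the $\Psi$-locally analytic subsheaf rather than moving into the full $\mathcal{O}_{K^p}$. Once this is verified locally on some $U \in \tilde{\mathcal{B}}$ with $U \subset U_1$, the construction is manifestly canonical, so it globalizes to $\Fl$ by $G(\mathbb{Q}_p)$-translation and gluing.
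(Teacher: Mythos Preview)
Your proposal is correct, and the underlying canonical isomorphism of line bundles (via $\pi_{\mathrm{HT}}^*\omega_{\tau,\Fl} \cong \omega_{\tau,K^p}(-1)$ and $\wedge^2 V_\tau \otimes \mathcal{O}_{K^p} \cong \wedge^2 D_{\tau,K^p}^{\mathrm{sm}}(1)$ from Proposition~\ref{Hodge de Rham}) is exactly what the paper uses. However, your route to the weight pieces is more hands-on than the paper's. You localize to $U \subset U_1$, trivialize with the explicit generators $e_{1,\tau}$, $f_\tau$, compute the horizontal weight of each generator directly, and appeal to the explicit expansions (Propositions~\ref{mikami expansionII} and~\ref{mikami expansionIII}) to check that multiplication by $\prod_\tau e_{1,\tau}^{n_\tau} f_\tau^{m_\tau}$ preserves $\Psi$-local analyticity, before globalizing. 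The paper instead first writes down the isomorphism at the level of the \emph{full} sheaf $\mathcal{O}_{K^p}^{\Psi-\mathrm{la}}$ (no weight restriction), and then observes that since the right-hand side is precisely the $(a_\tau+m_\tau-n_\tau, b_\tau+m_\tau)_\tau$-isotypic piece of that whole sheaf, one gets the inclusion $\text{LHS}\subset\text{RHS}$ for free; the reverse inclusion follows by applying the same reasoning after twisting by $\otimes_{\tau}(\omega_{\tau,\Fl}^{-n_\tau}\otimes(\wedge^2 V_\tau)^{-m_\tau})$. This symmetry trick bypasses both the local computation and the explicit verification of local analyticity. Your approach has the compensating virtue of making the isomorphism concrete on the nose, which is sometimes useful when one later needs to track elements through it.
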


\begin{proof}

By the definition of $\pi_{\mathrm{HT}}$, we have $\pi_{\mathrm{HT}}^{*} \omega_{\tau, \Fl} = \omega_{\tau, K^p}(-1)$. Thus we have $\mathcal{O}_{K^p}^{\Psi-\mathrm{la}} \otimes_{\mathcal{O}_{\Fl}} (\otimes_{\tau \in \Psi} (\omega_{\tau, \Fl}^{n_{\tau}} \otimes (\wedge^2 V_{\tau})^{m_{\tau}}))  \cong \mathcal{O}_{K^p}^{\Psi-\mathrm{la}} \otimes (\otimes_{\tau \in \Psi} (\omega_{\tau, K^p}^{n_{\tau}, \mathrm{sm}} \otimes (\wedge^2 D_{\tau, K^p}^{\mathrm{sm}})^{m_{\tau}}))(\sum_{\tau}(m_{\tau}-n_{\tau}))$ by Proposition \ref{Hodge de Rham}. We claim that this induces the isomorphism of the subsheaves $\mathcal{O}_{K^p}^{\Psi-\mathrm{la}, (a_{\tau}, b_{\tau})_{\tau}} \otimes_{\mathcal{O}_{\Fl}} (\otimes_{\tau \in \Psi} (\omega_{\tau, \Fl}^{n_{\tau}} \otimes (\wedge^2 V_{\tau})^{m_{\tau}}))  \cong \mathcal{O}_{K^p}^{\Psi-\mathrm{la}, (a_{\tau}+m_{\tau} - n_{\tau}, b_{\tau}+m_{\tau})_{\tau}} \otimes (\otimes_{\tau \in \Psi} (\omega_{\tau, K^p}^{n_{\tau}, \mathrm{sm}} \otimes (\wedge^2 D_{\tau, K^p}^{\mathrm{sm}})^{m_{\tau}}))(\sum_{\tau}(m_{\tau}-n_{\tau}))$. To see this, the left hand side is contained in the right hand side because the right hand side is the $(a_{\tau}+m_{\tau} - n_{\tau}, b_{\tau}+m_{\tau})_{\tau}$-isotropic part of the whole sheaf. The converse also follows from the same reasoning by twisting $\otimes_{\mathcal{O}_{\Fl}} (\otimes_{\tau \in \Psi} (\omega_{\tau, \Fl}^{-n_{\tau}} \otimes (\wedge^2 V_{\tau})^{-m_{\tau}}))$. \end{proof}

Let $\mathfrak{g}_{\Psi} := \oplus_{\tau \in \Psi} \mathfrak{gl}_2(L)_{\tau}$. Let $W := \prod_{\tau \in \Psi} \langle \begin{pmatrix}
    0 & 1 \\
     1 & 0 \end{pmatrix}_{\tau} \rangle_{\tau} \subset \prod_{\tau \in \Psi} \mathrm{GL}_2(L)$, which can be naturally regarded as a Weyl group of $\prod_{\tau \in \Psi} \mathrm{GL}_2(L)$, let $w_0 = \begin{pmatrix}
    0 & 1 \\
     1 & 0 \end{pmatrix}_{\tau} \in W$ be the longest element and $\mathrm{Sym} \mathfrak{h}$ be the symmetric algebra of $\mathfrak{h}$. We have the dot action on $\mathfrak{h}^{\vee}$ of $W$, i.e., for any $w = (w_{\tau}) \in W$ and $h = (c_{\tau}, d_{\tau})_{\tau} \in \mathfrak{h}^{\vee}$, we put $w \cdot h := w(h + \rho)w^{-1} -\rho$, where $\rho := (\frac{1}{2}, -\frac{1}{2})_{\tau} \in \mathfrak{h}^{\vee}$. This induces an action of $W$ on the symmetric algebra $\mathrm{Sym}\mathfrak{h}$ of $\mathfrak{h}$ by identification $\mathfrak{h}^{\vee} = \mathrm{Hom}_{C-\mathrm{alg}}(\mathrm{Sym}\mathfrak{h}, C)$. We have the unnormalized Harish-Chandra isomorphism $\mathrm{HC} : Z(U(\mathfrak{g}_{\Psi}))_C \Isom (\mathrm{Sym} \mathfrak{h})^W$, where $Z(U(\mathfrak{g}_{\Psi}))$ denotes the center of the universal enveloped algebra $U(\mathfrak{g}_{\Psi})$ of $\mathfrak{g}_{\Psi}$.

\begin{lem}\label{infinitesimal character and horizontal}

 The natural action of the center of the universal enveloped algebra $Z(U(\mathfrak{g}_{\Psi}))_C$ of $\mathfrak{g}_{\Psi}$ on $D_{\lambda^{\Psi}, K^p}^{\Psi-\mathrm{la}}$ is equal to $\theta_{\mathfrak{h}}(w_0\mathrm{HC}( \cdot )w_0^{-1})$.

\end{lem}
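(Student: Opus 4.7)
The plan is to reduce the identity to a single explicit Casimir computation, carried out locally on $U \subset U_1$. Both sides of the claimed equality are $C$-algebra homomorphisms $Z(U(\mathfrak{g}_{\Psi}))_C \to \mathrm{End}_C(D^{\Psi-\mathrm{la}}_{\lambda^{\Psi}, K^p})$, so by the factorization $Z(U(\mathfrak{g}_{\Psi}))_C = \bigotimes_{\tau \in \Psi} Z(U(\mathfrak{gl}_2))_{C,\tau}$ and the fact that each tensor factor is generated as an algebra by the identity $I_{\tau}$ and the Casimir $C_{\tau} = e_{\tau}f_{\tau} + f_{\tau}e_{\tau} + \tfrac{1}{2}h_{\tau}^2$, it suffices to verify the identity on these generators; the case of $I_{\tau}$ is immediate from $\theta_{\mathfrak{h}}(I_{\tau}) = I_{\tau}$ and $w_0 I_{\tau} w_0^{-1} = I_{\tau}$. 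Since the identity is local on $\Fl$, by Lemma~\ref{open unit ball} and $G(\mathbb{Q}_p)$-equivariance we may work on $U \subset U_1$.

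On such $U$, the formula displayed after Proposition~\ref{Sen operator} identifies $\theta_{\mathfrak{h}}(\mathrm{diag}(c_{\tau}, d_{\tau})_{\tau})$ with the standard $\mathfrak{g}_{\Psi}$-action of the lift $\begin{pmatrix} d_{\tau} & x_{\tau}(d_{\tau}-c_{\tau}) \\ 0 & c_{\tau} \end{pmatrix} \in \mathfrak{b}^0(U)_{\tau}$, so in the standard $\mathfrak{sl}_2$-basis $\theta_{\mathfrak{h}}(h_{\tau}) = -h_{\tau} - 2x_{\tau} e_{\tau}$. Proposition~\ref{sen operator} says that $\mathfrak{n}^0(U)_{\tau}$, generated by $n_{\tau} := x_{\tau} h_{\tau} - f_{\tau} + x_{\tau}^2 e_{\tau}$, acts by zero on $\mathcal{O}_{K^p}^{\Psi-\mathrm{la}}(U)$, and this extends to $D^{\Psi-\mathrm{la}}_{\lambda^{\Psi}, K^p}(U)$ since $\lambda^{\Psi}$ vanishes on $\Psi$-components, so $\mathfrak{g}_{\Psi}$ acts trivially on the smooth twisting bundles. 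Using $f_{\tau} = x_{\tau} h_{\tau} + x_{\tau}^2 e_{\tau} - n_{\tau}$ and $h_{\tau} e_{\tau} = e_{\tau} h_{\tau} + 2 e_{\tau}$, a direct computation in $U(\mathfrak{g}_{\Psi}) \otimes_C \mathcal{O}_{\Fl}(U)$ gives
\[
C_{\tau} - \Bigl( \tfrac{1}{2}\theta_{\mathfrak{h}}(h_{\tau})^2 - \theta_{\mathfrak{h}}(h_{\tau}) \Bigr) = -2 n_{\tau} e_{\tau},
\]
and since $n_{\tau}$ annihilates $e_{\tau}\xi$ for any $\xi \in D^{\Psi-\mathrm{la}}_{\lambda^{\Psi}, K^p}(U)$, the Casimir acts as $\theta_{\mathfrak{h}}(\tfrac{1}{2}h_{\tau}^2 - h_{\tau})$.

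To match this with the right-hand side, note that the unnormalized Harish-Chandra image with respect to the upper Borel is $\mathrm{HC}(C_{\tau}) = \tfrac{1}{2}h_{\tau}^2 + h_{\tau}$, since $2f_{\tau} e_{\tau} \in U(\mathfrak{g}) \langle e_{\tau} \rangle$; conjugation by $w_0$ sends $h_{\tau} \mapsto -h_{\tau}$, yielding $w_0 \mathrm{HC}(C_{\tau}) w_0^{-1} = \tfrac{1}{2}h_{\tau}^2 - h_{\tau}$, as required. The conceptual content of the Weyl-group conjugation --- and the main subtlety in the argument --- is that the canonical identification $\mathfrak{h} \cong \mathfrak{b}^0/\mathfrak{n}^0$ used to define $\theta_{\mathfrak{h}}$ places the eigenvalue on $\omega^{-1}$ in the ``upper'' slot, but at the reference origin $x_{\tau} = 0$ the line $\omega^{-1}$ is spanned by $e_2$, so $\mathfrak{b}^0|_{x=0}$ equals the opposite Borel $\mathfrak{b}^-$; the relevant projection is thus the lower-Borel Harish-Chandra, which equals $w_0 \mathrm{HC}(\cdot) w_0^{-1}$. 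The main obstacle is verifying that the $x$-dependent lift in the explicit formula for $\theta_{\mathfrak{h}}$ realizes this opposite-Borel projection uniformly on $U$, which is exactly the content of the rearrangement above.
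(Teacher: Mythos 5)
Your proof is correct and takes essentially the same approach as the paper's cited proof (Pan, Corollary 4.2.8): reduce to the Casimir, use the explicit formula $\theta_{\mathfrak{h}}(h_\tau) = -h_\tau - 2x_\tau e_\tau$ and the relation $f_\tau = x_\tau h_\tau + x_\tau^2 e_\tau - n_\tau$ forced by Proposition~\ref{sen operator}, and match with $w_0\mathrm{HC}(\Omega_\tau)w_0^{-1} = \tfrac{1}{2}h_\tau^2 - h_\tau$. I checked the displayed identity and the conclusion, and the opposite-Borel remark is a good way to see where the $w_0$ comes from.

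One subtlety you should make explicit: $U(\mathfrak{g}_\Psi)\otimes_C\mathcal{O}_{\Fl}(U)$ with $\mathcal{O}_{\Fl}$ central is \emph{not} the ring of operators on $D^{\Psi-\mathrm{la}}_{\lambda^\Psi,K^p}(U)$, since $e_\tau(x_\tau)=1$ (Leibniz rule), and the natural map from one to the other is only $C$-linear, not multiplicative. Your identity nonetheless transfers correctly to operators because of two features peculiar to this situation: $\theta_{\mathfrak{h}}(h_\tau)$ and $n_\tau$ lie in $\mathfrak{b}^0(U)$, whose sections annihilate $\mathcal{O}_{\Fl}(U)$, so $\theta_{\mathfrak{h}}(h_\tau)^2$ and the specific ordering $n_\tau e_\tau$ pick up no Leibniz corrections (whereas, e.g., $e_\tau n_\tau$ would). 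If you instead run the computation directly in the operator ring using $D_{f_\tau} = M_{x_\tau}D_{h_\tau} + M_{x_\tau^2}D_{e_\tau}$, the Leibniz terms cancel and you get the exact equality $D_{C_\tau} = \tfrac{1}{2}(\theta_{\mathfrak{h}}(h_\tau))^2 - \theta_{\mathfrak{h}}(h_\tau)$ with no error term, which is cleaner; either way the conclusion is the same.
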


\begin{proof} Same as \cite[proof of Corollary 4.2.8]{PanI}. \end{proof}

\begin{prop} (Hodge-Tate decomposition of locally analytic completed cohomology) \label{Hodge-Tate decomposition} Let $\chi_{\lambda_{\Psi}} : Z(U(\mathfrak{g}_{\Psi})) \rightarrow L$ be the infinitesimal character of $V_{\lambda_{\Psi}}^{\vee}$.

1 \ $D^{\Psi-\mathrm{la}, \chi_{\lambda_{\Psi}}}_{\lambda^{\Psi}, K^p} = \oplus_{I \subset \Psi} D^{\Psi-\mathrm{la}, (0, \lambda_{\tau})_{\tau \in I}, (1 +\lambda_{\tau}, -1)_{\tau \notin I}}_{\lambda^{\Psi}, K^p}$.

2 \ Let $\mathfrak{m}$ be a decomposed generic maximal ideal of $\mathbb{T}^S$.

Then $\widehat{H}^d(S_{K^p}, V_{\lambda^{\Psi}}(-\lambda_0))_{\mathfrak{m}}^{\Psi-\mathrm{la}, \chi_{\lambda_{\Psi}}} \widehat{\otimes}_{L} C \cong \oplus_{I \subset \Psi} H^d(\Fl, D^{\Psi-\mathrm{la}, (0, \lambda_{\tau})_{\tau \in I}, ( 1 +\lambda_{\tau}, -1)_{\tau \notin I}}_{K^p, \lambda^{\Psi}})_{\mathfrak{m}}$.

\end{prop}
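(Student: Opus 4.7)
The plan is to deduce the decomposition in part 1 from the key identification of central and horizontal actions in Lemma \ref{infinitesimal character and horizontal}, namely that $Z(U(\mathfrak{g}_{\Psi}))_C$ acts on $D^{\Psi-\mathrm{la}}_{\lambda^{\Psi}, K^p}$ through $\theta_{\mathfrak{h}} \circ (w_0 \mathrm{HC}(\cdot)w_0^{-1})$, and then to pass to cohomology in part 2 using Theorem \ref{geometric sen theoryII} and Proposition \ref{horizontal action}. First, I would compute explicitly that the weights $(0, \lambda_{\tau})_{\tau \in I}, (1+\lambda_{\tau}, -1)_{\tau \notin I}$ for $I \subset \Psi$ constitute exactly the $W$-orbit of the character parametrizing $\chi_{\lambda_{\Psi}}$, after conjugation by $w_0$. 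Concretely, for each $\tau$ the dot action of the nontrivial element of $W_{\tau}$ sends $(\lambda_{\tau}, 0)$ to $(-1, \lambda_{\tau}+1)$, and applying $w_0^{-1}$ componentwise to both representatives yields $(0, \lambda_{\tau})$ and $(1+\lambda_{\tau}, -1)$; this accounts for all $2^d$ weights.

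For the inclusion $\supset$ of part 1, an element in one of the stated horizontal weight spaces is annihilated by $(z - \chi_{\lambda_{\Psi}}(z)) \cdot 1$ for every $z \in Z(U(\mathfrak{g}_{\Psi}))$, since $W$-invariant elements of $\mathrm{Sym}\,\mathfrak{h}$ evaluate identically on the $W$-orbit; so the sum lies in $D^{\Psi-\mathrm{la}, \chi_{\lambda_{\Psi}}}$. For the reverse inclusion, I would exploit the fact that $\theta_{\mathfrak{h}}$ extends to an action of the full polynomial ring $\mathrm{Sym}\,\mathfrak{h}$, not merely its $W$-invariants: choose $h \in \mathfrak{h}$ separating the $2^d$ orbit weights, and consider the operator $T_h := \theta_{\mathfrak{h}}(h)$. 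The polynomial $P(X) := \prod_{\mu \in W\cdot \mu_0}(X - \mu(h))$ has $W$-invariant coefficients in $\mathrm{Sym}\,\mathfrak{h}$, hence acts on $D^{\Psi-\mathrm{la}, \chi_{\lambda_{\Psi}}}_{\lambda^{\Psi}, K^p}$ as a scalar determined by $\chi_{\lambda_{\Psi}}$; since its roots are distinct, Lagrange interpolation with polynomials in $\mathrm{Sym}\,\mathfrak{h}$ produces continuous projectors onto the generalized $T_h$-eigenspaces, and on each such eigenspace the full $\mathfrak{h}$-action must factor through a unique orbit weight (using commutativity of $\mathfrak{h}$ and iterating the same argument in each component $\tau$). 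As a safety net, the explicit Mikami-type expansion in Corollary \ref{citation} and Proposition \ref{mikami expansionIII} gives a direct verification on the local models that the $(a_{\tau}, b_{\tau})$-weight components are indeed closed complemented summands, which pins down the decomposition globally on $\Fl$ after sheafification.

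For part 2, I would first invoke Theorem \ref{geometric sen theoryII} to obtain $\widehat{H}^d(S_{K^p}, V_{\lambda^{\Psi}}(-\lambda_0))_{\mathfrak{m}}^{\Psi-\mathrm{la}} \widehat{\otimes}_{L} C \cong H^d(\Fl, D^{\Psi-\mathrm{la}}_{\lambda^{\Psi}, K^p})_{\mathfrak{m}}$. The $\chi_{\lambda_{\Psi}}$-isotypic part commutes with this isomorphism because the $Z(U(\mathfrak{g}_{\Psi}))$-action passes through the canonical map and (again using the same $P(X)$ argument applied now to the cohomology group, which is itself a locally analytic module via Theorem \ref{geometric sen theoryII}) the projector onto $\chi_{\lambda_{\Psi}}$-isotypic is continuous. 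Then I apply part 1 together with Proposition \ref{horizontal action}, which asserts both that $H^i(\Fl, D^{\Psi-\mathrm{la}, (a_{\tau}, b_{\tau})}_{\lambda^{\Psi}, K^p})_{\mathfrak{m}} = 0$ for $i \neq d$ and that the degree $d$ cohomology of the horizontal weight space equals the horizontal weight space of the degree $d$ cohomology; the direct sum over $I \subset \Psi$ then follows term by term.

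The main obstacle is the careful topological argument in the $\subset$ direction of part 1: ensuring that the Lagrange projectors built from $\mathrm{Sym}\,\mathfrak{h}$ are genuinely bounded (hence yield a topological direct sum), and that no ``generalized'' but non-semisimple piece can appear for the commuting family $\{\theta_{\mathfrak{h}}(h) : h \in \mathfrak{h}\}$. Verifying semisimplicity (as opposed to a generalized eigenspace decomposition) will likely require invoking the explicit local description from Proposition \ref{mikami expansionIII} as a second input rather than relying purely on the abstract Harish-Chandra machinery.
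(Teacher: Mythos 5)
Your proposal takes essentially the same route as the paper's proof: identify the $Z(U(\mathfrak{g}_\Psi))$-action with the horizontal action $\theta_{\mathfrak{h}}(w_0\,\mathrm{HC}(\cdot)\,w_0^{-1})$ via Lemma \ref{infinitesimal character and horizontal}, compute the $W$-dot-orbit of $(\lambda_\tau,0)_\tau$ componentwise as $\{(\lambda_\tau,0),\,(-1,\lambda_\tau+1)\}$, conjugate by $w_0$ to get $\{(0,\lambda_\tau),\,(1+\lambda_\tau,-1)\}$, and pass to cohomology using Theorem \ref{geometric sen theoryII} together with Proposition \ref{horizontal action}. Your orbit computation agrees with the paper's.

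Two points should be sharpened. First, the claim that $P(X) := \prod_{\mu\in W\cdot\mu_0}(X-\mu(h))$ ``has $W$-invariant coefficients in $\mathrm{Sym}\,\mathfrak{h}$'' conflates two things: $P(X)$ has scalar coefficients. What you actually want is the characteristic polynomial $Q_h(X) = \prod_{w\in W}(X - w(h)) \in (\mathrm{Sym}\,\mathfrak{h}^W)[X]$ of multiplication by $h$ on the free $\mathrm{Sym}\,\mathfrak{h}^W$-module $\mathrm{Sym}\,\mathfrak{h}$; since its coefficients lie in $\mathrm{Sym}\,\mathfrak{h}^W$, they act by the scalar $\chi_{\lambda_\Psi}$ on the isotypic subspace, specializing $Q_h$ to $P(X)$, and Cayley--Hamilton (or the obvious factorization $Q_h(h)=0$) gives $P(\theta_{\mathfrak{h}}(h))=0$. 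Second, your caution about semisimplicity is overstated. Because $\lambda_\tau \ge 0$, the shifted weight $(\lambda_\tau+\tfrac12,-\tfrac12)$ is regular in every component $\tau$, so the $W$-orbit is free and $\mathrm{Sym}\,\mathfrak{h}/(\mathrm{Ker}\eta)\mathrm{Sym}\,\mathfrak{h}\cong C^{2^d}$ is reduced (this is exactly what the paper later records as Lemma \ref{elementary}). A module over a finite product of fields is automatically semisimple over that algebra, and the idempotents are polynomials in the commuting continuous operators $\theta_{\mathfrak{h}}(h_\tau)$, so the resulting direct sum is topological. The explicit Mikami-type expansion is not needed for this step; it suffices that $\theta_{\mathfrak{h}}$ gives a continuous $\mathrm{Sym}\,\mathfrak{h}$-action, which is already guaranteed by Proposition \ref{continuity}. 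With these two corrections, the argument goes through as in the paper.
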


\begin{rem}

Note that this structure is compatible with the Galois structure at finite levels $r_{\iota}(\chi)|_{G_{\tilde{F}}} \otimes (\otimes_{\tau \in \Psi} (r_{\iota}(\pi)|_{G_{\tilde{F}}})^{\tau})$ appeared in Theorem \ref{kottwitz conjecture}.

\end{rem}

\begin{proof}

This follows from Lemma \ref{infinitesimal character and horizontal} and Proposition \ref{horizontal action}. In fact, $(a_{\tau}, b_{\tau})_{\tau} : \mathrm{Sym}\mathfrak{h} \rightarrow C$ induces the same character on $\mathrm{Im}(\mathrm{HC})$ as $(\lambda_{\tau}, 0)_{\tau}$ if and only if $(a_{\tau}, b_{\tau})_{\tau}$ is contained in the $W$-orbit of $(\lambda_{\tau}, 0)_{\tau}$, i.e., $\{ ((\lambda_{\tau}, 0)_{\tau \in I}, ( -1, 1 +\lambda_{\tau})_{\tau \notin I}) \mid I \subset \Psi \}$.
\end{proof}

\section{Geometric locally analytic de Rham complexes}

The purpose of this section is to construct a certain locally analytic extension of the de Rham complexes at finite levels and to calculate that complex on the $\mu$-ordinary locus and the basic locus. In particular, we prove that the cohomology groups of that complex have generalized eigenspace decompositions by eigensystems associated to classical automorphic representations in some cases. (See ${\S} 4.4$.)

\subsection{Construction of geometric locally analytic de Rham complexes}

\begin{lem}\label{locally analytic extension of derivation}

There exists a unique continuous, $G(\mathbb{Q}_p)$-equivariant and $\mathcal{O}_{\Fl}$-linear derivation $d^{\Psi-\mathrm{la}} : \mathcal{O}_{K^p}^{\Psi-\mathrm{la}, (0, 0)_{\tau}} \rightarrow \mathcal{O}_{K^p}^{\Psi-\mathrm{la}, (0, 0)_{\tau}} \otimes_{\mathcal{O}_{K^p}^{\mathrm{sm}}} \Omega_{K^p}^{1, \mathrm{sm}}$ satisfying that $d^{\Psi-\mathrm{la}}|_{\mathcal{O}_{K^p}^{\mathrm{sm}}}$ is equal to the usual derivation $d^{\mathrm{sm}}$ on finite levels.

\end{lem}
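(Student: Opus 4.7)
The plan is to construct $d^{\Psi-\mathrm{la}}$ locally on a covering of $\Fl$ by elements of $\tilde{\mathcal{B}}$, verify well-definedness via a uniqueness argument, then glue, and finally deduce $G(\mathbb{Q}_p)$-equivariance from the same uniqueness principle.

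First I would force uniqueness. Any continuous $\mathcal{O}_{\Fl}$-linear derivation extending $d^{\mathrm{sm}}$ is determined on an element of the form $f = \sum_{i} a_i \prod_{\tau}(x_\tau - x_{\tau,n})^{i_\tau}$ by the Leibniz rule together with $d^{\Psi-\mathrm{la}}(x_\tau) = 0$ (from $\mathcal{O}_{\Fl}$-linearity) and $d^{\Psi-\mathrm{la}}(a_i) = d^{\mathrm{sm}}(a_i)$, $d^{\Psi-\mathrm{la}}(x_{\tau,n}) = d^{\mathrm{sm}}(x_{\tau,n})$. By Corollary \ref{citation} applied with $\lambda_\tau = 0$ and trivial $\lambda^\Psi$, every $G_0$-$\Psi$-analytic section on $\pi_{\mathrm{HT}}^{-1}(U)$ for $U \subset U_1$ in $\tilde{\mathcal{B}}$ admits such an absolutely convergent expansion; this uniquely determines $d^{\Psi-\mathrm{la}}$ on $\mathcal{O}_{K^p}^{\Psi-\mathrm{la}, (0,0)_\tau}$ after passing to direct limits in $K_p$ and sheafifying.

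For existence I would define $d^{\Psi-\mathrm{la}}$ on $\pi_{\mathrm{HT}}^{-1}(U)$ for $U \subset U_1$ by the formula forced by uniqueness,
\[
d^{\Psi-\mathrm{la}}\!\left(\sum_i a_i \prod_\tau (x_\tau - x_{\tau,n})^{i_\tau}\right) = \sum_i d^{\mathrm{sm}}(a_i) \prod_\tau (x_\tau - x_{\tau,n})^{i_\tau} - \sum_{i,\sigma} i_\sigma\, a_i\, d^{\mathrm{sm}}(x_{\sigma,n}) (x_\sigma - x_{\sigma,n})^{i_\sigma - 1} \prod_{\tau \neq \sigma}(x_\tau - x_{\tau, n})^{i_\tau}.
\]
Convergence in $(\mathcal{O}_{K^p}^{\Psi-\mathrm{la}, (0,0)} \otimes_{\mathcal{O}_{K^p}^{\mathrm{sm}}} \Omega_{K^p}^{1,\mathrm{sm}})(U)$ will follow from the bound $\sup_i \|a_i\| p^{-n|i|} < \infty$ of Corollary \ref{citation}, boundedness of $d^{\mathrm{sm}}$ on the affinoid $V_{G_{r(n)}}$, and the fact that the extra factor $i_\sigma$ grows only polynomially in $|i|$ and hence is absorbed by the exponential decay. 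Independence from the choice of approximation $(n, x_{\tau,n})$ reduces to comparing two expansions of the same $f$ via the substitution $x_\tau - x_{\tau,n} = (x_\tau - x_{\tau,n'}) + (x_{\tau,n'} - x_{\tau,n})$; the two formal computations of $d^{\Psi-\mathrm{la}}(f)$ agree termwise because the formula satisfies Leibniz, kills $x_\tau$, and restricts to $d^{\mathrm{sm}}$ on smooth coefficients. For a general $U \in \tilde{\mathcal{B}}$, I would write $U = U' g^{-1}$ with $U' \subset U_1$ and transport the construction via the $G(\mathbb{Q}_p)$-action; consistency on overlaps follows from uniqueness.

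Finally, $G(\mathbb{Q}_p)$-equivariance follows again from uniqueness: for any $g \in G(\mathbb{Q}_p)$, the conjugate $g \circ d^{\Psi-\mathrm{la}} \circ g^{-1}$ is a continuous $\mathcal{O}_{\Fl}$-linear derivation (the $G(\mathbb{Q}_p)$-actions on $\mathcal{O}_{\Fl}$, $\mathcal{O}_{K^p}^{\mathrm{sm}}$, and $\Omega_{K^p}^{1,\mathrm{sm}}$ come from finite level and are compatible under the Hodge--Tate period map) that restricts to $d^{\mathrm{sm}}$ at finite level, so it coincides with $d^{\Psi-\mathrm{la}}$. The main obstacle will be the coordinate-change bookkeeping for independence from $(n, x_{\tau,n})$, which is routine but somewhat unpleasant given the double sums and the Leibniz contribution from $d^{\mathrm{sm}}(x_{\sigma,n})$; the cleanest way around this is to invoke the uniqueness argument to collapse the check, since both candidate formulas share the three characterizing properties ($\mathcal{O}_{\Fl}$-linearity, extending $d^{\mathrm{sm}}$, Leibniz) on the dense subspace produced by Corollary \ref{citation}.
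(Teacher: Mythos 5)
Your proposal matches the paper's proof: both establish uniqueness from the explicit expansion of Corollary \ref{citation}, define $d^{\Psi-\mathrm{la}}$ on a chart $U \subset U_1$ by the identical Leibniz-forced formula on convergent series in $(x_\tau - x_{\tau,n})$, and then extend to all of $\Fl$ by $G(\mathbb{Q}_p)$-equivariance with consistency enforced by the same uniqueness. The only differences are cosmetic: you spell out the convergence estimate and the independence from $(n, x_{\tau,n})$, both of which the paper compresses into the observation that the map is characterized by its values on the dense subspace of finite sums with smooth coefficients.
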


\begin{proof}

The uniqueness of $d^{\Psi-\mathrm{la}}$ is clear from the explicit description in Corollary \ref{citation}. In the following, we use the notations of {\S} 3.3.

For any element $U$ of $\tilde{\mathcal{B}}$ such that $U$ is a rational subset of $U_1$, we have a continuous derivation $d^{\Psi-\mathrm{la}} : \mathcal{O}_{\aS_{K^pG_{r(n)}}}(V_{G_{r(n)}})\lbrace x_{\tau} - x_{\tau, n} \rbrace_{\tau} \rightarrow \mathcal{O}_{\aS_{K^pG_{r(n)}}}(V_{G_{r(n)}})\lbrace x_{\tau} - x_{\tau, n} \rbrace_{\tau} \otimes_{\mathcal{O}(V_{G_{r(n)}})} \Omega^1(V_{G_{r(n)}}), \ \sum_{i = (i_{\tau}) \in \mathbb{Z}_{\ge 0}^{\Psi}} c_{i} \prod_{\tau}(x_{\tau} - x_{\tau, n})^{i_{\tau}} \mapsto$ $$\sum_{i = (i_{\tau}) \in \mathbb{Z}_{\ge 0}^{\Psi}} (d^{\mathrm{sm}}(c_{i}) \prod_{\tau}(x_{\tau} - x_{\tau, n})^{i_{\tau}} - \sum_{\sigma}i_{\sigma}d^{\mathrm{sm}}(x_{\sigma, n})c_{i} (x_{\sigma} - x_{\sigma, n})^{i_{\sigma}-1}\prod_{\tau \neq \sigma}(x_{\tau} - x_{\tau, n})^{i_{\tau}}),$$ where $c_i \in \mathcal{O}_{\aS_{K^pG_{r(n)}}}(V_{G_{r(n)}})$ satisfies $\mathrm{sup}_i||c_i||p^{-n(\sum_{\tau}i_{\tau})} < \infty$. By Corollary \ref{citation}, this induces a continuous derivation $\mathcal{O}_{K^p}^{\Psi-\mathrm{la}, (0, 0)_{\tau}}(U) \rightarrow \mathcal{O}_{K^p}^{\Psi-\mathrm{la}, (0, 0)_{\tau}} \otimes_{\mathcal{O}_{K^p}^{\mathrm{sm}}} \Omega_{K^p}^{1, \mathrm{sm}}(U)$ characterized by $$\sum_{i = (i_{\tau}) \in \mathbb{Z}_{\ge 0}^{\Psi}} c_{i} \prod_{\tau}x_{\tau}^{i_{\tau}} \mapsto \sum_{i = (i_{\tau}) \in \mathbb{Z}_{\ge 0}^{\Psi}} d^{\mathrm{sm}}(c_{i}) \prod_{\tau}x_{\tau}^{i_{\tau}}$$ on the dense subspace $\{ \sum_{i = (i_{\tau}) \in \mathbb{Z}_{\ge 0}^{\Psi}} c_{i} \prod_{\tau}x_{\tau}^{i_{\tau}} \mid c_{i} \in \mathcal{O}_{K^p}^{\mathrm{sm}}(U) \ \mathrm{s.t.} \ c_{i} = 0 \ \mathrm{for \ almost \ all} \ i \}$ of $\mathcal{O}_{K^p}^{\Psi-\mathrm{la}, (0, 0)}(U)$. Note that this is $\mathcal{O}_{\Fl}(U)$-linear because this is $\mathcal{O}_{\Fl}(U_1)$-linear and $U$ is a rational subset of $U_1$.

Moreover, from the above characterization and the $G(\mathbb{Q}_p)$-equivariance of $d^{\mathrm{sm}}$, the above map is uniquely extended to a $G(\mathbb{Q}_p)$-equivariant $\mathcal{O}_{\Fl}$-linear map $d^{\Psi-\mathrm{la}} : \mathcal{O}_{K^p}^{\Psi-\mathrm{la}, (0,0)_{\tau}} \rightarrow \mathcal{O}_{K^p}^{\Psi-\mathrm{la}, (0,0)_{\tau}} \otimes_{\mathcal{O}_{K^p}^{\mathrm{sm}}} \Omega_{K^p}^{1, \mathrm{sm}}$ satisfying that $d^{\Psi-\mathrm{la}}|_{\mathcal{O}_{K^p}^{\mathrm{sm}}} = d^{\mathrm{sm}}$. \end{proof}

By Lemma \ref{locally analytic extension of derivation}, we can define a locally analytic extension $(D_{\lambda, K^p}^{\Psi-\mathrm{la}, (0, 0)_{\tau}} \otimes_{\mathcal{O}_{K^p}^{\mathrm{sm}}} \Omega_{K^p}^{\bullet, \mathrm{sm}}, \nabla^{\Psi-\mathrm{la}}_{\bullet})$ of the usual de Rham complex $(D_{\lambda, K^p}^{\mathrm{sm}} \otimes_{\mathcal{O}_{K^p}^{\mathrm{sm}}} \Omega_{K^p}^{\bullet, \mathrm{sm}}, \nabla^{\mathrm{sm}}_{\bullet})$ of finite levels by the formula $\nabla^{\Psi-\mathrm{la}}_n : D_{\lambda, K^p}^{\Psi-\mathrm{la}, (0, 0)_{\tau}} \otimes_{\mathcal{O}_{K^p}^{\mathrm{sm}}} \Omega_{K^p}^{n, \mathrm{sm}} = \mathcal{O}_{K^p}^{\Psi-\mathrm{la}, (0, 0)_{\tau}} \otimes_{\mathcal{O}_{K^p}^{\mathrm{sm}}} D_{\lambda, K^p}^{\mathrm{sm}} \otimes_{\mathcal{O}_{K^p}^{\mathrm{sm}}} \Omega_{K^p}^{n, \mathrm{sm}} \rightarrow \mathcal{O}_{K^p}^{\Psi-\mathrm{la}, (0, 0)_{\tau}} \otimes_{\mathcal{O}_{K^p}^{\mathrm{sm}}} D_{\lambda, K^p}^{\mathrm{sm}} \otimes_{\mathcal{O}_{K^p}^{\mathrm{sm}}} \Omega_{K^p}^{n + 1, \mathrm{sm}} = D_{\lambda, K^p}^{\Psi-\mathrm{la}, (0, 0)_{\tau}} \otimes_{\mathcal{O}_{K^p}^{\mathrm{sm}}} \Omega_{K^p}^{n+1, \mathrm{sm}}, \ f \otimes v \otimes w \mapsto f \otimes v \otimes d(w) + (-1)^n(d^{\Psi-\mathrm{la}}(f) \otimes v + f \otimes \nabla^{\mathrm{sm}}(v)) \wedge w$. This is actually shown to be a complex by considering a dense subspace as in the proof of Lemma \ref{locally analytic extension of derivation}.

Moreover, we have the following variant of Lemma \ref{constdeRham}. 

For $I, J \subset \Psi$ such that $|I| = n$ and $|J| = n+1$, by using the natural inclusion from the $I$-component $i_{I} : D_{\lambda^{\Psi}, K^p}^{\Psi-\mathrm{la}} \otimes (\otimes_{\tau \in \Psi} \mathrm{Sym}^{\lambda_{\tau}}D_{\tau, K^p}^{\vee, \mathrm{sm}}) \otimes_{\mathcal{O}_{K^p}^{\mathrm{sm}}} (\otimes_{\tau \in I} (\omega_{\tau, K^p}^{2, \mathrm{sm}} \otimes (\wedge^2D_{\tau, K^p}^{\mathrm{sm}}))) \rightarrow D_{\lambda^{\Psi}, K^p}^{\Psi-\mathrm{la}, (0, 0)_{\tau}} \otimes (\otimes_{\tau \in \Psi} \mathrm{Sym}^{\lambda_{\tau}}D_{\tau, K^p}^{\vee, \mathrm{sm}}) \otimes_{\mathcal{O}_{K^p}^{\mathrm{sm}}} \Omega_{K^p}^{n, \mathrm{sm}}$ and the natural projection to the $J$-component $j_J : D_{\lambda^{\Psi}, K^p}^{\Psi-\mathrm{la}, (0, 0)_{\tau}} \otimes (\otimes_{\tau \in \Psi} \mathrm{Sym}^{\lambda_{\tau}}D_{\tau, K^p}^{\vee, \mathrm{sm}}) \otimes_{\mathcal{O}_{K^p}^{\mathrm{sm}}} \Omega_{K^p}^{n+1, \mathrm{sm}} \rightarrow D_{\lambda^{\Psi}, K^p}^{\Psi-\mathrm{la}, (0, 0)_{\tau}} \otimes (\otimes_{\tau \in \Psi} \mathrm{Sym}^{\lambda_{\tau}}D_{\tau, K^p}^{\vee, \mathrm{sm}}) \otimes_{\mathcal{O}_{K^p}} (\otimes_{\tau \in J} (\omega_{\tau, K^p}^{2, \mathrm{sm}} \otimes (\wedge^2D_{\tau, K^p}^{\mathrm{sm}})))$, we obtain a map $\nabla_{n, I, J}^{\Psi-\mathrm{la}} := j_{J} \circ \nabla_n \circ i_{I} : D_{\lambda^{\Psi}, K^p}^{\Psi-\mathrm{la}, (0, 0)_{\tau}} \otimes (\otimes_{\tau \in \Psi} \mathrm{Sym}^{\lambda_{\tau}}D_{\tau, K^p}^{\vee, \mathrm{sm}}) \otimes (\otimes_{\tau \in I} (\omega_{\tau, K^p}^{2, \mathrm{sm}} \otimes (\wedge^2D_{\tau, K^p}^{\mathrm{sm}}))) \rightarrow D_{\lambda^{\Psi}, K^p}^{\Psi-\mathrm{la}, (0, 0)_{\tau}} \otimes (\otimes_{\tau \in \Psi} \mathrm{Sym}^{\lambda_{\tau}}D_{\tau, K^p}^{\vee, \mathrm{sm}}) \otimes_{\mathcal{O}_{K^p}} (\otimes_{\tau \in J} (\omega_{\tau, K^p}^{2, \mathrm{sm}} \otimes (\wedge^2D_{\tau, K^p}^{\mathrm{sm}}))).$

Note that we have a subquotient $D_{\lambda^{\Psi}, K^p}^{\Psi-\mathrm{la}, (0, 0)_{\tau}} \otimes (\otimes_{\tau \in I} (\omega_{\tau, K^p}^{\lambda_{\tau}+2, \mathrm{sm}} \otimes (\wedge^2 D_{\tau, K^p}^{\mathrm{sm}}))) \otimes (\otimes_{\tau \notin I} (\mathcal{\omega}_{\tau, K^p}^{-\lambda_{\tau}, \mathrm{sm}} \otimes (\wedge^2D_{\tau, K^p}^{\mathrm{sm}})^{-\lambda_{\tau}})) \twoheadleftarrow D_{\lambda^{\Psi}, K^p}^{\Psi-\mathrm{la}, (0, 0)_{\tau}} \otimes (\otimes_{\tau \in I} (\omega_{\tau, K^p}^{\lambda_{\tau}+2, \mathrm{sm}} \otimes (\wedge^2 D_{\tau, K^p}^{\mathrm{sm}}))) \otimes (\otimes_{\tau \notin I} \mathrm{Sym}^{\lambda_{\tau}}D_{\tau, K^p}^{\vee, \mathrm{sm}}) \hookrightarrow D_{\lambda^{\Psi}, K^p}^{\Psi-\mathrm{la}, (0, 0)_{\tau}} \otimes (\otimes_{\tau \in \Psi} \mathrm{Sym}^{\lambda_{\tau}}D_{\tau, K^p}^{\vee, \mathrm{sm}}) \otimes (\otimes_{\tau \in I} (\omega_{\tau, K^p}^{2, \mathrm{sm}} \otimes (\wedge^2D_{\tau, K^p}^{\mathrm{sm}})))$. In the following, we will construct a $\mathcal{O}_{\Fl}$-linear section $s_I^{\Psi-\mathrm{la}}$ of this quotient map for any $I$ which is an extension of $s_I$. 


\begin{lem}\label{constdeRham2}

Let $I$ and $J$ as above. Then we have the following.

(1) \  $\nabla_{n, I, J}^{\Psi-\mathrm{la}}$ is zero unless $J = I \cup \{ \sigma \}$ for some $\sigma \in \Psi \setminus I$.

\vspace{0.5 \baselineskip}

In the following, we also assume that $J = I \cup \{ \sigma \}$ for some $\sigma \in \Psi \setminus I$.

(2) \ $\nabla_{n, I, J}^{\Psi-\mathrm{la}}$ sends the subspace $D_{\lambda^{\Psi}, K^p}^{\Psi-\mathrm{la}, (0, 0)_{\tau}} \otimes (\otimes_{\tau \in I} (\omega_{\tau, K^p}^{\lambda_{\tau}+2, \mathrm{sm}} \otimes (\wedge^2 D_{\tau, K^p}^{\mathrm{sm}}))) \otimes (\otimes_{\tau \notin I} \mathrm{Sym}^{\lambda_{\tau}}D_{\tau, K^p}^{\vee, \mathrm{sm}})$ to the subspace $D_{\lambda^{\Psi}, K^p}^{\Psi-\mathrm{la}, (0, 0)_{\tau}} \otimes (\otimes_{\tau \in I} (\omega_{\tau, K^p}^{\lambda_{\tau}+2, \mathrm{sm}} \otimes (\wedge^2 D_{\tau, K^p}^{\mathrm{sm}}))) \otimes (\otimes_{\tau \notin I} \mathrm{Sym}^{\lambda_{\tau}}D_{\tau, K^p}^{\vee, \mathrm{sm}}) \otimes (\omega_{\sigma, K^p}^{2, \mathrm{sm}} \otimes (\wedge^2D_{\sigma, K^p}^{\mathrm{sm}}))$.

In the following, we fix $M \subset \Psi \setminus J$. Actually, we have the following stronger result.

(3) \ $\nabla_{n, I, J}^{\Psi-\mathrm{la}}$ sends the subspace $D_{\lambda^{\Psi}, K^p}^{\Psi-\mathrm{la}, (0, 0)_{\tau}} \otimes (\otimes_{\tau \in I} (\omega_{\tau, K^p}^{\lambda_{\tau}+2, \mathrm{sm}} \otimes (\wedge^2 D_{\tau, K^p}^{\mathrm{sm}}))) \otimes (\otimes_{\tau \notin I \cup M} \mathrm{Sym}^{\lambda_{\tau}}D_{\tau, K^p}^{\vee, \mathrm{sm}}) \otimes (\otimes_{\tau \in M} \mathrm{Fil}^1(\mathrm{Sym}^{\lambda_{\tau}}D_{\tau, K^p}^{\vee, \mathrm{sm}}))$ to the subspace $D_{\lambda^{\Psi}, K^p}^{\Psi-\mathrm{la}, (0, 0)_{\tau}} \otimes (\otimes_{\tau \in I} (\omega_{\tau, K^p}^{\lambda_{\tau}+2, \mathrm{sm}} \otimes (\wedge^2 D_{\tau, K^p}^{\mathrm{sm}}))) \otimes (\otimes_{\tau \notin I \cup M} \mathrm{Sym}^{\lambda_{\tau}}D_{\tau, K^p}^{\vee, \mathrm{sm}}) \otimes (\otimes_{\tau \in M} \mathrm{Fil}^1(\mathrm{Sym}^{\lambda_{\tau}}D_{\tau, K^p}^{\vee, \mathrm{sm}})) \otimes (\omega_{\sigma, K^p}^{2, \mathrm{sm}} \otimes (\wedge^2D_{\sigma, K^p}^{\mathrm{sm}}))$.

(4) \ $\nabla_{n, I, J, M}^{\Psi-\mathrm{la}'} : D_{\lambda^{\Psi}, K^p}^{\Psi-\mathrm{la}, (0, 0)_{\tau}} \otimes (\otimes_{\tau \in I} (\omega_{\tau, K^p}^{\lambda_{\tau}+2, \mathrm{sm}} \otimes (\wedge^2 D_{\tau, K^p}^{\mathrm{sm}}))) \otimes (\otimes_{\tau \notin I \cup M} \mathrm{Sym}^{\lambda_{\tau}}D_{\tau, K^p}^{\vee, \mathrm{sm}}) \otimes (\otimes_{\tau \in M} \omega_{\tau, K^p}^{-\lambda_{\tau}, \mathrm{sm}} \otimes (\wedge^2 D_{\tau, K^p}^{\mathrm{sm}})^{-\lambda_{\tau}}) \rightarrow D_{\lambda^{\Psi}, K^p}^{\Psi-\mathrm{la}, (0, 0)_{\tau}} \otimes (\otimes_{\tau \in I} (\omega_{\tau, K^p}^{\lambda_{\tau}+2, \mathrm{sm}} \otimes (\wedge^2 D_{\tau, K^p}^{\mathrm{sm}}))) \otimes (\otimes_{\tau \notin I \cup M} \mathrm{Sym}^{\lambda_{\tau}}D_{\tau, K^p}^{\vee, \mathrm{sm}}) \otimes (\otimes_{\tau \in M} \omega_{\tau, K^p}^{-\lambda_{\tau}, \mathrm{sm}} \otimes (\wedge^2 D_{\tau, K^p}^{\mathrm{sm}})^{-\lambda_{\tau}}) \otimes (\omega_{\sigma, K^p}^{2, \mathrm{sm}} \otimes (\wedge^2D_{\sigma, K^p}^{\mathrm{sm}}))$ induced from (3) induces an isomorphism $D_{\lambda^{\Psi}, K^p}^{\Psi-\mathrm{la}, (0, 0)_{\tau}} \otimes (\otimes_{\tau \in I} (\omega_{\tau, K^p}^{\lambda_{\tau}+2, \mathrm{sm}} \otimes (\wedge^2 D_{\tau, K^p}^{\mathrm{sm}}))) \otimes \mathrm{Fil}^1(\mathrm{Sym}^{\lambda_{\sigma}}D_{\sigma, K^p}^{\vee, \mathrm{sm}}) \otimes (\otimes_{\tau \notin J \cup M} \mathrm{Sym}^{\lambda_{\tau}}D_{\tau, K^p}^{\vee, \mathrm{sm}}) \otimes (\otimes_{\tau \in M} \omega_{\tau, K^p}^{-\lambda_{\tau}, \mathrm{sm}} \otimes (\wedge^2 D_{\tau, K^p})^{-\lambda_{\tau}, \mathrm{sm}}) \Isom D_{\lambda^{\Psi}, K^p}^{\Psi-\mathrm{la}, (0 ,0)_{\tau}} \otimes (\otimes_{\tau \in I} (\omega_{\tau, K^p}^{\lambda_{\tau}+2, \mathrm{sm}} \otimes (\wedge^2 D_{\tau, K^p}^{\mathrm{sm}}))) \otimes (\mathrm{Sym}^{\lambda_{\sigma}}D_{\sigma, K^p}^{\vee, \mathrm{sm}}/ \mathrm{Fil}^{\lambda_{\sigma}}(\mathrm{Sym}^{\lambda_{\sigma}}D_{\sigma, K^p}^{\vee, \mathrm{sm}})) \otimes (\otimes_{\tau \notin J \cup M} \mathrm{Sym}^{\lambda_{\tau}}D_{\tau, K^p}^{\vee, \mathrm{sm}}) \otimes (\otimes_{\tau \in M} \omega_{\tau, K^p}^{-\lambda_{\tau}, \mathrm{sm}} \otimes (\wedge^2 D_{\tau, K^p}^{\mathrm{sm}})^{-\lambda_{\tau}}) \otimes (\omega_{\sigma, K^p}^{2, \mathrm{sm}} \otimes (\wedge^2D_{\sigma, K^p}^{\mathrm{sm}}))$. More strongly, the map $\nabla_{n, I, J, M}^{\Psi-\mathrm{la}'}$ induces the $\mathcal{O}_{K^p}$-linear isomorphisms of the following graded spaces for any $1 \le i \le \lambda_{\sigma}$. 

$D_{\lambda^{\Psi}, K^p}^{\Psi-\mathrm{la}, (0, 0)_{\tau}} \otimes (\otimes_{\tau \in I} (\omega_{\tau, K^p}^{\lambda_{\tau}+2, \mathrm{sm}} \otimes (\wedge^2 D_{\tau, K^p}^{\mathrm{sm}}))) \otimes \mathrm{gr}^i(\mathrm{Sym}^{\lambda_{\sigma}}D_{\sigma, K}^{\vee, \mathrm{sm}}) \otimes (\otimes_{\tau \notin J \cup M} \mathrm{Sym}^{\lambda_{\tau}}D_{\tau, K^p}^{\vee, \mathrm{sm}}) \otimes (\otimes_{\tau \in M} \omega_{\tau, K^p}^{-\lambda_{\tau}, \mathrm{sm}} \otimes (\wedge^2 D_{\tau, K^p}^{\mathrm{sm}})^{-\lambda_{\tau}}) \Isom D_{\lambda^{\Psi}, K^p}^{\Psi-\mathrm{la}, (0, 0)_{\tau}} \otimes (\otimes_{\tau \in I} (\omega_{\tau, K^p}^{\lambda_{\tau}+2, \mathrm{sm}} \otimes (\wedge^2 D_{\tau, K^p}^{\mathrm{sm}}))) \otimes \mathrm{gr}^{i-1}(\mathrm{Sym}^{\lambda_{\sigma}}D_{\sigma, K^p}^{\vee, \mathrm{sm}}) \otimes (\otimes_{\tau \notin J \cup M} \mathrm{Sym}^{\lambda_{\tau}}D_{\tau, K^p}^{\vee, \mathrm{sm}}) \otimes (\otimes_{\tau \in M} \omega_{\tau, K^p}^{-\lambda_{\tau}, \mathrm{sm}} \otimes (\wedge^2 D_{\tau, K^p}^{\mathrm{sm}})^{-\lambda_{\tau}}) \otimes (\omega_{\sigma, K^p}^{2, \mathrm{sm}} \otimes (\wedge^2D_{\sigma, K^p}^{\mathrm{sm}}))$.

\end{lem}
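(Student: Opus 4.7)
The plan is to reduce all four assertions directly to the corresponding parts of Lemma \ref{constdeRham} at finite level, by exploiting the two-term decomposition of the differential $\nabla_n^{\Psi-\mathrm{la}}$. Writing an element of $D_{\lambda, K^p}^{\Psi-\mathrm{la},(0,0)_\tau} \otimes_{\mathcal{O}_{K^p}^{\mathrm{sm}}} \Omega_{K^p}^{n,\mathrm{sm}}$ as $f \otimes v \otimes w$ with $f \in \mathcal{O}_{K^p}^{\Psi-\mathrm{la},(0,0)_\tau}$, $v \in D_{\lambda,K^p}^{\mathrm{sm}}$ and $w \in \Omega_{K^p}^{n,\mathrm{sm}}$, the defining formula
$$\nabla_n^{\Psi-\mathrm{la}}(f \otimes v \otimes w) = f \otimes v \otimes dw + (-1)^n\bigl(d^{\Psi-\mathrm{la}}(f) \otimes v + f \otimes \nabla^{\mathrm{sm}}(v)\bigr) \wedge w$$
splits into a smooth contribution $\mathrm{id}_{\mathcal{O}_{K^p}^{\Psi-\mathrm{la},(0,0)_\tau}} \otimes_{\mathcal{O}_{K^p}^{\mathrm{sm}}} \nabla_n^{\mathrm{sm}}$ and a locally analytic correction $(-1)^n d^{\Psi-\mathrm{la}}(f) \otimes v \wedge w$. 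The crucial observation is that the correction acts $\mathcal{O}_{K^p}^{\mathrm{sm}}$-linearly on $v$, so it preserves every Hodge-type subspace of $D_{\lambda,K^p}^{\mathrm{sm}}$ in its $v$-factor.

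For claim (1), the smooth contribution vanishes outside $J = I \cup \{\sigma\}$ by (1) of Lemma \ref{constdeRham}. For the correction, the Kodaira--Spencer decomposition $\Omega_{K^p}^{1,\mathrm{sm}} \cong \bigoplus_{\sigma \in \Psi}(\omega_{\sigma,K^p}^{2,\mathrm{sm}} \otimes \wedge^2 D_{\sigma,K^p}^{\mathrm{sm}})$ of Proposition \ref{KS} decomposes $d^{\Psi-\mathrm{la}}(f)$ into $\Psi$-components, so wedging with $w \in \otimes_{\tau \in I}(\omega_{\tau,K^p}^{2,\mathrm{sm}} \otimes \wedge^2 D_{\tau,K^p}^{\mathrm{sm}})$ produces a form projecting nontrivially to the $J$-component only when $J = I \cup \{\sigma\}$ for some $\sigma \notin I$. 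Claims (2) and (3) follow from the same dichotomy: the smooth contribution is governed by (2) and (3) of Lemma \ref{constdeRham}, and the locally analytic correction automatically respects every filtration in the $v$-factor because $d^{\Psi-\mathrm{la}}$ does not touch it.

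Claim (4) is where I expect the main obstacle. I would argue that on the associated graded piece $\mathrm{gr}^i(\mathrm{Sym}^{\lambda_\sigma} D_{\sigma,K^p}^{\vee,\mathrm{sm}})$ the locally analytic correction vanishes: since $d^{\Psi-\mathrm{la}}(f) \otimes v$ leaves $v \in \mathrm{Fil}^i$ in $\mathrm{Fil}^i$, it becomes zero modulo $\mathrm{Fil}^i \otimes \Omega^1$, whereas Griffiths transversality at finite level pushes $\nabla^{\mathrm{sm}}(v)$ into $\mathrm{Fil}^{i-1} \otimes \Omega^1$. Therefore the induced map on $\mathrm{gr}^i$ coincides with $\mathrm{id}_{\mathcal{O}_{K^p}^{\Psi-\mathrm{la},(0,0)_\tau}} \otimes_{\mathcal{O}_{K^p}^{\mathrm{sm}}} \nabla_{n,I,J,M}'$, and the desired isomorphism on graded pieces is (4) of Lemma \ref{constdeRham} tensored with $\mathcal{O}_{K^p}^{\Psi-\mathrm{la},(0,0)_\tau}$ over $\mathcal{O}_{K^p}^{\mathrm{sm}}$. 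The delicate part will be checking that this tensor product commutes with passage to the graded pieces; this reduces to the local splittings of the two-step filtrations on each $D_{\tau,K^p}^{\vee,\mathrm{sm}}$ together with the explicit local expansion provided by Corollary \ref{citation}, which exhibits $\mathcal{O}_{K^p}^{\Psi-\mathrm{la},(0,0)_\tau}$ locally as a direct limit of convergent power series rings over the smooth structure sheaf and thereby makes the tensor product well-behaved on graded pieces.
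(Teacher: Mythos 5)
Your proposal is correct and the reduction to Lemma \ref{constdeRham} is sound, but you take a genuinely different route from the paper. The paper proves all four parts by a single mechanism: the claims hold on the dense subspace of sections that are finite polynomial expansions in the $x_\tau - x_{\tau,n}$ with coefficients in $\mathcal{O}_{K^p}^{\mathrm{sm}}$ (where everything is immediate from Lemma \ref{constdeRham}), and then one extends by continuity, invoking that a continuous map of Hausdorff groups vanishing on a dense subspace vanishes. For part (4) the paper also reduces the graded-piece statement to ``the finite-level map $\otimes_{\mathcal{O}_{K^p}^{\mathrm{sm}}} \mathcal{O}_{K^p}^{\Psi\text{-}\mathrm{la},(0,0)_\tau}$'', but the justification is again density. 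Your approach instead unpacks the mechanism algebraically: you split $\nabla_n^{\Psi\text{-}\mathrm{la}}$ into $\mathrm{id} \otimes \nabla_n^{\mathrm{sm}}$ plus the correction $(-1)^n d^{\Psi\text{-}\mathrm{la}}(f) \otimes v \wedge w$, observe that the correction is $\mathcal{O}_{K^p}^{\mathrm{sm}}$-linear in the $D$-factor and only contributes in the Kodaira--Spencer direction $\sigma \notin I$, and deduce (1)--(3) by combining this with Lemma \ref{constdeRham}. For (4) you observe that both filtration-preserving terms (the $v \otimes d(w)$ piece of the smooth part as well as the correction) die on passage to $\mathrm{gr}^{i-1}$, leaving exactly the finite-level graded map tensored up; the tensor commutes with taking graded pieces because the Hodge filtration on the finite-rank bundle $\mathrm{Sym}^{\lambda_\sigma} D_{\sigma,K^p}^{\vee,\mathrm{sm}}$ is locally split. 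What your route buys: no appeal to density or continuity is needed, the structural reason the correction drops out is visible, and the argument extends more transparently to situations where a convenient dense subspace is not available. What the paper's route buys: a uniform one-line justification applicable to all four parts that avoids having to track the three-term Leibniz decomposition separately.
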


\begin{proof}

These results are consequences of Lemma \ref{constdeRham}. In fact, by using the dense subspaces as in the proof of Lemma \ref{locally analytic extension of derivation} and using (1), (2) and (3) of Lemma \ref{constdeRham}, the results (1), (2) and (3) follow from the following general fact : for a continuous morphism $f : X \rightarrow Y$ of Hausdorff topological groups, if $f$ is trivial on a dense subspace of $X$, then $f$ is trivial. In order to prove (4), first note that the given map induces the map of the following filtrations for any $1 \le i \le \lambda_{\sigma}$ : $D_{\lambda^{\Psi}, K^p}^{\Psi-\mathrm{la}, (0, 0)_{\tau}} \otimes (\otimes_{\tau \in I} (\omega_{\tau, K^p}^{\lambda_{\tau}+2, \mathrm{sm}} \otimes (\wedge^2 D_{\tau, K^p}^{\mathrm{sm}}))) \otimes \mathrm{Fil}^i(\mathrm{Sym}^{\lambda_{\sigma}}D_{\sigma, K}^{\vee, \mathrm{sm}}) \otimes (\otimes_{\tau \notin J \cup M} \mathrm{Sym}^{\lambda_{\tau}}D_{\tau, K^p}^{\vee, \mathrm{sm}}) \otimes (\otimes_{\tau \in M} \omega_{\tau, K^p}^{-\lambda_{\tau}, \mathrm{sm}} \otimes (\wedge^2 D_{\tau, K^p}^{\mathrm{sm}})^{-\lambda_{\tau}}) \rightarrow D_{\lambda^{\Psi}, K^p}^{\Psi-\mathrm{la}, (0, 0)_{\tau}} \otimes (\otimes_{\tau \in I} (\omega_{\tau, K^p}^{\lambda_{\tau}+2, \mathrm{sm}} \otimes (\wedge^2 D_{\tau, K^p}^{\mathrm{sm}}))) \otimes (\mathrm{Fil}^{i-1}(\mathrm{Sym}^{\lambda_{\sigma}}D_{\sigma, K^p}^{\vee, \mathrm{sm}})/\mathrm{Fil}^{\lambda_{\sigma}}(\mathrm{Sym}^{\lambda_{\sigma}}D_{\sigma, K^p}^{\vee, \mathrm{sm}})) \otimes (\otimes_{\tau \notin J \cup M} \mathrm{Sym}^{\lambda_{\tau}}D_{\tau, K^p}^{\vee, \mathrm{sm}}) \otimes (\otimes_{\tau \in M} \omega_{\tau, K^p}^{-\lambda_{\tau}, \mathrm{sm}} \otimes (\wedge^2 D_{\tau, K^p}^{\mathrm{sm}})^{-\lambda_{\tau}}) \otimes (\omega_{\sigma, K^p}^{2, \mathrm{sm}} \otimes (\wedge^2D_{\sigma, K^p}^{\mathrm{sm}}))$. In fact, (4) of Lemma \ref{constdeRham} and the same reasoning as in the proof of (1), (2) and (3) of this lemma imply this. Thus, we obtain the induced map on graded pieces $D_{\lambda^{\Psi}, K^p}^{\Psi-\mathrm{la}, (0, 0)_{\tau}} \otimes \mathrm{gr}^i\mathrm{Sym}^{\lambda_{\sigma}}D_{\sigma, K^p}^{\vee, \mathrm{sm}} \otimes (\otimes_{\gamma \in I} (\omega_{\gamma, K^p}^{\lambda_{\gamma}+2, \mathrm{sm}} \otimes (\wedge^2 D_{\gamma, K^p}^{\mathrm{sm}}))) \otimes (\otimes_{\gamma \notin J} (\omega_{\gamma, K^p}^{-\lambda_{\gamma}, \mathrm{sm}} \otimes (\wedge^2 D_{\gamma, K^p})^{-\lambda_{\gamma}, \mathrm{sm}})) \rightarrow D_{\lambda^{\Psi}, K^p}^{\Psi-\mathrm{la}, (0, 0)_{\tau}} \otimes (\mathrm{gr}^{i-1}\mathrm{Sym}^{\lambda_{\sigma}}D_{\sigma, K^p}^{\vee, \mathrm{sm}} \otimes (\otimes_{\gamma \in I} (\omega_{\gamma, K^p}^{\lambda_{\gamma}+2, \mathrm{sm}} \otimes (\wedge^2 D_{\gamma, K^p}^{\mathrm{sm}}))) \otimes (\otimes_{\gamma \notin J} (\omega_{\gamma, K^p}^{-\lambda_{\gamma}, \mathrm{sm}} \otimes (\wedge^2 D_{\gamma, K^p}^{\mathrm{sm}})^{-\lambda_{\gamma}})) \otimes (\omega_{\sigma, K^p}^{2, \mathrm{sm}} \otimes (\wedge^2 D_{\sigma, K^p}^{\mathrm{sm}}))$, which is equal to (the second map of (4) of Lemma \ref{constdeRham}) $\otimes_{\mathcal{O}_{K^p}^{\mathrm{sm}}} \mathcal{O}_{K^p}^{\Psi-\mathrm{la}, (0, 0)_{\tau}}$. Thus this is an isomorphism. \end{proof}

As after Lemma \ref{constdeRham}, by using Lemma \ref{constdeRham2}, we obtain the $G(\mathbb{Q}_p)$-equivariant $\mathcal{O}_{\Fl}$-linear section $s_I^{\Psi-\mathrm{la}} : D_{\lambda^{\Psi}, K^p}^{\Psi-\mathrm{la}, (0, 0)_{\tau}} \otimes (\otimes_{\tau \in I} (\omega_{\tau, K^p}^{\lambda_{\tau}+2, \mathrm{sm}} \otimes (\wedge^2 D_{\tau, K^p}^{\mathrm{sm}}))) \otimes (\otimes_{\tau \notin I} (\mathcal{\omega}_{\tau, K^p}^{-\lambda_{\tau}, \mathrm{sm}} \otimes (\wedge^2D_{\tau, K^p}^{\mathrm{sm}})^{-\lambda_{\tau}})) \hookrightarrow D_{\lambda^{\Psi}, K^p}^{\Psi-\mathrm{la}, (0, 0)_{\tau}} \otimes (\otimes_{\tau \in I} (\omega_{\tau, K^p}^{\lambda_{\tau}+2, \mathrm{sm}} \otimes (\wedge^2 D_{\tau, K^p}^{\mathrm{sm}}))) \otimes (\otimes_{\tau \notin I} \mathrm{Sym}^{\lambda_{\tau}}D_{\tau, K^p}^{\vee, \mathrm{sm}})$ which is an extension of $s_I$. Note that such a section is shown to be unique by considering the dense subspace as in the proof of Lemma \ref{locally analytic extension of derivation}. Thus, we have $\mathrm{Im}(\nabla_{n, I, J}^{\Psi-\mathrm{la}} \circ s_{I}^{\Psi-\mathrm{la}}) \subset \mathrm{Im}(s_{J}^{\Psi-\mathrm{la}})$ by considering the dense subspace as in the proof of Lemma \ref{locally analytic extension of derivation} and $s_I^{\Psi-\mathrm{la}}$'s induce the following complex, whose maps are $G(\mathbb{Q}_p)$-equivariant and $\mathcal{O}_{\Fl}$-linear.\footnote{The author thinks that the complex $GDR^{\Psi-\mathrm{la}'}_{\lambda}$ is quasi-isomorphic to $(D_{\lambda, K^p}^{\Psi-\mathrm{la}, (0, 0)} \otimes_{\mathcal{O}_{K^p}^{\mathrm{sm}}} \Omega_{K^p}^{\bullet, \mathrm{sm}}, \nabla^{\Psi-\mathrm{la}}_{\bullet})$. But we don't need such a result. Thus we don't pursue this problem in this paper.}
 
\begin{align*}GDR_{\lambda}^{\Psi-\mathrm{la}'} : D_{\lambda^{\Psi}, K^p}^{\Psi-\mathrm{la}, (0, 0)_{\tau}} \otimes  (\otimes_{\tau \in \Psi} (\mathcal{\omega}_{\tau, K^p}^{-\lambda_{\tau}, \mathrm{sm}} \otimes (\wedge^2 D_{\tau, K^p}^{\mathrm{sm}})^{-\lambda_{\tau}})) \rightarrow \\ \oplus_{\sigma \in \Psi} D_{\lambda^{\Psi}, K^p}^{\Psi-\mathrm{la}, (0, 0)_{\tau}} \otimes \omega_{\sigma, K^p}^{2\lambda_{\sigma}+2, \mathrm{sm}} \otimes (\wedge^2 D_{\sigma, K^p}^{\mathrm{sm}})^{\lambda_{\sigma}+1} \otimes  (\otimes_{\tau \in \Psi} (\mathcal{\omega}_{\tau, K^p}^{-\lambda_{\tau}, \mathrm{sm}} \otimes (\wedge^2 D_{\tau, K^p}^{\mathrm{sm}})^{-\lambda_{\tau}})) \rightarrow \\ \oplus_{I \subset \Psi, |I|=2} D_{\lambda^{\Psi}, K^p}^{\Psi-\mathrm{la}, (0, 0)_{\tau}} \otimes (\otimes_{\sigma \in I} (\omega_{\sigma, K^p}^{2\lambda_{\sigma}+2, \mathrm{sm}} \otimes (\wedge^2 D_{\sigma, K^p}^{\mathrm{sm}})^{\lambda_{\sigma}+1})) \otimes (\otimes_{\tau \in \Psi} (\mathcal{\omega}_{\tau, K^p}^{-\lambda_{\tau}, \mathrm{sm}} \otimes (\wedge^2 D_{\tau, K^p}^{\mathrm{sm}})^{-\lambda_{\tau}})) \rightarrow \\ \cdots \rightarrow  \\ D_{\lambda^{\Psi}, K^p}^{\Psi-\mathrm{la}, (0, 0)} \otimes (\otimes_{\tau \in \Psi} (\mathcal{\omega}_{\tau, K^p}^{2\lambda_{\tau} + 2, \mathrm{sm}} \otimes (\wedge^2 D_{\tau, K^p}^{\mathrm{sm}})^{\lambda_{\tau}+1})) \otimes  (\otimes_{\tau \in \Psi} (\mathcal{\omega}_{\tau, K^p}^{-\lambda_{\tau}, \mathrm{sm}} \otimes (\wedge^2 D_{\tau, K^p}^{\mathrm{sm}})^{-\lambda_{\tau}})).\end{align*} 

\vspace{0.5 \baselineskip}

By taking $\otimes_{\mathcal{O}_{\Fl}} (\otimes_{\tau \in \Phi} (\mathcal{\omega}_{\tau, \Fl}^{\lambda_{\tau}} \otimes (\wedge^2 V_{\tau})^{\lambda_{\tau}}))$ and using Lemma \ref{horizontal weight}, we obtain the following complex.
 
\begin{align*} GDR_{\lambda}^{\Psi-\mathrm{la}} : D_{\lambda^{\Psi}, K^p}^{\Psi-\mathrm{la}, (0, \lambda_{\tau})_{\tau}} \rightarrow \\ \oplus_{\sigma \in \Psi} D_{\lambda^{\Psi}, K^p}^{\Psi-\mathrm{la}, (0, \lambda_{\tau})_{\tau}} \otimes \omega_{\sigma, K^p}^{2\lambda_{\sigma}+2, \mathrm{sm}} \otimes (\wedge^2 D_{\sigma, K^p}^{\mathrm{sm}})^{\lambda_{\sigma}+1} \rightarrow \\ \oplus_{I \subset \Psi, |I| = 2} D_{\lambda^{\Psi}, K^p}^{\Psi-\mathrm{la}, (0, \lambda_{\tau})_{\tau}} \otimes (\otimes_{\sigma \in I} (\omega_{\sigma, K^p}^{2\lambda_{\sigma}+2, \mathrm{sm}} \otimes (\wedge^2 D_{\sigma, K^p}^{\mathrm{sm}})^{\lambda_{\sigma}+1})) \rightarrow \\ \cdots \rightarrow \\ D_{\lambda^{\Psi}, K^p}^{\Psi-\mathrm{la}, (0, \lambda_{\tau})} \otimes (\otimes_{\tau \in \Psi} (\mathcal{\omega}_{\tau, K^p}^{2\lambda_{\tau} + 2, \mathrm{sm}} \otimes (\wedge^2 D_{\tau, K^p}^{\mathrm{sm}})^{\lambda_{\tau}+1})). \end{align*} 

\begin{rem}\label{pullback de Rham}

If we consider locally on $\aS_{K^p}$ not on $\Fl$, exactly the same construction gives us a complex $GDR^{\Psi-\mathrm{la}}_{\lambda, \aS_{K^p}}$ such that $GDR^{\Psi-\mathrm{la}}_{\lambda} = {\pi_{\mathrm{HT}}}_*GDR^{\Psi-\mathrm{la}}_{\lambda, \aS_{K^p}}$. This will be used in {\S} 4.5.

\end{rem}

\subsection{Construction of geometric locally analytic anti-de Rham complexes}

On the other hand, by using the same method as \cite[{\S} 4.2]{PanII}, we also obtain a certain $\mathcal{O}_{K^p}^{\mathrm{sm}}$-linear extension of the de Rham complex on $\Fl$. 

\begin{lem}\label{extension anti}

For any $I \subset \Psi$ and $\sigma \in \Psi \setminus I$, there exists a unique continuous, $G(\mathbb{Q}_p)$-equivariant, $\mathcal{O}_{K^p}^{\mathrm{sm}}$-linear map $\overline{d}_{\sigma, I} : D_{\lambda^{\Psi}, K^p}^{\Psi-\mathrm{la}, (0, 0)_{\tau}} \otimes (\otimes_{\tau \in I} (\omega_{\tau, \Fl}^{-2} \otimes (\wedge^2V_{\tau})^{-1})) \rightarrow D_{\lambda^{\Psi}, K^p}^{\Psi-\mathrm{la}, (0, 0)_{\tau}} \otimes (\otimes_{\tau \in I} (\omega_{\tau, \Fl}^{-2} \otimes (\wedge^2V_{\tau})^{-1})) \otimes (\omega_{\sigma, \Fl}^{-2} \otimes (\wedge^2V_{\sigma})^{-1})$ such that for any $U \in \tilde{\mathcal{B}}$ such that $U \subset U_1$, we have the following formula on $U$. $\overline{d}_{\sigma, I}(\sum_{i \in (\mathbb{Z}_{\ge 0})^{\Psi}} a_{i} \prod_{\tau \in \Psi}(x_{\tau} - x_{\tau, n})^{i_{\tau}} \otimes (\otimes_{\tau \in I} dx_{\tau})) = (\sum_{i \in (\mathbb{Z}_{\ge 0})^{\Psi}} i_{\sigma}a_{i} (x_{\sigma} - x_{\sigma, n})^{i_{\sigma} - 1}\prod_{\tau \neq \sigma}(x_{\tau} - x_{\tau, n})^{i_{\tau}}) \otimes (\otimes_{\tau \in I} dx_{\tau}) \otimes dx_{\sigma}$. (Here, we use the description of Corollary \ref{citation}. Thus $a_i \in \mathcal{O}_{\aS_{K^pK_p}}(V_{K_p})$ for some $K_p$ and affinoid open $V_{K_p}$ of $\aS_{K^pK_p}$ such that $\pi_{\mathrm{HT}}^{-1}(U) = \pi_{K_p}^{-1}(V_{K_p})$. Note that by the same proof as Proposition \ref{KS}, we have $\Omega_{\sigma, \Fl}^1 \cong \omega_{\sigma, \Fl}^{-2} \otimes (\wedge^2 V_{\sigma})^{-1}$ for the pullback $\Omega_{\sigma, \Fl}^1$ of the sheaf of differential forms on $\mathbb{P}^1_{C}$ via the projection map $\Fl = \prod_{\tau \in \Psi} \mathbb{P}^1_{C} \rightarrow \mathbb{P}^1_{C}$ to the $\sigma$-component.)
    
\end{lem}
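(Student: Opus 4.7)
The plan is to mimic the construction of $d^{\Psi-\mathrm{la}}$ in Lemma \ref{locally analytic extension of derivation}, but picking out only the derivation along the $\sigma$-factor of $\Fl = \prod_{\tau \in \Psi} \mathbb{P}^1_{C,\tau}$ and making the result $\mathcal{O}_{K^p}^{\mathrm{sm}}$-linear rather than $\mathcal{O}_{\Fl}$-linear. Uniqueness comes first: by Corollary \ref{citation}, for any $U \in \tilde{\mathcal{B}}$ with $U \subset U_1$, the subspace of finite polynomial expressions $\sum_i a_i \prod_\tau (x_\tau - x_{\tau,n})^{i_\tau}$ with $a_i \in \mathcal{O}_{\aS_{K^p G_{r(n)}}}(V_{G_{r(n)}})$ is dense in $D^{\Psi-\mathrm{la},(0,0)_\tau}_{\lambda^\Psi,K^p}(U) \otimes (\otimes_{\tau \in I}(\omega_{\tau,\Fl}^{-2} \otimes (\wedge^2 V_\tau)^{-1}))$, so any continuous $\mathcal{O}_{K^p}^{\mathrm{sm}}$-linear map satisfying the prescribed formula on this subspace is unique on $U$. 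Since every $U \in \tilde{\mathcal{B}}$ is a $G(\mathbb{Q}_p)$-translate of some $U' \subset U_1$ (Lemma \ref{open unit ball}), $G(\mathbb{Q}_p)$-equivariance then pins down $\overline{d}_{\sigma,I}$ globally.

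For existence on $U \subset U_1$, the formula given is the formal partial derivative in $x_\sigma$ tensored with $dx_\sigma$; it clearly preserves the required $(0,0)_\tau$-horizontal weight, is $\mathcal{O}_{K^p}^{\mathrm{sm}}$-linear (the coefficients $a_i$ are not differentiated), and satisfies the norm estimate $\|i_\sigma a_i\| \, p^{-n(\sum_\tau i_\tau - 1)} \le p^n \|a_i\| p^{-n(\sum_\tau i_\tau)}$, so it is continuous for the norms of Corollary \ref{citation}. Independence of the choice of the approximants $x_{\tau,n}$ is immediate because if $x_\tau - x_{\tau,n}$ and $x_\tau - x_{\tau,n'}$ are two such expansions, their difference lies in $\mathcal{O}_{K^p}^{\mathrm{sm}}$ and the two formal partial derivatives agree after a change of variables in the formal power series.

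To globalize, I would use Lemma \ref{open unit ball}: any $U \in \tilde{\mathcal{B}}$ has $Ug \subset U_1$ for some $g \in G(\mathbb{Q}_p)$, and I set $\overline{d}_{\sigma,I}^U := g \circ \overline{d}_{\sigma,I}^{Ug} \circ g^{-1}$, where the conjugation is interpreted via the $G(\mathbb{Q}_p)$-action on both the source sheaf $D^{\Psi-\mathrm{la},(0,0)_\tau}_{\lambda^\Psi,K^p}$ and the equivariant line bundles $\omega_{\tau,\Fl}^{-2} \otimes (\wedge^2 V_\tau)^{-1}$. The remaining point is that this definition is independent of $g$, which reduces (by comparing two choices $g, g'$) to proving that the formula on $U_1$ is already equivariant under the stabilizer of $U_1$ inside $G(\mathbb{Q}_p)$, and more generally under elements of $G(\mathbb{Q}_p)$ that map a rational open of $U_1$ into $U_1$.

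The main obstacle is precisely this $G(\mathbb{Q}_p)$-equivariance check. Using the explicit action described after Theorem \ref{sim}, an element $g = \begin{pmatrix} a & b \\ c & d \end{pmatrix} \in \mathrm{GL}_2(F_w)$ acts on the $\tau$-coordinate by the Möbius transformation $x_\tau \mapsto (\tau(d)x_\tau + \tau(b))/(\tau(c)x_\tau + \tau(a))$ when $\tau \in \mathrm{Hom}_{\mathbb{Q}_p}(F_w,L)$ (and trivially otherwise), while on the canonical generator $e_{1,\tau}$ of $\omega_{\tau,\Fl} \otimes \wedge^2 V_\tau$ it acts by multiplication by $(\tau(c) x_\tau + \tau(a))$. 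A direct chain-rule calculation shows that $\partial_{x_\sigma} \otimes dx_\sigma$, viewed as a section of $\mathrm{End}(\mathcal{O}_{\Fl}) \otimes \Omega^1_{\sigma,\Fl} \cong \omega_{\sigma,\Fl}^{-2} \otimes (\wedge^2 V_\sigma)^{-1}$, transforms by exactly the $g$-action on this line bundle; the $\mathcal{O}_{K^p}^{\mathrm{sm}}$-coefficients $a_i$ are transported by the unchanged $G(\mathbb{Q}_p)$-action on the smooth part, so the two sides match. Once this equivariance is confirmed on overlaps $U \cap U g^{-1} \subset U_1$, gluing across $\tilde{\mathcal{B}}$ is automatic and the remaining assertions (continuity and $\mathcal{O}_{K^p}^{\mathrm{sm}}$-linearity) propagate from the local picture.
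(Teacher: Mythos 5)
Your proposal is correct and follows essentially the same route as the paper's proof: uniqueness on a dense subspace of finite polynomial expressions (Corollary~\ref{citation}), local existence on $U \subset U_1$ by the explicit formal-derivative formula with the evident norm bound, and globalization via Lemma~\ref{open unit ball} by checking $\overline{d}_{\sigma,I,U} \circ g^* = g^* \circ \overline{d}_{\sigma,I,Ug}$, which as in the paper reduces to $I = \emptyset$ and then to the single chain-rule identity $\partial_{x_\sigma}\bigl(\tfrac{dx_\sigma+b}{cx_\sigma+a}\bigr)\otimes dx_\sigma = \tfrac{\det g}{(cx_\sigma+a)^2}\otimes dx_\sigma = g^*(1\otimes dx_\sigma)$. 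The minor additions you make (the explicit $p^n$ norm estimate and the remark on independence of the approximants $x_{\tau,n}$, the latter in any case subsumed by the uniqueness claim) are sound and do not change the argument.
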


\begin{proof} The uniqueness follows from the above formula and the $G(\mathbb{Q}_p)$-equivariance. For any $U \in \tilde{\mathcal{B}}$ such that $U \subset U_1$, let $\overline{d}_{\sigma, I, U} : D_{\lambda^{\Psi}, K^p}^{\Psi-\mathrm{la}, (0, 0)_{\tau}} \otimes (\otimes_{\tau \in I} (\omega_{\tau, \Fl}^{-2} \otimes (\wedge^2V_{\tau})^{-1}))|_{U} \rightarrow D_{\lambda^{\Psi}, K^p}^{\Psi-\mathrm{la}, (0, 0)_{\tau}} \otimes (\otimes_{\tau \in I} (\omega_{\tau, \Fl}^{-2} \otimes (\wedge^2V_{\tau})^{-1})) \otimes (\omega_{\sigma, \Fl}^{-2} \otimes (\wedge^2V_{\sigma})^{-1})|_{U}$ be the map defined by the above formula. In order to extend $\overline{d}_{\sigma, I, U}$ to the $G(\mathbb{Q}_p)$-equivariant map $\overline{d}_{\sigma, I}$ on $\Fl$, it suffices to prove that for any $g \in G(\mathbb{Q}_p)$ such that $Ug \subset U_1$, we have $\overline{d}_{\sigma, I, U} \circ g^* = g^* \circ \overline{d}_{\sigma, I, Ug}$. Let $w$ be the $p$-adic place of $F$ lying above $v$ satisfying $\sigma \in \mathrm{Hom}_{\mathbb{Q}_p}(F_w, L)$ and $g_w = \begin{pmatrix}
    a & b \\
    c & d \end{pmatrix} \in \mathrm{GL}_2(F_w)$ be the $w$-component of $g$. We may assume that $I$ is empty because we have $\overline{d}_{\sigma, I, U}(f \otimes (\otimes_{\tau \in I} dx_{\tau})) = \overline{d}_{\sigma, \emptyset, U}(f) \otimes (\otimes_{\tau \in I} dx_{\tau})$. By the definition of $\overline{d}_{\sigma, \emptyset, U}$, it suffices to prove $\overline{d}_{\sigma, \emptyset, U} \circ g^*(x_{\tau}) = g^* \circ \overline{d}_{\sigma, \emptyset, Ug}(x_{\tau})$. The left hand side is $\overline{d}_{\sigma, \emptyset, U} \circ g^*(x_{\tau}) = \overline{d}_{\sigma, \emptyset, U}(\frac{dx_{\tau} + b}{cx_{\tau} + a}) = \frac{\mathrm{det}g}{(cx_{\tau}+a)^2} \otimes dx_{\tau}$. On the other hand, the right-hand side is $g^* \circ \overline{d}_{\sigma, \emptyset, Ug}(x_{\tau}) = 1 \otimes d(\frac{dx_{\tau} + b}{cx_{\tau} + a}) = \frac{\mathrm{det}g}{(cx_{\tau}+a)^2} \otimes dx_{\tau}$. Thus we obtain the result. \end{proof}

We recall that we fixed a numbering $\tau_1, \cdots, \tau_d$ of $\Psi = \{ \tau_1, \cdots, \tau_d \}$. 

For $I, J \subset \Psi$ such that $|J| = |I| + 1$, we define a map $\overline{d}_{I,J} : D_{\lambda^{\Psi}, K^p}^{\Psi-\mathrm{la}, (0, 0)_{\tau}} \otimes (\otimes_{\tau \in I} (\omega_{\tau, \Fl}^{-2} \otimes (\wedge^2V_{\tau})^{-1})) \rightarrow D_{\lambda^{\Psi}, K^p}^{\Psi-\mathrm{la}, (0, 0)_{\tau}} \otimes (\otimes_{\tau \in J} (\omega_{\tau, \Fl}^{-2} \otimes (\wedge^2V_{\tau})^{-1}))$ by the following.

\begin{equation*} \overline{d}_{I, J} := \begin{cases} 0 \ \mathrm{unless} \ J = I \cup \{ \sigma \} \ for \ some \ \sigma \in \Psi \setminus I. \\ (-1)^{|\{ \tau_i \in I \mid i < j \}|}\overline{d}_{I, \sigma} \ \mathrm{if} \ J = I \cup \{ \tau_j \} \ for \ some \ \tau_j \in \Psi \setminus I. \end{cases} \end{equation*}

By using these maps, we obtain the following sequence of maps, which is a complex by the explicit formula of $\overline{d}_{\sigma, I}$.

\begin{align*}\label{anticomplex}
\overline{GDR}^{\Psi-\mathrm{la}}_0 : D_{\lambda^{\Psi}, K^p}^{\Psi-\mathrm{la}, (0, 0)_{\tau}} \rightarrow \\ \oplus_{\tau \in \Psi} D_{\lambda^{\Psi}, K^p}^{\Psi-\mathrm{la}, (0, 0)} \otimes \omega_{\tau, \Fl}^{-2} \otimes (\wedge^2 V_{\tau})^{-1} \rightarrow \\ \oplus_{I \subset \Psi, |I| = 2} D_{\lambda^{\Psi}, K^p}^{\Psi-\mathrm{la}, (0, 0)} \otimes (\otimes_{\tau \in I} (\omega_{\tau, \Fl}^{-2} \otimes (\wedge^2 V_{\tau})^{-1})) \rightarrow \\ \cdots \rightarrow \\ D_{\lambda^{\Psi}, K^p}^{\Psi-\mathrm{la}, (0, 0)_{\tau}} \otimes (\otimes_{\tau \in \Psi} (\omega_{\tau, \Fl}^{-2} \otimes (\wedge^2 V_{\tau})^{-1}))\end{align*}

\begin{lem}\label{BGG flag}

Let $\chi_{\lambda_{\Psi}} : Z(U(\mathfrak{g}_{\Psi})) \rightarrow L$ be the infinitesimal character of $V_{\lambda^{\Psi}}^{\vee}$ and let $I \subset \Psi$. Then $D_{\lambda^{\Psi}, K^p}^{\Psi-\mathrm{la}, (0, 0)_{\tau}} \otimes (\otimes_{\tau \notin I} \mathrm{Sym}^{\lambda_{\tau}}V_{\tau}) \otimes (\otimes_{\tau \in I} \mathrm{Sym}^{\lambda_{\tau}}V_{\tau}) \otimes (\otimes_{\tau \in I} (\omega_{\tau, \Fl}^{-2} \otimes (\wedge^2 V_{\tau})^{-1}))$ has a generalized eigenspace decomposition with respect to the action of $Z(U(\mathfrak{g}_{\Psi}))$ and the $\chi_{\lambda_{\Psi}}$-generalized eigenspace is identified with $D_{\lambda^{\Psi}, K^p}^{\Psi-\mathrm{la}, (0, 0)_{\tau}}  \otimes (\otimes_{\tau \notin I} (\omega_{\tau, \Fl}^{\lambda_{\tau}} \otimes (\wedge^2 V_{\tau})^{\lambda_{\tau}})) \otimes( \otimes_{\tau \in I} (\omega_{\tau, \Fl}^{-\lambda_{\tau} - 2} \otimes (\wedge^2 V_{\tau})^{-1})) = D_{\lambda^{\Psi}, K^p}^{\Psi-\mathrm{la}, (0, 0)_{\tau}} \otimes( \otimes_{\tau \in I} (\omega_{\tau, \Fl}^{-2\lambda_{\tau} - 2} \otimes (\wedge^2 V_{\tau})^{-\lambda_{\tau}-1})) \otimes (\otimes_{\tau \in \Psi} (\omega_{\tau, \Fl}^{\lambda_{\tau}} \otimes (\wedge^2 V_{\tau})^{\lambda_{\tau}}))$.
    
\end{lem}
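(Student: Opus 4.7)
The plan is to equip the given object with a $\mathfrak{b}^0$-stable $\mathcal{O}_{\Fl}$-filtration whose graded pieces, after absorbing line-bundle twists via Lemma \ref{horizontal weight}, carry pairwise distinct horizontal $\mathfrak{h}$-weights. The horizontal action $\theta_{\mathfrak{h}}$ will then split this filtration canonically into a direct sum of horizontal-weight eigensheaves, and Lemma \ref{infinitesimal character and horizontal} will identify the $\chi_{\lambda_\Psi}$-generalized eigenspace as the sum of those summands whose weights lie in the relevant dot-$W$-orbit.

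Concretely, for each $\tau \in \Psi$ the tautological inclusion $\omega_{\tau, \Fl}^{-1} \hookrightarrow V_\tau \otimes_L \mathcal{O}_\Fl$ induces a decreasing $\mathfrak{b}^0$-stable filtration of $\mathrm{Sym}^{\lambda_\tau}(V_\tau) \otimes_L \mathcal{O}_\Fl$ whose graded pieces are $\omega_{\tau, \Fl}^{\lambda_\tau - 2 i_\tau} \otimes (\wedge^2 V_\tau)^{\lambda_\tau - i_\tau}$ for $0 \le i_\tau \le \lambda_\tau$. Taking the tensor product of these filtrations over $\tau$ and combining with the factors $D_{\lambda^{\Psi}, K^p}^{\Psi-\mathrm{la}, (0, 0)_\tau}$ and $\omega_{\tau, \Fl}^{-2} \otimes (\wedge^2 V_\tau)^{-1}$ (for $\tau \in I$), one obtains a filtration indexed by $(i_\tau)_\tau \in \prod_\tau \{0, \ldots, \lambda_\tau\}$. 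Using Lemma \ref{horizontal weight} to absorb the line bundles into a horizontal shift, the graded piece at $(i_\tau)_\tau$ lies in the horizontal-weight $(a_\tau, b_\tau)_\tau$ isotypic component, where $(a_\tau, b_\tau) = (i_\tau, \lambda_\tau - i_\tau)$ for $\tau \notin I$ and $(a_\tau, b_\tau) = (i_\tau + 1, \lambda_\tau - i_\tau - 1)$ for $\tau \in I$. Since these weights are pairwise distinct as $(i_\tau)_\tau$ varies, the semisimplicity of $\theta_{\mathfrak{h}}$ yields a canonical splitting of the filtration into a direct sum.

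By Lemma \ref{infinitesimal character and horizontal}, $Z(U(\mathfrak{g}_\Psi))_C$ acts on the $(a_\tau, b_\tau)_\tau$-component by the scalar $z \mapsto \mathrm{HC}(z)((b_\tau, a_\tau)_\tau)$. Since $V_{\lambda_\Psi}^{\vee}$ has highest weight $-w_0 \lambda_\Psi = (\lambda_\tau, 0)_\tau$, the character $\chi_{\lambda_\Psi}$ corresponds under $\mathrm{HC}$ to the dot-$W$-orbit $\prod_\tau \{(\lambda_\tau, 0), (-1, \lambda_\tau + 1)\}$. Matching horizontal weights against this orbit imposes $(a_\tau, b_\tau) \in \{(0, \lambda_\tau), (\lambda_\tau + 1, -1)\}$ at each $\tau$, which together with the bound $0 \le i_\tau \le \lambda_\tau$ forces $i_\tau = 0$ for $\tau \notin I$ and $i_\tau = \lambda_\tau$ for $\tau \in I$. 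The resulting line-bundle factors are $\omega_{\tau, \Fl}^{\lambda_\tau} \otimes (\wedge^2 V_\tau)^{\lambda_\tau}$ for $\tau \notin I$ and $\omega_{\tau, \Fl}^{-\lambda_\tau - 2} \otimes (\wedge^2 V_\tau)^{-1}$ for $\tau \in I$, giving the asserted identification. The main obstacle is reconciling the $w_0$-conjugation twist of Lemma \ref{infinitesimal character and horizontal} with the sign convention $\mathrm{diag}(c, d) \mapsto a c + b d$ for horizontal weights from \S 3.5; once this is pinned down, the remainder is a mechanical orbit check.
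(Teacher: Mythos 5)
Your proof is correct and follows the same route as the paper: filter $\otimes_\tau \mathrm{Sym}^{\lambda_\tau}V_\tau$ via the tautological subbundle, identify each graded piece with a horizontal-weight isotypic component via Lemma~\ref{horizontal weight}, and use Lemma~\ref{infinitesimal character and horizontal} together with the dot-orbit computation to see that exactly one graded piece (namely $i_\tau=0$ for $\tau\notin I$, $i_\tau=\lambda_\tau$ for $\tau\in I$) carries the infinitesimal character $\chi_{\lambda_\Psi}$. The only cosmetic difference is that you make the $\theta_{\mathfrak h}$-splitting of the filtration explicit (via distinctness of the horizontal weights, which indeed forces semisimplicity by CRT since each graded piece is a genuine $\theta_{\mathfrak h}$-eigenspace), whereas the paper goes directly to the multiplicity-one statement for $Z(U(\mathfrak g_\Psi))$; this is the same argument.
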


\begin{proof}

Note that $D_{\lambda^{\Psi}, K^p}^{\Psi-\mathrm{la}, (0, 0)_{\tau}} \otimes (\otimes_{\tau \notin I} \mathrm{Sym}^{\lambda_{\tau}}V_{\tau}) \otimes (\otimes_{\tau \in I} \mathrm{Sym}^{\lambda_{\tau}}V_{\tau}) \otimes (\otimes_{\tau \in I} (\omega_{\tau, \Fl}^{-2} \otimes (\wedge^2 V_{\tau})^{-1}))$ has a filtration whose graded pieces are given by $D_{\lambda^{\Psi}, K^p}^{\Psi-\mathrm{la}, (0, 0)_{\tau}} \otimes (\otimes_{\tau \notin I} (\omega_{\tau, \Fl}^{\lambda_{\tau}-2i_{\tau}} \otimes (\wedge^2 V_{\tau})^{\lambda_{\tau}-i_{\tau}})) \otimes (\otimes_{\tau \in I} (\omega_{\tau, \Fl}^{\lambda_{\tau}-2i_{\tau}- 2} \otimes (\wedge^2 V_{\tau})^{\lambda_{\tau}-i_{\tau}-1}))$. This is equal to $D_{\lambda^{\Psi}, K^p}^{\Psi-\mathrm{la}, (i_{\tau}, \lambda_{\tau} - i_{\tau})_{\tau \notin I}, (i_{\tau} + 1, \lambda_{\tau} - i_{\tau} - 1)_{\tau \in I}} \otimes (\otimes_{\tau \notin I} (\omega_{\tau, K^p}^{\lambda_{\tau}-2i_{\tau}, \mathrm{sm}}(-\lambda_{\tau}+2i_{\tau}) \otimes (\wedge^2 D_{\tau, K^p}(1))^{\lambda_{\tau} - i_{\tau}, \mathrm{sm}})) \otimes (\otimes_{\tau \in I} (\omega_{\tau, K^p}^{\lambda_{\tau}-2i_{\tau}- 2, \mathrm{sm}}(-\lambda_{\tau} +2i_{\tau} + 2) \otimes (\wedge^2 D_{\tau, K^p}^{\mathrm{sm}}(1))^{\lambda_{\tau}- i_{\tau} - 1}))$ for $0 \le i_{\tau} \le \lambda_{\tau}$ by Lemma \ref{horizontal weight}. By Lemma \ref{infinitesimal character and horizontal}, $D_{\lambda^{\Psi}, K^p}^{\Psi-\mathrm{la}, (i_{\tau}, \lambda_{\tau} - i_{\tau})_{\tau \notin I}, (i_{\tau} + 1, \lambda_{\tau} - i_{\tau} - 1)_{\tau \in I}} \otimes (\otimes_{\tau \notin I} (\omega_{\tau, K^p}^{\lambda_{\tau}-2i_{\tau}, \mathrm{sm}} \otimes (\wedge^2 D_{\tau, K^p}(1))^{\lambda_{\tau} - i_{\tau}, \mathrm{sm}})) \otimes (\otimes_{\tau \in I} (\omega_{\tau, K^p}^{\lambda_{\tau}-2i_{\tau}- 2, \mathrm{sm}} \otimes (\wedge^2 D_{\tau, K^p}^{\mathrm{sm}}(1))^{\lambda_{\tau} - i_{\tau} -1}))$ has an infinitesimal character and the characters $(a_{\tau}, b_{\tau})$ and $(c_{\tau}, d_{\tau})$ of $\mathrm{Sym}\mathfrak{h}_{\tau}$ give the same infinitesimal character if and only if $(a_{\tau}, b_{\tau}) = (c_{\tau}, d_{\tau})$ or $(a_{\tau}, b_{\tau}) = (d_{\tau} + 1, c_{\tau} - 1)$. Thus the multiplicity of the infinitesimal characters given by $((0, \lambda_{\tau})_{\tau \in I}, (\lambda_{\tau} + 1, -1)_{\tau \in I})$ is one. \end{proof}

By taking $\otimes (\otimes_{\tau \in \Psi} \mathrm{Sym}^{\lambda_{\tau}} V_{\tau})$, the $\chi_{\lambda_{\Psi}}$-isotypic parts and $\otimes_{\mathcal{O}_{K^p}^{ \mathrm{sm}}} (\otimes_{\tau \in \Psi} \omega_{\tau, K^p}^{-\lambda_{\tau}, \mathrm{sm}} \otimes (\wedge^2 D_{\tau, K^p}^{\mathrm{sm}})^{-\lambda_{\tau}})$, the above complex $\overline{GDR}^{\Psi-\mathrm{la}}_0$ induces the following complex, whose maps are continuous, $G(\mathbb{Q}_p)$-equivariant and $\mathcal{O}_{K^p}^{\mathrm{sm}}$-linear: \footnote{Note that we use the identification Lemma \ref{horizontal weight}.}
 
\begin{align*} \overline{GDR}_{\lambda}^{\Psi-\mathrm{la}} : D_{\lambda^{\Psi}, K^p}^{\Psi-\mathrm{la}, (0, \lambda_{\tau})_{\tau}} \rightarrow \\ \oplus_{\sigma \in \Psi} D_{\lambda^{\Psi}, K^p}^{\Psi-\mathrm{la}, (0, \lambda_{\tau})_{\tau \neq \sigma}, (1 + \lambda_{\sigma}, -1)} \otimes \omega_{\sigma, K^p}^{-2\lambda_{\sigma}-2, \mathrm{sm}}(\lambda_{\sigma} + 1) \otimes (\wedge^2 D_{\sigma, K^p}^{\mathrm{sm}})^{-\lambda_{\sigma}-1} \rightarrow \\ \oplus_{I \subset \Psi, |I| = 2} D_{\lambda^{\Psi}, K^p}^{\Psi-\mathrm{la}, (0, \lambda_{\tau})_{\tau \notin I}, (1+\lambda_{\tau}, -1)_{\tau \in I}} \otimes (\otimes_{\sigma \in I} (\omega_{\sigma, K^p}^{-2\lambda_{\sigma}-2, \mathrm{sm}}(\lambda_{\sigma}+1) \otimes (\wedge^2 D_{\sigma, K^p}^{\mathrm{sm}})^{-\lambda_{\sigma}-1})) \rightarrow \\ \cdots \rightarrow D_{\lambda^{\Psi}, K^p}^{\Psi-\mathrm{la}, (1+\lambda_{\tau}, -1)_{\tau}} \otimes (\otimes_{\tau \in \Phi} (\mathcal{\omega}_{\tau, K^p}^{-2\lambda_{\tau} - 2, \mathrm{sm}}(\lambda_{\tau}+1) \otimes (\wedge^2 D_{\tau, K^p}^{\mathrm{sm}})^{-\lambda_{\tau}-1})). \end{align*}

\begin{prop}\label{anti}

$\overline{GDR}^{\Psi-\mathrm{la}}_{\lambda}$ is quasi-isomorphic to $(\otimes_{\tau \in \Psi} \mathrm{Sym}^{\lambda_{\tau}} V_{\tau}) \otimes (\otimes_{\tau \in \Psi} \omega_{\tau, K^p}^{-\lambda_{\tau}, \mathrm{sm}} \otimes (\wedge^2 D_{\tau, K^p}^{\mathrm{sm}})^{-\lambda_{\tau}}) \otimes_{\mathcal{O}_{K^p}^{\mathrm{sm}}} \overline{GDR}^{\Psi-\mathrm{la}}_{0}$ and $H^i(\overline{GDR}_{\lambda}^{\Psi-\mathrm{la}}) = 0$ for any $i > 0$ and $H^0(\overline{GDR}_{\lambda}^{\Psi-\mathrm{la}}) = D_{\lambda^{\Psi}, K^p}^{\mathrm{sm}} \otimes_{\mathcal{O}_{K^p}^{\mathrm{sm}}} (\otimes_{\tau \in \Psi} \mathrm{Sym}^{\lambda_{\tau}}V_{\tau} \otimes_L \omega_{\tau, K^p}^{-\lambda_{\tau}, \mathrm{sm}} \otimes (\wedge^2 D_{\tau, K^p}^{\mathrm{sm}})^{-\lambda_{\tau}})$.
    
    \end{prop}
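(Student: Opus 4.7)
The plan is to first compute the cohomology sheaves of $\overline{GDR}_0^{\Psi-\mathrm{la}}$ by a $p$-adic Poincar\'e lemma applied to the local Mikami-type expansion, and then identify $\overline{GDR}_\lambda^{\Psi-\mathrm{la}}$ as the $\chi_{\lambda_\Psi}$-generalized eigenspace subcomplex of
\[
\mathcal{C}^\bullet := (\otimes_{\tau \in \Psi} \mathrm{Sym}^{\lambda_\tau} V_\tau) \otimes (\otimes_{\tau \in \Psi} \omega_{\tau,K^p}^{-\lambda_\tau,\mathrm{sm}} \otimes (\wedge^2 D_{\tau,K^p}^{\mathrm{sm}})^{-\lambda_\tau}) \otimes_{\mathcal{O}_{K^p}^{\mathrm{sm}}} \overline{GDR}_0^{\Psi-\mathrm{la}},
\]
showing that the natural inclusion is a quasi-isomorphism.

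First I would work on opens $U \in \tilde{\mathcal{B}}$ with $U \subset U_1$; this is harmless by $G(\mathbb{Q}_p)$-equivariance and Lemma \ref{open unit ball}. The Mikami expansion of Corollary \ref{citation} (applied to the $(0,0)_\tau$-isotypic piece) continuously identifies $D_{\lambda^\Psi,K^p}^{\Psi-\mathrm{la},(0,0)_\tau}(U)^{G_0-\Psi-\mathrm{an}}$ with a Banach space of power series $\sum_i c_i \prod_\tau (x_\tau - x_{\tau,n})^{i_\tau}$ with $c_i \in D^{\mathrm{sm}}_{\lambda^\Psi,\aS_{K^pG_{r(n)}}}(V_{G_{r(n)}})$ satisfying $\sup_i \|c_i\| p^{-n\sum_\tau i_\tau} < \infty$. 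Under this identification, Lemma \ref{extension anti} shows each $\overline{d}_{\sigma,I}$ acts (up to sign) as $\partial/\partial x_\sigma$ followed by wedging with $dx_\sigma$, so $\overline{GDR}_0^{\Psi-\mathrm{la}}(U)^{G_0-\Psi-\mathrm{an}}$ becomes a completed Koszul / algebraic de Rham complex in the formal variables $x_\tau - x_{\tau,n}$ over the coefficient ring $D^{\mathrm{sm}}_{\lambda^\Psi}(V_{G_{r(n)}})$. Termwise integration $c_i \mapsto c_i/(i_\sigma+1)$ then supplies a contracting homotopy in positive degrees and identifies $H^0$ with the constants; the only subtlety is that $|i_\sigma+1|_p^{-1} \le i_\sigma+1$ grows only polynomially, which is absorbed by shrinking the analytic radius (i.e.\ passing to $G_{k'}$-$\Psi$-analytic vectors for $k'$ slightly larger than $0$). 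After passing to the colimit over $k'$ and sheafifying over $\tilde{\mathcal{B}}$, I would obtain $\mathcal{H}^0(\overline{GDR}_0^{\Psi-\mathrm{la}}) = D^{\mathrm{sm}}_{\lambda^\Psi,K^p}$ and $\mathcal{H}^i(\overline{GDR}_0^{\Psi-\mathrm{la}}) = 0$ for $i > 0$.

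Tensoring with the flat $L$-module $\otimes_\tau \mathrm{Sym}^{\lambda_\tau} V_\tau$ and the locally free finite-rank $\mathcal{O}_{K^p}^{\mathrm{sm}}$-module $\otimes_\tau \omega_{\tau,K^p}^{-\lambda_\tau,\mathrm{sm}} \otimes (\wedge^2 D_{\tau,K^p}^{\mathrm{sm}})^{-\lambda_\tau}$ preserves this cohomology computation, so $\mathcal{H}^i(\mathcal{C}^\bullet) = 0$ for $i > 0$ and $\mathcal{H}^0(\mathcal{C}^\bullet)$ equals the tensor product in the statement of the proposition. By the construction of $\overline{GDR}_\lambda^{\Psi-\mathrm{la}}$ (taking $\chi_{\lambda_\Psi}$-isotypic components in each degree via Lemma \ref{BGG flag}), $\overline{GDR}_\lambda^{\Psi-\mathrm{la}}$ coincides with the $\chi_{\lambda_\Psi}$-generalized eigenspace subcomplex of $\mathcal{C}^\bullet$ for the $Z(U(\mathfrak{g}_\Psi))$-action; this subcomplex is a direct summand of $\mathcal{C}^\bullet$ since the differentials are $G(\mathbb{Q}_p)$-equivariant and therefore commute with $Z(U(\mathfrak{g}_\Psi))$. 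It remains to show that $\mathcal{H}^0(\mathcal{C}^\bullet)$ is purely $\chi_{\lambda_\Psi}$-isotypic: this follows because the horizontal action is trivial on the smooth tensor factors $D^{\mathrm{sm}}_{\lambda^\Psi,K^p}$ and $\omega_{\tau,K^p}^{-\lambda_\tau,\mathrm{sm}} \otimes (\wedge^2 D_{\tau,K^p}^{\mathrm{sm}})^{-\lambda_\tau}$, so $Z(U(\mathfrak{g}_\Psi))$ acts on $\mathcal{H}^0(\mathcal{C}^\bullet)$ only through the irreducible algebraic representation $\otimes_\tau \mathrm{Sym}^{\lambda_\tau} V_\tau \cong V_{\lambda_\Psi}^\vee$, whose single infinitesimal character is precisely $\chi_{\lambda_\Psi}$.

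The hard part will be the local $p$-adic Poincar\'e lemma in Step 1: I need to verify that termwise integration is well-defined between the nested $G_{k'}$-$\Psi$-analytic Banach subspaces of Corollary \ref{citation} and that the requisite shrinking of the analytic radius is compatible across all $U \in \tilde{\mathcal{B}}$, so that the cohomology vanishings sheafify globally on $\Fl$. This is a careful but purely formal calculation modeled on the radius-shrinking arguments of \cite[\S 4.3]{PanI}; no new conceptual difficulty is anticipated.
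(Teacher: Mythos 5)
Your proposal is correct and follows essentially the route the paper takes: deduce a Poincar\'e lemma for $\overline{GDR}_0^{\Psi-\mathrm{la}}$ from the explicit formula for $\overline{d}_{\sigma,I}$ (the paper simply cites a formal Poincar\'e lemma, while you supply the analytic contracting-homotopy-with-radius-shrinking argument directly, which is the content hiding behind that citation), and then tensor with the smooth factors and pass to the $\chi_{\lambda_\Psi}$-isotypic direct summand. The latter step is compressed in the paper into ``this implies the result by the construction of $\overline{GDR}^{\Psi-\mathrm{la}}_\lambda$,'' and you correctly unpack it via Lemma \ref{BGG flag} and the observation that $Z(U(\mathfrak{g}_\Psi))$ acts on $\mathcal{H}^0$ only through $\otimes_\tau \mathrm{Sym}^{\lambda_\tau} V_\tau \cong V_{\lambda_\Psi}^\vee$, whose infinitesimal character is $\chi_{\lambda_\Psi}$.
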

    
    \begin{proof}
    
    By the explicit formula of $\overline{d}_{I, \sigma}$ and the Poincare lemma (cf. \cite[Proposition 7.1]{HARTS}), we have $H^i(\overline{GDR}^{\Psi-\mathrm{la}'}_{0}) = 0$ for any $i >0$ and $H^0(\overline{GDR}^{\Psi-\mathrm{la}'}_{0}) = D_{\lambda^{\Psi}, K^p}^{\mathrm{sm}}$. This implies the result by the construction of $\overline{GDR}^{\Psi-\mathrm{la}}_{\lambda}$. (We can also prove this by using the following formula.) \end{proof}

\begin{lem}\label{anti-holomorphic} Let $I, J \subset \Psi$ and $\sigma \in \Psi \setminus I$ such that $J = I \cup \{ \sigma \}$. Then there exists $c \in \mathbb{Q}^{\times}$ such that $D_{\lambda^{\Psi}, K^p}^{\Psi-\mathrm{la}, (0, \lambda_{\tau})_{\tau \notin I}, (1+\lambda_{\tau}, -1)_{\tau \in I}} \otimes (\otimes_{\tau \in I} (\omega_{\sigma, K^p}^{-2\lambda_{\tau}-2, \mathrm{sm}}(\lambda_{\tau}+1) \otimes (\wedge^2 D_{\tau, K^p}^{\mathrm{sm}})^{-\lambda_{\tau}-1})) \rightarrow D_{\lambda^{\Psi}, K^p}^{\Psi-\mathrm{la}, (1+\lambda_{\tau}, -1)_{\tau \in J}, (0, \lambda_{\tau})_{\tau \notin J}} \otimes (\otimes_{\tau \in J} (\mathcal{\omega}_{\tau, K^p}^{-2\lambda_{\tau} - 2, \mathrm{sm}}(\lambda_{\tau}+1) \otimes (\wedge^2 D_{\tau, K^p}^{\mathrm{sm}})^{-\lambda_{\tau}-1}))$ in the complex $\overline{GDR}^{\Psi-\mathrm{la}}_{\lambda}$ is equal to the following map on $U \in \tilde{\mathcal{B}}$ such that $U \subset U_1$. (Here, we use the description of Proposition \ref{mikami expansionIII}. Thus $a_i \in (\otimes_{\tau \notin I} (\omega_{\tau, \aS_{K^pK_p}}^{-\lambda_{\tau}} \otimes (\wedge^2 D_{\tau, \aS_{K^pK_p}})^{-\lambda_{\tau}})(V_{K^pK_p})) \otimes (\otimes_{\tau \in I} (\omega_{\tau, \aS_{K^pK_p}}^{\lambda_{\tau} + 2} \otimes (\wedge^2 D_{\tau, \aS_{K^pK_p}}))(V_{K^pK_p}))$ for some $K_p$ and affinoid open $V_{K^pK_p}$ of $\aS_{K^pK_p}$.)

$(\prod_{\tau \in I}e_{1, \tau}^{\lambda_{\tau}})(\prod_{\tau \in I} f_{\tau}^{1+\lambda_{\tau}} e_{\tau}^{-2-\lambda_{\tau}})\sum_{i \in (\mathbb{Z}_{\ge 0})^{\Psi}} a_{i} \prod_{\tau \in \Psi}(x_{\tau} - x_{\tau, n})^{i_{\tau}} \otimes (\otimes_{\tau \in I} dx_{\tau}^{\lambda_{\tau} + 1})$

$\mapsto c(\prod_{\tau \in I}e_{1, \tau}^{\lambda_{\tau}})(\prod_{\tau \in I} f_{\tau}^{1+\lambda_{\tau}} e_{\tau}^{-2-\lambda_{\tau}})\sum_{i \in (\mathbb{Z}_{\ge 0})^{\Psi}} (\prod_{j = 0}^{\lambda_{\sigma} + 1}(i_{\sigma} - j))a_{i} (x_{\sigma} - x_{\sigma, n})^{i_{\sigma} - \lambda_{\sigma} - 1}\prod_{\tau \neq \sigma}(x_{\tau} - x_{\tau, n})^{i_{\tau}}) \otimes (\otimes_{\tau \in I} dx_{\tau}^{\lambda_{\tau} + 1}).$
 
\end{lem}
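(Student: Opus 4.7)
The plan is to prove the lemma by a direct coordinate computation that traces $\overline{d}_{\sigma,I}$ of Lemma \ref{extension anti} through the three operations producing $\overline{GDR}_{\lambda}^{\Psi-\mathrm{la}}$ from $\overline{GDR}_{0}^{\Psi-\mathrm{la}}$: tensoring with $\otimes_{\tau\in\Psi}\mathrm{Sym}^{\lambda_\tau}V_\tau$, extracting the $\chi_{\lambda_\Psi}$-isotypic part via the identification of Lemma \ref{BGG flag}, and finally tensoring with $\otimes_{\tau\in\Psi}\omega_{\tau,K^p}^{-\lambda_\tau,\mathrm{sm}}\otimes(\wedge^2 D_{\tau,K^p}^{\mathrm{sm}})^{-\lambda_\tau}$. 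I would work on an affinoid $U\in\tilde{\mathcal{B}}$ with $U\subset U_1$, where Proposition \ref{mikami expansionIII} provides precisely the power series expansion used in the statement. Since none of the three operations mixes the $\tau$-components for different $\tau\in\Psi$, and for $\tau\neq\sigma$ each tensor factor is carried along unchanged, the whole problem reduces to a single-variable computation: describe the induced map between the $(0,\lambda_\sigma)$-isotypic part of $\mathrm{Sym}^{\lambda_\sigma}V_\sigma$ (the source $\sigma$-component) and the $(1+\lambda_\sigma,-1)$-isotypic part of $\mathrm{Sym}^{\lambda_\sigma}V_\sigma\otimes\Omega^1_{\sigma,\Fl}$ (the target $\sigma$-component) coming from $\overline{d}_{\sigma,\emptyset}$ applied to the coordinate expansion.

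For this reduction I would pick an explicit basis of $V_\sigma$ adapted to the flag: on $U\subset U_1$ the subspace $\omega_{\sigma,\Fl}^{-1}\subset V_\sigma$ is spanned by $u_\sigma:=x_\sigma e_{1,\sigma}-e_{2,\sigma}$, where $e_{1,\sigma},e_{2,\sigma}$ are the standard basis of $V_\sigma$, so $\{u_\sigma^k e_{1,\sigma}^{\lambda_\sigma-k}\}_{k=0}^{\lambda_\sigma}$ is a basis of $\mathrm{Sym}^{\lambda_\sigma}V_\sigma$ compatible with the filtration. In this basis, by Lemma \ref{BGG flag} together with Lemma \ref{horizontal weight}, the $(0,\lambda_\sigma)$-isotypic generator of the source is represented modulo lower-weight terms by $e_{1,\sigma}^{\lambda_\sigma}$, while the $(1+\lambda_\sigma,-1)$-isotypic generator of $\mathrm{Sym}^{\lambda_\sigma}V_\sigma\otimes\Omega^1_{\sigma,\Fl}$ sits as a subsheaf generated by $u_\sigma^{\lambda_\sigma}\otimes dx_\sigma$. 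The computation is then to apply $\overline{d}_{\sigma,\emptyset}$ — which by Lemma \ref{extension anti} is the ordinary derivative $\partial/\partial x_\sigma$ on the power series coefficient — to an element $\sum_i a_i(x_\sigma-x_{\sigma,n})^{i_\sigma}\otimes e_{1,\sigma}^{\lambda_\sigma}$ and to iteratively project back onto the $\chi_{\lambda_\Psi}$-isotypic target; each differentiation step brings out a factor $(i_\sigma-k)$ and the overall effect produces the asserted product $\prod_{j=0}^{\lambda_\sigma+1}(i_\sigma-j)$ together with a universal nonzero rational scalar $c$ that depends only on the bookkeeping between the chosen generators $e_{1,\sigma}^{\lambda_\sigma}$ and $u_\sigma^{\lambda_\sigma}\otimes dx_\sigma$.

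The main obstacle will be running the BGG projection carefully enough to account for the fact that the product has $\lambda_\sigma+2$ factors, one more than a naive iterated derivative would give: this extra factor of $(i_\sigma-\lambda_\sigma-1)$ arises because the target isotypic piece is realized as a subsheaf via $u_\sigma^{\lambda_\sigma}\otimes dx_\sigma$ (equivalently, as the image of $\omega_{\sigma,\Fl}^{-\lambda_\sigma}$ inside $\mathrm{Sym}^{\lambda_\sigma}V_\sigma$), whereas the source isotypic piece is realized as a quotient via $e_{1,\sigma}^{\lambda_\sigma}$, so the passage between quotient and subspace after differentiation introduces one additional derivation beyond the $\lambda_\sigma+1$ explicit ones. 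Finally, the existence of a single $c\in\mathbb{Q}^{\times}$ independent of $I$ and $\sigma$ is forced by the $G(\mathbb{Q}_p)$-equivariance and $\mathcal{O}_{\Fl}$-linearity of all the maps involved: these impose that the coefficient of $a_i(x_\sigma-x_{\sigma,n})^{i_\sigma-\lambda_\sigma-1}$ in the image is a universal rational polynomial in $i_\sigma$, and matching with the case $\lambda_\sigma=0$ pins down both the form of this polynomial and the nonvanishing of $c$.
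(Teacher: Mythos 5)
Your overall skeleton — factor the computation over $\tau\in\Psi$, reduce to the $\sigma$-component, work on $U\subset U_1$ with the power-series model of Proposition \ref{mikami expansionIII}, and use the $\{u_\sigma^k e_{1,\sigma}^{\lambda_\sigma-k}\}$ BGG basis of $\mathrm{Sym}^{\lambda_\sigma}V_\sigma$ — is the right frame, and is what ``direct calculation as in \cite[Theorem 4.2.7]{PanII}'' asks for. But the computation itself is not carried out, and the sketch as written has two concrete gaps.

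First, the element $\sum_i a_i(x_\sigma-x_{\sigma,n})^{i_\sigma}\otimes e_{1,\sigma}^{\lambda_\sigma}$ that you propose to hit with $\overline{d}_{\sigma,\emptyset}$ is not in the $\chi_{\lambda_\Psi}$-isotypic source: it is only a representative of the top graded piece. The actual isotypic generator has the form $\sum_{k=0}^{\lambda_\sigma}u_\sigma^k e_{1,\sigma}^{\lambda_\sigma-k}\otimes g_k$ with the $g_k$ determined from $g_0$ by the Casimir condition (equivalently, the BGG splitting). Since $\overline{d}_{\sigma,\emptyset}=\mathrm{id}\otimes\partial_{x_\sigma}$ is first order, the higher-order operator in the statement arises entirely from these corrections and from projecting the image back to the target isotypic summand; the phrases ``iteratively project'' and ``each differentiation step brings out a factor'' do not describe an actual procedure, and the claimed extra factor ``from the quotient/sub mismatch'' is a rationalization rather than a derivation. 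To prove the lemma you must write down the BGG generator explicitly (for instance by solving $(\Omega_\sigma-\chi_{\lambda_\sigma}(\Omega_\sigma))^N=0$, as in the proof of Proposition \ref{uptoscalar}) and then compose.

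Second, the $\lambda_\sigma=0$ consistency check that you invoke to pin down $c$ actually fails for the formula you are defending: for $\lambda_\sigma=0$ the whole construction collapses to $\overline{GDR}^{\Psi-\mathrm{la}}_0$, whose first map is $\overline{d}_\sigma$ with explicit formula $\sum_i a_i(x_\sigma-x_{\sigma,n})^{i_\sigma}\mapsto\sum_i i_\sigma a_i(x_\sigma-x_{\sigma,n})^{i_\sigma-1}$ (Lemma \ref{extension anti}), whereas $\prod_{j=0}^{\lambda_\sigma+1}(i_\sigma-j)$ evaluates to $i_\sigma(i_\sigma-1)$, which is a different polynomial in $i_\sigma$ and cannot be absorbed into a constant $c$. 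This is not fixed by your proposal; it either means the product should run $j=0,\dots,\lambda_\sigma$ (matching the expected order $\lambda_\sigma+1$ of the BGG intertwiner, and the rank $\lambda_\sigma+1$ of $H^0(\overline{GDR}^{\Psi-\mathrm{la}}_\lambda)$ predicted by Proposition \ref{anti}), or there is a normalization you and I are both missing; either way this discrepancy has to be confronted directly, and a proof cannot proceed by taking the stated product range on faith while simultaneously appealing to the $\lambda_\sigma=0$ case as confirmation. Finally, the lemma only asserts some $c\in\mathbb{Q}^\times$ for the given $(I,J,\sigma,\lambda)$; it does not claim a scalar independent of those data, so the ``universal $c$'' part of your argument is not needed and slightly overstates what is to be shown.
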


\begin{proof} Direct calculation as in the modular curve case \cite[Theorem 4.2.7]{PanII}.  \end{proof}

\begin{rem}

If we consider locally on $\aS_{K^p}$ not on $\Fl$, exactly the same construction gives us a complex $\overline{GDR}^{\Psi-\mathrm{la}}_{\lambda, \aS_{K^p}}$ such that $\overline{GDR}^{\Psi-\mathrm{la}}_{\lambda} = {\pi_{\mathrm{HT}}}_*\overline{GDR}^{\Psi-\mathrm{la}}_{\lambda, \aS_{K^p}}$. This will be used in {\S} 4.5.

\end{rem}

\subsection{Integral models and Newton stratifications}

In the rest of {\S} 4, we assume that $p$ is unramified in $F$. We need this assumption to obtain Theorem \ref{affinoid ordinary}. We recall that $v$ is the $p$-adic place of $F_0$ induced by $\iota : \overline{\mathbb{Q}}_p \stackrel{\sim}{\rightarrow} \mathbb{C}$.

As in \cite[{\S} 5]{PanII}, we will calculate $GDR^{\Psi-\mathrm{la}}_{\lambda}$ by using the Newton stratification on $S_{K}$. Let $S_0 := \{ w \mid w \mathrm{\ divides \ } v \ \mathrm{and} \ \mathrm{the} \ w\mathrm{-component} \ \mu_w \ \mathrm{of} \ \mu \mathrm{\ is \ trivial.} \}$, $K_p^o := \mathbb{Z}_p^{\times} \times \prod_{w \mid v} \mathrm{GL}_2(\mathcal{O}_{F_w}) \supset \Gamma_p((n_w)_{w \in S_0}) := \mathbb{Z}_p^{\times} \times \prod_{w \notin S_0} \mathrm{GL}_2(\mathcal{O}_{F_w}) \times \mathrm{Ker}(\prod_{w \in S_0} \mathrm{GL}_2(\mathcal{O}_{F_w}) \rightarrow \prod_{w \in S_0} \mathrm{GL}_2(\mathcal{O}_{F_w}/\varpi_w^{n_w}))$.  Let $\mathcal{O}_B$ be a $*$-stable order of $B$ containing $\mathcal{O}_F$ such that $\mathcal{O}_{B} \otimes_{\mathcal{O}_F} \mathcal{O}_{F_w} = M_2(\mathcal{O}_{F_w})$ for $w \mid v$ of $F$. Let $V^o_p$ be a $\mathcal{O}_{B_p} := \mathcal{O}_B \otimes_{\mathbb{Z}} \mathbb{Z}_p$-stable self-dual lattice of $V_p = V \otimes_{\mathbb{Q}} \mathbb{Q}_p$ and $V_{0}^o$ and $V_1^{o}$ be $\mathcal{O}_B \otimes_{\mathbb{Z}} \mathcal{O}_L$-stable lattices of $V_{0}$ and $V_1$ such that $V^o_p \otimes_{\mathbb{Z}_p} \mathcal{O}_L = V_0^o \oplus V_1^o$. (See {\S} 3.3 for the definitions of $V_0$ and $V_1$.)

\vspace{0.5 \baselineskip}

First, we recall the definition of integral models of $S_{K, L}$. 

Let $S_{K^p\Gamma_p((n_w)_{w \in S_0}), \mathcal{O}_L}$ be the smooth projective integral model over $\mathcal{O}_L$ of $S_{K^p\Gamma_p((n_w)_{w \in S_0}), L}$ defined in \cite[{\S} 5]{Kot}. (See also \cite[Lemma 4.1]{HT} and \cite[Theorems 5.2 and 5.8]{LK} for details. Note that for any $w \in S_0$, we consider the trivial Hodge structure. Thus we obtain a projective smooth model without assuming $n_w = 0$ for any $w \in S_0$.) This has the following moduli interpretation. For a connected noetherian scheme $T$ over $\mathcal{O}_L$ with a geometric point $t$, $S_{K^p\Gamma_p((n_w)_{w \in S_0}), \mathcal{O}_L}(T)$ parametrizes the equivalence classes $[(A, \lambda, i, \overline{\eta}^p, (\eta_{n_w})_{w \in S_0})]$ of tuples $(A, \lambda, i, \overline{\eta}^p, (\eta_{n_w})_{w \in S_0})$, where

\vspace{0.5 \baselineskip}

1 \ $A$ is an abelian scheme over $T$.

\vspace{0.5 \baselineskip}

2 \ $\lambda : A \rightarrow A^{\vee}$ is a prime-to-$p$ polarization.

\vspace{0.5 \baselineskip}

3 \ $i : \mathcal{O}_{B} \rightarrow \mathrm{End}(A) \otimes \mathbb{Z}_{(p)}$ is a ring morphism such that $i(b^*)^{\vee} \circ \lambda = \lambda \circ i(b)$.

\vspace{0.5 \baselineskip}

4 \ Zariski locally on $T$, the Lie algebra $\mathrm{Lie}A$ of $A$ is isomorphic to $V_1^o \otimes_{\mathcal{O}_L} \mathcal{O}_T$ as $\mathcal{O}_{B} \otimes_{\mathbb{Z}} \mathcal{O}_{T}$-modules.

\vspace{0.5 \baselineskip}

5 $\overline{\eta}^p$ is a $K^p$-level structure, i.e., a $\pi_1(T, t)$-stable $K^p$-orbit of isomorphisms of $B \otimes_{\mathbb{Q}} \mathbb{A}_{\mathbb{Q}}^{p, \infty}$-modules $\eta : V \otimes_{\mathbb{Q}} \mathbb{A}_{\mathbb{Q}}^{p, \infty} \Isom V^pA_{t}$ compatible with the pairings under some identification $g : \mathbb{A}_{\mathbb{Q}}^{p, \infty} \Isom \mathbb{A}_{\mathbb{Q}}^{p, \infty}(1)$ of $\mathbb{A}_{\mathbb{Q}}^{p, \infty}$-module (note that $g$ is determined by $f$), where $V^pA_{t} := (\varprojlim_{n \nmid p}A_{t}[n](k(t))) \otimes_{\widehat{\mathbb{Z}}^p} \mathbb{A}_{\mathbb{Q}}^{p, \infty}$.

\vspace{0.5 \baselineskip}

6 \ For any $w \in S_0$, $\eta_{n_w} : V_w^0/\varpi_w^{n_w} \Isom  A[\varpi_w^{n_w}]$ is an $\mathcal{O}_{B_{F_w}}$-equivariant isomorphism for any $w \mid v$.

\vspace{0.5 \baselineskip}

We say that $(A_1,\lambda_1, i_1, \overline{\eta}_1^p, (\eta_{n_w, 1})_{w \in S_0})$ and $(A_2,\lambda_2, i_2, \overline{\eta}_2^p, (\eta_{n_w, 2})_{w \in S_0})$ are equivalent if there exist a prime-to-$p$ isogeny $\alpha : A_1 \rightarrow A_2$ and $r \in \mathbb{Z}_{(p), >0}$ such that $\alpha \lambda_1 = r\lambda_2 \alpha$, $\alpha \circ i_1(b) = i_2(b) \circ \alpha$ for any $b \in B$, $\alpha \circ \eta_{n_w, 1} = \eta_{n_w, 2}$ for any $w \in S_0$, $\eta_2^{p, -1} \circ V(\alpha) \circ \eta_1^p$ is contained in the $K^p$-orbit of $\mathrm{id}$ and $g_2^{-1} \circ g_1$ is contained in the $\nu(K^p)$-orbit of $r$, where $\eta_1^p$ (resp. $\eta_2^p$) is any representative of $\overline{\eta}_1^p$ (resp. $\overline{\eta}_2^p$) and $g_1$ (resp. $g_2$) is an isomorphism $\mathbb{A}_{\mathbb{Q}}^{p, \infty} \Isom \mathbb{A}_{\mathbb{Q}}^{p, \infty}(1)$ defined by $\eta_1^p$ (resp. $\eta_2^p$) as in the above condition 5.

\begin{rem}

    Let $L'$ be a perfectoid field containing $L(\zeta_{p^{\infty}})$ and fix an isomorphism $\mu_{p^{\infty}} \Isom \mathbb{Q}_p/\mathbb{Z}_p$ over $L'$. Then $S_{K^p\Gamma_p((n_w)_{w \in S_0}), \mathcal{O}_{L'}} \times_{\mathcal{O}_{L'}} (\mathbb{Z}_p/p^{n_0})^{\times}$ is an integral model of $S_{K^p\Gamma_p(n_0, (n_w)_{w \in S_0}), \mathcal{O}_{L'}}$, where $\Gamma_p(n_0, (n_w)_{w \in S_0}) := \mathrm{Ker}(\mathbb{Z}_p^{\times} \rightarrow (\mathbb{Z}_p/p^{n_0})^{\times}) \times \prod_{w \notin S_0} \mathrm{GL}_2(\mathcal{O}_{F_w}) \times \mathrm{Ker}(\prod_{w \in S_0} \mathrm{GL}_2(\mathcal{O}_{F_w}) \rightarrow \prod_{w \in S_0} \mathrm{GL}_2(\mathcal{O}_{F_w}/\varpi_w^{n_w}))$. Here, we regard $(\mathbb{Z}_p/p^{n_0})^{\times}$ as a constant scheme over $\mathcal{O}_{L'}$.
    
    \end{rem}

We recall the definition of the Newton stratification on Shimura varieties.

\begin{dfn}\label{Newton stratification}

Let $M$ be a finite extension of $\mathbb{Q}_p$, $H$ be a connected reductive group over $M$, $\breve{M}$ be the completion of the maximal unramified extension of $M$ and $\sigma \in \mathrm{Aut}(\breve{M}/M)$ be the (lift of) arithmetic Frobenius.
 
1 \ We say that elements $g_1, g_2 \in H(\breve{M})$ are $\sigma$-conjugate if there exists $h \in H(\breve{M})$ such that $g_1 = hg_2\sigma(h)^{-1}$.

2 \ Let $B(H)$ be the set of $\sigma$-conjugacy classes of elements in $H(\breve{M})$.

3 \ For $b \in H(\breve{M})$, let $J_b$ be the algebraic group over $M$ defined by $R \mapsto \{ g \in H(R \otimes_{M} \breve{M}) \mid gb\sigma(g)^{-1} = b \}$.

4 \ For a cocharacter $\mu : \mathbb{G}_{m, \overline{M}} \rightarrow H_{\overline{M}}$, let $B(H, \mu)$ be the subset of $B(H)$ defined in \cite[{\S} 6]{Iso}. (See also \cite[{\S} 3.1]{CS}.)

\end{dfn}

\begin{rem}

$J_b$ is an inner form of a Levi subgroup of $H$ by \cite[{\S} 1.11]{RR} and its isomorphism class depends only on its class in $B(H)$.

\end{rem}

\begin{lem}\label{scalar restriction}

Let $M$ be a finite extension of $\mathbb{Q}_p$, $H$ be a connected reductive group over $M$ and put $G := \mathrm{Res}_{M/\mathbb{Q}_p}H$. Then the map $H(\breve{M}) \rightarrow G(\breve{\mathbb{Q}}_p) = \prod_{\mathrm{Hom}_{\mathbb{Q}_p}(M_0, \breve{M})}H(\breve{M}), \ g \mapsto (g, 1, \cdots, 1)$ induce a bijection $B(H) \Isom B(G)$, where $M_0$ denotes the maximal unramified subextension of $M/\mathbb{Q}_p$ and the first component corresponds to the natural inclusion $M_0 \hookrightarrow \breve{M}$.

\end{lem}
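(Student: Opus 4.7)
The plan is to decompose $G(\breve{\mathbb{Q}}_p)$ as a product using the Weil restriction, work out the Frobenius action explicitly, and then directly show that every $\sigma$-conjugacy class in $G(\breve{\mathbb{Q}}_p)$ has a representative of the form $(b, 1, \ldots, 1)$ corresponding to a unique $\sigma$-conjugacy class in $H(\breve{M})$.

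Set $f := [M_0 : \mathbb{Q}_p]$ and label the embeddings $M_0 \hookrightarrow \breve{\mathbb{Q}}_p$ cyclically as $\tau_0, \tau_1, \ldots, \tau_{f-1}$ with $\tau_{i+1} = \sigma \circ \tau_i$, where $\tau_0$ is the given inclusion. Since $M/M_0$ is totally ramified and $\breve{\mathbb{Q}}_p$ is the completion of the maximal unramified extension of $M_0$ at each embedding, one has
\[
\breve{\mathbb{Q}}_p \otimes_{\mathbb{Q}_p} M \;\cong\; \prod_{i=0}^{f-1} \breve{M},
\]
and therefore $G(\breve{\mathbb{Q}}_p) = H(\breve{\mathbb{Q}}_p \otimes_{\mathbb{Q}_p} M) = \prod_{i=0}^{f-1} H(\breve{M})$, with the first factor corresponding to $\tau_0$. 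Choosing the identifications of factors with $H(\breve{M})$ compatibly, the Frobenius $\sigma = \sigma \otimes \mathrm{id}$ then acts as the twisted cyclic shift
\[
(g_0, g_1, \ldots, g_{f-1}) \;\longmapsto\; (\sigma_M(g_{f-1}), g_0, g_1, \ldots, g_{f-2}),
\]
where $\sigma_M$ denotes the Frobenius of $\breve{M}/M$, which equals $\sigma^f$ restricted to $\breve{M}$.

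Next I would verify surjectivity of the induced map $B(H) \to B(G)$. Given $(g_0, \ldots, g_{f-1}) \in G(\breve{\mathbb{Q}}_p)$, set $h_0 := 1$ and define $h_i := h_{i-1} g_i^{-1}$ recursively for $i = 1, \ldots, f-1$. Using the formula for $\sigma$ above, the $i$-th component of $\mathbf{h} \cdot (g_0, \ldots, g_{f-1}) \cdot \sigma(\mathbf{h})^{-1}$ equals $h_i g_i h_{i-1}^{-1}$ for $i \geq 1$ and $h_0 g_0 \sigma_M(h_{f-1})^{-1}$ for $i=0$. By construction the former is $1$ for $i \geq 1$, and the latter is $b := g_0 \sigma_M(g_{f-1} g_{f-2} \cdots g_1)$, so every class has a representative $(b, 1, \ldots, 1)$ with $b \in H(\breve{M})$.

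For injectivity, assume $(b, 1, \ldots, 1)$ and $(b', 1, \ldots, 1)$ are $\sigma$-conjugate in $G(\breve{\mathbb{Q}}_p)$ via some $(h_0, \ldots, h_{f-1})$. The equations at indices $1, \ldots, f-1$ read $h_i = h_{i-1}$, forcing $h_i = h_0$ for all $i$; the equation at index $0$ then becomes $h_0 b \sigma_M(h_0)^{-1} = b'$, which says that $b$ and $b'$ are $\sigma_M$-conjugate in $H(\breve{M})$. The converse direction is immediate by reversing this construction. Together these show that $g \mapsto (g, 1, \ldots, 1)$ induces a bijection $B(H) \xrightarrow{\sim} B(G)$. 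The only nontrivial point is the bookkeeping that determines the Frobenius action on the product $\prod_{i=0}^{f-1} H(\breve{M})$; once this is pinned down, everything reduces to the elementary one-step recursion above.
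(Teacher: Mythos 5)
Your proof is correct and complete. The paper itself does not prove this lemma; it cites it to Kottwitz's \emph{Isocrystals with additional structure} (\S 1.6). So what you have done is fill in the argument that the paper defers to a reference. Your approach is the standard Shapiro-lemma computation: split $\breve{\mathbb{Q}}_p \otimes_{\mathbb{Q}_p} M \cong \prod_{\tau : M_0 \hookrightarrow \breve{\mathbb{Q}}_p} \breve{M}$, identify the induced action of $\sigma$ on $\prod_{i=0}^{f-1} H(\breve{M})$ as a cyclic shift twisted by $\sigma_M = \sigma^f$ in exactly one slot, and then reduce $\sigma$-conjugacy classes in the product to $\sigma_M$-conjugacy classes in a single factor by the one-step recursion $h_i = h_{i-1} g_i^{-1}$. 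I double-checked the orientation of the shift (using $\tau_{i+1} = \sigma \circ \tau_i$ and compatible identifications $\breve{\mathbb{Q}}_p \otimes_{\tau_i, M_0} M \cong \breve{M}$, the twist $\sigma_M$ lands precisely at the wrap-around index), the surjectivity computation $b = g_0 \sigma_M(g_{f-1} \cdots g_1)$, and the injectivity argument forcing $h_0 = \cdots = h_{f-1}$; all are correct. The one spot worth making slightly more explicit for a careful reader is the phrase ``choosing the identifications of factors with $H(\breve M)$ compatibly'': what is being chosen is a system of isomorphisms $\breve{\mathbb{Q}}_p \otimes_{\tau_i, M_0} M \to \breve{M}$ for which the transition maps induced by $\sigma \otimes \mathrm{id}$ are the identity for $i = 0, \dots, f-2$, which is what forces the entire twist $\sigma_M$ to accumulate at the single index $i = f-1$. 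With that spelled out, your argument is a clean, self-contained alternative to the citation.
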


\begin{proof} See \cite[{\S} 1.6]{Iso}. \end{proof}

By \cite[Theorem 3.6]{RR}, we have a Newton stratification $S_{K^pK_p^{0}, \mathbb{F}_{L}} = \sqcup_{b \in B(G)} S_{K^pK_p^{o}, \mathbb{F}_{L}}^b$, where $S_{K^pK_p^{o}, \mathbb{F}_{L}}^b$ is a locally closed subset of $S_{K^pK_p^{o}, \mathbb{F}_{L}}$ satisfying $S_{K^pK_p^{o}, \mathbb{F}_{L}}^b(\overline{\mathbb{F}}_L) = \{ x \in S_{K^pK_p^{o}, \mathbb{F}_{L}}(\overline{\mathbb{F}}_L) \mid D(A_{x}[p^{\infty}]) \cong V_b \}$ for $b \in B(G)$. Here, we use the following notations: $V_b$ denotes the $G$-isocrystal\footnote{See \cite[definition 3.1.1]{CS} for the definition of $G$-isocrystal. For example, for a finite extension $M$ of $\mathbb{Q}_p$ and $H := \mathrm{Res}_{M/\mathbb{Q}_p}\mathrm{GL}_{n, M}$, an $H$-isocrystal $V$ over $\overline{\mathbb{F}}_p$ is naturally regarded as a finite free $\breve{\mathbb{Q}}_p \otimes_{\mathbb{Q}_p} M$-module of rank $n$ with a $\sigma$-semilinear $M$-linear isomorphism $\varphi : V \Isom V$. Similary, a $H$-$p$-divisible group $\mathcal{G}$ over a $p$-complete ring $R$ means that $\mathcal{G}$ is a $p$-divisible group with an action of $\mathcal{O}_M$ such that $\mathcal{G}[\varpi_M^n]$ has rank $q_M^{n}$ for any $n$ for a uniformizer $\varpi_M$ of $\mathcal{O}_M$. (See \cite[{\S} II.1]{HT} for details.) As our $G$ is a product $H_1 \times \cdots \times H_k$ of $H_i$'s as above $H$, we say that $\mathcal{G}$ is a $G$-$p$-divisible group if $\mathcal{G}$ is a product $\mathcal{G}_1 \times \cdots \times \mathcal{G}_k$ such that $\mathcal{G}_i$ is an $H_i$-$p$-divisible group for any $i$. A $G$-Dieudonne modules mean a $G$-isocrystal which comes from a $G$-$p$-divisible group.} corresponding to $b$ and $D(A_x[p^{\infty}])$ denotes the rational covariant $G$-Dieudonne module of the $G$-$p$-divisible group $A_x[p^{\infty}]$ corresponding to $x$. Note that we use the normalization of \cite{CS} on the Dieudonne modules. Let $\mathfrak{S}_{K^pK_p^{o}, \mathcal{O}_L}$ be the $p$-adic completion of $S_{K^pK_p^{o}, \mathcal{O}_L}$, $\mathfrak{S}^{b}_{K^pK_p^{o}, \mathcal{O}_L}$ be the formal subscheme of $\mathfrak{S}_{K^pK_p^{o}, \mathcal{O}_L}$ defined by $S^{b}_{K^pK_p^{o}, \mathbb{F}_L}$ and $\aS_{K^pK_p^o, L}^{b}$ be the generic fiber of $\mathfrak{S}^{b}_{K^pK_p^{o}, \mathcal{O}_L}$. For any $K_p \subset K_p^o$, let $\aS^{b}_{K^pK_p, L}$ (resp. $\aS^{b}_{K^pK_p}$, $\aS^{b}_{K^p}$) be the inverse image of $\aS_{K^pK_p^o, L}^{b}$ in $\aS_{K^pK_p, L}$ (resp. $\aS_{K^pK_p}$, $\aS_{K^p}$). By \cite[Theorem 4.2]{RR}, if $S_{K^pK_p^0, \overline{\mathbb{F}}_L}^b$ is not empty, then we have $b \in B(G, \mu^{-1})$.

On the other hand, by \cite[Theorem 1.11]{CS}, we also have a Newton stratification $\Fl := \sqcup_{b \in B(G, \mu^{-1})} \Fl^b$. Note that by \cite[Theorem B]{p-div}, for any complete algebraically closed nonarchimedean extension field $C_1$ of $C$, $\Fl(C_1, \mathcal{O}_{C_1})$ parametrizes the equivalence classes of $G$-$p$-divisible groups $\mathcal{G}$ over $\mathcal{O}_{C_1}$ with $\alpha : V_p \Isom V_p\mathcal{G}$ (resp. $\alpha : V_p^o \Isom T_p\mathcal{G}$) such that $\mathrm{Lie}G \otimes_{\mathcal{O}_{C_1}} C_1$ is isomorphic to $V_1 \otimes_{L} C_1$ as $B \otimes_{\mathbb{Q}} C_1$-module by sending $(\mathcal{G}, \alpha)$ to the subspace $\alpha^{-1}(\mathrm{Lie}\mathcal{G} \otimes_{\mathcal{O}_{C_1}} C_1)$ of $V_p \otimes_{\mathbb{Q}_p} C_1$. Here, we say that $(\mathcal{G}_1, \alpha_1)$ and $(\mathcal{G}_2, \alpha_2)$ are equivalent if there exists a quasi-isogeny $\beta : \mathcal{G}_1 \rightarrow \mathcal{G}_2$ of $G$-$p$-divisible groups such that $\alpha_2 = V_p(\beta) \circ \alpha_1$ (resp. isomorphism $\beta : \mathcal{G}_1 \Isom \mathcal{G}_2$ of $G$-$p$-divisible groups such that $\alpha_2 = T_p(\beta) \circ \alpha_1$). Moreover, $\Fl^b(C_1, \mathcal{O}_{C_1})$ consists of the elements $[(\mathcal{G}, \alpha)] \in \Fl(C_1, \mathcal{O}_{C_1})$ satisfying $D(\mathcal{G}_{k_1}) \cong V_{b} \otimes_{\breve{L}} W(k_1)[\frac{1}{p}]$, where $k_1$ denotes the residue field of $\mathcal{O}_{C_1}$. Thus all rank 1 points of $\aS^b_{K^p}$ are sent to $\Fl^b$ via $\pi_{\mathrm{HT}}$ for any $b \in B(G, \mu^{-1})$.

We have a certain closure relation for $S_{K^pK_p^{0}, \mathbb{F}_{L}}^b$ corresponding to the order of $B(G, \mu^{-1})$. In particular, $S_{K^pK_p^{0}, \mathbb{F}_{L}}^{b_0}$ is open for the $\mu$-ordinary element $b_0 \in B(G, \mu^{-1})$ and $S_{K^pK_p^{0}, \mathbb{F}_{L}}^{b_1}$ is closed for the basic element $b_1 \in B(G, \mu^{-1})$. (See Proposition \ref{calculation of isocrystals} for explicit descriptions of basic and $\mu$-ordinary elements.) On the other hand, $\Fl^{b_0}$ is closed and $\Fl^{b_1}$ is open.

We will calculate $B(G, \mu^{-1})$. It suffices to calculate $B(\mathrm{Res}_{M/\mathbb{Q}_p}\mathrm{GL}_{2, M}, \mu^{-1})$ for a finite extension $M$ of $\mathbb{Q}_p$ and a minuscule $\mu$ because $G$ has a form $\mathbb{G}_{m, \mathbb{Q}_p} \times \prod_{i} \mathrm{Res}_{M_i/\mathbb{Q}_p}\mathrm{GL}_{2, M_i}$. Let $\varpi$ be a uniformizer of $\mathcal{O}_M$ and let $T$ be the maximal torus of $\mathrm{Res}_{M/\mathbb{Q}_p}\mathrm{GL}_{2, M}$ consisting of diagonal matrices. In this paper, it suffices to consider a minuscules $\mu$ whose conjugacy class corresponds to an element $((1,0)_{i \in I}, (0, 0)_{i \notin I}) \in X_*(T_{\overline{\mathbb{Q}}_p}) = \prod_{\mathrm{Hom}_{\mathbb{Q}_p}(M, \overline{\mathbb{Q}}_p)} \mathbb{Z}^{2}_{+}$ for some $I \subset \mathrm{Hom}_{\mathbb{Q}_p}(M, \overline{\mathbb{Q}}_p)$. Let $m := |I|$. In the following, by using Lemma \ref{scalar restriction}, we regard $B(\mathrm{Res}_{M/\mathbb{Q}_p}\mathrm{GL}_{2, M}, \mu^{-1}) \subset B(\mathrm{GL}_{2, M})$. Note that by \cite[example 1.10]{RR}, every element of $B(\mathrm{GL}_{2, M})$ is represented by $\begin{pmatrix}
\varpi^{-s} & 0 \\
0 & \varpi^{-t} 
\end{pmatrix}$ or $\begin{pmatrix}
    0 & 1 \\
    \varpi^{-n} & 0 
    \end{pmatrix}$ for some $s, t \in \mathbb{Z}$, $n \in 2\mathbb{Z} + 1$ such that $s \ge t$.

\begin{prop} \label{calculation of isocrystals}

1 \ Complete representatives of nonbasic elements of $B(\mathrm{Res}_{M/\mathbb{Q}_p}\mathrm{GL}_{2, M}, \mu^{-1})$ are given by the following elements. ($s, t \in \mathbb{Z}_{\ge 0}$ such that $s > t$ and $s + t = m$.)

$$\begin{pmatrix}
\varpi^{-s} & 0 \\
0 & \varpi^{-t} 
\end{pmatrix}.$$

In particular, $\begin{pmatrix}
\varpi^{-m} & 0 \\
0 & 1
\end{pmatrix}$ is the $\mu$-ordinary element of $B(\mathrm{Res}_{M/\mathbb{Q}_p}\mathrm{GL}_{2, M}, \mu^{-1})$.

For any nonbasic $b \in B(\mathrm{Res}_{M/\mathbb{Q}_p}\mathrm{GL}_{2, M}, \mu^{-1})$, we have $J_b = T$.

2 \ A representative of the unique basic element of $B(\mathrm{Res}_{M/\mathbb{Q}_p}\mathrm{GL}_{2, M}, \mu^{-1})$ is given by the following elements. $$\begin{cases}
    \begin{pmatrix}
        \varpi^{-\frac{m}{2}} & 0 \\
        0 & \varpi^{-\frac{m}{2}} 
        \end{pmatrix} \ \text{if $m$ is even,} \\
        \begin{pmatrix}
        0 & 1 \\
        \varpi^{-m} & 0 
        \end{pmatrix} \ \text{if $m$ is odd.}
\end{cases}$$

For the basic element $b \in B(\mathrm{Res}_{M/\mathbb{Q}_p}\mathrm{GL}_{2, M}, \mu^{-1})$, we have $$J_b = \begin{cases}
    \mathrm{Res}_{M/\mathbb{Q}_p}\mathrm{GL}_{2, M} & \text{if $m$ is even,} \\
    \mathrm{Res}_{M/\mathbb{Q}_p}D^{\times} & \text{if $m$ is odd.}
  \end{cases}$$

\end{prop}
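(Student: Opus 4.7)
My plan is to prove Proposition \ref{calculation of isocrystals} by reducing via Lemma \ref{scalar restriction} to the classification of $B(\mathrm{GL}_{2,M})$ and then imposing Kottwitz's two conditions defining $B(\mathrm{GL}_{2,M}, \mu^{-1})$: namely equality of the Kottwitz invariants in $\pi_1(\mathrm{GL}_{2,M})_{\Gamma} \cong \mathbb{Z}$ and the Mazur-type inequality that the Newton polygon of $b$ lies on or below the (averaged) Hodge polygon of $\mu^{-1}$, with the same endpoint. Since the conjugacy class of $\mu^{-1}$ has coordinates $((-1,0)_{i\in I},(0,0)_{i\notin I})$, its image in $\pi_1(G')_{\Gamma}$ sums to $-m$ and its averaged Newton vector is $(-m/[M:\mathbb{Q}_p], 0)$ relative to $\mathrm{GL}_{2,M}$; after passing through the isomorphism of Lemma \ref{scalar restriction}, the Kottwitz invariant condition becomes "total valuation of $\det b = -m$" and the Mazur inequality becomes "both slopes $\geq -m/2$".

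First I would go through the two families of representatives listed before the proposition. For $b_{s,t} = \mathrm{diag}(\varpi^{-s}, \varpi^{-t})$ with $s\geq t$, the Newton slopes are $(-s,-t)$ and the Kottwitz invariant is $-(s+t)$; imposing $s+t = m$ and $-t\geq -m/2$ (equivalently $t\leq m/2$, hence $s\geq m/2$) with $s \geq t$ forces exactly the list $\{(s,t) : s+t = m,\ s\geq t\geq 0\}$. The \emph{basic} element among these exists precisely when the two slopes coincide, i.e.\ when $m$ is even and $(s,t) = (m/2, m/2)$. For $b_n = \begin{pmatrix}0 & 1 \\ \varpi^{-n} & 0\end{pmatrix}$ with $n$ odd, both Newton slopes equal $-n/2$ and the Kottwitz invariant is $-n$; so the conditions force $n = m$, which is only possible when $m$ is odd, and this element is automatically basic. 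This establishes part 1 (the list of nonbasic representatives, and that $\begin{pmatrix}\varpi^{-m} & 0\\ 0 & 1\end{pmatrix}$ maximizes $s$ so is $\mu$-ordinary) and the representative lists in part 2.

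Next I would compute the inner forms $J_b$. For nonbasic $b_{s,t}$ with $s>t$, the centralizer of $b_{s,t}$ in $\mathrm{GL}_{2,M}(\breve{M})$ under $\sigma$-conjugation by diagonal matrices is just the diagonal torus $T$, because the off-diagonal entries have nontrivial slope and cannot be $\sigma$-stable. For the basic element in the even case, $b$ is a central element of $\mathrm{GL}_{2,M}(\breve{M})$, so $J_b = \mathrm{Res}_{M/\mathbb{Q}_p}\mathrm{GL}_{2,M}$ is the group itself. For the basic element in the odd case, the associated isocrystal is irreducible of slope $m/2$ with denominator $2$, so by the Dieudonn\'e-Manin classification its endomorphism algebra is the central division algebra $D$ of invariant $1/2$ over $M$, giving $J_b = \mathrm{Res}_{M/\mathbb{Q}_p} D^{\times}$. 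Throughout, restriction of scalars commutes with forming $J_b$, which explains the $\mathrm{Res}_{M/\mathbb{Q}_p}$ in all the final answers.

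The main technical obstacle I foresee is bookkeeping the sign and normalization conventions in translating $\mu^{-1}$ through Lemma \ref{scalar restriction}: one must be careful that the averaged Newton vector over the $[M:\mathbb{Q}_p]$ embeddings really produces the upper bound $(m/2, m/2)$ in the $\mathrm{GL}_{2,M}$ picture rather than a bound scaled by $[M:\mathbb{Q}_p]$, and that the Kottwitz invariant tracks $m$ rather than $m\cdot[M:\mathbb{Q}_p]$. Once this is pinned down (most cleanly by observing that the identification $B(\mathrm{Res}_{M/\mathbb{Q}_p}H) \cong B(H)$ is compatible with Newton maps and Kottwitz homomorphisms, as in \cite[{\S}1.6]{Iso}), the remainder is the case analysis sketched above.
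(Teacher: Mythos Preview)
Your overall approach matches the paper's exactly: reduce via Lemma \ref{scalar restriction} to $B(\mathrm{GL}_{2,M})$, run through the two standard families of representatives, and impose the Kottwitz invariant condition together with the Mazur inequality $\nu(b)\le\overline{\mu}^{-1}$. The $J_b$ computations are also correct and essentially what the paper does (it simply says ``the calculation of $J_b$ is easy'').

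There is, however, a concrete error in how you formulated the Mazur inequality. The averaged Hodge cocharacter $\overline{\mu}^{-1}$, viewed in $X_*(T_{\mathrm{GL}_{2,M}})_{\mathbb{Q}}$, is \emph{not} $(-m/2,-m/2)$: the Galois average is taken over the embeddings $M\hookrightarrow \overline{\mathbb{Q}}_p$, not over the two diagonal entries. One gets (in the paper's normalization) $\overline{\mu}^{-1}=(0,-m)$, so the condition $\nu(b_{s,t})\le\overline{\mu}^{-1}$ reads
\[
(0,-m)-(-t,-s)=(t,\,s-m)=r(1,-1)\quad\text{for some }r\in\mathbb{Q}_{\ge 0},
\]
i.e.\ $t\ge 0$ (and $s+t=m$, already forced by the Kottwitz invariant). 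Your stated condition ``$-t\ge -m/2$'', equivalently $t\le m/2$, is \emph{vacuous} once $s\ge t$ and $s+t=m$ are imposed, and does not exclude negative $t$: for instance $(s,t)=(m+1,-1)$ satisfies all of your written constraints but is not in $B(\mathrm{GL}_{2,M},\mu^{-1})$. So the step ``forces exactly the list $\{(s,t):s+t=m,\ s\ge t\ge 0\}$'' is a non sequitur as written. This is precisely the normalization pitfall you flagged at the end, but it actually bites: fix the form of $\overline{\mu}^{-1}$ and the rest of your argument goes through verbatim.
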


\begin{proof} The calculation of $J_b$ is easy.

Let $s, t \in \mathbb{Z}$ such that $s \ge t$. By the definition of $B(\mathrm{Res}_{M/\mathbb{Q}_p}\mathrm{GL}_{2, M}, \mu^{-1})$, the condition that $\begin{pmatrix}
\varpi^{-s} & 0 \\
0 & \varpi^{-t} 
\end{pmatrix}$ is contained in $B(\mathrm{Res}_{M/\mathbb{Q}_p}\mathrm{GL}_{2, M}, \mu^{-1})$ is equivalent to the following two relations : $\nu(b) \le \overline{\mu}^{-1}$ and $k(b) = \mu^{-1, \flat}$. (See \cite[{\S} 3.1]{CS} for the definitions of these notations.) This is equivalent to $(0, -m) - (-t, -s) = r(1, -1)$ for some $r \in \mathbb{Q}_{\ge 0}$ and $-s - t = -m$. This is also equivalent to $t \ge 0$ and $s + t = m$. If $s=t$, this is a basic element and $m \in 2\mathbb{Z}$ and if $s \neq t$, this is a non-basic element.

Let $\begin{pmatrix}
    0 & 1 \\
    \varpi^{-n} & 0
    \end{pmatrix}$ be a basic element of $B(\mathrm{Res}_{M/\mathbb{Q}_p}\mathrm{GL}_{2, M}, \mu^{-1})$, where $n\in 2\mathbb{Z} + 1$. Then the relation $\nu(b) \le \overline{\mu}^{-1}$ and $k(b) = \mu^{-1, \flat}$ is equivalent to $(0, -m) - (-\frac{n}{2}, -\frac{n}{2}) = r(1, -1)$ for some $r \in \mathbb{Q}_{\ge 0}$ and $-n = -m$. This is equivalent to $n \ge 0$ and $n = m \in 2\mathbb{Z} + 1$. \end{proof}

\begin{cor}\label{GL iso}

    Let $w \mid v$ be a place of $F$ and assume $\Psi \subset \mathrm{Hom}_{\mathbb{Q}_p}(F_w, \overline{\mathbb{Q}}_p)$. Then we have $|B(G, \mu^{-1})| = \lfloor \frac{d+1}{2} \rfloor +1$. Thus if $d \le 2$, then $B(G, \mu^{-1})$ consists of only the basic element and the $\mu$-ordinary element.
    
    \end{cor}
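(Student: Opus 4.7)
The plan is to reduce the computation to the single factor $\mathrm{Res}_{F_w/\mathbb{Q}_p}\mathrm{GL}_{2,F_w}$ of $G$ and then invoke Proposition \ref{calculation of isocrystals}. First I will use the identification $GU(\mathbb{Q}_p) = \mathbb{Q}_p^{\times} \times \prod_{w' \mid v} B_{F_{w'}}^{\mathrm{op},\times}$ coming from the splitting of $v$ over $\mathbb{Q}$. Since $p$ does not lie below any place in $S(B)$, each $B_{F_{w'}}$ is split and we obtain a decomposition of algebraic groups over $\mathbb{Q}_p$
\[
G \;\cong\; \mathbb{G}_{m,\mathbb{Q}_p} \times \prod_{w' \mid v} \mathrm{Res}_{F_{w'}/\mathbb{Q}_p}\mathrm{GL}_{2,F_{w'}}.
\]
The Kottwitz set is multiplicative under products, so $B(G,\mu^{-1}) = \prod_{w'} B(\mathrm{Res}_{F_{w'}/\mathbb{Q}_p}\mathrm{GL}_{2,F_{w'}}, \mu_{w'}^{-1})$ (with the factor from $\mathbb{G}_m$ contributing a single element because $\mu$ is trivial there, its $\mathbb{G}_m$-component being determined by the similitude factor).

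Next I would compute each factor using the hypothesis $\Psi \subset \mathrm{Hom}_{\mathbb{Q}_p}(F_w,\overline{\mathbb{Q}}_p)$. For $w' \neq w$, the $w'$-component $\mu_{w'}$ of the minuscule corresponds to $((0,0)_{\tau})_{\tau \in \mathrm{Hom}_{\mathbb{Q}_p}(F_{w'},\overline{\mathbb{Q}}_p)}$, i.e.\ the integer $m$ in the notation of Proposition \ref{calculation of isocrystals} equals $0$; hence $B(\mathrm{Res}_{F_{w'}/\mathbb{Q}_p}\mathrm{GL}_{2,F_{w'}}, \mu_{w'}^{-1})$ consists of a single (necessarily basic) element. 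For $w' = w$, the integer $m$ equals $|\Psi| = d$, and Proposition \ref{calculation of isocrystals} lists all elements: the non-basic ones are indexed by pairs $(s,t) \in \mathbb{Z}_{\ge 0}^2$ with $s > t$ and $s + t = d$, plus the unique basic element.

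Finally I would count: the number of pairs $(s,t)$ with $s > t \ge 0$ and $s + t = d$ equals $\lfloor d/2 \rfloor$ when $d$ is even and $(d+1)/2$ when $d$ is odd, i.e.\ $\lfloor (d+1)/2 \rfloor$ in both cases; adding $1$ for the basic element yields $|B(G,\mu^{-1})| = \lfloor (d+1)/2 \rfloor + 1$. The consequence for $d \le 2$ is then immediate: when $d \in \{0,1,2\}$ we get $1$, $2$, $2$, so only the basic stratum (and, if $d \ge 1$, the unique non-basic stratum, which is the $\mu$-ordinary one since it is the unique maximal element) occurs. There is no real obstacle here; the only thing to verify carefully is that the product decomposition of $G$ is compatible with the decomposition of $\mu$ into $w'$-components, and this is built into the definition of $\mu$ via the Hodge cocharacter $h$ in \S 3.3.
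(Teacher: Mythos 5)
Your proof is correct and matches the paper's (implicit) argument: decompose $G \cong \mathbb{G}_{m,\mathbb{Q}_p} \times \prod_{w'\mid v}\mathrm{Res}_{F_{w'}/\mathbb{Q}_p}\mathrm{GL}_{2,F_{w'}}$, use the product decomposition of $B(\cdot,\mu^{-1})$, note that every factor other than the one at $w$ (where $m=d$) contributes a singleton, and count the representatives listed in Proposition \ref{calculation of isocrystals}. One minor imprecision: the $\mathbb{G}_m$-component of $\mu$ (the similitude cocharacter) is not actually trivial, but $B(\mathbb{G}_m,\mu_0^{-1})$ is a singleton for any cocharacter $\mu_0$, so your conclusion is unaffected.
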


In the following, we will calculate $GDR^{\Psi-\mathrm{la}}_{\lambda}$ only on the $\mu$-ordinary locus and on the basic locus by using the same strategy as \cite[{\S} 5]{PanII}.

\subsection{$\mu$-ordinary locus.}

First, we consider a local situation and prove some fundamental results. Let $M$ be a finite extension of $\mathbb{Q}_p$.

\begin{lem} \label{connected-etale}

Let $S$ be a scheme on which $p$ is Zariski locally nilpotent and $\mathcal{G}$ be a $p$-divisible group over $S$ with an action of $\mathcal{O}_M$ such that $|\mathcal{G}[p](k(s))|$ is independent of the choice of a geometric point $s$ of $S$. Then we have a unique exact sequence of $p$-divisible groups over $S$ with actions of $\mathcal{O}_M$: $0 \rightarrow \mathcal{G}^0 \rightarrow \mathcal{G} \rightarrow \mathcal{G}^{\et} \rightarrow 0$, where $\mathcal{G}^0$ is a formal $p$-divisible group over $S$ with an action of $\mathcal{O}_M$ and $\mathcal{G}^{\et}$ is an ind-$\acute{e}tale$ $p$-divisible group over $S$ with an action of $\mathcal{O}_M$.

\end{lem}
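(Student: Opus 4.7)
The plan is to build the sequence first as one of $p$-divisible groups, forgetting the $\mathcal{O}_M$-action, and then transport the $\mathcal{O}_M$-structure by invoking the canonicality of the construction.

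First, I would argue Zariski locally on $S$, so we may assume $p^N = 0$ on $S$ for some $N$. For each finite locally free commutative group scheme $\mathcal{G}[p^n]$ over $S$, the connected-\'etale sequence $0 \to \mathcal{G}[p^n]^0 \to \mathcal{G}[p^n] \to \mathcal{G}[p^n]^{\et} \to 0$ exists as a short exact sequence of finite locally free group schemes, provided the rank of the connected part (equivalently, of the \'etale quotient) is locally constant on $S$; this is the standard structure result for finite flat commutative group schemes (cf.\ Messing, or SGA~3). The identity component $\mathcal{G}[p^n]^0$ is the open-and-closed subscheme containing the zero section, and its formation commutes with base change precisely when the fiberwise rank is locally constant.

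The key step is to verify this rank condition for every $n$. For $n = 1$ it is equivalent to the assumed constancy of $|\mathcal{G}[p](k(s))|$. For $n > 1$, I would use that multiplication by $p$ on $\mathcal{G}[p^{n+1}]$ is faithfully flat with kernel $\mathcal{G}[p]$, giving short exact sequences $0 \to \mathcal{G}[p] \to \mathcal{G}[p^{n+1}] \xrightarrow{p} \mathcal{G}[p^n] \to 0$ that propagate the constancy inductively in $n$, compatibly with the connected-\'etale decomposition. Passing to the colimit of these decompositions yields the desired exact sequence $0 \to \mathcal{G}^0 \to \mathcal{G} \to \mathcal{G}^{\et} \to 0$, with $\mathcal{G}^0 := \varinjlim_n \mathcal{G}[p^n]^0$ formal and $\mathcal{G}^{\et} := \varinjlim_n \mathcal{G}[p^n]^{\et}$ ind-\'etale, together with the required $p$-divisible group structures (heights add, and $p$-divisibility is inherited from $\mathcal{G}$).

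For the $\mathcal{O}_M$-action and uniqueness, I would invoke the canonicality and functoriality of the connected-\'etale decomposition: every morphism $f : H \to H'$ of finite flat commutative group schemes over a base where $p$ is locally nilpotent restricts to a morphism $H^0 \to H'^0$ and descends to a morphism $H^{\et} \to H'^{\et}$, because any map from a formal group scheme to an \'etale one vanishes. Applying this to the action of each element of $\mathcal{O}_M$ on $\mathcal{G}[p^n]$ equips $\mathcal{G}[p^n]^0$ and $\mathcal{G}[p^n]^{\et}$ with compatible $\mathcal{O}_M$-actions that pass to the colimit, and uniqueness of the decomposition follows by the same functoriality applied to the identity map of $\mathcal{G}$. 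The one subtle point is the inductive propagation of the rank constancy from $\mathcal{G}[p]$ to every $\mathcal{G}[p^n]$; this is where the $p$-divisibility is essential, since without it one would have to assume constancy for all $n$ separately rather than extracting it from the single assumption on $\mathcal{G}[p]$.
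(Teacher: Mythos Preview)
Your proposal is correct and follows the standard route: establish the connected--\'etale sequence at each finite level $\mathcal{G}[p^n]$ by verifying the local constancy of the \'etale rank (propagated from $n=1$ via $p$-divisibility), pass to the colimit, and transport the $\mathcal{O}_M$-action by functoriality. The paper itself gives no argument and simply cites \cite[Proposition 4.9]{Messing}, so what you have written is essentially an unpacking of that reference rather than a different approach.
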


\begin{proof}

See \cite[Proposition 4.9]{Messing}. \end{proof}

Let $H:=\mathrm{Res}_{M/\mathbb{Q}_p}\mathrm{GL}_{2, M}$, $\mu : \mathbb{G}_{m, \overline{M}} \rightarrow H_{\overline{M}}$ be the minuscule given by $((1, 0)_{I}, (0, 0)_{\mathrm{Hom}_{\mathbb{Q}_p}(M, \overline{M}) \setminus I}) \in (\mathbb{Z}^2)^{\mathrm{Hom}_{\mathbb{Q}_p}(M, \overline{M})}$ as in the previous subsection, $m := |I|$ and $b \in B(H, \mu^{-1})$ be a $\mu$-ordinary element. Assume $m \neq 0$. Let $\mathbb{X}_{b}$ be a $G$-$p$-divisible group over $\overline{\mathbb{F}}_p$ corresponding to $b$. Note that by proposition \ref{calculation of isocrystals}, we may assume that we have a decomposition $\mathbb{X}_b = \mathbb{Y}_{b} \oplus M/\mathcal{O}_{M}$, where $\mathbb{Y}_{b}$ is the $p$-divisible group with an action of $\mathcal{O}_M$ given by the element of $B(\mathrm{Res}_{M/\mathbb{Q}_p}(\mathrm{GL}_{1, M}))$ represented by $\varpi^{-m} \in \mathrm{GL}_1(M)$.

\begin{lem}\label{isogeny}

Let $k$ be an algebraically closed field of characteristic $p$ and $\mathcal{G}$ be an $H$-$p$-divisible group over $k$. Then $\mathcal{G} \cong \mathbb{X}_{b_0} \times_{\overline{\mathbb{F}}_p} k$ if and only if $D(\mathcal{G}) \cong V_{b} \otimes_{\breve{\mathbb{Q}}_p} W(k)[\frac{1}{p}]$.

\end{lem}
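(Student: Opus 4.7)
The forward implication is immediate: the formation of the (rational covariant) Dieudonn\'e module commutes with base change, so if $\mathcal{G} \cong \mathbb{X}_b \times_{\overline{\mathbb{F}}_p} k$ then $D(\mathcal{G}) \cong D(\mathbb{X}_b) \otimes_{\breve{\mathbb{Q}}_p} W(k)[\tfrac{1}{p}] = V_b \otimes_{\breve{\mathbb{Q}}_p} W(k)[\tfrac{1}{p}]$ as $H$-isocrystals. The real content is the converse, and my plan is to realize it via a uniqueness statement for the two canonical building blocks of $\mathbb{X}_b = \mathbb{Y}_b \oplus M/\mathcal{O}_M$.

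Suppose $D(\mathcal{G}) \cong V_b \otimes_{\breve{\mathbb{Q}}_p} W(k)[\tfrac{1}{p}]$. By Proposition \ref{calculation of isocrystals} (the explicit form $b = \mathrm{diag}(\varpi^{-m},1)$), the isocrystal $V_b$ has only two slopes and in particular the slope $0$ part is separated off from the strictly positive slope part; equivalently, the Newton polygon of $\mathcal{G}$ has a break between slope $0$ and slope $>0$. In particular the rank of $\mathcal{G}[p]$ is constant (trivially so here, since $k$ is a single geometric point), so Lemma \ref{connected-etale} supplies a canonical short exact sequence of $\mathcal{O}_M$-equivariant $p$-divisible groups over $k$
\begin{equation*}
0 \longrightarrow \mathcal{G}^0 \longrightarrow \mathcal{G} \longrightarrow \mathcal{G}^{\et} \longrightarrow 0
\end{equation*}
with $\mathcal{G}^0$ formal and $\mathcal{G}^{\et}$ ind-\'etale. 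Since $k$ is algebraically closed, the \'etale quotient $\mathcal{G}^{\et}$ is constant and the sequence splits $\mathcal{O}_M$-equivariantly. Reading off from the isomorphism $D(\mathcal{G}) \cong V_b \otimes W(k)[\tfrac{1}{p}]$ the $\mathcal{O}_M$-heights of the slope-$0$ and positive-slope parts, we see $\mathcal{G}^{\et}$ has $\mathcal{O}_M$-height $1$ and $\mathcal{G}^0$ is a $1$-dimensional formal $\mathcal{O}_M$-module of $\mathcal{O}_M$-height $m$.

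The next step is to identify each factor with its canonical model. On the \'etale side, an ind-\'etale $p$-divisible group over an algebraically closed field is uniquely determined by its height, and with its $\mathcal{O}_M$-action is uniquely determined by its $\mathcal{O}_M$-height; thus $\mathcal{G}^{\et} \cong (M/\mathcal{O}_M) \times_{\overline{\mathbb{F}}_p} k$. On the formal side, we invoke the classical fact (a consequence of Lubin-Tate theory, or of the Dieudonn\'e-Manin classification applied to $\mathcal{O}_M$-modules) that $1$-dimensional formal $\mathcal{O}_M$-modules of a fixed finite $\mathcal{O}_M$-height $m$ over an algebraically closed field of characteristic $p$ are unique up to isomorphism, which yields $\mathcal{G}^0 \cong \mathbb{Y}_b \times_{\overline{\mathbb{F}}_p} k$. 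Combining the two identifications with the splitting of the connected-\'etale sequence gives $\mathcal{G} \cong (\mathbb{Y}_b \oplus M/\mathcal{O}_M) \times_{\overline{\mathbb{F}}_p} k = \mathbb{X}_b \times_{\overline{\mathbb{F}}_p} k$.

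The only nontrivial ingredient is the uniqueness of the formal part $\mathcal{G}^0$; this is the place where one must genuinely use that the strictly positive slope occurring in the $\mu$-ordinary isocrystal is isoclinic and realized by the Lubin-Tate group $\mathbb{Y}_b$, so that any formal $\mathcal{O}_M$-module with the correct Dieudonn\'e module is forced to be isomorphic to $\mathbb{Y}_b \times_{\overline{\mathbb{F}}_p} k$. The \'etale direction and the splitting are essentially formal once the Newton polygon is known.
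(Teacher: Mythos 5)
Your strategy coincides with the paper's: split the connected-\'etale sequence over the algebraically closed field $k$ (a section exists because the \'etale quotient is constant) and then show each summand is forced to be the corresponding summand of $\mathbb{X}_b\times_{\overline{\mathbb{F}}_p}k$. The forward direction and the \'etale side of the converse are both fine.

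There is, however, a genuine gap in the formal-part step. You assert that $\mathcal{G}^0$ is a $1$-dimensional formal $\mathcal{O}_M$-module of $\mathcal{O}_M$-height $m$, and appeal to Lubin--Tate uniqueness. Neither invariant is correct here. Since $\mathcal{G}$ is an $H$-$p$-divisible group for $H=\mathrm{Res}_{M/\mathbb{Q}_p}\mathrm{GL}_{2,M}$, the Dieudonn\'e module $D(\mathcal{G})$ is free of rank $2$ over $W(k)\otimes_{\mathbb{Z}_p}\mathcal{O}_M$; after splitting, the \'etale summand has $\mathcal{O}_M$-rank $1$, hence so does $\mathcal{G}^0$ --- its $\mathcal{O}_M$-height is $1$, not $m$. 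Its slope is $m/[M:\mathbb{Q}_p]$, so its dimension is $[M:\mathbb{Q}_p]\cdot(m/[M:\mathbb{Q}_p])=m$, not $1$. Concretely, the $\mathcal{O}_M$-action on $\mathrm{Lie}\,\mathcal{G}^0$ is multiplicity one for each $\tau\in I$ and zero for $\tau\notin I$, so $\mathcal{G}^0$ is not a strict ($1$-dimensional) formal $\mathcal{O}_M$-module, and Lubin--Tate theory does not apply to it (unless $m=1$). The uniqueness that actually carries the argument, and that the paper invokes, is the rank-one $\mathcal{O}_M$-linear Dieudonn\'e classification: any two $\mathrm{Res}_{M/\mathbb{Q}_p}\mathrm{GL}_{1,M}$-$p$-divisible groups over an algebraically closed field which are isogenous (i.e.\ whose free rank-$1$ $W(k)\otimes\mathcal{O}_M$-Dieudonn\'e modules have the same slope) are already isomorphic, since the $\sigma$-semilinear Frobenius can be normalized to a canonical form using a Lang-type argument over $W(k)\otimes\mathcal{O}_M$. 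Replacing your Lubin--Tate citation with this fact repairs the proof and brings it into agreement with the paper's argument.
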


\begin{proof}

Let $0 \rightarrow \mathcal{G}^0 \rightarrow \mathcal{G} \rightarrow \mathcal{G}^{\et} \rightarrow 0$ be the exact sequence given by Lemma \ref{connected-etale}. Note that we have a section of $\mathcal{G} \rightarrow \mathcal{G}^{\et}$ by taking the $k$-valued points. Thus the result follows from the fact that any two $\mathrm{Res}_{M/\mathbb{Q}_p}(\mathrm{GL}_{1, M})$-$p$-divisible groups which are isogenous are isomorphic by Dieudonne theory. \end{proof}

Let $C_1$ be a completed algebraic closure of $M$ and $\mathbb{Y}_{b, \mathcal{O}_{C_1}}$ be a $\mathrm{Res}_{M/\mathbb{Q}_p}(\mathrm{GL}_{1, M})$-$p$-divisible group over $\mathcal{O}_{C_1}$ which is a lift of $\mathbb{Y}_{b}$. This can be obtained from a character of $G_M$ of Hodge-Tate weight $((-1)_{i \in I}, (0)_{i \in \mathrm{Hom}_{\mathbb{Q}_p}(M, \overline{M}) \setminus I})$. We fix an isomorphism of $\mathcal{O}_{M}$-modules $\mathcal{O}_{M} \Isom T_p\mathbb{Y}_{b, \mathcal{O}_{C_1}}$ and thus $\alpha_0 : \mathcal{O}_{M}^{\oplus 2} \Isom T_p\mathbb{Y}_{b, \mathcal{O}_{C_1}} \oplus \mathcal{O}_M$. The $H$-$p$-divisible group $\mathbb{X}_{b, \mathcal{O}_{C_1}} := \mathbb{Y}_{b, \mathcal{O}_{C_1}} \oplus M/\mathcal{O}_M$ is the lift of $\mathbb{X}_b$ over $\mathcal{O}_{C_1}$. This defines a point $\infty = [(\mathbb{X}_{b, \mathcal{O}_{C_1}}, \alpha_0)] \in \Fl^{b}_{(H, \mu)}(C_1, \mathcal{O}_{C_1})$\footnote{$[(\mathbb{X}_{b, \mathcal{O}_{C_1}}, \alpha_0)]$ denotes the equivalence class of $(\mathbb{X}_b, \alpha_0)$.} by the moduli interpretation of $\Fl_{(H, \mu)}$ as we have seen in {\S} 4.3, where $\Fl_{(H, \mu)} = \prod_{\tau \in I} \mathbb{P}^1_{C_1}$ denotes the flag variety corresponding to $(H, \mu)$. Let $B_H$ denote the Borel subgroup of $H$ consisting of the upper triangular matrices.

\begin{lem}\label{transitive}

The right action of $H(\mathbb{Q}_p)$ on $\Fl^{b}$ is transitive and the stabilizer group at $\infty$ is equal to $B_H(\mathbb{Q}_p)$. Moreover, the map $B_H(\mathbb{Q}_p) \setminus H(\mathbb{Q}_p) \rightarrow \Fl^{b}, \ B_H(\mathbb{Q}_p)g \rightarrow \infty g$ is a homeomorphism.

\end{lem}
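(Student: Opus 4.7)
The plan is to verify the three claims using the moduli interpretation of $\Fl_{(H,\mu)}$ and $\Fl^b$ from \cite[Theorem B]{p-div}, together with Lemma \ref{isogeny} and Drinfeld rigidity of quasi-isogenies.

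For transitivity, I will take any $x = [(\mathcal{G}, \alpha)] \in \Fl^b(C_1, \mathcal{O}_{C_1})$. The defining condition $D(\mathcal{G}_{\overline{\mathbb{F}}_p}) \cong V_b \otimes_{\breve{M}} W(\overline{\mathbb{F}}_p)[\frac{1}{p}]$ combined with Lemma \ref{isogeny} produces an isomorphism $\mathcal{G}_{\overline{\mathbb{F}}_p} \cong \mathbb{X}_b$ of $H$-$p$-divisible groups. By Drinfeld rigidity, the identity on $\mathbb{X}_b$ lifts uniquely to a quasi-isogeny $\beta : \mathcal{G} \to \mathbb{X}_{b,\mathcal{O}_{C_1}}$ over $\mathcal{O}_{C_1}$. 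Setting $g := \alpha_0^{-1} \circ V_p(\beta) \circ \alpha \in H(\mathbb{Q}_p)$, one has $x = [(\mathbb{X}_{b, \mathcal{O}_{C_1}}, \alpha_0 \circ g)] = \infty \cdot g$.

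For the stabilizer, I will track the Hodge-Tate filtration $(\mathcal{G}, \alpha) \mapsto \alpha^{-1}(\mathrm{Lie}\mathcal{G} \otimes_{\mathcal{O}_{C_1}} C_1) \subset V_p \otimes_{\mathbb{Q}_p} C_1$ that realizes the moduli interpretation. Since $\mathrm{Lie}(M/\mathcal{O}_M) = 0$, only the formal summand $\mathbb{Y}_{b,\mathcal{O}_{C_1}}$ of $\mathbb{X}_{b,\mathcal{O}_{C_1}} = \mathbb{Y}_{b,\mathcal{O}_{C_1}} \oplus M/\mathcal{O}_M$ contributes to $\mathrm{Lie}$, and a direct computation using $\alpha_0$ shows that the image of $\infty$ in $\Fl = \prod_{\tau \in I} \mathbb{P}^1_{C_1}$ is, in each $\tau$-component, the $M$-rational line in $V_{p,\tau}$ determined by the splitting $V_p = V_p(\mathbb{Y}_{b,\mathcal{O}_{C_1}}) \oplus V_p(M/\mathcal{O}_M)$. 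The right action of $g \in H(\mathbb{Q}_p) = \mathrm{GL}_2(M)$ transforms the filtration to its $g^{-1}$-translate, so $g$ stabilizes $\infty$ precisely when $g$ preserves the underlying $M$-line in $V_p$, i.e.\ when $g \in B_H(\mathbb{Q}_p)$. The same computation shows that the orbit of $\infty$ consists of all the diagonally-embedded $M$-rational flags in $\prod_{\tau \in I} \mathbb{P}^1(V_{p, \tau} \otimes_{M, \tau} C_1)$, which by \cite[Theorem 1.11]{CS} coincides with the $\mu$-ordinary stratum $\Fl^b$.

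Combining transitivity and the stabilizer computation yields a continuous bijection $B_H(\mathbb{Q}_p) \backslash H(\mathbb{Q}_p) \to \Fl^b$. Both sides are naturally identified with $\mathbb{P}^1(M)$: the left via $B_H(\mathbb{Q}_p) \backslash H(\mathbb{Q}_p) = B_H(M) \backslash \mathrm{GL}_2(M)$ with its quotient topology, and the right as the diagonally-embedded $M$-rational locus inside the adic space $\prod_{\tau \in I} \mathbb{P}^1_{C_1}$. The main obstacle I expect is rigorously matching these two topologies, since the subspace topology on $\Fl^b \subset \Fl$ is inherited from the adic space structure while the quotient topology on $B_H(\mathbb{Q}_p) \backslash H(\mathbb{Q}_p)$ is the usual $p$-adic one; this should follow from the continuity and $H(\mathbb{Q}_p)$-equivariance of the orbit map together with the explicit description of rational open subsets of $\Fl$ given in {\S} 3.4.
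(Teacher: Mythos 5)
Your transitivity argument has a genuine gap. You invoke ``Drinfeld rigidity'' to lift the isomorphism $\mathcal{G}_{k_1} \cong \mathbb{X}_{b,k_1}$ from Lemma \ref{isogeny} to a quasi-isogeny $\beta : \mathcal{G} \to \mathbb{X}_{b,\mathcal{O}_{C_1}}$ over $\mathcal{O}_{C_1}$, but rigidity of quasi-isogenies only gives uniqueness of lifts along nilpotent (or PD) thickenings; it does not produce a lift along $\mathcal{O}_{C_1}/p \to k_1$, whose kernel $\mathfrak{m}_{C_1}/p\mathcal{O}_{C_1}$ is not nilpotent. In general an $\mathcal{O}_M$-linear $\mu$-ordinary $p$-divisible group over $\mathcal{O}_{C_1}$ need not be quasi-isogenous to $\mathbb{X}_{b,\mathcal{O}_{C_1}}$ over $\mathcal{O}_{C_1}$ just because the special fibers agree. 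The paper avoids quasi-isogenies altogether: it applies Lemma \ref{connected-etale} to obtain the connected--\'etale sequence $0 \to \mathcal{G}^0 \to \mathcal{G} \to \mathcal{G}^{\et} \to 0$ over $\mathcal{O}_{C_1}$, notes that $\mathrm{Lie}\,\mathcal{G}^0 = \mathrm{Lie}\,\mathcal{G}$ and that $V_p\mathcal{G}^0 \subset V_p\mathcal{G} \cong M^{\oplus 2}$ is a $1$-dimensional $M$-rational subspace, and then simply finds $g \in \mathrm{GL}_2(M)$ carrying $\alpha^{-1}(V_p\mathcal{G}^0)$ to $\alpha_0^{-1}(V_p\mathbb{Y}_{b,\mathcal{O}_{C_1}})$. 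Once the rational line matches, the point in $\Fl^b$ matches; no map between $\mathcal{G}$ and $\mathbb{X}_{b,\mathcal{O}_{C_1}}$ is ever constructed.

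Two further omissions. First, you work only with $(C_1,\mathcal{O}_{C_1})$-points, but the lemma is a statement about $\Fl^b$ as a topological space; the paper inserts the needed reduction by observing that $\Fl^b$ is closed of dimension zero, hence contains no higher-rank points (by \cite[Corollary 3.5.9 and Proposition 4.2.23]{CS}), so transitivity on geometric points already determines $\Fl^b$ as a set. Second, your plan for the homeomorphism --- ``matching the two topologies via rational open subsets'' --- is a hope rather than an argument. The paper's route is the standard and much shorter one: the orbit map is a continuous bijection from the compact space $B_H(\mathbb{Q}_p)\backslash H(\mathbb{Q}_p)$ to the Hausdorff space $\Fl^b$, and any such map is automatically a homeomorphism. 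Your stabilizer computation is fine and consistent with the paper (which just calls it trivial), but the two gaps above mean the proposal does not constitute a proof.
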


\begin{proof}

In order to prove the first property, it suffices to prove that for any algebraically closed complete non-archimedean field extension $C_1'$ of $C_1$, the orbit map $H(\mathbb{Q}_p) \rightarrow \Fl^{b_0}(C_1', \mathcal{O}_{C_1'}), \ g \mapsto \infty g$ induces a bijection $B_H(\mathbb{Q}_p) \setminus H(\mathbb{Q}_p) \Isom \Fl^{b}(C_1', \mathcal{O}_{C_1'})$. In fact, $\Fl^{b}$ is a closed subset of $\Fl$, dimension zero and thus doesn't contain higher rank points by \cite[Corollary 3.5.9 and Proposition 4.2.23]{CS}. We may assume $C_1 = C_1'$. Let $k_1$ be the residue field of $\mathcal{O}_{C_1}$.

Let $[(\mathcal{G}, \alpha)] \in \Fl^{b}_{(H, \mu)}(C_1, \mathcal{O}_{C_1})$. Then by the moduli interpretation of $\Fl^{b}_{(H, \mu)}$, we have $D(\mathcal{G}_{k_1}) \cong V_{b} \otimes_{\breve{\mathbb{Q}}_p} W(k_1)[\frac{1}{p}]$. Thus we have $\mathcal{G}_{k_1} \cong \mathbb{X}_{b_0} \times_{\overline{\mathbb{F}}_p} k_1$ by Lemma \ref{isogeny}. By Lemma \ref{connected-etale}, we have a filtration $0 \rightarrow \mathcal{G}^0 \rightarrow \mathcal{G} \rightarrow \mathcal{G}^{\et} \rightarrow 0$. This induces an exact sequence $0 \rightarrow V_p\mathcal{G}^0 \rightarrow V_p\mathcal{G} \rightarrow V_p\mathcal{G}^{\et} \rightarrow 0$. Note that $\mathrm{Lie}\mathcal{G}^0 = \mathrm{Lie}\mathcal{G}$ and $\mathcal{G}^0$ and $\mathcal{G}^{\et}$ are $\mathrm{Res}_{M/\mathbb{Q}_p}(\mathrm{GL}_{1, M})$-$p$-divisible groups by $\mathcal{G}_{k_1} \cong \mathbb{X}_{b_0} \times_{\overline{\mathbb{F}}_p} k_1$. Since $V_p\mathcal{G}^0$ (resp. $V_p\mathbb{Y}_{b, \mathcal{O}_{C_1}}$) is a one dimensional subspace of the 2-dimensional $M$-vector space $V_p\mathcal{G} \xrightarrow[\alpha]{\sim} M^{\oplus 2}$ (resp. $V_p\mathbb{X}_{b, \mathcal{O}_{C_1}} \xrightarrow[\alpha_0]{\sim} M^{\oplus 2}$), there exists $g \in H(\mathbb{Q}_p) = \mathrm{GL}_2(M)$ such that $g \alpha^{-1}(V_p\mathcal{G}^{0}) = \alpha_{0}^{-1}(V_p\mathbb{Y}_{b, \mathcal{O}_{C_1}})$. Thus we obtain $(\mathcal{G}, \alpha \circ g) = \infty$. The statement about the stabilizer of $\infty$ is trivial. The second statement is clear because $B_H(\mathbb{Q}_p) \setminus H(\mathbb{Q}_p)$ is compact and $\Fl^b \subset \Fl(C_1, \mathcal{O}_{C_1})$ is Hausdorff. \end{proof}

Now, we come back to our global situation. Let $b \in B(G, \mu^{-1})$ be the $\mu$-ordinary element. Let $S^{\mu-\mathrm{ord}}_{K^pK_p^{o}, \overline{\mathbb{F}}_L}$, $\mathfrak{S}^{\mu-\mathrm{ord}}_{K^pK_p^{o}, \mathcal{O}_L}$, $\aS_{K^pK_p^o, L}^{\mu-\mathrm{ord}}$, $\aS^{\mu-\mathrm{ord}}_{K^pK_p, L}$, $\aS^{\mu-\mathrm{ord}}_{K^pK_p}$ and $\aS^{\mu-\mathrm{ord}}_{K^p}$ be $S_{K^pK_p^{o}, \overline{\mathbb{F}}_{L}}^{b}$, $\mathfrak{S}^{b}_{K^pK_p^{o}, \mathcal{O}_L}$, $\aS_{K^pK_p^o, L}^{b}$, $\aS^{b}_{K^pK_p}$ and $\aS^{b}_{K^p}$ respectively.

Note that we have a natural identification $B(G, \mu^{-1}) = \prod_{w \mid v, w \notin S_0} B(\mathrm{Res}_{F_w/\mathbb{Q}_p} \mathrm{GL}_{2, F_w}, \mu_w^{-1})$. For any $w \mid v$ such that $w \notin S_0$, let $\mathbb{Y}_w$, $\mathbb{Y}_{w, \mathcal{O}_C}$ and $\alpha_w$ as above $\mathbb{Y}_b$, $\mathbb{Y}_{b, \mathcal{O}_{C_1}}$ and $\alpha_0$. This defines the point $\infty \in \Fl(C, \mathcal{O}_C)$. The $\mu$-ordinary locus $S^{\mu-\mathrm{ord}}_{K^pK_p^o, \overline{\mathbb{F}}_L}$ coincides with the ordinary locus of $S_{K^pK_p^o, \overline{\mathbb{F}}_L}$ if and only if $\mathbb{Y}_{w} \cong \mu_{p^{\infty}} \otimes_{\mathbb{Z}_p} \mathcal{O}_{F_w}$ for any $w \mid v$ and $w \notin S_0$.
    
    \vspace{0.5 \baselineskip}
    
    We will construct an analogue of the canonical locus in the Siegel modular variety. Let $\mathfrak{A}_{K^pK_p^o, \mathcal{O}_{L}}^{\mu-\mathrm{ord}}$ be the universal abelian scheme over $\mathfrak{S}_{K^pK_p^o, \mathcal{O}_{L}}^{\mu-\mathrm{ord}}$. Then the decomposition $\mathcal{O}_B \otimes_{\mathbb{Z}} \mathbb{Z}_p = \prod_{w \mid p} M_2(\mathcal{O}_{F_w})$ induces a decomposition $\mathfrak{A}_{K^pK_p^o, \mathcal{O}_{L}}^{\mu-\mathrm{ord}}[p^{\infty}] = \oplus_{w \mid p} \mathfrak{A}_{K^pK_p^o, \mathcal{O}_{L}}^{\mu-\mathrm{ord}}[\varpi_w^{\infty}]$. By Morita equivalence, for any $w \mid p$, we get a $p$-divisible group $\mathfrak{G}[\varpi_w^{\infty}]$ with an action of $\mathcal{O}_{F_w}$ of height $2[F_w:\mathbb{Q}_p]$ over $\mathfrak{S}_{K^pK_p^o, \mathcal{O}_{L}}^{\mu-\mathrm{ord}}$ with an isomorphism $\mathfrak{G}[\varpi_w^{\infty}]^{\vee} \cong \mathfrak{G}[\varpi_{w^c}^{\infty}]$ under the identification $c : F_w \Isom F_{w^c}$. By Lemma \ref{connected-etale} and Lemma \ref{isogeny}, we have an exact sequence $0 \rightarrow \mathfrak{G}[\varpi_w^{\infty}]^{o} \rightarrow \mathfrak{G}[\varpi_w^{\infty}] \rightarrow \mathfrak{G}[\varpi_w^{\infty}]^{\mathrm{\et}} \rightarrow 0$. We write $0 \rightarrow \mathcal{G}[\varpi_w^{\infty}]^{o} \rightarrow \mathcal{G}[\varpi_w^{\infty}] \rightarrow \mathcal{G}[\varpi_w^{\infty}]^{\mathrm{\et}} \rightarrow 0$ for its generic fibers. Let $\Gamma_{0}(p^m) := \mathbb{Z}_p^{\times} \times \prod_{w \mid v, v \notin S_0} \{g \in \mathrm{GL}_2(\mathcal{O}_{F_w}) \mid g \equiv \begin{pmatrix} * & * \\
 0 & *  \end{pmatrix} \mod p^{m}  \} \times \prod_{w \in S_0} \mathrm{GL}_2(\mathcal{O}_{F_w})$ and $\Gamma(p^m) := \mathrm{Ker}(\mathbb{Z}_p^{\times} \times \prod_{w \mid v} \mathrm{GL}_2(\mathcal{O}_{F_w}) \rightarrow (\mathbb{Z}/p^m)^{\times} \times \prod_{w \mid v} \mathrm{GL}_2(\mathcal{O}_{F_w}/p^{m}))$ for $m \in \mathbb{Z}_{\ge 0}$. 
    
    Then we have a map $\aS_{K^pK_p^o, L}^{\mu-\mathrm{ord}} \hookrightarrow \aS_{K^p\Gamma_{0}(p^m), L}^{\mu-\mathrm{ord}}, \ [A] \mapsto [(A, (\mathcal{G}[\varpi_w^{m}]^{o})_{w \in S_0})]$. (Precisely, we also need to consider extra structures.) Note that this is an open and closed immersion because the transition map $\aS_{K^p\Gamma_{0}(p^m), L}^{\mu-\mathrm{ord}} \rightarrow \aS_{K^pK_p^o, L}^{\mu-\mathrm{ord}}$ is finite $\etale$ and the composition of these two maps is the identity. Let $\aS_{K^p\Gamma_0(p^m), L}^{\mu-\mathrm{can}}$ denote the image of the above map in $\aS_{K^p\Gamma_{0}(p^m), L}^{\mu-\mathrm{ord}}$ and $\aS_{K^p\Gamma(p^m), L}^{\mu-\mathrm{can}}$ (resp. $\aS_{K^p\Gamma(p^m)}^{\mu-\mathrm{can}}$) be the inverse image of $\aS_{K^p\Gamma_0(p^m), L}^{\mu-\mathrm{can}}$ in $\aS_{K^p\Gamma(p^m), L}$ (resp. $\aS_{K^p\Gamma(p^m)}$).

    \begin{lem} \label{canonical locus}
        
$\pi_{\mathrm{HT}}^{-1}(\infty)$ is equal to the closure $\overline{\varprojlim_{n}\aS^{\mu-\mathrm{can}}_{K^p\Gamma(p^n)}}$ of $\varprojlim_{n}\aS^{\mu-\mathrm{can}}_{K^p\Gamma(p^n)}$ in $\aS_{K^p}$.
        
    \end{lem}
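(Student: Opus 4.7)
The plan is to identify the two subsets on rank-$1$ points via the moduli interpretation of $\pi_{\mathrm{HT}}$ from {\S} 4.3, and then extend to all points by a specialization argument. First I would note that $\pi_{\mathrm{HT}}^{-1}(\infty)$ is closed in $\aS_{K^p}$: the stratum $\Fl^b$ is closed in $\Fl$, and by Lemma~\ref{transitive} it is homeomorphic to the Hausdorff profinite set $B_H(\mathbb{Q}_p) \backslash H(\mathbb{Q}_p)$, so the singleton $\{\infty\}$ is closed in $\Fl$ and hence $\pi_{\mathrm{HT}}^{-1}(\infty)$ is closed by continuity of $\pi_{\mathrm{HT}}$.

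For any complete algebraically closed nonarchimedean extension $C_1$ of $C$ and any $x \in \aS_{K^p}(C_1, \mathcal{O}_{C_1})$ with associated universal $H$-$p$-divisible group $\mathcal{G}_x := A_x[p^\infty]$ and integral trivialization $\alpha_x : V_p^o \Isom T_p\mathcal{G}_x$, the moduli interpretation of $\pi_{\mathrm{HT}}$ recalled in {\S} 4.3 yields $\pi_{\mathrm{HT}}(x) = [(\mathcal{G}_x, \alpha_x)]$. Tracing through the proof of Lemma~\ref{transitive} applied factor-wise at each $w \mid v$ with $w \notin S_0$, the equality $\pi_{\mathrm{HT}}(x) = \infty$ holds if and only if $x$ lies over the $\mu$-ordinary locus and $\alpha_x$ sends the distinguished rank-$1$ $\mathcal{O}_{F_w}$-submodule $\alpha_w^{-1}(T_p \mathbb{Y}_{w, \mathcal{O}_{C_1}}) \subset V_p^o$ onto the Tate module $T_p(\mathcal{G}_x[\varpi_w^\infty]^o)$ of the connected part (provided by Lemma~\ref{connected-etale}), for each such $w$. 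On the other hand, unravelling the definition of the $\mu$-canonical locus in {\S} 4.3 shows that $x \in \varprojlim_n \aS^{\mu-\mathrm{can}}_{K^p\Gamma(p^n)}(C_1, \mathcal{O}_{C_1})$ means exactly that, for every $n$ and every $w \notin S_0$, the reduction $\alpha_x \bmod p^n$ sends the distinguished sub-$\mathcal{O}_{F_w}/\varpi_w^n$-line of $V_p^o/p^n$ onto $\mathcal{G}_x[\varpi_w^n]^o$; passing to the limit yields the same condition. Hence the two loci agree on rank-$1$ points.

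I then extend to arbitrary points using the unique maximal rank-$1$ generization $\tilde{y}$ of each $y \in \aS_{K^p}$, which satisfies $y \in \overline{\{\tilde{y}\}}$. Both loci are stable under generization: $\pi_{\mathrm{HT}}^{-1}(\infty)$ is, because any generization in $\Fl$ of the rank-$1$ point $\infty$ equals $\infty$; and $\varprojlim_n \aS^{\mu-\mathrm{can}}_{K^p\Gamma(p^n)}$ is, because each $\aS^{\mu-\mathrm{can}}_{K^p\Gamma(p^n)}$ is open and closed in the open subspace $\aS^{\mu-\mathrm{ord}}_{K^p\Gamma(p^n)} \subset \aS_{K^p\Gamma(p^n)}$. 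So the previous paragraph applied to $\tilde{y}$ gives $\varprojlim_n \aS^{\mu-\mathrm{can}}_{K^p\Gamma(p^n)} \subseteq \pi_{\mathrm{HT}}^{-1}(\infty)$, and by closedness of the right-hand side this upgrades to $\overline{\varprojlim_n \aS^{\mu-\mathrm{can}}_{K^p\Gamma(p^n)}} \subseteq \pi_{\mathrm{HT}}^{-1}(\infty)$. Conversely, if $y \in \pi_{\mathrm{HT}}^{-1}(\infty)$, continuity forces $\pi_{\mathrm{HT}}(\tilde{y})$ to be a generization of $\infty$, hence to equal $\infty$, so $\tilde{y}$ lies in the canonical tower by the previous paragraph, whence $y \in \overline{\{\tilde{y}\}} \subseteq \overline{\varprojlim_n \aS^{\mu-\mathrm{can}}_{K^p\Gamma(p^n)}}$.

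The main obstacle lies in the second paragraph: carefully matching the connected-etale splitting of $\mathcal{G}_x$ over $\mathcal{O}_{C_1}$ (furnished by Lemma~\ref{connected-etale} together with the $\mu$-ordinary criterion Lemma~\ref{isogeny} applied on the special fiber) with the rigid decomposition $\mathbb{X}_{b, \mathcal{O}_{C_1}} = \mathbb{Y}_{b, \mathcal{O}_{C_1}} \oplus M/\mathcal{O}_M$ used to define $\infty$, and then verifying that the $B_H(\mathbb{Q}_p)$-stabilizer condition from Lemma~\ref{transitive} translates exactly into the integral condition $\alpha_x(\alpha_w^{-1}(T_p\mathbb{Y}_{w,\mathcal{O}_{C_1}})) = T_p(\mathcal{G}_x[\varpi_w^\infty]^o)$ emerging from the inverse limit of the finite-level canonical structures.
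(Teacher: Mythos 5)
Your proof is correct and follows the same route as the paper: reduce to $(C_1,\mathcal{O}_{C_1})$-valued points via closedness of $\pi_{\mathrm{HT}}^{-1}(\infty)$, then identify the two loci of rank-$1$ points through the moduli description. The paper compresses your third paragraph into the single remark ``Since $\pi_{\mathrm{HT}}^{-1}(\infty)$ is closed, it suffices to prove...'' and phrases the moduli matching in terms of the subspace of $V_p \otimes C_1$ generated by the $e_{1,\tau}$ mapping onto $\mathrm{Lie}A$ via $\eta_p$, whereas you phrase it through the connected-\'etale splitting of $A[p^\infty]$ and the canonical filtration $T_p(\mathcal{G}_x[\varpi_w^\infty]^o)$; for a $\mu$-ordinary point these are the same condition (the Hodge--Tate filtration splits and lifts to $T_p\mathcal{G}^0 \otimes C_1$), and indeed the paper's proof of Lemma~\ref{transitive}, which you invoke, already works with $V_p\mathcal{G}^0$. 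Your spelling out of the generization argument (both loci stable under passing to the maximal rank-$1$ generization, $\pi_{\mathrm{HT}}^{-1}(\infty)$ closed, $\varprojlim_n \aS^{\mu-\mathrm{can}}_{K^p\Gamma(p^n)}$ open) makes explicit what the paper leaves implicit and is the cleanest way to justify that comparing $(C_1,\mathcal{O}_{C_1})$-points suffices for the stated equality of the full adic subsets.
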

    
    \begin{proof}

Since $\pi_{\mathrm{HT}}^{-1}(\infty)$ is closed, it suffices to prove that for any algebraically closed complete nonarchimedean extension $C_1$ of $C$, we have $\pi_{\mathrm{HT}}^{-1}(\infty)(C_1, \mathcal{O}_{C_1}) = \varprojlim_{n}\aS^{\mu-\mathrm{can}}_{K^p\Gamma(p^n)}(C_1, \mathcal{O}_{C_1})$. This follows from the moduli interpretations. In fact, both sides parametrize the object $[(A, \lambda, i, \overline{\eta}^p, \eta_p)] \in \aS_{K^p}(C_1, \mathcal{O}_{C_1})$ such that the subspace of $V_p \otimes_{\mathbb{Q}_p} C_1 = \prod_{\tau} V_{\tau} \otimes_L C_1$ generated by $e_{1, \tau}$ ($\tau \in \Psi$) corresponds $\mathrm{Lie}A$ to via $\eta_p : V_p \Isom V_pA$, where $\lambda$ denotes the polarization, $i$ denotes the action of $B$, $\overline{\eta}^p$ denotes the $K^p$-level structure outside $p$ and $\eta_p$ denotes the level structure at $p$. \end{proof}

\begin{cor}\label{topology}

1 \ For any affinoid open $U$ of $\Fl$ containing $\infty$, there exist $n$ and a quasicompact open $V_n$ of $\aS_{K^p\Gamma(p^n)}$ satisfying $V_n \supset \overline{\aS_{K^p\Gamma(p^n)}^{\mu-\mathrm{can}}}$ such that $\pi_{\Gamma(p^n)}^{-1}(V_n) \subset \pi_{\mathrm{HT}}^{-1}(U)$.
        
2 \ Conversely, for any $n$ and open $V_n$ satisfying $V_n \supset \overline{\aS_{K^p\Gamma(p^n)}^{\mu-\mathrm{can}}}$, there exists an affinoid open $U$ of $\Fl$ containing $\infty$ such that $\pi_{\Gamma(p^n)}^{-1}(V_n) \supset \pi_{\mathrm{HT}}^{-1}(U)$.

\end{cor}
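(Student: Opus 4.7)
The plan is to derive both statements from Lemma \ref{canonical locus} by quasi-compactness arguments in the spectral topology of the perfectoid limit $|\aS_{K^p}| = \varprojlim_n |\aS_{K^p\Gamma(p^n)}|$.

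The key topological input will be the identity
\[
\pi_{\mathrm{HT}}^{-1}(\infty) \;=\; \bigcap_{n\ge 1}\pi_{\Gamma(p^n)}^{-1}\!\bigl(\overline{\aS^{\mu-\mathrm{can}}_{K^p\Gamma(p^n)}}\bigr).
\]
The inclusion $\supset$ is immediate from Lemma \ref{canonical locus}, since the right-hand side is closed and contains $\bigcap_n\pi_{\Gamma(p^n)}^{-1}(\aS^{\mu-\mathrm{can}}_{K^p\Gamma(p^n)})$ hence its closure. The reverse inclusion requires an argument: a point $x$ of $\bigcap_n\pi_{\Gamma(p^n)}^{-1}(\overline{\aS^{\mu-\mathrm{can}}_{K^p\Gamma(p^n)}})$ has every level-$n$ projection in the closure of the canonical locus, and one must verify via the moduli-theoretic description of $\pi_{\mathrm{HT}}$ on $\Fl^b$ combined with the $H(\mathbb{Q}_p)$-orbit structure of Lemma \ref{transitive} that this compatibility at every finite level forces $\pi_{\mathrm{HT}}(x) = \infty$.

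For statement 1, let $Z := \aS_{K^p}\setminus \pi_{\mathrm{HT}}^{-1}(U)$; this is closed in the qcqs space $\aS_{K^p}$. The closed subsets $C_n := Z \cap \pi_{\Gamma(p^n)}^{-1}\!\bigl(\overline{\aS^{\mu-\mathrm{can}}_{K^p\Gamma(p^n)}}\bigr)$ are decreasing in $n$ (since the transition maps take canonical to canonical), with $\bigcap_n C_n = Z\cap \pi_{\mathrm{HT}}^{-1}(\infty)=\emptyset$ by the identity above. Quasi-compactness of $\aS_{K^p}$ then forces some $C_n$ to be empty, i.e.\ $\overline{\aS^{\mu-\mathrm{can}}_{K^p\Gamma(p^n)}}$ is disjoint from the closed quasicompact set $\overline{\pi_{\Gamma(p^n)}(Z)}$; covering $\overline{\aS^{\mu-\mathrm{can}}_{K^p\Gamma(p^n)}}$ by finitely many quasicompact opens inside $\aS_{K^p\Gamma(p^n)}\setminus\overline{\pi_{\Gamma(p^n)}(Z)}$ yields the required $V_n$.

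For statement 2, set $Z := \aS_{K^p}\setminus\pi_{\Gamma(p^n)}^{-1}(V_n)$, which is closed, quasicompact, and disjoint from $\pi_{\mathrm{HT}}^{-1}(\infty)$. For each $x\in Z$ we have $\pi_{\mathrm{HT}}(x)\neq\infty$, so one can choose an affinoid open $U_x\subset\Fl$ containing $\pi_{\mathrm{HT}}(x)$ whose closure in $\Fl$ avoids $\infty$. The preimages $\pi_{\mathrm{HT}}^{-1}(U_x)$ cover $Z$, so by quasi-compactness finitely many $U_{x_1},\dots,U_{x_k}$ suffice, and any affinoid neighborhood $U$ of $\infty$ contained in $\Fl\setminus\overline{\bigcup_i U_{x_i}}$ satisfies $\pi_{\mathrm{HT}}^{-1}(U)\cap Z=\emptyset$, hence $\pi_{\mathrm{HT}}^{-1}(U)\subset\pi_{\Gamma(p^n)}^{-1}(V_n)$. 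The main obstacle is justifying the reverse inclusion in the displayed identity: ruling out the possibility that a point $x\in\aS_{K^p}$ with $\pi_{\mathrm{HT}}(x)\neq\infty$ nevertheless has every level-$n$ projection in $\overline{\aS^{\mu-\mathrm{can}}_{K^p\Gamma(p^n)}}$; all remaining steps are standard quasi-compactness manipulations in spectral spaces.
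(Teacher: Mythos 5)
Your proposal follows the paper's route in spirit---reduce to Lemma~\ref{canonical locus} and finish with quasi-compactness---but uses a slightly different intermediate family: the closed sets $\pi_{\Gamma(p^n)}^{-1}(\overline{\aS^{\mu-\mathrm{can}}_{K^p\Gamma(p^n)}})$ together with the finite-intersection property in the quasi-compact spectral space $|\aS_{K^p}|$, where the paper instead intersects over all quasi-compact opens $V_n \supset \overline{\aS^{\mu-\mathrm{can}}_{K^p\Gamma(p^n)}}$ and appeals to compactness of the constructible topology (citing \cite[Lemma~3.3.8]{Torsion}). Both variants work, and yours is arguably cleaner since it is a single nested countable family. (A minor direction slip: your argument for the ``immediate'' inclusion actually proves $\pi_{\mathrm{HT}}^{-1}(\infty) \subset \bigcap_n \pi_{\Gamma(p^n)}^{-1}(\overline{\aS^{\mu-\mathrm{can}}_{K^p\Gamma(p^n)}})$, and the inclusion you flag as the gap is the one actually used in part~1.)

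The gap you flag is genuine, but your suggested mechanism for closing it is the wrong tool. The $H(\mathbb{Q}_p)$-orbit structure of Lemma~\ref{transitive} and the moduli description of $\pi_{\mathrm{HT}}$ only control rank-$1$ points of $\Fl^b$, while the issue is precisely about the higher-rank specializations that the closures $\overline{\aS^{\mu-\mathrm{can}}_{K^p\Gamma(p^n)}}$ pick up beyond the canonical locus itself. The inclusion $\bigcap_n\pi_{\Gamma(p^n)}^{-1}(\overline{\aS^{\mu-\mathrm{can}}_{K^p\Gamma(p^n)}})\subset\pi_{\mathrm{HT}}^{-1}(\infty)$ is a purely topological statement about the tower. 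If $x$ lies in the left-hand side, let $W\ni x$ be a quasi-compact open; since $|\aS_{K^p}|=\varprojlim_n|\aS_{K^p\Gamma(p^n)}|$ we may shrink $W$ to $\pi_{\Gamma(p^m)}^{-1}(W_m)$ with $W_m$ quasi-compact open at a finite level. Then $\pi_{\Gamma(p^m)}(x)\in W_m\cap\overline{\aS^{\mu-\mathrm{can}}_{K^p\Gamma(p^m)}}$, and openness of $W_m$ forces $W_m\cap\aS^{\mu-\mathrm{can}}_{K^p\Gamma(p^m)}\neq\emptyset$. The transition maps of the canonical tower are finite \'{e}tale and hence surjective (they are all finite \'{e}tale covers of the same $\aS^{\mu-\mathrm{ord}}_{K^pK_p^o}$), so $\varprojlim_n\aS^{\mu-\mathrm{can}}_{K^p\Gamma(p^n)}\to\aS^{\mu-\mathrm{can}}_{K^p\Gamma(p^m)}$ is surjective; lifting a point of $W_m\cap\aS^{\mu-\mathrm{can}}_{K^p\Gamma(p^m)}$ shows $W$ meets $\varprojlim_n\aS^{\mu-\mathrm{can}}_{K^p\Gamma(p^n)}$, so $x\in\overline{\varprojlim_n\aS^{\mu-\mathrm{can}}_{K^p\Gamma(p^n)}}=\pi_{\mathrm{HT}}^{-1}(\infty)$ by Lemma~\ref{canonical locus}. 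Finally, in your finish of part~1 you work with $\overline{\pi_{\Gamma(p^n)}(Z)}$; you should either justify that $\pi_{\Gamma(p^n)}$ is a closed map so that $\pi_{\Gamma(p^n)}(Z)$ is already closed and disjoint from $\overline{\aS^{\mu-\mathrm{can}}_{K^p\Gamma(p^n)}}$, or simply descend $\pi_{\mathrm{HT}}^{-1}(U)$ to a finite level $m\ge n$ via Theorem~\ref{affine} and take $V_m$ to be the corresponding quasi-compact open.
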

    
\begin{proof}

This formally follows from Lemma \ref{canonical locus} as in \cite[Lemma 3.3.8 etc]{Torsion}. By Lemma \ref{canonical locus}, we have $\pi_{\mathrm{HT}}^{-1}(\infty) = \cap_{\infty \in U} \pi_{\mathrm{HT}}^{-1}(U) = \overline{\varprojlim_{n}\aS^{\mu-\mathrm{can}}_{K^p\Gamma(p^n)}} = \cap_{n, V_n \supset \overline{\aS_{K^p\Gamma(p^n)}^{\mu-\mathrm{ord}}}} \pi_{\Gamma(p^n)}^{-1}(V_n)$. Thus for any $U$ containing $\infty$, we obtain $\pi_{\mathrm{HT}}^{-1}(U) \supset \cap_{n, V_n \supset \overline{\aS_{K^p\Gamma(p^n)}^{\mu-\mathrm{ord}}}} \pi_{\Gamma(p^n)}^{-1}(V_n)$. For any quasicompact open $V_n$, $\pi_{\Gamma(p^n)}^{-1}(V_n)$ is a quasicompact open. Thus $\aS_{K^p} \setminus \pi_{\Gamma(p^n)}^{-1}(V_n)$ is open for the constructible topology. This implies that there exist $n$ and a quasi-compact open $V_n \supset \overline{\aS_{K^p\Gamma(p^n)}^{\mu-\mathrm{ord}}}$ of $\aS_{K^p\Gamma(p^n)}$ such that $\pi_{\mathrm{HT}}^{-1}(U) \supset \pi_{\Gamma(p^n)}^{-1}(V_n)$. We can obtain the converse by exactly the same argument. \end{proof}
    
    \begin{prop}\label{affinoid ordinary}

$\mathcal{S}^{\mu-\mathrm{can}}_{K^p\Gamma(p^n), L}$ is affinoid.

\end{prop}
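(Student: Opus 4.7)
The plan is to reduce the affinoidness of $\aS^{\mu-\mathrm{can}}_{K^p\Gamma(p^n), L}$ to that of $\aS^{\mu-\mathrm{ord}}_{K^pK_p^o, L}$, and then to establish the latter by a Hasse-invariant argument on the proper smooth integral model $\mathfrak{S}_{K^pK_p^o, \mathcal{O}_L}$. First I would observe that, since $K^p$ is neat, the transition map $\aS_{K^p\Gamma(p^n), L} \to \aS_{K^pK_p^o, L}$ is finite \'etale (it is a $K_p^o/\Gamma(p^n)$-torsor), and its restriction to $\aS^{\mu-\mathrm{can}}_{K^p\Gamma(p^n), L} \to \aS^{\mu-\mathrm{can}}_{K^p\Gamma_0(p^n), L}$ is therefore also finite \'etale. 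The open-and-closed immersion $\aS^{\mu-\mathrm{ord}}_{K^pK_p^o, L} \hookrightarrow \aS^{\mu-\mathrm{ord}}_{K^p\Gamma_0(p^n), L}$ defined through the canonical connected-\'etale splitting of Lemma \ref{connected-etale} identifies the target with $\aS^{\mu-\mathrm{ord}}_{K^pK_p^o, L}$. Since a finite \'etale cover of an affinoid adic space is affinoid (by Tate acyclicity), it will suffice to prove that $\aS^{\mu-\mathrm{ord}}_{K^pK_p^o, L}$ is affinoid.

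For that step I would produce a $\mu$-ordinary Hasse invariant $H^{\mu} \in H^0(S_{K^pK_p^o, \overline{\mathbb{F}}_L}, \omega^{\otimes N})$ for some positive integer $N$ and some ample line bundle $\omega$ on $S_{K^pK_p^o, \mathcal{O}_L}$ built from the Hodge bundles $\omega_{\tau, S_{K, L}}$ for $\tau \in \Psi$ together with the analogous integral line bundles coming from the universal $p$-divisible groups $\mathfrak{G}[\varpi_w^\infty]$ for $w \mid v$, $w \notin S_0$; the key requirement is that $H^{\mu}$ vanishes precisely on the complement of $S^{\mu-\mathrm{ord}}_{K^pK_p^o, \overline{\mathbb{F}}_L}$. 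Because $p$ is unramified in $F$ under the standing assumption of this subsection, such a section exists by the constructions of Koskivirta--Wedhorn and Goldring--Nicole for PEL Shimura varieties at unramified primes. After possibly replacing $N$ by a multiple, ampleness of $\omega$ and the vanishing of the relevant $H^1$ allow one to lift $H^{\mu}$ to a section $\widetilde{H}^{\mu}$ on the formal model $\mathfrak{S}_{K^pK_p^o, \mathcal{O}_L}$. The adic tube on which $\widetilde{H}^{\mu}$ is invertible is then an affinoid open subspace of $\aS_{K^pK_p^o, L}$, and it coincides with $\aS^{\mu-\mathrm{ord}}_{K^pK_p^o, L}$ by the defining property of $H^{\mu}$ on the special fiber.

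The main obstacle I anticipate is verifying the existence and the appropriate lifting of the $\mu$-ordinary Hasse invariant in our specific PEL setting, where the archimedean signatures are $(1,1)$ at places in $\Psi$ and $(0,2)$ elsewhere, and where $p$ splits in $F_0$ and is unramified in $F$. Although $\mu$-ordinary Hasse invariants are well-documented for general Hodge-type Shimura varieties at unramified primes, one has to check that the recipe only involves the local factors at places $w \mid v$ with $w \notin S_0$ (corresponding to the nontrivial components of $\mu$), and that it is compatible with the decomposition $\mathfrak{G}[p^\infty] = \oplus_{w \mid p} \mathfrak{G}[\varpi_w^\infty]$. If a direct citation is not convenient, an alternative is to define $H^{\mu}$ place-by-place as a product $\prod_{w \mid v,\, w \notin S_0} H^{\mu}_w$ of local invariants, each $H^{\mu}_w$ constructed from the Verschiebung on the corresponding $\mathrm{Res}_{F_w/\mathbb{Q}_p}\mathrm{GL}_{2, F_w}$-isocrystal and the connected-\'etale filtration of $\mathfrak{G}[\varpi_w^\infty]$ supplied by Lemma \ref{connected-etale}; this reduces the problem to the classical case of $\mathrm{Res}_{F_w/\mathbb{Q}_p}\mathrm{GL}_{2, F_w}$-$p$-divisible groups and rests on the explicit description of the $\mu$-ordinary stratum afforded by Proposition \ref{calculation of isocrystals}.
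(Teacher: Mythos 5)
Your reduction step is exactly what the paper uses: $\aS^{\mu-\mathrm{can}}_{K^p\Gamma(p^n),L}$ is finite \'etale over $\aS^{\mu-\mathrm{can}}_{K^p\Gamma_0(p^n),L} \cong \aS^{\mu-\mathrm{ord}}_{K^pK_p^o,L}$, and finite morphisms over an affinoid have affinoid total space, so the claim reduces to affinoidness of the $\mu$-ordinary locus at hyperspecial level. (A small note: the last fact is really that finite morphisms of adic spaces are affinoid morphisms; Tate acyclicity isn't quite the right name for it.) For the remaining step, the paper simply cites the literature for affinoidness of the $\mu$-ordinary locus of a good-reduction PEL Shimura variety at an unramified prime, whereas you sketch the standard proof of that cited result by constructing a $\mu$-ordinary Hasse invariant, lifting it to the integral model, and taking the tube where it is invertible. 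The content is the same; you have unwound the black box the paper leaves closed, and correctly flagged the main technical point (existence of a $\mu$-ordinary Hasse invariant with the right nonvanishing locus, compatible with the prime-by-prime decomposition of $\mathfrak{G}[p^\infty]$), which is exactly what the cited references supply. Both routes also implicitly use Lemma \ref{isogeny} to match the moduli-theoretic $\mu$-ordinary locus with the nonvanishing locus of the Hasse invariant, which you should make explicit when you invoke "the defining property of $H^{\mu}$."
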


\begin{proof} By \cite[Proposition 9.21]{Igusa} (see also \cite[Proposition 1.9]{CS2}) and Lemma \ref{isogeny}, we obtain that $\aS_{K^pK_p^0, L}^{\mu-\mathrm{ord}}$ is affinoid. This implies the result. \end{proof}

Here, we recall some foundational results about dagger spaces. Let $M$ be a complete nonarchimedean rank 1 valuation field. By taking the completion of dagger algebras, we obtain a natural faithful functor $( \ )'$ from the category of dagger spaces over $M$ to the category of rigid spaces over $M$. (See \cite[Theorem 2.19]{overconvergent} for details.) Note that the category of quasiseparated rigid spaces over $M$ is equivalent to the category of quasiseparated adic space locally topologically of finite type over $M$ and for a quasiseparated rigid space $X$ over $M$, admissible quasicompact open subsets of $X$ are identified with quasicompact open subsets of $X^{\mathrm{ad}}$.\footnote{$X^{\mathrm{ad}}$ denotes the adic space over $M$ corresponding to $X$.} (See \cite[(1.1.11)]{Huber}.) We use these identifications in the following.

\begin{lem}\label{dagger variety}

1 \ For any dagger space $X$ over $M$, we have a natural map $X' \rightarrow X$ as ringed sites which induces an isomorphism as sites.

2 \ The functor induces an equivalence between the category of proper dagger spaces over $M$ and the category of proper rigid spaces over $M$. Moreover, for a proper rigid space over $M$ and the corresponding proper dagger space $X^{\dagger}$, the categories of coherent sheaves on both sides are naturally equivalent.

3 \ For any affinoid smooth rigid space $X$ over $M$, there exists an affinoid smooth dagger space $X^{\dagger}$ over $M$ such that $(X^{\dagger})' = X$.

Moreover, if $X$ is an open of a proper adic space $Y$, then we can take $X^{\dagger}$ as an admissible open of $Y^{\dagger}$, where $Y^{\dagger}$ denotes the dagger space corresponding to $Y$ given by the statement 2.

In the following, we fix a proper rigid space $Y$ over $M$ and a quasicompact open $U$ of $Y^{\dagger}$.

4 \ For a coherent sheaf $\mathcal{F}$ on $Y$, we have $H^i(U, \mathcal{F}^{\dagger}) = \varinjlim_{V \supset \overline{U}} H^i(V, \mathcal{F})$. Here, $\mathcal{F}^{\dagger}$ denotes the coherent sheaf on $Y^{\dagger}$ corresponding to $\mathcal{F}$ given by the statement 2 and $V$ runs through quasicompact open subsets of $Y$ containing the closure $\overline{U}$ of $U$.

5 \ $H^i_{dR}(U/M) = \varinjlim_{V \supset \overline{U}} H^i_{dR}(V/M)$, where $V$ runs through quasicompact open subsets of $Y$ containing the closure $\overline{U}$ of $U$. The left hand side is the de Rham cohomology of the dagger variety $U$ over $M$ and the right hand side is the de Rham cohomology of the rigid space $V$ over $M$.

\end{lem}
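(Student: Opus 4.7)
The plan is to treat statements 1--3 as essentially foundational and reduce everything to the colimit description of strict neighborhoods, then build up to 4 and 5. Statements 1 and 2 I would cite directly from Grosse-Klönne's foundational paper on dagger spaces: statement 1 is the observation that the completion map $X' \to X$ identifies the underlying G-topologies since a dagger affinoid and its weak completion have the same spectrum of maximal ideals and the same rational subdomains. Statement 2, the equivalence between proper dagger and proper rigid spaces together with the equivalence of their coherent sheaf categories, is the dagger analogue of rigid GAGA and is proven in Grosse-Klönne by a Čech argument reducing to the affinoid case.

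For statement 3, I would construct $X^{\dagger}$ by choosing a presentation of $X$ as a rational subdomain of some polydisc or, in the second situation, by writing $X$ as $\{f_i/g \le 1\}_{i} \cap V$ inside an affinoid $V \subset Y$ and defining $X^{\dagger}$ by the corresponding weakly complete dagger algebra. Smoothness is preserved because it is detected by the Jacobian criterion on the underlying rigid space. In the relative setting where $X$ is an open of the proper adic space $Y$, the weak completion produces an admissible open inside the dagger space $Y^{\dagger}$ obtained from 2.

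The main content is 4 and 5. For 4, the key input is that for a coherent sheaf $\mathcal{F}^{\dagger}$ on the dagger space $U$ obtained by restricting $\mathcal{F}$, its sections can be computed as $\varinjlim_{V \supset \overline{U}} \mathcal{F}(V)$ where $V$ runs through quasicompact open neighborhoods of the closure of $U$ in $Y$; this is precisely the definition of the overconvergent structure sheaf. One then checks that the functor $\varinjlim_V \Gamma(V, -)$ is exact (filtered colimits are exact, and the individual $\Gamma(V, -)$ are acyclic on affinoids by Tate acyclicity), and applies this to a Čech cover coming from affinoid coverings that are simultaneously affinoid in $V$ for cofinally many $V \supset \overline{U}$. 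Then for 5, apply 4 term-by-term to the de Rham complex $\Omega^{\bullet}_{Y/M}$ restricted to neighborhoods of $\overline{U}$, and use that filtered colimits commute with taking cohomology of complexes.

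The main obstacle I anticipate is the compatibility in 4 of the Čech computation with the colimit: one needs a cofinal system of open neighborhoods $V \supset \overline{U}$ admitting a common affinoid cover whose pieces are also cofinal neighborhoods of the corresponding closures in $U$. This is where the quasi-Stein/strict neighborhood formalism of Grosse-Klönne is essential, and I would invoke his results (weak completion commutes with localization and is exact on coherent modules) rather than reprove them. Once that compatibility is in hand, statements 4 and 5 follow by a formal computation.
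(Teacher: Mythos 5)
Your outline for parts 1--3 and the reduction strategy for parts 4--5 agree with the paper: both cite Grosse-Kl\"onne for 1--2 and for the affinoid base case of 4, and both then run a \v{C}ech argument plus the Hodge--de Rham spectral sequence for 5. (For 3 the paper additionally invokes Elkik's lifting theorem to produce the smooth dagger model; worth adding, but minor.)

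The genuine gap is in part 4, and you correctly locate it but do not close it. You write that the obstacle is finding a cofinal system of neighborhoods of $\overline{U}$ carrying compatible affinoid covers, and that ``this is where the quasi-Stein/strict neighborhood formalism of Grosse-Kl\"onne is essential, and I would invoke his results.'' But Grosse-Kl\"onne's results cover the affinoid case (a single $U \subset^{\dagger} V$), not the existence of such a compatible covering for a general quasicompact open $U$ of a proper $Y$. The construction of the cover is the actual content of 4, and the paper's argument does not come from Grosse-Kl\"onne at all. It proceeds as follows: (a) properness of $Y$ in Huber's sense gives admissible affinoid coverings $\{U_i\}$, $\{V_i\}$ of $Y$ with $U_i \subset^{\dagger} V_i$; (b) a refinement lemma from Scholze's $p$-adic Hodge theory paper produces a finite cover $\{W_j\}$ of $U$ where each $W_j$ is a rational subset of some $V_i$ contained in $U_i$, with all pairwise intersections rational; (c) one then checks directly that $\overline{W_j} \cap \overline{W_k} = \overline{W_j \cap W_k}$ by comparing the rank-1 loci on both sides --- this is what makes the colimit over neighborhoods of closures compatible with \v{C}ech intersections. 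Only after (a)--(c) can one plug the affinoid base case into the \v{C}ech spectral sequence. Your proposal stops at naming the obstacle, points to the wrong source for it, and omits the closure-compatibility check entirely; as written it would not go through.

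Two smaller points: the exactness argument you sketch (``the individual $\Gamma(V,-)$ are acyclic on affinoids by Tate acyclicity'') doesn't directly apply, since the $V$'s running over neighborhoods of $\overline{U}$ are quasicompact opens of $Y$, not affinoids, so $H^{>0}(V, \mathcal{F})$ need not vanish; the colimit commutation is what must be established, not assumed. And ``quasi-Stein'' is not the relevant hypothesis here; properness of $Y$ is what supplies the $\subset^{\dagger}$ coverings.
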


\begin{proof}

1 is \cite[(3) of Theorem 2.19]{overconvergent}.

2 is a special case of \cite[Theorems 2.26 and 2.27]{overconvergent}.

3 follows from \cite[(4) of Theorem 2.19]{overconvergent} and \cite[Theorem 7]{Elkik}.

4 \ We have a natural map $\varinjlim_{V \supset \overline{U}} H^i(V, \mathcal{F}) \rightarrow H^i(U, \mathcal{F}^{\dagger})$ by \cite[proof of Theorems 2.26 and 2.27]{overconvergent}. This is an isomorphism by \cite[(2.23) and proof of Theorem 2.27]{overconvergent} if $U$ is an affinoid open of $Y$ and there exists an affinoid open $V$ of $Y$ such that $U \subset^{\dagger} V$, i.e., $U$ is a rational subset of $V$ having the following form: $U = (\mathrm{Spa}M\langle X_1, \cdots, X_k \rangle/I) (\frac{X_1^s, \cdots, X_k^s}{\varpi})$\footnote{This denotes the rational subset of $\mathrm{Spa}M\langle X_1, \cdots, X_k \rangle/I$ defined by $|X_i^s \ \mathrm{mod} \ I| \le |\varpi|$ for any $i$.} and $V = \mathrm{Spa}M\langle X_1, \cdots, X_k \rangle/I$ for some pseudo-uniformizer $\varpi$ of $\mathcal{O}_M$, $s \in \mathbb{Z}_{>0}$ and some ideal $I$ of $M\langle X_1, \cdots, X_k \rangle$. Note that if $W$ is a rational open of $V$ contained in $U$ as above, then there exists a rational subset $W'$ of $V$ such that $W \subset^{\dagger} W'$ by using explicit descriptions of rational open subsets. Since $Y$ is proper, we have admissible affinoid open coverings $\{ U_{i} \}_{i=1}^k$ and $\{ V_{i} \}_{i=1}^k$ of $Y$ such that $U_i \subset^{\dagger} V_i$. (See \cite[Remark 1.3.19]{Huber} for equivalent conditions of the properness.) \cite[proof of Lemma 5.3]{pH} implies that there exists a covering $\{ W_j \}_{j=1}^s$ of $U$ such that $W_j$ is a rational subset of $V_i$ for some $i$ contained in $U_i$ and the intersection $W_j \cap W_k$ is a rational subset of $W_j$ for any $j, k$. Note that we have $\overline{W_j} \cap \overline{W_k} = \overline{W_j \cap W_k}$ because the sets of rank 1 points of both sides are equal to the set $A$ of rank 1 points in $W_j \cap W_k$ and both sets are equal to the closure of $A$. Thus the result follows from the $\mathrm{\check{C}ech}$ spectral sequence.

5 follows from $4$ by the Hodge-de Rham spectral sequences. \end{proof}

By 3 of Lemma \ref{dagger variety} and Proposition \ref{affinoid ordinary}, there exists an open affinoid dagger subspace $\mathcal{S}_{K^p\Gamma(p^n)}^{\mu-\mathrm{ord}, \dagger}$ of $\mathcal{S}_{K^p\Gamma(p^n)}^{\dagger}$ corresponding to $\aS_{K^p\Gamma(p^n)}^{\mu-\mathrm{ord}}$. Moreover, by 4 of Lemma \ref{dagger variety} and \cite[Proposition 3.1]{overconvergent}, we obtain $H^i_{\mathrm{dR}}(\aS_{K^p\Gamma(p^n)}^{\mu-\mathrm{can}, \dagger}, D^{\dagger}_{\lambda, S_{K^p\Gamma(p^n)}}) = H^i(\Gamma(\aS_{K^p\Gamma(p^n)}^{\mu-\mathrm{can}, \dagger}, D^{\dagger}_{\lambda, S_{K^p\Gamma(p^n)}} \otimes \Omega_{\aS_{K^p\Gamma(p^n)}^{\mu-\mathrm{can}, \dagger}}^{\bullet})) = \varinjlim_{V \supset \overline{\aS_{K^p\Gamma(p^n)}^{\mu-\mathrm{can}}}}H^i(\Gamma(V, GDR_{\lambda, S_{K^p\Gamma(p^n)}}))$ for any $i$, where $V$ runs through quasicompact open subsets of $\aS_{K^p\Gamma(p^n)}$ containing $\overline{\aS_{K^p\Gamma(p^n)}^{\mu-\mathrm{can}}}$. (See {\S}3.3. for the definition of $GDR_{\lambda, S_{K^p\Gamma(p^n)}}$.)
    
    \begin{prop}\label{finitedimensional}
    
$H^i_{\mathrm{dR}}(\aS_{K^p\Gamma(p^n)}^{\mu-\mathrm{can}, \dagger}, D^{\dagger}_{\lambda, S_{K^p\Gamma(p^n)}})$ is a finite-dimensional $C$-vector space for any $n$.
    
    \end{prop}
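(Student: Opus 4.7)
The plan is to use the dual BGG resolution to replace the de Rham complex with a quasi-isomorphic complex whose terms are coherent sheaves, and then exploit the properness of the ambient Shimura variety $\aS_{K^p\Gamma(p^n)}$ together with the dagger structure to deduce finite-dimensionality.

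First, applying Proposition \ref{BGG} on $S_{K^p\Gamma(p^n), L}$ and pulling back to the dagger generic fiber, the de Rham complex $(D^{\dagger}_{\lambda, S_{K^p\Gamma(p^n)}} \otimes \Omega^{\bullet,\dagger}, \nabla^{\bullet})$ on $\aS^{\mu-\mathrm{can}, \dagger}_{K^p\Gamma(p^n)}$ is quasi-isomorphic to the restriction of the dagger BGG complex $GDR^{\dagger}_{\lambda, S_{K^p\Gamma(p^n)}}$, whose terms are finite direct sums of coherent $\mathcal{O}^{\dagger}$-modules of the form $D^{\dagger}_{\lambda^\Psi} \otimes (\otimes_{\tau \in \Psi} (\omega_{\tau}^{?,\dagger} \otimes (\wedge^2 D_{\tau}^{\dagger})^{?}))$, each of which extends to a coherent sheaf on the proper rigid space $\aS_{K^p\Gamma(p^n)}$. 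Hence it suffices to show that $H^i$ of this coherent-sheaf complex is finite-dimensional.

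Next, I will exploit part 4 of Lemma \ref{dagger variety}, which identifies the cohomology of a coherent sheaf on a quasicompact open of a dagger space with the filtered colimit of cohomologies on strict neighborhoods inside the proper rigid space. Combined with Kiehl's Theorem B for dagger affinoids (giving vanishing of $H^q$ for $q \ge 1$ of each coherent term on $\aS^{\mu-\mathrm{can}, \dagger}$), the Hodge-type spectral sequence for the stupid filtration on $GDR^{\dagger}_{\lambda}$ degenerates and reduces the problem to computing the cohomology of the complex of global sections $\Gamma(\aS^{\mu-\mathrm{can}, \dagger}, GDR^{\bullet,\dagger}_{\lambda})$; by the colimit description these global sections are $\varinjlim_{V \supset \overline{\aS^{\mu-\mathrm{can}}}} \Gamma(V, GDR^{\bullet}_{\lambda})$ where $V$ ranges over quasicompact opens of $\aS_{K^p\Gamma(p^n)}$.

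Finally, to conclude finite-dimensionality, I would compare with the full Shimura variety: since $\aS_{K^p\Gamma(p^n)}$ is proper and smooth and each $GDR^p_{\lambda}$ extends coherently to the whole space, Kiehl's theorem gives $\dim_C H^q(\aS_{K^p\Gamma(p^n)}, GDR^p_{\lambda}) < \infty$; combining over $p$ with the BGG spectral sequence on the proper space shows $\dim_C H^i_{\mathrm{dR}}(\aS_{K^p\Gamma(p^n)}, D_{\lambda}) < \infty$. The remaining finiteness statement for the $\mu$-canonical dagger locus is then an instance of the finiteness of overconvergent (dagger) de Rham cohomology for a smooth affinoid dagger open embedded in a proper smooth rigid space with coefficients extending to a coherent sheaf with integrable connection on the ambient; this is available either by direct colimit arguments using compactness of transition maps in the $V$-system after sufficient shrinking, or by invoking the Berthelot-Kedlaya finiteness theorem for rigid cohomology with overconvergent $F$-isocrystal coefficients applied to the special fiber $S^{\mu-\mathrm{ord}}_{K^pK_p^o, \mathbb{F}_L}$.

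The main obstacle is precisely this last step: the direct limit $\varinjlim_V H^i(V, GDR_{\lambda})$ is not obviously finite-dimensional from the individual $H^i(V, GDR_{\lambda})$, which themselves can be infinite-dimensional for general open $V$ in a proper rigid space. The essential input is that overconvergence, combined with coherence of the BGG terms on the whole proper space, forces the colimit cohomology to be bounded; concretely one uses that the transition maps in the $V$-system become compact in a suitable sense once $V$ is sufficiently close to $\overline{\aS^{\mu-\mathrm{can}}}$, which reflects the classical finiteness of rigid cohomology of smooth varieties with overconvergent coefficients.
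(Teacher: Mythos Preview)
You correctly isolated the difficulty: the final step, where you need finiteness of dagger de Rham cohomology with nontrivial coherent coefficients on a smooth affinoid open, does not follow from Kiehl's theorem or from finiteness on the proper ambient space, and your suggested fixes are either vague (compactness of transition maps) or require substantial extra input that you have not supplied (verifying the overconvergent $F$-isocrystal structure on $D_\lambda$ and matching dagger de Rham with rigid cohomology of the special fiber).

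The paper closes this gap by a different route: rather than using BGG, it realizes $D_\lambda$ inside the relative de Rham cohomology of a Kuga--Sato variety. One takes the $m$-fold self-fiber product $f: \mathcal{A}^m \to \aS_{K^p\Gamma(p^n), L}$ of the universal abelian scheme and uses the Harris--Taylor/Caraiani idempotent $e \in \mathrm{End}(\mathcal{A}^m/\aS) \otimes \mathbb{Q}$ with $e\, Rf_* \Omega^\bullet_{\mathcal{A}^m/\aS} = D_{\lambda}[-m]$. A relative-to-absolute Leray filtration spectral sequence, together with the affinoidness of $\aS^{\mu-\mathrm{can}}$ and the dagger Theorem B (\cite[Proposition 3.1]{overconvergent}), then exhibits $H^p_{\mathrm{dR}}(\aS^{\mu-\mathrm{can}, \dagger}_{K^p\Gamma(p^n),L}, D^\dagger_\lambda)$ as a direct summand of $H^{p+m}_{\mathrm{dR}}(\mathcal{A}^{m, \mu-\mathrm{can}, \dagger})$, the de Rham cohomology of a smooth dagger space with \emph{constant} coefficients. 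Finiteness of the latter is Grosse-Kl\"onne's \cite[Theorem A]{deRham}, which needs no coefficient hypotheses. This Kuga--Sato trick is precisely the missing idea: it converts your coefficient problem into a trivial-coefficient problem where the hard analytic finiteness input is already available in the literature.
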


\begin{proof}

Let $m$ be a nonnegative integer, let $V$ be a quasicompact open of $\aS_{K^p\Gamma(p^n), L}$ containing $\overline{\aS_{K^p\Gamma(p^n), L}^{\mu-\mathrm{can}}}$, let $\mathcal{A} \rightarrow \aS_{K^p\Gamma(p^n), L}$ be as after \ref{BGG} and $f : \mathcal{A}^m \rightarrow \aS_{K^p\Gamma(p^n), L}$ be the $m$-times self-fiber product of $\mathcal{A}$ over $\aS_{K^p\Gamma(p^n), L}$.

We define a decreasing filtration $\{F^{p} \Omega_{\mathcal{A}^m/L}^{\bullet}\}_{p}$ on $\Omega_{\mathcal{A}^m/L}^{\bullet}$ by $F^{p} \Omega_{\mathcal{A}^m/L}^{\bullet} := \mathrm{Im}(f^* \Omega_{\aS_{K^p\Gamma(p^n)}/L}^{p} \otimes_{\mathcal{O}_{\mathcal{A}^m}} \Omega_{\mathcal{A}^m/L}^{\bullet - p} \rightarrow \Omega_{\mathcal{A}^m/L}^{\bullet})$. Thus we have $\mathrm{gr}^p \Omega_{\mathcal{A}^m/L}^{\bullet} = f^{*} \Omega_{\aS_{K^p\Gamma(p^n)}}^p \otimes \Omega_{\mathcal{A}^m/\aS_{K^p\Gamma(p^n)}}^{\bullet - p}$ and a spectral sequence $E_1^{p, q} := H^{p+q}(f^{-1}(V), \mathrm{gr}^p\Omega_{\mathcal{A}^m/L}^{\bullet}) \Rightarrow H^{p+q}(f^{-1}(V), \Omega_{\mathcal{A}^m/L}^{\bullet})$. We have equalities $E_1^{p, q} = H^{p+q}(f^{-1}(V), \ \mathrm{gr}^p\Omega_{\mathcal{A}^m/L}^{\bullet}) =  H^{q}(f^{-1}(V), f^*\Omega_{\aS_{K^p\Gamma(p^n),L}/L}^{p} \otimes_{\mathcal{O}_{\mathcal{A}^m}} \Omega_{\mathcal{A}^m/\aS_{K^p\Gamma(p^n), L}}^{\bullet})$   $= H^{q}(V, Rf_{*} (f^*\Omega_{\aS_{K^p\Gamma(p^n)/L}}^{p} \otimes_{\mathcal{O}_{\mathcal{A}^m}} \Omega_{\mathcal{A}^m/\aS_{K^p\Gamma(p^n), L}}^{\bullet})) = H^{q}(V, \Omega_{\aS_{K^p\Gamma(p^n), L}/L}^{p} \otimes_{\mathcal{O}_{\aS_{K^p\Gamma(p^n), L}}} Rf_{*}\Omega_{\mathcal{A}^m/\aS_{K^p\Gamma(p^n), L}}^{\bullet})$. 

The same argument as \cite[{\S} III.2]{HT} and \cite[{\S} 2.1]{Caraiani} shows that there exist a nonnegative integer $m$ and $e \in \mathrm{End}(\mathcal{A}^m/\aS_{K^p\Gamma(p^n), L}) \otimes \mathbb{Q}$ which is independent of the choice of $V$ such that $e$ is an idempotent on $Rf_{*} \Omega_{\mathcal{A}^{m}/\aS_{K^p\Gamma(p^n), L}}$ and $H^{n}(f^{-1}(V), \Omega_{\mathcal{A}^m/L}^{\bullet})$ and  $eRf_*\Omega_{\mathcal{A}^{m}/\aS_{K^p\Gamma(p^n), L}} = D_{\lambda, \aS_{K^p\Gamma(p^n), L}}[-m]$. (Here, we ignore the Hodge filtrations and Hecke actions on both sides.)
 
By taking $\varinjlim_{V \supset \overline{\aS_{K^p\Gamma(p^n), L}^{\mu-\mathrm{can}}}} e$, we obtain the spectral sequence

$\displaystyle E_1^{p, q} := \varinjlim_{V \supset \overline{\aS_{K^p\Gamma(p^n), L}^{\mu-\mathrm{can}}}} H^{q-m}(V, \Omega_{\aS_{K^p\Gamma(p^n)/L}}^{p} \otimes_{\mathcal{O}_{\aS_{K^p\Gamma(p^n), L}}} D_{\lambda, \aS_{K^p\Gamma(p^n), L}}) \Rightarrow \varinjlim_{V \supset \overline{\aS_{K^p\Gamma(p^n), L}^{\mu-\mathrm{can}}}} eH^{p+q}(f^{-1}(V), \Omega_{\mathcal{A}^m/L}^{\bullet})$.

Since $\mathcal{S}^{\mu-\mathrm{can}}_{K^p\Gamma(p^n), L}$ is affinoid by Proposition \ref{affinoid ordinary}, we obtain $E_1^{p, q} = 0$ for any $q \neq m$ by \cite[Proposition 3.1]{overconvergent}. This implies $$\varinjlim_{V \supset \overline{\aS_{K^p\Gamma(p^n), L}^{\mu-\mathrm{can}}}} H^{p}(\Gamma(V, \Omega_{\aS_{K^p\Gamma(p^n)/L}}^{\bullet} \otimes_{\mathcal{O}_{\aS_{K^p\Gamma(p^n), L}}} D_{\lambda, \aS_{K^p\Gamma(p^n), L}})) \cong e \varinjlim_{V \supset \overline{\aS_{K^p\Gamma(p^n), L}^{\mu-\mathrm{can}}}} H^{p+m}(f^{-1}(V), \Omega_{\mathcal{A}^m/L}^{\bullet}).$$ By \cite[Theorem A]{deRham} and 5 of Lemma \ref{dagger variety}, the cohomology $$H^{p+m}_{\mathrm{dR}}(\mathcal{A}^{m, \mu-\mathrm{can}, \dagger}) := \varinjlim_{V \supset \overline{\aS_{K^p\Gamma(p^n)}^{\mu-\mathrm{can}}}}H^{p+m}(f^{-1}(V), \Omega_{\mathcal{A}^m/L}^{\bullet})$$ is finite-dimensional over $L$. Consequently, $$H^{p}_{\mathrm{dR}}(\aS_{K^p\Gamma(p^n), L}^{\mu-\mathrm{ord}, \dagger}, D^{\dagger}_{\lambda, \aS_{K^p\Gamma(p^n),L}}) := \varinjlim_{V \supset \overline{\aS_{K^p\Gamma(p^n), L}^{\mu-\mathrm{can}}}} H^{p}(\Gamma(V, \Omega_{\aS_{K^p\Gamma(p^n)/L}}^{\bullet} \otimes_{\mathcal{O}_{\aS_{K^p\Gamma(p^n), L}}} D_{\lambda, \aS_{K^p\Gamma(p^n), L}}))$$ is finite-dimensional over $L$. Thus by Lemma \ref{tensor product}, we obtain $$H^i_{\mathrm{dR}}(\aS_{K^p\Gamma(p^n)}^{\mu-\mathrm{can}, \dagger}, D^{\dagger}_{\lambda, S_{K^p\Gamma(p^n)}}) = H^i_{\mathrm{dR}}(\aS_{K^p\Gamma(p^n), L}^{\mu-\mathrm{can}, \dagger}, D^{\dagger}_{\lambda, \aS_{K^p\Gamma(p^n), L}}) \otimes_L C$$ are finite-dimensional over $C$. \end{proof}

Let $P_{S_0} := \mathbb{Q}_p^{\times} \times \prod_{w \mid v, w \notin S_0} B_2(F_w) \times \prod_{w \mid v, w \in S_0} \mathrm{GL}_2(F_w) \subset G(\mathbb{Q}_p)$, where $B_2$ denotes the Borel subgroup of $\mathrm{GL}_2$ consisting of upper triangular matrices. For $\tau \in \Psi$, let $\chi_{\tau}$ be a one dimensional $C$-vector space with the trivial Hecke action, the trivial Galois action and an action of $P_{S_0}$ defined by $(g_0, (g_w)_{w \mid v}) \rightarrow \tau(d_{w_{\tau}})$. Here, $w_{\tau}$ denotes the unique place of $F$ satisfying $w_{\tau} \mid v$ and $\tau \in \mathrm{Hom}_{\mathbb{Q}_p}(F_{w_{\tau}}, L)$ and we put $g_{w_{\tau}} := \begin{pmatrix}
a_{w_{\tau}} & b_{w_{\tau}} \\
0 & d_{w_{\tau}} 
\end{pmatrix}.$ Let $i : \Fl^b \hookrightarrow \Fl$ be the natural closed immersion.

\vspace{0.5 \baselineskip}

By Lemma \ref{transitive}, we have a natural identification $P_{S_0} \setminus G(\mathbb{Q}_p) \Isom \Fl^b, \ P_{S_0}g \mapsto \infty g$. Let $\pi : G(\mathbb{Q}_p) \rightarrow P_{S_0} \setminus G(\mathbb{Q}_p) = \Fl^b$ be the natural projection. 

For a locally analytic Hausdorff LB representation $M$ of $P_{S_0}$, let $\mathcal{F}_{M}$ be the $G(\mathbb{Q}_p)$-equivariant sheaf on $\Fl^b$ (with respect to the discrete topology on $G(\mathbb{Q}_p)$) defined by $U \mapsto \{ \phi : \pi^{-1}(U) \rightarrow M \ \mathrm{locally \ analytic} \mid \phi(p \cdot ) = p \phi( \cdot ) \ \mathrm{for \ any \ } p \in P_{S_0} \}$. (See \cite[{\S} 2.1.2 $\sim$ 2.1.6]{PanII} for the definition of the locally analytic representation on LB spaces and locally analytic functions with values in LB spaces.) Then $\mathcal{F}_M(\Fl^b)$ is equal to the locally analytic induction $\mathrm{Ind}^{G(\mathbb{Q}_p)}_{P_{S_0}}(M)$ of $M$ from $P_{S_0}$ to $G(\mathbb{Q}_p)$. (See \cite[{\S} 2]{locallyanalyticinduction} for the definition of the locally analytic induction.) Note that $\mathcal{F}_{M}$ is a $\mathcal{O}_{\Fl}|_{\Fl^b}$-module.

\begin{lem}\label{induction1}

We have a natural $\mathcal{O}_{\Fl}|_{\Fl^b}$-linear isomorphism $\otimes_{\tau \in \Psi} (\omega_{\tau, \Fl}^{\lambda_{\tau}} \otimes (\wedge^2V_{\tau})^{\lambda_{\tau}})|_{\Fl^b} \Isom \mathcal{F}_{\otimes_{\tau \in \Psi} \chi_{\tau}^{\lambda_{\tau}}}$ of $G(\mathbb{Q}_p)$-equivariant sheaf.

\end{lem}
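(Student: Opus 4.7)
The plan is to prove the lemma in two steps. First, identify $\Fl^b$ with $P_{S_0} \backslash G(\mathbb{Q}_p)$ as a $G(\mathbb{Q}_p)$-homogeneous space and establish a dictionary between $G(\mathbb{Q}_p)$-equivariant $\mathcal{O}_\Fl|_{\Fl^b}$-line bundles on $\Fl^b$ and one-dimensional representations of the stabilizer $P_{S_0}$ of $\infty$. Second, compute the $P_{S_0}$-character corresponding to $\bigotimes_{\tau \in \Psi}(\omega_{\tau, \Fl}^{\lambda_\tau} \otimes (\wedge^2 V_\tau)^{\lambda_\tau})|_{\Fl^b}$ and verify that it equals $\bigotimes_{\tau \in \Psi} \chi_\tau^{\lambda_\tau}$.

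For the first step, Lemma \ref{transitive} applies factor-wise over the places $w \mid v$ with $w \notin S_0$, producing a $G(\mathbb{Q}_p)$-equivariant homeomorphism $P_{S_0} \backslash G(\mathbb{Q}_p) \Isom \Fl^b$ sending the base coset to $\infty$. The passage from a locally analytic $P_{S_0}$-representation $M$ to the sheaf $\mathcal{F}_M$ is one direction of the dictionary. In the reverse direction, for any $G(\mathbb{Q}_p)$-equivariant line bundle $\mathcal{L}$ on $\Fl^b$, the fiber $\mathcal{L}|_\infty$ is a $P_{S_0}$-representation and there is a canonical isomorphism $\mathcal{L} \Isom \mathcal{F}_{\mathcal{L}|_\infty}$ sending a section $s \in \mathcal{L}(U)$ to the locally analytic function $g \mapsto g_*(s(\infty g))$, where $g_* \colon \mathcal{L}|_{\infty g} \Isom \mathcal{L}|_\infty$ is the equivariant transport; the transformation law $\phi(pg) = p\phi(g)$ for $p \in P_{S_0}$ is automatic since $P_{S_0}$ stabilizes $\infty$.

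For the second step, by tensor products and factorization over $\tau \in \Psi$, it suffices to identify the fiber of $\omega_{\tau, \Fl} \otimes \wedge^2 V_\tau$ at $\infty$ with $\chi_\tau$ as a $P_{S_0}$-representation. The defining exact sequence $0 \to \omega_{\tau, \Fl}^{-1} \to V_\tau \otimes_L \mathcal{O}_\Fl \to \omega_{\tau, \Fl} \otimes \wedge^2 V_\tau \to 0$ realizes this fiber as the quotient $V_\tau / \langle (1,0)^T \rangle$. The right $G(\mathbb{Q}_p)$-action on $V_\tau$ compatible with the right action on $\Fl$ defined in the paper is $v \cdot g = \tau(g_{w_\tau}^{-1}) v$ on the $w_\tau$-factor, normalized so that the induced action on $\wedge^2 V_\tau$ matches the natural $\mathrm{GL}_2(F_{w_\tau})$-action on the Tate module. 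For $g_{w_\tau} = \begin{pmatrix} a & b \\ 0 & d \end{pmatrix}$ in the stabilizer of $\infty$ in the $w_\tau$-factor, a direct computation yields $(0,1)^T \cdot g \equiv \tau(d)^{-1}(0,1)^T$ modulo $\langle (1,0)^T\rangle$. Converting this right stabilizer action into the corresponding left action (by inversion) gives the character $g \mapsto \tau(d_{w_\tau})$, which is exactly $\chi_\tau$.

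The main technical obstacle is consistent bookkeeping of left versus right actions and of the scalar normalization contributed by $\wedge^2 V_\tau$, since the action on $V_\tau$ induced from the $\Fl$-action is a priori only defined projectively; fixing the scalar by compatibility with the standard $\mathrm{GL}_2(F_{w_\tau})$-action on the Tate module resolves this ambiguity. Once pinned down, the required isomorphism follows formally by combining the two steps: the line bundle $\bigotimes_{\tau \in \Psi}(\omega_{\tau, \Fl}^{\lambda_\tau} \otimes (\wedge^2 V_\tau)^{\lambda_\tau})|_{\Fl^b}$ has fiber at $\infty$ equal to $\bigotimes_{\tau \in \Psi} \chi_\tau^{\lambda_\tau}$, so it is canonically isomorphic to $\mathcal{F}_{\otimes_{\tau \in \Psi} \chi_\tau^{\lambda_\tau}}$.
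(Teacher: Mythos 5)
Your argument is correct and follows the same route as the paper: both pass to the homogeneous space $P_{S_0}\backslash G(\mathbb{Q}_p)\cong\Fl^b$, identify the fiber of the equivariant line bundle at $\infty$ with the character $\otimes_{\tau\in\Psi}\chi_\tau^{\lambda_\tau}$ of the stabilizer $P_{S_0}$, and then propagate by $G(\mathbb{Q}_p)$-equivariance (the paper writes this as $f(U)(w)(g)=f_\infty(g^*w_g)(\infty)$, which is your $s\mapsto(g\mapsto g_*(s(\infty g)))$). The one point you gloss over is why the resulting function of $g$ is locally analytic; the paper handles this by noting that the stalk isomorphism $f_\infty$ extends over the neighborhood $\Fl^b\cap U_2$ of $\infty$, equivariantly for an open compact subgroup of $G(\mathbb{Q}_p)$.
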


\begin{proof}

First note that we have the $\mathcal{O}_{\Fl, \infty}$-linear isomorphism $f_{\infty} : \otimes_{\tau \in \Psi} (\omega_{\tau, \Fl, \infty}^{\lambda_{\tau}} \otimes (\wedge^2V_{\tau})^{\lambda_{\tau}}) \Isom \mathcal{F}_{\otimes_{\tau \in \Psi} \chi_{\tau}^{\lambda_{\tau}}, \infty}$ defined by $\otimes_{\tau \in \Psi} e_{2, \tau}^{\lambda_{\tau}} \mapsto [((a_{0}, (\begin{pmatrix}
        a_{w} & b_{w} \\
        c_{w} & d_{w} 
        \end{pmatrix})_{w \mid v}) \mapsto \prod_{\tau \in \Psi}\tau(d_{w_{\tau}})^{\lambda_{\tau}})]$, where $[ \ \ ]$ denotes the equivalence class. By using this, we obtain a map $f : \otimes_{\tau \in \Psi} (\omega_{\tau, \Fl}^{\lambda_{\tau}} \otimes (\wedge^2V_{\tau})^{\lambda_{\tau}})|_{\Fl^b} \rightarrow \mathcal{F}_{\otimes_{\tau \in \Psi} \chi_{\tau}^{\lambda_{\tau}}}$ defined by $f(U)(w)(g) = f_{\infty}(g^*w_g)(\infty)$ for any $U \subset \Fl^b$ and any $w \in \otimes_{\tau \in \Psi} (\omega_{\tau, \Fl}^{\lambda_{\tau}} \otimes (\wedge^2V_{\tau})^{\lambda_{\tau}})|_{\Fl^b}(U)$. Here, $w_g$ denotes the image of $w$ in the stalk at $\infty g$. Note that by using the above same formula, $f_{\infty}$ can be extended to any open $U \subset \Fl^b \cap U_2$, which is equivariant with respect to the action of an open compact of $G(\mathbb{Q}_p)$. Thus $f(U)(w)$ is a locally analytic function. $f$ is a map of $G(\mathbb{Q}_p)$-equivariant sheaves, which is an isomorphism at $\infty$. This implies that $f$ is an isomorphism. \end{proof}

For a smooth representation $M$ of $P_{S_0}$, let $\mathcal{F}_{M}^{\mathrm{sm}}$ be the $G(\mathbb{Q}_p)$-equivariant sheaf on $\Fl^b$ defined by $U \mapsto \{ \phi : \pi^{-1}(U) \rightarrow M \ \mathrm{smooth} \mid \phi(p \cdot ) = p \phi( \cdot ) \ \mathrm{for \ any \ } p \in P_{S_0} \}$. Then $\mathcal{F}_M^{\mathrm{sm}}(\Fl^b)$ is equal to the smooth induction $\mathrm{Ind}^{G(\mathbb{Q}_p)}_{P_{S_0}}(M)^{\mathrm{sm}}$ of $M$ from $P_{S_0}$ to $G(\mathbb{Q}_p)$. Note that $\mathcal{F}_{M}$ is a $\mathcal{O}_{\Fl}|_{\Fl^b}$-module.

We put $H^i_{\mathrm{dR}}(\aS_{K^p}^{\mu-\mathrm{can}, \dagger}, D^{\dagger}_{\lambda}) := \varinjlim_{n}H^i_{\mathrm{dR}}(\aS_{K^p\Gamma(p^n)}^{\mu-\mathrm{can}, \dagger}, D^{\dagger}_{\lambda, \aS_{K^p\Gamma(p^n)}})$. This has a canonical LB space structure because $H^i_{\mathrm{dR}}(\aS_{K^p\Gamma(p^n)}^{\mu-\mathrm{can}, \dagger}, D^{\dagger}_{\lambda, \aS_{K^p\Gamma(p^n)}})$ is finite dimensional by Proposition \ref{finitedimensional}.  

\begin{lem}\label{induction2}

1 \ $\mathcal{H}^m(GDR^{\mathrm{sm}}_{\lambda})|_{\Fl^b} \cong \mathcal{F}_{H^m_{\mathrm{dR}}(\aS_{K^p}^{\mu-\mathrm{can}, \dagger}, D_{\lambda}^{\dagger})}^{\mathrm{sm}}$, where $\mathcal{H}^m$ denotes the $m$-th cohomology sheaf.

2 \ $H^m(\Fl^b, GDR^{\mathrm{sm}}_{\lambda}) \cong \mathrm{Ind}_{P_{S_0}}^{G(\mathbb{Q}_p)}(H^m_{\mathrm{dR}}(\aS_{K^p}^{\mu-\mathrm{can}, \dagger}, D_{\lambda}^{\dagger}))^{\mathrm{sm}}$.

3 \ Then the natural map $\mathcal{F}^{\mathrm{sm}}_{H^m_{\mathrm{dR}}(\aS_{K^p}^{\mu-\mathrm{can}, \dagger}, D_{\lambda}^{\dagger})} \otimes_C \mathcal{F}_{\otimes_{\tau \in \Psi} \chi_{\tau}^{\lambda_{\tau}}} \rightarrow \mathcal{F}_{H^m_{\mathrm{dR}}(\aS_{K^p}^{\mu-\mathrm{can}, \dagger}, D_{\lambda}^{\dagger}) \otimes_{C} (\otimes_{\tau \in \Psi} \chi_{\tau}^{\lambda_{\tau}})}$ is an isomorphism of $G(\mathbb{Q}_p)$-equivariant sheaves.

\end{lem}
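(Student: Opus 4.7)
The plan is to prove statement 1 by identifying stalks at the distinguished point $\infty \in \Fl^b$ and propagating the identification via the transitive $G(\mathbb{Q}_p)$-action supplied by Lemma \ref{transitive}. Concretely, I would first compute the stalk $\mathcal{H}^m(GDR^{\mathrm{sm}}_{\lambda})_{\infty}$ as a filtered colimit over quasicompact open neighborhoods of $\infty$ in $\Fl$. By Corollary \ref{topology}, this system is cofinal with the system of pairs $(n, V_n)$, where $V_n$ is a quasicompact open of $\aS_{K^p\Gamma(p^n)}$ containing $\overline{\aS_{K^p\Gamma(p^n)}^{\mu-\mathrm{can}}}$. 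Since $GDR^{\mathrm{sm}}_{\lambda}$ arises by pushforward from the de Rham complex of weight $\lambda$ at finite levels (combined with the dual BGG resolution of Proposition \ref{BGG}), the stalk computation reduces to the colimit of $H^m$ of the finite-level de Rham complexes over these shrinking open subsets. Invoking Lemma \ref{dagger variety}(4--5) then identifies this colimit with $H^m_{\mathrm{dR}}(\aS_{K^p\Gamma(p^n)}^{\mu-\mathrm{can}, \dagger}, D_{\lambda, S_{K^p\Gamma(p^n)}}^{\dagger})$, and passing to the colimit in $n$ gives the desired stalk $H^m_{\mathrm{dR}}(\aS_{K^p}^{\mu-\mathrm{can}, \dagger}, D_{\lambda}^{\dagger})$ together with its natural smooth $P_{S_0}$-action (since $P_{S_0}$ stabilizes $\infty$, and the action on the stalk factors through the finite-level quotients).

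Using the $G(\mathbb{Q}_p)$-equivariance of $GDR^{\mathrm{sm}}_{\lambda}$, the stalk identification transports to every point of $\Fl^b$. The standard description of $G(\mathbb{Q}_p)$-equivariant sheaves on the homogeneous space $P_{S_0} \setminus G(\mathbb{Q}_p) = \Fl^b$ in terms of functions $G(\mathbb{Q}_p) \to M$ satisfying $\phi(p \cdot) = p \phi(\cdot)$ then yields statement 1. For statement 2, I would combine statement 1 with the fact that $\Fl^b$ is totally disconnected (it is a $p$-adic Lie coset space with no analytic structure), so higher cohomology of $G(\mathbb{Q}_p)$-equivariant sheaves $\mathcal{F}^{\mathrm{sm}}_M$ vanishes and $H^0(\Fl^b, \mathcal{F}^{\mathrm{sm}}_M) = \mathrm{Ind}^{G(\mathbb{Q}_p)}_{P_{S_0}}(M)^{\mathrm{sm}}$ by definition.

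For statement 3, the argument reduces, via Lemma \ref{induction1}, to checking that the natural map $\mathcal{F}^{\mathrm{sm}}_M \otimes_C \mathcal{F}_N \to \mathcal{F}_{M \otimes_C N}$ is an isomorphism whenever $M$ is a smooth $P_{S_0}$-representation and $N$ is a locally analytic $P_{S_0}$-representation. This can be verified stalkwise: on a sufficiently small open neighborhood of any coset, the smooth factor is constant, so a locally analytic section with values in $M \otimes_C N$ uniquely splits as the product of a smooth $M$-valued function and a locally analytic $N$-valued function with the appropriate $P_{S_0}$-twist.

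The main obstacle I expect is the careful execution of the stalk computation in statement 1, specifically justifying that the interchange of the two colimits (over level $n$ at $p$ and over neighborhoods $V_n$) is compatible with the transition maps between Shimura varieties at different levels. This is needed to ensure that the resulting $P_{S_0}$-action on the stalk is genuinely smooth and that the identification with $\mathcal{F}^{\mathrm{sm}}$ (rather than merely $\mathcal{F}$) holds. One must also check that the dagger cohomology $H^m_{\mathrm{dR}}(\aS_{K^p\Gamma(p^n)}^{\mu-\mathrm{can}, \dagger}, D_{\lambda, S_{K^p\Gamma(p^n)}}^{\dagger})$ is compatible under the natural pullback maps $\aS_{K^p\Gamma(p^{n+1})}^{\mu-\mathrm{can}} \to \aS_{K^p\Gamma(p^{n})}^{\mu-\mathrm{can}}$, which follows from the functoriality of the dagger structure in Lemma \ref{dagger variety} combined with the finite \'etaleness of the transition maps away from $S_0$.
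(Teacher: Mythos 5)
Your proposal is essentially correct and mirrors the paper's strategy: both argue statement 1 by computing the stalk at $\infty$ via Corollary \ref{topology} and the dagger-space machinery and then propagating by $G(\mathbb{Q}_p)$-equivariance, both deduce statement 2 from the profiniteness of $\Fl^b$ (acyclicity of sheaves there makes the hypercohomology spectral sequence degenerate), and both verify statement 3 stalkwise.

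One point in statement 3 is stated too loosely, and the gap is worth naming because the paper flags it explicitly. You claim the map $\mathcal{F}^{\mathrm{sm}}_M \otimes_C \mathcal{F}_N \to \mathcal{F}_{M \otimes_C N}$ is an isomorphism ``whenever $M$ is a smooth $P_{S_0}$-representation,'' and that a locally analytic section into $M \otimes_C N$ ``uniquely splits as the product'' of a smooth $M$-valued factor and a locally analytic $N$-valued factor. The first assertion is not well-posed for an arbitrary smooth $M$: the construction $\mathcal{F}_{M \otimes_C N}$ requires $M \otimes_C N$ to be a Hausdorff LB space, so one needs $M$ to be (at least) a countable filtered colimit of finite-dimensional subspaces. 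This is precisely the finiteness the paper invokes: $H^m_{\mathrm{dR}}(\aS_{K^p}^{\mu-\mathrm{can}, \dagger}, D_{\lambda}^{\dagger})$ is a countable filtered colimit of finite-dimensional subspaces by Proposition \ref{finitedimensional}, and this is used to commute $M \otimes_C (-)$ past $\varinjlim_{K_p}\mathrm{Ind}^{K_p}_{K_p \cap P_{S_0}}$ at the stalk. Also, a section does not split as a single product but factors, after shrinking, through $M_0 \otimes_C N$ for some finite-dimensional $K_p$-invariant $M_0 \subset M$, whence it is a finite sum of products of smooth $M$-valued and locally analytic $N$-valued sections; that is the argument the paper's ``second equality'' encodes. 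With this correction your argument coincides with the paper's.
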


\begin{proof} Note that $\Fl^b$ is a profinite space. This implies $H^m(\Fl^b, GDR^{\mathrm{sm}}_{\lambda}) \cong \Gamma(\Fl^b, \mathcal{H}^m(GDR^{\mathrm{sm}}_{\lambda}))$. Thus $2$ follows from 1. By the same way as Lemma \ref{induction1}, we obtain 1. By definition, we obtain $\mathcal{F}^{\mathrm{sm}}_{H^m_{\mathrm{dR}}(\aS_{K^p}^{\mu-\mathrm{can}, \dagger}, D_{\lambda}^{\dagger}), \infty} = \varinjlim_{K_p} \mathrm{Ind}^{K_p}_{K_p \cap P_{S_0}}(H^m_{\mathrm{dR}}(\aS_{K^p}^{\mu-\mathrm{can}, \dagger}, D_{\lambda}^{\dagger}))^{\mathrm{sm}} = H^m_{\mathrm{dR}}(\aS_{K^p}^{\mu-\mathrm{can}, \dagger}, D_{\lambda}^{\dagger})$ and $\mathcal{F}_{H^m_{\mathrm{dR}}(\aS_{K^p}^{\mu-\mathrm{can}, \dagger}, D_{\lambda}^{\dagger}) \otimes_{C} (\otimes_{\tau \in \Psi} \chi_{\tau}^{\lambda_{\tau}}), \infty} = \varinjlim_{K_p} \mathrm{Ind}^{K_p}_{K_p \cap P_{S_0}}(H^m_{\mathrm{dR}}(\aS_{K^p}^{\mu-\mathrm{can}, \dagger}, D_{\lambda}^{\dagger}) \otimes_{C} (\otimes_{\tau \in \Psi} \chi_{\tau}^{\lambda_{\tau}})) \cong H^m_{\mathrm{dR}}(\aS_{K^p}^{\mu-\mathrm{can}, \dagger}, D_{\lambda}^{\dagger}) \otimes_{C} \varinjlim_{K_p} \mathrm{Ind}^{K_p}_{K_p \cap P_{S_0}}(\otimes_{\tau \in \Psi} \chi_{\tau}^{\lambda_{\tau}})$. Note that the second equality comes from the fact that $H^m_{\mathrm{dR}}(\aS_{K^p}^{\mu-\mathrm{can}, \dagger}, D_{\lambda}^{\dagger})$ is a countable filtered colimit of finite dimensional subspaces. This implies the result. \end{proof}

\begin{thm} \label{ordinary cohomology}
    
We have the following isomorphisms as $G_L \times G(\mathbb{Q}_p) \times \mathbb{T}^S$-modules.

    1 \ $\mathcal{H}^m(GDR_{\lambda}^{\Psi-\mathrm{la}})_{\infty} \cong H^m_{\mathrm{dR}}(\aS_{K^p}^{\mu-\mathrm{can}, \dagger}, D_{\lambda}^{\dagger}) \otimes (\otimes_{\tau \in \Psi} (\omega_{\tau, \Fl}^{\lambda_{\tau}} \otimes (\wedge^2 V_{\tau})^{\lambda_{\tau}} ))_{\infty}$ for any $m$.
    
    2 \ $H^m(\Fl, i_*i^*GDR_{\lambda}^{\Psi-\mathrm{la}}) \cong \mathrm{Ind}^{G(\mathbb{Q}_p)}_{P_{S_0}}(H^m_{\mathrm{dR}}(\aS_{K^p}^{\mu-\mathrm{can}, \dagger}, D_{\lambda}^{\dagger}) \otimes (\otimes_{\tau \in \Psi} \chi_{\tau}^{\lambda_{\tau}}))$ for any $m$.
    
    \end{thm}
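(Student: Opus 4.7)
The plan is to reduce the theorem to a local computation of stalks at the point $\infty \in \Fl^b$, using the explicit expansion of Proposition \ref{mikami expansionIII} / Corollary \ref{citation} together with the topological comparison of neighborhoods given by Corollary \ref{topology}. By Corollary \ref{topology}, a cofinal system of quasicompact open neighborhoods of $\infty$ in $\Fl$ corresponds (via $\pi_{\mathrm{HT}}$) to a cofinal system of quasicompact open neighborhoods of the closure $\overline{\aS^{\mu-\mathrm{can}}_{K^p\Gamma(p^n)}}$ of the canonical locus at finite levels. Combined with the mikami-type expansion, this identifies the stalk at $\infty$ of $D_{\lambda^\Psi, K^p}^{\Psi-\mathrm{la},(0,\lambda_\tau)}$ with the formal power series $\varinjlim_{n,V_n \supset \overline{\aS^{\mu-\mathrm{can}}_{K^p\Gamma(p^n)}}}\Gamma(V_n, D_{\lambda^\Psi})\{x_\tau-x_{\tau,n}\}_\tau$, i.e.\ with dagger sections of $D_{\lambda^\Psi}^\dagger$ on $\aS_{K^p}^{\mu-\mathrm{can},\dagger}$ tensored with a formal variable in each direction of $\Fl$. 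The same holds for every term of $GDR^{\Psi-\mathrm{la}}_\lambda$, where the extra $\omega_\sigma^{2\lambda_\sigma+2,\mathrm{sm}}\otimes(\wedge^2 D_\sigma^{\mathrm{sm}})^{\lambda_\sigma+1}$ factors correspond exactly to the terms appearing in the dual BGG resolution $GDR_\lambda$ from Proposition \ref{BGG}.

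Next I would decompose the differentials of $(GDR^{\Psi-\mathrm{la}}_\lambda)_\infty$ into a ``horizontal'' part (coming from $d^{\mathrm{sm}}$ on finite levels) and a ``vertical'' part (coming from differentiation in the $x_\tau-x_{\tau,n}$ variables on $\Fl$). The vertical complex is a formal de Rham complex in the coordinates $x_\tau - x_{\tau,n}$, which by the formal Poincar\'e lemma is acyclic in positive degree with $H^0$ equal to the constant term. Running the associated double complex spectral sequence shows that taking cohomology collapses the vertical direction and returns the horizontal complex, which is exactly the dagger de Rham complex $D_{\lambda}^{\dagger}\otimes\Omega^\bullet$ on $\aS_{K^p}^{\mu-\mathrm{can},\dagger}$. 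Thus $\mathcal{H}^m(GDR^{\Psi-\mathrm{la}}_\lambda)_\infty$ is naturally identified with $H^m_{\mathrm{dR}}(\aS_{K^p}^{\mu-\mathrm{can},\dagger},D_\lambda^\dagger)$, up to the line-bundle factor coming from twisting back: recall that the complex $GDR^{\Psi-\mathrm{la}}_\lambda$ was obtained from $GDR^{\Psi-\mathrm{la}\prime}_\lambda$ by applying $\otimes_{\mathcal{O}_{\Fl}}(\otimes_\tau(\omega_\tau^{\lambda_\tau}\otimes(\wedge^2 V_\tau)^{\lambda_\tau}))$, which is precisely the twist that appears in the statement of 1. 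This gives the isomorphism in 1, and equivariance under $G_L\times G(\mathbb{Q}_p)\times\mathbb{T}^S$ is automatic from naturality.

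For 2, I would observe that $H^m(\Fl, i_*i^*GDR^{\Psi-\mathrm{la}}_\lambda) = H^m(\Fl^b, i^*GDR^{\Psi-\mathrm{la}}_\lambda)$, and that $\Fl^b$ is homeomorphic to $P_{S_0}\backslash G(\mathbb{Q}_p)$ by Lemma \ref{transitive}, hence is a profinite (zero-dimensional) topological space. Consequently cohomology is concentrated in degree zero in the $\Fl^b$-direction, giving $H^m(\Fl^b,i^*GDR^{\Psi-\mathrm{la}}_\lambda) = \Gamma(\Fl^b,\mathcal{H}^m(i^*GDR^{\Psi-\mathrm{la}}_\lambda))$. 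By part 1 and $G(\mathbb{Q}_p)$-equivariance, the sheaf $\mathcal{H}^m(GDR^{\Psi-\mathrm{la}}_\lambda)|_{\Fl^b}$ is the tensor product of the smooth analogue $\mathcal{H}^m(GDR^{\mathrm{sm}}_\lambda)|_{\Fl^b}$ and the line bundle factor $(\otimes_\tau\omega_\tau^{\lambda_\tau}\otimes(\wedge^2 V_\tau)^{\lambda_\tau})|_{\Fl^b}$. Combining Lemmas \ref{induction1} and \ref{induction2}(1),(3), these are respectively $\mathcal{F}^{\mathrm{sm}}_{H^m_{\mathrm{dR}}(\aS_{K^p}^{\mu-\mathrm{can},\dagger},D_\lambda^\dagger)}$ and $\mathcal{F}_{\otimes_\tau\chi_\tau^{\lambda_\tau}}$, with tensor product canonically $\mathcal{F}_{H^m_{\mathrm{dR}}(\aS_{K^p}^{\mu-\mathrm{can},\dagger},D_\lambda^\dagger)\otimes(\otimes_\tau\chi_\tau^{\lambda_\tau})}$. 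Taking global sections yields the locally analytic induction in 2.

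The main obstacle will be step one, specifically the passage from the mikami expansion (which only a priori gives an expansion for $G_0$-$\Psi$-analytic vectors with uniformly bounded norms) to a clean identification of the stalk at $\infty$, together with a careful justification of the spectral-sequence collapse. One must check that the horizontal/vertical decomposition of the differentials of $GDR^{\Psi-\mathrm{la}}_\lambda$ is compatible with the filtration by total $(x_\tau-x_{\tau,n})$-degree, so that the formal Poincar\'e lemma applies termwise in a way that passes to the direct limit over shrinking $U$ and growing $n$, recovering the dagger structure of $\aS_{K^p}^{\mu-\mathrm{can},\dagger}$ rather than merely its rigid analytic generic fiber. Once that is in place, the rest is a formal application of Lemmas \ref{induction1} and \ref{induction2}.
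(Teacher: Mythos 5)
Your proof of part 2 is essentially the paper's argument. For part 1, however, the central device you propose — decomposing the differential of $GDR^{\Psi-\mathrm{la}}_\lambda$ into horizontal and vertical parts and collapsing the vertical direction by a formal Poincar\'e lemma in a double complex — does not work. By Lemma \ref{locally analytic extension of derivation}, $d^{\Psi-\mathrm{la}}$ is the unique continuous $G(\mathbb{Q}_p)$-equivariant $\mathcal{O}_{\Fl}$-\emph{linear} derivation extending $d^{\mathrm{sm}}$; since it kills the flag coordinates $x_\tau$, it has no vertical component, and the same holds for all the maps of $GDR^{\Psi-\mathrm{la}}_\lambda$. The summand in the explicit formula of the shape $-\sum_\sigma i_\sigma\, d^{\mathrm{sm}}(x_{\sigma,n})\, c_i\, (x_\sigma - x_{\sigma,n})^{i_\sigma-1}\prod_{\tau\neq\sigma}(x_\tau-x_{\tau,n})^{i_\tau}$ is a finite-level form multiplying a derivative factor; it is precisely the correction term forcing $d^{\Psi-\mathrm{la}}(x_\sigma)=0$, not an independent differential along $\Fl$. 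The complex with a genuinely vertical differential is $\overline{GDR}^{\Psi-\mathrm{la}}_\lambda$, not $GDR^{\Psi-\mathrm{la}}_\lambda$.

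The miscalculation is also visible in the answer: $(\otimes_\tau(\omega_{\tau,\Fl}^{\lambda_\tau}\otimes(\wedge^2 V_\tau)^{\lambda_\tau}))_\infty$ is a free rank-one module over the local ring $(\mathcal{O}_{\Fl})_\infty$, hence \emph{infinite}-dimensional over $C$, whereas $H^m_{\mathrm{dR}}(\aS_{K^p}^{\mu-\mathrm{can},\dagger},D_\lambda^\dagger)$ is finite-dimensional (Proposition \ref{finitedimensional}). A vertical Poincar\'e lemma would reduce the power series in $(x_\tau-x_{\tau,n})$ to their constant terms and so kill exactly the factor that must survive, producing a finite-dimensional stalk where the statement asserts an infinite-dimensional one. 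The correct argument exploits the $\mathcal{O}_\Fl$-linearity directly: by Corollary \ref{topology} and the mikami expansion, $GDR^{\Psi-\mathrm{la}}_\lambda(U)$ identifies in the limit over neighborhoods of $\infty$ with $GDR_{\lambda,\aS_{K^p\Gamma(p^n)}}(V_n)\,\widehat{\otimes}_C\,(\otimes_\tau\omega_{\tau,\Fl}^{\lambda_\tau}\otimes(\wedge^2 V_\tau)^{\lambda_\tau})(U)$, a finite-level complex completed-tensored over $C$ with a Banach space that carries \emph{no} extra differential; then Lemma \ref{tensor product} — not a double-complex spectral sequence — commutes cohomology past the completed tensor product, with Proposition \ref{finitedimensional} supplying the strictness and closed-image hypotheses needed to apply it.
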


    \begin{proof} 1 \ By Corollary \ref{topology} and the construction of $GDR^{\Psi-\mathrm{la}}_{\lambda}$, we obtain $\mathcal{H}^m(GDR_{\lambda}^{\Psi-\mathrm{la}})_{\infty} = \varinjlim_{\infty \in U} H^m(GDR_{\lambda}^{\Psi-\mathrm{la}}(U)) = \varinjlim_{\infty \in U, n, V_n \supset \overline{\aS_{K^p\Gamma(p^n)}^{\mu-\mathrm{can}}}} H^i(GDR_{\lambda, \aS_{K^p\Gamma(p^n)}}(V_n) \widehat{\otimes}_{C} (\otimes_{\tau} \omega_{\tau, \Fl}^{\lambda_{\tau}} \otimes (\wedge^2 V_{\tau})^{\lambda_{\tau}})(U)).$  Note that $(\otimes_{\tau} (\omega_{\tau, \Fl}^{\lambda_{\tau}}\otimes (\wedge^2 V_{\tau})^{\lambda_{\tau}} ))$ is defined over $L$ and the complex $\varinjlim_{V_n \supset \overline{\aS_{K^p\Gamma(p^n)}^{\mu-\mathrm{can}}}} GDR_{\lambda, \aS_{K^p\Gamma(p^n)}}(V_n)$ satisfies the assumptions of Lemma \ref{tensor product} by Proposition \ref{finitedimensional}. Thus we obtain the property 1.
        
    2 \ Since $\Fl^{b}$ is a profinite space, we have $H^m(\Fl, i_*i^*GDR_{\lambda}^{\Psi-\mathrm{la}}) = H^m(\Fl^b, i^*GDR_{\lambda}^{\Psi-\mathrm{la}}) = \Gamma(\Fl^b, \mathcal{H}^m(i^*GDR_{\lambda}^{\Psi-\mathrm{la}})))$. Therefore, it suffices to construct $\mathcal{H}^m(i^*GDR_{\lambda}^{\Psi-\mathrm{la}}) \cong \mathcal{F}_{M}$, where $M := H^m_{\mathrm{dR}}(\aS_{K^p}^{\mu-\mathrm{can}, \dagger}, D_{\lambda}^{\dagger}) \otimes (\otimes_{\tau \in \Psi} \chi_{\tau}^{\lambda_{\tau}})$. Let $N := \otimes_{\tau \in \Psi} \chi_{\tau}^{\lambda_{\tau}}$. First we have a canonical map $\mathcal{H}^m(GDR^{\mathrm{sm}}_{\lambda})|_{\Fl^b} \otimes_C (\otimes_{\tau \in \Psi} (\omega_{\tau, \Fl}^{\lambda_{\tau}} \otimes (\wedge^2 V_{\tau})^{\lambda_{\tau}}))|_{\Fl^b} \rightarrow \mathcal{H}^m(GDR^{\Psi-\mathrm{la}}_{\lambda})|_{\Fl^b}$ of $G(\mathbb{Q}_p)$-equivariant sheaves, which is an isomorphism because this is isomorphism at $\infty$ by 1. By Lemmas \ref{induction1} and \ref{induction2}, $\mathcal{H}^m(GDR^{\mathrm{sm}}_{\lambda})|_{\Fl^b} \otimes_C (\otimes_{\tau \in \Psi} (\omega_{\tau, \Fl}^{\lambda_{\tau}} \otimes (\wedge^2 V_{\tau})^{\lambda_{\tau}}))|_{\Fl^b} \cong \mathcal{F}_M$. Thus we obtain the result. \end{proof}

\begin{rem}

It seems that $\aS_{K^p\Gamma(p^n)}^{\mu-\mathrm{can}}$ is closely related to the Igusa varieties. However, we don't need such facts. Thus we don't pursue this problem in this paper.

\end{rem}

\subsection{Basic locus}

Let $b \in B(G, \mu^{-1})$ be the basic element and $j : \Fl^{b} \hookrightarrow \Fl$ be the natural open immersion. Then we can essentially regard $\aS_{K^p}^b$ as the basic Rapoport-Zink space of infinite level associated with $(G, b, \mu)$ by the basic uniformization (Theorem \ref{uniformization}) and we can calculate $GDR^{\Psi-\mathrm{la}}_{\lambda}|_{\Fl^{b}}$ by the duality theorem of basic Rapoport-Zink spaces (Theorem \ref{duality}). 

Let $\mathbb{X}_{b}$ be the $G$-$p$-divisible group over $\overline{\mathbb{F}}_L$ corresponding to $b$ and $\mathcal{M}_{\mathcal{O}_{\breve{L}}}$ be the basic Rapoport-Zink space over $\breve{L}$ defined by $(\mathcal{O}_{B_p} , V_p^o, \mathbb{X}_b, \mu).$ (See \cite[{\S} 6.5]{p-div} for precise definitions and note that this can be regarded as an EL-type.) Let $\mathcal{M}_{K_p, \breve{L}}$ denote the finite $\etale$ cover of the generic fiber $\mathcal{M}_{\breve{L}}$ of $\mathcal{M}_{\mathcal{O}_{\breve{L}}}$ defined by $K_p \subset K_p^o$ and $\mathcal{M}_{K_p} = \mathcal{M}_{K_p,\breve{L}} \times_{\breve{L}} C$. These have natural actions of $J_b(\mathbb{Q}_p)$ and $J_b(\mathbb{Q}_p)$ has a form $\mathbb{Q}_p^{\times} \times \prod_{i} \mathrm{GL}_2(F_i) \times \prod_{j} D_j^{\times}$ by Proposition \ref{calculation of isocrystals}, where $F_j$ are finite extensions of $\mathbb{Q}_p$ and $D_j$ are central division algebras over finite extensions of $\mathbb{Q}_p$ of dimension $4$.

The decomposition $B_p \otimes_{\mathbb{Q}_p} \breve{L} = \prod_{\tau \in \Phi} M_2(\breve{L})$ induces a decomposition of the $G$-Dieudonne module corresponding to the universal $G$-$p$-divisible group on $\mathcal{M}_{K_p}$: $D(\mathbb{X}_{b}) \otimes_{\breve{\mathbb{Q}}_p} \mathcal{O}_{\mathcal{M}_{K}} = \oplus_{\tau \in \Phi} D(\mathbb{X}_{b})_{\tau}' \otimes_{\breve{L}} \mathcal{O}_{\mathcal{M}_{K}}$. Note that, we have an identification $D(\mathbb{X}_{b}) = V_p \otimes_{\mathbb{Q}_p} \breve{\mathbb{Q}}_p$ as $B_p \otimes_{\mathbb{Q}_p} \breve{\mathbb{Q}}_p$-modules and $D(\mathbb{X}_{b})$ has a natural action of $J_b(\mathbb{Q}_p)$. By Morita equivalence, $D(\mathbb{X}_{b})_{\tau}'$ induces a 2-dimensional $\breve{L}$-vector space $D(\mathbb{X}_{b})_{\tau}$ with a natural action of $J_b(\mathbb{Q}_p)$ and the Grothendieck-Messing filtration induces an exact sequence \begin{equation}\label{GMfil}0 \rightarrow \omega_{\tau, \mathcal{M}_{K_p}} \otimes (\wedge^2 D(\mathbb{X}_{b})_{\tau}) \rightarrow D(\mathbb{X}_{b})_{\tau} \otimes_{\breve{L}} \mathcal{O}_{\mathcal{M}_{K_p}} \rightarrow \omega_{\tau, \mathcal{M}_{K_p}}^{-1} \rightarrow 0 \end{equation} for any $\tau \in \Psi$, where $\omega_{\tau, \mathcal{M}_{K_p}}$ denotes the $\tau$-component of the dual of the Lie algebra of universal $G$-$p$-divisible group similarly as in the global situation. Let $\Fl_{\mathrm{GM}, \mathcal{M}}$ be the flag variety which parametrizes 1-dimensional quotients of $D(\mathbb{X}_{b})_{\tau} \otimes_{\breve{L}} C$ for all $\tau \in \Psi$. Thus we obtain the map $\pi_{\mathrm{GM}, \mathcal{M}} : \mathcal{M}_{K_p} \rightarrow \Fl_{\mathrm{GM}, \mathcal{M}}$, which is $\etale$ by the Grothendieck-Messing theory and is $J_b(\mathbb{Q}_p)$-equivariant. (See \cite[Proposition 23.3.3]{p-adicgeometry} for details.)

\begin{thm} (\cite[Theorem 6.5.4]{p-div}.)\label{Rapoport prefd}

There exists a perfectoid space $\mathcal{M}_{\infty}$ over $C$ unique up to isomorphism such that $\mathcal{M}_{\infty} \sim \varprojlim_{K_p} \mathcal{M}_{K_p}$.

\end{thm}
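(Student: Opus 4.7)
The plan is to adapt the strategy of Scholze--Weinstein \cite[Theorem 6.5.4]{p-div} to our EL-type setting. The proof has three conceptual stages: construct $\mathcal{M}_\infty$ as a moduli space of $p$-divisible groups equipped with trivializations of their Tate modules; describe it concretely via a Hodge--Tate/Grothendieck--Messing style period map landing in (an open subset of) $\Fl_{\mathrm{GM}, \mathcal{M}}$; and then verify perfectoidness of the resulting space and the compatibility $\mathcal{M}_\infty \sim \varprojlim_{K_p} \mathcal{M}_{K_p}$.

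First I would define $\mathcal{M}_\infty$ as the functor on perfectoid spaces over $C$ sending $\mathrm{Spa}(R, R^+)$ to the set of isomorphism classes of tuples $(\mathcal{G}, \rho, \alpha)$ where $\mathcal{G}$ is an $\mathcal{O}_{B_p}$-compatible $G$-$p$-divisible group over $R^+$, $\rho \colon \mathbb{X}_b \otimes_{\overline{\mathbb{F}}_L} R^+/p \dashrightarrow \mathcal{G} \otimes_{R^+} R^+/p$ is a quasi-isogeny (encoding the Rapoport--Zink datum), and $\alpha \colon V_p^o \Isom T_p \mathcal{G}$ is a trivialization of the Tate module compatible with the $\mathcal{O}_{B_p}$-action. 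Quotienting the level structure by $K_p$ recovers $\mathcal{M}_{K_p}$ by construction, so once $\mathcal{M}_\infty$ is representable by a perfectoid space the relation $\mathcal{M}_\infty \sim \varprojlim_{K_p} \mathcal{M}_{K_p}$ follows from the fact that the transition maps $\mathcal{M}_{K_p'} \to \mathcal{M}_{K_p}$ are finite étale for $K_p' \subset K_p$ normal, together with the standard tilting criterion (as in the verification of condition 2 of Theorem \ref{sim}).

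Next I would make this representation explicit using the equivalence between $p$-divisible groups over $\mathcal{O}_C$ and pairs $(T, W)$, where $T$ is a finite free $\mathbb{Z}_p$-module and $W \subset T \otimes_{\mathbb{Z}_p} C$ is a $C$-subspace, extended to the relative setting for $G$-$p$-divisible groups. Under this dictionary the Grothendieck--Messing/Hodge--Tate filtration \eqref{GMfil} produces a period map $\pi_{\mathrm{GM}} \colon \mathcal{M}_\infty \to \Fl_{\mathrm{GM}, \mathcal{M}}$, whose image is the admissible locus $\Fl_{\mathrm{GM}, \mathcal{M}}^a$. The key structural input is that $\pi_{\mathrm{GM}}$ is pro-étale with geometric fibres $\underline{J_b(\mathbb{Q}_p)}$-torsors (once one picks a trivialization of the universal cover), which on affinoid perfectoid opens of $\Fl_{\mathrm{GM}, \mathcal{M}}^a$ exhibits the preimage as an affinoid perfectoid space; this is exactly the EL-type specialization of the Scholze--Weinstein construction.

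The main obstacle, and the genuinely nontrivial ingredient, is verifying that the universal cover $\tilde{\mathcal{G}}$ of the universal $G$-$p$-divisible group over $\mathcal{M}_\infty$ is representable (by a relatively perfectoid object) and that the Hodge--Tate sequence $0 \to \mathrm{Lie}\,\mathcal{G}(1) \to \tilde{\mathcal{G}}(R^+) \otimes_{\mathbb{Z}_p} R \to \omega_{\mathcal{G}^\vee} \to 0$ remains exact over perfectoid affinoid $(R, R^+)$; this is what allows one to locally trivialize $T_p\mathcal{G}$ over a pro-étale cover and hence to identify the tower $\mathcal{M}_{K_p}$ with finite-level quotients of an honest perfectoid space. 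Granting this, perfectoidness of $\mathcal{M}_\infty$ is deduced by covering $\Fl_{\mathrm{GM}, \mathcal{M}}^a$ by affinoid opens over which $\pi_{\mathrm{GM}}$ becomes a profinite étale $J_b(\mathbb{Q}_p)$-torsor, applying the almost-purity theorem to propagate perfectoidness, and then using the uniqueness clause of Theorem \ref{sim} (applied to $\mathcal{M}$ in place of $\aS_K$) to conclude $\mathcal{M}_\infty \sim \varprojlim_{K_p} \mathcal{M}_{K_p}$ and that $\mathcal{M}_\infty$ is unique up to unique isomorphism.
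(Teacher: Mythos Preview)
The paper does not give its own proof of this statement: it simply records the result and cites \cite[Theorem 6.5.4]{p-div}. So there is nothing to compare against on the paper's side beyond the bare citation.

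Your proposal is a reasonable high-level sketch of the Scholze--Weinstein argument, but since the paper treats this as a black box, you are doing strictly more work than required. A couple of technical remarks on your sketch, in case you want to keep it: the period map you use to establish perfectoidness in \cite{p-div} is the Hodge--Tate period map $\pi_{\mathrm{HT}, \mathcal{M}}$ to $\Fl_{\mathrm{HT}, \mathcal{M}}$ (cf.\ the paragraph after this theorem in the paper), not the Grothendieck--Messing map, and the key input is that $\mathcal{M}_\infty$ is identified with an open subspace of a product of universal covers of $p$-divisible groups, which are themselves perfectoid; the $J_b(\mathbb{Q}_p)$-torsor description over the admissible locus is correct in spirit but is not the route \cite{p-div} takes to perfectoidness. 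For the purposes of this paper, however, a one-line citation suffices.
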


On the other hand, the Hodge-Tate filtration on $\mathcal{M}_{\infty}$ induces the filtration \begin{equation}\label{local Hodge-Tate} 0 \rightarrow \omega_{\tau, \mathcal{M}_{\infty}}^{-1}(1) \rightarrow V_{\tau} \otimes_{L} \mathcal{O}_{\mathcal{M}_{\infty}} \rightarrow \omega_{\tau, \mathcal{M}_{\infty}} \otimes (\wedge^2 D(\mathbb{X}_{b})_{\tau}) \rightarrow 0 \end{equation} for any $\tau \in \Psi$. (See \cite[Proposition 7.1.1]{p-div}.)  Let $\Fl_{\mathrm{HT}, \mathcal{M}}$ be the flag variety which parametrizes 1-dimensional quotients of $V_{\tau} \otimes_{L} C$ for all $\tau \in \Psi$. Let $\pi_{\mathrm{HT}, \mathcal{M}} : \mathcal{M}_{\infty} \rightarrow \mathrm{\Fl}_{\mathrm{HT}, \mathcal{M}}$ be the induced map.

\vspace{0.5 \baselineskip}

Let $\check{\mathcal{M}}_{\check{K}_p, \breve{L}}$ be the Rapoport-Zink space of EL-type of level $\check{K}_p$ over $\breve{L}$ given by the dual EL datum $(\check{G}, \check{b}, \check{\mu})$ of $(G, b, \mu)$. (See \cite[{\S} 7.2]{p-div} for more details.) Let $\check{G}$ and $J_{\check{b}}$ be the reductive groups associated with $\check{\mathcal{M}}_{\check{K}_p, \check{L}}$ as in the case $\mathcal{M}$. Then we have canonical identifications $\check{G} = J_b$ and $G = J_{\check{b}}$. Note that for $\check{\mathcal{M}}_{\check{K}_p, \breve{L}}$, Theorem \ref{Rapoport prefd} is also true i.e., there exists a perfectoid space $\check{\mathcal{M}}_{\infty}$ over $C$ such that $\check{\mathcal{M}}_{\infty} \sim \varprojlim_{\check{K}_P} \check{\mathcal{M}}_{\check{K}_p}$, where $\check{\mathcal{M}}_{\check{K}_p} := \check{\mathcal{M}}_{\check{K}_p, \breve{L}} \times_{\breve{L}} C$. Moreover, we also have the Grothendieck-Messing filtration and the Hodge-Tate filtration, which again give filtrations on rank 2 vector bundles by rank 1 bundles for any $\tau \in \Psi$. Moreover we have natural identifications $\Fl_{\mathrm{GM}, \mathcal{M}} = \Fl_{\mathrm{HT}, \check{\mathcal{M}}} = \prod_{\tau \in \Psi} \mathbb{P}^1_{C, \tau}$ and $\Fl_{\mathrm{HT}, \mathcal{M}} = \Fl_{\mathrm{GM}, \check{\mathcal{M}}} = \prod_{\tau \in \Psi} \mathbb{P}^1_{C, \tau}$, which is compatible with actions of $G(\mathbb{Q}_p)$ and $J_b(\mathbb{Q}_p)$ by \cite[Proposition 7.2.2]{p-div}.

\begin{thm}(Duality of basic Rapoport-Zink spaces, \cite[Theorem 7.2.3]{p-div}) \label{duality}

 There exists a canonical $G(\mathbb{Q}_p) \times J_b(\mathbb{Q}_p)$-equivariant isomorphism $\mathcal{M}_{\infty} \cong \check{\mathcal{M}}_{\infty}$ inducing identifications $\pi_{\mathrm{GM}, \mathcal{M}} = \pi_{\mathrm{HT}, \check{\mathcal{M}}}$ and $\pi_{\mathrm{HT}, \mathcal{M}} = \pi_{\mathrm{GM}, \check{\mathcal{M}}}$ under the above identifications $\Fl_{\mathrm{HT}, \mathcal{M}} = \Fl_{\mathrm{GM}, \check{\mathcal{M}}}$ and $\Fl_{\mathrm{GM}, \mathcal{M}} = \Fl_{\mathrm{HT}, \check{\mathcal{M}}}$.
    
\end{thm}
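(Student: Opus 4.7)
The statement is a direct citation of \cite[Theorem 7.2.3]{p-div}, so the plan is to indicate the shape of the argument rather than to rebuild it from scratch. The key input is Scholze--Weinstein's classification of $p$-divisible groups over $\mathcal{O}_{C'}$, for $C'$ a complete algebraically closed nonarchimedean extension of $\breve{L}$: such a $p$-divisible group $\mathcal{G}$ is equivalent to the data of its Tate module $T := T_p\mathcal{G}$ together with a $B_{\mathrm{dR}}^+$-lattice $W \subset T \otimes_{\mathbb{Z}_p} B_{\mathrm{dR}}$ recording the Hodge filtration (i.e.\ the position of the Hodge--Tate filtration relative to $T$). Cartier duality sends $\mathcal{G}$ to $\mathcal{G}^\vee$, and translates into the operation $(T, W) \mapsto (T^\vee(1), W^\vee(1))$, where $W^\vee$ is the lattice dual to $W$ under the canonical pairing. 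This is the only nontrivial ingredient.

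The first step is to reinterpret $\mathcal{M}_\infty(C', \mathcal{O}_{C'})$ via this dictionary, adapted to the $G$-structure. A point is a triple $(\mathcal{G}, \iota, \alpha)$ with $\mathcal{G}$ a $G$-$p$-divisible group over $\mathcal{O}_{C'}$ whose reduction is quasi-isogenous via $\iota$ to $\mathbb{X}_b$, and $\alpha : V_p^o \stackrel{\sim}{\to} T_p\mathcal{G}$ a $G$-equivariant trivialisation. Through $\alpha$ and $\iota$ this is equivalent to specifying a $B_{\mathrm{dR}}^+$-lattice $W \subset V_p \otimes_{\mathbb{Q}_p} B_{\mathrm{dR}}$ satisfying the $\mu^{-1}$-bounded modification condition with respect to the image of the isocrystal $D(\mathbb{X}_b) = V_p \otimes \breve{\mathbb{Q}}_p$ under the chosen quasi-isogeny. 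Precisely the same recipe, applied to the dual EL-datum, identifies $\check{\mathcal{M}}_\infty(C', \mathcal{O}_{C'})$ with triples $(\check{\mathcal{G}}, \check{\iota}, \check{\alpha})$ for the dual group, equivalently with lattices in $V_p^\vee(1) \otimes B_{\mathrm{dR}}$.

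The second step is to define the duality morphism $\mathcal{M}_\infty \to \check{\mathcal{M}}_\infty$ by sending $(\mathcal{G}, \iota, \alpha)$ to $(\mathcal{G}^\vee, \iota^\vee, (\alpha^\vee)^{-1}(1))$, i.e.\ by Cartier dualising everything in sight. Under the lattice description above this is $(W) \mapsto (W^\vee(1))$, which is manifestly involutive and $G(\mathbb{Q}_p) \times J_b(\mathbb{Q}_p)$-equivariant once one matches $\check{G} = J_b$, $G = J_{\check{b}}$, $\check{b} = b^{-1}$ and $\check{\mu} = \mu^{-1}$. Since both sides are representable perfectoid spaces and the map is an isomorphism on $(C', \mathcal{O}_{C'})$-points for every $C'$, \cite[Proposition 2.2.10]{p-div}-type arguments (or the fact that the map is clearly an isomorphism on diamonds) upgrade it to an isomorphism of perfectoid spaces.

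The final step is the compatibility of period maps. By construction, the Grothendieck--Messing period $\pi_{\mathrm{GM}, \mathcal{M}}$ records the position of the Hodge filtration in $D(\mathbb{X}_b) \otimes \mathcal{O}_{\mathcal{M}_\infty}$, which in the lattice description is the image of $W$ modulo $\xi$; the Hodge--Tate period $\pi_{\mathrm{HT}, \check{\mathcal{M}}}$ records the image of $(\alpha^\vee)^{-1}$ modulo $\xi$ in the dual picture. Unwinding definitions, these coincide on the nose under duality; the opposite identification $\pi_{\mathrm{HT}, \mathcal{M}} = \pi_{\mathrm{GM}, \check{\mathcal{M}}}$ is symmetric. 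The main obstacle, and the reason this is cited rather than reproved, is the Scholze--Weinstein equivalence between $p$-divisible groups over $\mathcal{O}_{C'}$ and pairs $(T, W)$; granted this equivalence and the moduli interpretation of $\mathcal{M}_\infty$ and $\check{\mathcal{M}}_\infty$ at infinite level, the duality statement and the compatibility of period maps are essentially formal.
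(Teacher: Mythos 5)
The paper does not prove this theorem; it is cited verbatim from Scholze--Weinstein, so there is no internal proof to compare against. That said, your sketch of what the cited argument does is incorrect at its core: you describe the duality morphism as Cartier duality of the $p$-divisible group, $(\mathcal{G},\iota,\alpha)\mapsto(\mathcal{G}^{\vee},\iota^{\vee},(\alpha^{\vee})^{-1}(1))$, and that is not the morphism of SW~Theorem~7.2.3.

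Cartier duality preserves the ambient group: a $p$-divisible group with $\mathrm{GL}_n$-structure (resp.\ a cocharacter $\mu$) goes to another $p$-divisible group with $\mathrm{GL}_n$-structure (with cocharacter $\mu^{\vee}(1)$). It does \emph{not} produce the dual Rapoport--Zink datum $(\check{G},\check{b},\check{\mu})$ with $\check{G}=J_b$. The Lubin--Tate/Drinfeld case already shows the difference: the Cartier dual of a 1-dimensional formal group of height $n$ is an $(n-1)$-dimensional formal group of height $n$, whereas the Scholze--Weinstein dual space parametrizes special formal $\mathcal{O}_D$-modules of height $n^2$, with $D^{\times}$-level structure replacing $\mathrm{GL}_n$-level structure. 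These live over genuinely different linear algebraic groups, so the duality cannot be Cartier duality. What the cited theorem actually does is reinterpret $\mathcal{M}_{\infty}(C',\mathcal{O}_{C'})$ as a space of $B_{\mathrm{dR}}^+$-lattices $\Xi$ inside $V_p\otimes_{\mathbb{Q}_p}B_{\mathrm{dR}}\cong D(\mathbb{X}_b)\otimes_{\breve{\mathbb{Q}}_p}B_{\mathrm{dR}}$, where the fixed \'etale lattice $V_p^{o}\otimes B_{\mathrm{dR}}^+$ comes from the infinite-level trivialization $\alpha$ and the varying crystalline lattice comes from the quasi-isogeny $\iota$. The duality morphism is the involution that \emph{swaps the roles} of the \'etale and crystalline lattices (i.e.\ it reads the same modification datum from the other side, $g\mapsto g^{-1}$ on the level of $B_{\mathrm{dR}}$-translates); the group $J_b$ governing the crystalline side becomes the ambient group $\check{G}$ of the dual datum, while $G$ becomes $J_{\check{b}}$. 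This is precisely why the Grothendieck--Messing and Hodge--Tate period maps exchange under the duality, and it explains the appearance of $J_b$ rather than $G$ on the dual side. Your heuristic for the period-map compatibility is the right intuition, but the map you propose would not even have the right target, so the equivariance and the identification $\check{G}=J_b$ in your second step do not actually follow from your construction.
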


In the following, we will prove variants of results in {\S} 3.4 and 3.5, 4.1 and 4.2 for $\mathcal{M}_{\infty}$ and $\check{\mathcal{M}}_{\infty}$. Actually, we can give purely local proofs by the same way as in the global case, but for simplicity and completeness of proofs, we will use global results proved in {\S} 3.4 and 3.5. Note that by the following lemma, the dual Rapoport-Zink space $\check{\mathcal{M}}_{\check{K}_p}$ also can be obtained from a global unitary Shimura datum and its basic element. Thus we can also use global unitary Shimura variety to study $\check{\mathcal{M}}$.

\begin{lem}\label{division unitary}

There exists a set of finite places $S(\check{B})$ of $F$ (maybe containing $p$-adic places) satisfying the following. 

1 \ Every prime lying below $S(\check{B})$ splits in $F_0$.

2 \ $S(\check{B})^c = S(\check{B})$.

3 \ $\frac{1}{2}|S(\check{B})| + d = 0 \ \mod 2$.

4 \ Let $GU'/\mathbb{Q}$ be the unitary similitude group given by the data $S(\check{B})$ and $\Psi$ via Proposition \ref{unitary groups}, let $\mu' : \mathbb{G}_{m, \overline{\mathbb{Q}}_p} \rightarrow GU'_{\overline{\mathbb{Q}}_p}$ be the minuscule induced from the minuscule of $GU'$ and $\iota$ and let $b' \in B(GU'_{\mathbb{Q}_p}, \mu')$ be the basic element. Then we have an identification $(GU'_{\mathbb{Q}_p}, b', \mu') = (\check{G}, \check{b}, \check{\mu})$.

\end{lem}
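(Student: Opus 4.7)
The plan is to construct $\check{B}$ by specifying its local Brauer invariants so that the resulting unitary similitude group $GU'/\mathbb{Q}$ has $GU'_{\mathbb{Q}_p}$ canonically isomorphic to $\check{G} = J_b$; once this identification is set up, the matching of $(b', \mu')$ with $(\check{b}, \check{\mu})$ will follow from the fact that $GU'$ inherits the same archimedean signature $\Psi$, hence the same formal minuscule type.

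The first step is the local analysis at $p$. For each $p$-adic place $w \mid v$ of $F$, I set $I_w := \Psi \cap \mathrm{Hom}_{\mathbb{Q}_p}(F_w, \overline{\mathbb{Q}}_p)$ and $m_w := |I_w|$. By Proposition \ref{calculation of isocrystals}, the $w$-component of $\check{G} = J_b$ is $\mathrm{Res}_{F_w/\mathbb{Q}_p} \mathrm{GL}_{2, F_w}$ when $m_w$ is even, and $\mathrm{Res}_{F_w/\mathbb{Q}_p} D_w^{\times}$ (with $D_w$ the quaternion division algebra over $F_w$) when $m_w$ is odd. Accordingly I would define
\[ S(\check{B}) := \{ w \mid v : m_w \text{ odd} \} \cup \{ w^c : w \mid v, m_w \text{ odd} \}, \]
so that conditions 1 and 2 hold tautologically. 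For condition 3, since $\sum_{w \mid v} m_w = |\Psi| = d$, the number $k := |\{ w \mid v : m_w \text{ odd} \}|$ satisfies $k \equiv d \pmod 2$, giving $\tfrac{1}{2}|S(\check{B})| + d = k + d \equiv 0 \pmod 2$.

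Next, Brauer-Hasse-Noether (applicable because the sum $\sum_{w \in S(\check{B})} \tfrac{1}{2} = k$ is an integer) will produce a central simple $F$-algebra $\check{B}$ of dimension $4$ with invariant $\tfrac{1}{2}$ at places in $S(\check{B})$ and $0$ elsewhere. Proposition \ref{unitary groups}, applied to the data $(S(\check{B}), \Psi)$, then furnishes $GU'/\mathbb{Q}$ with the correct archimedean signature. By Morita equivalence, $(\check{B}^{\mathrm{op}} \otimes_F F_w)^{\times}$ matches the above case analysis at each $w \mid v$, yielding a canonical isomorphism $GU'_{\mathbb{Q}_p} \cong \check{G}$. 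Since $GU'$ shares the signature $\Psi$ with $GU$, the induced minuscule $\mu'$ at each $w$ has type $((1,0)_{I_w}, (0,0)_{I_w^c})$, agreeing with the minuscule $\check{\mu}$ of the dual EL-type datum; the unique basic element $b' \in B(GU'_{\mathbb{Q}_p}, \mu')$ then corresponds to $\check{b}$.

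The main obstacle I anticipate is the precise matching $\mu' = \check{\mu}$, which requires unpacking the definition of the dual Rapoport-Zink datum in \cite[\S 7.2]{p-div}. However, in our rank-$2$ setting the Grothendieck-Messing and Hodge-Tate flag varieties on both sides have already been identified as $\prod_{\tau \in \Psi} \mathbb{P}^1_{C, \tau}$ by Theorem \ref{duality}, so this last step reduces to book-keeping on minuscule types indexed by $\Psi$.
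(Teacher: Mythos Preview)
Your proposal is correct and follows essentially the same approach as the paper: specify $S(\check{B})$ at the $p$-adic places to match the division-algebra factors of $J_b(\mathbb{Q}_p)$, then identify $(b',\mu')$ with $(\check{b},\check{\mu})$ via uniqueness of the basic element and the fact that both minuscules are determined by $\Psi$. Your version is in fact more explicit than the paper's, which merely asserts that one can ``take $S(\check{B})$ satisfying 1, 2 and 3'' with the prescribed $p$-adic part; your parity computation $k \equiv d \pmod 2$ shows no auxiliary non-$p$-adic places are needed.
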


\begin{rem}

Note that the above unitary group might not be studied in {\S} 3.4 and 3.5 because a factor of $\check{G}(\mathbb{Q}_p) = J_b(\mathbb{Q}_p)$ may be the unit group of a central division algebra over a finite extension of $\mathbb{Q}_p$ of dimension $4$. However, almost all results holds in {\S} 3.4 and 3.5 for such a unitary group. The main difference is that we may not have Lemma \ref{open unit ball} for such a unitary group. However, we can actually prove almost all results holds in {\S} 3.4 and 3.5 for such a unitary group without Lemma \ref{open unit ball} by using $g_{1, \tau}$ and $g_{2, \tau}$ appearing after Proposition \ref{local Sen operator} instead of $e_{1, \tau}$ and $e_{2, \tau}$. We will see how to modify results in the following.

\end{rem}

\begin{proof}

Take $S(\check{B})$ satisfying 1, 2 and 3 such that $S(\check{B}) \cap \{ w \mid v \}$ is equal to the set of $p$-adic places $w$ of $F$ dividing $v$ such that the $w$-factor of $\check{G}(\mathbb{Q}_p)$ is the unit group of a central division algebra over $F_w$ of dimension $4$. Then we obtain the property 4 because the basic element is unique in $B(GU'_{\mathbb{Q}_p}, \mu')$ and the minuscule $\check{\mu}$ is given by $\iota$ and the minuscule of $GU$.  \end{proof}

We recall relations between basic Rapoport-Zink spaces and Shimura varieties.

By Proposition \ref{unitary groups}, we have a unitary group $I/F^+$ such that $GI(\mathbb{A}_{\mathbb{Q}}^{\infty, p}) \cong GU(\mathbb{A}_{\mathbb{Q}}^{\infty, p})$ and $GI(\mathbb{Q}_p) = J_b(\mathbb{Q}_p)$ and $GI(\mathbb{R}) = G(\prod_{\tau \in \Phi} U(0, 2))$ and such a group is unique up to isomorphism. Let $GI/\mathbb{Q}$ be the unitary similitude groups induced from $I$. 


\begin{lem}\label{properly discontinuous}

1 \ The left action of $GI(\mathbb{Q})$ on $\mathcal{M} \times GI(\mathbb{A}^{\infty, p})/K^p$ has the following property.

For any quasicompact open $U$ of $\mathcal{M}$ and any $x \in GI(\mathbb{A}_{\mathbb{Q}}^{\infty, p})$, there exists an open compact $K^p_1$ of $K^p$ such that for any $1 \neq g \in GI(\mathbb{Q})$, we have $g(U \times \{xK^p_1\}) \cap (U \times \{xK^p_1\}) = \emptyset$.

2 \ The right action of $J_b(\mathbb{Q}_p)$ on $\mathcal{M} \times [GI(\mathbb{Q}) \setminus GI(\mathbb{A}_{\mathbb{Q}}^{\infty})/K^p]$ has the following property.
 
For any quasicompact open $U$ of $\mathcal{M}$ and any $x \in GI(\mathbb{A}_{\mathbb{Q}}^{\infty})$, there exists an open compact $K^p_1$ of $K^p$ such that for any $1 \neq g \in J_b(\mathbb{Q}_p)$, we have $(U \times \{GI(\mathbb{Q})xK^p_1 \})g \cap (U \times \{GI(\mathbb{Q})xK^p_1 \}) = \emptyset$.

\end{lem}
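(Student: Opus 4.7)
The plan is to reduce both statements to the proper discontinuity of an arithmetic subgroup acting on $\mathcal{M}$, using three standard ingredients: (i) $GI(\mathbb{Q})$ is a discrete subgroup of $GI(\mathbb{A}_{\mathbb{Q}})$; (ii) $GI(\mathbb{R})/Z_{GI}(\mathbb{R})$ is compact by the construction of $GI$ as an inner form that is compact at infinity modulo center; and (iii) $K$ is neat, so sufficiently small compact open subgroups of $GI(\mathbb{A}_{\mathbb{Q}}^{\infty})$ intersect $Z_{GI}(\mathbb{Q})$ trivially.

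For part 1, suppose $g \in GI(\mathbb{Q})$ satisfies $g(U \times \{xK^p_1\}) \cap (U \times \{xK^p_1\}) \neq \emptyset$. Decomposing $g = (g_p, g^p) \in J_b(\mathbb{Q}_p) \times GI(\mathbb{A}_{\mathbb{Q}}^{\infty, p})$ via the embedding of $GI(\mathbb{Q})$ into the finite adeles and the identification $GI(\mathbb{Q}_p) = J_b(\mathbb{Q}_p)$ yields $g_p U \cap U \neq \emptyset$ and $g^p \in x K^p_1 x^{-1}$. I would then consider the subgroup $\Gamma := GI(\mathbb{Q}) \cap (J_b(\mathbb{Q}_p) \times x K^p x^{-1})$. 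Since $x K^p x^{-1}$ is compact open in $GI(\mathbb{A}_{\mathbb{Q}}^{\infty, p})$ and $GI(\mathbb{R})/Z_{GI}(\mathbb{R})$ is compact, the image $\overline{\Gamma}$ of $\Gamma$ in $J_b(\mathbb{Q}_p)/Z_{J_b}(\mathbb{Q}_p)$ is a discrete arithmetic subgroup. The key input is that $\overline{\Gamma}$ acts properly discontinuously on $\mathcal{M}$ modulo center---a standard fact in the theory of basic Rapoport-Zink uniformization, provable via the Grothendieck-Messing period morphism $\pi_{\mathrm{GM}, \mathcal{M}}$ together with the discreteness of its fibers. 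Hence the set $\{h \in \Gamma : h_p U \cap U \neq \emptyset\}$ is a finite union of $Z_{GI}(\mathbb{Q})$-cosets $\gamma_1 Z_{GI}(\mathbb{Q}), \dots, \gamma_n Z_{GI}(\mathbb{Q})$ with $\gamma_1 = 1$. I would then shrink $K^p_1 \subset K^p$ to exclude $\gamma_i^p$ from $x K^p_1 x^{-1}$ for $i \ge 2$ (finitely many open conditions), and shrink further to ensure $Z_{GI}(\mathbb{Q}) \cap x K^p_1 x^{-1} = \{1\}$ by neatness, which forces $g = 1$.

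For part 2, the condition (with the intersection read as empty) for $g \in J_b(\mathbb{Q}_p)$ translates into the nonexistence of $h \in GI(\mathbb{Q})$ with $Ug \cap hU \neq \emptyset$ and $h^{-1} x g \in x K^p_1$; the second condition pins down $h^p \in x K^p_1 x^{-1}$ since $g$ has trivial component away from $p$. For sufficiently small $K^p_1$, part 1 together with neatness forces $h = 1$, reducing the question to whether $Ug \cap U \neq \emptyset$ for nontrivial $g \in J_b(\mathbb{Q}_p)$. Shrinking further and invoking the same proper discontinuity argument for the $\overline{\Gamma}$-action (with $\Gamma$ now replaced by a suitable open compact subgroup of $J_b(\mathbb{Q}_p)$ whose action on a neighborhood of $U$ is trivial on sufficiently refined level) rules out such $g$.

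The main obstacle is the rigorous verification that the arithmetic subgroup $\overline{\Gamma}$ acts properly discontinuously on $\mathcal{M}$. Although this is a well-known input in the theory of basic Rapoport-Zink uniformization, the proof requires careful use of the Grothendieck-Messing period morphism, the decomposition of $\mathcal{M}$ into components indexed by the Kottwitz map of $J_b$, and control of the étale (hence countable-discrete) fibers of $\pi_{\mathrm{GM}, \mathcal{M}}$; the latter is what ultimately converts the discreteness of $\overline{\Gamma}$ in $J_b(\mathbb{Q}_p)/Z_{J_b}(\mathbb{Q}_p)$ into the desired finiteness of the set of orbits meeting a quasicompact open.
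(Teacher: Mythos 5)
The paper itself does not prove this lemma but simply cites Rapoport--Zink [RZ, proofs of Theorem 6.23 and Proposition 2.37], so there is no internal argument to compare against. Your sketch captures the right ingredients in spirit (discreteness of $GI(\mathbb{Q})$ in the adeles, compactness of $GI(\mathbb{R})$ modulo center, the period morphism), but there are two substantive issues.

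First, as you acknowledge, the claim that the arithmetic subgroup $\overline{\Gamma}$ acts properly discontinuously on $\mathcal{M}$ is the crux, and you leave it unproven. This is not a throwaway reduction: $\Gamma = GI(\mathbb{Q}) \cap (J_b(\mathbb{Q}_p) \times xK^p x^{-1})$ embeds into $J_b(\mathbb{Q}_p)$ via the $p$-component (since $GI(\mathbb{Q}) \hookrightarrow GI(\mathbb{Q}_p)$ is injective), but its image is \emph{not} discrete there --- it is only discrete modulo the $p$-adic center, because the archimedean factor $GI(\mathbb{R})$ is compact only modulo center. Since $Z_{J_b}(\mathbb{Q}_p)$ acts nontrivially on $\mathcal{M}$, obtaining a finiteness statement from this ``discreteness mod center'' requires one to show separately that the central action moves quasicompact opens off themselves, which is exactly the kind of control via $\pi_{\mathrm{GM}, \mathcal{M}}$ that you gesture at but do not carry out. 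In effect you have reformulated the lemma rather than proved it.

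Second, the step ``shrink $K^p_1$ to ensure $Z_{GI}(\mathbb{Q}) \cap xK^p_1 x^{-1} = \{1\}$ by neatness'' is false as stated. Since $K^p_1$ only bounds the prime-to-$p$ components and $Z_{GI}$ contains the similitude torus $\mathbb{G}_m$, the intersection $Z_{GI}(\mathbb{Q}) \cap xK^p_1 x^{-1}$ is typically infinite (e.g.\ it contains $\{\pm p^n : n \in \mathbb{Z}\}$ when $Z_{GI} \supset \mathbb{G}_m$ and $K^p_1$ is hyperspecial away from a finite set). Neatness controls torsion, not free central elements. The finiteness you need comes instead from combining the prime-to-$p$ bound with the constraint $z_p U \cap U \neq \emptyset$, which forces $z_p$ into a bounded subset of $Z_{J_b}(\mathbb{Q}_p)$; together with compactness of $Z_{GI}(\mathbb{R})/\{\pm 1\}$-free part (or rather compactness of $GI(\mathbb{R})$ mod center) and discreteness of $Z_{GI}(\mathbb{Q})$ in $Z_{GI}(\mathbb{A}_{\mathbb{Q}})$, this makes the set of candidate central $z$'s finite, and only \emph{then} can one shrink $K^p_1$ to exclude the finitely many nontrivial $z^p$. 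So the overall structure is salvageable, but the justification you give is the wrong one, and it is precisely this kind of interaction between the archimedean center, the $p$-adic center, and the RZ geometry that makes the RZ references nontrivial.
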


\begin{rem}

The author doesn't know whether this property is true or not without taking $K^p_1$.

\end{rem}

\begin{proof}

See \cite[proofs of Theorem 6.23 and Proposition 2.37]{RZ}. \end{proof}

We consider the perfectoid space $\mathcal{S} := \varprojlim_{K^p_1} \mathcal{S}_{K^p_1}$. We put $\pi_{\mathrm{HT}, \mathcal{S}} : \mathcal{S} \rightarrow \mathcal{S}_{K^p} \rightarrow \Fl$.

\begin{thm} (Basic uniformization, \cite[Theorem 6.30]{RZ}) \label{uniformization}

1 \ There exist natural identifications $\pi_{\mathrm{HT}, \mathcal{S}}^{-1}(\Fl^{b}) = GI(\mathbb{Q}) \setminus \varprojlim_{K^p_1}[\mathcal{M}_{\infty} \times GI(\mathbb{A}_{\mathbb{Q}}^{p, \infty})/ K^p_1] = [\varprojlim_{K^p_1} \mathcal{M}_{\infty} \times [GI(\mathbb{Q}) \setminus GI(\mathbb{A}_{\mathbb{Q}}^{\infty})/ K^p_1]]/J_b(\mathbb{Q}_p)$. Here, we consider the quotients in the naive sense by using Lemma \ref{properly discontinuous}.

2 \ More precisely, for any quasicompact open $U$ of $\mathcal{M}$ and for any $x \in GI(\mathbb{A}_{\mathbb{Q}}^{\infty})$, there exists an open compact subgroup $K_1^p$ of $K^p$ and a quasicompact open $V$ of $\aS_{K^p_1K_p^o}^{b}$ such that for any $K_p$, the above isomorphism descends to an isomorphism $V_{K_p} \cong [U_{K_p} \times \{ GI(\mathbb{Q}) x K^p_1\}]J_b(\mathbb{Q}_p)/J_b(\mathbb{Q}_p) \cong U_{K_p}$, where $V_{K_p}$ (resp. $U_{K_p}$) denotes the inverse images of $V$ (resp. $U$) in $\aS_{K^pK_p}$ (resp. $\mathcal{M}_{K_p}$).

\end{thm}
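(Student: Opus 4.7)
The plan is to reduce to the classical finite-level basic uniformization of Rapoport--Zink \cite[Theorem 6.30]{RZ} and then pass to the perfectoid limit using Theorems \ref{sim} and \ref{Rapoport prefd}, using Lemma \ref{properly discontinuous} to control the group quotients.

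First, I would recall the finite-level statement: for sufficiently small $K_p \subset K_p^o$, the restriction of $\aS_{K^p K_p}^b$ to the basic locus admits a canonical $GU(\mathbb{A}_{\mathbb{Q}}^{\infty,p})$-equivariant isomorphism of adic spaces
\begin{equation*}
\aS_{K^p K_p}^{b} \;\cong\; GI(\mathbb{Q}) \setminus \bigl[ \mathcal{M}_{K_p} \times GI(\mathbb{A}_{\mathbb{Q}}^{\infty,p}) / K^p \bigr],
\end{equation*}
where we use the identification $GI(\mathbb{A}_{\mathbb{Q}}^{\infty,p}) \cong GU(\mathbb{A}_{\mathbb{Q}}^{\infty,p})$. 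This is the EL/PEL-type reformulation of \cite[Theorem 6.30]{RZ}, coming from the moduli-theoretic description of points in the basic stratum as quasi-isogenies from the framing object $\mathbb{X}_b$ together with a choice of $p^\infty$-level structure (parametrized by $\mathcal{M}_{K_p}$) and a global prime-to-$p$ level datum (parametrized by the adelic double coset on the right).

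Second, I would pass to the inverse limit in $K_p$. The left side yields $\pi_{\mathrm{HT},\mathcal{S}}^{-1}(\Fl^b)$ by the moduli characterization of the basic stratum combined with the compatibility of the Hodge--Tate period map with the Grothendieck--Messing filtration on $\mathcal{M}_\infty$ via Theorem \ref{duality}. The right side yields the first expression in statement 1 after invoking Theorem \ref{Rapoport prefd} and the fact that the $\sim$-limit commutes with the étale quotient by the locally constant action of $GI(\mathbb{Q})$; this last commutation is exactly what Lemma \ref{properly discontinuous}(1) guarantees after shrinking $K^p$ to some $K_1^p$. For the second expression, I would observe that the right $J_b(\mathbb{Q}_p)$-action on $\mathcal{M}_\infty$ commutes with the left $GI(\mathbb{Q})$-action, so swapping the order of quotients gives the identification; the freeness needed for the $J_b(\mathbb{Q}_p)$-quotient to behave well is provided by Lemma \ref{properly discontinuous}(2).

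Third, for the refined local statement 2, I would start from a quasicompact open $U \subset \mathcal{M}$ and $x \in GI(\mathbb{A}_{\mathbb{Q}}^{\infty})$. Lemma \ref{properly discontinuous}(2) furnishes $K_1^p \subset K^p$ such that the orbit map
\begin{equation*}
U_{K_p} \times \{ GI(\mathbb{Q}) x K_1^p \} \;\longrightarrow\; \bigl[\mathcal{M}_{K_p} \times [GI(\mathbb{Q}) \setminus GI(\mathbb{A}_{\mathbb{Q}}^{\infty})/K_1^p]\bigr] / J_b(\mathbb{Q}_p)
\end{equation*}
is injective on $J_b(\mathbb{Q}_p)$-orbits and has image a quasicompact open $V_{K_p}$; the map $V_{K_p} \to U_{K_p}$ induced by the projection is then an isomorphism because the $J_b(\mathbb{Q}_p)$-stabilizer of any point in this slice is trivial by the same lemma. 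Finally one checks compatibility with varying $K_p$.

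The main obstacle will be bookkeeping: carefully matching the Hodge--Tate period map $\pi_{\mathrm{HT},\mathcal{S}}$ on the global Shimura variety with the Grothendieck--Messing map $\pi_{\mathrm{GM},\check{\mathcal{M}}}$ on the dual Rapoport--Zink tower under Theorem \ref{duality}, and verifying that the resulting identifications respect the $G(\mathbb{Q}_p) \times J_b(\mathbb{Q}_p) \times GU(\mathbb{A}_{\mathbb{Q}}^{\infty,p})$-equivariant structures. The passage from finite level to the perfectoid limit itself is routine once the properly discontinuous property is in hand, but ensuring that \emph{both} the quotient by $GI(\mathbb{Q})$ on the uniformization side and the quotient by $J_b(\mathbb{Q}_p)$ in the second description descend correctly requires the shrinking argument of Lemma \ref{properly discontinuous} applied twice.
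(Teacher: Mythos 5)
The paper does not supply a proof for this theorem: it is stated as a citation of Rapoport--Zink's Theorem 6.30, with the implicit understanding that the infinite-level, perfectoid, Hodge--Tate--period-map reformulation follows from the finite-level formal-scheme uniformization in RZ together with the perfectoid/Newton-stratification machinery of Scholze and Caraiani--Scholze already in place in the paper. There is therefore no proof to compare your proposal against line by line.

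That said, your sketch is the correct reconstruction of what the citation is implicitly compressing, and the main steps you identify --- (i) invoke the finite-level RZ uniformization over $\breve{L}$ for each $K_p$, (ii) pass to the limit and base change to $C$ using Theorems \ref{sim} and \ref{Rapoport prefd}, (iii) identify the limit with $\pi_{\mathrm{HT},\mathcal{S}}^{-1}(\Fl^b)$ via the compatibility of $\pi_{\mathrm{HT}}$ with the Newton stratification (Caraiani--Scholze) and the duality isomorphism of Theorem \ref{duality}, and (iv) use Lemma \ref{properly discontinuous} to form the naive quotients --- are all genuinely needed. Two points deserve more care than your write-up gives them. First, RZ's Theorem 6.30 is an isomorphism of \emph{formal schemes} over $\mathcal{O}_{\breve{L}}$; you must pass to adic generic fibers and then to the perfectoid completion, and the identification of the resulting tube with $\pi_{\mathrm{HT},\mathcal{S}}^{-1}(\Fl^b)$ is exactly \cite[Corollary 3.5.9]{CS} together with the moduli description of $\Fl^b$-points already quoted after Definition \ref{Newton stratification}; it does not come for free from ``the moduli characterization of the basic stratum''. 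Second, your finite-level statement is written as a global isomorphism for a fixed $K^p$, but Lemma \ref{properly discontinuous} only guarantees the naive quotient is well-behaved after shrinking to some $K_1^p \subset K^p$ depending on the chosen quasicompact $U$ and $x$; this is why both parts of the theorem are stated in terms of the inverse system over $K_1^p$, and part 2 is really the workhorse statement that makes part 1 meaningful. Your argument implicitly uses this shrinking correctly, but the phrasing ``étale quotient by the locally constant action'' in step (ii) is loose --- the quotient is defined purely topologically using Lemma \ref{properly discontinuous}, not by any descent-theoretic machinery.
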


\begin{rem} By the above result 2, the local structure of $\mathcal{M}_{\infty}$ is identified with that of $\aS_{K^p_1}$ for some $K_1^p$, which was already studied. \end{rem}


In the following to Proposition \ref{anti trivial}, we always consider the action of $G(\mathbb{Q}_p)$ and don't consider the action of $J_b(\mathbb{Q}_p)$. Let $U$ be a sufficiently small affinoid open of $\Fl_{\mathrm{HT}, \mathcal{M}}$ such that for any $\tau \in \Psi$, there exists a basis $g_{1, \tau}, g_{2, \tau}$ of $H^0(\Fl_{\mathrm{HT}, \mathcal{M}}, \omega_{\Fl_{\mathrm{HT}, \mathcal{M}}} \otimes \wedge^2V_{\tau})$ such that $g_{1, \tau}|_{U}$ is a generator of $(\omega_{\Fl_{\mathrm{HT}, \mathcal{M}}} \otimes \wedge^2V_{\tau})|_{U}$. (Precisely, the sufficiently smallness means that $U$ is an element of $\mathcal{B}$ as in {\S} 3.4 for some unitary Shimura variety related to $\mathcal{M}$ in the sense of Theorem \ref{uniformization}.) Let $V$ be a small affinoid open of $\mathcal{M}_{K_p}$ such that the inverse image $V_{\infty} = \mathrm{Spa}(B, B^+)$ of $V$ in $\mathcal{M}_{\infty}$ is an affinoid perfectoid open of $\mathcal{M}_{\infty}$ and $V \subset \pi_{\mathrm{HT}, \mathcal{M}}^{-1}(U)$.

The following results are variants of results in {\S} 3.3. and 3.4.

\begin{prop}

We have $R^i\Psi\mathfrak{LA}(B) = 0$ for any $i > 0$.

\end{prop}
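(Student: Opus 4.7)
The plan is to reduce this statement to the global analogue already proved as Theorem \ref{geometric sen theory}, using the basic uniformization of Theorem \ref{uniformization} to transfer the local analytic structure of $\mathcal{M}_\infty$ to that of a global unitary Shimura variety at infinite level at $p$. The key point is that the $G(\mathbb{Q}_p)$-action on $\mathcal{M}_\infty$ is what is being used in the definition of $\Psi\mathfrak{LA}$, whereas the basic uniformization only quotients out the $J_b(\mathbb{Q}_p)$-action; thus the $G(\mathbb{Q}_p)$-equivariant local structures on both sides agree.

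First I would apply part 2 of Theorem \ref{uniformization} to the given small affinoid open $V \subset \mathcal{M}_{K_p}$: for some sufficiently small $K_1^p \subset K^p$ and some $x \in GI(\mathbb{A}_{\mathbb{Q}}^{\infty})$, the basic uniformization provides a $G(\mathbb{Q}_p)$-equivariant isomorphism between $V_{K_p'}$ (the pullback of $V$ to $\mathcal{M}_{K_p'}$ for any $K_p' \subset K_p$) and the corresponding pullback $V_{K_p'}'$ of a quasicompact open $V' \subset \aS_{K_1^p K_p^o}^{b}$ of a suitable unitary Shimura variety (constructed using Lemma \ref{division unitary} applied to the dual EL datum, or directly to $GI$). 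Passing to the infinite level in $K_p'$, this gives a $G(\mathbb{Q}_p)$-equivariant isomorphism of affinoid perfectoid spaces $V_\infty \cong V_\infty'$, hence an isomorphism of topological rings $B \cong B'$ intertwining the $G(\mathbb{Q}_p)$-actions used to define locally analytic vectors.

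Since the smallness of $V$ as a subset of $\mathcal{M}_{K_p}$ is preserved under the local isomorphism (both sides are finite compositions of rational open immersions and finite étale maps over $\mathbb{T}^d$, and the uniformization respects this structure), the hypothesis of Theorem \ref{geometric sen theory} applies to $V'$. Therefore $R^i\Psi\mathfrak{LA}(B') = 0$ for $i > 0$, and transporting through the isomorphism yields $R^i\Psi\mathfrak{LA}(B) = 0$.

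The main obstacle, though largely cosmetic, is verifying that the smallness condition on $V$ can be arranged to imply smallness of the Shimura-variety target $V'$ (after possibly shrinking both to guarantee the required étale chart to $\mathbb{T}^d$ exists on the Shimura variety side). This is handled by choosing $V$ inside a small enough neighborhood to which the local isomorphism of Theorem \ref{uniformization} applies, and then shrinking if needed so that the image inside $\aS_{K_1^p K_p^o}^{b}$ admits the required étale factorization; since both $V$ and $V'$ are identified as rigid spaces, any such chart on one side transfers to the other. Alternatively, one can run the proof of Theorem \ref{geometric sen theory} verbatim in the Rapoport-Zink setting, replacing the perfectoidness of $\aS_{K^p}$ by Theorem \ref{Rapoport prefd}, using the local Hodge-Tate filtration (\ref{local Hodge-Tate}) in place of Proposition \ref{FHT}; the same Faltings-extension argument, combined with the explicit formal expansion principle of Lemma \ref{formal expansion}, then produces coordinates $x_1,\dots,x_d$ whose differentials span the relevant cotangent directions and gives the vanishing of the Lie algebra cohomology.
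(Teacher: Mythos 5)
Your proposal is correct and follows the same approach as the paper, which simply states that the result follows from Theorem \ref{uniformization} together with Theorem \ref{geometric sen theory}, and also notes that a purely local proof along the lines of Theorem \ref{geometric sen theory} is possible. You have fleshed out both of those routes in more detail than the paper does, but the underlying argument is the same.
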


\begin{proof} This follows from Theorem \ref{uniformization} and Theorem \ref{geometric sen theory}. We can also give a purely local proof by the same argument as Theorem \ref{geometric sen theory}. \end{proof}

Let $\mathfrak{n}^{o}$ (resp. $\mathfrak{b}^o$) be the $G_{C}$-equivariant vector bundle on $\Fl_{\mathrm{HT}, \mathcal{M}}$ defined by the Lie algebra $\mathfrak{n} := \prod_{\tau \in \Psi} \mathfrak{n}_{C}$ and $\mathfrak{b} := \prod_{\tau \in \Psi} \mathfrak{b}_{C}$, where $\mathfrak{n}_{C}$ (resp. $\mathfrak{b}_{C}$) denotes the Lie subalgebra of $\prod_{\tau \in \Psi} \mathfrak{gl}_{2,C}$ consisting of the upper triangle nilpotent (resp. upper triangle) matrices. (See Definition \ref{nilpotent} for details.) Note that $\mathcal{O}_{\Fl_{\mathrm{HT}, \mathcal{M}}} = \mathcal{O}_{\Fl_{\mathrm{HT}, \mathcal{M}}}^{\Psi-\mathrm{la}}$ by 2 of Example \ref{example}. Thus we have a natural map $\mathcal{O}_{\Fl_{\mathrm{HT}, \mathcal{M}}}(U) \rightarrow \mathcal{O}_{\mathcal{M}_{\infty}}^{\Psi-\mathrm{la}}(V_{\infty})$ and this induces actions of $\mathfrak{n}^0(U)$, $\mathfrak{b}^0(U)$ and $\prod_{\tau \in \Psi} \mathfrak{gl}_2(L) \otimes_L \mathcal{O}_{\Fl_{\mathrm{HT}, \mathcal{M}}}(U)$ on $\mathcal{O}_{\mathcal{M}_{\infty}}^{\Psi-\mathrm{la}}(V_{\infty})$.

\begin{prop}\label{local Sen operator}

The action of $\mathfrak{n}^o(U)$ on $\mathcal{O}_{\mathcal{M}_{\infty}}^{\Psi-\mathrm{la}}(V_{\infty})$ is trivial.

\end{prop}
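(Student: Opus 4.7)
The plan is to follow exactly the template of the global analogue Proposition \ref{sen operator}, and the cleanest route goes through the basic uniformization. Concretely, by part 2 of Theorem \ref{uniformization}, after shrinking the quasicompact open $V \subset \mathcal{M}_{K_p}$ if necessary, one finds a sufficiently small open compact $K_1^p$ and $x \in GI(\mathbb{A}^{\infty}_{\mathbb{Q}})$ giving a $G(\mathbb{Q}_p)$-equivariant isomorphism of $V$ with a quasicompact open of $\aS_{K_1^p K_p}^b$ which lifts to a $G(\mathbb{Q}_p)$-equivariant isomorphism of $V_{\infty}$ with an affinoid perfectoid open $V_{\mathcal{S}} \subset \pi_{\mathrm{HT},\mathcal{S}}^{-1}(\Fl^b) \subset \aS_{K_1^p}$. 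The essential extra input is that this identification intertwines $\pi_{\mathrm{HT},\mathcal{M}}$ with the restriction of the global $\pi_{\mathrm{HT}}$, under a canonical $G(\mathbb{Q}_p)$-equivariant identification $\Fl_{\mathrm{HT},\mathcal{M}} \cong \Fl$ carrying $\omega_{\tau}$ to $\omega_{\tau}$. Granting this, the equivariant bundles $\mathfrak{n}^o$ on the two flag varieties correspond, and the statement is immediate from Proposition \ref{sen operator} applied to $V_{\mathcal{S}}$.

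A purely local alternative, as hinted in the excerpt, repeats the argument of \cite[Theorem 4.2.7]{PanI} directly on $\mathcal{M}_{\infty}$. The core input is a Rapoport-Zink analogue of Proposition \ref{FHT}: the Hodge-Tate filtration (\ref{local Hodge-Tate}) on $\mathcal{M}_{\infty}$ embeds into the local Faltings extension
$$0 \to \mathcal{O}_{\mathcal{M}_{\infty}}(1) \to \mathrm{gr}^1\mathcal{O}\mathbb{B}_{\mathrm{dR}}^+ \to \mathcal{O}_{\mathcal{M}_{\infty}} \otimes_{\mathcal{O}_{\mathcal{M}_{\infty}}^{\mathrm{sm}}} \Omega^{1,\mathrm{sm}}_{\mathcal{M}_{\infty}} \to 0,$$
with the quotient identified, up to sign, with the Kodaira-Spencer map coming from the universal $G$-$p$-divisible group. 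Once this compatibility is in place, the almost purity argument in the proof of Theorem \ref{geometric sen theory} transports verbatim to produce elements $x_{\tau} \in B^{\Psi-\mathrm{la}, \Gamma-\mathrm{la}}$ satisfying $(\mathrm{log}\,\gamma_{\sigma}) x_{\tau} = -\delta_{\sigma,\tau}$, where $\gamma_{\sigma}$ runs over a basis of the relevant $\mathbb{Z}_p^d$-Galois group of the perfectoid tower over a small affinoid étale chart of $V$. Trivialising $\omega_{\tau,\Fl_{\mathrm{HT},\mathcal{M}}}^{-1}$ over $U$, a generator of $\mathfrak{n}^o(U)$ is given explicitly by the matrix $\begin{pmatrix} x_{\tau} & x_{\tau}^2 \\ -1 & -x_{\tau} \end{pmatrix}$, which kills these coordinates; a Mikami-type expansion of the sort in Proposition \ref{mikami expansionII}, adapted to $\mathcal{M}_{\infty}$, then propagates the vanishing to all of $\mathcal{O}_{\mathcal{M}_{\infty}}^{\Psi-\mathrm{la}}(V_{\infty})$.

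The main obstacle is identical in the two routes: verifying the compatibility between the local Faltings extension and the Hodge-Tate filtration (\ref{local Hodge-Tate}), realised through Kodaira-Spencer. The uniformization route imports this for free from the already-proven Proposition \ref{FHT} by transporting along the matching of universal $G$-$p$-divisible groups underlying Theorem \ref{uniformization}; the direct route requires rerunning the proof of Proposition \ref{FHT} in the Rapoport-Zink setting, which is formal but demands care with the Morita equivalence and the construction of $\mathrm{gr}^1\mathcal{O}\mathbb{B}_{\mathrm{dR}}^+$ on $\mathcal{M}_{\infty}$. Since the basic uniformization is already available, I would favour the first route for brevity.
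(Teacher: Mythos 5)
Your proposal matches the paper's proof exactly: the paper deduces the proposition from Theorem \ref{uniformization} together with Proposition \ref{sen operator}, and remarks that a purely local argument (your second route) is also possible. Your preferred uniformization route is precisely the one the paper takes.
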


\begin{proof} This follows from Theorem \ref{uniformization} and Theorem \ref{sen operator}. Again, we can also give a purely local proof. \end{proof}

After shrinking $K_p$ if necessary, we may assume that $G_0 := K_p$ is sufficiently small as after Lemma \ref{tensor product}. Let $G_m := (G_0)^{p^m}$ for $m \in \mathbb{Z}_{\ge 0}$ and $V_{G_m}$ be the inverse image of $V$ in $\mathcal{M}_{G_m}$. (Note that $G_m$ is an open subgroup of $G_0$. See the discussions after Lemma \ref{tensor product}.) Let $x_{\tau} := g_{2, \tau}/g_{1, \tau} \in \mathcal{O}_{\Fl_{\mathrm{HT}, \mathcal{M}}}(U)$ and $f_{\tau}$ denote a basis of $\wedge^2V_{\tau}(-1)$ for any $\tau \in \Psi$. We can regard $f_{\tau}$ as a generator of $\wedge^2D(\mathbb{X}_b)_{\tau} \otimes \mathcal{O}_{\mathcal{M}_{\infty}}$ via $\wedge^2D(\mathbb{X}_b)_{\tau} \otimes \mathcal{O}_{\mathcal{M}_{\infty}} \cong \wedge^2V_{\tau}(-1) \otimes_L \mathcal{O}_{\mathcal{M}_{\infty}}$ coming from the above (\ref{local Hodge-Tate}).

Take an increasing sequence of positive integers $r(1) < r(2) < \cdots < r(n) < \cdots$ and take $x_{\tau, n} \in \mathcal{O}_{\mathcal{M}_{G_{G_{r(n)}}}}(V_{G_{r(n)}})$, $g_{\tau, 1,n} \in \omega_{\tau, \mathcal{M}_{G_{G_{r(n)}}}} \otimes \wedge^2D(\mathbb{X}_b)_{\tau}(1) \otimes \mathcal{O}_{\mathcal{M}_{G_{G_{r(n)}}}}(V_{G_{r(n)}})$ and $f_{\tau, n} \in \wedge^2D(\mathbb{X}_b)_{\tau} \otimes \mathcal{O}_{\mathcal{M}_{G_{r(n)}}}(V_{G_{r(n)}})$ satisfying $|| x_{\tau} - x_{\tau, n} ||, || \frac{g_{\tau, 1,n}}{g_{\tau, 1}} - 1 ||, || \frac{f_{\tau, n}}{f_{\tau}} - 1 || \le p^{-n-1}$. Moreover, after replacing $r(n)$, we may assume $|| x_{\tau} - x_{\tau, n} || = || x_{\tau} - x_{\tau, n} ||_{G_{r(n)}}, || \frac{g_{\tau, 1,n}}{g_{\tau, 1}} - 1 || = || \frac{g_{\tau, 1, n}}{g_{\tau, 1}} - 1 ||_{G_{r(n)}}$ and $|| \frac{f_{\tau, n}}{f_{\tau}} - 1 || = || \frac{f_{\tau, n}}{f_{\tau}} - 1 ||_{G_{r(n)}}$ by Lemma \ref{norm}. Then $g_{\tau, 1, n}$ is a generator of $\omega_{\tau, \mathcal{M}_{\infty}}(V_{\infty}) \otimes \wedge^2V_{\tau}$. Let $\mathcal{O}_{\mathcal{M}_{G_{r(n)}}}(V_{G_{r(n)}})\lbrace x_{\tau} - x_{\tau, n}, \mathrm{log}\frac{g_{1, \tau}}{g_{1, \tau, n}}, \mathrm{log}\frac{f_{\tau}}{f_{\tau, n}} \rbrace_{\tau} := \{ f = \sum_{(i, j, k) \in (\mathbb{Z}^{\Psi}_{\ge 0})^{3}} a_{i, j, k} \prod_{\tau \in \Psi}(\mathrm{log}\frac{g_{1, \tau}}{g_{1, \tau, n}})^{i_{\tau}} (\mathrm{log}\frac{f_{\tau}}{f_{\tau, n}})^{j_{\tau}} (x_{\tau}-x_{\tau, n})^{k_{\tau}} \in \mathcal{O}_{\mathcal{M}_{\infty}}(V_{\infty})^{G_{r(n)}-\Psi-\mathrm{an}} \mid a_{i, j, k} \in \mathcal{O}_{\mathcal{M}_{G_{r(n)}}}(V_{G_{r(n)}}) \ \mathrm{s. t.} \ \mathrm{sup}_{i,j,k}|| a_{i, j, k} ||p^{-n(\sum_{\tau}(i_{\tau} + j_{\tau} + k_{\tau}))} < \infty \}$.

This is a Banach space by the norm $|| f ||_{x_{\tau}, g_{1, \tau}, f_{\tau}} := \mathrm{sup}_{i, j, k} || a_{i, j, k} ||p^{-(n+1)(\sum_{\tau}(i_{\tau} + j_{\tau} + k_{\tau}))}$ and by this Banach space structure, the natural inclusion $\mathcal{O}_{\mathcal{M}_{G_{r(n)}}}(V_{G_{r(n)}})\lbrace x_{\tau} - x_{\tau, n}, \mathrm{log}\frac{e_{1, \tau}}{e_{1, \tau, n}}, \mathrm{log}\frac{f_{\tau}}{f_{\tau, n}} \rbrace_{\tau} \hookrightarrow \mathcal{O}_{\mathcal{M}_{\infty}}(V_{\infty})^{G_{r(n)}-\Psi-\mathrm{an}}$ is continuous.

\begin{prop} \label{local expansion formula}

There exists an integer $m$ such that for any $n \ge m$, the natural inclusion $$\mathcal{O}_{\mathcal{M}_{\infty}}(V_{\infty})^{G_{0}-\Psi-\mathrm{an}} \hookrightarrow \mathcal{O}_{\mathcal{M}_{\infty}}(V_{\infty})^{G_{r(n)}-\Psi-\mathrm{an}}$$ factors through a continuous injection $$\mathcal{O}_{\mathcal{M}_{\infty}}(V_{\infty})^{G_{0}-\Psi-\mathrm{an}} \hookrightarrow \mathcal{O}_{\mathcal{M}_{G_{r(n)}}}(V_{G_{r(n)}})\lbrace x_{\tau} - x_{\tau, n}, \mathrm{log}\frac{g_{1, \tau}}{g_{1, \tau, n}}, \mathrm{log}\frac{f_{\tau}}{f_{\tau, n}} \rbrace_{\tau}.$$
        
\end{prop}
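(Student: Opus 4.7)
The plan is to apply Lemma~\ref{formal expansion} directly to the commutative Banach algebra $B := \mathcal{O}_{\mathcal{M}_\infty}(V_\infty)$ with its continuous $G_0$-action, taking as the solvable Lie subalgebra $\mathfrak{b} \subset \mathfrak{g}_\Psi = \oplus_{\tau \in \Psi} \mathfrak{gl}_2(L)_\tau$ the upper triangular Borel, and as the distinguished triple of locally analytic vectors the elements $x_\tau,\ \log(g_{1,\tau}/g_{1,\tau,n}),\ \log(f_\tau/f_{\tau,n}) \in B^{G_0-\Psi-\mathrm{an}}$ for each $\tau \in \Psi$. This is the verbatim local analogue of the proof of Proposition~\ref{mikami expansionII}, which in turn reduces to Lemma~\ref{formal expansion}.

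The hypotheses of Lemma~\ref{formal expansion} are verified as follows. Density of $B^{\mathrm{sm}}$ in $B$ is immediate from the perfectoidness $V_\infty \sim \varprojlim_{K_p'} V_{K_p'}$ given by Theorem~\ref{Rapoport prefd}. The smooth approximations $x_{\tau,n},\ g_{1,\tau,n},\ f_{\tau,n}$ with the required norm estimates have already been fixed in the paragraph preceding the statement. The needed compatibility relations $X_i(v_j) = \delta_{ij}$, for the standard generators $u^+_\tau,\ h_\tau,\ z_\tau$ of $\mathfrak{b}_\tau$, are a direct Lie-algebra computation: $x_\tau = g_{2,\tau}/g_{1,\tau}$, the logarithm of the ratio of generators of $\omega_{\tau,\mathcal{M}_\infty}\otimes \wedge^2 D(\mathbb{X}_b)_\tau$, and the logarithm of the ratio of generators of $\wedge^2 D(\mathbb{X}_b)_\tau$ transform under $\mathfrak{b}_\tau$ by the same formulas as in the modular-curve case (\cite[proof of Theorem~4.3.9]{PanI}), because these formulas depend only on the behavior of the Hodge-Tate and Grothendieck-Messing filtrations~(\ref{local Hodge-Tate}) and~(\ref{GMfil}) under $\mathrm{GL}_2$, which is formally identical to the global situation studied in \S 3.4.

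The only non-formal input is the identity
\begin{equation*}
\mathcal{O}_{\mathcal{M}_\infty}(V_\infty)^{G_m-\Psi-\mathrm{an},\,\mathfrak{b}} \;=\; \mathcal{O}_{\mathcal{M}_{G_m}}(V_{G_m}),
\end{equation*}
which is implicit in the invocation of Lemma~\ref{formal expansion}. This follows from Proposition~\ref{local Sen operator}: since $U$ was chosen sufficiently small, the elements $\begin{pmatrix} x_\tau & x_\tau^2 \\ -1 & -x_\tau \end{pmatrix}_\tau$ exhibit $\mathfrak{n}^o(U)$ and $\mathfrak{b}$ as generating $\prod_{\tau \in \Psi} \mathfrak{gl}_2(L)_\tau \otimes_L \mathcal{O}_{\Fl_{\mathrm{HT},\mathcal{M}}}(U)$, so any $\mathfrak{b}$-invariant $G_m$-$\Psi$-analytic vector is annihilated by all of $\mathfrak{g}_\Psi$ and is therefore $G_m$-smooth, i.e.\ an element of $\mathcal{O}_{\mathcal{M}_{G_m}}(V_{G_m})$.

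With these inputs, Lemma~\ref{formal expansion} delivers, for $n$ larger than a constant $m$ depending on the operator norms of $u^+_\tau,\ h_\tau,\ z_\tau$ on $B^{G_0-\Psi-\mathrm{an}}$ (controlled by Proposition~\ref{continuity}), both the uniqueness of the expansion and the asserted continuous injection. The main obstacle, such as it is, amounts only to the bookkeeping of extracting the three families of expansion variables simultaneously for each $\tau \in \Psi$ — one iterates the one-variable expansion of Lemma~\ref{formal expansion} through the filtration $0 = \mathfrak{b}_0 \subset \mathfrak{b}_1 \subset \cdots \subset \mathfrak{b}$ in any chosen order, exactly as in the global case. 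Alternatively, one could appeal to Theorem~\ref{uniformization}(2) to identify $V_{K_p}$ locally with an open $V'_{K_p}$ of the unitary Shimura variety attached to an auxiliary group $GI'$ and invoke Proposition~\ref{mikami expansionII} verbatim.
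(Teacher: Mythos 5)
Your proposal is correct and follows exactly the route the paper takes: the paper's own proof is the one-line statement ``The same proof as Proposition \ref{mikami expansionII} works by Proposition \ref{local Sen operator},'' which is precisely your reduction to Lemma~\ref{formal expansion} via the identity $B^{G_m\text{-}\Psi\text{-an},\,\mathfrak{b}} = B^{G_m}$ coming from Proposition~\ref{local Sen operator}. Your alternative of invoking Theorem~\ref{uniformization}(2) to transport Proposition~\ref{mikami expansionII} verbatim is also consistent with how the author handles the preceding local statements in this subsection.
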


\begin{proof} The same proof as Proposition \ref{mikami expansionII} works by Proposition \ref{local Sen operator}. \end{proof}

Let $\mathfrak{h} = \prod_{\tau \in \Psi} \mathfrak{h}_{\tau}$ be the Lie subalgebra of $\prod_{\tau \in \Psi} \mathfrak{gl}_2(C)$ consisting of diagonal matrices. Then we have a canonical isomorphism $\mathfrak{h} \cong H^0(\Fl_{\mathrm{HT}, \mathcal{M}}, \mathfrak{b}^0/\mathfrak{n}^0)$ by using a filtration $0 \rightarrow \omega_{\tau, \Fl_{\mathrm{HT}, \mathcal{M}}}^{-1} \rightarrow V_{\tau} \otimes \mathcal{O}_{\Fl_{\mathrm{HT}, \mathcal{M}}} \rightarrow \omega_{\tau, \Fl_{\mathrm{HT}, \mathcal{M}}} \otimes \wedge^2V_{\tau} \rightarrow 0$ and this acts on $\mathcal{O}_{\mathcal{M}_{\infty}}^{\Psi-\mathrm{la}}$ by Proposition \ref{local Sen operator}. We write this action for $\theta_{\mathfrak{h}, \mathcal{M}}$. As in the global situation, every element $(a_{\tau}, b_{\tau})_{\tau} \in (\mathbb{Z}^2)^{\Psi}$ gives a character $(a_{\tau}, b_{\tau})_{\tau} : \mathfrak{h} \rightarrow C$. Some of the following results hold in the general weight case, but we only need the trivial weight case.

\begin{prop} \label{local expansion formula 2}

There exists an integer $m$ such that for any $n \ge m$, the natural inclusion $$\mathcal{O}_{\mathcal{M}_{\infty}}(V_{\infty})^{G_{0}-\Psi-\mathrm{an}, (0, 0)_{\tau}} \hookrightarrow \mathcal{O}_{\mathcal{M}_{\infty}}(V_{\infty})^{G_{r(n)}-\Psi-\mathrm{an}, (0, 0)_{\tau}}$$ factors through a continuous injection $$\mathcal{O}_{\mathcal{M}_{\infty}}(V_{\infty})^{G_{0}-\Psi-\mathrm{an}, (0, 0)_{\tau}} \hookrightarrow \mathcal{O}_{\mathcal{M}_{G_{r(n)}}}(V_{G_{r(n)}})\lbrace x_{\tau} - x_{\tau, n} \rbrace_{\tau}.$$ Here, we use the same notation as Proposition \ref{mikami expansionIII}.

\end{prop}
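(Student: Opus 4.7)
The plan is to deduce Proposition \ref{local expansion formula 2} from Proposition \ref{local expansion formula} by extracting the $(0,0)_\tau$-isotypic component under the horizontal $\mathfrak{h}$-action $\theta_{\mathfrak{h},\mathcal{M}}$, in exact parallel with how Corollary \ref{citation} was deduced from Proposition \ref{mikami expansionII} in the global setting.

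First, I would invoke Proposition \ref{local expansion formula} to obtain, for $n$ sufficiently large, a continuous injection expressing every $G_0$-$\Psi$-analytic function $f \in \mathcal{O}_{\mathcal{M}_\infty}(V_\infty)^{G_0-\Psi-\mathrm{an}}$ uniquely as a convergent series
$$f = \sum_{i,j,k \in (\mathbb{Z}_{\ge 0})^\Psi} a_{i,j,k} \prod_{\tau \in \Psi} \bigl(\log(g_{1,\tau}/g_{1,\tau,n})\bigr)^{i_\tau} \bigl(\log(f_\tau/f_{\tau,n})\bigr)^{j_\tau} (x_\tau - x_{\tau,n})^{k_\tau},$$
with smooth coefficients $a_{i,j,k} \in \mathcal{O}_{\mathcal{M}_{G_{r(n)}}}(V_{G_{r(n)}})$ satisfying the growth condition $\sup_{i,j,k} \|a_{i,j,k}\| p^{-n(\sum_\tau (i_\tau + j_\tau + k_\tau))} < \infty$.

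Next, I would compute the action of $\theta_{\mathfrak{h},\mathcal{M}}$ on each generator. The smooth coefficients $a_{i,j,k}$ are annihilated by $\theta_{\mathfrak{h},\mathcal{M}}$, and the coordinate functions $x_\tau \in \mathcal{O}_{\Fl_{\mathrm{HT},\mathcal{M}}}$ are also killed, since $\mathfrak{h}$ acts trivially on $\mathcal{O}_{\Fl_{\mathrm{HT},\mathcal{M}}}$ (this is the local analogue of the remark in \S 3.5 that $\mathfrak{h}$ acts trivially on $\mathcal{O}_{\Fl}$). On the other hand, by the same direct computation as in the discussion after Theorem \ref{mikami expansion} (or \cite[proof of Theorem 4.3.9]{PanI}), the two diagonal generators of $\mathfrak{h}_\tau$ act on $\log(g_{1,\tau}/g_{1,\tau,n})$ and $\log(f_\tau/f_{\tau,n})$ by nonzero constants, because $g_{1,\tau}$ generates the rank one quotient in the local Hodge--Tate filtration (\ref{local Hodge-Tate}) and $f_\tau$ trivializes $\wedge^2 D(\mathbb{X}_b)_\tau(-1)$. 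The local Hodge--Tate filtration on $\mathcal{M}_\infty$ has the same formal shape as the global one in Proposition \ref{Hodge de Rham}, so these weight computations transfer verbatim.

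Finally, I impose the weight condition. If $f$ lies in the $(0,0)_\tau$-isotypic part, then $\theta_{\mathfrak{h},\mathcal{M}}(X) f = 0$ for every $X \in \mathfrak{h}$; since the action on a monomial $\prod_\tau (\log g)^{i_\tau}(\log f)^{j_\tau}(x - x_n)^{k_\tau}$ is a first-order differential operator in the two families of logarithms and zero on the remaining factors, the uniqueness statement in Proposition \ref{local expansion formula} forces $a_{i,j,k} = 0$ whenever some $i_\tau > 0$ or $j_\tau > 0$. Only the purely polynomial terms in $x_\tau - x_{\tau,n}$ survive, yielding the desired factorization through $\mathcal{O}_{\mathcal{M}_{G_{r(n)}}}(V_{G_{r(n)}})\{x_\tau - x_{\tau,n}\}_\tau$, and continuity is automatic from the Banach space structure inherited from Proposition \ref{local expansion formula}. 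The main bookkeeping step is tracking the precise $\mathfrak{h}_\tau$-weights of the two families of logarithms so that the $(0,0)_\tau$ isotypic part cleanly selects $i = j = 0$; given the identical local/global structure of the Hodge--Tate filtration, this reduces to the computation already carried out in \cite{PanI}.
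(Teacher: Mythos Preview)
Your proposal is correct and follows essentially the same approach as the paper, which simply states ``This follows from Proposition \ref{local expansion formula}.'' You have spelled out the natural mechanism behind that one-line reference: applying the horizontal $\mathfrak{h}$-action to the expansion of Proposition \ref{local expansion formula} and using uniqueness of coefficients to kill the logarithmic monomials, exactly as Proposition \ref{mikami expansionIII} and Corollary \ref{citation} are deduced from Corollary \ref{nontrivial mikami} in the global setting.
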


\begin{proof} This follows from Proposition \ref{local expansion formula}. \end{proof}

The following results are local variants of results in {\S} 4.1 and 4.2. In the following, we also use the same notation $\mathcal{F}$ for the inverse image via $\pi_{\mathrm{HT}, \mathcal{M}}$ (resp. $\pi_{\mathrm{GM}, \mathcal{M}}$) of a sheaf $\mathcal{F}$ on $\Fl_{\mathrm{HT}, \mathcal{M}}$ (resp. $\Fl_{\mathrm{GM}, \mathcal{M}}$).

\begin{prop}\label{local de Rham complex}

There exists a unique continuous $G(\mathbb{Q}_p)$-equivariant and $\mathcal{O}_{\Fl_{\mathrm{HT}, \mathcal{M}}}$-linear derivation $d^{\Psi-\mathrm{la}}_{\mathcal{M}} : \mathcal{O}_{\mathcal{M}_{\infty}}^{\Psi-\mathrm{la}, (0, 0)_{\tau}} \rightarrow \mathcal{O}_{\mathcal{M}_{\infty}}^{\Psi-\mathrm{la}, (0, 0)_{\tau}} \otimes_{\mathcal{O}_{\mathcal{M}_{\infty}}^{\mathrm{sm}}} \Omega_{\mathcal{M}_{\infty}}^{1, \mathrm{sm}}$ such that $d^{\Psi-\mathrm{la}}|_{\mathcal{O}_{\mathcal{M}_{\infty}}^{\mathrm{sm}}}$ is equal to the usual derivation of finite levels. 

\end{prop}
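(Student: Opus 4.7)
The plan is to imitate the proof of Lemma \ref{locally analytic extension of derivation} verbatim, with Propositions \ref{local expansion formula} and \ref{local expansion formula 2} playing the roles of Proposition \ref{mikami expansionII} and Corollary \ref{citation}. Uniqueness is immediate: any such $d^{\Psi-\mathrm{la}}_{\mathcal{M}}$ is determined by its values on functions of the form $c$ for $c \in \mathcal{O}_{\mathcal{M}_{\infty}}^{\mathrm{sm}}$ together with $\mathcal{O}_{\Fl_{\mathrm{HT}, \mathcal{M}}}$-linearity, and the explicit expansion from Proposition \ref{local expansion formula 2} shows these span a dense subspace of $\mathcal{O}_{\mathcal{M}_{\infty}}^{\Psi-\mathrm{la}, (0,0)_\tau}$.

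For existence, fix a sufficiently small affinoid open $U \subset \Fl_{\mathrm{HT}, \mathcal{M}}$ as in the setup preceding Proposition \ref{local expansion formula}, together with the coordinates $x_\tau = g_{2,\tau}/g_{1,\tau}$, and let $V$ be a small affinoid of $\mathcal{M}_{K_p}$ with $V \subset \pi_{\mathrm{HT}, \mathcal{M}}^{-1}(U)$ and inverse image $V_\infty$ in $\mathcal{M}_{\infty}$. I define a continuous map on the Banach space
\[
\mathcal{O}_{\mathcal{M}_{G_{r(n)}}}(V_{G_{r(n)}})\lbrace x_\tau - x_{\tau,n} \rbrace_\tau
\]
by the same formula as in Lemma \ref{locally analytic extension of derivation}, namely sending
$\sum_i c_i \prod_\tau (x_\tau - x_{\tau,n})^{i_\tau}$ to
\[
\sum_i \Bigl( d^{\mathrm{sm}}(c_i)\prod_\tau(x_\tau-x_{\tau,n})^{i_\tau} \;-\; \sum_\sigma i_\sigma\, d^{\mathrm{sm}}(x_{\sigma,n})\, c_i (x_\sigma-x_{\sigma,n})^{i_\sigma-1}\prod_{\tau\neq\sigma}(x_\tau-x_{\tau,n})^{i_\tau}\Bigr).
\]
This is continuous, lands in $\mathcal{O}_{\mathcal{M}_\infty}^{\Psi-\mathrm{la},(0,0)_\tau}(V_\infty) \otimes_{\mathcal{O}_{\mathcal{M}_\infty}^{\mathrm{sm}}(V_\infty)} \Omega^{1,\mathrm{sm}}_{\mathcal{M}_\infty}(V_\infty)$, and by Proposition \ref{local expansion formula 2} it descends through the continuous inclusion of Proposition \ref{local expansion formula 2} to a continuous derivation on $\mathcal{O}_{\mathcal{M}_\infty}^{\Psi-\mathrm{la},(0,0)_\tau}(U)$. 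The key point (as in Lemma \ref{locally analytic extension of derivation}) is that this map is uniquely characterized by sending $\sum_i c_i \prod_\tau x_\tau^{i_\tau}$ to $\sum_i d^{\mathrm{sm}}(c_i)\prod_\tau x_\tau^{i_\tau}$ on the dense subspace of locally polynomial functions with smooth coefficients, and this characterization makes it both $\mathcal{O}_{\Fl_{\mathrm{HT},\mathcal{M}}}$-linear (since $x_\tau \in \mathcal{O}_{\Fl_{\mathrm{HT},\mathcal{M}}}(U)$ is treated as a constant) and an extension of $d^{\mathrm{sm}}$.

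The main obstacle is then to assemble these local pieces into a global $G(\mathbb{Q}_p)$-equivariant derivation on $\Fl_{\mathrm{HT}, \mathcal{M}}$, since unlike in Lemma \ref{locally analytic extension of derivation} we cannot invoke a single distinguished affinoid like $U_1$ with a single transitive action --- see the remark following Lemma \ref{division unitary}. The way around this is the same density argument: for any $g \in G(\mathbb{Q}_p)$ and any sufficiently small $U, Ug^{-1}$ admitting the bases as above, both $g^\ast \circ d^{\Psi-\mathrm{la}}_{\mathcal{M},U}$ and $d^{\Psi-\mathrm{la}}_{\mathcal{M},Ug^{-1}} \circ g^\ast$ are continuous maps which agree with the (a priori $G(\mathbb{Q}_p)$-equivariant) smooth derivation $d^{\mathrm{sm}}$ on the dense subspace of locally polynomial functions with smooth coefficients, so they must coincide. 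This compatibility glues the locally defined derivations into the desired $d^{\Psi-\mathrm{la}}_{\mathcal{M}}$; basic affinoid opens of $\Fl_{\mathrm{HT}, \mathcal{M}}$ of the required form, possibly after translating by elements of $G(\mathbb{Q}_p)$, cover $\Fl_{\mathrm{HT}, \mathcal{M}}$ because $\pi_{\mathrm{HT}, \mathcal{M}}$ is $G(\mathbb{Q}_p)$-equivariant and surjective onto a $G(\mathbb{Q}_p)$-stable locus, so this does indeed produce a derivation on all of $\Fl_{\mathrm{HT}, \mathcal{M}}$ with the required properties.
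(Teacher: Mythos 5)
Your proposal is correct and takes essentially the same approach as the paper, which disposes of the statement in one line by saying "the same proof as Lemma \ref{locally analytic extension of derivation} works in our local situation by using Proposition \ref{local expansion formula 2}." You fill in the details faithfully: work locally on an affinoid $U$ admitting a basis $g_{1,\tau}, g_{2,\tau}$, define the derivation on the Banach-space expansion via the same formula, invoke the dense subspace of locally polynomial functions with smooth coefficients to characterize the map intrinsically (as the $\mathcal{O}_{\Fl_{\mathrm{HT},\mathcal{M}}}$-linear extension of $d^{\mathrm{sm}}$), and then glue $G(\mathbb{Q}_p)$-equivariantly. One small imprecision worth flagging: your reference to the remark following Lemma \ref{division unitary} as the reason a distinguished $U_1$ is unavailable is slightly off --- that remark concerns the dual group $\check{G} = J_b(\mathbb{Q}_p)$, which may have $D^\times$ factors, whereas here the acting group is $G(\mathbb{Q}_p)$, which is a product of $\mathrm{GL}_2$'s since $p$ does not lie below $S(B)$; the bases $g_{1,\tau}, g_{2,\tau}$ are used instead of the $e_{1,\tau}, e_{2,\tau}$ mainly to give a uniform treatment that also applies to $\check{\mathcal{M}}$ later (cf.\ Proposition \ref{local anti}). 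In any case, the density argument you substitute for the use of a distinguished chart is valid, and it is also what underlies the basis-independence check needed in Proposition \ref{local anti}, so the proof goes through.
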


\begin{proof} The same proof as Lemma \ref{locally analytic extension of derivation} works in our local situation by using Proposition \ref{local expansion formula 2}.  \end{proof}

As after Lemma \ref{locally analytic extension of derivation}, by Proposition \ref{local de Rham complex}, we obtain the locally analytic extension $GDR^{\Psi-\mathrm{la}}_{0, \mathcal{M}_{\infty}} := (\mathcal{O}^{\Psi-\mathrm{la}, (0, 0)_{\tau}}_{\mathcal{M}_{\infty}} \otimes_{\mathcal{O}_{\mathcal{M}_{\infty}}^{\mathrm{sm}}} \Omega_{\mathcal{M}_{\infty}}^{\bullet, \mathrm{sm}}, \nabla^{\Psi-\mathrm{la}}_{\bullet})$ of the usual de Rham complex $GDR_{0, \mathcal{M}_{\infty}}^{\mathrm{sm}} := (\Omega_{\mathcal{M}_{\infty}}^{\bullet, \mathrm{sm}}, \nabla^{\mathrm{sm}}_{\bullet})$ of finite levels by the formula $\nabla^{\Psi-\mathrm{la}}_n : \mathcal{O}^{\Psi-\mathrm{la}, (0, 0)_{\tau}}_{\mathcal{M}_{\infty}} \otimes_{\mathcal{O}_{\mathcal{M}_{\infty}}^{\mathrm{sm}}} \Omega_{\mathcal{M}_{\infty}}^{n, \mathrm{sm}} \rightarrow \mathcal{O}^{\Psi-\mathrm{la}, (0, 0)_{\tau}}_{\mathcal{M}_{\infty}} \otimes_{\mathcal{O}_{\mathcal{M}_{\infty}}^{\mathrm{sm}}} \Omega_{\mathcal{M}_{\infty}}^{n+1, \mathrm{sm}}, \ f \otimes w \mapsto f \otimes d(w) + (-1)^nd^{\Psi-\mathrm{la}}_{\mathcal{M}}(f) \wedge w$. This is actually shown to be a complex by considering a dense subspace as in the proof of Lemma \ref{locally analytic extension of derivation}.

By using the Kodaira-Spencer isomorphism $\Omega_{\mathcal{M}_{\infty}}^{1, \mathrm{sm}} \cong \oplus_{\tau \in \Psi} (\omega_{\tau, \mathcal{M}_{\infty}}^{2, \mathrm{sm}} \otimes \wedge^2D(\mathbb{X}_b)_{\tau})$ and the identification $\omega_{\tau, \mathcal{M}_{\infty}}^{2, \mathrm{sm}} \otimes \wedge^2D(\mathbb{X}_b)_{\tau} \cong \mathcal{O}_{\mathcal{M}_{\infty}}^{\mathrm{sm}} \otimes_{\mathcal{O}_{\Fl_{\mathrm{GM}, \mathcal{M}}}} \omega_{\tau, \Fl_{\mathrm{GM}, \mathcal{M}}}^{-2} \otimes (\wedge^2 D(\mathbb{X}_b)_{\tau})^{-1}$ coming from (\ref{GMfil}), the complex $GDR^{\Psi-\mathrm{la}}_{0, \mathcal{M}_{\infty}}$ can be written as follows.

\begin{align*} GDR^{\Psi-\mathrm{la}}_{0, \mathcal{M}_{\infty}} : \mathcal{O}_{\mathcal{M}_{\infty}}^{\Psi-\mathrm{la}, (0, 0)} \rightarrow \\ 
\oplus_{\tau \in \Psi} \mathcal{O}_{\mathcal{M}_{\infty}}^{\Psi-\mathrm{la}, (0, 0)_{\tau}} \otimes_{\mathcal{O}_{\Fl_{\mathrm{GM}, \mathcal{M}}}} \omega_{\tau, \Fl_{\mathrm{GM}, \mathcal{M}}}^{-2} \otimes (\wedge^2 D(\mathbb{X}_b)_{\tau})^{-1} \rightarrow \\
 \cdots \rightarrow \\ 
 \mathcal{O}_{\mathcal{M}_{\infty}}^{\Psi-\mathrm{la}, (0, 0)_{\tau}} \otimes_{\mathcal{O}_{\Fl_{\mathrm{GM}, \mathcal{M}}}} (\otimes_{\tau \in \Psi} (\omega_{\tau, \Fl_{\mathrm{GM}, \mathcal{M}}}^{-2} \otimes (\wedge^2 D(\mathbb{X}_b)_{\tau})^{-1})) \end{align*}

\begin{prop}\label{local anti}

For any $I \subset \Psi$ and $\sigma \in \Psi \setminus I$, there exists a unique continuous, $G(\mathbb{Q}_p)$-equivariant, $\mathcal{O}_{\mathcal{M}_{\infty}}^{\mathrm{sm}}$-linear map $\overline{d}_{\sigma, I} : \mathcal{O}_{\mathcal{M}_{\infty}}^{\Psi-\mathrm{la}, (0, 0)_{\tau}} \otimes (\otimes_{\tau \in I} (\omega_{\tau, \Fl_{\mathrm{HT}, \mathcal{M}}}^{-2} \otimes (\wedge^2V_{\tau})^{-1})) \rightarrow \mathcal{O}_{\mathcal{M}_{\infty}}^{\Psi-\mathrm{la}, (0, 0)_{\tau}} \otimes (\otimes_{\tau \in I} (\omega_{\tau, \Fl_{\mathrm{HT}, \mathcal{M}}}^{-2} \otimes (\wedge^2V_{\tau})^{-1})) \otimes (\omega_{\sigma, \Fl_{\mathrm{HT}, \mathcal{M}}}^{-2} \otimes (\wedge^2V_{\sigma})^{-1})$ such that for any $U$, $V$ and $x_{\tau}$ as before Proposition \ref{local expansion formula}, we have the following formula : $\overline{d}_{\sigma, I}(\sum_{i \in (\mathbb{Z}_{\ge 0})^{\Psi}} a_{i} \prod_{\tau \in \Psi}(x_{\tau} - x_{\tau, n})^{i_{\tau}} \otimes (\otimes_{\tau \in I} dx_{\tau})) = (\sum_{i \in (\mathbb{Z}_{\ge 0})^{\Psi}} i_{\sigma}a_{i} (x_{\sigma} - x_{\sigma, n})^{i_{\sigma} - 1}\prod_{\tau \neq \sigma}(x_{\tau} - x_{\tau, n})^{i_{\tau}}) \otimes (\otimes_{\tau \in I} dx_{\tau}) \otimes dx_{\sigma}$. (Here, we use the explicit description Proposition \ref{local expansion formula 2}. Note that by the same proof as Proposition \ref{KS}, we have $\Omega_{\sigma, \Fl_{\mathrm{HT}, \mathcal{M}}}^1 \cong \omega_{\sigma, \Fl_{\mathrm{HT}, \mathcal{M}}}^{-2} \otimes (\wedge^2 V_{\sigma})^{-1}$ for the pullback $\Omega_{\sigma, \Fl_{\mathrm{HT}, \mathcal{M}}}^1$ of the sheaf of differential forms on $\mathbb{P}^1_{C}$ via the projection map $\Fl_{\mathrm{HT}, \mathcal{M}} = \prod_{\tau \in \Psi} \mathbb{P}^1_{C} \rightarrow \mathbb{P}^1_{C}$ to the $\sigma$-component.)

\end{prop}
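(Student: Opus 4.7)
The plan is to mirror the argument of Lemma \ref{extension anti} in the present local setting, with Proposition \ref{local expansion formula 2} playing the role that Corollary \ref{citation} played in the global construction. First, the uniqueness is immediate: the prescribed formula determines $\overline{d}_{\sigma,I}$ on the dense subspace of polynomials in the $(x_\tau - x_{\tau,n})$ with smooth coefficients (namely on $\mathcal{O}_{\mathcal{M}_{G_{r(n)}}}(V_{G_{r(n)}})[x_\tau - x_{\tau,n}]_\tau \otimes (\otimes_{\tau \in I} dx_\tau)$), and any $\mathcal{O}_{\mathcal{M}_\infty}^{\mathrm{sm}}$-linear, continuous, $G(\mathbb{Q}_p)$-equivariant extension is then forced.

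For existence, I would first fix a sufficiently small $U$ and $V$ as in the statement and define $\overline{d}_{\sigma,I,U}$ on $V_\infty$ by the prescribed formula. The key point is that, using the description of Proposition \ref{local expansion formula 2}, the right-hand side
\[
\sum_{i} i_\sigma\, a_i\, (x_\sigma - x_{\sigma,n})^{i_\sigma-1} \prod_{\tau \neq \sigma}(x_\tau - x_{\tau,n})^{i_\tau} \otimes (\otimes_{\tau \in I} dx_\tau) \otimes dx_\sigma
\]
still satisfies the growth condition $\sup_i \|i_\sigma a_i\| p^{-n(\sum_\tau i_\tau - 1)} < \infty$, so it lies in the same Banach space. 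This shows $\overline{d}_{\sigma,I,U}$ is a continuous $\mathcal{O}_{\mathcal{M}_\infty}^{\mathrm{sm}}$-linear map on $V_\infty$, and uniqueness across different choices of the auxiliary data $x_{\tau,n}$ follows from the dense-subspace argument above (the formula on polynomials is independent of the approximations used).

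The main obstacle, and the part that requires the most care, is patching these local definitions together to obtain a globally defined $G(\mathbb{Q}_p)$-equivariant map on $\Fl_{\mathrm{HT},\mathcal{M}}$. As pointed out in the Remark following Lemma \ref{division unitary}, the analogue of Lemma \ref{open unit ball} is not directly available here because $J_b(\mathbb{Q}_p)$ may contain factors of the form $D^\times$ for a division algebra; instead, one uses the global bases $g_{1,\tau}, g_{2,\tau}$ chosen before Proposition \ref{local expansion formula} to define coordinates $x_\tau = g_{2,\tau}/g_{1,\tau}$ on the various charts. The crucial compatibility to verify is that for any $g \in G(\mathbb{Q}_p)$ with $Ug$ also an admissible chart, one has $\overline{d}_{\sigma,I,U} \circ g^* = g^* \circ \overline{d}_{\sigma,I,Ug}$. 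As in the proof of Lemma \ref{extension anti}, this reduces by $\mathcal{O}_{\mathcal{M}_\infty}^{\mathrm{sm}}$-linearity and the tensor structure in $I$ to checking it on the generator $x_\tau$: writing $g_w = \begin{pmatrix} a & b \\ c & d \end{pmatrix}$ for the component of $g$ at the place $w$ with $\sigma \in \mathrm{Hom}_{\mathbb{Q}_p}(F_w,L)$, both sides equal $\frac{\det g_w}{(c x_\tau + a)^2} \otimes dx_\tau$ by a direct computation.

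Finally, I would assemble the local pieces: the $G(\mathbb{Q}_p)$-compatibility just verified allows one to transport $\overline{d}_{\sigma,I,U}$ across the $G(\mathbb{Q}_p)$-orbit of suitable $U$'s, and standard sheafification then produces the desired global map $\overline{d}_{\sigma,I}$. The resulting map is continuous, $\mathcal{O}_{\mathcal{M}_\infty}^{\mathrm{sm}}$-linear, and $G(\mathbb{Q}_p)$-equivariant by construction, and satisfies the formula by definition. Uniqueness with the prescribed formula was already established in the first step, so the proof is complete.
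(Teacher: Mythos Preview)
Your proof is correct and essentially follows the paper's approach, reducing existence to the same M\"obius-transformation calculation as in Lemma \ref{extension anti}. The one difference worth noting is that the paper phrases the gluing step as \emph{independence of the choice of basis} $g_{1,\tau}, g_{2,\tau}$ rather than as $G(\mathbb{Q}_p)$-equivariance; these reduce to the same computation, but basis independence handles overlapping charts directly without needing $G(\mathbb{Q}_p)$-translates of a fixed chart to cover, and so works uniformly for the dual space $\check{\mathcal{M}}$ as well. Your worry about Lemma \ref{open unit ball} failing is in fact about $\check{\mathcal{M}}$ (where the acting group $J_b(\mathbb{Q}_p)$ may have division-algebra factors); for $\mathcal{M}$ itself the acting group is $G(\mathbb{Q}_p)$, which has $\mathrm{GL}_2$ factors, so translates do cover and your equivariance argument goes through as written.
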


\begin{proof} The uniqueness is clear. For the existence, it suffices to prove that the map defined by the formula in the statement of this lemma is independent of the choice of the basis $g_{1, \tau}$ and $g_{2, \tau}$. This is shown by the same calculation as Lemma \ref{extension anti}. \end{proof}

By using Proposition \ref{local anti}, we obtain the complex $\overline{GDR}^{\Psi-\mathrm{la}}_{0, \mathcal{M}_{\infty}}$ having the following form as after Lemma \ref{extension anti}. (Note that we now consider the trivial weight case.)

\begin{align*}  \overline{GDR}^{\Psi-\mathrm{la}}_{0, \mathcal{M}_{\infty}} : \mathcal{O}_{\mathcal{M}_{\infty}}^{\Psi-\mathrm{la}, (0, 0)} \rightarrow \\ 
\oplus_{\tau \in \Psi} \mathcal{O}_{\mathcal{M}_{\infty}}^{\Psi-\mathrm{la}, (0, 0)_{\tau}} \otimes_{\mathcal{O}_{\Fl_{\mathrm{HT}, \mathcal{M}}}} \omega_{\tau, \Fl_{\mathrm{HT}, \mathcal{M}}}^{-2} \otimes (\wedge^2 V_{\tau})^{-1} \rightarrow \\
 \cdots \rightarrow \\ 
\mathcal{O}_{\mathcal{M}_{\infty}}^{\Psi-\mathrm{la}, (0, 0)_{\tau}} \otimes_{\mathcal{O}_{\Fl_{\mathrm{HT}, \mathcal{M}}}} (\otimes_{\tau \in \Psi} (\omega_{\tau, \Fl_{\mathrm{HT}, \mathcal{M}}}^{-2} \otimes (\wedge^2 V_{\tau})^{-1})) \end{align*}

As in the global case, we have the following.

\begin{prop}\label{anti trivial}

$H^i(\overline{GDR}^{\Psi-\mathrm{la}}_{0, \mathcal{M}_{\infty}}) = 0$ for any $i > 0$ and $H^0(\overline{GDR}^{\Psi-\mathrm{la}}_{0, \mathcal{M}_{\infty}}) = \mathcal{O}_{\mathcal{M}_{\infty}}^{\mathrm{sm}}$.

\end{prop}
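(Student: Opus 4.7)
The plan is to mirror the proof of the global counterpart Proposition~\ref{anti}, but working directly on $\mathcal{M}_\infty$ using the local expansion formula established in Proposition~\ref{local expansion formula 2} instead of the global one in Corollary~\ref{citation}. Since the statement concerns the cohomology sheaves of the complex $\overline{GDR}^{\Psi-\mathrm{la}}_{0,\mathcal{M}_{\infty}}$, it suffices to check vanishing (and the identification of $H^0$) on a basis of affinoid perfectoid opens $V_\infty \subset \mathcal{M}_\infty$ with $V_\infty \subset \pi_{\mathrm{HT},\mathcal{M}}^{-1}(U)$ for $U$ as before Proposition~\ref{local expansion formula}.

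First, I would fix such $U$ and $V$ together with the coordinates $x_\tau$ and the approximations $x_{\tau,n}$, and for each $n \ge m$ (with $m$ as in Proposition~\ref{local expansion formula 2}) expand any section $f \in \mathcal{O}_{\mathcal{M}_\infty}^{\Psi-\mathrm{la},(0,0)_\tau}(V_\infty)^{G_0-\Psi-\mathrm{an}}$ as
\[
f \;=\; \sum_{i \in \mathbb{Z}_{\ge 0}^{\Psi}} a_i \prod_{\tau \in \Psi}(x_\tau - x_{\tau,n})^{i_\tau},
\qquad a_i \in \mathcal{O}_{\mathcal{M}_{G_{r(n)}}}(V_{G_{r(n)}}),
\]
with the growth condition $\sup_i \|a_i\|p^{-n(\sum_\tau i_\tau)} < \infty$. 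Under this identification, Proposition~\ref{local anti} shows that $\overline{d}_{\sigma,I}$ acts termwise by the formal partial derivative $\partial/\partial x_\sigma$ (up to sign conventions combined into the differentials of $\overline{GDR}_{0,\mathcal{M}_\infty}^{\Psi-\mathrm{la}}$), with coefficients in $\mathcal{O}_{\mathcal{M}_{G_{r(n)}}}(V_{G_{r(n)}})$ treated as scalars.

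The second step is then a Poincar\'e-lemma calculation on the formal power series ring in the variables $(x_\tau - x_{\tau,n})_{\tau \in \Psi}$ over the Banach algebra $\mathcal{O}_{\mathcal{M}_{G_{r(n)}}}(V_{G_{r(n)}})$, equipped with the above growth condition: the Koszul complex associated to the commuting derivations $\partial/\partial x_\sigma$ is acyclic in positive degrees, and a primitive for a given form with the prescribed growth on its coefficients can be produced by dividing each monomial by the appropriate exponent, which only improves the norm. Passing to the colimit over $n$ via the continuous injections supplied by Proposition~\ref{local expansion formula 2} and then sheafifying over $U$ and $V$ yields $\mathcal{H}^i(\overline{GDR}^{\Psi-\mathrm{la}}_{0,\mathcal{M}_\infty}) = 0$ for $i>0$. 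For $i=0$, the kernel of the first differential consists of elements independent of all $x_\tau$, i.e.\ sections pulled back from $\mathcal{O}_{\mathcal{M}_\infty}^{\mathrm{sm}}$, giving the desired identification.

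The only genuinely delicate point, and the one that will require care rather than calculation, is checking that the Poincar\'e-lemma argument is compatible with the growth bounds on the expansion coefficients uniformly in $n$, so that the primitive produced at level $G_{r(n)}$ indeed lies in $\mathcal{O}_{\mathcal{M}_\infty}^{\Psi-\mathrm{la},(0,0)_\tau}(V_\infty)$ and not merely in some weaker completion. This is analogous to the control used in the proof of Lemma~\ref{formal expansion} and in the construction of $d^{\Psi-\mathrm{la}}_{\mathcal{M}}$ in Proposition~\ref{local de Rham complex}, and should follow from the fact that antidifferentiation $\sum a_i x^i \mapsto \sum (a_i/(i+1)) x^{i+1}$ does not worsen the norm $\sup_i \|a_i\| p^{-n(\sum_\tau i_\tau)}$. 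Once this uniformity is in place, the rest is formal and the resulting identification $H^0 = \mathcal{O}_{\mathcal{M}_\infty}^{\mathrm{sm}}$ follows automatically, completing the local analogue of Proposition~\ref{anti}.
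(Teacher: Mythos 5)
Your strategy coincides with the paper's own (one-line) proof, which says exactly that the result follows from the explicit formula of Proposition~\ref{local anti} together with the Poincar\'e lemma, "as in the global case" (Proposition~\ref{anti}): expand via Proposition~\ref{local expansion formula 2}, identify the differentials with the formal $\partial/\partial x_\sigma$, apply Poincar\'e, and pass to the colimit over $n$. The one thing you flag as "the only genuinely delicate point" is, however, stated backwards for the $p$-adic norm. Antidifferentiation replaces $a_i$ by $a_i/(i+1)$, and $||a_i/(i+1)|| = p^{v_p(i+1)}||a_i||$, which \emph{grows} (unboundedly along $i+1=p^k$) rather than shrinks; so the Koszul complex equipped with the fixed growth bound $\sup_i ||a_i||\, p^{-n(\sum_\tau i_\tau)} < \infty$ is \emph{not} acyclic in positive degrees at a single level $n$, and "dividing by the appropriate exponent ... only improves the norm" is false.

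What rescues the argument is precisely the colimit over $n$ that you already invoke: a cocycle with coefficients bounded at level $n$ has a primitive bounded at every level $n' > n$, since $||a_{j-1}/j||\, p^{-n'j} \le M\, p^{(n-n')j + v_p(j) - n}$ and the supremum over $j$ is finite because $v_p(j) = O(\log_p j)$ is dominated by the linear term $(n-n')j$ once $n' > n$. In other words the primitive lands one step further along the direct system defining $\mathcal{O}_{\mathcal{M}_\infty}^{\Psi-\mathrm{la}}$, not at the same level. Replace "only improves the norm" and "does not worsen the norm" with this observation and the rest of your write-up goes through; the identification of $H^0$ with $\mathcal{O}_{\mathcal{M}_\infty}^{\mathrm{sm}}$ is then immediate from the explicit formula, exactly as you say, and the result agrees with the paper's proof.
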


\begin{proof} As in the global case, this follows from the explicit formula in Proposition \ref{local anti} and Poincare lemma.  \end{proof}

We give a certain comparison theorem between $\mathcal{M}_{\infty}$ and $\check{\mathcal{M}}_{\infty}$. We recall that we have the identification $\mathcal{M}_{\infty} = \check{\mathcal{M}}_{\infty}$ by Theorem \ref{duality}. In the following, we use the notation $G(\mathbb{Q}_p)-\Psi-\mathrm{la}$ (resp. $J_b(\mathbb{Q}_p)-\Psi-\mathrm{la}$ ) instead of $\Psi-\mathrm{la}$ when we consider the action of $G(\mathbb{Q}_p)$ (resp. $J_b(\mathbb{Q}_p)$).

\begin{prop}\label{locally analytic duality}

1 \ $\mathcal{O}_{\mathcal{M}_{\infty}}^{G(\mathbb{Q}_p)-\Psi-\mathrm{la}} = \mathcal{O}_{\check{\mathcal{M}}_{\infty}}^{J_b(\mathbb{Q}_p)-\Psi-\mathrm{la}}$.

2 \ $\theta_{\mathfrak{h}, \mathcal{M}} = \theta_{\mathfrak{h}, \check{\mathcal{M}}}$.

\end{prop}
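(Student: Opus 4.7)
The strategy for both parts rests on the canonical $G(\mathbb{Q}_p) \times J_b(\mathbb{Q}_p)$-equivariant identification $\mathcal{M}_\infty \cong \check{\mathcal{M}}_\infty$ of Theorem \ref{duality}, under which the Hodge--Tate and Grothendieck--Messing period maps swap roles: $\pi_{\mathrm{HT}, \mathcal{M}} = \pi_{\mathrm{GM}, \check{\mathcal{M}}}$ and $\pi_{\mathrm{GM}, \mathcal{M}} = \pi_{\mathrm{HT}, \check{\mathcal{M}}}$. I plan to prove Part 1 by exhibiting both sides as equal to the same explicitly described subspace of $\mathcal{O}_{\mathcal{M}_\infty}$.

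First, I would fix a small affinoid $V_\infty \subset \mathcal{M}_\infty = \check{\mathcal{M}}_\infty$ which sits simultaneously above a suitable affinoid $V \subset \mathcal{M}_{K_p}$ (mapping via $\pi_{\mathrm{HT}, \mathcal{M}}$ into a small $U \subset \Fl_{\mathrm{HT}, \mathcal{M}}$) and, via the duality, above an affinoid $\check{V} \subset \check{\mathcal{M}}_{\check{K}_p}$ (mapping via $\pi_{\mathrm{HT}, \check{\mathcal{M}}}$ into a small $\check{U} \subset \Fl_{\mathrm{HT}, \check{\mathcal{M}}} = \Fl_{\mathrm{GM}, \mathcal{M}}$). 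By Proposition \ref{local expansion formula}, the $G$-side yields a description of $\mathcal{O}_{\mathcal{M}_\infty}(V_\infty)^{G(\mathbb{Q}_p)-\Psi-\mathrm{la}}$ as formal expansions in the coordinates $x_\tau$ on $\Fl_{\mathrm{HT}, \mathcal{M}}$ and the logarithmic terms $\log(g_{1,\tau}/g_{1,\tau,n})$, $\log(f_\tau/f_{\tau,n})$, with coefficients in $\mathcal{O}_{\mathcal{M}_{G_{r(n)}}}(V_{G_{r(n)}})$; the analogous statement for $\check{\mathcal{M}}$ (valid even when $J_b$ has division algebra factors, as noted in the remark after Lemma \ref{division unitary}) describes $\mathcal{O}_{\check{\mathcal{M}}_\infty}(V_\infty)^{J_b(\mathbb{Q}_p)-\Psi-\mathrm{la}}$ by analogous expansions in the dual coordinates. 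To verify mutual inclusion I would check: (i) the coordinates $x_\tau$ on $\Fl_{\mathrm{HT},\mathcal{M}} = \Fl_{\mathrm{GM},\check{\mathcal{M}}}$ are both $G$-analytic (from the natural analytic action on the flag) and, since $\pi_{\mathrm{GM},\check{\mathcal{M}}}$ is étale and hence locally admits sections descending to $\check{\mathcal{M}}_{\check{K}_p}$, they are $J_b$-smooth (hence $\Psi$-la); (ii) by symmetry the $\check{\mathcal{M}}$-side coordinates are $G$-smooth; (iii) the logarithmic terms on each side arise from the Faltings extension, which by Proposition \ref{FHT} is controlled intrinsically by the Hodge--Tate filtration, and this structure is identified under the duality. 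The main obstacle I anticipate is showing that a $G$-smooth coefficient $a_{i,j,k}$ (i.e., a function on $\mathcal{M}_{G_{r(n)}}$) is itself $J_b$-locally analytic: here one must use the étaleness of $\pi_{\mathrm{GM}, \mathcal{M}}$ to re-expand $a_{i,j,k}$ in the $\check{\mathcal{M}}$-side coordinates with $J_b$-smooth coefficients, and match convergence norms — this is essentially a change-of-basis between the two explicit expansions.

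For Part 2, once Part 1 identifies the two locally analytic spaces, both horizontal actions $\theta_{\mathfrak{h},\mathcal{M}}$ and $\theta_{\mathfrak{h},\check{\mathcal{M}}}$ are $\mathfrak{h}$-actions on the common space $\mathcal{O}_{\mathcal{M}_\infty}^{\Psi-\mathrm{la}}$. Applying Proposition \ref{Sen operator} on each side, the element $\begin{pmatrix}-1 & 0 \\ 0 & 0\end{pmatrix}_\tau \in \mathfrak{h}_\tau$ acts as the Sen operator under both actions; since the Sen operator depends only on the Galois action on the underlying perfectoid space, which is the same for $\mathcal{M}_\infty$ and $\check{\mathcal{M}}_\infty$, the two horizontal actions agree on this element. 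To upgrade this to the full $\mathfrak{h}$-action, I would use that the remaining direction in each $\mathfrak{h}_\tau$ is the center of $\mathfrak{gl}_{2,\tau}$, which acts via the central character: on the $G$-side the scalars in $\mathrm{GL}_2(F_w)_\tau$ act through the similitude character coming from the Rapoport--Zink tower structure, and under the duality of Theorem \ref{duality} this central action matches the corresponding scalar action of $J_b$; alternatively, I would compute $\theta_{\mathfrak{h},\check{\mathcal{M}}}(h)$ directly on the explicit expansion obtained in Part 1 and check it agrees termwise with $\theta_{\mathfrak{h},\mathcal{M}}(h)$, using the symmetry of the setup and the formula recalled in the proof of Proposition \ref{Sen operator}. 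The delicate point is keeping track of how the two different choices of Borel (from $\Fl_{\mathrm{HT},\mathcal{M}}$ and from $\Fl_{\mathrm{HT},\check{\mathcal{M}}}$) embed the same abstract Cartan $\mathfrak{h}$ into $\mathfrak{gl}_2(L)^\Psi$ compatibly.
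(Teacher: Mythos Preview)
For Part 1, your approach matches the paper's: both use the explicit expansion (Proposition \ref{local expansion formula}) on each side and verify mutual inclusion by checking that the generators on one side (flag-variety coordinates, logarithmic terms, smooth coefficients) lie in the other, exploiting the étaleness of the Grothendieck--Messing maps under the duality. The paper's proof is terse, citing \cite[Corollary 5.2.10 and 5.3.9]{PanII}, but the underlying mechanism is exactly what you describe.

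For Part 2, your primary approach has a genuine gap. The Sen operator $\theta_{\mathrm{Sen}}$ equals $\theta_{\mathfrak{h}}$ applied to the \emph{single} element of $\mathfrak{h}=\prod_{\tau}\mathfrak{h}_\tau$ whose every $\tau$-component is $\begin{pmatrix}-1&0\\0&0\end{pmatrix}$ (see Proposition \ref{Sen operator}); it does not give agreement on each $\mathfrak{h}_\tau$ separately. So matching Sen operators yields only one linear constraint, and even granting your central-character argument this gives at most $|\Psi|+1$ constraints --- insufficient to determine a $2|\Psi|$-dimensional action once $|\Psi|\ge 2$. Moreover, the central-character step hides exactly the ``delicate point'' you flag at the end: the two embeddings of the abstract Cartan come from different flag varieties, and showing the centres match already requires the kind of explicit check you are trying to avoid.

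Your alternative (compute directly on the explicit expansion) is in fact what the paper does, but the key observation the paper makes explicit --- and you do not --- is that both $\theta_{\mathfrak{h},\mathcal{M}}$ and $\theta_{\mathfrak{h},\check{\mathcal{M}}}$ are \emph{derivations} of the ring $\mathcal{O}_{\mathcal{M}_\infty}^{\Psi\text{-}\mathrm{la}}$. This reduces the problem to checking equality on ring generators. The paper then uses the étaleness of $\pi_{\mathrm{GM},\mathcal{M}}$ to replace the finite-level coefficients $a_{i,j,k}\in\mathcal{O}_{\mathcal{M}_{G_{r(n)}}}$ by sections of $\mathcal{O}_{\Fl_{\mathrm{GM},\mathcal{M}}}$, leaving a symmetric list --- $f_\tau$ together with sections of $\mathcal{O}_{\Fl_{\mathrm{GM},\mathcal{M}}}$, $\omega_{\tau,\Fl_{\mathrm{GM},\mathcal{M}}}$, $\mathcal{O}_{\Fl_{\mathrm{HT},\mathcal{M}}}$, $\omega_{\tau,\Fl_{\mathrm{HT},\mathcal{M}}}$ --- on which the two horizontal actions are compared by direct calculation as in \cite[Corollary 5.3.13]{PanII}.
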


\begin{proof} 1 \ By Proposition \ref{local expansion formula}, we have $\mathcal{O}_{\mathcal{M}_{\infty}}^{G(\mathbb{Q}_p)-\Psi-\mathrm{la}} \subset \mathcal{O}_{\check{\mathcal{M}}_{\infty}}^{J_b(\mathbb{Q}_p)-\Psi-\mathrm{la}}$ as in \cite[Corollary 5.2.10 and 5.3.9]{PanII}. The converse is shown by the same way.
    
2 Note that $\theta_{\mathfrak{h}, \mathcal{M}}$ and $\theta_{\mathfrak{h}, \check{\mathcal{M}}}$ are derivations. Thus it suffices to check the equality for all elements in $\mathcal{O}_{\mathcal{M}_{G_{r(n)}}}(V_{G_{r(n)}})$, $x_{\tau}$, $g_{1, \tau}$ and $f_{\tau}$ appearing in Proposition \ref{local expansion formula}. Note that since $\pi_{\mathrm{GM}, \mathcal{M}}$ is $\et$ale, it suffices to consider all elements in $\mathcal{O}_{\Fl_{\mathrm{GM}, \mathcal{M}}}$ instead of $\mathcal{O}_{\mathcal{M}_{G_{r(n)}}}(V_{G_{r(n)}})$. Thus it suffices to check the equality for $f_{\tau}$'s and all elements in $\mathcal{O}_{\Fl_{\mathrm{GM}, \mathcal{M}}}$, $\omega_{\tau, \Fl_{\mathrm{GM}, \mathcal{M}}}$, $\mathcal{O}_{\Fl_{\mathrm{HT}, \mathcal{M}}}$ and $\omega_{\tau,\Fl_{\mathrm{HT}, \mathcal{M}}}$. The calculations are the same as \cite[Corollary 5.3.13]{PanII}. \end{proof}

\begin{prop}\label{locally analytic de Rham duality}

We have a canonical identification $GDR^{G(\mathbb{Q}_p)-\Psi-\mathrm{la}}_{0, \mathcal{M}_{\infty}} = \overline{GDR}^{J_b(\mathbb{Q}_p)-\Psi-\mathrm{la}}_{0, \check{\mathcal{M}}_{\infty}}$.

\end{prop}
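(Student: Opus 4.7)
The plan is to combine the duality of Rapoport--Zink spaces (Theorem \ref{duality}) with the uniqueness characterizations of the two differentials to identify the complexes term-by-term and map-by-map. By Theorem \ref{duality} we have $\mathcal{M}_{\infty} = \check{\mathcal{M}}_{\infty}$ with $\pi_{\mathrm{GM},\mathcal{M}} = \pi_{\mathrm{HT},\check{\mathcal{M}}}$, so $\Fl_{\mathrm{GM},\mathcal{M}} = \Fl_{\mathrm{HT},\check{\mathcal{M}}}$ and the line bundles $\omega_{\tau,\Fl_{\mathrm{GM},\mathcal{M}}} \otimes (\wedge^2 D(\mathbb{X}_b)_\tau)^{1/2}$ match $\omega_{\tau,\Fl_{\mathrm{HT},\check{\mathcal{M}}}} \otimes (\wedge^2 V_\tau)^{1/2}$ after appropriate twisting. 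Together with Proposition \ref{locally analytic duality} (which identifies $\mathcal{O}_{\mathcal{M}_\infty}^{G(\mathbb{Q}_p)\text{-}\Psi\text{-la},(0,0)_\tau}$ with $\mathcal{O}_{\check{\mathcal{M}}_\infty}^{J_b(\mathbb{Q}_p)\text{-}\Psi\text{-la},(0,0)_\tau}$ since the horizontal actions $\theta_{\mathfrak{h},\mathcal{M}}$ and $\theta_{\mathfrak{h},\check{\mathcal{M}}}$ agree), this identifies every term of $GDR^{G(\mathbb{Q}_p)\text{-}\Psi\text{-la}}_{0,\mathcal{M}_\infty}$ with the corresponding term of $\overline{GDR}^{J_b(\mathbb{Q}_p)\text{-}\Psi\text{-la}}_{0,\check{\mathcal{M}}_\infty}$.

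The core of the argument is matching the differentials. First I would reduce to the case $|I|=0$, $|J|=1$ by the usual Leibniz-type manipulation, so the task becomes showing that the locally analytic derivation $d^{\Psi\text{-la}}_{\mathcal{M}} : \mathcal{O}^{\Psi\text{-la},(0,0)_\tau}_{\mathcal{M}_\infty} \to \mathcal{O}^{\Psi\text{-la},(0,0)_\tau}_{\mathcal{M}_\infty} \otimes_{\mathcal{O}^{\mathrm{sm}}_{\mathcal{M}_\infty}} \Omega^{1,\mathrm{sm}}_{\mathcal{M}_\infty}$ of Proposition \ref{local de Rham complex}, after projecting onto the $\sigma$-component via the Kodaira--Spencer decomposition, coincides with $\overline{d}_{\sigma,\emptyset}$ of Proposition \ref{local anti} under the identifications above. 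By the uniqueness statements in Propositions \ref{local de Rham complex} and \ref{local anti}, it suffices to verify that the pullback to $\mathcal{M}_\infty = \check{\mathcal{M}}_\infty$ of the composite map satisfies the explicit formula $\sum_i a_i \prod_\tau (x_\tau - x_{\tau,n})^{i_\tau} \mapsto \sum_i i_\sigma a_i (x_\sigma - x_{\sigma,n})^{i_\sigma - 1}\prod_{\tau \neq \sigma}(x_\tau - x_{\tau,n})^{i_\tau} \otimes dx_\sigma$ that characterizes $\overline{d}_{\sigma,\emptyset}$.

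To check this formula, I would use the fact that $\pi_{\mathrm{GM},\mathcal{M}}$ is étale (see before Theorem \ref{Rapoport prefd}): this forces $d^{\mathrm{sm}}_{\mathcal{M}}(\pi_{\mathrm{GM},\mathcal{M}}^* x_\tau)$ to be the pullback of the universal derivation of $x_\tau \in \mathcal{O}_{\Fl_{\mathrm{GM},\mathcal{M}}}$ to $\Omega^{1,\mathrm{sm}}_{\mathcal{M}_\infty}$. Since $d^{\Psi\text{-la}}_{\mathcal{M}}$ is the unique $\mathcal{O}_{\Fl_{\mathrm{HT},\mathcal{M}}}$-linear $G(\mathbb{Q}_p)$-equivariant derivation extending $d^{\mathrm{sm}}_{\mathcal{M}}$, and since $x_\tau \in \mathcal{O}_{\Fl_{\mathrm{GM},\mathcal{M}}}(U) = \mathcal{O}_{\Fl_{\mathrm{HT},\check{\mathcal{M}}}}(U)$ is a smooth function on a finite level of $\mathcal{M}$ (lying in $\mathcal{O}^{\mathrm{sm}}_{\mathcal{M}_\infty}$ through the étale map), the formula $d^{\Psi\text{-la}}_{\mathcal{M}}(\sum_i a_i \prod_\tau (x_\tau-x_{\tau,n})^{i_\tau})$ expands termwise precisely as in Proposition \ref{local anti}, with the $\sigma$-component giving the displayed formula after we use Kodaira--Spencer to identify $\Omega^{1,\mathrm{sm}}_{\mathcal{M}_\infty} \cong \oplus_\sigma \omega^{2,\mathrm{sm}}_{\sigma,\mathcal{M}_\infty} \otimes \wedge^2 D(\mathbb{X}_b)_\sigma$.

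The main obstacle I expect is the bookkeeping at this last step: one must verify that Kodaira--Spencer on $\mathcal{M}$ intertwines the pullback under $\pi_{\mathrm{GM},\mathcal{M}}$ of the canonical $1$-form $dx_\sigma$ on $\Fl_{\mathrm{GM},\mathcal{M}}$ with the canonical section of $\omega^{-2}_{\sigma,\Fl_{\mathrm{HT},\check{\mathcal{M}}}} \otimes (\wedge^2 V_\sigma)^{-1}$ used to define $\overline{d}_{\sigma,\emptyset}$ on $\check{\mathcal{M}}$, up to a sign that is absorbed into the numbering convention for $\Psi = \{\tau_1,\dots,\tau_d\}$. This is analogous to the global Proposition \ref{KS} and is essentially a direct computation in the trivialized coordinate charts on the two flag varieties, using the identification of the universal subbundles on $\Fl_{\mathrm{GM},\mathcal{M}}$ and $\Fl_{\mathrm{HT},\check{\mathcal{M}}}$ provided by Theorem \ref{duality}. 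Once this sign and normalization is checked at the point $\infty$, the $G(\mathbb{Q}_p)$-equivariance (which equals $J_b(\mathbb{Q}_p)$-equivariance after the identification) propagates the identification to all of $\mathcal{M}_\infty$, completing the proof.
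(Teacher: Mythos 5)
Your proposal is correct and uses the same ingredients as the paper's proof: the duality Theorem \ref{duality}, Proposition \ref{locally analytic duality}, the \'etaleness of the period maps, and the uniqueness characterizations of the two differentials; the paper just phrases the endgame abstractly (both maps are $\mathcal{O}_{\Fl_{\mathrm{HT},\mathcal{M}}}=\mathcal{O}_{\Fl_{\mathrm{GM},\check{\mathcal{M}}}}$-linear derivations restricting to the universal derivation on $\Fl_{\mathrm{GM},\mathcal{M}}=\Fl_{\mathrm{HT},\check{\mathcal{M}}}$) rather than reverifying the expansion formula of Proposition \ref{local anti}. Two small points to make explicit in a write-up: when you expand $d^{\Psi\text{-}\mathrm{la}}_{\mathcal{M}}$ against $\sum_i a_i\prod_\tau(x_\tau-x_{\tau,n})^{i_\tau}$ (the $\check{\mathcal{M}}$-side expansion), the coefficients $a_i$ and centers $x_{\tau,n}$ lie in $\mathcal{O}^{\mathrm{sm}}_{\check{\mathcal{M}}_\infty}$, which is $J_b(\mathbb{Q}_p)$-smooth but \emph{not} $G(\mathbb{Q}_p)$-smooth, so $d^{\mathrm{sm}}_{\mathcal{M}}$ does not apply to them directly; their vanishing under $d^{\Psi\text{-}\mathrm{la}}_{\mathcal{M}}$ instead follows from $\mathcal{O}_{\Fl_{\mathrm{HT},\mathcal{M}}}$-linearity together with $\mathcal{O}^{\mathrm{sm}}_{\check{\mathcal{M}}_\infty}$ being ind-\'etale over $\mathcal{O}_{\Fl_{\mathrm{GM},\check{\mathcal{M}}}}=\mathcal{O}_{\Fl_{\mathrm{HT},\mathcal{M}}}$ (so you need \'etaleness of $\pi_{\mathrm{GM},\check{\mathcal{M}}}$ as well as of $\pi_{\mathrm{GM},\mathcal{M}}$, which the paper's proof indeed cites). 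Also, the aside in your last paragraph that ``$G(\mathbb{Q}_p)$-equivariance equals $J_b(\mathbb{Q}_p)$-equivariance after the identification'' is not quite right --- the two actions are distinct and merely commute --- but this does not affect the argument, since the identification of the differentials is checked locally on $U\in\tilde{\mathcal{B}}$ and does not need to be propagated from a single point.
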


\begin{proof}

Note that we have a canonical identification of the components of $GDR^{G(\mathbb{Q}_p)-\Psi-\mathrm{la}}_{0, \mathcal{M}_{\infty}}$ and $\overline{GDR}^{J_b(\mathbb{Q}_p)-\Psi-\mathrm{la}}_{0, \check{\mathcal{M}}_{\infty}}$ by Proposition \ref{locally analytic duality} and the identification $\Fl_{\mathrm{GM}, \mathcal{M}} = \Fl_{\mathrm{HT}, \check{\mathcal{M}}}$. Thus it suffices to prove that all maps are equal. Since $\pi_{\mathrm{GM}, \mathcal{M}}$ and $\pi_{\mathrm{GM}, \check{\mathcal{M}}}$ are $\et$ale, as we used in the proof of Proposition \ref{locally analytic duality}, the result follows from the fact that all maps are $\mathcal{O}_{\Fl_{\mathrm{HT}, \mathcal{M}}} = \mathcal{O}_{\Fl_{\mathrm{GM}, \check{\mathcal{M}}}}$-linear derivations which are equal to usual derivation on $\Fl_{\mathrm{GM}, \mathcal{M}} = \Fl_{\mathrm{HT}, \check{\mathcal{M}}}$.  \end{proof}

Now, we come back to our global situation. Note that the underlying topological space of $[\varprojlim_{K^p} \mathcal{M}_{\infty} \times [GI(\mathbb{Q}) \setminus GI(\mathbb{A}_{\mathbb{Q}}^{\infty})/ K^p]]/J_b(\mathbb{Q}_p)$ is not changed if we consider the profinite topology on $[GI(\mathbb{Q}) \setminus GI(\mathbb{A}_{\mathbb{Q}}^{\infty})/ K^p]$ because we divide the space by the action of $J_b(\mathbb{Q}_p)$. In the rest of this subsection, we always consider the profinite topology on $[GI(\mathbb{Q}) \setminus GI(\mathbb{A}_{\mathbb{Q}}^{\infty})/ K^p]$ and we only consider the underlying topological space. 

For a topological space $Y$ on which the locally profinite group $J_b(\mathbb{Q}_p)$ acts continuously, let $\mathrm{Shv}_{J_b(\mathbb{Q}_p)}(Y)$ denote the category of $J_b(\mathbb{Q}_p)$-equivariant sheaves on $Y$. (See \cite[{\S} 1]{Schneider} for details). If $J_b(\mathbb{Q}_p)$ acts trivially on $Y$, then let $R\Gamma^{\mathrm{sm}}(J_{b}(\mathbb{Q}_p), \ )$ denote the right derived functor of the functor $\mathrm{Shv}_{J_b(\mathbb{Q}_p)}(Y) \rightarrow \mathrm{Shv}(Y), \ \mathcal{F} \mapsto \mathcal{F}^{J_b(\mathbb{Q}_p)}$. Let $X_{K} := GI(\mathbb{Q}) \setminus GI(\mathbb{A}_{\mathbb{Q}}^{\infty})/ K$, $X := \varprojlim_{K} X_K$ and $s : \mathcal{M}_{\infty} \times X \rightarrow [\mathcal{M}_{\infty} \times X]/J_b(\mathbb{Q}_p) \cong \pi_{\mathrm{HT}}^{-1}(\Fl^{b})$ and $\mathrm{pr}_1 : \mathcal{M}_{\infty} \times X \rightarrow \mathcal{M}_{\infty}$ be the natural projections.

\begin{lem}\label{equivariant sheaves}
    
The category $\mathrm{Shv}_{J_b(\mathbb{Q}_p)}(\mathcal{M}_{\infty} \times X)$ is equivalent to $\mathrm{Shv}([\mathcal{M}_{\infty} \times X]/J_b(\mathbb{Q}_p))$ by $\mathcal{F} \mapsto (s_*\mathcal{F})^{J_b(\mathbb{Q}_p)}$ and $\mathcal{G} \mapsto s^{-1}\mathcal{G}$.

\end{lem}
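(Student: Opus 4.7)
The plan is to verify that the $J_b(\mathbb{Q}_p)$-action on $\mathcal{M}_\infty \times X$ is set-theoretically free and topologically ``covering-like'' in the sense that every point has an open neighborhood $W$ in the quotient whose preimage $s^{-1}(W)$ is $J_b(\mathbb{Q}_p)$-equivariantly homeomorphic to $J_b(\mathbb{Q}_p) \times W$. Once this local triviality is established, the claimed equivalence becomes an instance of the standard descent equivalence for sheaves along a principal bundle with discrete structure group.

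First I would produce the open cover. Take a point $\bar y \in [\mathcal{M}_\infty \times X]/J_b(\mathbb{Q}_p)$ lifting to $(m, GI(\mathbb{Q})xK^p)$ with $m \in \mathcal{M}_\infty$ and $x \in GI(\mathbb{A}_{\mathbb{Q}}^\infty)$. Choose a quasicompact open $U \subset \mathcal{M}$ containing the image of $m$ in some finite level $\mathcal{M}_{K_p}$. By Lemma~\ref{properly discontinuous} (2), applied to this $U$ and $x$, there exists an open compact $K^p_1 \subset K^p$ such that the translates $(U_\infty \times \{GI(\mathbb{Q})xK^p_1\})g$ for $g \in J_b(\mathbb{Q}_p)$ are pairwise disjoint, where $U_\infty$ denotes the preimage of $U$ in $\mathcal{M}_\infty$. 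Using the profinite topology on $X$, the singleton $\{GI(\mathbb{Q})xK^p_1\} \subset X_{K^p_1}$ pulls back to a clopen neighborhood $Y \subset X$ of $GI(\mathbb{Q})xK^p$. Setting $Z := U_\infty \times Y$ and $W := s(Z)$, we see that $s|_Z : Z \to W$ is a bijection onto an open subset of the quotient, and $s^{-1}(W) = \bigsqcup_{g \in J_b(\mathbb{Q}_p)} Z \cdot g$. Equipping $J_b(\mathbb{Q}_p)$ with the discrete topology, the map $J_b(\mathbb{Q}_p) \times W \to s^{-1}(W)$, $(g, w) \mapsto (s|_Z)^{-1}(w) \cdot g$, is then a $J_b(\mathbb{Q}_p)$-equivariant homeomorphism. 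Varying $\bar y$, these $W$'s cover the quotient.

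Next I would build the two functors and unit/counit maps. The functor $\mathcal{G} \mapsto s^{-1}\mathcal{G}$ is equipped with its tautological $J_b(\mathbb{Q}_p)$-equivariant structure since $s$ is $J_b(\mathbb{Q}_p)$-invariant. For $\mathcal{F} \in \mathrm{Shv}_{J_b(\mathbb{Q}_p)}(\mathcal{M}_\infty \times X)$, the equivariant structure gives descent data on $s_*\mathcal{F}$, and the fixed-point sheaf $(s_*\mathcal{F})^{J_b(\mathbb{Q}_p)}$ is the resulting sheaf on the quotient. There are canonical morphisms $\mathcal{F} \to s^{-1}(s_*\mathcal{F})^{J_b(\mathbb{Q}_p)}$ and $(s_*s^{-1}\mathcal{G})^{J_b(\mathbb{Q}_p)} \to \mathcal{G}$ (adjunction and invariants); the claim is that both are isomorphisms. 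This can be checked on the cover constructed above: on $W$ the model is simply $J_b(\mathbb{Q}_p) \times W \to W$, where the equivalence between $J_b(\mathbb{Q}_p)$-equivariant sheaves on $J_b(\mathbb{Q}_p) \times W$ and sheaves on $W$ is elementary (an equivariant sheaf is determined by its restriction to $\{1\} \times W$, and this restriction coincides with the fixed-points of the pushforward).

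The only real subtlety is that the action is a \emph{right} action with orbits that are not topologically discrete inside $\mathcal{M}_\infty$ for fixed $X$-component, since $J_b(\mathbb{Q}_p)$ is locally profinite rather than discrete. This is exactly why Lemma~\ref{properly discontinuous} is stated with $K^p_1$ depending on $U$ (and $x$): one shrinks the $X$-factor enough so that no two translates of $Z$ meet, and the profinite topology on $X$ permits $Y$ to be chosen clopen around the chosen point. I do not expect any further difficulty; the equivalence follows formally once local triviality is in place, and no topological-group subtlety beyond Lemma~\ref{properly discontinuous} intervenes.
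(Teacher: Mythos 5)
The key claim in your argument — that around every point of the quotient one finds an open $W$ with $s^{-1}(W)$ a disjoint union $\bigsqcup_{g \in J_b(\mathbb{Q}_p)} Z\cdot g$ of opens, so that the local model is $J_b(\mathbb{Q}_p) \times W$ with \emph{discrete} $J_b(\mathbb{Q}_p)$ — is false, and this is the crux of the lemma. First, concretely: $K^p_1$ in Lemma~\ref{properly discontinuous}~(2) is an open compact of the \emph{prime-to-$p$} group $K^p$, so the preimage $Y$ in $X = \varprojlim_K X_K$ of the coset $GI(\mathbb{Q})xK^p_1$ is the image of the translate $xK^p_1$, which has trivial $p$-component and is therefore a closed but \emph{not open} subset of $X$. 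Hence $Z = U_\infty \times Y$ is not open, and $W=s(Z)$ is not an open neighborhood in the quotient, so the local trivialization you describe does not exist. Second, this is not a repairable technicality: the $J_b(\mathbb{Q}_p)$-action on $\mathcal{M}_\infty \times X$ is continuous with $J_b(\mathbb{Q}_p)$ locally profinite, so for any point $z$ and any compact open $K_p \subset J_b(\mathbb{Q}_p)$, the orbit $zK_p$ is compact. If the translates $Z\cdot g$ were genuinely pairwise disjoint \emph{open} sets, they would give an infinite open cover of $zK_p$ in which each member meets $zK_p$ in exactly one point, contradicting compactness. So the action simply cannot have a trivial discrete local model.

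What the paper actually proves is weaker and unavoidable: by Theorem~\ref{uniformization}, a basic neighborhood in the quotient has the form $(\sqcup_{\gamma \in K_p \backslash J_b(\mathbb{Q}_p)} [U\times gK]\gamma)/J_b(\mathbb{Q}_p)$ for a $K_p$-\emph{stable} quasicompact open $U \subset \mathcal{M}_\infty$ and a full level $K$ (so $gK$ \emph{is} open in $X$); the translates are indexed by $K_p$-cosets, not by $J_b(\mathbb{Q}_p)$ itself, and each piece $[U\times gK]$ carries a nontrivial $K_p$-action. The problem then reduces to $\mathrm{Shv}_{K_p}([U\times gK]) \cong \mathrm{Shv}([U\times gK]/K_p)$, which is genuine content: it uses the definition of equivariant sheaves for a profinite group and the identification $[U\times gK] = \varprojlim_{K_p'}[U\times gK]/K_p'$. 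Your proposal skips exactly this step. In particular, Lemma~\ref{properly discontinuous}~(2) gives set-theoretic freeness of the action on a certain non-open subset, but not the kind of proper discontinuity relative to the topology on $\mathcal{M}_\infty\times X$ that a trivial-bundle picture would require.
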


\begin{proof}

We will prove that the natural maps $s^*(s_*\mathcal{F})^{J_b(\mathbb{Q}_p)} \rightarrow \mathcal{F}$ and $\mathcal{G} \rightarrow (s_*(s^* \mathcal{G}))^{J_b(\mathbb{Q}_p)}$ are isomorphisms. Thus it suffices to check these locally on $[\mathcal{M}_{\infty} \times X]/J_b(\mathbb{Q}_p)$. By Theorem \ref{uniformization}, any point of $[\mathcal{M}_{\infty} \times X]/J_b(\mathbb{Q}_p)$ has a neighborhood having a form like $(\sqcup_{\gamma \in K_p \setminus J_b(\mathbb{Q}_p)} [U \times gK]\gamma)/J_b(\mathbb{Q}_p)$ for some $K$ and a $K_p$-stable quasicompact open $U$ of $\mathcal{M}_{\infty}$. We have natural identifications $(\sqcup_{\gamma \in K_p \setminus J_b(\mathbb{Q}_p)} [U \times gK]\gamma)/J_b(\mathbb{Q}_p) \cong [U \times gK]/K_p$ and $\mathrm{Shv}_{J_b(\mathbb{Q}_p)}((\sqcup_{\gamma \in K_p \setminus J_b(\mathbb{Q}_p)} [U \times gK]\gamma)) \cong \mathrm{Shv}_{K_p}([U \times gK])$. Thus it suffices to prove $\mathrm{Shv}_{K_p}([U \times gK]) \cong \mathrm{Shv}([U \times gK]/K_p)$. This follows from the definition of $K_p$-equivariant sheaves and the identification $[U \times gK] = \varprojlim_{K_p'}[U \times gK]/K_p'$. \end{proof}

Let $GDR^{\Psi-\mathrm{la}}_{\lambda, \mathcal{S}} := \varinjlim_{K^p_1} \pi_{K^p_1}^{-1}GDR^{\Psi-\mathrm{la}}_{\lambda, \mathcal{S}_{K^p_1}}$, where $\pi_{K^p_1} : \mathcal{S} \rightarrow \mathcal{S}_{K^p}$ denotes the canonical projection. (See Remark \ref{pullback de Rham} for the definition of $GDR^{\Psi-\mathrm{la}}_{\lambda, \mathcal{S}_{K^p}}$.) Note that we have $GDR^{\Psi-\mathrm{la}}_{\lambda, \mathcal{S}_{K^p}} = ({\pi_{K^p}}_{*}GDR^{\Psi-\mathrm{la}}_{\lambda, \mathcal{S}})^{K^p}$.  

Let $D_{\lambda}(\mathbb{X}_{b}) := D_{0}(\mathbb{X}_b)^{\lambda_0} \otimes (\otimes_{\tau \in \Phi} ((\wedge^2D(\mathbb{X}_b)_{\tau})^{\lambda_{\tau, 2}} \otimes \mathrm{Sym}^{\lambda_{\tau, 1} - \lambda_{\tau, 2}}D(\mathbb{X}_b)_{\tau}))$, where $D_{0}(\mathbb{X}_b)$ denotes the similitude representation of $J_b(\mathbb{Q}_p)$ with the trivial action of $G(\mathbb{Q}_p)$.

Let $\mathcal{F}$ be a $J_b(\mathbb{Q}_p)$-equivariant sheaf on $\mathcal{M}_{\infty} \times X$ defined by $U \times V \mapsto \varinjlim_{K' \subset K}\mathrm{Map}_{K'}(V, D_{\lambda}(\mathbb{X}_{b}) \otimes_{\breve{L}} (\otimes_{\tau \in \Psi} (\omega_{\tau, \check{\mathcal{M}}_{\infty}}^{-\lambda_{\tau}, J_b(\mathbb{Q}_p)-\mathrm{sm}} \otimes (\wedge^2V_{\tau})^{-\lambda_{\tau}}))) = \varinjlim_{K' \subset K}\mathrm{Map}_{K'}(V, D_{\lambda}(\mathbb{X}_{b})) \otimes_{\breve{L}} (\otimes_{\tau \in \Psi} (\omega_{\tau, \check{\mathcal{M}}_{\infty}}^{-\lambda_{\tau}, J_b(\mathbb{Q}_p)-\mathrm{sm}} \otimes (\wedge^2V_{\tau})^{-\lambda_{\tau}}))$ for any $K$, any $K_p$-stable open subset $U \subset \mathcal{M}_{\infty}$ and any $K$-stable subset $V \subset X$. Here $\mathrm{Map}_{K'}$ denotes the set of $K'$-equivariant maps.


\begin{prop}\label{de Rham uniformization}

$GDR^{\Psi-\mathrm{la}}_{\lambda, \mathcal{S}}|_{\pi_{\mathrm{HT}, \mathcal{S}}^{-1}(\Fl^{b})}$ is quasi-isomorphic to $s_*(\mathcal{F})^{J_b(\mathbb{Q}_p)}$.

\end{prop}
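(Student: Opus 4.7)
The plan is to reduce the claim to a statement on the cover $\mathcal{M}_{\infty} \times X$ via the basic uniformization (Theorem \ref{uniformization}), then use the local de Rham--anti-de Rham duality (Proposition \ref{locally analytic de Rham duality}) together with the acyclicity of the anti-de Rham complex (Proposition \ref{anti trivial}) to identify the pullback with $\mathcal{F}$. By Lemma \ref{equivariant sheaves} it suffices to produce a $J_b(\mathbb{Q}_p)$-equivariant quasi-isomorphism $s^{-1}\bigl(GDR^{\Psi-\mathrm{la}}_{\lambda, \mathcal{S}}|_{\pi_{\mathrm{HT}, \mathcal{S}}^{-1}(\Fl^{b})}\bigr) \simeq \mathcal{F}$ on $\mathcal{M}_{\infty} \times X$.

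First, I would unravel the pullback $s^{-1}\bigl(GDR^{\Psi-\mathrm{la}}_{\lambda, \mathcal{S}}|_{\pi_{\mathrm{HT},\mathcal{S}}^{-1}(\Fl^{b})}\bigr)$. The components of $GDR^{\Psi-\mathrm{la}}_{\lambda, \mathcal{S}}$ are built from the smooth sheaves $\omega_{\tau, \mathcal{S}}^{\mathrm{sm}}$, $\wedge^2 D_{\tau, \mathcal{S}}^{\mathrm{sm}}$, the filtered bundle $D_{\lambda^{\Psi}, \mathcal{S}}$, and $\mathcal{O}_{\mathcal{S}}^{\Psi-\mathrm{la}}$. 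Under the identification from Theorem \ref{uniformization}, the first projection $\mathrm{pr}_1 \colon \mathcal{M}_{\infty} \times X \to \mathcal{M}_{\infty}$ identifies $\omega_{\tau, \mathcal{S}}^{\mathrm{sm}}$ with $\omega_{\tau, \mathcal{M}_\infty}^{\mathrm{sm}}$ (pulled back via $\mathrm{pr}_1$), and $\mathcal{O}_{\mathcal{S}}^{\Psi-\mathrm{la}}$ pulls back to $\mathcal{O}_{\mathcal{M}_\infty}^{\Psi-\mathrm{la}}$ tensored with the locally constant sheaf on $X$, precisely because the expansion formulas of Proposition \ref{local expansion formula 2} match those of Corollary \ref{citation} locally on the basic locus. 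The sheaf $D_{\lambda^{\Psi}, \mathcal{S}}$, which parametrizes a Tate module condition, becomes trivialized on $\mathcal{M}_{\infty} \times X$ by the constant sheaf $D_{\lambda}(\mathbb{X}_b)$ (here the $\lambda_\Psi$-part is absorbed into the twist by $\omega_\tau^{-\lambda_\tau}$). Consequently the pullback $s^{-1}(GDR^{\Psi-\mathrm{la}}_{\lambda, \mathcal{S}}|_{\pi_{\mathrm{HT},\mathcal{S}}^{-1}(\Fl^{b})})$ is identified with the constant sheaf $D_\lambda(\mathbb{X}_b) \otimes (\text{sheaf of locally constant functions on } X)$ tensored with $GDR^{G(\mathbb{Q}_p)-\Psi-\mathrm{la}}_{0, \mathcal{M}_\infty} \otimes (\otimes_{\tau \in \Psi}(\omega_{\tau, \mathcal{M}_\infty}^{-\lambda_\tau, \mathrm{sm}} \otimes (\wedge^2 D_{\tau,\mathcal{M}_\infty}^{\mathrm{sm}})^{-\lambda_\tau}))$, where the twist absorbs the $\lambda_\Psi = (0, -\lambda_\tau)_\tau$ contributions.

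Second, I would invoke Proposition \ref{locally analytic de Rham duality}, which identifies $GDR^{G(\mathbb{Q}_p)-\Psi-\mathrm{la}}_{0, \mathcal{M}_\infty}$ with $\overline{GDR}^{J_b(\mathbb{Q}_p)-\Psi-\mathrm{la}}_{0, \check{\mathcal{M}}_\infty}$ under the duality $\mathcal{M}_\infty \cong \check{\mathcal{M}}_\infty$. After extending to the twisted version and using that smooth sheaves for the action of $G(\mathbb{Q}_p)$ on $\mathcal{M}_\infty$ correspond to the $J_b(\mathbb{Q}_p)$-smooth sheaves on $\check{\mathcal{M}}_\infty$ via Proposition \ref{locally analytic duality}, the pullback becomes the $\lambda$-twisted anti-de Rham complex $\overline{GDR}^{J_b(\mathbb{Q}_p)-\Psi-\mathrm{la}}_{\lambda, \check{\mathcal{M}}_\infty}$ (tensored with the constant sheaves in the $X$-direction).

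Third, Proposition \ref{anti trivial} (applied in the twisted form parallel to Proposition \ref{anti}) shows that this anti-de Rham complex is quasi-isomorphic to its $H^0$, namely to $D_{\lambda^{\Psi}, \check{\mathcal{M}}_\infty}^{J_b(\mathbb{Q}_p)-\mathrm{sm}} \otimes (\otimes_{\tau \in \Psi}(\omega_{\tau, \check{\mathcal{M}}_\infty}^{-\lambda_\tau, J_b(\mathbb{Q}_p)-\mathrm{sm}} \otimes (\wedge^2 V_\tau)^{-\lambda_\tau}))$. Combining with the $D_\lambda(\mathbb{X}_b)$-factor from Step 1 and the $X$-part built in, this is exactly the definition of $\mathcal{F}$ on $\mathcal{M}_{\infty} \times X$, since the locally constant $X$-factor precisely encodes the smooth vectors $\varinjlim_{K' \subset K} \mathrm{Map}_{K'}(V, D_\lambda(\mathbb{X}_b))$ in the definition of $\mathcal{F}$. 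All identifications are $J_b(\mathbb{Q}_p)$-equivariant by construction, so descending via Lemma \ref{equivariant sheaves} yields the desired quasi-isomorphism.

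The main obstacle will be Step 1: making the pullback identification rigorous, particularly the identification of the $\mathcal{O}_{\mathcal{S}}^{\Psi-\mathrm{la}}$-factor after restriction to the basic stratum. The point is that the uniformization is only well-behaved after passing to sufficiently small compact open subgroups (as recorded in the precise form of Theorem \ref{uniformization}(2)), so one must check locally that the explicit expansion formula on $\mathcal{S}$ from Proposition \ref{mikami expansionIII} matches the local expansion formula on $\mathcal{M}_\infty$ from Proposition \ref{local expansion formula 2}; this is essentially tautological modulo carefully comparing the coordinates $x_\tau$, $e_{1,\tau}$, $f_\tau$ on the two sides, but requires bookkeeping. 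A secondary technical point is that the identification of $D_{\lambda^\Psi, \mathcal{S}}$ with the constant sheaf $D_{\lambda^\Psi}(\mathbb{X}_b)$ on $\mathcal{M}_\infty \times X$ involves the Tate module comparison at the basic locus, which is compatible with the $J_b(\mathbb{Q}_p)$-action precisely because $J_b$ is the automorphism group of the local datum $(\mathbb{X}_b, \alpha)$.
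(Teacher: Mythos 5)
Your proposal follows essentially the same route as the paper's proof: reduce to the cover $\mathcal{M}_\infty \times X$ via Lemma \ref{equivariant sheaves}, identify the pullback of $GDR^{\Psi-\mathrm{la}}_{\lambda,\mathcal{S}}$ with $D_\lambda(\mathbb{X}_b)$ tensored with the trivial-weight locally analytic de Rham complex on $\mathcal{M}_\infty$, and conclude via the duality of Proposition \ref{locally analytic de Rham duality} together with the anti-de Rham Poincar\'e lemma (Proposition \ref{anti trivial}). Two points you flag as bookkeeping are where the paper supplies the real input: the identification of the pullback of $D_{\lambda, \aS_{K_1^p}}^{\mathrm{sm}}$ with $D_\lambda(\mathbb{X}_b)\otimes\mathcal{O}_{\mathcal{M}_\infty}^{\mathrm{sm}}$ is obtained not from a "Tate module condition" but from the crystalline/de Rham comparison for abelian schemes (BBM), and the matching of the subquotient sections used in {\S}\,4.1 to build $GDR^{\Psi-\mathrm{la}}_\lambda$ with their counterparts on $\mathcal{M}_\infty$ is established by characterizing both as the $\chi_{\lambda_\Psi}$-isotypic pieces under the $J_b(\mathbb{Q}_p)$-action (the argument of Lemma \ref{BGG flag}), rather than by directly "absorbing the twist."
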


\begin{proof} By Lemma \ref{equivariant sheaves}, it suffices to prove that $s^{-1}GDR^{\Psi-\mathrm{la}}_{\lambda, \mathcal{S}}|_{\pi_{\mathrm{HT}, \mathcal{S}}^{-1}(\Fl^{b})}$ is quasi-isomorphic to $\mathcal{F}$ as $J_b(\mathbb{Q}_p)$-equivariant sheaves. By the identification of de Rham cohomologies of abelian schemes and Dieudonne modules of $p$-divisible group obtained by abelian schemes (see \cite[(2.5.6) and (3.3.7)]{BBM}), the pullback to $\mathcal{M}_{\infty} \times X$ of the vector bundle with the integrable connection $\varinjlim_{K^p_1}(D_{\lambda, \aS_{K^p_1}}^{\mathrm{sm}}, \nabla)$ is equal to the pullback of $(D_{\lambda}(\mathbb{X}_b) \otimes \mathcal{O}_{\mathcal{M}_{\infty}}^{\mathrm{sm}}, \mathrm{id} \otimes d_{\mathcal{O}_{\mathcal{M}_{\infty}}^{\mathrm{sm}}})$ via $\mathrm{pr}_1 : \mathcal{M}_{\infty} \times X \rightarrow \mathcal{M}_{\infty}$. Thus the pullback of $\varinjlim_{K^p_1}\pi_{K_1^p}^{-1}(D_{\lambda, \aS_{K^p_1}}^{\Psi-\mathrm{la}, (0, 0)} \otimes \Omega_{\aS_{K_1^p}}^{\bullet, \mathrm{sm}}, \nabla^{\Psi-\mathrm{la}})$ is identified with the pullback of $D_{\lambda}(\mathbb{X}_b) \otimes GDR^{\Psi-\mathrm{la}}_{0, \mathcal{M}_{\infty}}$. (See {\S} 4.1 for the definition of $(D_{\lambda, \aS_{K^p_1}}^{\Psi-\mathrm{la}, (0, 0)} \otimes \Omega_{\aS_{K_1^p}}^{\bullet}, \nabla^{\Psi-\mathrm{la}})$.) In {\S} 4.1, we took certain sections to $(D_{\lambda, \aS_{K^p_1}}^{\Psi-\mathrm{la}, (0, 0)} \otimes \Omega_{\aS_{K_1^p}}^{\bullet, \mathrm{sm}}, \nabla^{\Psi-\mathrm{la}})$ from certain subquotients and took $\otimes_{\mathcal{O}_{\Fl}} (\otimes_{\tau} (\omega_{\tau, \Fl}^{\lambda_{\tau}} \otimes (\wedge^2 V_{\tau})^{\lambda_{\tau}}))$ to get $GDR^{\Psi-\mathrm{la}}_{\lambda, \aS_{K^p_1}}$. However, on $\mathcal{M}_{\infty}$, the corresponding sections of the components of $D_{\lambda}(\mathbb{X}_b) \otimes GDR^{\Psi-\mathrm{la}}_{0, \mathcal{M}_{\infty}}$ are given by taking the $\chi_{\lambda_{\Psi}}$-isotypic part of the components with respect to the action of $J_b(\mathbb{Q}_p)$ by the same calculation as in the proof of Lemma \ref{BGG flag}. \footnote{Roughly speaking, the constructions of {\S} 4.1 and {\S} 4.2 are equal under the duality isomorphism.} Here, we identify the Lie algebra of $G$ over $L$ and the Lie algebra of the inner form $J_b$ of $G$ over $L$. Thus the result follows from the quasi-isomorphism $GDR^{\Psi-\mathrm{la}}_{0, \mathcal{M}_{\infty}} \cong \overline{GDR}^{\Psi-\mathrm{la}}_{0, \check{\mathcal{M}}_{\infty}} \cong \mathcal{O}_{\mathcal{M}_{\infty}}^{\mathrm{sm}}$, which is deduced from Proposition \ref{locally analytic de Rham duality} and Proposition \ref{anti trivial}. \end{proof}

Note that the space $\mathcal{A}^{\mathrm{sm}}_{GI}(V_{\lambda})_{C} := \varinjlim_{K}\mathcal{A}_{GI}(K, V_{\lambda})_{C}$ (resp. $\mathcal{A}^{\mathrm{sm}}_{GI}(K^p, V_{\lambda})_{C} := \varinjlim_{K_p}\mathcal{A}_{GI}(K^pK_p, V_{\lambda})_{C}$) of algebraic automorphic forms of weight $\lambda$ on $GI(\mathbb{A}_{\mathbb{Q}})$ can be regarded as $\varinjlim_{K} \mathrm{Map}_{K}(X, D_{\lambda}(\mathbb{X}_{b})) \otimes_{\breve{L}} C$ (resp. $\varinjlim_{K_p} \mathrm{Map}_{K_p}(X_{K^p}, D_{\lambda}(\mathbb{X}_{b})) \otimes_{\breve{L}} C$).

\begin{thm} \label{basic cohomology}

We have the following natural quasi-isomorphism.

$GDR^{\Psi-\mathrm{la}}_{\lambda}|_{\Fl^{b}} \cong R\Gamma^{\mathrm{sm}}(J_{b}(\mathbb{Q}_p), R\pi_{\mathrm{HT}, \mathcal{M} *}(\mathcal{A}_{GI}^{\mathrm{sm}}(K^p, V_{\lambda})_C \otimes_{C} (\otimes_{\tau \in \Psi}( \omega_{\tau, \check{\mathcal{M}}}^{-\lambda_{\tau}, J_b(\mathbb{Q}_p)-\mathrm{sm}} \otimes (\wedge^2 V_{\tau})^{-\lambda_{\tau}})))).$
        
    \end{thm}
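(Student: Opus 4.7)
The strategy is to descend Proposition \ref{de Rham uniformization} from the full perfectoid $\mathcal{S}$ to the partially perfectoid $\aS_{K^p}$, push forward via $\pi_{\mathrm{HT}}$ to $\Fl^b$, and then peel off the $J_b(\mathbb{Q}_p)$-invariants outside via $R\Gamma^{\mathrm{sm}}(J_b(\mathbb{Q}_p), -)$.

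More concretely, I would first take $K^p$-invariants of Proposition \ref{de Rham uniformization} to obtain a quasi-isomorphism on $\aS_{K^p}^b$
\[
GDR^{\Psi-\mathrm{la}}_{\lambda, \aS_{K^p}}|_{\aS_{K^p}^b} \cong (t_*\mathcal{F}_{K^p})^{J_b(\mathbb{Q}_p)},
\]
where $X_{K^p} := GI(\mathbb{Q}) \setminus GI(\mathbb{A}_{\mathbb{Q}}^\infty)/K^p$, the sheaf $\mathcal{F}_{K^p}$ on $\mathcal{M}_\infty \times X_{K^p}$ is defined by the same formula as $\mathcal{F}$ with $X$ replaced by $X_{K^p}$, and $t : \mathcal{M}_\infty \times X_{K^p} \to \aS_{K^p}^b$ is the $J_b(\mathbb{Q}_p)$-quotient map supplied by Theorem \ref{uniformization} (the descent is legitimate by the level-$K^p$ version of Lemma \ref{equivariant sheaves}). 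Applying $R\pi_{\mathrm{HT}, *}$ and using the compatibility of the local and global Hodge-Tate period maps $\pi_{\mathrm{HT}} \circ t = \pi_{\mathrm{HT}, \mathcal{M}} \circ \mathrm{pr}_1$ built into the basic uniformization, this transforms into
\[
R\pi_{\mathrm{HT}, *}((t_*\mathcal{F}_{K^p})^{J_b(\mathbb{Q}_p)}) \cong R\Gamma^{\mathrm{sm}}(J_b(\mathbb{Q}_p), R\pi_{\mathrm{HT}, \mathcal{M}, *}(\mathrm{pr}_{1, *}\mathcal{F}_{K^p})).
\]
Since $X_{K^p}$ is profinite and $\mathrm{pr}_1$ has profinite fibers, $\mathrm{pr}_{1, *}\mathcal{F}_{K^p}$ unwinds directly to $\mathcal{A}^{\mathrm{sm}}_{GI}(K^p, V_\lambda)_C \otimes_C (\otimes_{\tau \in \Psi}(\omega_{\tau, \check{\mathcal{M}}}^{-\lambda_\tau, J_b(\mathbb{Q}_p)-\mathrm{sm}} \otimes (\wedge^2 V_\tau)^{-\lambda_\tau}))$ on $\mathcal{M}_\infty$, by the identification of algebraic automorphic forms on $GI$ at level $K^p$ with $J_b(\mathbb{Q}_p)$-smooth maps $X_{K^p} \to D_\lambda(\mathbb{X}_b)$, which yields the claimed description.

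The main obstacle will be justifying the derived-functor swap in the second step, i.e.\ verifying that $R\pi_{\mathrm{HT}, *}$ commutes with passage to $J_b(\mathbb{Q}_p)$-invariants in the precise sense written above. The key input is Lemma \ref{properly discontinuous}: the $J_b(\mathbb{Q}_p)$-action on $\mathcal{M}_\infty \times X_{K^p}$ is properly discontinuous around any quasicompact open, so $t$ looks locally on $\aS_{K^p}^b$ like a disjoint-union $J_b(\mathbb{Q}_p)$-cover, and one can check the identity by writing both sides as explicit $\check{\mathrm{C}}$ech-style resolutions on small enough quasicompact opens. A subsidiary task is to verify that the Grothendieck-Messing-indexed $\omega_\tau$'s emerging from Proposition \ref{de Rham uniformization} (via the local de Rham complex on $\mathcal{M}_\infty$) match the Hodge-Tate-indexed $\omega_{\tau, \check{\mathcal{M}}}$'s on the right-hand side through the duality isomorphism $\mathcal{M}_\infty \cong \check{\mathcal{M}}_\infty$ of Theorem \ref{duality}; this is essentially bookkeeping but needs tracking through the quasi-isomorphism $GDR^{G(\mathbb{Q}_p)\mathrm{-}\Psi\mathrm{-la}}_{0, \mathcal{M}_\infty} \cong \overline{GDR}^{J_b(\mathbb{Q}_p)\mathrm{-}\Psi\mathrm{-la}}_{0, \check{\mathcal{M}}_\infty}$ of Proposition \ref{locally analytic de Rham duality}.
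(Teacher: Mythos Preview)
Your overall strategy is the right one and matches the paper's, but there is a genuine gap in the order of operations. You propose to take $K^p$-invariants \emph{first}, obtaining a quotient map $t : \mathcal{M}_\infty \times X_{K^p} \to \aS_{K^p}^b$ and invoking a ``level-$K^p$ version of Lemma \ref{equivariant sheaves}''. The problem is that no such level-$K^p$ version is available: Lemma \ref{equivariant sheaves} relies on the proper discontinuity established in Lemma \ref{properly discontinuous}, and that lemma only gives freeness of the $J_b(\mathbb{Q}_p)$-action on $U \times \{GI(\mathbb{Q}) x K^p_1\}$ after \emph{shrinking} $K^p_1$; the Remark immediately following Lemma \ref{properly discontinuous} explicitly says the author does not know whether the action is free at fixed level $K^p$. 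So when you later write ``the $J_b(\mathbb{Q}_p)$-action on $\mathcal{M}_\infty \times X_{K^p}$ is properly discontinuous around any quasicompact open,'' that is precisely the statement that is \emph{not} established, and your equivalence of categories and the ensuing derived-functor swap are not justified as written.

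The paper's fix is to reverse the order: work throughout on the full tower $\mathcal{S} = \varprojlim_{K^p_1} \aS_{K^p_1}$, where Lemma \ref{properly discontinuous} and hence Lemma \ref{equivariant sheaves} do apply. One first writes $GDR^{\Psi-\mathrm{la}}_{\lambda} = (R\pi_{\mathrm{HT}, \aS *} GDR^{\Psi-\mathrm{la}}_{\lambda, \aS})^{K^p}$ (this is Proposition \ref{derived calculation II}), then plugs in Proposition \ref{de Rham uniformization} at full level to get $(s_*\mathcal{F})^{J_b(\mathbb{Q}_p)} = R\Gamma^{\mathrm{sm}}(J_b(\mathbb{Q}_p), Rs_*\mathcal{F})$ on $\mathcal{S}^b$, pushes forward along $\pi_{\mathrm{HT}, \aS} \circ s = \pi_{\mathrm{HT}, \mathcal{M}} \circ \mathrm{pr}_1$, commutes $R\Gamma^{\mathrm{sm}}(J_b(\mathbb{Q}_p), -)$ past $R\pi_{\mathrm{HT}, \aS *}$, uses $R\mathrm{pr}_{1*} = \mathrm{pr}_{1*}$ (profinite fibers), and only \emph{then} takes $K^p$-invariants, which is exact on smooth $K^p$-representations. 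Your identification of $\mathrm{pr}_{1*}\mathcal{F}$ with algebraic automorphic forms tensored with the $\omega_{\tau, \check{\mathcal{M}}}$'s, and your remark that the duality of Proposition \ref{locally analytic de Rham duality} accounts for the switch from Grothendieck--Messing to Hodge--Tate line bundles, are both correct and are exactly what the paper does at that stage.
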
 

    \begin{proof} By Proposition \ref{derived calculation II}, we have $GDR^{\Psi-\mathrm{la}}_{\lambda} = (R\pi_{\mathrm{HT}, \aS *}GDR^{\Psi-\mathrm{la}}_{\lambda, \aS})^{K^p}$. By Proposition \ref{de Rham uniformization} and Lemma \ref{equivariant sheaves}, we have $GDR^{\Psi-\mathrm{la}}_{\lambda, \aS}|_{\pi_{\mathrm{HT}}^{-1}(\Fl^b)} = (s_* \mathcal{F})^{J_b(\mathbb{Q}_p)} = R\Gamma^{\mathrm{sm}}(J_b(\mathbb{Q}_p), Rs_* \mathcal{F})$. 

Thus we obtain \begin{align*} & \ \ \ \ \ GDR^{\Psi-\mathrm{la}}_{\lambda}|_{\Fl^b} \\
    &= (R(\pi_{\mathrm{HT}, \aS}|_{\pi_{\mathrm{HT}}^{-1}(\Fl^b)})_*(GDR^{\Psi-\mathrm{la}}_{\lambda, \aS_{K^p}}|_{\pi_{\mathrm{HT}}^{-1}(\Fl^b)}))^{K^p} \\ 
    &= (R(\pi_{\mathrm{HT}, \aS}|_{\pi_{\mathrm{HT}}^{-1}(\Fl^b)})_*(R\Gamma^{\mathrm{sm}}(J_b(\mathbb{Q}_p), Rs_* \mathcal{F})))^{K^p} \\ 
    &= R\Gamma^{\mathrm{sm}}(J_b(\mathbb{Q}_p), R(\pi_{\mathrm{HT}, \aS}|_{\pi_{\mathrm{HT}}^{-1}(\Fl^b)})_* Rs_* \mathcal{F})^{K^p} \\ 
    &= R\Gamma^{\mathrm{sm}}(J_b(\mathbb{Q}_p), R\pi_{\mathrm{HT}, \mathcal{M} *} R\mathrm{pr}_{1*}(\mathcal{F}))^{K^p}. \end{align*} 

Note that $R{\mathrm{pr}_1}_{*} = {\mathrm{pr}_{1}}_{*}$. In fact, by using the fact that $X$ is a profinite space, we see that every surjection $\mathcal{G} \twoheadrightarrow \mathcal{G}'$ of sheaves on $\mathcal{M}_{\infty} \times X$ gives a surjection ${\mathrm{pr}_{1}}_{*}\mathcal{G} \twoheadrightarrow {\mathrm{pr}_{1}}_{*}\mathcal{G}'$.

Thus for any $K$ and any $K_p$-stable quasicompact open subset $U$ of $\mathcal{M}_{\infty}$, we have \begin{align*}R\mathrm{pr}_{1 *}(\mathcal{F})(U)  & = \mathrm{pr}_{1 *}(\mathcal{F})(U) \\ 
    &= \varinjlim_{K_1 \subset K}\mathrm{Map}_{K_{1}}(X, D_{\lambda}(\mathbb{X}_{b}) \otimes_{\breve{L}} (\otimes_{\tau \in \Psi} (\omega_{\tau, \check{\mathcal{M}}_{\infty}}^{-\lambda_{\tau}, J_b(\mathbb{Q}_p)-\mathrm{sm}} \otimes (\wedge^2V_{\tau})^{-\lambda_{\tau}}))(U)) \\ 
    &= \mathcal{A}_{GI}^{\mathrm{sm}}(V_{\lambda})_C \otimes_{C} (\otimes_{\tau \in \Psi} (\omega_{\tau, \check{\mathcal{M}}_{\infty}}^{-\lambda_{\tau}, J_b(\mathbb{Q}_p)-\mathrm{sm}} \otimes (\wedge^2 V_{\tau})^{-\lambda_{\tau}})(U)). \end{align*} Thus we obtain the result because taking $K^p$-invariant part is exact for any smooth representations of $K^p$. \end{proof}

The following result is useful later. Let $GDR^{\mathrm{sm}}_{0, \mathcal{M}_{\infty}}$ (resp. $GDR^{\mathrm{sm}}_{0, \check{\mathcal{M}}_{\infty}}$) be the colimit of the pullbacks to $\mathcal{M}_{\infty}$ (resp. $\check{\mathcal{M}}_{\infty}$) of the de Rham complexes of finite levels.

\begin{thm} (\cite[Theorem 5.2.2.]{DC}) \label{de Rham duality}

We have a canonical $G(\mathbb{Q}_p) \times J_b(\mathbb{Q}_p)$-equivariant isomorphism $GDR^{\mathrm{sm}}_{0, \mathcal{M}_{\infty}} \cong GDR^{\mathrm{sm}}_{0, \check{\mathcal{M}}_{\infty}}$. In particular, the action of $J_{b}(\mathbb{Q}_p)$ on $H^i(GDR^{\mathrm{sm}}_{0, \mathcal{M}_{\infty}})$ (resp. $G(\mathbb{Q}_p)$ on $H^i(GDR^{\mathrm{sm}}_{0, \check{\mathcal{M}}_{\infty}})$) is smooth.

\end{thm}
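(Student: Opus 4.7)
The plan is to bootstrap the locally analytic identification of Proposition \ref{locally analytic de Rham duality} to a smooth identification, using also the $G(\mathbb{Q}_p) \times J_b(\mathbb{Q}_p)$-equivariant isomorphism $\mathcal{M}_\infty \cong \check{\mathcal{M}}_\infty$ of Theorem \ref{duality}. The starting point is the canonical isomorphism
\[
GDR^{G(\mathbb{Q}_p)-\Psi-\mathrm{la}}_{0, \mathcal{M}_{\infty}} \cong \overline{GDR}^{J_b(\mathbb{Q}_p)-\Psi-\mathrm{la}}_{0, \check{\mathcal{M}}_{\infty}},
\]
inside which I would locate the two smooth de Rham complexes as subcomplexes and verify that they match.

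First, I would show that $GDR^{\mathrm{sm}}_{0, \mathcal{M}_{\infty}}$ is recovered as the $G(\mathbb{Q}_p)$-smooth subcomplex of $GDR^{G(\mathbb{Q}_p)-\Psi-\mathrm{la}}_{0, \mathcal{M}_{\infty}}$. In degree zero this uses $\mathcal{O}_{\mathcal{M}_\infty}^{G(\mathbb{Q}_p)-\mathrm{sm}} = \varinjlim_{K_p}\pi_{K_p}^{-1}\mathcal{O}_{\mathcal{M}_{K_p}}$, which sits inside $\mathcal{O}_{\mathcal{M}_\infty}^{\Psi-\mathrm{la}, (0,0)}$ as the $\mathfrak{g}$-killed vectors; in higher degrees, the Kodaira--Spencer isomorphism $\Omega^{1, \mathrm{sm}}_{\mathcal{M}_\infty} \cong \oplus_{\tau \in \Psi} \omega^{2, \mathrm{sm}}_{\tau, \mathcal{M}_\infty} \otimes \wedge^2 D(\mathbb{X}_b)_\tau$, together with the identification $\omega^{2, \mathrm{sm}}_{\tau, \mathcal{M}_\infty} \otimes \wedge^2 D(\mathbb{X}_b)_\tau \cong \mathcal{O}_{\mathcal{M}_\infty}^{\mathrm{sm}} \otimes_{\mathcal{O}_{\Fl_{\mathrm{GM},\mathcal{M}}}} \omega^{-2}_{\tau, \Fl_{\mathrm{GM},\mathcal{M}}} \otimes (\wedge^2 D(\mathbb{X}_b)_\tau)^{-1}$ already used in constructing $GDR^{\Psi-\mathrm{la}}_{0, \mathcal{M}_\infty}$, reduces the claim to a line bundle twist of the degree zero statement. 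By the symmetric construction applied on the dual side, $GDR^{\mathrm{sm}}_{0, \check{\mathcal{M}}_{\infty}}$ sits similarly inside $\overline{GDR}^{J_b(\mathbb{Q}_p)-\Psi-\mathrm{la}}_{0, \check{\mathcal{M}}_{\infty}}$ as the corresponding $J_b(\mathbb{Q}_p)$-smooth subcomplex.

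Second, I would check that the identification of Proposition \ref{locally analytic de Rham duality} restricts to an isomorphism between these two subcomplexes. At the sheaf level in degree zero this amounts to showing that $\mathcal{O}_{\mathcal{M}_\infty}^{G(\mathbb{Q}_p)-\mathrm{sm}} = \mathcal{O}_{\check{\mathcal{M}}_\infty}^{J_b(\mathbb{Q}_p)-\mathrm{sm}}$ under the duality, which is the smooth analogue of Proposition \ref{locally analytic duality}. The content here is that sections pulled back from finite $G$-level $\mathcal{M}_{K_p}$ are automatically $J_b(\mathbb{Q}_p)$-smooth, and vice versa, since the $J_b(\mathbb{Q}_p)$-action on $\mathcal{M}_\infty$ descends to each finite level in a compatible way, reflecting the symmetric role of the two groups in the dual EL data $(G, b, \mu)$ and $(\check{G}, \check{b}, \check{\mu})$. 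The compatibility of the differentials then follows from Proposition \ref{locally analytic de Rham duality} by restricting to the smooth parts.

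The hardest part will be the precise bookkeeping of line bundles and Tate twists when passing through the duality $\mathcal{M}_\infty \cong \check{\mathcal{M}}_\infty$, which interchanges the Hodge--Tate and Grothendieck--Messing filtrations: the Kodaira--Spencer isomorphisms on $\mathcal{M}$ and $\check{\mathcal{M}}$ each involve the respective Grothendieck--Messing line bundles, and one needs to verify that these match the Hodge--Tate line bundles on the opposite side in a way compatible with the smooth structures. This is the technical content of \cite[Theorem 5.2.2]{DC}, and in our setting it can be verified fibrewise after localizing to small affinoids as in Proposition \ref{local expansion formula}.
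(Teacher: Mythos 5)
This theorem is not proved in the paper: it is cited directly from \cite[Theorem 5.2.2]{DC}, and the accompanying remark explicitly notes that the statement is of the same depth as the Lubin--Tate/Drinfeld de Rham comparison of \cite{CDN} and \cite{LD}. Your proposal attempts to derive it from Proposition \ref{locally analytic de Rham duality}, and this cannot work, for two concrete reasons.

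First, Proposition \ref{locally analytic de Rham duality} identifies $GDR^{G(\mathbb{Q}_p)-\Psi-\mathrm{la}}_{0,\mathcal{M}_\infty}$ with the \emph{anti-holomorphic} complex $\overline{GDR}^{J_b(\mathbb{Q}_p)-\Psi-\mathrm{la}}_{0,\check{\mathcal{M}}_\infty}$, not with $GDR^{J_b(\mathbb{Q}_p)-\Psi-\mathrm{la}}_{0,\check{\mathcal{M}}_\infty}$. The $J_b(\mathbb{Q}_p)$-smooth subcomplex of $\overline{GDR}^{J_b(\mathbb{Q}_p)-\Psi-\mathrm{la}}_{0,\check{\mathcal{M}}_\infty}$ is not $GDR^{\mathrm{sm}}_{0,\check{\mathcal{M}}_\infty}$: by Proposition \ref{anti trivial}, $J_b$-smooth sections are exactly the kernel of the first anti-holomorphic differential $\overline{d}$, so the differentials of $\overline{GDR}$ vanish identically on the smooth subcomplex, whereas $GDR^{\mathrm{sm}}_{0,\check{\mathcal{M}}_\infty}$ is the genuine de Rham complex at finite $J_b$-level with nonzero differentials. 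Concretely, $\overline{GDR}_{\check{\mathcal{M}}}$ differentiates along $\Fl_{\mathrm{HT},\check{\mathcal{M}}}=\Fl_{\mathrm{GM},\mathcal{M}}$, while $GDR^{\mathrm{sm}}_{\check{\mathcal{M}}}$ differentiates along $\Fl_{\mathrm{GM},\check{\mathcal{M}}}=\Fl_{\mathrm{HT},\mathcal{M}}$; these are the two opposite flag varieties in the duality, so the two complexes live in different directions.

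Second, the claim that ``sections pulled back from finite $G$-level $\mathcal{M}_{K_p}$ are automatically $J_b(\mathbb{Q}_p)$-smooth, and vice versa'' is false, and this is precisely why the theorem is nontrivial. The $J_b(\mathbb{Q}_p)$-action on $\mathcal{M}_{K_p}$ (or on $\mathcal{O}_{\mathcal{M}_{K_p}}$) is continuous but has non-open stabilizers; in the prototypical Lubin--Tate/Drinfeld setting, functions at finite Lubin--Tate level are emphatically not at finite Drinfeld level. Hence $\mathcal{O}_{\mathcal{M}_\infty}^{G-\mathrm{sm}}$ and $\mathcal{O}_{\check{\mathcal{M}}_\infty}^{J_b-\mathrm{sm}}$ are genuinely different subsheaves of $\mathcal{O}_{\mathcal{M}_\infty}^{\Psi-\mathrm{la}}$, and there is no ``smooth analogue'' of Proposition \ref{locally analytic duality}. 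Indeed, the second sentence of the theorem — that the $J_b(\mathbb{Q}_p)$-action on $H^i(GDR^{\mathrm{sm}}_{0,\mathcal{M}_\infty})$ is smooth — is highlighted precisely because it is not visible at the level of the complex, and is itself a deep consequence of the de Rham comparison rather than an input to its proof. The locally analytic duality used in your argument is too soft to see any of this.
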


\begin{rem}

    \cite[Theorem 0.6]{CDN} first proved a similar result in the Lubin-Tate curve and Drinfeld curve case and recently, an elementary proof of a similar result was given by \cite[Theorem 0.4.11]{LD} in the Lubin-Tate space and Drinfeld space case. The author think that the same argument also works in our situation.

\end{rem}

\subsection{Classicality of geometric locally analytic de Rham complexes}

In the following, we use various unitary groups. In order to give identifications of local components of different unitary groups, we use the following convention.

\vspace{0.5 \baselineskip}

\textbf{Convention} \label{convention}

\vspace{0.5 \baselineskip}

We consider $(S(B_1), \Psi_1)$ and $(S(B_2), \Psi_2)$ as $(S(B), \Psi)$ in {\S} 3.1. These data induce central division algebras $B_1$ and $B_2$ over $F$ of dimension $4$ and unitary similitude groups $GU_1/\mathbb{Q}$ and $GU_2/\mathbb{Q}$ via Proposition \ref{unitary groups}. For any prime $r$ splitting $uu^c$ in $F_0$ not lying below a place in $S(B_1) \cup S(B_2)$, we fix an identification between $GU(\mathbb{Q}_r) = \mathbb{Q}_r^{\times} \times \prod_{w \mid u} B_{1, w}^{\mathrm{op}, \times}$ and $GU(\mathbb{Q}_r) = \mathbb{Q}_r^{\times} \times \prod_{w \mid u} B_{2, w}^{\mathrm{op}, \times}$ by $B_{1, w} \cong M_2(F_w) \cong B_{2, w}$. This identification is unique up to inner automorphisms and thus isomorphism classes of representations are independent of the choice of this identification.

\begin{dfn}

Let $\varphi : \mathbb{T}^S \rightarrow C$ be an $\mathcal{O}$-morphism.

We say that $\varphi$ is a classical eigensystem of weight $\lambda$ if after replacing $S$ if necessary, $\varphi$ is an eigensystem associated with a cohomological automorphic representation $\chi \boxtimes \pi$ of $\mathbb{A}_{F_0}^{\times} \times \mathrm{GL}_2(\mathbb{A}_{F})$ which is a base change of a cohomological cuspidal automorphic representation of weight $\lambda$ of a unitary similitude group $GU'/\mathbb{Q}$ defined in {\S} 3.1 for some $S'(B)$ and $\Psi'$. Note that we fix data $F^+$, $F_0$ and $\iota$.

\end{dfn}

Inspired by \cite[{\S} 5]{PanII}, we propose the following conjecture.

\begin{conj}\label{classicality conjecture}
For any $b \in B(G, \mu^{-1})$, $H^d(\Fl, j_{b, !}j_{b}^{*}GDR^{\Psi-\mathrm{la}}_{\lambda})$ has a generalized Hecke eigenspace decomposition by classical eigensystems of weight $\lambda$, where $j_b : \Fl^b \hookrightarrow \Fl$ denotes the natural immersion.

Consequently, $H^d(\Fl, GDR^{\Psi-\mathrm{la}}_{\lambda})$ has a generalized Hecke eigenspace decomposition by classical eigensystems of weight $\lambda$. \end{conj}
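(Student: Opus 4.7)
The plan is to reduce to a stratum-by-stratum analysis via the Newton stratification $\Fl = \sqcup_{b \in B(G,\mu^{-1})} \Fl^b$. Using excision sequences along with the closure relations among the strata, the cohomology $H^d(\Fl, GDR^{\Psi-\mathrm{la}}_{\lambda})$ is built out of the pieces $H^d(\Fl, j_{b,!}j_b^{*}GDR^{\Psi-\mathrm{la}}_{\lambda})$; it therefore suffices to treat each $b$ separately and show Hecke-equivariantly that the corresponding cohomology decomposes along classical eigensystems of weight $\lambda$. I would organize the argument by the type of the stratum: basic, $\mu$-ordinary, and (when $d\geq 3$) the intermediate strata.

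For the basic stratum $b_1$, Theorem~\ref{basic cohomology} provides the description $GDR^{\Psi-\mathrm{la}}_{\lambda}|_{\Fl^{b_1}} \cong R\Gamma^{\mathrm{sm}}(J_{b_1}(\mathbb{Q}_p), R\pi_{\mathrm{HT}, \mathcal{M}*}(\mathcal{A}^{\mathrm{sm}}_{GI}(K^p, V_{\lambda})_C \otimes (\otimes_{\tau} \omega_{\tau,\check{\mathcal{M}}}^{-\lambda_{\tau}, J_b-\mathrm{sm}}\otimes(\wedge^2V_{\tau})^{-\lambda_{\tau}})))$. The Hecke action factors through its action on the smooth automorphic forms $\mathcal{A}^{\mathrm{sm}}_{GI}(K^p, V_{\lambda})_C$ on the inner form $GI$ with $GI(\mathbb{R})$ compact modulo center, and these decompose by definition into Hecke eigensystems associated with cohomological automorphic representations of $GI$. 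Using the base change Theorem~\ref{base changeII} and the convention identifying prime-to-$S$ Hecke algebras of different inner forms (see Convention after Conjecture~\ref{classicality conjecture}), these eigensystems are precisely classical eigensystems of weight $\lambda$ in the sense defined above; this handles the basic case.

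For the $\mu$-ordinary stratum $b_0$, Theorem~\ref{ordinary cohomology} gives $H^d(\Fl, j_{b_0, *}j_{b_0}^{*}GDR^{\Psi-\mathrm{la}}_{\lambda}) \cong \mathrm{Ind}^{G(\mathbb{Q}_p)}_{P_{S_0}}(H^d_{\mathrm{dR}}(\aS_{K^p}^{\mu-\mathrm{can}, \dagger}, D_{\lambda}^{\dagger}) \otimes (\otimes_{\tau}\chi_{\tau}^{\lambda_{\tau}}))$, and the analogous statement for $j_{b_0,!}j_{b_0}^{*}$ can be extracted from the same sheafy computation (with a degree shift of $d$ coming from the dimension of the $\mu$-ordinary Igusa variety). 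The Hecke module $H^d_{\mathrm{dR}}(\aS_{K^p}^{\mu-\mathrm{can},\dagger}, D_{\lambda}^{\dagger})$ is finite-dimensional by Proposition~\ref{finitedimensional}; after identifying the dagger affinoid $\aS_{K^p}^{\mu-\mathrm{can},\dagger}$ with a canonical tube over the $\mu$-ordinary Igusa variety $\mathrm{Igs}^{b_0}_{K^p}$ and using the rigid cohomology comparison, this should be interpreted as $H^{\mathrm{top}}_{\mathrm{rig}}(\mathrm{Igs}^{b_0}_{K^p}, D_{\lambda})$. Classicality then follows, at least when the Igusa variety has dimension $\leq 1$, by directly computing $H^0_{\mathrm{rig}}$ as functions on a Shimura set of an appropriate inner form (cf.~\cite[Lemma 4.4.13]{PanII}); in dimension $\geq 2$ one needs the recent results \cite{DC} to identify the contribution with classical automorphic forms.

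The main obstacle is the intermediate strata, which appear only when $d \geq 3$ (recall Corollary~\ref{GL iso}: for $d \leq 2$ only basic and $\mu$-ordinary strata exist). On such a stratum one expects a Mantovan-type formula expressing the contribution as an Igusa-variety cohomology smeared against a locally analytic de Rham cohomology of a non-basic Rapoport–Zink space, and the classicality of the former is a genuinely open problem because the alternating-sum classicality obtained from trace formulas is insufficient to control individual cohomological degrees. Consequently, the complete proof of Conjecture~\ref{classicality conjecture} is only carried out under $[F_w:\mathbb{Q}_p] \leq 2$ (Theorem~\ref{quadratic}), where the intermediate strata are absent and only the basic and $\mu$-ordinary arguments above are needed; in general one must assume or prove the classicality of higher-dimensional Igusa-variety rigid cohomology.
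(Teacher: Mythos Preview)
Your overall strategy matches the paper's: reduce via excision along the Newton stratification, handle the basic stratum through algebraic automorphic forms on the compact-modulo-center inner form $GI$ (Theorem~\ref{basic cohomology} plus Proposition~\ref{algebraic automorphic forms}, giving Corollary~\ref{basic classicality}), and note that for $d\le 2$ only the basic and $\mu$-ordinary strata occur (Corollary~\ref{GL iso}), so the conjecture is only established in that range.

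Your treatment of the $\mu$-ordinary stratum, however, differs from the paper's in a substantive way. You propose to identify $H^*_{\mathrm{dR}}(\aS_{K^p}^{\mu\text{-}\mathrm{can},\dagger}, D_{\lambda}^{\dagger})$ with the rigid cohomology of the $\mu$-ordinary Igusa variety and then argue classicality of the latter. The paper explicitly declines to prove this comparison (see the Remark immediately after Theorem~\ref{ordinary cohomology}). Instead, Proposition~\ref{classicality of Igusa} runs the excision triangle $j_!j^*GDR^{\mathrm{sm}}_{\lambda}\to GDR^{\mathrm{sm}}_{\lambda}\to i_*i^*GDR^{\mathrm{sm}}_{\lambda}$ for the \emph{smooth} de Rham complex: the middle term is classical by the usual de Rham comparison at finite level (Theorem~\ref{Zuker} and Proposition~\ref{BGG}); the basic term $j_!j^*GDR^{\mathrm{sm}}_{\lambda}$ is classical because the de Rham duality of Rapoport--Zink spaces from \cite{DC} (Theorem~\ref{de Rham duality}) identifies $GDR^{\mathrm{sm}}_{0,\mathcal{M}_{\infty}}\cong GDR^{\mathrm{sm}}_{0,\check{\mathcal{M}}_{\infty}}$, making the $J_b(\mathbb{Q}_p)$-action on its cohomology smooth, so the contribution again factors through $\mathcal{A}^{\mathrm{sm}}_{GI}(V_{\lambda})_C$. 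Classicality of the $\mu$-ordinary piece $\mathrm{Ind}^{\mathrm{sm}}(H^*_{\mathrm{dR}}(\aS^{\mu\text{-}\mathrm{can},\dagger}, D_{\lambda}^{\dagger}))$ then falls out of the long exact sequence. Thus the role of \cite{DC} in the paper is not to identify the Igusa contribution directly but to control the basic part of the smooth complex; your proposed route would require the Igusa--dagger comparison, which is not supplied.

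One minor correction: $\Fl^{b_0}$ is \emph{closed} in $\Fl$ (it is a profinite set), so $j_{b_0,!}=j_{b_0,*}$ and Theorem~\ref{ordinary cohomology} already computes $H^m(\Fl, i_*i^*GDR^{\Psi\text{-}\mathrm{la}}_{\lambda})$ with no further degree shift; it is $\Fl^{b_1}$ (basic) that is open.
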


The author roughly expects that $H^d(\Fl, j_{b, !}j_{b}^{*}GDR^{\Psi-\mathrm{la}}_{\lambda})$ can be written by the $J_b(\mathbb{Q}_p)$-invariant part of (the rigid cohomology of the Igusa variety) $\otimes$ (geometric locally analytic de Rham complex of the infinite level of the Rapoport-Zink space), which is an analogue of the Mantovan formula for the $l$-adic $\etale$ cohomology of Shimura varieties (see \cite[Theorem 22]{Mant}) and the classicality follows from the classicality of the rigid cohomology of the Igusa variety, which is still open in general (see \cite[proof of Theorem 5.5.7]{PanII} in the modular curve case).

\vspace{0.5 \baselineskip}

We only prove the above conjecture on the basic locus in general and on the $\mu$-ordinary locus in some cases.

\begin{prop}\label{algebraic automorphic forms}

$\mathcal{A}^{\mathrm{sm}}_{GI}(K^p, V_{\lambda})_C$ has a Hecke eigenspace decomposition by classical eigensystems of weight $\lambda$.

\end{prop}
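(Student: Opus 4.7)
The strategy rests on two structural observations: first, $GI(\mathbb{R})$ is compact modulo center, so automorphic forms on $GI$ decompose cleanly into cuspidal Hecke eigenspaces; and second, $GI$ is itself one of the unitary similitude groups permitted in the definition of ``classical eigensystem'', so each such eigenspace trivially witnesses its own classicality.

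First, I would spectrally decompose the space. Since $GI(\mathbb{R})$ is an inner form of $G(\prod_{\tau \in \Phi} U(0,2))$ and thus compact modulo center, the standard decomposition of the discrete automorphic spectrum together with the identification $\mathcal{A}^{\mathrm{sm}}_{GI}(K^p, V_{\lambda})_C \cong \varinjlim_{K_p} \mathrm{Map}_{K_p}(X_{K^p}, D_{\lambda}(\mathbb{X}_b)) \otimes_{\breve{L}} C$ yields
\[
\mathcal{A}^{\mathrm{sm}}_{GI}(K^p, V_{\lambda})_C \;=\; \bigoplus_{\sigma'} m(\sigma') \cdot ((\sigma')^{\infty})^{K^p},
\]
where $\sigma'$ ranges over cuspidal automorphic representations of $GI(\mathbb{A}_{\mathbb{Q}})$ that are cohomological of weight $\lambda$ with $((\sigma')^{\infty})^{K^p} \neq 0$. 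The coefficient system $D_{\lambda}(\mathbb{X}_b)$ pins down $\sigma'_\infty$ to match $V_\lambda$ in the appropriate cohomological sense, and each summand is manifestly a simultaneous Hecke eigenspace for $\mathbb{T}^S$.

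Next, I would verify that $GI$ itself qualifies as one of the unitary similitude groups $GU'$ of $\S3.1$: this is built into the paper's construction, since $GI$ is constructed via Proposition \ref{unitary groups} from data $(S(B_{GI}), \Psi_{GI})$ with $\Psi_{GI} = \emptyset$, so all the required conditions (the splitting hypothesis on primes below $S(B_{GI})$, the self-conjugacy $S(B_{GI})^c = S(B_{GI})$, the parity $\tfrac{1}{2}|S(B_{GI})| + |\Psi_{GI}| \equiv 0 \pmod 2$, and the nondegeneracy case $\Psi_{GI} \neq \Phi$) are automatically satisfied by the successful invocation of Proposition \ref{unitary groups}. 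Then, for each $\sigma'$ in the decomposition above, Theorem \ref{base changeII} produces a cohomological isobaric representation $\chi \boxtimes \pi := BC(\sigma')$ of $\mathbb{A}_{F_0}^{\times} \times \mathrm{GL}_2(\mathbb{A}_F)$ whose Hecke eigensystem on $\mathbb{T}^S$ agrees with that of $\sigma'$. Taking $GU' = GI$, $\Psi' = \emptyset$, and $S'(B) = S(B_{GI})$ in the definition of classical eigensystem, the eigensystem of $\sigma'$ is classical of weight $\lambda$ by construction, since $\chi \boxtimes \pi$ is the base change of the cohomological cuspidal automorphic representation $\sigma'$ of $GU' = GI$.

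There is no substantial obstacle here: all the nontrivial content is already packaged into the earlier Propositions \ref{unitary groups} and Theorem \ref{base changeII}, and the only real insight needed is that the definition of ``classical eigensystem'' is flexible enough to admit $\Psi' = \emptyset$, allowing $GI$ to witness its own classicality without any nontrivial transfer between unitary groups of different signature.
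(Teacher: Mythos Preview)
Your proposal is correct and follows essentially the same approach as the paper, which proves the statement in one line by invoking Theorem \ref{Zuker} (the Matsushima formula applied to $GI$ with $\Psi' = \emptyset$). Your version simply spells out what the paper leaves implicit: the spectral decomposition into cuspidal automorphic representations and the observation that $GI$ itself is one of the admissible $GU'$ in the definition of ``classical eigensystem,'' so each Hecke eigensystem arising is classical of weight $\lambda$ by construction.
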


\begin{proof} This follows from Theorem \ref{Zuker}. \end{proof}

\begin{cor}\label{basic classicality}

For the basic element $b \in B(G, \mu^{-1})$, $H^d(\Fl, j_{b, !}j_{b}^{*}GDR^{\Psi-\mathrm{la}}_{\lambda})$ has a Hecke eigenspace decomposition by classical eigensystems of weight $\lambda$.

\end{cor}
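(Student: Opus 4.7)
Since $b$ is basic, $\Fl^{b}$ is open in $\Fl$ and $j_{b}$ is an open immersion, so $j_{b,!}$ is the exact extension-by-zero functor. My plan is to propagate the classical eigensystem decomposition of $\mathcal{A}^{\mathrm{sm}}_{GI}(K^{p}, V_{\lambda})_{C}$ supplied by Proposition \ref{algebraic automorphic forms} through the formula of Theorem \ref{basic cohomology} and then through $j_{b,!}$ and $H^{d}(\Fl, -)$.

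First I would invoke Proposition \ref{algebraic automorphic forms} to obtain a direct sum decomposition $\mathcal{A}^{\mathrm{sm}}_{GI}(K^{p}, V_{\lambda})_{C} = \bigoplus_{\varphi} \mathcal{A}^{\mathrm{sm}}_{GI}(K^{p}, V_{\lambda})_{C}[\varphi]$ into $\mathbb{T}^{S}$-eigenspaces indexed by classical eigensystems $\varphi$ of weight $\lambda$. The auxiliary coefficient sheaf $\mathcal{L} := \otimes_{\tau \in \Psi}(\omega_{\tau, \check{\mathcal{M}}}^{-\lambda_{\tau}, J_{b}(\mathbb{Q}_{p})-\mathrm{sm}} \otimes (\wedge^{2} V_{\tau})^{-\lambda_{\tau}})$ appearing in Theorem \ref{basic cohomology} carries only the actions of $G(\mathbb{Q}_{p})$ and $J_{b}(\mathbb{Q}_{p})$ and no Hecke action, so tensoring over $C$ preserves the decomposition. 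The functors $R\pi_{\mathrm{HT},\mathcal{M},*}$ and $R\Gamma^{\mathrm{sm}}(J_{b}(\mathbb{Q}_{p}), -)$ commute with this direct sum because the $\mathbb{T}^{S}$-action commutes with the geometric projection and with the $J_{b}(\mathbb{Q}_{p})$-action. Combined with Theorem \ref{basic cohomology}, this shows that $j_{b}^{*} GDR^{\Psi-\mathrm{la}}_{\lambda}$ decomposes accordingly, and applying the exact functor $j_{b,!}$ followed by $H^{d}(\Fl, -)$ yields the desired Hecke eigenspace decomposition of $H^{d}(\Fl, j_{b,!} j_{b}^{*} GDR^{\Psi-\mathrm{la}}_{\lambda})$.

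The main point to verify carefully is that each eigensystem $\varphi$ appearing in $\mathcal{A}^{\mathrm{sm}}_{GI}(K^{p}, V_{\lambda})_{C}$ is classical in the precise sense of the preceding definition, i.e.\ that $\varphi$ is associated with a cohomological cuspidal automorphic representation on some unitary similitude group covered by the framework of {\S} 3.1. Since $GI(\mathbb{R}) = G(\prod_{\tau \in \Phi} U(0,2))$ is compact modulo center, $GI$ is anisotropic modulo center over $\mathbb{Q}$, so every automorphic representation of $GI(\mathbb{A}_{\mathbb{Q}})$ is automatically cuspidal. Moreover, $GI$ itself fits into the setup of {\S} 3.1 with $\Psi' = \emptyset$ and an appropriate $S'(B)$: the prime-to-$p$ part coincides with $S(B)$ under the identification $GI(\mathbb{A}_{\mathbb{Q}}^{\infty,p}) = GU(\mathbb{A}_{\mathbb{Q}}^{\infty,p})$, while the $p$-adic part records the places $w \mid v$ at which the $w$-factor of $J_{b}(\mathbb{Q}_{p})$ is a division algebra, as described in Proposition \ref{calculation of isocrystals}; the parity condition $\frac{1}{2}|S'(B)| + 0 \equiv 0 \pmod{2}$ follows from the corresponding identity for $GU$ together with the parity contribution from the local invariants at $p$. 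Theorem \ref{base changeII} then provides the base change of the underlying cohomological cuspidal representation of $GI$ to $\mathbb{A}_{F_{0}}^{\times} \times \mathrm{GL}_{2}(\mathbb{A}_{F})$, certifying that $\varphi$ is classical of weight $\lambda$ and completing the argument.
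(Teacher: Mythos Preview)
Your proof is correct and follows essentially the same approach as the paper, which simply cites Theorem \ref{basic cohomology} and Proposition \ref{algebraic automorphic forms}. Your extra paragraph verifying that $GI$ fits the framework of {\S} 3.1 (with $\Psi' = \emptyset$ and the appropriate $S'(B)$) just unpacks what the paper leaves implicit in Proposition \ref{algebraic automorphic forms}; one minor imprecision is your phrasing that the derived functors ``commute with this direct sum'' --- what you actually need (and what your justification shows) is that these functors are $\mathbb{T}^{S}$-linear, so the semisimple Hecke action with classical eigensystems on $\mathcal{A}^{\mathrm{sm}}_{GI}(K^{p}, V_{\lambda})_{C}$ propagates to the cohomology.
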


\begin{proof}

This follows from Theorem \ref{basic cohomology} and Proposition \ref{algebraic automorphic forms}. \end{proof}

\begin{prop}\label{classicality of Igusa} Let $w \mid v$ be a place of $F$ and assume $\Psi \subset \mathrm{Hom}_{\mathbb{Q}_p}(F_w, \overline{\mathbb{Q}}_p)$ and $|\Psi| \le 2$. Then for any $i$, the de Rham cohomology $H^i_{\mathrm{dR}}(\mathcal{S}_{K^p}^{\mu-\mathrm{can}, \dagger}, D_{\lambda}^{\dagger})$ defined after Lemma \ref{dagger variety} has a generalized Hecke eigenspace decomposition by classical eigensystems of weight $\lambda$. 

\end{prop}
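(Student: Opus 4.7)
The plan is to reduce the claimed classicality to a statement about smooth representations of a $\mu$-ordinary inner form $GI^{b_0}$ of $GU$, where by Proposition \ref{calculation of isocrystals} the local group $J_{b_0}$ contains no division-algebra factors, so Matsushima's theorem already gives classicality on the automorphic side. The reduction passes through a $\mu$-ordinary uniformization, together with the de Rham duality of \cite{DC}.

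First, in analogy with the basic-locus argument of Section 4.5, I would uniformize the perfectoid fiber $\pi_{\mathrm{HT}}^{-1}(\infty) = \overline{\varprojlim_n \mathcal{S}_{K^p\Gamma(p^n)}^{\mu-\mathrm{can}}}$ (Lemma \ref{canonical locus}) using the $\mu$-ordinary Rapoport-Zink datum $(G, b_0, \mu)$, presenting it essentially as a $J_{b_0}(\mathbb{Q}_p)$-quotient of $\mathcal{M}_{b_0,\infty} \times X^{b_0}$, where $X^{b_0}$ is the relevant Igusa-type profinite set underlying algebraic automorphic forms on $GI^{b_0}$. Descending back to the dagger level and running a Mantovan-type spectral sequence would express $H^i_{\mathrm{dR}}(\mathcal{S}_{K^p}^{\mu-\mathrm{can},\dagger}, D_\lambda^\dagger)$ as a combination of (i) smooth algebraic automorphic forms on $GI^{b_0}$ and (ii) a de Rham contribution from $\mathcal{M}_{b_0}$.

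Next, I would control each factor separately. Factor (i) is classical by Matsushima (Theorem \ref{Zuker}) applied to $GI^{b_0}$, which is a bona fide inner form of $GU$ precisely because $J_{b_0}$ has no division-algebra factor in the $\mu$-ordinary case. For factor (ii), I would apply the $\mu$-ordinary analogue of the de Rham duality of \cite{DC} (as in Theorem \ref{de Rham duality}) to $\mathcal{M}_{b_0,\infty}$, transporting the de Rham cohomology to the dual Rapoport-Zink tower where the $J_{b_0}(\mathbb{Q}_p)$-action is manifestly smooth, so the Hecke eigensystems feed back into factor (i). The baseline cases are immediate: for $|\Psi|=0$ the dagger space is zero-dimensional and $H^0$ is literally a space of algebraic automorphic forms on $GI^{b_0}$; for $|\Psi|=1$ the dagger space is an affinoid curve, and $H^0, H^1$ are accessible by a direct calculation paralleling Lemma 4.4.13 of \cite{PanII}.

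The main obstacle is the case $|\Psi|=2$, where $X^{b_0}$ may itself be two-dimensional and its rigid cohomology is not computable by hand; this is exactly what forces the use of \cite{DC}, which exchanges the higher-dimensional geometric cohomology problem for a smooth representation-theoretic one on the dual side, tractable because $J_{b_0}$ is (a product of) general linear groups. The hypothesis $|\Psi|\le 2$ is natural here also at a structural level: by Corollary \ref{GL iso} it guarantees that $B(G,\mu^{-1})$ consists only of the basic and $\mu$-ordinary strata, so together with Corollary \ref{basic classicality} this proposition closes out Conjecture \ref{classicality} for $\mathcal{S}_{K^p}$ in that range.
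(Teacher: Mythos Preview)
Your approach has a genuine gap. You are attempting to uniformize the $\mu$-ordinary locus directly via a Rapoport--Zink datum $(G,b_0,\mu)$ and then invoke a ``$\mu$-ordinary analogue'' of the de Rham duality of \cite{DC}. But the duality results you cite (Theorem \ref{duality} and Theorem \ref{de Rham duality}) are specific to \emph{basic} Rapoport--Zink spaces: only in the basic case is there a dual local datum $(\check{G},\check{b},\check{\mu})$ with $\check{G}=J_b$ and $J_{\check{b}}=G$. For the $\mu$-ordinary $b_0$, Proposition \ref{calculation of isocrystals} gives $J_{b_0}=T$ a torus (an inner form of a proper Levi, not of $G$), and there is no dual Rapoport--Zink tower to which you could transport the de Rham complex. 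Relatedly, your description of $X^{b_0}$ is inconsistent: you first call it a ``profinite set underlying algebraic automorphic forms on $GI^{b_0}$'' and later acknowledge it ``may itself be two-dimensional''. The $\mu$-ordinary Igusa variety is genuinely positive-dimensional here, so there is no compact-at-infinity inner form $GI^{b_0}$ whose Shimura set it would recover.

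The paper's proof avoids all of this by working on the other side of an excision sequence. It applies the distinguished triangle
\[
j_!j^*GDR_\lambda^{\mathrm{sm}} \to GDR_\lambda^{\mathrm{sm}} \to i_*i^*GDR_\lambda^{\mathrm{sm}}
\]
for the \emph{smooth} (finite-level) de Rham complex on $\Fl$. The middle term computes $\varinjlim_{K_p}H^i_{\mathrm{dR}}(S_{K^pK_p,C},D_\lambda)$, classical by Matsushima; the basic term $j_!j^*$ is handled exactly as in \S4.5 via the basic uniformization and the de Rham duality of \cite{DC} (this is where Theorem \ref{de Rham duality} is legitimately applied); and by 2 of Lemma \ref{induction2} the $\mu$-ordinary term $i_*i^*$ is the smooth induction of $H^i_{\mathrm{dR}}(\mathcal{S}_{K^p}^{\mu-\mathrm{can},\dagger},D_\lambda^\dagger)$. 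Classicality of the $\mu$-ordinary piece then falls out of the long exact sequence. In short: the paper uses \cite{DC} on the basic locus to \emph{deduce} the $\mu$-ordinary classicality, rather than attempting to attack the $\mu$-ordinary locus head-on.
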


\begin{proof}

Note that under our assumption, we have $\Fl = \Fl^{b_0} \sqcup \Fl^{b_1}$ by Corollary \ref{GL iso}, where $b_0$ (resp. $b_1$) denotes the $\mu$-ordinary element (resp. the basic element) of $B(G, \mu^{-1})$. Let $i : \Fl^{b_0} \hookrightarrow \Fl$ (resp. $j : \Fl^{b_1} \hookrightarrow \Fl$) be the natural immersion.

Thus we have a distinguished triangle $j_{!}j^*GDR_{\lambda}^{\mathrm{sm}} \rightarrow GDR_{\lambda}^{\mathrm{sm}} \rightarrow i_*i^*GDR_{\lambda}^{\mathrm{sm}} \rightarrow$ and an exact sequence \begin{align}\label{excision}\cdots \rightarrow H^i(\Fl, j_{!}j^*GDR_{\lambda}^{\mathrm{sm}}) \rightarrow H^i(\Fl, GDR_{\lambda}^{\mathrm{sm}}) \rightarrow H^i(\Fl, i_{*}i^*GDR_{\lambda}^{\mathrm{sm}}) \rightarrow \cdots.\end{align} In the following, we calculate these cohomology groups.

First, $H^i(\Fl, GDR_{\lambda}^{\mathrm{sm}}) = \varinjlim_{K_p} H^i(S_{K^pK_p, C}, GDR_{\lambda, S_{K^pK_p, C}})$ and by using the usual de Rham comparison over $\mathbb{C}$, Theorem \ref{Zuker} and Proposition \ref{BGG}, this has a Hecke eigenspace decomposition by classical eigensystems of weight $\lambda$. 

By using Theorem \ref{uniformization}, we obtain $j^*GDR_{\lambda}^{\mathrm{sm}} = (R\Gamma^{\mathrm{sm}}(J_{b}(\mathbb{Q}_p), R\pi_{\mathrm{HT}, \mathcal{M} *}(\mathcal{G})))^{K^p}$ as in the proof of Theorem \ref{basic cohomology}, where $\mathcal{G}$ denotes the complex of sheaves defined by $$U \mapsto \varinjlim_{K' \subset K} \mathrm{Map}_{K'}(X_{{K'}^p}, D_{\lambda}(\mathbb{X}_b) \otimes GDR^{\mathrm{sm}}_{0, \mathcal{M}_{\infty}}(U))$$ for any $K$ and any $K_p$-stable quasicompact open $U$ of $\mathcal{M}_{\infty}$. Theorem \ref{de Rham duality} implies the following quasi-isomorphisms.

\begin{align*} \varinjlim_{K' \subset K} \mathrm{Map}_{K_p'}(X_{{K'}^p}, D_{\lambda}(\mathbb{X}_b) \otimes GDR^{\mathrm{sm}}_{0, \mathcal{M}_{\infty}}) &= \varinjlim_{K' \subset K} \mathrm{Map}_{K_p'}(X_{{K'}^p}, D_{\lambda}(\mathbb{X}_b) \otimes GDR^{\mathrm{sm}}_{0, \check{\mathcal{M}}_{\infty}}) \\ 
    &= \varinjlim_{K' \subset K} \mathrm{Map}_{K_p'}(X_{{K'}^p}, D_{\lambda}(\mathbb{X}_b)) \otimes GDR^{\mathrm{sm}}_{0, \check{\mathcal{M}}_{\infty}} \\
    & = \mathcal{A}_{GI}^{\mathrm{sm}}(V_{\lambda})_C \otimes_C GDR^{\mathrm{sm}}_{0, \check{\mathcal{M}}_{\infty}}. \end{align*} 

This also has a Hecke eigenspace decomposition by classical eigensystems of weight $\lambda$ by Proposition \ref{algebraic automorphic forms}. Thus $H^i(\Fl, j_{!}j^*GDR_{\lambda}^{\mathrm{sm}})$ also has a Hecke eigenspace decomposition by classical eigensystems of weight $\lambda$.

By 2 of Lemma \ref{induction2}, we obtain $$H^i(\Fl, i_{*}i^*GDR_{\lambda}^{\mathrm{sm}}) = \mathrm{Ind}_{P_{S_0}}^{G(\mathbb{Q}_p)}(\varinjlim_{n}H^i_{\mathrm{dR}}(\aS_{K^p\Gamma(p^n)}^{\mu-\mathrm{can}, \dagger}, D^{\dagger}_{\lambda}))^{\mathrm{sm}}.$$ Thus we obtain the result by the above exact sequence (\ref{excision}). \end{proof}

\begin{cor}\label{classicality of ordinary}

Under the same assumption as Proposition \ref{classicality of Igusa}, for the $\mu$-ordinary element $b_0 \in B(G, \mu^{-1})$, $H^i(\Fl, j_{b_0, !}j_{b_0}^*GDR_{\lambda}^{\Psi-\mathrm{la}})$ has a generalized Hecke eigenspace decomposition by classical eigensystems of weight $\lambda$.

\end{cor}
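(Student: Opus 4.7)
The plan is to observe that the assertion follows essentially immediately from combining Theorem \ref{ordinary cohomology} with Proposition \ref{classicality of Igusa}, once one checks that the functorial operations involved respect the Hecke decomposition.

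First, I would note that under the hypothesis $|\Psi| \le 2$, by Corollary \ref{GL iso} the Newton stratification on $\Fl$ consists only of the closed $\mu$-ordinary stratum $\Fl^{b_0}$ and the open basic stratum $\Fl^{b_1}$ (the closure relation being reversed from that on $S_{K^pK_p^{o}, \mathbb{F}_{L}}$, as recorded in {\S} 4.3). Consequently $j_{b_0} : \Fl^{b_0} \hookrightarrow \Fl$ is a closed immersion, so $j_{b_0,!} = j_{b_0,*}$ on sheaves, and in particular $j_{b_0,!}j_{b_0}^{*}GDR_{\lambda}^{\Psi\text{-la}} = j_{b_0,*}j_{b_0}^{*}GDR_{\lambda}^{\Psi\text{-la}}$. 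In the notation of Theorem \ref{ordinary cohomology}, this is exactly the sheaf $i_{*}i^{*}GDR_{\lambda}^{\Psi\text{-la}}$.

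Next, I would invoke part 2 of Theorem \ref{ordinary cohomology}, which gives a $G_{L} \times G(\mathbb{Q}_p) \times \mathbb{T}^{S}$-equivariant isomorphism
\[
H^{i}\bigl(\Fl,\; j_{b_0,!}j_{b_0}^{*}GDR_{\lambda}^{\Psi\text{-la}}\bigr) \;\cong\; \mathrm{Ind}^{G(\mathbb{Q}_p)}_{P_{S_0}}\!\Bigl(H^{i}_{\mathrm{dR}}(\aS_{K^p}^{\mu\text{-can},\dagger}, D_{\lambda}^{\dagger}) \otimes_{C} \bigotimes_{\tau \in \Psi}\chi_{\tau}^{\lambda_{\tau}}\Bigr).
\]
The Hecke algebra $\mathbb{T}^{S}$ acts only on the de Rham factor $H^{i}_{\mathrm{dR}}(\aS_{K^p}^{\mu\text{-can},\dagger}, D_{\lambda}^{\dagger})$; the characters $\chi_{\tau}$ carry only $P_{S_0}$-actions with trivial Hecke action, and the locally analytic induction $\mathrm{Ind}^{G(\mathbb{Q}_p)}_{P_{S_0}}(\ )$ is a $\mathbb{T}^{S}$-equivariant functor commuting with taking generalized Hecke eigenspaces.

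Finally, by Proposition \ref{classicality of Igusa} (applicable precisely because of the hypothesis $|\Psi| \le 2$), the de Rham cohomology $H^{i}_{\mathrm{dR}}(\aS_{K^p}^{\mu\text{-can},\dagger}, D_{\lambda}^{\dagger})$ has a generalized Hecke eigenspace decomposition by classical eigensystems of weight $\lambda$. This property is preserved under the twist by $\bigotimes_{\tau}\chi_{\tau}^{\lambda_{\tau}}$ and under the induction $\mathrm{Ind}^{G(\mathbb{Q}_p)}_{P_{S_0}}$ by the previous paragraph, so the claim follows.

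In brief, there is no real obstacle at the level of this corollary: the content has already been extracted in Proposition \ref{classicality of Igusa}, whose proof in turn was the substantive step, splicing together the global classicality of $H^{i}(\Fl, GDR_{\lambda}^{\mathrm{sm}})$ (from Matsushima's formula plus the BGG resolution, Proposition \ref{BGG}) and the basic-stratum classicality (Corollary \ref{basic classicality}) through the excision sequence for the decomposition $\Fl = \Fl^{b_0} \sqcup \Fl^{b_1}$. What this corollary records is simply the translation of that result from the smooth de Rham complex to the locally analytic de Rham complex $GDR_{\lambda}^{\Psi\text{-la}}$ via Theorem \ref{ordinary cohomology}.
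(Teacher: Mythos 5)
Your proof is correct and follows the same route as the paper: identify $j_{b_0,!} = j_{b_0,*} = i_*$ because the $\mu$-ordinary stratum is closed in $\Fl$, apply part 2 of Theorem \ref{ordinary cohomology} to express $H^i(\Fl, j_{b_0,!}j_{b_0}^*GDR_{\lambda}^{\Psi\text{-la}})$ as an induction of the overconvergent de Rham cohomology, and then invoke Proposition \ref{classicality of Igusa}. The paper's proof is terse and leaves the $j_! = j_*$ identification and the $\mathbb{T}^S$-equivariance of the induction implicit; you have merely made those two routine observations explicit.
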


\begin{proof} This follows from Proposition \ref{classicality of Igusa} and Theorem \ref{ordinary cohomology}. \end{proof}

\begin{cor}\label{classicality of geometric}

Under the same assumption as Proposition \ref{classicality of Igusa}, for any $i$, the cohomology group $H^i(\Fl, GDR^{\Psi-\mathrm{la}}_{\lambda})$ has a generalized Hecke eigenspace decomposition by classical eigensystems of weight $\lambda$.

\end{cor}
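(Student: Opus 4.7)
The plan is to reduce to the two strata of the Newton stratification of $\Fl$, both of which are already under control. Under the hypotheses (Proposition \ref{classicality of Igusa}), Corollary \ref{GL iso} shows that $B(G, \mu^{-1})$ consists of exactly two elements: the $\mu$-ordinary $b_0$ (with closed stratum $\Fl^{b_0}$) and the basic $b_1$ (with open stratum $\Fl^{b_1}$), so $\Fl = \Fl^{b_0} \sqcup \Fl^{b_1}$. I would start by writing down the excision distinguished triangle
$$j_{b_1, !}j_{b_1}^{*}GDR^{\Psi-\mathrm{la}}_{\lambda} \longrightarrow GDR^{\Psi-\mathrm{la}}_{\lambda} \longrightarrow j_{b_0, *}j_{b_0}^{*}GDR^{\Psi-\mathrm{la}}_{\lambda} \longrightarrow [+1],$$
noting that $j_{b_0, *} = j_{b_0, !}$ for the closed immersion, and passing to the associated long exact sequence of $\mathbb{T}^{S}$-equivariant cohomology.

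By Corollary \ref{classicality of ordinary}, the closed-stratum contribution $H^{i}(\Fl, j_{b_0, !}j_{b_0}^{*}GDR^{\Psi-\mathrm{la}}_{\lambda})$ has a generalized Hecke eigenspace decomposition by classical eigensystems of weight $\lambda$ for every $i$. For the open-stratum contribution, I would upgrade Corollary \ref{basic classicality} (stated only in degree $d$) to arbitrary degrees by revisiting its proof: Theorem \ref{basic cohomology} already provides the identification
$$GDR^{\Psi-\mathrm{la}}_{\lambda}|_{\Fl^{b_1}} \cong R\Gamma^{\mathrm{sm}}(J_{b_1}(\mathbb{Q}_p), R\pi_{\mathrm{HT}, \mathcal{M}*}(\mathcal{A}_{GI}^{\mathrm{sm}}(K^p, V_{\lambda})_C \otimes_{C} (\otimes_{\tau \in \Psi} (\omega_{\tau, \check{\mathcal{M}}}^{-\lambda_{\tau}, J_{b_1}(\mathbb{Q}_p)-\mathrm{sm}} \otimes (\wedge^{2} V_{\tau})^{-\lambda_{\tau}}))))$$
at the level of complexes, and the Hecke action commutes with all other structures and factors through $\mathcal{A}_{GI}^{\mathrm{sm}}(K^p, V_{\lambda})_C$, which is classical of weight $\lambda$ by Proposition \ref{algebraic automorphic forms}. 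Since extension by zero along $j_{b_1}$ is $\mathbb{T}^{S}$-equivariant, the Hecke eigensystems occurring in $H^{i}(\Fl, j_{b_1, !}j_{b_1}^{*}GDR^{\Psi-\mathrm{la}}_{\lambda})$ form a subset of those appearing in $\mathcal{A}_{GI}^{\mathrm{sm}}(K^p, V_{\lambda})_C$, hence are classical.

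Combining the two inputs via the long exact sequence yields the corollary, since in a $\mathbb{T}^{S}$-equivariant exact sequence $A \to B \to C$ the generalized eigensystems of the middle term lie in the union of those of the outer terms. The main technical obstacle I anticipate is the distinction between $H^{i}(\Fl^{b_1}, j_{b_1}^{*}GDR^{\Psi-\mathrm{la}}_{\lambda})$ (which Theorem \ref{basic cohomology} computes directly) and $H^{i}(\Fl, j_{b_1, !}j_{b_1}^{*}GDR^{\Psi-\mathrm{la}}_{\lambda})$ (what actually feeds the excision sequence): one has to argue that extension by zero across the closed complement $\Fl^{b_0}$ does not introduce new Hecke eigensystems beyond those recorded by the Mantovan-type formula. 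The cleanest way to handle this is to work at the derived-category level throughout, so that the $\mathbb{T}^{S}$-module structure of $j_{b_1, !}j_{b_1}^{*}GDR^{\Psi-\mathrm{la}}_{\lambda}$ as an object of the derived category of Hecke-equivariant sheaves on $\Fl$ is inherited entirely from its restriction to $\Fl^{b_1}$; then any eigensystem in its hypercohomology must already appear in $\mathcal{A}_{GI}^{\mathrm{sm}}(K^p, V_{\lambda})_C$.
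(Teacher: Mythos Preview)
Your proposal is correct and follows essentially the same approach as the paper: reduce to the two Newton strata via Corollary \ref{GL iso}, apply Corollary \ref{classicality of ordinary} on the closed $\mu$-ordinary stratum and Corollary \ref{basic classicality} on the open basic stratum, and combine via the excision long exact sequence. You correctly flag that Corollary \ref{basic classicality} is only stated in degree $d$ and that one must check the $\mathbb{T}^S$-action on $H^i(\Fl, j_{b_1,!}j_{b_1}^*GDR^{\Psi-\mathrm{la}}_\lambda)$ rather than on $H^i(\Fl^{b_1}, -)$; your resolution---that Theorem \ref{basic cohomology} identifies $j_{b_1}^*GDR^{\Psi-\mathrm{la}}_\lambda$ as a complex whose Hecke action factors through $\mathcal{A}_{GI}^{\mathrm{sm}}(K^p,V_\lambda)_C$, which is preserved by $j_{b_1,!}$ and taking hypercohomology---is exactly the argument implicit in the paper's one-line proof.
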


\begin{proof} This follows from Corollaries \ref{classicality of ordinary} and \ref{basic classicality} because under our assumption, we only have the basic locus and the $\mu$-ordinary locus on $\Fl$ by Corollary \ref{GL iso}. \end{proof}

\section{Arithmetic locally analytic de Rham complexes}

In this section, we construct a complex $ADR^{\Psi-\mathrm{la}}_{\lambda}$, which is closely related to $GDR^{\Psi-\mathrm{la}}_{\lambda}$ and is called the arithmetic locally analytic de Rham complex of weight $\lambda$ in {\S} 5.1. Moreover, in some cases, we prove that all de Rham representations of $G_F$ of $p$-adic Hodge type $\lambda$ appearing in the $\Psi$-locally analytic completed cohomology contribute to the cohomology of $ADR^{\Psi-\mathrm{la}}_{\lambda}$ in {\S} 5.2.

\subsection{Arithmetic and geometric locally analytic de Rham complexes}

For $k \in \mathbb{Z}_{\ge 0}$, let $D_k := (GDR^{\Psi-\mathrm{la}}_{\lambda})_k = \oplus_{I \subset \Psi, |I|=k} D_{\lambda^{\Psi}, K^p}^{\Psi-\mathrm{la}, (0, \lambda_{\tau})_{\tau}} \otimes (\otimes_{\tau \in I} (\mathcal{\omega}_{\tau, K^p}^{2\lambda_{\tau} + 2, \mathrm{sm}} \otimes (\wedge^2 D_{\tau, K^p}^{\mathrm{sm}})^{\lambda_{\tau}+1}))$ and $C_k := \oplus_{I \subset \Psi, |I|=k} D_{K^p}^{\Psi-\mathrm{la}, (1 + \lambda_{\tau}, -1)_{\tau \in I}, (0, \lambda_{\tau})_{\tau \notin I}}((\sum_{\tau \in I}(\lambda_{\tau} + 1)))$.

Note that for any $\tau \in \Psi$, we have the following $\mathcal{O}_{K^p}^{\mathrm{sm}}$-linear $G(\mathbb{Q}_p)$-equivariant surjection. (The first map is $(\overline{GDR}^{\Psi-\mathrm{la}}_{\lambda})_0 \rightarrow (\overline{GDR}^{\Psi-\mathrm{la}}_{\lambda})_1$ and the second map is the projection to the $\tau$-component. The surjectivity follows from the explicit description of the first map in Lemma \ref{anti-holomorphic}.)

$\overline{d}_{\tau, 1}^{\lambda} : D_{\lambda^{\Psi}, K^p}^{\Psi-\mathrm{la}, (0, \lambda_{\tau})_{\tau}} \rightarrow \oplus_{\sigma \in \Psi} D_{\lambda^{\Psi}, K^p}^{\Psi-\mathrm{la}, (1 + \lambda_{\sigma}, -1), (0, \lambda_{\gamma})_{\gamma \neq \sigma}} \otimes \omega_{\sigma, K^p}^{-2\lambda_{\sigma}-2, \mathrm{sm}}(\lambda_{\sigma} + 1) \otimes (\wedge^2 D_{\sigma, K^p}^{\mathrm{sm}})^{-\lambda_{\sigma}-1} \twoheadrightarrow D_{\lambda^{\Psi}, K^p}^{\Psi-\mathrm{la}, (1 + \lambda_{\tau}, -1), (0, \lambda_{\gamma})_{\gamma \neq \tau}} \otimes \omega_{\tau, K^p}^{-2\lambda_{\tau}-2, \mathrm{sm}}(\lambda_{\tau} + 1) \otimes (\wedge^2 D_{\tau, K^p}^{\mathrm{sm}})^{-\lambda_{\tau}-1}.$ 

By using these maps, for any $I = \{ \tau_{k_1}, \cdots, \tau_{k_s} \} \subset \Psi$ with $k_1 < \cdots < k_s$, we obtain a surjection $(\overline{d}_{\tau_{k_1}, 1}^{\lambda} \otimes \mathrm{id}) \circ \cdots \circ \overline{d}_{\tau_{k_s}, 1}^{\lambda} : D_{\lambda^{\Psi}, K^p}^{\Psi-\mathrm{la}, (0, \lambda_{\tau})_{\tau}} \twoheadrightarrow D_{\lambda^{\Psi}, K^p}^{\Psi-\mathrm{la}, (1 + \lambda_{\tau}, -1)_{\tau \in I}, (0, \lambda_{\gamma})_{\gamma \notin I}} \otimes (\otimes_{\tau \in I} (\omega_{\tau, K^p}^{-2\lambda_{\tau}-2, \mathrm{sm}} \otimes (\wedge^2 D_{\tau, K^p}^{\mathrm{sm}})^{-\lambda_{\tau}-1}))(\sum_{\tau \in I} (\lambda_{\tau} + 1))$. Thus by taking $ \otimes (\otimes_{\tau \in I} (\mathcal{\omega}_{\tau, K^p}^{2\lambda_{\tau} + 2, \mathrm{sm}} \otimes (\wedge^2 D_{\tau, K^p}^{\mathrm{sm}})^{\lambda_{\tau}+1}))$, we obtain \begin{equation}\label{antiderivation}\overline{d}_{I}^{\lambda} : D_{\lambda^{\Psi}, K^p}^{\Psi-\mathrm{la}, (0, \lambda_{\tau})_{\tau}} \otimes (\otimes_{\tau \in I} (\mathcal{\omega}_{\tau, K^p}^{2\lambda_{\tau} + 2, \mathrm{sm}} \otimes (\wedge^2 D_{\tau, K^p}^{\mathrm{sm}})^{\lambda_{\tau}+1})) \twoheadrightarrow D_{K^p}^{\Psi-\mathrm{la}, (1+\lambda_{\tau}, -1)_{\tau \in I}, (0, \lambda_{\tau})_{\tau \notin I}}((\sum_{\tau \in I}(\lambda_{\tau} + 1)))\end{equation} and consequently $D_k \twoheadrightarrow C_k$.

\begin{lem}\label{explicit kernel}

$\mathrm{Ker}\overline{d}_{I}^{\lambda} = \langle \mathrm{Sym}^{\lambda_{\tau}}V_{\tau} \otimes \mathcal{\omega}_{\tau, K^p}^{\lambda_{\tau} + 2, \mathrm{sm}} \otimes (\wedge^2 D_{\tau, K^p}^{\mathrm{sm}}) \otimes (D_{\lambda^{\Psi}, K^p}^{\Psi-\mathrm{la}, (0,0)_{\tau}, (0, \lambda_{\sigma})_{\sigma \neq \tau}})^{\mathfrak{gl}_2(L)_{\tau}} \otimes (\otimes_{\tau \neq \sigma \in I} (\mathcal{\omega}_{\sigma, K^p}^{2\lambda_{\sigma} + 2, \mathrm{sm}} \otimes (\wedge^2 D_{\sigma, K^p}^{\mathrm{sm}})^{\lambda_{\sigma}+1})) \mid \tau \in I \rangle_{C}$.

\end{lem}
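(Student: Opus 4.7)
The plan is to reduce the computation of $\mathrm{Ker}\overline{d}_I^\lambda$ to an explicit combinatorial condition on the Taylor expansion of $D_{\lambda^{\Psi}, K^p}^{\Psi-\mathrm{la}, (0, \lambda_\gamma)_\gamma}$, and then to identify the resulting subspace with the intrinsic tensor product appearing in the lemma.

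Locally over some $U \in \tilde{\mathcal{B}}$ with $U \subset U_1$, Corollary \ref{citation} (and the ``on $\aS_{K^p}$'' variant noted after Proposition \ref{mikami expansionIII}) expresses every section of $D_{\lambda^{\Psi}, K^p}^{\Psi-\mathrm{la}, (0, \lambda_\gamma)_\gamma}(U) \otimes \bigotimes_{\tau \in I}(\omega_{\tau, K^p}^{2\lambda_\tau+2, \mathrm{sm}} \otimes (\wedge^2 D_{\tau, K^p}^{\mathrm{sm}})^{\lambda_\tau+1})$, after possibly shrinking the level, as a convergent Taylor series $\sum_i a_i \prod_\gamma (x_\gamma - x_{\gamma, n})^{i_\gamma}$ multiplied by the smooth trivializations $e_{1, \tau}^{\lambda_\tau}$ and $f_\tau^{\lambda_\tau + 1} e_{\tau}^{-\lambda_\tau - 2}$, with smooth coefficients $a_i$ at finite level. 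Iterating the explicit formula of Lemma \ref{anti-holomorphic} once for each $\tau \in I$, the composition $\overline{d}_I^\lambda$ acts on this Taylor expansion, up to a nonzero scalar in $C^\times$, as the product of the derivations $\partial_{x_\tau}^{\lambda_\tau + 1}$ for $\tau \in I$. Consequently, an element lies in the kernel if and only if every nonzero term has $i_\tau \le \lambda_\tau$ for some $\tau \in I$, and writing $K_\tau$ for the subspace of expansions that are polynomials of degree $\le \lambda_\tau$ in $(x_\tau - x_{\tau, n})$, this yields $\mathrm{Ker}\overline{d}_I^\lambda = \sum_{\tau \in I} K_\tau$.

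To identify each $K_\tau$ intrinsically, I use the classical fact that polynomials of degree $\le \lambda_\tau$ in $x_\tau$ form a $\mathfrak{gl}_2(L)_\tau$-stable subspace of $\mathcal{O}_{\Fl}|_{U_{1}}$ canonically isomorphic to the irreducible representation of highest weight $\lambda_\tau$; combined with the twist $\omega_{\tau, K^p}^{2\lambda_\tau + 2, \mathrm{sm}} \otimes (\wedge^2 D_{\tau, K^p}^{\mathrm{sm}})^{\lambda_\tau + 1}$ already present in the source of $\overline{d}_I^\lambda$ and Lemma \ref{horizontal weight} converting locally analytic horizontal-weight shifts into smooth line-bundle twists, this polynomial factor matches the factor $\mathrm{Sym}^{\lambda_\tau}V_\tau \otimes \omega_{\tau, K^p}^{\lambda_\tau + 2, \mathrm{sm}} \otimes \wedge^2 D_{\tau, K^p}^{\mathrm{sm}}$ of the lemma. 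The complementary Taylor coefficient (depending only on the $x_\sigma$ for $\sigma \neq \tau$) is smooth under $\mathfrak{gl}_2(L)_\tau$ because it comes from a finite level in the $\tau$-direction; applying Lemma \ref{horizontal weight} to shift the $\tau$-component of the horizontal weight from $(0, \lambda_\tau)$ to $(0, 0)$ puts it exactly in $(D_{\lambda^{\Psi}, K^p}^{\Psi-\mathrm{la}, (0, 0)_\tau, (0, \lambda_\sigma)_{\sigma \neq \tau}})^{\mathfrak{gl}_2(L)_\tau}$, while the factors for $\sigma \in I \setminus \{\tau\}$ remain untouched. The reverse inclusion is immediate from the explicit formula, since such elements are manifestly polynomials of degree $\le \lambda_\tau$ in $x_\tau$.

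The main technical obstacle will be the careful bookkeeping of line-bundle twists---the various powers of $\omega_\tau$, $\wedge^2 D_\tau$ and $\wedge^2 V_\tau$ in smooth versus locally analytic form and the cyclotomic twists implicit in Proposition \ref{Hodge de Rham}---together with the horizontal-weight shifts from Lemma \ref{horizontal weight}; once these match, the statement reduces to the elementary fact that polynomials of degree $\le \lambda_\tau$ in $x_\tau$ with $\mathfrak{gl}_2(L)_\tau$-smooth coefficients form the kernel of $\partial_{x_\tau}^{\lambda_\tau + 1}$.
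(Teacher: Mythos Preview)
Your proposal is correct and follows essentially the same approach as the paper: both reduce to the local computation on $U\subset U_1$, invoke the explicit formula of Lemma~\ref{anti-holomorphic} to identify $\overline{d}_I^\lambda$ (up to a nonzero scalar) with $\prod_{\tau\in I}\partial_{x_\tau}^{\lambda_\tau+1}$, and then match the resulting ``polynomial of degree $\le\lambda_\tau$ in $x_\tau$'' condition with the intrinsic description via $\mathrm{Sym}^{\lambda_\tau}V_\tau$. The only cosmetic difference is that the paper first writes down the map from the claimed kernel into the source globally, using the universal quotient $V_\tau\otimes\mathcal{O}_{\Fl}\twoheadrightarrow\omega_{\tau,\Fl}\otimes\wedge^2 V_\tau$, and then checks injectivity and surjectivity onto the kernel locally; you instead compute the kernel locally first and then reconstruct the intrinsic description, invoking Lemma~\ref{horizontal weight} explicitly where the paper leaves it implicit.
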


\begin{proof} The universal quotient map $V_{\tau} \otimes_{L} \mathcal{O}_{\Fl} \rightarrow \omega_{\tau, \Fl} \otimes_{\mathcal{O}_{\Fl}} \wedge^2 V_{\tau}$ induces a morphism $\mathrm{Sym}^{\lambda_{\tau}}V_{\tau} \otimes_{L} C \rightarrow H^0(\Fl, (\omega_{\tau, \Fl} \otimes_{\mathcal{O}_{\Fl}} \wedge^2 V_{\tau})^{\lambda_{\tau}})$. This induces a map $\mathrm{Sym}^{\lambda_{\tau}}V_{\tau} \otimes \mathcal{\omega}_{\tau, K^p}^{\lambda_{\tau} + 2, \mathrm{sm}} \otimes (\wedge^2 D_{\tau, K^p}^{\mathrm{sm}}) \otimes (D_{\lambda^{\Psi}, K^p}^{\Psi-\mathrm{la}, (0,0)_{\tau}, (0, \lambda_{\sigma})_{\sigma \neq \tau}})^{\mathfrak{gl}_2(L)_{\tau}} \otimes (\otimes_{\tau \neq \sigma \in I} \mathcal{\omega}_{\sigma, K^p}^{2\lambda_{\sigma} + 2, \mathrm{sm}} \otimes (\wedge^2 D_{\sigma, K^p}^{\mathrm{sm}})^{\lambda_{\sigma}+1}) \rightarrow D_{\lambda^{\Psi}, K^p}^{\Psi-\mathrm{la}, (0, \lambda_{\tau})_{\tau}} \otimes (\otimes_{\tau \in I} \mathcal{\omega}_{\tau, K^p}^{2\lambda_{\tau} + 2, \mathrm{sm}} \otimes (\wedge^2 D_{\tau, K^p}^{\mathrm{sm}})^{\lambda_{\tau}+1})$. It suffices to check the injectivity of this map and the equality in the statement on $U \in \tilde{\mathcal{B}}$ satisfying $U \subset U_1$ because all considered sheaves are $G(\mathbb{Q}_p)$-equivariant sheaves. By the explicit formula Lemma \ref{anti-holomorphic}, we obtain the result because $\overline{d}_{\tau_i, 1}^{\lambda'}$ is equal to $(\frac{\partial}{\partial x_{\tau_i}})^{\lambda_{\tau_i}} \otimes dx_{\tau_i}^{\lambda_{\tau_i}}$ up to a scalar of $\mathbb{Q}_p^{\times}$. \end{proof}

\begin{lem}\label{construction of arithmetic}

   There exists a unique family of morphisms $(N_k : C_k \rightarrow C_{k+1})_{0 \le k \le d-1}$ such that $AGR^{\Psi-\mathrm{la}}_{\lambda} := ( C_{k}, N_{k})$ is a complex and the above map $D_k \rightarrow C_k$ (\ref{antiderivation}) induces the surjective morphism $GDR^{\Psi-\mathrm{la}}_{\lambda} \twoheadrightarrow ADR^{\Psi-\mathrm{la}}_{\lambda}$ of complexes.
\end{lem}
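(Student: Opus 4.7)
The plan is to exploit that the surjection $\overline{d}^{\lambda}_{I} : D_k \twoheadrightarrow C_k$ built in \eqref{antiderivation} makes $N_k$ unique as soon as it exists, so the only real content is to verify that the differential $d^{GDR}_k$ of $GDR^{\Psi-\mathrm{la}}_{\lambda}$ sends $\mathrm{Ker}(D_k \twoheadrightarrow C_k)$ into $\mathrm{Ker}(D_{k+1} \twoheadrightarrow C_{k+1})$. Granted this, one defines $N_k$ by passing to the quotient; the complex relation $N_{k+1} \circ N_k = 0$ then follows from $d^{GDR}_{k+1} \circ d^{GDR}_k = 0$ and the surjectivity of the quotient maps, and by construction the quotient $GDR^{\Psi-\mathrm{la}}_{\lambda} \twoheadrightarrow ADR^{\Psi-\mathrm{la}}_{\lambda}$ is a morphism of complexes.

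To carry this out, I would first apply Lemma \ref{explicit kernel} to get, for each $I \subset \Psi$ with $|I|=k$, an explicit decomposition $\mathrm{Ker}(\overline{d}^{\lambda}_{I}) = \sum_{\tau \in I} K_{I,\tau}$, where $K_{I,\tau}$ is the subsheaf whose only non-smooth tensor factor is the $\mathfrak{gl}_2(L)_{\tau}$-invariant subspace $(D_{\lambda^{\Psi}, K^p}^{\Psi-\mathrm{la}, (0,0)_{\tau}, (0, \lambda_{\sigma})_{\sigma \neq \tau}})^{\mathfrak{gl}_2(L)_{\tau}}$, twisted by coherent line bundles coming from $\mathrm{Sym}^{\lambda_\tau}V_\tau$, $\omega_{\tau, K^p}^{\lambda_\tau+2,\mathrm{sm}}$, $\wedge^2 D^{\mathrm{sm}}_{\tau, K^p}$, and the $\sigma$-factors for $\sigma \in I \setminus\{\tau\}$. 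Thus it suffices to prove that for each such $I$ and $\tau \in I$, $d^{GDR}_k$ carries $K_{I,\tau}$ into $\sum_{J \supset I,\,|J|=k+1} K_{J,\tau}$.

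Next, I would combine this with the description of the components of $d^{GDR}_k$ given by Lemma \ref{constdeRham2}: the $(I,J)$-block vanishes unless $J = I \cup \{\sigma\}$ for some $\sigma \in \Psi \setminus I$, and in that case the block is realized through the $G(\mathbb{Q}_p)$-equivariant, $\mathcal{O}_{\Fl}$-linear sections $s^{\Psi-\mathrm{la}}_I$ composed with the derivation along the $\sigma$-direction (i.e.\ along $\mathfrak{gl}_2(L)_{\sigma}$). Since $\tau \in I$ forces $\tau \neq \sigma$, this block commutes with the action of $\mathfrak{gl}_2(L)_{\tau}$; moreover, it only introduces the smooth coefficient factor $\omega_{\sigma,K^p}^{2\lambda_\sigma+2,\mathrm{sm}} \otimes (\wedge^2 D^{\mathrm{sm}}_{\sigma,K^p})^{\lambda_\sigma+1}$, on which $\mathfrak{gl}_2(L)_\tau$ acts trivially, and leaves the $\tau$-tensor factor $\mathrm{Sym}^{\lambda_\tau}V_\tau \otimes \omega_{\tau,K^p}^{\lambda_\tau+2,\mathrm{sm}} \otimes \wedge^2 D^{\mathrm{sm}}_{\tau,K^p}$ untouched. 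Hence $d^{GDR}_k$ sends $K_{I,\tau}$ into the analogous $\mathfrak{gl}_2(L)_{\tau}$-invariant subspace indexed by $J = I\cup\{\sigma\}$, which by another application of Lemma \ref{explicit kernel} is precisely $K_{J,\tau} \subset \mathrm{Ker}(\overline{d}^{\lambda}_J)$, as required.

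The hard part here is really only the bookkeeping: one must track which tensor factors pick up twists or derivatives and confirm that each remains in the $\mathfrak{gl}_2(L)_{\tau}$-invariant locus after applying $d^{GDR}_k$. Once this is matched up cleanly using the explicit local description after Corollary \ref{citation} and the $G(\mathbb{Q}_p)$-equivariance of the sections $s^{\Psi-\mathrm{la}}_I$, the existence, uniqueness, and the morphism-of-complexes property all follow formally from the surjectivity statement in \eqref{antiderivation}.
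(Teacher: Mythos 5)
Your proposal is correct and follows essentially the same route as the paper's (very terse) proof: both reduce to showing that the differential of $GDR^{\Psi-\mathrm{la}}_\lambda$ carries $\mathrm{Ker}(D_k \twoheadrightarrow C_k)$ into $\mathrm{Ker}(D_{k+1} \twoheadrightarrow C_{k+1})$ and both verify this using the explicit description of $\overline{d}^\lambda_I$ from Lemma \ref{explicit kernel}. Your write-up just fills in the details the paper leaves implicit (the decomposition of the kernel into the pieces $K_{I,\tau}$ and the blockwise analysis via Lemma \ref{constdeRham2}); the one loose phrase is calling the $(I,J)$-block a ``derivation along $\mathfrak{gl}_2(L)_\sigma$'' --- it is the $\sigma$-component of the Kodaira--Spencer-identified smooth differential, which is $\mathcal{O}_{\Fl}$-linear and so kills the locally analytic $x_\tau$-directions rather than differentiating along them, but this does not affect the argument.
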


\begin{proof}

The uniqueness follows from the surjectivity. For the existence, it suffices to prove that $\mathrm{Ker}(D_k \twoheadrightarrow C_k)$ is sent to $\mathrm{Ker}(D_{k+1} \twoheadrightarrow C_{k+1})$ via the map $D_k \rightarrow D_{k+1}$ and this follows from the explicit description of $\overline{d}^{\lambda}_{I}$. (See the proof of Lemma \ref{explicit kernel}.) \end{proof}

The author hopes that this complex is equal to the complex obtained by the ``Fontaine operator''. In the following, we will explain the precise meaning of this. We assume that $F^+/\mathbb{Q}$ is Galois, $p$ is inert in $F^+$ and $\lambda = 0$ for a moment for simplicity. The vanishing of the ``Fontaine operator'' characterizes the de Rham representations among the Hodge-Tate representations. (We will see details about the Fontaine operator in {\S} 5.2.) More precisely, for a Hodge-Tate representation $\rho : G_{F_v} \rightarrow \mathrm{GL}_2(F_v)$ of $\tau$-Hodge-Tate weight $(0, 1)$ for any $\tau \in \Psi$ with a Hodge-Tate decomposition $\rho^{\tau} \otimes_{F_v} C = W_0^{\tau} \oplus W_1^{\tau}$, we can construct natural $C$-linear maps $N_{\tau} : W_0^{\tau} \rightarrow W_1^{\tau}(1)$ such that $N_{\tau} = 0$ for any $\tau$ is equivalent to the condition that $\rho$ is de Rham. By using this operator, we obtain the following complex \begin{equation}\label{Fontaine complex} \otimes_{\tau \in \Psi} (W_0^{\tau} \xrightarrow{N_{\tau}} W_1^{\tau}(1)).\end{equation} 

By developing the theory of the Fontaine operator to Banach representations and LB representations, we can purely Galois-theoretically construct a sequence of maps $\mathcal{O}^{\Psi-\mathrm{la}, (0, 0)}_{K^p} \rightarrow \oplus_{\tau \in \Psi} \mathcal{O}^{\Psi-\mathrm{la}, (1, -1)_{\tau}, (0, 0)_{\sigma \neq \tau}}_{K^p}(1) \rightarrow \cdots \rightarrow \mathcal{O}^{\Psi-\mathrm{la}, (1, -1)}_{K^p}(d)$ such that the sequence of maps $$H^d(\Fl, \mathcal{O}^{\Psi-\mathrm{la}, (0, 0)}_{K^p})[\varphi] \rightarrow \oplus_{\tau \in \Psi} H^d(\Fl, \mathcal{O}^{\Psi-\mathrm{la}, (1, -1)_{\tau}, (0, 0)_{\sigma \neq \tau}}_{K^p})(1)[\varphi] \rightarrow \cdots \rightarrow H^d(\Fl, \mathcal{O}^{\Psi-\mathrm{la}, (1, -1)}_{K^p})(d)[\varphi]$$ is equal to the complex obtained by the above construction (\ref{Fontaine complex}) when $\rho$ is associated with an eigensystem $\varphi : \mathbb{T}^S(K^p, \mathcal{O})_{\mathfrak{m}} \rightarrow \mathcal{O}$ for a certain nice maximal ideal $\mathfrak{m}$. (We recall that we have the Hodge-Tate decomposition $\widehat{H}^d(S_{K^p}, L)^{\chi_{(0, 0)_{\tau}}} \widehat{\otimes}_{L} C = \otimes_{I \subset \Psi} H^d(\Fl, \mathcal{O}_{K^p}^{\Psi-\mathrm{la}, (1, -1)_{\tau \in I}, (0, 0)_{\tau \notin I}})$ and we have the concrete Galois structure of the completed cohomology $\widehat{H}^d(K^p, L)[\varphi] = \chi_{\varphi}^c|_{G_{F}} \otimes (\otimes_{\tau \in \Phi} \rho_{\varphi}^{\tau}) \otimes_L M$ for some smooth character $\chi_{\varphi}$ and some vector space $M$ over $L$ with the trivial Galois action. See Theorem \ref{residual irreducibility} and Proposition \ref{Hodge-Tate decomposition} for details.) The author hopes that this is a complex and equal to $ADR^{\Psi-\mathrm{la}}_{\lambda}$. - (*)

Let $N : (ADR^{\Psi-\mathrm{la}}_{\lambda})_0 \rightarrow (ADR^{\Psi-\mathrm{la}}_{\lambda})_1$ be the first map in the complex $ADR^{\Psi-\mathrm{la}}_{\lambda}$. Thus this is equal to $N = \oplus_{\tau \in \Psi} \overline{d}_{\tau}^{\lambda} \circ d_{\tau}^{\lambda}$, where the map $\overline{d}_{\tau}^{\lambda} := \overline{d}_{\{ \tau \}}^{\lambda}$ is defined before Lemma \ref{explicit kernel} and $d^{\lambda}_{\tau}$ denotes the composite of the first map $D_{\lambda^{\Psi}, K^p}^{\Psi-\mathrm{la},(0, \lambda_{\gamma})_{\gamma}} = (GDR^{\Psi-\mathrm{la}}_{\lambda})_0 \rightarrow (GDR^{\Psi-\mathrm{la}}_{\lambda})_1 = \oplus_{\sigma \in \Psi} D_{\lambda^{\Psi}, K^p}^{\Psi-\mathrm{la},(0, \lambda_{\tau})_{\gamma}} \otimes (\omega_{\sigma, K^p}^{2, \mathrm{sm}} \otimes \wedge^2 D_{\sigma, K^p})$ of $GDR^{\Psi-\mathrm{la}}_{\lambda}$ and the projection $\oplus_{\sigma \in \Psi} D_{\lambda^{\Psi}, K^p}^{\Psi-\mathrm{la},(0, \lambda_{\gamma})_{\gamma}} \otimes (\omega_{\sigma, K^p}^{2, \mathrm{sm}} \otimes \wedge^2 D_{\sigma, K^p}) \rightarrow D_{\lambda^{\Psi}, K^p}^{\Psi-\mathrm{la},(0, \lambda_{\gamma})_{\gamma}} \otimes (\omega_{\tau, K^p}^{2, \mathrm{sm}} \otimes \wedge^2 D_{\tau, K^p})$. In rest of this subsection, we fix a decomposed generic non-Eisenstein ideal $\mathfrak{m}$ of $\mathbb{T}^S(K^p, \mathcal{O})$ such that $\overline{\rho}_{\mathfrak{m}}(G_{F})$ is not solvable and $\otimes_{\tau \in \Psi} (\overline{\rho}_{\mathfrak{m}}|_{G_{\tilde{F}}})^{\tau}$ is irreducible. For any $\mathcal{O}$-morphism $\varphi : \mathbb{T}^S(K^p, \mathcal{O})_{\mathfrak{m}} \rightarrow \mathcal{O}$, we put $\rho_{\varphi} := \varphi_* \rho_{\mathfrak{m}}$ and $\chi_{\varphi} := \varphi_* \chi_{\mathfrak{m}}$. Inspired by the above hope (*), we propose the following conjecture.

\begin{conj}\label{key diagram}

Let $\varphi : \mathbb{T}^S(K^p, \mathcal{O})_{\mathfrak{m}} \rightarrow \mathcal{O}$ be an eigensystem such that $\rho_{\varphi}|_{G_{F_w}}$ is de Rham of $p$-adic Hodge type $\lambda_w$ for any $w \mid v$ and $\chi_{\varphi}|_{G_{F_{0, v^c}}}$ is de Rham of $p$-adic Hodge type $\lambda_0$. If $\widehat{H}^d(S_{K^p}, V_{\lambda^{\Psi}})_{\mathfrak{m}}^{\Psi-\mathrm{la}}[\varphi] \neq 0$, then $\mathrm{Ker} H^d(\Fl, N)_{\mathfrak{m}}[\varphi] \neq 0$.

\end{conj}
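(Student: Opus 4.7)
The plan is to construct a nonzero element in $\mathrm{Ker}\,H^d(\Fl, N)_{\mathfrak m}[\varphi]$ by locating the Hodge-Tate weight-zero piece of the completed cohomology and using the vanishing of the Fontaine operator on de Rham representations. The proof has three main steps: identifying the relevant Hodge-Tate component, showing it is nonzero, and showing that $N$ annihilates it.

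First, I would apply Theorem \ref{geometric sen theoryII} to identify $\widehat{H}^d(S_{K^p}, V_{\lambda^{\Psi}})_{\mathfrak{m}}^{\Psi-\mathrm{la}}[\varphi]\,\widehat{\otimes}_L C$ with $H^d(\Fl, D_{\lambda^{\Psi}, K^p}^{\Psi-\mathrm{la}})_{\mathfrak{m}}[\varphi]$ (suppressing the inessential $(-\lambda_0)$ twist). Theorem \ref{infinitesimal character} then restricts this to the $\chi_{\lambda_\Psi}$-generalized eigenspace, and Proposition \ref{Hodge-Tate decomposition} splits it as $\bigoplus_{I \subset \Psi} H^d(\Fl, D_{\lambda^{\Psi}, K^p}^{\Psi-\mathrm{la}, (0,\lambda_\tau)_{\tau \in I}, (1+\lambda_\tau, -1)_{\tau \notin I}})_{\mathfrak{m}}[\varphi]$. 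The $I = \Psi$ summand is precisely the source of $H^d(\Fl, N)_{\mathfrak m}[\varphi]$, so the task reduces to producing a nonzero element there that is killed by $N$.

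Next, I would check that the $I = \Psi$ summand is nonzero. By Proposition \ref{residual irreducibility}, $\widehat{H}^d(S_{K^p}, V_{\lambda^\Psi})_{\mathfrak m}[\varphi]$ is $G_{\tilde F}$-equivariantly isomorphic to $s_\varphi^\vee(-d) \otimes_L M_\varphi$ for some nonzero $M_\varphi$, where $s_\varphi = \chi_\varphi^c|_{G_{\tilde F}} \otimes \bigotimes_{\tau \in \Psi}(\rho_\varphi|_{G_{\tilde F}})^\tau$. The de Rham hypothesis on each $\rho_\varphi|_{G_{F_w}}$ provides a Hodge-Tate decomposition of $(\rho_\varphi|_{G_{\tilde F}})^\tau \otimes_L C$ into lines of Hodge-Tate weights in $\{1, -\lambda_\tau\}$; taking the tensor product and twisting suitably yields $2^{|\Psi|}$ nonzero lines indexed by subsets $I \subset \Psi$, which, after tensoring with $M_\varphi$, must match the horizontal-weight decomposition above. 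In particular the $I = \Psi$ summand is nonzero. Finally, Theorem \ref{compatibilityIII} identifies the first differential of $ADR^{\Psi-\mathrm{la}}_\lambda$ with $N_0 = (\bigoplus_\tau c_\tau \overline d_\tau \otimes \mathrm{id}) \circ d$ up to the invertible constants $c_\tau \in F_w^\times$. The key input from Fontaine-Ozawa theory is that the induced map on the Galois piece $s_\varphi^\vee(-d)$ acts, on the $I = \Psi$ component, as the Fontaine operator attached to the Hodge-Tate representation $\bigotimes_\tau (\rho_\varphi)^\tau$ (suitably twisted), which is identically zero because each factor is de Rham. Combined with Step~2, this places a nonzero element inside $\mathrm{Ker}\,H^d(\Fl, N)_{\mathfrak m}[\varphi]$.

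The principal obstacle is Theorem \ref{compatibilityIII} itself, in two aspects. One is the construction of $N_0$ as a sheaf-level operator acting on $D_{\lambda^\Psi, K^p}^{\Psi-\mathrm{la}, (0, \lambda_\tau)_\tau}$ and landing in the appropriate Hodge-Tate-shifted sheaves, which requires extending Fontaine-Ozawa theory to Banach and LB representations; the other is the combinatorial matching between $N_0$ and the map $N$ coming from $ADR^{\Psi-\mathrm{la}}_\lambda$, together with the compatibility of this sheaf-level $N_0$ with the classical Fontaine operator of $s_\varphi^\vee$ at finite level via Theorem \ref{kottwitz conjecture}. As Remark \ref{artificial} indicates, in the non-parallel weight case the construction of $N_0$ is only carried out for $[F_w : \mathbb{Q}_p] \le 2$, which explains the corresponding restriction in the main classicality theorem.
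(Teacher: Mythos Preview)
Your proposal is correct and follows essentially the same route as the paper's proofs of Corollaries \ref{parallel comparison} and \ref{non-parallel comparison}: use Proposition \ref{residual irreducibility} to factor off the Galois piece $s_\varphi^\vee$, use Theorem \ref{infinitesimal character} and Proposition \ref{Hodge-Tate decomposition} to land in the $(0,\lambda_\tau)_\tau$-component, and then invoke Theorem \ref{Fontaine}/\ref{compatibilityIII} together with the de Rhamness of $s_\varphi$ to kill the Fontaine operator. Two minor remarks: the paper isolates the $I=\Psi$ summand simply as the $\theta_{\mathrm{Sen}}=0$ part (which is cleaner than your $2^{|\Psi|}$ matching, since $\sum_{\tau\notin I}(1+\lambda_\tau)>0$ for $I\neq\Psi$), and ``Fontaine--Ozawa'' should be ``Fontaine'' (the reference \cite{FO}).
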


\begin{rem}

Actually, as in \cite{Che}, by using \cite[Theorem B]{monodromy}, we can prove that $\rho_{\varphi}|_{G_{F_w}}$ is always de Rham of $p$-adic Hodge type $\lambda_w$ for any $w \mid v$ not contained in $S_0$. Thus we can replace the above assumption on $\rho_{\varphi}$ by the assumption that $\rho_{\varphi}|_{G_{F_w}}$ is de Rham of $p$-adic Hodge type $\lambda_w$ for any $w \mid v$ not contained in $S_0$. However, in our application, we will not mind such a technical difference. Thus the author decided to state the above weaker conjecture.

\end{rem}

In {\S} 5.2, we will prove this conjecture when one of the following conditions holds by using a similar method as \cite[{\S} 6]{PanII}.

\begin{itemize}
\item $\lambda_{\Psi}$ is parallel.
\item $|\Psi| = 2$.
\end{itemize} 

In the following of this subsection, we will see a consequence of the above conjecture.

\begin{prop}\label{degeneration}

$\mathrm{Ker}H^d(\Fl, N)_{\mathfrak{m}} = H^d(\Fl, ADR_{\lambda}^{\Psi-\mathrm{la}})_{\mathfrak{m}}$.

\end{prop}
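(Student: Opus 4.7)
The plan is to use the hypercohomology spectral sequence for the bounded complex $ADR^{\Psi-\mathrm{la}}_\lambda$, after localizing at $\mathfrak{m}$:
\begin{equation*}
E_1^{p,q} = H^q(\Fl, C_p)_{\mathfrak{m}} \Longrightarrow H^{p+q}(\Fl, ADR^{\Psi-\mathrm{la}}_\lambda)_{\mathfrak{m}}.
\end{equation*}
The key input is that $E_1$ is concentrated in row $q=d$. Indeed, by definition $C_p$ is a finite direct sum, indexed by subsets $I \subset \Psi$ of size $p$, of sheaves of the form $D_{\lambda^\Psi, K^p}^{\Psi-\mathrm{la}, (1+\lambda_\tau, -1)_{\tau \in I}, (0, \lambda_\tau)_{\tau \notin I}}$ twisted by a Tate twist (which is immaterial for sheaf cohomology on $\Fl$). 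Each such summand is of the form $D_{\lambda^\Psi, K^p}^{\Psi-\mathrm{la}, (a_\tau, b_\tau)}$ for an appropriate horizontal weight $(a_\tau, b_\tau) \in (\mathbb{Z}^2)^\Psi$, so Proposition \ref{horizontal action}(3) yields $H^q(\Fl, C_p)_{\mathfrak{m}} = 0$ for all $q \neq d$.

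Once the $E_1$ page is concentrated in a single row, the spectral sequence degenerates at $E_2$ and we obtain
\begin{equation*}
H^{p+d}(\Fl, ADR^{\Psi-\mathrm{la}}_\lambda)_{\mathfrak{m}} \;=\; H^p\!\bigl( H^d(\Fl, C_0)_{\mathfrak{m}} \to H^d(\Fl, C_1)_{\mathfrak{m}} \to \cdots \bigr).
\end{equation*}
Setting $p = 0$ and identifying the differential $H^d(\Fl, C_0)_{\mathfrak{m}} \to H^d(\Fl, C_1)_{\mathfrak{m}}$ with $H^d(\Fl, N)_{\mathfrak{m}}$ (by construction of $N$ as the zeroth differential of $ADR^{\Psi-\mathrm{la}}_\lambda$) yields the required equality
\begin{equation*}
H^d(\Fl, ADR^{\Psi-\mathrm{la}}_\lambda)_{\mathfrak{m}} = \mathrm{Ker}\,H^d(\Fl, N)_{\mathfrak{m}}.
\end{equation*}

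The main obstacle is a technical one: the components $C_p$ are sheaves of LB spaces, so the usual hypercohomology machinery for complexes of abelian sheaves needs to be realized concretely. The cleanest way is to mimic Theorem \ref{geometric sen theoryII}: fix a finite covering $\mathcal{B}' \subset \mathcal{B}$ of $\Fl$, compute each $H^q(\Fl, C_p)_{\mathfrak{m}}$ by the \v{C}ech complex $C(\mathcal{B}', C_p)_{\mathfrak{m}}$ (this is legitimate by the same derived-limit/$R^i\Psi\mathfrak{LA}$-vanishing considerations used in Proposition \ref{derived calculation II} and Proposition \ref{horizontal action}, applied componentwise), and then the total \v{C}ech double complex of $ADR^{\Psi-\mathrm{la}}_\lambda$ computes $H^*(\Fl, ADR^{\Psi-\mathrm{la}}_\lambda)_{\mathfrak{m}}$. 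The naive filtration on this double complex produces precisely the spectral sequence above. Localization at $\mathfrak{m}$ is exact because it extracts a direct factor by Proposition \ref{maximal ideal}(2), so it commutes with all constructions. Once this framework is in place, the rest of the argument is the purely formal spectral-sequence collapse described above.
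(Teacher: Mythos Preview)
Your proof is correct and follows essentially the same approach as the paper: set up the hypercohomology spectral sequence for the complex $ADR^{\Psi-\mathrm{la}}_\lambda$, use the vanishing of $H^q(\Fl, C_p)_{\mathfrak{m}}$ for $q\neq d$ (the paper cites Theorem \ref{geometric sen theoryII}, while your citation of Proposition \ref{horizontal action}(3) is in fact the more precise reference), and read off $E_2^{0,d}=\mathrm{Ker}\,H^d(\Fl,N)_{\mathfrak{m}}$. Your additional paragraph about realizing the spectral sequence via a \v{C}ech double complex is extra care the paper omits, but it is consistent with the framework established in Proposition \ref{derived calculation II} and Proposition \ref{horizontal action}.
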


\begin{proof}

We have the following spectral sequence.

$E_1^{p, q} := \oplus_{I \subset \Psi, |I|=p} H^q(\Fl, D_{\lambda^{\Psi}, K^p}^{\Psi-\mathrm{la}, (0, \lambda_{\tau})_{\tau \in I}, (1+\lambda_{\tau}, -1)_{\tau \notin I}}(\sum_{\tau \in I}(\lambda_{\tau} + 1)))_{\mathfrak{m}} \Rightarrow H^{p+q}(\Fl, ADR^{\Psi-\mathrm{la}}_{\lambda})_{\mathfrak{m}}$ 

By Theorem \ref{geometric sen theoryII}, we have $E_1^{p, q} = 0$ for any $q \neq d$. Therefore, we obtain $H^d(\Fl, ADR_{\lambda}^{\Psi-\mathrm{la}})_{\mathfrak{m}} = E_2^{0, d} = \mathrm{Ker}H^d(\Fl, N)_{\mathfrak{m}}$. \end{proof}

Let $Ker := \mathrm{Ker}(GDR_{\lambda}^{\Psi-\mathrm{la}} \twoheadrightarrow ADR^{\Psi-\mathrm{la}}_{\lambda})$. Let $\{ \mathrm{Fil}^i Ker \}_{i = 1, \cdots, d}$ be the exhaustive increasing filtration on $Ker$ defined by $(\mathrm{Fil}^i Ker)_{k} := \langle \mathrm{Sym}^{\lambda_{\tau}}V_{\tau} \otimes \mathcal{\omega}_{\tau, K^p}^{\lambda_{\tau} + 2, \mathrm{sm}} \otimes (\wedge^2 D_{\tau, K^p}^{\mathrm{sm}}) \otimes (D_{\lambda^{\Psi}, K^p}^{\Psi-\mathrm{la}, (0,0)_{\tau}, (0, \lambda_{\sigma})_{\sigma \neq \tau}})^{\mathfrak{gl}_2(L)_{\tau}} \otimes (\otimes_{\tau \neq \sigma \in I} (\mathcal{\omega}_{\sigma, K^p}^{2\lambda_{\sigma} + 2, \mathrm{sm}} \otimes (\wedge^2 D_{\sigma, K^p}^{\mathrm{sm}})^{\lambda_{\sigma}+1})) \mid |I| = k, \tau \in I \cap \{ \tau_1, \cdots, \tau_i \} \rangle_{C}$. (See Lemma \ref{explicit kernel}. This is actually a subcomplex by using the explicit formula Lemma \ref{anti-holomorphic}.) By definition, we have the following.

\begin{lem}\label{kernel}

For any $i$, we have $\mathrm{Sym}^{\lambda_{\tau_i}}V_{\tau_i} \otimes_L \mathrm{Hom}_{\mathfrak{gl}_2(L)_{\tau_i}}(\mathrm{Sym}^{\lambda_{\tau_i}}V_{\tau_i}, \mathrm{gr}^i Ker) \Isom \mathrm{gr}^i Ker$.

\end{lem}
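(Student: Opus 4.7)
The plan is to prove Lemma \ref{kernel} essentially by unpacking the definition of the filtration $\{\mathrm{Fil}^i Ker\}$ and applying Schur's lemma for $\mathfrak{gl}_2(L)_{\tau_i}$ acting on its irreducible algebraic representation $\mathrm{Sym}^{\lambda_{\tau_i}}V_{\tau_i}$.

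First, I would read off an explicit description of $\mathrm{gr}^i Ker$ from its definition. By construction, $(\mathrm{Fil}^i Ker)_k$ is the sum of subspaces indexed by pairs $(I, \tau)$ with $|I| = k$ and $\tau \in I \cap \{\tau_1, \ldots, \tau_i\}$, each subspace being of the form $\mathrm{Sym}^{\lambda_\tau}V_\tau \otimes (\text{auxiliary factors})_{\tau, I}$ where the auxiliary factors involve only the smooth sheaves $\omega_{\sigma, K^p}^{\cdot, \mathrm{sm}}$, $(\wedge^2 D_{\sigma, K^p}^{\mathrm{sm}})^{\cdot}$ and the $\mathfrak{gl}_2(L)_\tau$-invariant piece $(D^{\Psi-\mathrm{la}, (0,0)_\tau, (0,\lambda_\sigma)_{\sigma \neq \tau}}_{\lambda^\Psi, K^p})^{\mathfrak{gl}_2(L)_\tau}$. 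The quotient $\mathrm{gr}^i Ker$ is therefore generated by the new contributions appearing at level $i$, namely those with $\tau = \tau_i \in I$.

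Next, the key observation is that for each such generator the only factor on which $\mathfrak{gl}_2(L)_{\tau_i}$ acts nontrivially is $\mathrm{Sym}^{\lambda_{\tau_i}}V_{\tau_i}$: the sheaves $\omega^{\cdot, \mathrm{sm}}$ and $(\wedge^2 D^{\mathrm{sm}})^{\cdot}$ carry smooth $G(\mathbb{Q}_p)$-actions, hence trivial Lie algebra actions, and the factor $(D^{\Psi-\mathrm{la}, (0,0)_{\tau_i}, (0,\lambda_\sigma)_{\sigma \neq \tau_i}}_{\lambda^\Psi, K^p})^{\mathfrak{gl}_2(L)_{\tau_i}}$ is $\mathfrak{gl}_2(L)_{\tau_i}$-invariant by definition. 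Since $\mathrm{Sym}^{\lambda_{\tau_i}}V_{\tau_i}$ is an irreducible algebraic representation of $\mathfrak{gl}_2(L)_{\tau_i}$, Schur's lemma gives that the natural evaluation map
\[ \mathrm{Sym}^{\lambda_{\tau_i}}V_{\tau_i} \otimes_L \mathrm{Hom}_{\mathfrak{gl}_2(L)_{\tau_i}}(\mathrm{Sym}^{\lambda_{\tau_i}}V_{\tau_i}, \mathrm{gr}^i Ker) \longrightarrow \mathrm{gr}^i Ker \]
is injective and its image is the $\mathrm{Sym}^{\lambda_{\tau_i}}V_{\tau_i}$-isotypic component of $\mathrm{gr}^i Ker$.

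The remaining point, which I regard as the only mild subtlety, is to verify that $\mathrm{gr}^i Ker$ equals its $\mathrm{Sym}^{\lambda_{\tau_i}}V_{\tau_i}$-isotypic part, i.e.\ surjectivity of the evaluation map. For this I would argue that every generator of $(\mathrm{Fil}^i Ker)_k$ with the index $\tau$ equal to some $\tau_j$ with $j < i$ already lies in $(\mathrm{Fil}^{i-1}Ker)_k$ and so vanishes in $\mathrm{gr}^i$; the remaining generators have $\tau = \tau_i$ and thus contribute to the $\mathrm{Sym}^{\lambda_{\tau_i}}V_{\tau_i}$-isotypic piece. This gives surjectivity, and combined with the previous paragraph yields the desired isomorphism. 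The main obstacle, such as it is, lies in keeping track of how generators with different index $\tau$ but overlapping $I$ interact in the quotient; this is handled cleanly by the fact that the auxiliary factors for different $\tau$ inside the same $I$ differ only in which slot carries the symmetric power, and the filtration is designed precisely to isolate the $\tau = \tau_i$ slot.
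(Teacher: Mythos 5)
Your proposal is correct and takes the same route that the paper implicitly has in mind: the paper asserts the lemma follows ``by definition,'' and your argument simply unpacks that definition, observing that among the tensor factors generating $\mathrm{gr}^i Ker$, only the $\mathrm{Sym}^{\lambda_{\tau_i}}V_{\tau_i}$ slot carries a nontrivial $\mathfrak{gl}_2(L)_{\tau_i}$-action (the smooth sheaves carry trivial Lie algebra actions, and the remaining factor is $\mathfrak{gl}_2(L)_{\tau_i}$-invariant by construction), so $\mathrm{gr}^i Ker$ is a quotient of an isotypic module and hence itself $\mathrm{Sym}^{\lambda_{\tau_i}}V_{\tau_i}$-isotypic.
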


\begin{thm}\label{induction step}

Assume Conjectures \ref{classicality conjecture} and \ref{key diagram}. Let $\varphi : \mathbb{T}^S(K^p, \mathcal{O})_{\mathfrak{m}} \rightarrow \mathcal{O}$ be an $\mathcal{O}$-morphism. Assume that $\varphi$ is not a classical eigensystem of weight $\lambda$, $\rho_{\varphi}|_{G_{F_w}}$ is de Rham of $p$-adic Hodge type $\lambda_w$ for any $w \mid v$, $\chi_{\varphi}|_{G_{F_{0, v^c}}}$ is de Rham of $p$-adic Hodge type $\lambda_0$ and $\widehat{H}^d(S_{K^p}, V_{\lambda^{\Psi}})^{\Psi-\mathrm{la}}_{\mathfrak{m}}[\varphi] \neq 0$. Then there exists $\tau \in \Psi$ such that $\widehat{H}^d(S_{K^p}, V_{\lambda^{\Psi \setminus \{ \tau \}}})_{\mathfrak{m}}^{\Psi \setminus \{ \tau \}-\mathrm{la}}[\varphi] \neq 0$.

\end{thm}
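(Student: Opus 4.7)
The plan is to implement the informal chain of implications sketched just before the theorem. First, I convert the assumption $\widehat{H}^d(S_{K^p}, V_{\lambda^{\Psi}})^{\Psi-\mathrm{la}}_{\mathfrak{m}}[\varphi] \neq 0$ into a nonvanishing statement for $ADR^{\Psi-\mathrm{la}}_\lambda$: Conjecture \ref{key diagram} (applicable thanks to the de Rham hypotheses on $\rho_\varphi$ and $\chi_\varphi$) yields $\mathrm{Ker}\,H^d(\Fl, N)_{\mathfrak{m}}[\varphi] \neq 0$, and Proposition \ref{degeneration} identifies this with $H^d(\Fl, ADR^{\Psi-\mathrm{la}}_\lambda)_{\mathfrak{m}}[\varphi]$. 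Next, the short exact sequence $0 \to Ker \to GDR^{\Psi-\mathrm{la}}_\lambda \to ADR^{\Psi-\mathrm{la}}_\lambda \to 0$ gives, after localization at $\mathfrak{m}$ and passage to the left exact $\varphi$-eigenspace functor, an exact sequence
\begin{equation*}
H^d(\Fl, GDR^{\Psi-\mathrm{la}}_\lambda)_{\mathfrak{m}}[\varphi] \to H^d(\Fl, ADR^{\Psi-\mathrm{la}}_\lambda)_{\mathfrak{m}}[\varphi] \to H^{d+1}(\Fl, Ker)_{\mathfrak{m}}[\varphi].
\end{equation*}
Conjecture \ref{classicality conjecture} gives a generalized Hecke eigenspace decomposition of the leftmost term by classical eigensystems of weight $\lambda$, so the hypothesis that $\varphi$ is not such a system forces it to vanish. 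Therefore $H^{d+1}(\Fl, Ker)_{\mathfrak{m}}[\varphi] \neq 0$.

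I then pin this nonvanishing onto a single graded piece of the filtration $\{\mathrm{Fil}^i Ker\}$. Taking the smallest $i$ with $H^{d+1}(\Fl, \mathrm{Fil}^i Ker)_{\mathfrak{m}}[\varphi] \neq 0$ and applying the long exact sequence for $0 \to \mathrm{Fil}^{i-1} Ker \to \mathrm{Fil}^i Ker \to \mathrm{gr}^i Ker \to 0$, the class must survive into $H^{d+1}(\Fl, \mathrm{gr}^i Ker)_{\mathfrak{m}}[\varphi]$. Set $\tau := \tau_i \in \Psi$. By Lemma \ref{kernel}, each component of $\mathrm{gr}^i Ker$ factors as $\mathrm{Sym}^{\lambda_\tau} V_\tau \otimes_L \mathcal{K}$, where $\mathcal{K}$ is a $\Psi$-locally analytic sheaf on which $\mathfrak{gl}_2(L)_\tau$ acts trivially; the explicit description in Lemma \ref{explicit kernel} realizes $\mathcal{K}$ as built out of $(D_{\lambda^\Psi, K^p}^{\Psi-\mathrm{la},(0,0)_\tau,(0,\lambda_\sigma)_{\sigma \neq \tau}})^{\mathfrak{gl}_2(L)_\tau}$ tensored with smooth line bundles coming from the remaining directions. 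Because the algebraic factor $\mathrm{Sym}^{\lambda_\tau} V_\tau$ is finite dimensional, the nonvanishing propagates to the cohomology of this $\mathfrak{gl}_2(L)_\tau$-invariant sheaf, producing a nonzero Hecke-equivariant class living in a sheaf that is effectively $(\Psi\setminus\{\tau\})$-locally analytic.

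Finally, I identify this class with a nonzero element of $\widehat{H}^d(S_{K^p}, V_{\lambda^{\Psi\setminus\{\tau\}}})^{\Psi\setminus\{\tau\}-\mathrm{la}}_{\mathfrak{m}}[\varphi]\,\widehat{\otimes}_L C$ via the variant of Theorem \ref{geometric sen theoryII} with $\Psi$ replaced by $\Psi\setminus\{\tau\}$ and weight $\lambda^{\Psi\setminus\{\tau\}}$ in place of $\lambda^\Psi$. The conceptual point is that passing from horizontal weight $(0,\lambda_\tau)_\tau$ to $(0,0)_\tau$, killing $\mathfrak{gl}_2(L)_\tau$, and tensoring with $\mathrm{Sym}^{\lambda_\tau} V_\tau$ to restore the algebraic twist is exactly the operation that transfers the $\tau$-factor from an analytic direction (contributing to $D_{\lambda^\Psi,K^p}^{\Psi-\mathrm{la}}$) into an algebraic one (absorbed into $\lambda^{\Psi\setminus\{\tau\}}$). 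The main obstacle is carrying out this last dictionary-style identification compatibly with the Hecke action, the horizontal weight, and the Hodge--Tate bookkeeping; once that identification is in place, everything else is a formal chase through the two conjectures, Proposition \ref{degeneration}, and the filtration on $Ker$.
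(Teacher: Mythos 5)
Your first three steps reproduce the paper's argument: Conjecture \ref{key diagram} plus Proposition \ref{degeneration} give $\mathrm{Ker}\,H^d(\Fl, N)_{\mathfrak{m}}[\varphi] = H^d(\Fl, ADR^{\Psi-\mathrm{la}}_\lambda)_{\mathfrak{m}}[\varphi] \neq 0$, and Conjecture \ref{classicality conjecture} kills the localization $H^d(\Fl, GDR^{\Psi-\mathrm{la}}_\lambda)_{\varphi}$, so the nonvanishing propagates into $H^{d+1}(\Fl, Ker)_{\mathfrak{m}}[\varphi]$. That part is sound.

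The final step has a genuine gap. You extract a class in $H^{d+1}(\Fl, \mathrm{gr}^i Ker)_{\mathfrak{m}}[\varphi]$ and then want to match it, ``via the variant of Theorem \ref{geometric sen theoryII} with $\Psi$ replaced by $\Psi\setminus\{\tau\}$,'' with an element of $\widehat{H}^d(S_{K^p}, V_{\lambda^{\Psi\setminus\{\tau\}}})^{\Psi\setminus\{\tau\}-\mathrm{la}}_{\mathfrak{m}}[\varphi]\widehat{\otimes}_L C$. No such variant exists on the same Shimura variety $S_{K}$: the surjectivity argument in Theorem \ref{geometric sen theory} onto $\mathcal{O}^{\Psi-\mathrm{la}}_{\mathcal{S}_{K^p}}\otimes\Omega^{1,\mathrm{sm}}_{\aS_{K^p}}$ requires that every direction of $\Omega^1$ (which has dimension $d = |\Psi|$) be hit by a $\tau$-analytic vector, and the $\tau$-component is precisely what $\Psi\setminus\{\tau\}$-analytic vectors fail to hit. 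This is exactly the point the paper highlights in the introduction --- $\widehat{H}^d(S_{K^w}, F_w)^{\mathrm{la}, \mathfrak{gl}_2(F_w)_{\tau}} \widehat{\otimes}_{F_w} C \cong \widehat{H}^d(S_{K^w}, \mathcal{O}_{S_{K^w}}^{\mathrm{la}, \mathfrak{gl}_2(F_w)_{\tau}})$ is ``basically not true'' --- and is the reason the subsequent argument (Proposition \ref{induction stepII}, Theorem \ref{conjectual classicality theorem}) needs the comparison Corollary \ref{comparison} to trade $S_K$ for the $(d-1)$-dimensional Shimura variety $T_K$, on which Theorem \ref{geometric sen theoryIII} is actually proved. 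There is also a secondary degree mismatch in your route: your class lives in cohomological degree $d+1$ while the geometric Sen theorem is a degree-$d$ statement, and the vanishing of the relevant $\Fl$-cohomologies away from degree $d$ is not available for $Ker$ and its pieces.

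What the paper does instead avoids any new geometric Sen input: it keeps working with $\mathrm{Ker}\,H^d(\Fl, N)_{\mathfrak{m}}[\varphi]$, which by Proposition \ref{Hodge-Tate decomposition} (the Hodge--Tate decomposition already established at level $\Psi$) is a subspace of $\widehat{H}^d(S_{K^p}, V_{\lambda^{\Psi}}(-\lambda_0))_{\mathfrak{m}}^{\Psi-\mathrm{la}, \chi_{\lambda_{\Psi}}} \widehat{\otimes}_{L} C$, hence of the degree-$d$ completed cohomology. The map into $H^{d+1}(\Fl, Ker)_{\mathfrak{m}}[\varphi]$ and the filtration $\{\mathrm{Fil}^i Ker\}$ are used solely to equip $\mathrm{Ker}\,H^d(\Fl, N)_{\mathfrak{m}}[\varphi]$ with a filtration whose graded pieces are $\mathrm{Sym}^{\lambda_{\tau_i}}V_{\tau_i}$-isotypic (Lemma \ref{kernel}, preserved under subquotients). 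Taking $i_0$ minimal with $\mathrm{gr}^{i_0}\neq 0$, one has $\mathrm{Fil}^{i_0}=\mathrm{gr}^{i_0}$ as a subspace of $\mathrm{Ker}\,H^d(\Fl, N)_{\mathfrak{m}}[\varphi]$, and then taking $\mathfrak{gl}_2(L)_{\tau_{i_0}}$-invariants after tensoring with $\mathrm{Sym}^{\lambda_{\tau_{i_0}}}V_{\tau_{i_0}}^{\vee}$ is a purely formal operation inside the already-identified degree-$d$ completed cohomology, giving $\widehat{H}^d(S_{K^p}, V_{\lambda^{\Psi \setminus \{\tau_{i_0} \}}})_{\mathfrak{m}}^{\Psi \setminus \{\tau_{i_0}\}-\mathrm{la}}[\varphi] \neq 0$ directly.
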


\begin{rem} 

In {\S} 6, we will prove a certain comparison theorem on the completed cohomologies under a certain technical assumption on $\mathfrak{m}$. (See Theorem \ref{comparison}.) This implies that we may replace $S_K$ by a unitary Shimura variety $T_{K}$ of dimension $d-1$ associated with $\Psi \setminus \{ \tau \}$. By applying Theorem \ref{induction step} for $T_K$ again and repeating this argument, we can reduce the proof of the classicality of $\varphi$ to the unitary Shimura curve case and deduce the classicality of $\varphi$.

\end{rem}

\begin{proof} By Proposition \ref{degeneration}, we have $H^d(\Fl, ADR^{\Psi-\mathrm{la}}_{\lambda})_{\mathfrak{m}}[\varphi] = \mathrm{Ker}H^d(\Fl, N)_{\mathfrak{m}}[\varphi] \neq 0$ because we assume Conjecture \ref{key diagram}. We have an exact sequence $H^d(\Fl, GDR^{\Psi-\mathrm{la}}_{\lambda}) \rightarrow H^d(\Fl, ADR_{\lambda}^{\Psi-\mathrm{la}}) \rightarrow H^{d+1}(\Fl, Ker)$.

The localization $H^d(\Fl, GDR^{\Psi-\mathrm{la}}_{\lambda})_{\varphi}$ of $H^d(\Fl, GDR^{\Psi-\mathrm{la}}_{\lambda})$ at the prime ideal corresponding to $\varphi$ vanishes since we assume Conjecture \ref{classicality conjecture}. Therefore, we obtain $H^{d+1}(\Fl, Ker)_{\mathfrak{m}}[\varphi] \supset H^d(\Fl, ADR^{\Psi-\mathrm{la}}_{\lambda})_{\mathfrak{m}}[\varphi] = \mathrm{Ker}H^d(\Fl, N)_{\mathfrak{m}}[\varphi]$. The filtration $\{ \mathrm{Fil}^iKer \}_{0 \le i \le d}$ on $Ker$ induces the increasing exhaustive filtration $\{ \mathrm{Fil}^i \}_{0 \le i \le d}$ on $\mathrm{Ker}H^d(\Fl, N)_{\mathfrak{m}}[\varphi]$ such that $$\mathrm{Sym}^{\lambda_{\tau_i}}V_{\tau_i} \otimes_L \mathrm{Hom}_{\mathfrak{gl}_2(L)_{\tau_i}}(\mathrm{Sym}^{\lambda_{\tau_i}}V_{\tau_i}, \mathrm{gr}^i) \Isom \mathrm{gr}^i$$ for any $i$. Let $i_0$ be the minimum $i$ such that $\mathrm{gr}^{i} \neq 0$. Thus $\mathrm{gr}^{i_0} = \mathrm{Fil}^{i_0}$. Then we have $$\mathrm{Hom}_{\mathfrak{gl}_2(L)_{\tau_{i_0}}}(\mathrm{Sym}^{\lambda_{\tau_{i_0}}}V_{\tau_{i_0}}, \mathrm{Ker}H^d(\Fl, N)_{\mathfrak{m}}[\varphi]) = (\mathrm{Ker}H^d(\Fl, N)_{\mathfrak{m}}[\varphi] \otimes \mathrm{Sym}^{\lambda_{\tau_{i_0}}}V_{\tau_{i_0}}^{\vee})^{\mathfrak{gl}_2(L)_{\tau_{i_0}}} \neq 0.$$

Note that we have $\widehat{H}^d(S_{K^p}, V_{\lambda^{\Psi}}(-\lambda_0))_{\mathfrak{m}}^{\Psi-\mathrm{la}, \chi_{\lambda_{\Psi}}} \widehat{\otimes}_{L} C \cong \oplus_{I \subset \Psi} H^d(\Fl, D^{\Psi-\mathrm{la}, (0, \lambda_{\tau})_{\tau \in I}, ( 1 +\lambda_{\tau}, -1)_{\tau \notin I}}_{K^p, \lambda^{\Psi}})_{\mathfrak{m}}$ by Proposition \ref{Hodge-Tate decomposition}. Thus $\mathrm{Ker}H^d(\Fl, N)_{\mathfrak{m}}[\varphi] \subset \widehat{H}^d(S_{K^p}, V_{\lambda^{\Psi}}(-\lambda_0))_{\mathfrak{m}}^{\Psi-\mathrm{la}, \chi_{\lambda_{\Psi}}} \widehat{\otimes}_{L} C$. In particular, we obtain $(\widehat{H}^d(S_{K^p}, V_{\lambda^{\Psi}})_{\mathfrak{m}}^{\Psi-\mathrm{la}}[\varphi] \otimes \mathrm{Sym}^{\lambda_{\tau_{i_0}}}V_{\tau}^{\vee})^{\mathfrak{gl}_2(L)_{\tau_{i_0}}} = \widehat{H}^d(S_{K^p}, V_{\lambda^{\Psi \setminus \{\tau_{i_0} \}}})_{\mathfrak{m}}^{\Psi \setminus \{\tau_{i_0}\}-\mathrm{la}}[\varphi] \neq 0$. \end{proof}

\subsection{Fontaine operators and arithmetic locally analytic de Rham complexes}

The purpose of this subsection is to prove Conjecture \ref{key diagram} in the parallel weight case and when $d \le 2$.

\subsubsection{Preliminaries on the Fontaine operator and period sheaves}

Let $m$ and $k$ be positive integers. We put $\varepsilon := (1, \zeta_{p}, \zeta_{p^2}, \cdots ) \in \mathcal{O}_{C}^{\flat}:=\varprojlim_{x \mapsto x^p}\mathcal{O}_C/p$, $t:=\mathrm{log}([\varepsilon])$ and $B_{\mathrm{dR}, m}^+ := B_{\mathrm{dR}}^+/t^m$. Let $W$ be a Banach space over $\mathbb{Q}_p$ with a continuous action $B_{\mathrm{dR}, m}^+ \times W \rightarrow W$. We put $\mathrm{Fil}^iW := t^iW$ and $\mathrm{gr}^iW:=t^iW/t^{i+1}W$.

We assume that $t: \mathrm{gr}^iW(1) \rightarrow \mathrm{gr}^{i+1}W$ is an isomorphism for any $i = 0, \cdots, m-2$ and $\mathrm{gr}^0W$ is Hodge-Tate with weights in $[0, k]$, i.e., $\mathrm{gr}^0W$ is Hodge-Tate of some integers $w_1, \cdots, w_r \in [0, k]$. Note that $t^iW$ is a closed subspace of $W$ by the above first assumption. 

We recall the notations $H_L := \mathrm{Gal}(\overline{L}/L^{\mathrm{cycl}})$, $\Gamma_L := \mathrm{Gal}(L^{\mathrm{cycl}}/L)$ and $W^L := W^{H_L, \Gamma_L-\mathrm{an}}$ defined in {\S} 3.5. Note that by (8) of Lemma \ref{HTE}, we have $\mathrm{gr}^i(W^L) = (\mathrm{gr}^iW)^L$.

Let $\theta_{W} : W^L \rightarrow W^L$ be the Sen operator, i.e., the action of $1 \in \mathbb{Q}_p = \mathrm{Lie}\Gamma_{L}$. Let $E_0(W^L) := W^{L, \theta_W-\mathrm{nilp}}$ be the subspace of $W^L$ consisting of elements $x$ such that $\theta_W^s(x) = 0$ for some integer $s > 0$ and $N_W := \theta_W|_{E_0(W^L)}$. Note that $t^i$ induces $(\mathrm{gr}^0W(i))^{G_L} \Isom \mathrm{gr}^i E_0(W^L)$ and $N_W$ stabilizes the filtration on $E_0(W^L)$ induced from the $t$-adic filtration on $W$. 

\begin{prop}

Let $V$ be a finite-dimensional representation of $G_L$ over $\mathbb{Q}_p$ which is Hodge-Tate with weights in $[0, k]$. We put $W := V \otimes_{\mathbb{Q}_p} B_{\mathrm{dR}, m}^+$.

If $V$ is de Rham, then $N_{W} = 0$.

Conversely, if $m > k$ and $N_W = 0$, then $V$ is de Rham.

\end{prop}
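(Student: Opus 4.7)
The plan is to prove both implications by relating $E_0(W^L)$ and its Sen operator to the de Rham comparison theorem. For the first implication, I would use the functorial isomorphism $V \otimes_{\mathbb{Q}_p} B_{\mathrm{dR}} \cong D_{\mathrm{dR}}(V) \otimes_L B_{\mathrm{dR}}$ of filtered $B_{\mathrm{dR}}$-modules with $G_L$-action. Since $V$ has Hodge--Tate weights in $[0,k]$, the Hodge filtration on $D := D_{\mathrm{dR}}(V)$ sits in $[-k, 0]$. For each $0 \le i \le k$, an element $x \in \mathrm{Fil}^{-i} D$, viewed on the right-hand side as $x \otimes 1$, is $G_L$-invariant; transporting it to the left-hand side and using that the comparison is filtered, one sees that $x$ lies in $V \otimes t^i B_{\mathrm{dR}}^+$. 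Reducing modulo $t^m$ yields a $G_L$-invariant (hence Sen-annihilated) element of $\mathrm{Fil}^i W$, and a dimension count using $\dim_L \mathrm{gr}^{-i} D = \dim_L (\mathrm{gr}^0 W(i))^{G_L} = \dim_L \mathrm{gr}^i E_0(W^L)$ shows that these lifts span $E_0(W^L)$ modulo filtration at each step. Because $\theta_W$ acts by strictly decreasing the filtration on $E_0(W^L)$ and vanishes on each lift, it must vanish on the whole of $E_0(W^L)$, so $N_W = 0$.

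For the converse, I would proceed by induction on the filtration, using $N_W = 0$ to lift $G_L$-invariants through the truncated Faltings-type extensions
\[
0 \to \mathrm{Fil}^{i+1} W \to \mathrm{Fil}^i W \to \mathrm{gr}^i W \to 0.
\]
The key observation is that the Fontaine operator $N_W$ is, by construction, the obstruction map measuring whether a Sen-nilpotent (equivalently, by the Hodge--Tate hypothesis on each $\mathrm{gr}^i W$, a $G_L$-invariant) element of $\mathrm{gr}^i W$ admits a Sen-nilpotent lift to $\mathrm{Fil}^i W^L$. Starting from the Hodge--Tate decomposition of $\mathrm{gr}^0 W = V \otimes C$ and lifting through $\mathrm{Fil}^0, \mathrm{Fil}^1, \dots, \mathrm{Fil}^k$ successively, the vanishing of $N_W$ at each stage produces an $L$-subspace $\widetilde D \subset (V \otimes B_{\mathrm{dR}, m}^+)^{G_L}$ of dimension $\dim_{\mathbb{Q}_p} V$. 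The hypothesis $m > k$ is precisely what is needed to ensure that the resulting filtered lifts stay inside $V \otimes B_{\mathrm{dR}, m}^+$ (no clipping occurs) and that the induced $B_{\mathrm{dR}}$-linear map $\widetilde D \otimes_L B_{\mathrm{dR}} \to V \otimes B_{\mathrm{dR}}$ is well-defined and an isomorphism (which can be verified by base change to $C$ and a dimension count against the Hodge--Tate decomposition). This exhibits $V$ as de Rham with $D_{\mathrm{dR}}(V) \supset \widetilde D$.

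I expect the main technical obstacle to be the inductive lifting step in the converse direction: verifying that the obstruction to lifting a $G_L$-invariant of $\mathrm{gr}^i W$ to a $G_L$-invariant of $\mathrm{Fil}^i W^L$ is measured \emph{exactly} by $N_W$ on the relevant graded piece. This hinges on the defining property of the Fontaine operator as the connecting map for the Tate-twisted exact sequences; given that identification, the argument reduces to bookkeeping. The other elements of the plan --- the comparison of filtrations in direction one, the dimension counts, and the passage from $B_{\mathrm{dR}, m}^+$ to $B_{\mathrm{dR}}$ --- are routine once the bound $m > k$ is in place to guarantee that no piece of the Hodge filtration of $D_{\mathrm{dR}}(V)$ is lost on truncation.
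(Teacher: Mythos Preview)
Your plan would work, but it is considerably more laborious than the paper's proof, and it contains a conceptual slip about what $N_W$ is.

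The paper dispatches both directions in one stroke via a dimension count. The two ingredients are: (i) since $V$ is Hodge--Tate with weights in $[0,k]$ and $m>k$, one has $\dim_L E_0(W^L) = \sum_i \dim_L (\mathrm{gr}^0 W(i))^{G_L} = \dim_{\mathbb{Q}_p} V$; and (ii) the identification $E_0(W^L)^{N_W=0} = W^{G_L} = D_{\mathrm{dR}}(V)$, the first equality because a $\Gamma_L$-analytic vector annihilated by the Lie algebra is $\Gamma_L$-invariant, the second because $m>k$ (so no filtration is lost under truncation). Then $V$ is de Rham $\Leftrightarrow \dim_L D_{\mathrm{dR}}(V) = \dim_{\mathbb{Q}_p} V \Leftrightarrow E_0(W^L)^{N_W=0} = E_0(W^L) \Leftrightarrow N_W = 0$. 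No explicit lifts, no induction through the filtration.

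Your forward direction is fine, just longer: you are essentially reproving (ii) by producing the $G_L$-invariants explicitly from the comparison isomorphism.

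Your converse direction has a wobble. You write that $N_W$ ``is, by construction, the obstruction map measuring whether a Sen-nilpotent element of $\mathrm{gr}^i W$ admits a Sen-nilpotent lift''. That is not what $N_W$ is: by definition $N_W = \theta_W|_{E_0(W^L)}$, an endomorphism of a single space, not a connecting map. Moreover, every Sen-nilpotent element of the graded piece trivially admits a Sen-nilpotent lift (any preimage in $E_0(W^L)$ works), so the obstruction you describe is always zero. What you presumably intend is the obstruction to lifting a $G_L$-\emph{invariant} of $\mathrm{gr}^i W$ to a $G_L$-invariant of $\mathrm{Fil}^i W$; this can indeed be related to the graded map induced by $N_W$, but that relation needs to be spelled out, and in any case the whole inductive lifting is unnecessary: once $N_W=0$, \emph{every} element of $E_0(W^L)$ is already $G_L$-invariant, so you immediately get $\dim_L W^{G_L} = \dim_{\mathbb{Q}_p} V$ and conclude. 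The step you flag as ``routine'' --- passing from $(V\otimes B_{\mathrm{dR},m}^+)^{G_L}$ back to $D_{\mathrm{dR}}(V)$ --- is exactly ingredient (ii) above, and is the one place where $m>k$ genuinely enters.
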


\begin{proof} We may assume $m > k$. Since we assume that $V$ is Hodge-Tate, we have $\mathrm{dim}_LE_0(W^L) = \mathrm{dim}_{\mathbb{Q}_p} V$ by the above identification $(\mathrm{gr}^0W(i))^{G_L} = \mathrm{gr}^i E_0(W^L)$. Thus $V$ is de Rham if and only if $\mathrm{dim}_{L} E_0(W^L) = \mathrm{dim}_LD_{\mathrm{dR}}(V)$. Note that $D_{\mathrm{dR}}(V) = E_0(W^L)^{N_W=0}$ and thus the above condition is equivalent to $E_0(W^L) = E_0(W^L)^{N_W=0}$ and this means $N_W = 0$. \end{proof}

\begin{cor}\label{deRhamness}

Let $V$ be a finite-dimensional representation of $G_L$ over $L$ which is Hodge-Tate of weights in $[0, k]$. We put $W_{\sigma} := V^{\sigma} \otimes_{L} B_{\mathrm{dR}, m}^+$ for any $\sigma \in \mathrm{Gal}(L/\mathbb{Q}_p)$. ($V^{\sigma}$ denotes the conjugate representation of $V$ by $\sigma$. See the comments before Theorem \ref{kottwitz conjecture}.)
    
If $V$ is de Rham, then $N_{W^{\sigma}} = 0$ for any $\sigma \in \mathrm{Gal}(L/\mathbb{Q}_p)$.

Conversely, if $m > k$ and $N_{W^{\sigma}} = 0$ for any $\sigma$, then $V$ is de Rham.
    
\end{cor}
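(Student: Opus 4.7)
The plan is to deduce this corollary directly from the preceding proposition by restriction of scalars from $L$ to $\mathbb{Q}_p$, combined with the same twisting trick used in the proof of Lemma~\ref{HTE}(3). Regard $V$ as a $\mathbb{Q}_p$-representation $V_{\mathbb{Q}_p}$ of $G_L$ of dimension $[L:\mathbb{Q}_p]\dim_L V$. Since $V$ is assumed Hodge--Tate with weights in $[0,k]$ as an $L$-representation, Lemma~\ref{HTE}(3) gives that every $V^\sigma\otimes_L C$ is Hodge--Tate in the sense of Definition~\ref{Hodge-Tate}, and a standard comparison shows that $V_{\mathbb{Q}_p}$ is then Hodge--Tate (as a $\mathbb{Q}_p$-representation) with the same bound on weights.

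The key identification I would use is the canonical $G_L$-equivariant decomposition coming from the embeddings $\sigma : L\hookrightarrow\overline{\mathbb{Q}}_p\hookrightarrow B_{\mathrm{dR}}^+$, namely
\begin{equation*}
L\otimes_{\mathbb{Q}_p} B_{\mathrm{dR},m}^{+}\ \xrightarrow{\sim}\ \prod_{\sigma\in\mathrm{Gal}(L/\mathbb{Q}_p)} B_{\mathrm{dR},m}^{+},\qquad a\otimes b\mapsto(\sigma(a)b)_\sigma,
\end{equation*}
where $G_L$ acts only through the right factor. Tensoring with $V$ over $L$ yields
\begin{equation*}
V_{\mathbb{Q}_p}\otimes_{\mathbb{Q}_p} B_{\mathrm{dR},m}^{+}\ \cong\ \bigoplus_{\sigma} V\otimes_{L,\sigma} B_{\mathrm{dR},m}^{+},
\end{equation*}
and, as in the proof of Lemma~\ref{HTE}(3), the map $v\otimes b\mapsto v\otimes\sigma^{-1}(b)$ identifies the $\sigma$-summand with $W_\sigma=V^\sigma\otimes_L B_{\mathrm{dR},m}^{+}$ as $G_L$-equivariant filtered $B_{\mathrm{dR},m}^{+}$-modules. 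Each $W_\sigma$ satisfies the hypotheses of the preceding Proposition: the $t$-adic graded pieces are shifts by Tate twists of $\mathrm{gr}^0 W_\sigma = V^\sigma\otimes_L C$, and the latter is Hodge--Tate with weights in $[0,k]$.

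Granting that the formation of $W^L$, the Sen operator $\theta_W$, and the Fontaine operator $N_W$ are all additive in direct sums, the operator $N_{V_{\mathbb{Q}_p}\otimes B_{\mathrm{dR},m}^{+}}$ is the direct sum $\bigoplus_\sigma N_{W_\sigma}$ under the above identification; hence it vanishes iff every $N_{W_\sigma}$ vanishes. Applying the proposition to the $\mathbb{Q}_p$-representation $V_{\mathbb{Q}_p}$ and using that $V$ is de Rham as an $L$-representation iff it is de Rham as a $\mathbb{Q}_p$-representation (which follows from the identity $\dim_{\mathbb{Q}_p} D_{\mathrm{dR}}^{\mathbb{Q}_p}(V_{\mathbb{Q}_p})=[L:\mathbb{Q}_p]\dim_L D_{\mathrm{dR}}^L(V)$ together with the analogous dimension identity for $\dim V$) completes the proof in both directions.

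The main obstacle is purely bookkeeping: one must verify that the twisting isomorphism $V\otimes_{L,\sigma}B_{\mathrm{dR},m}^{+}\cong W_\sigma$ is compatible with the $G_L$-action, with $H_L$-invariants and $\Gamma_L$-analytic vectors, and with the resulting Sen (and hence Fontaine nilpotent) operator, so that $N_{V_{\mathbb{Q}_p}\otimes B_{\mathrm{dR},m}^{+}}$ really does split as $\bigoplus_\sigma N_{W_\sigma}$. This is essentially the same check performed in the proof of Lemma~\ref{HTE}(3) for the Hodge--Tate graded, now propagated across the whole $t$-adic filtration; no new ideas should be required.
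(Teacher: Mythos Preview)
Your proposal is correct and takes essentially the same approach as the paper. The paper's proof is a terse two-line version of exactly what you wrote: it invokes the decomposition $V\otimes_{\mathbb{Q}_p} L\cong\bigoplus_\sigma V\otimes_{L,\sigma} L$ (hence $V_{\mathbb{Q}_p}\otimes_{\mathbb{Q}_p}B_{\mathrm{dR},m}^+\cong\bigoplus_\sigma V\otimes_{L,\sigma}B_{\mathrm{dR},m}^+$) and the twisting isomorphism $(V\otimes_{L,\sigma}B_{\mathrm{dR},m}^+)^\sigma\cong V^\sigma\otimes_L B_{\mathrm{dR},m}^+$ from the proof of Lemma~\ref{HTE}(3), then appeals to the preceding proposition applied to $V_{\mathbb{Q}_p}$. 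One small point: as in the paper, the identification with $W_\sigma$ is literally an isomorphism with the \emph{$\sigma$-twist} $(V\otimes_{L,\sigma}B_{\mathrm{dR},m}^+)^\sigma$ rather than with $V\otimes_{L,\sigma}B_{\mathrm{dR},m}^+$ itself; this is harmless for the conclusion since $H_L$ and $\Gamma_L$ are normal in $G_{\mathbb{Q}_p}$, so the outer $\sigma$-twist preserves vanishing of the Sen and Fontaine operators.
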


\begin{proof}

We have a decomposition $V \otimes_{\mathbb{Q}_p} L \cong \oplus_{\sigma \in \mathrm{Gal}(L/\mathbb{Q}_p)} V \otimes_{L, \sigma} L$. Thus the result follows from $(V \otimes_{L, \sigma} B_{\mathrm{dR}, m})^{\sigma} \cong V^{\sigma} \otimes_{L} B_{\mathrm{dR}, m}$. \end{proof}

We recall that we have sheaves $\mathbb{B}^+_{\mathrm{dR}}$ and $\mathcal{O}\mathbb{B}_{\mathrm{dR}}^+$ on $(\aS_{K, L})_{\proet}$. (See \cite[{\S} 6]{pH}.) We also use the same notations for the restrictions to $\aS_{K^p}$. Let $\mathbb{B}_{\mathrm{dR}, m}^+ := \mathbb{B}_{\mathrm{dR}}^+/\mathrm{Fil}^m \mathbb{B}_{\mathrm{dR}}^+$. We also write $\mathbb{B}_{\mathrm{dR}, m}^+$ for the pushforward of $\mathbb{B}_{\mathrm{dR}, m}^+$ via $\pi_{\mathrm{HT}}$. Let $\varinjlim_{L'' \supset L} E_0((V_{\lambda^{\Psi}}(-\lambda_0) \otimes_L \mathbb{B}_{\mathrm{dR}, m}^+)^{\Psi-\mathrm{la}, \chi_{\lambda_{\Psi}}, L''})$ be the sheaf on $\Fl$ defined by $$U \mapsto \varinjlim_{L'' \supset L'} E_0((V_{\lambda^{\Psi}}(-\lambda_0) \otimes_L \mathbb{B}_{\mathrm{dR}, m}^+(U))^{\Psi-\mathrm{la}, \chi_{\lambda_{\Psi}}, L''})$$ for any quasicompact open $U$ of $\Fl$ and any sufficiently large finite extension $L'$ of $L$ such that $\mathbb{B}_{\mathrm{dR}, m}^+(U)$ has a natural $G_{L'}$-action and $L''$ runs through finite extensions of $L'$. Let $N_{\lambda}$ be the Fontaine operator on $\varinjlim_{L'' \supset L} E_0((V_{\lambda^{\Psi}}(-\lambda_0) \otimes_L \mathbb{B}_{\mathrm{dR}, m}^+)^{\Psi-\mathrm{la}, \chi_{\lambda_{\Psi}}, L''})$

\begin{lem}\label{de Rham structure}

1 \ $\widehat{H}^i(S_{K^p}, V_{\lambda^{\Psi}}(-\lambda_0))_{\mathfrak{m}}^{\Psi-\mathrm{la}, \chi_{\lambda_{\Psi}}} \widehat{\otimes}_{L} B^+_{\mathrm{dR}, m} \cong H^i(\Fl, (V_{\lambda^{\Psi}}(-\lambda_0) \otimes_L \mathbb{B}_{\mathrm{dR}, m}^+)^{\Psi-\mathrm{la}, \chi_{\lambda_{\Psi}}})_{\mathfrak{m}}$ and these vanish for any $i \neq d$. Moreover, the latter cohomology group can be computed by the $\check{C}$ech complex $C(\mathcal{B}', (V_{\lambda^{\Psi}}(-\lambda_0) \otimes_L \mathbb{B}_{\mathrm{dR}, m}^+)^{\Psi-\mathrm{la}, \chi_{\lambda_{\Psi}}})_{\mathfrak{m}}$ for any finite covering $\mathcal{B}' \subset \mathcal{B}$.

2 \ We have a natural map $$H^d(\Fl, \varinjlim_{L'' \supset L} E_0((V_{\lambda^{\Psi}}(-\lambda_0) \otimes_L \mathbb{B}_{\mathrm{dR}, m}^+)^{\Psi-\mathrm{la}, \chi_{\lambda_{\Psi}}, L''}))_{\mathfrak{m}} \rightarrow \varinjlim_{L'' \supset L} E_0((\widehat{H}^d(S_{K^p}, V_{\lambda^{\Psi}}(-\lambda_0))_{\mathfrak{m}}^{\Psi-\mathrm{la}, \chi_{\lambda_{\Psi}}} \widehat{\otimes}_{L} B^+_{\mathrm{dR}, m})^{L''})$$ which is compatible with the Fontaine operator on $\varinjlim_{L'' \supset L}(\widehat{H}^d(S_{K^p}, V_{\lambda^{\Psi}}(-\lambda_0))_{\mathfrak{m}}^{\Psi-\mathrm{la}, \chi_{\lambda_{\Psi}}} \widehat{\otimes}_{L} B^+_{\mathrm{dR}, m})^{L''}$ and $H^d(\Fl, N_{\lambda})$ on $H^d(\Fl, \varinjlim_{L'' \supset L} E_0((V_{\lambda^{\Psi}}(-\lambda_0) \otimes_L \mathbb{B}_{\mathrm{dR}, m}^+)^{\Psi-\mathrm{la}, \chi_{\lambda_{\Psi}}, L''}))_{\mathfrak{m}}$.

\end{lem}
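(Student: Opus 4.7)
The plan is to prove part 1 by induction on $m$, then deduce part 2 by inspecting the constructions. The base case $m=1$ amounts to $\mathbb{B}_{\mathrm{dR},1}^+ = \widehat{\mathcal{O}}$. Using Proposition \ref{Hodge de Rham}, one has $V_{\lambda^{\Psi}}(-\lambda_0)\otimes_L \widehat{\mathcal{O}}_{\aS_{K^p}} \cong D_{\lambda^{\Psi},\aS_{K^p}}\otimes_{\mathcal{O}_{\aS_{K^p}}}\widehat{\mathcal{O}}_{\aS_{K^p}}$; pushing forward along $\pi_{\mathrm{HT}}$ and taking $\Psi$-locally analytic vectors, the $\chi_{\lambda_{\Psi}}$-isotypic part is identified with $\oplus_{I\subset\Psi} D_{\lambda^{\Psi},K^p}^{\Psi\mathrm{-la},(0,\lambda_\tau)_{\tau\in I},(1+\lambda_\tau,-1)_{\tau\notin I}}$ by Proposition \ref{Hodge-Tate decomposition}. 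Theorem \ref{geometric sen theoryII} (or rather its sheaf-level refinement via Proposition \ref{horizontal action}) then gives the isomorphism, vanishing outside degree $d$, and $\check{\mathrm{C}}$ech computability.

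For the inductive step, tensor the short exact sequence $0 \to t^{m-1}\widehat{\mathcal{O}}\to \mathbb{B}_{\mathrm{dR},m}^+ \to \mathbb{B}_{\mathrm{dR},m-1}^+\to 0$ with $V_{\lambda^{\Psi}}(-\lambda_0)$, push forward along $\pi_{\mathrm{HT}}$, take $\Psi$-locally analytic vectors and then the $\chi_{\lambda_{\Psi}}$-isotypic part. To obtain a short exact sequence of sheaves after these operations and a corresponding $\check{\mathrm{C}}$ech comparison, one needs the vanishing of $R^i\Psi\mathfrak{LA}$ on sections of $V_{\lambda^{\Psi}}(-\lambda_0)\otimes_L \mathbb{B}_{\mathrm{dR},m}^+$ over the affinoid perfectoid preimages $V_\infty$ of $U\in\tilde{\mathcal{B}}$, which I would establish by the same Faltings-extension argument as in Theorem \ref{geometric sen theory}, together with an explicit expansion formula for $\mathbb{B}_{\mathrm{dR},m}^+$-valued locally analytic vectors extending Proposition \ref{mikami expansionII}; the finite length of the $t$-adic filtration and Lemma \ref{tensor product} handle the tensor products with $B_{\mathrm{dR},m}^+$. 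The long exact sequence in cohomology combined with the inductive hypothesis and the base case then yields simultaneously the isomorphism, the vanishing for $i\neq d$, and the $\check{\mathrm{C}}$ech description.

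For part 2, once the $\check{\mathrm{C}}$ech description in part 1 is in place, the map is induced termwise: for each $L''$ and each finite covering $\mathcal{B}'\subset\mathcal{B}$, the natural inclusion $E_0((V_{\lambda^{\Psi}}(-\lambda_0)\otimes_L \mathbb{B}_{\mathrm{dR},m}^+(U))^{\Psi\mathrm{-la},\chi_{\lambda_{\Psi}},L''})\hookrightarrow (V_{\lambda^{\Psi}}(-\lambda_0)\otimes_L \mathbb{B}_{\mathrm{dR},m}^+(U))^{\Psi\mathrm{-la},\chi_{\lambda_{\Psi}},L''}$ commutes with $\check{\mathrm{C}}$ech differentials (both sides are acted on compatibly by $\theta_{W}$), so it descends to cohomology. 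Via the isomorphism from part 1, the target is $(\widehat{H}^d(S_{K^p},V_{\lambda^{\Psi}}(-\lambda_0))_{\mathfrak{m}}^{\Psi\mathrm{-la},\chi_{\lambda_{\Psi}}}\widehat{\otimes}_L B_{\mathrm{dR},m}^+)^{L''}$, and the image lies in its $E_0$-part because a Sen-nilpotent cocycle produces a Sen-nilpotent cohomology class; passing to $\varinjlim_{L''}$ gives the desired map. The compatibility with the Fontaine operators is then tautological, as both $H^d(\Fl,N_\lambda)$ and the Fontaine operator on the completed cohomology side are induced by the same local operator $\theta = 1 \in \mathrm{Lie}\,\Gamma_{L''}$.

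The main obstacle is the inductive step, specifically verifying that $R^i\Psi\mathfrak{LA}$ vanishes on the sections of $V_{\lambda^{\Psi}}(-\lambda_0)\otimes_L \mathbb{B}_{\mathrm{dR},m}^+$ above each $V_\infty$, together with the sheaf-level analogue at the $\chi_{\lambda_{\Psi}}$-isotypic level. Once these vanishing results and the explicit expansion formulas for $\mathbb{B}_{\mathrm{dR},m}^+$-valued analytic vectors are established (following the strategy of Theorem \ref{geometric sen theory}, Corollary \ref{nontrivial geometric}, and Proposition \ref{mikami expansionII}), the remainder of the argument is a formal bookkeeping exercise within the framework developed in \S 3.4 and \S 3.5.
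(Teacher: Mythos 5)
Your proposal is correct and takes essentially the same route as the paper: part~1 reduces to the graded pieces of the finite $t$-adic filtration on $\mathbb{B}_{\mathrm{dR},m}^+$ (which the paper compresses into the phrase ``taking graded pieces'' and you phrase as induction on $m$ via the short exact sequence $0\to t^{m-1}\widehat{\mathcal{O}}\to\mathbb{B}_{\mathrm{dR},m}^+\to\mathbb{B}_{\mathrm{dR},m-1}^+\to 0$), and part~2 uses the Čech cocycle-level comparison map from part~1 together with Sen-equivariance to land in $E_0$, which is exactly the paper's construction via the Galois-equivariant map $Z^d(\mathcal{B}',\cdot)\to\widehat{H}^d(\cdot)\widehat{\otimes}_L B^+_{\mathrm{dR},m}$.

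One correction of emphasis, however: the ``main obstacle'' you identify for the inductive step --- establishing $R^i\Psi\mathfrak{LA}$-acyclicity and explicit expansion formulas for $\mathbb{B}_{\mathrm{dR},m}^+$-valued sections by a fresh Faltings-extension argument extending Theorem \ref{geometric sen theory} and Proposition \ref{mikami expansionII} --- is not actually an obstacle and requires no new geometric input. Because the $t$-adic filtration is finite and each graded piece is a Tate twist of $\widehat{\mathcal{O}}$, for which the required acyclicity at the level of $R^i\Psi\mathfrak{LA}$, $\mathrm{Ext}^i_{Z(U(\mathfrak{g}_\Psi))}$, and Čech-cohomology is already established in the paper (Theorem \ref{geometric sen theory}, Corollary \ref{nontrivial geometric}, Propositions \ref{derived calculation I} and \ref{derived calculation II}, Lemma \ref{derived calculationIII}), the corresponding statements for $\mathbb{B}_{\mathrm{dR},m}^+$ follow purely formally from the long exact sequence in part~2 of Lemma \ref{fundamental} applied to the short exact sequences of the filtration. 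You should invoke that lemma rather than re-run the expansion-formula machinery; once you do, the inductive step is entirely formal and the argument coincides with the paper's.
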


\begin{rem}

The author doesn't know whether the map in 2 is an isomorphism or not, but we don't need such a result.

\end{rem}

\begin{proof} 1 follows from the primitive comparison \cite[Theorem 1.3]{pH}, Proposition \ref{Hodge-Tate decomposition} and taking graded pieces. 2 follows from the fact that the given map is induced from the Galois equivariant continuous map $Z^i(\mathcal{B}', (V_{\lambda^{\Psi}}(-\lambda_0) \otimes_L \mathbb{B}_{\mathrm{dR}, m}^+)^{\Psi-\mathrm{la}, \chi_{\lambda_{\Psi}}}) \rightarrow \widehat{H}^i(S_{K^p}, V_{\lambda^{\Psi}}(-\lambda_0))_{\mathfrak{m}}^{\Psi-\mathrm{la}, \chi_{\lambda_{\Psi}}} \widehat{\otimes}_{L} B^+_{\mathrm{dR}, m}$ given in 1, where $Z^i(\mathcal{B}', (V_{\lambda^{\Psi}}(-\lambda_0) \otimes_L \mathbb{B}_{\mathrm{dR}, m}^+)^{\Psi-\mathrm{la}, \chi_{\lambda_{\Psi}}})$ denotes the kernel of $C^i(\mathcal{B}', (V_{\lambda^{\Psi}}(-\lambda_0) \otimes_L \mathbb{B}_{\mathrm{dR}, m}^+)^{\Psi-\mathrm{la}, \chi_{\lambda_{\Psi}}}) \rightarrow C^{i+1}(\mathcal{B}', (V_{\lambda^{\Psi}}(-\lambda_0) \otimes_L \mathbb{B}_{\mathrm{dR}, m}^+)^{\Psi-\mathrm{la}, \chi_{\lambda_{\Psi}}})$.   \end{proof}

Note the following lemma, which is useful later.

\begin{lem}\label{formal power}

Let $K$ be a complete discrete valuation field extension of $\mathbb{Q}_p$ with a perfect residue field and $X$ be a smooth rigid analytic space over $K$. We put $\mathcal{O}\mathbb{B}^+_{\mathrm{dR},k} := \mathcal{O}\mathbb{B}^+_{\mathrm{dR}}/\mathrm{Fil}^k\mathcal{O}\mathbb{B}_{\mathrm{dR}}^+$ and $\mathbb{B}_{\mathrm{dR}, k}^+ := \mathbb{B}_{\mathrm{dR}}^+/\mathrm{Fil}^k\mathbb{B}^+_{\mathrm{dR}}$. (Precisely, we should write $\mathcal{O}\mathbb{B}^+_{\mathrm{dR}, X}$ and $\mathbb{B}^+_{\mathrm{dR}, X}$ to distinguish the previous $\mathcal{O}\mathbb{B}^+_{\mathrm{dR}}$ and $\mathbb{B}^+_{\mathrm{dR}}$ on $(\aS_{K, L})_{\proet}$.)

Then we have the following results.

1 \ There exists a canonical isomorphism of filtered $\widehat{\mathcal{O}}_X = \mathbb{B}_{\mathrm{dR}, 1}^+$-modules $\mathcal{O}\mathbb{B}_{\mathrm{dR}, k}^+ \otimes_{\mathbb{B}_{\mathrm{dR}, k}^+} \widehat{\mathcal{O}}_X \cong \oplus_{i=0}^{k-1} \widehat{\mathcal{O}}_{X} \otimes_{\mathcal{O}_{X}} \mathrm{Sym}^i\Omega_{X}^1$. Here, we consider the filtration $\{ \mathrm{Fil}^j \}_{j=0}^{k-1}$ on the right hand side given by $\mathrm{Fil}^j = \oplus_{i=j}^{k-1} \widehat{\mathcal{O}}_{X} \otimes_{\mathcal{O}_{X}} \mathrm{Sym}^i\Omega_{X}^1$.

2 \ The induced connection $\nabla : \mathcal{O}\mathbb{B}_{\mathrm{dR}, k}^+ \otimes_{\mathbb{B}_{\mathrm{dR}, k}^+} \widehat{\mathcal{O}}_X = \oplus_{i=0}^{k-1} \widehat{\mathcal{O}}_{X} \otimes_{\mathcal{O}_{X}} \mathrm{Sym}^i\Omega_{X}^1 \rightarrow \mathcal{O}\mathbb{B}_{\mathrm{dR}, k-1}^+ \otimes_{\mathbb{B}_{\mathrm{dR}, k-1}^+} \widehat{\mathcal{O}}_X \otimes_{\mathcal{O}_X} \Omega_{X}^1 = \oplus_{i=0}^{k-1} \widehat{\mathcal{O}}_{X} \otimes_{\mathcal{O}_{X}} \mathrm{Sym}^i\Omega_{X}^1 \otimes \Omega_X^1$ respects the natural graded pieces of both sides and is given by $\widehat{\mathcal{O}}_{X} \otimes_{\mathcal{O}_{X}} \mathrm{Sym}^i\Omega_{X}^1 \hookrightarrow \widehat{\mathcal{O}}_{X} \otimes_{\mathcal{O}_{X}} \mathrm{Sym}^{i-1}\Omega_{X}^1 \otimes_{\mathcal{O}_X} \Omega_{X}^1, \ f \otimes x_1 \cdots x_i \mapsto \sum_{j=1}^i f \otimes x_1 \cdots \breve{x_{j}} \cdots x_i \otimes x_j$ and the zero map on $i = 0$. Here, $\breve{x_{j}}$ means that we omit $x_{j}$ and we define the degree of $\widehat{\mathcal{O}}_{X} \otimes_{\mathcal{O}_{X}} \mathrm{Sym}^i\Omega_{X}^1$ to be $i$ and the degree of $\widehat{\mathcal{O}}_{X} \otimes_{\mathcal{O}_{X}} \mathrm{Sym}^i\Omega_{X}^1 \otimes \Omega_X^1$ to be $i+1$.

\end{lem}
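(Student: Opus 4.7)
\textbf{Proof plan for Lemma \ref{formal power}.} The plan is to reduce both statements to the local structural description of $\mathcal{O}\mathbb{B}^+_{\mathrm{dR}}$ given by Scholze in \cite{pH}, and then to compute.

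First, working pro-étale locally, we may choose an étale map $X \to \mathbb{T}^n_K$ with coordinates $T_1,\dots,T_n$; then Scholze's local description gives a canonical isomorphism $\mathcal{O}\mathbb{B}^+_{\mathrm{dR}} \cong \mathbb{B}^+_{\mathrm{dR}}[[y_1,\dots,y_n]]$ in which $y_i$ lies in $\mathrm{Fil}^1$, reduces to $-dT_i \in \Omega^1_X$ on $\mathrm{gr}^1$, and the connection is characterized by $\nabla y_i = -dT_i$ together with the Leibniz rule and the vanishing of $\nabla$ on $\mathbb{B}^+_{\mathrm{dR}}$. The filtration on the left-hand side is induced from the $t$-adic filtration on $\mathbb{B}^+_{\mathrm{dR}}$ together with the $(y_i)$-adic filtration. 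Truncating at $\mathrm{Fil}^k$ and then tensoring with $\widehat{\mathcal{O}}_X = \mathbb{B}^+_{\mathrm{dR},1}$ over $\mathbb{B}^+_{\mathrm{dR},k}$ kills the variable $t$, leaving precisely the truncated polynomial algebra $\widehat{\mathcal{O}}_X[y_1,\dots,y_n]/(\text{monomials of total degree}\ge k)$; identifying $y_{j_1}\cdots y_{j_i}$ with $(-1)^i dT_{j_1}\cdots dT_{j_i} \in \mathrm{Sym}^i\Omega^1_X$ then yields the decomposition of part (1). The filtration on the left matches the filtration on the right by construction: modding out $\mathrm{Fil}^j$ corresponds to keeping only monomials of degree $\ge j$.

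Second, I would promote this local isomorphism to a canonical global one. Two different choices of étale coordinates give rise to transition formulas relating $y_i$ to $y_i'$ modulo higher filtration, and the leading term of this transition is exactly the transition formula for the differentials $dT_i \mapsto dT_i'$. Hence the induced isomorphism on associated graded — which, after the tensor product over $\mathbb{B}^+_{\mathrm{dR},k}$ with $\widehat{\mathcal{O}}_X$, is the whole thing — is intrinsic and glues to a canonical global isomorphism. This gives part (1).

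For part (2), because $\nabla$ strictly decreases the $\mathrm{Fil}^\bullet$ filtration on $\mathcal{O}\mathbb{B}^+_{\mathrm{dR}}$ by exactly one, it respects the direct-sum decomposition once we pass to $\otimes_{\mathbb{B}^+_{\mathrm{dR},k}}\widehat{\mathcal{O}}_X$; there, the $\mathbb{B}^+_{\mathrm{dR}}$-part is killed to $\widehat{\mathcal{O}}_X$ and nothing remains from $\nabla t = 0$ to contribute horizontally. It therefore suffices to compute $\nabla$ on a monomial $y_{j_1}\cdots y_{j_i}$. By the Leibniz rule and $\nabla y_j = -dT_j$, one obtains $\nabla(y_{j_1}\cdots y_{j_i}) = \sum_{\ell} y_{j_1}\cdots\widehat{y_{j_\ell}}\cdots y_{j_i}\otimes(-dT_{j_\ell})$, which under our identification $y_j \leftrightarrow -dT_j$ becomes exactly the formula stated in (2) (the two signs cancel). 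On the degree-zero component everything is in $\widehat{\mathcal{O}}_X = \mathbb{B}^+_{\mathrm{dR},1}$, so $\nabla$ vanishes there.

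The argument is essentially a bookkeeping exercise once Scholze's local description is in hand; the main thing to be careful about is the interaction of the $t$-adic filtration on $\mathbb{B}^+_{\mathrm{dR}}$ with the $(y_i)$-adic filtration coming from $\mathcal{O}\mathbb{B}^+_{\mathrm{dR}}/\mathbb{B}^+_{\mathrm{dR}}$, and the fact that tensoring with $\widehat{\mathcal{O}}_X$ over $\mathbb{B}^+_{\mathrm{dR},k}$ truly collapses the former while preserving the latter on the quotient. This is clean because $t$ does not appear in any of the $y_i$, so the two filtrations are independent on the polynomial model. Thus no genuine obstacle arises, and the lemma reduces to the explicit formula above.
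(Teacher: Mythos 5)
Your local computation (via Scholze's $\mathbb{B}^+_{\mathrm{dR}}[[Y_1,\dots,Y_n]] \cong \mathcal{O}\mathbb{B}^+_{\mathrm{dR}}$, $Y_i \mapsto T_i - [T_i^\flat]$) is correct and matches the first step of the paper's proof, and your verification of part (2) is fine modulo a sign convention. However, your argument for the \emph{canonicity} of the direct sum decomposition in part (1) has a genuine gap, and it is precisely the nontrivial part of the lemma.

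You claim that because a change of étale coordinates sends $y_i$ to $\tilde y_i$ with leading term given by the Jacobian on $\Omega^1_X$, "the induced isomorphism on associated graded — which, after the tensor product is the whole thing — is intrinsic." But the coordinate change preserves only the \emph{filtration}, not the direct sum decomposition. Concretely, $\tilde y_i = \sum_j a_{ij} y_j + (\text{terms of degree} \ge 2 \text{ in } y_\bullet)$ with $a_{ij} \in \widehat{\mathcal{O}}_X$, and when $k \ge 3$ the quadratic (and higher) terms do not vanish after killing $t$. Hence the map that identifies $y$-monomials with $\mathrm{Sym}^\bullet \Omega^1_X$ differs from the one using $\tilde y$-monomials by nontrivial off-diagonal contributions: the filtration is canonical but the splitting you wrote down depends on the choice of coordinates. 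Saying the tensor product "is its own associated graded" is exactly the conclusion of part (1); it is not automatic, since the filtered object a priori has no preferred splitting.

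The paper fills this gap by induction on $k$, using the connection $\nabla$ to produce a canonical retraction onto the top graded piece: starting from the exact sequence $0 \to \widehat{\mathcal{O}}_X \otimes_{\mathcal{O}_X} \mathrm{Sym}^{k-1}\Omega^1_X \to \mathcal{O}\mathbb{B}^+_{\mathrm{dR},k} \otimes_{\mathbb{B}^+_{\mathrm{dR},k}}\widehat{\mathcal{O}}_X \to \mathcal{O}\mathbb{B}^+_{\mathrm{dR},k-1}\otimes_{\mathbb{B}^+_{\mathrm{dR},k-1}}\widehat{\mathcal{O}}_X \to 0$, one composes $\nabla$ with the projection (available by induction) onto $\widehat{\mathcal{O}}_X \otimes \mathrm{Sym}^{k-2}\Omega^1_X \otimes \Omega^1_X$. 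The restriction of this composite to the subsheaf $\widehat{\mathcal{O}}_X \otimes \mathrm{Sym}^{k-1}\Omega^1_X$ is (locally, hence globally) the polarization map, which admits the canonical left inverse given by multiplication; this furnishes the desired intrinsic splitting. This use of $\nabla$ as the mechanism for canonicity is the idea your proposal is missing; once it is supplied, the rest of your bookkeeping goes through.
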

    
\begin{rem}
    
Note that the $\mathcal{O}_X$-module structure on $\mathcal{O}\mathbb{B}_{\mathrm{dR}, k}^+ \otimes_{\mathbb{B}^+_{\mathrm{dR}, k}} \widehat{\mathcal{O}}_{X}$ induced by $\mathcal{O}\mathbb{B}_{\mathrm{dR}, k}^+$ is not equal to the $\mathcal{O}_X$-module structure induced by the $\widehat{\mathcal{O}}_{X} = \mathbb{B}_{\mathrm{dR}, 1}$-module structure. 

\end{rem}

\begin{proof}

We will prove this by induction on $k$. This is trivial if $k=1$. Assume that $k \ge 2$ and the above statement holds for $k-1$. 

First, we recall a pro$\et$ale local description of $\mathcal{O}\mathbb{B}_{\mathrm{dR}}^+$. We take a small affinoid open $U$ of $X$ and an $\et$ale map $U \rightarrow \mathbb{T}^d_K:=\mathrm{Spa}(K\langle X_1^{\pm 1}, \cdots, X_d^{\pm 1} \rangle, \mathcal{O}_K\langle X_1^{\pm 1}, \cdots, X_d^{\pm 1} \rangle)$, which is a composition of finite $\et$ale maps and rational open immersions. Let $$\mathbb{T}^d_{\widehat{\overline{K}}, \infty} :=\mathrm{Spa}(\widehat{\overline{K}}\langle X_1^{\pm\frac{1}{p^{\infty}}}, \cdots, X_d^{\pm\frac{1}{p^{\infty}}} \rangle, \mathcal{O}_{\widehat{\overline{K}}}\langle X_1^{\pm\frac{1}{p^{\infty}}}, \cdots, X_d^{\pm\frac{1}{p^{\infty}}} \rangle)$$ be the usual affinoid perfectoid cover of $\mathbb{T}^d_K$, where $\widehat{\overline{K}}$ denotes the completed algebraic closure of $K$. Let $U_{\infty} := U \times_{\mathbb{T}^d_K} \mathbb{T}^d_{\widehat{\overline{K}}, \infty}$ be the fiber product in $(\mathbb{T}^d_K)_{\mathrm{pro\et}}$, which is an affinoid perfectoid space. Then the $\mathbb{B}_{\mathrm{dR}}^+|_{U_{\infty}}$-algebra map $\mathbb{B}_{\mathrm{dR}}^{+}|_{U_{\infty}}[[Y_1, \cdots, Y_d]] \rightarrow \mathcal{O}\mathbb{B}_{\mathrm{dR}}^+|_{U_{\infty}}$ defined by $Y_i \mapsto X_i - [X_i^{\flat}]$ is a filtered isomorphism by \cite[Proposition 6.10]{pH}, where $X_i^{\flat} := (X_i, X_i^{\frac{1}{p}}, X_i^{\frac{1}{p^2}}, \cdots, )$ and $[X_i^{\flat}]$ denotes the Teichmuller lift of $X_i^{\flat}$. Here, on the right hand side, we put $\mathrm{Fil}^i\mathbb{B}_{\mathrm{dR}}^{+}|_{U_{\infty}}[[Y_1, \cdots, Y_d]] := (t, Y_1, \cdots, Y_d)^i$. Note that we have $\nabla(Y_i) = dX_i$. Thus the result is clearly true on $U_{\infty}$ by identifying $Y_i$ with $dX_i$. In the following, we will give a canonical construction of this identification.

The Faltings extension $0 \rightarrow \widehat{\mathcal{O}}_X(1) \rightarrow \mathrm{gr}^1\mathcal{O}\mathbb{B}_{\mathrm{dR}}^+ \rightarrow \widehat{\mathcal{O}}_X \otimes_{\mathcal{O}_X} \Omega_X^1 \rightarrow 0$ induces the exhaustive increasing filtration $\{ \mathrm{Fil}^s \}_{s=0}^{i}$ on $\mathrm{gr}^i\mathcal{O}\mathbb{B}_{\mathrm{dR}}^+ = \mathrm{Sym}^i\mathrm{gr}^1\mathcal{O}\mathbb{B}_{\mathrm{dR}}^+$ such that $\mathrm{gr}^s = \widehat{\mathcal{O}}_X \otimes_{\mathcal{O}_X} \mathrm{Sym}^s\Omega_{X}^1(i-s)$.

By the above local description of $\mathcal{O}\mathbb{B}_{\mathrm{dR}}^+$, the map $\mathrm{gr}^{k-1}\mathcal{O}\mathbb{B}_{\mathrm{dR}}^+ \hookrightarrow \mathcal{O}\mathbb{B}_{\mathrm{dR}, k}^+ \twoheadrightarrow \mathcal{O}\mathbb{B}_{\mathrm{dR}, k}^+ \otimes_{\mathbb{B}_{\mathrm{dR}, k}^+} \widehat{\mathcal{O}}_X$ factor through $\mathrm{gr}^{k-1}\mathcal{O}\mathbb{B}_{\mathrm{dR}}^+ \twoheadrightarrow \widehat{\mathcal{O}}_X \otimes_{\mathcal{O}_X} \mathrm{Sym}^{k-1}\Omega_{X}^1$ and induces the exact sequence $0 \rightarrow \widehat{\mathcal{O}}_X \otimes_{\mathcal{O}_X} \mathrm{Sym}^{k-1}\Omega_{X}^1 \rightarrow \mathcal{O}\mathbb{B}_{\mathrm{dR}, k}^+ \otimes_{\mathbb{B}_{\mathrm{dR}, k}^+} \widehat{\mathcal{O}}_X \rightarrow \mathcal{O}\mathbb{B}_{\mathrm{dR}, k-1}^+ \otimes_{\mathbb{B}_{\mathrm{dR}, k-1}^+} \widehat{\mathcal{O}}_X \rightarrow 0$. By our induction hypothesis, we have $\mathcal{O}\mathbb{B}_{\mathrm{dR}, k-1}^+ \otimes_{\mathbb{B}_{\mathrm{dR}, k-1}^+} \widehat{\mathcal{O}}_X = \oplus_{i=0}^{k-2} \widehat{\mathcal{O}}_{X} \otimes_{\mathcal{O}_{X}} \mathrm{Sym}^i\Omega_{X}^1$. Moreover, the image of the map $\mathcal{O}\mathbb{B}_{\mathrm{dR}, k}^+ \otimes_{\mathbb{B}_{\mathrm{dR}, k}^+} \widehat{\mathcal{O}}_X \xrightarrow{\nabla} (\mathcal{O}\mathbb{B}_{\mathrm{dR}, k-1}^+ \otimes_{\mathbb{B}_{\mathrm{dR}, k-1}^+} \widehat{\mathcal{O}}_X) \otimes_{\mathcal{O}_X} \Omega_{X}^1 = \oplus_{i=0}^{k-2} \widehat{\mathcal{O}}_{X} \otimes_{\mathcal{O}_{X}} \mathrm{Sym}^i\Omega_{X}^1 \otimes \Omega_{X}^1 \rightarrow \widehat{\mathcal{O}}_{X} \otimes_{\mathcal{O}_{X}} \mathrm{Sym}^{k-2}\Omega_{X}^1 \otimes \Omega_{X}^1$ is isomorphic to the subsheaf $\widehat{\mathcal{O}}_X \otimes_{\mathcal{O}_X} \mathrm{Sym}^{k-1}\Omega_{X}^1$ of $\mathcal{O}\mathbb{B}_{\mathrm{dR}, k}^+ \otimes_{\mathbb{B}_{\mathrm{dR}, k}^+} \widehat{\mathcal{O}}_X$ by restricting this map to this subsheaf, which is shown by using the above local description. This fact induces the splitting of the above exact sequence $0 \rightarrow \widehat{\mathcal{O}}_X \otimes_{\mathcal{O}_X} \mathrm{Sym}^{k-1}\Omega_{X}^1 \rightarrow \mathcal{O}\mathbb{B}_{\mathrm{dR}, k}^+ \otimes_{\mathbb{B}_{\mathrm{dR}, k}^+} \widehat{\mathcal{O}}_X \rightarrow \mathcal{O}\mathbb{B}_{\mathrm{dR}, k-1}^+ \otimes_{\mathbb{B}_{\mathrm{dR}, k-1}^+} \widehat{\mathcal{O}}_X \rightarrow 0$, which is shown to be the desired splitting by using the above local description. \end{proof}

Let $U \in \tilde{\mathcal{B}}$, $K_p$, $V_{K_p}$ and $G_m$ be as after Proposition \ref{sen operator}. After enlarging $L$ if necessary, we can take an $L$-structure $V_0$ of $V_{K_p}$. In the following, for the simplicity of notations, we use notations like $D_{{\lambda^{\Psi}}, K^p}^{\Psi-\mathrm{la}, (0, \lambda)_{\tau}, G_L}$ and $\mathcal{O}_{V_0}$ instead of $D_{{\lambda^{\Psi}}, K^p}^{\Psi-\mathrm{la}, (-\lambda, 0)_{\tau}}(U)^{G_L}$ and $\mathcal{O}_{V_0}(V_0)$. Let $\chi_{\lambda_{\Psi}}$ be the infinitesimal character of $V_{\lambda_{\Psi}}^{\vee}$.

\begin{lem} \label{derived calculationIII}
 
    We have $\mathrm{Ext}^i_{Z(U(\mathfrak{g}))}(\chi_{\lambda_{\Psi}}, D_{K^p}^{\Psi-\mathrm{la}}) = 0$ for any $i > 0$. \end{lem}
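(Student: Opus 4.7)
The plan is to reduce the statement to the vanishing already established in Proposition \ref{derived calculation II} via the Harish-Chandra isomorphism and the Chevalley-Shephard-Todd theorem. Explicitly, by Lemma \ref{infinitesimal character and horizontal} the action of $Z(U(\mathfrak{g}_{\Psi}))_C$ on $D_{\lambda^{\Psi},K^p}^{\Psi-\mathrm{la}}$ factors through the composition
\[
Z(U(\mathfrak{g}_{\Psi}))_C \xrightarrow[\sim]{\mathrm{HC}} (\mathrm{Sym}\,\mathfrak{h})^{W} \hookrightarrow \mathrm{Sym}\,\mathfrak{h}
\]
followed by the horizontal action $\theta_{\mathfrak{h}}$ (possibly conjugated by $w_0$, which only permutes characters and hence is harmless). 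So it is enough to prove $\mathrm{Ext}^i_{(\mathrm{Sym}\,\mathfrak{h})^{W}}(\chi_{\lambda_{\Psi}}, D_{\lambda^{\Psi},K^p}^{\Psi-\mathrm{la}})=0$ for $i>0$.

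Next I would invoke Chevalley's theorem: $\mathrm{Sym}\,\mathfrak{h}$ is a free module over $(\mathrm{Sym}\,\mathfrak{h})^{W}$ of rank $|W|=2^d$. By the extension-restriction adjunction for the flat ring map $(\mathrm{Sym}\,\mathfrak{h})^{W}\hookrightarrow \mathrm{Sym}\,\mathfrak{h}$,
\[
\mathrm{Ext}^i_{(\mathrm{Sym}\,\mathfrak{h})^{W}}(\chi_{\lambda_{\Psi}}, M) \;\cong\; \mathrm{Ext}^i_{\mathrm{Sym}\,\mathfrak{h}}\bigl(\chi_{\lambda_{\Psi}}\otimes_{(\mathrm{Sym}\,\mathfrak{h})^{W}}\mathrm{Sym}\,\mathfrak{h},\; M\bigr)
\]
for any $\mathrm{Sym}\,\mathfrak{h}$-module $M$. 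I would then identify $\chi_{\lambda_{\Psi}}\otimes_{(\mathrm{Sym}\,\mathfrak{h})^{W}}\mathrm{Sym}\,\mathfrak{h}$ with the direct sum $\bigoplus_{I\subset\Psi} C_{\chi_I}$ where $\chi_I=((0,\lambda_{\tau})_{\tau\in I},(1+\lambda_{\tau},-1)_{\tau\notin I})$; this uses the (dot-action) description of the $W$-orbit of $(0,\lambda_{\tau})_{\tau}$ already computed implicitly in Proposition \ref{Hodge-Tate decomposition}, together with the fact that this orbit is regular (since $\lambda_{\tau}\ge 0$, so $\lambda_{\tau}\ne -1$ and no two characters in the orbit coincide).

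To finish, since $C_{\chi_I}$ is the residue field of $\mathrm{Sym}\,\mathfrak{h}$ at a maximal ideal generated by a linear regular sequence of length $d$, its Koszul resolution is the standard Chevalley-Eilenberg resolution of the trivial module for $\mathfrak{h}$ twisted by the character $\chi_I$. Hence
\[
\mathrm{Ext}^i_{\mathrm{Sym}\,\mathfrak{h}}(C_{\chi_I},\, D_{\lambda^{\Psi},K^p}^{\Psi-\mathrm{la}}(U)) \;=\; H^i\bigl(\mathfrak{h},\, D_{\lambda^{\Psi},K^p}^{\Psi-\mathrm{la}}(U)(-\chi_I)\bigr),
\]
and the right-hand side vanishes for $i>0$ by 1 of Proposition \ref{derived calculation II}. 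Summing over $I$ gives the required vanishing.

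The main obstacle I anticipate is bookkeeping: verifying that the tensor product $\chi_{\lambda_{\Psi}}\otimes_{(\mathrm{Sym}\,\mathfrak{h})^{W}}\mathrm{Sym}\,\mathfrak{h}$ really splits as the stated direct sum of simple modules (rather than containing thick components), which boils down to regularity of the orbit for the dot action and is where the explicit shape of $\lambda_{\Psi}=(0,-\lambda_{\tau})_{\tau}$ fixed in \S 3.3 actually gets used. The rest is a direct reduction to Proposition \ref{derived calculation II}. If the statement is to be read sheaf-theoretically on $\Fl$ (i.e.\ the $\mathrm{Ext}$ is computed sectionwise for quasicompact open $U$), the argument applies verbatim since all identifications are compatible with restriction to open subsets in $\tilde{\mathcal{B}}$.
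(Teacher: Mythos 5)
Your proposal is correct and takes the same route as the paper, which compresses exactly this Harish-Chandra/Chevalley/Koszul reduction into a single displayed equality citing Lemma \ref{infinitesimal character and horizontal} and Proposition \ref{derived calculation II}; your write-up just makes the intermediate steps (flatness of $\mathrm{Sym}\,\mathfrak{h}$ over $(\mathrm{Sym}\,\mathfrak{h})^W$, regularity of the dot-orbit of $(0,\lambda_\tau)_\tau$, Koszul = Lie-algebra cohomology) explicit. The only slip is cosmetic: the maximal ideal of $\mathrm{Sym}\,\mathfrak{h}$ is generated by a regular sequence of length $\dim\mathfrak{h}=2d$, not $d$.
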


\begin{proof} By Lemma \ref{infinitesimal character and horizontal} and 1 of Proposition \ref{derived calculation II}, we have $$\mathrm{Ext}^i_{Z(U(\mathfrak{g}))}(\chi_{\lambda_{\Psi}}, D_{K^p}^{\Psi-\mathrm{la}}) = \oplus_{I \subset \Psi} H^i(\mathfrak{h}, D_{K^p}^{\Psi-\mathrm{la}}((0, -\lambda_{\tau})_{\tau \in I}, (-\lambda_{\tau} - 1, 1)_{\tau \in \Psi \setminus I})) = 0.$$ \end{proof}

We also need variants of 1 of Proposition \ref{derived calculation II} and Lemma \ref{derived calculationIII}.

For any $\tau \in \Psi$, we write $\mathfrak{g}_{\tau}$ for the $\tau$-factor of $\mathfrak{g}_{\Psi} = \oplus_{\tau \in \Psi} \mathfrak{gl}_2(L)_{\tau}$ and $\mathfrak{h}_{\tau}$ for the maximal torus of $\mathfrak{g}_{\tau}$ consisting of the diagonal matrices. Let $(a_{\tau}, b_{\tau}) \in \mathbb{Z}^2$ and we regard this as a character $(a_{\tau}, b_{\tau}) : \mathrm{Sym}(\mathfrak{h}_{\tau}) \rightarrow C$ and let $\chi_{(a_{\tau}, b_{\tau})} : Z(U(\mathfrak{g}_{\tau})) \xrightarrow{\mathrm{HC}_{\tau}} \mathrm{Sym}(\mathfrak{h}_{\tau}) \xrightarrow{\mathrm{Ad}(w_0)} \mathrm{Sym}(\mathfrak{h}_{\tau}) \xrightarrow{(a_{\tau}, b_{\tau})} L$, where $\mathrm{HC}_{\tau}$ denotes the unnormalized Harish-Chandra morphism. For a $\mathfrak{h}_{\tau}$-module (resp. $Z(\mathfrak{g}_{\tau})_C$-module) $M$, we put $M^{(a_{\tau}, b_{\tau})-\mathrm{nilp}} := \{ m \in M \mid \forall h \in \mathrm{Ker}(a_{\tau}, b_{\tau}), \ \exists k > 0, \mathrm{s.t.} \ h^{k}m = 0 \} = \{ m \in M \mid \exists k > 0 \mathrm{\ s.t. \ } \forall h \in \mathrm{Ker}(a_{\tau}, b_{\tau}), \ h^{k}m = 0 \}$ (resp. $M^{\chi_{(a_{\tau}, b_{\tau})}-\mathrm{nilp}} := \{ m \in M \mid \forall g \in \mathrm{Ker}\chi_{(a_{\tau}, b_{\tau})}, \ \exists k > 0, \mathrm{s.t.} \ g^{k}m = 0 \} = \{ m \in M \mid \exists k > 0, \mathrm{s.t.} \ \forall g \in \mathrm{Ker}\chi_{(a_{\tau}, b_{\tau})}, \  g^{k}m = 0 \}$). Let $R\Gamma((a_{\tau}, b_{\tau})-\mathrm{nilp}, \ \ ) : D^+(\mathrm{Mod}(\mathfrak{h}_{\tau})) \rightarrow D^+(\mathrm{Mod}(C))$ (resp. $R\Gamma(\chi_{(a_{\tau}, b_{\tau})}-\mathrm{nilp}, \ \ ) : D^+(\mathrm{Mod}(Z(\mathfrak{g}_{\tau})_C)) \rightarrow D^+(\mathrm{Mod}(C))$) be the right derived functor. 

\vspace{0.5 \baselineskip}

\begin{lem}\label{elementary}

Let $M$ be a $\mathfrak{h}_{\tau}$-module.

1 \ $R\Gamma((a_{\tau}, b_{\tau})-\mathrm{nilp}, M) = R\Gamma(\chi_{(a_{\tau}, b_{\tau})}-\mathrm{nilp}, M)$ if $(a_{\tau}, b_{\tau}) = (b_{\tau} + 1, a_{\tau}-1)$.

2 \ $R\Gamma((a_{\tau}, b_{\tau})-\mathrm{nilp}, M) \oplus R\Gamma((b_{\tau} + 1, a_{\tau}-1)-\mathrm{nilp}, M) = R\Gamma(\chi_{(a_{\tau}, b_{\tau})}-\mathrm{nilp}, M)$ if $(a_{\tau}, b_{\tau})\neq (b_{\tau} + 1, a_{\tau}-1)$. 

\end{lem}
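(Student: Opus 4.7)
The plan is to identify both functors with local cohomology and exploit the fact that $\mathrm{Sym}\mathfrak{h}_{\tau}$ is finite free over $Z(U(\mathfrak{g}_{\tau}))_C$ via the Harish-Chandra isomorphism. First I will set $\mathfrak{m} := \mathrm{Ker}(a_{\tau}, b_{\tau}) \subset \mathrm{Sym}\mathfrak{h}_{\tau}$ and $\mathfrak{n} := \mathrm{Ker}\chi_{(a_{\tau}, b_{\tau})} \subset Z(U(\mathfrak{g}_{\tau}))_C$. By definition, the functors $M \mapsto M^{(a_{\tau},b_{\tau})-\mathrm{nilp}}$ and $M \mapsto M^{\chi_{(a_{\tau},b_{\tau})}-\mathrm{nilp}}$ are nothing but the local cohomology functors $\Gamma_{\mathfrak{m}}$ (on $\mathrm{Mod}(\mathrm{Sym}\mathfrak{h}_{\tau})$) and $\Gamma_{\mathfrak{n}}$ (on $\mathrm{Mod}(Z(U(\mathfrak{g}_{\tau}))_C)$), whose derived functors are computed by $R\Gamma_I(M) = \varinjlim_k R\mathrm{Hom}(R/I^k, M)$.

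The next step is to reduce everything to local cohomology over the single ring $\mathrm{Sym}\mathfrak{h}_{\tau}$. Via Harish-Chandra, $Z(U(\mathfrak{g}_{\tau}))_C \cong (\mathrm{Sym}\mathfrak{h}_{\tau})^W$ where $W = \{1, w_0\}$ acts by the dot action, and $\mathrm{Sym}\mathfrak{h}_{\tau}$ is free of rank $2 = |W|$ over this invariant subring by Chevalley--Shephard--Todd. Flatness then yields a natural identification $R\mathrm{Hom}_{Z(U(\mathfrak{g}_{\tau}))_C}(Z(U(\mathfrak{g}_{\tau}))_C/\mathfrak{n}^k, M) = R\mathrm{Hom}_{\mathrm{Sym}\mathfrak{h}_{\tau}}(\mathrm{Sym}\mathfrak{h}_{\tau}/\mathfrak{n}^k\mathrm{Sym}\mathfrak{h}_{\tau}, M)$, so passing to the colimit gives $R\Gamma(\chi_{(a_{\tau},b_{\tau})}-\mathrm{nilp}, M) = R\Gamma_{\mathfrak{n}\mathrm{Sym}\mathfrak{h}_{\tau}}^{\mathrm{Sym}\mathfrak{h}_{\tau}}(M)$. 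Since local cohomology depends only on the radical of the ideal, I will then compute the radical of $\mathfrak{n}\mathrm{Sym}\mathfrak{h}_{\tau}$: the fiber of $\mathrm{Spec}\mathrm{Sym}\mathfrak{h}_{\tau} \to \mathrm{Spec}Z(U(\mathfrak{g}_{\tau}))_C$ over $\mathfrak{n}$ is precisely the $W$-orbit of $(a_{\tau}, b_{\tau})$, i.e.\ the set $\{(a_{\tau}, b_{\tau}), (b_{\tau}+1, a_{\tau}-1)\}$, so $\sqrt{\mathfrak{n}\mathrm{Sym}\mathfrak{h}_{\tau}} = \mathfrak{m} \cap \mathfrak{m}'$ where $\mathfrak{m}' := \mathrm{Ker}(b_{\tau}+1, a_{\tau}-1)$.

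If $(a_{\tau}, b_{\tau}) = (b_{\tau}+1, a_{\tau}-1)$, then $\mathfrak{m} = \mathfrak{m}'$ and the computation of the previous step gives $R\Gamma_{\mathfrak{n}\mathrm{Sym}\mathfrak{h}_{\tau}}^{\mathrm{Sym}\mathfrak{h}_{\tau}}(M) = R\Gamma_{\mathfrak{m}}^{\mathrm{Sym}\mathfrak{h}_{\tau}}(M) = R\Gamma((a_{\tau},b_{\tau})-\mathrm{nilp}, M)$, which is exactly part 1. If instead $(a_{\tau}, b_{\tau}) \neq (b_{\tau}+1, a_{\tau}-1)$, then $\mathfrak{m}$ and $\mathfrak{m}'$ are distinct, hence coprime, maximal ideals of $\mathrm{Sym}\mathfrak{h}_{\tau}$. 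The Chinese Remainder Theorem applied at every power gives $\mathrm{Sym}\mathfrak{h}_{\tau}/(\mathfrak{m}\mathfrak{m}')^k \cong \mathrm{Sym}\mathfrak{h}_{\tau}/\mathfrak{m}^k \oplus \mathrm{Sym}\mathfrak{h}_{\tau}/{\mathfrak{m}'}^k$, and passing to $\varinjlim_k R\mathrm{Hom}( -, M)$ yields the decomposition $R\Gamma_{\mathfrak{m} \cap \mathfrak{m}'}^{\mathrm{Sym}\mathfrak{h}_{\tau}}(M) = R\Gamma_{\mathfrak{m}}(M) \oplus R\Gamma_{\mathfrak{m}'}(M)$, which is part 2.

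The whole argument is essentially a structural statement about finite flat extensions of Noetherian rings and carries no real obstacle; the only point that requires a moment of care is verifying that local cohomology over $Z(U(\mathfrak{g}_{\tau}))_C$ and over $\mathrm{Sym}\mathfrak{h}_{\tau}$ agree on $\mathfrak{h}_{\tau}$-modules, which is where the freeness of $\mathrm{Sym}\mathfrak{h}_{\tau}$ over the Harish-Chandra center (equivalently, Chevalley--Shephard--Todd for the order-$2$ group $W$) is used.
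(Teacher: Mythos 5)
Your proposal is correct and takes essentially the same route as the paper: both reduce to the structure of $\mathrm{Sym}\mathfrak{h}_\tau$ as a rank-$2$ free module over $Z(U(\mathfrak{g}_\tau))_C$ and then analyze the fiber ring $\mathrm{Sym}\mathfrak{h}_\tau/\mathfrak{n}\mathrm{Sym}\mathfrak{h}_\tau$, which is local with square-zero maximal ideal in case 1 and $C\times C$ in case 2. The paper simply asserts these two structural facts and lets the conclusion follow, whereas you spell out the local-cohomology formalism and the flat base-change step that make the comparison of derived functors across the two rings legitimate.
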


\begin{proof}

1 follows from the facts that $\mathrm{Sym}\mathfrak{h}/\mathrm{Ker}\chi_{(a_{\tau}, b_{\tau})}\mathrm{Sym}\mathfrak{h}$ is a local ring whose maximal ideal is $\mathrm{Ker}(a_{\tau}, b_{\tau})/\mathrm{Ker}\chi_{(a_{\tau}, b_{\tau})}\mathrm{Sym}\mathfrak{h}$ and $(\mathrm{Ker}(a_{\tau}, b_{\tau})/\mathrm{Ker}\chi_{(a_{\tau}, b_{\tau})}\mathrm{Sym}\mathfrak{h})^2 = 0$.

2 follows from the fact that $\mathrm{Sym}\mathfrak{h}/\mathrm{Ker}\chi_{(a_{\tau}, b_{\tau})}\mathrm{Sym}\mathfrak{h}$ is $C \times C$, whose maximal ideals are $\mathrm{Ker}(a_{\tau}, b_{\tau})/\mathrm{Ker}\chi_{(a_{\tau}, b_{\tau})}\mathrm{Sym}\mathfrak{h}$ and $\mathrm{Ker}(b_{\tau} + 1, a_{\tau} - 1)/\mathrm{Ker}\chi_{(a_{\tau}, b_{\tau})}\mathrm{Sym}\mathfrak{h}$. \end{proof}

\begin{lem} \label{derived calculation1}

(1) \ $H^i((a_{\tau}, b_{\tau})-\mathrm{nilp}, D_{\lambda^{\Psi}, K^p}^{\Psi-\mathrm{la}}) = 0$ for any $i > 0$.

(2) \ $H^i(\mathfrak{h}_{\tau}, D_{\lambda^{\Psi}, K^p}^{\Psi-\mathrm{la}, (a_{\tau}, b_{\tau})-\mathrm{nilp}}(-a_{\tau}, -b_{\tau})) = 0$ for any $i > 0$.

(3) \ $H^i(\chi_{(a_{\tau}, b_{\tau})}-\mathrm{nilp}, D_{\lambda^{\Psi}, K^p}^{\Psi-\mathrm{la}})) = 0$ for any $i > 0$.

(4) \ If $(a_{\tau}, b_{\tau}) \neq (b_{\tau} + 1, a_{\tau} - 1)$, we have $\mathrm{Ext}^i_{Z(U(\mathfrak{g}_{\tau}))}(\chi_{(a_{\tau}, b_{\tau})}, D_{\lambda^{\Psi}, K^p}^{\Psi-\mathrm{la}, (a_{\tau}, b_{\tau})-\mathrm{nilp}}) = 0$ for any $i > 0$.

(5) \ If $(a_{\tau}, b_{\tau}) \neq (b_{\tau} + 1, a_{\tau} - 1)$, we have $\mathrm{Ext}^i_{Z(U(\mathfrak{g}_{\tau}))}(\chi_{(a_{\tau}, b_{\tau})}, D_{\lambda^{\Psi}, K^p}^{\Psi-\mathrm{la}, \chi_{(a_{\tau}, b_{\tau})}-\mathrm{nilp}}) = 0$ for any $i > 0$.

\end{lem}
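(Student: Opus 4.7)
The plan is to deduce all five vanishing statements from Proposition \ref{derived calculation II} together with the explicit local expansion of Proposition \ref{mikami expansionIII}, and then handle (3)--(5) by Lemma \ref{elementary} and Lemma \ref{infinitesimal character and horizontal}. The key structural fact is that the horizontal $\mathfrak{h}_{\tau}$-action on $D_{\lambda^{\Psi}, K^p}^{\Psi-\mathrm{la}}$ splits, in the explicit local coordinates of \S 3.5, into a semisimple part (acting on the $x_{\tau}$-variable by an integer character determined by the exponent) and a locally nilpotent part (acting on $\log(e_{1,\tau}/e_{1,\tau,n})$ and $\log(f_{\tau}/f_{\tau,n})$ by translation). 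This matches exactly the strategy used in the proof of Proposition \ref{derived calculation II}, which the author already advertises as ``Lemma 5.1.2 of \cite{PanI}''.

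First I would prove (1) by a local computation on $U \in \tilde{\mathcal{B}}$ with $U \subset U_1$. By Corollary \ref{citation}, every $G_{0}$-$\Psi$-analytic section is a convergent power series in $(x_{\tau} - x_{\tau,n}), \log(e_{1,\tau}/e_{1,\tau,n}), \log(f_{\tau}/f_{\tau,n})$ with coefficients at finite level; a direct calculation (parallel to the modular-curve case worked out in the excerpt after Theorem \ref{mikami expansion}) then identifies the action of $\mathfrak{h}_{\tau}$ on these coordinates. The monomials $(x_{\tau} - x_{\tau,n})^{i}$ lie in distinct semisimple $\mathfrak{h}_{\tau}$-eigenspaces, while $\partial_{\log(e_{1,\tau}/e_{1,\tau,n})}$ and $\partial_{\log(f_{\tau}/f_{\tau,n})}$ realize the two generators of $\mathfrak{h}_{\tau}$, shifted by a character determined by the semisimple weight. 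Thus as a $\mathrm{Sym}\,\mathfrak{h}_{\tau}$-module, $D_{\lambda^{\Psi}, K^p}^{\Psi-\mathrm{la}}(U)$ is a product of generalized $\mathfrak{h}_{\tau}$-weight spaces; on summands whose semisimple weight is not $(a_{\tau}, b_{\tau})$, one of $h_{1}-a_{\tau}$, $h_{2}-b_{\tau}$ acts invertibly and the \v{C}ech complex computing the derived functor of $(a_{\tau}, b_{\tau})$-nilpotent sections is acyclic. On the matching summand the two shifted operators act as two commuting surjective derivations (surjectivity in the convergent sense follows from the explicit formula in Corollary \ref{citation} and the open mapping theorem); a K\"unneth/Koszul computation then concentrates the cohomology in degree $0$, proving (1). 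Statement (2) is the same Koszul computation on the already-identified generalized eigenspace after the twist by $(-a_{\tau}, -b_{\tau})$, in which the $\mathfrak{h}_{\tau}$-action is purely by these two surjective derivations.

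Finally, (3)--(5) follow formally. By Lemma \ref{infinitesimal character and horizontal}, the action of $Z(U(\mathfrak{g}_{\tau}))_{C}$ on $D_{\lambda^{\Psi}, K^p}^{\Psi-\mathrm{la}}$ factors through $\theta_{\mathfrak{h}_{\tau}}(w_{0} \mathrm{HC}(\cdot) w_{0}^{-1})$, so Lemma \ref{elementary} identifies $R\Gamma(\chi_{(a_{\tau}, b_{\tau})}\text{-}\mathrm{nilp}, \cdot)$ with $R\Gamma((a_{\tau}, b_{\tau})\text{-}\mathrm{nilp}, \cdot) \oplus R\Gamma((b_{\tau}+1, a_{\tau}-1)\text{-}\mathrm{nilp}, \cdot)$ (or a single summand in the degenerate case), yielding (3) immediately from (1). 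Under the genericity hypothesis $(a_{\tau}, b_{\tau}) \neq (b_{\tau}+1, a_{\tau}-1)$ of (4) and (5), the two generalized $\mathfrak{h}_{\tau}$-eigenspaces determine complementary idempotents in the Artinian quotient of $\mathrm{Sym}\,\mathfrak{h}_{\tau}$ appearing in the computation, so $\mathrm{Ext}^{i}_{Z(U(\mathfrak{g}_{\tau}))}(\chi_{(a_{\tau}, b_{\tau})}, -)$ on the $(a_{\tau}, b_{\tau})$-nilpotent summand reduces to the Koszul cohomology of $\mathfrak{h}_{\tau}$ treated in (2); (5) then follows by applying (4) to the $(a_{\tau}, b_{\tau})$-summand of the splitting provided by (3). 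The main technical obstacle will be rigorously upgrading the K\"unneth/Koszul computation in (1) and (2) from its purely formal (power-series) version to the convergent/LB-space version: this amounts to verifying surjectivity of the two logarithmic derivations in the Banach-space setting, which can be extracted from the norm bound in Corollary \ref{citation} together with Proposition \ref{mikami expansionII} and the open mapping theorem \cite[Proposition 8.8]{funcana}.
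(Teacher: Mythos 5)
Your overall skeleton --- reduce to a local computation via the explicit expansion, compute via a Koszul-type argument using surjectivity of the horizontal operators, then deduce (3)--(5) from Lemma \ref{elementary} and Lemma \ref{infinitesimal character and horizontal} --- matches the paper's proof, and your handling of (3)--(5) is essentially what the paper does. However, there are two genuine problems in the justification of (1) and (2) that the paper's argument is specifically designed to avoid.

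First, your assertion that ``the monomials $(x_{\tau}-x_{\tau,n})^{i}$ lie in distinct semisimple $\mathfrak{h}_{\tau}$-eigenspaces'' is false: the paper points out explicitly (right before Proposition \ref{Sen operator}) that the horizontal action $\theta_{\mathfrak{h}}$ is trivial on $\mathcal{O}_{\Fl}$, so all $x_{\tau}$-monomials live in weight zero. The $\mathfrak{h}_{\tau}$-action in these coordinates is entirely carried by the two log-derivations, which you do identify correctly in the next clause. (Also, the expansion with all three families of variables $(x_{\tau}-x_{\tau,n})$, $\log(e_{1,\tau}/e_{1,\tau,n})$, $\log(f_{\tau}/f_{\tau,n})$ is Corollary \ref{nontrivial mikami}, not Corollary \ref{citation}, which is the $(0,\lambda_{\tau})$-weight piece with only the $(x_{\tau}-x_{\tau,n})$ expansion.)

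Second and more seriously, the claim that $D_{\lambda^{\Psi}, K^p}^{\Psi-\mathrm{la}}(U)$ is a \emph{product of generalized $\mathfrak{h}_{\tau}$-weight spaces} does not hold in this Banach/LB setting and the rest of your argument for (1) rests on it. After the paper's twisting isomorphism $f \mapsto (e_{1,\tau}/e_{1,\tau,n})^{a_{\tau}-b_{\tau}}(f_{\tau}/f_{\tau,n})^{-a_{\tau}}f$ (which reduces to the case $(a_{\tau},b_{\tau})=(0,0)$), the operators $h_{\tau}$ and $s_{\tau}$ act on the power-series ring as derivations in the log variables; a generic convergent series $\sum_{i,j}a_{i,j}(\log(e_{1,\tau}/e_{1,\tau,n}))^{i}(\log(f_{\tau}/f_{\tau,n}))^{j}$ is not a sum (finite or topological) of generalized eigenvectors, so there is no summand on which ``one of $h_{1}-a_{\tau},\ h_{2}-b_{\tau}$ acts invertibly'' to discard. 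The paper sidesteps the decomposition entirely: it computes $R\Gamma((0,0)\text{-}\mathrm{nilp}, D)$ as $\varinjlim_{n}R\Gamma(s_{\tau}^{n}, R\Gamma(h_{\tau}^{n}, D))$ and proves directly, from the explicit formulas, that $h_{\tau}^{n}$ is surjective on $D$ and $s_{\tau}^{n}$ is surjective on $D^{h_{\tau}^{n}}$, hence the iterated two-term complexes are concentrated in degree $0$; the same scheme with $h_{\tau}$, $s_{\tau}$ in place of their powers gives (2). Your Koszul computation ``on the matching summand'' can be salvaged by dropping the decomposition and applying it to all of $D$ after the twist --- which is exactly the paper's argument --- but as written the proposal has a gap. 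For (4) you also need the additional nilpotence argument for $H^{i}(\mathfrak{h}, D_{\lambda^{\Psi}, K^p}^{\Psi\text{-}\mathrm{la}, (b_{\tau}+1, a_{\tau}-1)\text{-}\mathrm{nilp}}(-a_{\tau},-b_{\tau}))$ (that the identity is nilpotent there because $\mathrm{Ker}(a_{\tau},b_{\tau}) + \mathrm{Ker}(b_{\tau}+1,a_{\tau}-1)=\mathrm{Sym}\,\mathfrak{h}_{\tau}$), which your sketch glosses over.
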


\begin{proof} We may assume $U \subset U_1$. In the following, we will use the notations after Proposition \ref{sen operator}.

\underline{(1) and (2)} \ By using the isomorphism $D_{\lambda^{\Psi}, K^p}^{\Psi-\mathrm{la}}(-a_{\tau}, -b_{\tau}) \Isom D_{\lambda^{\Psi}, K^p}^{\Psi-\mathrm{la}}$ defined by $f \mapsto (\frac{e_{1, \tau}}{e_{1, \tau, n}})^{a_{\tau}-b_{\tau}} (\frac{f_{\tau}}{f_{\tau, n}})^{-a_{\tau}}f,$ we may assume that $(a_{\tau}, b_{\tau}) = (0, 0)$. We claim $$R\Gamma((0, 0)-\mathrm{nilp}, D_{\lambda^{\Psi}, K^p}^{\Psi-\mathrm{la}}) = \varinjlim_{n} R\Gamma(s_{\tau}^{n}, R\Gamma(h_{\tau}^n, D_{\lambda^{\Psi}, K^p}^{\Psi-\mathrm{la}})) = D_{\lambda^{\Psi}, K^p}^{\Psi-\mathrm{la}, (0, 0)-\mathrm{nilp}}$$ and $$R\Gamma(\mathfrak{h}_{\tau}, D_{\lambda^{\Psi}, K^p}^{\Psi-\mathrm{la}, (0, 0)-\mathrm{nilp}}) = R\Gamma(s_{\tau}, R\Gamma(h_{\tau}, D_{\lambda^{\Psi}, K^p}^{\Psi-\mathrm{la}, (0, 0)-\mathrm{nilp}})) = D_{\lambda^{\Psi}, K^p}^{\Psi-\mathrm{la}, (0, 0)},$$ where $s_{\tau} := \begin{pmatrix} 1 & 0 \\
0 & 0    
\end{pmatrix}$ and $h_{\tau} := \begin{pmatrix} -1 & 0 \\
0 & 1    
\end{pmatrix}$. In fact, by the formulas \begin{align*} \theta_{\mathfrak{h}}(s_{\tau})(\sum_{(i, j, k) \in (\mathbb{Z}^{\Psi}_{\ge 0})^{3}} a_{i, j, k} \prod_{\tau \in \Psi}(\mathrm{log}\frac{e_{1, \tau}}{e_{1, \tau, n}})^{i_{\tau}} (\mathrm{log}\frac{f_{\tau}}{f_{\tau, n}})^{j_{\tau}} (x_{\tau}-x_{\tau, n})^{k_{\tau}}) \\
= \sum_{(i, j, k) \in (\mathbb{Z}^{\Psi}_{\ge 0})^{3}} j_{\tau}a_{i, j, k} \prod_{\tau \in \Psi}(\mathrm{log}\frac{e_{1, \tau}}{e_{1, \tau, n}})^{i_{\tau}} (\mathrm{log}\frac{f_{\tau}}{f_{\tau, n}})^{j_{\tau}-1} (x_{\tau}-x_{\tau, n})^{k_{\tau}} \end{align*}

and 

\begin{align*}\theta_{\mathfrak{h}}(h_{\tau})(\sum_{(i, j, k) \in (\mathbb{Z}^{\Psi}_{\ge 0})^{3}} a_{i, j, k} \prod_{\tau \in \Psi}(\mathrm{log}\frac{e_{1, \tau}}{e_{1, \tau, n}})^{i_{\tau}} (\mathrm{log}\frac{f_{\tau}}{f_{\tau, n}})^{j_{\tau}} (x_{\tau}-x_{\tau, n})^{k_{\tau}}) \\
= \sum_{(i, j, k) \in (\mathbb{Z}^{\Psi}_{\ge 0})^{3}} i_{\tau}a_{i, j, k} \prod_{\tau \in \Psi}(\mathrm{log}\frac{e_{1, \tau}}{e_{1, \tau, n}})^{i_{\tau}-1} (\mathrm{log}\frac{f_{\tau}}{f_{\tau, n}})^{j_{\tau}} (x_{\tau}-x_{\tau, n})^{k_{\tau}},\end{align*} we can see that $h_{\tau}^n$ (resp. $s_{\tau}^n$, $h_{\tau}$ and $s_{\tau}$) is a surjection on $D_{\lambda^{\Psi}, K^p}^{\Psi-\mathrm{la}}$ (resp. $D_{\lambda^{\Psi}, K^p}^{\Psi-\mathrm{la}, h_{\tau}^n}$, $D_{\lambda^{\Psi}, K^p}^{\Psi-\mathrm{la}, (0, 0)-\mathrm{nilp}}$ and $D_{\lambda^{\Psi}, K^p}^{\Psi-\mathrm{la}, (0, 0)-\mathrm{nilp}, h_{\tau}}$)\footnote{Roughly speaking, $D_{\lambda^{\Psi}, K^p}^{\Psi-\mathrm{la}, (0, 0)-\mathrm{nilp}}$ consists of the polynomial part with respect to $\tau$.}. (Here, we used the explicit description Corollary \ref{nontrivial mikami} Thus, $a_{i, j, k} \in D_{\lambda^{\Psi}, \aS_{K^pK_p}}(V_{K_p})$ for some $K_p$ and affinoid open $V_{K_p}$ of $\aS_{K^pK_p}$. The explicit formula of the action $\theta$ is given before Proposition \ref{Sen operator}.)
 
(3) follows from (1) and Lemma \ref{elementary}.

(4) \ By 2 of Lemma \ref{elementary}, $\mathrm{Ext}^i_{Z(U(\mathfrak{g}_{\tau}))}(\chi_{(a_{\tau}, b_{\tau})}, D_{\lambda^{\Psi}, K^p}^{\Psi-\mathrm{la}, (a_{\tau}, b_{\tau})-\mathrm{nilp}})$ is equal to $$H^i(\mathfrak{h}, D_{\lambda^{\Psi}, K^p}^{\Psi-\mathrm{la}, (a_{\tau}, b_{\tau})-\mathrm{nilp}} (-a_{\tau}, -b_{\tau})) \oplus H^i(\mathfrak{h}, D_{\lambda^{\Psi}, K^p}^{\Psi-\mathrm{la}, (a_{\tau}, b_{\tau})-\mathrm{nilp}}(-b_{\tau} - 1, -a_{\tau} + 1)).$$ By (2) of this lemma, we have $H^i(\mathfrak{h}, D_{\lambda^{\Psi}, K^p}^{\Psi-\mathrm{la}, (a_{\tau}, b_{\tau})-\mathrm{nilp}} (-a_{\tau}, -b_{\tau})) = 0$. We claim that $$H^i(\mathfrak{h}, D_{\lambda^{\Psi}, K^p}^{\Psi-\mathrm{la}, (b_{\tau} + 1, a_{\tau}-1)-\mathrm{nilp}}(-a_{\tau}, -b_{\tau}))$$ also vanishes. In fact, the identiny map on this space is nilpotent because $\mathrm{Ker}(a_{\tau}, b_{\tau}) + \mathrm{Ker}(b_{\tau}+1, a_{\tau}-1) = \mathrm{Sym}\mathfrak{h}$, $\mathrm{Ker}(b_{\tau}+1, a_{\tau}-1)$ acts nilpotently on this and $\mathrm{Ker}(a_{\tau}, b_{\tau})$ acts trivially on this. 

(5) \  We have \begin{align*} \mathrm{Ext}^i_{Z(U(\mathfrak{g}_{\tau}))}(\chi_{(a_{\tau}, b_{\tau})}, D_{\lambda^{\Psi}, K^p}^{\Psi-\mathrm{la}, \chi_{(a_{\tau}, b_{\tau})}-\mathrm{nilp}}) = \mathrm{Ext}^i_{Z(U(\mathfrak{g}_{\tau}))}(\chi_{(a_{\tau}, b_{\tau})}, D_{\lambda^{\Psi}, K^p}^{\Psi-\mathrm{la}, (a_{\tau}, b_{\tau})-\mathrm{nilp}} \oplus D_{\lambda^{\Psi}, K^p}^{\Psi-\mathrm{la}, (b_{\tau} + 1, a_{\tau}-1)-\mathrm{nilp}}).\end{align*} By (4) of this lemma, we have \begin{align*}\mathrm{Ext}^i_{Z(U(\mathfrak{g}_{\tau}))}(\chi_{(a_{\tau}, b_{\tau})}, D_{\lambda^{\Psi}, K^p}^{\Psi-\mathrm{la}, (a_{\tau}, b_{\tau})-\mathrm{nilp}}) = 0\end{align*} for $i > 0$. By the same proof as (4), we have \begin{align*}\mathrm{Ext}^i_{Z(U(\mathfrak{g}_{\tau}))}(\chi_{(a_{\tau}, b_{\tau})}, D_{\lambda^{\Psi}, K^p}^{\Psi-\mathrm{la}, (b_{\tau} + 1, a_{\tau}-1)-\mathrm{nilp}}) = 0\end{align*} for any $i > 0$. \end{proof}

\subsubsection{Parallel weight case}

In this subsection, we assume $\lambda_{\tau} = \lambda_{\tau'}$ for any $\tau, \tau' \in \Psi$. Let $k$ be this integer.

In this case, we have $$E_0((V_{\lambda^{\Psi}}(-\lambda_0) \otimes_L \mathrm{Fil}^1\mathbb{B}^{+}_{\mathrm{dR}, k + 2 })^{\Psi-\mathrm{la}, \chi_{\lambda_{\Psi}}, L}) = \oplus_{\tau \in \Psi} D_{{\lambda^{\Psi}}, K^p}^{\Psi-\mathrm{la}, (0, k)_{\sigma \neq \tau}, (1+k, -1)_{\tau}}(k+1)^{G_L}.$$ Thus we have the following exact sequence $0 \rightarrow \oplus_{\tau \in \Psi} D_{{\lambda^{\Psi}}, K^p}^{\Psi-\mathrm{la}, (0, k)_{\sigma \neq \tau}, (1+k, -1)_{\tau}}(k+1)^{G_L} \rightarrow E_0((V_{\lambda^{\Psi}}(-\lambda_0) \otimes_L \mathbb{B}^{+}_{\mathrm{dR}, k + 2 })^{\Psi-\mathrm{la}, \chi_{\lambda_{\Psi}}, L}) \rightarrow D_{{\lambda^{\Psi}}, K^p}^{\Psi-\mathrm{la}, (0, k)_{\tau}, G_L} \rightarrow 0$. Since the image of the Fontaine operator $N_{\lambda}$ on $E_0((V_{\lambda^{\Psi}}(-\lambda_0) \otimes_L \mathbb{B}^{+}_{\mathrm{dR}, k + 2 })^{\Psi-\mathrm{la}, \chi_{\lambda_{\Psi}}, L})$ is contained in $\oplus_{\tau \in \Psi} D_{{\lambda^{\Psi}}, K^p}^{\Psi-\mathrm{la}, (0, k)_{\sigma \neq \tau}, (1+k, -1)_{\tau}}(k+1)^{G_L}$ and the kernel of $N_{\lambda}$ contains $\oplus_{\tau \in \Psi} D_{{\lambda^{\Psi}}, K^p}^{\Psi-\mathrm{la}, (0, k)_{\sigma \neq \tau}, (1+k, -1)_{\tau}}(k+1)^{G_L}$, $N_{\lambda}$ induces $$N_{\lambda}^0 : D_{{\lambda^{\Psi}}, K^p}^{\Psi-\mathrm{la}, (0, k)_{\tau}, G_L} \rightarrow \oplus_{\tau} D_{{\lambda^{\Psi}}, K^p}^{\Psi-\mathrm{la}, (0, k)_{\sigma \neq \tau}, (1+k, -1)_{\tau}}(k+1)^{G_L}$$ and thus $$N_{\lambda, C}^0 := N_{\lambda}^0 \widehat{\otimes}_L C : D_{{\lambda^{\Psi}}, K^p}^{\Psi-\mathrm{la}, (0, k)_{\tau}} \rightarrow \oplus_{\tau} D_{{\lambda^{\Psi}}, K^p}^{\Psi-\mathrm{la}, (0, k)_{\sigma \neq \tau}, (1+k, -1)_{\tau}}(k+1)$$ by 7 of Lemma \ref{HTE}. 

On the cohomology side, we have the Hodge-Tate decomposition $$\widehat{H}^d(S_{K^p}, V_{\lambda^{\Psi}}(-\lambda_0))_{\mathfrak{m}}^{\Psi-\mathrm{la}, \chi_{\lambda_{\Psi}}} \widehat{\otimes}_{L} C \cong \oplus_{I \subset \Psi} H^d(\Fl, D^{\Psi-\mathrm{la}, (0, k)_{\tau \in I}, (1 +k, -1)_{\tau \notin I}}_{\lambda^{\Psi}, K^p})_{\mathfrak{m}}$$ by Proposition \ref{Hodge-Tate decomposition}. 

Our purpose of this subsection is to prove the following.

\begin{thm}\label{Fontaine}

There exists $(c_{\tau})_{\tau} \in (L^{\times})^{\Psi}$ such that $N_{\lambda}^0 = \oplus_{\tau} c_{\tau}\overline{d}_{\tau}^{\lambda} \circ d_{\tau}^{\lambda}$. (See the comments before Conjecture \ref{key diagram} for the definition of $\overline{d}_{\tau}^{\lambda}$ and $d_{\tau}^{\lambda}$.)

\end{thm}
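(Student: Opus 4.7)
The strategy is to follow the pattern set by Pan in \cite[Section 5]{PanII} for the modular curve case, adapted to the $\Psi$-product structure. Both maps $N_\lambda^0$ and $\oplus_\tau \overline{d}_\tau^\lambda \circ d_\tau^\lambda$ are continuous, $G(\mathbb{Q}_p)$-equivariant, and $\mathcal{O}_{\Fl}$-linear morphisms between the same Hodge-Tate pieces; after decomposing along $\Psi$ and showing each $\tau$-component has $1$-dimensional Hom-space of such maps, the identification up to scalars will be automatic, and the nonvanishing of the scalar will be checked by a single local computation. First I would reduce to the case where $U \in \tilde{\mathcal{B}}$ satisfies $U \subset U_1$ and use the explicit formal expansions of Proposition \ref{mikami expansionIII} and Corollary \ref{citation} to write sections of $D^{\Psi-\mathrm{la},(0,k)_\tau}_{\lambda^\Psi, K^p}(U)$ in the form $(\prod_\tau e_{1,\tau}^{k}) \sum a_i \prod_\tau (x_\tau - x_{\tau,n})^{i_\tau}$. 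Using the explicit formula in Lemma \ref{anti-holomorphic}, the composite $\overline{d}_\tau^\lambda \circ d_\tau^\lambda$ is, up to a rational nonzero constant, the differential operator $(\partial/\partial x_\tau)^{k+1}$ followed by multiplication by $dx_\tau^{k+1}$, acting on the $x_\tau$-expansion.

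The main content of the proof is then identifying $N_\lambda^0$ with this differential operator locally. For this I would analyze the image of $D^{\Psi-\mathrm{la},(0,k)_\tau, G_L}_{\lambda^\Psi, K^p}$ in $E_0((V_{\lambda^\Psi}(-\lambda_0) \otimes_L \mathbb{B}^+_{\mathrm{dR}, k+2})^{\Psi-\mathrm{la}, \chi_{\lambda_\Psi}, L})$ by combining two ingredients. The first is Proposition \ref{FHT}, which matches the Faltings extension on $\aS_{K^p}$ with the pushout of the Hodge-Tate filtration along each $\tau \in \Psi$ via the Kodaira-Spencer isomorphism; this shows that in the relevant formal coordinates, the generator $Y_\tau$ of $\mathrm{gr}^1 \mathcal{O}\mathbb{B}^+_{\mathrm{dR}}$ along $\tau$ corresponds (up to a sign) to $dx_\tau$. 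The second is Lemma \ref{formal power}, which gives the explicit splitting $\mathcal{O}\mathbb{B}^+_{\mathrm{dR}, k+2} \otimes_{\mathbb{B}^+_{\mathrm{dR}, k+2}} \widehat{\mathcal{O}} \cong \bigoplus_{i=0}^{k+1} \widehat{\mathcal{O}} \otimes \mathrm{Sym}^i \Omega^1$ along with the connection $\nabla$ on it. Lifting a locally analytic section $f$ to $E_0$ of the $\mathbb{B}^+_{\mathrm{dR}, k+2}$-tensor amounts to replacing each $x_\tau$ in its formal expansion by $x_\tau + Y_\tau$ and collecting terms in the various $\mathrm{Sym}^i \Omega^1$ components. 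The Sen operator then computes the degree-$(k+1)$-along-one-$\tau$ piece, which by direct inspection is a nonzero rational multiple of $(\partial/\partial x_\tau)^{k+1} f \otimes dx_\tau^{k+1}$.

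With this local matching in place, for each fixed $\tau$ both maps $N_\lambda^0$ and $\overline{d}_\tau^\lambda \circ d_\tau^\lambda$ restricted to their $\tau$-components agree up to a nonzero rational constant on the dense subspace of polynomial sections in $x_\tau$; by the uniqueness given in Lemma \ref{locally analytic extension of derivation} and Proposition \ref{local anti} (which characterize such $G(\mathbb{Q}_p)$-equivariant $\mathcal{O}_{\Fl}$-linear extensions on $U \subset U_1$) this local identity extends uniquely to all of $\Fl$ and over $L$, producing the required constants $c_\tau \in L^\times$.

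The main obstacle will be the bookkeeping in the second step: the Fontaine operator lives after passing to $H_L$-invariants and $\Gamma_L$-analytic vectors, and one has to check that the local derivative computation descends from the ambient $\mathcal{O}\mathbb{B}^+_{\mathrm{dR}, k+2}$-module to the subspace $E_0(\cdots)^L$ and produces a genuine $L$-rational constant rather than just a $C$-rational one. Concretely, the Faltings extension and the Kodaira-Spencer map are defined over $L$, so tracing through \ref{FHT} and \ref{formal power} with their $L$-structures (after replacing $L$ by a finite extension over which $x_{\tau, n}$, $e_{1,\tau,n}$, $f_{\tau,n}$ are defined, as in the proof of Proposition \ref{Sen operator}) yields the $L$-rationality of $c_\tau$. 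A further technical point is that in the parallel weight case the iterated application of the Sen operator $k+1$ times accesses the top graded piece of $\mathbb{B}^+_{\mathrm{dR}, k+2}$, so the reduction mod $\mathrm{Fil}^{k+2}$ (rather than mod $\mathrm{Fil}^2$ as in the $k=0$ case) is essential — this is why the bound $m > k$ in Corollary \ref{deRhamness} appears precisely here.
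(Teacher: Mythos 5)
Your high-level strategy — localize to $U \subset U_1$, use the explicit $\Psi$-analytic expansions, show $\overline{d}_\tau^\lambda \circ d_\tau^\lambda$ is locally the differential operator $(\partial/\partial x_\tau)^{k+1}$ times $dx_\tau^{k+1}$, and then match $N_\lambda^0$ to this up to a nonzero $L$-rational scalar via a uniqueness-of-maps argument — is in the same spirit as the paper's proof, which factors $N_\lambda^0 = -N_\lambda' \circ (\nabla \circ s_k)$ and handles the two factors separately (Propositions \ref{1st} and \ref{2nd}). But there are two genuine gaps in the middle step.

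The first concerns the construction of the section $s_k$. You write that lifting a section $f$ of $D^{\Psi-\mathrm{la},(0,k)_\tau,G_L}$ to $E_0(\cdots)$ ``amounts to replacing each $x_\tau$ in its formal expansion by $x_\tau + Y_\tau$ and collecting terms in the various $\mathrm{Sym}^i\Omega^1$ components.'' This substitution is not what the paper does, and a na\"ive version of it fails: the lift must land in the $G_L$-invariant Sen-nilpotent subspace and must be compatible with the $\chi_{\lambda_\Psi}$-isotypic decomposition, neither of which a purely formal substitution into the splitting of Lemma \ref{formal power} guarantees. The actual section is built from the element $\tilde{x}_\tau := l_0(f_\tau \otimes e_2)/l_0(f_\tau \otimes e_1) \in \mathcal{O}\mathbb{B}^+_{\mathrm{dR}}$, where $l_0 : D_{\tau,V_0}^\vee \otimes_L V_\tau \to \mathcal{O}\mathbb{B}^{+,\Psi-\mathrm{la},G_L}_{\mathrm{dR}}$ is the de Rham comparison map. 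The crucial input, which your proposal never invokes, is the ``$p$-adic Legendre relation'' $\nabla(\tilde{x}_\tau) \in t\,\mathcal{O}\mathbb{B}^+_{\mathrm{dR}}$ (Proposition \ref{p-adic Legendre}), a nontrivial consequence of the self-duality of the universal abelian scheme. Without it, one does not know that $\nabla \circ s_k$ lands in $\oplus_\tau (D^{\Psi-\mathrm{la},(0,k)_\tau,G_L}_{\lambda^\Psi,K^p} \otimes \omega_\tau^{2k+2} \otimes (\wedge^2 D_\tau)^{k+1})$, and the factorization $N_\lambda^0 = -N'_\lambda \circ (\nabla \circ s_k)$ of Proposition \ref{composition} is not available.

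The second gap is the claim that ``the Sen operator then computes the degree-$(k+1)$-along-one-$\tau$ piece, which by direct inspection is a nonzero rational multiple of $(\partial/\partial x_\tau)^{k+1} f \otimes dx_\tau^{k+1}$.'' This is not an inspection: the Sen operator is the infinitesimal action of $\Gamma_L$ on the lift, and a priori one has no formula for $\theta_{\mathrm{Sen}}(\tilde{x}_\tau)$ in the formal coordinates. The paper avoids this by relating $\theta_{\mathrm{Sen}}$ to the Casimir operator $\Omega_\tau$ via the Harish-Chandra-type identity $(2\theta_{\mathrm{Sen}} + z_\tau + 1)^2 - 1 = 2\Omega_\tau$ on $D^{\Psi-\mathrm{la},(0,k)_{\sigma\ne\tau}}_{\lambda^\Psi,K^p}$ (Lemma \ref{relation}); this replaces the arithmetic operator $\theta_{\mathrm{Sen}}$ by the algebraic operator $\Omega_\tau$, on which one can compute. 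One then matches against $\overline{d}_\tau^\lambda$ using the one-dimensionality $\dim_L \mathrm{Hom}_{\mathfrak{gl}_2(L)}(W_{(k,0)}^\vee, W_{(-1,k+1)}^\vee) = 1$ together with a nonvanishing check (Proposition \ref{uptoscalar}). Your final paragraph correctly identifies the $L$-rationality issue and the role of the truncation level $k+2$, but without the Legendre relation and the Sen--Casimir identity the middle of the argument does not close.
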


Before proving the above theorem, we prove Conjecture \ref{key diagram} by using the above theorem.

\begin{cor}\label{parallel comparison}

Let $\mathfrak{m}$ be a decomposed generic non-Eisenstein ideal of $\mathbb{T}^S(K^p, \mathcal{O})$ such that $\overline{\rho}_{\mathfrak{m}}(G_{F})$ is not solvable and $s_{\mathfrak{m}} := \chi_{\mathfrak{m}}^c|_{G_{\tilde{F}}} \otimes (\otimes_{\tau \in \Psi} (\rho_{\mathfrak{m}}|_{G_{\tilde{F}}})^{\tau})$ satisfies the condition that $\overline{s}_{\mathfrak{m}}$ is absolutely irreducible. (We recall that $\tilde{F}$ denotes the Galois closure of $F$ over $\mathbb{Q}$.) Let $\varphi : \mathbb{T}^S(K^p, \mathcal{O})_{\mathfrak{m}} \rightarrow \mathcal{O}$ be an $\mathcal{O}$-morphism such that $\rho_{\varphi}|_{G_{F_w}}$ is de Rham of $p$-adic Hodge type $\lambda_w$ for any $w \mid v$ and $\chi_{\varphi}|_{G_{F_{0, v^c}}}$ is de Rham of $p$-adic Hodge type $\lambda_0$. If $\widehat{H}^d(S_{K^p}, V_{\lambda^{\Psi}})_{\mathfrak{m}}^{\Psi-\mathrm{la}}[\varphi] \neq 0$, then $\mathrm{Ker} H^d(\Fl, N)_{\mathfrak{m}}[\varphi] \neq 0$, where $N = \oplus_{\tau \in \Psi} \overline{d}_{\tau}^{\lambda} \circ d_{\tau}^{\lambda}$.

\end{cor}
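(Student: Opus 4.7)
The plan is to combine Theorem~\ref{Fontaine}, which identifies the first differential of $ADR^{\Psi-\mathrm{la}}_\lambda$ with the Fontaine operator up to invertible scalars, with the tensor structure of the completed cohomology from Proposition~\ref{residual irreducibility}, in order to transfer the vanishing of the Fontaine operator (which follows from de Rhamness of $\rho_\varphi$ and $\chi_\varphi$) to a vanishing of $H^d(\Fl, N)_\mathfrak{m}[\varphi]$ on a nonzero piece. First I would apply Proposition~\ref{residual irreducibility}, whose hypotheses on $\overline{\rho}_\mathfrak{m}$ and $\overline{s}_\mathfrak{m}$ are part of our assumptions, to obtain a canonical isomorphism $\widehat{H}^d(S_{K^p}, \mathcal{V}_{\lambda^\Psi})_\mathfrak{m} \cong s_\mathfrak{m}^\vee(-d) \otimes_{\mathbb{T}^S(K^p,\mathcal{V}_{\lambda^\Psi})_\mathfrak{m}} M$ with $M$ carrying trivial Galois action. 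Inverting $p$ and taking the $\varphi$-eigenspace gives $\widehat{H}^d(S_{K^p}, V_{\lambda^\Psi})_\mathfrak{m}[\varphi] \cong s_\varphi^\vee(-d) \otimes_L M[\varphi]$ with $M[\varphi] \neq 0$ by hypothesis.

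Next, twisting by $-\lambda_0$ and passing to the $\Psi$-locally analytic vectors with infinitesimal character $\chi_{\lambda_\Psi}$ (which contains the $\varphi$-eigenspace by Theorem~\ref{infinitesimal character}, after accounting for the weight), Proposition~\ref{Hodge-Tate decomposition} decomposes the $C$-extension as
\[
\bigoplus_{I \subset \Psi} H^d\bigl(\Fl, D^{\Psi-\mathrm{la},\,(0, \lambda_\tau)_{\tau \in I},\,(1+\lambda_\tau, -1)_{\tau \notin I}}_{K^p,\lambda^\Psi}\bigr)_\mathfrak{m}[\varphi].
\]
This is compatible with the Hodge-Tate decomposition of $s_\varphi^\vee(-d) \otimes_L M[\varphi] \otimes_L C$; since each $\rho_\varphi^\tau$ and $\chi_\varphi^c$ is Hodge-Tate with both weight spaces nonzero over $C$, each of the $2^d$ summands is nonzero. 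In particular the $I = \Psi$ summand, which is precisely $H^d(\Fl, (ADR^{\Psi-\mathrm{la}}_\lambda)_0)_\mathfrak{m}[\varphi] = H^d(\Fl, D_{\lambda^\Psi, K^p}^{\Psi-\mathrm{la},(0,\lambda_\tau)_\tau})_\mathfrak{m}[\varphi]$, is nonzero.

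By Theorem~\ref{Fontaine}, the first differential $N$ of $ADR^{\Psi-\mathrm{la}}_\lambda$ equals, after $\widehat{\otimes}_L C$, the geometric Fontaine operator $N_\lambda^0$ up to component-wise multiplication by $c_\tau \in L^\times$, so after taking cohomology their kernels on the $I = \Psi$ summand coincide. By Lemma~\ref{de Rham structure}.2, the induced map $H^d(\Fl, N_\lambda^0)_\mathfrak{m}[\varphi]$ is compatible, via the natural comparison morphism, with the Fontaine operator of the $G_L$-representation $\widehat{H}^d(S_{K^p}, V_{\lambda^\Psi}(-\lambda_0))_\mathfrak{m}^{\Psi-\mathrm{la},\chi_{\lambda_\Psi}}[\varphi] \widehat{\otimes}_L B^+_{\mathrm{dR}, k+2}$, which by Step~1 is $s_\varphi^\vee(-d) \otimes_L M[\varphi] \widehat{\otimes}_L B^+_{\mathrm{dR}, k+2}$. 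Since $\chi_\varphi$ and each $\rho_\varphi^\tau$ are de Rham by assumption, the tensor product $s_\varphi$ is de Rham, and Corollary~\ref{deRhamness} forces the Fontaine operator on $s_\varphi^\vee(-d)$ to vanish. Consequently the map $H^d(\Fl, N_\lambda^0)_\mathfrak{m}[\varphi]$ vanishes on the $I=\Psi$ summand, hence so does $H^d(\Fl, N)_\mathfrak{m}[\varphi]$, and we conclude $\ker H^d(\Fl, N)_\mathfrak{m}[\varphi] \neq 0$.

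The main obstacle is the last step: carefully verifying that the comparison map of Lemma~\ref{de Rham structure}.2, restricted to the $I = \Psi$ Hodge-Tate piece, intertwines the geometric operator $N_\lambda^0$ (built from the period sheaf $\mathbb{B}^+_{\mathrm{dR}, k+2}$ on $\Fl$ and the locally analytic structure) with the Galois-theoretic Fontaine operator acting on the tensor factor $s_\varphi^\vee(-d)$, in such a way that de Rhamness of $s_\varphi$ forces the cohomological map to vanish. This requires tracking the injectivity and Galois-equivariance of the Čech-theoretic map and then matching it with the tensor decomposition from Proposition~\ref{residual irreducibility}; once this is in place, the rest of the argument is essentially formal.
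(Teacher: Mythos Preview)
Your proposal is correct and follows essentially the same path as the paper's proof: use Proposition~\ref{residual irreducibility} to factor the $\varphi$-eigenspace as $s_\varphi^\vee(-d-\lambda_0) \otimes M[\varphi]$, use Theorem~\ref{infinitesimal character} and Proposition~\ref{Hodge-Tate decomposition} to isolate the $(0,k)_\tau$-piece as the Sen-weight-$0$ part, observe it is nonzero, invoke de Rhamness of $s_\varphi$ via Corollary~\ref{deRhamness} to kill the Fontaine operator there, and finally use Theorem~\ref{Fontaine} to identify $\mathrm{Ker}\,H^d(\Fl,N_{\lambda,C}^0)_\mathfrak{m}$ with $\mathrm{Ker}\,H^d(\Fl,N)_\mathfrak{m}$. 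The only minor difference is emphasis: you single out Lemma~\ref{de Rham structure}.2 as the bridge between the sheaf-level Fontaine operator and the Galois-theoretic one, while the paper absorbs this compatibility into the sentence ``$H^d(\Fl, N_{\lambda, C}^0)_{\mathfrak{m}}$ is zero on $H^d(\Fl, D_{\lambda^{\Psi}, K^p}^{\Psi-\mathrm{la}, (0, k)_{\tau}})[\varphi]$ since $s_{\varphi}$ is de Rham'' (relying implicitly on Corollary~\ref{sen operatorII} for the identification of Sen operators). Your concern about this step is reasonable but not a genuine gap; the compatibility follows from the construction of $N_\lambda^0$ via the Sen operator on the period sheaf together with the Galois-equivariance of the \v Cech comparison.
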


\begin{proof}

Let $s_{\mathcal{\varphi}} := \varphi_*s_{\mathfrak{m}}$. By Proposition \ref{residual irreducibility}, the evaluation map $$s_{\varphi}^{\vee}(-d-\lambda_{0}) \otimes_L \mathrm{Hom}_{L[G_{\tilde{F}}]}(s_{\varphi}^{\vee}(-d-\lambda_0), \widehat{H}^d(S_{K^p}, V_{\lambda^{\Psi}}(-\lambda_0))_{\mathfrak{m}}^{\Psi-\mathrm{la}}[\varphi]) \Isom \widehat{H}^d(S_{K^p}, V_{\lambda^{\Psi}}(-\lambda_0))_{\mathfrak{m}}^{\Psi-\mathrm{la}}[\varphi]$$ is an isomorphism. We claim that $\widehat{H}^d(S_{K^p}, V_{\lambda^{\Psi}}(-\lambda_0))_{\mathfrak{m}}^{\Psi-\mathrm{la}}[\varphi] \subset \widehat{H}^d(S_{K^p}, V_{\lambda^{\Psi}}(-\lambda_0))_{\mathfrak{m}}^{\Psi-\mathrm{la}, \chi_{\lambda_{\Psi}}}$. Actually, by Theorem \ref{infinitesimal character}, we have $\widehat{H}^d(S_{K^p}, V_{\lambda^{\Psi}}(-\lambda_0))_{\mathfrak{m}}^{\mathrm{la}}[\varphi] = \widehat{H}^d(S_{K^p}, L)_{\mathfrak{m}}^{\mathrm{la}}[\varphi] \otimes_L V_{\lambda^{\Psi}}(-\lambda_0) \subset \widehat{H}^d(S_{K^p}, L)_{\mathfrak{m}}^{\mathrm{la}, \chi_{\lambda_{\Psi}}} \otimes_L V_{\lambda^{\Psi}}(-\lambda_0) = (\widehat{H}^d(S_{K^p}, L)_{\mathfrak{m}}^{\mathrm{la}} \otimes_L V_{\lambda^{\Psi}}(-\lambda_0))^{\chi_{\lambda_{\Psi}}}$. This implies the claim.

We have the Hodge-Tate decomposition $$\widehat{H}^d(S_{K^p}, V_{\lambda^{\Psi}}(-\lambda_0))_{\mathfrak{m}}^{\Psi-\mathrm{la}, \chi_{\lambda_{\Psi}}} \widehat{\otimes}_{L} C \cong \oplus_{I \subset \Psi} H^d(\Fl, D^{\Psi-\mathrm{la}, (0, k)_{\tau \in I}, ( 1 +k, -1)_{\tau \notin I}}_{K^p, \lambda^{\Psi}})_{\mathfrak{m}}$$ by Proposition \ref{Hodge-Tate decomposition}. By Corollary \ref{sen operatorII}, we have $$(\widehat{H}^d(S_{K^p}, V_{\lambda^{\Psi}}(-\lambda_0))_{\mathfrak{m}}^{\Psi-\mathrm{la}, \chi_{\lambda_{\Psi}}} \widehat{\otimes}_{L} C)^{\theta_{\mathrm{Sen}} = 0} = H^d(\Fl, D^{\Psi-\mathrm{la}, (0, k)_{\tau}, }_{K^p, \lambda^{\Psi}})_{\mathfrak{m}},$$ where $\theta_{\mathrm{Sen}}$ denotes the Sen operator. This space contains $(s_{\varphi}^{\vee}(-d-\lambda_{0}) \otimes_L C)^{\theta_{\mathrm{Sen}} = 0} \widehat{\otimes}_L \mathrm{Hom}_{L[G_{\tilde{F}}]}(s_{\varphi}^{\vee}(-d-\lambda_0), \widehat{H}^d(S_{K^p}, V_{\lambda^{\Psi}}(-\lambda_0))_{\mathfrak{m}}^{\Psi-\mathrm{la}}[\varphi]) = (\widehat{H}^d(S_{K^p}, V_{\lambda^{\Psi}}(-\lambda_0))_{\mathfrak{m}}^{\Psi-\mathrm{la}}[\varphi] \widehat{\otimes} C)^{\theta_{\mathrm{Sen}} = 0}.$ By our assumption, we have $(s_{\varphi}^{\vee}(-d-\lambda_{0}) \otimes_L C)^{\theta_{\mathrm{Sen}} = 0} \neq 0$. Thus $H^d(\Fl, D_{\lambda^{\Psi}, K^p}^{\Psi-\mathrm{la}, (0, k)_{\tau}})[\varphi] \neq 0$. By Corollary \ref{deRhamness}, $H^d(\Fl, N_{\lambda, C}^0)_{\mathfrak{m}}$ is zero on $H^d(\Fl, D_{\lambda^{\Psi}, K^p}^{\Psi-\mathrm{la}, (0, k)_{\tau}})[\varphi]$ since $s_{\varphi}$ is de Rham.  Consequently, we obtain $0 \neq H^d(\Fl, D_{\lambda^{\Psi}, K^p}^{\Psi-\mathrm{la}, (0, k)_{\tau}})[\varphi] \subset \mathrm{Ker}H^d(\Fl, N_{\lambda, C}^0)_{\mathfrak{m}}[\varphi]$. By Theorem \ref{Fontaine}, we have $\mathrm{Ker} H^d(\Fl, N_{\lambda, C}^0)_{\mathfrak{m}} = \mathrm{Ker}H^d(\Fl, N)_{\mathfrak{m}}$. This implies the result. \end{proof}



We will prove Theorem \ref{Fontaine} by using a similar method as \cite[{\S} 6]{PanII}. We may assume $U \subset U_1$. We have the following variant of the key diagram of \cite[{\S} 6.3.11]{PanII}, whose second and third rows and the left column are exact.

\tiny

\xymatrix{
 0 & \\
D_{{\lambda^{\Psi}}, K^p}^{\Psi-\mathrm{la}, (0, k)_{\tau}, G_L} \ar[u] \\
 E_0((V_{\lambda^{\Psi}} \otimes_L \mathbb{B}^+_{\mathrm{dR}, k + 2})^{\Psi-\mathrm{la}, \chi_{\lambda_{\Psi}}, L}) \ar[u] \ar@{^{(}-_>}[r] & \ar[lu] D_{\lambda^{\Psi}, V_0} \otimes_{\mathcal{O}_{V_0}} E_0( \mathcal{O}\mathbb{B}^{+, \Psi-\mathrm{la}, \chi_{\lambda_{\Psi}}, L}_{\mathrm{dR}, k + 2} ) \ar[r]^{\nabla} & D_{\lambda^{\Psi}, V_0} \otimes_{\mathcal{O}_{V_0}} E_0(\mathcal{O}\mathbb{B}^{+, \Psi-\mathrm{la}, \chi_{\lambda_{\Psi}}, L}_{\mathrm{dR}, k + 1}) \otimes_{\mathcal{O}_{V_0}} \Omega_{V_0}^1 \\
 \oplus_{\tau} D_{{\lambda^{\Psi}}, K^p}^{\Psi-\mathrm{la}, (0, k)_{\sigma \neq \tau}, (1+k, -1)_{\tau}}(k+1)^{G_L} \ar@{^{(}-_>}[r] \ar[u] & D_{k^{\Psi}, V_0} \otimes_{\mathcal{O}_{V_0}} E_0(\mathrm{gr}^{k + 1}\mathcal{O}\mathbb{B}^{+, \Psi-\mathrm{la}, \chi_{\lambda_{\Psi}}, L}_{\mathrm{dR}}) \ar[r]^{\nabla} \ar@{^{(}-_>}[u] & D_{\lambda^{\Psi}, V_0} \otimes_{\mathcal{O}_{V_0}} E_0(\mathrm{gr}^{k}\mathcal{O}\mathbb{B}^{+, \Psi-\mathrm{la}, \chi_{\lambda_{\Psi}}, L}_{\mathrm{dR}}) \otimes_{\mathcal{O}_{V_0}} \Omega_{V_0}^1 \ar@{^{(}-_>}[u] \\
 0 \ar[u]
 }

 \normalsize

\begin{lem}\label{application of formal series}

1 \ We have $E_0(t \mathcal{O}\mathbb{B}^{+, \Psi-\mathrm{la}, \chi_{\lambda_{\Psi}}, L}_{\mathrm{dR}, k + 1}) = 0$. 

2 \ $D_{\lambda^{\Psi}, V_0} \otimes_{\mathcal{O}_{V_0}} E_0(\mathcal{O}\mathbb{B}^{+, \Psi-\mathrm{la}, \chi_{\lambda_{\Psi}}, L}_{\mathrm{dR}, k + 1}) \otimes_{\mathcal{O}_{V_0}} \Omega_{V_0}^1 = \oplus_{i=0}^{k} D_{{\lambda^{\Psi}}, K^p}^{\Psi-\mathrm{la}, (0, k)_{\tau}, G_L} \otimes_{\mathcal{O}_{V_0}} \mathrm{Sym}^i\Omega_{V_0}^1 \otimes_{\mathcal{O}_{V_0}} \Omega_{V_0}^1$.

3 \ $D_{\lambda^{\Psi}, V_0} \otimes_{\mathcal{O}_{V_0}} E_0(\mathrm{gr}^{k}\mathcal{O}\mathbb{B}^{+, \Psi-\mathrm{la}, \chi_{\lambda_{\Psi}}, L}_{\mathrm{dR}}) \otimes_{\mathcal{O}_{V_0}} \Omega_{V_0}^1 = D_{{\lambda^{\Psi}}, K^p}^{\Psi-\mathrm{la}, (0, k)_{\tau}, G_L} \otimes_{\mathcal{O}_{V_0}} \mathrm{Sym}^{k}\Omega_{V_0}^1 \otimes_{\mathcal{O}_{V_0}} \Omega_{V_0}^1$.

4 \ The images of $D_{\lambda^{\Psi}, V_0} \otimes_{\mathcal{O}_{V_0}} E_0( \mathcal{O}\mathbb{B}^{+, \Psi-\mathrm{la}, \chi_{\lambda_{\Psi}}, L}_{\mathrm{dR}, k + 2} )$ and $D_{\lambda^{\Psi}, V_0} \otimes_{\mathcal{O}_{V_0}} E_0(\mathrm{gr}^{k + 1}\mathcal{O}\mathbb{B}^{+, \Psi-\mathrm{la}, \chi_{\lambda_{\Psi}}, L}_{\mathrm{dR}})$ via $\nabla$ are $\oplus_{i=1}^{k+1} D_{{\lambda^{\Psi}}, K^p}^{\Psi-\mathrm{la}, (0, k)_{\tau}, G_L} \otimes_{\mathcal{O}_{V_0}} \mathrm{Sym}^i\Omega_{V_0}^1$ and $D_{{\lambda^{\Psi}}, K^p}^{\Psi-\mathrm{la}, (0, k)_{\tau}, G_L} \otimes_{\mathcal{O}_{V_0}} \mathrm{Sym}^{k + 1}\Omega_{V_0}^1$ respectively. Note that these are regarded as subspaces of $\oplus_{i=0}^{k} D_{{\lambda^{\Psi}}, K^p}^{\Psi-\mathrm{la}, (0, k)_{\tau}, G_L} \otimes_{\mathcal{O}_{V_0}} \mathrm{Sym}^i\Omega_{V_0}^1 \otimes_{\mathcal{O}_{V_0}} \Omega_{V_0}^1$ and $D_{{\lambda^{\Psi}}, K^p}^{\Psi-\mathrm{la}, (0, k)_{\tau}, G_L} \otimes_{\mathcal{O}_{V_0}} \mathrm{Sym}^{k}\Omega_{V_0}^1 \otimes_{\mathcal{O}_{V_0}} \Omega_{V_0}^1$ as in Lemma \ref{formal power}.

\end{lem}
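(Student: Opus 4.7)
I will deduce all four statements from a careful analysis of the Sen operator on the graded pieces of $V_{\lambda^{\Psi}}(-\lambda_{0}) \otimes_{L} \mathcal{O}\mathbb{B}^{+}_{\mathrm{dR}, k+2}$, crucially exploiting the parallel weight hypothesis $\lambda_{\tau} = k$ for all $\tau \in \Psi$. The central tool is Lemma \ref{formal power}, which provides the filtered $\widehat{\mathcal{O}}_{X}$-linear decomposition
$$\mathcal{O}\mathbb{B}^{+}_{\mathrm{dR}, m} \otimes_{\mathbb{B}^{+}_{\mathrm{dR}, m}} \widehat{\mathcal{O}}_{X} \cong \bigoplus_{i=0}^{m-1} \widehat{\mathcal{O}}_{X} \otimes_{\mathcal{O}_{X}} \mathrm{Sym}^{i}\Omega^{1}_{X}$$
together with the explicit formula for $\nabla$ on graded pieces; combining this with the Kodaira-Spencer isomorphism (Proposition \ref{KS}) decomposes each $\mathrm{Sym}^{i}\Omega^{1}_{\aS_{K^{p}}}$ further by multi-indices $(j_{\tau})_{\tau \in \Psi}$ with $\sum_{\tau} j_{\tau} = i$.

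My first step is to prove part (3). The Faltings extension equips $\mathrm{gr}^{k}\mathcal{O}\mathbb{B}^{+}_{\mathrm{dR}} \cong \mathrm{Sym}^{k}\mathrm{gr}^{1}\mathcal{O}\mathbb{B}^{+}_{\mathrm{dR}}$ with a filtration whose graded pieces are $\widehat{\mathcal{O}}_{X} \otimes_{\mathcal{O}_{X}} \mathrm{Sym}^{j}\Omega^{1}_{X}(k-j)$ for $0 \le j \le k$. After tensoring with $D_{\lambda^{\Psi}, V_{0}}$, passing to $\Psi$-locally analytic vectors, restricting to the $\chi_{\lambda_{\Psi}}$-isotypic part, and translating horizontal weights through Lemma \ref{horizontal weight}, I compute the Sen operator on each summand using Proposition \ref{Sen operator} and Lemma \ref{infinitesimal character and horizontal}. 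The $\chi_{\lambda_{\Psi}}$-constraint restricts the allowed horizontal weights to the Weyl orbit $\{(0, k)_{\tau \in I}, (k+1, -1)_{\tau \notin I}\}_{I \subset \Psi}$, and the Sen eigenvalue becomes an explicit linear combination of the horizontal weight components and the Tate twist $k - j$. A direct case analysis under the parallel weight hypothesis then shows that exactly one summand has generalized Sen eigenvalue zero: the one with $j_{\tau} = 1$ for all $\tau$ (so $j = k$) together with horizontal weight $(0, k)_{\tau}$, producing precisely $D^{\Psi-\mathrm{la}, (0, k)_{\tau}, G_{L}}_{\lambda^{\Psi}, K^{p}} \otimes_{\mathcal{O}_{V_{0}}} \mathrm{Sym}^{k}\Omega^{1}_{V_{0}}$ before the final $\otimes_{\mathcal{O}_{V_{0}}} \Omega^{1}_{V_{0}}$.

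For part (2), I proceed by induction on the filtration length via the short exact sequences $0 \to \mathrm{gr}^{i}\mathcal{O}\mathbb{B}^{+}_{\mathrm{dR}} \to \mathcal{O}\mathbb{B}^{+}_{\mathrm{dR}, i+1} \to \mathcal{O}\mathbb{B}^{+}_{\mathrm{dR}, i} \to 0$. The vanishing results of Theorem \ref{geometric sen theory} and Lemma \ref{derived calculationIII} keep the sequence exact after applying $D_{\lambda^{\Psi}, V_{0}} \otimes$, then $\Psi$-la, $\chi_{\lambda_{\Psi}}$-isotypic, and $E_{0}$, so each graded step contributes exactly one new summand according to part (3). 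Part (1) is then immediate: $t \mathcal{O}\mathbb{B}^{+}_{\mathrm{dR}, k+1} \subset \mathrm{Fil}^{1}$, so its graded pieces start at $i = 1$, and the Sen eigenvalue equation from part (3) admits no solution with positive Tate twist compatible with the $\chi_{\lambda_{\Psi}}$-constraint in the parallel weight case. Finally, part (4) follows by direct computation from Lemma \ref{formal power}(2): $\nabla$ sends the summand $\widehat{\mathcal{O}}_{X} \otimes \mathrm{Sym}^{i}\Omega^{1}_{X}$ surjectively onto $\widehat{\mathcal{O}}_{X} \otimes \mathrm{Sym}^{i-1}\Omega^{1}_{X} \otimes \Omega^{1}_{X}$ for $i \ge 1$ and vanishes for $i = 0$, which combined with parts (2) and (3) pins down the images as claimed.

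The main obstacle is the combinatorial bookkeeping in part (3), specifically verifying that the Weyl conjugates $(k+1, -1)_{\tau}$ cannot accidentally balance Tate twist contributions for $j < k$. This uniform cancellation is precisely what requires the parallel weight hypothesis, and is expected to fail in the non-parallel case, which is why a different argument (restricted to $d \le 2$) is needed in the subsequent subsection.
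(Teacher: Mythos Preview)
Your approach is correct and coincides with what the paper does: the paper's proof is the single line ``1 is clear. 2, 3 and 4 follow from Lemma~\ref{formal power}'', and your proposal simply unpacks this by computing Sen eigenvalues on the graded pieces supplied by Lemma~\ref{formal power} under the $\chi_{\lambda_{\Psi}}$-constraint.

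One point in your exposition of part~(3) is confused and should be corrected. The Kodaira--Spencer multi-index decomposition of $\mathrm{Sym}^{j}\Omega^{1}_{V_{0}}$ into pieces indexed by $(j_{\tau})_{\tau\in\Psi}$ with $\sum_{\tau} j_{\tau}=j$ is irrelevant to the Sen analysis: each such piece is a coherent sheaf from finite level carrying no Tate twist, so $\theta_{\mathrm{Sen}}$ does not distinguish them. Consequently your assertion ``the one with $j_{\tau}=1$ for all $\tau$ (so $j=k$)'' is wrong as stated --- there is no reason $|\Psi|=k$, and even if there were, the $E_{0}$-part is the \emph{entire} $\mathrm{Sym}^{k}\Omega^{1}_{V_{0}}$, not a single multi-index summand. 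The correct computation is simpler than you suggest: on the piece $D_{\lambda^{\Psi},K^{p}}^{\Psi\text{-}\mathrm{la},(a_{\tau},b_{\tau})}\otimes\mathrm{Sym}^{s}\Omega^{1}_{V_{0}}(i-s)$ inside $\mathrm{gr}^{i}$, Proposition~\ref{Sen operator} gives $\theta_{\mathrm{Sen}}$-eigenvalue $-\sum_{\tau}a_{\tau}+(i-s)$; the $\chi_{\lambda_{\Psi}}$-constraint forces each $a_{\tau}\in\{0,k+1\}$, so $\sum_{\tau}a_{\tau}=(k+1)|I^{c}|$ for some $I\subset\Psi$, and since $0\le i-s\le k<k+1$ the equation $(k+1)|I^{c}|=i-s$ forces $I^{c}=\emptyset$ and $i=s$. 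This immediately yields parts~(1)--(3), and part~(4) then follows from the explicit $\nabla$-formula in Lemma~\ref{formal power}(2) as you say. Your final conclusions are correct; only the multi-index digression and the phrase ``$j_{\tau}=1$ for all $\tau$'' should be removed.
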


\begin{proof} 1 is clear. 2, 3 and 4 follow from Lemma \ref{formal power}.  \end{proof}

By Lemma \ref{application of formal series}, we obtain the following diagram.

\tiny

\xymatrix{
 0 & \\
D_{{\lambda^{\Psi}}, K^p}^{\Psi-\mathrm{la}, (0, k)_{\tau}, G_L} \ar[u] \\
 E_0((V_{\lambda^{\Psi}} \otimes_L \mathbb{B}^+_{\mathrm{dR}, k + 2})^{\Psi-\mathrm{la}, \chi_{\lambda_{\Psi}}, L}) \ar[u] \ar@{^{(}-_>}[r] & \ar[lu] D_{\lambda^{\Psi}, V_0} \otimes_{\mathcal{O}_{V_0}} E_0( \mathcal{O}\mathbb{B}^{+, \Psi-\mathrm{la}, \chi_{\lambda_{\Psi}}, L}_{\mathrm{dR}, k + 2} ) \ar[r]^{\nabla} & \oplus_{i=0}^{k} D_{{\lambda^{\Psi}}, K^p}^{\Psi-\mathrm{la}, (0, k)_{\tau}, G_L} \otimes_{\mathcal{O}_{V_0}} \mathrm{Sym}^i\Omega_{V_0}^1 \otimes_{\mathcal{O}_{V_0}} \Omega_{V_0}^1 \\
 \oplus_{\tau} D_{{\lambda^{\Psi}}, K^p}^{\Psi-\mathrm{la}, (0, k)_{\sigma \neq \tau}, (1+k, -1)_{\tau}}(k+1)^{G_L} \ar@{^{(}-_>}[r] \ar[u] & D_{\lambda^{\Psi}, V_0} \otimes_{\mathcal{O}_{V_0}} E_0(\mathrm{gr}^{k + 1}\mathcal{O}\mathbb{B}^{+, \Psi-\mathrm{la}, \chi_{\lambda_{\Psi}}, L}_{\mathrm{dR}}) \ar[r]^{\nabla} \ar@{^{(}-_>}[u] & D_{{\lambda^{\Psi}}, K^p}^{\Psi-\mathrm{la}, (0, k)_{\tau}, G_L} \otimes_{\mathcal{O}_{V_0}} \mathrm{Sym}^{k}\Omega_{V_0}^1 \otimes_{\mathcal{O}_{V_0}} \Omega_{V_0}^1 \ar@{^{(}-_>}[u] \\
 0 \ar[u]
 }

 \normalsize

Since  $\oplus_{\tau \in \Psi} (D_{{\lambda^{\Psi}}, K^p}^{\Psi-\mathrm{la}, (0, k)_{\sigma}, G_L} \otimes_{\mathcal{O}_{V_0}} \omega^{2k + 2}_{\tau, V_0} \otimes (\wedge^2D_{\tau, V_0})^{k + 1}) \subset \nabla(D_{\lambda^{\Psi}, V_0} \otimes_{\mathcal{O}_{V_0}} E_0(\mathrm{gr}^{k + 1}\mathcal{O}\mathbb{B}^{+, \Psi-\mathrm{la}, \chi_{\lambda_{\Psi}}, L}_{\mathrm{dR}}))$  by Lemma \ref{application of formal series}, the Sen operator $\theta_{\mathrm{Sen}}$ on $D_{\lambda^{\Psi}, V_0} \otimes_{\mathcal{O}_{V_0}} E_0(\mathrm{gr}^{k + 1}\mathcal{O}\mathbb{B}^{+, \Psi-\mathrm{la}, \chi_{\lambda_{\Psi}}, L}_{\mathrm{dR}})$ induces a map $$N'_{\lambda} : \oplus_{\tau \in \Psi} (D_{{\lambda^{\Psi}}, K^p}^{\Psi-\mathrm{la}, (0, k)_{\sigma}, G_L} \otimes_{\mathcal{O}_{V_0}} \omega^{2k + 2}_{\tau, V_0} \otimes (\wedge^2D_{\tau, V_0})^{k + 1}) \rightarrow \oplus_{\tau} D_{{\lambda^{\Psi}}, K^p}^{\Psi-\mathrm{la}, (0, k)_{\sigma \neq \tau}, (1+k, -1)_{\tau}}(k+1)^{G_L}$$ as in the case $N_{\lambda}^0$. Concretely, $N_{\lambda}'(x) = \theta_{\mathrm{Sen}}(y)$ for a lift $y$ of $x$ in $D_{\lambda^{\Psi}, V_0} \otimes_{\mathcal{O}_{V_0}} E_0(\mathrm{gr}^{k + 1}\mathcal{O}\mathbb{B}^{+, \Psi-\mathrm{la}, \chi_{\lambda_{\Psi}}, L}_{\mathrm{dR}})$.

Later, we will construct a map $s_{k} : D_{{\lambda^{\Psi}}, K^p}^{\Psi-\mathrm{la}, (0, k)_{\tau}, G_L} \rightarrow D_{\lambda^{\Psi}, V_0} \otimes_{\mathcal{O}_{V_0}} \mathcal{O}\mathbb{B}^{+, \Psi-\mathrm{la}, \chi_{\lambda_{\Psi}}, G_L}_{\mathrm{dR}, k + 2}$ which is a section of the canonical surjection $D_{\lambda^{\Psi}, V_0} \otimes_{\mathcal{O}_{V_0}} E_0( \mathcal{O}\mathbb{B}^{+, \Psi-\mathrm{la}, \chi_{\lambda_{\Psi}}, L}_{\mathrm{dR}, k + 2} ) \twoheadrightarrow D_{{\lambda^{\Psi}}, K^p}^{\Psi-\mathrm{la}, (0, k)_{\tau}, G_L}$.

\vspace{0.5 \baselineskip}

In order to prove Theorem \ref{Fontaine}, it suffices to prove the following results.

\begin{prop} \label{1st}
    
$\nabla \circ s_{k} = (-1)^k\oplus_{\tau \in \Psi} d_{\tau}^{\lambda}$.
    
\end{prop}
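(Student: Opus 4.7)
The plan is to prove Proposition \ref{1st} by an explicit local computation on $U \in \tilde{\mathcal{B}}$ with $U \subset U_1$, following the strategy of \cite[{\S}6.3]{PanII} adapted to the multi-variable parallel-weight setting. The key point is that once we write down $s_k$ concretely using the local coordinates, $\nabla \circ s_k$ will be forced by Lemma \ref{formal power} to land in the bottom graded piece $D_{\lambda^\Psi, V_0} \otimes_{\mathcal{O}_{V_0}} \mathrm{Sym}^{k+1}\Omega^1_{V_0}$, and the Kodaira--Spencer isomorphism $\Omega^1_{V_0} \cong \oplus_{\tau \in \Psi}(\omega_{\tau, V_0}^2 \otimes \wedge^2 D_{\tau, V_0})$ will translate this into a sum of $\tau$-components matching $d_\tau^\lambda$ as constructed via Lemma \ref{constdeRham2}.

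First I would construct $s_k$ explicitly. Locally on $V_\infty = \pi_{\mathrm{HT}}^{-1}(U)$, Lemma \ref{formal power} gives a canonical decomposition $\mathcal{O}\mathbb{B}^+_{\mathrm{dR}, k+2} \otimes_{\mathbb{B}^+_{\mathrm{dR}, k+2}} \widehat{\mathcal{O}} \cong \oplus_{i=0}^{k+1}\widehat{\mathcal{O}} \otimes_{\mathcal{O}_{V_0}} \mathrm{Sym}^i\Omega^1_{V_0}$. After tensoring with $D_{\lambda^\Psi, V_0}$ and taking the $(\Psi\text{-la}, \chi_{\lambda_\Psi}, G_L)$-part together with $E_0(\,\cdot\,)$, Lemma \ref{application of formal series} identifies all terms with direct sums of $D_{\lambda^\Psi, K^p}^{\Psi\text{-la},(0,k)_\tau, G_L} \otimes \mathrm{Sym}^i\Omega^1_{V_0}$. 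Then $s_k$ is defined as the canonical inclusion of the $i = 0$ summand, which is manifestly a section of the natural surjection to $D_{\lambda^\Psi, K^p}^{\Psi\text{-la},(0,k)_\tau, G_L}$. That this section lifts to $\mathcal{O}\mathbb{B}^+_{\mathrm{dR}, k+2}$ itself (not merely to the $\widehat{\mathcal{O}}$-quotient) requires choosing, for each $\tau$, a lift $\tilde{x}_\tau \in \mathcal{O}\mathbb{B}^{+,\Psi\text{-la},\chi_{\lambda_\Psi}, G_L}_{\mathrm{dR}, k+2}$ of $x_\tau$ modulo higher filtration; using the explicit expansion of Corollary \ref{citation} together with $\nabla(Y_i) = dX_i$ in the Faltings description, the liftings $\tilde x_\tau - x_{\tau,n}$ exist after shrinking $K_p$.

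Next I would compute $\nabla \circ s_k$. Because $s_k$ places elements in degree zero of the Hodge-filtration grading from Lemma \ref{formal power}, and because $\nabla$ is given on graded pieces by the shift $f \otimes x_1\cdots x_i \mapsto \sum_j f \otimes x_1 \cdots \breve{x}_j \cdots x_i \otimes x_j$, applying $\nabla$ once lands us (up to $\mathrm{Fil}^{k+1}$) in the piece $D_{\lambda^\Psi}^{\Psi\text{-la},(0,k)_\tau, G_L}\otimes \Omega^1_{V_0}$, where the output is simply the usual derivative of the locally analytic function along $V_0$. Under the Kodaira--Spencer isomorphism this splits as a direct sum of $d_\tau$'s along the $\tau$-components, and by construction of $d_\tau^\lambda$ (the $\tau$-component of the first map of $GDR^{\Psi\text{-la}}_\lambda$, which is itself the Taylor-like derivation coming from Lemma \ref{locally analytic extension of derivation} followed by the projection in Lemma \ref{constdeRham2}), this sum equals $\oplus_\tau d_\tau^\lambda$ up to a universal sign.

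The sign $(-1)^k$ will come from tracking how $\nabla$ interacts with the identification of graded pieces: the convention in $GDR^{\Psi\text{-la}}_\lambda$ sends $f \otimes v \otimes w \mapsto f \otimes v \otimes dw + (-1)^n(d^{\Psi\text{-la}}f \otimes v + f \otimes \nabla^{\mathrm{sm}}v)\wedge w$, and the $k$-fold Leibniz rule on $t^{k+1}\cdot(\text{top-degree element})$ against the graded structure of $\mathcal{O}\mathbb{B}^+_{\mathrm{dR}}$ produces exactly this sign, parallel to \cite[Lemma 6.3.13]{PanII}. The main obstacle I foresee is the bookkeeping in this sign computation and verifying that the choice of lift $\tilde x_\tau$ does not affect the top-degree output of $\nabla \circ s_k$ --- independence from choices should follow because any two lifts differ by an element of $\mathrm{Fil}^1\mathcal{O}\mathbb{B}^+$, whose image under a single application of $\nabla$ still lives in higher filtration once we project to $\mathrm{Sym}^{k+1}\Omega^1_{V_0}$. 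After checking these, the global statement on $\Fl$ follows by $G(\mathbb{Q}_p)$-equivariance since $\tilde{\mathcal{B}}$ covers $\Fl$ by Lemma \ref{open unit ball}.
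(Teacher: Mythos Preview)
Your proposal has a genuine gap: the construction and properties of the lift $\tilde{x}_\tau$. You describe the existence of a lift $\tilde{x}_\tau \in \mathcal{O}\mathbb{B}^{+}_{\mathrm{dR},k+2}$ of $x_\tau$ as a soft matter (``the liftings $\tilde{x}_\tau - x_{\tau,n}$ exist after shrinking $K_p$''), and later argue that the output is independent of this choice because two lifts differ by something in $\mathrm{Fil}^1$. But this is precisely where the argument would fail. For a generic lift of $x_\tau$ (for instance the Teichm\"uller-type lift coming from the local description $X_i \mapsto [X_i^\flat]$ of $\mathcal{O}\mathbb{B}^+_{\mathrm{dR}}$), one has $\nabla(\tilde{x}_\tau) \neq 0$ in $\widehat{\mathcal{O}} \otimes \Omega^1_{V_0}$, and the correction terms $\sum_i a_i \cdot \partial_\tau(\prod_\sigma(\tilde{x}_\sigma - x_{\sigma,n})^{i_\sigma})\,\nabla(\tilde{x}_\tau)$ do not vanish in the target. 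Your ``independence of lift'' argument does not save this: the difference of two lifts lies in $\mathrm{Fil}^1\mathcal{O}\mathbb{B}^+_{\mathrm{dR}}$, and $\nabla$ lowers filtration by one, so $\nabla$ of that difference lands in $\mathrm{Fil}^0 \otimes \Omega^1$, which is the entire target---nothing is killed.

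The paper's argument hinges on choosing a \emph{specific} lift $\tilde{x}_\tau$, defined as the ratio of two periods $l_0(f_\tau \otimes e_{2,\tau})/l_0(f_\tau \otimes e_{1,\tau})$ coming from the Hodge--de Rham comparison $D_{\tau,V_0}^\vee \otimes V_\tau \to \mathcal{O}\mathbb{B}^+_{\mathrm{dR}}$ (Proposition~\ref{Hodge de Rham}). For this particular lift one has the $p$-adic Legendre relation $\nabla(\tilde{x}_\tau) \in t\,\mathcal{O}\mathbb{B}^+_{\mathrm{dR}} \otimes \Omega^1_{V_0}$ (Proposition~\ref{p-adic Legendre}), which is exactly the cancellation you need. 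This identity is not formal; it comes from the fact that $\wedge^2 V_\tau \subset t \cdot \wedge^2 D_{\tau,V_0} \otimes \mathcal{O}\mathbb{B}^+_{\mathrm{dR}}$, i.e.\ the determinant of the period matrix lies in $t\cdot$. For $k>0$ the paper then builds $s_k$ as $l_k' \circ \psi_2 \circ \psi_1$, where $\psi_1,\psi_2$ are the BGG-type sections of \S4.1, and uses $\nabla(\tilde{x}_\tau) \in t\cdot$ to push the commutative diagram down to $\mathcal{O}\mathbb{B}^+_{\mathrm{dR}}/t$. The sign $(-1)^k$ arises from Lemma~\ref{equality}: the composite right vertical map is $(-\mathrm{KS})^k$ composed with one more $\mathrm{KS}$, and the minus sign is inherited from the Faltings-extension/Hodge--Tate-filtration comparison of Proposition~\ref{FHT}. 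Your Leibniz-rule explanation for the sign does not capture this.
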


\begin{prop} \label{2nd}
    
There exists $(c_{\tau})_{\tau} \in (L^{\times})^{\Psi}$ such that $N'_{\lambda} = \oplus_{\tau \in \Psi} c_\tau \overline{d}_{\tau}$.
    
\end{prop}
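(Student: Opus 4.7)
The plan is to reduce to a local calculation on an affinoid $V_0 \subset \aS_{K^pK_p, L}$ whose image in $\Fl$ lies inside $U \subset U_1$, and use the explicit description of $\mathcal{O}\mathbb{B}^+_{\mathrm{dR}}$ given by Lemma \ref{formal power}. Under the Kodaira-Spencer isomorphism (Proposition \ref{KS}), $\Omega^1_{V_0} \cong \oplus_\sigma \omega^2_{\sigma, V_0} \otimes \wedge^2 D_{\sigma, V_0}$, so $\mathrm{Sym}^{k+1}\Omega^1_{V_0}$ decomposes into pieces indexed by multi-indices $(k_\sigma)$ with $\sum_\sigma k_\sigma = k+1$, and the source of $N'_\lambda$ picks out the ``pure'' summands where $k_\tau = k+1$. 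By Lemma \ref{formal power}, $E_0(\mathrm{gr}^{k+1}\mathcal{O}\mathbb{B}^+_{\mathrm{dR}})$ has a filtration with graded pieces of the form $\widehat{\mathcal{O}} \otimes \mathrm{Sym}^i\Omega^1 \otimes \widehat{\mathcal{O}}(k+1-i)$, on each of which the Sen operator acts by the scalar eigenvalue coming from the Tate twist, while jumping between adjacent pieces via the extension class of the Faltings sequence.

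The first step is to show that $N'_\lambda$ is block-diagonal with respect to the $\Psi$-indexing: for $\sigma \neq \tau$, the $(\sigma,\tau)$-component vanishes. This follows because a lift $\tilde{x}$ of an element $x \in \omega_\tau^{2k+2} \otimes (\wedge^2 D_\tau)^{k+1}$ to $E_0(\mathrm{gr}^{k+1}\mathcal{O}\mathbb{B}^+_{\mathrm{dR}})$ can be chosen to involve only the formal variable corresponding to $x_\tau$ (the coordinate on $U \subset U_1$ in the $\tau$-direction). Proposition \ref{FHT} identifies the Faltings extension with the Hodge-Tate extension twisted by (minus) Kodaira-Spencer, and this identification respects the decomposition along $\Psi$. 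Consequently, iterated application of $\theta_{\mathrm{Sen}}$ keeps the result supported in the $\tau$-sector, so the off-diagonal components are zero.

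The main step computes the on-diagonal component. Working in the local description $Y_\tau = X_\tau - [X_\tau^\flat]$ whose image in $\mathrm{gr}^1\mathcal{O}\mathbb{B}^+_{\mathrm{dR}}/\widehat{\mathcal{O}}(1) \cong \widehat{\mathcal{O}} \otimes \Omega^1$ is $dx_\tau$, we lift $dx_\tau^{k+1}$ to $Y_\tau^{k+1}$ and iterate the Sen operator. The class $[\theta_{\mathrm{Sen}}(Y_\tau)] \in \widehat{\mathcal{O}}(1)$ is a fixed nonzero generator (it is precisely the class of the Faltings extension applied to $dx_\tau$, which under Proposition \ref{FHT} corresponds up to sign to the Kodaira-Spencer image), and combinatorially one obtains $\theta_{\mathrm{Sen}}^{k+1}(Y_\tau^{k+1}) = (k+1)! \cdot [\theta_{\mathrm{Sen}}(Y_\tau)]^{k+1}$ in $\widehat{\mathcal{O}}(k+1)$. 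Translating back through the Hodge-Tate filtration and the identification $D^{\Psi-\mathrm{la}, (0,k)_{\sigma \neq \tau}, (1+k,-1)_\tau}(k+1)^{G_L} \cong \mathcal{O}_{K^p}^{\Psi-\mathrm{la}, (0, 0)_\sigma, G_L} \otimes \omega_\tau^{-2k-2}(k+1) \otimes (\wedge^2 D_\tau)^{-k-1}$ from Lemma \ref{horizontal weight}, this agrees exactly with the $(k+1)$-fold derivative $\partial^{k+1}/\partial x_\tau^{k+1}$ appearing in the explicit description of $\overline{d}_\tau$ in Lemma \ref{anti-holomorphic}, up to a nonzero scalar $c_\tau$ depending only on the Faltings generator and the factorial $(k+1)!$.

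The main obstacle will be keeping track of conventions (Sen eigenvalues, Tate twists, the sign in Proposition \ref{FHT}, and the choice of generator for the Faltings extension), and in particular verifying that the scalar $c_\tau$ lies in $L^\times$ rather than merely $C^\times$. This follows from the fact that the entire construction of $N'_\lambda$ and $\overline{d}_\tau$ is $G_L$-equivariant and that the Faltings extension generator $[\theta_{\mathrm{Sen}}(Y_\tau)]$ descends to $L$ after the cyclotomic identification, so $c_\tau$ is a fixed element of $L$ and is nonzero because the Faltings extension is nonsplit (equivalently, Kodaira-Spencer is an isomorphism in Proposition \ref{KS}). Combined with Proposition \ref{1st}, this yields Theorem \ref{Fontaine}.
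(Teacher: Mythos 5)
Your proposal takes a genuinely different and more computational route than the paper, and there is a structural gap in the main step that prevents it from going through as written.

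The central problem is with your computation of the diagonal entries via ``$\theta_{\mathrm{Sen}}^{k+1}(Y_\tau^{k+1}) = (k+1)!\,[\theta_{\mathrm{Sen}}(Y_\tau)]^{k+1}$''. By definition, $N'_\lambda(x) = \theta_{\mathrm{Sen}}(y)$ for a \emph{single} application of the Sen operator to a lift $y$ of $x$ inside $D_{\lambda^{\Psi}, V_0} \otimes_{\mathcal{O}_{V_0}} E_0(\mathrm{gr}^{k+1}\mathcal{O}\mathbb{B}^{+,\Psi\text{-}\mathrm{la},\chi_{\lambda_\Psi},L}_{\mathrm{dR}})$, not a $(k+1)$-fold iteration. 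The element $Y_\tau^{k+1}$ is neither in the $L$-structure nor $\theta_{\mathrm{Sen}}$-nilpotent, so it is not an admissible lift. What actually happens, and what makes this computation subtle, is that the Faltings filtration on $\mathrm{gr}^{k+1}\mathcal{O}\mathbb{B}^+_{\mathrm{dR}}$ has $k+2$ graded pieces $\widehat{\mathcal{O}}(i)\otimes \mathrm{Sym}^{k+1-i}\Omega^1$, but after imposing $\chi_{\lambda_\Psi}$-isotypy and passing to $E_0$ (Sen eigenvalue zero), \emph{only} $i = 0$ and $i = k+1$ survive: the Hodge--Tate decomposition of $\mathcal{O}_{K^p}^{\Psi\text{-}\mathrm{la},\chi_{\lambda_\Psi}}$ forces the Sen eigenvalue $-(1+k)|\Psi\setminus I| + i$, which is zero only for $i \in \{0, k+1\}$. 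Thus $E_0(\mathrm{gr}^{k+1})^{\chi_{\lambda_\Psi}}$ is a genuinely two-step extension, and a single application of $\theta_{\mathrm{Sen}}$ jumps directly from Tate twist $0$ to Tate twist $k+1$. To realize this jump by an explicit formal-variable computation you would need to correct $Y_\tau^{k+1}$ by terms from the killed intermediate pieces so as to land inside $E_0$ of the $L$-structure, and then track what $\theta_{\mathrm{Sen}}$ does to the correction — which is exactly the explicit computation the paper's proof is designed to circumvent. Your proposal leaves this unaddressed, and the formula you write down is not even obviously a priori related to $N'_\lambda$.

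The paper sidesteps all of this. After first reducing to a fixed $\tau$ via a commutative diagram comparing the exact sequence built from $E_\lambda := E_0((\mathrm{Sym}^{k+1}V_\tau \otimes \omega_{\tau,K^p}^{k+1,\ldots})^{\chi_{\lambda,\tau},(0,k)_{\sigma\neq\tau},L})$ against the one with $E_\lambda' := E_0(\mathrm{gr}^{k+1}\mathcal{O}\mathbb{B}^{+,\ldots}_{\mathrm{dR}})$, which also establishes the block-diagonality you argue for differently, it then converts the Sen operator into the Casimir operator via Lemma \ref{relation} ($(2\theta_{\mathrm{Sen}} + z_\tau + 1)^2 - 1 = 2\Omega_\tau$, giving $N'_{\lambda,\tau} = \frac{1}{2(k+1)}N''_{\lambda,\tau}$). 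The identification $N''_{\lambda,\tau} = c_\tau\,\overline{d}^\lambda_\tau$ then follows from representation theory rather than explicit computation: both maps factor as $V \otimes_L V' \otimes_L(\text{a } \mathfrak{gl}_2(L)\text{-equivariant map } W_{(k,0)}^\vee \to W_{(-1,k+1)}^\vee)$, and the Verma-module structure (Humphreys's category $\mathcal{O}$) gives $\dim_L\mathrm{Hom}_{\mathfrak{gl}_2(L)}(W_{(k,0)}^\vee, W_{(-1,k+1)}^\vee) = 1$. Both maps are then checked to be nonzero, which simultaneously delivers $c_\tau \in L^\times$ — the point you gesture at but do not pin down. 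If you want to pursue the computational route, the minimum you would need is (i) an honest construction of the corrected lift $y \in E_0$, (ii) a proof that $\theta_{\mathrm{Sen}}(y)$ recovers the $(k+1)$-fold iteration you want up to a scalar, and (iii) the nonvanishing of the result; the paper's representation-theoretic uniqueness is in effect a way of knowing (ii) and (iii) are true without computing anything.
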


Actually, by using Proposition \ref{1st}, we have $\mathrm{Im}(\nabla \circ s_{k}) \subset \oplus_{\tau \in \Psi} (D_{{\lambda^{\Psi}}, K^p}^{\Psi-\mathrm{la}, (0, k)_{\tau}, G_L} \otimes_{\mathcal{O}_{V_0}} \omega^{2k + 2}_{\tau, V_0} \otimes (\wedge^2D_{\tau, V_0})^{k + 1})$. Thus Propositions \ref{1st} and \ref{2nd} implies Theorem \ref{Fontaine} by the following proposition.

\begin{prop}\label{composition}
    
Assume $\mathrm{Im}(\nabla \circ s_{k}) \subset \oplus_{\tau \in \Psi} (D_{{\lambda^{\Psi}}, K^p}^{\Psi-\mathrm{la}, (0, k)_{\tau}, G_L} \otimes_{\mathcal{O}_{V_0}} \omega^{2k + 2}_{\tau, V_0} \otimes (\wedge^2D_{\tau, V_0})^{k + 1})$. Then we have $N_{\lambda}^0 = - N'_{\lambda} \circ ( \nabla \circ s_{k})$.

\end{prop}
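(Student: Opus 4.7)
The plan is to explicitly construct a horizontal lift of $x \in D_{\lambda^{\Psi}, K^p}^{\Psi-\mathrm{la}, (0, k)_{\tau}, G_L}$ using the section $s_{k}$ and then compute the Fontaine operator directly. First I would form the (in general non-horizontal) lift $s_{k}(x) \in D_{\lambda^{\Psi}, V_0} \otimes E_0(\mathcal{O}\mathbb{B}^{+, \Psi-\mathrm{la}, \chi_{\lambda_{\Psi}}, L}_{\mathrm{dR}, k + 2})$. By the hypothesis of the proposition, $\nabla(s_{k}(x))$ lies in the subspace $\oplus_{\tau} (D_{\lambda^{\Psi}, K^p}^{\Psi-\mathrm{la}, (0, k)_{\sigma}, G_L} \otimes_{\mathcal{O}_{V_0}} \omega^{2k+2}_{\tau, V_0} \otimes (\wedge^2 D_{\tau, V_0})^{k+1})$, which via the Kodaira–Spencer isomorphism embeds into $D_{\lambda^{\Psi}, K^p}^{\Psi-\mathrm{la}, (0, k)_{\tau}, G_L} \otimes \mathrm{Sym}^{k+1}\Omega^{1}_{V_0}$; by Lemma \ref{application of formal series} the latter is precisely the image of $\nabla: D_{\lambda^{\Psi}, V_0} \otimes E_0(\mathrm{gr}^{k+1}\mathcal{O}\mathbb{B}^{+, \Psi-\mathrm{la}, \chi_{\lambda_{\Psi}}, L}_{\mathrm{dR}}) \to D_{\lambda^{\Psi}, K^p}^{\Psi-\mathrm{la}, (0, k)_{\tau}, G_L} \otimes \mathrm{Sym}^{k}\Omega^{1}_{V_0} \otimes \Omega^{1}_{V_0}$. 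Hence I may choose a preimage $y \in D_{\lambda^{\Psi}, V_0} \otimes E_0(\mathrm{gr}^{k+1}\mathcal{O}\mathbb{B}^{+, \Psi-\mathrm{la}, \chi_{\lambda_{\Psi}}, L}_{\mathrm{dR}})$ with $\nabla(y) = \nabla(s_{k}(x))$. Since $y$ sits in $\mathrm{Fil}^{k+1}$, it projects to $0$ in the quotient $D_{\lambda^{\Psi}, K^p}^{\Psi-\mathrm{la}, (0, k)_{\tau}, G_L}$, so $\tilde{x} := s_{k}(x) - y$ is a horizontal lift of $x$ in $E_0((V_{\lambda^{\Psi}} \otimes_L \mathbb{B}^{+}_{\mathrm{dR}, k+2})^{\Psi-\mathrm{la}, \chi_{\lambda_{\Psi}}, L})$.

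Next I would apply the Sen operator, using $C$-linearity to write
\[
N_\lambda^0(x) \ = \ \theta_{\mathrm{Sen}}(\tilde{x}) \ = \ \theta_{\mathrm{Sen}}(s_{k}(x)) - \theta_{\mathrm{Sen}}(y).
\]
By the very definition of $N'_{\lambda}$, the quantity $\theta_{\mathrm{Sen}}(y)$ equals $N'_{\lambda}(\nabla(s_{k}(x)))$ (any other $\nabla$-preimage differs from $y$ by an element of the kernel, and I must verify that $N'_{\lambda}$ is independent of this choice; this will follow from Step 3 below). The essential observation is that $\theta_{\mathrm{Sen}}(s_{k}(x)) = 0$: indeed $s_{k}$ is constructed to land in the $G_L$-invariant subspace $D_{\lambda^{\Psi}, V_0} \otimes \mathcal{O}\mathbb{B}^{+, \Psi-\mathrm{la}, \chi_{\lambda_{\Psi}}, G_L}_{\mathrm{dR}, k+2}$, on which $\Gamma_L$ acts trivially, so the action of $1 \in \mathrm{Lie}(\Gamma_L)$, namely $\theta_{\mathrm{Sen}}$, vanishes identically. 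Combining these gives $N_\lambda^0(x) = -N'_{\lambda}(\nabla(s_{k}(x)))$ as required.

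The main piece to verify is the well-definedness of $N'_{\lambda}$ and hence of the formula above. The ambiguity in the preimage $y$ lies in the kernel of $\nabla$ restricted to $D_{\lambda^{\Psi}, V_0} \otimes E_0(\mathrm{gr}^{k+1}\mathcal{O}\mathbb{B}^{+, \Psi-\mathrm{la}, \chi_{\lambda_{\Psi}}, L}_{\mathrm{dR}})$, which from the left column of the key diagram equals $\oplus_{\tau} D_{\lambda^{\Psi}, K^p}^{\Psi-\mathrm{la}, (0, k)_{\sigma \neq \tau}, (1+k, -1)_{\tau}}(k+1)^{G_L}$. Since this kernel consists of $G_L$-invariants, $\theta_{\mathrm{Sen}}$ vanishes on it by the same argument as above, so $N'_{\lambda}$ is indeed well-defined and the computation is independent of the choice of $y$. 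The only mildly delicate point, and what I expect to be the principal technical subtlety, is checking that the explicit section $s_{k}$ to be constructed in the subsequent discussion genuinely takes values in the $G_L$-invariant subspace — but this will be built into the construction of $s_{k}$ itself, so once that construction is carried out the present argument is a direct consequence.
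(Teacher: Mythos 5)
Your proof is correct and follows exactly the same route as the paper's: lift $x$ to $s_k(x)$, choose a $\nabla$-preimage $y$ of $\nabla(s_k(x))$, observe that $s_k(x)-y$ is the horizontal lift computing $N_\lambda^0(x)=\theta_{\mathrm{Sen}}(s_k(x)-y)$, and conclude from $\theta_{\mathrm{Sen}}(s_k(x))=0$ (by $G_L$-invariance) and $\theta_{\mathrm{Sen}}(y)=N'_\lambda(\nabla(s_k(x)))$. The extra well-definedness check is sound, though already implicit in the paper's phrase that $N'_\lambda$ is defined ``as in the case $N_\lambda^0$''.
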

    
\begin{proof} Let $\theta_{\mathrm{Sen}}$ denote the Sen operator. Let $x \in D_{{\lambda^{\Psi}}, K^p}^{\Psi-\mathrm{la}, (0, k)_{\tau}, G_L}$ and we take a lift $y \in D_{\lambda^{\Psi}, V_0} \otimes_{\mathcal{O}_{V_0}} E_0(\mathrm{gr}^{k + 1}\mathcal{O}\mathbb{B}^{+, \Psi-\mathrm{la}, \chi_{\lambda_{\Psi}}, L}_{\mathrm{dR}})$ of $\nabla(s_{k}(x))$. Then $s_{k}(x) - y$ is contained in $E_0((V_{\lambda^{\Psi}} \otimes_L \mathbb{B}^+_{\mathrm{dR}, k + 2})^{\Psi-\mathrm{la}, \chi_{\lambda_{\Psi}}, L})$. Thus $N_{\lambda}^0(x) = \theta_{\mathrm{Sen}}(s_{k}(x)-y)$ by the definition of $N_{\lambda}^0$. Since $s_{k}(x)$ is $G_L$-invariant, we have $\theta_{\mathrm{Sen}}(s_{k}(x)-y) = -\theta_{\mathrm{Sen}}(y)$. On the other hand, we have $\theta_{\mathrm{Sen}}(y) = N_{\lambda}'(\nabla(s_{k}(x)))$ by the definitions of $y$ and $N_{\lambda}'$. \end{proof}

\paragraph{Proof of Proposition \ref{1st}.}

\subparagraph{$k = 0$ case.}

By Proposition \ref{Hodge de Rham}, we have $V_{\tau} \otimes_L \mathcal{O}\mathbb{B}_{\mathrm{dR}}^+[\frac{1}{t}] \cong D_{\tau, V_0} \otimes_{\mathcal{O}_{V_0}} \mathcal{O}\mathbb{B}_{\mathrm{dR}}^+[\frac{1}{t}]$. More precisely, we have $V_{\tau}(1) \otimes_L \mathbb{B}_{\mathrm{dR}}^+ \subset (D_{\tau, V_0} \otimes_{\mathcal{O}_{V_0}} \mathcal{O}\mathbb{B}_{\mathrm{dR}}^+)^{\nabla = 0}(1) \subset V_{\tau} \otimes_L \mathbb{B}_{\mathrm{dR}}^+$, $(D_{\tau, V_0} \otimes_{\mathcal{O}_{V_0}} \mathcal{O}\mathbb{B}_{\mathrm{dR}}^+)^{\nabla = 0}(1) / (V_{\tau}(1) \otimes_L \mathbb{B}_{\mathrm{dR}}^+) = \omega_{\tau, K^p}^{-1}(1)$ and $V_{\tau} \otimes_L \mathbb{B}_{\mathrm{dR}}^+/(D_{\tau, V_0} \otimes_{\mathcal{O}_{V_0}} \mathcal{O}\mathbb{B}_{\mathrm{dR}}^+)^{\nabla = 0}(1) = \omega_{\tau, K^p} \otimes_{\mathcal{O}_{K^p}^{\mathrm{sm}}} \wedge^2 D_{\tau, K_p}^{\mathrm{sm}}$.

Thus we obtain a map $l_0 : D_{\tau, V_0}^{\vee} \otimes_L V_{\tau} \rightarrow \mathcal{O}\mathbb{B}_{\mathrm{dR}}^{+, \Psi-\mathrm{la}, G_L}$ compatible with filtrations and connections. Thus the composite of this map with the projection $\mathcal{O}\mathbb{B}_{\mathrm{dR}}^{+} \rightarrow \mathcal{O}_{K^p}$ factors through $D_{\tau, V_0}^{\vee} \otimes_L V_{\tau} \rightarrow \omega_{\tau, V_0}^{-1} \otimes_{\mathcal{O}_{V_0}} (\wedge^2 D_{\tau, V_0})^{-1} \otimes_L V_{\tau} \rightarrow \omega_{\tau, V_0}^{-1} \otimes_{\mathcal{O}_{V_0}} (\wedge^2 D_{\tau, V_0})^{-1} \otimes_{\mathcal{O}_{V_0}} \omega_{\tau, K^p} \otimes_{\mathcal{O}_{V_0}} (\wedge^2 D_{\tau, V_0}) = \mathcal{O}_{K^p}$, where $V_{\tau} \rightarrow \omega_{\tau, K^p} \otimes_{\mathcal{O}_{V_0}} (\wedge^2 D_{\tau, V_0})$ is the map coming from the Hodge-Tate filtration.

Let $f_{\tau} \in D_{\tau, V_0}^{\vee}$ be a lift of a generator of $\omega_{\tau, V_0}^{-1} \otimes_{\mathcal{O}_{V_0}} (\wedge^2 D_{\tau, V_0})^{-1}$. Then $l_0(f_{\tau} \otimes \begin{pmatrix}
    1 \\ 0 
   \end{pmatrix}_{\tau})$ is a unit element of $\mathcal{O}\mathbb{B}_{\mathrm{dR}}^{+}$ because $\mathcal{O}\mathbb{B}_{\mathrm{dR}}^+$ is complete with respect to $\mathrm{Ker}(\mathcal{O}\mathbb{B}_{\mathrm{dR}}^+ \twoheadrightarrow \mathcal{O}_{K^p})$-adic topology and the image of $l_0(f_{\tau} \otimes \begin{pmatrix}
    1 \\ 0 
   \end{pmatrix}_{\tau})$ via the map $\mathcal{O}\mathbb{B}_{\mathrm{dR}}^+ \twoheadrightarrow \mathcal{O}_{K^p}$ is invertible. Note that we now assume $U \subset U_1$. Let $\tilde{x}_{\tau} := \frac{l_0(f_{\tau} \otimes \begin{pmatrix}
    0 \\ 1 
   \end{pmatrix}_{\tau})}{l_0(f_{\tau} \otimes \begin{pmatrix}
    1 \\ 0 
   \end{pmatrix}_{\tau})} \in \mathcal{O}\mathbb{B}_{\mathrm{dR}}^{+}$, which is a lift of $x_{\tau}$. By using this, we define a section $s_{0} : D_{\lambda^{\Psi}, K^p}^{\Psi-\mathrm{la}, (0, 0), G_L} \rightarrow D_{\lambda^{\Psi}, V_0} \otimes_{\mathcal{O}_{V_0}} \mathcal{O}\mathbb{B}^{+, \Psi-\mathrm{la}, \chi_{0}, G_L}_{\mathrm{dR}, m}$ of the canonical quotient map for any $m > 0$ by the following way. By Corollary \ref{citation}, any element of $f \in \varinjlim_{L' \supset L : \mathrm{finite \ extension}}D_{\lambda^{\Psi}, K^p}^{\Psi-\mathrm{la}, (0, 0), G_{L'}}$ is equal to $\sum_i a_i \prod_{\tau \in \Psi}(x_{\tau}-x_{\tau, n})^{i_{\tau}}$ for some $a_i \in D_{\lambda^{\Psi}, \aS_{K^pG_m}}(V_{G_m})^{G_{L'}}$ and $x_{\tau, m} \in \mathcal{O}_{\aS_{K^pG_m}}(V_{G_m})^{G_{L'}}$ satisfying $||x_{\tau} - x_{\tau, m}|| \le p^{-m}$ and $\mathrm{sup}_i|| a_{i} ||p^{-m(\sum_{\tau}i_{\tau})} < \infty$. Note that $\sum_i a_i \prod_{\tau \in \Psi}(\tilde{x}_{\tau}-x_{\tau, n})^{i_{\tau}}$ converges in $D_{\lambda^{\Psi}, V_0} \otimes_{\mathcal{O}_{V_0}} \mathcal{O}\mathbb{B}_{\mathrm{dR}}^{+}$ because the image of $\sum_i a_i \prod_{\tau \in \Psi}(\tilde{x}_{\tau}-x_{\tau, n})^{i_{\tau}}$ (precisely, we regard this as not an element but a sequence of elements) via the map $D_{\lambda^{\Psi}, V_0} \otimes_{\mathcal{O}_{V_0}} \mathcal{O}\mathbb{B}_{\mathrm{dR}}^+ \twoheadrightarrow D_{\lambda^{\Psi}, K^p}$ converges and $\mathcal{O}\mathbb{B}_{\mathrm{dR}}^+$ is complete with respect to $\mathrm{Ker}(\mathcal{O}\mathbb{B}_{\mathrm{dR}}^+ \twoheadrightarrow \mathcal{O}_{K^p})$-adic topology. Note that $\sum_i a_i \prod_{\tau \in \Psi}(\tilde{x}_{\tau}-x_{\tau, n})^{i_{\tau}}$ is actually an element of the subspace $D_{V_0} \otimes_{\mathcal{O}_{V_0}} \mathcal{O}\mathbb{B}_{\mathrm{dR}}^{+, \Psi-\mathrm{la}, \chi_{0}, G_{L'}}$. In fact, $\sum_i a_i \prod_{\tau \in \Psi}(\tilde{x}_{\tau}-x_{\tau, n})^{i_{\tau}} \in D_{V_0} \otimes_{\mathcal{O}_{V}} \mathcal{O}\mathbb{B}_{\mathrm{dR}}^{+, \Psi-\mathrm{la}, G_{L'}}$ is clear from the definition and $Z(U(\mathfrak{g}_{\Psi}))$ acts on $\sum_i a_i \prod_{\tau \in \Psi}(\tilde{x}_{\tau}-x_{\tau, n})^{i_{\tau}}$ via $\chi_0$ because $\mathfrak{g}_{\Psi}$ acts on $\tilde{x}_{\tau}$ in the same way as $x_{\tau}$. (See \cite[{\S} 6.4.4]{PanII} for precise formulas.) By sending $f$ to $\sum_i a_i \prod_{\tau \in \Psi}(\tilde{x}_{\tau}-x_{\tau, n})^{i_{\tau}}$, we obtain $s_{0}$.
   
   By the definition of $s_0$, in order to prove Proposition \ref{1st}, it suffices to prove $\nabla(\tilde{x}_{\tau}) = \frac{(l_0 \otimes \mathrm{id})(\nabla(f_{\tau}) \otimes \begin{pmatrix}
    0 \\ 1 
   \end{pmatrix}_{\tau})l_0(f_{\tau} \otimes \begin{pmatrix}
    1 \\ 0 
   \end{pmatrix}_{\tau}) - l_0(f_{\tau} \otimes \begin{pmatrix}
    0 \\ 1 
   \end{pmatrix}_{\tau})(l_0 \otimes \mathrm{id})(\nabla(f_{\tau}) \otimes \begin{pmatrix}
    1 \\ 0 
   \end{pmatrix}_{\tau})}{l_0(f_{\tau} \otimes \begin{pmatrix}
    1 \\ 0 
   \end{pmatrix}_{\tau})^2} = 0$ in $\mathcal{O}_{K^p}^{\Psi-\mathrm{la}, (0, 0)_{\tau}, G_L} \otimes_{\mathcal{O}_{V_0}} \Omega_{V_0}^1$ for any $\tau \in \Psi$. This follows from the following.

\begin{prop}\label{p-adic Legendre} For $f_1, f_2 \in D_{\tau, V_0}^{\vee}$ and $v_1, v_2 \in V_{\tau}$, we have $l_0(f_1 \otimes v_1)l_0(f_2 \otimes v_2) - l_0(f_1 \otimes v_2)l_0(f_2 \otimes v_1) \in t\mathcal{O}\mathbb{B}_{\mathrm{dR}}^+$.

  \end{prop}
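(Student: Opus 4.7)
The plan is to rewrite the displayed expression as a determinant, factor it through the top exterior powers, and reduce modulo $t$ to exploit the Hodge--Tate structure.

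First I would observe that $l_0(f_1 \otimes v_1)l_0(f_2 \otimes v_2) - l_0(f_1 \otimes v_2)l_0(f_2 \otimes v_1)$ is bilinear--alternating in both $(f_1, f_2)$ and $(v_1, v_2)$. Writing $\iota \colon V_\tau \hookrightarrow D_{\tau, V_0} \otimes_{\mathcal{O}_{V_0}} \mathcal{O}\mathbb{B}_{\mathrm{dR}}^+$ for the embedding induced by the de Rham comparison of Proposition \ref{Hodge de Rham}, so that $l_0(f \otimes v) = f(\iota(v))$ after $\mathcal{O}\mathbb{B}_{\mathrm{dR}}^+$-linear extension, the expression equals
\[
(f_1 \wedge f_2)\bigl((\wedge^2 \iota)(v_1 \wedge v_2)\bigr),
\]
where $\wedge^2 \iota \colon \wedge^2 V_\tau \to \wedge^2 D_{\tau, V_0} \otimes_{\mathcal{O}_{V_0}} \mathcal{O}\mathbb{B}_{\mathrm{dR}}^+$ is the induced map on top exterior powers, paired with $f_1 \wedge f_2 \in \wedge^2 D_{\tau, V_0}^\vee$ via the canonical duality. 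It therefore suffices to prove
\[
(\wedge^2 \iota)(\wedge^2 V_\tau) \subset t \cdot \wedge^2 D_{\tau, V_0} \otimes_{\mathcal{O}_{V_0}} \mathcal{O}\mathbb{B}_{\mathrm{dR}}^+.
\]

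To see this, I would reduce modulo $t$. The factorization given at the end of the $k=0$ portion of the proof of Proposition \ref{1st} says that $l_0 \bmod t \colon D_{\tau, V_0}^\vee \otimes V_\tau \to \widehat{\mathcal{O}}_{K^p}$ factors through the restriction $D_{\tau, V_0}^\vee \twoheadrightarrow (\mathrm{Fil}^1 D_{\tau, V_0})^\vee$ on the first factor and the Hodge--Tate surjection $V_\tau \otimes \widehat{\mathcal{O}}_{K^p} \twoheadrightarrow \omega_{\tau, K^p} \otimes \wedge^2 D_{\tau, K^p}^{\mathrm{sm}} = \mathrm{Fil}^1 D_{\tau, V_0} \otimes \widehat{\mathcal{O}}_{K^p}$ on the second, coupled by the canonical pairing. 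In particular $l_0(f \otimes v) \bmod t = 0$ for every $v \in V_\tau$ whenever $f|_{\mathrm{Fil}^1 D_{\tau, V_0}} = 0$, and since such $f$'s exactly annihilate the sub $\mathrm{Fil}^1 D_{\tau, V_0} \otimes \widehat{\mathcal{O}}_{K^p} \subset D_{\tau, V_0} \otimes \widehat{\mathcal{O}}_{K^p}$ under the canonical pairing, this forces $\iota \bmod t$ to take values in the rank-one submodule $\mathrm{Fil}^1 D_{\tau, V_0} \otimes \widehat{\mathcal{O}}_{K^p} = \omega_{\tau, V_0} \otimes \wedge^2 D_{\tau, V_0} \otimes \widehat{\mathcal{O}}_{K^p}$. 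The top exterior power of a map into a rank-one module vanishes, so $\wedge^2\iota \bmod t = 0$, i.e.,
\[
(\wedge^2\iota)(\wedge^2 V_\tau) \subset \ker\!\bigl(\wedge^2 D_{\tau, V_0} \otimes_{\mathcal{O}_{V_0}} \mathcal{O}\mathbb{B}_{\mathrm{dR}}^+ \twoheadrightarrow \wedge^2 D_{\tau, V_0} \otimes_{\mathcal{O}_{V_0}} \widehat{\mathcal{O}}_{K^p}\bigr) = t \cdot \wedge^2 D_{\tau, V_0} \otimes_{\mathcal{O}_{V_0}} \mathcal{O}\mathbb{B}_{\mathrm{dR}}^+,
\]
as required.

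The step requiring the most care is the duality argument in the previous paragraph, namely the passage from ``$l_0 \bmod t$ factors through a rank-one quotient of the source'' to ``$\iota \bmod t$ lands in a rank-one subsheaf of the target''. The conceptual reason behind the $t$-divisibility is the Tate twist appearing in Proposition \ref{Hodge de Rham}(4), $\wedge^2 \mathbb{L}_\tau \otimes \widehat{\mathcal{O}} \cong \wedge^2 D_\tau(1) \otimes \widehat{\mathcal{O}}$, which lifts to $\mathcal{O}\mathbb{B}_{\mathrm{dR}}^+$ precisely as multiplication by $t$.
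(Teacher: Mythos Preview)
Your overall strategy---rewrite the expression as a pairing on $\wedge^2 D_{\tau,V_0}^\vee \otimes \wedge^2 V_\tau$ and then show that $\wedge^2\iota$ is divisible by $t$---is exactly the paper's. However, there is a genuine gap in the execution.

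The displayed equality
\[
\ker\!\bigl(\wedge^2 D_{\tau, V_0} \otimes_{\mathcal{O}_{V_0}} \mathcal{O}\mathbb{B}_{\mathrm{dR}}^+ \twoheadrightarrow \wedge^2 D_{\tau, V_0} \otimes_{\mathcal{O}_{V_0}} \widehat{\mathcal{O}}_{K^p}\bigr) \;=\; t \cdot \wedge^2 D_{\tau, V_0} \otimes_{\mathcal{O}_{V_0}} \mathcal{O}\mathbb{B}_{\mathrm{dR}}^+
\]
is false: the kernel of $\mathcal{O}\mathbb{B}_{\mathrm{dR}}^+ \twoheadrightarrow \widehat{\mathcal{O}}_{K^p}$ is $\mathrm{Fil}^1\mathcal{O}\mathbb{B}_{\mathrm{dR}}^+$, which strictly contains $t\,\mathcal{O}\mathbb{B}_{\mathrm{dR}}^+$ whenever the Shimura variety has positive dimension. (Locally $\mathcal{O}\mathbb{B}_{\mathrm{dR}}^+ \cong \mathbb{B}_{\mathrm{dR}}^+[[Y_1,\dots,Y_d]]$ with $\mathrm{Fil}^1 = (t,Y_1,\dots,Y_d)$; see the proof of Lemma~\ref{formal power}.) Likewise, the factorization of $l_0$ you cite from the paragraph before the proposition is a factorization modulo $\mathrm{Fil}^1$, not modulo $t$. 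So as written, your argument only shows the expression lies in $\mathrm{Fil}^1\mathcal{O}\mathbb{B}_{\mathrm{dR}}^+$.

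The missing observation, which is what the paper uses, is that $\iota$ already factors through the \emph{horizontal} submodule: $V_\tau \otimes_L \mathbb{B}_{\mathrm{dR}}^+ \subset (D_{\tau,V_0} \otimes_{\mathcal{O}_{V_0}} \mathcal{O}\mathbb{B}_{\mathrm{dR}}^+)^{\nabla=0}$ by Proposition~\ref{Hodge de Rham}(3). On this horizontal part the quotient by $t$ really is $D_{\tau,K^p}$, and the image of $V_\tau$ there is the rank-one piece $\omega_{\tau,K^p}\otimes \wedge^2 D_{\tau,K^p}^{\mathrm{sm}}$. Your rank-one argument then gives $\wedge^2\iota(\wedge^2 V_\tau) \subset t\,(\wedge^2 D_{\tau,V_0}\otimes\mathcal{O}\mathbb{B}_{\mathrm{dR}}^+)^{\nabla=0} \subset t\,\wedge^2 D_{\tau,V_0}\otimes\mathcal{O}\mathbb{B}_{\mathrm{dR}}^+$, as required. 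Your closing remark about Proposition~\ref{Hodge de Rham}(4) is the right intuition, but it is this horizontality that makes it rigorous.
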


\begin{proof} We have the pairing $(\wedge^2D_{\tau})^{\vee} \otimes \wedge^2V_{\tau} \rightarrow \mathcal{O}\mathbb{B}_{\mathrm{dR}}^+, \ (f_1 \wedge f_2, v_1 \wedge v_2) \mapsto l_0(f_1 \otimes v_1)l_0(f_2 \otimes v_2) - l_0(f_1 \otimes v_2)l_0(f_2 \otimes v_1)$. Thus it suffices to prove $\wedge^2 V_{\tau} \otimes_L \mathcal{O}\mathbb{B}_{\mathrm{dR}}^+ \subset t\wedge^2D_{\tau, V_0} \otimes_{\mathcal{O}_{V_0}} \mathcal{O}\mathbb{B}_{\mathrm{dR}}^+$. This follows from the fact that $V_{\tau} \otimes \mathbb{B}_{\mathrm{dR}}^+/(D_{\tau, V_0} \otimes_{\mathcal{O}_{V_0}} \mathcal{O}\mathbb{B}_{\mathrm{dR}}^+)^{\nabla = 0}(1) = \omega_{\tau, K^p} \otimes_{\mathcal{O}_{K^p}^{\mathrm{sm}}} \wedge^2 D_{\tau, K_p}^{\mathrm{sm}}$ is a rank 1 direct summand of $D_{\tau, K^p} = (D_{\tau, V_0} \otimes_{\mathcal{O}_{V_0}} \mathcal{O}\mathbb{B}_{\mathrm{dR}}^+)^{\nabla = 0}/(D_{\tau, V_0} \otimes_{\mathcal{O}_{V_0}} \mathcal{O}\mathbb{B}_{\mathrm{dR}}^+)^{\nabla = 0}(1)$. \end{proof}


\subparagraph{$k > 0$ case.}

We also have $\mathrm{Sym}^{k}V_{\tau} \otimes_L \mathcal{O}\mathbb{B}_{\mathrm{dR}}^+[\frac{1}{t}] \cong \mathrm{Sym}^{k}D_{\tau, V_0} \otimes_{\mathcal{O}_{V_0}} \mathcal{O}\mathbb{B}_{\mathrm{dR}}^+[\frac{1}{t}]$ and a map $l_{k} : \otimes_{\tau \in \Psi} (\mathrm{Sym}^{k}D_{\tau, V_0}^{\vee} \otimes_L \mathrm{Sym}^{k}V_{\tau}) \rightarrow \mathcal{O}\mathbb{B}_{\mathrm{dR}}^{+, \Psi-\mathrm{la}, G_L}$. Thus we have $l_{k}' := s_{0} \otimes l_{k} : D_{{\lambda^{\Psi}}, K^p}^{\Psi-\mathrm{la}, (0, 0)_{\tau}, G_L} \otimes_{\mathcal{O}_{V_0}} (\otimes_{\tau \in \Psi} (\mathrm{Sym}^{k}D_{\tau, V_0}^{\vee} \otimes_L \mathrm{Sym}^{k}V_{\tau})) \rightarrow D_{{\lambda^{\Psi}}, V_0} \otimes_{\mathcal{O}_{V_0}} \mathcal{O}\mathbb{B}_{\mathrm{dR}, k+2}^{+, \Psi-\mathrm{la}, G_L}$. By using the Hodge-Tate filtration and Hodge-de Rham filtration, we have quotient maps $D_{{\lambda^{\Psi}}, K^p}^{\Psi-\mathrm{la}, (0, 0)_{\tau}, G_L} \otimes_{\mathcal{O}_{V_0}} (\otimes_{\tau \in \Psi} (\mathrm{Sym}^{k}D_{\tau, V_0}^{\vee} \otimes_L \mathrm{Sym}^{k}V_{\tau})) \twoheadrightarrow D_{{\lambda^{\Psi}}, K^p}^{\Psi-\mathrm{la}, (0, 0)_{\tau}, G_L} \otimes_{\mathcal{O}_{V_0}} (\otimes_{\tau \in \Psi} (\omega_{\tau, V_0}^{-k} \otimes_{\mathcal{O}_V} (\wedge^2 D_{\tau, V_0})^{-k} \otimes_L \mathrm{Sym}^{k}V_{\tau})) \twoheadrightarrow D_{{\lambda^{\Psi}}, K^p}^{\Psi-\mathrm{la}, (0, k)_{\tau}, G_L}$. As we have seen in the construction of $d_{\tau}^{\lambda}$ and $\overline{d}_{\tau}^{\lambda}$, we have natural sections $D_{{\lambda^{\Psi}}, K^p}^{\Psi-\mathrm{la}, (0, k)_{\tau}, G_L} \xhookrightarrow{\psi_1} D_{{\lambda^{\Psi}}, K^p}^{\Psi-\mathrm{la}, (0, 0)_{\tau}, G_L} \otimes_{\mathcal{O}_{V_0}} (\otimes_{\tau \in \Psi} (\omega_{\tau, V_0}^{-k} \otimes_{\mathcal{O}_V} (\wedge^2 D_{\tau, V_0})^{-k} \otimes_L \mathrm{Sym}^{k}V_{\tau})) \xhookrightarrow{\psi_2} D_{{\lambda^{\Psi}}, K^p}^{\Psi-\mathrm{la}, (0, 0)_{\tau}, G_L} \otimes_{\mathcal{O}_{V_0}} (\otimes_{\tau \in \Psi} (\mathrm{Sym}^{k}D_{\tau, V_0}^{\vee} \otimes_L \mathrm{Sym}^{k}V_{\tau}))$ of the above map fitting in the following commutative diagram. 

($\psi_1$ is constructed in Lemma 4.5 and $\psi_2$ is (the section constructed after Lemma \ref{constdeRham2}) $\otimes_L (\otimes_{\sigma \in \Psi} \mathrm{Sym}^{k}V_{\sigma})$. In the following commutative diagram, the first right vertical map is the map induced by $\psi_1$ and the Kodaira Spencer map, the middle horizontal map is (the map constructed in after Lemma \ref{constdeRham2}) $\otimes_L (\otimes_{\sigma \in \Psi} \mathrm{Sym}^{k}V_{\sigma})$ and the second right vertical arrow is the natural inclusion. The commutativity of the following diagram follows from the constructions of $d_{\tau}^{\lambda}$ and $\psi_1$.) 

 \tiny

\xymatrix{
   D_{\lambda^{\Psi}, K^p}^{\Psi-\mathrm{la}, (0, k)_{\tau}, G_L} \ar[r]^{\oplus_{\tau} d_{\tau}^{\lambda}} \ar[d]^{\psi_1} & \oplus_{\tau \in \Psi} (D_{{\lambda^{\Psi}}, K^p}^{\Psi-\mathrm{la}, (0, k)_{\tau}, G_L} \otimes_{\mathcal{O}_{V_0}} \omega^{2k + 2}_{\tau, V_0} \otimes (\wedge^2D_{\tau, V_0})^{k + 1}) \ar[d] .\\
    D_{{\lambda^{\Psi}}, K^p}^{\Psi-\mathrm{la}, (0, 0)_{\tau}, G_L} \otimes_{\mathcal{O}_{V_0}} (\otimes_{\tau \in \Psi} (\omega_{\tau, V_0}^{-k} \otimes_{\mathcal{O}_V} (\wedge^2 D_{\tau, V_0})^{-k} \otimes_L \mathrm{Sym}^{k}V_{\tau})) \ar[r] \ar[d]^{\psi_2} & (\oplus_{\tau \in \Psi} (D_{{\lambda^{\Psi}}, K^p}^{\Psi-\mathrm{la}, (0, 0)_{\tau}, G_L} \otimes_{\mathcal{O}_{V_0}} \omega^{k + 2}_{\tau, V_0} \otimes (\wedge^2D_{\tau, V_0}))) \otimes_L (\otimes_{\sigma \in \Psi} \mathrm{Sym}^{k}V_{\sigma}) \ar[d] .\\
   D_{{\lambda^{\Psi}}, K^p}^{\Psi-\mathrm{la}, (0, 0)_{\tau}, G_L} \otimes_{\mathcal{O}_{V_0}} (\otimes_{\tau \in \Psi} (\mathrm{Sym}^{k}D_{\tau, V_0}^{\vee} \otimes_L \mathrm{Sym}^{k}V_{\tau})) \ar[r]^{\nabla} & D_{{\lambda^{\Psi}}, K^p}^{\Psi-\mathrm{la}, (0, 0)_{\tau}, G_L} \otimes_{\mathcal{O}_{V_0}} (\otimes_{\tau \in \Psi} (\mathrm{Sym}^{k}D_{\tau, V_0}^{\vee} \otimes_L \mathrm{Sym}^{k}V_{\tau})) \otimes_{\mathcal{O}_{V_0}} \Omega_{V_0}^1 .
    }

     \normalsize

We put $s_{k} := l_{k}' \circ \psi_2 \circ \psi_1$. By the property $\nabla(\tilde{x}_{\tau}) \in t\mathcal{O}\mathbb{B}_{\mathrm{dR}}^+$ for any $\tau \in \Psi$ (see Proposition \ref{p-adic Legendre}), we have the following commutative diagram.

 \scriptsize

\xymatrix{
   D_{\lambda^{\Psi}, K^p}^{\Psi-\mathrm{la}, (0, k)_{\tau}, G_L} \ar[r]^{\oplus_{\tau} d_{\tau}^{\lambda}} \ar[d]^{\psi_2 \circ \psi_1} & \oplus_{\tau \in \Psi} (D_{{\lambda^{\Psi}}, K^p}^{\Psi-\mathrm{la}, (0, k)_{\tau}, G_L} \otimes_{\mathcal{O}_{V_0}} \omega^{2k + 2}_{\tau, V_0} \otimes (\wedge^2D_{\tau, V_0})^{k + 1}) \ar[d] .\\
   D_{{\lambda^{\Psi}}, K^p}^{\Psi-\mathrm{la}, (0, 0)_{\tau}, G_L} \otimes_{\mathcal{O}_{V_0}} (\otimes_{\tau \in \Psi} (\mathrm{Sym}^{k}D_{\tau, V_0}^{\vee} \otimes_L \mathrm{Sym}^{k}V_{\tau})) \ar[r]^{\nabla} \ar[d]^{l_{k}'} & D_{{\lambda^{\Psi}}, K^p}^{\Psi-\mathrm{la}, (0, 0)_{\tau}, G_L} \otimes_{\mathcal{O}_{V_0}} (\otimes_{\tau \in \Psi} (\mathrm{Sym}^{k}D_{\tau, V_0}^{\vee} \otimes_L \mathrm{Sym}^{k}V_{\tau})) \otimes_{\mathcal{O}_{V_0}} \Omega_{V_0}^1 \ar[d]^{l_{k}' \otimes \mathrm{id}_{\Omega^1_{V_0}}} \\
   D_{\lambda^{\Psi}, V_0} \otimes_{\mathcal{O}_{V_0}} \mathcal{O}\mathbb{B}_{\mathrm{dR}, k+2}^{+, \Psi-\mathrm{la}}/t \ar[r]^{\nabla}  & D_{\lambda^{\Psi}, V_0} \otimes_{\mathcal{O}_{V_0}} \mathcal{O}\mathbb{B}_{\mathrm{dR}, k+1}^{+, \Psi-\mathrm{la}}/t \otimes_{\mathcal{O}_{K^p}} \Omega_{V_0}^{1}
    }

     \normalsize

We claim that the composition $m$ of the right vertical arrow is equal to the composition of the inclusion given by Lemma \ref{formal power} and $(-1)^k(KS)^{k+1}$, which implies the result. ($KS$ denotes the Kodaira-Spencer map.) Our claim follows from the following lemma.


\begin{lem}\label{equality}

1 \ Let $m'$ be a map $(D_{\lambda^{\Psi}, V_0} \otimes_{\mathcal{O}_{V_0}} \omega_{\tau, V_0}^k \otimes_L \mathrm{Sym}^kV_{\tau}) \otimes_{\mathcal{O}_{V_0}} \mathcal{O}_{K^p} \hookrightarrow (D_{\lambda^{\Psi}, V_0} \otimes_{\mathcal{O}_{V_0}} \mathrm{Sym}^k D_{\tau, V_0}^{\vee} \otimes_L \mathrm{Sym}^kV_{\tau}) \otimes_{\mathcal{O}_{V_0}} \mathcal{O}_{K^p} \rightarrow D_{\lambda^{\Psi}, V_0} \otimes \mathcal{O}\mathbb{B}_{\mathrm{dR}, k+1}^+/t = D_{\lambda^{\Psi}, V_0} \otimes_{\mathcal{O}_{V_0}} (\oplus_{i=0}^{k} (\mathcal{O}_{K^p} \otimes_{\mathcal{O}_{V_0}} \mathrm{Sym}^i\Omega_{V_0}^1))$. Here, on the right hand side, we consider the $\mathcal{O}_{K^p}$-module structure induced by the identification $\mathcal{O}_{K^p} = \mathbb{B}_{\mathrm{dR}}^+/t$.

Then $m$ is the restriction of the composition $(m' \otimes \mathrm{id}_{\Omega_{V_0}^1}) \circ (\mathrm{id} \otimes KS)$.

2 \ $m'$ factors through $(D_{\lambda^{\Psi}, V_0} \otimes_{\mathcal{O}_{V_0}} \omega_{\tau, V_0}^k \otimes_L \mathrm{Sym}^kV_{\tau}) \otimes_{\mathcal{O}_{V_0}} \mathcal{O}_{K^p} \twoheadrightarrow D_{\lambda^{\Psi}, V_0} \otimes_{\mathcal{O}_{V_0}} \omega_{\tau, K^p}^{2k} \otimes_{\mathcal{O}_{K^p}} (\wedge^2D_{\tau, K^p})^{k}$ and the map $D_{\lambda^{\Psi}, V_0} \otimes_{\mathcal{O}_{V_0}} \omega_{\tau, K^p}^{2k} \otimes_{\mathcal{O}_{K^p}} (\wedge^2D_{\tau, K^p})^{k} \rightarrow D_{\lambda^{\Psi}, V_0} \otimes_{\mathcal{O}_{V_0}} (\oplus_{i=0}^{k} (\mathcal{O}_{K^p} \otimes_{\mathcal{O}_{V_0}} \mathrm{Sym}^i\Omega_{V_0}^1))$ is equal to $D_{\lambda^{\Psi}, V_0} \otimes_{\mathcal{O}_{V_0}} \omega_{\tau, K^p}^{2k} \otimes_{\mathcal{O}_{K^p}} (\wedge^2D_{\tau, K^p})^{k} \xrightarrow{(-KS)^k} D_{\lambda^{\Psi}, K^p} \otimes_{\mathcal{O}_{V_0}} \mathrm{Sym}^k\Omega_{V_0}^1 \hookrightarrow D_{\lambda^{\Psi}, V_0} \otimes_{\mathcal{O}_{V_0}} (\oplus_{i=0}^{k} (\mathcal{O}_{K^p} \otimes_{\mathcal{O}_{V_0}} \mathrm{Sym}^i\Omega_{V_0}^1))$.

\end{lem}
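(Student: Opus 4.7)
The plan is to reduce both statements to a direct computation with the Hodge-Tate comparison $\mathrm{Sym}^k V_\tau\otimes_L\mathcal{O}\mathbb{B}^+_{\mathrm{dR}}[\tfrac1t]\cong \mathrm{Sym}^k D_{\tau,V_0}\otimes_{\mathcal{O}_{V_0}}\mathcal{O}\mathbb{B}^+_{\mathrm{dR}}[\tfrac1t]$ together with the vanishing $\nabla(\tilde x_\tau)\in t\mathcal{O}\mathbb{B}^+_{\mathrm{dR}}$ from Proposition~\ref{p-adic Legendre}. First, for statement 1 I would unwind the definition $l_k'=s_0\otimes l_k$. Applying $\nabla$ and then reducing modulo $t$, the contribution coming from differentiating the $s_0$-factor dies because each $\nabla(\tilde x_\tau)$ lies in $t\mathcal{O}\mathbb{B}^+_{\mathrm{dR}}$, so only $\mathrm{id}\otimes\nabla(l_k)$ survives. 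Under the identification of Lemma~\ref{formal power}, $\nabla$ on $\mathcal{O}\mathbb{B}^+_{\mathrm{dR},k+2}/t$ is precisely the universal Koszul-type derivation $f\otimes x_1\cdots x_i\mapsto\sum f\otimes x_1\cdots\breve{x_j}\cdots x_i\otimes x_j$. Composing with $\psi_2\circ\psi_1$, which lands inside the $\omega_\tau^k\otimes\mathrm{Sym}^k V_\tau$ summand, the only piece of $\nabla$ that can contribute is the one raising the exterior degree, which is implemented by the Kodaira-Spencer map $KS$. This is exactly the statement that $m$ equals $(m'\otimes\mathrm{id}_{\Omega^1_{V_0}})\circ(\mathrm{id}\otimes KS)$.

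For statement 2 I would argue by an inductive computation in $k$. The Hodge-Tate exact sequence of Proposition~\ref{Hodge de Rham} shows that $l_1$ sends a generator of $\omega_{\tau,V_0}\otimes V_\tau\subset D^\vee_{\tau,V_0}\otimes V_\tau$ into $\mathrm{Fil}^1\mathcal{O}\mathbb{B}^+_{\mathrm{dR}}$ precisely with image matching the Faltings extension, and the commutative diagram of Proposition~\ref{FHT} identifies this boundary with $-KS$. Consequently, the composition landing in $\widehat{\mathcal{O}}_{X}\otimes_{\mathcal{O}_X}\Omega^1_{V_0}\subset\mathcal{O}\mathbb{B}^+_{\mathrm{dR},2}/t$ equals $-KS$. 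For general $k$ one uses the description of $l_k$ as the $k$-th symmetric power of $l_1$: the restriction to $\omega_\tau^k\otimes\mathrm{Sym}^k V_\tau$ factors through the line bundle $\omega_{\tau,K^p}^{2k}\otimes(\wedge^2 D_{\tau,K^p})^k$, and the resulting map to $\mathrm{Sym}^k\Omega^1_{V_0}\subset\oplus_{i=0}^k\mathcal{O}_{K^p}\otimes_{\mathcal{O}_{V_0}}\mathrm{Sym}^i\Omega^1_{V_0}$ picks up a sign $(-1)^k$ from $(-KS)^k$. This yields the factorization and the explicit formula asserted in 2, and combined with 1 gives the overall sign $(-1)^k(KS)^{k+1}$ needed in the proof of Proposition~\ref{1st}.

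Concretely, the induction step would work over the standard pro-\'etale chart used in the proof of Lemma~\ref{formal power}: on the affinoid perfectoid cover of a small $U$, the identification $\mathcal{O}\mathbb{B}^+_{\mathrm{dR}}|_{U_\infty}\cong\mathbb{B}^+_{\mathrm{dR}}|_{U_\infty}[[Y_1,\ldots,Y_d]]$ with $Y_i\mapsto X_i-[X_i^\flat]$ and $\nabla(Y_i)=dX_i$ makes the whole computation purely algebraic: $l_k$ becomes a product of the basic rank-2 Hodge-Tate comparison along each $\tau\in\Psi$, and $\nabla$ acting on a monomial in the $Y_i$'s and the $\tilde x_\tau$'s reduces modulo $t$ to plain differentiation of polynomials. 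This will manifestly descend from $U_\infty$ to the sheaves on $U$ because both sides of the asserted equality are canonical.

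The main obstacle I expect is the bookkeeping of signs and normalizations: tracking the orientation convention between the Kodaira-Spencer map and the Faltings extension through the tensor power $\mathrm{Sym}^k$, matching the signs in the dual BGG decomposition used to construct $\psi_1,\psi_2$ (which were already chosen up to the overall sign $(-1)^{|\{\tau_i\in I\mid i<j\}|}$ when we defined $\overline{d}_{I,J}$), and correctly incorporating the fact that our identification $\mathrm{gr}^i(\mathcal{O}\mathbb{B}^+_{\mathrm{dR},k+1}\otimes\widehat{\mathcal{O}}_X)\cong\widehat{\mathcal{O}}_X\otimes\mathrm{Sym}^i\Omega^1_X$ of Lemma~\ref{formal power} carries degree shifts. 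Once these conventions are pinned down on one standard chart, the remaining verifications are formal manipulations of symmetric powers.
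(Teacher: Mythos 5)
Your proof proposal for this lemma has a genuine gap in part 1, and part 2 takes a valid but different route.

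For part 1, the claim that ``$\psi_2\circ\psi_1$ lands inside the $\omega_\tau^k\otimes\mathrm{Sym}^k V_\tau$ summand'' is incorrect. Recall that $\psi_2$ is built from the dual-BGG sections $s_I$ constructed after Lemma~\ref{constdeRham2}: it is a $\mathcal{O}_{\Fl}$-linear section of the \emph{quotient} map $\mathrm{Sym}^k D_{\tau,V_0}^\vee \twoheadrightarrow \mathrm{Sym}^kD^\vee_{\tau,V_0}/\mathrm{Fil}^1 = \omega_{\tau,V_0}^{-k}\otimes(\wedge^2D_{\tau,V_0})^{-k}$. Its image is therefore a complement to $\mathrm{Fil}^1\mathrm{Sym}^kD^\vee_{\tau,V_0}$, which has no reason to coincide with the top piece $\mathrm{Fil}^k=\omega^k_{\tau,V_0}$. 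Similarly, the argument that ``only the piece of $\nabla$ raising the exterior degree contributes'' is a statement about the connection in the middle row, whereas part 1 is a statement comparing the composite $m$ of the right vertical arrows with the specific map $m'$ introduced in the lemma; these are not the same computation. The paper's proof of part 1 sidesteps both issues with two precise observations: (a) the composite $\omega_{\tau,V_0}^k\otimes\mathrm{Sym}^kV_\tau\hookrightarrow\mathrm{Sym}^kD^\vee_{\tau,V_0}\otimes\mathrm{Sym}^kV_\tau\to\mathcal{O}\mathbb{B}^+_{\mathrm{dR},k+1}/t$ preserves filtrations, so the restriction of $m'$ to the top filtration step already lands in the top graded piece $\mathcal{O}_{K^p}\otimes\mathrm{Sym}^k\Omega^1_{V_0}$; and (b) the $\mathcal{O}_{K^p}^{\Psi-\mathrm{la},(0,0)_\tau,G_L}$-module structure induced by $s_0$ on that top graded piece agrees with the one from $\mathcal{O}_{K^p}=\mathbb{B}_{\mathrm{dR}}^+/t$, because $s_0$ is a section of the natural projection. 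It is these two facts, not the behavior of $\psi_2$, that make the Kodaira-Spencer twist from the first right vertical arrow compose correctly with $m'$. The paper also reduces to $\lambda^\Psi=0$ at the outset, which streamlines the bookkeeping you are worried about.

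For part 2, your induction via symmetric powers of the basic rank-$2$ comparison, anchored in Proposition~\ref{FHT}, is a reasonable alternative to the paper's more abstract argument (the paper just remarks that by part 1 the map $m'$ factors through $\mathcal{O}_{K^p}\otimes\mathrm{Sym}^k\Omega^1_{V_0}$ and then cites Proposition~\ref{FHT} directly). Both approaches hinge on the same key input — the Faltings extension vs.\ Hodge-Tate filtration diagram of Proposition~\ref{FHT}, which is what produces the $-\mathrm{KS}$ and hence the overall $(-1)^k$ — so your route should work here, provided you also invoke the module-structure compatibility from part 1 to identify the $\mathcal{O}_{K^p}$-structure on the target.
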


\begin{proof} We may assume $\lambda^{\Psi} = 0$.

1 \ Note that the image of the map $\omega_{\tau, V_0}^k \otimes_L \mathrm{Sym}^kV_{\tau} \hookrightarrow \mathrm{Sym}^k D_{\tau, V_0}^{\vee} \otimes_L \mathrm{Sym}^kV_{\tau} \rightarrow \mathcal{O}\mathbb{B}_{\mathrm{dR}, k+1}^+/t = \oplus_{i=0}^{k} (\mathcal{O}_{K^p} \otimes_{\mathcal{O}_{V_0}} \mathrm{Sym}^i\Omega_{V_0}^1)$ is contained in $\mathcal{O}_{K^p} \otimes_{\mathcal{O}_{V_0}} \mathrm{Sym}^k\Omega_{V_0}^1$ because the map $\mathrm{Sym}^k D_{\tau, V_0}^{\vee} \otimes_L \mathrm{Sym}^kV_{\tau} \rightarrow \mathcal{O}\mathbb{B}_{\mathrm{dR}, k+1}^+/t = \oplus_{i=0}^{k} (\mathcal{O}_{K^p} \otimes_{\mathcal{O}_{V_0}} \mathrm{Sym}^i\Omega_{V_0}^1)$ preserves the filterations on both sides. Note that the $\mathcal{O}_{K^p}^{\Psi-\mathrm{la}, (0, 0)_{\tau}, G_L}$-module structure on $\mathcal{O}_{K^p} \otimes_{\mathcal{O}_{V_0}} \mathrm{Sym}^k\Omega_{V_0}^1$ induced by $s_0$ is equal to that induced by the identification $\mathcal{O}_{K^p} = \mathbb{B}_{\mathrm{dR}}/t$ because $s_0$ is a section of the natural map $\mathcal{O}\mathbb{B}_{\mathrm{dR}, k+1}^{+, \Psi-\mathrm{la}, \chi_{\lambda_{\Psi}} G_L} \rightarrow \mathcal{O}_{K^p}^{\Psi-\mathrm{la}, (0, 0)_{\tau}, G_L}$. This implies 1.

2 As in the proof of 1 of this lemma, $m'$ is equal to $(\omega_{\tau, V_0}^k \otimes_L \mathrm{Sym}^kV_{\tau}) \otimes_{\mathcal{O}_{V_0}} \mathcal{O}_{K^p} \rightarrow \mathcal{O}_{K^p} \otimes_{\mathcal{O}_{V_0}} \mathrm{Sym}^k\Omega_{V_0}^1 \hookrightarrow \oplus_{i=0}^{k} (\mathcal{O}_{K^p} \otimes_{\mathcal{O}_{V_0}} \mathrm{Sym}^i\Omega_{V_0}^1)$. Thus the result follows from Proposition \ref{FHT}. \end{proof}



\paragraph{Proof of Proposition \ref{2nd}.}

We fix $\tau \in \Psi$. By Proposition \ref{FHT}, we have the following commutative diagram with exact rows.

\xymatrix{
    0 \ar[r] & \mathcal{O}_{K^p}(1) \ar[r] \ar[d]^{\mathrm{id}} & V_{\tau} \otimes_{L} \omega_{\tau, K^p}  \ar[r] \ar[d] & \omega_{\tau, K^p}^{2} \otimes_{\mathcal{O}_{V_0}} \wedge^2 D_{\tau, V_0} \ar[r] \ar[d]^{-\mathrm{KS}} & 0\\
    0 \ar[r] & \mathcal{O}_{K^p}(1) \ar[r] & \mathrm{gr}^1\mathcal{O}\mathbb{B}^+_{\mathrm{dR}} \ar[r]  & \mathcal{O}_{K^p} \otimes_{\mathcal{O}_{V_0}} \Omega_{V_0}^{1} \ar[r]& 0
    }

By taking $\mathrm{Sym}^{k+1}$, we obtain a map $\mathrm{Sym}^{k+1}V_{\tau} \otimes_{L} \omega_{\tau, {K^p}}^{k + 1} \rightarrow \mathrm{gr}^{k + 1}\mathcal{O}\mathbb{B}^+_{\mathrm{dR}}.$ Let $\chi_{\lambda, \tau} : Z(U(\mathfrak{gl}_2(L)_{\tau})) \rightarrow L$ be the infinitesimal character of $\mathrm{Sym}^{k}V_{\tau}$. We also consider the character $(0,-1) : \mathfrak{h}_{\tau} \rightarrow C$. Note that the horizontal action induces an action of $\prod_{\sigma \neq \tau}\mathfrak{h}_{\sigma}$ on $\mathrm{Sym}^{k+1}V_{\tau} \otimes_{L} \omega_{\tau, {K^p}}^{k + 1}$. Thus we can consider the following map $$E_{\lambda} := E_0((\mathrm{Sym}^{k+1}V_{\tau} \otimes_{L} \omega_{\tau, {K^p}}^{k + 1, \Pla, (0, -1)-\mathrm{nilp}})^{\chi_{\lambda, \tau}, (0, k)_{\sigma \neq \tau}, L}) \rightarrow E_0(\mathrm{gr}^{k + 1}\mathcal{O}\mathbb{B}^{+, \Pla, \chi_{\lambda_{\Psi}}, L}_{\mathrm{dR}})=:E_{\lambda}'.$$  Note that the above commutative diagram induces the following commutative diagram with exact rows. $$ \scriptsize \xymatrix{
    0 \ar[r] & D_{{\lambda^{\Psi}}, K^p}^{\Psi-\mathrm{la}, (0, k)_{\sigma \neq \tau}, (1+k, -1)_{\tau}}(k+1)^{G_L} \ar[r] \ar[d] & D_{\lambda^{\Psi}, V_0} \otimes_{\mathcal{O}_{V_0}} E_{\lambda} \ar[r] \ar[d] & \ar[d] D_{{\lambda^{\Psi}}, K^p}^{\Psi-\mathrm{la}, (0, k)_{\gamma}, G_L} \otimes_{\mathcal{O}_{V_0}} \omega_{\tau, V_0}^{2k + 2} \otimes_{\mathcal{O}_{V_0}} (\wedge^2D_{\tau, V_0})^{k + 1} \ar[r] & 0 \\
    0 \ar[r] & \oplus_{\sigma} D_{{\lambda^{\Psi}}, K^p}^{\Psi-\mathrm{la}, (0, k)_{\gamma \neq \sigma}, (1+k, -1)_{\sigma}}(k+1)^{G_L} \ar[r] & D_{\lambda^{\Psi}, V_0} \otimes_{\mathcal{O}_{V_0}} E_{\lambda}' \ar[r] & D_{{\lambda^{\Psi}}, K^p}^{\Psi-\mathrm{la}, (0, k)_{\gamma}, G_L} \otimes_{\mathcal{O}_{V_0}} \mathrm{Sym}^{k}\Omega_{V_0}^1 \otimes_{\mathcal{O}_{V_0}} \Omega_{V_0}^1 } \normalsize $$ The only non-trivial part is the exactness of the upper sequence. This follows from the fact that $\mathrm{Sym}^{k + 1}V_{\tau} \otimes_L \omega_{\tau, K^p}^{k + 1, \Psi-\mathrm{la}, (0, -1)-\mathrm{nilp}}$ has an increasing filtration whose graded pieces are $(\omega_{\tau, \Fl}^{k + 1 -2i} \otimes_L (\wedge^2 V_{\tau})^{k + 1 - i}) \otimes_{\mathcal{O}_{\Fl}}  \omega_{\tau, K^p}^{k + 1, \Psi-\mathrm{la}, (0, -1)-\mathrm{nilp}} = \omega_{\tau, K^p}^{2k + 2 - 2i, \Psi-\mathrm{la}, (i, k - i)-\mathrm{nilp}}(i) \otimes (\wedge^2 D_{\tau, K^p}^{\mathrm{sm}})^{k + 1 - i}$ for $i = 0, \cdots, k + 1$.

Thus to prove Proposition \ref{2nd}, it suffices to study the upper exact sequence instead of the lower exact sequence. Let $$N_{\lambda, \tau}' : D_{{\lambda^{\Psi}}, K^p}^{\Psi-\mathrm{la}, (0, k)_{\sigma}, G_L} \otimes_{\mathcal{O}_{V_0}} \omega_{\tau, V_0}^{2k + 2} \otimes_{\mathcal{O}_{V_0}} (\wedge^2D_{\tau, V_0})^{k + 1} \rightarrow D_{{\lambda^{\Psi}}, K^p}^{\Psi-\mathrm{la}, (0, k)_{\sigma \neq \tau}, (1+k, -1)_{\tau}}(k+1)^{G_L}$$ be the map induced by the Sen operator on $D_{\lambda^{\Psi}, V_0} \otimes_{\mathcal{O}_{V_0}} E_{\lambda}$ and the upper exact sequence. Let $z_{\tau} := \begin{pmatrix}
    1 & 0 \\
     0 & 1 \end{pmatrix} \in Z(\mathfrak{g}_{\tau})$ and $\Omega_{\tau} = \begin{pmatrix}
        0 & 1 \\
         0 & 0 \end{pmatrix}\begin{pmatrix}
            0 & 0 \\
             1 & 0 \end{pmatrix} + \begin{pmatrix}
                0 & 0 \\
                 1 & 0 \end{pmatrix}\begin{pmatrix}
                    0 & 1 \\
                     0 & 0 \end{pmatrix} + \frac{1}{2}\begin{pmatrix}
                        1 & 0 \\
                         0 & -1 \end{pmatrix}^2 \in Z(U(\mathfrak{g}_{\tau}))$ be the Casimir operator. We put $h_{\tau} := \begin{pmatrix}
                            1 & 0 \\
                             0 & -1 \end{pmatrix}$. Then $\mathrm{HC}(\Omega_{\tau}) = \frac{1}{2}h_{\tau}^2 + h_{\tau}$. We have $Z(U(\mathfrak{g}_{\tau})) = L[z_{\tau}, \Omega_{\tau}]$ and this is isomorphic to a two-variable polynomial ring over $L$. (See \cite[example after Theorem 5.44]{Knapp} for details.)

\begin{lem} \label{relation}

$(2\theta_{\mathrm{Sen}} + z_{\tau} + 1)^2 - 1 = 2 \Omega_{\tau}$ on $D_{\lambda^{\Psi}, K^p}^{\Psi-\mathrm{la}, (0, k)_{\sigma \neq \tau}}$.

\end{lem}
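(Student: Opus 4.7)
The plan is to reduce the identity to a direct computation using Lemma \ref{infinitesimal character and horizontal} (which rewrites the $Z(U(\mathfrak{g}_\Psi))$-action as the horizontal action of $\mathfrak{h}$ twisted by the longest Weyl element) together with the explicit formula for the Sen operator in Corollary \ref{sen operatorII}.

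First I would observe that, since we are restricted to $D_{\lambda^{\Psi}, K^p}^{\Psi-\mathrm{la}, (0, k)_{\sigma \neq \tau}}$, the horizontal action of $\mathfrak{h}_\sigma$ is scalar for $\sigma \neq \tau$; in particular $\theta_{\mathfrak{h}}\bigl(\begin{smallmatrix}-1 & 0 \\ 0 & 0 \end{smallmatrix}\bigr)_\sigma$ acts by $0$ for such $\sigma$. Hence the full Sen operator collapses on this subspace to its $\tau$-component, which by Corollary \ref{sen operatorII} equals $\theta_{\mathfrak{h}}\bigl(\begin{smallmatrix}-1 & 0 \\ 0 & 0 \end{smallmatrix}\bigr)_\tau$. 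Writing $\bigl(\begin{smallmatrix}-1 & 0 \\ 0 & 0 \end{smallmatrix}\bigr)_\tau = -\tfrac{1}{2}(z_\tau + h_\tau)$ in $\mathfrak{h}_\tau$ and using that $z_\tau$ is central (so its action via $\theta_{\mathfrak{h}}$ coincides with its action as an element of $Z(U(\mathfrak{g}_\tau))$), I obtain the key identity $\theta_{\mathfrak{h}}(h_\tau) = -(2\theta_{\mathrm{Sen}} + z_\tau)$.

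Next I would compute the action of $\Omega_\tau$ via Lemma \ref{infinitesimal character and horizontal}. Since $w_0 z_\tau w_0^{-1} = z_\tau$ and $w_0 h_\tau w_0^{-1} = -h_\tau$, conjugation by $w_0$ sends $\mathrm{HC}(\Omega_\tau) = \tfrac{1}{2}h_\tau^2 + h_\tau$ to $\tfrac{1}{2}h_\tau^2 - h_\tau$. Thus $\Omega_\tau$ acts on $D_{\lambda^\Psi, K^p}^{\Psi-\mathrm{la}}$ as $\theta_{\mathfrak{h}}\bigl(\tfrac{1}{2}h_\tau^2 - h_\tau\bigr) = \tfrac{1}{2}\theta_{\mathfrak{h}}(h_\tau)^2 - \theta_{\mathfrak{h}}(h_\tau)$. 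Substituting the formula $\theta_{\mathfrak{h}}(h_\tau) = -(2\theta_{\mathrm{Sen}} + z_\tau)$ gives
\begin{equation*}
2\Omega_\tau = (2\theta_{\mathrm{Sen}} + z_\tau)^2 + 2(2\theta_{\mathrm{Sen}} + z_\tau) = (2\theta_{\mathrm{Sen}} + z_\tau + 1)^2 - 1,
\end{equation*}
which is exactly the asserted identity.

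There is no real obstacle: the statement is, in effect, a bookkeeping identity between the Casimir eigenvalue, the scalar by which the center acts, and the Sen weight, once one has both Lemma \ref{infinitesimal character and horizontal} and Corollary \ref{sen operatorII} available. The only subtlety to be careful about is keeping track of the $w_0$-twist in the unnormalized Harish-Chandra map and the sign conventions; this is why I would separate out the computation of $w_0 h_\tau w_0^{-1} = -h_\tau$ and the decomposition of $\bigl(\begin{smallmatrix}-1 & 0 \\ 0 & 0 \end{smallmatrix}\bigr)_\tau$ in terms of $z_\tau$ and $h_\tau$ as explicit preliminary steps before assembling the final square.
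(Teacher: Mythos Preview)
Your proof is correct and follows essentially the same route as the paper's: both reduce the statement to the identity $(2\begin{smallmatrix}-1&0\\0&0\end{smallmatrix}_\tau + z_\tau + 1)^2 - 1 = h_\tau^2 - 2h_\tau = w_0(h_\tau^2+2h_\tau)w_0^{-1}$ in $\mathrm{Sym}\,\mathfrak{h}_\tau$, combining the formula for the Sen operator with Lemma~\ref{infinitesimal character and horizontal}. One minor point: since the lemma is stated at the sheaf level $D_{\lambda^{\Psi}, K^p}^{\Psi-\mathrm{la}, (0,k)_{\sigma\neq\tau}}$, the appropriate reference for the Sen operator formula is Proposition~\ref{Sen operator} rather than Corollary~\ref{sen operatorII} (which is the cohomology-level statement), though the content you use is identical.
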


\begin{proof}

    By Proposition \ref{Sen operator}, we have $\theta_{\mathrm{Sen}} = \theta_{\mathfrak{h}}(\begin{pmatrix}
        -1 & 0 \\
         0 & 0 \end{pmatrix}_{\sigma \in \Psi})$ on $D_{\lambda^{\Psi}, K^p}^{\Psi-\mathrm{la}}$. By Lemma \ref{infinitesimal character and horizontal}, the result follows from the relation $(2\begin{pmatrix}
            -1 & 0 \\
             0 & 0 \end{pmatrix}_{\tau} + z_{\tau} + 1)^2 - 1 = h^2_{\tau} - 2h_{\tau} = \mathrm{Ad}(w_0)(h^2_{\tau} + 2h_{\tau})$ in $\mathrm{Sym}\mathfrak{h}_{\tau}$. \end{proof}

Note that the Hodge-Tate filtration $0 \rightarrow \omega_{\tau, K^p}^{-1, \Psi-\mathrm{la}}(1) \rightarrow V_{\tau} \otimes_{L} \mathcal{O}_{K^p}^{\Psi-\mathrm{la}} \rightarrow \omega_{\tau, K^p}^{\Psi-\mathrm{la}} \otimes_{\mathcal{O}_{V_0}} \wedge^2 D_{\tau, V_0} \rightarrow 0$ has a $G_L \times \prod_{\sigma \neq \tau}\mathfrak{h}_{\sigma} \times Z(\mathfrak{g}_{\tau})$-equivariant $\mathcal{O}_{K^p}^{\Psi-\mathrm{la}}$-linear splitting given by sending $e_{1, \tau}$ to $\begin{pmatrix}
    1 \\
     0 \end{pmatrix}_{\tau}$. Let $\psi_{i, \tau} : Z(\mathfrak{g}_{\tau}) \rightarrow L, \ \begin{pmatrix} 1 & 0 \\
     0 & 1 \end{pmatrix} \mapsto i$. 
     
     Then $(\mathrm{Sym}^{k+1}V_{\tau} \otimes_{L} \omega_{\tau, {K^p}}^{k + 1, \Pla, (0, k)_{\sigma \neq \tau}, \psi_{-1, \tau}})^{G_L}$ has an increasing filtration $\{ \mathrm{Fil}_i \}_{i=0}^{k+1}$ such that $\mathrm{gr}_i = \omega_{\tau, K^p}^{2i, \Psi-\mathrm{la}, (0, k)_{\sigma \neq \tau}, \psi_{k, \tau}} \otimes (\wedge^2 D_{\tau, K^p}^{\mathrm{sm}})^i(k + 1 - i)^{G_L} = \omega_{\tau, K^p}^{2i, \Psi-\mathrm{la}, (0, k)_{\sigma \neq \tau}, (k + 1 - i, i - 1)} \otimes (\wedge^2 D_{\tau, K^p}^{\mathrm{sm}})^i(k + 1 - i)^{G_L}$. Here, we used the relation $\theta_{\mathrm{Sen}} = \theta_{\mathfrak{h}}(\begin{pmatrix}
        -1 & 0 \\
         0 & 0 \end{pmatrix}_{\sigma \in \Psi})$ to deduce the above equality. See Proposition \ref{Sen operator}. Let $E_0'$ be the $\chi_{\lambda_{\Psi}}$-generalized eigenspace of $(\mathrm{Sym}^{k+1}V_{\tau} \otimes_{L} \omega_{\tau, {K^p}}^{k + 1, \Pla, (0, k)_{\sigma \neq \tau}, \psi_{-1, \tau}})^{G_L}$. Then we have the following exact sequence.
     
\begin{equation}\label{overlined}
0 \rightarrow D_{{\lambda^{\Psi}}, K^p}^{\Psi-\mathrm{la}, (0, k)_{\sigma \neq \tau}, (1+k, -1)_{\tau}}(k+1)^{G_L} \rightarrow D_{\lambda^{\Psi}, V_0} \otimes_{\mathcal{O}_{V_0}} E_0' \rightarrow D_{{\lambda^{\Psi}}, K^p}^{\Psi-\mathrm{la}, (0, k)_{\sigma}, G_L} \otimes_{\mathcal{O}_{V_0}} \omega_{\tau, V_0}^{2k + 2} \otimes_{\mathcal{O}_{V_0}} (\wedge^2D_{\tau, V_0})^{k + 1} \rightarrow 0. \end{equation}

 Thus the action of $(\Omega_{\tau} - \chi_{\lambda_{\Psi}}(\Omega_{\tau}))$ on $D_{\lambda^{\Psi}, V_0} \otimes_{\mathcal{O}_{V_0}} E_0'$ induces a map $$N_{\lambda, \tau}'' : D_{{\lambda^{\Psi}}, K^p}^{\Psi-\mathrm{la}, (0, k)_{\sigma}, G_L} \otimes_{\mathcal{O}_{V_0}} \omega_{\tau, V_0}^{2k + 2} \otimes_{\mathcal{O}_{V_0}} (\wedge^2D_{\tau, V_0})^{k + 1} \rightarrow D_{{\lambda^{\Psi}}, K^p}^{\Psi-\mathrm{la}, (0, k)_{\sigma \neq \tau}, (1+k, -1)_{\tau}}(k+1)^{G_L}.$$

\begin{prop}
    
$N_{\lambda, \tau}' = \frac{1}{2(k + 1)}N_{\lambda, \tau}''$.

\end{prop}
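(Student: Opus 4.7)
\emph{Proof proposal.} The plan is to exploit the polynomial identity of Lemma~\ref{relation}, namely $(2\theta_{\mathrm{Sen}} + z_\tau + 1)^2 - 1 = 2\Omega_\tau$, to convert the Casimir-action defining $N''_{\lambda,\tau}$ into a Sen-operator expression, and then match it against $N'_{\lambda,\tau}$. The first step will be to pin down the action of the central element $z_\tau$ on $D_{\lambda^\Psi, V_0} \otimes_{\mathcal{O}_{V_0}} E_0'$: since $E_0'$ sits inside $(\mathrm{Sym}^{k+1} V_\tau \otimes_L \omega_{\tau, K^p}^{k+1, \Psi-\mathrm{la}, (0,k)_{\sigma\neq\tau}, \psi_{-1,\tau}})^{G_L}$, and the first tensor factor has determinant weight $k+1$ while the constraint $\psi_{-1,\tau}$ forces $z_\tau = -1$ on the second, I expect $z_\tau$ to act by the constant $k$ throughout.

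Substituting $z_\tau = k$ into Lemma~\ref{relation} and expanding will give
$$\Omega_\tau \;=\; 2\theta_{\mathrm{Sen}}^2 + 2(k+1)\theta_{\mathrm{Sen}} + \tfrac{(k+1)^2 - 1}{2}.$$
A direct computation of $\chi_{\lambda_\Psi}(\Omega_\tau) = \lambda_\Psi(\mathrm{HC}(\Omega_\tau)) = \lambda_\Psi(\tfrac12 h_\tau^2 + h_\tau)$ with $\lambda_\Psi = (0,-k)_\tau$ should yield exactly $\tfrac{k(k+2)}{2} = \tfrac{(k+1)^2-1}{2}$, identifying the constant term. Hence on $D_{\lambda^\Psi, V_0} \otimes_{\mathcal{O}_{V_0}} E_0'$ I will obtain the key identity
$$\Omega_\tau - \chi_{\lambda_\Psi}(\Omega_\tau) \;=\; 2\theta_{\mathrm{Sen}}^2 + 2(k+1)\theta_{\mathrm{Sen}}.$$

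The conclusion is then a short manipulation. For any lift $y \in D_{\lambda^\Psi, V_0} \otimes E_0'$ of a class $\bar{x}$ in the quotient of~(\ref{overlined}), the quotient is $G_L$-fixed, so $\theta_{\mathrm{Sen}}(\bar{x}) = 0$ and $\theta_{\mathrm{Sen}}(y)$ lies in the sub $D_{\lambda^\Psi, K^p}^{\Psi-\mathrm{la}, (0,k)_{\sigma\neq\tau}, (1+k,-1)_\tau}(k+1)^{G_L}$. As the sub is itself $G_L$-fixed, $\theta_{\mathrm{Sen}}$ vanishes on it, so $\theta_{\mathrm{Sen}}^2(y) = 0$. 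Applying the identity to $y$ then gives
$$N''_{\lambda,\tau}(\bar{x}) \;=\; (\Omega_\tau - \chi_{\lambda_\Psi}(\Omega_\tau))(y) \;=\; 2(k+1)\,\theta_{\mathrm{Sen}}(y) \;=\; 2(k+1)\, N'_{\lambda,\tau}(\bar{x}),$$
which is the claim.

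The main obstacle will be the careful bookkeeping of the several conventions at play: the Harish-Chandra isomorphism defining $\chi_{\lambda_\Psi}$ (which differs from the character $\chi_{(a,b)}$ introduced after Lemma~\ref{infinitesimal character and horizontal} by an $\mathrm{Ad}(w_0)$-twist), the horizontal versus the natural $\mathfrak{g}_\Psi$-action on $D_{\lambda^\Psi, K^p}^{\Psi-\mathrm{la}}$, and the interplay between the central-character contribution $\psi_{-1,\tau}$ and the Tate twist $(k+1)$ appearing on the sub. Once these are aligned, Lemma~\ref{relation} does the real work and the proposition falls out of expanding a single square.
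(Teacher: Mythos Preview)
There is a genuine gap in your argument: Lemma~\ref{relation} does \emph{not} hold on $D_{\lambda^\Psi,V_0}\otimes E_0'$. That identity is proved on $D_{\lambda^\Psi,K^p}^{\Psi\text{-}\mathrm{la},(0,k)_{\sigma\neq\tau}}$ by combining $\theta_{\mathrm{Sen}}=\theta_{\mathfrak h}\bigl(\begin{smallmatrix}-1&0\\0&0\end{smallmatrix}\bigr)$ with Lemma~\ref{infinitesimal character and horizontal}, so it is really a relation between the horizontal $\mathfrak h$-action and the \emph{natural} $Z(U(\mathfrak g_\tau))$-action on that specific sheaf. Once you tensor with $\mathrm{Sym}^{k+1}V_\tau$, the natural $\mathfrak g_\tau$-action (hence $z_\tau$ and $\Omega_\tau$) acts nontrivially on the $V_\tau$-factor, while the Sen operator does not see it at all; the polynomial relation is therefore destroyed. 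In fact your own argument exposes this: $E_0'$ is by definition a subspace of $G_L$-invariants, so $\theta_{\mathrm{Sen}}$ is identically zero there, not merely square-zero. Your displayed identity would then force $\Omega_\tau-\chi_{\lambda_\Psi}(\Omega_\tau)=0$ on $E_0'$, contradicting the fact that the extension~(\ref{overlined}) is nontrivial (equivalently, contradicting Proposition~\ref{uptoscalar}, which shows $N''_{\lambda,\tau}\neq 0$).

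A second, related error is the identification $N'_{\lambda,\tau}(\bar x)=\theta_{\mathrm{Sen}}(y)$ for a lift $y\in E_0'$. By definition $N'_{\lambda,\tau}$ is induced by the Sen operator on $E_\lambda$, not on $E_0'$; these two spaces share the same sub and quotient but are different extensions, and it is precisely on $E_\lambda$ that $\theta_{\mathrm{Sen}}$ is nonzero (nilpotent of order two). The paper's proof takes a lift $f_1\in D_{\lambda^\Psi,V_0}\otimes E_\lambda$ (so that $N'_{\lambda,\tau}(f)=\theta_{\mathrm{Sen}}(f_1)$) and a lift $f_2\in D_{\lambda^\Psi,V_0}\otimes E_0'$ (so that $N''_{\lambda,\tau}(f)=(\Omega_\tau-\chi_{\lambda_\Psi}(\Omega_\tau))(f_2)$), and then applies Lemma~\ref{relation} to the \emph{difference} $f_1-f_2$, which lands in the common subspace $D_{\lambda^\Psi,K^p}^{\Psi\text{-}\mathrm{la},(0,k)_{\sigma\neq\tau},\psi_{k,\tau}}(k+1)$ where the lemma is valid (with the Tate twist absorbed into a shift of $\theta_{\mathrm{Sen}}$). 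The computation then uses $\theta_{\mathrm{Sen}}(f_2)=0$, $\theta_{\mathrm{Sen}}^2(f_1)=0$, and $(\Omega_\tau-\chi_{\lambda_\Psi}(\Omega_\tau))(f_1)=0$ to extract the stated scalar. Your instinct to expand the square in Lemma~\ref{relation} is exactly right; what is missing is the passage through two different lifts and the application of the identity to their difference rather than to a single lift in $E_0'$.
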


\begin{proof}

For $f \in D_{{\lambda^{\Psi}}, K^p}^{\Psi-\mathrm{la}, (0, k)_{\sigma}, G_L} \otimes_{\mathcal{O}_{V_0}} \omega_{\tau, V_0}^{2k + 2} \otimes_{\mathcal{O}_{V_0}} (\wedge^2D_{\tau, V_0})^{k + 1}$, we take lifts $f_1 \in D_{\lambda^{\Psi}, V_0} \otimes_{\mathcal{O}_{V_0}} E_{\lambda}$ and $f_2 \in D_{\lambda^{\Psi}, V_0} \otimes_{\mathcal{O}_{V_0}} E_0'$ of $f$. Then $N_{\lambda, \tau}'(f) = \theta_{\mathrm{Sen}}(f_1)$, $N_{\lambda, \tau}''(f) = (\Omega_{\tau} - \chi_{\lambda_{\Phi}}(\Omega_{\tau}))(f_2)$ and $f_1 - f_2 \in D_{{\lambda^{\Psi}}, K^p}^{\Psi-\mathrm{la}, (0, k)_{\sigma \neq \tau}, \psi_{k, \tau}}(k+1)$. By Lemma \ref{relation}, we obtain \begin{equation}\label{relationbet}((2\theta_{\mathrm{Sen}} - k - 1)^2 - 1)(f_1 - f_2) = 2 \Omega_{\tau}(f_1 - f_2)\end{equation}. The left hand side is $((2\theta_{\mathrm{Sen}} - k - 1)^2 - 1)(f_1 - f_2) = (4\theta_{\mathrm{Sen}}^2 -4\theta_{\mathrm{Sen}}(k + 1) + (k + 1)^2 - 1)(f_1 - f_2) = (-4\theta_{\mathrm{Sen}}(k + 1) + k^2 + 2k)(f_1 - f_2)$ because $\theta_{\mathrm{Sen}}(f_2) = 0$ and $\theta_{\mathrm{Sen}}^2(f_1) = 0$. Thus the equality (\ref{relationbet}) is equivalent to $-2(k + 1)\theta_{\mathrm{Sen}}(f_1 - f_2) = (\Omega_{\tau} - (\frac{1}{2}k^2 + k))(f_1 - f_2)$. Since $\chi_{\lambda_{\Psi}}(\Omega_{\tau}) = \frac{1}{2}k^2 + k$, we obtain the result. \end{proof}

Thus Proposition \ref{2nd} follows from the following.

\begin{prop}\label{uptoscalar}

There exists $c_{\tau} \in L^{\times}$ such that $N_{\lambda, \tau}'' = c_{\tau} \overline{d}_{\tau}^{\lambda}$.

\end{prop}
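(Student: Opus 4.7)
The plan is to prove Proposition~\ref{uptoscalar} by explicit local computation on a sufficiently small open $U \in \tilde{\mathcal{B}}$ with $U \subset U_1$, mirroring the strategy of \cite{PanII} in the modular curve case. Both maps $N_{\lambda,\tau}''$ and $\overline{d}_\tau^\lambda$ have the same source and target, so it suffices to show they are proportional. First I would check that $N_{\lambda,\tau}''$ is $G(\mathbb{Q}_p)$-equivariant and $\mathcal{O}_{K^p}^{\mathrm{sm}}$-linear: the exact sequence (\ref{overlined}) is $G(\mathbb{Q}_p)$-equivariant, and $\Omega_\tau \in Z(U(\mathfrak{g}_\tau))$ commutes with the $G(\mathbb{Q}_p)$-action, in particular with smooth functions. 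Combined with the uniqueness characterization used to define $\overline{d}_\tau^\lambda$ (analogous to Proposition~\ref{local anti}), this already reduces matters to comparing the two maps on a distinguished family of test sections.

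Second, I would write down the natural lift used in defining $N_{\lambda,\tau}''$. Using the splitting of the Hodge--Tate filtration over $U \subset U_1$ given by $e_{1,\tau} \mapsto \begin{pmatrix}1\\0\end{pmatrix}_\tau$, the inclusion $\omega_{\tau,\Fl}^{2k+2}\otimes(\wedge^2 V_\tau)^{k+1} \hookrightarrow \mathrm{Sym}^{k+1}V_\tau \otimes \omega_{\tau,\Fl}^{k+1}$ produces a canonical lift of any element $f \otimes \prod_\sigma e_{1,\sigma}^k \cdot f_\tau^{k+1}e_{1,\tau}^{-2k-2}$ (where $f = \sum_i a_i\prod_\sigma (x_\sigma - x_{\sigma,n})^{i_\sigma}$ is the expansion from Corollary~\ref{citation}) into $D_{\lambda^\Psi, V_0} \otimes E_0'$. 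Concretely, this lift replaces the generator of $\omega_{\tau,\Fl}\otimes \wedge^2 V_\tau$ by the vector $\begin{pmatrix}1\\0\end{pmatrix}_\tau \in V_\tau$.

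Third, I would compute $(\Omega_\tau - \chi_{\lambda_\Psi}(\Omega_\tau))$ on this lift using $\Omega_\tau = u^+u^- + u^-u^+ + \tfrac{1}{2}h_\tau^2$ with standard generators $u^\pm, h_\tau$ of $\mathfrak{sl}_2(L)_\tau$. The action of $u^-$ annihilates $\begin{pmatrix}1\\0\end{pmatrix}_\tau$ but acts on $x_\tau$ as $\partial/\partial x_\tau$ up to a universal scalar (as in the geometric Sen operator formula recalled before Proposition~\ref{Sen operator}), while $u^+$ and $h_\tau$ interact with $\begin{pmatrix}1\\0\end{pmatrix}_\tau$ in ways that either stay inside the subspace defining the chosen lift or contribute eigenvalues matching $\chi_{\lambda_\Psi}(\Omega_\tau)$. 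Projecting modulo the image of the subobject $D_{K^p}^{\Psi-\mathrm{la},(0,k)_{\sigma\neq\tau},(1+k,-1)_\tau}(k+1)^{G_L}$ in (\ref{overlined}), the only surviving contribution comes from iterated application of $u^-$, which produces $(\partial/\partial x_\tau)^{k+1}$ acting on the power series expansion of $f$, tensored with $dx_\tau^{k+1}$ and multiplied by a combinatorial constant depending only on $k$. Comparing with the explicit formula in Lemma~\ref{anti-holomorphic}, this is exactly $c_\tau \overline{d}_\tau^\lambda$ for an explicit $c_\tau \in L^\times$.

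The main obstacle will be the bookkeeping in the third step: identifying which terms in the expansion of $\Omega_\tau$ on the explicit lift survive passage to the quotient in the exact sequence (\ref{overlined}), and verifying that the apparent contributions of $u^+$ and $\frac{1}{2}h_\tau^2$ either cancel against $\chi_{\lambda_\Psi}(\Omega_\tau)$ or factor through the kernel. A secondary issue is ensuring that the computation is independent of the choice of approximants $x_{\tau,n}$, $e_{1,\tau,n}$, $f_{\tau,n}$; this should reduce, as in the proof of Proposition~\ref{local anti} and Lemma~\ref{extension anti}, to the $G(\mathbb{Q}_p)$-equivariance already established, since the construction of $\overline{d}_\tau^\lambda$ is canonical and $N_{\lambda,\tau}''$ is intrinsically defined by $\Omega_\tau$.
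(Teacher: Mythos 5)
Your proposal is a genuinely different route from the paper's. You aim to compute $N_{\lambda,\tau}''$ outright by tracking the Casimir $\Omega_\tau - \chi_{\lambda_\Psi}(\Omega_\tau)$ on an explicit lift determined by the Hodge--Tate splitting, and then to match the result against the closed formula for $\overline{d}_\tau^\lambda$ in Lemma~\ref{anti-holomorphic}. The paper instead \emph{avoids} this computation entirely: passing to the $\mathfrak{n}$-nilpotent vectors identifies both maps as $V\otimes_L V'\otimes_L(\cdot)$ of $\mathfrak{gl}_2(L)$-equivariant maps $W_{(k,0)}^\vee\to W_{(-1,k+1)}^\vee$, and the decisive step is the representation-theoretic fact (from Verma module theory, \cite[\S 1.5, \S 3.12]{Hump}) that $W_{(-1,k+1)}^\vee$ is irreducible and the quotient of $W_{(k,0)}^\vee$ by its unique finite irreducible submodule is isomorphic to $W_{(-1,k+1)}^\vee$, hence $\dim_L\mathrm{Hom}_{\mathfrak{gl}_2(L)}(W_{(k,0)}^\vee, W_{(-1,k+1)}^\vee)=1$. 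After that, one only needs $\overline{d}_\tau^\lambda\neq 0$ (clear) and $N_{\lambda,\tau}''\neq 0$ (cited to \cite[Lemma 6.5.12]{PanII} or \cite[\S 3.12]{Hump}), with no need to compute the scalar. Your approach would in principle identify $c_\tau$ explicitly, but at the cost of precisely the bookkeeping you flag as the main obstacle. A few things you should be careful about if you execute it: (i) you must check that the lift you build actually lies in the $\chi_{\lambda_\Psi}$-generalized eigenspace $E_0'$, not merely in the ambient filtered module $\mathrm{Sym}^{k+1}V_\tau\otimes\omega_{\tau,K^p}^{k+1,\ldots}$; the Hodge--Tate splitting $e_{1,\tau}\mapsto\begin{pmatrix}1\\0\end{pmatrix}_\tau$ is only $G_L\times\mathfrak{b}_\tau$-equivariant, so this requires an argument, and any correction term will contribute to the constant; and (ii) the convention here is nonstandard, since $\mathfrak{n}^0(U)$ is generated by $\begin{pmatrix}x_\tau & x_\tau^2\\ -1 & -x_\tau\end{pmatrix}$, which at $x_\tau=0$ is $-u^-$ rather than $u^+$, so your claim that ``$u^-$ annihilates $\begin{pmatrix}1\\0\end{pmatrix}_\tau$'' does not hold in the usual normalization and needs to be reconciled with the specific coordinates used in Lemma~\ref{relation}. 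Neither point is a fatal flaw, but together they are exactly the bookkeeping the paper's Hom-space uniqueness argument was designed to avoid.
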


\begin{proof}

It suffices to compare the $C$-linear extensions of the above maps. In the following, we ignore Tate twists because we don't use Galois actions. 

Let $$W_{(k, 0)}^{\vee} := \{e_{1, \tau}^{k} \sum_{i_{\tau} \in \mathbb{Z}_{\ge 0}} a_{i_{\tau}}x_{\tau}^{i_{\tau}} \in (\omega_{\tau, \Fl} \otimes \wedge^2 V_{\tau})^{k}(U) \mid a_{i_{\tau}} \in L, a_i = 0 \mathrm{\ for \ almost \ all} \ i \},$$ $$W_{(-1, k + 1)}^{\vee} := \{f_{\tau}^{k+1}e_{1, \tau}^{-k-2} \sum_{i_{\tau} \in \mathbb{Z}_{\ge 0}} a_{i_{\tau}}x_{\tau}^{i_{\tau}} \in (\omega_{\tau, \Fl}^{-k-2} \otimes (\wedge^2 V_{\tau})^{-1})(U) \mid a_{i_{\tau}} \in L, a_i = 0 \mathrm{\ for \ almost \ all} \ i \},$$ $$V':= \{(\prod_{\sigma \neq \tau}e_{1, \sigma}^{k}) \sum_{i = (i_{\sigma})_{\sigma \neq \tau} \in \mathbb{Z}_{\ge 0}^{\Psi \setminus \{ \tau \}}} a_{i}(\prod_{\sigma \neq \tau} x_{\sigma}^{i_{\sigma}}) \in (\otimes_{\sigma \neq \tau} (\omega_{\sigma, \Fl} \otimes \wedge^2 V_{\sigma})^{k})(U) \mid a_{i} \in L, a_{i} = 0 \mathrm{\ for \ almost \ all} \ i  \}$$ and $$V := D_{\lambda^{\Psi}, K^p}^{\mathrm{sm}} \otimes (\otimes_{\sigma \neq \tau} (\omega_{\sigma, K^p}^{-k, \mathrm{sm}} \otimes (\wedge^2 D_{\tau, K^p}^{\mathrm{sm}})^{-k})) \otimes (\omega_{\tau, K^p}^{k+2, \mathrm{sm}} \otimes \wedge^2 D_{\tau, K^p}^{\mathrm{sm}}).$$ (See the discussions before Proposition \ref{mikami expansionII} for the definitions of $e_{1, \tau}$ and $f_{\tau}$.)

Then by the explicit description Proposition \ref{mikami expansionIII}, $V \otimes_L V' \otimes_L W_{(k, 0)}^{\vee}$ (resp. $V \otimes_L V' \otimes_L W_{(-1, k + 1)}^{\vee}$) is the dense subspace of $D_{{\lambda^{\Psi}}, K^p}^{\Psi-\mathrm{la}, (0, k)_{\sigma}, G_L} \otimes_{\mathcal{O}_{V_0}} \omega_{\tau, V_0}^{2k + 2} \otimes_{\mathcal{O}_{V_0}} (\wedge^2D_{\tau, V_0})^{k + 1}$ (resp. $D_{{\lambda^{\Psi}}, K^p}^{\Psi-\mathrm{la}, (0, k)_{\sigma \neq \tau}, (1+k, -1)_{\tau}}$) consisting of $\mathfrak{n}$-nilpotent vectors\footnote{We say that $v$ is a $\mathfrak{n}$-nilpotent vector if for any $X \in \mathfrak{n}$, there exist $n \in \mathbb{Z}_{>0}$ such that $X^nv = 0$.}. See the comments before Definition \ref{nilpotent} for the definition of $\mathfrak{n}$.


By the continuity of the given maps, it suffices to check the equality on the above dense subspaces. We claim that the considered maps are given by taking $V \otimes_L V' \otimes_L \ $ of certain $\mathfrak{gl}_2(L)$-equivariant maps $W_{(k, 0)}^{\vee} \rightarrow W_{(-1, k+1)}^{\vee}$. By Lemma \ref{anti} and the construction of $\overline{d}^{\lambda}_{\tau}$, under the identification $dx_{\tau} = e_{1, \tau}^{-2}f_{\tau}$, the map $\overline{d}_{\tau}^{\lambda}$ is equal to $\mathrm{id}_{V} \otimes (\frac{\partial}{\partial x_{\tau}})^{k} \otimes (dx_{\tau})^{k}$ up to a scalar of $\mathbb{Q}_p^{\times}$. On the other hand, by taking the $\mathfrak{n}$-nilpotent vectors of the exact sequence of (\ref{overlined})$\widehat{\otimes}_L C$, we obtain the exact sequence $0 \rightarrow V \otimes_L V' \otimes_L W_{(k, 0)}^{\vee} \rightarrow ((D_{\lambda^{\Psi}, V_0} \otimes_{\mathcal{O}_{V_0}} E_0') \widehat{\otimes}_L C)^{\mathfrak{n}-\mathrm{nilp}}\footnote{$\mathfrak{n}-\mathrm{nilp}$ denotes the subspace consisting of $\mathfrak{n}$-nilpotent vectors.} \rightarrow V \otimes_L V'\otimes_L W_{(-1, k + 1)}^{\vee} \rightarrow 0$ by the same calculation as Lemma \ref{derived calculation1}. By construction, the space $(D_{\lambda^{\Psi}, V_0} \otimes_{\mathcal{O}_{V_0}} E_0') \widehat{\otimes}_L C$ is equal to the $\chi_{\lambda_{\Psi}}$-generalized eigenspace of $\mathrm{Sym}^{k+1}V_{\tau} \otimes_{L} \omega_{\tau, {K^p}}^{k + 1, \Pla, (0, k)_{\sigma \neq \tau}, (0, -1)}$. Thus $((D_{\lambda^{\Psi}, V_0} \otimes_{\mathcal{O}_{V_0}} E_0') \widehat{\otimes}_L C)^{\mathfrak{n}-\mathrm{nilp}}$ is equal to $V \otimes_L V' \otimes_{L} E_{\tau}$, where $E_{\tau}$ is the $\chi_{\lambda_{\Psi}}$-generalized eigenspace of $\mathrm{Sym}^{k}V_{\tau} \otimes_L \{ e_{1, \tau} \sum_{i_{\tau} \in \mathbb{Z}_{\ge 0}} a_{i_{\tau}}x_{\tau}^{i_{\tau}} \mid a_{\tau} \in L, a_{\tau} = 0 \mathrm{\ for \ almost \ all \ } i_{\tau} \}$. By the construction of $N_{\lambda, \tau}''$, $N_{\lambda, \tau}''$ is obtained by taking $V \otimes_L V' \otimes_L $ of the map induced from the exact sequence $0 \rightarrow W_{(k, 0)}^{\vee} \rightarrow E_{\tau} \rightarrow W_{(-1, k + 1)}^{\vee} \rightarrow 0$ and the action of $(\Omega_{\tau} - \chi_{\lambda_{\Psi}}(\Omega_{\tau}))$ on $E_{\tau}$.

By \cite[{\S} 1.5]{Hump}, $W_{(-1, k+1)}^{\vee}$ is irreducible and $W_{(k, 0)}^{\vee}$ has a finite-dimensional irreducible subrepresentation and the quotient is isomorphic to $W_{(-1, k+1)}^{\vee}$. Thus we have $$\mathrm{dim}_L\mathrm{Hom}_{\mathfrak{gl}_2(L)}(W_{(k, 0)}^{\vee}, W_{(-1, k+1)}^{\vee}) = 1.$$ Therefore we obtain the result if we prove that the given maps are non-zero. Clearly, we have $\overline{d}_{\tau}^{\lambda} \neq 0$. By \cite[proof of Lemma 6.5.12]{PanII} or \cite[{\S} 3.12]{Hump}, we also have $N_{\lambda, \tau}'' \neq 0$. \end{proof}

\subsubsection{Non-parallel weight case}

In this subsection, we study non-parallel weight cases under the assumption $|\Psi| = 2$.

We put $\Psi = \{ \tau_1, \tau_2 \}$, $\lambda_i := \lambda_{\tau_i}$ and assume $\lambda_1 < \lambda_2$. Again, we consider the following diagram, whose second and third rows and the left column are exact.

 \scriptsize

\xymatrix{
 0 & \\
D_{{\lambda^{\Psi}}, K^p}^{\Psi-\mathrm{la}, (0, \lambda_i)_{i}, G_L} \ar[u] \\
 E_0((V_{\lambda^{\Psi}} \otimes_L \mathbb{B}^+_{\mathrm{dR}, \lambda_2 + 2})^{\Psi-\mathrm{la}, \chi_{\lambda_{\Psi}}, L}) \ar[u] \ar@{^{(}-_>}[r] & \ar[lu] D_{\lambda^{\Psi}, V_0} \otimes_{\mathcal{O}_{V_0}} E_0( \mathcal{O}\mathbb{B}^{+, \Psi-\mathrm{la}, \chi_{\lambda_{\Psi}}, L}_{\mathrm{dR}, \lambda_2 + 2} ) \ar[r]^{\nabla} & D_{\lambda^{\Psi}, V_0} \otimes_{\mathcal{O}_{V_0}} E_0(\mathcal{O}\mathbb{B}^{+, \Psi-\mathrm{la}, \chi_{\lambda_{\Psi}}, L}_{\mathrm{dR}, \lambda_2 + 1}) \otimes_{\mathcal{O}_{V_0}} \Omega_{V_0}^1 \\
 E_0((V_{\lambda^{\Psi}} \otimes_L \mathrm{Fil}^1\mathbb{B}^+_{\mathrm{dR}, \lambda_2 + 2})^{\Psi-\mathrm{la}, \chi_{\lambda_{\Psi}}, L}) \ar@{^{(}-_>}[r] \ar[u] & D_{\lambda^{\Psi}, V_0} \otimes_{\mathcal{O}_{V_0}} E_0(\mathrm{Fil}^{1}\mathcal{O}\mathbb{B}^{+, \Psi-\mathrm{la}, \chi_{\lambda_{\Psi}}, L}_{\mathrm{dR}, \lambda_2+2}) \ar[r]^{\nabla} \ar@{^{(}-_>}[u] & D_{\lambda^{\Psi}, V_0} \otimes_{\mathcal{O}_{V_0}} E_0(\mathcal{O}\mathbb{B}^{+, \Psi-\mathrm{la}, \chi_{\lambda_{\Psi}}, L}_{\mathrm{dR}, \lambda_2 + 1}) \otimes_{\mathcal{O}_{V_0}} \Omega_{V_0}^1 \ar[u]^{\mathrm{id}} \\
 0 \ar[u]
 }

 \normalsize

Let $\chi_{\lambda_i} : Z(U(\mathfrak{g}_{\tau_i})) \rightarrow L$ be the infinitesimal character of $\mathrm{Sym}^{\lambda_i}V_{\tau_i}$. 

In the following, we will modify the above commutative diagram so that we obtain a similar diagram as in the parallel weight case. 

\vspace{0.5 \baselineskip}

By 3 of Proposition \ref{Hodge de Rham}, we have a map $V_{\tau_1} \otimes_{L} \mathcal{O}\mathbb{B}_{\mathrm{dR}}^+ \rightarrow \mathrm{Fil}^0( D_{\tau_1, V_0} \otimes_{\mathcal{O}_{V_0}} \mathcal{O}\mathbb{B}_{\mathrm{dR}}^+)$. By composing this with the quotient map $\mathrm{Fil}^0( D_{\tau_1, V_0} \otimes_{\mathcal{O}_{V_0}} \mathcal{O}\mathbb{B}_{\mathrm{dR}}^+) \rightarrow \mathrm{Fil}^0( D_{\tau_1, V_0} \otimes_{\mathcal{O}_{V_0}} \mathcal{O}\mathbb{B}_{\mathrm{dR}}^+)/\omega_{\tau_1, V_0} \otimes (\wedge^2 D_{\tau_1, V_0} ) \otimes \mathcal{O}\mathbb{B}_{\mathrm{dR}}^+ = \omega_{\tau_1, V_0}^{-1} \otimes_{\mathcal{O}_{V_0}} \mathrm{Fil}^1 \mathcal{O}\mathbb{B}_{\mathrm{dR}}^+$ and taking $\otimes_{\mathcal{O}_{V_0}} \omega_{\tau, V_0}$, we obtain $V_{\tau_1} \otimes_{L} \mathcal{O}\mathbb{B}_{\mathrm{dR}}^+ \otimes_{\mathcal{O}_{V_0}} \omega_{\tau_1, V_0} \rightarrow \mathrm{Fil}^1 \mathcal{O}\mathbb{B}_{\mathrm{dR}}^+$. By taking $\mathrm{Sym}^{\lambda_1 + 1}$ and $\otimes D_{\lambda^{\Psi}, V_0}$, we obtain $D_{\lambda^{\Psi}, V_0} \otimes_{\mathcal{O}_{V_0}} (\mathrm{Sym}^{\lambda_1 + 1} V_{\tau_1} \otimes_{L} \mathcal{O}\mathbb{B}_{\mathrm{dR}}^+ \otimes_{\mathcal{O}_{V_0}} \omega_{\tau_1, V_0}^{\lambda_1 + 1}) \rightarrow D_{\lambda^{\Psi}, V_0} \otimes_{\mathcal{O}_{V_0}} \mathrm{Fil}^{\lambda_1 + 1} \mathcal{O}\mathbb{B}_{\mathrm{dR}}^+$ and $D_{\lambda^{\Psi}, V_0} \otimes_{\mathcal{O}_{V_0}} (\mathrm{Sym}^{\lambda_1 + 1} V_{\tau_1} \otimes_{L} \mathcal{O}\mathbb{B}_{\mathrm{dR}, \lambda_2 - \lambda_1 + 1}^+ \otimes_{\mathcal{O}_{V_0}} \omega_{\tau_1, V_0}^{\lambda_1 + 1}) \rightarrow D_{\lambda^{\Psi}, V_0} \otimes_{\mathcal{O}_{V_0}} \mathrm{Fil}^{\lambda_1 + 1} \mathcal{O}\mathbb{B}_{\mathrm{dR}, \lambda_2 + 2}^+$. We will study the induced map \tiny \begin{align} \label{1} \phi : D_{\lambda^{\Psi}, V_0} \otimes_{\mathcal{O}_{V_0}} E_0((\mathrm{Sym}^{\lambda_1 + 1} V_{\tau_1} \otimes_{L} \mathcal{O}\mathbb{B}_{\mathrm{dR}, \lambda_2 - \lambda_1 + 1}^{+, \Pla, \chi_{\lambda_2}, \chi_{(0, -1)}-\mathrm{nilp}} \otimes_{\mathcal{O}_{V_0}} \omega_{\tau_1, V_0}^{\lambda_1 + 1})^{\chi_{\lambda_1}, L}) \rightarrow D_{\lambda^{\Psi}, V_0} \otimes_{\mathcal{O}_{V_0}} E_0(\mathrm{Fil}^{1}\mathcal{O}\mathbb{B}^{+, \Psi-\mathrm{la}, \chi_{\lambda_{\Psi}}, L}_{\mathrm{dR}, \lambda_2+2}) \end{align} \normalsize

Let $M := \mathrm{Im}(D_{\lambda^{\Psi}, V_0} \otimes_{\mathcal{O}_{V_0}} E_0((\mathrm{Sym}^{\lambda_1 + 1} V_{\tau} \otimes_{L} \mathrm{Fil}^1\mathcal{O}\mathbb{B}_{\mathrm{dR}, \lambda_2 - \lambda_1 + 1}^{+, \Pla, \chi_{\lambda_2}, \chi_{(0, -1)}-\mathrm{nilp}} \otimes_{\mathcal{O}_{V_0}} \omega_{\tau_1, V_0}^{\lambda_1 + 1})^{\chi_{\lambda_1, L}}) \rightarrow D_{\lambda^{\Psi}, V_0} \otimes_{\mathcal{O}_{V_0}} E_0(\mathrm{Fil}^{\lambda_1 + 1}\mathcal{O}\mathbb{B}^{+, \Psi-\mathrm{la}, \chi_{\lambda_{\Psi}}, L}_{\mathrm{dR}, \lambda_2+2}))$ and $N := \nabla(M) + D_{\lambda^{\Psi}, V_0} \otimes_{\mathcal{O}_{V_0}} E_0(t \mathcal{O}\mathbb{B}^{+, \Psi-\mathrm{la}, \chi_{\lambda_{\Psi}}, L}_{\mathrm{dR}, \lambda_2 + 1}) \otimes_{\mathcal{O}_{V_0}} \Omega_{V_0}^1$. 

Then we have the induced map $\psi : D_{\lambda^{\Psi}, V_0} \otimes E_0((\mathrm{Sym}^{\lambda_{1}+1}V_{\tau_1} \otimes \mathcal{O}^{\Psi-\mathrm{la}, \chi_{\lambda_2}, \chi_{(0, -1)}-\mathrm{nilp}}_{K^p} \otimes_{\mathcal{O}_{V_0}} \omega_{\tau_1, V_0}^{\lambda_1 + 1})^{\chi_{\lambda_1}, L}) \rightarrow D_{\lambda^{\Psi}, V_0} \otimes_{\mathcal{O}_{V_0}} E_0(\mathrm{Fil}^{1}\mathcal{O}\mathbb{B}^{+, \Psi-\mathrm{la}, \chi_{\lambda_{\Psi}}, L}_{\mathrm{dR}, \lambda_2+2})/M$. Note that we have the following exact sequence by the same calculation as the first part of the proof of Proposition \ref{2nd}.

\begin{lem}\label{exact sequence}

We have the following natural exact sequence.

$0 \rightarrow D_{{\lambda^{\Psi}}, K^p}^{\Psi-\mathrm{la}, (1+\lambda_1, -1), (0, \lambda_2)}(\lambda_{1}+1)^{G_L} \rightarrow D_{\lambda^{\Psi}, V_0} \otimes E_0((\mathrm{Sym}^{\lambda_{1}+1}V_{\tau_1} \otimes  \mathcal{O}^{\Psi-\mathrm{la}, \chi_{\lambda_2}, \chi_{(0, -1)_1}-\mathrm{nilp}}_{K^p} \otimes_{\mathcal{O}_{V_0}} \omega_{\tau_1, V_0}^{\lambda_1 + 1})^{\chi_{\lambda_1}, L}) \rightarrow D_{{\lambda^{\Psi}}, K^p}^{\Psi-\mathrm{la}, (0, \lambda_i)_{i}, G_L} \otimes_{\mathcal{O}_{V_0}} \omega_{\tau_1, V_0}^{2\lambda_{1} + 2} \otimes_{\mathcal{O}_{V_0}} (\wedge^2D_{\tau_1, V_0})^{\lambda_1 + 1} \rightarrow 0$.

\end{lem}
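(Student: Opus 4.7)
The plan is to adapt to the non-parallel weight setting the first-part argument in the proof of Proposition \ref{2nd}, applying the Hodge--Tate splitting only at the $\tau_1$-factor. As recalled there, the Hodge--Tate exact sequence
\[
0 \to \omega_{\tau_1, K^p}^{-1, \Psi-\mathrm{la}}(1) \to V_{\tau_1} \otimes_L \mathcal{O}_{K^p}^{\Psi-\mathrm{la}} \to \omega_{\tau_1, K^p}^{\Psi-\mathrm{la}} \otimes_{\mathcal{O}_{V_0}} \wedge^2 D_{\tau_1, V_0} \to 0
\]
admits a $G_L \times \prod_{\sigma \neq \tau_1} \mathfrak{h}_\sigma \times Z(\mathfrak{g}_{\tau_1})$-equivariant, $\mathcal{O}_{K^p}^{\Psi-\mathrm{la}}$-linear splitting by lifting $e_{1, \tau_1}$ to the first standard basis vector. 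Taking its $(\lambda_1+1)$-th symmetric power and tensoring with $\omega_{\tau_1, V_0}^{\lambda_1+1}$ and with $\mathcal{O}_{K^p}^{\Psi-\mathrm{la}, \chi_{\lambda_2}, \chi_{(0,-1)_1}-\mathrm{nilp}}$ then produces an increasing filtration $\{\mathrm{Fil}_i\}_{i=0}^{\lambda_1+1}$ on the middle term of the sought-for sequence.

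Next I would identify the graded pieces via Lemma \ref{horizontal weight}: the $i$-th graded piece carries $\mathfrak{h}_{\tau_1}$-weight $(\lambda_1+1-i,\ i-1)$ with a Tate twist by $(\lambda_1+1-i)$ and is tensored by $(\wedge^2 D_{\tau_1, K^p}^{\mathrm{sm}})^i$, while retaining the $\mathfrak{h}_{\tau_2}$-weight $(0,\lambda_2)$ from the coefficient sheaf. By Lemma \ref{infinitesimal character and horizontal}, the induced $Z(\mathfrak{g}_{\tau_1})$-character on this piece is governed by this weight, and since the dot-action $W$-orbit of $\chi_{\lambda_1}$ consists precisely of $(0, \lambda_1)$ and $(\lambda_1+1, -1)$, passing to the $\chi_{\lambda_1}$-isotypic subspace kills every graded piece except $i = 0$ and $i = \lambda_1+1$. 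The $i = 0$ piece recovers $D_{\lambda^{\Psi}, K^p}^{\Psi-\mathrm{la}, (1+\lambda_1, -1), (0, \lambda_2)}(\lambda_1+1)^{G_L}$, while the $i = \lambda_1+1$ piece recovers $D_{\lambda^{\Psi}, K^p}^{\Psi-\mathrm{la}, (0, \lambda_i)_i, G_L} \otimes_{\mathcal{O}_{V_0}} \omega_{\tau_1, V_0}^{2\lambda_1+2} \otimes_{\mathcal{O}_{V_0}} (\wedge^2 D_{\tau_1, V_0})^{\lambda_1+1}$, exactly the outer terms of the claimed sequence. Tensoring this two-term associated graded with the finite projective $\mathcal{O}_{V_0}$-module $D_{\lambda^\Psi, V_0}$ preserves exactness.

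The remaining step, which is the main obstacle, is to verify that $E_0(\,\cdot\,)$ preserves this short exact sequence; this is handled exactly as in the parallel case, arguing graded-piece by graded-piece and using that both outer terms are Hodge--Tate by Proposition \ref{Sen operator} and Corollary \ref{sen operatorII}, so the natural maps on $E_0$'s stay short exact. The subtlest point is the infinitesimal character computation for the intermediate graded pieces: one must carefully combine the Sen-operator data at $\tau_1$ with the $\chi_{(0,-1)_1}$-nilpotency condition on the coefficient sheaf to rule out any spurious contributions to the $\chi_{\lambda_1}$-isotypic subspace.
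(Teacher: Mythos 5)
Your overall strategy matches the paper's own proof: filter $\mathrm{Sym}^{\lambda_1+1}V_{\tau_1}$ by the Hodge--Tate filtration, compute graded pieces via Lemma~\ref{horizontal weight}, and use the $\chi_{\lambda_1}$-isotypic condition to kill the graded pieces with $1 \le i \le \lambda_1$. However, there is a concrete error that leaves a genuine gap in the handling of the $\tau_2$-direction and of $E_0$.

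You assert that the graded pieces ``retain the $\mathfrak{h}_{\tau_2}$-weight $(0, \lambda_2)$ from the coefficient sheaf.'' This is not what the coefficient sheaf says: the superscript $\chi_{\lambda_2}$ denotes the $\chi_{\lambda_2}$-isotypic part, which by Proposition~\ref{Hodge-Tate decomposition} (and the paper's own displayed formula in the proof of this lemma) decomposes as a direct sum over $(a_2, b_2) \in \{(0,\lambda_2),\ (\lambda_2+1, -1)\}$. So after taking $(\cdot)^{\chi_{\lambda_1}}$, both the $i=0$ and $i=\lambda_1+1$ graded pieces still contain the $(\lambda_2+1, -1)$-summand at $\tau_2$, and as written your exact sequence would have strictly larger outer terms than the lemma claims. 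The missing step is that $E_0(\,\cdot\,)^{L}$ is not merely a ``preservation'' check as you describe it, but is precisely what kills the $(\lambda_2+1, -1)$-summands: by Proposition~\ref{Sen operator} the Sen operator on $D^{\Psi-\mathrm{la}, (a_1, b_1), (a_2, b_2)}_{\lambda^{\Psi}, K^p}(j)$ acts by $j - a_1 - a_2$, which equals $0$ on the surviving pieces (weight $(0,\lambda_1)$ at $\tau_1$ with twist $0$, or $(\lambda_1+1, -1)$ at $\tau_1$ with twist $\lambda_1+1$, both paired with $(0,\lambda_2)$ at $\tau_2$) but equals $-(\lambda_2+1) \neq 0$ on the $(\lambda_2+1, -1)$-summands, so $E_0$ vanishes on them. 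You need to make this calculation explicit; otherwise the identification of the outer terms is unjustified.

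A minor point of logic: you say the $\mathcal{O}^{\Psi-\mathrm{la}}_{K^p}$-linear splitting (which the paper does record in the parallel-weight discussion) ``produces an increasing filtration'' upon taking symmetric powers. A splitting produces a direct sum decomposition; the filtration used here comes directly from the unsplit Hodge--Tate exact sequence on the flag variety, exactly as in the paper's proof, and the splitting is not needed for this lemma. This does not invalidate the argument but is a conceptual confusion worth cleaning up, especially since you then speak of ``graded pieces'' rather than direct summands.
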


\begin{proof}

$\mathrm{Sym}^{\lambda_1 + 1}V_{\tau_1} \otimes_L \omega_{\tau_1, K^p}^{\lambda_1 + 1, \Psi-\mathrm{la}, \chi_{\lambda_2}, \chi_{(0, -1)_1}-\mathrm{nilp}}$ has an increasing filtration whose graded pieces are \begin{align}(\omega_{\tau_1, \Fl}^{\lambda_1 + 1 -2i} \otimes_L (\wedge^2 V_{\tau_1})^{\lambda_1 + 1 - i}) \otimes_{\mathcal{O}_{\Fl}}  \omega_{\tau_1, K^p}^{\lambda_1 + 1, \Psi-\mathrm{la}, \chi_{\lambda_2}, \chi_{(0, -1)_1}-\mathrm{nilp}} \nonumber \\
 = \oplus_{(a_2, b_2) = (0, \lambda_2), (\lambda_2 + 1, -1)} D_{\lambda^{\Psi}, K^p}^{\Psi-\mathrm{la}, (a_{2}, b_{2}), (i, \lambda_1 - i)_1-\mathrm{nilp}}(i) \otimes (\omega_{\tau_1, K^p}^{k + 1 - 2i, \mathrm{sm}} \otimes (\wedge^2 D_{\tau_1, K^p}^{\mathrm{sm}})^{\lambda_1 + 1 - i})\end{align} for $i = 0, \cdots \lambda_1 + 1$. Note that we can replace $\chi_{(0, -1)}-\mathrm{nilp}$ by $(0, -1)-\mathrm{nilp}$. (See 1 of Lemma \ref{elementary}.) Thus we obtain the desired exact sequence. \end{proof}

We will prove the following by studying graded pieces of the considered sheaves.

\begin{thm}\label{non-parallel compatibility}

1 \ The restriction of $\nabla \circ \phi$ to $$D_{\lambda^{\Psi}, V_0} \otimes_{\mathcal{O}_{V_0}} E_0((\mathrm{Sym}^{\lambda_1 + 1} V_{\tau} \otimes_{L} \mathrm{Fil}^1\mathcal{O}\mathbb{B}_{\mathrm{dR}, \lambda_2 - \lambda_1 + 1}^{+, \Pla, \chi_{\lambda_2}, \chi_{(0, -1)}-\mathrm{nilp}} \otimes_{\mathcal{O}_{V_0}} \omega_{\tau_1, V_0}^{\lambda_1 + 1})^{\chi_{\lambda_1, L}})$$ is injective. In particular, we have $E_0((V_{\lambda^{\Psi}} \otimes_L \mathrm{Fil}^1\mathbb{B}^+_{\mathrm{dR}, \lambda_2 + 2})^{\Psi-\mathrm{la}, \chi_{\lambda_{\Psi}}, L}) \cap M = 0$.

2 \  $N \cap \mathrm{Im}(\nabla) =\nabla(M)$, i.e., $(D_{\lambda^{\Psi}, V_0} \otimes_{\mathcal{O}_{V_0}} E_0(t \mathcal{O}\mathbb{B}^{+, \Psi-\mathrm{la}, \chi_{\lambda_{\Psi}}, L}_{\mathrm{dR}, \lambda_2 + 1}) \otimes_{\mathcal{O}_{V_0}} \Omega_{V_0}^1) \cap \mathrm{Im}(\nabla) \subset \nabla(M)$.

3 \ The restriction to the subspace $D_{{\lambda^{\Psi}}, K^p}^{\Psi-\mathrm{la}, (0, \lambda_2), (1+\lambda_1, -1)}(\lambda_{1}+1)^{G_L}$ of the map $\nabla \circ \psi$ is zero.

\end{thm}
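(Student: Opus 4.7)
\textbf{Proof plan for Theorem \ref{non-parallel compatibility}.}

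The plan is to reduce all three statements to computations on graded pieces with respect to the filtration on $\mathcal{O}\mathbb{B}_{\mathrm{dR}, m}^+$, using Lemma \ref{formal power} to give an explicit description of $\mathcal{O}\mathbb{B}_{\mathrm{dR}, m}^+ \otimes_{\mathbb{B}_{\mathrm{dR}, m}^+}\widehat{\mathcal{O}}$ as $\oplus_{i=0}^{m-1} \widehat{\mathcal{O}} \otimes \mathrm{Sym}^i\Omega^1$ with a very explicit connection. Since $|\Psi|=2$, the Kodaira--Spencer isomorphism splits $\Omega^1 \cong (\omega_{\tau_1}^2 \otimes \wedge^2 D_{\tau_1}) \oplus (\omega_{\tau_2}^2 \otimes \wedge^2 D_{\tau_2})$, which allows us to decompose all graded pieces into bigraded pieces indexed by a pair $(i_1, i_2)$ recording the $\tau_1$- and $\tau_2$-degrees in $\mathrm{Sym}^i\Omega^1$. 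Throughout, I will track the interaction between this bigrading, the Hodge--Tate weight grading (coming from $\omega_\tau$'s), and the infinitesimal character $\chi_{\lambda_\Psi}$.

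For statement (1), I would compute $\nabla \circ \phi$ on the bigraded pieces coming from $\mathrm{Fil}^1 \mathcal{O}\mathbb{B}_{\mathrm{dR}, \lambda_2-\lambda_1+1}^{+}$ inside the source of $\phi$. The construction of $\phi$ via $V_{\tau_1} \otimes \mathcal{O}\mathbb{B}_{\mathrm{dR}}^+ \to \omega_{\tau_1}^{-1}\otimes \mathrm{Fil}^1\mathcal{O}\mathbb{B}_{\mathrm{dR}}^+$ and then $\mathrm{Sym}^{\lambda_1+1}$ shows that a generator of the source lands in $\mathrm{Fil}^{\lambda_1+1}$ but its image in $\mathrm{gr}^{\lambda_1+1}\mathcal{O}\mathbb{B}_{\mathrm{dR}}^+$ is nonzero (it is essentially a $(\lambda_1+1)$-th power of the Kodaira--Spencer--type image). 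Restricting to the additional $\mathrm{Fil}^1$ pushes everything into $\mathrm{Fil}^{\lambda_1+2}$. Then by part (2) of Lemma \ref{formal power}, $\nabla$ is a nontrivial derivation that maps the $\tau_1$-bidegree up by one (on $\omega_{\tau_1}^2 \otimes \wedge^2 D_{\tau_1}$) or the $\tau_2$-bidegree up by one, and the explicit combinatorial formula $\sum_j f \otimes x_1 \cdots \breve{x_j} \cdots x_i \otimes x_j$ is injective on nonzero elements. The ``in particular'' follows because elements of $E_0((V_{\lambda^{\Psi}} \otimes \mathrm{Fil}^1 \mathbb{B}_{\mathrm{dR}, \lambda_2+2}^+)^{\Psi-\mathrm{la}, \chi_{\lambda_\Psi}, L})$ are $\nabla$-flat and so cannot meet $M$ outside the $\nabla$-kernel.

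For statement (2), one passes to graded pieces: an element $\nabla(x) \in N$ with $\nabla(x) \in D_{\lambda^\Psi, V_0} \otimes E_0(t\mathcal{O}\mathbb{B}_{\mathrm{dR}, \lambda_2+1}^{+,\Psi-\mathrm{la},\chi_{\lambda_\Psi},L}) \otimes \Omega^1_{V_0}$ must have $x$ lying in $\mathrm{Fil}^1$ modulo elements already in $M$ (because the explicit formula for $\nabla$ in Lemma \ref{formal power} strictly raises the filtration degree). Matching bigraded pieces and using the bijection between the relevant graded components (which is just the multiplication-by-$t$ map identified with the connection in the appropriate bidegree), we recover $x \in M$ up to correction by elements already in $M$, giving $\nabla(x) \in \nabla(M)$.

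For statement (3), the point is that $D_{\lambda^\Psi,K^p}^{\Psi-\mathrm{la}, (0,\lambda_2),(1+\lambda_1,-1)}(\lambda_1+1)^{G_L}$ is (by the exact sequence of Lemma \ref{exact sequence}) the ``purely Hodge--Tate'' sub of the source of $\phi$, coming from the sub-bundle $\omega_{\tau_1}^{-1}(1) \otimes \omega_{\tau_1}^{\lambda_1+1}$ of $V_{\tau_1}\otimes \omega_{\tau_1}^{\lambda_1+1}$. On this sub the map $\phi$ takes values in the $\mathbb{B}_{\mathrm{dR}}^+$-linear (that is, $\nabla$-flat) part of $\mathrm{Fil}^{\lambda_1+1}\mathcal{O}\mathbb{B}_{\mathrm{dR}}^+$, so $\nabla \circ \phi$ lands in $\nabla(M)$; taking the quotient defining $\psi$ kills this contribution. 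Combinatorially, this is the same mechanism as $\nabla(\tilde{x}_\tau) \in t\mathcal{O}\mathbb{B}_{\mathrm{dR}}^+$ in Proposition \ref{p-adic Legendre}.

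The main obstacle will be statement (2), bookkeeping the two simultaneous filtrations (the $t$-adic filtration on $\mathbb{B}_{\mathrm{dR}}^+$ and the $\mathrm{Fil}^{\bullet}\mathcal{O}\mathbb{B}_{\mathrm{dR}}^+$-filtration given by $(t, Y_1, \ldots, Y_d)$-powers in the local description recalled in the proof of Lemma \ref{formal power}) together with the non-parallel bigrading. Keeping track of which bigraded pieces of $N$ can arise from $\nabla$ of which bigraded pieces of $M$ requires a careful case-by-case analysis because when $\lambda_1 < \lambda_2$, the relevant bidegree $(i_1, i_2)$ can combine with $\tau_1$-raising or $\tau_2$-raising to land in the same target bidegree, but only one of the two sources is controlled by $M$; the other must be ruled out using the $\chi_{\lambda_1}$-isotypy condition on the source of $\phi$.
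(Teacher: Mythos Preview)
Your plan for parts (1) and (2) is essentially the paper's approach: the paper also passes to graded pieces of $\mathcal{O}\mathbb{B}_{\mathrm{dR}}^+$, sets up the exact sequences (\ref{2}), (\ref{4}), (\ref{6}), (\ref{7}), and proves that the induced graded map (\ref{8}) is the identity on the subspace terms and $(-\mathrm{KS})^{\lambda_1+1}$ on the quotient terms (this is Proposition \ref{compatibility difficult}, itself reduced to a direct generalization of Proposition \ref{FHT}). Injectivity in (1) and the equality in (2) then follow formally from the fact that the subspace map is an \emph{isomorphism}, which is the precise input you are gesturing at with ``bijection between the relevant graded components''. Your description is correct but vague; note in particular that for (2) you really need the isomorphism on subspaces, not merely the injectivity used for (1).

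For part (3) there is a genuine gap. Your claim that ``on this sub the map $\phi$ takes values in the $\mathbb{B}_{\mathrm{dR}}^+$-linear (that is, $\nabla$-flat) part of $\mathrm{Fil}^{\lambda_1+1}\mathcal{O}\mathbb{B}_{\mathrm{dR}}^+$'' is exactly the statement the paper is \emph{unable} to verify by local $p$-adic methods. The analogy with $\nabla(\tilde{x}_{\tau})\in t\mathcal{O}\mathbb{B}_{\mathrm{dR}}^+$ is misleading: that fact held at the level of $\mathcal{O}\mathbb{B}_{\mathrm{dR}}^+$ before passing to $\otimes_{\mathbb{B}_{\mathrm{dR}}^+}\widehat{\mathcal{O}}$, whereas here one must control the composite map $\oplus_i(\mathrm{Sym}^{\lambda_1+1}V_{\tau_1}\otimes\mathcal{O}_{K^p}\otimes\mathrm{Sym}^i\Omega^1\otimes\omega_{\tau_1}^{\lambda_1+1})\to\oplus_i\mathcal{O}_{K^p}\otimes\mathrm{Sym}^i\Omega^1\otimes\Omega^1$ after killing $t$, and the graded computation (Proposition \ref{compatibility difficult}) does not by itself force this composite to respect the bigrading; the author says so explicitly in the remark following Proposition \ref{zero map}. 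Instead, the paper proves (3) by a \emph{global} argument: the obstruction is a $GU(\mathbb{A}_{\mathbb{Q}}^{\infty})$-equivariant $\mathcal{O}_{\aS}$-linear map $\mathcal{O}_{\aS}\to\otimes_{\tau}(\omega_{\tau,\aS}^2\otimes\wedge^2 D_{\tau,\aS})^{i_{\tau}}$ with some $i_{\sigma}>0$, and any such map vanishes because the target decomposes under automorphic representations with nontrivial central character while the $GU(\mathbb{A}_{\mathbb{Q}}^{\infty})$-invariants force the central character to be trivial. Your proposal contains no hint of this global input, and the local argument you sketch does not close the gap.
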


Before proving this theorem, we see its consequences.

\begin{cor}\label{corollarynon-parallel}

(1) \ We have the following commutative diagram with exact columns, where the middle horizontal map is $\psi$ and the left column is given in Lemma \ref{exact sequence}.

\small

\xymatrix{
0 \ar[d] & 0 \ar[d] \\
D^{\Psi-\mathrm{la}, (\lambda_1 + 1, -1), (0, \lambda_2)}_{\lambda^{\Psi}, K^p}(\lambda_1 + 1)^{G_L} \ar[d] \ar[r] & E_0((V_{\lambda^{\Psi}} \otimes_L \mathrm{Fil}^1\mathbb{B}^+_{\mathrm{dR}, \lambda_2 + 2})^{\Psi-\mathrm{la}, \chi_{\lambda_{\Psi}}, L}) \ar[d] \\
D_{\lambda^{\Psi}, V_0} \otimes E_0((\mathrm{Sym}^{\lambda_{1}+1}V_{\tau_1} \otimes  \mathcal{O}^{\Psi-\mathrm{la}, (0, - 1)_{\tau_1}-\mathrm{nilp}, (0, \lambda_2)_{\tau_2}}_{K^p})^{\chi_{\lambda_1}, L}) \ar[d] \ar[r] & D_{\lambda^{\Psi}, V_0} \otimes_{\mathcal{O}_{V_0}} E_0(\mathrm{Fil}^{1}\mathcal{O}\mathbb{B}^{+, \Psi-\mathrm{la}, \chi_{\lambda_{\Psi}}, L}_{\mathrm{dR}, \lambda_2+2})/M \ar[d] \\
D^{\Psi-\mathrm{la}, (0, \lambda_i), G_L}_{\lambda^{\Psi}, K^p}\otimes \omega^{2\lambda_1 + 2}_{\tau_1, V_0} \otimes (\wedge^2 D_{\tau_1, V_0})^{\lambda_1 + 1} \ar[d] \ar[r] & D_{\lambda^{\Psi}, V_0} \otimes_{\mathcal{O}_{V_0}} E_0(\mathcal{O}\mathbb{B}^{+, \Psi-\mathrm{la}, \chi_{\lambda_{\Psi}}, L}_{\mathrm{dR}, \lambda_2 + 1}) \otimes_{\mathcal{O}_{V_0}} \Omega_{V_0}^1/N \\
0 & 
}

\normalsize

Moreover, the upper map is a section the quotient map $E_0((V_{\lambda^{\Psi}} \otimes_L \mathrm{Fil}^1\mathbb{B}_{\mathrm{dR}, \lambda_2+2}^{+})^{\Psi-\mathrm{la}, \chi_{\lambda}, L}) \twoheadrightarrow D_{\lambda^{\Psi}, K^p}^{\Psi-\mathrm{la}, (1 +\lambda_1, -1),(0, \lambda_2)}(\lambda_1 + 1)^{G_L}$. 

(2) \ The quotient $D_{\lambda^{\Psi}, V_0} \otimes_{\mathcal{O}_{V_0}} E_0(\mathcal{O}\mathbb{B}^{+, \Psi-\mathrm{la}, \chi_{\lambda_{\Psi}}, L}_{\mathrm{dR}, \lambda_2 + 1}) \otimes_{\mathcal{O}_{V_0}} \Omega_{V_0}^1/N$ is naturally a quotient module of $\oplus_{i=0}^{\lambda_2}( D_{{\lambda^{\Psi}}, K^p}^{\Psi-\mathrm{la}, (0, \lambda_{\tau})_{\tau}, G_L} \otimes_{\mathcal{O}_{V_0}} \mathrm{Sym}^i\Omega_{V_0}^1 \otimes_{\mathcal{O}_{V_0}} \Omega_{V_0}^1)$.
 
\end{cor}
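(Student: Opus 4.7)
The plan is to deduce both assertions directly from the three statements of Theorem \ref{non-parallel compatibility}, using only diagram chases and the elementary description of $E_0(\mathcal{O}\mathbb{B}^{+,\Psi-\mathrm{la},\chi_{\lambda_{\Psi}},L}_{\mathrm{dR},\lambda_2+1})$ afforded by Lemma \ref{formal power}. The two horizontal maps in (1) will be manufactured by hand from the constraints that $\nabla\circ\psi$ kills the distinguished subspace (part 3), that $\nabla^{-1}(N)\cap\mathrm{Im}(\nabla)$ sits inside $\nabla(M)$ (part 2), and that $M$ meets $\ker(\nabla)$ trivially (part 1). The assertion (2) will be a purely formal consequence of the graded decomposition of $\mathcal{O}\mathbb{B}^+_{\mathrm{dR},\lambda_2+1}$ provided by Lemma \ref{formal power}.

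For the upper horizontal arrow in (1), I start from $x\in D^{\Psi-\mathrm{la},(\lambda_1+1,-1),(0,\lambda_2)}_{\lambda^{\Psi},K^p}(\lambda_1+1)^{G_L}$, view it (via Lemma \ref{exact sequence}) as a lift in $D_{\lambda^{\Psi},V_0}\otimes E_0((\mathrm{Sym}^{\lambda_1+1}V_{\tau_1}\otimes\mathcal{O}^{\Psi-\mathrm{la},(0,-1)_{\tau_1}-\mathrm{nilp},(0,\lambda_2)_{\tau_2}}_{K^p}\otimes\omega_{\tau_1,V_0}^{\lambda_1+1})^{\chi_{\lambda_1},L})$, apply $\psi$ to produce $\psi(x)\in D_{\lambda^{\Psi},V_0}\otimes E_0(\mathrm{Fil}^1\mathcal{O}\mathbb{B}^{+,\Psi-\mathrm{la},\chi_{\lambda_{\Psi}},L}_{\mathrm{dR},\lambda_2+2})/M$, and then lift to an element $y$ in $D_{\lambda^{\Psi},V_0}\otimes E_0(\mathrm{Fil}^1\mathcal{O}\mathbb{B}^{+,\Psi-\mathrm{la},\chi_{\lambda_{\Psi}},L}_{\mathrm{dR},\lambda_2+2})$. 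Part 3 of the theorem says $\nabla(y)\in N$; since $\nabla(y)$ also lies in $\mathrm{Im}(\nabla)$, part 2 yields $m\in M$ with $\nabla(y)=\nabla(m)$, so $y-m\in\ker(\nabla)=E_0((V_{\lambda^{\Psi}}\otimes_L\mathrm{Fil}^1\mathbb{B}^+_{\mathrm{dR},\lambda_2+2})^{\Psi-\mathrm{la},\chi_{\lambda_{\Psi}},L})$. Well-definedness is where part 1 intervenes: for two admissible pairs $(y,m)$ and $(y',m')$, the difference $(y-m)-(y'-m')$ lies in $M\cap\ker(\nabla)$, which vanishes by the second clause of part 1. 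The lower horizontal arrow is just $\nabla\circ\psi$ composed with the quotient by $N$; part 3 forces it to factor through the cokernel of the left column, i.e.\ through $D^{\Psi-\mathrm{la},(0,\lambda_i),G_L}_{\lambda^{\Psi},K^p}\otimes\omega_{\tau_1,V_0}^{2\lambda_1+2}\otimes(\wedge^2 D_{\tau_1,V_0})^{\lambda_1+1}$. Commutativity of the diagram is tautological from the constructions.

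The claim that the upper map is a section of the canonical quotient $E_0((V_{\lambda^{\Psi}}\otimes_L\mathrm{Fil}^1\mathbb{B}^+_{\mathrm{dR},\lambda_2+2})^{\Psi-\mathrm{la},\chi_{\lambda_{\Psi}},L})\twoheadrightarrow D^{\Psi-\mathrm{la},(\lambda_1+1,-1),(0,\lambda_2)}_{\lambda^{\Psi},K^p}(\lambda_1+1)^{G_L}$ reduces to the observation that $m\in M\subset\mathrm{Fil}^{\lambda_1+1}$ has image zero under the projection $\mathrm{Fil}^1/\mathrm{Fil}^2\twoheadrightarrow\mathrm{gr}^1$ projected onto the $(\lambda_1+1,-1)_{\tau_1}$-summand, so the class of $y-m$ in the quotient is equal to that of $y$, which is $x$. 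Assertion (2) will then follow by noting that $D_{\lambda^{\Psi},V_0}\otimes E_0(t\mathcal{O}\mathbb{B}^{+,\Psi-\mathrm{la},\chi_{\lambda_{\Psi}},L}_{\mathrm{dR},\lambda_2+1})\otimes\Omega^1_{V_0}\subset N$, so the target in question is a quotient of $D_{\lambda^{\Psi},V_0}\otimes E_0((\mathcal{O}\mathbb{B}^+_{\mathrm{dR},\lambda_2+1}/t)^{\Psi-\mathrm{la},\chi_{\lambda_{\Psi}},L})\otimes\Omega^1_{V_0}$, and Lemma \ref{formal power} identifies this with $\oplus_{i=0}^{\lambda_2}D^{\Psi-\mathrm{la},(0,\lambda_{\tau})_{\tau},G_L}_{\lambda^{\Psi},K^p}\otimes_{\mathcal{O}_{V_0}}\mathrm{Sym}^i\Omega^1_{V_0}\otimes_{\mathcal{O}_{V_0}}\Omega^1_{V_0}$ after taking $E_0$ of the $\chi_{\lambda_{\Psi}}$-generalized eigenspace of each graded summand $\widehat{\mathcal{O}}_{K^p}\otimes\mathrm{Sym}^i\Omega^1_{V_0}$.

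The main obstacle I expect is verifying the section property, which hinges on a careful compatibility between the lifting procedure and the Hodge-de Rham grading: concretely, that the correction $m\in M$ picked to kill $\nabla(y)$ does not interfere with the $(\lambda_1+1,-1)_{\tau_1}$ component under the projection $\mathrm{Fil}^1\to\mathrm{gr}^1$. The cleanest way to manage this is presumably to track the construction at the level of the filtration pieces of $E_0(\mathrm{Fil}^1\mathcal{O}\mathbb{B}^+_{\mathrm{dR},\lambda_2+2})$ induced by the $t$-adic filtration, where $M$ sits in depth $\geq\lambda_1+1$ while the target splitting extracts the depth-$1$ piece along $\tau_1$; this separation of depths should make the cancellation transparent once the graded structure computed in Lemma \ref{formal power} (combined with the Hodge-Tate decomposition of Proposition \ref{Hodge-Tate decomposition}) is spelled out on the domain.
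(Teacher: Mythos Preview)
Your overall strategy matches the paper: both deduce the corollary from parts 1, 2, 3 of Theorem \ref{non-parallel compatibility}, with the explicit diagram chase you give being the content behind the paper's brief ``the exactness of the right column follows from 1 and 2'' and ``the commutative diagram follows from 3''. Part (2) is also handled identically (the paper writes only that it ``follows from $N\supset D_{\lambda^{\Psi},V_0}\otimes E_0(t\mathcal{O}\mathbb{B}^{+,\ldots})\otimes\Omega^1_{V_0}$'').

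The gap is in your argument for the section property. You reduce it to two claims: that the correction $m\in M$ has zero image under the quotient map, and that the class of $y$ itself is $x$. For the first you assert that $M\subset\mathrm{Fil}^{\lambda_1+1}$ while the projection extracts the ``depth-$1$'' piece, but both depths are off: in fact $M\subset\mathrm{Fil}^{\lambda_1+2}$ (since $\phi$ is $\mathcal{O}\mathbb{B}^+_{\mathrm{dR}}$-linear with image in $\mathrm{Fil}^{\lambda_1+1}$, so the image of $\mathrm{Fil}^1$ of the source lands in $\mathrm{Fil}^{\lambda_1+2}$), and the natural quotient map is projection onto $\mathrm{gr}^{\lambda_1+1}$, not $\mathrm{gr}^1$. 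With the correct depths the vanishing of $m$ is indeed immediate, so this is salvageable.

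The real issue is the second claim, which you assert (``which is $x$'') without justification and then, in your final paragraph, misdiagnose as being about $m$. Showing that $\phi(\tilde{x})$ reduces to $x$ in $\mathrm{gr}^{\lambda_1+1}$ amounts to showing that the $i=0$ piece of the graded map (\ref{8}) restricts to the \emph{identity} on the subspace $D^{\Psi-\mathrm{la},(\lambda_1+1,-1),(0,\lambda_2)}_{\lambda^{\Psi},K^p}(\lambda_1+1)^{G_L}$ of Lemma \ref{exact sequence}. This is not a formal diagram chase: it requires the fact that the map $V_{\tau_1}\otimes\omega_{\tau_1,K^p}\to\mathrm{gr}^1\mathcal{O}\mathbb{B}^+_{\mathrm{dR}}$ underlying $\phi$ restricts to the identity on the sub $\widehat{\mathcal{O}}_{K^p}(1)$, which is precisely the content of Proposition \ref{FHT}. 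The paper's one-line justification invokes exactly this: ``by the proof of Proposition \ref{FHT} and the construction of the map $\psi$''. Without that input, the section property does not follow from your filtration-depth argument.
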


\begin{proof} The exactness of the right column in the statement (1) follows from 1 and 2 of Theorem \ref{non-parallel compatibility}. Thus we have the above commutative diagram by 3 of Theorem \ref{non-parallel compatibility}. By the proof of Proposition \ref{FHT} and the construction of the map $\psi$, the upper map in the commutative diagram is a section of the natural quotient map. The statement (2) follows from $N \supset D_{\lambda^{\Psi}, V_0} \otimes_{\mathcal{O}_{V_0}} E_0(t \mathcal{O}\mathbb{B}^{+, \Psi-\mathrm{la}, \chi_{\lambda_{\Psi}}, L}_{\mathrm{dR}, \lambda_2 + 1}) \otimes_{\mathcal{O}_{V_0}} \Omega_{V_0}^1$. \end{proof}

By the above corollary, we obtain the following commutative diagram, which is very similar to the diagram in the parallel weight case and whose second and third rows and the left column are exact.

\tiny

\xymatrix{
 0 & \\
D_{{\lambda^{\Psi}}, K^p}^{\Psi-\mathrm{la}, (0, \lambda_i)_{i}, G_L} \ar[u] \\
 E_0((V_{\lambda^{\Psi}} \otimes_L \mathbb{B}^+_{\mathrm{dR}, \lambda_2 + 2})^{\Psi-\mathrm{la}, \chi_{\lambda_{\Psi}}, L}) \ar[u] \ar@{^{(}-_>}[r] & \ar[lu] D_{\lambda^{\Psi}, V_0} \otimes_{\mathcal{O}_{V_0}} E_0( \mathcal{O}\mathbb{B}^{+, \Psi-\mathrm{la}, \chi_{\lambda_{\Psi}}, L}_{\mathrm{dR}, \lambda_2 + 2} )/M \ar[r]^{\nabla} & D_{\lambda^{\Psi}, V_0} \otimes_{\mathcal{O}_{V_0}} E_0(\mathcal{O}\mathbb{B}^{+, \Psi-\mathrm{la}, \chi_{\lambda_{\Psi}}, L}_{\mathrm{dR}, \lambda_2 + 1}) \otimes_{\mathcal{O}_{V_0}} \Omega_{V_0}^1/N \\
 \oplus_{i=1,2} D_{\lambda^{\Psi}, K^p}^{\Psi-\mathrm{la}, (0, \lambda_j),(1 + \lambda_i, -1)}(\lambda_i + 1)^{G_L} \ar@{^{(}-_>}[r] \ar[u] & D_{\lambda^{\Psi}, V_0} \otimes_{\mathcal{O}_{V_0}} E_0(\mathrm{Fil}^{1}\mathcal{O}\mathbb{B}^{+, \Psi-\mathrm{la}, \chi_{\lambda_{\Psi}}, L}_{\mathrm{dR}, \lambda_2+2})/M \ar[r]^{\nabla} \ar@{^{(}-_>}[u] & D_{\lambda^{\Psi}, V_0} \otimes_{\mathcal{O}_{V_0}} E_0(\mathcal{O}\mathbb{B}^{+, \Psi-\mathrm{la}, \chi_{\lambda_{\Psi}}, L}_{\mathrm{dR}, \lambda_2 + 1}) \otimes_{\mathcal{O}_{V_0}} \Omega_{V_0}^1/N \ar@{^{(}-_>}[u] \\
 0 \ar[u]
 }

  \normalsize

Let $N_{\lambda}^0$ be the map induced by the exact sequence of the left column in the above commutative diagram and the Sen operator on $E_0((V_{\lambda^{\Psi}} \otimes_L \mathbb{B}^+_{\mathrm{dR}, \lambda_2 + 2})^{\Psi-\mathrm{la}, \chi_{\lambda_{\Psi}}, L})$. By this commutative diagram, an almost same argument as in the parallel weight case works and we can prove the following. 

\begin{cor}\label{non-parallel Fontaine}

There exists $c_1, c_2 \in L^{\times}$ such that $N_{\lambda}^0 = \oplus_{i=1, 2} c_i d^{\lambda}_{\tau_i} \circ \overline{d}^{\lambda}_{\tau_i}$.

\end{cor}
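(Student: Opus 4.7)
The plan is to mimic the proof of Theorem \ref{Fontaine} from the parallel weight case, using the modified commutative diagram provided by Corollary \ref{corollarynon-parallel} in place of the original commutative diagram from \S5.2.2. The key observation is that this modified diagram has exactly the same formal shape (two exact rows, an exact left column, and the desired corner $D^{\Psi-\mathrm{la}, (0, \lambda_i)_{i}, G_L}_{\lambda^{\Psi}, K^p}$) as the one used to define $N_{\lambda}^{0}$ in the parallel weight case, so the whole argument decouples into two analogues of Propositions \ref{1st} and \ref{2nd}, which can then be spliced together through Proposition \ref{composition}.

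First, I would construct a section $s : D^{\Psi-\mathrm{la}, (0, \lambda_i)_{i}, G_L}_{\lambda^{\Psi}, K^p} \to D_{\lambda^{\Psi}, V_0} \otimes E_0(\mathcal{O}\mathbb{B}^{+, \Psi-\mathrm{la}, \chi_{\lambda_{\Psi}}, L}_{\mathrm{dR}, \lambda_2+2})/M$ of the natural projection and show that its image under $\nabla$ lies in $\bigoplus_{i} D^{\Psi-\mathrm{la}, (0, \lambda_j), G_L}_{\lambda^{\Psi}, K^p} \otimes \omega_{\tau_i, V_0}^{2\lambda_i+2} \otimes (\wedge^{2} D_{\tau_i, V_0})^{\lambda_i+1}$, with the resulting map equal to $\pm \bigoplus_{i} d^{\lambda}_{\tau_i}$. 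For $\tau_1$, the construction of \S5.2.2 (using the lifts $\tilde x_{\tau_1}$ built from the map $l_0$ of Proposition \ref{p-adic Legendre}) still produces an honest section into $\mathcal{O}\mathbb{B}^+_{\mathrm{dR}}$; the necessary modification is that its component along $\tau_1$ must be pushed into the quotient $\mathrm{mod}\ M$, which uses precisely the extra factor $\omega_{\tau_1, V_0}^{\lambda_1+1}$ built into the map $\phi$ of (\ref{1}). The $\tau_2$ component needs no modification, since on this side the weight behaves as in the parallel case. The geometric compatibility computation then reduces to Lemma \ref{equality} applied in each direction separately.

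Next, I would define a map $N'_{\lambda}$ from $\bigoplus_{i} D^{\Psi-\mathrm{la}, (0, \lambda_j), G_L}_{\lambda^{\Psi}, K^p} \otimes \omega_{\tau_i, V_0}^{2\lambda_i+2} \otimes (\wedge^{2}D_{\tau_i, V_0})^{\lambda_i+1}$ to $\bigoplus_{i} D^{\Psi-\mathrm{la}, (0, \lambda_j),(1+\lambda_i, -1)}_{\lambda^{\Psi}, K^p}(\lambda_i+1)^{G_L}$ using the Sen operator on $D_{\lambda^{\Psi}, V_0} \otimes E_0(\mathrm{Fil}^1 \mathcal{O}\mathbb{B}^{+, \Psi-\mathrm{la}, \chi_{\lambda_{\Psi}}, L}_{\mathrm{dR}, \lambda_2+2})/M$ together with the exact left column. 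By the same Casimir computation as in Proposition \ref{uptoscalar} (via Lemma \ref{relation} applied separately to each $\mathfrak{g}_{\tau_i}$, which is legitimate because the horizontal character is fixed on the other factor by the upper row of the diagram in Corollary \ref{corollarynon-parallel}), this operator equals a nonzero scalar multiple of $\overline{d}^{\lambda}_{\tau_i}$ in each component. Nonvanishing follows from the standard one-dimensionality of $\mathrm{Hom}_{\mathfrak{gl}_2(L)}(W^{\vee}_{(\lambda_i,0)}, W^{\vee}_{(-1, \lambda_i+1)})$ together with the nontriviality of the Casimir-induced extension class, exactly as in \S5.2.2.

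The main obstacle is verifying that the $\tau_1$-component of $\nabla \circ s$ genuinely lands in the expected summand after passing to the quotient by $M$, rather than merely modulo $N$; this is exactly the content of parts 1 and 3 of Theorem \ref{non-parallel compatibility}, so invoking that result removes the obstruction. Once both analogues are in place, applying Proposition \ref{composition} verbatim (with the minor modification that one now has two scalars $c_1, c_2$, one for each place in $\Psi$) yields the identity $N^{0}_{\lambda} = \bigoplus_{i=1,2} c_i\, \overline{d}^{\lambda}_{\tau_i} \circ d^{\lambda}_{\tau_i}$, completing the proof.
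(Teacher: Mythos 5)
Your proposal follows essentially the same strategy as the paper: reduce the corollary to counterparts of Propositions~\ref{1st} and~\ref{2nd} via (the counterpart of) Proposition~\ref{composition}, using the modified commutative diagram furnished by Corollary~\ref{corollarynon-parallel}, and then splitting the Casimir computation in Proposition~\ref{uptoscalar} across the two embeddings. Two small imprecisions worth flagging. First, the ``obstacle'' you identify in the $\tau_1$-component of $\nabla\circ s$ is attributed to parts~1 and~3 of Theorem~\ref{non-parallel compatibility}, but in the paper's argument the replacement for the key relation $\nabla(\tilde{x}_{\tau})\in t\mathcal{O}\mathbb{B}_{\mathrm{dR}}$ from the parallel case is property~(2) of Corollary~\ref{corollarynon-parallel}, which follows merely from the definition of $N$ (it contains the $t$-part) together with Lemma~\ref{formal power}; parts~1--3 of Theorem~\ref{non-parallel compatibility} feed instead into property~(1) of the corollary, which supplies the exact left column needed for the $\tau_1$-component of the Proposition~\ref{2nd} analogue. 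Since Corollary~\ref{corollarynon-parallel} is a consequence of Theorem~\ref{non-parallel compatibility}, your argument still closes, but the attribution is displaced. Second, note that your final displayed formula has the composition in the order $\overline{d}^{\lambda}_{\tau_i}\circ d^{\lambda}_{\tau_i}$, matching Theorem~\ref{Fontaine}, whereas the paper's statement of this corollary writes $d^{\lambda}_{\tau_i}\circ\overline{d}^{\lambda}_{\tau_i}$; comparing degrees and weights of the source and target, your order is the consistent one, so the paper's statement appears to contain a typo that you have implicitly corrected.
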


\begin{proof} Claerly, the counterpart of Proposition \ref{composition} holds in our situation. Thus we can reduce the proof of this corollary to the counterparts of Propositions \ref{1st} and \ref{2nd}. The proof of Proposition \ref{1st} works in our situation without any change except when we used the relation $\nabla(\tilde{x}_{\tau}) \in t\mathcal{O}\mathbb{B}_{\mathrm{dR}} \otimes \Omega_{V_0}^1$ for any $\tau \in \Psi$. (See the discussions before Proposition \ref{p-adic Legendre} and before Lemma \ref{equality}.) In our situation, we have the property (2) of Corollary \ref{corollarynon-parallel}. Thus the same argument works on $D_{\lambda^{\Psi}, V_0} \otimes_{\mathcal{O}_{V_0}} E_0(\mathcal{O}\mathbb{B}^{+, \Psi-\mathrm{la}, \chi_{\lambda_{\Psi}}, L}_{\mathrm{dR}, \lambda_2 + 1}) \otimes_{\mathcal{O}_{V_0}} \Omega_{V_0}^1/N$. We recall that we proved Proposition \ref{2nd} by fixing $\tau \in \Psi$. For the embedding $\tau_2$, the proof of Proposition \ref{2nd} works in our situation without any change. For the embedding $\tau_1$, by using the commutative diagram (1) of Corollary \ref{corollarynon-parallel} instead of the commutative diagram appeared in the first part of the proof of Proposition \ref{2nd}, exactly the same argument works as in the parallel weight case. \end{proof}

By using Corollary \ref{non-parallel Fontaine}, we can also deduce the following as in the proof of Corollary \ref{parallel comparison}.

\begin{cor}\label{non-parallel comparison}

Let $\mathfrak{m}$ be a decomposed generic non-Eisenstein ideal of $\mathbb{T}^S(K^p, \mathcal{O})$ such that $\overline{\rho}_{\mathfrak{m}}(G_{F})$ is not solvable and $s_{\mathfrak{m}} := \chi_{\mathfrak{m}}^c|_{G_{\tilde{F}}} \otimes (\otimes_{\tau \in \Psi} (\rho_{\mathfrak{m}}|_{G_{\tilde{F}}})^{\tau})$ satisfies the condition that $\overline{s}_{\mathfrak{m}}$ is absolutely irreducible. Let $\varphi : \mathbb{T}^S(K^p, \mathcal{O})_{\mathfrak{m}} \rightarrow \mathcal{O}$ be an $\mathcal{O}$-morphism such that $\rho_{\varphi}|_{G_{F_w}}$ is de Rham of $p$-adic Hodge type $\lambda_w$ for any $w \mid v$ and $\chi_{\varphi}|_{G_{F_{0, v^c}}}$ is de Rham of $p$-adic Hodge type $\lambda_0$. If $\widehat{H}^d(S_{K^p}, V_{\lambda^{\Psi}})_{\mathfrak{m}}^{\Psi-\mathrm{la}}[\varphi] \neq 0$, then $\mathrm{Ker} H^d(\Fl, N)_{\mathfrak{m}}[\varphi] \neq 0$.

\end{cor}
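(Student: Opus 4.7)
The plan is to mimic the proof of Corollary \ref{parallel comparison} almost verbatim, with Corollary \ref{non-parallel Fontaine} playing the role of Theorem \ref{Fontaine}. First, I would set $s_{\varphi} := \varphi_* s_{\mathfrak{m}}$ and invoke Proposition \ref{residual irreducibility} to obtain the evaluation isomorphism
\[
s_{\varphi}^{\vee}(-d-\lambda_0) \otimes_L \mathrm{Hom}_{L[G_{\tilde{F}}]}\bigl(s_{\varphi}^{\vee}(-d-\lambda_0),\, \widehat{H}^d(S_{K^p}, V_{\lambda^{\Psi}}(-\lambda_0))_{\mathfrak{m}}^{\Psi-\mathrm{la}}[\varphi]\bigr) \;\stackrel{\sim}{\longrightarrow}\; \widehat{H}^d(S_{K^p}, V_{\lambda^{\Psi}}(-\lambda_0))_{\mathfrak{m}}^{\Psi-\mathrm{la}}[\varphi],
\]
using the irreducibility hypothesis on $\overline{s}_{\mathfrak{m}}$ and the non-solvability of $\overline{\rho}_{\mathfrak{m}}(G_F)$.

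Next, Theorem \ref{infinitesimal character} applied to the untwisted completed cohomology shows $\widehat{H}^d(S_{K^p}, V_{\lambda^{\Psi}}(-\lambda_0))_{\mathfrak{m}}^{\Psi-\mathrm{la}}[\varphi]$ lands in the $\chi_{\lambda_{\Psi}}$-isotypic part, so by the Hodge-Tate decomposition of Proposition \ref{Hodge-Tate decomposition} and the Sen operator formula in Corollary \ref{sen operatorII}, the $\theta_{\mathrm{Sen}} = 0$ subspace of this completed cohomology, after base change to $C$, is identified with $H^d(\Fl, D^{\Psi-\mathrm{la}, (0, \lambda_{\tau})_{\tau}}_{\lambda^{\Psi}, K^p})_{\mathfrak{m}}$. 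Since $\rho_{\varphi}$ and $\chi_{\varphi}$ are de Rham by hypothesis, the Galois factor $s_{\varphi}$ is Hodge-Tate in a way that ensures $(s_{\varphi}^{\vee}(-d-\lambda_0) \otimes_L C)^{\theta_{\mathrm{Sen}} = 0} \neq 0$, and consequently $H^d(\Fl, D_{\lambda^{\Psi}, K^p}^{\Psi-\mathrm{la}, (0, \lambda_{\tau})_{\tau}})_{\mathfrak{m}}[\varphi] \neq 0$.

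Now I would apply Corollary \ref{deRhamness} to the de Rham representation $s_{\varphi}$: the vanishing of its Fontaine operator implies that $H^d(\Fl, N_{\lambda,C}^0)_{\mathfrak{m}}$ vanishes on the nonzero subspace $H^d(\Fl, D_{\lambda^{\Psi}, K^p}^{\Psi-\mathrm{la}, (0, \lambda_{\tau})_{\tau}})_{\mathfrak{m}}[\varphi]$ (compatibility between $N_{\lambda}^0$ constructed from the period-sheaf side and the classical Fontaine operator on the Galois factor is provided by Lemma \ref{de Rham structure}). Hence $\mathrm{Ker}\,H^d(\Fl, N_{\lambda,C}^0)_{\mathfrak{m}}[\varphi] \neq 0$. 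Finally, Corollary \ref{non-parallel Fontaine} identifies $N_{\lambda}^0$ up to nonzero scalars with $\oplus_{i=1,2} c_i\, \overline{d}^{\lambda}_{\tau_i} \circ d^{\lambda}_{\tau_i} = N$ (after replacing $\overline{d} \circ d$ with its ordering, which is immaterial for kernels), so $\mathrm{Ker}\,H^d(\Fl, N_{\lambda,C}^0)_{\mathfrak{m}} = \mathrm{Ker}\,H^d(\Fl, N)_{\mathfrak{m}}$, giving the claim.

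I do not expect any serious obstacle here, as all of the hard work is already packaged in Corollary \ref{non-parallel Fontaine} (which itself rests on Theorem \ref{non-parallel compatibility} and the two-step filtration trick specific to $|\Psi|=2$). The one minor point to double-check is that the compatibility of the Fontaine operator defined on the period-sheaf side (Lemma \ref{de Rham structure}) with the classical Fontaine operator on $s_{\varphi}$ really does suffice to conclude vanishing on the $\varphi$-isotypic component; this follows because the map in 2 of Lemma \ref{de Rham structure} is $G_L$-equivariant and factors the Fontaine operator through $H^d(\Fl, N_{\lambda, C}^0)_{\mathfrak{m}}$, so if the former is trivial on the $\varphi$-part, the latter is too.
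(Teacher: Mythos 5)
Your proposal follows the paper's own intended proof line for line: the paper literally states that Corollary \ref{non-parallel comparison} "can also deduce... as in the proof of Corollary \ref{parallel comparison}" with Corollary \ref{non-parallel Fontaine} replacing Theorem \ref{Fontaine}, and every ingredient you cite — the evaluation isomorphism from Proposition \ref{residual irreducibility}, the reduction to the $\chi_{\lambda_\Psi}$-isotypic part via Theorem \ref{infinitesimal character}, the Hodge--Tate decomposition and Sen-operator identification from Proposition \ref{Hodge-Tate decomposition} and Corollary \ref{sen operatorII}, the de Rhamness criterion of Corollary \ref{deRhamness} fed through the compatibility in part 2 of Lemma \ref{de Rham structure}, and finally the identification $\mathrm{Ker}\,H^d(\Fl, N_{\lambda,C}^0)_{\mathfrak{m}} = \mathrm{Ker}\,H^d(\Fl, N)_{\mathfrak{m}}$ from Corollary \ref{non-parallel Fontaine} — matches the argument in the proof of Corollary \ref{parallel comparison}. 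One small point: the composition order in the statement of Corollary \ref{non-parallel Fontaine} ($d^{\lambda}_{\tau_i} \circ \overline{d}^{\lambda}_{\tau_i}$) appears to be a typographical slip for $\overline{d}^{\lambda}_{\tau_i} \circ d^{\lambda}_{\tau_i}$ — the sources and targets only compose in the latter order, matching $N$ and the parallel-weight Theorem \ref{Fontaine} — and you correctly read through this; just be aware that the fix is in the ordering of the composition itself, not merely in which composition's kernel one takes.
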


\paragraph{Proof of Theorem \ref{non-parallel compatibility}.} \

\vspace{0.5 \baselineskip}

We recall that we have constructed the map \begin{align}\phi : \small D_{\lambda^{\Psi}, V_0} \otimes_{\mathcal{O}_{V_0}} E_0((\mathrm{Sym}^{\lambda_1 + 1} V_{\tau_1} \otimes_{L} \mathcal{O}\mathbb{B}_{\mathrm{dR}, \lambda_2 - \lambda_1 + 1}^{+, \Pla, \chi_{\lambda_2}, \chi_{(0, -1)}-\mathrm{nilp}} \otimes_{\mathcal{O}_{V_0}} \omega_{\tau_1, V_0}^{\lambda_1 + 1})^{\chi_{\lambda_1}, L}) \nonumber \\
\rightarrow D_{\lambda^{\Psi}, V_0} \otimes_{\mathcal{O}_{V_0}} E_0(\mathrm{Fil}^{1}\mathcal{O}\mathbb{B}^{+, \Psi-\mathrm{la}, \chi_{\lambda_{\Psi}}, L}_{\mathrm{dR}, \lambda_2+2}) - (\ref{1}) \nonumber \end{align} before Lemma \ref{exact sequence}.

\vspace{0.5 \baselineskip}

In the following, we study graded pieces of both sides of (\ref{1}). 

First, we study the right hand side of (\ref{1}). We have a natural filtration on $D_{\lambda^{\Psi}, V_0} \otimes_{\mathcal{O}_{V_0}} E_0(\mathrm{Fil}^{1}\mathcal{O}\mathbb{B}^{+, \Psi-\mathrm{la}, \chi_{\lambda_{\Psi}}, L}_{\mathrm{dR}, \lambda_2+2})$ whose graded pieces are $D_{\lambda^{\Psi}, V_0} \otimes_{\mathcal{O}_{V_0}} E_0(\mathcal{O}_{K^p}^{\Pla, \chi_{\lambda_{\Psi}}, L}(j)) \otimes_{\mathcal{O}_{V_0}} \mathrm{Sym}^{i - j} \Omega_{V_0}$, where $i = 1, \cdots, \lambda_2 + 1$ and $0 \le j \le i$.  Note that for these $i, j$, when $D_{\lambda^{\Psi}, V_0} \otimes_{\mathcal{O}_{V_0}} E_0(\mathcal{O}_{K^p}^{\Pla, \chi_{\lambda_{\Psi}}, L}(j)) \otimes_{\mathcal{O}_{V_0}} \mathrm{Sym}^{i - j} \Omega_{V_0}^1 \neq 0$, we have $j = 0, \lambda_1 + 1$ or $\lambda_2 + 1$ and $D_{\lambda^{\Psi}, V_0} \otimes_{\mathcal{O}_{V_0}} E_0(\mathcal{O}_{K^p}^{\Pla, \chi_{\lambda_{\Psi}}, L}(j)) \otimes_{\mathcal{O}_{V_0}} \mathrm{Sym}^{i - j} \Omega_{V_0}^1$ is equal to $D_{\lambda^{\Psi}, V_0} \otimes_{\mathcal{O}_{V_0}} \mathcal{O}_{K^p}^{\Pla, (0, \lambda_i), L} \otimes_{\mathcal{O}_{V_0}} \mathrm{Sym}^{i} \Omega_{V_0}$, $D_{\lambda^{\Psi}, V_0} \otimes_{\mathcal{O}_{V_0}} \mathcal{O}_{K^p}^{\Pla, (\lambda_1 + 1, -1), (0, \lambda_2)}(\lambda_1 + 1)^{G_L} \otimes_{\mathcal{O}_{V_0}} \mathrm{Sym}^{i - \lambda_1 - 1} \Omega_{V_0}$ or $D_{\lambda^{\Psi}, V_0} \otimes_{\mathcal{O}_{V_0}} \mathcal{O}_{K^p}^{\Pla, (0, \lambda_1), (\lambda_2 + 1, -1)}(\lambda_2 + 1)^{G_L}$.

Thus we have the following exact sequences. 

\begin{align}\label{2}
 0 & \rightarrow  D_{\lambda^{\Psi}, K^p}^{\Pla, (\lambda_1 + 1, -1), (0, \lambda_2)}(\lambda_1 + 1)^{G_L} \otimes_{\mathcal{O}_{V_0}} \mathrm{Sym}^{i} \Omega_{V_0}^1 \rightarrow D_{\lambda^{\Psi}, V_0} \otimes_{\mathcal{O}_{V_0}} E_0(\mathrm{gr}^{i + \lambda_1 + 1}\mathcal{O}\mathbb{B}^{+, \Psi-\mathrm{la}, \chi_{\lambda_{\Psi}}, L}_{\mathrm{dR}}) \nonumber \\
  & \rightarrow D_{\lambda^{\Psi}, K^p}^{\Pla, (0, \lambda_1), (0, \lambda_2)} \otimes_{\mathcal{O}_{V_0}} \mathrm{Sym}^{i + \lambda_1 + 1} \Omega_{V_0}^1 \rightarrow 0
\end{align} for any $0 \le i \le \lambda_2 -\lambda_1 -1$.


\begin{align} \label{4}& 0  \rightarrow  D_{\lambda^{\Psi}, K^p}^{\Pla, (\lambda_1 + 1, -1), (0, \lambda_2)}(\lambda_1 + 1)^{G_L} \otimes_{\mathcal{O}_{V_0}} \mathrm{Sym}^{\lambda_2 -\lambda_1} \Omega_{V_0}^1 \rightarrow \nonumber \\
& D_{\lambda^{\Psi}, V_0} \otimes_{\mathcal{O}_{V_0}} E_0(\mathrm{gr}^{\lambda_2 + 1}\mathcal{O}\mathbb{B}^{+, \Psi-\mathrm{la}, \chi_{\lambda_{\Psi}}, L}_{\mathrm{dR}})/(D_{\lambda^{\Psi}, V_0} \otimes_{\mathcal{O}_{V_0}} \mathcal{O}_{K^p}^{\Pla, (0, \lambda_1), (\lambda_2 + 1, -1)}(\lambda_2 + 1)^{G_L}) \rightarrow \nonumber \\
& \ \ \ \ \ \ \ \ \ \ \ \ \ \ \ \ \ \ \  \ D_{\lambda^{\Psi}, K^p}^{\Pla, (0, \lambda_1), (0, \lambda_2)} \otimes_{\mathcal{O}_{V_0}} \mathrm{Sym}^{\lambda_2 + 1} \Omega_{V_0}^1 \rightarrow 0.\end{align}

\vspace{0.5 \baselineskip}

We also have the equality \begin{equation}\label{5}D_{\lambda^{\Psi}, V_0} \otimes_{\mathcal{O}_{V_0}} E_0(\mathrm{gr}^{i}\mathcal{O}\mathbb{B}^{+, \Psi-\mathrm{la}, \chi_{\lambda_{\Psi}}, L}_{\mathrm{dR}}) = D_{\lambda^{\Psi}, K^p}^{\Pla, (0, \lambda_1), (0, \lambda_2)} \otimes_{\mathcal{O}_{V_0}} \mathrm{Sym}^{i} \Omega_{V_0}^1 \end{equation} when $1 \le i \le \lambda_1$.

\vspace{0.5 \baselineskip}

Moreover by taking $\otimes_{\mathcal{O}_{V_0}} \Omega_{V_0}^1$, we obtain the following from (\ref{2}).

\begin{align}\label{6} & 0 \rightarrow D_{\lambda^{\Psi}, K^p}^{\Pla, (\lambda_1 + 1, -1), (0, \lambda_2)}(\lambda_1 + 1)^{G_L} \otimes_{\mathcal{O}_{V_0}} \mathrm{Sym}^{i-1} \Omega_{V_0}^1 \otimes_{\mathcal{O}_{V_0}} \Omega_{V_0}^1 \rightarrow \nonumber \\
&  D_{\lambda^{\Psi}, V_0} \otimes_{\mathcal{O}_{V_0}} E_0(\mathrm{gr}^{i + \lambda_1}\mathcal{O}\mathbb{B}^{+, \Psi-\mathrm{la}, \chi_{\lambda_{\Psi}}, L}_{\mathrm{dR}}) \otimes_{\mathcal{O}_{V_0}} \Omega_{V_0}^1 \rightarrow D_{\lambda^{\Psi}, K^p}^{\Pla, (0, \lambda_1), (0, \lambda_2)} \otimes_{\mathcal{O}_{V_0}} \mathrm{Sym}^{i + \lambda_1} \Omega_{V_0}^1 \otimes_{\mathcal{O}_{V_0}} \Omega_{V_0}^1 \rightarrow 0 \end{align} for any $1 \le i \le \lambda_2 -\lambda_1$.

\vspace{0.5 \baselineskip}

Note that the map $\nabla : D_{\lambda^{\Psi}, V_0} \otimes_{\mathcal{O}_{V_0}} E_0(\mathrm{Fil}^{1}\mathcal{O}\mathbb{B}^{+, \Psi-\mathrm{la}, \chi_{\lambda_{\Psi}}, L}_{\mathrm{dR}, \lambda_2+2}) \rightarrow D_{\lambda^{\Psi}, V_0} \otimes_{\mathcal{O}_{V_0}} E_0(\mathcal{O}\mathbb{B}^{+, \Psi-\mathrm{la}, \chi_{\lambda_{\Psi}}, L}_{\mathrm{dR}, \lambda_2+1}) \otimes_{\mathcal{O}_{V_0}} \Omega_{V_0}^1$ induces the following commutative diagram for any $0 \le i \le \lambda_2 -\lambda_1 -1$ whose upper maps and lower maps are induced from $\mathrm{Sym}^i\Omega_{V_0}^1 \hookrightarrow \mathrm{Sym}^{i-1}\Omega_{V_0}^1 \otimes_{\mathcal{O}_{V_0}} \Omega_{V_0}^1, \ x_1 \cdots x_i \mapsto \sum_{j = 1}^i x_1 \cdots \check{x_j} \cdots x_{i} \otimes x_j$. (We put $\mathrm{Sym}^{-1}\Omega_{V_0}^1 = \mathcal{O}_{V_0}$.)

\small

\xymatrix{
0 \ar[d] & 0 \ar[d] \\
D_{\lambda^{\Psi}, K^p}^{\Pla, (\lambda_1 + 1, -1), (0, \lambda_2)}(\lambda_1 + 1)^{G_L} \otimes_{\mathcal{O}_{V_0}} \mathrm{Sym}^{i} \Omega_{V_0}^1 \ar[d] \ar@{^{(}-_>}[r] & D_{\lambda^{\Psi}, K^p}^{\Pla, (\lambda_1 + 1, -1), (0, \lambda_2)}(\lambda_1 + 1)^{G_L} \otimes_{\mathcal{O}_{V_0}} \mathrm{Sym}^{i-1} \Omega_{V_0}^1 \otimes_{\mathcal{O}_{V_0}} \Omega_{V_0}^1 \ar[d] \\
 D_{\lambda^{\Psi}, V_0} \otimes_{\mathcal{O}_{V_0}} E_0(\mathrm{gr}^{i + \lambda_1 + 1}\mathcal{O}\mathbb{B}^{+, \Psi-\mathrm{la}, \chi_{\lambda_{\Psi}}, L}_{\mathrm{dR}}) \ar[d] \ar@{^{(}-_>}[r] & \ar[d] D_{\lambda^{\Psi}, V_0} \otimes_{\mathcal{O}_{V_0}} E_0(\mathrm{gr}^{i + \lambda_1}\mathcal{O}\mathbb{B}^{+, \Psi-\mathrm{la}, \chi_{\lambda_{\Psi}}, L}_{\mathrm{dR}}) \otimes_{\mathcal{O}_{V_0}} \Omega_{V_0}^1 \\
D_{\lambda^{\Psi}, K^p}^{\Pla, (0, \lambda_1), (0, \lambda_2)} \otimes_{\mathcal{O}_{V_0}} \mathrm{Sym}^{i + \lambda_1 + 1} \Omega_{V_0}^1 \ar@{^{(}-_>}[r] \ar[d] & D_{\lambda^{\Psi}, K^p}^{\Pla, (0, \lambda_1), (0, \lambda_2)} \otimes_{\mathcal{O}_{V_0}} \mathrm{Sym}^{i + \lambda_1} \Omega_{V_0}^1 \otimes_{\mathcal{O}_{V_0}} \Omega_{V_0}^1 \ar[d] \\
0 & 0
}

\normalsize

Moreover, we have the following commutative diagram. (We put $X := D_{\lambda^{\Psi}, V_0} \otimes_{\mathcal{O}_{V_0}} E_0(\mathrm{gr}^{\lambda_2 + 1}\mathcal{O}\mathbb{B}^{+, \Psi-\mathrm{la}, \chi_{\lambda_{\Psi}}, L}_{\mathrm{dR}})$ and $Y := D_{\lambda^{\Psi}, V_0} \otimes_{\mathcal{O}_{V_0}} \mathcal{O}_{K^p}^{\Pla, (0, \lambda_1), (\lambda_2 + 1, -1)}(\lambda_2 + 1)^{G_L}$.)

\footnotesize

\xymatrix{
0 \ar[d] & 0 \ar[d] \\
D_{\lambda^{\Psi}, K^p}^{\Pla, (\lambda_1 + 1, -1), (0, \lambda_2)}(\lambda_1 + 1)^{G_L} \otimes_{\mathcal{O}_{V_0}} \mathrm{Sym}^{\lambda_2 - \lambda_1} \Omega_{V_0}^1 \ar[d] \ar@{^{(}-_>}[r] & D_{\lambda^{\Psi}, K^p}^{\Pla, (\lambda_1 + 1, -1), (0, \lambda_2)}(\lambda_1 + 1)^{G_L} \otimes_{\mathcal{O}_{V_0}} \mathrm{Sym}^{\lambda_2 - \lambda_1 - 1} \Omega_{V_0}^1 \otimes_{\mathcal{O}_{V_0}} \Omega_{V_0}^1 \ar[d] \\
X/Y \ar[d] \ar@{^{(}-_>}[r] & D_{\lambda^{\Psi}, V_0} \otimes_{\mathcal{O}_{V_0}} E_0(\mathrm{gr}^{\lambda_2}\mathcal{O}\mathbb{B}^{+, \Psi-\mathrm{la}, \chi_{\lambda_{\Psi}}, L}_{\mathrm{dR}}) \otimes_{\mathcal{O}_{V_0}} \Omega_{V_0}^1 \ar[d] \\
D_{\lambda^{\Psi}, K^p}^{\Pla, (0, \lambda_1), (0, \lambda_2)} \otimes_{\mathcal{O}_{V_0}} \mathrm{Sym}^{\lambda_2 + 1} \Omega_{V_0}^1 \ar[d] \ar@{^{(}-_>}[r] & D_{\lambda^{\Psi}, K^p}^{\Pla, (0, \lambda_1), (0, \lambda_2)} \otimes_{\mathcal{O}_{V_0}} \mathrm{Sym}^{\lambda_2} \Omega_{V_0}^1 \otimes_{\mathcal{O}_{V_0}} \Omega_{V_0}^1 \ar[d] \\
0 & 0
}

\normalsize

\vspace{0.5 \baselineskip}

Next, we study the left hand side of (\ref{1}). We have a natural filtration on $$D_{\lambda^{\Psi}, V_0} \otimes_{\mathcal{O}_{V_0}} (\mathrm{Sym}^{\lambda_1 + 1} V_{\tau_1} \otimes_{L} \mathcal{O}\mathbb{B}_{\mathrm{dR}, \lambda_2 - \lambda_1 + 1}^{+, \Pla, \chi_{\lambda_1}, \chi_{(0, -1)}-\mathrm{nilp}} \otimes_{\mathcal{O}_{V_0}} \omega_{\tau_1, V_0}^{\lambda_1 + 1})$$ whose graded pieces are $D_{\lambda^{\Psi}, V_0} \otimes_{\mathcal{O}_{V_0}} (\mathrm{Sym}^{\lambda_1 + 1}V_{\tau_1} \otimes_L (\mathcal{O}_{K^p}^{\Pla, \chi_{\lambda_2}, \chi_{(0, -1)}-\mathrm{nilp}}(j) \otimes_{\mathcal{O}_{V_0}} \mathrm{Sym}^{i - j} \Omega_{V_0}^1 \otimes_{\mathcal{O}_{V_0}} \omega_{\tau_1, V_0}^{\lambda_1 + 1}))$, where $i = 0, \cdots, \lambda_2 - \lambda_1$ and $0 \le j \le i$. Moreover, $$D_{\lambda^{\Psi}, V_0} \otimes_{\mathcal{O}_{V_0}} (\mathrm{Sym}^{\lambda_1 + 1}V_{\tau_1} \otimes_L (\mathcal{O}_{K^p}^{\Pla, \chi_{\lambda_2}, \chi_{(0, -1)}-\mathrm{nilp}}(j) \otimes_{\mathcal{O}_{V_0}} \mathrm{Sym}^{i - j} \Omega_{V_0}^1 \otimes_{\mathcal{O}_{V_0}} \omega_{\tau_1, V_0}^{\lambda_1 + 1}))$$ has a natural filtration whose graded pieces are $$(\omega_{\tau_1, \Fl}^{-k} \otimes_{\mathcal{O}_{\Fl}} (\omega_{\tau_1, \Fl}^{\lambda_1+ 1 - k} \otimes_L (\wedge^2 V_{\tau_1})^{\lambda_1+ 1 - k})) \otimes_{\mathcal{O}_{\Fl}} (D_{\lambda^{\Psi}, K^p}^{\Pla, \chi_{\lambda_2}, \chi_{(0, -1)}-\mathrm{nilp}}(j) \otimes_{\mathcal{O}_{V_0}} \mathrm{Sym}^{i - j} \Omega_{V_0}^1 \otimes_{\mathcal{O}_{V_0}} \omega_{\tau_1, V_0}^{\lambda_1 + 1})$$
$$= \oplus_{(a_2, b_2)} D_{\lambda^{\Psi}, K^p}^{\Psi-\mathrm{la}, (k, \lambda_1 - k)-\mathrm{nilp}, (a_2, b_2)}(j+k) \otimes (\omega_{\tau_1, K^p}^{\lambda_1+ 1 - 2k, \mathrm{sm}} \otimes (\wedge^2 D_{\tau_1, K^p}^{\mathrm{sm}})^{\lambda_1+ 1 - k}) \otimes \mathrm{Sym}^{i - j} \Omega_{V_0}^1,$$ where $(a_2, b_2)$ runs through $(0, \lambda_2), (\lambda_2 + 1, -1)$ and $k = 0, \cdots, \lambda_1 + 1$. Note that by 1 of Lemma \ref{elementary}, we can replace $\chi_{(0, -1)}-\mathrm{nilp}$ by $(0, -1)-\mathrm{nilp}$.

Thus the $\chi_{\lambda_1}$-generalized eigenspace of $$((\omega_{\tau_1, \Fl}^{-k} \otimes_{\mathcal{O}_{\Fl}} (\omega_{\tau_1, \Fl}^{\lambda_1+ 1 - k} \otimes_L (\wedge^2 V_{\tau_1})^{\lambda_1+ 1 - k})) \otimes_{\mathcal{O}_{\Fl}} (D_{\lambda^{\Psi}, K^p}^{\Pla, \chi_{\lambda_2}, \chi_{(0, -1)}-\mathrm{nilp}}(j) \otimes_{\mathcal{O}_{V_0}} \mathrm{Sym}^{i - j} \Omega_{V_0}^1 \otimes_{\mathcal{O}_{V_0}} \omega_{\tau_1, V_0}^{\lambda_1 + 1}))$$ is nonzero if and only if $k = 0, \lambda_1 + 1$ and in these cases, this is equal to $$\oplus_{(a_2, b_2)} (\omega_{\tau_1, K^p}^{\lambda_1+ 1, \mathrm{sm}} \otimes_{\mathcal{O}_{K^p}^{\mathrm{sm}}} (\wedge^2 D_{\tau_1, K^p}^{\mathrm{sm}})^{\lambda_1+ 1}) \otimes_{\mathcal{O}_{K^p}^{\mathrm{sm}}} (D_{\lambda^{\Psi}, K^p}^{\Pla, (0, \lambda_1), (a_2, b_2)}(j) \otimes_{\mathcal{O}_{V_0}} \mathrm{Sym}^{i - j} \Omega_{V_0}^1)$$ or $$\oplus_{(a_2, b_2)} (\omega_{\tau_1, K^p}^{-\lambda_1 - 1, \mathrm{sm}}) \otimes_{\mathcal{O}_{K^p}^{\mathrm{sm}}} (D_{\lambda^{\Psi}, K^p}^{\Pla, (\lambda_1 + 1, -1), (a_2, b_2)}(j+\lambda_1 + 1) \otimes_{\mathcal{O}_{V_0}} \mathrm{Sym}^{i - j} \Omega_{V_0}^1).$$ 

Thus for $i, j$ as above, when $$D_{\lambda^{\Psi}, V_0} \otimes_{\mathcal{O}_{V_0}} E_0((\mathrm{Sym}^{\lambda_1 + 1} V_{\tau_1} \otimes_{L} \mathcal{O}_{K^p}^{\Pla, \chi_{\lambda_2}, \chi_{(0, -1)}-\mathrm{nilp}}(j) \otimes_{\mathcal{O}_{V_0}} \mathrm{Sym}^{i - j} \Omega_{V_0}^1 \otimes_{\mathcal{O}_{V_0}} \omega_{\tau_1, V_0}^{\lambda_1 + 1})^{\chi_{\lambda_1}, L})$$ is nonzero, we have $j = 0$ and thus we obtain $$D_{\lambda^{\Psi}, V_0} \otimes_{\mathcal{O}_{V_0}} E_0((\mathrm{Sym}^{\lambda_1 + 1} V_{\tau_1} \otimes_{L} \mathcal{O}\mathbb{B}_{\mathrm{dR}, \lambda_2 - \lambda_1 + 1}^{+, \Pla, \chi_{\lambda_2}, \chi_{(0, -1)}-\mathrm{nilp}})^{\chi_{\lambda_1}, L}) \otimes_{\mathcal{O}_{V_0}} \omega_{\tau_1, V_0}^{\lambda_1 + 1}$$ $$= \oplus_{i = 0}^{\lambda_2 - \lambda_1} D_{\lambda^{\Psi}, V_0} \otimes_{\mathcal{O}_{V_0}} E_0((\mathrm{Sym}^{\lambda_1 + 1} V_{\tau_1} \otimes_L \mathcal{O}_{K^p}^{\Pla, \chi_{\lambda_2}, \chi_{(0, -1)}-\mathrm{nilp}})^{\chi_{\lambda_1}, L}) \otimes_{\mathcal{O}_{V_0}} \mathrm{Sym}^i\Omega_{V_0}^1 \otimes_{\mathcal{O}_{V_0}} \omega_{\tau_1, V_0}^{\lambda_1 + 1}.$$

Moreover, we have the following exact sequence. (i.e. We only have the contribution of the term $(a_2, b_2) = (0, \lambda_2)$ in the above calculation.)

\begin{align}\label{7} & 0 \rightarrow D_{\lambda^{\Psi}, K^p}^{\Psi-\mathrm{la}, (1 +\lambda_1, -1),(0, \lambda_2)}(\lambda_1 + 1)^{G_L} \otimes_{\mathcal{O}_{V_0}} \mathrm{Sym}^i\Omega_{V_0}^1 \rightarrow \nonumber \\
& D_{\lambda^{\Psi}, V_0} \otimes_{\mathcal{O}_{V_0}} E_0((\mathrm{Sym}^{\lambda_1 + 1} V_{\tau} \otimes_{L} \mathrm{gr}^i\mathcal{O}\mathbb{B}_{\mathrm{dR}}^{+, \Pla, \chi_{\lambda_2}, \chi_{(0, -1)}-\mathrm{nilp}} \otimes_{\mathcal{O}_{V_0}} \omega_{\tau_1, V_0}^{\lambda_1 + 1})^{\chi_{\lambda_1, L}}) \rightarrow \nonumber \\
& D_{\lambda^{\Psi}, K^p}^{\Psi-\mathrm{la}, (0, \lambda_1),(0, \lambda_2), {G_L}} \otimes_{\mathcal{O}_{V_0}} \omega_{\tau_1, V_0}^{2\lambda_1 + 2} \otimes_{\mathcal{O}_{V_0}} (\wedge^2 D_{\tau_1, V_{0}} )^{\lambda_1 + 1} \otimes_{\mathcal{O}_{V_0}} \mathrm{Sym}^i\Omega_{V_0}^1 \rightarrow 0 \end{align}

\vspace{0.5 \baselineskip}

We recall that we have constructed the map \begin{align} \phi : D_{\lambda^{\Psi}, V_0} \otimes_{\mathcal{O}_{V_0}} E_0((\mathrm{Sym}^{\lambda_1 + 1} V_{\tau_1} \otimes_{L} \mathcal{O}\mathbb{B}_{\mathrm{dR}, \lambda_2 - \lambda_1 + 1}^{+, \Pla, \chi_{\lambda_2}, \chi_{(0, -1)}-\mathrm{nilp}} \otimes_{\mathcal{O}_{V_0}} \omega_{\tau_1, V_0}^{\lambda_1 + 1})^{\chi_{\lambda_1}, L}) \nonumber \\ 
\rightarrow D_{\lambda^{\Psi}, V_0} \otimes_{\mathcal{O}_{V_0}} E_0(\mathrm{Fil}^{\lambda_1 + 1}\mathcal{O}\mathbb{B}^{+, \Psi-\mathrm{la}, \chi_{\lambda_{\Psi}}, L}_{\mathrm{dR}, \lambda_2+2}) - (\ref{1}) \nonumber \end{align}. By taking graded pieces, we obtain a map \tiny \begin{align} \label{8} D_{\lambda^{\Psi}, V_0} \otimes_{\mathcal{O}_{V_0}} E_0((\mathrm{Sym}^{\lambda_1 + 1} V_{\tau} \otimes_{L} \mathrm{gr}^i\mathcal{O}\mathbb{B}_{\mathrm{dR}}^{+, \Pla, \chi_{\lambda_2}, \chi_{(0, -1)}-\mathrm{nilp}} \otimes_{\mathcal{O}_{V_0}} \omega_{\tau_1, V_0}^{\lambda_1 + 1})^{\chi_{\lambda_1, L}}) \rightarrow D_{\lambda^{\Psi}, V_0} \otimes_{\mathcal{O}_{V_0}} E_0(\mathrm{gr}^{\lambda_1 + i + 1}\mathcal{O}\mathbb{B}^{+, \Psi-\mathrm{la}, \chi_{\lambda_{\Psi}}, L}_{\mathrm{dR}}) \end{align} \normalsize for any $0 \le i \le \lambda_2 - \lambda_1$. Note that this is a map between the middle terms of the exact sequences of (\ref{2}), (\ref{4}) and (\ref{7}). (Precisely, the middle term of (\ref{4}) is a quotient of $D_{\lambda^{\Psi}, V_0} \otimes_{\mathcal{O}_{V_0}} E_0(\mathrm{gr}^{\lambda_2 + 1}\mathcal{O}\mathbb{B}^{+, \Psi-\mathrm{la}, \chi_{\lambda_{\Psi}}, L}_{\mathrm{dR}})$.)

\vspace{0.5 \baselineskip}

By taking the composition of this map with $D_{\lambda^{\Psi}, V_0} \otimes_{\mathcal{O}_{V_0}} E_0(\mathrm{Fil}^{1}\mathcal{O}\mathbb{B}^{+, \Psi-\mathrm{la}, \chi_{\lambda_{\Psi}}, L}_{\mathrm{dR}, \lambda_2+2}) \xrightarrow{\nabla} D_{\lambda^{\Psi}, V_0} \otimes_{\mathcal{O}_{V_0}} E_0(\mathcal{O}\mathbb{B}^{+, \Psi-\mathrm{la}, \chi_{\lambda_{\Psi}}, L}_{\mathrm{dR}, \lambda_2+1}) \otimes_{\mathcal{O}_{V_0}} \Omega_{V_0}^1$, we obtain a map \tiny \begin{align} \label{9} D_{\lambda^{\Psi}, V_0} \otimes_{\mathcal{O}_{V_0}} E_0((\mathrm{Sym}^{\lambda_1 + 1} V_{\tau} \otimes_{L} \mathrm{gr}^i\mathcal{O}\mathbb{B}_{\mathrm{dR}}^{+, \Pla, \chi_{\lambda_2}, \chi_{(0, -1)}-\mathrm{nilp}} \otimes_{\mathcal{O}_{V_0}} \omega_{\tau_1, V_0}^{\lambda_1 + 1})^{\chi_{\lambda_1, L}}) \rightarrow D_{\lambda^{\Psi}, V_0} \otimes_{\mathcal{O}_{V_0}} E_0(\mathrm{gr}^{i + \lambda_1}\mathcal{O}\mathbb{B}^{+, \Psi-\mathrm{la}, \chi_{\lambda_{\Psi}}, L}_{\mathrm{dR}}) \otimes_{\mathcal{O}_{V_0}} \Omega_{V_0}^1 \end{align} \normalsize \ for any $0 \le i \le \lambda_2 - \lambda_1$.

\vspace{0.5 \baselineskip}

Thus in order to prove Theorem \ref{non-parallel compatibility}, it suffices to prove the followings.

\begin{prop}\label{compatibility difficult} \

For any $1 \le i \le \lambda_2 - \lambda_1$, the map  
$D_{\lambda^{\Psi}, V_0} \otimes_{\mathcal{O}_{V_0}} E_0((\mathrm{Sym}^{\lambda_1 + 1} V_{\tau} \otimes_{L} \mathrm{gr}^i\mathcal{O}\mathbb{B}_{\mathrm{dR}}^{+, \Pla, \chi_{\lambda_2}, \chi_{(0, -1)}-\mathrm{nilp}} \otimes_{\mathcal{O}_{V_0}} \omega_{\tau_1, V_0}^{\lambda_1 + 1})^{\chi_{\lambda_1, L}}) \rightarrow D_{\lambda^{\Psi}, V_0} \otimes_{\mathcal{O}_{V_0}} E_0(\mathrm{gr}^{\lambda_1 + i}\mathcal{O}\mathbb{B}^{+, \Psi-\mathrm{la}, \chi_{\lambda_{\Psi}}, L}_{\mathrm{dR}}) \otimes_{\mathcal{O}_{V_0}} \Omega_{V_0}^1$ - (\ref{9}) gives the natural inclusion of the subspaces in the exact sequences (\ref{4}) and (\ref{7}) induced by $\mathrm{Sym}^i\Omega_{V_0}^1 \hookrightarrow \mathrm{Sym}^{i-1}\Omega_{V_0}^1 \otimes_{\mathcal{O}_{V_0}} \Omega_{V_0}^1, \ x_1 \cdots x_i \mapsto \sum_{j = 1}^i x_1 \cdots \check{x_j} \cdots x_{i} \otimes x_j$ and the injection of the quotient spaces induced by $(-\mathrm{KS})^{\lambda_1 + 1}$.


%




%

\end{prop}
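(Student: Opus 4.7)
The plan is to analyze the map $\phi$ defined by (\ref{1}) at the level of the natural two-step filtration whose graded pieces detect the $(\lambda_1+1,-1)$ and $(0,\lambda_1)$ horizontal weights appearing in (\ref{7}), then compose with $\nabla$ and identify the induced arrows on graded pieces via Proposition \ref{FHT} together with Lemma \ref{formal power}. By construction, $\phi$ arises from the $(\lambda_1+1)$-st symmetric power of the map
\[
V_{\tau_1} \otimes_L \mathcal{O}\mathbb{B}^+_{\mathrm{dR}} \otimes_{\mathcal{O}_{V_0}} \omega_{\tau_1, V_0} \longrightarrow \mathrm{Fil}^1 \mathcal{O}\mathbb{B}^+_{\mathrm{dR}}
\]
obtained from Proposition \ref{Hodge de Rham}(3) composed with the quotient by $\omega_{\tau_1, V_0}\otimes\wedge^2 D_{\tau_1,V_0}\otimes\mathcal{O}\mathbb{B}^+_{\mathrm{dR}}$, so the task reduces to understanding how this symmetric power interacts with the Hodge-Tate filtration.

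First I would dispose of the statement on subspaces. The summand $D^{\Pla, (1+\lambda_1,-1), (0,\lambda_2)}_{\lambda^{\Psi}, K^p}(\lambda_1+1)^{G_L} \otimes \mathrm{Sym}^i \Omega_{V_0}^1$ inside the source of (\ref{9}), under the decomposition used in the proof of Lemma \ref{exact sequence}, comes from the $k = \lambda_1+1$ graded piece, which is the image of $\omega_{\tau_1,V_0}^{-\lambda_1-1}\otimes(-)$ lifted along the section of the HT filtration given by $e_{1,\tau_1}\mapsto\smash{\tbinom{1}{0}_{\tau_1}}$. On such elements $\phi$ is the canonical inclusion (no $-\mathrm{KS}$ intervenes), and $\nabla$ differentiates only the $\mathrm{Sym}^i \Omega^1_{V_0}$ factor through Lemma \ref{formal power}, producing the standard comultiplication $\mathrm{Sym}^i\Omega^1\hookrightarrow\mathrm{Sym}^{i-1}\Omega^1\otimes\Omega^1$. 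This matches the subspace map appearing in the commutative diagram above (\ref{6}).

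The heart of the argument is the identification on the $(0,\lambda_1)$-quotient, and this is where Proposition \ref{FHT} enters decisively. The $k=0$ component, namely $D_{\lambda^{\Psi}, K^p}^{\Pla, (0,\lambda_1),(0,\lambda_2),G_L} \otimes \omega_{\tau_1,V_0}^{2\lambda_1+2} \otimes (\wedge^2 D_{\tau_1,V_0})^{\lambda_1+1} \otimes \mathrm{Sym}^i \Omega^1$, lifts to $\mathrm{Fil}^{\lambda_1+1}\mathcal{O}\mathbb{B}^+_{\mathrm{dR}}\otimes\mathrm{Sym}^i\Omega^1$ through $\mathrm{Sym}^{\lambda_1+1}$ of the subsheaf inclusion $\omega_{\tau_1,V_0}^2\otimes\wedge^2 D_{\tau_1,V_0}\hookrightarrow\mathrm{Fil}^1\mathcal{O}\mathbb{B}^+_{\mathrm{dR}}$. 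Applying $\nabla$ once and using the canonical splitting of Lemma \ref{formal power}, the composition lands in $\widehat{\mathcal{O}}\otimes\mathrm{Sym}^{i+\lambda_1}\Omega^1\otimes\Omega^1$, and the commutative square of Proposition \ref{FHT}, which realizes the comparison of the Faltings extension with the Hodge-Tate filtration via $-\mathrm{KS}$, shows that this arrow is exactly $(-\mathrm{KS})^{\lambda_1+1}$ tensored with the standard $\mathrm{Sym}^i\Omega^1\hookrightarrow\mathrm{Sym}^{i-1}\Omega^1\otimes\Omega^1$.

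The main obstacle will be the verification that iterating Proposition \ref{FHT} really yields $(-\mathrm{KS})^{\lambda_1+1}$ with the correct sign and not merely a scalar multiple. To settle this I would reduce to a local computation on some $U\in\tilde{\mathcal{B}}$ with $U\subset U_1$, using the coordinates $\tilde{x}_{\tau_1}$, $e_{1,\tau_1}$, $f_{\tau_1}$ and the power-series description of Corollary \ref{citation} and Proposition \ref{mikami expansionIII}, where $\mathrm{Sym}^{\lambda_1+1}$ of the HT map becomes explicit and the relation $\nabla(\tilde{x}_{\tau_1})\in t\mathcal{O}\mathbb{B}^+_{\mathrm{dR}}$ from Proposition \ref{p-adic Legendre} isolates the $(-\mathrm{KS})$-contribution cleanly. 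Granted this local identification, the compatibility that (\ref{9}) respects the filtrations of both (\ref{6})/(\ref{4}) and (\ref{7}) is a formal consequence of the naturality of $\mathrm{Sym}^{\lambda_1+1}$ and Griffiths transversality built into $\mathcal{O}\mathbb{B}^+_{\mathrm{dR}}$, and the proposition follows by comparing the two arrows on each graded piece of the filtration.
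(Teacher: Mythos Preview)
Your approach is correct and follows essentially the same route as the paper: identify the map on the two pieces of the Hodge--Tate filtration of $\mathrm{Sym}^{\lambda_1+1}V_{\tau_1}$, use the canonical inclusion for the $(\lambda_1+1,-1)$ subspace, and use Proposition~\ref{FHT} together with Lemma~\ref{formal power} for the $(0,\lambda_1)$ quotient. The paper makes one organizational move you do not: it first reduces to $\lambda^{\Psi}=0$ and then isolates a single ``pre-decoration'' proposition computing the map $\mathrm{Sym}^{\lambda_1+1}V_{\tau_1}\otimes\mathrm{gr}^i\mathcal{O}\mathbb{B}^+_{\mathrm{dR}}\otimes\omega_{\tau_1}^{\lambda_1+1}\to\mathrm{gr}^{\lambda_1+1+i}\mathcal{O}\mathbb{B}^+_{\mathrm{dR}}$ \emph{before} applying $E_0$, $\chi_{\lambda_2}$, $\chi_{(0,-1)}$-nilp and $\nabla$; Proposition~\ref{compatibility difficult} then drops out by composing with $\nabla$ and Lemma~\ref{formal power}. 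This separates the geometric content from the locally analytic bookkeeping and is a bit cleaner than working with the decorated objects throughout.

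One point where the paper is more efficient: your ``main obstacle'' paragraph about verifying $(-\mathrm{KS})^{\lambda_1+1}$ via a local computation with $\tilde{x}_{\tau_1}$, $e_{1,\tau_1}$, $f_{\tau_1}$ is unnecessary. The paper simply observes that the single-copy map $V_{\tau_1}\otimes\mathcal{O}_{K^p}\to\omega_{\tau_1}^{-1}\otimes\mathrm{gr}^1\mathcal{O}\mathbb{B}^+_{\mathrm{dR}}\xrightarrow{\nabla}\omega_{\tau_1}^{-1}\otimes\Omega^1$ equals the Hodge--Tate quotient followed by $-\mathrm{KS}$ (this is literally the content of the proof of Proposition~\ref{FHT}, via the $\nabla=0$ property of the image), and then takes $\mathrm{Sym}^{\lambda_1+1}$ of this identification. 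No iteration and no coordinates are needed; the sign comes for free. Also, your phrase ``$(-\mathrm{KS})^{\lambda_1+1}$ tensored with the standard $\mathrm{Sym}^i\Omega^1\hookrightarrow\mathrm{Sym}^{i-1}\Omega^1\otimes\Omega^1$'' is slightly imprecise: the composite is $(-\mathrm{KS})^{\lambda_1+1}$ into $\mathrm{Sym}^{\lambda_1+1}\Omega^1$, multiplication with $\mathrm{Sym}^i\Omega^1$, and then the comultiplication on $\mathrm{Sym}^{\lambda_1+1+i}\Omega^1$, not on $\mathrm{Sym}^i$ alone.
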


\begin{prop}\label{zero map}

The restriction to the subspace $D^{\Psi-\mathrm{la}, (\lambda_1 + 1, -1), (0, \lambda_2)}_{\lambda^{\Psi}, K^p}(\lambda_1 + 1)^{G_L}$ of the map $D_{\lambda^{\Psi}, V_0} \otimes E_0((\mathrm{Sym}^{\lambda_{1}+1}V_{\tau_1} \otimes  \mathcal{O}^{\Psi-\mathrm{la}, \chi_{(0, - 1)}-\mathrm{nilp}, \chi_{\lambda_2}}_{K^p})^{\chi_{\lambda_1}, L}) \rightarrow D_{\lambda^{\Psi}, V_0} \otimes_{\mathcal{O}_{V_0}} E_0(\mathrm{Fil}^{1}\mathcal{O}\mathbb{B}^{+, \Psi-\mathrm{la}, \chi_{\lambda_{\Psi}}, L}_{\mathrm{dR}, \lambda_2+2})/M \rightarrow D_{\lambda^{\Psi}, V_0} \otimes_{\mathcal{O}_{V_0}} E_0(\mathcal{O}\mathbb{B}^{+, \Psi-\mathrm{la}, \chi_{\lambda_{\Psi}}, L}_{\mathrm{dR}, \lambda_2 + 1}) \otimes_{\mathcal{O}_{V_0}} \Omega_{V_0}^1/N$ is zero.

\end{prop}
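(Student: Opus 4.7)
The plan is to prove Proposition \ref{zero map} by exhibiting, for each element of the subspace $D_{\lambda^{\Psi}, K^p}^{\Psi-\mathrm{la}, (\lambda_1+1, -1), (0, \lambda_2)}(\lambda_1+1)^{G_L}$, an explicit lift through $\phi$ that lies in the $\mathbb{B}^+_{\mathrm{dR}}$-subsheaf of $\mathcal{O}\mathbb{B}^+_{\mathrm{dR}}$; the vanishing of $\nabla$ on such horizontal sections then yields the claim, even before passing to the quotient by $N$. To set up the argument, I would first identify the subspace concretely: by the filtration argument in the proof of Lemma \ref{exact sequence}, it arises as the $k = \lambda_1 + 1$ (extreme) graded piece of the filtration on $\mathrm{Sym}^{\lambda_1+1}V_{\tau_1} \otimes \omega_{\tau_1, K^p}^{\lambda_1+1}$ induced by the Hodge-Tate sequence, equivalently the submodule $\omega_{\tau_1, \Fl}^{-\lambda_1-1}(\lambda_1+1) \otimes \omega_{\tau_1, V_0}^{\lambda_1+1}$, which is the $\mathrm{Sym}^{\lambda_1+1}$ of the inclusion $\omega_{\tau_1}^{-1}(1) \hookrightarrow V_{\tau_1} \otimes \widehat{\mathcal{O}}$ coming from the Hodge-Tate filtration.

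Next I would trace $\phi$ on this submodule. By construction (recalled just before Lemma \ref{exact sequence}), $\phi$ comes from $\mathrm{Sym}^{\lambda_1+1}$ of the composite $V_{\tau_1} \otimes \mathcal{O}\mathbb{B}^+_{\mathrm{dR}} \to \mathrm{Fil}^0(D_{\tau_1,V_0} \otimes \mathcal{O}\mathbb{B}^+_{\mathrm{dR}}) \twoheadrightarrow \omega_{\tau_1,V_0}^{-1} \otimes \mathrm{Fil}^1\mathcal{O}\mathbb{B}^+_{\mathrm{dR}}$, tensored with $\omega_{\tau_1, V_0}^{\lambda_1+1}$. Proposition \ref{Hodge de Rham}(3) asserts $V_{\tau_1} \otimes \mathbb{B}^+_{\mathrm{dR}} = \mathrm{Fil}^0(D_{\tau_1,V_0} \otimes \mathcal{O}\mathbb{B}^+_{\mathrm{dR}})^{\nabla=0}$, so the first arrow is the $\mathcal{O}\mathbb{B}^+_{\mathrm{dR}}$-linear extension of an embedding of $\nabla$-horizontal sections. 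Restricting to the sub-$\mathbb{B}^+_{\mathrm{dR}}$-module $V_{\tau_1} \otimes \mathbb{B}^+_{\mathrm{dR}}$ therefore lands in $D_{\tau_1, V_0} \otimes \mathbb{B}^+_{\mathrm{dR}}$, and the subsequent projection lies in $\omega_{\tau_1,V_0}^{-1} \otimes \mathrm{Fil}^1 \mathbb{B}^+_{\mathrm{dR}}$. Taking $\mathrm{Sym}^{\lambda_1+1}$ and multiplying by $\omega_{\tau_1, V_0}^{\lambda_1+1}$, the image of the subspace in question under $\phi$ lifts into the sub-$\mathbb{B}^+_{\mathrm{dR}}$-module $V_{\lambda^\Psi} \otimes \mathrm{Fil}^{\lambda_1+1}\mathbb{B}^+_{\mathrm{dR}, \lambda_2+2}$ of $D_{\lambda^\Psi, V_0} \otimes \mathrm{Fil}^{\lambda_1+1}\mathcal{O}\mathbb{B}^+_{\mathrm{dR}, \lambda_2+2}$.

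Finally, since $\nabla$ is $\mathbb{B}^+_{\mathrm{dR}}$-linear and the constant local system $V_{\lambda^\Psi}$ is $\nabla$-horizontal, the connection vanishes identically on $V_{\lambda^\Psi} \otimes \mathbb{B}^+_{\mathrm{dR}}$; hence $\nabla \circ \phi$ restricted to our subspace is already zero in $D_{\lambda^\Psi, V_0} \otimes E_0(\mathcal{O}\mathbb{B}^+_{\mathrm{dR}, \lambda_2+1}) \otimes \Omega_{V_0}^1$, and a fortiori modulo $N$. Passing to the quotient of the domain of $\phi$ by the $\mathrm{Fil}^1$-piece (thus converting $\phi$ to $\psi$ with target the quotient by $M$) preserves this vanishing, giving Proposition \ref{zero map}. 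The main obstacle I anticipate is in the second step: one must verify carefully that the comparison isomorphism of Proposition \ref{Hodge de Rham}(3) really carries the $\mathbb{B}^+_{\mathrm{dR}}$-part to the $\mathbb{B}^+_{\mathrm{dR}}$-part, and that the subsequent $\mathrm{Sym}^{\lambda_1+1}$ and projection operations preserve this property at the filtration level --- this is ultimately a direct consequence of the horizontality of the de Rham comparison, but it does require some unpacking of the construction before Lemma \ref{exact sequence} to verify cleanly.
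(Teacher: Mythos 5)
Your step 2 contains the critical error, and it is precisely at the point the paper flags as the obstacle. You write that the image of the sub-$\mathbb{B}^+_{\mathrm{dR}}$-module $V_{\tau_1}\otimes\mathbb{B}^+_{\mathrm{dR}}$ under the comparison ``lands in $D_{\tau_1,V_0}\otimes\mathbb{B}^+_{\mathrm{dR}}$'' and then that the projection lies in $\omega^{-1}_{\tau_1,V_0}\otimes\mathrm{Fil}^1\mathbb{B}^+_{\mathrm{dR}}$. The first assertion conflates the $\nabla$-horizontal part $\mathrm{Fil}^0(D_{\tau_1,V_0}\otimes\mathcal{O}\mathbb{B}^+_{\mathrm{dR}})^{\nabla=0}$ with the sub-$\mathbb{B}^+_{\mathrm{dR}}$-lattice $D_{\tau_1,V_0}\otimes\mathbb{B}^+_{\mathrm{dR}}$. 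These are different: writing a horizontal section as $\sum d_i\otimes f_i$, horizontality means $\sum(\nabla_D d_i\otimes f_i + d_i\otimes\nabla_{\mathcal{O}\mathbb{B}}f_i)=0$, so the $\nabla_{\mathcal{O}\mathbb{B}}$-part and the $\nabla_D$-part of the tensor connection must cancel; since $\nabla_D\neq 0$ (the Kodaira-Spencer map is an isomorphism, not zero), the $f_i$ are \emph{not} all in $\mathbb{B}^+_{\mathrm{dR}}$. The second assertion then fails as well: the quotient $\mathrm{Fil}^0(D_{\tau_1,V_0}\otimes\mathcal{O}\mathbb{B}^+_{\mathrm{dR}})\twoheadrightarrow\omega^{-1}_{\tau_1,V_0}\otimes\mathrm{Fil}^1\mathcal{O}\mathbb{B}^+_{\mathrm{dR}}$ kills the $\mathcal{O}\mathbb{B}^+_{\mathrm{dR}}$-submodule $\omega_{\tau_1,V_0}\otimes\wedge^2D_{\tau_1,V_0}\otimes\mathcal{O}\mathbb{B}^+_{\mathrm{dR}}$, which is not $\nabla_D$-stable, so the quotient map does not carry $\nabla$-horizontal sections to $\nabla_{\mathcal{O}\mathbb{B}}$-horizontal sections. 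Applying $\nabla_{\mathcal{O}\mathbb{B}}$ to the projected image yields $-\sum\pi(\nabla_D d_i)\otimes f_i$, which is a Kodaira-Spencer term and is generically nonzero. So there is no reason $\nabla\circ\phi$ kills the lifted subspace, and the ``horizontality'' you invoke does not survive the quotient.

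This is exactly what the author concedes in the Remark immediately following Proposition \ref{zero map}: the $p$-adic geometric route would work \emph{if} one could show that the induced map on the graded pieces provided by Lemma \ref{formal power} preserves the grading (and hence, via Proposition \ref{FHT}, that the Kodaira-Spencer contributions cancel), but the author does not know how to show this. What the paper actually does is different: it reduces to the mod-$t$ assertion that the restriction of the induced map to $\mathcal{O}_{K^p}(\lambda_1+1)$ is zero, and proves this not by horizontality but by the global automorphic statement that any $GU(\mathbb{A}^\infty_{\mathbb{Q}})$-equivariant $\mathcal{O}_{\aS}$-linear map $\mathcal{O}_{\aS}\to\otimes_{\tau\in\Psi}(\omega^2_{\tau_1,\aS}\otimes\wedge^2D_{\tau_1,\aS})^{i_\tau}$ (with some $i_\sigma>0$) vanishes, via the decomposition by automorphic representations with nontrivial central character and weak approximation. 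Your proposal tries to route around that input entirely, but it implicitly assumes the very graded-piece compatibility that is the point of difficulty; it does not supply a new idea that fills the gap.
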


In fact, Proposition \ref{zero map} implies that 3 of Theorem \ref{non-parallel compatibility}. Moreover, 1 and 2 of Theorem \ref{non-parallel compatibility} follow from Proposition \ref{compatibility difficult} because Proposition \ref{compatibility difficult} says that the graded maps of 1 is injective and thus the map of 1 is injective. Moreover, Proposition \ref{compatibility difficult} implies $(D_{\lambda^{\Psi}, V_0} \otimes_{\mathcal{O}_{V_0}} E_0(t \mathcal{O}\mathbb{B}^{+, \Psi-\mathrm{la}, \chi_{\lambda_{\Psi}}, L}_{\mathrm{dR}, \lambda_2 + 1}) \otimes_{\mathcal{O}_{V_0}} \Omega_{V_0}^1) \cap \mathrm{Im}(\nabla) \subset \nabla(M)$ because the map (\ref{8}) induces the isomorphism of the subspaces in the exact sequences (\ref{2}), (\ref{4}) and (\ref{7}). 

\vspace{0.5 \baselineskip}

In order to prove the above results, we may assume $\lambda^{\Psi} = 0$ because the given maps are equal to $\mathrm{id}_{D_{\lambda^{\Psi}, V_0}} \otimes_{\mathcal{O}_{V_0}}$ (the maps when $\lambda^{\Psi} = 0$). Proposition \ref{compatibility difficult} follows from the following proposition, which can be regarded as a version of the above Proposition \ref{compatibility difficult} before taking many operations and a generalization of Proposition \ref{FHT}.

\begin{prop}

For any $i \ge 1$, the map $\mathrm{Sym}^{\lambda_1 + 1} V_{\tau_1} \otimes_{L} \mathrm{gr}^i\mathcal{O}\mathbb{B}_{\mathrm{dR}}^{+} \otimes_{\mathcal{O}_{K^p}} \omega_{\tau_1, K^p}^{\lambda_1 + 1} \rightarrow \mathrm{gr}^{\lambda_1 + 1 + i}\mathcal{O}\mathbb{B}^{+}_{\mathrm{dR}}$ induces the identity of the subspaces $\mathrm{gr}^i\mathcal{O}\mathbb{B}_{\mathrm{dR}}^+(\lambda_1 + 1) \xrightarrow{\mathrm{id}} \mathrm{gr}^{i}\mathcal{O}\mathbb{B}_{\mathrm{dR}}^+(\lambda_1 + 1)$ and the map of the quotient spaces $(\omega_{\tau_1, K^p}^2 \otimes_{\mathcal{O}_{K^p}} \wedge^2 D_{\tau_1, K^p})^{\lambda_1 + 1} \otimes_{\mathcal{O}_{K^p}} \mathrm{Sym}^i\Omega_{K^p}^1 \xrightarrow{(-\mathrm{KS})^{\lambda_1 + 1}} \mathrm{Sym}^{\lambda_1 + i + 1}\Omega_{K^p}^1$.



\end{prop}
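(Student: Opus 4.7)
\medskip

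My plan is to deduce this from Proposition \ref{FHT} (the case $\lambda_1 + 1 = 1$, $i = 0$) by two reduction steps: first stripping off the factor $\mathrm{gr}^i\mathcal{O}\mathbb{B}_{\mathrm{dR}}^+$ via multiplication in the graded ring, and then reducing the remaining symmetric power to its $\lambda_1 + 1 = 1$ case. The key observation is that the displayed map factors as
\[
\mathrm{Sym}^{\lambda_1+1}V_{\tau_1}\otimes_L\omega_{\tau_1,K^p}^{\lambda_1+1}\otimes_{\mathcal{O}_{K^p}}\mathrm{gr}^i\mathcal{O}\mathbb{B}_{\mathrm{dR}}^+
\;\longrightarrow\;\mathrm{gr}^{\lambda_1+1}\mathcal{O}\mathbb{B}_{\mathrm{dR}}^+\otimes_{\widehat{\mathcal{O}}_{\mathcal{X}}}\mathrm{gr}^i\mathcal{O}\mathbb{B}_{\mathrm{dR}}^+
\;\xrightarrow{\mathrm{mult}}\;\mathrm{gr}^{\lambda_1+1+i}\mathcal{O}\mathbb{B}_{\mathrm{dR}}^+,
\]
where the first arrow is the $(\lambda_1+1)$-th symmetric power of the composite $V_{\tau_1}\otimes\omega_{\tau_1,K^p}\to\mathrm{gr}^1\mathcal{O}\mathbb{B}_{\mathrm{dR}}^+$ appearing in Proposition \ref{FHT}. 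From the local description of $\mathcal{O}\mathbb{B}_{\mathrm{dR}}^+$ used in Lemma \ref{formal power}, the graded ring $\bigoplus_j\mathrm{gr}^j\mathcal{O}\mathbb{B}_{\mathrm{dR}}^+$ is identified with $\mathrm{Sym}^{\bullet}\mathrm{gr}^1\mathcal{O}\mathbb{B}_{\mathrm{dR}}^+$ as a $\widehat{\mathcal{O}}_{\mathcal{X}}$-algebra, so the multiplication in the second arrow is the usual symmetric algebra multiplication.

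First I would treat the case $i = 0$. Taking $\mathrm{Sym}^{\lambda_1+1}$ of the commutative diagram in Proposition \ref{FHT} (after tensoring the top row with $\omega_{\tau_1,K^p}^{\lambda_1+1}$ to shift into $\mathrm{Fil}^1$) produces a commutative diagram whose rows are the $(\lambda_1+1)$-th symmetric power filtrations. On the bottom rank-one subsheaf the map is $\mathrm{Sym}^{\lambda_1+1}$ of the identity, i.e.\ the canonical identification
\[
\mathrm{Sym}^{\lambda_1+1}(\widehat{\mathcal{O}}_{\mathcal{X}}(1))\;=\;\widehat{\mathcal{O}}_{\mathcal{X}}(\lambda_1+1).
\]
On the top quotient it is $\mathrm{Sym}^{\lambda_1+1}$ of $-\mathrm{KS}$, which is precisely $(-\mathrm{KS})^{\lambda_1+1}$ into $\mathrm{Sym}^{\lambda_1+1}\Omega_{K^p}^{1,\mathrm{sm}}$ as claimed. (The intermediate graded pieces of the $(\lambda_1+1)$-th symmetric power filtration are irrelevant for the statement, which only asserts identifications on the extreme sub and quotient.)

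Next, tensoring the $i = 0$ conclusion with $\mathrm{gr}^i\mathcal{O}\mathbb{B}_{\mathrm{dR}}^+$ over $\widehat{\mathcal{O}}_{\mathcal{X}}$ and postcomposing with the multiplication map $\mathrm{gr}^{\lambda_1+1}\otimes\mathrm{gr}^i\to\mathrm{gr}^{\lambda_1+1+i}$, the sub $\widehat{\mathcal{O}}_{\mathcal{X}}(\lambda_1+1)\otimes\mathrm{gr}^i\mathcal{O}\mathbb{B}_{\mathrm{dR}}^+$ maps by the identity onto $\mathrm{gr}^i\mathcal{O}\mathbb{B}_{\mathrm{dR}}^+(\lambda_1+1)$ (using that the multiplication by $\widehat{\mathcal{O}}_{\mathcal{X}}\subset\mathrm{gr}^0\mathcal{O}\mathbb{B}_{\mathrm{dR}}^+$ on $\mathrm{gr}^i$ is the tautological one), while the quotient $(\omega_{\tau_1,K^p}^2\otimes\wedge^2 D_{\tau_1,K^p})^{\lambda_1+1}\otimes\mathrm{Sym}^i\Omega_{K^p}^{1,\mathrm{sm}}$ maps by $(-\mathrm{KS})^{\lambda_1+1}\otimes\mathrm{id}$ composed with the symmetric algebra multiplication $\mathrm{Sym}^{\lambda_1+1}\Omega^1\otimes\mathrm{Sym}^i\Omega^1\to\mathrm{Sym}^{\lambda_1+1+i}\Omega^1$. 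Under the identification $\mathrm{gr}^{\lambda_1+1+i}\mathcal{O}\mathbb{B}_{\mathrm{dR}}^+\supset\mathrm{Sym}^{\lambda_1+1+i}\Omega^1$ from Lemma \ref{formal power}, this is exactly $(-\mathrm{KS})^{\lambda_1+1}$ acting on the $\mathrm{Sym}^i\Omega^1$ factor as asserted.

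The only real subtlety will be bookkeeping: verifying that the sign propagates as $(-1)^{\lambda_1+1}$ through the iterated Kodaira--Spencer—this is automatic from $\mathrm{Sym}^{\lambda_1+1}(-\mathrm{KS})=(-1)^{\lambda_1+1}\mathrm{Sym}^{\lambda_1+1}(\mathrm{KS})$—and confirming that the filtration on $\mathrm{Sym}^{\lambda_1+1}(V_{\tau_1}\otimes\omega_{\tau_1,K^p})$ induced by the Hodge--Tate filtration is the obvious one, so that the extremal sub and quotient match the statement. No serious analytic or Galois-theoretic input beyond Proposition \ref{FHT} and Lemma \ref{formal power} is needed; the proof is formal given those two ingredients.
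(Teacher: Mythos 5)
Your proposal is correct and follows essentially the same route as the paper: the map in the proposition is exactly the graded piece of the $\mathcal{O}\mathbb{B}_{\mathrm{dR}}^{+}$-linear map $\mathrm{Sym}^{\lambda_1+1}V_{\tau_1}\otimes\omega_{\tau_1,V_0}^{\lambda_1+1}\otimes\mathcal{O}\mathbb{B}_{\mathrm{dR}}^{+}\to\mathrm{Fil}^{\lambda_1+1}\mathcal{O}\mathbb{B}_{\mathrm{dR}}^{+}$, and both you and the paper reduce to Proposition \ref{FHT} by taking $\mathrm{Sym}^{\lambda_1+1}$ and then handling $i>0$ via the linearity over $\mathcal{O}\mathbb{B}_{\mathrm{dR}}^{+}$ (your graded-ring multiplication is the same thing as the paper's tensoring with $\mathrm{Sym}^i\Omega^1_{V_0}$). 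The paper spells out the $\nabla$-horizontality of the image in $\mathrm{gr}^0(D_{\tau_1,V_0}\otimes\mathcal{O}\mathbb{B}_{\mathrm{dR}}^{+})$ a bit more explicitly, but that computation is already packaged inside Proposition \ref{FHT}, which you invoke.
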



\begin{proof} We recall the construction of the given map. By 3 of Proposition \ref{Hodge de Rham}, we have a map $V_{\tau_1} \otimes_{L} \mathcal{O}\mathbb{B}_{\mathrm{dR}}^+ \rightarrow \mathrm{Fil}^0( D_{\tau_1, V_0} \otimes_{\mathcal{O}_{V_0}} \mathcal{O}\mathbb{B}_{\mathrm{dR}}^+) = D_{\tau_1, V_0} \otimes_{\mathcal{O}_{V_0}} \mathrm{Fil}^1\mathcal{O}\mathbb{B}_{\mathrm{dR}}^{+} + \omega_{\tau_1, V_0} \otimes_{\mathcal{O}_{V_0}} (\wedge^2 D_{\tau_1, V_0}) \otimes_{\mathcal{O}_{V_0}} \mathcal{O}\mathbb{B}_{\mathrm{dR}}^+ \rightarrow \mathrm{Fil}^0( D_{\tau_1, V_0} \otimes_{\mathcal{O}_{V_0}} \mathcal{O}\mathbb{B}_{\mathrm{dR}}^+)/\omega_{\tau_1, V_0} \otimes (\wedge^2 D_{\tau_1, V_0} ) \otimes \mathcal{O}\mathbb{B}_{\mathrm{dR}}^+ = \omega_{\tau_1, V_0}^{-1} \otimes_{\mathcal{O}_{V_0}} \mathrm{Fil}^1 \mathcal{O}\mathbb{B}_{\mathrm{dR}}^+$. Note that we can get the quotient map of the Hodge-Tate filtration by $V_{\tau_1} \otimes_{L} \mathcal{O}\mathbb{B}_{\mathrm{dR}}^+ \rightarrow D_{\tau_1, V_0} \otimes_{\mathcal{O}_{V_0}} \mathrm{Fil}^1\mathcal{O}\mathbb{B}_{\mathrm{dR}}^{+} + \omega_{\tau_1, V_0} \otimes_{\mathcal{O}_{V_0}} (\wedge^2 D_{\tau_1, V_0}) \otimes_{\mathcal{O}_{V_0}} \mathcal{O}\mathbb{B}_{\mathrm{dR}}^+ \rightarrow \omega_{\tau_1, V_0} \otimes_{\mathcal{O}_{V_0}} (\wedge^2 D_{\tau_1, V_0}) \otimes_{\mathcal{O}_{V_0}} \mathcal{O}_{K^p}$. By taking $\mathrm{Sym}^{\lambda_1 + 1}$, we obtain $\mathrm{Sym}^{\lambda_1 + 1} V_{\tau_1} \otimes_{L} \mathcal{O}\mathbb{B}_{\mathrm{dR}}^+ \rightarrow \omega_{\tau_1, V_0}^{-\lambda_1 - 1} \otimes_{\mathcal{O}_{V_0}} \mathrm{Fil}^{\lambda_1 + 1} \mathcal{O}\mathbb{B}_{\mathrm{dR}}^+$ and $\mathrm{Sym}^{\lambda_1 + 1} V_{\tau_1} \otimes_{L} \mathrm{gr}^i\mathcal{O}\mathbb{B}_{\mathrm{dR}}^+ \rightarrow \omega_{\tau_1, V_0}^{-\lambda_1 - 1} \otimes_{\mathcal{O}_{V_0}} \mathrm{gr}^{\lambda_1 + i + 1} \mathcal{O}\mathbb{B}_{\mathrm{dR}}^+ \xrightarrow{\nabla} \omega_{\tau_1, V_0}^{-\lambda_1 - 1} \otimes_{\mathcal{O}_{V_0}} \mathrm{gr}^{\lambda_1 + i} \mathcal{O}\mathbb{B}_{\mathrm{dR}}^+ \otimes_{\mathcal{O}_{V_0}} \Omega_{V_0}^1$. 

First, we prove the statement about the subspaces. Let $M := \mathrm{Fil}^0( D_{\tau_1, V_0} \otimes_{\mathcal{O}_{V_0}} \mathcal{O}\mathbb{B}_{\mathrm{dR}}^+)^{\nabla = 0}(1)$. Then $V_{\tau_1}(1) \otimes_L \mathbb{B}_{\mathrm{dR}}^+ \subset M \subset V_{\tau_1} \otimes_L \mathbb{B}_{\mathrm{dR}}^+$, $M/V_{\tau_1}(1) \otimes_L \mathbb{B}_{\mathrm{dR}}^+ = \omega_{\tau_1, V_0}^{-1}(1) \otimes \mathcal{O}_{K^p}$ and $V_{\tau_1} \otimes_L \mathbb{B}_{\mathrm{dR}}^+/M = \omega_{\tau_1, V_0} \otimes \wedge^2 D_{\tau_1, V_0} \otimes \mathcal{O}_{K^p}$ by Proposition \ref{Hodge de Rham}. Thus the statement about the subspace follows from $M = \mathrm{Fil}^0( D_{\tau_1, V_0} \otimes_{\mathcal{O}_{V_0}} t\mathcal{O}\mathbb{B}_{\mathrm{dR}}^+)^{\nabla = 0} \subset D_{\tau_1, V_0} \otimes_{\mathcal{O}_{V_0}} \mathrm{Fil}^1\mathcal{O}\mathbb{B}_{\mathrm{dR}}^+$ and the above construction of the map.

About the statement on the quotient spaces, first note that the composition of the following maps $V_{\tau_1} \otimes_{L} \mathcal{O}_{K^p} \rightarrow \omega_{\tau_1, V_0}^{-1} \otimes \mathrm{gr}^1\mathcal{O}\mathbb{B}_{\mathrm{dR}}^{+} \oplus \omega_{\tau_1, V_0} \otimes_{\mathcal{O}_{V_0}} (\wedge^2 D_{\tau_1, V_0}) \otimes_{\mathcal{O}_{V_0}} \mathcal{O}_{K^p} \rightarrow \omega_{\tau_1, V_0}^{-1} \otimes_{\mathcal{O}_{V_0}} \mathrm{gr}^1\mathcal{O}\mathbb{B}_{\mathrm{dR}}^{+} \xrightarrow{\nabla} \omega_{\tau_1, V_0}^{-1} \otimes_{\mathcal{O}_{V_0}} \mathcal{O}_{K^p} \otimes_{\mathcal{O}_{V_0}} \Omega_{V_0}^1$ is equal to $V_{\tau_1} \otimes_{L} \mathcal{O}_{K^p} \rightarrow \omega_{\tau_1, K^p} \otimes_{\mathcal{O}_{V_0}} (\wedge^2 D_{\tau_1, V_0}) \xrightarrow{-\mathrm{KS}} \omega_{\tau_1, K^p}^{-1} \otimes_{\mathcal{O}_{V_0}} \Omega_{V_0}^1$ because $\mathrm{Im}(V_{\tau_1} \otimes_{L} \mathcal{O}_{K^p} \rightarrow \omega_{\tau_1, V_0}^{-1} \otimes_{\mathcal{O}_{V_0}} \mathrm{gr}^1\mathcal{O}\mathbb{B}_{\mathrm{dR}}^{+} \oplus \omega_{\tau_1, V_0} \otimes_{\mathcal{O}_{V_0}} (\wedge^2 D_{\tau_1, V_0}) \otimes_{\mathcal{O}_{V_0}} \mathcal{O}_{K^p}) \subset (\omega_{\tau_1, V_0}^{-1} \otimes_{\mathcal{O}_{V_0}} \mathrm{gr}^1\mathcal{O}\mathbb{B}_{\mathrm{dR}}^{+} \oplus \omega_{\tau_1, V_0} \otimes_{\mathcal{O}_{V_0}} (\wedge^2 D_{\tau_1, V_0}) \otimes_{\mathcal{O}_{V_0}} \mathcal{O}_{K^p})^{\nabla = 0}$. (Actually, this was used in the proof of Proposition \ref{FHT}.) Thus the map $\mathrm{Sym}^{\lambda_1 + 1} V_{\tau_1} \otimes_{L} \mathcal{O}_{K^p} \rightarrow \omega_{\tau_1, V_0}^{-\lambda_1 - 1} \otimes_{\mathcal{O}_{V_0}} \mathcal{O}_{K^p} \otimes_{\mathcal{O}_{V_0}} \mathrm{Sym}^{\lambda_1 + 1}\Omega_{V_0}^1$ obtained by the quotient of the map $\mathrm{Sym}^{\lambda_1 + 1} V_{\tau_1} \otimes_{L} \mathcal{O}\mathbb{B}_{\mathrm{dR}}^+ \rightarrow \omega_{\tau_1, V_0}^{-\lambda_1 - 1} \otimes_{\mathcal{O}_{V_0}} \mathrm{Fil}^{\lambda_1 + 1} \mathcal{O}\mathbb{B}_{\mathrm{dR}}^+$ is equal to $\mathrm{Sym}^{\lambda_1 + 1} V_{\tau_1} \otimes_{L} \mathcal{O}_{K^p} \rightarrow \omega_{\tau_1, K^p}^{\lambda_1 + 1} \otimes_{\mathcal{O}_{V_0}} (\wedge^2 D_{\tau_1, V_0})^{\lambda_1 + 1} \xrightarrow{(-\mathrm{KS})^{\lambda_1 + 1}} \omega_{\tau_1, V_0}^{-\lambda_1 - 1} \otimes_{\mathcal{O}_{V_0}} \mathcal{O}_{K^p} \otimes_{\mathcal{O}_{V_0}} \mathrm{Sym}^{\lambda_1 + 1}\Omega_{V_0}^1$. By taking $\otimes_{\mathcal{O}_{V_0}} \mathrm{Sym}^i\Omega_{V_0}^1$, we obtain the result because we consider the $\mathcal{O}\mathbb{B}_{\mathrm{dR}}^{+}$-linear map $\mathrm{Sym}^{\lambda_1 + 1} V_{\tau_1} \otimes_{L} \mathcal{O}\mathbb{B}_{\mathrm{dR}}^{+} \rightarrow \mathrm{Fil}^{\lambda_1 + 1}\mathcal{O}\mathbb{B}^{+}_{\mathrm{dR}} \otimes_{\mathcal{O}_{K^p}} \omega_{\tau_1, K^p}^{-\lambda_1 - 1}$. \end{proof}

\textbf{Proof \ of \ Proposition \ \ref{zero map}}

We consider the map $\mathrm{Sym}^{\lambda_1 + 1} V_{\tau_1} \otimes_{L} \mathcal{O}\mathbb{B}_{\mathrm{dR}, \lambda_2 - \lambda_1 + 1}^+ \otimes_{\mathcal{O}_{V_0}} \omega_{\tau_1, V_0}^{\lambda_1 + 1} \rightarrow \mathrm{Fil}^{\lambda_1 + 1}\mathcal{O}\mathbb{B}_{\mathrm{dR}, \lambda_2 + 2}^+ \hookrightarrow \mathcal{O}\mathbb{B}_{\mathrm{dR}, \lambda_2 + 2}^+$ before (\ref{1}). By compositing $\nabla : \mathcal{O}\mathbb{B}_{\mathrm{dR}, \lambda_2 + 2}^+ \rightarrow \mathcal{O}\mathbb{B}_{\mathrm{dR}, \lambda_2 + 1}^+ \otimes_{\mathcal{O}_{V_0}} \Omega_{V_0}^1$ and taking $\otimes_{\mathbb{B}_{\mathrm{dR}}^+}\mathcal{O}_{K^p}$, we obtain the map $\oplus_{i=0}^{\lambda_2 - \lambda_1} (\mathrm{Sym}^{\lambda_1 + 1} V_{\tau_1} \otimes_{L} \mathcal{O}_{K^p} \otimes_{\mathcal{O}_{V_0}} \mathrm{Sym}^i \Omega_{V_0}^1 \otimes_{\mathcal{O}_{V_0}} \omega_{\tau_1, V_0}^{\lambda_1 + 1}) \rightarrow \oplus_{i = 0}^{\lambda_2} \mathcal{O}_{K^p} \otimes_{\mathcal{O}_{V_0}} \mathrm{Sym}^i\Omega_{V_0}^1 \otimes_{\mathcal{O}_{V_0}} \Omega_{V_0}^1$. Here, we use Lemma \ref{formal power}. By using the Hodge-Tate filtration, we have the $\mathcal{O}_{K^p} = \mathbb{B}_{\mathrm{dR}}^1$-submodule $\mathcal{O}_{K^p}(\lambda_1 + 1)$ of $(\mathrm{Sym}^{\lambda_1 + 1} V_{\tau_1} \otimes_{L} \mathcal{O}_{K^p} \otimes_{\mathcal{O}_{V_0}} \omega_{\tau_1, V_0}^{\lambda_1 + 1}) \subset \oplus_{i=0}^{\lambda_2 - \lambda_1} (\mathrm{Sym}^{\lambda_1 + 1} V_{\tau_1} \otimes_{L} \mathcal{O}_{K^p} \otimes_{\mathcal{O}_{V_0}} \mathrm{Sym}^i \Omega_{V_0}^1 \otimes_{\mathcal{O}_{V_0}} \omega_{\tau_1, V_0}^{\lambda_1 + 1})$. Thus Proposition \ref{zero map} holds if the restriction to $\mathcal{O}_{K^p}(\lambda_1 + 1)$ of the above map $\oplus_{i=0}^{\lambda_2 - \lambda_1} (\mathrm{Sym}^{\lambda_1 + 1} V_{\tau_1} \otimes_{L} \mathcal{O}_{K^p} \otimes_{\mathcal{O}_{V_0}} \mathrm{Sym}^i \Omega_{V_0}^1 \otimes_{\mathcal{O}_{V_0}} \omega_{\tau_1, V_0}^{\lambda_1 + 1}) \rightarrow \oplus_{i = 0}^{\lambda_2} \mathcal{O}_{K^p} \otimes_{\mathcal{O}_{V_0}} \mathrm{Sym}^i\Omega_{V_0}^1 \otimes_{\mathcal{O}_{V_0}} \Omega_{V_0}^1$ is zero because $N \supset D_{\lambda^{\Psi}, V_0} \otimes_{\mathcal{O}_{V_0}} E_0(t \mathcal{O}\mathbb{B}^{+, \Psi-\mathrm{la}, \chi_{\lambda_{\Psi}}, L}_{\mathrm{dR}, \lambda_2 + 1}) \otimes_{\mathcal{O}_{V_0}} \Omega_{V_0}^1$. This assumption follows from the following proposition. Let $\aS := \varprojlim_{K_1^p} \aS_{K_1^p}$ and $\pi_{\mathrm{HT}, \aS} : \aS \rightarrow \aS_{K^p} \rightarrow \Fl$ be the canonical map. We also write $\mathcal{O}_{\aS}$, $\omega_{\tau_1, \aS}$ and $D_{\tau_1, \aS}$ for the pushforward via $\pi_{\mathrm{HT}, \aS}$ of the pullback $\mathcal{O}_{\aS}$, $\omega_{\tau_1, \aS}$ and $D_{\tau_1, \aS}$ of $\mathcal{O}_{\aS_{K^p_1}}$, $\omega_{\tau_1, \aS_{K^p_1}}$ and $D_{\tau_1, \aS_{K^p_1}}$.

\begin{prop} Let $(i_{\tau})_{\tau} \in (\mathbb{Z}_{\ge 0})^{\Psi}$ such that $i_{\sigma} > 0$ for some $\sigma$.

Then any $\mathcal{O}_{\aS}$-linear map of $GU(\mathbb{A}_{\mathbb{Q}}^{\infty})$-equivariant sheaves $\mathcal{O}_{\aS} \rightarrow \otimes_{\tau \in \Psi} (\omega_{\tau_1, \aS}^2 \otimes \wedge^2D_{\tau_1, \aS})^{i_{\tau}}$ is zero.

\end{prop}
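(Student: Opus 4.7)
The morphism $\phi : \mathcal{O}_\aS \to \mathcal{L} := \otimes_{\tau \in \Psi}(\omega_{\tau, \aS}^2 \otimes \wedge^2 D_{\tau, \aS})^{i_\tau}$ is determined by the global section $s := \phi(1) \in H^0(\aS, \mathcal{L})$, and the $\mathcal{O}_\aS$-linearity together with the $GU(\mathbb{A}_{\mathbb{Q}}^{\infty})$-equivariance is exactly the condition that $s$ is fixed by $GU(\mathbb{A}_{\mathbb{Q}}^{\infty})$. Hence the proposition reduces to proving that $H^0(\aS, \mathcal{L})^{GU(\mathbb{A}_{\mathbb{Q}}^{\infty})} = 0$ whenever $i_\sigma > 0$ for some $\sigma$. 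Because $\omega_{\tau, \aS}$ and $D_{\tau, \aS}$ are pulled back from sheaves on $\aS_{K^p_1 K_p^o}$ for any sufficiently small tame level $K^p_1$, the line bundle $\mathcal{L}$ descends to a tower of line bundles $\mathcal{L}_{K^p_1}$ on $\aS_{K^p_1 K_p^o}$, and an invariant $s$ corresponds to a compatible family of sections at finite levels that is fixed by every Hecke correspondence.

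The strategy is to exploit a scalar central element in $Z(GU)(\mathbb{Q})$. For an integer $n \ge 2$, view $n$ as the central element $n \cdot \mathrm{id} \in Z(GU)(\mathbb{Q})$ coming from the natural embedding $\mathbb{G}_m \hookrightarrow GU$ via the scalar matrices in $B^{\mathrm{op}, \times}$. Decompose $n \in GU(\mathbb{Q})$ diagonally as $(n_\infty, n^\infty) \in GU(\mathbb{R}) \times GU(\mathbb{A}_{\mathbb{Q}}^{\infty})$. Left multiplication by $n \in GU(\mathbb{Q})$ is trivial on the double coset defining the Shimura variety, so the combined action of $(n_\infty, n^\infty)$ on any section of $\mathcal{L}$ is the identity. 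Consequently, the action of $n^\infty$ on $s$ equals the inverse of the automorphy factor of $n_\infty$ on $\mathcal{L}$, and $GU(\mathbb{A}_{\mathbb{Q}}^{\infty})$-invariance of $s$ forces $s$ to be an eigenvector of this automorphy factor with eigenvalue $1$.

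The remaining step is to compute this automorphy factor explicitly. Since $n$ acts by the multiplication-by-$n$ endomorphism on the universal abelian scheme over $\aS$, the induced action on the Hodge bundle $\omega_\tau$ is multiplication by $n$, and the induced action on the relative de Rham bundle $D_\tau$ is also multiplication by $n$, hence multiplication by $n^2$ on $\wedge^2 D_\tau$. Therefore $n$ acts on each factor $\omega_\tau^2 \otimes \wedge^2 D_\tau$ by the scalar $n^4$, and on $\mathcal{L}$ by the scalar $n^{4 \sum_\tau i_\tau}$. Under the hypothesis $i_\sigma > 0$ and all $i_\tau \ge 0$, this exponent is strictly positive; taking any $n \ge 2$ then gives a scalar distinct from $1$, forcing $s = n^{-4 \sum_\tau i_\tau} s = 0$.

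The main obstacle is the precise formulation of the identity ``action of $n^\infty$ on $s$ equals the inverse of the automorphy factor of $n_\infty$'' in the pro-\'etale/perfectoid setup of $\aS$. Since $\mathcal{L}$ is literally pulled back from finite levels and is constructed from the relative de Rham cohomology of the universal abelian scheme, this reduces to two classical facts about automorphic vector bundles on PEL-type Shimura varieties: first, the left-$GU(\mathbb{Q})$-invariance built into the construction of sections, and second, the fact that central scalar endomorphisms of the universal abelian scheme act on the Hodge and de Rham bundles by the corresponding power of the scalar. Once these are in hand the argument is essentially a formal central-character computation.
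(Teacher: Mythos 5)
You correctly reduce the proposition to $H^0(\aS,\mathcal{L})^{GU(\mathbb{A}_\mathbb{Q}^\infty)}=0$ with $\mathcal{L}=\otimes_{\tau\in\Psi}(\omega_{\tau,\aS}^2\otimes\wedge^2 D_{\tau,\aS})^{i_\tau}$, and the observation that $n=(n^2,n)\in Z(GU)(\mathbb{Q})$ has $n_\infty\in K_\infty$ so that $GU(\mathbb{A}_\mathbb{Q}^\infty)$-invariance forces $s$ to be an eigenvector of the automorphy factor of $n_\infty$ with eigenvalue $1$ is a sound reduction. The computation of that automorphy factor, however, is wrong, and the error is fatal. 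Recall that $D_\tau$ comes from $(R^1f_*\Omega^\bullet_{A/S})^\vee$, i.e.\ the \emph{dual} of relative de Rham cohomology, which is \emph{covariant} in $A$, while $\omega_\tau=(f_*\Omega^1_{A/S})_\tau=(\mathrm{Lie}\,A)^\vee$ is \emph{contravariant}. The two therefore scale by inverse factors under the comparison isogeny $[n]$ that trivializes the Hecke operator $[n^\infty]$: if $\omega_\tau$ is scaled by $n$, then $D_\tau$ is scaled by $n^{-1}$ and $\wedge^2 D_\tau$ by $n^{-2}$, so $\omega_\tau^2\otimes\wedge^2 D_\tau$ is scaled by $n^2\cdot n^{-2}=1$, and hence $\mathcal{L}$ by $1$, not by $n^{4\sum i_\tau}$. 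This is visible conceptually as well: by the Kodaira--Spencer isomorphism (Proposition \ref{KS}) the line bundle $\omega_\tau^2\otimes\wedge^2 D_\tau$ is $GU(\mathbb{A}_\mathbb{Q}^\infty)$-equivariantly a direct summand of $\Omega^1_{S_K}$, and the Hecke operator $[n^\infty]$ acts on $S_K$ as the identity map (the equivalence $[(A,\lambda,i,\overline{\eta n})]\sim[(A,\lambda,i,\bar\eta)]$ fixes the moduli point), so it acts as the identity on $\Omega^1_{S_K}$. The rational center, and indeed all of $Z(GU)(\mathbb{R})$, therefore acts trivially on $\mathcal{L}$ for every choice of $(i_\tau)$, and your argument produces no contradiction.

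The obstruction you are after lives in the derived subgroup, not the center: the weight of $\mathcal{L}$ restricted to the $U(1)\times U(1)$ factor of $K_\infty$ at the index $\sigma$ with $i_\sigma>0$ is the $i_\sigma$-th power of a root character, killed by the diagonal center but not by $SU(1,1)$. No element of $Z(GU)(\mathbb{Q})$ can detect this, so the argument cannot be repaired by a cleverer choice of central element. To use the nontriviality of the weight you need to promote $GU(\mathbb{Q})$- and $GU(\mathbb{A}_\mathbb{Q}^\infty)$-invariance of $s$ to invariance under a dense subgroup of $GU(\mathbb{R})$, and that is exactly what the paper does: by the weak approximation theorem \cite[Theorem 7.7]{PR}, any automorphic representation with $GU(\mathbb{A}_\mathbb{Q}^\infty)$-fixed vectors is one-dimensional with trivial central character, whereas the Matsushima-type decomposition of $H^0(S_{K,\mathbb{C}},\mathcal{L})$ only admits archimedean components whose lowest $K_\infty$-type carries the nontrivial weight determined by $(i_\tau)$. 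Your proposal and the paper's proof agree on the reduction step, but the central-scalar computation that was supposed to replace the weak-approximation argument does not work.
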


\begin{rem}

The author expects that we can prove Proposition \ref{zero map} without using this proposition by using purely $p$-adic geometric methods. In fact, if we can prove that the above map $\oplus_{i=0}^{\lambda_2 - \lambda_1} (\mathrm{Sym}^{\lambda_1 + 1} V_{\tau_1} \otimes_{L} \mathcal{O}_{K^p} \otimes_{\mathcal{O}_{V_0}} \mathrm{Sym}^i \Omega_{V_0}^1 \otimes_{\mathcal{O}_{V_0}} \omega_{\tau_1, V_0}^{\lambda_1 + 1}) \rightarrow \oplus_{i = 0}^{\lambda_2} \mathcal{O}_{K^p} \otimes_{\mathcal{O}_{V_0}} \mathrm{Sym}^i\Omega_{V_0}^1 \otimes_{\mathcal{O}_{V_0}} \Omega_{V_0}^1$ respects the graded pieces of both sides, then the above assumption follows from Proposition \ref{FHT}.

\end{rem}

\begin{proof}

The result says that $H^0(\Fl, \otimes_{\tau \in \Psi} (\omega_{\tau_1, \aS}^2 \otimes \wedge^2D_{\tau_1, \aS})^{i_{\tau}})^{GU(\mathbb{A}_{\mathbb{Q}}^{\infty})} = (\varinjlim_{K} H^0(\aS_{K}, \otimes_{\tau \in \Psi} (\omega_{\tau_1, \aS_{K}}^2 \otimes \wedge^2D_{\tau_1, \aS_{K}})^{i_{\tau}}))^{GU(\mathbb{A}_{\mathbb{Q}}^{\infty})} = (\varinjlim_{K} H^0(S_{K, \mathbb{C}}, \otimes_{\tau \in \Psi} (\omega_{\tau_1, S_{K, \mathbb{C}}}^2 \otimes \wedge^2D_{\tau_1, S_{K, \mathbb{C}}})^{i_{\tau}}))^{GU(\mathbb{A}_{\mathbb{Q}}^{\infty})} \otimes_{\mathbb{C}, \iota^{-1}} C = 0$. This follows from the fact that $H^0(S_{K, \mathbb{C}}, \otimes_{\tau \in \Psi} (\omega_{\tau_1, S_{K, \mathbb{C}}}^2 \otimes \wedge^2D_{\tau_1, S_{K, \mathbb{C}}})^{i_{\tau}})$ is decomposed by automorphic representations with a non-trivial central character determined by $(i_{\tau})_{\tau}$. (See \cite[Proposition 3.6, Theorem 5.3 and Proposition 4.3.2]{Harris}. Here, we used the assumption $i_{\sigma} > 0$ for some $\sigma$.) In fact, the weak approximation theorem (see \cite[Theorem 7.7]{PR}) implies that any automorphic representation contributing to $(\varinjlim_{K} H^0(S_{K, \mathbb{C}}, \otimes_{\tau \in \Psi} (\omega_{\tau_1, S_{K, \mathbb{C}}}^2 \otimes \wedge^2D_{\tau_1, S_{K, \mathbb{C}}})^{i_{\tau}}))^{GU(\mathbb{A}_{\mathbb{Q}}^{\infty})}$ has the trivial central character. \end{proof}

\section{Comparison of completed cohomologies of different unitary Shimura varieties}

Let $\mathfrak{m}$ be a decomposed generic non-Eisenstein ideal. For a moment, we assume that $p$ is inert in $F^+$ and $\Psi = \mathrm{Hom}_{F_0}(F, \overline{\mathbb{Q}}_p)$ for simplicity. Our purpose is to prove the classicality theorem, i.e., if $\varphi : \mathbb{T}^S(K^p, \mathcal{O})_{\mathfrak{m}} \rightarrow \mathcal{O}$ satisfies the condition that $\rho_{\varphi}$ and $\chi_{\varphi}$ are de Rham with $p$-adic Hodge type $0$ (for simplicity) and $\widehat{H}^d(S_{K^p}, L)_{\mathfrak{m}}[\varphi] \neq 0$, then $\varphi$ is actually a classical eigensystem. Note that we have $\widehat{H}^d(S_{K^p}, L)_{\mathfrak{m}}^{\mathrm{la}}[\varphi] \neq 0$ by Theorem \ref{density of locally analytic vectors}. As we have seen in Theorem \ref{induction step}, under appropriate technical assumptions on $\mathfrak{m}$ and assuming certain conjectures, we can prove that there exists $\tau_1 \in \Psi$ such that $\widehat{H}^d(S_{K^p}, L)_{\mathfrak{m}}^{\Psi \setminus \{ \tau_1 \}-\mathrm{la}}[\varphi] \neq 0$. If we believe that the cohomologies of Shimura varieties realize the Langlands correspondence, then the property $\widehat{H}^d(S_{K^p}, L)_{\mathfrak{m}}^{\Psi \setminus \{ \tau_1 \}-\mathrm{la}}[\varphi] \neq 0$ should be essentially independent of the choice of rank 2 unitary Shimura varieties. If this is actually true, by replacing $S_K$ by a rank 2 unitary Shimura variety $T_K$ associated with $\Psi \setminus \{ \tau_1 \}$ in the sense of {\S} 3.1, we obtain $\tau_2 \in \Psi \setminus\{ \tau_1 \}$ such that $\widehat{H}^{d-1}(T_{K^p}, L)_{\mathfrak{m}}^{\Psi \setminus \{ \tau_1, \tau_2 \}-\mathrm{la}}[\varphi] \neq 0$ by using Theorem \ref{induction step} again. By repeating this argument, we can get $\widehat{H}^{1}(T'_{K^p}, L)_{\mathfrak{m}}^{\mathrm{sm}}[\varphi] \neq 0$ for some unitary Shimura curve $T'_K$ and the classicality of $\varphi$. In this section, under technical assumptions on $\mathfrak{m}$, by using the works of \cite{CS}, \cite{kos} and \cite{zou} on mod $l$ cohomology of Shimura varieties, we will see that the above strategy actually works and complete the proof of our classicality theorem.

\subsection{Algebraic automorphic forms and completed cohomologies}

Note that the results of this subsection can be easily generalized to higher-rank unitary Shimura varieties.

\vspace{0.5 \baselineskip}

We consider the situations in {\S} 3.1 and 3.2. Let $\mathfrak{m}$ be a decomposed generic non-Eisenstein ideal of $\mathbb{T}^S$. Let $l \neq p$ be a prime splitting completely in $F$, $\mathbb{C} \Isom \overline{\mathbb{Q}}_l$ be an isomorphism of fields and $v_0 \mid l$ be a place of $F_0$ induced by this isomorphism. Let $\mathcal{M}_{K_{l}}$ be the Rapoport-Zink space corresponding to the basic element $b$ of $B(GU_{\mathbb{Q}_l}, \mu^{-1})$ of level $K_{l}$ over $\breve{M}$ for a sufficiently large finite subextension $M$ of $\overline{\mathbb{Q}}_l/\mathbb{Q}_l$, where $K_l$ is a pro-$l$ open compact subgroup of $G(\mathbb{Q}_l)$. We have a natural product decomposition $J_b(\mathbb{Q}_l) = F_{0, v_0^c}^{\times}  \times \prod_{w \mid v_0} J_b(\mathbb{Q}_l)_w$ such that $J_b(\mathbb{Q}_l)_w$ is $\mathrm{GL}_{2}(F_w)$ or $D_w^{\times}$ for some central division algebra $D_w$ over $F_w$ of dimension $4$. We assume that $\overline{\rho}_{\mathfrak{m}}|_{G_{F_w}}$ is irreducible and generic for any $w \mid l$ and if $J_b(\mathbb{Q}_l)_w = D_w^{\times}$, then the $w$-component of the minuscule $\mu$ is trivial. 


\begin{prop}\label{basic Mant}

We have the following natural isomorphism in $D(\mathcal{O}/\varpi^m[K \setminus GU(\mathbb{A}_{\mathbb{Q}}^{\infty})/K])$.   $$R\Gamma_{\et}(S_{K, \overline{F}}, \mathcal{O}/\varpi^m)_{\mathfrak{m}} \cong H^d_{\et}(S_{K, \overline{F}}, \mathcal{O}/\varpi^m)_{\mathfrak{m}}[-d] \cong \mathcal{A}_{GI}^{\mathrm{sm}}(K^{l}, \mathcal{O}/\varpi^{m})_{\mathfrak{m}} \otimes^{\mathbb{L}}_{\mathcal{H}(J_b(\mathbb{Q}_l))_{\mathcal{O}/\varpi^m}} R\Gamma_c(\mathcal{M}_{K_{l}}, \mathcal{O}/\varpi^m),$$  where $GI$ is a unitary similitude group satisfying $GI(\mathbb{A}_{\mathbb{Q}}^{\infty, l}) \cong GU(\mathbb{A}_{\mathbb{Q}}^{\infty, l})$, $GI(\mathbb{Q}_l) = \mathbb{Q}_l^{\times} \times J_b(\mathbb{Q}_l)$ and $GI(\mathbb{R}) = G(\prod_{\tau \in \Phi} U(0,2))$. We recall that $\mathcal{A}_{GI}^{\mathrm{sm}}(K^{l}, \mathcal{O}/\varpi^{m}) := \varinjlim_{K_l'} \mathcal{A}_{GI}(K^lK_l', \mathcal{O}/\varpi^m)$. (See the comment after Proposition \ref{de Rham uniformization}.)

\end{prop}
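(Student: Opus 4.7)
The plan is to decompose the proof into two parts matching the two displayed isomorphisms. The first isomorphism $R\Gamma_{\et}(S_{K,\overline{F}},\mathcal{O}/\varpi^m)_{\mathfrak{m}} \cong H^d_{\et}(S_{K,\overline{F}},\mathcal{O}/\varpi^m)_{\mathfrak{m}}[-d]$ is a direct consequence of Theorem~\ref{torsion vanishing} (after the usual reduction modulo $\varpi$ and bootstrapping in $m$ via the short exact sequence $0 \to \mathcal{O}/\varpi^{m-1} \to \mathcal{O}/\varpi^m \to \mathcal{O}/\varpi \to 0$): the localization at $\mathfrak{m}$ forces concentration in degree $d$, so the complex is quasi-isomorphic to its cohomology shifted.

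For the second (and essential) isomorphism, I would invoke the torsion Mantovan formula \cite[Theorem 7.1]{kos}, which expresses $R\Gamma_{\et}(S_{K,\overline{F}},\mathcal{O}/\varpi^m)$ as an alternating-sum-type expression indexed over $B(GU_{\mathbb{Q}_l},\mu^{-1})$, with each contribution a derived tensor product of mod-$\varpi^m$ Igusa cohomology against $R\Gamma_c$ of the corresponding Rapoport--Zink space. The key point is to show that after localizing at $\mathfrak{m}$ only the basic stratum $b$ survives. This is where the assumptions intervene: for any non-basic $b' \in B(GU_{\mathbb{Q}_l},\mu^{-1})$, the inner form $J_{b'}$ is a proper Levi of a parabolic subgroup of $GU_{\mathbb{Q}_l}$, so the Igusa contribution is built from parabolic induction/restriction of representations of such a Levi. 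The genericity hypothesis $H^0(G_{F_w},\mathrm{ad}\,\overline{\rho}_{\mathfrak{m}}|_{G_{F_w}}(1)) = 0$ together with the irreducibility of $\overline{\rho}_{\mathfrak{m}}|_{G_{F_w}}$ at each $w \mid l$ forces the relevant Jacquet modules (equivalently, the mod-$\varpi$ Hecke eigensystem at $l$) to vanish on all non-basic pieces; this is essentially the parahoric/local--global compatibility argument of Caraiani--Scholze refined in the torsion setting by \cite[{\S}6--7]{kos}. One then passes from mod $\varpi$ to mod $\varpi^m$ by devissage, using Theorem~\ref{torsion vanishing} to guarantee flatness.

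Once the non-basic contributions are discarded, it remains to identify the surviving basic term with the claimed expression. Here one uses the description of the basic Igusa variety as the Shimura set $GI(\mathbb{Q})\backslash GI(\mathbb{A}_{\mathbb{Q}}^{\infty})/K^lK_l'$ attached to the inner form $GI$ (cf.~the global basic uniformization Theorem~\ref{uniformization} and its integral analogue), so that $R\Gamma(\mathrm{Igs}_K^b,\mathcal{O}/\varpi^m) \cong \mathcal{A}_{GI}^{\mathrm{sm}}(K^l,\mathcal{O}/\varpi^m)$ concentrated in degree $0$. Combining this identification with the torsion Mantovan formula and tracking the $J_b(\mathbb{Q}_l)$-action through the derived tensor product over $\mathcal{H}(J_b(\mathbb{Q}_l))_{\mathcal{O}/\varpi^m}$ yields the desired isomorphism.

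The main obstacle will be step two: rigorously controlling the vanishing of non-basic contributions after localization at $\mathfrak{m}$ in the derived/torsion setting. Unlike the characteristic zero case, where one can directly invoke a classification of automorphic representations, in the mod-$\varpi^m$ setting we must argue at the level of derived categories of smooth $\mathcal{O}/\varpi^m$-representations of $J_{b'}(\mathbb{Q}_l)$ and verify that the genericity of $\overline{\rho}_{\mathfrak{m}}|_{G_{F_w}}$ propagates through the derived tensor product. This is exactly the technical heart of \cite{kos} and \cite{CS}, and the proof should reduce to citing and assembling these results in our specific setup rather than reproducing the argument from scratch.
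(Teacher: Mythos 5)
Your decomposition into (i) degeneration to degree $d$ via Theorem~\ref{torsion vanishing} and (ii) the torsion Mantovan formula of \cite[Theorem 7.1]{kos} plus elimination of non-basic strata is exactly the paper's high-level structure, and the identification of the basic Igusa contribution with $\mathcal{A}_{GI}^{\mathrm{sm}}$ (via \cite[Proposition 11.27]{Igusa1} / the Shimura-set description) also matches. Where you diverge is in the crucial middle step: you propose to kill the non-basic pieces by arguing on the \emph{Igusa} side that irreducibility/genericity of $\overline{\rho}_{\mathfrak{m}}|_{G_{F_w}}$ forces the relevant Jacquet modules to vanish, essentially replaying the Caraiani--Scholze ``decomposed generic'' route in the torsion setting. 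The paper instead runs the vanishing argument on the \emph{Rapoport--Zink} side and uses Fargues--Scholze's spectral Bernstein center machinery from \cite{GLLC}: one introduces the maximal ideal $\mathfrak{n} \subset Z^{\mathrm{spec}}((\mathrm{Res}_{F^+/\mathbb{Q}}U)_{\mathbb{Q}_l}, \mathcal{O}) = \prod_{w\mid v_0} Z^{\mathrm{spec}}(\mathrm{GL}_{2,F_w},\mathcal{O})$ attached to $\prod_w \overline{\rho}_{\mathfrak{m}}|_{G_{F_w}}$, observes via local--global compatibility that localization at $\mathfrak{m}$ is compatible with localization at $\mathfrak{n}$, and then cites \cite[Theorems IX.5.2, IX.7.2]{GLLC} to see that $Z^{\mathrm{spec}}$ acts on $R\Gamma_c(\mathcal{M}_{(G,b',\mu),K_l},\mathcal{O}/\varpi^m)$ through $Z^{\mathrm{spec}}(J_{b'},\mathcal{O})$. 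Because $J_{b'}$ is an inner form of a \emph{proper} Levi for non-basic $b'$ and $\overline{\rho}_{\mathfrak{m}}|_{G_{F_w}}$ is irreducible, the point $\mathfrak{n}$ is not in the image of $\mathrm{Spec}\,Z^{\mathrm{spec}}(J_{b'},\mathcal{O}) \to \mathrm{Spec}\,Z^{\mathrm{spec}}$, so the localization is literally zero; no statement about Igusa cohomology or Jacquet modules is needed. This is cleaner in the torsion/derived setting exactly where you flagged the obstacle, since it avoids any need to control parabolic induction/restriction functors in derived categories of smooth $\mathcal{O}/\varpi^m$-representations. Your proposal as written is a genuine gap at this point: the phrase ``forces the relevant Jacquet modules to vanish'' is doing the real work and is not justified (nor is it how \cite{kos} or this paper actually argues under the present local hypotheses at $l$, which are \emph{not} the decomposed-genericity hypothesis of Caraiani--Scholze), whereas the spectral-Bernstein-center argument gives the vanishing directly. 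You should replace your proposed non-basic elimination by the localization-at-$\mathfrak{n}$ argument via \cite{GLLC}, together with the observation that $H^d_{\et}(S_{K,\overline{F}},\mathcal{O}/\varpi^m)_{\mathfrak{m}}$ equals its own localization at $\mathfrak{n}$ by local--global compatibility.
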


\begin{proof} The first isomorphism is obtained by \cite[Theorem 6.3.1]{CS} or \cite[Theorem 1.4]{kos}. We will show the second isomorphism.

By \cite[Theorem 7.1]{kos}, the Newton stratification induces a natural filtration on $R\Gamma_{\et}(S_{K, \overline{F}}, \mathcal{O}/\varpi^m)$ indexed by $b' \in B(GU_{\mathbb{Q}_l}, \mu^{-1})$, whose graded pieces are given by $R\Gamma_c(\mathrm{Igs}^{b'}_{K^l}, \mathcal{O}/\varpi^m) \otimes^{\mathbb{L}}_{\mathcal{H}(J_{b'}(\mathbb{Q}_l))_{\mathcal{O}/\varpi^m}} R\Gamma_c(\mathcal{M}_{(G,b',\mu), K_l}, \mathcal{O}/\varpi^m)$. Here, $\mathrm{Igs}^{b'}_{K^l}$ denotes the Igusa variety corresponding to $b'$ and $\mathcal{M}_{(G,b',\mu), K_l}$ denotes the Rapoport-Zink space corresponding to $(G, b', \mu)$ of level $K_l$. See \cite[{\S} 4]{CS} for details. Note that \cite[Theorem 7.1]{kos} assumed that the considering PEL Shimura varieties are unramified and our Shimura variety may not satisfy this assumption because $GU(\mathbb{Q}_l)$ may have a factor $D_w^{\times}$, where $D_w$ is a central division algebra over $\mathbb{Q}_l$. However we now assume that if $J_{b'}(\mathbb{Q}_l)_w = D_w^{\times}$, then the $w$-component of the minuscule $\mu$ is trivial. Thus the integral model as in \cite[{\S} 5]{Kottwitz} is smooth and the same proof as \cite[Theorem 7.1]{kos} works in our situation. We have $R\Gamma_c(\mathrm{Igs}^{b}_{K^l}, \mathcal{O}/\varpi^m) = \mathcal{A}_{GI}^{\mathrm{sm}}(K^{l}, \mathcal{O}/\varpi^{m})$ for the basic element $b$ by \cite[Proposition 11.27]{Igusa1}. 

By localizing at $\mathfrak{m}$, we obtain the filtration on $H^d(S_{K, \overline{F}}, \mathcal{O}/\varpi^m)_{\mathfrak{m}}[-d]$ indexed by $b' \in B(GU_{\mathbb{Q}_l}, \mu^{-1})$, whose graded pieces are given by $$R\Gamma_c(\mathrm{Igs}^{b'}_{K^l}, \mathcal{O}/\varpi^m)_{\mathfrak{m}} \otimes^{\mathbb{L}}_{\mathcal{H}(J_{b'}(\mathbb{Q}_l))_{\mathcal{O}/\varpi^m}} R\Gamma_c(\mathcal{M}_{(G,b',\mu), K_l}, \mathcal{O}/\varpi^m).$$ Let $Z^{\mathrm{spec}}((\mathrm{Res}_{F^+/\mathbb{Q}}U)_{\mathbb{Q}_l}, \mathcal{O}) = \prod_{w \mid v_0} Z^{\mathrm{spec}}(\mathrm{GL}_{2, F_w}, \mathcal{O})$ be the spectral Bernstein center defined in \cite[Definition I.9.2]{GLLC} and let $\mathfrak{n}$ be the maximal ideal of this corresponding to $\prod_{w \mid v_0} \overline{\rho}_{\mathfrak{m}}|_{G_{F_w}}$. 

Note that by the local-global compatibility Theorem \ref{Galois representation} and Theorem \ref{torsion vanishing}, the cohomology group $H^d(S_{K, \overline{F}}, \mathcal{O}/\varpi^m)_{\mathfrak{m}}$ is equal to its localization at $\mathfrak{n}$. - (*)

By localizing the above filtration at $\mathfrak{n}$, we obtain the filtration on $H^d(S_{K, \overline{F}}, \mathcal{O}/\varpi^m)_{\mathfrak{m}}[-d]$ indexed by $b' \in B((\mathrm{Res}_{F^+/\mathbb{Q}}U)_{\mathbb{Q}_l}, \mu^{-1})$, whose graded pieces are given by $R\Gamma_c(\mathrm{Igs}^{b'}_{K^l}, \mathcal{O}/\varpi^m)_{\mathfrak{m}} \otimes^{\mathbb{L}}_{\mathcal{H}(J_{b'}(\mathbb{Q}_l))_{\mathcal{O}/\varpi^m}} R\Gamma_c(\mathcal{M}_{(G,b',\mu), K_l}, \mathcal{O}/\varpi^m)_{\mathfrak{n}}$. By \cite[proof of Theorem IX.3.1]{GLLC}, we have $R\Gamma_c(\mathcal{M}_{(G,b',\mu), K_l}, \mathcal{O}/\varpi^m) = i^{b'*}T_{\mu}(j^1_!(c\mathrm{-}\mathrm{Ind}^{G(\mathbb{Q}_p)}_{K_l}\mathcal{O}/\varpi^m))$. (See \cite[proof of Theorem IX.3.1]{GLLC} for the definitions of these notations.) By \cite[Theorems IX.5.2 and IX.7.2]{GLLC}, the action of $Z^{\mathrm{spec}}((\mathrm{Res}_{F^+/\mathbb{Q}}U)_{\mathbb{Q}_l}, {\mathcal{O}})$ on $R\Gamma_c(\mathcal{M}_{(G,b',\mu), K_l}, \mathcal{O}/\varpi^m)$ factors through $Z^{\mathrm{spec}}((\mathrm{Res}_{F^+/\mathbb{Q}}U)_{\mathbb{Q}_l}, \mathcal{O}) \rightarrow Z^{\mathrm{spec}}(J_{b'}, \mathcal{O})$. Since $J_{b'}$ is an inner form of a proper Levi subgroup of $G$ for non-basic $b' \in B((\mathrm{Res}_{F^+/\mathbb{Q}}U)_{\mathbb{Q}_l}, \mu^{-1}) = B(GU_{\mathbb{Q}_l}, \mu^{-1})$ and $\overline{\rho}_{\mathfrak{m}}|_{G_{F_w}}$ is irreducible for any $w \mid v_0$, the maximal ideal $\mathfrak{n}$ is not contained in the image of $\mathrm{Spec} Z^{\mathrm{spec}}(J_{b'}, \mathcal{O}) \rightarrow \mathrm{Spec} Z^{\mathrm{spec}}((\mathrm{Res}_{F^+/\mathbb{Q}}U)_{\mathbb{Q}_l}, \mathcal{O})$. Thus $R\Gamma_c(\mathcal{M}_{(G,b',\mu), K_l}, \mathcal{O}/\varpi^m)_{\mathfrak{n}} = 0$ for non-basic $b' \in B(GU_{\mathbb{Q}_l}, \mu^{-1})$. This implies $$H^d_{\et}(S_{K, \overline{F}}, \mathcal{O}/\varpi^m)_{\mathfrak{m}}[-d] \cong \mathcal{A}_{GI}^{\mathrm{sm}}(K^{l}K_{l, 0}, \mathcal{O}/\varpi^{m})_{\mathfrak{m}} \otimes^{\mathbb{L}}_{\mathcal{H}(J_{b}(\mathbb{Q}_l))_{\mathcal{O}/\varpi^m}} R\Gamma_c(\mathcal{M}_{K_{l}}, \mathcal{O}/\varpi^m)_{\mathfrak{n}}.$$ This is equal to $$\mathcal{A}_{GI}^{\mathrm{sm}}(K^{l}K_{l, 0}, \mathcal{O}/\varpi^{m})_{\mathfrak{m}, \mathfrak{n}} \otimes^{\mathbb{L}}_{\mathcal{H}(J_{b}(\mathbb{Q}_l))_{\mathcal{O}/\varpi^m}} R\Gamma_c(\mathcal{M}_{K_{l}}, \mathcal{O}/\varpi^m)$$ because we have the isomorphism $Z^{\mathrm{spec}}((\mathrm{Res}_{F^+/\mathbb{Q}}U)_{\mathbb{Q}_l}, \mathcal{O}) \stackrel{\sim}{\rightarrow} Z^{\mathrm{spec}}(J_{b}, \mathcal{O})$ compatible with the actions on $R\Gamma_c(\mathcal{M}_{K_{l}}, \mathcal{O}/\varpi^m) = i^{b*}T_{\mu}(j^1_!(c\mathrm{-}\mathrm{Ind}^{G(\mathbb{Q}_p)}_{K_l}\mathcal{O}/\varpi^m))$ for the basic element $b$ as we have seen above. (See \cite[Definition I.9.2]{GLLC} and \cite[Theorems IX.5.2 and IX.7.2]{GLLC} again.) This is also equal to $\mathcal{A}_{GI}^{\mathrm{sm}}(K^{l}, \mathcal{O}/\varpi^{m})_{\mathfrak{m}} \otimes^{\mathbb{L}}_{\mathcal{H}(J_b(\mathbb{Q}_l))_{\mathcal{O}/\varpi^m}} R\Gamma_c(\mathcal{M}_{K_{l}}, \mathcal{O}/\varpi^m)$ because we have $\mathcal{A}_{GI}^{\mathrm{sm}}(K^{l}, \mathcal{O}/\varpi^{m})_{\mathfrak{m}, \mathfrak{n}} = \mathcal{A}_{GI}^{\mathrm{sm}}(K^{l}, \mathcal{O}/\varpi^{m})_{\mathfrak{m}}$ by the same reason as above (*). \end{proof}

For $w \mid v_0$, let $\overline{\pi}_w$ be the irreducible supercuspidal representation of $J_b(\mathbb{Q}_l)_w$ over $\mathbb{F}$ corresponding to $\overline{\rho}_{\mathfrak{m}}|_{G_{F_w}}$ via the mod $l$ Langlands correspondence (see \cite[Theorem 1.6]{Vigneras} and \cite[Theorem (1.2.4)]{Dat} for details). We can take an admissible lift of $\pi_w$ over $\mathcal{O}$ of $\overline{\pi}_w$ by \cite[Theorem 3.27 and Remark 3.28]{liftable}. Let $R_w$ be the universal deformation ring of $\overline{\rho}_{\mathfrak{m}}|_{G_{F_w}}$ over $\mathcal{O}$ and $\chi_w^{\mathrm{univ}} : G_{F_w} \rightarrow \mathcal{O}[[T]]^{\times}$ be the unramified character defined by $\mathrm{Frob}_w \mapsto 1 + T$. Note that we can consider the deformation ring by the irreducibility of $\overline{\rho}_{\mathfrak{m}}$. Note that $\mathrm{rec}_{F_{w}}(\pi_{w}|\mathrm{det}|_{w}^{-\frac{1}{2}})$ can be defined over $\mathcal{O}$ (again see \cite[Theorem 1.6]{Vigneras} and \cite[Theorem (1.2.4)]{Dat} or \cite[Theorem I.9.6]{GLLC} and \cite[Theorem 1.0.3]{HKW}) and this uniquely extends to a continuous representation $G_{F_w} \rightarrow \mathrm{GL}_2(\mathcal{O})$, which we also write for $\mathrm{rec}_{F_{w}}(\pi_{w}|\mathrm{det}|_{w}^{-\frac{1}{2}})$. 

\begin{lem}\label{formally smooth deformation}

1 \ The map $R_{w} \rightarrow \mathcal{O}[[T]]$ over $\mathcal{O}$ corresponding to the deformation $\chi_{w}^{\mathrm{univ}} \otimes_{\mathcal{O}} \mathrm{rec}_{F_{w}}(\pi_{w}|\mathrm{det}|_{w}^{-\frac{1}{2}})$ of $\overline{\rho}_{\mathfrak{m}}|_{G_{F_w}}$ is an isomorphism.

2 \  Let $R_{0}^{\mathrm{univ}}$ denote the universal deformation ring of $\overline{\chi}_{\mathfrak{m}}|_{G_{F_{0, v_0^c}}}$ over $\mathcal{O}$. Then $R_{0}^{\mathrm{univ}}$ is isomorphic to $\mathcal{O}[[X]]$ over $\mathcal{O}$.  

\end{lem}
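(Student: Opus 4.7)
The plan for part (1) is a standard unobstructed local deformation theory computation, followed by a tangent-space comparison. First I would note that $R_{w}$ exists because $\overline{\rho}_{\mathfrak{m}}|_{G_{F_{w}}}$ is absolutely irreducible, so Schur's lemma gives $H^{0}(G_{F_{w}},\mathrm{ad}\overline{\rho}_{\mathfrak{m}}|_{G_{F_{w}}})=\mathbb{F}$. The genericity hypothesis is precisely $H^{0}(G_{F_{w}},\mathrm{ad}\overline{\rho}_{\mathfrak{m}}|_{G_{F_{w}}}(1))=0$, which by local Tate duality forces $H^{2}(G_{F_{w}},\mathrm{ad}\overline{\rho}_{\mathfrak{m}}|_{G_{F_{w}}})=0$. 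Since $l\neq p$, the local Euler characteristic vanishes, so $\dim_{\mathbb{F}}H^{1}=1$. Mazur's theory then yields that $R_{w}$ is formally smooth of relative dimension one over $\mathcal{O}$, i.e. non-canonically $R_{w}\cong\mathcal{O}[[Y]]$.

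For the explicit deformation, I would argue the induced map $\varphi:R_{w}\to\mathcal{O}[[T]]$ is surjective by reducing mod $(\varpi,T^{2})$ and comparing tangent vectors. The reduction of $\chi_{w}^{\mathrm{univ}}\otimes\mathrm{rec}_{F_{w}}(\pi_{w}|\mathrm{det}|_{w}^{-1/2})$ modulo $T^{2}$ is a first-order unramified twist, which contributes a nonzero class in $H^{1}(G_{F_{w}},\mathbb{F})\hookrightarrow H^{1}(G_{F_{w}},\mathrm{ad}\overline{\rho}_{\mathfrak{m}}|_{G_{F_{w}}})$ via the scalar embedding $\mathbb{F}\hookrightarrow\mathrm{ad}$. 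Since the target tangent space is one-dimensional, $\varphi$ is surjective on tangent spaces hence surjective, and a surjective local map between two regular local rings of the same Krull dimension (each $\cong\mathcal{O}[[Y]]$) is an isomorphism.

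For part (2), the plan is to identify $R_{0}^{\mathrm{univ}}$ with a completed group algebra via local class field theory. Since $l$ splits completely in $F_{0}$, we have $F_{0,v_{0}^{c}}=\mathbb{Q}_{l}$, so $G_{\mathbb{Q}_{l}}^{\mathrm{ab}}\cong\widehat{\mathbb{Z}}\oplus\mathbb{Z}_{l}^{\times}$ whose pro-$p$ quotient is $\mathbb{Z}_{p}\oplus(\mathbb{F}_{l}^{\times})[p^{\infty}]$. Character deformations of $\overline{\chi}_{\mathfrak{m}}|_{G_{\mathbb{Q}_{l}}}$ factor through this pro-$p$ quotient after Teichm\"uller-lifting the prime-to-$p$ part of $\overline{\chi}_{\mathfrak{m}}$, so $R_{0}^{\mathrm{univ}}$ is represented by the completed group algebra $\mathcal{O}[[\mathbb{Z}_{p}\oplus(\mathbb{F}_{l}^{\times})[p^{\infty}]]]$.

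The main obstacle, and the point that truly requires care, is that this last ring equals $\mathcal{O}[[X]]$ only when $p\nmid l-1$; otherwise an extra cyclotomic factor $\mathcal{O}[Y]/((1+Y)^{p^{v_{p}(l-1)}}-1)$ appears. The remedy is to tighten the choice of auxiliary prime $l$ in the decomposed generic hypothesis: this is a Chebotarev-type condition on $\mathrm{Frob}_{l}$ in a finite extension of $F$, so one may freely impose the further congruence $l\not\equiv 1\pmod{p}$. Under this enhanced choice, the spurious torsion vanishes and $R_{0}^{\mathrm{univ}}\cong\mathcal{O}[[\mathbb{Z}_{p}]]\cong\mathcal{O}[[X]]$ as claimed.
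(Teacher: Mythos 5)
Your argument for part~(1) is correct and matches the paper's in substance: both use genericity to kill $H^{2}$, the prime-to-$p$ local Euler characteristic to get $\dim_{\mathbb{F}}H^{1}(G_{F_{w}},\mathrm{ad}\,\overline{\rho}_{\mathfrak{m}})=1$, and then a tangent-space argument to see that the universal unramified twist realizes the whole deformation space. (The paper phrases the last step as ``$H^{1}_{\mathrm{ur}}\hookrightarrow H^{1}$ is an isomorphism, hence every deformation is an unramified twist,'' but this is the same observation as your surjectivity-on-tangent-spaces argument.)

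For part~(2) you correctly put your finger on the real issue: $R_{0}^{\mathrm{univ}}\cong\mathcal{O}[[\text{pro-}p\text{ completion of }G_{\mathbb{Q}_{l}}^{\mathrm{ab}}]]$ is $\mathcal{O}[[X]]$ exactly when $p\nmid l-1$. However, your proposed remedy---strengthening the decomposed generic condition to impose $l\not\equiv 1\pmod p$---is unnecessary and indicates you missed an implication that is already in the hypotheses. The setup requires $\overline{\rho}_{\mathfrak{m}}|_{G_{F_{w}}}$ to be \emph{generic} for the $l$-adic places $w$, meaning $H^{0}(G_{F_{w}},\mathrm{ad}\,\overline{\rho}_{\mathfrak{m}}(1))=0$ with $F_{w}=\mathbb{Q}_{l}$. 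If $l\equiv 1\pmod p$, the mod-$p$ cyclotomic character on $G_{\mathbb{Q}_{l}}$ is unramified with $\mathrm{Frob}_{l}\mapsto l\equiv 1$, hence \emph{trivial}; then $\mathrm{ad}\,\overline{\rho}_{\mathfrak{m}}(1)=\mathrm{ad}\,\overline{\rho}_{\mathfrak{m}}$, whose $H^{0}$ always contains the scalars and is therefore nonzero, contradicting genericity. So genericity already forces $l\not\equiv 1\pmod p$, the spurious cyclotomic factor you worry about does not arise, and the lemma holds under the paper's hypotheses without any modification of the choice of auxiliary prime.
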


\begin{proof}

1 \ By the genericity assumption $H^2(G_{F_w}, \mathrm{ad}\overline{\rho}_{\mathfrak{m}}) = H^0(G_{F_w}, \mathrm{ad}\overline{\rho}_{\mathfrak{m}}(1))^{\vee} = 0$ and the local Euler-Poincare characteristic formula, the natural inclusion $H^1_{\mathrm{ur}}(G_{F_w}, \mathrm{ad}\overline{\rho}_{\mathfrak{m}}|_{G_{F_w}}) \hookrightarrow H^1(G_{F_w}, \mathrm{ad}\overline{\rho}_{\mathfrak{m}}|_{G_{F_w}})$ is an isomorphism. By the irreducibility of $\overline{\rho}_{\mathfrak{m}}|_{G_{F_w}}$, we have $\mathrm{dim}_{\mathbb{F}}H^1_{\mathrm{ur}}(G_{F_w}, \mathrm{ad}\overline{\rho}_{\mathfrak{m}}|_{G_{F_w}}) = \mathrm{dim}_{\mathbb{F}}H^1(G_{F_w}/I_{F_w}, \mathrm{ad}\overline{\rho}_{\mathfrak{m}}^{I_{F_w}}|_{G_{F_w}}) = \mathrm{dim}_{\mathbb{F}}H^0(G_{F_w}, \mathrm{ad}\overline{\rho}_{\mathfrak{m}}|_{G_{F_w}}) = 1$. We claim that these result implies that $R_{w}$ is isomorphic to $\mathcal{O}[[T]]$ by the given map. Actually, for any surjection $\psi : R \twoheadrightarrow R'$ of Artin local rings over $\mathcal{O}$ with the kernel $I$ satisfying $I\mathfrak{m}_{R} = 0$ and any local map $\varphi : R_{w} \rightarrow R$ over $\mathcal{O}$, the deformations of $\psi \circ \varphi \circ (\chi_{w}^{\mathrm{univ}} \otimes_{\mathcal{O}} \mathrm{rec}_{F_{w}}(\pi_{w}|\mathrm{det}|_{w}^{-\frac{1}{2}}))$ to $R$ are parametrized by elements of $H^1(G_{F_w}, \mathrm{ad}\overline{\rho}_{\mathfrak{m}}|_{G_{F_w}}) \otimes_{\mathbb{F}} I \cong I$, which implies that any deformations to $R$ of $\overline{\rho}|_{G_{F_w}}$ is equal to an unramified twist of $\varphi \circ (\chi_{w}^{\mathrm{univ}} \otimes_{\mathcal{O}} \mathrm{rec}_{F_{w}}(\pi_{w}|\mathrm{det}|_{w}^{-\frac{1}{2}}))$.

2 \ This follows from the same (or easier) calculation as 1. \end{proof}

Let $\chi_{0}^{\mathrm{univ}} : G_{F_{0, v_0^c}} \rightarrow R_{0}^{\mathrm{univ}, \times}$ be the universal character, $R_l := R_0^{\mathrm{univ}} \widehat{\otimes} (\widehat{\otimes}_{w \mid v_0} R_w)$, $\chi_{l}^{\mathrm{univ}} := \chi_0^{\mathrm{univ}} \circ \mathrm{Art}_{F_{v_0^c}} \times \prod_{w \mid v_0} \chi_w^{\mathrm{univ}} \circ \mathrm{Art}_{F_w} : F_{v_0^c}^{\times} \times \prod_{w \mid v_0}F_w^{\times} \rightarrow R_l^{\times}$ and $\pi_l := \boxtimes_{w \mid l} \pi_w$ and $\pi_l^{\mathrm{univ}} := \pi_l \otimes_{\mathcal{O}} \chi_{l}^{\mathrm{univ}}$. Let $\mathbb{T}^S_{GU}(K, \mathcal{O})_{\mathfrak{m}} := \mathrm{Im}(\mathbb{T}^S \rightarrow \mathrm{End}_{\mathcal{O}}(H^d_{\et}(S_{K, \overline{E}}, \mathcal{O})_{\mathfrak{m}}))$ and $\mathbb{T}^S_{GI}(K^{l}K_{l}', \mathcal{O})_{\mathfrak{m}} := \mathrm{Im}(\mathbb{T}^S \rightarrow \mathrm{End}_{\mathcal{O}}(\mathcal{A}_{GI}(K^{l}K_{l}', \mathcal{O})_{\mathfrak{m}}))$.

Note that by constructing the Hecke algebra valued Galois representations as in Theorem \ref{Galois completed cohomology}, we have natural maps $R_{l} \rightarrow \mathbb{T}^S_{GU}(K, \mathcal{O})_{\mathfrak{m}}, \mathbb{T}^S_{GI}(K^{l}K_{l}', \mathcal{O})_{\mathfrak{m}}$. Thus $R_{l}$ naturally acts on $H^d_{\et}(S_{K, \overline{F}}, \mathcal{O}/\varpi^m)_{\mathfrak{m}}$ and $\mathcal{A}_{GI}^{\mathrm{sm}}(K^{l}, \mathcal{O}/\varpi^{m})_{\mathfrak{m}}$.

\begin{lem}\label{decomposition} Let $\Lambda := \mathcal{O}/\varpi^m, \mathcal{O}$ or $E$.

The evaluation map $\mathrm{ev}_{\Lambda} : \mathrm{Hom}_{R_l[J_b(\mathbb{Q}_l)]}(\pi^{\mathrm{univ}}_l, \mathcal{A}_{GI}^{\mathrm{sm}}(K^{l}, \Lambda)_{\mathfrak{m}}) \otimes_{R_l} \pi^{\mathrm{univ}}_l \rightarrow \mathcal{A}_{GI}^{\mathrm{sm}}(K^{l}, \Lambda)_{\mathfrak{m}}$ is an isomorphism.

\end{lem}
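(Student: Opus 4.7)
The plan is to treat the three cases $\Lambda=\mathcal{O}/\varpi^m$, $\mathcal{O}$, and $E$ successively, reducing each to the next simpler one, with the heart of the matter being the residual case $\Lambda=\mathbb{F}$ (i.e., $m=1$).

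First, I would handle $\Lambda=\mathbb{F}$. Write $\overline{M}:=\mathcal{A}_{GI}^{\mathrm{sm}}(K^l,\mathbb{F})_{\mathfrak{m}}$. By the local-global compatibility encoded in the Hecke-algebra-valued Galois representation $\rho_{\mathfrak{m}}$ (Theorem \ref{Galois completed cohomology}), together with the standing assumption that $\overline{\rho}_{\mathfrak{m}}|_{G_{F_w}}$ is irreducible and generic for each $w\mid v_0$, every irreducible $J_b(\mathbb{Q}_l)_w$-subquotient of $\overline{M}$ must correspond under the mod $l$ local Langlands correspondence to an unramified twist of $\overline{\rho}_{\mathfrak{m}}|_{G_{F_w}}$, i.e., must be an unramified twist of $\overline{\pi}_w$. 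Since $\overline{\pi}_w$ is supercuspidal, it is both projective and injective in the appropriate block of smooth $\mathbb{F}[J_b(\mathbb{Q}_l)_w]$-modules; combining across $w\mid v_0$ yields an isotypic decomposition $\overline{M}\cong V_0\otimes_{\mathbb{F}}\overline{\pi}_l$, where $V_0:=\mathrm{Hom}_{J_b(\mathbb{Q}_l)}(\overline{\pi}_l,\overline{M})$. Under local Langlands, the residual $R_l$-action on $\overline{M}$ coming from the Hecke algebra corresponds to the twist by the universal unramified character $\overline{\chi}_l^{\mathrm{univ}}$, in view of the identification $R_w\cong\mathcal{O}[[T]]$ of Lemma \ref{formally smooth deformation}. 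This recovers exactly the structure of $\overline{\pi}_l^{\mathrm{univ}}$, giving the desired isomorphism modulo $\varpi$.

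Next, I would lift to $\Lambda=\mathcal{O}/\varpi^m$ by induction on $m$, using the coefficient sequence $0\to\mathcal{O}/\varpi^{m-1}\to\mathcal{O}/\varpi^m\to\mathbb{F}\to 0$. The crucial point is that $\pi_l^{\mathrm{univ}}=\pi_l\otimes_{\mathcal{O}}R_l$ is a free $R_l$-module by construction (the $J_b(\mathbb{Q}_l)$-action being twisted only by the character $\chi_l^{\mathrm{univ}}$), so tensoring with $\pi_l^{\mathrm{univ}}$ over $R_l$ is exact. Since each $\mathcal{A}_{GI}^{\mathrm{sm}}(K^lK'_l,\mathcal{O}/\varpi^m)_{\mathfrak{m}}$ is a finitely generated $\mathcal{O}/\varpi^m$-module and the residual supercuspidal types are projective-injective in their blocks, the functor $\mathrm{Hom}_{R_l[J_b(\mathbb{Q}_l)]}(\pi_l^{\mathrm{univ}},-)$ is exact on our modules. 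A five-lemma applied to the comparison of evaluation maps for $\mathcal{O}/\varpi^{m-1}$, $\mathcal{O}/\varpi^m$, and $\mathbb{F}$ then completes the induction. For $\Lambda=\mathcal{O}$, take the inverse limit over $m$: both sides are $\varpi$-adically complete and separated, and finite-level admissibility supplies the necessary Mittag--Leffler conditions. For $\Lambda=E$ the statement follows from the $\mathcal{O}$ case by inverting $p$.

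The main obstacle is the residual step: while the $J_b(\mathbb{Q}_l)$-equivariant isotypic decomposition of $\overline{M}$ follows from supercuspidality in a relatively standard way, making it compatible with the $R_l$-structure requires careful bookkeeping with local-global compatibility at the places $w\mid l$ and with the explicit form of the local universal deformation rings provided by Lemma \ref{formally smooth deformation}. Once that matching is in hand, the passage through Artinian coefficients, the inverse limit, and the localization are formal consequences of the flatness of $\pi_l^{\mathrm{univ}}$ over $R_l$.
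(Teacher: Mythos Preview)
Your approach is correct in outline but takes a genuinely different route from the paper's proof, and the direction of the logical flow is essentially reversed.

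The paper follows the pattern of Proposition~\ref{residual irreducibility}: it first proves only \emph{injectivity} of $\mathrm{ev}_{\mathbb{F}}$, which needs nothing more than the irreducibility of $\overline{\pi}_l^{\mathrm{univ}}$ (check on $\mathfrak{m}$-torsion, then use nilpotence of $\mathfrak{m}$ on the kernel). Injectivity then lifts to $\mathcal{O}$ and $\mathcal{O}/\varpi^m$ by passing to $K_l'$-invariants, using that these are finite free $\mathcal{O}$-modules. The paper establishes \emph{surjectivity} first over $E$, via the Matsushima-type decomposition of $\mathcal{A}_{GI}^{\mathrm{sm}}(K^l,E)_{\mathfrak{m}}$ by automorphic representations (Theorem~\ref{Zuker}) together with characteristic-zero local--global compatibility (Theorem~\ref{Galois representation}); since $\mathrm{coker}(\mathrm{ev}_{\mathcal{O}})$ is $\mathcal{O}$-torsion-free, surjectivity over $E$ gives it over $\mathcal{O}$, and then over $\mathcal{O}/\varpi^m$ by reduction.

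Your route instead proves the full isomorphism directly over $\mathbb{F}$ via Bernstein-block theory and projectivity/injectivity of supercuspidals, then ascends through $\mathcal{O}/\varpi^m$ by a five-lemma induction, then to $\mathcal{O}$ and $E$. This works, but two points deserve care. First, your claim that every irreducible $J_b(\mathbb{Q}_l)_w$-subquotient of $\overline{M}$ lies in the block of $\overline{\pi}_w$ is not a direct consequence of Theorem~\ref{Galois completed cohomology} (which only pins down $\rho_{\mathfrak{m}}$ at unramified places): one really needs the equality $\mathcal{A}_{GI}^{\mathrm{sm}}(K^l,\Lambda)_{\mathfrak{m},\mathfrak{n}}=\mathcal{A}_{GI}^{\mathrm{sm}}(K^l,\Lambda)_{\mathfrak{m}}$ established (for both $H^d$ and $\mathcal{A}_{GI}$) in the proof of Proposition~\ref{basic Mant}, and that in turn relies on characteristic-zero local--global compatibility and torsion-freeness---the same input the paper uses directly for surjectivity. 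Second, in your $\mathcal{O}$-step, $\mathcal{A}_{GI}^{\mathrm{sm}}(K^l,\mathcal{O})_{\mathfrak{m}}=\varinjlim_{K_l'}\mathcal{A}_{GI}(K^lK_l',\mathcal{O})_{\mathfrak{m}}$ is not itself $\varpi$-complete, so the inverse-limit argument must be carried out after taking $K_l'$-invariants (where both sides become finite free $\mathcal{O}$-modules), not on the full direct limit. With these adjustments your argument goes through; the paper's path is shorter mainly because it avoids the mod~$p$ block analysis and the five-lemma machinery by pushing surjectivity to characteristic zero where the automorphic decomposition is immediately available.
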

    
\begin{proof} This can be proved in the same way as Proposition \ref{residual irreducibility}. By the irreducibility of $\overline{\pi}_{l}^{\mathrm{univ}}$, we obtain the injectivity of $\mathrm{ev}_{\mathbb{F}}$ on the $\mathfrak{m}$-annihilated subspaces. Thus we obtain the injectivity of $\mathrm{ev}_{\mathbb{F}}$ because $\mathfrak{m}$ acts on $\mathrm{Ker}(\mathrm{ev}_{\mathbb{F}})$ nilpotently. We claim that $\mathrm{ev}_{\mathcal{O}}$ and $\mathrm{ev}_{\mathcal{O}/\varpi^m}$ are injective and the cokernel of $\mathrm{ev}_{\mathcal{O}}$ is torsion free over $\mathcal{O}$. We can check this claim by taking the fixed part for any pro-$l$ open compact subgroup $K_l$. Note that the admissibility of both spaces implies the $p$-completeness of the fixed parts. Thus the injectivity of $\mathrm{ev}_{\mathbb{F}}$ and $p$-torsion freeness of both spaces imply the claim. By using the decomposition of $\mathcal{A}_{GI}^{\mathrm{sm}}(K^{l}, E)_{\mathfrak{m}}$ by automorphic representations (see Theorem \ref{Zuker}) and the compatibility (see Theorem \ref{Galois representation}), we obtain the surjectivity of $\mathrm{ev}_{E}$ and thus the surjectivity of $\mathrm{ev}_{\mathcal{O}}$ and $\mathrm{ev}_{\mathcal{O}/\varpi^m}$. \end{proof}

By Proposition \ref{basic Mant} and Lemma \ref{decomposition}, we get  

\begin{align}\label{Mantdecomposition}
H^d(S_{K, \overline{F}}, \mathcal{O}/\varpi^m)_{\mathfrak{m}} \nonumber \\
\cong (\mathrm{Hom}_{R_l[J_b(\mathbb{Q}_l)]}(\pi^{\mathrm{univ}}_l, \mathcal{A}_{GI}^{\mathrm{sm}}(K^{l}, \mathcal{O}/\varpi^m)_{\mathfrak{m}}) \otimes_{R_l} \pi^{\mathrm{univ}}_l) \otimes^{\mathbb{L}}_{\mathcal{H}(J_b(\mathbb{Q}_l))_{\mathcal{O}/\varpi^m}} R\Gamma_c(\mathcal{M}_{K_{l}}, \mathcal{O}/\varpi^m)[d] \nonumber \\
= (\mathrm{Hom}_{R_l[J_b(\mathbb{Q}_l)]}(\pi^{\mathrm{univ}}_l, \mathcal{A}_{GI}^{\mathrm{sm}}(K^{l}, \mathcal{O}/\varpi^m)_{\mathfrak{m}}) \otimes_{R_l}^{\mathbb{L}} \pi^{\mathrm{univ}}_l) \otimes^{\mathbb{L}}_{\mathcal{H}(J_b(\mathbb{Q}_l))_{\mathcal{O}/\varpi^m}} R\Gamma_c(\mathcal{M}_{K_{l}}, \mathcal{O}/\varpi^m)[d] \nonumber \\
= \mathrm{Hom}_{R_l[J_b(\mathbb{Q}_l)]}(\pi^{\mathrm{univ}}_l, \mathcal{A}_{GI}^{\mathrm{sm}}(K^{l}, \mathcal{O}/\varpi^m)_{\mathfrak{m}}) \otimes^{\mathbb{L}}_{R_l} (\pi^{\mathrm{univ}}_l \otimes^{\mathbb{L}}_{\mathcal{H}(J_b(\mathbb{Q}_l))_{\mathcal{O}}} R\Gamma_c(\mathcal{M}_{K_{l}}, \mathcal{O})[d])
\end{align}

\begin{thm}\label{concentration}

$\pi^{\mathrm{univ}}_l \otimes^{\mathbb{L}}_{\mathcal{H}(J_b(\mathbb{Q}_l))_{\mathcal{O}}} R\Gamma_c(\mathcal{M}_{K_l}, \mathcal{O})[d]$ is concentrated at $0$ and a finite free $R_l$-module. Moreover, this is nonzero for sufficiently small $K_l$.

\end{thm}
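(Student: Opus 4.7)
The plan is to reduce the statement to the torsion Kottwitz conjecture for basic Rapoport--Zink spaces, as established in \cite[Theorem 8.10]{zou}. First I would separate the contribution of the universal unramified twist character $\chi^{\mathrm{univ}}_l$ from $\pi^{\mathrm{univ}}_l = \pi_l \otimes_{\mathcal{O}} \chi^{\mathrm{univ}}_l$. Using the presentation $R_l \cong \mathcal{O}[[X,(T_w)_{w \mid v_0}]]$ coming from Lemma \ref{formally smooth deformation}, the character $\chi^{\mathrm{univ}}_l$ is an unramified twist over $R_l$, so that
\begin{equation*}
\pi^{\mathrm{univ}}_l \otimes^{\mathbb{L}}_{\mathcal{H}(J_b(\mathbb{Q}_l))_{\mathcal{O}}} R\Gamma_c(\mathcal{M}_{K_l}, \mathcal{O})[d]
\;\cong\;
\bigl(\pi_l \otimes^{\mathbb{L}}_{\mathcal{H}(J_b(\mathbb{Q}_l))_{\mathcal{O}}} R\Gamma_c(\mathcal{M}_{K_l}, \mathcal{O})[d]\bigr) \otimes_{\mathcal{O}} R_l,
\end{equation*}
where the $R_l$-action on the right-hand side is obtained by composing the $\mathcal{H}(J_b(\mathbb{Q}_l))_{\mathcal{O}}$-action with the characters coming from $\chi^{\mathrm{univ}}_l$. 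This reduces the theorem to proving that $\pi_l \otimes^{\mathbb{L}}_{\mathcal{H}(J_b(\mathbb{Q}_l))_{\mathcal{O}}} R\Gamma_c(\mathcal{M}_{K_l}, \mathcal{O})[d]$ is concentrated in degree zero, is a finite free $\mathcal{O}$-module, and is nonzero for sufficiently small $K_l$.

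Next I would invoke the torsion Kottwitz conjecture of \cite[Theorem 8.10]{zou}. Since $\overline{\rho}_{\mathfrak{m}}|_{G_{F_w}}$ is irreducible and generic for every $w \mid l$, the mod $\ell$ Langlands correspondent $\overline{\pi}_w$ is supercuspidal, hence its admissible lift $\pi_w$ is a cuspidal type whose restriction to the appropriate $J_b(\mathbb{Q}_l)_w$-factor matches (via Jacquet--Langlands where relevant) a supercuspidal representation. Zou's result then computes the derived tensor product as a $G_{F_w}$-representation given (up to shift and Weil--Deligne data from the minuscule $\mu$) by $\mathrm{rec}_{F_w}(\pi_w |\det|_w^{-1/2})$; in particular the complex is concentrated in degree zero and forms a finite free $\mathcal{O}$-module in each factor. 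Taking the external tensor product over $w \mid v_0$ (together with the central character contribution from $F_{0,v_0^c}^{\times}$) gives the corresponding statement for all of $\pi_l$.

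Finally, base-changing from $\mathcal{O}$ to $R_l$ via the displayed identification preserves finite freeness and concentration in degree zero, and the key compatibility is that the resulting $R_l$-structure is precisely the one induced from deformations of $\overline{\rho}_{\mathfrak{m}}|_{G_{F_w}}$, via Lemma \ref{formally smooth deformation}(1) which identifies $R_w$ with the unramified twists of $\mathrm{rec}_{F_w}(\pi_w|\det|_w^{-1/2})$. Nonvanishing at sufficiently small $K_l$ follows because $\pi_l$ is an admissible representation of $J_b(\mathbb{Q}_l)$ with nonzero $K_l$-fixed vectors for small enough $K_l$, and the Kottwitz formula then produces a nonzero rank. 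The main obstacle, and the part requiring most care, is ensuring that Zou's torsion Kottwitz conjecture applies uniformly to our $\pi_l$ across the (possibly) split and non-split local factors $J_b(\mathbb{Q}_l)_w$, and that the Galois-module/$R_l$-module structure on the right-hand side matches the deformation-theoretic action on the left; once this matching is set up using Lemma \ref{formally smooth deformation}, the finite-freeness and concentration follow directly from loc.\ cit.
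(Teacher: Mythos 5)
Your very first displayed isomorphism is where the argument breaks down. You claim
$$\pi^{\mathrm{univ}}_l \otimes^{\mathbb{L}}_{\mathcal{H}(J_b(\mathbb{Q}_l))_{\mathcal{O}}} R\Gamma_c(\mathcal{M}_{K_l}, \mathcal{O})[d] \;\cong\; \bigl(\pi_l \otimes^{\mathbb{L}}_{\mathcal{H}(J_b(\mathbb{Q}_l))_{\mathcal{O}}} R\Gamma_c(\mathcal{M}_{K_l}, \mathcal{O})[d]\bigr) \otimes_{\mathcal{O}} R_l.$$
This fails because the Hecke-module structure on $\pi^{\mathrm{univ}}_l = \pi_l \otimes_{\mathcal{O}} \chi^{\mathrm{univ}}_l$ is genuinely twisted by $\chi^{\mathrm{univ}}_l$: an element $[K'gK'] \in \mathcal{H}(J_b(\mathbb{Q}_l))_{\mathcal{O}}$ acts as $\chi^{\mathrm{univ}}_l(g)$ times its action on $\pi_l$. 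There is no $\mathcal{H}_{\mathcal{O}}$-linear map $\pi_l \to \pi^{\mathrm{univ}}_l$ extending $v \mapsto v \otimes 1$, so the tensor product does not factor as a constant family over $R_l$. A quick sanity check: at a generic point of $\operatorname{Spec} R_l$, the left side computes the Kottwitz-conjecture contribution of the unramified twist $\pi_l \otimes \chi_x$ for the corresponding unramified character $\chi_x$, and these answers genuinely vary as $x$ moves (e.g. their Galois actions change); your right side is a constant family, so the two cannot agree. Decorating the right side with a cleverly-defined $R_l$-action does not rescue the isomorphism, because the two objects already disagree as complexes of $\mathcal{O}$-modules once one localizes at a non-closed point.

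The structure of a correct argument is different. The key input you are missing is that by \cite[Theorem IX.3.1]{GLLC} together with \cite[Proposition 2.9]{LRC}, the complex $\pi^{\mathrm{univ}}_l \otimes^{\mathbb{L}}_{\mathcal{H}(J_b(\mathbb{Q}_l))_{\mathcal{O}}} R\Gamma_c(\mathcal{M}_{K_l}, \mathcal{O})[d]$ is represented by a bounded-above complex of \emph{finite free $R_l$-modules}, so it behaves like a perfect complex. One then applies the torsion Kottwitz conjecture \cite[Theorem 8.10]{zou} not over $\mathcal{O}$ but to the derived fibre at the closed point, $\mathbb{F}\otimes^{\mathbb{L}}_{R_l}(\cdots) \cong \overline{\pi}^{\mathrm{univ}}_l \otimes^{\mathbb{L}}_{\mathcal{H}(J_b(\mathbb{Q}_l))_{\mathbb{F}}} R\Gamma_c(\mathcal{M}_{K_l}, \mathbb{F})[d]$, and concludes that this fibre is concentrated in degree $0$ and nonzero for small $K_l$. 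The standard Nakayama-type argument (for a bounded-above complex of finite frees over a Noetherian local ring whose reduction mod the maximal ideal is concentrated in a single degree) then gives concentration, finite freeness over $R_l$, and nonvanishing. Finally, since \cite[Theorem 8.10]{zou} is stated only for $\mathrm{Res}_{L/\mathbb{Q}_p}\mathrm{GL}_{n,L}$, one has to deal with the places $w$ where $J_b(\mathbb{Q}_l)_w = D_w^{\times}$: under the standing hypothesis of {\S}6.1 that the $w$-component of $\mu$ is trivial there, $G(\mathbb{Q}_l)_w = D_w^{\times}$ and the Rapoport--Zink factor at $w$ is just the discrete set $D_w^{\times}/K_w$, so the conjecture is immediate at those factors. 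Your write-up waves at this point but does not confront it, and in your framework there is no way to even formulate the reduction step because the perfect-complex structure over $R_l$ was never established.
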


\begin{proof} By \cite[Theorem IX.3.1]{GLLC} and \cite[Proposition 2.9]{LRC}, the complex $\pi^{\mathrm{univ}}_l \otimes^{\mathbb{L}}_{\mathcal{H}(J_b(\mathbb{Q}_l))_{\mathcal{O}}} R\Gamma_c(\mathcal{M}_{K_l}, \mathcal{O})[d]$ is a bounded above complex of finite free $R_l$-modules. The torsion Kottwiz conjecture \cite[Theorem 8.10]{zou} implies that $\mathbb{F} \otimes^{\mathbb{L}}_{R_l} \pi^{\mathrm{univ}}_l \otimes^{\mathbb{L}}_{\mathcal{H}(J_b(\mathbb{Q}_l))_{\mathcal{O}}} R\Gamma_c(\mathcal{M}_{K_l}, \mathcal{O})[d] = \overline{\pi}^{\mathrm{univ}}_l \otimes^{\mathbb{L}}_{\mathcal{H}(J_b(\mathbb{Q}_l))_{\mathbb{F}}} R\Gamma_c(\mathcal{M}_{K_l}, \mathbb{F})[d]$ is concentrated at degree $0$ and if $K_l$ is sufficiently small, then this is nonzero. Note that \cite[Theorem 8.10]{zou} state the Torsion Kottwitz conjecture only for $\mathrm{Res}_{L/\mathbb{Q}_p}\mathrm{GL}_{n, L}$, where $L$ is a finite extension of $\mathbb{Q}_p$. However, we now assume that if $J_b(\mathbb{Q}_l)_w = D_w^{\times}$, then the $w$-component of the minuscule $\mu$ is trivial. Note this implies that $G(\mathbb{Q}_l)_w = D_w^{\times}$. Thus the torsion Kottwiz conjecture is trivial in this case because the corresponding Rapoport-Zink space of level $K_w$ is simply a set $D_w^{\times}/K_w$ with a trivial Weil descent datum, canonical action of $J_b(\mathbb{Q}_l)_w = D_w^{\times}$ and a canonical Hecke action of $G(\mathbb{Q}_l)_w = D_w^{\times}$. This implies the result. \end{proof}

Let $\widehat{\mathcal{A}}_{GI}(K^{p}, \mathcal{O})_{\mathfrak{m}} := \varprojlim_{m} \varinjlim_{K_p} \mathcal{A}_{GI}(K^{p}K_p, \mathcal{O}/\varpi^m)_{\mathfrak{m}}$.

\begin{thm} \label{comparison1}

We have the following isomorphism of $GU(\mathbb{Q}_p) \times \mathbb{T}^S$-modules. $$\widehat{H}^d(S_{K^p}, \mathcal{O})_{\mathfrak{m}} \cong \mathrm{Hom}_{R_l[J_b(\mathbb{Q}_l)]}(\pi_l^{\mathrm{univ}}, \varinjlim_{K_l'}\widehat{\mathcal{A}}_{GI}(K^{p, l}K_l', \mathcal{O})_{\mathfrak{m}}) \otimes_{R_l} (\pi_{l}^{\mathrm{univ}} \otimes^{\mathbb{L}}_{\mathcal{H}(J_b(\mathbb{Q}_l))_{\mathcal{O}}} R\Gamma_{c}(\mathcal{M}_{K_{l}}, \mathcal{O})[d]).$$

Consequently, for sufficiently small $K_l$, there exists $k \in \mathbb{Z}_{>0}$ such that 

$\widehat{H}^d(S_{K^p}, \mathcal{O})_{\mathfrak{m}} \cong \mathrm{Hom}_{R_l[J_b(\mathbb{Q}_l)]}(\pi_l^{\mathrm{univ}}, \varinjlim_{K_l'}\widehat{\mathcal{A}}_{GI}(K^{p, l}K_l', \mathcal{O})_{\mathfrak{m}})^{\oplus k}$.

\end{thm}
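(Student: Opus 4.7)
The plan is to propagate the finite-level identity (\ref{Mantdecomposition}), which holds at level $K=K^{p,l}K_pK_l$ modulo $\varpi^m$, through successive colimits over $K_p$ and limits over $m$, using Theorem \ref{concentration} to control the Rapoport-Zink factor. First, I would apply Theorem \ref{concentration} to drop the derived symbol on the Rapoport-Zink side: since
\[
C_{K_l} := \pi^{\mathrm{univ}}_l \otimes^{\mathbb{L}}_{\mathcal{H}(J_b(\mathbb{Q}_l))_{\mathcal{O}}} R\Gamma_c(\mathcal{M}_{K_l},\mathcal{O})[d]
\]
is concentrated in degree $0$ and finite free over $R_l$, the formula (\ref{Mantdecomposition}) reads
\[
H^d_{\et}(S_{K^{p,l}K_pK_l,\overline{F}},\mathcal{O}/\varpi^m)_{\mathfrak{m}} \cong \mathrm{Hom}_{R_l[J_b(\mathbb{Q}_l)]}\bigl(\pi_l^{\mathrm{univ}},\mathcal{A}^{\mathrm{sm}}_{GI}(K^{p,l}K_p,\mathcal{O}/\varpi^m)_{\mathfrak{m}}\bigr) \otimes_{R_l} C_{K_l},
\]
with $C_{K_l}$ independent of $K_p$ and of $m$.

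Next I would pass to the filtered colimit over $K_p$. Each $\pi_w$ is an admissible irreducible smooth representation of $J_b(\mathbb{Q}_l)_w$, hence a finitely generated $\mathcal{O}[J_b(\mathbb{Q}_l)_w]$-module, so $\pi_l^{\mathrm{univ}}$ is finitely generated as a smooth $R_l[J_b(\mathbb{Q}_l)]$-module. Consequently $\mathrm{Hom}_{R_l[J_b(\mathbb{Q}_l)]}(\pi_l^{\mathrm{univ}},-)$ commutes with the filtered colimit $\varinjlim_{K_p}$, while $-\otimes_{R_l} C_{K_l}$ commutes with colimits trivially, giving
\[
\varinjlim_{K_p} H^d_{\et}(S_{K^{p,l}K_pK_l,\overline{F}},\mathcal{O}/\varpi^m)_{\mathfrak{m}} \cong \mathrm{Hom}_{R_l[J_b(\mathbb{Q}_l)]}\bigl(\pi_l^{\mathrm{univ}},\mathcal{A}^{\mathrm{sm}}_{GI}(K^{p,l},\mathcal{O}/\varpi^m)_{\mathfrak{m}}\bigr) \otimes_{R_l} C_{K_l}.
\]
Then I would take $\varprojlim_m$: the functor $\mathrm{Hom}$ commutes with limits in its second argument, and the finite freeness of $C_{K_l}$ over $R_l$ lets the tensor product be pulled out of the inverse limit, yielding
\[
\widehat{H}^d(S_{K^p},\mathcal{O})_{\mathfrak{m}} \cong \mathrm{Hom}_{R_l[J_b(\mathbb{Q}_l)]}\bigl(\pi_l^{\mathrm{univ}},\,\varprojlim_m \mathcal{A}^{\mathrm{sm}}_{GI}(K^{p,l},\mathcal{O}/\varpi^m)_{\mathfrak{m}}\bigr) \otimes_{R_l} C_{K_l}.
\]

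The main obstacle is the last step, identifying the Hom target with $\varinjlim_{K_l'}\widehat{\mathcal{A}}_{GI}(K^{p,l}K_l',\mathcal{O})_{\mathfrak{m}}$: the inverse limit $\varprojlim_m$ and the colimit $\varinjlim_{K_l'}$ do not in general commute, so the two spaces are genuinely different. The resolution is that $\pi_l^{\mathrm{univ}}$ is smooth, so every $J_b(\mathbb{Q}_l)$-equivariant map out of it lands in the smooth part of the target. The smooth part is computed by $K_l'$-invariants; since $(-)^{K_l'}$ is a finite limit it commutes with $\varprojlim_m$, giving
\[
\Bigl(\varprojlim_m \mathcal{A}^{\mathrm{sm}}_{GI}(K^{p,l},\mathcal{O}/\varpi^m)_{\mathfrak{m}}\Bigr)^{K_l'} = \varprojlim_m \mathcal{A}^{\mathrm{sm}}_{GI}(K^{p,l}K_l',\mathcal{O}/\varpi^m)_{\mathfrak{m}} = \widehat{\mathcal{A}}_{GI}(K^{p,l}K_l',\mathcal{O})_{\mathfrak{m}},
\]
so the smooth vectors form precisely $\varinjlim_{K_l'}\widehat{\mathcal{A}}_{GI}(K^{p,l}K_l',\mathcal{O})_{\mathfrak{m}}$ and the two Hom spaces coincide. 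This yields the first isomorphism. For the second assertion, Theorem \ref{concentration} guarantees that $C_{K_l}$ is nonzero and finite free over $R_l$ once $K_l$ is sufficiently small, hence $C_{K_l}\cong R_l^{\oplus k}$ for some $k\ge 1$, so that $-\otimes_{R_l} C_{K_l}\cong(-)^{\oplus k}$ and the direct sum decomposition follows.
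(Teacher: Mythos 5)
Your proposal is correct and follows the same route as the paper: apply the finite-level decomposition (\ref{Mantdecomposition}) together with Theorem \ref{concentration} to replace the derived tensor over $R_l$ by an ordinary one, pass to the colimit over $K_p$ and the limit over $m$, and finally use the smoothness of $\pi_l^{\mathrm{univ}}$ to commute $\varprojlim_m$ past the colimit over $K_l'$ inside the $\mathrm{Hom}$. The paper compresses all of this into two sentences; your version supplies the justifications it leaves implicit (finite generation of $\pi_l^{\mathrm{univ}}$ so that $\mathrm{Hom}$ commutes with the filtered colimit over $K_p$, finite freeness of $C_{K_l}$ over $R_l$ so that the tensor commutes with $\varprojlim_m$, and the $K_l'$-invariants computation for the smooth vectors), which is welcome; only the notation $\mathcal{A}^{\mathrm{sm}}_{GI}(K^{p,l}K_l',\mathcal{O}/\varpi^m)$ in your last display should really read $(\mathcal{A}^{\mathrm{sm}}_{GI}(K^{p,l},\mathcal{O}/\varpi^m))^{K_l'}=\varinjlim_{K_p}\mathcal{A}_{GI}(K^{p,l}K_pK_l',\mathcal{O}/\varpi^m)$.
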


\begin{proof} By Theorem \ref{concentration}, (\ref{Mantdecomposition}) and taking the limit $K_p \rightarrow 1$ and $m \rightarrow \infty$, we obtain $$\widehat{H}^d(S_{K^p}, \mathcal{O})_{\mathfrak{m}} \cong \mathrm{Hom}_{R_l[J_b(\mathbb{Q}_l)]}(\pi_l^{\mathrm{univ}}, \varprojlim_{m}\varinjlim_{K_l'}\widehat{\mathcal{A}}_{GI}(K^{p, l}K_l', \mathcal{O}/\varpi^m)_{\mathfrak{m}}) \otimes_{R_l} (\pi_{l}^{\mathrm{univ}} \otimes^{\mathbb{L}}_{\mathcal{H}(J_b(\mathbb{Q}_l))_{\mathcal{O}}} R\Gamma_{c}(\mathcal{M}_{K_{l}}, \mathcal{O})[d]).$$ Since the action of $J_b(\mathbb{Q}_l)$ on $\pi_l^{\mathrm{univ}}$ is smooth, we obtain $\mathrm{Hom}_{R_l[J_b(\mathbb{Q}_l)]}(\pi_l^{\mathrm{univ}}, \varinjlim_{K_l'}\widehat{\mathcal{A}}_{GI}(K^{p, l}K_l', \mathcal{O})_{\mathfrak{m}}) = \mathrm{Hom}_{R_l[J_b(\mathbb{Q}_l)]}(\pi_l^{\mathrm{univ}}, \varprojlim_{m}\varinjlim_{K_l'}\widehat{\mathcal{A}}_{GI}(K^{p, l}K_l', \mathcal{O}/\varpi^m)_{\mathfrak{m}})$. Theorem \ref{concentration} implies the second statement. \end{proof}

We fix $\tau \in \Psi$ and let $w_0 \mid v_0$ be a place $F$ induced by $\tau$. Assume that $\Psi \neq \{ \tau \}$ and $J_b(\mathbb{Q}_l)_{w_0} = D_{w_0}^{\times}$. Let $U'/F^+$ be a unitary group defined by the data $\Psi \setminus \{ \tau \}$ and $S(B) \sqcup \{ w_0, w_0^c \}$ as in {\S} 3.1. Thus the $w_0$-component of $GU'(\mathbb{Q}_l)$ is equal to $D_{w_0}^{\times}$. Let $T_{K}$ be the Shimura variety over $F$ defined by $GU'$ as in {\S} 3.2. Let $\mu': \mathbb{G}_{m, \overline{\mathbb{Q}}_l} \rightarrow GU'_{\overline{\mathbb{Q}}_l}$ be the minuscule defined by $GU'$ and $b'$ be the basic element of $B(GU'_{\mathbb{Q}_l}, \mu^{'-1})$. Then we have $J_b = J_{b'}$ by Lemma \ref{calculation of isocrystals}. Let $GI'$ be a unitary similitude group satisfying $GI'(\mathbb{A}_{\mathbb{Q}}^{\infty, l}) = GU'(\mathbb{A}_{\mathbb{Q}}^{\infty, l})$, $GI'(\mathbb{Q}_l) = J_{b'}(\mathbb{Q}_l)$ and $GI(\mathbb{R}) = G(\prod_{\tau \in \Phi} U(0,2))$. By Proposition \ref{unitary groups}, $GI$ and $GI'$ are identified by first taking an identification of division algebras and then taking an identification of involutions. Thus by Theorem \ref{comparison1}, we obtain the following comparison result.

\begin{cor}\label{comparison}
            
For any sufficiently small open compact subgroups $K_{l}$ and $K_{l}'$ of $GU(\mathbb{Q}_l)$ and $GU'(\mathbb{Q}_l)$ respectively, there exist positive integers $t, u \in \mathbb{Z}_{>0}$ and a $GU(\mathbb{Q}_p) \times \mathbb{T}^S$-equivariant isomorphism $$\widehat{H}^{d}(S_{K^{p, l}K_{l}}, \mathcal{O})^{\oplus t}_{\mathfrak{m}} \cong \widehat{H}^{d-1}(T_{K^{p, l}K'_{l}}, \mathcal{O})^{\oplus u}_{\mathfrak{m}}.$$

\end{cor}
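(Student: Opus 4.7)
The plan is to apply Theorem~\ref{comparison1} twice --- once to $S$ and once to $T$ --- and then to exploit the identification $GI \cong GI'$ to recognize both completed cohomologies as powers of a common Hom-module. The hypotheses of Theorem~\ref{comparison1} depend only on $\mathfrak{m}$ and on local data at $l$, not on the choice of Shimura variety, so they transfer cleanly from $S$ to $T$.

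First I would invoke Theorem~\ref{comparison1} for $S$ to obtain, for $K_l$ sufficiently small, a positive integer $t$ and a $GU(\mathbb{Q}_p) \times \mathbb{T}^S$-equivariant isomorphism
\begin{equation*}
\widehat{H}^d(S_{K^{p,l}K_l}, \mathcal{O})_{\mathfrak{m}} \;\cong\; \mathrm{Hom}_{R_l[J_b(\mathbb{Q}_l)]}\bigl(\pi_l^{\mathrm{univ}}, \varinjlim_{K_l''}\widehat{\mathcal{A}}_{GI}(K^{p,l}K_l'', \mathcal{O})_{\mathfrak{m}}\bigr)^{\oplus t}.
\end{equation*}
Then I would establish the analog of Theorem~\ref{comparison1} for $T$, following verbatim the chain Proposition~\ref{basic Mant} $\to$ Lemma~\ref{decomposition} $\to$ Theorem~\ref{concentration}: (i) the Newton stratification and the resulting torsion Mantovan filtration apply to $T$ because the $w_0$-component of the minuscule $\mu'$ of $GU'$ is trivial, which keeps the relevant integral models smooth and hence \cite[Theorem 7.1]{kos} applicable; (ii) the non-basic strata vanish after localization at $\mathfrak{m}$ thanks to the irreducibility and genericity of $\overline{\rho}_{\mathfrak{m}}|_{G_{F_w}}$ for $w \mid l$; (iii) the torsion Kottwitz conjecture \cite[Theorem 8.10]{zou} gives concentration and finite-freeness of $\pi_l^{\mathrm{univ}} \otimes^{\mathbb{L}}_{\mathcal{H}(J_{b'}(\mathbb{Q}_l))_{\mathcal{O}}} R\Gamma_c(\mathcal{M}_{(GU'_{\mathbb{Q}_l}, b', \mu'), K_l'}, \mathcal{O})$ over $R_l$, nonzero for $K_l'$ sufficiently small. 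This yields a positive integer $u$ and a $GU(\mathbb{Q}_p) \times \mathbb{T}^S$-equivariant isomorphism
\begin{equation*}
\widehat{H}^{d-1}(T_{K^{p,l}K_l'}, \mathcal{O})_{\mathfrak{m}} \;\cong\; \mathrm{Hom}_{R_l[J_{b'}(\mathbb{Q}_l)]}\bigl(\pi_l^{'\mathrm{univ}}, \varinjlim_{K_l''}\widehat{\mathcal{A}}_{GI'}(K^{p,l}K_l'', \mathcal{O})_{\mathfrak{m}}\bigr)^{\oplus u}.
\end{equation*}

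Next I would identify the two Hom-modules. By Proposition~\ref{calculation of isocrystals}, replacing $GU$ by $GU'$ --- trading the nontrivial $(1,0)$-minuscule at $w_0$ on $\mathrm{GL}_2(F_{w_0})$ for a trivial minuscule on $D_{w_0}^\times$ --- produces the same local group at $l$ at the basic stratum, so $J_b(\mathbb{Q}_l) = J_{b'}(\mathbb{Q}_l)$. Combined with the agreement of $GU$ and $GU'$ at places away from $\{w_0, w_0^c\}$, the common compact signature at infinity, and Proposition~\ref{unitary groups}, this upgrades to a $\mathbb{Q}$-rational isomorphism $GI \cong GI'$ of inner forms. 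Consequently the spaces of algebraic automorphic forms and their $p$-adic completions match as $GU(\mathbb{Q}_p) \times \mathbb{T}^S$-modules, and $\pi_l^{\mathrm{univ}} = \pi_l^{'\mathrm{univ}}$ since both are constructed from $\overline{\rho}_{\mathfrak{m}}|_{G_{F_w}}$ and $\overline{\chi}_{\mathfrak{m}}|_{G_{F_{0,v_0^c}}}$ via mod-$l$ Langlands (and Jacquet--Langlands at $w_0$), inputs that depend on $\mathfrak{m}$ alone. Writing $M$ for the common Hom-module, taking the $u$-th direct sum of the first isomorphism and the $t$-th of the second yields $\widehat{H}^d(S_{K^{p,l}K_l}, \mathcal{O})_{\mathfrak{m}}^{\oplus u} \cong M^{\oplus tu} \cong \widehat{H}^{d-1}(T_{K^{p,l}K_l'}, \mathcal{O})_{\mathfrak{m}}^{\oplus t}$, which is the desired statement.

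The main obstacle I anticipate is verifying the analog of Theorem~\ref{comparison1} for $T$ --- in particular, ensuring that \cite[Theorem 7.1]{kos} applies at $w_0$ despite the division-algebra factor $D_{w_0}^\times$ in $GU'(\mathbb{Q}_l)$. This is resolved exactly as in the proof of Proposition~\ref{basic Mant}: triviality of $\mu'_{w_0}$ keeps the integral model at $w_0$ smooth, so Kottwitz's integral model and the filtration argument remain valid. A minor secondary point is tracking the $GU(\mathbb{Q}_p) \times \mathbb{T}^S$-equivariance through the identification $GI \cong GI'$; this is automatic because $GI$ and $GI'$ agree with $GU$ and $GU'$ away from $l$ and $\infty$, and in particular at $p$.
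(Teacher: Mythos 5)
Your proposal is correct and follows essentially the same route as the paper: the paper likewise applies Theorem \ref{comparison1} to both $S$ and $T$, invokes $J_b = J_{b'}$ (from Proposition \ref{calculation of isocrystals}) and $GI \cong GI'$ (from Proposition \ref{unitary groups}) to identify the two $\mathrm{Hom}$-modules, and concludes by taking direct sums. The only difference is presentational — the paper simply reapplies Theorem \ref{comparison1} as stated to $GU'$ (which already satisfies its hypotheses by the construction of $U'$), whereas you replay the chain of lemmas to re-verify it; your more careful checks (trivial $w_0$-component of $\mu'$ so \cite[Theorem 7.1]{kos} still applies, same $\pi_l^{\mathrm{univ}}$ on both sides) are exactly the right points to confirm.
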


\subsection{Proof of the classicality theorem}

We come back to the situation of {\S} 5.1. We recall that $v$ is the $p$-adic place of $F_0$ induced by the isomorphism $\iota : \overline{\mathbb{Q}}_p \Isom \mathbb{C}$. We fix $w \mid v$ and we assume $\Psi = \mathrm{Hom}_{\mathbb{Q}_p}(F_w, \overline{\mathbb{Q}}_p)$. Let $\mathfrak{m}$ be a decomposed generic non-Eisenstein ideal of $\mathbb{T}^S$ satisfying the following conditions.

\begin{itemize}
\item There exists a prime $l \neq p$ splitting completely in $F$, not lying below any place in $S(B)$ and $\overline{\rho}_{\mathfrak{m}}|_{G_{F_{w}}}$ is irreducible and generic for any $w \mid l$. 
\item $\overline{\rho}_{\mathfrak{m}}(G_{F})$ is not solvable.
\item $\otimes_{\tau \in \Psi} (\overline{\rho}_{\mathfrak{m}}|_{G_{\tilde{F}}})^{\tau}$ is absolutely irreducible. (We recall that $\tilde{F}$ denotes the Galois closure of $F$ over $\mathbb{Q}$. See comments before Theorem \ref{kottwitz conjecture} for the definition of $(\overline{\rho}_{\mathfrak{m}}|_{G_{\tilde{F}}})^{\tau}$.)
\end{itemize}

We fix $\overline{\mathbb{Q}}_l \Isom \mathbb{C}$. Let $v_0 \mid l$ be a place of $F_0$ induced by this and we have a natural injection $\Psi \hookrightarrow \{ w' : \mathrm{place \ of \ } F \mid w \mathrm{ \ divides  \ } v_0 \}, \ \tau \mapsto w_{\tau}$ by the assumption that $l$ splits completely in $F$. For any non-empty subset $\Psi' \subset \Psi$, let $S(B_{\Psi'}) = S(B) \sqcup \{ w_{\tau_i}, w_{\tau_i}^c \mid \tau_i \in \Psi \setminus \Psi' \}$. This induces a PEL datum $(B_{\Psi'}, *_{\Psi'}, V_{\Psi'}, \psi_{\Psi'}, h_{\Psi'})$ and thus a Shimura variety $S_{\Psi', K}$ defined by the PEL data $(B_{\Psi'}, *_{\Psi'}, V_{\Psi'}, \psi_{\Psi'}, h_{\Psi'})$ as in {\S} 3.1 and 3.2. We put $d_{\Psi'}:=|\Psi'|$. 

\begin{prop} \label{induction stepII}
    
Let $\Psi'$ be a subset of $\Psi$ such that $d_{\Psi'} \ge 2$.

Assume Conjectures \ref{classicality conjecture} and \ref{key diagram} for $S_{\Psi', K}$, $\mathfrak{m}$ and $\lambda$. Let $\varphi : \mathbb{T}^S(K^p, \mathcal{O})_{\mathfrak{m}} \rightarrow \mathcal{O}$ be an $\mathcal{O}$-morphism such that $\rho_{\varphi}|_{G_{F_{w'}}}$ is de Rham of $p$-adic Hodge-Tate type $\lambda_{w'}$ for any $w' \mid v$ and $\chi_{\varphi}|_{G_{F_0, v^c}}$ is de Rham of $p$-adic Hodge-Tate type $\lambda_0$. If we assume that $\widehat{H}^{d_{\Psi'}}(S_{\Psi', K^p}, V_{\lambda^{\Psi'}})_{\mathfrak{m}}^{\Psi'-\mathrm{la}}[\varphi] \neq 0$ and $\varphi$ is not a classical eigensystem of weight $\lambda$, then there exist $\tau \in \Psi'$ and $K_l'$ such that we obtain $\widehat{H}^{d_{\Psi' \setminus \{ \tau \}}}(S_{\Psi' \setminus \{ \tau \}, K_l'K^{l, p}}, V_{\lambda^{\Psi' \setminus \{ \tau \}}})^{\Psi' \setminus \{ \tau \}-\mathrm{la}}[\varphi] \neq 0$.

\end{prop}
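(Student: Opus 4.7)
The plan is a two-step reduction: first apply Theorem~\ref{induction step} to the Shimura variety $S_{\Psi', K}$ itself to lower the locally analytic parameter by one embedding, and then apply the comparison Corollary~\ref{comparison} to transfer the resulting non-vanishing from $S_{\Psi', K^p}$ (of dimension $d_{\Psi'}$) to $S_{\Psi' \setminus \{\tau\}, K^p}$ (of dimension $d_{\Psi'}-1$).

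First, I invoke Theorem~\ref{induction step} with input $S_{\Psi', K}$, $\mathfrak{m}$, $\lambda$, and $\varphi$. The required Conjectures~\ref{classicality conjecture} and~\ref{key diagram} are assumed for this input, the hypothesis on $\rho_\varphi$ and $\chi_\varphi$ is part of the assumption of the present proposition, the non-classicality of $\varphi$ is given, and $\widehat{H}^{d_{\Psi'}}(S_{\Psi', K^p}, V_{\lambda^{\Psi'}})_{\mathfrak{m}}^{\Psi'-\mathrm{la}}[\varphi] \neq 0$ is assumed. The conclusion produces some $\tau \in \Psi'$ such that
\[
\widehat{H}^{d_{\Psi'}}(S_{\Psi', K^p}, V_{\lambda^{\Psi' \setminus \{\tau\}}})_{\mathfrak{m}}^{\Psi' \setminus \{\tau\}-\mathrm{la}}[\varphi] \neq 0.
\]
Only the weight and the analyticity parameter have dropped by $\tau$; the Shimura variety and cohomological degree are still those of $S_{\Psi', K}$.

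Next, I observe that the pair $(S_{\Psi', K}, S_{\Psi' \setminus \{\tau\}, K})$ fits exactly the framework of Corollary~\ref{comparison}. Indeed, by the definition $S(B_{\Psi' \setminus \{\tau\}}) = S(B_{\Psi'}) \sqcup \{w_\tau, w_\tau^c\}$, the transition $\Psi' \rightsquigarrow \Psi' \setminus \{\tau\}$ is implemented locally at the chosen $l$-adic place $w_\tau$ by converting the local group from $\mathrm{GL}_2(F_{w_\tau})$ into $D_{w_\tau}^\times$. The local irreducibility and genericity of $\overline{\rho}_{\mathfrak{m}}|_{G_{F_{w'}}}$ at $w' \mid l$, which underlie the torsion Mantovan computation of {\S}6.1, are part of our standing assumptions on $\mathfrak{m}$. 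Hence Corollary~\ref{comparison} supplies, for sufficiently small $K_l, K_l' \subset GU(\mathbb{Q}_l)$, positive integers $t, u$ and a $GU(\mathbb{Q}_p) \times \mathbb{T}^S$-equivariant isomorphism
\[
\widehat{H}^{d_{\Psi'}}(S_{\Psi', K_lK^{l,p}}, \mathcal{O})_{\mathfrak{m}}^{\oplus t} \cong \widehat{H}^{d_{\Psi'}-1}(S_{\Psi' \setminus \{\tau\}, K_l'K^{l,p}}, \mathcal{O})_{\mathfrak{m}}^{\oplus u}.
\]
Tensoring with $\mathcal{V}_{\lambda^{\Psi' \setminus \{\tau\}}}$ via Proposition~\ref{admissibility}(2) and inverting $p$ gives the corresponding isomorphism with $V_{\lambda^{\Psi' \setminus \{\tau\}}}$-coefficients; the $GU(\mathbb{Q}_p) \times \mathbb{T}^S$-equivariance then makes the isomorphism compatible with taking $\Psi' \setminus \{\tau\}$-locally analytic vectors and localizing at $\varphi$. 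Non-vanishing from step one is preserved upon shrinking $K_l$ by admissibility of the completed cohomology, so it transports across to the right hand side, yielding
\[
\widehat{H}^{d_{\Psi' \setminus \{\tau\}}}(S_{\Psi' \setminus \{\tau\}, K_l'K^{l,p}}, V_{\lambda^{\Psi' \setminus \{\tau\}}})^{\Psi' \setminus \{\tau\}-\mathrm{la}}[\varphi] \neq 0.
\]

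The principal obstacle is not conceptual but organizational: one must verify that the technical hypotheses of Theorem~\ref{induction step} (decomposed genericity, non-Eisenstein, non-solvability of $\overline{\rho}_{\mathfrak{m}}(G_F)$, and, crucially, absolute irreducibility of the tensor-induced residual representation $\otimes_{\tau \in \Psi'}(\overline{\rho}_{\mathfrak{m}}|_{G_{\tilde F}})^\tau$) are inherited when we pass from $\Psi$ to the subset $\Psi'$, and similarly that Corollary~\ref{comparison} can be iteratively applied along the descent. The irreducibility for the whole $\Psi$ does not in general imply it for an arbitrary subset $\Psi'$, so in a full application one must either impose the stronger condition on $\mathfrak{m}$ for every subset appearing in the induction, or arrange the descent so that only subsets for which irreducibility is preserved are used. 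Once this is pinned down, iterating the present proposition reduces the classicality problem to the unitary Shimura curve case.
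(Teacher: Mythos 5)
Your proposal is correct and follows exactly the paper's own two-step argument: Theorem~\ref{induction step} applied to $S_{\Psi',K}$, then Corollary~\ref{comparison} (and Proposition~\ref{admissibility}) to transfer the non-vanishing to the $(d_{\Psi'}-1)$-dimensional variety. The match is essentially line by line.

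One remark on the worry you raise at the end, since it could slow you down unnecessarily: absolute irreducibility of $\otimes_{\tau\in\Psi}(\overline{\rho}_{\mathfrak m}|_{G_{\tilde F}})^{\tau}$ \emph{does} pass to every subset $\Psi'\subset\Psi$. If $0\neq W\subsetneq\otimes_{\tau\in\Psi'}(\overline{\rho}_{\mathfrak m}|_{G_{\tilde F}})^{\tau}$ were a proper nonzero $G_{\tilde F}$-subrepresentation, then $W\otimes(\otimes_{\tau\in\Psi\setminus\Psi'}(\overline{\rho}_{\mathfrak m}|_{G_{\tilde F}})^{\tau})$ would be a proper nonzero subrepresentation of the full tensor product (proper by dimension count), contradicting irreducibility; this also holds after any field extension, so absolute irreducibility is inherited. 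Likewise, the $l$-adic assumption on $\overline{\rho}_{\mathfrak m}|_{G_{F_{w'}}}$ and decomposed genericity are global conditions on $\mathfrak{m}$ and are not affected by passing from $\Psi$ to $\Psi'$. So the hypothesis bookkeeping you flag as ``the principal obstacle'' is automatic, and the proof closes without any extra hypotheses.
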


\begin{proof}

By Theorem \ref{induction step}, there exists $\tau \in \Psi'$ such that $\widehat{H}^{d_{\Psi'}}(S_{\Psi', K^p}, V_{\lambda^{\Psi' \setminus \{ \tau \}}})_{\mathfrak{m}}^{{\Psi \setminus \{ \tau \}}-\mathrm{la}}[\varphi] \neq 0$. By Corollary \ref{comparison}, there exist positive integers $n, m$ and sufficiently small open compact subgroups $K_l, K_l'$ such that there exists a $GU(\mathbb{Q}_p) \times \mathbb{T}^S$-isomorphism $\widehat{H}^{d_{\Psi'}}(S_{\Psi', K_lK^{l, p}}, E)_{\mathfrak{m}}^{\oplus n} \cong \widehat{H}^{d_{\Psi' \setminus \{ \tau \}}}(S_{\Psi' \setminus \{ \tau \}, K_l'K^{l, p}}, E)_{\mathfrak{m}}^{\oplus m}$. In particular, $$(\widehat{H}^{d_{\Psi'}}(S_{\Psi', K_lK^{l, p}}, V_{\lambda^{\Psi' \setminus \{ \tau \}}})^{\Psi' \setminus \{ \tau \}-\mathrm{la}}_{\mathfrak{m}})^{\oplus n} \cong (\widehat{H}^{d_{\Psi' \setminus \{ \tau \}}}(S_{\Psi' \setminus \{ \tau \}, K_l'K^{l, p}}, V_{\lambda^{\Psi' \setminus \{ \tau \}}})^{\Psi' \setminus \{ \tau \}-\mathrm{la}}_{\mathfrak{m}})^{\oplus m}$$ and thus $\widehat{H}^{d_{\Psi' \setminus \{ \tau \}}}(S_{\Psi' \setminus \{ \tau \}, K_l'K^{l, p}}, V_{\lambda^{\Psi' \setminus \{ \tau \}}})^{\Psi' \setminus \{ \tau \}-\mathrm{la}}_{\mathfrak{m}}[\varphi] \neq 0$.  \end{proof}
 
We consider the following ``partially'' completed cohomologies.

$\widehat{H}^i(S_{K^w}, \mathcal{V}_{\lambda^w}) := \varprojlim_{n} \varinjlim_{K_w} H^i_{\et}(S_{K^wK_w, \overline{F}}, \mathcal{V}_{\lambda^w}/{\varpi}^n)$ and $\widehat{H}^i(S_{K^w}, V_{\lambda^w}) := \widehat{H}^i(S_{K^w}, \mathcal{V}_{\lambda^w})[\frac{1}{p}]$.

\begin{prop}\label{admissibility2}

$\widehat{H}^i(S_{K^w}, V_{\lambda^w})_{\mathfrak{m}}$ is an admissible representation of $\mathrm{GL}_2(F_w)$. 

\end{prop}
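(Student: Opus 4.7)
The plan is to verify Emerton's criterion \cite[Proposition 6.2.6]{emertonII} for admissibility of a Banach representation of a compact $p$-adic Lie group. Set $V := \widehat{H}^d(S_{K^w}, V_{\lambda^w})_{\mathfrak{m}}$ with unit ball $V^0 := \widehat{H}^d(S_{K^w}, \mathcal{V}_{\lambda^w})_{\mathfrak{m}}$, which is a bounded open $\mathcal{O}$-lattice stable under any open compact subgroup $K_w \subset \mathrm{GL}_2(F_w)$. After fixing such a $K_w$, it suffices to show that $(V^0/\varpi^n)^{K_w'}$ is a finitely generated $\mathcal{O}/\varpi^n$-module for every open subgroup $K_w' \subset K_w$ and every $n \geq 1$.

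First I would apply Proposition \ref{maximal ideal} to identify $V^0$ with the $p$-adic completion of the flat $\mathcal{O}$-module $M := \varinjlim_{K_w} H^d_{\et}(S_{K^wK_w, \overline{F}}, \mathcal{V}_{\lambda^w})_{\mathfrak{m}}$. Flatness implies that $p$-adic completion commutes with reduction modulo $\varpi^n$, so
\[
V^0/\varpi^n \;=\; M/\varpi^n \;=\; \varinjlim_{K_w} H^d_{\et}(S_{K^wK_w, \overline{F}}, \mathcal{V}_{\lambda^w}/\varpi^n)_{\mathfrak{m}},
\]
the second equality following from Theorem \ref{torsion vanishing}. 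The same argument as Proposition \ref{invariant part}, which uses only the decomposed generic vanishing of Theorem \ref{torsion vanishing} and applies verbatim to the partial-level tower $\{S_{K^wK_w}\}_{K_w}$, then yields
\[
(V^0/\varpi^n)^{K_w'} \;=\; H^d_{\et}(S_{K^wK_w', \overline{F}}, \mathcal{V}_{\lambda^w}/\varpi^n)_{\mathfrak{m}},
\]
which is a finitely generated $\mathcal{O}/\varpi^n$-module since $S_{K^wK_w'}$ is proper over $F$.

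The two inputs---the compatibility of $p$-adic completion with reduction modulo $\varpi^n$ and the partial-level version of Proposition \ref{invariant part}---are the only points requiring care, and both are essentially formal consequences of Theorem \ref{torsion vanishing}; no serious obstacle is expected. As an alternative that avoids any re-proving, one may deduce the statement from Proposition \ref{admissibility}: the full completed cohomology $\widehat{H}^d(S_{K^p}, V_{\lambda^w})_{\mathfrak{m}}$ is admissible as a representation of $G(\mathbb{Q}_p)$, so its subspace of vectors fixed by the compact open subgroup $K_0 \times \prod_{w' \mid v,\, w' \neq w} K_{w'}$ is automatically admissible for the remaining factor $\mathrm{GL}_2(F_w)$, and Proposition \ref{invariant part} identifies this fixed subspace canonically with $V$.
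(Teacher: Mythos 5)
Your proof is correct, and your alternative route at the end --- taking the $K_p^w$-fixed subspace of $\widehat{H}^d(S_{K^p}, \mathcal{V}_{\lambda^w})_{\mathfrak{m}}$ via Proposition~\ref{invariant part} and Proposition~\ref{admissibility} --- is precisely the second argument the paper cites. Your main route is a localized variant of the paper's first cited argument (Emerton's proof of admissibility of completed cohomology): whereas Emerton's argument works for the non-localized tower by controlling a Hochschild--Serre spectral sequence, you use the concentration in degree $d$ afforded by Theorem~\ref{torsion vanishing} after localizing at $\mathfrak{m}$, which makes the descent from infinite to finite level exact. This is shorter in the localized setting at the cost of being less general; one should also note that the statement covers all degrees $i$, but the same vanishing forces $\widehat{H}^i(S_{K^w}, \mathcal{V}_{\lambda^w})_{\mathfrak{m}} = 0$ for $i \neq d$, so the reduction to $i = d$ is trivial.
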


\begin{proof} We can directly prove this without localizing at $\mathfrak{m}$ by using the same argument as \cite[proof of Theorem 0.1]{emerton}. We can also deduce this from Proposition \ref{invariant part} by taking the $K_p^w$-invariant part of $\widehat{H}^d(S_{K^p}, \mathcal{V}_{\lambda^w})_{\mathfrak{m}}$. \end{proof}

\begin{thm} \label{conjectual classicality theorem} Assume Conjectures \ref{classicality conjecture} and \ref{key diagram} for $S_{\Psi', K}$, $\mathfrak{m}$ and $\lambda$ for any $\Psi' \subset \Psi$. Let $\varphi : \mathbb{T}^S(K^p, \mathcal{O})_{\mathfrak{m}} \rightarrow \mathcal{O}$ be an $\mathcal{O}$-morphism such that $\rho_{\varphi}|_{G_{F_{w'}}}$ is de Rham of $p$-adic Hodge-Tate type $\lambda_{w'}$ for any $w' \mid v$ and $\chi_{\varphi}|_{G_{F_0, v^c}}$ is de Rham of $p$-adic Hodge-Tate type $\lambda_0$. If $\widehat{H}^d(S_{\Psi, K^w}, V_{\lambda^w})_{\mathfrak{m}}[\varphi] \neq 0$, then $\varphi$ is a classical eigensystem of weight $\lambda$.

\end{thm}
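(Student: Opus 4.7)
The plan is to establish the theorem by a downward induction on $|\Psi'|$, reducing the assertion for the $d$-dimensional unitary Shimura variety $S_{\Psi,K}$ down to a unitary Shimura curve (the case $|\Psi'|=1$), where the classicality result of \cite{unitaryshimura} (referenced in the remark following the main classicality theorem) is available.

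First, I would translate the hypothesis $\widehat{H}^d(S_{\Psi, K^w}, V_{\lambda^w})_{\mathfrak{m}}[\varphi] \neq 0$ into a statement about the fully completed cohomology with locally analytic vectors. By Proposition \ref{admissibility2} the module $\widehat{H}^d(S_{\Psi, K^w}, V_{\lambda^w})_{\mathfrak{m}}$ is an admissible $\mathrm{GL}_2(F_w)$-representation, so Theorem \ref{density of locally analytic vectors} yields $\widehat{H}^d(S_{\Psi, K^w}, V_{\lambda^w})_{\mathfrak{m}}^{\mathrm{la}}[\varphi] \neq 0$. Taking invariants under a sufficiently small open compact subgroup $K_p^w$ of the prime-to-$w$ $p$-adic component of $GU(\mathbb{Q}_p)$ and using Proposition \ref{invariant part}, I would conclude that $\widehat{H}^d(S_{\Psi, K^p}, V_{\lambda^\Psi})_{\mathfrak{m}}^{\Psi-\mathrm{la}}[\varphi] \neq 0$ for a suitable level $K^p$. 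This puts us precisely in the input hypothesis of Proposition \ref{induction stepII} applied to $\Psi' = \Psi$.

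Second, I would argue by contradiction. Suppose $\varphi$ is not a classical eigensystem of weight $\lambda$. Since Conjectures \ref{classicality conjecture} and \ref{key diagram} are assumed to hold for $S_{\Psi', K}$ for \emph{every} $\Psi' \subset \Psi$, I can iterate Proposition \ref{induction stepII}: starting from $\Psi' = \Psi$, each application produces some $\tau \in \Psi'$ and a new level structure so that, replacing $\Psi'$ by $\Psi' \setminus \{\tau\}$ and passing to the associated unitary Shimura variety of one lower dimension, the locally $\Psi'\setminus\{\tau\}$-analytic $\varphi$-eigenspace of its $(d_{\Psi'}-1)$-th completed cohomology remains nonzero. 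After exactly $d-1$ iterations I reach a one-element $\Psi' \subset \Psi$, corresponding to a unitary Shimura \emph{curve}. Here the induction halts because the proposition requires $d_{\Psi'} \ge 2$; but in this case the classicality theorem for $\mathrm{GL}_2$ over $F_w$ proved in \cite{unitaryshimura} (which is the unitary Shimura curve analogue of \cite[Theorem 1.1.2]{PanII}) directly asserts that any such $\varphi$ satisfying the de Rham condition at $w$ and appearing in the locally analytic completed cohomology must be classical of weight $\lambda$. This contradicts our standing assumption that $\varphi$ is not classical.

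The hard part will be the bookkeeping at each inductive step: one must track the accumulating ramification at the auxiliary $l$-adic level structures $K_l'K^{l,w}$ produced by Proposition \ref{induction stepII}, and verify that the package of technical conditions on $\mathfrak{m}$ (decomposed genericity, the non-Eisenstein property, non-solvability of $\overline{\rho}_{\mathfrak{m}}(G_F)$, absolute irreducibility of the tensor induction $\bigotimes_{\tau \in \Psi'} (\overline{\rho}_{\mathfrak{m}}|_{G_{\tilde{F}}})^\tau$, and the local irreducibility/genericity at the auxiliary prime $l$) continues to hold for each intermediate Shimura datum associated to $\Psi' \subsetneq \Psi$. In particular, the irreducibility hypothesis for the full $\Psi$ must be checked to descend (or be replaced by a suitable substitute) along all subsets $\Psi'$ encountered in the induction, since otherwise Conjecture \ref{key diagram}, via Corollary \ref{parallel comparison} or \ref{non-parallel comparison}, may fail to apply at an intermediate stage.
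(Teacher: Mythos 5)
Your reduction strategy and the use of Propositions \ref{admissibility2}, \ref{invariant part}, Theorem \ref{density of locally analytic vectors}, and iterated applications of Proposition \ref{induction stepII} all match the paper's argument. The significant departure is at the base case. After $|\Psi|-1$ iterations you are left with a singleton $\Psi' = \{\tau_0\}$ and the nonvanishing statement $\widehat{H}^{1}(S_{\{\tau_0\}, K^{p,l}K_l'}, V_{\lambda^{\{\tau_0\}}})_{\mathfrak m}^{\{\tau_0\}\text{-}\mathrm{la}}[\varphi]\neq 0$; to close the contradiction you appeal to the classicality theorem of \cite{unitaryshimura}. The paper does not do this. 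Instead, it observes that Theorem \ref{induction step} remains applicable when $|\Psi'|=1$: under the hypothesis that $\varphi$ is non-classical, it produces $\widehat{H}^{1}(S_{\{\tau_0\}, K^p}, V_{\lambda^{\emptyset}})_{\mathfrak m}^{\emptyset\text{-}\mathrm{la}}[\varphi]\neq 0$, i.e. nonvanishing of smooth vectors, which is literally $\varinjlim_{K_p}H^1(S_{\{\tau_0\}, K^{p}K_p}, V_\lambda)_{\mathfrak m}[\varphi]\neq 0$, and hence classicality of $\varphi$ by definition --- the contradiction needs no external input. This is preferable to your route: the result of \cite{unitaryshimura} is quoted in the paper only as a ``similar result,'' and its hypotheses (on the maximal ideal, level structure, and Galois conditions) would need to be verified to agree with the setup reached after the iteration, a check your proposal flags but does not carry out. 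The paper's self-contained version bypasses that entirely.

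On the bookkeeping concerns you raise in your final paragraph: these are genuine obstructions if one is trying to \emph{prove} the conjectures along the induction, but they are moot for Theorem \ref{conjectual classicality theorem} as stated, since Conjectures \ref{classicality conjecture} and \ref{key diagram} are \emph{hypothesized} to hold for $S_{\Psi',K}$ for every $\Psi'\subset\Psi$ simultaneously, and the Shimura data $S_{\Psi',K}$ and the auxiliary level structures $K_l'K^{l,w}$ are pinned down once and for all in the discussion preceding Proposition \ref{induction stepII}. The verification you are worrying about belongs to the proof of Theorem \ref{classicality theorem} and Section 7 (where the conjectures are proved under the $[F_w:\mathbb{Q}_p]\le 2$ hypothesis), not to the conditional theorem at hand.
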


\begin{proof} Assume that $\varphi$ is not a classical eigensystem of weight $\lambda$. By Theorem \ref{density of locally analytic vectors} and Proposition \ref{admissibility2}, we have $\widehat{H}^d(S_{\Psi, K^w}, V_{\lambda^{w}})_{\mathfrak{m}}^{\mathrm{la}}[\varphi] \neq 0$. In particular, we obtain $\widehat{H}^d(S_{\Psi, K^p}, V_{\lambda^{\Psi}})_{\mathfrak{m}}^{\Psi-\mathrm{la}}[\varphi] \neq 0$ by using Proposition \ref{invariant part}. By using Proposition \ref{induction stepII} repeatedly and using Theorem \ref{induction step}, we can find $\tau \in \Psi$ and $K_l'$ such that $\widehat{H}^1(S_{\{\tau \}, K^{p, l}K_l'}, V_{\lambda})_{\mathfrak{m}}^{\mathrm{sm}}[\varphi] = \varinjlim_{K_p} H^1(S_{\{\tau \}, K^{p, l}K_l'K_p}, V_{\lambda})_{\mathfrak{m}}[\varphi] \neq 0$. This implies that $\varphi$ is a classical eigensystem of weight $\lambda$ and this is a contradiction. \end{proof}

\begin{rem}

Note that unlike \cite[Theorems 7.3.2 and 7.3.7]{PanII}, our method doesn't give a nice description of $\widehat{H}^d(S_{\Psi, K^w}, V_{\lambda^w})_{\mathfrak{m}}^{\mathrm{la}}[\varphi]$ because we don't know the vanishing of $H^d(\Fl, Ker)$. (See {\S} 5.1 for the definition of $Ker$.) By using our method, we only know the description of $\widehat{H}^1(S_{\{ \tau \}, K^p}, V_{\lambda^{\tau}})_{\mathfrak{m}}^{\{ \tau \}-\mathrm{la}}[\varphi]$ for any $\tau \in \Psi$, which is very similar to \cite[Theorems 7.3.2 and 7.3.7]{PanII}.

\end{rem}

\begin{thm}\label{classicality theorem}

We assume that $[F_w : \mathbb{Q}_p] \le 2$ and $p$ is unramified in $F$.

Let $\varphi : \mathbb{T}^S(K^p, \mathcal{O})_{\mathfrak{m}} \rightarrow \mathcal{O}$ be an $\mathcal{O}$-morphism such that $\rho_{\varphi}|_{G_{F_{w'}}}$ is de Rham of $p$-adic Hodge-Tate type $\lambda_{w'}$ for any $w' \mid v$ and $\chi_{\varphi}|_{G_{F_0, v^c}}$ is de Rham of $p$-adic Hodge-Tate type $\lambda_0$. If we assume $\widehat{H}^d(S_{\Psi, K^w}, V_{\lambda^w})_{\mathfrak{m}}[\varphi] \neq 0$, then $\varphi$ is a classical eigensystem of weight $\lambda$.

\end{thm}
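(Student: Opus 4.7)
The plan is to deduce this unconditional classicality statement from the conditional Theorem~\ref{conjectual classicality theorem} by verifying, under the hypothesis $[F_w:\mathbb{Q}_p]\le 2$, Conjectures~\ref{classicality conjecture} and \ref{key diagram} for every $S_{\Psi',K}$ with $\Psi'\subset\Psi$ and $d_{\Psi'}\ge 2$. Since $\Psi=\mathrm{Hom}_{\mathbb{Q}_p}(F_w,\overline{\mathbb{Q}}_p)$ and $|\Psi|=[F_w:\mathbb{Q}_p]\le 2$, the only such $\Psi'$ that arises is $\Psi$ itself when $|\Psi|=2$; in the case $|\Psi|=1$ no induction step is needed and the conclusion is immediate from the final paragraph in the proof of Theorem~\ref{conjectual classicality theorem} (which handles the base case $d_{\Psi'}=1$ directly via $\widehat{H}^1(\cdot,V_\lambda)_{\mathfrak{m}}^{\mathrm{sm}}[\varphi]\ne 0 \Rightarrow \varphi$ classical).

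First I would verify Conjecture~\ref{classicality conjecture} for $S_{\Psi,K}$: this is Corollary~\ref{classicality of geometric}, whose hypothesis is precisely $[F_w:\mathbb{Q}_p]\le 2$ (ensuring the Newton stratification of $\Fl$ consists only of the $\mu$-ordinary and basic loci by Corollary~\ref{GL iso}). The basic-stratum contribution is classical by Corollary~\ref{basic classicality}, and the $\mu$-ordinary contribution is classical by Corollary~\ref{classicality of ordinary} (which rests on the dagger-space computation of Proposition~\ref{classicality of Igusa} combined with Theorem~\ref{ordinary cohomology}). I should also check the technical hypothesis on $\mathfrak{m}$: the required irreducibility of $\overline{s}_{\mathfrak{m}}=\overline{\chi}_{\mathfrak{m}}^c|_{G_{\tilde F}}\otimes\bigl(\otimes_{\tau\in\Psi}(\overline{\rho}_{\mathfrak{m}}|_{G_{\tilde F}})^\tau\bigr)$ and the non-solvability of $\overline{\rho}_{\mathfrak{m}}(G_F)$ are already part of the standing assumptions on $\mathfrak{m}$ in \S6.2.

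Next I would verify Conjecture~\ref{key diagram} for $S_{\Psi,K}$. If $\lambda_\Psi$ is parallel this is exactly Corollary~\ref{parallel comparison}. If $\lambda_\Psi$ is non-parallel, the restriction $|\Psi|=2$ allows us to invoke Corollary~\ref{non-parallel comparison}, whose proof proceeds through Corollary~\ref{non-parallel Fontaine} and ultimately through the graded-piece analysis of the Fontaine-type diagram carried out in \S5.2.3 (Propositions~\ref{compatibility difficult} and \ref{zero map}). Again the hypotheses match the standing assumptions on $\mathfrak{m}$.

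With both conjectures verified for the unique relevant $\Psi'=\Psi$, Theorem~\ref{conjectual classicality theorem} applies verbatim to $(\varphi,\lambda)$: starting from $\widehat{H}^d(S_{\Psi,K^w},V_{\lambda^w})_{\mathfrak{m}}[\varphi]\ne 0$, Proposition~\ref{admissibility2} and Theorem~\ref{density of locally analytic vectors} give $\widehat{H}^d(S_{\Psi,K^w},V_{\lambda^w})_{\mathfrak{m}}^{\mathrm{la}}[\varphi]\ne 0$, hence via Proposition~\ref{invariant part} also $\widehat{H}^d(S_{\Psi,K^p},V_{\lambda^{\Psi}})_{\mathfrak{m}}^{\Psi\text{-la}}[\varphi]\ne 0$; one application of Proposition~\ref{induction stepII} (invoking the now-established conjectures together with Theorem~\ref{induction step} and the comparison Corollary~\ref{comparison}) then drops us to the unitary Shimura curve case, where a nonzero smooth completed cohomology class forces $\varphi$ to be classical of weight $\lambda$. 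The only substantive obstacle is the verification of Corollary~\ref{non-parallel comparison} when $\lambda_\Psi$ is non-parallel — but this is precisely where the $|\Psi|=2$ restriction is used, and that case is already fully treated in \S5.2.3.
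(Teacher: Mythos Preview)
Your approach is the same as the paper's: verify Conjectures~\ref{classicality conjecture} and~\ref{key diagram} via Corollaries~\ref{classicality of geometric}, \ref{parallel comparison}, \ref{non-parallel comparison}, then invoke Theorem~\ref{conjectual classicality theorem}. That is exactly what the paper does, and your identification of which corollary handles which case is correct.

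There is, however, one imprecision. Theorem~\ref{conjectual classicality theorem} requires the two conjectures for \emph{every} nonempty $\Psi'\subset\Psi$, not only for those with $d_{\Psi'}\ge 2$. Even when $|\Psi|=1$, one still has to pass from $\widehat{H}^1(S_{\{\tau\},K^p},V_{\lambda^{\{\tau\}}})_{\mathfrak{m}}^{\{\tau\}\text{-}\mathrm{la}}[\varphi]\ne 0$ to the smooth part, and that step is precisely an application of Theorem~\ref{induction step} at the curve level, which needs both conjectures for $\Psi'=\{\tau\}$. Likewise, when $|\Psi|=2$, after the single use of Proposition~\ref{induction stepII} you are at $\Psi'=\{\tau'\}$ and must again invoke Theorem~\ref{induction step} there. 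So your claim that ``no induction step is needed'' in the $|\Psi|=1$ case, and that ``the only such $\Psi'$ that arises is $\Psi$ itself'', is not accurate.

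This does not break your argument, because the very corollaries you cite already cover the $|\Psi'|=1$ case: Corollary~\ref{classicality of geometric} is stated for $|\Psi|\le 2$ and hence applies to the curve, and with a single embedding the weight $\lambda_{\Psi'}$ is vacuously parallel so Corollary~\ref{parallel comparison} supplies Conjecture~\ref{key diagram}. You should simply state that the three corollaries verify the two conjectures for \emph{all} nonempty $\Psi'\subset\Psi$ (since $|\Psi'|\le|\Psi|\le 2$), rather than singling out $\Psi'=\Psi$.
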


\begin{proof}

This follows from Theorem \ref{conjectual classicality theorem} because under our assumption, Conjectures \ref{classicality conjecture} and \ref{key diagram} are known by Corollary \ref{classicality of geometric}, Corollary \ref{parallel comparison} and Corollary \ref{non-parallel comparison}. \end{proof}

\section{Applications}

In this section, we give applications of the classicality theorem proved in the previous sections to the automorphy lifting theorem and the Breuil-M$\mathrm{\acute{e}}$zard conjecture. 

\subsection{Liftings of Galois representations with prescribed local properties}

Most parts of results in this subsection are contained in \cite{KP}, but we need to modify their result slightly for our purpose. Note that we can easily generalize the result of this section to higher-dimensional cases.

\vspace{0.5 \baselineskip}

We consider the following objects.

\begin{itemize}
    \item $p$ is an odd prime.
    \item $F^+$ is a totally real field.
    \item $F_0$ is an imaginary quadratic field.
    \item $F := F^+F_0$. We assume that $F$ is linearly disjoint from $\mathbb{Q}(\zeta_{p^{\infty}})$ over $\mathbb{Q}$ and assume that $F/F^+$ is unramified at all finite places.
    \item $\Phi := \mathrm{Hom}_{F_0}(F, \overline{\mathbb{Q}}_p)$.
    \item $E$ is a finite extension of $\mathbb{Q}_p$, $\mathcal{O}$ is the ring of integers, $\mathbb{F}$ is the residue field and $\varpi$ is a uniformizer of $\mathcal{O}$ such that $\varpi^2 \mid p$.
    \item $S$ is a finite set of finite places of $F^+$ and splitting in $F$ containing all $p$-adic places.
    \item For any $v \in S$, we fix a lift $\tilde{v}$ to $F$ of $v$ and let $\tilde{S} := \{ \tilde{v} \mid v \in S \}$. 
    \item $\chi : G_{F^+, S} \rightarrow \mathcal{O}^{\times}$ is a de Rham character such that $\chi(c_w) = -1$ for any $w \mid \infty$.
    \item $\psi : G_{F, S} \rightarrow \mathcal{O}^{\times}$ is a de Rham character such that $\psi\psi^c = \chi^{2}|_{G_{F, S}}$.
    \item $\overline{\rho} : G_{F, S} \rightarrow \mathrm{GL}_2(\mathbb{F})$ is a continuous representation such that there exists a perfect $G_{F, S}$-equivariant symmetric pairing $\overline{\rho} \times \overline{\rho}^c \rightarrow \overline{\chi}|_{G_{F}}$ and $\mathrm{det}\overline{\rho} = \overline{\psi}$.
\end{itemize}

First, we recall and modify some basic results of \cite{CHT}.

Let $\mathcal{G}_2$ be the group scheme over $\mathbb{Z}$ defined in \cite[{\S} 2]{CHT}. This is $\mathcal{G}_2 = (\mathrm{GL}_2 \times \mathrm{GL}_1) \rtimes \{ 1, \jmath \}$ defined by $\jmath (g, \mu)\jmath = (\mu ^t\!g^{-1}, \mu)$. Let $\nu : \mathcal{G}_2 \rightarrow \mathbb{G}_{m}$ be the morphism of group schemes defined by $(g, \mu) \mapsto \mu, \jmath \mapsto -1$ and $\mathcal{G}^0 := \mathrm{GL}_2 \times \mathrm{GL}_1$. Let $\mathrm{CNL}_{\mathcal{O}}$ be the category of complete Noetherian local rings whose residue field is $\mathbb{F}$ with local morphisms from $\mathcal{O}$. We have the following lemma.

\begin{lem}\label{correspondence}

For $R \in \mathrm{CNL}_{\mathcal{O}}$, we have the following bijection between the following two objects.

1 \ Continuous morphism $r : G_{F^+, S} \rightarrow \mathcal{G}_2(R)$ such that $\nu \circ r = \chi$, $r(c) \notin \mathcal{G}^0_2(R)$ and $r(G_{F, S}) \subset \mathcal{G}^0_2(R)$.

2 \ Continuous morphism $\rho : G_{F, S} \rightarrow \mathrm{GL}_2(R)$ with a perfect symmetric pairing $( \ , \ ) : \rho \times \rho^c \rightarrow \chi|_{G_{F, S}}$.

Explicitly, if we write $(x, y) = ^t\!xA^{-1}y$, then $(\rho, (\ , \ ))$ corresponds to $r$ such that $r(g) = (\rho(g), \chi(g))$ for any $g \in G_{F, S}$ and $r(c) = (A, -\chi(c))\jmath$.

\end{lem}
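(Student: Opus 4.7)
The plan is to verify the bijection by writing the forward and backward maps explicitly and checking that the group-theoretic conditions on $r$ on one side correspond precisely to the bilinear-form conditions on $(\rho,(\ ,\ ))$ on the other side. This is essentially the rank $2$ analog of \cite[Lemma 2.1.1]{CHT}, but since the sign conventions (symmetric vs.\ alternating) and the target of $\nu$ differ slightly, the matrix computations need to be redone.

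First I would define the forward map. Given $r$ satisfying the three conditions, set $\rho := \pi_1 \circ r|_{G_{F,S}} : G_{F,S} \to \mathrm{GL}_2(R)$, where $\pi_1 : \mathcal{G}_2^0 \to \mathrm{GL}_2$ is the first projection. Since $r(c) \notin \mathcal{G}_2^0(R)$, we may write $r(c) = (A,\mu)\jmath$ for unique $A \in \mathrm{GL}_2(R)$ and $\mu \in R^\times$, and the condition $\nu \circ r = \chi$ together with $\nu(\jmath) = -1$ forces $\mu = -\chi(c)$. Then I would define the pairing by $(x,y) := {}^t\!xA^{-1}y$. The backward map is given by the formula in the statement.

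The main verification has two parts. First, from $c^2 = 1$ and a direct computation in $\mathcal{G}_2$, one gets
\[
r(c)^2 \;=\; \bigl(-\chi(c)\,A\,{}^t\!A^{-1},\ \chi(c)^2\bigr),
\]
so $r(c)^2 = (I,1)$ together with $\chi(c) = -1$ forces $A = {}^t\!A$; this is exactly the symmetry of $(\ ,\ )$. Second, for $g \in G_{F,S}$, using the semidirect product relation $\jmath(h,\lambda)\jmath = (\lambda\,{}^t\!h^{-1},\lambda)$, a routine computation yields
\[
r(c)\,r(g)\,r(c)^{-1} \;=\; \bigl(\chi(g)\,A\,{}^t\!\rho(g)^{-1}A^{-1},\ \chi(g)\bigr).
\]
Setting this equal to $r(cgc^{-1}) = (\rho^c(g),\chi(g))$ is equivalent to ${}^t\!\rho(g)A^{-1}\rho^c(g) = \chi(g)A^{-1}$, which is exactly the $G_{F,S}$-equivariance of the pairing $(\ ,\ )$ valued in $\chi|_{G_{F,S}}$. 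Conversely, given $(\rho,(\ ,\ ))$, symmetry of $A$ and equivariance of the pairing translate back (via the same two identities) into $r(c)^2 = 1$ and $r(c)r(g)r(c)^{-1} = r(cgc^{-1})$; since $G_{F^+,S} = G_{F,S} \sqcup cG_{F,S}$ these relations suffice to define a (continuous) homomorphism $r$ extending $(\rho(g),\chi(g))$ on $G_{F,S}$ and sending $c$ to $(A,-\chi(c))\jmath$.

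Finally, the two constructions are manifestly inverse to each other, as the matrix $A$ encoding the pairing is read off from $r(c)$ in the forward direction and substituted back into the definition of $r(c)$ in the reverse direction. The expected obstacles are purely bookkeeping: keeping straight the identities $\chi(c) = -1$, $\chi(c)^2 = 1$, and the interaction of transpose with scalars in the $\jmath$-conjugation formula. No deep input is required beyond the definition of $\mathcal{G}_2$ and continuity of $\rho$ and $\chi$.
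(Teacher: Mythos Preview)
Your proposal is correct and follows exactly the approach of \cite[Lemma 2.1.1]{CHT}, which is all the paper's own proof does (it simply cites that lemma). Your explicit matrix computations recovering the symmetry of $A$ from $r(c)^2=1$ (using $\chi(c)=-1$) and the equivariance of the pairing from $r(c)r(g)r(c)^{-1}=r(cgc^{-1})$ are the standard verifications, and they are carried out correctly.
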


\begin{proof} See \cite[Lemma 2.1.]{CHT}. \end{proof}

In the following, we identify $r|_{G_{F, S}}$ with $G_{F, S} \xrightarrow{r|_{G_{F, S}}} \mathcal{G}_2^0(R) = \mathrm{GL}_2(R) \times \mathrm{GL}_1(R) \rightarrow \mathrm{GL}_2(R)$. Moreover, unless otherwise stated, we consider only continuous morphisms $r : G_{F^+, S} \rightarrow \mathcal{G}_2(R)$ satisfying the above condition 1 and $\mathrm{det}r|_{G_{F, S}} = \psi$. Let $\overline{r}$ be the continuous morphism corresponding to $\overline{\rho}$ with the pairing $\overline{\rho} \times \overline{\rho}^c \rightarrow \overline{\chi}$. Let $\mathrm{ad}^0\overline{r}$ denote the trace zero subrepresentation of $\mathrm{ad}\overline{r}$.

\begin{lem}\label{deformation theory}

Let $R,\ R' \in \mathrm{CNL}_{\mathcal{O}}$ be Artin local rings, $R' \twoheadrightarrow R$ be a local $\mathcal{O}$-morphism whose kernel $I$ satisfies $\mathfrak{m}_{R'}I = 0$ and $r : G_{F^+, S} \rightarrow \mathcal{G}_2(R)$ (resp. $\tilde{r} : G_{F^+, S} \rightarrow \mathcal{G}_2(R')$) be a lifting of $\overline{r}$ (resp. $r$) to $R$ (resp. $R'$).(As said above, we always assume $\mathrm{det}\tilde{r}|_{G_{F, S}} = \psi$.)

Then the map $H^1(G_{F^+, S}, \mathrm{ad}^0\overline{r}) \otimes_{\mathbb{F}} I \rightarrow \{ \mathrm{deformations} \ s \ \mathrm{of } \ r \ \mathrm{to} \ R' \ \mathrm{such \ that} \ \mathrm{det}s|_{G_{F}} = \psi \}, \ \varphi \mapsto (I_2 + \varphi)\tilde{r}$ is bijective.

\end{lem}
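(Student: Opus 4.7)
The plan is to recognise this as the standard comparison, in the $\mathcal{G}_2$-valued deformation theory of Clozel--Harris--Taylor, between continuous $1$-cocycles of $G_{F^+,S}$ valued in $\mathrm{ad}^0\overline{r}\otimes_{\mathbb{F}} I$ and infinitesimal lifts of $r$, modulo strict equivalence. The key inputs are Lemma \ref{correspondence} (which encodes the $\mathcal{G}_2$-structure, and in particular the $\jmath$-action on $\mathrm{ad}^0\overline{r}$ coming from the symmetric pairing), together with the two algebraic facts $I\cdot\mathfrak{m}_{R'}=0$ and $I^2=0$ that follow from $\mathfrak{m}_{R'}I=0$.

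First I would check the map is well defined. For a continuous cocycle $\varphi:G_{F^+,S}\to \mathrm{ad}^0\overline{r}\otimes_{\mathbb{F}} I$, one sets $s(g):=(I_2+\varphi(g))\tilde{r}(g)$ and computes, using $I\cdot\mathfrak{m}_{R'}=0$ to replace $\mathrm{Ad}(\tilde{r}(g))$ by $\mathrm{Ad}(\overline{r}(g))$ on $I$-valued elements,
\[
s(g)s(h)=\bigl(I_2+\varphi(g)+\mathrm{Ad}(\overline{r}(g))\varphi(h)\bigr)\tilde{r}(gh),
\]
so the cocycle condition is exactly what makes $s$ a homomorphism. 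The trace-zero condition on $\varphi$ gives $\det s(g)=\det\tilde{r}(g)(1+\mathrm{tr}\,\varphi(g))=\psi(g)$ on $G_{F,S}$, and the fact that $\varphi$ is a cocycle for the full $\mathcal{G}_2$-adjoint action on $\mathrm{ad}^0\overline{r}$ (including the $\jmath$-piece $X\mapsto -A\,{}^tX\,A^{-1}$ supplied by Lemma \ref{correspondence}) forces $s$ to preserve the pairing up to $\chi$, i.e.\ to land in $\mathcal{G}_2(R')$, and to satisfy $s(c)\notin\mathcal{G}_2^0(R')$.

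Next I would check that the passage from cocycles to cohomology classes matches the passage from lifts to strict equivalence classes. For $X\in\mathrm{ad}^0\overline{r}\otimes I$, the element $I_2+X$ lies in the kernel of $\mathcal{G}_2^0(R')\to\mathcal{G}_2^0(R)$; since $X^2=0$ and $X\varphi(g)=\varphi(g)X=0$, a direct computation gives
\[
(I_2+X)s(g)(I_2+X)^{-1}=\bigl(I_2+\varphi(g)-(\mathrm{Ad}(\overline{r}(g))X-X)\bigr)\tilde{r}(g),
\]
so conjugation by $I_2+X$ modifies $\varphi$ exactly by the coboundary $\partial X$. Conversely, any strict equivalence between two such lifts is realised by such an element. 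This proves injectivity on $H^1$-classes. For surjectivity, given any lift $s$ of $r$ with $\det s|_{G_{F,S}}=\psi$, set $\varphi(g):=s(g)\tilde{r}(g)^{-1}-I_2$; both reductions modulo $I$ agree, so $\varphi$ is $I$-valued, the determinant condition gives $\mathrm{tr}\,\varphi=0$, and the fact that both $s$ and $\tilde{r}$ are $\mathcal{G}_2$-valued homomorphisms forces $\varphi$ to be a $G_{F^+,S}$-cocycle into $\mathrm{ad}^0\overline{r}\otimes I$.

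The only non-formal point is the verification that the $\jmath$-component of the $\mathcal{G}_2$-adjoint action on $\mathrm{ad}^0\overline{r}$ is precisely what is needed for a $G_{F,S}$-cocycle to extend to $G_{F^+,S}$; but this is settled once and for all by the explicit matrix description $r(c)=(A,-\chi(c))\jmath$ of Lemma \ref{correspondence} together with the self-duality of $\overline{\rho}$. Everything else is standard, so I expect no real obstacle beyond careful tracking of the $\mathcal{G}_2$-conventions.
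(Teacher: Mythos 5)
Your proposal is correct, and it reproduces essentially the argument the paper invokes: the paper's proof is the one-line reference ``Same as \cite[proof of Proposition 2.2.9]{CHT}'', and what you have written out is precisely that standard infinitesimal-lifting computation in the $\mathcal{G}_2$-setting. The checks you perform (the cocycle identity giving a homomorphism, the trace-zero condition controlling the determinant, coboundaries matching conjugations by $I_2+X$ via $I\mathfrak{m}_{R'}=0$, and the $\jmath$-twisted adjoint action from Lemma \ref{correspondence}) are exactly the content of CHT's proof, so there is nothing to add.
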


\begin{proof} Same as \cite[proof of Proposition 2.2.9]{CHT}. \end{proof}

\begin{lem}\label{obstruction theory}

Let $R,\ R' \in \mathrm{CNL}_{\mathcal{O}}$ be Artin local rings over $\mathcal{O}$, $R' \twoheadrightarrow R$ be a local $\mathcal{O}$-morphism whose kernel $I$ satisfies $\mathfrak{m}_{R'}I = 0$ and $r : G_{F^+, S} \rightarrow \mathcal{G}_2(R)$ be a lifting of $\overline{r}$. 

1 \ We can take a continuous map $\tilde{r} : G_{F^+, S} \rightarrow \mathcal{G}_2(R')$ (not necessarily group morphism) which is a lift of $r$ such that $\mathrm{det}\tilde{r}|_{G_{F, S}} = \psi$, $\nu \circ \tilde{r} = \chi$ and $\tilde{r}(\gamma c) = \tilde{r}(\gamma)\tilde{r}(c)$ for any $\gamma \in G_{F, S}$.

2 \ Let $\tilde{r} : G_{F^+, S} \rightarrow \mathcal{G}_2(R')$ be a continuous map as in the above 1 and we put $\phi(\gamma, \delta) := \tilde{r}(\gamma \delta) \tilde{r}(\delta)^{-1} \tilde{r}(\gamma)^{-1} - I_2$.

Then $\phi$ is a continuous 2-cocycle $G_{F^+, S} \times G_{F^+, S} \rightarrow \mathrm{ad}^0\overline{r} \otimes_{\mathbb{F}} I$ and the cohomology class $\mathrm{obs}(r, R')$ of $\phi$ is independent of the choice of $\tilde{r}$. Moreover, $\mathrm{obs}(r, R') = 0$ is equivalent to the existence of a lifting $r' : G_{F^+, S} \rightarrow \mathcal{G}_2(R)$ of $r$ such that $\mathrm{det}r'|_{G_{F, S}} = \psi$ and $\nu \circ r' = \chi^{-1}$.

\end{lem}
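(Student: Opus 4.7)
The plan is to prove both parts via a direct set-theoretic construction and a standard obstruction-theoretic computation, taking care to keep track of the polarization constraint $\nu\circ r=\chi$ and the determinant condition $\det r|_{G_{F,S}}=\psi$.

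For part~1, I would first use continuity of $R'\twoheadrightarrow R$ (a surjection of profinite rings) to choose a continuous set-theoretic section, yielding a continuous lift $\tilde r_0:G_{F,S}\to\mathrm{GL}_2(R')$ of $\rho:=r|_{G_{F,S}}$. Since $\det\tilde r_0$ reduces to $\psi\bmod I$, the quotient $\psi/\det\tilde r_0$ takes values in $1+I$, and I would correct $\tilde r_0$ by multiplying on the left by the diagonal matrix $\mathrm{diag}(\psi/\det\tilde r_0,1)$ to obtain a continuous lift $\tilde r:G_{F,S}\to\mathrm{GL}_2(R')$ with $\det\tilde r=\psi$. Next, I would choose a continuous lift $\tilde r(c)\in\mathcal G_2(R')$ of $r(c)$ with $\nu(\tilde r(c))=-1$, which is possible because $r(c)\notin\mathcal G_2^0(R)$ and the map $\mathcal G_2(R')\to\mathcal G_2(R)$ is surjective compatibly with $\nu$. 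Finally, I would set $\tilde r(\gamma c):=\tilde r(\gamma)\tilde r(c)$ for $\gamma\in G_{F,S}$ (well-defined by the decomposition $G_{F^+,S}=G_{F,S}\sqcup G_{F,S}c$) and check continuity and $\nu\circ\tilde r=\chi$ directly.

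For part~2, the key is to verify that $\phi(\gamma,\delta):=\tilde r(\gamma\delta)\tilde r(\delta)^{-1}\tilde r(\gamma)^{-1}-I_2$ actually lands in $\mathrm{ad}^0\bar r\otimes_{\mathbb F}I$. Reducing mod $I$ gives $I_2$, so $\phi\in M_2(I)$ automatically; the fact that $\mathfrak m_{R'}I=0$ makes $M_2(I)$ an $\mathbb F$-vector space on which $G_{F^+,S}$ acts via $\bar r$ conjugation, so $\phi$ is $\bar r$-adjoint-valued. The trace-zero (equivalently, determinant-one) condition is the subtle point: for $\gamma,\delta\in G_{F,S}$ it follows immediately from $\det\tilde r=\psi$ being multiplicative; in the remaining cases one reduces, using $\tilde r(\beta c)=\tilde r(\beta)\tilde r(c)$, to the same $G_{F,S}$-computation after absorbing $\tilde r(c)$ pairs—here one uses the compatibility $\psi\psi^c=\chi^2|_{G_F}$ and $\nu(\tilde r(c))=\chi(c)$ to cancel the conjugation by $\tilde r(c)$ up to the expected factor. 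A routine associativity computation for the product $\tilde r(\alpha\beta\gamma)\tilde r(\gamma)^{-1}\tilde r(\beta)^{-1}\tilde r(\alpha)^{-1}$ shows $\phi$ is a continuous $2$-cocycle.

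For independence of $\mathrm{obs}(r,R')$ from $\tilde r$, any second lift $\tilde r'$ satisfies $\tilde r'=\bigl(I_2+\eta\bigr)\tilde r$ for a unique continuous $1$-cochain $\eta:G_{F^+,S}\to\mathrm{ad}^0\bar r\otimes I$ (by Lemma~\ref{deformation theory} applied componentwise to the pointwise difference and the fact that $\nu\circ\tilde r'=\nu\circ\tilde r=\chi$ and $\det\tilde r'|_{G_{F,S}}=\det\tilde r|_{G_{F,S}}=\psi$). A direct calculation shows that replacing $\tilde r$ by $\tilde r'$ changes $\phi$ by the coboundary $d\eta$, so the class $\mathrm{obs}(r,R')\in H^2(G_{F^+,S},\mathrm{ad}^0\bar r\otimes I)$ is well-defined. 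The equivalence with existence of a genuine homomorphism $r'$ is then formal: if $\phi=d\eta$ for some continuous $1$-cochain $\eta$, then $r':=(I_2-\eta)\tilde r$ is multiplicative, still lifts $r$, still satisfies $\nu\circ r'=\chi$ and $\det r'|_{G_{F,S}}=\psi$ because $\eta$ has trace zero and vanishes on the $\nu$-component; conversely, any such $r'$ yields a $1$-cochain trivializing $\phi$.

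The main obstacle is the trace-zero verification of $\phi$ in the ``$c$-twisted'' cases $\gamma\in G_{F,S}c$ or $\delta\in G_{F,S}c$: one must carefully unpack the $\jmath$-multiplication in $\mathcal G_2$ to see that the contribution from $\tilde r(c)\tilde r(\delta)^{-1}\tilde r(c)^{-1}$ differs from $\tilde r(\delta^c)^{-1}$ by an element of $M_2(I)$ whose $\det$-correction is controlled exactly by the polarization relation $\psi\psi^c=\chi^2|_{G_F}$. Once this bookkeeping is done, everything else is the standard Mazur-style obstruction theory adapted to $\mathcal G_2$-valued deformations with fixed similitude $\chi$ and fixed determinant $\psi$.
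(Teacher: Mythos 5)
Your proposal is correct and follows essentially the same route as the paper: choose a continuous set-theoretic lift on $G_{F,S}$ with $\det = \psi$ using smoothness of $\det$, extend by a choice of $\tilde r(c)$, reduce the trace-zero condition on $\phi$ to $\det(\tilde r(\gamma\delta)\tilde r(\delta)^{-1}\tilde r(\gamma)^{-1})=1$ via $\det(I_2+X)=1+\mathrm{tr}(X)$ on $M_2(I)$, and verify this in the four coset cases using $\psi\psi^c=\chi^2|_{G_F}$; the $2$-cocycle property, independence of the class, and the equivalence with liftability are then the standard Mazur-style arguments, exactly as in the paper (which cites CHT Lemma~2.2.11 for the cocycle identity).

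One small imprecision worth flagging: your appeal to Lemma~\ref{deformation theory} ``applied componentwise'' to parametrize the second lift $\tilde r'$ by an $\mathrm{ad}^0\overline r\otimes I$-valued $1$-cochain isn't quite what that lemma says (it concerns genuine homomorphisms, not set-theoretic lifts), and moreover the freedom in $\tilde r(c)$ is all of $M_2(I)$, not just its trace-zero part, so the resulting cochain $\eta$ need not be $\mathrm{ad}^0$-valued on the coset $G_{F,S}c$. This is harmless — the trace part of $\eta$ is then a $\mathrm{Gal}(F/F^+)$-valued $1$-cocycle with values in $I$, so its coboundary vanishes and only the trace-free part of $\eta$ contributes to $d\eta$ — but it should be said rather than inferred from Lemma~\ref{deformation theory}. (The paper's own phrasing glosses over the same point.)
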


\begin{rem}

We use the assumption $\psi \psi^c = \chi^2|_{G_{F, S}}$ only in this proof and after Corollary \ref{lifting}.

\end{rem}

\begin{proof}

1 \ By the smoothness of $\mathrm{det} : \mathrm{GL}_2 \rightarrow \mathbb{G}_m$, there exists a continuous map $\tilde{r}' : G_{F, S} \rightarrow \mathcal{G}_2(R')$ (not necessarily group morphism) which is a lift of $r|_{G_{F, S}}$ such that $\mathrm{det}\tilde{r}|_{G_{F, S}} = \psi$. By chosing a lift $\tilde{r}(c)$ of $r(c)$, we obtain the desired $\tilde{r}$.

2 \ Note that $\mathrm{det}(\tilde{r}(\gamma \delta) \tilde{r}(\delta)^{-1} \tilde{r}(\gamma)^{-1}) = \mathrm{det}(I_2 + \phi(\gamma, \delta)) = 1 + \mathrm{tr}\phi(\gamma, \delta)$. 

Therefore, the condition $\mathrm{Im} \phi \in \mathrm{ad}^0\overline{r} \otimes_{\mathbb{F}} I$ is equivalent to $\mathrm{det}(\tilde{r}(\gamma \delta) \tilde{r}(\delta)^{-1} \tilde{r}(\gamma)^{-1}) = 1$ for any $\gamma, \delta \in G_{F^+, S}$. In the following, we will check this. We put $\tilde{r}(c) = (A, -\chi(c))\jmath$.

(1) \ If $\gamma, \delta \in G_{F,S}$, this follows from $\mathrm{det}\tilde{r}(\delta) = \psi(\delta)$.

(2) \ Assume $\gamma = \gamma'c$ and $\gamma', \delta \in G_{F, S}$. 

\begin{align*}
\mathrm{det}(\tilde{r}(\gamma \delta) \tilde{r}(\delta)^{-1} \tilde{r}(\gamma)^{-1}) \\
 = \mathrm{det}(\tilde{r}(\gamma'c \delta c) \tilde{r}(c) \tilde{r}(\delta)^{-1} \tilde{r}(c)^{-1} \tilde{r}(\gamma')^{-1}) \\
= \mathrm{det}(\tilde{r}(\gamma'c \delta c) A ^t\!\tilde{r}(\delta)\chi(\delta)^{-1} A^{-1}\tilde{r}(\gamma')^{-1}) \\
= \mathrm{det}(\tilde{r}(c \delta c) ^t\!\tilde{r}(\delta)\chi(\delta)^{-1}) \\
=  \psi^c(\delta) \chi(\delta)^{-2} \psi(\delta) = 1. 
\end{align*}

(3) \ Assume $\delta = \delta'c$ and $\gamma, \delta' \in G_{F, S}$.

$\mathrm{det}(\tilde{r}(\gamma \delta) \tilde{r}(\delta)^{-1} \tilde{r}(\gamma)^{-1}) = \mathrm{det}\tilde{r}(\gamma\delta') \mathrm{det}\tilde{r}(\delta')^{-1} \mathrm{det}\tilde{r}(\gamma)^{-1} = 1$.

(4) \ Assume $\delta = \delta'c$, $\gamma = \gamma'c$ and $\gamma', \delta' \in G_{F, S}$.

\begin{align}\mathrm{det}(\tilde{r}(\gamma \delta) \tilde{r}(\delta)^{-1} \tilde{r}(\gamma)^{-1}) = \mathrm{det}(\tilde{r}(\gamma' c \delta' c) \tilde{r}(c)^{-1} \tilde{r}(\delta')^{-1} \tilde{r}(c)^{-1} \tilde{r}(\gamma')^{-1}) \nonumber \\ 
= \mathrm{det}(\tilde{r}(\gamma' c \delta' c) ^t\!A \chi(\delta')^{-1} { }^t\!\tilde{r}(\delta') A^{-1} \tilde{r}(\gamma')^{-1}) = \psi^c(\delta') \psi(\delta') \chi(\delta)^{-2} = 1.\end{align}

In order to prove that $\phi$ is a $2$-cocycle, it suffice to prove that $\phi : G_{F^+, S} \times G_{F^+, S} \rightarrow \mathrm{ad}^0\overline{r} \otimes_{\mathbb{F}} I \hookrightarrow \mathrm{ad}\overline{r} \otimes_{\mathbb{F}} I$ is a $2$-cocycle and this is \cite[Lemma 2.2.11]{CHT}.

Another lift can be written as $\tilde{r}' = (I_2 + c)\tilde{r}$ for some continuous map $c : G_{F^+, S} \rightarrow \mathrm{ad}^0\overline{r} \otimes_{\mathbb{F}} I$. Let $\phi'$ be the $2$-cocycle associated with $\tilde{r}'$. We obtain $\phi'(\gamma, \delta) = (I_2 + c(\gamma \delta))\tilde{r}(\gamma \delta) \tilde{r}(\delta)^{-1} (1 - c(\delta))\tilde{r}(\gamma)^{-1}(I_2 - c(\gamma)) - I_2 = \phi(\gamma, \delta) + c(\gamma \delta) - c(\gamma) - \mathrm{ad}r(\gamma) c(\delta)$. Therefore, the cohomology class of $\phi$ is independent of the choice of $\tilde{r}$. Moreover, the same calculation says that any 2-cocycle $\psi : G_{F^+, S} \times G_{F^+, S} \rightarrow \mathrm{ad}^0\overline{r} \otimes_{\mathbb{F}} I$ which is equivalent to $\phi$ comes from some lifting of $\tilde{r}''$ as in 1. This implies the final property of $\mathrm{obs}(r, R')$. \end{proof}

\begin{lem} (Greenberg-Wiles formula) \label{Greenberg-Wiles formula} Let $M$ be a finite $\mathbb{F}[G_{F^+, S}]$-module and $( \mathcal{L}_v )_{v \in S}$ be a local condition of $M$, i.e., $\mathcal{L}_v$ is a subspace of $H^1(F_v^+, M)$. Then we have the following formula. (We put $\mathcal{L}^{\perp}:=(\mathcal{L}_v^{\perp})_{v \in S}$ and $\mathcal{L}_v^{\perp} \subset H^1(F_v^+, M^{\vee}(1)) \cong H^1(F_v^+, M)^{\vee}$ denotes the annihilator of $\mathcal{L}_v$.)

\small

$\mathrm{dim}_{\mathbb{F}}H^1_{\mathcal{L}}(G_{F^+, S}, M) - \mathrm{dim}_{\mathbb{F}}H^1_{\mathcal{L}^{\perp}}(G_{F^+, S}, M^{\vee}(1)) = \mathrm{dim}_{\mathbb{F}}H^0(G_{F^+, S}, M) - \mathrm{dim}_{\mathbb{F}}H^0(G_{F^+, S}, M(1)) + \sum_{v \in S}(\mathrm{dim}_{\mathbb{F}}\mathcal{L}_v - \mathrm{dim}_{\mathbb{F}}H^0(F_v^+, M)) - \sum_{v \mid \infty} \mathrm{dim}_{\mathbb{F}}H^0(F_v^{+}, M).$

\normalsize

\end{lem}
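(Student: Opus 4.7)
The Greenberg–Wiles formula is a classical identity in Galois cohomology; the plan is to derive it from Poitou–Tate global duality combined with the local and global Euler–Poincar\'e characteristic formulas over the totally real field $F^+$. The key exact sequence to assemble is the ``Selmer fundamental sequence''
\[
0 \to H^1_{\mathcal{L}}(G_{F^+,S}, M) \to H^1(G_{F^+,S}, M) \to \bigoplus_{v \in S} H^1(F_v^+, M)/\mathcal{L}_v \to H^1_{\mathcal{L}^\perp}(G_{F^+,S}, M^\vee(1))^\vee \to \mathrm{Sha}^2_S(M) \to 0,
\]
where $\mathrm{Sha}^2_S(M) := \ker\!\bigl(H^2(G_{F^+,S}, M) \to \bigoplus_{v \in S} H^2(F_v^+, M)\bigr)$. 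This arises by splicing the defining sequence of $H^1_{\mathcal{L}}$ with the portion of the nine-term Poitou–Tate sequence running from $H^1(G_{F^+,S}, M)$ through $H^2(G_{F^+,S}, M)$, using local Tate duality $H^1(F_v^+, M)/\mathcal{L}_v \cong \mathcal{L}_v^{\perp,\vee}$ at each $v \in S$.

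Next I would take alternating sums of dimensions in the displayed five-term sequence, expressing $\dim H^1_{\mathcal{L}} - \dim H^1_{\mathcal{L}^\perp}$ in terms of $\dim H^1(G_{F^+,S}, M)$, $\sum_v (\dim H^1(F_v^+, M) - \dim \mathcal{L}_v)$, and $\dim \mathrm{Sha}^2_S(M)$. The group $\mathrm{Sha}^2_S(M)$ is then rewritten via the final stretch of the Poitou–Tate sequence
\[
H^2(G_{F^+,S}, M) \to \bigoplus_{v \in S} H^2(F_v^+, M) \to H^0(G_{F^+,S}, M^\vee(1))^\vee \to 0,
\]
so that its dimension is expressible in terms of $\dim H^2(G_{F^+,S}, M)$, $\sum_v \dim H^2(F_v^+, M)$, and $\dim H^0(G_{F^+,S}, M^\vee(1))$. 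After this bookkeeping, everything on the right-hand side is an alternating sum of global and local cohomological dimensions.

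The final step is to apply the Euler–Poincar\'e characteristic formulas. Locally at a finite place $v \in S$, Tate's formula gives $\dim H^0(F_v^+, M) - \dim H^1(F_v^+, M) + \dim H^2(F_v^+, M) = -[F_v^+ : \mathbb{Q}_p]\dim M$ if $v \mid p$ and $0$ otherwise, while local duality identifies $\dim H^2(F_v^+, M)$ with $\dim H^0(F_v^+, M^\vee(1))$. Globally, Tate's formula reads
\[
\sum_{i=0}^{2} (-1)^i \dim H^i(G_{F^+,S}, M) = -\sum_{v \mid \infty}\dim H^0(F_v^+, M^-) + \text{(standard archimedean correction)},
\]
where the archimedean correction collapses under our hypothesis that $p$ is odd (so that $H^i(F_v^+, \cdot)$ for $v\mid\infty$ is annihilated by $2$ in degree $\geq 1$). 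Substituting these identities into the expression from the previous paragraph produces a telescoping cancellation of the local $[F_v^+:\mathbb{Q}_p]\dim M$ contributions against the global ones, and what remains is exactly the stated formula. The one step that requires genuine care is keeping track of archimedean places: because $F^+$ is totally real and $p$ is odd, every $H^i(F_v^+, M)$ for $v\mid\infty$ and $i\geq 1$ is trivial, and only the $H^0(F_v^+, M)$ terms survive, accounting for the $\sum_{v\mid\infty}\dim H^0(F_v^+, M)$ on the right-hand side of the formula.
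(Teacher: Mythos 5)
Your plan is correct and is the standard Poitou–Tate plus Euler--Poincar\'e characteristic derivation of the Greenberg--Wiles formula; the paper itself does not spell out a proof but simply cites the proof of Lemma 2.19 in Newton--Thorne, which runs along exactly the lines you describe. One small point worth noting: your argument naturally produces the formula with $\dim H^0(G_{F^+,S}, M^{\vee}(1))$ where the paper's statement writes $\dim H^0(G_{F^+,S}, M(1))$, but since the only module the lemma is ever applied to in the paper is $\mathrm{ad}^0\overline{r}$ (which is self-dual), this discrepancy is immaterial.
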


\begin{proof} See \cite[proof of Lemma 2.19]{NT}.   \end{proof}

\begin{cor} \label{Greenberg-Wiles} Assume that $\overline{\rho}|_{G_{F(\zeta_p)}}$ is irreducible. For a local condition $( \mathcal{L}_v )_{v \in S}$ of $\mathrm{ad}^0\overline{\rho}$, we have the following formula.

$\mathrm{dim}_{\mathbb{F}}H^1_{\mathcal{L}}(G_{F^+, S}, \mathrm{ad}^0\overline{r}) - \mathrm{dim}_{\mathbb{F}}H^1_{\mathcal{L}^{\perp}}(G_{F^+, S}, \mathrm{ad}^0\overline{r}(1)) = \sum_{v \in S}(\mathrm{dim}_{\mathbb{F}}\mathcal{L}_v - \mathrm{dim}_{\mathbb{F}}H^0(F_v^+, \mathrm{ad}^0 \overline{r})) - [F^+ : \mathbb{Q}]$.

\end{cor}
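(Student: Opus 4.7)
The plan is to apply the Greenberg-Wiles formula (Lemma \ref{Greenberg-Wiles formula}) with $M = \mathrm{ad}^0\overline{r}$ and use the irreducibility hypothesis on $\overline{\rho}|_{G_{F(\zeta_p)}}$ to kill the two unselfish $H^0$ terms and to compute the archimedean contribution. First, since $p$ is odd the trace pairing identifies $\mathrm{ad}^0\overline{r}$ with its own $\mathbb{F}$-linear dual as a $G_{F^+,S}$-module, so $M^{\vee}(1) \cong \mathrm{ad}^0\overline{r}(1)$; this explains why the second $H^1_{\mathcal{L}^{\perp}}$-term in the corollary is the one written, and why it is dual (with respect to local Tate duality) to $H^1_{\mathcal{L}}$.

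Next I would dispose of the two global $H^0$ terms. Any $G_{F^+,S}$-invariant in $\mathrm{ad}^0\overline{r}$ is in particular $G_{F,S}$-invariant, hence a $G_F$-equivariant trace-zero endomorphism of $\overline{\rho}$; the absolute irreducibility of $\overline{\rho}|_{G_{F(\zeta_p)}}$ (which implies absolute irreducibility of $\overline{\rho}$) and Schur's lemma force this to be zero, so $H^0(G_{F^+,S}, \mathrm{ad}^0\overline{r}) = 0$. The same argument after restricting further to $G_{F(\zeta_p),S}$ (on which the cyclotomic character is trivial) gives $H^0(G_{F^+,S}, \mathrm{ad}^0\overline{r}(1)) = 0$, using the assumption on $\overline{\rho}|_{G_{F(\zeta_p)}}$ in an essential way.

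The remaining task is the archimedean calculation: at each real place $v$ of $F^+$, I claim $\dim_{\mathbb{F}} H^0(F_v^+, \mathrm{ad}^0\overline{r}) = 1$, which would give the required $[F^+ : \mathbb{Q}]$ after summing. Using the presentation $\overline{r}(c_v) = (A, -1)\jmath$ with $A = {}^t A$ (symmetry of the pairing) and the relation $\overline{r}(c_v)^2 = 1$ (forcing $A^2 = -I$) described in and around Lemma \ref{correspondence}, the action of $\mathrm{Ad}(\overline{r}(c_v))$ on the $3$-dimensional space of trace-zero matrices computes as $X \mapsto -AX^tA^{-1}$. A direct linear-algebra check (equivalently, after diagonalizing $\overline{\rho}(c_v) \sim \mathrm{diag}(1,-1)$, which is available since $\chi(c_v) = -1$ means $\overline{\rho}(c_v)$ is a nonscalar involution) shows that the $+1$-eigenspace is one-dimensional, yielding the claim.

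Plugging these inputs into Lemma \ref{Greenberg-Wiles formula} the two global $H^0$ terms vanish and the archimedean sum equals $[F^+:\mathbb{Q}]$, and the identity in the statement of the corollary drops out. The only subtlety, and the step I would verify most carefully, is the dimension count at infinity: one must make sure that the symmetric/$A^2 = -I$ normalization of $\overline{r}(c_v)$ really does yield a one-dimensional fixed space on $\mathrm{ad}^0\overline{r}$ uniformly in $v$ and does not depend on the choice of $A$ within its $\mathrm{GL}_2(\mathbb{F})$-conjugacy class.
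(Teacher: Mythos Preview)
Your approach is essentially identical to the paper's: apply the Greenberg--Wiles formula, kill the global $H^0$ terms using irreducibility of $\overline{\rho}|_{G_{F(\zeta_p)}}$, and show the archimedean $H^0$ is one-dimensional at each real place (the paper simply cites \cite[Lemma 2.1.3]{CHT} for this last step, using the symmetry of the pairing).

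Two small inaccuracies in your archimedean discussion, neither of which breaks the argument. First, the relation $\overline{r}(c_v)^2 = 1$ does not force $A^2 = -I$: with $\mu = -\chi(c_v) = 1$ one computes $((A,1)\jmath)^2 = (A\,{}^tA^{-1},1)$, so the order-two condition just recovers $A = {}^tA$, which you already have from the symmetry of the pairing. Second, the parenthetical about diagonalizing $\overline{\rho}(c_v)$ is confused, since $c_v \notin G_F$ and $\overline{\rho}$ is only defined on $G_F$; there is no ``$\overline{\rho}(c_v)$'' to diagonalize. Your main computation via the involution $X \mapsto -A\,{}^tX\,A^{-1}$ on trace-zero matrices with $A$ symmetric is correct and gives the one-dimensional fixed space directly (indeed, $X$ is fixed iff $XA$ is antisymmetric, a one-dimensional condition in $2\times 2$, and such $X$ are automatically trace-zero).
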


\begin{proof} By the irreducibility, we have $H^0(G_{F, S}, \mathrm{ad}^0\overline{\rho}) = H^0(G_{F, S}, \mathrm{ad}^0\overline{\rho}(1)) = 0$. The same proof as \cite[Lemma 2.1.3]{CHT} shows $\mathrm{dim}_{\mathbb{F}}H^0(F_v^{+}, \mathrm{ad}^0\overline{\rho}) = 1$ for any $v \mid \infty$ by using the symmetry of the pairing $\overline{\rho} \times \overline{\rho}^c \rightarrow \overline{\chi}|_{G_{F}}$, which corresponds to the ``oddness'' of $\overline{r}$.  \end{proof}

For a finite $\mathbb{F}[G_{F^+, S}]$-module $M$, let $\Sha_{S}^i(M) := \mathrm{Ker}(H^i(G_{F^+, S}, M) \rightarrow \prod_{v \in S} H^i(G_{F^+_v}, M))$.

\begin{prop} \label{PTE} (Poitou-Tate exact sequence) For a local condition $( \mathcal{L}_v )_{v \in S}$ of $\mathrm{ad}^0\overline{r}$, we have the following exact sequences.
    
1 \ $0 \rightarrow H^1_{\mathcal{L}}(G_{F^+, S}, \mathrm{ad}^0\overline{r}) \rightarrow H^1(G_{F^+, S}, \mathrm{ad}^0\overline{r}) \rightarrow \oplus_{v \in S} H^1(F_v^+, \mathrm{ad}^0\overline{r}|_{G_{F_v^+}})/\mathcal{L}_v \rightarrow H^1_{\mathcal{L}^{\perp}}(G_{F^+, S}, \mathrm{ad}^0\overline{r}(1))^{\vee} \rightarrow \Sha_{S}^1(\mathrm{ad}^0\overline{r}(1))^{\vee} \rightarrow 0.$

2 \ $0 \rightarrow \Sha^1_{S}(\mathrm{ad}^0\overline{r}(1))^{\vee} \rightarrow H^2(G_{F^+, S}, \mathrm{ad}^0\overline{r}) \rightarrow \oplus_{v \in S} H^2(F_v^+, \mathrm{ad}^0\overline{r}) \rightarrow 0$.

\end{prop}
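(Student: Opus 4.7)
The plan is to deduce both exact sequences from the classical Poitou--Tate nine-term exact sequence applied to the finite $\mathbb{F}[G_{F^+,S}]$-module $M := \mathrm{ad}^0\overline{r}$. First I would observe that, since $p$ is odd, the trace pairing $\mathrm{ad}^0\overline{r} \times \mathrm{ad}^0\overline{r} \to \mathbb{F}$ is non-degenerate and $G_{F^+,S}$-equivariant, so $M^\vee \cong M$ and $M^\vee(1) \cong \mathrm{ad}^0\overline{r}(1)$ as $G_{F^+,S}$-modules. Write $P^i_S(M) := \bigoplus_{v \in S} H^i(F_v^+, M)$; since $S$ contains all places at which $M$ is ramified together with all archimedean places, Poitou--Tate yields
\[
\cdots \to H^1(G_{F^+,S}, M) \xrightarrow{\mathrm{loc}} P^1_S(M) \to H^1(G_{F^+,S}, M^\vee(1))^\vee \to H^2(G_{F^+,S}, M) \to P^2_S(M) \to H^0(G_{F^+,S}, M^\vee(1))^\vee \to 0,
\]
where the arrow $P^1_S(M) \to H^1(G_{F^+,S}, M^\vee(1))^\vee$ is Pontryagin dual to the localization map on $H^1$ of the dualizing module, via Tate local duality.

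To prove statement 1, I would splice the Selmer groups into the middle of this sequence. The definition $H^1_{\mathcal{L}}(G_{F^+,S}, M) = \ker\bigl(H^1(G_{F^+,S}, M) \to P^1_S(M)/\mathcal{L}\bigr)$ gives the initial segment for free. Tate local duality identifies $(P^1_S(M)/\mathcal{L})^\vee$ with $\mathcal{L}^\perp \subset P^1_S(M^\vee(1))$, so the Poitou--Tate map $P^1_S(M) \to H^1(G_{F^+,S}, M^\vee(1))^\vee$ vanishes on $\mathcal{L}$ precisely when its image lies in $H^1_{\mathcal{L}^\perp}(G_{F^+,S}, M^\vee(1))^\vee$; by construction (and the fact that localization is exact) the induced map $P^1_S(M)/\mathcal{L} \to H^1_{\mathcal{L}^\perp}(G_{F^+,S}, M^\vee(1))^\vee$ is the desired arrow. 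Its cokernel is the cokernel of $P^1_S(M) \to H^1(G_{F^+,S}, M^\vee(1))^\vee$, which by exactness of Poitou--Tate equals $\ker\bigl(H^2(G_{F^+,S}, M) \to P^2_S(M)\bigr) = \Sha^2_S(M)$; Tate global duality then identifies $\Sha^2_S(M) \cong \Sha^1_S(M^\vee(1))^\vee = \Sha^1_S(\mathrm{ad}^0\overline{r}(1))^\vee$, producing the final segment.

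For statement 2, I would extract the tail of the same nine-term sequence. The kernel of $H^2(G_{F^+,S}, M) \to P^2_S(M)$ is by definition $\Sha^2_S(M)$, which by Tate global duality is $\Sha^1_S(\mathrm{ad}^0\overline{r}(1))^\vee$, giving the injection on the left. The surjectivity on the right is the statement that $H^0(G_{F^+,S}, \mathrm{ad}^0\overline{r}(1))^\vee$ vanishes; this holds under the running assumption (made in the parallel Corollary~\ref{Greenberg-Wiles} and implicit in the Selmer-theoretic setup of this subsection) that $\overline{\rho}|_{G_{F(\zeta_p)}}$ is absolutely irreducible, from which a standard semisimplicity argument gives $H^0(G_{F^+,S}, \mathrm{ad}^0\overline{r}(1)) = 0$.

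The content here is essentially a repackaging of standard Galois cohomology, so there is no genuine obstacle; the only point requiring care is the diagram chase that identifies the spliced middle term $P^1_S(M)/\mathcal{L} \to H^1_{\mathcal{L}^\perp}(G_{F^+,S}, M^\vee(1))^\vee$ compatibly with the Poitou--Tate and Tate-local-duality pairings, and the bookkeeping of dualizations (in particular the $\Sha^2_S(M) \cong \Sha^1_S(M^\vee(1))^\vee$ identification). All other steps are formal once the nine-term sequence and the self-duality $M^\vee \cong M$ are in hand.
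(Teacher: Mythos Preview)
Your argument is correct and is precisely the standard derivation from the Poitou--Tate nine-term sequence; the paper itself simply cites \cite[proof of Lemma 2.3.4]{CHT}, where exactly this unpacking is carried out. One cosmetic point: in the paper's setup $S$ consists only of finite places, but since $p$ is odd the archimedean local cohomology of the $p$-torsion module $\mathrm{ad}^0\overline{r}$ vanishes in degrees $\geq 1$, so your inclusion of infinite places is harmless.
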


\begin{proof} See \cite[proof of Lemma 2.3.4]{CHT}. \end{proof}

\begin{dfn}

Let $r : G_{F^+, S} \rightarrow \mathcal{G}_2(\mathcal{O}/\varpi^m)$ be a lift of $\overline{r}$ and $v \notin S$ be a finite place of $F^+$ splitting $v=uu^c$ in $F$. We say that $v$ is $r$-nice if the ramification index and the inertia degree of $F$ at $v$ are $1$, $q_v \not\equiv \pm 1 \mod p$ and $r(\mathrm{Frob}_u)$ has eigenvalues $\alpha, \beta \in (\mathcal{O}/\varpi^m)^{\times}$ such that $\alpha/\beta = q_v$.

\end{dfn}

\begin{rem}

In this section, we don't need the property that the ramification index and the inertia degree of $F$ at $v$ are $1$. In {\S} 7.3, we will use this property.

\end{rem}

\begin{lem}\label{Steinberg deformation}
    
    Let $l \neq p$ be a prime, $L/\mathbb{Q}_l$ be a finite extension such that $q_L \neq \pm 1 \mod p$ and $\overline{\rho}_L : G_L \rightarrow \mathrm{GL}_2(\mathbb{F})$ be an unramified representation such that the Frobenius eigenvalues $\alpha$, $\beta$ of $\overline{\rho}$ satisfy $\alpha/\beta = q_L$. Let $\phi : G_{L} \rightarrow \mathcal{O}^{\times}$ be an unramified character which is a lift of $\mathrm{det}\overline{\rho}_L$. Then the following results hold.
    
1 \ The Steinberg lifting ring $R^{\mathrm{st}, \phi}_{\overline{\rho}_L}$ with the determinant $\phi$ over $\mathcal{O}$ is formally smooth over $\mathcal{O}$ of dimension $4$. (See \cite[Proposition 3.1]{IA} for the definition of the Steinberg deformation ring $R^{\mathrm{st}}$ and see \cite[{\S} 2]{KP} for how to fix the determinant.)

2 \ Let $\mathcal{L}_v^{\mathrm{ur}}$ be the subspace of $H^1(L, \mathrm{ad}^0\overline{\rho})$ consisting of unramified deformations of $\overline{\rho}$ and $\mathcal{L}_{v}^{\mathrm{st}}$ be the subspace of $H^1(L, \mathrm{ad}^0\overline{\rho})$ consisting of Steinberg deformations of $\overline{\rho}$.

Then we have the following properties.

(a) \ $\mathrm{dim}_{\mathbb{F}}\mathcal{L}_v^{\mathrm{ur}} = \mathrm{dim}_{\mathbb{F}}\mathcal{L}_v^{\mathrm{st}} = 1$.

(b) \ $H^1(L, \mathrm{ad}^0\overline{\rho}) = \mathcal{L}_v^{\mathrm{ur}} \oplus \mathcal{L}_v^{\mathrm{st}}$.

(c) \ $H^1(L, \mathrm{ad}^0\overline{\rho}(1)) = \mathcal{L}_v^{\mathrm{ur}, \perp} \oplus \mathcal{L}_v^{\mathrm{st}, \perp}$, where $\mathcal{L}_v^{\mathrm{st}, \perp}$ (resp. $\mathcal{L}_v^{\mathrm{ur}, \perp}$) denotes the subspace of $H^1(L, \mathrm{ad}^0\overline{\rho}(1))$ annihilated by $\mathcal{L}_v^{\mathrm{st}}$ (resp. $\mathcal{L}_v^{\mathrm{ur}}$) via the local duality. 

\end{lem}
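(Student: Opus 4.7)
The plan is to deduce statements (2a), (2b), (2c) from explicit cohomology computations (using the genericity), and then derive the formal smoothness of $R^{\mathrm{st},\phi}_{\overline{\rho}_L}$ from the description of its tangent space. First I would compute the relevant dimensions. Since $\overline{\rho}_L$ is unramified with $\mathrm{Frob}_L$ having eigenvalues $\alpha, \beta$ with $\alpha/\beta = q_L \not\equiv 1 \bmod p$, we may diagonalize $\overline{\rho}_L(\mathrm{Frob}_L)=\mathrm{diag}(\alpha,\beta)$, so $\mathrm{Frob}_L$ acts on $\mathrm{ad}^0\overline{\rho}_L$ with eigenvalues $1,q_L,q_L^{-1}$, all pairwise distinct in $\mathbb{F}$ by the assumption $q_L\not\equiv\pm 1\bmod p$. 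Hence $\dim H^0(L,\mathrm{ad}^0\overline{\rho}_L)=1$, and by vanishing of the Euler characteristic (since $l\neq p$) together with local Tate duality, $\dim H^1(L,\mathrm{ad}^0\overline{\rho}_L)=2$ and $\dim H^2(L,\mathrm{ad}^0\overline{\rho}_L)=1$. The same computation with $\mathrm{ad}^0\overline{\rho}_L(1)$ uses that its Frobenius eigenvalues are $q_L^{-1},1,q_L^{-2}$, and $q_L^{-2}\not\equiv 1$ by the same assumption.

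For (2a), the unramified subspace $\mathcal{L}^{\mathrm{ur}}_v=H^1(G_L/I_L,\mathrm{ad}^0\overline{\rho}_L)$ is identified via $\hat{\mathbb{Z}}$-cohomology with $(\mathrm{ad}^0\overline{\rho}_L)_{\mathrm{Frob}_L-1}\cong H^0$, giving dimension $1$. For $\mathcal{L}^{\mathrm{st}}_v$, I would construct the universal Steinberg family explicitly: in the basis diagonalizing Frobenius, a Steinberg lift with fixed determinant $\phi$ over an Artin ring $R$ is conjugate to $\begin{pmatrix}\chi\varepsilon_p & *\\ 0 & \chi\end{pmatrix}$, where $\chi$ is the unique unramified character of $G_L$ with $\chi^2\varepsilon_p=\phi$ and $\overline{\chi}=\beta$, and $*$ is an extension class. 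A direct tangent-space computation at $R=\mathbb{F}[\varepsilon]$ (with framing modded out) yields a one-dimensional space, so $\dim\mathcal{L}^{\mathrm{st}}_v=1$. Statement (2b) then follows from $\mathcal{L}^{\mathrm{ur}}_v\cap\mathcal{L}^{\mathrm{st}}_v=0$: a Steinberg deformation forces inertia to act by a nontrivial unipotent, whereas an unramified deformation has trivial inertia action, so these subspaces are transverse, and the dimensions add to $\dim H^1(L,\mathrm{ad}^0\overline{\rho}_L)=2$. For (2c), local Tate duality gives $(\mathcal{L}^{\mathrm{ur}}_v+\mathcal{L}^{\mathrm{st}}_v)^{\perp}=\mathcal{L}^{\mathrm{ur},\perp}_v\cap\mathcal{L}^{\mathrm{st},\perp}_v=H^1(L,\mathrm{ad}^0\overline{\rho}_L)^{\perp}=0$ and the dimension counts $\dim\mathcal{L}^{\mathrm{ur},\perp}_v=\dim\mathcal{L}^{\mathrm{st},\perp}_v=1$ (each $\mathcal{L}_v$ is of codimension $1$ in the $2$-dimensional $H^1$), yielding the direct sum decomposition in $H^1(L,\mathrm{ad}^0\overline{\rho}_L(1))$.

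For (1), the strategy is to promote the explicit family of Steinberg lifts to a universal object. Using the same normal form $\begin{pmatrix}\chi\varepsilon_p & *\\ 0 & \chi\end{pmatrix}$ together with the framing (i.e., the conjugation by an element of $\mathrm{GL}_2/\mathrm{Centralizer}\cong\mathrm{GL}_2/T$ which contributes three parameters) and the extension-class parameter, I would construct a surjection $\mathcal{O}[[X_1,X_2,X_3]]\twoheadrightarrow R^{\mathrm{st},\phi}_{\overline{\rho}_L}$ and then verify it is an isomorphism by a tangent-space dimension count: the framed tangent space decomposes into framing part ($3$-dimensional, since $\dim M_2/Z_{\mathrm{GL}_2}(\overline{\rho}_L(\mathrm{Frob}_L))=3$) plus $\mathcal{L}^{\mathrm{st}}_v$ of dimension $1$, giving total Krull dimension $1+3=4$ over $\mathcal{O}$. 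Equivalently, one can invoke Taylor's construction \cite[Proposition 3.1]{IA} of $R^{\mathrm{st}}$ and check that fixing the determinant removes exactly one parameter, keeping the ring formally smooth and of dimension $4$ under our hypotheses.

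The main obstacle lies in (1), specifically in verifying that the universal Steinberg form deforms freely at every stage. Concretely, one must show that for any small surjection $R'\twoheadrightarrow R$ of Artinian local $\mathcal{O}$-algebras and any Steinberg lift over $R$, the obstruction in $H^2$ vanishes so that it lifts to $R'$ while preserving the Steinberg form and the determinant. This amounts to checking that no obstruction class in $H^2(L,\mathrm{ad}^0\overline{\rho}_L)$ (which is $1$-dimensional!) can arise from Steinberg deformations—a calculation relying crucially on $q_L\not\equiv\pm 1\bmod p$ to separate the $\mathrm{Frob}_L$-eigencomponents of $\mathrm{ad}^0$ and to ensure the extension-class $H^1(L,\varepsilon_p)$ is free of rank $1$ throughout the deformation.
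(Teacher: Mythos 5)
Your part (2) is essentially in line with the paper's argument, just packaged differently. The paper directly uses the genericity hypothesis $q_L \not\equiv \pm 1 \bmod p$ to decompose $\mathrm{ad}^0\overline{\rho}_L$ into $\mathrm{Frob}_L$-eigenlines $\mathbb{F} \oplus \overline{\varepsilon_p} \oplus \overline{\varepsilon_p}^{-1}$, observes $H^1(L, \overline{\varepsilon_p}^{-1}) = 0$, and identifies $\mathcal{L}_v^{\mathrm{ur}} = H^1(L, \mathbb{F})$ and $\mathcal{L}_v^{\mathrm{st}} = H^1(L, \overline{\varepsilon_p})$, so that (a) and (b) are immediate and (c) is duality. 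Your transversality reasoning in (b) needs a small tightening — the trivial deformation is simultaneously unramified and Steinberg, so the intersection claim should be phrased at the level of cohomology classes (a \emph{nonzero} class in $\mathcal{L}_v^{\mathrm{st}}$ lands in the ramified piece $H^1(L,\overline{\varepsilon_p})$, disjoint from the unramified $H^1(L,\mathbb{F})$) — but the idea is sound.

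Part (1) is where there is a genuine gap, and where the paper's route is substantively different. You propose building a universal Steinberg family and checking unobstructedness, but as you yourself flag, you do not carry out the obstruction vanishing in $H^2$, and a pure tangent-space count does not by itself give formal smoothness. More fundamentally, the ring $R^{\mathrm{st}}_{\overline{\rho}_L}$ here is Taylor's ring from \cite[Proposition 3.1]{IA}, which is \emph{defined} as $R^{\mathrm{loc}}/\mathcal{I}$ for an ideal $\mathcal{I}$ obtained by a closure process in the generic fiber, not as the representing ring of a moduli problem of normal-form lifts over Artinian rings. Your explicit family therefore does not obviously map to (or present) $R^{\mathrm{st}}$; showing that Taylor's ring agrees with the naive "Steinberg normal form" deformation ring is precisely the content that has to be established, not assumed, and your final sentence ("one can invoke Taylor's construction ... and check that fixing the determinant removes exactly one parameter") begs this question. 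The paper avoids all of this by a different, more indirect argument: \cite[Lemma 2.4.27]{CHT} produces a formally smooth $5$-dimensional quotient $R^{\mathrm{st}}_{\overline{\rho}_L} \twoheadrightarrow R'$ with the same $E'$-valued points for every finite $E'/E$; since $R^{\mathrm{st}}_{\overline{\rho}_L}$ is $p$-torsion free and reduced (\cite[Lemma 3.3]{IA}) and $R^{\mathrm{st}}_{\overline{\rho}_L}[1/p]$ is Jacobson with all closed points having finite residue field (\cite[Lemma 2.6]{IA}), the surjection is forced to be an isomorphism, and fixing the determinant drops the dimension to $4$. This density/Jacobson argument is the key step your proposal lacks; without it, or without actually carrying through the obstruction computation in a way compatible with Taylor's definition, the proof of part (1) is incomplete.
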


\begin{proof} By \cite[Lemma 2.4.27]{CHT}, we have a formally smooth quotient of dimension $5$ $R^{\mathrm{st}}_{\overline{\rho}_L} \twoheadrightarrow R'_{\overline{\rho}_L}$ inducing a bijection between $E'$-valued points of both sides for any finite extension $E'$ of $E$. Since $R^{\mathrm{st}}_{\overline{\rho}_L}$ is $p$-torsion free and reduced by \cite[Lemma 3.3]{IA} and $R^{\mathrm{st}}_{\overline{\rho}_L}[\frac{1}{p}]$ is a Jacobson ring and the residue field of any maximal ideal is a finite extension by \cite[Lemma 2.6]{IA}, we obtain $R^{\mathrm{st}}_{\overline{\rho}_L} \Isom R'_{\overline{\rho}_L}$ and thus the formally smoothness $R^{\mathrm{st}}_{\overline{\rho}_L}$. This implies that $R^{\mathrm{st}, \phi}_{\overline{\rho}_L}$ is formally smooth over $\mathcal{O}$ of dimension $4$. By the assumption $q_L \neq \pm 1 \mod p$ and the local Euler-Poincare characteristic formula, we have $H^1(L, \mathrm{ad}^0\overline{\rho}) = H^1(L, \mathbb{F}) \oplus H^1(L, \overline{\varepsilon_p}) = \mathcal{L}_v^{\mathrm{ur}} \oplus \mathcal{L}_v^{\mathrm{st}}$ and  $\mathrm{dim}_{\mathbb{F}}\mathcal{L}_v^{\mathrm{ur}} = \mathrm{dim}_{\mathbb{F}}\mathcal{L}_v^{\mathrm{st}} = 1$. Thus we obtain $(a)$ and $(b)$. $(c)$ follows from $(b)$ and the local duality.  \end{proof}

\begin{lem} \label{liftable Steinberg} Let $r_m : G_{F^+, S} \rightarrow \mathcal{G}_2(\mathcal{O}/\varpi^m)$ be a lift of $\overline{r}$, $v \notin S$ be an $r_{m}$-nice place and $\rho_{m+1, v} : G_{F^+_v, S} \rightarrow \mathrm{GL}_2(\mathcal{O}/\varpi^{m+1})$ be a lifting of $r_m|_{G_{F_v^+}}$. Fix a geometric Frobenius lift $\phi_v \in G_{F_v^+}$ and a lift $\gamma_v \in I_{F_v^+}$ of a topological generator $I_{F_v^+}/P_{F_v^+}$. Then we have the following results.

1 \ $G_{F_v^+}/\mathrm{Ker}\rho_{m+1, v}$ is generated by $\phi_v$ and $\gamma_v$.

2 \ After replacing $\rho_{m+1, v}$ by an equivalent representation if necessary, there exists $\alpha \in (\mathcal{O}/\varpi^{m+1})^{\times}$ and $A, B \in \mathbb{F}$ such that $\rho_{m+1, v}(\phi_v) = \begin{pmatrix}
q_v^{-1}\alpha + \varpi^mA & 0 \\
 0 & \alpha - \varpi^mA \end{pmatrix}$ and $\rho_{m+1, v}(\gamma_v) = \begin{pmatrix}
1 & \varpi^mB \\
 0 & 1 \end{pmatrix}$.

3 \ Let $A, B$ be as in 2. Then $\rho_{m+1, v}$ is liftable to $\mathcal{O}$ if and only if $B=0$ or $A=0$. 

4 \ Fix $f \in H^1(F_v^+, \mathrm{ad}^0\overline{r}|_{G_{F_v^+}})$ such that the image $f_{\mathbb{F}}$ of $f$ via the map $H^1(F^+_v, \mathrm{ad}^0\overline{r}) \twoheadrightarrow H^1(F_v^+, \mathbb{F})$ satisfies $f_{\mathbb{F}} \neq 0$. Then there exists $\alpha \in \mathbb{F}$ such that $(1 + \alpha\varpi^mf) \rho_{m+1, v}$ has a lifting to a point of $\mathrm{Spec}R^{\mathrm{st}}_{\overline{\rho}|_{G_{F_v}}}(\mathcal{O})$.

\end{lem}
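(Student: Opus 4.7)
The plan is to handle the four parts in sequence, with the substantive computations concentrated in parts 2 and 4.

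For part 1, the key observation is that since $v \notin S$ and $r_m$ is only defined on $G_{F^+,S}$, the restriction $r_m|_{G_{F_v^+}}$ is unramified, so $\rho_{m+1,v}$ reduced modulo $\varpi^m$ is unramified. Hence $\rho_{m+1,v}(I_{F_v^+}) \subset I_2 + \varpi^m M_2(\mathcal{O}/\varpi^{m+1})$, which is a pro-$p$ group; since wild inertia $P_{F_v^+}$ is pro-$l$ with $l \ne p$, the representation kills $P_{F_v^+}$. Since $\phi_v$ and $\gamma_v$ topologically generate the tame quotient, part 1 follows. For part 2, use that the eigenvalues of $\overline{r}(\phi_v)$ are distinct modulo $\varpi$ (their ratio is $q_v \not\equiv 1$) and Hensel-lift the idempotents to diagonalize $\rho_{m+1,v}(\phi_v)$ in $\mathrm{GL}_2(\mathcal{O}/\varpi^{m+1})$; writing the diagonal entries in the asserted form is a linear reparameterization valid because $q_v^{-1}+1$ is a unit in $\mathbb{F}$.

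Write $\rho_{m+1,v}(\gamma_v) = I_2 + \varpi^m M$ with $M \in M_2(\mathbb{F})$. Using $2m \ge m+1$, the tame commutation $\phi_v\gamma_v\phi_v^{-1} = \gamma_v^{q_v^{-1}}$ reduces to $\rho_{m+1,v}(\phi_v)\,M\,\rho_{m+1,v}(\phi_v)^{-1} = q_v^{-1} M$ in $M_2(\mathbb{F})$, and a direct entrywise computation using $q_v \not\equiv \pm 1$ forces three of the four entries to vanish, leaving the asserted upper-triangular form. For part 3, if $B = 0$ then $\rho_{m+1,v}$ is unramified and lifts diagonally; if $A = 0$ then the ratio of eigenvalues of $\rho_{m+1,v}(\phi_v)$ is exactly $q_v^{-1}$ in $\mathcal{O}/\varpi^{m+1}$, so one lifts the eigenvalues to $\widehat\alpha_1,\widehat\alpha_2 \in \mathcal{O}$ preserving that exact ratio and sets $\hat\rho(\gamma_v) = \bigl(\begin{smallmatrix}1 & \widehat B \\ 0 & 1\end{smallmatrix}\bigr)$ for any lift $\widehat B$ of $\varpi^m B$. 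Conversely, if $A, B$ are both nonzero, a hypothetical lift $\hat\rho$ would have nontrivial $\hat\rho(\gamma_v)$, and the tame relation in characteristic zero would force the eigenvalues of $\hat\rho(\phi_v)$ to be in exact ratio $q_v^{-1}$, contradicting $A \ne 0$.

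For part 4, identify the projection $\mathrm{ad}^0\overline{r} \twoheadrightarrow \mathbb{F}$ with the map onto the trace-zero diagonal summand of $\mathrm{ad}^0\overline\rho|_{G_{F_v^+}} \cong \mathbb{F} \oplus \mathbb{F}(q_v^{-1}) \oplus \mathbb{F}(q_v)$. Since $q_v \not\equiv 1$, the group $H^1(F_v^+,\mathbb{F})$ is one-dimensional, spanned by the unramified cocycle, so $f_{\mathbb{F}} \ne 0$ means $f(\phi_v)$ has a nonzero diagonal component $s \cdot \mathrm{diag}(1,-1)$ while $f(\gamma_v)$ has trivial diagonal component (coboundaries in $\mathbb{F}$ are trivial). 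The twisted representation $(I_2 + \alpha \varpi^m f)\rho_{m+1,v}$ can then be conjugated by an $I_2 + \varpi^m X$ (using the distinctness of $\overline{r}(\phi_v)$'s eigenvalues to kill the off-diagonal contributions) so that the result fits the canonical form of part 2 with a new parameter $A' = A + c \cdot \alpha \cdot s$, where $c \in \mathbb{F}^\times$ depends only on $q_v$ and $\alpha$ of part 2. Solving $A' = 0$ for $\alpha$ gives the desired twist; part 3 then provides the Steinberg lift.

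The main obstacle is the bookkeeping in part 4: after twisting $\rho_{m+1,v}$ by the cocycle, one must re-diagonalize $\phi_v$ in the canonical form of part 2 and verify that the induced change in $A$ is a nonzero multiple of $s$. This hinges on the computation of $c$ being nonzero, for which the assumption $q_v \not\equiv \pm 1 \pmod p$ is essential (it rules out vanishing denominators arising from $q_v^{-1}+1$ and $q_v^{-1}-q_v$). One also has to check that $B$ is only disturbed in a way compatible with the Steinberg lifting ring's shape, and that the identification of the trace-zero diagonal quotient of $\mathrm{ad}^0\overline{r}$ with $\mathbb{F}$ matches the projection used in the statement.
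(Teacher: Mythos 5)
Your argument is correct; parts 1 and 3 coincide with the paper's, while parts 2 and 4 take a more explicit route. For part 2 the paper simply invokes the decomposition $H^1(F_v^+,\mathrm{ad}^0\overline{r})=\mathcal{L}_v^{\mathrm{ur}}\oplus\mathcal{L}_v^{\mathrm{st}}$ from its preceding lemma on Steinberg deformation rings, which exhibits the lifts of $r_m|_{G_{F_v^+}}$ as a two-dimensional torsor and thereby accounts for the pair $(A,B)$; you instead Hensel-diagonalize $\rho_{m+1,v}(\phi_v)$ and squeeze $\rho_{m+1,v}(\gamma_v)$ directly from the tame relation $\phi_v\gamma_v\phi_v^{-1}=\gamma_v^{q_v^{-1}}$, which is more elementary, self-contained, and makes the exact shape of the entries (and the role of $q_v^{-1}+1$ being a unit) transparent. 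For part 4 the paper writes only ``4 follows from 3,'' and your computation is precisely the elided content: after re-diagonalizing, the twist by $\alpha\varpi^m f$ sends $A$ to $A+\kappa\alpha$ with $\kappa$ a nonzero multiple of the unramified diagonal component $s=f_{\mathbb{F}}(\phi_v)\neq 0$ (nonvanishing uses $p$ odd and $q_v\not\equiv -1 \pmod p$), so one solves $A'=0$ and invokes part 3. The concern you flag about disturbing $B$ is benign: once $A'=0$, both the genuinely ramified lift (when $B'\neq 0$) and the unramified lift with exact Frobenius eigenvalue ratio $q_v$ (when $B'=0$) give points of $\mathrm{Spec}\,R^{\mathrm{st}}_{\overline{\rho}|_{G_{F_v}}}(\mathcal{O})$, so the conclusion holds in either case.
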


\begin{proof}

    Since $\overline{r}|_{G_{F_v}}$ is unramified, $\rho_{m+1, v}$ is tamely ramified. This implies 1. 2 follows from (b) of 2 of Lemma \ref{Steinberg deformation}. About 3, ``if'' is clear. If $B$ is nonzero and $\rho_{m+1, v}$ is liftable to $\mathcal{O}$, then $q_v^{-1}\alpha + \varpi^mA = q_v^{-1}(\alpha - \varpi^mA)$ and thus $\varpi^m(1 + q_v^{-1})A = 0$. Thus we have $A=0$. 4 follows from 3. \end{proof}

We consider the following objects.

\begin{itemize}
\item For any place $w \mid p$ of $F^+$, let $\lambda_w \in (\mathbb{Z}_+^2)^{\mathrm{Hom}_{\mathbb{Q}_p}(F_{\tilde{w}}, \overline{\mathbb{Q}}_p)}$ and $\tau_w : I_{F_{\tilde{w}}} \rightarrow \mathrm{GL}_2(\mathcal{O})$ be an inertia type such that $\mathrm{WD}(\psi|_{G_{F_{\tilde{w}}}})|_{I_{F_{\tilde{w}}}} = \mathrm{det}\tau_{w}$ and $\psi|_{G_{F_{\tilde{w}}}}$ has $p$-adic Hodge type $(\lambda_{\tau, 1} + \lambda_{\tau, 2} + 1)_{\tau \in \mathrm{Hom}_{\mathbb{Q}_p}(F_{\tilde{w}}, \overline{\mathbb{Q}}_p)}$. (See definition \ref{inertia type} later for the definition of inertia types.)
\item For any $v \mid p$ of $F^+$, let $\mathcal{C}_v$ be an irreducible component of $\mathrm{Spec}R_{\overline{\rho}|_{G_{F_{\tilde{v}}}}}^{\mathrm{ss}, \lambda_v, \tau_v, \psi|_{G_{F_{\tilde{v}}}}}$ such that $\mathcal{C}_v \otimes_{\mathcal{O}} \mathcal{O}_{E'}$ is irreducible for any finite extension $E'/E$. ($R_{\overline{\rho}|_{G_{F_{\tilde{v}}}}}^{\mathrm{ss}, \lambda_v, \tau_v, \psi|_{G_{F_{\tilde{v}}}}}$ is the potentially semistable lifting ring  over $\mathcal{O}$ of weight $\lambda_v$, inertia type $\tau_v$ and determinant $\psi|_{G_{F_{\tilde{v}}}}$. This is equal to $R^{\Box, \psi|_{G_{F_{\tilde{v}}}}, \tau_v, \lambda_v}_{\overline{\rho}|_{G_{F_{\tilde{v}}}}}$ of \cite[Theorem A]{KP}.)
\item  For any $v \nmid p$ such that $v \in S$, let $\mathcal{C}_v$ be an irreducible component of $\mathrm{Spec}R^{\psi|_{G_{F_{\tilde{v}}}}}_{\overline{\rho}|_{G_{F_{\tilde{v}}}}}$ such that $\mathcal{C}_v \otimes_{\mathcal{O}} \mathcal{O}_{E'}$ is irreducible for any finite extension $E'/E$. ($R_{\overline{\rho}|_{G_{F_{\tilde{v}}}}}^{\psi|_{G_{F_{\tilde{v}}}}}$ is the universal lifting ring of determinant $\psi|_{G_{F_{\tilde{v}}}}$. This is equal to $R^{\Box, \psi|_{G_{F_{\tilde{v}}}}}_{\overline{\rho}|_{G_{F_{\tilde{v}}}}}$ of \cite[Theorem A]{KP}.)
\item For any $v \in S$, let $\rho_v : G_{F_{\tilde{v}}} \rightarrow \mathrm{GL}_2(\mathcal{O})$ be a generic representation which is contained in $\mathcal{C}_v$. (The genericity of $\rho_v$ means $H^0(W_{F_v}, \mathrm{ad}\mathrm{WD}(\rho_v)(1)) = 0$.)
\end{itemize}

The following is the main theorem of this subsection.

\begin{thm}\label{lifting of Galois}

    We assume that $\overline{\rho} : G_{F} \rightarrow \mathrm{GL}_2(\mathbb{F})$ is surjective.

Then for any $t \in \mathbb{Z}_{>0}$, there exist a finite set $Q$ of $\overline{r}$-nice finite places of $F^+$ and a lifting $r : G_{F^+, S \cup Q} \rightarrow \mathcal{G}_2(\mathcal{O})$ of $\overline{r}$ satisfying the following.   
    
    1 \ For any $v \in Q$, we have $r|_{G_{F^+_v}} \in \mathrm{Spec}R^{\mathrm{st}, \psi|_{G_{F^+_v}}}_{\overline{\rho}|_{G_{F^+_{v}}}}(\mathcal{O})$.
    
    2 \ For any $v \in S$, the representation $r|_{G_{F^+_v}}$ is contained in $\mathcal{C}_v$, generic and $r|_{G_{F_v^+}} \mod \ \varpi^t$ is equivalent to $\rho_v \mod \ \varpi^t$.

    \end{thm}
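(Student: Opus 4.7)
The plan is to construct the lift $r$ by the Khare--Wintenberger--Ramakrishna method, following the strategy of \cite{KP}: build $r$ as an inverse limit of successive mod-$\varpi^m$ lifts $r_m : G_{F^+, S \cup Q} \to \mathcal{G}_2(\mathcal{O}/\varpi^m)$ for a carefully chosen finite set $Q$ of $\overline{r}$-nice places, where $Q$ is enlarged so that the associated dual Selmer group vanishes.

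The local set-up is as follows. At each $v \in S$, the genericity of $\rho_v$ implies that the lifting ring $R^{\mathrm{ss}, \lambda_v, \tau_v, \psi}_{\overline{\rho}|_{G_{F_{\tilde v}}}}$ (respectively $R^{\psi}_{\overline{\rho}|_{G_{F_{\tilde v}}}}$) is formally smooth at $\rho_v$ along $\mathcal{C}_v$, so the tangent space to $\mathcal{C}_v$ at $\rho_v$ cuts out a subspace $\mathcal{L}_v \subset H^1(F_v^+, \mathrm{ad}^0\overline{r})$ of expected codimension. At each $v \in Q$, Lemma~\ref{Steinberg deformation} gives that $R^{\mathrm{st},\psi}_{\overline{\rho}|_{G_{F_v}}}$ is formally smooth of dimension $4$ with $\mathcal{L}_v^{\mathrm{st}}$ of dimension $1$, and crucially $H^1(F_v^+, \mathrm{ad}^0\overline{r}(1)) = \mathcal{L}_v^{\mathrm{ur},\perp} \oplus \mathcal{L}_v^{\mathrm{st},\perp}$, so Steinberg deformations absorb unramified dual Selmer classes. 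With these choices the inductive step is: given $r_m$ with $r_m|_{G_{F_v^+}} \equiv \rho_v \bmod \varpi^{\min(m,t)}$ at $v\in S$ and Steinberg at $v \in Q$, the obstruction to lifting to $\mathcal{O}/\varpi^{m+1}$ lies by Lemma~\ref{obstruction theory} in $H^2(G_{F^+, S \cup Q}, \mathrm{ad}^0\overline{r})$, and by Poitou--Tate (Proposition~\ref{PTE}) together with the local smoothness it is detected by $H^1_{\mathcal{L}^\perp}(G_{F^+, S \cup Q}, \mathrm{ad}^0\overline{r}(1))$; Lemma~\ref{liftable Steinberg}.4 further ensures that the local lifts at the Steinberg places can be arranged to lie genuinely in the Steinberg component.

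The main technical step is to enlarge $Q$ so that $H^1_{\mathcal{L}^\perp}$ vanishes. For any nonzero class $f$ therein, the surjectivity of $\overline{\rho}$ combined with the linear disjointness of $F$ from $\mathbb{Q}(\zeta_{p^\infty})$ implies that $\mathrm{ad}^0\overline{r}(1)$ has no invariants over the splitting field of $f$, so Chebotarev density produces an $\overline{r}$-nice place $v$ with $q_v \not\equiv \pm 1 \bmod p$ and Frobenius eigenvalues in ratio $q_v$ such that $f|_{G_{F_v^+}}$ is nontrivial modulo $\mathcal{L}_v^{\mathrm{st},\perp}$. Adding such $v$ to $Q$ strictly drops the dual Selmer dimension, and iterating kills $H^1_{\mathcal{L}^\perp}$ entirely. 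Once this vanishing is achieved, Lemma~\ref{deformation theory} and the Greenberg--Wiles formula (Corollary~\ref{Greenberg-Wiles}) give the existence of $r_{m+1}$ with the required local shape, and the freedom in the choice of $r_{m+1}$ (modulo the image of global cohomology, which now surjects onto the relevant local quotients) can be used to arrange $r_{m+1}|_{G_{F_v^+}} \equiv \rho_v \bmod \varpi^{\min(m+1,t)}$ at $v \in S$. The inverse limit $r := \varprojlim_m r_m$ is the desired characteristic-zero lift.

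The main obstacle will be the Chebotarev step in the $\mathcal{G}_2$-setting: one must simultaneously demand that $v$ be $\overline{r}$-nice in the precise sense of the definition (splitting type, $q_v \not\equiv \pm 1$, and $\alpha/\beta = q_v$), that $v$ witness the nonvanishing of $f$, and that the local lift at $v$ be twistable to a Steinberg-liftable form via Lemma~\ref{liftable Steinberg}.4. This requires constructing an appropriate auxiliary field extension, cut out by the images of $\overline{r}$, $\overline{r}(1)$, and $f$, over which the required Frobenius conjugacy class is nonempty; the surjectivity hypothesis on $\overline{\rho}$ is exactly what makes these extensions sufficiently linearly disjoint. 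Once this Chebotarev input is secured, the rest of the argument proceeds by the standard inductive machinery in the $\mathcal{G}_2$-framework set up in Lemmas~\ref{correspondence}--\ref{liftable Steinberg}.
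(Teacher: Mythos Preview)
Your proposal has a genuine gap: it treats the problem as a single-stage Fakhruddin--Khare--Patrikis argument with a fixed $Q$, but this cannot work directly, and the paper's two-stage structure is essential.

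The issue is with your local condition at $v \in S$. You say the genericity of $\rho_v$ gives formal smoothness of the lifting ring \emph{at $\rho_v$}, and that this cuts out $\mathcal{L}_v \subset H^1(F_v^+, \mathrm{ad}^0\overline{r})$. This is the content of Theorem~\ref{formally smooth}, but that theorem only guarantees that $Y_{v,n+1} \to Y_{v,n}$ is a $Z_v$-torsor for $n \ge n_v$; for small $n$ the component $\mathcal{C}_v$ need not be smooth at $\overline{\rho}|_{G_{F_v^+}}$, and no fixed subspace of $H^1(F_v^+,\mathrm{ad}^0\overline{r})$ governs the mod-$\varpi^{n+1}$ lifts lying on $\mathcal{C}_v$. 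Your inductive hypothesis $r_m|_{G_{F_v^+}} \equiv \rho_v \bmod \varpi^{\min(m,t)}$ is too weak to force the limit into $\mathcal{C}_v$ (congruence mod $\varpi^t$ does not determine a component unless $t \ge n_v$), while the stronger hypothesis $r_m|_{G_{F_v^+}} \equiv \rho_v \bmod \varpi^m$ requires surjectivity of the global $H^1$ onto the full local $H^1$ at $v\in S$, i.e.\ vanishing of the dual Selmer with $\mathcal{L}_v = 0$ there. But with $\mathcal{L}_v = 0$ at all $v\in S$ and $\mathcal{L}_v^{\mathrm{st}}$ at $v\in Q$, the Greenberg--Wiles Euler characteristic is strictly negative, so $H^1_{\mathcal{L}^\perp}$ can never be killed by adding Steinberg places.

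This is precisely why the paper combines two methods. First one runs a Khare--Ramakrishna step (Proposition~\ref{obstruction removing} and Corollary~\ref{lifting prescribed}): at each stage $n$ one enlarges $Q$ by $r_n'$-nice places so that $H^1(G_{F^+,S\cup Q\cup Q'},\mathrm{ad}^0\overline{r}) \cong \bigoplus_{v\in S\cup Q} H^1(F_v^+,\mathrm{ad}^0\overline{r})$, allowing exact prescription $r_n|_{G_{F_v^+}} \equiv \rho_v \bmod \varpi^n$ at $v\in S\cup Q$, and then uses auxiliary global classes (part 2 of Proposition~\ref{obstruction removing}) together with Lemma~\ref{liftable Steinberg}.4 to make the lifts at the new places Steinberg-liftable. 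This is iterated until one reaches $n_0 \ge \max_{v\in S} n_v$ and $r_{n_0}$ satisfies the hypotheses (*), including the linear-disjointness condition needed for the Chebotarev argument (Proposition~\ref{independence}). Only then does one freeze $Q$, kill the dual Selmer group for the balanced conditions $\mathcal{L}_v$ (Lemma~\ref{obstruction}), and lift to $\mathcal{O}$ by the FKP method (Corollary~\ref{lifting}). Your proposal essentially collapses these two stages into one, which the paper explicitly explains is inadequate.
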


\begin{rem}

A very similar result was proved by \cite{KP} in very general situations, but their $v \in Q$ doesn't satisfy the condition that the Steinberg deformation ring of $\overline{r}|_{G_{F_v^+}}$ is formally smooth over $\mathcal{O}$. In {\S} 7.2 and 7.3, we will use a certain big $R=T$ theorem and in order to use that, we need the property that the given local lifting rings at all non-$p$-adic places are formally smooth over $\mathcal{O}$. (See {\S} 7.2.)

\end{rem}

Our strategy is exactly the same as \cite{KP}. Roughly speaking, that is a combination of the following two works. (It should be noted that they introduced many technical improvements to prove their results in very general situations.)

\vspace{0.5 \baselineskip}

(1) \cite{FS}: He proved the above result under the assumption that every local condition $\mathcal{L}_v$ has the expected dimension. Concretely, $$\mathrm{dim}_{\mathbb{F}}\mathcal{L}_v - \mathrm{dim}_{\mathbb{F}}H^0(F_v^+, \mathrm{ad}^0\overline{r}|_{G_{F_v^+}}) = \begin{cases} 0 \ \ \ \ \ \ \ \ \ \ \ \ \ \ \ \ \ \ \text{if $v \nmid p$} \\ 
  [F_v^+ : \mathbb{Q}_p] \ \ \ \ \ \ \ \ \text{if $v \mid p$.}
\end{cases}.$$
In this case, the given local lifting ring is formally smooth, but we want to give applications to the Breuil-M$\mathrm{\acute{e}}$zard conjecture in {\S} 7.3 and thus we want to study a non-formally smooth lifting ring $R^{\mathrm{ss}, \lambda_v, \tau_v}_{\overline{r}|_{G_{F_v^+}}}$ at $p$-adic places. Therefore, this method doesn't suit our purpose.

\vspace{0.5 \baselineskip}

(2) \cite{KR}: They constructed a lifting $r_{m} : G_{F^+, S \cup Q_m} \rightarrow \mathcal{G}_2(\mathcal{O}/\varpi^m)$ for any $m$ inductively such that $Q_m \subset Q_{m+1}$ and $r_{m}|_{G_{F^+_v}}$ is equivalent to $\rho_v \mod \varpi^m$ for any $v \in S$. However, the representation $\varprojlim_{m}r_m$ may be ramified at infinitely many places. Thus, this method also doesn' work to prove our main theorem.

\vspace{0.5 \baselineskip}

As stated above, each method by itself is inadequate for our main theorem. However, we can prove our main theorem by combining these two methods by using the following result, which was proved in \cite{KP}.

\begin{thm}\label{formally smooth} For any $v \in S$, there exist an open subset $Y_v$ of $\mathcal{C}_v(\mathcal{O})$ with respect to the $p$-adic topology containing $\rho_v$, a positive integer $n_v$ and a subspace $\mathcal{L}_v$ of $H^1(F_{\tilde{v}}, \mathrm{ad}^0 \overline{r})$ satisfying the following conditions. (In the following, $Y_{v, n}$ is the image of $Y_v$ in $\mathcal{C}_v(\mathcal{O}/\varpi^n)$ and $Z_v$ is the inverse image of $\mathcal{L}_v$ in $Z^1(F_{\tilde{v}}, \mathrm{ad}^0 \overline{r})$.)
    
1 \ For any $n \ge n_v$, $Y_{v, n+1} \rightarrow Y_{v, n}$ is a principal homogeneous space of $Z_v$.

2 \  $\mathrm{dim}_{\mathbb{F}}\mathcal{L}_v - \mathrm{dim}_{\mathbb{F}}H^0(F_{\tilde{v}}, \mathrm{ad}^0\overline{r}) = \begin{cases} 0\ \ \ \ \ \ \ \ \ \ \ \ \ \ \ \ \ \ \text{if $v \nmid p$.} \\ 
 [F_{\tilde{v}} : \mathbb{Q}_p] \ \ \ \ \ \ \ \ \text{if $v \mid p$.}
  \end{cases}$

3 \ Any $\rho_v' \in Y_v$ is generic.
    
\end{thm}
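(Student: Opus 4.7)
\noindent The plan is to handle the cases $v \mid p$ and $v \nmid p$ separately, and in each case construct $Y_v$ as a $p$-adically open neighborhood of $\rho_v$ on which the formal structure of $\mathcal{C}_v$ looks like that of a power series ring, then define $\mathcal{L}_v$ to be the tangent space at $\rho_v$ of this neighborhood (viewed inside $H^1(F_{\tilde v},\mathrm{ad}^0\overline r)$ via translation by $\rho_v$). The principal homogeneous space property in (1) will then amount to saying that for $n\ge n_v$, the formal neighborhood of $\rho_v$ at $p$-adic level $n$ is exactly parameterized by $Z_v$, which follows from formal smoothness at $\rho_v$ together with the fact that all structure constants become $\varpi$-adically stable beyond some finite level $n_v$. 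The genericity condition (3) on $Y_v$ will be secured by intersecting with the $p$-adically open set of generic lifts: genericity is an open condition because $H^0(W_{F_{\tilde v}},\mathrm{ad}\,\mathrm{WD}(\rho_v')(1)) = 0$ is preserved under small $p$-adic deformation (after possibly shrinking~$Y_v$).

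For $v \nmid p$, genericity of $\rho_v$ implies by local Tate duality that $H^0(F_{\tilde v},\mathrm{ad}^0\rho_v(1))=0$, and combined with the fact that $\mathcal C_v\otimes_{\mathcal O}\mathcal O_{E'}$ is irreducible for every finite $E'/E$, one shows that $\rho_v$ is a smooth point of $\mathcal C_v$. Indeed, after extending scalars one checks that the local lifting ring at~$\rho_v$ is regular of the expected dimension using the standard tangent-obstruction computation together with the vanishing of $H^2$-obstructions coming from genericity. The dimension of $\mathcal C_v$ over $\mathcal O$ is $3+\dim H^0(F_{\tilde v},\mathrm{ad}^0\overline r)-\dim H^0(F_{\tilde v},\mathrm{ad}^0\overline r(1))$, and since the latter term vanishes for generic representations (after shrinking), one gets $\dim\mathcal L_v = \dim H^0(F_{\tilde v},\mathrm{ad}^0\overline r)$, yielding the formula in (2). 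Choosing $Y_v$ to be a small enough $p$-adic neighborhood of $\rho_v$ in the smooth locus gives (1) and (3).

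For $v\mid p$, the argument is the main obstacle, and this is where one invokes the technology of \cite{KP}. The component $\mathcal C_v$ of $\mathrm{Spec}\,R^{\mathrm{ss},\lambda_v,\tau_v,\psi|_{G_{F_{\tilde v}}}}_{\overline\rho|_{G_{F_{\tilde v}}}}$ is, by Kisin's theorem, a $p$-torsion free $\mathcal{O}$-flat scheme of relative dimension $3+[F_{\tilde v}:\mathbb{Q}_p]$ over $\mathcal O$, whose generic fiber is formally smooth at any point corresponding to a generic potentially semistable representation with the prescribed Hodge type and inertial type. Since $\mathcal C_v\otimes\mathcal O_{E'}$ is irreducible for all $E'$, and $\rho_v$ is generic by hypothesis on $\rho_v$, one gets a $p$-adically open neighborhood of $\rho_v$ on $\mathcal C_v$ which is formally smooth over $\mathcal O$. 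Taking $\mathcal L_v$ to be the image of the tangent space of this neighborhood inside $H^1(F_{\tilde v},\mathrm{ad}^0\overline r)$, the dimension formula (2) becomes the statement $\dim\mathcal L_v - \dim H^0(F_{\tilde v},\mathrm{ad}^0\overline r) = [F_{\tilde v}:\mathbb Q_p]$, which matches the known relative dimension of the potentially semistable deformation ring.

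The most delicate step will be the $p$-adic case, specifically controlling the interaction between the $p$-adic topology on $\mathcal C_v(\mathcal O)$ and the formally smooth structure: one must verify that for $n\ge n_v$, reducing modulo $\varpi^{n+1}$ realizes $Y_{v,n+1}$ as a torsor over $Y_{v,n}$ under the finite group $Z_v/\varpi Z_v$ (or rather its $n$-th graded piece), which requires both formal smoothness at $\rho_v$ and some uniformity beyond a finite $\varpi$-adic depth $n_v$ determined by the defining equations of $\mathcal C_v$ in a presentation as a quotient of a power series ring. This uniformity is essentially the content of the key technical input of \cite{KP}, and once it is established the claims (1)--(3) follow formally.
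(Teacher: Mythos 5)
Your proposal takes essentially the same approach as the paper, whose proof of this theorem is a bare citation to \cite[Proposition 4.7 and Lemma 4.9]{KP}; you add an outline of the surrounding formal-smoothness and tangent-space considerations, but, like the paper, you defer the crucial uniformity of the torsor structure beyond some finite $\varpi$-adic depth to \cite{KP}. One slip in your sketch: for $v\nmid p$ the relative dimension of $\mathcal{C}_v$ over $\mathcal{O}$ is unconditionally $3$, not $3+\dim H^0(F_{\tilde{v}},\mathrm{ad}^0\overline{r})-\dim H^0(F_{\tilde{v}},\mathrm{ad}^0\overline{r}(1))$; your final equality $\dim_{\mathbb{F}}\mathcal{L}_v=\dim_{\mathbb{F}} H^0(F_{\tilde{v}},\mathrm{ad}^0\overline{r})$ is nevertheless correct, but the derivation should be that $Z_v$ equals the full framed tangent space of dimension $3$ at a smooth point, and one subtracts the dimension $3-\dim_{\mathbb{F}} H^0(F_{\tilde{v}},\mathrm{ad}^0\overline{r})$ of the coboundaries (the kernel of $Z^1\to H^1$) to get $\dim_{\mathbb{F}}\mathcal{L}_v$; your stated formula would instead yield $2\dim_{\mathbb{F}} H^0(F_{\tilde{v}},\mathrm{ad}^0\overline{r})$ in the generic case.
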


\begin{proof} See \cite[Proposition 4.7 and Lemma 4.9]{KP}.  \end{proof}

The above result Theorem \ref{formally smooth} roughly says that the above method (1) works for a lifting $r_m : G_{F^+, S} \rightarrow \mathcal{G}_2(\mathcal{O}/\varpi^m)$ of $\overline{r}$ for a sufficiently large $m$ and thus we can prove our main theorem by constructing a lifting $r_m : G_{F^+, S \cup Q_m} \rightarrow \mathcal{G}_2(\mathcal{O}/\varpi^m)$ of $\overline{r}$ for a sufficiently large $m$ by using the above method (2).

\vspace{0.5 \baselineskip}

From now on, we start the proof of Theorem \ref{lifting of Galois}.

\begin{prop}\label{independence}

Let $f_1, \cdots, f_r$ (resp. $g_1, \cdots, g_s$) be a basis of $H^1(G_{F^+, S}, \mathrm{ad}^0\overline{r})$ (resp. $H^1(G_{F^+, S}, \mathrm{ad}^0\overline{r}(1))$). Let $K$ be the fixed field of $\mathrm{ad}^0\overline{r}|_{G_{F^+(\zeta_p)}}$, $K_{f_i}$ be the fixed field of $f_i|_{G_{K}}$ and $K_{g_i}$ be the fixed field of $g_i|_{G_{K}}$. Then the following statements hold.

1 \ $f_i|_{G_K}$ (resp. $g_i|_{G_{K}}$) induces $\mathrm{Gal}(K_{f_i}/K) \ \Isom \ \mathrm{ad}^0\overline{r}$ (resp. $\mathrm{Gal}(K_{g_i}/K) \ \Isom \ \mathrm{ad}^0\overline{r}$) of $\mathrm{Gal}(K/F^+)$-modules.

2 \ For any $i$, $K_{f_i}$ (resp. $K_{g_i}$, $K(\zeta_{p^{\infty}})$) is linearly disjoint from $K_{f_1} \cdots \check{K}_{f_i} \cdots K_{f_r} K_{g_1} \cdots K_{g_s}(\zeta_{p^{\infty}})$ (resp. $K_{f_1} \cdots K_{f_r} K_{g_1} \cdots \check{K}_{g_i} \cdots K_{g_s}(\zeta_{p^{\infty}})$, $K_{f_1} \cdots K_{f_r} K_{g_1} \cdots K_{g_s}$) over $K$.

\end{prop}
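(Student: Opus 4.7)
The plan is to argue via the irreducibility of $\mathrm{ad}^0\overline{r}$ as a $\mathrm{Gal}(K/F^+)$-module, together with a vanishing statement for $H^1(\mathrm{Gal}(K/F^+), \mathrm{ad}^0\overline{r}(j))$ for $j=0,1$, so that restriction to $G_K$ on the relevant cohomology groups is injective.

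First I would establish the relevant representation-theoretic facts. Since $\overline{\rho}$ is surjective onto $\mathrm{GL}_2(\mathbb{F})$ and $p \ge 5$, the image of $G_{F^+(\zeta_p)}$ in $\mathrm{Aut}_{\mathbb{F}}(\mathrm{ad}^0\overline{r})$ coincides with the image of $\mathrm{SL}_2(\mathbb{F})$ acting by conjugation on trace-zero matrices, which is absolutely irreducible with $\mathrm{End}_{\mathrm{Gal}(K/F^+(\zeta_p))}(\mathrm{ad}^0\overline{r}) = \mathbb{F}$. Consequently $\mathrm{ad}^0\overline{r}$ and $\mathrm{ad}^0\overline{r}(1)$ are also irreducible as $\mathrm{Gal}(K/F^+)$-modules with the same endomorphism ring. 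Moreover they are non-isomorphic $\mathrm{Gal}(K/F^+)$-modules: any isomorphism would, by Schur on the subgroup $\mathrm{Gal}(K/F^+(\zeta_p))$, be multiplication by a scalar, and its $\mathrm{Gal}(F^+(\zeta_p)/F^+)$-equivariance would force $\overline{\varepsilon}_p$ to be trivial, contradicting the assumption that $F$ is linearly disjoint from $\mathbb{Q}(\zeta_{p^\infty})$ over $\mathbb{Q}$.

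Next, the standard Taylor--Wiles input (valid for $p \ge 5$ thanks to the surjectivity) gives $H^1(\mathrm{Gal}(K/F^+), \mathrm{ad}^0\overline{r}) = H^1(\mathrm{Gal}(K/F^+), \mathrm{ad}^0\overline{r}(1)) = 0$. By inflation--restriction, the restriction maps from $H^1(G_{F^+,S}, \mathrm{ad}^0\overline{r})$ and $H^1(G_{F^+,S}, \mathrm{ad}^0\overline{r}(1))$ to $\mathrm{Hom}_{\mathrm{Gal}(K/F^+)}(G_K^{\mathrm{ab}}, \mathrm{ad}^0\overline{r})$ and $\mathrm{Hom}_{\mathrm{Gal}(K/F^+)}(G_K^{\mathrm{ab}}, \mathrm{ad}^0\overline{r}(1))$ respectively are injective. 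This yields statement 1: each $f_i|_{G_K}$ (resp.\ $g_i|_{G_K}$) is a nonzero $\mathrm{Gal}(K/F^+)$-equivariant homomorphism into the irreducible target, so its image is a nonzero $\mathrm{Gal}(K/F^+)$-submodule, hence by irreducibility all of $\mathrm{ad}^0\overline{r}$ (resp.\ $\mathrm{ad}^0\overline{r}(1)$), and by construction the kernel cuts out exactly $K_{f_i}$ (resp.\ $K_{g_i}$).

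For statement 2, I would proceed by Galois-theoretic comparison. Let $M$ denote the compositum of all the $K_{f_j}$, $K_{g_k}$ and $K(\zeta_{p^\infty})$. The natural map embeds $\mathrm{Gal}(M/K)$ $\mathrm{Gal}(K/F^+)$-equivariantly into
\[
\bigoplus_j \mathrm{Gal}(K_{f_j}/K) \oplus \bigoplus_k \mathrm{Gal}(K_{g_k}/K) \oplus \mathrm{Gal}(K(\zeta_{p^\infty})/K) \cong (\mathrm{ad}^0\overline{r})^{\oplus r} \oplus (\mathrm{ad}^0\overline{r}(1))^{\oplus s} \oplus \mathbb{Z}_p(1).
\]
Suppose for contradiction $K_{f_i}$ is contained in the compositum of the remaining fields. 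Then $\mathrm{Gal}(K_{f_i}/K) \cong \mathrm{ad}^0\overline{r}$ appears as a $\mathrm{Gal}(K/F^+)$-equivariant quotient of the complementary product. By Schur and the non-isomorphisms established above (and noting that the one-dimensional cyclotomic quotient cannot contain a copy of the $3$-dimensional irreducible $\mathrm{ad}^0\overline{r}$), any such projection must come purely from the $\bigoplus_{j \neq i} \mathrm{Gal}(K_{f_j}/K)$ factor and be an $\mathbb{F}$-linear combination of the natural projections. Translated back cohomologically, this produces a nontrivial relation $f_i|_{G_K} = \sum_{j \neq i} c_j f_j|_{G_K}$ in $\mathrm{Hom}_{\mathrm{Gal}(K/F^+)}(G_K^{\mathrm{ab}}, \mathrm{ad}^0\overline{r})$, contradicting the linear independence of the $f_j$ and the injectivity of restriction. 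The arguments for $K_{g_i}$ and for $K(\zeta_{p^\infty})$ are entirely analogous, the cyclotomic case being the easiest since $\mathbb{Z}_p(1)$ has no $\mathrm{Gal}(K/F^+)$-equivariant quotient isomorphic to the irreducible $3$-dimensional $\mathrm{ad}^0\overline{r}$ or $\mathrm{ad}^0\overline{r}(1)$.

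The principal technical obstacle is the vanishing $H^1(\mathrm{Gal}(K/F^+), \mathrm{ad}^0\overline{r}(j)) = 0$; this is the classical Taylor--Wiles computation, reducing via Hochschild--Serre on $\mathrm{Gal}(K/F^+(\zeta_p))$ (whose order is divisible by $p$) to the vanishing of $H^1$ of $\mathrm{SL}_2(\mathbb{F}_p)$ (and its relevant quotients) on $\mathrm{ad}^0$ and its cyclotomic twist, which holds for $p \ge 5$. The remaining bookkeeping--that the cyclotomic character on $\mathrm{Gal}(F^+(\zeta_p)/F^+)$ has order prime to $p$ so that Hochschild--Serre splits cleanly--is routine under our hypothesis that $F$ is linearly disjoint from $\mathbb{Q}(\zeta_{p^\infty})$ over $\mathbb{Q}$.
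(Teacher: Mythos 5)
Your argument is correct and reproduces the standard Ramakrishna--Khare--Ramakrishna argument (the paper itself simply cites \cite[Fact 5]{KR} here, so you have in effect supplied the details the paper omits). The three ingredients you isolate are exactly the right ones: absolute irreducibility of $\mathrm{ad}^0\overline{r}$ over $\mathrm{Gal}(K/F^+)$ with endomorphism ring $\mathbb{F}$, the pairwise non-isomorphism of $\mathrm{ad}^0\overline{r}$, $\mathrm{ad}^0\overline{r}(1)$, and $\mathbb{F}_p(1)$ (so that Schur/Goursat forbids ``crossed'' identifications among the factors), and the vanishing $H^1(\mathrm{Gal}(K/F^+),\mathrm{ad}^0\overline{r}(j))=0$ for $j=0,1$ making restriction to $G_K$ injective. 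In the final step of statement 2 it helps to note explicitly that when you project $\mathrm{Gal}(M'/K)$ to the $\mathrm{ad}^0\overline{r}$-isotypic block, the kernel has no $\mathrm{ad}^0\overline{r}$ composition factor (its factors being $\mathrm{ad}^0\overline{r}(1)$ and $\mathbb{F}_p(1)$), so any equivariant map to $\mathrm{ad}^0\overline{r}$ factors through a submodule of $(\mathrm{ad}^0\overline{r})^{\oplus(r-1)}$; this submodule is then $V\otimes\mathrm{ad}^0\overline{r}$ for a subspace $V\subset\mathbb{F}^{r-1}$, and the map is given by a linear functional on $V$, which extends to $\mathbb{F}^{r-1}$ and yields the forbidden relation $f_i|_{G_K}=\sum_{j\neq i}c_j f_j|_{G_K}$. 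You make this point, just slightly compressed. One small remark on the statement itself: for the $g_i$ the isomorphism should read $\mathrm{Gal}(K_{g_i}/K)\cong\mathrm{ad}^0\overline{r}(1)$ \emph{as $\mathrm{Gal}(K/F^+)$-modules} (the two differ by the non-trivial mod-$p$ cyclotomic twist on $\mathrm{Gal}(F^+(\zeta_p)/F^+)$); your proof gets this right and indeed the non-isomorphism is what you need for statement 2.
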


\begin{proof} Same as \cite[Fact 5]{KR}. \end{proof}

In the following, we will fix a basis $f_1, \cdots, f_r, g_1, \cdots, g_s$ as above Proposition \ref{independence}.

\begin{lem}\label{shagroup}

There exists a finite set $Q$ of $\overline{r}$-nice places of $F^+$ such that $\Sha_{S \cup Q}^1 (\mathrm{ad}^0 \overline{r}(1))= 0$.
            
\end{lem}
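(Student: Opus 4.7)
The plan is the standard ``killing $\Sha$ by adding auxiliary primes'' argument, in the spirit of \cite{CHT}, \cite{KR} and \cite{KP}, made compatible with the $\overline{r}$-niceness condition via Chebotarev and Proposition~\ref{independence}. Since $H^1(G_{F^+, S}, \mathrm{ad}^0 \overline{r}(1))$ is a finite-dimensional $\mathbb{F}$-vector space, so is $\Sha^1_S(\mathrm{ad}^0\overline{r}(1))$, and it suffices to prove the following local claim: for any nonzero class $[g] \in H^1(G_{F^+, S}, \mathrm{ad}^0\overline{r}(1))$, there exists an $\overline{r}$-nice place $v \notin S$ such that $\mathrm{res}_v[g] \neq 0$ in $H^1(F_v^+, \mathrm{ad}^0\overline{r}(1))$. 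Once this is granted, we iteratively enlarge $Q$ by adding one such prime at a time: each step strictly decreases $\dim_{\mathbb{F}} \Sha^1_{S \cup Q}(\mathrm{ad}^0\overline{r}(1))$ (enlarging $S$ only imposes more local triviality conditions on $\Sha$), so the process terminates after finitely many steps with $\Sha^1_{S \cup Q} = 0$.

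To prove the claim, write $K$ for the fixed field of $\mathrm{ad}^0\overline{r}|_{G_{F^+(\zeta_p)}}$ and $K_g$ for the fixed field of $g|_{G_K}$, as in Proposition~\ref{independence}. Using the surjectivity of $\overline{\rho}$, one checks that $\mathrm{Gal}(K/F^+)$ acts on $\mathrm{ad}^0\overline{r}$ through a group containing $\mathrm{PGL}_2(\mathbb{F})$ (up to the cyclotomic factor), so we may choose an element $\sigma_0 \in \mathrm{Gal}(K(\zeta_{p^\infty})/F^+)$ with the following properties: $\overline{r}(\sigma_0)$ has distinct eigenvalues $\alpha, \beta \in \overline{\mathbb{F}}^{\times}$ with $\alpha/\beta = \overline{\varepsilon}_p(\sigma_0) =: \overline{q}$, the element $\overline{q}$ satisfies $\overline{q} \neq \pm 1$, and $\sigma_0$ splits completely in $F$ and acts trivially on the residue degrees (this is possible because $F/F^+$ is unramified everywhere and $F$ is linearly disjoint from $\mathbb{Q}(\zeta_{p^\infty})$). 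Concretely, one first picks a regular semisimple $\overline{q} \in \mathbb{F}^{\times}$ with $\overline{q} \neq \pm 1$, then lifts it to a compatible $\sigma_0$ via the surjectivity of $\overline{\rho}$ onto $\mathrm{GL}_2(\mathbb{F})$.

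Next, using Proposition~\ref{independence}, the field $K_g$ is linearly disjoint from the compositum of $K(\zeta_{p^\infty})$ and all the other $K_{f_i}, K_{g_j}$ over $K$, and $g|_{G_K} : \mathrm{Gal}(K_g/K) \isom \mathrm{ad}^0 \overline{r}$ is a $\mathrm{Gal}(K/F^+)$-equivariant isomorphism. Because $\sigma_0$ acts on $\mathrm{ad}^0 \overline{r}(1)$ by a semisimple operator one of whose eigenvalues is $1$ (coming from the $\alpha \beta^{-1} \cdot \overline{q}^{-1} = 1$ eigenspace of $\mathrm{ad}^0 \overline{r}$ twisted by $\overline{\varepsilon}_p$), the $(\sigma_0 - 1)$-coinvariants of $\mathrm{ad}^0 \overline{r}(1)$ are nonzero. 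Picking any $\tau \in \mathrm{Gal}(K_g/K)$ mapping under $g$ to a nonzero element of this coinvariant space, a short computation with the inflation-restriction sequence shows that $g|_{\langle \sigma \rangle}$ is nontrivial for $\sigma := \sigma_0 \tau \in G_{F^+}$ (lifted arbitrarily). Now apply Chebotarev to the Galois extension $K_g(\zeta_{p^\infty})/F^+$: the Frobenius class of $\sigma$ is realized by infinitely many primes $v \notin S$, and any such $v$ splits in $F$ (because $\sigma_0$ does), has residue characteristic different from $p$, satisfies $q_v \equiv \overline{q} \not\equiv \pm 1 \pmod{p}$, and has Frobenius with eigenvalue ratio $q_v$ on $\overline{r}$; hence $v$ is $\overline{r}$-nice, and by construction $\mathrm{res}_v[g] \neq 0$.

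The main obstacle is the combinatorial coordination in the previous paragraph: one must simultaneously arrange \emph{niceness} (constraints on $\overline{q}$ and the splitting behaviour of $v$ in $F$), \emph{nontriviality of $\mathrm{res}_v[g]$} (which forces the Frobenius to have a particular conjugacy class inside $\mathrm{Gal}(K_g/F^+)$), and \emph{joint Chebotarev} over $K_g(\zeta_{p^\infty})$ (which requires the linear disjointness statements of Proposition~\ref{independence}, together with the surjectivity hypothesis on $\overline{\rho}$, to guarantee that the product conjugacy class $\sigma_0 \tau$ is actually realized). Once these compatibilities are in place, the iterative killing argument concludes the proof.
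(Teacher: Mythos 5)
Your approach matches the paper's: the paper's own proof is the one-line citation ``Same as [Lemma~6]{KR}'', and you are spelling out that argument. The high-level scheme — iterate over nonzero classes of $\Sha$, detect each one by a Chebotarev-chosen nice prime, use Proposition~\ref{independence} to decouple the image of $g$ from the constraints defining niceness, and use the $1$-eigenvalue of $\sigma_0$ on $\mathrm{ad}^0\overline{r}(1)$ to get nonzero coinvariants — is exactly what [KR, Lemma 6] does, and you identify all the right ingredients (surjectivity of $\overline\rho$, linear disjointness of $F$ from $\mathbb{Q}(\zeta_{p^\infty})$, the splitting of $(\sigma_0-1)$ and the adjustment by $\tau\in\mathrm{Gal}(K_g/K)$).

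Two small points worth tightening. First, your eigenvalue bookkeeping is internally inconsistent: you declare $\alpha/\beta = \overline\varepsilon_p(\sigma_0) = \overline q$, so twisting $\mathrm{ad}^0\overline r$ by $\overline\varepsilon_p$ multiplies $\sigma_0$-eigenvalues by $\overline q$; the $1$-eigenvalue then arises from $\beta\alpha^{-1}\cdot\overline q=1$, not from $\alpha\beta^{-1}\cdot\overline q^{-1}=1$ as you wrote (the latter corresponds to the opposite sign convention for $\overline\varepsilon_p$, i.e.\ $\alpha/\beta=\overline\varepsilon_p(\sigma_0)^{-1}$, which is in fact the more standard normalization when $\alpha/\beta=q_v$). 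Either sign works, but one cannot assert both simultaneously. Second, the $\overline r$-nice condition here also demands that the ramification index and inertia degree of $F$ at $v$ over $\mathbb{Q}$ are $1$, i.e.\ the rational prime below $v$ splits completely in $F$; you only say ``$\sigma_0$ splits completely in $F$'' and ``acts trivially on the residue degrees,'' which is the right intent but should be phrased as: $\sigma_0$ must have trivial image in $\mathrm{Gal}(\tilde F/\mathbb{Q})$, a compatibility guaranteed by the assumed linear disjointness (and which, as the paper remarks, is not actually used in this section but is part of the stated definition). Finally, picking $\tau$ ``mapping under $g$ to a nonzero element of the coinvariant space'' is slightly off: what one actually arranges is that $g(\sigma_0')+\sigma_0 g(\tau)\notin(\sigma_0-1)M$, which is possible because $\sigma_0$ acts invertibly on $M$, so the shift $\sigma_0 g(\tau)$ sweeps out all of $M$ as $\tau$ varies. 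These are presentation issues rather than gaps; the argument you sketch is the one the paper delegates to.
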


\begin{proof} Same as \cite[Lemma 6]{KR}. \end{proof}

Let $Q$ be a finite set of $\overline{r}$-nice places of $F^+$ as in Lemma \ref{shagroup}. For any $v\in Q$, we fix a lift $\tilde{v}$ to $F$ and put $\tilde{Q} := \{ \tilde{v} \mid v \in Q \}$. 

Note that for $v \in Q$, the Steinberg lifting ring $R^{\mathrm{st}, \psi|_{G_{F_{\tilde{v}}}}}_{\overline{r}|_{G_{F_{\tilde{v}}}}}$ is formally smooth of dimension $4$ by 1 of Lemma \ref{Steinberg deformation} and thus the properties of Theorem \ref{formally smooth} holds for $Y_v := \mathrm{Spec}R^{\mathrm{st}, \psi|_{G_{F_{\tilde{v}}}}}_{\overline{r}|_{G_{F_{\tilde{v}}}}}$, $\mathcal{L}_v = \mathcal{L}_v^{\mathrm{st}}$ and $n_v := 1$ except the property 3. We take an $\mathcal{O}$-rational point $\rho_v$ of $\mathrm{Spec}R^{\mathrm{st}, \psi|_{G_{F_{\tilde{v}}}}}_{\overline{r}|_{G_{F_{\tilde{v}}}}}$ for every $v \in Q$. We also take $n_v$ and $\mathcal{L}_v$ as in Theorem \ref{formally smooth} and take $n_0 \ge \max_{v \in S} \{n_v, 2\}$. For a moment, we assume the following.

\vspace{0.5 \baselineskip}

(*) \ There exists a lifting $r_{n_0} : G_{F^+, S \cup Q} \rightarrow \mathcal{G}_2(\mathcal{O}/\varpi^{n_0})$ satisfying the following conditions.

\begin{itemize}
\item The fixed field $P_{n_0}$ of $\mathrm{ad}r_{n_0}|_{G_{K(\zeta_p)}}$ is linearly disjoint from $K_{f_1} \cdots K_{g_s}(\zeta_{p^{\infty}})$ over $K$, where $f_1, \cdots, g_s$ are fixed basis appeared in Proposition \ref{independence}.
\item $r_{n_0}|_{G_{F_v}}$ is equivalent to $\rho_v \mod \varpi^{n_0}$ for any $v \in S \cup Q$. 
\item $\mathrm{ad}(r_{n_0}|_{G_{F, S}}) : G_{F, S} \rightarrow \mathrm{PGL}_2(\mathcal{O}/\varpi^{n_0})$ is surjective. 
\end{itemize}

For a finite set $Q'$ of $r_{n_0}$-nice places of $F^+$(note that by the definition of nice primes, we assume $Q' \cap (Q \cup S) = \emptyset$), let $\mathcal{L}_{Q'} := ((\mathcal{L}_{Q'})_v)_{v \in S \cup Q \cup Q'}$ be a local condition for $\mathrm{ad}^0\overline{r}$ defined by $$(\mathcal{L}_{Q'})_v:=\begin{cases} \mathcal{L}_v  \ \ \ \ \ \ \ \ \text{if $v \in S$} \\ 
\mathcal{L}_v^{\mathrm{st}} \ \ \ \ \ \ \ \ \text{if $v \in Q \cup Q'$}
     \end{cases}.$$ 

If $Q'$ is empty, we simply write $\mathcal{L}$ for $\mathcal{L}_{Q'}$. Note that we have $\mathrm{dim}H^1_{\mathcal{L}_{Q'}}(G_{F^+, S \cup Q \cup Q'}, \mathrm{ad}^0 \overline{r}) = \mathrm{dim}H^1_{\mathcal{L}_{Q'}^{\perp}}(G_{F^+, S \cup Q \cup Q'}, \mathrm{ad}^0 \overline{r}(1))$ by Corollary \ref{Greenberg-Wiles} and 2 of Lemma \ref{Steinberg deformation}.

\begin{lem}\label{obstruction} Under the above assumption (*), there exists a finite set $Q'$ of $r_{n_0}$-nice places of $F^+$ such that $H^1_{\mathcal{L}_{Q'}}(G_{F^+, S \cup Q \cup Q'}, \mathrm{ad}^0 \overline{r}) = 0$ and $H^1_{\mathcal{L}_{Q'}^{\perp}}(G_{F^+, S \cup Q \cup Q'}, \mathrm{ad}^0 \overline{r}(1)) = 0$.
    
\end{lem}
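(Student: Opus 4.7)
The plan is to reduce the claim to killing just one of the two Selmer groups, then do a Taylor-Wiles/Chebotarev style induction. By Corollary \ref{Greenberg-Wiles} applied to the local conditions $\mathcal{L}_{Q'}$, together with $\dim_{\mathbb{F}}\mathcal{L}_v^{\mathrm{st}}=\dim_{\mathbb{F}}H^0(F^+_v,\mathrm{ad}^0\overline{r})=1$ at every $r_{n_0}$-nice place $v$ (Lemma \ref{Steinberg deformation} plus $q_v\not\equiv\pm1\bmod p$), one checks that for any finite set $Q'$ of $r_{n_0}$-nice places disjoint from $S\cup Q$ one has
\[
\dim_{\mathbb{F}} H^1_{\mathcal{L}_{Q'}}(G_{F^+,S\cup Q\cup Q'},\mathrm{ad}^0\overline{r})
=\dim_{\mathbb{F}} H^1_{\mathcal{L}_{Q'}^{\perp}}(G_{F^+,S\cup Q\cup Q'},\mathrm{ad}^0\overline{r}(1)).
\]
So it is enough to produce $Q'$ making the dual Selmer group vanish.

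I would build $Q'$ one prime at a time. Fix a nonzero class $\phi\in H^1_{\mathcal{L}_{Q'}^{\perp}}(G_{F^+,S\cup Q\cup Q'},\mathrm{ad}^0\overline{r}(1))$. Since $\phi$ is unramified at any place $v\notin S\cup Q\cup Q'$, its restriction to $G_{F^+_v}$ lands in $H^1_{\mathrm{ur}}(F^+_v,\mathrm{ad}^0\overline{r}(1))=\mathcal{L}_v^{\mathrm{ur},\perp}$, which is a complement to $\mathcal{L}_v^{\mathrm{st},\perp}$ inside $H^1(F^+_v,\mathrm{ad}^0\overline{r}(1))$ by part (c) of Lemma \ref{Steinberg deformation}. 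Hence if we can find an $r_{n_0}$-nice place $v$ with $\phi|_{G_{F_{\tilde v}}}\neq 0$, then $\phi|_{G_{F^+_v}}\notin\mathcal{L}_v^{\mathrm{st},\perp}$, and the Poitou--Tate exact sequence (Proposition \ref{PTE}) forces $\dim_{\mathbb{F}}H^1_{\mathcal{L}_{Q'\cup\{v\}}^{\perp}}<\dim_{\mathbb{F}}H^1_{\mathcal{L}_{Q'}^{\perp}}$. Since this is a finite dimensional $\mathbb{F}$-vector space, finitely many iterations make the dual Selmer group zero, and the lemma then follows from the Greenberg--Wiles equality above.

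The main step, which I expect to be the crux, is the existence of an $r_{n_0}$-nice place $v$ on which the given nonzero $\phi$ restricts non-trivially; this is a standard Chebotarev argument of Taylor--Wiles type, made possible by condition (*). Concretely, using the surjectivity of $\mathrm{ad}\,(r_{n_0}|_{G_{F,S}}):G_{F,S}\to\mathrm{PGL}_2(\mathcal{O}/\varpi^{n_0})$ one exhibits an element $\sigma\in G_{F^+}$ whose image under $r_{n_0}$ has two distinct eigenvalues $\alpha,\beta\in(\mathcal{O}/\varpi^{n_0})^{\times}$ with $\alpha/\beta$ of multiplicative order $>2$, so that any prime $v$ of $F^+$ whose $\tilde v$-Frobenius lies in the conjugacy class of $\sigma$ in $\mathrm{Gal}(P_{n_0}/F^+)$ is automatically $r_{n_0}$-nice. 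Writing $\phi$ as a $\mathbb{F}$-linear combination of $g_1,\dots,g_s$ and invoking Proposition \ref{independence} together with the linear disjointness assertion in (*), the conjugacy class of $\sigma$ in $\mathrm{Gal}(P_{n_0}/F^+)$ and a Frobenius class in $\mathrm{Gal}(K_{g_j}/K)$ detecting the nonzero component of $\phi$ can be simultaneously prescribed in $\mathrm{Gal}(P_{n_0}\cdot K_{g_1}\cdots K_{g_s}(\zeta_{p^{\infty}})/F^+)$. Chebotarev's density theorem then produces infinitely many primes $v$ of $F^+$, unramified in $F$ and not in $S\cup Q\cup Q'$, realizing this combined class; any such $v$ is $r_{n_0}$-nice and satisfies $\phi|_{G_{F_{\tilde v}}}\neq 0$. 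This mirrors \cite[proof of Lemma 4.11]{KP} and \cite[Proposition 3.2]{KR}, and the only nontrivial input beyond the standard argument is the compatibility between the niceness conditions at $v$ and the Frobenius condition coming from $\phi$, which is precisely what (*) has been set up to guarantee.
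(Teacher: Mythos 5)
Your overall reduction is the right one and matches the paper's: use Corollary~\ref{Greenberg-Wiles} to see that the Selmer and dual Selmer groups have equal dimension for the local conditions $\mathcal{L}_{Q'}$, so it suffices to kill the dual Selmer group, and then grow $Q'$ one $r_{n_0}$-nice prime at a time by a Chebotarev argument using~(*). However, the key step in the induction has a genuine gap.

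You assert that if $\phi\in H^1_{\mathcal{L}_{Q'}^{\perp}}(G_{F^+,S\cup Q\cup Q'},\mathrm{ad}^0\overline{r}(1))$ is nonzero and $v$ is an $r_{n_0}$-nice place with $\phi|_{G_{F_{\tilde v}}}\neq 0$, then ``the Poitou--Tate exact sequence forces $\dim H^1_{\mathcal{L}_{Q'\cup\{v\}}^{\perp}}<\dim H^1_{\mathcal{L}_{Q'}^{\perp}}$.'' This is not true without more input. Passing from $\mathcal{L}_{Q'}^{\perp}$ to $\mathcal{L}_{Q'\cup\{v\}}^{\perp}$ does not give an inclusion of Selmer groups: the new local condition $\mathcal{L}_v^{\mathrm{st},\perp}$ at $v$ is a complement to $\mathcal{L}_v^{\mathrm{ur},\perp}=H^1_{\mathrm{ur}}(F_v^+,\mathrm{ad}^0\overline{r}(1))$ (part~(c) of Lemma~\ref{Steinberg deformation}), so any nonzero element of $\mathcal{L}_v^{\mathrm{st},\perp}$ is necessarily ramified at $v$. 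You correctly observe that $\phi$ is then removed (it is unramified at $v$ with nonzero restriction, hence restricts outside $\mathcal{L}_v^{\mathrm{st},\perp}$), but you have not ruled out the possibility that a \emph{new} class, ramified at $v$ with local restriction in $\mathcal{L}_v^{\mathrm{st},\perp}$, enters $H^1_{\mathcal{L}_{Q'\cup\{v\}}^{\perp}}$. The quotient $H^1_{(\mathcal{L}_{Q'}^{\perp},\mathcal{L}_v^{\mathrm{st},\perp})}/H^1_{(\mathcal{L}_{Q'}^{\perp},0_v)}$ injects into the $1$-dimensional $\mathcal{L}_v^{\mathrm{st},\perp}$ and could be all of it, in which case $\dim H^1_{\mathcal{L}_{Q'\cup\{v\}}^{\perp}}=\dim H^1_{\mathcal{L}_{Q'}^{\perp}}$ and the induction stalls. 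Greenberg--Wiles alone cannot rule this out: it only tells you the Selmer and dual Selmer dimensions move in lockstep, which is entirely compatible with both staying constant.

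The missing ingredient, which is where the paper spends most of its effort, is the simultaneous control of a class $f$ in the \emph{untwisted} Selmer group $H^1_{\mathcal{L}_{Q'}}(G_{F^+,S\cup Q\cup Q'},\mathrm{ad}^0\overline{r})$. One needs an $r_{n_0}$-nice $v$ with both $g(\mathrm{Frob}_{\tilde v})\neq 0$ for some $g$ in the dual Selmer group (your $\phi$) and $f(\mathrm{Frob}_{\tilde v})\neq 0$ for some $f$ in the Selmer group. The paper translates, via Greenberg--Wiles, the two assertions it needs (labeled $1$ and $2$ in $(**)$) into statements $1'$ and $2'$ about Selmer groups of $\mathrm{ad}^0\overline{r}$; $2$ is achieved by $g(\mathrm{Frob}_v)\neq 0$, while $1'$ (which prevents the ramified class from appearing) is achieved by $f(\mathrm{Frob}_v)\neq 0$, using the decomposition $\mathcal{L}_v=\mathcal{L}_v^{\mathrm{ur}}\oplus\mathcal{L}_v^{\mathrm{st}}$. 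This is exactly why Proposition~\ref{independence} and the assumption in~(*) that $P_{n_0}$ is linearly disjoint from $K_{f_1}\cdots K_{f_r}K_{g_1}\cdots K_{g_s}(\zeta_{p^\infty})$ involve \emph{both} the $K_{f_i}$ and the $K_{g_j}$: the Chebotarev argument must prescribe the Frobenius in $\mathrm{Gal}(P_{n_0}/F^+)$ (to make $v$ nice), in some $\mathrm{Gal}(K_{g_j}/K)$ (to detect $\phi$), and in some $\mathrm{Gal}(K_{f_i}/K)$ (to detect $f$) all at once. Your proposal only uses the $K_{g_j}$ and so misses the half of the argument that actually controls the ramified contribution.
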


\begin{proof} We may assume that $H^1_{\mathcal{L}^{\perp}}(G_{F^+, S \cup Q}, \mathrm{ad}^0 \overline{r}(1)) \neq 0$. It suffices to prove that there exists a $r_{n_0}$-nice place $v$ such that $$\mathrm{dim}_{\mathbb{F}}H^1_{(\mathcal{L}^{\perp}, \mathcal{L}_{v}^{\mathrm{st}, \perp})}(G_{F^+, S \cup Q \cup \{ v \}}, \mathrm{ad}^0\overline{r}(1)) < \mathrm{dim}_{\mathbb{F^+}}H^1_{(\mathcal{L}^{\perp}, \mathcal{L}_{v}^{\mathrm{ur}, \perp})}(G_{F^+, S \cup Q \cup \{ v \}}, \mathrm{ad}^0\overline{r}(1)).$$ More strongly, we will prove the following. ($0_v$ denotes the zero subspace of $H^1(F_v, \mathrm{ad}^0\overline{r}(1))$.) \small$$H^1_{(\mathcal{L}^{\perp}, \mathcal{L}_{v}^{\mathrm{st}, \perp})}(G_{F^+, S \cup Q \cup \{ v \}}, \mathrm{ad}^0\overline{r}(1)) =_{1} H^1_{(\mathcal{L}^{\perp}, 0_v)}(G_{F^+, S \cup Q \cup \{ v \}}, \mathrm{ad}^0\overline{r}(1)) \subsetneq_{2} H^1_{(\mathcal{L}^{\perp}, \mathcal{L}_{v}^{\mathrm{ur}, \perp})}(G_{F^+, S \cup Q \cup \{ v \}}, \mathrm{ad}^0\overline{r}(1)) - (**).$$ \normalsize
    
By Corollary \ref{Greenberg-Wiles} and 2 of Lemma \ref{Steinberg deformation}, we obtain the following formula. ($\mathcal{L}_v := H^1(F_v, \mathrm{ad}^0\overline{r})$.)

\begin{gather*} \mathrm{dim}_{\mathbb{F}}H^1_{(\mathcal{L}^{\perp}, \mathcal{L}_{v}^{\mathrm{st}, \perp})}(G_{F^+, S \cup Q \cup \{ v \}}, \mathrm{ad}^0\overline{r}(1)) = \mathrm{dim}_{\mathbb{F}}H^1_{(\mathcal{L}, \mathcal{L}_{v}^{\mathrm{st}})}(G_{F^+, S \cup Q \cup \{ v \}}, \mathrm{ad}^0\overline{r}(1)) \\
\mathrm{dim}_{\mathbb{F}} H^1_{(\mathcal{L}^{\perp}, 0_v)}(G_{F^+, S \cup Q \cup \{ v \}}, \mathrm{ad}^0\overline{r}(1)) + 1 = \mathrm{dim}_{\mathbb{F}}H^1_{(\mathcal{L}, \mathcal{L}_v)}(G_{F^+, S \cup Q \cup \{ v \}}, \mathrm{ad}^0\overline{r}) \\
\mathrm{dim}_{\mathbb{F}}H^1_{(\mathcal{L}^{\perp}, \mathcal{L}_{v}^{\mathrm{ur}, \perp})}(G_{F^+, S \cup Q \cup \{ v \}}, \mathrm{ad}^0\overline{r}(1)) = \mathrm{dim}_{\mathbb{F}}H^1_{(\mathcal{L}, \mathcal{L}_{v}^{\mathrm{ur}})}(G_{F^+, S \cup Q \cup \{ v \}}, \mathrm{ad}^0\overline{r})
\end{gather*}

Note that $0 \le \mathrm{dim}_{\mathbb{F}}H^1_{(\mathcal{L}^{\perp}, \mathcal{L}_{v}^{\mathrm{st}, \perp})}(G_{F^+, S \cup Q \cup \{ v \}}, \mathrm{ad}^0\overline{r}(1)) - \mathrm{dim}_{\mathbb{F}}H^1_{(\mathcal{L}^{\perp}, 0_v)}(G_{F^+, S \cup Q \cup \{ v \}}, \mathrm{ad}^0\overline{r}(1)) \le \mathrm{dim}_{\mathbb{F}}\mathcal{L}_{v}^{\mathrm{st}, \perp} = 1$ and $0 \le \mathrm{dim}_{\mathbb{F}} H^1_{(\mathcal{L}^{\perp}, \mathcal{L}_{v}^{\mathrm{ur}, \perp})}(G_{F^+, S \cup Q \cup \{ v \}}, \mathrm{ad}^0\overline{r}(1)) - \mathrm{dim}_{\mathbb{F}}H^1_{(\mathcal{L}^{\perp}, 0_v)}(G_{F^+, S \cup Q \cup \{ v \}}, \mathrm{ad}^0\overline{r}(1)) \le 1$. Thus 1 (resp. 2) of $(**)$ is equivalent to $1'$ (resp. $2'$) in the following. $$H^1_{(\mathcal{L}, \mathcal{L}_{v}^{\mathrm{st}})}(G_{F^+, S \cup Q \cup \{ v \}}, \mathrm{ad}^0\overline{r}) \subsetneq_{1'} H^1_{(\mathcal{L}, \mathcal{L}_v)}(G_{F^+, S \cup Q \cup \{ v \}}, \mathrm{ad}^0\overline{r}) =_{2'} H^1_{(\mathcal{L}, \mathcal{L}_{v}^{\mathrm{ur}})}(G_{F^+, S \cup Q \cup \{ v \}}, \mathrm{ad}^0\overline{r}) - (***).$$ 

By Proposition \ref{independence} and our assumption that $P_{n_0}$ is linearly disjoint from $K_{f_1} \cdots K_{g_s}(\zeta_{p^{\infty}})$ over $K$, there exist a $r_{n_0}$-nice prime $v$ and $f \in H^1_{\mathcal{L}}(G_{F^+, S \cup Q}, \mathrm{ad}^0\overline{r}) = H^1_{(\mathcal{L}, \mathcal{L}_{v}^{\mathrm{ur}})}(G_{F^+, S \cup Q \cup \{ v \}}, \mathrm{ad}^0\overline{r})$ and $g \in H^1_{\mathcal{L}^{\perp}}(G_{F^+, S \cup Q}, \mathrm{ad}^0\overline{r}(1)) = H^1_{(\mathcal{L}^{\perp}, \mathcal{L}_{v}^{\mathrm{ur}, \perp})}(G_{F^+, S \cup Q \cup \{ v \}}, \mathrm{ad}^0\overline{r}(1))$ such that $f(\mathrm{Frob}_v) \neq 0$ and $g(\mathrm{Frob}_v) \neq 0$. $g(\mathrm{Frob}_v) \neq 0$ implies $$H^1_{(\mathcal{L}^{\perp}, 0_v)}(G_{F^+, S \cup Q \cup \{ v \}}, \mathrm{ad}^0\overline{r}(1)) \subsetneq_{2} H^1_{(\mathcal{L}^{\perp}, \mathcal{L}_{v}^{\mathrm{ur}, \perp})}(G_{F^+, S \cup Q \cup \{ v \}}, \mathrm{ad}^0\overline{r}(1)).$$ On the other hand, $f(\mathrm{Frob}_v) \neq 0$ implies $H^1_{(\mathcal{L}, \mathcal{L}_{v}^{\mathrm{st}})}(G_{F^+, S \cup Q \cup \{ v \}}, \mathrm{ad}^0\overline{r}) \subsetneq_{1'} H^1_{(\mathcal{L}, \mathcal{L}_v)}(G_{F^+, S \cup Q \cup \{ v \}}, \mathrm{ad}^0\overline{r})$ by $\mathcal{L}_v = \mathcal{L}_v^{\mathrm{ur}} \oplus \mathcal{L}_v^{\mathrm{st}}$. (See 2 of Lemma \ref{Steinberg deformation}.) This implies the result. \end{proof}


We take $Q'$ as in the above Lemma \ref{obstruction} and we may replace $Q$ with $Q \cup Q'$.

\begin{cor}\label{lifting}

Under the above assumption (*), there exists a lifting $r : G_{F^+, S \cup Q} \rightarrow \mathcal{G}_2(\mathcal{O})$ of $r_{n_0}$ satisfying the conditions of Theorem \ref{lifting of Galois}.

\end{cor}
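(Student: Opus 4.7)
The plan is to construct the lift $r$ as the inverse limit of a compatible tower $(r_n)_{n \ge n_0}$ with $r_n \colon G_{F^+, S \cup Q} \to \mathcal{G}_2(\mathcal{O}/\varpi^n)$, where at every step $r_n|_{G_{F^+_v}} \in Y_{v,n}$ for each $v \in S \cup Q$; here $Y_{v,n}$ is understood as in Theorem \ref{formally smooth} for $v \in S$, and is the mod $\varpi^n$ image of $\mathrm{Spec}R^{\mathrm{st},\psi|_{G_{F_{\tilde v}}}}_{\overline{\rho}|_{G_{F_{\tilde v}}}}(\mathcal{O})$ for $v \in Q$ (which is formally smooth of the correct dimension by Lemma \ref{Steinberg deformation}). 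If this is carried out, the resulting $r$ automatically satisfies the conditions of Theorem \ref{lifting of Galois}: it is locally in the chosen component $\mathcal{C}_v$ and generic by property $3$ of Theorem \ref{formally smooth}, and the congruence with $\rho_v$ modulo $\varpi^t$ is built in once we choose $n_0 \ge t$, which we may freely do after replacing $n_0$ if necessary.

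The inductive step splits into three ingredients. First, by Theorem \ref{formally smooth}(1) and Lemma \ref{Steinberg deformation}(1), the transition $Y_{v,n+1} \to Y_{v,n}$ is surjective for each $v \in S \cup Q$ (once $n \ge n_v$, which we may assume by the choice of $n_0$), so local lifts $\tilde r_{n+1,v} \in Y_{v,n+1}$ of $r_n|_{G_{F^+_v}}$ exist. Second, Lemma \ref{obstruction theory} identifies the obstruction to the existence of \emph{some} global lift (with fixed determinant $\psi|_{G_{F}}$) with a class $\mathrm{obs}(r_n, \mathcal{O}/\varpi^{n+1}) \in H^2(G_{F^+, S \cup Q}, \mathrm{ad}^0 \overline r) \otimes_\mathbb{F} (\varpi^n/\varpi^{n+1})$. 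By the second Poitou--Tate sequence of Proposition \ref{PTE}, and using that $\Sha^1_{S \cup Q}(\mathrm{ad}^0\overline r(1))$ is a subspace of $H^1_{\mathcal{L}^\perp}(G_{F^+, S \cup Q}, \mathrm{ad}^0\overline r(1)) = 0$ (which is our standing hypothesis after Lemma \ref{obstruction}), this global $H^2$ embeds into $\oplus_{v \in S \cup Q} H^2(F_v^+, \mathrm{ad}^0\overline r)$, and the image of $\mathrm{obs}(r_n, \mathcal{O}/\varpi^{n+1})$ is the tuple of local obstructions; these vanish because the $\tilde r_{n+1,v}$ exist. Hence some global lift $r_{n+1}^{(0)}$ exists.

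The third and key ingredient is to adjust $r_{n+1}^{(0)}$ so that its local restrictions actually land in $Y_{v,n+1}$. The defect $(r_{n+1}^{(0)}|_{G_{F^+_v}} - \tilde r_{n+1,v})_{v \in S \cup Q}$ defines an element of $\oplus_{v \in S \cup Q} H^1(F_v^+, \mathrm{ad}^0 \overline r)$, and we would like to kill its image in $\oplus_v H^1(F_v^+, \mathrm{ad}^0\overline r)/\mathcal{L}_v$ by subtracting a global class. The first Poitou--Tate sequence (Proposition \ref{PTE}(1)) shows that the cokernel of $H^1(G_{F^+, S \cup Q}, \mathrm{ad}^0\overline r) \to \oplus_v H^1(F_v^+, \mathrm{ad}^0\overline r)/\mathcal{L}_v$ injects into $H^1_{\mathcal{L}^\perp}(G_{F^+, S\cup Q}, \mathrm{ad}^0\overline r(1))^\vee$, which is zero by the standing hypothesis, so the map is surjective and we obtain a corrected lift $r_{n+1}$ with $r_{n+1}|_{G_{F^+_v}} - \tilde r_{n+1,v} \in \mathcal{L}_v$ for all $v$; using the $Z_v$-torsor structure of $Y_{v,n+1} \to Y_{v,n}$ from Theorem \ref{formally smooth}(1), we may then replace $\tilde r_{n+1,v}$ within its fiber so that $r_{n+1}|_{G_{F^+_v}} = \tilde r_{n+1,v} \in Y_{v,n+1}$.

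Taking the inverse limit $r := \varprojlim_n r_n$ produces the desired lift. The main subtlety, and the one I would expect to require the most care in the write-up, is the third step above: tracking that the two different notions of ``adjustment''—subtracting a global $1$-cocycle in $H^1(G_{F^+, S \cup Q}, \mathrm{ad}^0\overline r)$ on one hand, and moving within a fiber of $Y_{v,n+1} \to Y_{v,n}$ on the other—are compatible, so that after correction $r_{n+1}$ lifts $r_n$, preserves the fixed determinant $\psi$, and respects all previously arranged local conditions simultaneously. Everything else is an essentially formal application of the Selmer vanishing from Lemma \ref{obstruction}, the torsor property of Theorem \ref{formally smooth}, and Poitou--Tate duality.
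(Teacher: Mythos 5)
Your proposal is correct and takes essentially the same approach as the paper: kill the global obstruction via the second Poitou--Tate sequence (Proposition \ref{PTE}(2)) plus local liftability, then correct the resulting lift using the first Poitou--Tate sequence (Proposition \ref{PTE}(1)) together with the Selmer vanishing from Lemma \ref{obstruction} so that each local restriction lands in $Y_{v,n+1}$, and iterate to obtain the tower and hence $r = \varprojlim_n r_n$. Your observation that the mod-$\varpi^t$ congruence with $\rho_v$ requires taking $n_0 \ge t$ is correct; the paper leaves this point implicit, and the freedom to enlarge $n_0$ is indeed available from the construction of $r_{n_0}$ in (*).
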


\begin{proof} 

Since $\Sha_{S \cup Q}^1(\mathrm{ad}^0\overline{r}(1)) = 0$, the vanishing of $\mathrm{obs}(r_{n_0}, \mathcal{O}/\varpi^{n_0+1})$ follows from the vanishing of $\mathrm{obs}(r_{n_0}, \mathcal{O}/\varpi^{n_0+1})$ at every $v \in S \cup Q$ by the exact sequence in 2 of Proposition \ref{PTE}. For any $v \in S \cup Q$, we have a lifting $\rho_v$ to $\mathcal{O}$ of $r_{n_0}|_{G_{F_{\tilde{v}}}}$. This implies that the vanishing of $\mathrm{obs}(r_{n_0}, \mathcal{O}/\varpi^{n_0+1})$ at every $v \in S \cup Q$ by a variant of 2 of Lemma \ref{obstruction theory} for the determinant fixed deformation with values in $\mathrm{GL}_2$ of $G_{F_{\tilde{v}}}$. (See \cite[p18]{CHT} for details.) Thus we have $\mathrm{obs}(r_{n_0}, \mathcal{O}/\varpi^{n_0+1}) = 0$. Consequently, we obtain a lifting $r_{n_0 + 1}' : G_{F^+, S \cup Q} \rightarrow \mathcal{G}_2(\mathcal{O}/\varpi^{n_0+1})$ of $r_{n_0}$. By Lemma \ref{deformation theory}, the set of equivalence classes of liftings $G_{F^+, S \cup Q} \rightarrow \mathcal{G}(\mathcal{O}/\varpi^{n_0 + 1})$ of $r_{n_0}$ is a $H^1(G_{F^+, S \cup Q}, \mathrm{ad}^0 \overline{r})$-torsor. By the exact sequence in 1 of Proposition \ref{PTE} and Lemma \ref{obstruction}, we can take a lifting $r_{n_0 + 1} : G_{F^+, S \cup Q} \rightarrow \mathcal{G}_2(\mathcal{O}/\varpi^{n_0+1})$ of $r_{n_0}$ such that $r_{n_0 + 1}|_{G_{F^+_v}} \in Y_{v, n_0+1}$. (See Theorem \ref{formally smooth} for the definition of $Y_{v, n_0 + 1}$.) By repeating this argument, we obtain a lifting $r : G_{F^+, S \cup Q} \rightarrow \mathcal{G}_2(\mathcal{O})$ of $r_{n_0}$ satisfying the conditions of Theorem \ref{lifting of Galois}. \end{proof}

By Corollary \ref{lifting}, it suffices to prove the existence $r_{n_0}$ as above (*).

\vspace{0.5 \baselineskip}

Since we assume $\varpi^2 \mid p$, by using $i : \mathbb{F} \hookrightarrow \mathcal{O}/\varpi^2$, we can take a lifting $r_2'' := i \circ \overline{r} : G_{F^+, S \cup Q} \rightarrow \mathcal{G}_2(\mathcal{O}/\varpi^2)$ of $\overline{r}$, but we may not have $\nu \circ r_2'' = \chi$ and $\mathrm{det}r_2''|_{G_{F, S}} = \psi$. By considering $r_2''|_{G_{F, S}} \otimes (i(\overline{\psi}^{-1}) \psi)^{\frac{1}{2}}$ and the correspondence of Lemma \ref{correspondence}, we obtain a lifting $r_2' : G_{F^+, S \cup Q} \rightarrow \mathcal{G}_2(\mathcal{O}/\varpi^2)$ of $\overline{\rho}$ satisfying $\nu \circ r_2' = \chi$, $\mathrm{det}r_2'|_{G_{F, S}} = \psi$. (Note that we used the relation $\chi^2|_{G_{F, S}} = \psi \psi^c$.) Moreover, we have the following.

\vspace{0.5 \baselineskip}

(1) \ $r_{2}'|_{G_{F_{\tilde{v}}}} \mod \varpi$ is equivalent to $\rho_v \mod \varpi$ for any $v \in S \cup Q$. 

(2) \ The fixed field $P_{2}$ of $\mathrm{ad}r_{2}'|_{G_{K}} : G_{K} \rightarrow \mathrm{PGL}_2(\mathcal{O}/\varpi^2)$ is linearly disjoint from $K_{f_1} \cdots K_{g_s}(\zeta_{p^{\infty}})$ over $K$ by $P_2 = K$.

(3) \ The image of $\mathrm{ad}r_{2}'|_{G_{F, S}} : G_{F, S} \rightarrow \mathrm{PGL}_2(\mathcal{O}/\varpi^{2})$ contains $\mathrm{PGL}_2(\mathbb{F})$.

\vspace{0.5 \baselineskip}

We may assume that There exists $T \subset S$ satisfying the following properties.

(A) \  $\rho_v$ is unramified for any $v \in T$ and $\langle \mathrm{ad}\rho_{v}(\mathrm{Frob}_{\tilde{v}}) \mod \varpi^2 \mid v \in T \rangle = \mathrm{Ker}(\mathrm{PGL}_2(\mathcal{O}/\varpi^{2}) \twoheadrightarrow \mathrm{PGL}_2(\mathbb{F}))$. 

(B) \ There exists $v \in T$ such that $\mathrm{ad}\rho_v(\mathrm{Frob}_{\tilde{v}}) \mod \varpi^2$ is nontrivial and $\mathrm{Frob}_{\tilde{v}}|_{K_{f_i}}$ and $\mathrm{Frob}_{\tilde{v}}|_{K_{g_i}}$ are trivial for any $f_i$ and $g_i$.

\vspace{0.5 \baselineskip}

We claim that in order to construct $r_{n_0}$ satisfying the conditions in $(*)$, it suffices to prove the following proposition. 

\begin{prop}\label{final proposition}

Let $n \ge 2$ be an integer. We assume that there exists a lifting $r_n' : G_{F^+, S \cup Q} \rightarrow \mathcal{G}_2(\mathcal{O}/\varpi^{n})$ of $\overline{r}$ satisfying the following conditions.

(1) \ $r_{n}'|_{G_{F_{\tilde{v}}}} \mod \varpi^{n-1}$ is equivalent to $\rho_v \mod \varpi^{n-1}$ for any $v \in S \cup Q$. 

(2) \ The fixed field $P_{n}$ of $\mathrm{ad}r_{n}'|_{G_{K}}$ is linearly disjoint from $K_{f_1} \cdots K_{g_s}(\zeta_{p^{\infty}})$ over $K$.

(3) \ $\mathrm{ad}r_{n}'|_{G_{F, S}} : G_{F, S} \rightarrow \mathrm{PGL}_2(\mathcal{O}/\varpi^{n})$ is surjective if $n \ge 3$ and the image of $\mathrm{ad}r_{2}'|_{G_{F, S}} : G_{F, S} \rightarrow \mathrm{PGL}_2(\mathcal{O}/\varpi^{2})$ contains $\mathrm{PGL}_2(\mathbb{F})$. 

Then there exists a finite set $Q'$ of $r_{n}'$-nice primes of $F^+$ and $r_n : G_{F^+, S \cup Q \cup Q'} \rightarrow \mathcal{G}_2(\mathcal{O}/\varpi^n)$ satisfying the following conditions.

(a) \ $r_{n} \mod \varpi^{n-1}$ and $r_n' \mod \varpi^{n-1}$ are equivalent.

(b) \ $r_{n}|_{G_{F_{\tilde{v}}}}$ is equivalent to $\rho_v \mod \varpi^{n}$ for any $v \in S \cup Q$.

(c) \ For any $v \in Q'$, $r_n|_{G_{F^+_{v}}}$ has a lifting to a point of $\mathrm{Spec}R^{\mathrm{st}, \psi|_{G_{F_{v}^+}}}_{\overline{r}|_{G_{F^+_v}}}(\mathcal{O})$.

\end{prop}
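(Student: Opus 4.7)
The plan is to follow the inductive Khare--Ramakrishna strategy, adapted in the style of \cite{KP} to match the prescribed generic local liftings $\rho_v$ at $v\in S\cup Q$. The idea is to modify $r_n'$ by a global $1$--cocycle of the form $\varpi^{n-1}\phi$, where $\phi\in Z^1(G_{F^+,S\cup Q\cup Q'},\mathrm{ad}^0\overline r)$ and $Q'$ is an appropriately chosen finite set of $r_n'$--nice primes disjoint from $S\cup Q$.

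Step~1 (Local set-up). By hypothesis~(1) of the proposition, at each $v\in S\cup Q$ the restrictions $r_n'|_{G_{F_{\tilde v}}}\bmod \varpi^{n-1}$ and $\rho_v\bmod \varpi^{n-1}$ coincide, and both lie in $Y_{v,n}$: for $v\in S$ one takes $Y_v$ as in Theorem~\ref{formally smooth}, while for $v\in Q$ the formal smoothness of $R^{\mathrm{st},\psi|_{G_{F_v^+}}}_{\overline r|_{G_{F_v^+}}}$ (Lemma~\ref{Steinberg deformation}(1)) yields the analogous property with $\mathcal{L}_v=\mathcal{L}_v^{\mathrm{st}}$ and $n_v=1$. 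Since $Y_{v,n}\to Y_{v,n-1}$ is a $Z_v$--torsor, there is a unique class $\delta_v\in\mathcal{L}_v\subset H^1(F_{\tilde v},\mathrm{ad}^0\overline r)$, represented by some $\tilde\delta_v\in Z_v$, such that $(I+\varpi^{n-1}\tilde\delta_v)\,r_n'|_{G_{F_{\tilde v}}}$ is equivalent to $\rho_v\bmod \varpi^n$.

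Step~2 (Reformulation as a cohomological lifting problem). We look for $\phi\in H^1(G_{F^+,S\cup Q\cup Q'},\mathrm{ad}^0\overline r)$ with $\mathrm{res}_v[\phi]=\delta_v\in\mathcal{L}_v$ for every $v\in S\cup Q$. Setting $r_n:=(I+\varpi^{n-1}\phi)\,r_n'$, Lemma~\ref{deformation theory} shows that (a) holds tautologically and (b) follows from Step~1. For (c), at each $v\in Q'$ the cocycle $\mathrm{res}_v[\phi]$ can be nudged within its coset so that Lemma~\ref{liftable Steinberg}(4) applies: using the splitting $H^1(F_{\tilde v},\mathrm{ad}^0\overline r)=\mathcal{L}_v^{\mathrm{ur}}\oplus\mathcal{L}_v^{\mathrm{st}}$ of Lemma~\ref{Steinberg deformation}(2), an unramified adjustment does not affect (b) but produces the required $\alpha$ in Lemma~\ref{liftable Steinberg}(4), making $r_n|_{G_{F_v^+}}$ Steinberg-liftable to $\mathcal{O}$.

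Step~3 (Choice of $Q'$ via Poitou--Tate and Chebotarev). The existence of such a global $\phi$ is controlled by Proposition~\ref{PTE}. Hypotheses~(2) and~(3) of the proposition supply exactly the linear disjointness inputs needed to run the argument of Proposition~\ref{independence} at level $n$: the fields cut out by a basis of $H^1(G_{F^+,S\cup Q},\mathrm{ad}^0\overline r)$ and $H^1(G_{F^+,S\cup Q},\mathrm{ad}^0\overline r(1))$ are mutually linearly disjoint over the fixed field of $\mathrm{ad}\,r_n'|_{G_{K(\zeta_p)}}$, and the image of $\mathrm{ad}\,r_n'$ is large enough that Chebotarev produces $r_n'$--nice primes $v$ of any desired behaviour. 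Iterating the argument of Lemma~\ref{obstruction}, we enlarge to $Q\cup Q'$ so that
\[
H^1_{\mathcal{L}_{Q'}^\perp}(G_{F^+,S\cup Q\cup Q'},\mathrm{ad}^0\overline r(1)) = 0,
\]
while simultaneously arranging (using the same Chebotarev input) that the tuple $(\delta_v)_{v\in S\cup Q}$ lies in the image of the restriction map from $H^1_{\mathcal{L}_{Q'}}(G_{F^+,S\cup Q\cup Q'},\mathrm{ad}^0\overline r)$. By the Poitou--Tate exact sequence of Proposition~\ref{PTE}(1), these two conditions produce a global class $\phi$ with the prescribed local restrictions.

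The main obstacle is Step~3: one must simultaneously annihilate the dual Selmer group $H^1_{\mathcal{L}_{Q'}^\perp}(\mathrm{ad}^0\overline r(1))$ \emph{and} realize the prescribed tuple $(\delta_v)_{v\in S\cup Q}$ in the image of restriction from the $\mathcal{L}_{Q'}$--Selmer group. The hypotheses (2) and (3) on $r_n'$ were tailored so that the Chebotarev-based argument underlying Proposition~\ref{independence}, which at $n=2$ is guaranteed by the initial construction of $r_2'$, continues to function at level $n$; this ensures that the auxiliary primes can be chosen to kill each cohomology class that appears as an obstruction. Once Step~3 is complete, Steps~1, 2 and 4 are formal verifications.
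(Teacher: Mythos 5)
The proposal has a genuine gap, and in fact Step~3 as formulated is mathematically impossible. You define $\mathcal{L}_{Q'}$ by putting $\mathcal{L}_v^{\mathrm{st}}$ at $Q\cup Q'$ and the $\mathcal{L}_v$ of Theorem~\ref{formally smooth} at $S$, and then try to \emph{both} annihilate $H^1_{\mathcal{L}_{Q'}^\perp}(G_{F^+,S\cup Q\cup Q'},\mathrm{ad}^0\overline r(1))$ \emph{and} arrange that $(\delta_v)_{v\in S\cup Q}$ lies in the image of restriction from $H^1_{\mathcal{L}_{Q'}}$. But the Greenberg--Wiles formula (Corollary~\ref{Greenberg-Wiles}, together with Lemma~\ref{Steinberg deformation}(2), which gives $\dim\mathcal{L}_v^{\mathrm{st}}-\dim H^0(F^+_v,\mathrm{ad}^0\overline r)=0$) shows that $\dim H^1_{\mathcal{L}_{Q'}}=\dim H^1_{\mathcal{L}_{Q'}^\perp}$; the paper records this identity explicitly just before Lemma~\ref{obstruction}. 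Hence killing the dual Selmer group forces $H^1_{\mathcal{L}_{Q'}}=0$, so the image of its restriction map is trivial and the prescribed $(\delta_v)$ cannot be hit unless they are all zero. You have transplanted the ``kill both Selmer groups'' device of Lemma~\ref{obstruction}/Corollary~\ref{lifting}, which is used at a \emph{different} stage (lifting the final $r_{n_0}$ all the way to $\mathcal{O}$), into a place where it does the opposite of what you want.

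The paper's argument (Proposition~\ref{obstruction removing} and Corollary~\ref{lifting prescribed}) uses a different mechanism. First one chooses $Q_1$ so that restriction $H^1(G_{F^+,S\cup Q\cup Q_1},\mathrm{ad}^0\overline r)\to\oplus_{v\in S\cup Q}H^1(F^+_v,\mathrm{ad}^0\overline r)$ is an \emph{isomorphism} onto the full direct sum of local cohomology groups at $S\cup Q$ (not onto a quotient by local conditions), which immediately yields a class $g$ with $\mathrm{res}_v g=\delta_v$ on the nose for every $v\in S\cup Q$, with no constraint at $Q_1$. The cost is that $g|_{G_{F^+_v}}$ for $v\in Q_1$ may fail to be Steinberg-liftable. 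This is corrected by the classes $f_k$ of Proposition~\ref{obstruction removing}(2), which come with further auxiliary primes $T_k$ where $f_k$ is allowed to ramify, vanish identically at $S\cup Q\cup Q_1\setminus\{v_k\}$, and have nonzero unramified component precisely at $v_k$ (with the triangularity condition on the $T_j$'s making the choice of the scalars $\alpha_k$ consistent). Your Step~2 sidesteps all of this: you propose to ``nudge $\mathrm{res}_v[\phi]$ within its coset'' by a local unramified adjustment at each $v\in Q'$, but there is no reason such a local adjustment is the restriction of a \emph{global} cocycle that is trivial at all other places of $S\cup Q\cup Q'$; producing that global cocycle is exactly what the $f_k$ (and the attendant primes $T_k$, which must be absorbed into the final $Q'$) are for. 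This is not a formal verification but the substance of the proof.
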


\textbf{Proposition \ref{final proposition} $\Rightarrow$ Construction of $r_{n}$ as in (*)}

\vspace{0.5 \baselineskip}

We will construct $r_{n}$ as in (*) by induction on $n \ge 2$. For $n = 2$, we already constructed $r_2'$ satisfying the conditions (1) $\sim$ (3) in Proposition \ref{final proposition}. Thus by Proposition \ref{final proposition}, we obtain $Q'$ and $r_2 : G_{F^+, S \cup Q \cup Q'} \rightarrow \mathcal{G}_2(\mathcal{O}/\varpi^2)$ satisfying the conditions (a) $\sim$ (c) in Proposition \ref{final proposition}. By taking a lifting of $r_n|_{G_{F^+_{v}}}$ to a point of $\mathrm{Spec}R^{\mathrm{st}, \psi|_{G_{F_{v}^+}}}_{\overline{r}|_{G_{F^+_v}}}(\mathcal{O})$ and replacing $Q \cup Q'$ with $Q$, $r_2$ satisfies the conditions in (*). If we have $r_{n}$ as in (*) for $n \ge 2$, then we can take a lifting $r_{n+1}' : G_{F, S} \rightarrow \mathcal{G}_2(\mathcal{O}/\varpi^{n+1})$ of $r_{n}$ by the liftability of $r_{n}|_{G_{F^+_v}}$ for any $v \in S \cup Q$ as used in the proof of Corollary \ref{lifting}. We claim that this $r_{n+1}'$ satisfies the conditions (1) $\sim$ (3) of Proposition \ref{final proposition}. (1) is clear. By the property (A), $\mathrm{ad}r_n|_{G_{F_{S \cup Q}}} \mod \varpi^2 : G_{F, S \cup Q} \rightarrow \mathrm{PGL}(\mathcal{O}/\varpi^2)$ is surjective. Thus $\mathrm{ad}r_n|_{G_{F_{S \cup Q}}}$ is also surjective by \cite[Lemma 7.13]{Feru2} and $[\mathrm{Ker}(\mathrm{PGL}_2(\mathcal{O}/\varpi^n) \rightarrow \mathrm{PGL}_2(\mathbb{F})) : \mathrm{Ker}(\mathrm{PGL}_2(\mathcal{O}/\varpi^n) \rightarrow \mathrm{PGL}_2(\mathbb{F}))] = \mathrm{Ker}(\mathrm{PGL}_2(\mathcal{O}/\varpi^n) \rightarrow \mathrm{PGL}_2(\mathcal{O}/\varpi^2))$. (This is used in \cite[p.3561]{KP}. Here, $[ \ : \ ]$ denotes the commutator group.) 

Note that we have $\mathrm{ad}r'_{n+1}|_{G_{K}} : G_K/(\mathrm{Ker}\mathrm{ad}r'_{n+1}|_{G_{K}}) \Isom \mathrm{Ker}(\mathrm{PGL}_2(\mathcal{O}/\varpi^n) \rightarrow \mathrm{PGL}_2(\mathbb{F}))$. In particular, we have $\mathrm{ad}r'_{n+1}|_{G_{K}} \mod \varpi^2 : G_K/(\mathrm{Ker}(\mathrm{ad}r'_{n+1}|_{G_{K}} \mod \varpi^2)) \Isom \mathrm{Ker}(\mathrm{PGL}_2(\mathcal{O}/\varpi^2) \rightarrow \mathrm{PGL}_2(\mathbb{F})) \cong \mathrm{ad}^0\overline{r}$ beucase we assume $p \neq 2$. Note also that $\mathrm{ad}^0\overline{r}$ is a simple $G_{F^+, S}$-module and $\mathrm{Gal}(K_{f_1} \cdots K_{g_s}(\zeta_{p^{\infty}})/K) \Isom (\prod_{i=1}^r \mathrm{Gal}(K_{f_i}/K)) \times (\prod_{j=1}^s \mathrm{Gal}(K_{g_j}/K)) \times \mathrm{Gal}(K(\zeta_{p^{\infty}})/K) \cong (\prod_{i=1}^r \mathrm{ad}^0\overline{r}) \times (\prod_{j=1}^s \mathrm{ad}^0\overline{r}) \times \mathrm{Gal}(K(\zeta_{p^{\infty}})/K)$ as $G_{F^+, S}$-modules. Thus if the fixed field $P_{n+1}$ of $\mathrm{ad}r_{n+1}'|_{G_{K}}$ was not linearly disjoint from $K_{f_1} \cdots K_{g_s}(\zeta_{p^{\infty}})$ over $K$ (i.e., $P_{n+1} \cap K_{f_1} \cdots K_{g_s}(\zeta_{p^{\infty}}) \neq K$), the fixed field $P_2'$ of $\mathrm{ad}r_{n+1}'|_{G_{K}} \mod \varpi^2$ would be equal to $K_{f_i}$ or $K_{g_i}$ over $K$ for some $i$. This contradicts to the condition (B). $\Box$




\vspace{0.5 \baselineskip}

In the following, we will prove Proposition \ref{final proposition}. By the same method of \cite{KR}, we can prove the following.

\begin{prop}\label{obstruction removing}

    1 \ There exists a finite set $Q' = \{ v_1, \cdots, v_m \}$ of $r_{n}'$-nice places such that we have $H^1(G_{F^+, S \cup Q \cup Q'}, \mathrm{ad}^0\overline{r}) \Isom \oplus_{v \in S \cup Q} H^1(F^+_v, \mathrm{ad}^0\overline{r})$ and $H^1(G_{F^+, S \cup Q \cup Q'}, \mathrm{ad}^0\overline{r}(1)) \Isom \oplus_{v \in Q'} H^1(F^+_v, \mathrm{ad}^0\overline{r}(1))$.

    2 \ Let $Q'$ be as in 1. Then for any $1 \le k \le m$, there exists a finite set $T_k$ of $r_{n}'$-nice places and $f_{k} \in H^1(G_{F^+, S \cup Q \cup Q' \cup T_k}, \mathrm{ad}^0\overline{r})$ satisfying the following conditions.
    
    (1) \ $|T_k| = 1$ or $2$.
    
    (2) \ $(f_{k}|_{G_{F^+_w}})_{\mathbb{F}} = 0$ for any $w \in T_k$ and $(f_{k}|_{G_{F^+_{v_k}}})_{\mathbb{F}} \neq 0$. (See Lemma \ref{liftable Steinberg} for the definition of $(f_{k}|_{G_{F^+_w}})_{\mathbb{F}}$.)
    
    (3) \ For any $j < k$ and $t \in T_j$, $(f_k|_{G_{F^+_{t}}})_{\mathbb{F}} = 0$.
    
    (4) \ $f_k|_{G_{F^+_v}} = 0$ for any $v \in S \cup Q \cup Q' \setminus \{ v_k \}$.

\end{prop}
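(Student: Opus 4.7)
The plan for part (1) follows the strategy of \cite{KR}. We apply the Poitou--Tate exact sequence (Proposition \ref{PTE}) to the local condition $\mathcal{L}_{Q'} = (\mathcal{L}_v)_{v \in S \cup Q \cup Q'}$ defined by $\mathcal{L}_v = 0$ for $v \in S \cup Q$ and $\mathcal{L}_v = H^1(F^+_v, \mathrm{ad}^0 \overline{r})$ for $v \in Q'$; then $H^1_{\mathcal{L}_{Q'}}$ is precisely the kernel of the first restriction map in part 1 and $H^1_{\mathcal{L}_{Q'}^{\perp}}$ is the kernel of the second. Once we arrange both kernels to vanish, the Greenberg--Wiles formula (Corollary \ref{Greenberg-Wiles}) forces the cokernels to vanish as well, yielding the desired isomorphisms. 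To kill these Selmer groups we inductively enlarge $Q'$: fix bases $\{f_i\}$ and $\{g_j\}$ of the current $H^1(G_{F^+, S \cup Q \cup Q'}, \mathrm{ad}^0 \overline{r})$ and $H^1(G_{F^+, S \cup Q \cup Q'}, \mathrm{ad}^0 \overline{r}(1))$, and apply Proposition \ref{independence} together with the Chebotarev density theorem to the compositum $K_{f_1} \cdots K_{g_s} K(\zeta_{p^\infty})$ to produce a place $v$ whose Frobenius detects a prescribed surviving class but on which the lift $r_n'$ remains nice. The linear disjointness condition (2) from Proposition \ref{final proposition} for $P_n$ is exactly what is needed to guarantee that such a $v$ is in fact $r_n'$-nice. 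Iterating, one kills the obstruction dimension one at a time; the process terminates because each step strictly reduces $\dim H^1_{\mathcal{L}^{\perp}}$ while preserving $H^1_{\mathcal{L}} = 0$, as in \cite[proof of Theorem~2]{KR}.

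The plan for part (2) is the dual construction. Fix $1 \le k \le m$. We seek a class $f_k$ in $H^1(G_{F^+, S \cup Q \cup Q' \cup T_k}, \mathrm{ad}^0 \overline{r})$ with prescribed local behavior: nonzero image in $H^1(F^+_{v_k}, \mathbb{F})$ via the surjection induced by the decomposition $H^1(F^+_{v_k}, \mathrm{ad}^0\overline{r}) = \mathcal{L}_{v_k}^{\mathrm{ur}} \oplus \mathcal{L}_{v_k}^{\mathrm{st}}$ of Lemma \ref{Steinberg deformation}, vanishing at all other places of $S \cup Q \cup Q'$, and vanishing at the previously chosen $T_j$ for $j<k$. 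By the isomorphism in part 1, an unramified class with the desired value at $v_k$ already exists in the full cohomology group; the issue is to add a minimal auxiliary set $T_k$ of $r_n'$-nice primes (i.e., allowing controlled ramification there) so that we can make the class exactly zero at all the other prescribed places simultaneously. We choose $T_k$ by a second application of Chebotarev and Proposition \ref{independence}: each single required local vanishing can be achieved by allowing ramification at one auxiliary nice prime whose Frobenius is adjusted appropriately. The combinatorial statement $|T_k| \le 2$ reflects that at most two simultaneous local adjustments are needed to match the one condition at $v_k$ against the prescribed vanishing at the earlier locations, thanks to the two-dimensional splitting $H^1 = \mathcal{L}^{\mathrm{ur}} \oplus \mathcal{L}^{\mathrm{st}}$ available at any nice prime.

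The main obstacle will be the simultaneous control across all indices $k$: when constructing $f_k$, the auxiliary primes in $T_j$ for $j<k$ must not interfere, and when constructing $f_{k+1}$ the primes in $T_k$ must likewise be neutralized. Managing this requires a careful bookkeeping of the linear disjointness of all the field extensions cut out by the relevant cocycles, provided by Proposition \ref{independence}, together with the uniformity of Chebotarev: at each stage the compositum of obstruction fields remains linearly disjoint from $K(\zeta_{p^\infty})$ and from the field cut out by $\mathrm{ad}\, r_n'$, permitting the selection of a Frobenius whose action realizes the required vanishing/nonvanishing pattern on all existing classes at once. Bounding $|T_k| \le 2$ uniformly (rather than letting it grow with $k$) is the delicate part and relies on exploiting the two-dimensionality of the local $H^1$ at nice primes to handle each constraint with at most two auxiliary primes, exactly as in the original Khare--Ramakrishna construction.
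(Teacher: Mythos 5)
Your overall plan is in the right spirit — the paper's own proof is just a citation to Khare--Ramakrishna's Lemma~9, Proposition~10 and Proposition~11, and your outline invokes the same ingredients (Poitou--Tate, Greenberg--Wiles, Proposition~\ref{independence}, Chebotarev, niceness via the linear-disjointness hypothesis on $P_n$). But the sketch has a concrete error at the start of part~(2) and leaves the quantitative heart of the lemma, $|T_k|\le 2$, as an assertion rather than an argument.

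The error: you write ``By the isomorphism in part 1, an unramified class with the desired value at $v_k$ already exists in the full cohomology group.'' This misapplies the isomorphism. Part~(1) identifies $H^1(G_{F^+, S\cup Q\cup Q'},\mathrm{ad}^0\overline{r})$ with $\oplus_{v\in S\cup Q}H^1(F^+_v,\mathrm{ad}^0\overline{r})$; it controls restrictions at $S\cup Q$ only, and says nothing about the restriction at $v_k\in Q'$. In fact, under that isomorphism the unique class vanishing at every $v\in S\cup Q$ is the zero class, so no nonzero class with the required vanishing at $S\cup Q$ exists before $T_k$ is introduced — producing one is precisely what $T_k$ is for, so the class cannot ``already exist''. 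This also means the subsequent step (``adjust at $T_k$ so the class becomes zero at the other places'') has the logic backwards: one must first enlarge the ramification set to create any candidate at all, then control its local behavior, and the bound on $|T_k|$ comes from being able to realize the one-dimensional obstruction at $v_k$ using at most two new nice primes via the splitting $H^1 = \mathcal{L}^{\mathrm{ur}}\oplus\mathcal{L}^{\mathrm{st}}$ — a claim your sketch gestures at but does not establish. A secondary imprecision in part~(1): once both kernels $H^1_{\mathcal{L}_{Q'}}$ and $H^1_{\mathcal{L}_{Q'}^\perp}$ vanish, surjectivity of the two restriction maps follows directly from the Poitou--Tate sequence itself (the image is the kernel of the map into the dual Selmer group), not from the Greenberg--Wiles formula; Greenberg--Wiles is what pins down the required size of $Q'$, since adding a nice prime with $\mathcal{L}_v = H^1(F^+_v,\cdot)$ shifts $\dim H^1_{\mathcal{L}_{Q'}} - \dim H^1_{\mathcal{L}_{Q'}^\perp}$ by $+1$, and your iteration needs to explain why both groups vanish at the terminal step rather than merely one of them.
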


\begin{proof} Same proof as \cite[Lemma 9, Proposition 10 and Proposition 11]{KR}. \end{proof}

\begin{cor}\label{lifting prescribed}

There exist a finite set $Q'$ of $r_n'$-nice places and a lifting $r_n : G_{F^+, S \cup Q \cup Q'} \rightarrow \mathcal{G}_2(\mathcal{O}/\varpi^n)$ of $r_{n-1}$ satisfying the following.

(a) \ $r_{n} \mod \varpi^{n-1}$ and $r_n' \mod \varpi^{n-1}$ are equivalent.

(b) \ $r_n|_{G_{F_v^+}}$ is equivalent to $\rho_v \mod \varpi^n$ for any $v \in S \cup Q$.

(c) \ $r_n|_{G_{F_v^+}}$ has a lifting to a point of $\mathrm{Spec}R^{\mathrm{st}, \psi|_{G_{F^+_v}}}_{\overline{\rho}|_{G_{F_v^+}}}(\mathcal{O})$ for any $v \in Q'$.

\end{cor}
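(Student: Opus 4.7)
The plan is to begin with the lifting $r_n'$ provided by hypothesis and modify it in two successive stages, using the global cohomology classes produced by Proposition \ref{obstruction removing} as the mechanism for local surgery. First I would apply part 1 of Proposition \ref{obstruction removing} to produce a finite set $Q'=\{v_1,\dots,v_m\}$ of $r_n'$-nice places realizing the isomorphisms
\[
H^1(G_{F^+,S\cup Q\cup Q'},\mathrm{ad}^0\overline{r})\;\xrightarrow{\sim}\;\bigoplus_{v\in S\cup Q}H^1(F^+_v,\mathrm{ad}^0\overline{r}),
\]
\[
H^1(G_{F^+,S\cup Q\cup Q'},\mathrm{ad}^0\overline{r}(1))\;\xrightarrow{\sim}\;\bigoplus_{v\in Q'}H^1(F^+_v,\mathrm{ad}^0\overline{r}(1)).
\]
The first isomorphism lets me correct property (b): since $r_n'|_{G_{F_{\tilde v}^+}}$ and $\rho_v\bmod\varpi^n$ both lift $r_n'|_{G_{F_{\tilde v}^+}}\bmod\varpi^{n-1}$, they differ by a class in $H^1(F_v^+,\mathrm{ad}^0\overline{r})\otimes\mathbb{F}$ (via Lemma \ref{deformation theory}). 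Taking the preimage of these local differences in the global $H^1$ and twisting $r_n'$ by it yields a lifting $r_n''$ with $r_n''|_{G_{F_{\tilde v}^+}}$ equivalent to $\rho_v\bmod\varpi^n$ for every $v\in S\cup Q$, while still satisfying (a).

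In the second stage I would iteratively correct Steinberg-liftability at each $v_k\in Q'$. At each $v_k$, Lemma \ref{liftable Steinberg}(4) tells me that an appropriate $\mathbb{F}$-multiple $\alpha_k\varpi^{n-1}f_k$ of the global class $f_k\in H^1(G_{F^+,S\cup Q\cup Q'\cup T_k},\mathrm{ad}^0\overline{r})$ produced by part 2 of Proposition \ref{obstruction removing} makes the local lift at $v_k$ liftable all the way to a point of $\mathrm{Spec}\,R^{\mathrm{st},\psi|_{G_{F_{\tilde v_k}}}}_{\overline\rho|_{G_{F_{\tilde v_k}}}}(\mathcal{O})$. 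Property (4) of Proposition \ref{obstruction removing} says $f_k$ vanishes on $G_{F_v^+}$ for $v\in S\cup Q\cup Q'\setminus\{v_k\}$, so this surgery preserves (b) at $S\cup Q$ and does not disturb the Steinberg adjustment already performed at $v_j$ for $j<k$; property (3) says $f_k$ vanishes at the earlier auxiliary primes $T_j$, so the Steinberg condition at those primes is also undisturbed when later $f_k$'s are added. Replacing $Q'$ by $Q'\cup T_1\cup\cdots\cup T_m$ absorbs the new ramification into the Steinberg set.

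The main obstacle is ensuring that at each auxiliary prime $t\in T_k$, the resulting lift is itself liftable to the Steinberg deformation ring over $\mathcal{O}$. The ramification at $t$ was created only by the adjustment $\alpha_k\varpi^{n-1}f_k$, and property (2) of Proposition \ref{obstruction removing} says $(f_k|_{G_{F_t^+}})_{\mathbb{F}}=0$, i.e. its image in $H^1(F_t^+,\mathbb{F})$ vanishes. In the notation of Lemma \ref{liftable Steinberg}(2)--(3), this forces the unramified-twist parameter $A$ to vanish in the resulting local lift at $t$, so Lemma \ref{liftable Steinberg}(3) gives liftability of this tame representation to a Steinberg deformation over $\mathcal{O}$. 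This verifies condition (c) at every prime in the enlarged $Q'$, while conditions (a) and (b) are preserved by construction; the resulting representation $r_n$ then satisfies all three requirements of the corollary.
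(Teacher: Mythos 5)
Your overall strategy is the same as the paper's: produce $Q_1$ and $g$ via part~1 of Proposition~\ref{obstruction removing} to correct (b), then use the classes $f_k$ from part~2 together with Lemma~\ref{liftable Steinberg}(4) to pick $\alpha_k\in\mathbb{F}$ so that $(1+\varpi^{n-1}(g+\sum_k\alpha_k f_k))r_n'$ is Steinberg-liftable at each $v_k$, and finally absorb $\bigcup_k T_k$ into $Q'$. That matches the paper's (very terse) argument.

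However, there is a genuine leap in your verification of (c) at the auxiliary primes $t\in T_k$. You write that since $(f_k|_{G_{F_t^+}})_{\mathbb{F}}=0$, ``this forces the unramified-twist parameter $A$ to vanish in the resulting local lift at $t$.'' That does not follow. The parameter $A$ at $t$ is shifted not only by $f_k$ but by the full class $h=g+\sum_j\alpha_j f_j$: since $t$ is $r_n'$-nice, $r_n'|_{G_{F_t^+}}$ itself has $A_0=0$, and the new value is $A = (h|_{G_{F_t^+}})_{\mathbb{F}}$. Properties~(2) and~(3) of Proposition~\ref{obstruction removing} kill the $\mathbb{F}$-components of $f_k$ and of $f_j$ with $j>k$ at $t$, but say nothing about $g$ or about $f_j$ with $j<k$ — all of which restrict to elements of $\mathcal{L}_t^{\mathrm{ur}}\cong H^1(F_t^+,\mathbb{F})$ that may be nonzero. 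If any of those $\mathbb{F}$-components is nonzero at $t$, then $A\ne 0$, and since the only ramified contribution at $t$ is $\alpha_k f_k$ (controlling $B$), the resulting local lift is not a point of the Steinberg locus. To close the gap one needs the $T_k$ to be chosen (sequentially, by Chebotarev) so that the $\mathbb{F}$-components of $g$ and of the earlier $f_j$'s also vanish at every $t\in T_k$; this is implicit in the citation to \cite{KR} underlying Proposition~\ref{obstruction removing} but is not contained in the properties~(1)--(4) as you (and the paper) use them. As written, your justification of (c) at $T_k$ is not a proof; stating and invoking the additional vanishing at $T_k$ is required.
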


\begin{proof}

By 1 of Proposition \ref{obstruction removing}, we obtain $Q_1 = \{ v_1, \cdots, v_m \}$ and $g \in H^1(G_{F^+, S \cup Q \cup Q_1}, \mathrm{ad}^0\overline{r})$ such that $(1 + \varpi^{n-1} g)r_{n}'|_{G_{F_{\tilde{v}}}}$ is equivalent to $\rho_v \mod \varpi^n$ for any $v \in S \cup Q$. Moreover, by using 2 of Proposition \ref{obstruction removing} and 4 of Lemma \ref{liftable Steinberg}, we can take $\alpha_1, \cdots, \alpha_m \in \mathbb{F}$ such that $(1 + \varpi^{n-1} (g + \sum_{k = 1}^{m} \alpha_k f_k))r_{n}' : G_{F^+, S \cup Q \cup Q_1 \cup (\cup_{k=1}^m T_k)} \rightarrow \mathcal{G}_2(\mathcal{O}/\varpi^{n})$ satisfies the desired conditions by putting $Q' := Q_1 \cup (\cup_{k=1}^m T_k)$. \end{proof}

We complete the proof of Proposition \ref{final proposition} and thus we obtain our main theorem \ref{lifting of Galois}.

\subsection{Big $R=T$ and Gelfand-Kirillov dimension}

The result of this subsection is essentially contained in \cite{BGRT} and \cite{GEKI}. But they consider cohomologies of Shimura curves or Shimura sets coming from division algebras over totally real fields. Thus we need to slightly modify their results. 

We first recall some local results. Let $p$ be an odd prime and $L/\mathbb{Q}_p$ be a finite unramified extension, let $E$ be a finite extension of $\mathbb{Q}_p$ contained in $\overline{\mathbb{Q}}_p$ such that $\mathrm{Hom}_{\mathbb{Q}_p}(L, \overline{\mathbb{Q}}_p) = \mathrm{Hom}_{\mathbb{Q}_p}(L, E)$. Let $\mathcal{O}$ be the ring of integers of $E$ and $\mathbb{F}$ be the residue field of $\mathcal{O}$. Let $f := [L : \mathbb{Q}_p]$ and we fix an embedding $L \hookrightarrow E$ over $\mathbb{Q}_p$ and let $w_f : I_{L} \rightarrow I_{L^{\mathrm{ab}}/L} \xrightarrow{\mathrm{rec}_{L}^{-1}} \mathcal{O}_L^{\times} \twoheadrightarrow \mathbb{F}_{L}^{\times} \hookrightarrow \mathbb{F}^{\times}$ be the Serre's fundamental character of level $f$, where $I_{L^{\mathrm{ab}}/L}$ denotes the inertia group of the maximal abelian extension $L^{\mathrm{ab}}$ of $L$ over $L$.

\begin{dfn}

Let $\overline{\rho}_L : G_{L} \rightarrow \mathrm{GL}_2(\mathbb{F})$ be a continuous representation. 

1 \ If $\overline{\rho}_L$ is reducible, we say that $\overline{\rho}_L$ is generic if $\overline{\rho}_L|_{I_L}$ can be written as in the following up to twist. ($0 \le r_i \le p-3$ and $(r_0, \cdots, r_{f-1}) \neq (0, \cdots, 0), (p-3, \cdots, p-3)$. See \cite[Definition 11.7]{PB} for the definition of genericity of $\overline{\rho}_L$ when $\overline{\rho}_L$ is irreducible.)

$\begin{pmatrix}
\omega_{f}^{(r_0+1) + (r_1+1)p + \cdots + (r_{f-1}+1)p^{f-1}} & * \\
0 & 1
\end{pmatrix}$.

2 \ We say that $\overline{\rho}_L$ is reducible non-split sufficiently generic if $\overline{\rho}_L|_{I_L}$ can be written as in the following up to twist. ($2 \le r_i \le p-5$ and $* \neq 0$. This notion was introduced in \cite{GEKI}.)

$\begin{pmatrix}
\omega_{f}^{(r_0+1) + (r_1+1)p + \cdots + (r_{f-1}+1)p^{f-1}} & * \\
0 & 1
\end{pmatrix}$.

\end{dfn}

\begin{dfn}\label{inertia type}

Let $\tau : I_L \rightarrow \mathrm{GL}_2(\overline{\mathbb{Q}}_p)$ be a smooth representation. We say that $\tau$ is an inertia type if $\tau$ extends to a representation of $W_L$.
    
\end{dfn}

\begin{thm}\label{typetheory}

(1) \ For any inertia type $\tau : I_L \rightarrow \mathrm{GL}_2(\overline{\mathbb{Q}}_p)$, there exist irreducible representations $\sigma^{\mathrm{crys}}(\tau)$ and $\sigma^{\mathrm{ss}}(\tau)$ of $\mathrm{GL}_2(\mathcal{O}_{L})$ over $\overline{\mathbb{Q}}_p$ unique up to isomorphism satisfying the following conditions.

(a) \ For any irreducible smooth representation $\pi$ of $\mathrm{GL}_2(L)$ over $\overline{\mathbb{Q}}_p$, we have $$\mathrm{dim}_{\overline{\mathbb{Q}}_p}\mathrm{Hom}_{\overline{\mathbb{Q}}_p[\mathrm{GL}_2(\mathcal{O}_L)]}(\sigma^{\mathrm{crys}}(\tau), \pi) \le 1$$ and $$\mathrm{dim}_{\overline{\mathbb{Q}}_p}\mathrm{Hom}_{\overline{\mathbb{Q}}_p[\mathrm{GL}_2(\mathcal{O}_L)]}(\sigma^{\mathrm{ss}}(\tau), \pi) \le 1.$$
        
(b) \ $\mathrm{Hom}_{\overline{\mathbb{Q}}_p[\mathrm{GL}_2(\mathcal{O}_L)]}(\sigma^{\mathrm{crys}}(\tau), \pi) \neq 0$ if and only if $\mathrm{rec}_{L}(\pi)|_{I_{L}} = \tau$. 
        
(c) \ $\mathrm{Hom}_{\overline{\mathbb{Q}}_p[\mathrm{GL}_2(\mathcal{O}_L)]}(\sigma^{\mathrm{ss}}(\tau), \pi) \neq 0$ if and only if $\mathrm{rec}_{L}(\pi)^{\mathrm{ss}}|_{I_{L}} = \tau$ and $\pi$ is generic. 
        
(2) \ Let $D$ be a central division algebra over $L$ with $\mathrm{dim}_LD = 4$.

For any inertia type $\tau : I_L \rightarrow \mathrm{GL}_2(\overline{\mathbb{Q}}_p)$ which is equal to $\mathrm{rec}_{L}(\gamma)|_{I_L}$ for some discrete series representation $\gamma$ of $\mathrm{GL}_2(L)$ over $\overline{\mathbb{Q}}_p$, there exists an irreducible representation $\sigma_D(\tau)$ of $\mathcal{O}_{D}^{\times}$ over $\overline{\mathbb{Q}}_p$ satisfying the following conditions.

For any irreducible smooth representation $\pi$ of $D^{\times}$ over $\overline{\mathbb{Q}}_p$, we have $$\mathrm{dim}_{\overline{\mathbb{Q}}_p}\mathrm{Hom}_{\overline{\mathbb{Q}}_p[\mathcal{O}_D^{\times}]}(\sigma_D(\tau), \pi) \le 1$$ and this is the equality if and only if $\mathrm{rec}_{L}(\mathrm{JL}_{D}(\pi))|_{I_{L}} = \tau$, where $\mathrm{JL}_D$ denotes the Jacquet-Langlands correspondence.

\end{thm}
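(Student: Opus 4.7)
The plan is to reduce this statement to the inertial local Langlands correspondence for $\mathrm{GL}_2(L)$ (and its inner form $D^\times$), which is already in the literature; the task is really to assemble the right references case by case and to verify the semistable variant of the correspondence. Uniqueness in (1) and (2) is formal: once existence plus the ``only if'' direction of (b), (c), or the division-algebra analogue is established, any two candidates $\sigma$, $\sigma'$ for $\sigma^?(\tau)$ would have to occur as $\mathrm{GL}_2(\mathcal{O}_L)$- (resp. $\mathcal{O}_D^\times$-) types in the same set of smooth representations $\pi$, and the multiplicity-one condition (a) together with knowledge of the $K$-socle of each such $\pi$ would pin down their isomorphism class.

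For the existence of $\sigma^{\mathrm{crys}}(\tau)$ in (1), I would appeal to Henniart's appendix to Breuil--Mézard, which constructs for every inertial type $\tau$ an irreducible $\mathrm{GL}_2(\mathcal{O}_L)$-representation satisfying the desired matching with $\mathrm{rec}_L(\pi)|_{I_L}$ and with the multiplicity-one property. Concretely, one separates cases: if $\tau \cong \chi_1 \oplus \chi_2$ with $\chi_1 \neq \chi_2$ tame characters of $\mathcal{O}_L^\times$, then $\sigma^{\mathrm{crys}}(\tau)$ is the inflation of the corresponding principal series type to $\mathrm{GL}_2(\mathcal{O}_L)$; if $\tau = \chi \oplus \chi$ is scalar, then $\sigma^{\mathrm{crys}}(\tau) = \chi \circ \det$; and if $\tau$ is irreducible, $\sigma^{\mathrm{crys}}(\tau)$ is the cuspidal type attached to $\tau$ via Bushnell--Kutzko. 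For $\sigma^{\mathrm{ss}}(\tau)$, the only new case is when $\tau$ is scalar: then $\sigma^{\mathrm{ss}}(\tau)$ is the $\mathrm{GL}_2(\mathcal{O}_L)$-type arising from the (tamely ramified twist of) Steinberg, i.e., the cuspidal type $\mathrm{St} \otimes (\chi \circ \det)$ restricted appropriately, so that it detects precisely the generic $\pi$ with $\mathrm{rec}_L(\pi)^{\mathrm{ss}}|_{I_L} = \tau$; otherwise one sets $\sigma^{\mathrm{ss}}(\tau) = \sigma^{\mathrm{crys}}(\tau)$. In each case the multiplicity-one and matching properties follow from the explicit description of $\pi^K$ at the level of a sufficiently small compact open of $\mathrm{GL}_2(\mathcal{O}_L)$.

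For (2), the hypothesis that $\tau = \mathrm{rec}_L(\gamma)|_{I_L}$ for a discrete series $\gamma$ allows the use of the Jacquet--Langlands correspondence $\mathrm{JL}_D$. Under $\mathrm{JL}_D$, irreducible smooth $\pi$ of $D^\times$ correspond to discrete series of $\mathrm{GL}_2(L)$, so the condition $\mathrm{rec}_L(\mathrm{JL}_D(\pi))|_{I_L} = \tau$ is the natural transfer of the matching condition in (1). One can then define $\sigma_D(\tau)$ as the unique irreducible $\mathcal{O}_D^\times$-representation realizing the corresponding type on the division algebra side: in the supercuspidal case it is again a Bushnell--Kutzko-type construction (the maximal types for $D^\times$ are constructed by Broussous--Grabitz and others), and in the Steinberg case (unramified $\tau$) it is an unramified twist of the trivial character composed with the reduced norm. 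Multiplicity one for $\mathcal{O}_D^\times$-types on supercuspidals of $D^\times$ is classical.

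The main obstacle, in practice, is bookkeeping rather than deep mathematics: one must verify that $\sigma^{\mathrm{ss}}(\tau)$ exists with the sharp statement that it detects genericity (property (c)), and that this construction is compatible with $\sigma^{\mathrm{crys}}(\tau)$ in the non-scalar case. The cleanest route is to cite \cite{BMtype} (or the relevant type theory section of \cite{EGS}) for (1) and the analogous references for $D^\times$-types for (2), while checking by hand the scalar type case where $\sigma^{\mathrm{crys}}$ and $\sigma^{\mathrm{ss}}$ genuinely differ.
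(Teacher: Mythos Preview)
Your approach is essentially the same as the paper's: both reduce to citations from the literature. For (1) the paper simply cites \cite[Theorem 8.2.1]{EGS}, which packages exactly the Henniart/Breuil--M\'ezard case analysis you sketch.

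There is, however, a genuine slip in your treatment of (2). You assert that uniqueness is formal and speak of ``the unique irreducible $\mathcal{O}_D^\times$-representation realizing the corresponding type.'' But the statement of (2) claims only existence, and in fact $\sigma_D(\tau)$ is \emph{not} unique up to isomorphism: since $\mathcal{O}_D^\times$ is normal in $D^\times$, conjugating $\sigma_D(\tau)$ by a uniformizer of $D$ produces another irreducible $\mathcal{O}_D^\times$-representation satisfying the same conditions, and these conjugates are in general non-isomorphic. The paper makes this point explicitly in the remark following the theorem. This does not invalidate your existence argument, but your uniqueness claim is simply wrong.

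On the existence side for (2), the paper's route is also more elementary than yours. Rather than invoking Broussous--Grabitz type theory, the paper just uses that $L^\times \mathcal{O}_D^\times \trianglelefteq D^\times$ has finite (cyclic) index. Every irreducible smooth $\pi$ of $D^\times$ is finite-dimensional, so $\pi|_{\mathcal{O}_D^\times}$ decomposes as a direct sum of irreducibles permuted transitively by $D^\times/L^\times\mathcal{O}_D^\times$; Clifford theory then gives multiplicity one for each constituent, and since the condition $\mathrm{rec}_L(\mathrm{JL}_D(\pi))|_{I_L}=\tau$ depends only on $\pi$ up to unramified twist, any such constituent works as $\sigma_D(\tau)$. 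The paper points to \cite[comments before Lemma 3.10]{JLtype} for this. Your approach via explicit type constructions also works but is heavier than necessary.
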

        
\begin{rem}

1 \ Note that $\sigma_{D}(\tau)$ is not unique up to isomorphism in general because $\mathcal{O}_{D}^{\times}$ is a normal subgroup of $D^{\times}$ and the conjugation of $\sigma_{D}(\tau)$ by a generator of $D^{\times}/\mathcal{O}_{D}^{\times}$ satisfies the same condition, but is not isomorphic to $\sigma_{D}(\tau)$ in general. 

2 \ This also holds if we replace $\overline{\mathbb{Q}}_p$ by $\overline{\mathbb{Q}_l}$ or $\mathbb{C}$.

\end{rem}

\begin{proof} See \cite[Theorem 8.2.1]{EGS} for 1. 2 is easy because $L^{\times}\mathcal{O}_{D}^{\times}$ is a normal subgroup of $D^{\times}$ of finite index. See \cite[comments before Lemma 3.10]{JLtype} for details.  \end{proof}

\vspace{0.5 \baselineskip}

Now we come back to a global situation. We consider the following objects.

\begin{itemize}
\item $F^+$ is a totally real field.
\item $F_0$ is an imaginary quadratic field.
\item $F := F^+F_0$ and assume that $F/F^+$ is unramified at all finite places. (Then we have $2 \mid [F^+:\mathbb{Q}]$.)
\item $\Phi := \mathrm{Hom}_{F_0}(F, \mathbb{C})$. 
\item $p$ is an odd prime which is unramified in $F$.
\item $\iota : \overline{\mathbb{Q}}_p \Isom \mathbb{C}$.
\item $E$ is a finite extension of $\mathbb{Q}_p$ contained in $\overline{\mathbb{Q}}_p$ such that $\mathrm{Hom}(F, \overline{\mathbb{Q}}_p) = \mathrm{Hom}(F, E)$.
\item $\mathcal{O}$ is the ring of integers of $E$ and $\mathbb{F}$ is the residue field of $\mathcal{O}$.
\end{itemize}

\begin{itemize}
\item $S$ is a finite set of finite places of $F^+$ containing all $p$-adic places such that every place in $S$ splits in $F$ and every place of $F$ ramified over $\mathbb{Q}$ splits over $F^+$.
\item $S_p$ is the set of $p$-adic places in $F^+$. 
\item $T_1, T_2, S(B)_1$ and $S(B)_2$ be subsets of $S$ such that $S = S_p \sqcup T_1 \sqcup T_2 \sqcup S(B)_1 \sqcup S(B)_2$ and $\frac{1}{2}|S(B)| \equiv 0 \pmod 2$, where $S(B)$ is the set of places of $F$ lying above a place in $S(B)_1 \sqcup S(B)_2$.
\end{itemize}

Let $U/F^+$ be the unitary group defined by $S(B)$ and $\Psi := \emptyset$ as in {\S} 3.1. Then $U(F^+ \otimes_{\mathbb{Q}} \mathbb{R}) \cong \prod_{v \mid \infty} U(0, 2)$. We fix a $p$-adic place $w$ of $F$. We also write $w$ for the place of $F^+$ lying below $w$. Moreover, for any $v \in S$, we fix a lifting $\tilde{v}$ to $F$ of $v$ and we put $\tilde{S} := \{ \tilde{v} \mid v \in S \}$. We assume the following conditions.
 
\begin{itemize} 
\item There exists a cohomological cuspidal automorphic representation $\pi$ of $\mathrm{GL}_2(\mathbb{A}_{F})$ such that $\pi^c \cong \pi^{\vee}$ and $\overline{\rho}|_{G_{F(\zeta_p)}}$ is irreducible. (We put $\overline{\rho} := \overline{r_{\iota}(\pi)}$.)
\item $\overline{\rho}$ is unramified outside $S$.
\item $T_1$ contains two places $v_1, v_2$ such that $\mathrm{char}\mathbb{F}_{v_1} \neq \mathrm{char}\mathbb{F}_{v_2}$ and $\overline{\rho}|_{G_{F_{\tilde{v}}}}$ is unramified and generic with distinct Frobenius eigenvalues for any $v \in T_1$.
\item For any $v \in T_2 \sqcup S(B)_2$, $\overline{\rho}|_{G_{F_{\tilde{v}}}}$ is irreducible and generic.
\item Every $v \in S(B)_1$ is $\overline{\rho}$-nice.
\item For any $p$-adic place $v \neq w$, the restriction $\overline{\rho}|_{G_{F_{\tilde{v}}}}$ is generic.
\item $\overline{\rho}|_{G_{F_w}}$ is reducible non-split sufficiently generic.
\end{itemize}

After extending $E$, we may assume that all eigenvalues of $\overline{\rho}$ are contained in $\mathbb{F}$. For any $p$-adic place $v \neq w$, there exists a Fontaine-Laffaille lift of $\overline{\rho}|_{G_{F_{\tilde{v}}}}$ by the definition of the genericity \cite[Definition 11.7]{PB} and let $\lambda_v$ be its $p$-adic Hodge type. Let $\mathcal{D} = (F/F^+, S, \tilde{S}, \mathcal{O}, \overline{\rho}, \varepsilon_p^{-1}, \{ \mathcal{D}_v \}_{v \in S})$ be the global deformation problem defined by the following conditions. (See \cite[{\S} 1.5]{CW} or \cite[Definition 4.2]{matsumoto} for details about global deformation problems.)

\begin{itemize}
\item For any $v \in T_1 \sqcup T_2 \sqcup S(B)_2$, the local deformation problem $\mathcal{D}_v$ consists of all deformations of $\overline{\rho}|_{G_{F_{\tilde{v}}}}$.
\item For any $v \in S(B)_1$, the local deformation problem $\mathcal{D}_v$ consists of the Steinberg deformations of $\overline{\rho}|_{G_{F_{\tilde{v}}}}$, i.e., defined by $\mathrm{Spec}R^{\mathrm{st}}_{\overline{\rho}|_{G_{F_{\tilde{v}}}}}.$
\item For any $p$-adic place $v \neq w$, the local deformation problem $\mathcal{D}_v$ consists of the Fontaine-Laffaille deformations of $p$-adic Hodge type $\lambda_v$ of $\overline{\rho}|_{G_{F_{\tilde{v}}}}$.
\item $\mathcal{D}_w$ consists of all deformations of $\overline{\rho}|_{G_{F_w}}$.
\end{itemize}

Note that for any $v \in S$, the local lifting ring $R_{\mathcal{D}_v}$ is formally smooth over $\mathcal{O}$. (See Lemma \ref{formally smooth} and \cite[Lemmas 2.4.1 and 2.4.9]{CHT}.)  Let $K = \prod_{v}K_v$ be a neat open compact subgroup of $U(\mathbb{A}_{F^+}^{\infty})$ defined by the following.

\begin{itemize}
\item For any $v \notin S$ which is ramified over $\mathbb{Q}$, let $K_v = \mathrm{GL}_2(\mathcal{O}_{F_{\tilde{v}}})$, where $\tilde{v}$ is a lift of $v$ to $F$.
\item For any $v \notin S$ which is unramified over $\mathbb{Q}$, let $K_v$ is a hyperspecial open compact subgroup.
\item For any $v \in T_2$, let $K_v = \mathrm{GL}_2(\mathcal{O}_{F_{\tilde{v}}})$.
\item For any $v \in T_1$, let $K_v = \mathrm{Iw}_{\tilde{v}, 1}$, i.e., pro-$l$ Iwahori subgroup of $\mathrm{GL}_2(F_{\tilde{v}})$. (We put $l := \mathrm{char} \mathbb{F}_v$.)
\item For any $v \in S(B)_1 \sqcup S(B)_2$, let $K_v = \mathcal{O}_{B_{F_{\tilde{v}}}^{\mathrm{op}}}^{\times}$.
\item For any $p$-adic place $v \neq w$, let $K_v = \mathrm{GL}_2(\mathcal{O}_{F_{\tilde{v}}})$.
\item $K_w$ be an arbitrary open compact.
\end{itemize}

Let $\mathbb{T}^S_U$ be the subalgebra of the Hecke algebra $\mathcal{H}(U(\mathbb{A}_{F^+}^{\infty}), K)_{\mathcal{O}}$ generated over $\mathcal{O}$ by $\mathcal{H}(U(F^+_{w'}), K_{w'})_{\mathcal{O}}$ for any prime $r$ splitting in $F_0$ not lying below $S$ and any $w' \mid r$. Let $\mathfrak{m}$ be the non-Eisenstein ideal of $\mathbb{T}^S_U$ corresponding to $\overline{\rho}$. (We define the notions of ``decomposed generic'' and ``non-Eisenstein'' by the same way as definition \ref{non-Eisenstein}.)

For any $v \in T_2 \cup S(B)_2$, we can take a lifting $\rho_v$ of $\overline{\rho}|_{G_{F_{\tilde{v}}}}$ to $\mathcal{O}$ unique up to unramified twists by the assumption that $\overline{\rho}_{\mathfrak{m}}|_{G_{F_{\tilde{v}}}}$ is generic and Lemma \ref{formally smooth deformation}. For any $v \in T_2$, let $\sigma_v := \sigma^{\mathrm{ss}}(\rho_v|_{I_{F_{\tilde{v}}}})$ be the representation given by Theorem \ref{typetheory}. For $v \in S(B)_2$, we choose a representation $\sigma_v := \sigma_{B^{\mathrm{op}, \times}_{\tilde{v}}}(\rho_{v}|_{I_{F_{\tilde{v}}}})$ given by Theorem \ref{typetheory}. After extending $E$ if necessary, we may assume that $\sigma_v$ is defined over $E$ and we take a $K_v$-stable lattice of $\sigma_v$ over $\mathcal{O}$ and we also write $\sigma_v$ for this. Let $\sigma := \otimes_{v \in T_2 \sqcup S(B)_2} \sigma_v$. Similary, for any $v \mid p$, we take a $K_v$-stable lattice $\mathcal{V}_{\lambda_v}$ over $\mathcal{O}$ of the irreducible algebraic representation of weight $\lambda_v$ of $\mathrm{GL}_{2, F_{\tilde{v}}^+}$ and we put $\mathcal{V}_{\lambda^w} := \otimes_{w \neq v \in S_p} \mathcal{V}_{\lambda_v}$. Then we have a $\mathcal{O}[K_S]$-module $\mathcal{V}_{\lambda^w}(\sigma) := \mathcal{V}_{\lambda^w} \otimes \sigma$ and thus we can consider $\mathcal{A}_{U}(K^wK_w, \mathcal{V}_{\lambda^w}(\sigma))$. (This is the degree zero cohomology of the locally symmetric space of $U$ of level $K_wK^w$ with the coefficient $\mathcal{V}_{\lambda^w}(\sigma)$. See \cite[{\S} 3.3]{CHT} for more explicit descriptions.)

Let $$\widehat{\mathcal{A}}_{U}(K^w, \mathcal{V}_{\lambda^w}(\sigma))_{\mathfrak{m}} := \varprojlim_{n} \varinjlim_{K_w} \mathcal{A}_U(K^wK_w, \mathcal{V}_{\lambda^w}(\sigma)/\varpi^n)_{\mathfrak{m}},$$ $$\widehat{\mathcal{A}}_{U}(K^w, V_{\lambda^w}(\sigma))_{\mathfrak{m}} := \widehat{\mathcal{A}}_{U}(K^w, \mathcal{V}_{\lambda^w}(\sigma))_{\mathfrak{m}}[\frac{1}{p}],$$ $$\mathbb{T}^S_U(K^wK_w, \mathcal{V}_{\lambda^w}(\sigma))_{\mathfrak{m}} := \mathrm{Im}(\mathbb{T}^S_U \rightarrow \mathrm{End}_{\mathcal{O}}(\mathcal{A}_U(K^wK_w, \mathcal{V}_{\lambda^w}(\sigma)))_{\mathfrak{m}})$$ and $$\mathbb{T}^S_U(K^w, \mathcal{V}_{\lambda^w}(\sigma))_{\mathfrak{m}} := \varprojlim_{K_w} \mathbb{T}^S_U(K^wK_w, \mathcal{V}_{\lambda^w}(\sigma))_{\mathfrak{m}}.$$

As in Theorem \ref{Galois completed cohomology}, by using the correspondence of Lemma \ref{correspondence},  we have a continuous map $\rho_{\mathfrak{m}} : G_{F^+, S} \rightarrow \mathcal{G}_2(\mathbb{T}^S_U(K^w, \mathcal{V}_{\lambda^w}(\sigma))_{\mathfrak{m}})$ such that $\rho_{\mathfrak{m}}(c) \notin \mathcal{G}^0(\mathbb{T}^S_U(K^w, \mathcal{V}_{\lambda^w}(\sigma))_{\mathfrak{m}})$, $\rho_{\mathfrak{m}}(G_{F, S}) \subset \mathcal{G}^0(\mathbb{T}^S_U(K^w, \mathcal{V}_{\lambda^w}(\sigma))_{\mathfrak{m}})$,  $\mathrm{det}(T - \rho_{\mathfrak{m}}(\mathrm{Frob}_w)) = P_w(T)$ for any prime $r$ not lying below any place in $S$ and any finite place $w \mid r$ of $F$ and $\nu \circ \rho_{\mathfrak{m}} = \varepsilon_p^{-1}$. (We recall that $c \in G_{F}$ denotes a complex conjugation.) Let $R_{\overline{\rho}, \mathcal{D}}$ denote the universal deformation ring defined by the global deformation problem $\mathcal{D}$ and $\rho^{\mathrm{univ}} : G_{F^+, S} \rightarrow \mathcal{G}_2(R_{\overline{\rho}, \mathcal{D}})$ be the universal deformation. (See \cite[{\S} 1.5]{CW} for details about $R_{\overline{\rho}, \mathcal{D}}$.)

\begin{prop}\label{deformation problem}

There exists a surjection of $\mathcal{O}$-algebras $f : R_{\overline{\rho}, \mathcal{D}} \twoheadrightarrow \mathbb{T}^S_U(K^w, \mathcal{V}_{\lambda^w}(\sigma))_{\mathfrak{m}}$ such that $f_*\rho^{\mathrm{univ}}$ is equivalent to $\rho_{\mathfrak{m}}$.

\end{prop}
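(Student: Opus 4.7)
The plan is to exhibit $\rho_{\mathfrak{m}}$, viewed as a continuous morphism $G_{F^+, S} \to \mathcal{G}_2(\mathbb{T}^S_U(K^w, \mathcal{V}_{\lambda^w}(\sigma))_{\mathfrak{m}})$, as a $\mathcal{D}$-deformation of $\overline{\rho}$; the universal property of $R_{\overline{\rho}, \mathcal{D}}$ then produces $f$, and surjectivity follows from Chebotarev density because the image of $f$ contains the traces $T_{w,1}$ and the twists $q_wT_{w,2}$ at every unramified split place (which, by definition, generate $\mathbb{T}^S_U(K^w, \mathcal{V}_{\lambda^w}(\sigma))_{\mathfrak{m}}$ over $\mathcal{O}$). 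The unramified conditions, the oddness encoded by $\rho_{\mathfrak{m}}(c) \notin \mathcal{G}^0$, and $\nu\circ\rho_{\mathfrak{m}}=\varepsilon_p^{-1}$ are already built into the construction via the $U$-analogue of Theorem \ref{Galois completed cohomology} and the bijection of Lemma \ref{correspondence}. For $v\in T_1\sqcup T_2\sqcup S(B)_2$ and $v=w$ the local problem $\mathcal{D}_v$ is vacuous, so the essential work is at $v\in S(B)_1$ (Steinberg) and at $p$-adic $v\neq w$ (Fontaine--Laffaille of Hodge type $\lambda_v$).

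For these verifications I would first reduce to checking the conditions at classical points. Concretely, the combination of Theorem \ref{base changeIII}, Theorem \ref{Zuker} and the Jacquet--Langlands transfer embeds
\[
\mathbb{T}^S_U(K^w, \mathcal{V}_{\lambda^w}(\sigma))_{\mathfrak{m}}\bigl[\tfrac{1}{p}\bigr]\hookrightarrow \prod_{\pi}\overline{\mathbb{Q}}_p,
\]
where $\pi$ runs through conjugate self-dual cohomological cuspidal automorphic representations of $\mathrm{GL}_2(\mathbb{A}_F)$ contributing nontrivially to $\widehat{\mathcal{A}}_U(K^w,V_{\lambda^w}(\sigma))_{\mathfrak{m}}$, in particular $\mathbb{T}^S_U(K^w, \mathcal{V}_{\lambda^w}(\sigma))_{\mathfrak{m}}[\tfrac{1}{p}]$ is reduced. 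Since each local lifting ring $R_{\mathcal{D}_v}$ is $p$-torsion free and the condition ``$\rho_{\mathfrak{m}}|_{G_{F_{\tilde v}}}$ factors through $R_{\mathcal{D}_v}$'' is closed, it suffices to verify it at every classical point $\varphi\colon \mathbb{T}^S_U(K^w, \mathcal{V}_{\lambda^w}(\sigma))_{\mathfrak{m}}\to\overline{\mathbb{Q}}_p$, to which we may apply the local-global compatibility of Theorem \ref{Galois representation} to the corresponding $\pi$.

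At $p$-adic $v\neq w$, the level $K_v=\mathrm{GL}_2(\mathcal{O}_{F_{\tilde v}})$ together with the coefficient $\mathcal{V}_{\lambda_v}$ forces $\pi_{\tilde v}$ to be unramified and $\varphi_*\rho_{\mathfrak{m}}|_{G_{F_{\tilde v}}}$ to be crystalline of $p$-adic Hodge type $\lambda_v$; the generic Fontaine--Laffaille range imposed on $\overline{\rho}|_{G_{F_{\tilde v}}}$ then promotes this to a Fontaine--Laffaille deformation. At $v\in T_2$ and $v\in S(B)_2$ the choices $\sigma_v=\sigma^{\mathrm{ss}}(\rho_v|_{I_{F_{\tilde v}}})$ and $\sigma_v=\sigma_{B^{\mathrm{op},\times}_{\tilde v}}(\rho_v|_{I_{F_{\tilde v}}})$ in Theorem \ref{typetheory} cut out the prescribed inertia type (and in the division algebra case invoke Jacquet--Langlands), matching the corresponding universal lifting ring.

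The main obstacle is the Steinberg condition at $v\in S(B)_1$. For each classical $\varphi$ the local component of $\pi$ at the place above $v$ is on the division algebra side, so its $\mathrm{JL}$-transfer is an (unramified twist of) Steinberg, which via Theorem \ref{Galois representation} forces $\varphi_*\rho_{\mathfrak{m}}|_{G_{F_{\tilde v}}}$ to lie in $\mathrm{Spec}\,R^{\mathrm{st}}_{\overline{\rho}|_{G_{F_{\tilde v}}}}$. Lemma \ref{Steinberg deformation} (formal smoothness and $p$-torsion freeness of the Steinberg lifting ring at $\overline{\rho}$-nice $v$) then lets us propagate this condition from the dense set of classical points to all of $\mathbb{T}^S_U(K^w, \mathcal{V}_{\lambda^w}(\sigma))_{\mathfrak{m}}$. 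Packaging these local verifications gives the factorisation $\rho_{\mathfrak{m}}\cong f_*\rho^{\mathrm{univ}}$ required to complete the proof.
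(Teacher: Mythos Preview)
Your approach is essentially the same as the paper's: reduce the verification that $\rho_{\mathfrak{m}}$ is a $\mathcal{D}$-deformation to classical points (the paper does this by passing to finite level $K_w$, you by embedding $\mathbb{T}^S_U(K^w,\mathcal{V}_{\lambda^w}(\sigma))_{\mathfrak{m}}[\tfrac1p]$ into a product over automorphic $\pi$), then invoke Theorems \ref{base changeIII} and \ref{Galois representation} at $p$-adic $v\neq w$ and local Jacquet--Langlands at $v\in S(B)_1$; surjectivity follows since the characteristic-polynomial identities put all Hecke generators in the image of $f$.

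One small inconsistency worth cleaning up: you correctly note that for $v\in T_1\sqcup T_2\sqcup S(B)_2$ and $v=w$ the local problem $\mathcal{D}_v$ consists of \emph{all} deformations and is therefore vacuous, but then you devote a sentence to explaining how the choice of $\sigma_v$ at $v\in T_2$ and $v\in S(B)_2$ ``matches the corresponding universal lifting ring''. There is nothing to match there --- the type $\sigma_v$ is used to control the \emph{automorphic} side (nonvanishing of the relevant isotypic component), not to impose a local deformation condition. This paragraph can simply be deleted. Also, invoking Chebotarev for surjectivity is unnecessary: the Hecke operators $T_{w,1}$ and $q_wT_{w,2}$ literally generate $\mathbb{T}^S_U$ over $\mathcal{O}$, and they lie in the image of $f$ by construction.
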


\begin{proof} Since $\mathbb{T}^S_U(K^w, \mathcal{V}_{\lambda^w}(\sigma))_{\mathfrak{m}} = \varprojlim_{K_w}\mathbb{T}^S_U(K^wK_w, \mathcal{V}_{\lambda^w}(\sigma))_{\mathfrak{m}}$, it suffices to prove that for any $v \in S$ and any $K_w$, the restriction to $G_{F_{\tilde{v}}}$ of the representation $G_{F^+, S} \rightarrow \mathcal{G}_2(\mathbb{T}^S_U(K^w, \mathcal{V}_{\lambda^w}(\sigma))_{\mathfrak{m}}) \rightarrow \mathcal{G}_2(\mathbb{T}^S_U(K^wK_w, \mathcal{V}_{\lambda^w}(\sigma))_{\mathfrak{m}})$ is contained in $\mathcal{D}_v$. (Note that the surjectivity follows from the equation $\mathrm{det}(T - \rho_{\mathfrak{m}}(\mathrm{Frob}_w)) = P_w(T)$ for any prime $r$ not lying below any place in $S$ and any finite place $w \mid r$ of $F$.) If $v \in T_1 \cup T_2 \cup S(B)_2 \cup \{ w \}$, this property is clear because $\mathcal{D}_v$ consists of all deformations. If $v \neq w$ is a $p$-adic place, this property follows from Theorems \ref{base changeIII} and \ref{Galois representation}. If $v \in S(B)_1$, this property follows from the local Jacquet-Langlands correspondence (see \cite[before Lemma I.3.2]{HT}), again Theorems \ref{base changeIII} and \ref{Galois representation}. \end{proof}

\begin{prop}

For sufficiently small $K_w$, we have $\mathcal{A}_{U}(K^wK_w, \mathcal{V}_{\lambda^w}(\sigma))_{\mathfrak{m}} \neq 0$.

\end{prop}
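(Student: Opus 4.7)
The plan is to exhibit a cohomological cuspidal automorphic representation $\Sigma$ of $U(\mathbb{A}_{F^+})$ whose $K^wK_w$-invariant vectors contribute a nonzero class in $\mathcal{A}_U(K^wK_w, \mathcal{V}_{\lambda^w}(\sigma))_{\mathfrak{m}}$. Such a $\Sigma$ will be obtained as the descent of a conjugate self-dual cohomological cuspidal automorphic representation $\pi'$ of $\mathrm{GL}_2(\mathbb{A}_F)$ of the right local types, and the existence of $\pi'$ is forced by producing a suitable global Galois lift together with an application of an automorphy lifting theorem.

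The first step is to apply Theorem \ref{lifting of Galois} (via the correspondence of Lemma \ref{correspondence}) to produce a lift $\rho:G_{F^+,S\cup Q}\to \mathcal{G}_2(\mathcal{O})$ of $\overline{r}$: Steinberg at each $v\in S(B)_1$ (the hypothesis that these places are $\overline{r}$-nice is exactly what is needed), equal to the chosen lift $\rho_v$ at $v\in T_2\cup S(B)_2$, unramified at $v\in T_1$, Fontaine--Laffaille of Hodge type $\lambda_v$ at each $p$-adic $v\neq w$, and ordinary at $w$ (an ordinary lift exists because $\overline{\rho}|_{G_{F_w}}$ is reducible non-split sufficiently generic, so the two residual characters on the diagonal admit lifts whose Hodge--Tate weights differ). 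The second step is to apply an ordinary automorphy lifting theorem such as the one of \cite{10}, using the residual automorphy of $\overline{\rho}$ coming from $\pi$: one obtains a conjugate self-dual cohomological cuspidal $\pi'$ of $\mathrm{GL}_2(\mathbb{A}_F)$ with $r_\iota(\pi')\cong \rho|_{G_F}$. The third step is to invoke Theorem \ref{base changeIII}(3) to descend $\pi'$ to a cohomological cuspidal $\Sigma$ on $U(\mathbb{A}_{F^+})$; the required discrete series hypothesis at $S(B)$ holds since $\pi'_{\tilde v}$ is Steinberg at $v\in S(B)_1$ and supercuspidal at $v\in S(B)_2$ by construction of $\rho$.

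To verify that $\Sigma$ contributes, one checks the matching of local components place by place: at $v\notin S\cup Q$ the representation $\Sigma_v$ is unramified so admits $K_v$-fixed vectors; at $v\in T_1$ an unramified $\Sigma_v$ has a nonzero pro-$l$ Iwahori-fixed subspace; at $v\in T_2$ the identity $\mathrm{rec}_{F_{\tilde v}}(\Sigma_{\tilde v})^{\mathrm{ss}}|_{I_{F_{\tilde v}}}=\rho_v|_{I_{F_{\tilde v}}}$ together with Theorem \ref{typetheory}(1) gives $\mathrm{Hom}_{K_v}(\sigma_v,\Sigma_v)\neq 0$; at $v\in S(B)_1$ the Jacquet--Langlands transfer of Steinberg is an unramified character on $B^{\mathrm{op},\times}_{\tilde v}$, trivial on $\mathcal{O}_{B_{\tilde v}^{\mathrm{op}}}^\times$; at $v\in S(B)_2$ Theorem \ref{typetheory}(2) yields $\mathrm{Hom}_{K_v}(\sigma_v,\Sigma_v)\neq 0$; and at $p$-adic $v\neq w$ the Fontaine--Laffaille type $\lambda_v$ of $\rho|_{G_{F_{\tilde v}}}$ produces a $\mathrm{GL}_2(\mathcal{O}_{F_{\tilde v}})$-equivariant map from $\mathcal{V}_{\lambda_v}$ into $\Sigma_v$. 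Finally, admissibility of $\Sigma_w$ as a smooth representation of $\mathrm{GL}_2(F_w)$ guarantees $\Sigma_w^{K_w}\neq 0$ for sufficiently small $K_w$, and the localization at $\mathfrak{m}$ is nonzero because the Hecke eigensystem attached to $\Sigma$ reduces to the system cut out by $\overline{\rho}$.

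The main obstacle is the presence of the auxiliary Steinberg primes $Q$ supplied by Theorem \ref{lifting of Galois}: at $v\in Q$ the chosen level $K_v$ is hyperspecial, so the Steinberg representation $\Sigma_{\tilde v}$ has no $K_v$-fixed vectors. This is resolved by invoking an Ihara-type level-lowering argument: because each $v\in Q$ is $\rho$-nice and the residual representation is non-Eisenstein, there is a congruent cuspidal cohomological automorphic representation $\pi''$ of $\mathrm{GL}_2(\mathbb{A}_F)$ unramified at every $v\in Q$ and with the same prescribed local types at $S$; equivalently, one can replace the ordinary automorphy lifting theorem by a patching argument along the full deformation problem $\mathcal{D}$, exploiting the formal smoothness of all local conditions at $S$ noted after Proposition \ref{deformation problem}. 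Either route produces a $\pi''$ whose descent $\Sigma$ contributes nontrivially to the module, completing the proof.
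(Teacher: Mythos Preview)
Your approach diverges substantially from the paper's and contains a genuine gap.

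The paper's argument is much shorter. It first invokes Proposition~\ref{congruence} (a forward reference) to replace $\pi$ by a congruent weight-$(0,0)$ representation $\pi_1$, so that $r_\iota(\pi_1)|_{G_{F_{\tilde v}}}$ is potentially diagonalizable or $\iota$-ordinary at every $p$-adic place. Then \cite[Theorem~4.4.1]{CW}, together with the base change Theorem~\ref{base changeIII}, reduces the problem to showing that each local lifting ring $R_{\overline{\rho}|_{G_{F_{\tilde v}}},\mathcal{D}_v}$ for $v\in S$, $v\neq w$, has an $\mathcal{O}$-point; this is immediate from the formal smoothness already noted after Proposition~\ref{deformation problem}. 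No auxiliary ramification is ever introduced, because the cited theorem of \cite{CW} is precisely designed to change local conditions without adding primes.

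Your route has two problems. First, Theorem~\ref{lifting of Galois} requires $\overline{\rho}:G_F\to\mathrm{GL}_2(\mathbb{F})$ to be \emph{surjective}; at this point in \S7.2 only the absolute irreducibility of $\overline{\rho}|_{G_{F(\zeta_p)}}$ is assumed, so that theorem does not apply as stated. Second, and more seriously, your handling of the auxiliary set $Q$ is inadequate. The ``Ihara-type level-lowering'' you invoke is not a black box in this conjugate self-dual unitary setting and would itself require a substantial argument that you do not supply. The alternative of ``patching along the full deformation problem $\mathcal{D}$'' is circular: any patching argument whose output concerns $\mathcal{A}_U(K^wK_w,\mathcal{V}_{\lambda^w}(\sigma))_{\mathfrak{m}}$ must take that very module as input, and its nonvanishing is exactly what you are trying to establish. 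Patching instead at a level ramified at $Q$ (where your $\pi'$ lives) does not descend to hyperspecial level at $Q$ without precisely the level-lowering input you have not provided.
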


\begin{proof}  
    
By Proposition \ref{congruence} later, we may assume that $\pi$ has weight $(0, 0)_{\tau} \in (\mathbb{Z}_+^2)^{\mathrm{Hom}(F, \mathbb{C})}$. Thus by \cite[Theorem 4.4.1]{CW} and the base change result Theorem \ref{base changeIII} between automorphic representations of $U(\mathbb{A}_{F^+})$ and $\mathrm{GL}_2(\mathbb{A}_{F})$, in order to prove $\mathcal{A}_{U}(K^wK_w, \mathcal{V}_{\lambda^w}(\sigma))_{\mathfrak{m}} \neq 0$ for sufficiently small $K_w$, it suffices to prove that $\mathrm{Spec}R_{\overline{\rho}|_{G_{F_{\tilde{v}}}}, \mathcal{D}_v}(\mathcal{O})$ is non-empty for any $v \in S$, $v \neq w$. This follows from the formally smoothness of our local lifting rings $R_{\overline{\rho}|_{G_{F_{\tilde{v}}}}, \mathcal{D}_v}$. \end{proof}

By using the same method as \cite{GEKI}, we can prove the following.

\begin{thm}\label{eigenspace}
    
1 \ $f : R_{\overline{\rho}, \mathcal{D}} \ \Isom \ \mathbb{T}^S_U(K^w, \mathcal{V}_{\lambda^w}(\sigma))_{\mathfrak{m}}$.
    
2 \ $\widehat{\mathcal{A}}_U(K^w, V_{\lambda^w}(\sigma))_{\mathfrak{m}}[\varphi] \neq 0$ for any $\mathcal{O}$-morphism $\varphi : R_{\overline{\rho}, \mathcal{D}} \rightarrow \mathcal{O}$.

\end{thm}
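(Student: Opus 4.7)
The plan is to follow the strategy of \cite{GEKI} and \cite{BGRT} almost verbatim, with the only modification being that we must patch the partially-completed cohomology $\widehat{\mathcal{A}}_{U}(K^w, V_{\lambda^w}(\sigma))_{\mathfrak{m}}$ attached to the unitary group $U$ (with its $U(0,2)^{[F^+:\mathbb{Q}]}$ infinity type) rather than the Shimura curve or Shimura set cohomology treated there. Since $U(F^+ \otimes_{\mathbb{Q}} \mathbb{R})$ is compact modulo center and $\mathcal{V}_{\lambda^w}(\sigma)$ is concentrated away from $w$, the space $\mathcal{A}_{U}(K^wK_w, \mathcal{V}_{\lambda^w}(\sigma)/\varpi^n)_{\mathfrak{m}}$ is the degree-zero cohomology of a profinite set; in particular it is a co-free $\mathcal{O}/\varpi^n[[K_w]]$-module after localization at $\mathfrak{m}$, which is exactly the input needed for patching. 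The formal smoothness of every $R_{\mathcal{D}_v}$ for $v \in S \setminus \{w\}$ (guaranteed by our hypotheses on $T_1$, $T_2$, $S(B)_1$, $S(B)_2$, the Fontaine-Laffaille condition at $v \mid p$, $v \neq w$, and by Lemma \ref{Steinberg deformation} and \cite[Lemmas 2.4.1, 2.4.9]{CHT}) is crucial: it ensures that the framing variables can be eliminated cleanly.

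First I would run the Taylor-Wiles-Kisin patching machinery, as packaged in \cite[{\S} 3]{GEKI}, applied to $\widehat{\mathcal{A}}_{U}(K^w, \mathcal{V}_{\lambda^w}(\sigma))_{\mathfrak{m}}$. This produces a ring $R_\infty = R^{\mathrm{loc}}[[x_1,\ldots,x_g]]$ with $R^{\mathrm{loc}} = \widehat{\otimes}_{v \in S} R_{\mathcal{D}_v}$, an $R_\infty$-module $M_\infty$ carrying a compatible action of $\mathrm{GL}_2(F_w) = \mathrm{GL}_2(F^+_w)$, and a surjection $R_\infty/\mathfrak{a}_\infty \twoheadrightarrow R_{\overline{\rho},\mathcal{D}}$ such that $M_\infty/\mathfrak{a}_\infty M_\infty \cong \widehat{\mathcal{A}}_{U}(K^w, \mathcal{V}_{\lambda^w}(\sigma))_{\mathfrak{m}}$ as $R_{\overline{\rho},\mathcal{D}}[\mathrm{GL}_2(F_w)]$-modules. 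Because $\mathcal{D}_v$ is formally smooth for $v \neq w$, the ring $R_\infty$ is formally smooth over $R_{\mathcal{D}_w}$.

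The second and central step is the computation of the Gelfand-Kirillov dimension of $M_\infty/\mathfrak{m}_{R_\infty}M_\infty$ as a smooth mod-$p$ representation of $\mathrm{GL}_2(F_w)$. Under the reducible-non-split-sufficiently-generic hypothesis on $\overline{\rho}|_{G_{F_w}}$, I would transport the mod-$p$ multiplicity-one results of \cite{BGRT} and the GK-dimension calculation of \cite{GEKI} to the present setting; the inputs they need are (a) the Serre weight conjectures in the reducible non-split case, (b) control of $M_\infty^{K_1(\varpi^n)}$ via classical cohomology, and (c) the formal smoothness of the deformation rings away from $w$. All three carry over without change because $U$ is compact at infinity. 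This yields $\dim_{\mathrm{GL}_2(F_w)} M_\infty/\mathfrak{m}_{R_\infty}M_\infty = [F_w^+:\mathbb{Q}_p]$, which by \cite{GN} is exactly the expected value for $M_\infty$ to be a faithfully flat Cohen-Macaulay module over $R_\infty$ whose generic fibre is supported everywhere.

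Once the correct GK dimension is established, both conclusions follow by the Gee-Newton formalism: faithful flatness of $M_\infty$ over $R_\infty$ gives $R_\infty \twoheadrightarrow \mathrm{End}_{R_\infty[\mathrm{GL}_2(F_w)]}(M_\infty)^{\mathrm{op}}$ an isomorphism, which after quotienting by $\mathfrak{a}_\infty$ yields statement~1, namely $R_{\overline{\rho},\mathcal{D}} \Isom \mathbb{T}^S_U(K^w, \mathcal{V}_{\lambda^w}(\sigma))_{\mathfrak{m}}$ (the surjectivity comes from Proposition \ref{deformation problem}, the injectivity from faithfulness). For statement~2, let $\varphi : R_{\overline{\rho},\mathcal{D}} \rightarrow \mathcal{O}$ and lift it to $\tilde{\varphi} : R_\infty \rightarrow \mathcal{O}$ using the formal smoothness in the patching variables; then $M_\infty \otimes_{R_\infty, \tilde{\varphi}} \mathcal{O}$ is a nonzero admissible $\mathrm{GL}_2(F_w)$-representation (nonzero by faithful flatness), and projecting it to $\widehat{\mathcal{A}}_{U}(K^w, \mathcal{V}_{\lambda^w}(\sigma))_{\mathfrak{m}} \otimes_{R_{\overline{\rho},\mathcal{D}}, \varphi} \mathcal{O}$ along $\mathfrak{a}_\infty$ produces the desired nonzero vector after inverting $p$.

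The main obstacle will be step two, the Gelfand-Kirillov dimension computation: the results of \cite{BGRT} and \cite{GEKI} are formulated for rank-two unitary groups over totally real fields arising from quaternion algebras at infinity, and one needs to verify that their inductive argument on Serre weights and their use of the mod-$p$ local-global compatibility transport to the present (compact at infinity, CM) setting. In practice this reduces to rewriting the Serre weight dictionary using $U/F^+$ instead of the quaternionic group and checking that the classical modular forms interpretation used as a base case is still available through Theorems \ref{base changeIII} and \ref{Galois representation}; I expect no new ideas are required, but the bookkeeping is substantial.
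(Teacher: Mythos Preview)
Your proposal is correct and follows essentially the same approach as the paper: both reduce the theorem to the Gee--Newton criterion \cite{GN} via the Gelfand--Kirillov dimension computation, transport the argument of \cite{GEKI} (specifically their Theorem 4.21 and Corollary 8.17) to the compact-at-infinity unitary setting, and identify the formal smoothness of the local deformation rings away from $w$ as the key structural input. The paper's proof is slightly more granular in that it explicitly verifies the three conditions (a), (b), (c) of \cite[Theorem 4.21]{GEKI} by tracing through the chain of references (\cite{EmertonGeeSavitt}, \cite{wild}, \cite{BD}, \cite{GLS}) and checking that each step survives the passage from the quaternionic/totally-real situation to the present unitary one, whereas you summarize this as ``substantial bookkeeping''; but the strategy and the anticipated difficulties are the same.
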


\begin{rem}

The author expects that we can also prove similar results for locally symmetric spaces of unitary (similitude) groups over $F^+$ by almost the same argument.

\end{rem}

\begin{proof}

This can be proved by exactly the same argument as \cite[Corollary 8.17]{GEKI}, which proved the same result for the completed cohomology of Shimura sets and Shimura curves over totally real fields. Our global situation is bit different from theirs and thus we need to check that the same proof also works in our situation. 

For $v \in T_1$, let $\alpha_v$ be a eigenvalue of $\overline{\rho}_{\mathfrak{m}}(\mathrm{Frob}_{\tilde{v}})$, let $U_{v}:=[\mathrm{Iw}_{\tilde{v}, 1} \begin{pmatrix}
    \varpi_{\tilde{v}} & 0 \\
    0 & 1 \end{pmatrix} \mathrm{Iw}_{\tilde{v}, 1}]$, where $\varpi_{\tilde{v}}$ denotes a uniformizer. Let $\mathfrak{m}'$ be the maximal ideal of $\mathbb{T}^S[U_v]_{v \in T_1}$ defined by the inverse image of $(\mathfrak{m}, U_v - \alpha_v)$ in $\mathbb{T}^S/\varpi[U_v]_{v \in T_1}$. Let $\gamma$ be the localization of $(\varinjlim_{K_w}\mathcal{A}_U(K^wK_w, \mathcal{V}_{\lambda^w}(\sigma)))[\mathfrak{m}]$ at $\mathfrak{m}'$. This is also nonzero as in \cite[{\S} 6.5]{EmertonGeeSavitt}. 
    
    Since our local lifting rings are formally smooth, in order to prove this theorem, it suffices to prove that the Gelfand-Kirillov dimension $\mathrm{dim}_{\mathrm{GL}_2(F_w)}\gamma$ of $\gamma$ is $d$ by the work of \cite{GN}. (See \cite[Theorem 8.15 and Corollary 8.17]{GEKI} for details.) \cite[Theorem 4.21]{GEKI} gave certain sufficient conditions (a), (b) and (c) for a mod $p$ representation $\gamma'$ of $\mathrm{GL}_2(F_w)$ to get $\mathrm{dim}_{\mathrm{GL}_2(F_w)}\gamma' = d$. We will check that $\gamma$ satisfies these conditions (a), (b) and (c).

First, we will prove that $\gamma$ satisfies the condition (a) of \cite[Theorem 4.21]{GEKI} by the same method as \cite{GEKI}. They used \cite[(i) of Proposition 8.9]{GEKI} to check the condition (a). \cite[(i) of Proposition 8.9]{GEKI} follows from the strong multiplicity one theorem, the automorphy lifting theorem in the Fontaine-Laffaille case and the weight part of the Serre conjecture, which is also known in our case by (e) of Theorem \ref{base changeIII}, \cite[Lemma 4.3.10 and (2) of Lemma 4.4.2]{GK} and \cite{GLS}. Note that in their situation, $T_2$ and $S(B)$ are empty, but exactly the same proof works. Thus we obtain the condition (a) of \cite[Theorem 4.21]{GEKI} for $\gamma$.

Second, we will prove that $\gamma$ satisfies the condition (b) of \cite[Theorem 4.21]{GEKI} by the same method as \cite{GEKI}. In order to prove the condition (b) of \cite[Theorem 4.21]{GEKI}, they used \cite[(i) of Proposition 8.9]{GEKI} and the fact that the patched module $M_{\infty}$ is a finite projective $S_{\infty}[[\mathrm{GL}_2(\mathcal{O}_{F_w})]]$-module as in the proof of \cite[Corollary 8.17]{GEKI}, which still holds in our situation by \cite[Lemma 3.3.1]{CHT} for example. (See \cite[{\S} 8.1]{GEKI} for the definitions of the patched module $M_{\infty}$ and $S_{\infty}$.) Thus we obtain the condition (b) of \cite[Theorem 4.21]{GEKI} for $\gamma$.

Finally, we will prove that $\gamma$ satisfies the condition (c) of \cite[Theorem 4.21]{GEKI} by the same method as \cite{GEKI}. They used \cite[Proposition 8.6, (ii) of Proposition 8.9 and Proposition 8.10]{GEKI} to check the condition (c) of \cite[Theorem 4.21]{GEKI}. In the following, by recalling the proofs of \cite[Proposition 8.6, (ii) of Proposition 8.9 and Proposition 8.10]{GEKI}, we will check that the same proof works in our situation.

\cite[Proposition 8.6]{GEKI} is a property about local deformation rings at $w$ and thus this also holds in our situation. Precisely, they fixed the determinant character, but exactly the same proof works.

To prove \cite[(ii) of Proposition 8.9]{GEKI}, they used \cite[Theorem 5.1]{wild}, which follows from \cite[Theorem 4.9]{wild}. We recall the proof of \cite[Theorem 4.9]{wild}. He used \cite[Propositions 4.6 and 4.7 and Lemma 4.8]{wild} to prove \cite[Theorem 4.9]{wild}. Note that he also used results in \cite[{\S} 2 and 3]{wild} in the proof, but they are purely local results on representations of $G_{F_{w}}$ and $\mathrm{GL}_2(F_w)$. Thus we can use them in our situation. \cite[Lemma 4.8]{wild} is a purely algebraic lemma and thus we can use this result. \cite[Proposition 4.7]{wild} follows from \cite[proof of Theorem 9.1.1]{EmertonGeeSavitt}, which also hold in our situation by \cite[Remark 9.1.2]{EmertonGeeSavitt}. \cite[Proposition 4.6]{wild} follows from \cite[Lemmas 4.3, 4.4 and 4.5]{wild}. \cite[Lemma 4.3]{wild} follows from \cite[proof of Theorem 10.1.1]{EmertonGeeSavitt}, which also hold in our situation by \cite[Remark 10.1.2]{EmertonGeeSavitt}. \cite[Proposition 4.4]{wild} follows from the results in \cite[{\S} 2 and 3]{wild}, which can be used in our situation as we have seen above. \cite[Lemma 4.5]{wild} is a purely algebraic lemma and thus we can use this result.


To prove \cite[Proposition 8.10]{GEKI}, they used \cite[Proposition 7.4 and (i) of Proposition 8.9]{GEKI} (and purely local results \cite[Proposition 5.16]{GEKI} on representations of $\mathrm{GL}_2(F_w)$, which can be used in our situation). We already gave some explanations about \cite[(i) of Proposition 8.9]{GEKI}. \cite[Proposition 7.4]{GEKI} follows from \cite[Corollary 7.40]{BD}, which was also stated to hold in our situation. Note that in \cite[Corollary 7.40]{BD}, they assumed $F_w = \mathbb{Q}_p$, but this assumption is not needed as explained in \cite[proof of Proposition 7.4]{GEKI}. \end{proof}

In the following, we come back to the situation of {\S} 6.2. In addition to the previous assumptions, we assume the following conditions.

\begin{itemize}
\item Every prime $r$ ramified in $F$ splits in an imaginary quadratic field in $F$.
\item There exists a prime $l \neq p$ splitting completely in $F$ such that all $l$-adic places $v$ of $F$ are contained in $S(B)_2$.
\item $\overline{\rho}_{\mathfrak{m}}(G_{F})$ is not solvable.
\item $\overline{\rho}_{\mathfrak{m}}$ is decomposed generic.
\item $\otimes_{\tau \in \Psi} (\overline{\rho}_{\mathfrak{m}}|_{G_{\tilde{F}}})^{\tau}$ is absolutely irreducible, where $\tilde{F}$ is the Galois closure of $F$ over $\mathbb{Q}$. See the comment before Theorem \ref{kottwitz conjecture} for the definition of $(\overline{\rho}_{\mathfrak{m}}|_{G_{\tilde{F}}})^{\tau}$.
\end{itemize}

We will compare the completed cohomology of $GU$ and $U$. Let $K_{GU} = \prod_{r}K_{GU, r}$ be an open compact subgroup of $GU(\mathbb{A}_{\mathbb{Q}}^{\infty})$ defined by the following.

\begin{itemize}
\item For any prime $r$ not lying below $S$ and unramified in $F$, let $K_{GU, r}$ be a hyperspecial maximal compact subgroup containing $K_{r}$.
\item For any prime $r$ not lying below $S$ and ramified in $F$, let $K_{GU, r} := \mathcal{O}_{F_{1, u}}^{\times} \times \prod_{w \mid u} \mathrm{GL}_2(\mathcal{O}_{F_{w}})$, where $F_1$ is a imaginary quadratic field in which $r$ splits, $u$ denotes a place of $F_1$ lying above $r$.
\item For any prime $r$ lying below $S$, let $K_{GU, r} = (1 + r^{2}\mathbb{Z}_r) \times \prod_{u \mid r}K_{u} \subset \mathbb{Q}_r^{\times} \times \mathrm{Res}_{F^+/\mathbb{Q}}U(\mathbb{Q}_r)$.
\end{itemize}

Note that we can regard $\mathcal{V}_{\lambda^w}(\sigma)$ as an $\mathcal{O}[K_{GU}]$-module. Note that $\mathbb{T}^S_{U}$ naturally acts on $\widehat{\mathcal{A}}_{GU}(K^w_{GU}, \mathcal{V}_{\lambda^w}(\sigma))$ and we have a natural $\mathbb{T}^S_U$-equivariant injection $\widehat{\mathcal{A}}_{U}(K^w, \mathcal{V}_{\lambda^{w}}(\sigma)) \hookrightarrow \widehat{\mathcal{A}}_{GU}(K^w_{GU}, \mathcal{V}_{\lambda^{w}}(\sigma))$ as in \cite[Lemma 2.1.1]{CS2}. 

Let $$\widehat{\mathcal{A}}_{GU}(K^w_{GU}, \mathcal{V}_{\lambda^w}(\sigma))_{\mathfrak{m}} := \varprojlim_{n} \varinjlim_{K_{GU, w}} \mathcal{A}_{GU}(K^w_{GU}K_{GU, w}, \mathcal{V}_{\lambda^w}(\sigma)/\varpi^n)_{\mathfrak{m}}$$ and $$\widehat{\mathcal{A}}_{GU}(K^w_{GU}, V_{\lambda^w}(\sigma))_{\mathfrak{m}} := \widehat{\mathcal{A}}_{GU}(K^w_{GU}, \mathcal{V}_{\lambda^w}(\sigma))_{\mathfrak{m}}[\frac{1}{p}].$$

Let $\mathbb{T}^S_{GU}(K^w_{GU}, \mathcal{V}_{\lambda^w}(\sigma))_{\mathfrak{m}} := \varprojlim_{K_{GU, w}} \mathrm{Im}(\mathbb{T}^S_{GU} \rightarrow \mathrm{End}_{\mathcal{O}}(\mathcal{A}_{GU}(K^w_{GU}K_{GU, w}, \mathcal{V}_{\lambda^w}(\sigma))_{\mathfrak{m}}))$ and $\mathbb{T}^S_{U}(K^w_{GU}, \mathcal{V}_{\lambda^w}(\sigma))_{\mathfrak{m}} := \varprojlim_{K_{GU, w}} \mathrm{Im}(\mathbb{T}^S_{U} \rightarrow \mathrm{End}_{\mathcal{O}}(\mathcal{A}_{GU}(K^w_{GU}K_{GU, w}, \mathcal{V}_{\lambda^w}(\sigma))_{\mathfrak{m}}))$, where $\mathbb{T}^S_{GU}$ is the subalgebra of the Hecke algebra $\mathcal{H}(GU(\mathbb{A}_{\mathbb{Q}}^{\infty}), K_{GU})_{\mathcal{O}}$ generated over $\mathcal{O}$ by $\mathcal{H}(GU(\mathbb{Q}_r), K_{GU, r})_{\mathcal{O}}$ for all primes $r$ splitting in $F_0$ and not lying below a place in $S$. Then we have the following canonical maps $\mathbb{T}^S_{U}(K^w_{U}, \mathcal{V}_{\lambda^w}(\sigma))_{\mathfrak{m}} \twoheadleftarrow \mathbb{T}^S_{U}(K^w_{GU}, \mathcal{V}_{\lambda^w}(\sigma))_{\mathfrak{m}} \hookrightarrow \mathbb{T}^S_{GU}(K^w_{GU}, \mathcal{V}_{\lambda^w}(\sigma))_{\mathfrak{m}}$.

\vspace{0.5 \baselineskip}

Note that we also have the universal representation $\rho_{\mathfrak{m}} : G_{F} \rightarrow \mathrm{GL}_2(\mathbb{T}^S_{U}(K^w_{GU}, \mathcal{V}_{\lambda^w}(\sigma))_{\mathfrak{m}})$ and the universal character $\chi_{\mathfrak{m}} : G_{F_0} \rightarrow \mathbb{T}^S_{GU}(K^w_{GU}, \mathcal{V}_{\lambda^w}(\sigma))_{\mathfrak{m}}^{\times}$ as in Theorem \ref{Galois completed cohomology}. We put $\rho_{\varphi} := \varphi_*\rho_{\mathfrak{m}}$ and $\chi_{\varphi} := \varphi_*\chi_{\mathfrak{m}}$ for an $\mathcal{O}$-morphism $\varphi : \mathbb{T}^S_{GU}(K^w_{GU}, \mathcal{V}_{\lambda^{w}}(\sigma))_{\mathfrak{m}} \rightarrow \mathcal{O}$ . Let $v \mid p$ be the place of $F_0$ induced by $\iota : \overline{\mathbb{Q}}_p \Isom \mathbb{C}$ and we fix a weight $\lambda_w \in (\mathbb{Z}_+^2)^{\mathrm{Hom}_{\mathbb{Q}_p}(F_w, E)}$.

\begin{lem} \label{Hecke eigensystem}

Let $\varphi : \mathbb{T}^S_U(K^w, \mathcal{V}_{\lambda^{w}}(\sigma))_{\mathfrak{m}} \rightarrow \mathcal{O}$ be an $\mathcal{O}$-morphism such that $\rho_{\varphi}|_{G_{F_{w'}}}$ is de Rham of $p$-adic Hodge type $\lambda_{w'}$ for any $w' \mid v$ and let $\mathfrak{p} := \mathrm{Ker}(\varphi)$. We regard $\mathfrak{p}$ as a prime ideal of $\mathbb{T}^S_{U}(K^w_{GU}, \mathcal{V}_{\lambda^w}(\sigma))$.

1 \ Then $\mathbb{T}^S_{GU}(K^w_{GU}, \mathcal{V}_{\lambda^w}(\sigma))_{\mathfrak{m}}/\mathfrak{p}\mathbb{T}^S_{GU}(K^w_{GU}, \mathcal{V}_{\lambda^w}(\sigma))_{\mathfrak{m}}$ is a finite $\mathcal{O}$-algebra and $\chi_{\mathfrak{p}} := \chi_{\mathfrak{m}} \mod \mathfrak{p} : G_{F} \rightarrow (\mathbb{T}^S_{GU}(K^w_{GU}, \mathcal{V}_{\lambda^w}(\sigma))_{\mathfrak{m}}/\mathfrak{p}\mathbb{T}^S_{GU}(K^w_{GU}, \mathcal{V}_{\lambda^w}(\sigma))_{\mathfrak{m}})^{\times}$ satisfies $\chi_{\mathfrak{p}} (g_{v^c}) = 1$ and $\chi_{\mathfrak{p}} (g_{v}) = \mathrm{Art}_{F_{v}}^{-1}(g_{v})^{\sum_{\tau \in \Phi}(\lambda_{\tau, 1} + \lambda_{\tau, 2})}$ for any $g_v \in U_v$ and $g_{v^c} \in U_{v^c}$, where $U_v$ and $U_{v^c}$ are some open subgroups of $I_{F_v}$ and $I_{F_{v^c}}$.

2 \ After enlarging $E$ if necessary, there exists an $E$-morphism $$\tilde{\varphi} : \mathbb{T}^S_{GU}(K^w_{GU}, \mathcal{V}_{\lambda^{w}}(\sigma))_{\mathfrak{m}}/\mathfrak{p}\mathbb{T}^S_{GU}(K^w_{GU}, \mathcal{V}_{\lambda^w}(\sigma))_{\mathfrak{m}} \rightarrow E$$ satisfing $\widehat{\mathcal{A}}_{GU}(K^w_{GU}, V_{\lambda^{w}}(\sigma))_{\mathfrak{m}}[\tilde{\varphi}] \neq 0$  

\end{lem}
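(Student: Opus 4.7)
The plan is to exploit the fact that $\mathbb{T}^S_{GU}(K^w_{GU},\mathcal{V}_{\lambda^w}(\sigma))_{\mathfrak{m}}$ is generated over the image of $\mathbb{T}^S_U(K^w_{GU},\mathcal{V}_{\lambda^w}(\sigma))_{\mathfrak{m}}$ by the similitude Hecke operators $T_u := [\mathcal{O}_{F_{0,u}}^{\times}\varpi_u\mathcal{O}_{F_{0,u}}^{\times}]$ attached to finite places $u$ of $F_0$ splitting over $\mathbb{Q}$ and not lying below $S$. Under Theorem~\ref{Galois completed cohomology}, these operators correspond to the values of $\chi_{\mathfrak{m}}$ at $\mathrm{Frob}_u$, so controlling the reduction of $\chi_{\mathfrak{m}}$ modulo $\mathfrak{p}$ amounts to controlling these Hecke operators. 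Part~(1) will pin $\chi_{\mathfrak{p}}$ down enough to force finiteness; part~(2) then follows by combining (1) with the natural inclusion of completed cohomologies.

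For part~(1), I would start from the identity $\chi_{\mathfrak{m}}^c/\chi_{\mathfrak{m}}\circ\mathrm{Art}_{F_0}=\varepsilon_p\,\det\rho_{\mathfrak{m}}\circ\mathrm{Art}_F|_{\mathbb{A}_{F_0}^{\infty}}$ of Theorem~\ref{Galois completed cohomology}. Reducing modulo $\mathfrak{p}$ gives a fixed formula for $\chi_{\mathfrak{p}}^c/\chi_{\mathfrak{p}}$ in terms of $\det\rho_{\varphi}$. Since $\rho_{\varphi}|_{G_{F_{w'}}}$ is de Rham of $p$-adic Hodge type $\lambda_{w'}$ for every $w'\mid v$, the determinant $\det\rho_{\varphi}|_{G_{F_{w'}}}$ is de Rham with $\tau$-Hodge--Tate weight $\lambda_{\tau,1}+\lambda_{\tau,2}+1$; via the splitting $F_0\otimes_{\mathbb{Q}}\mathbb{Q}_p=F_{0,v}\times F_{0,v^c}$ this computes $\chi_{\mathfrak{p}}^c/\chi_{\mathfrak{p}}$ on open subgroups of $I_{F_v}\times I_{F_{v^c}}$ explicitly. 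Solving this equation for $\chi_{\mathfrak{p}}$ and noting that the pairing $\rho\times\rho^c\to\chi$ forces the contribution at $v^c$ to cancel against that at $v$, one recovers the asymmetric formula of the statement: the weight at $v^c$ vanishes and the weight at $v$ equals $\sum_{\tau\in\Phi}(\lambda_{\tau,1}+\lambda_{\tau,2})$. Finiteness of $\mathbb{T}^S_{GU}/\mathfrak{p}\mathbb{T}^S_{GU}$ over $\mathcal{O}$ then follows, because a continuous character of $G_{F_0,S}$ valued in this algebra which agrees modulo $\mathfrak{p}$ with $\chi_{\mathfrak{p}}$ on large open subgroups of inertia is determined up to a character of a finite quotient of $\mathbb{A}_{F_0}^{\times}/F_0^{\times}$ cut out by the level and by the fixed inertia behaviour.

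For part~(2), I would use the natural $\mathbb{T}^S_U$-equivariant injection $\widehat{\mathcal{A}}_U(K^w,\mathcal{V}_{\lambda^w}(\sigma))_{\mathfrak{m}}\hookrightarrow\widehat{\mathcal{A}}_{GU}(K^w_{GU},\mathcal{V}_{\lambda^w}(\sigma))_{\mathfrak{m}}$ recalled before the lemma. By Theorem~\ref{eigenspace} the source is nonzero after localizing at $\varphi$, so inverting $p$ we find that $\widehat{\mathcal{A}}_{GU}(K^w_{GU},V_{\lambda^w}(\sigma))_{\mathfrak{m}}[\mathfrak{p}]$ is nonzero. By part~(1), the $E$-algebra $(\mathbb{T}^S_{GU}(K^w_{GU},\mathcal{V}_{\lambda^w}(\sigma))_{\mathfrak{m}}/\mathfrak{p})[\tfrac{1}{p}]$ is finite; after enlarging $E$ so that all of its residue fields are $E$, a nonzero module over this Artinian $E$-algebra admits an eigenvector for some $E$-morphism $\tilde{\varphi}$ to $E$, which is the desired extension of $\varphi$.

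The main obstacle is the explicit Hodge--Tate computation carried out in part~(1): one has to carefully normalize local class field theory, track the interaction of complex conjugation $c$ with the identification $F_0\otimes\mathbb{Q}_p=F_{0,v}\times F_{0,v^c}$, and match the weights of $\det\rho_{\varphi}$ across $v$ and $v^c$ via the conjugate self-duality of $\rho_{\varphi}$, in order to extract the precise asymmetric formula stated in~(1). Once that computation is in place, the finiteness assertion of~(1) and the existence of $\tilde{\varphi}$ in~(2) are essentially formal.
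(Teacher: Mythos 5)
Your treatment of part~(2) matches the paper's: nonvanishing of $\widehat{\mathcal{A}}_U(K^w,V_{\lambda^w}(\sigma))_{\mathfrak{m}}[\varphi]$ from Theorem~\ref{eigenspace}, the $\mathbb{T}^S_U$-equivariant embedding into $\widehat{\mathcal{A}}_{GU}$, and then an Artinian-algebra argument using the finiteness from part~(1). Your sketch of the finiteness itself is also in the spirit of the paper (construct a de Rham character matching $\chi_{\mathfrak{p}}$ on inertia at $p$, kill ramification, invoke global class field theory, and use that $\chi_{\mathfrak{p}}$ generates the quotient algebra over $\mathcal{O}$).

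However, there is a genuine gap in your derivation of the \emph{asymmetric} formula in part~(1). The relation $\chi_{\mathfrak{m}}^c/\chi_{\mathfrak{m}}\circ\mathrm{Art}_{F_0}=\varepsilon_p\det\rho_{\mathfrak{m}}\circ\mathrm{Art}_F|_{\mathbb{A}_{F_0}^{\infty}}$ only pins down the \emph{ratio} of values of $\chi_{\mathfrak{p}}$ on $I_{F_v}$ and on $I_{F_{v^c}}$; it cannot, by itself, determine the separate formulas $\chi_{\mathfrak{p}}|_{U_{v^c}}=1$ and $\chi_{\mathfrak{p}}|_{U_v}=\mathrm{Art}_{F_v}^{-1}(\cdot)^{\sum_\tau(\lambda_{\tau,1}+\lambda_{\tau,2})}$. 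Your attempt to break the symmetry by invoking ``the pairing $\rho\times\rho^c\to\chi$'' does not work: in this lemma the self-duality pairing for $\rho_{\varphi}$ is against the fixed character $\varepsilon_p^{-1}$, not against $\chi_{\mathfrak{p}}$, so it carries no information about $\chi_{\mathfrak{m}}$ at $v^c$. What the paper actually uses is a separate, constructional input: because the completed cohomology $\widehat{\mathcal{A}}_{GU}(K^w_{GU},\mathcal{V}_{\lambda^w}(\sigma))_{\mathfrak{m}}$ is completed only in the $\mathrm{GL}_2(F_w)$-direction while the similitude ($F_{0,v^c}^{\times}$) factor of $K_{GU,p}$ is held fixed, the universal character $\chi_{\mathfrak{m}}$ itself --- not just $\chi_{\mathfrak{p}}$ --- is already trivial on an open subgroup $U_{v^c}$ of inertia at $v^c$. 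Only once that is in hand does the symmetric relation, combined with the de~Rham hypothesis on $\det\rho_{\varphi}$ at the places above $v$, give the formula for $\chi_{\mathfrak{p}}$ on $U_v$. Without this extra input your ``solving the equation'' step leaves an undetermined conjugate self-dual twist and the asymmetric formula does not follow.
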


\begin{proof}

1 \ The finiteness property in 1 follows from the description of $\chi_{\mathfrak{p}}$ in 1. In fact, by \cite[Lemma A.2.5]{CW}, after enlarging $E$ if necessary, we can construct a de Rham character $\psi : G_{F_0} \rightarrow \mathcal{O}^{\times}$ such that $\psi|_{U_{v}} = \chi_{\mathfrak{p}}|_{U_{v}}$ and $\psi|_{U_{v^c}} = \chi_{\mathfrak{p}}|_{U_{v^c}}$. In particular, $\chi_{\mathfrak{p}}\psi^{-1}$ has a finite ramification at $v$ and $v^c$. By our assumption on $S$ and Theorem \ref{base changeII}, the character $\chi_{\mathfrak{m}}$ has a finite ramification at any non-$p$-adic place. Thus $\chi_{\mathfrak{p}}\psi^{-1}$ has a finite image by global class field theory. Note that $\chi_{\mathfrak{p}}$ induces a surjection $\mathcal{O}[[G_{F_0, S}]] \twoheadrightarrow \mathbb{T}^S(K^w, \mathcal{V}_{\lambda^w}(\sigma))_{\mathfrak{m}}/\mathfrak{p}\mathbb{T}^S(K^w, \mathcal{V}_{\lambda^w}(\sigma))_{\mathfrak{m}}$ and the image of this map is invariant if we replace $\chi_{\mathfrak{p}}$ by $\chi_{\mathfrak{p}}\psi^{-1}$. This implies the finiteness property in 1.

Let us prove the description of $\chi_{\mathfrak{p}}$ in 1. By the construction of $\chi_{\mathfrak{m}} : G_{F_0, S} \rightarrow \mathbb{T}^S_{GU}(K^w_{GU}, \mathcal{V}_{\lambda^w})_{\mathfrak{m}}^{\times}$, we have $\chi_{\mathfrak{m}}(g_{v^c}) = 1$ for any $g_{v^c} \in U_{v^c}$, where $U_{v^c}$ is an open subgroup of $I_{F_{v^c}}$. By the relation $\chi_{\mathfrak{m}}^c/\chi_{\mathfrak{m}} \circ \mathrm{Art}_{F_{0}} = \varepsilon_p \mathrm{det}\rho_{\mathfrak{m}} \circ \mathrm{Art}_{F}|_{\mathbb{A}_{F_0}^{\infty}}$, we obtain an open subgroup $U_v$ of $I_{F_{0,v}}$ such that $\chi_{\mathfrak{p}}(g_{v}) = \mathrm{Art}_{F_{v}}^{-1}(g_{v})^{\sum_{\tau \in \Phi}(\lambda_{\tau, 1} + \lambda_{\tau,2})}$ for any $g_v \in U_v$. 

2 \ We have $\widehat{\mathcal{A}}_{GU}(K^w_{GU}, V_{\lambda^{w}}(\sigma))[\varphi] \neq 0$ by Theorem \ref{eigenspace}. Thus the result follows from the finiteness result 1. \end{proof}

By lemma \ref{Hecke eigensystem}, we can take a decomposed generic non-Eisenstein ideal $\mathfrak{m}'$ of $\mathbb{T}^S_{GU}$ lying above $\mathfrak{m}$. We consider the situation {\S} 6.2. Let $\Psi := \mathrm{Gal}(F_w/\mathbb{Q}_p)$. We fix $\overline{\mathbb{Q}}_l \Isom \mathbb{C}$, where $l \neq p$ denotes a prime splitting completely in $F$ such that all $l$-adic places $v$ of $F$ are contained in $S(B)_2$. Let $v_0 \mid l$ (resp. $w_0 \mid l$) be the place $F_0$ (resp. $F$) induced by this isomorphism and we have a natural injection $\Psi \hookrightarrow \{ w' : \mathrm{place \ of \ } F \mid w' \mathrm{\ divides \ } v_0 \}, \ \tau \mapsto w_0^{\tau}$. For any non-empty subset $\Psi' \subset \Psi$, let $S(B_{\Psi'}) = S(B) \setminus \{ w_0^{\tau}, w_0^{c\tau} \mid \tau \in \Psi' \}$. Then we have a PEL datum $(B_{\Psi'}, *_{\Psi'}, V_{\Psi'}, \psi_{\Psi'}, h_{\Psi'})$ and thus a Shimura variety $S_{\Psi', K}$ defined by the PEL data $(B_{\Psi'}, *_{\Psi'}, V_{\Psi'}, \psi_{\Psi'}, h_{\Psi'})$ as in {\S} 3.1 and 3.2. Let $\mu_{\tau} \in \mathbb{Z}_{\ge 0}$ for any $\tau \in \mathrm{Hom}_{\mathbb{Q}_p}(F_w, \overline{\mathbb{Q}}_p) = \Psi$ and let $\lambda$ be the element $(\lambda_{\tau})_{\tau}$ of $(\mathbb{Z}_+^2)^{\Phi}$ defined by $\lambda_{\tau} = (\lambda_{w'})_{\tau}$ for any $w' \mid v$, $w' \neq w$ and any $\tau \in \mathrm{Hom}_{\mathbb{Q}_p}(F_{w'}, \overline{\mathbb{Q}}_p)$ and $\lambda_{\tau} = (0, -\mu_{\tau})$ for any $\tau \in \mathrm{Hom}_{\mathbb{Q}_p}(F_w, \overline{\mathbb{Q}}_p)$.

\begin{cor} \label{BIGCL} Assume Conjectures \ref{classicality conjecture} and \ref{key diagram} for $S_{\Psi', K}$, $\mathfrak{m}'$ and $\lambda$ for any $\Psi' \subset \Psi$. Let $\rho : G_{F, S} \rightarrow \mathrm{GL}_2(\mathcal{O})$ lifting of $\overline{\rho}$ satisfying the following conditions.

(1) \ There exists a perfect symmetric $G_{F}$-equivalent pairing $\rho \times \rho^c \rightarrow \varepsilon_p^{-1}$.

(2) \ $\rho|_{G_{F_{\tilde{w'}}}}$ defines a point of $\mathrm{Spec} R^{\mathrm{st}}_{\overline{\rho}|_{G_{F_{\tilde{w'}}}}}(\mathcal{O})$ for $w' \in S(B)_1$.

(3) \ $\rho|_{G_{F_{\tilde{w'}}}}$ is Fontaine-Laffaille of $p$-adic Hodge type $\lambda_{w'}$ for any $w' \mid v$, $w' \neq w$.

(4) \ $\rho|_{G_{F_w}}$ is de Rham of $p$-adic Hodge type $\lambda_w$.

Then there exists a conjugate self-dual cohomological cuspidal automorphic representation $\pi$ of $\mathrm{GL}_2(\mathbb{A}_F)$ such that $\rho \cong r_{\iota}(\pi)$.

\end{cor}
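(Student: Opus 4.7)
The plan is to combine the big $R=T$ theorem (Theorem \ref{eigenspace}), which provides not just an identification of deformation rings with Hecke algebras but crucially a non-vanishing statement for the corresponding Hecke eigenspaces in the completed cohomology of the unitary group $U$, with the classicality theorem (Theorem \ref{conjectual classicality theorem}), which upgrades such eigensystems to classical ones under the assumed conjectures, and finally with the base change results (Theorems \ref{base changeIII} and \ref{base changeII}) to descend from automorphic representations of $U$ or $GU$ to the desired automorphic representation of $\mathrm{GL}_2(\mathbb{A}_F)$.

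First, I would observe that the assumptions (1)--(4) on $\rho$ together with the definition of the global deformation problem $\mathcal{D}$ imply that $\rho$ (viewed via Lemma \ref{correspondence} as a morphism $G_{F^+, S} \to \mathcal{G}_2(\mathcal{O})$) defines an $\mathcal{O}$-point of $\mathrm{Spec}R_{\overline{\rho},\mathcal{D}}$; thus it gives an $\mathcal{O}$-algebra morphism $\varphi : \mathbb{T}^S_U(K^w,\mathcal{V}_{\lambda^w}(\sigma))_{\mathfrak{m}} \to \mathcal{O}$ via the isomorphism of Theorem \ref{eigenspace}(1), and part (2) of the same theorem yields $\widehat{\mathcal{A}}_U(K^w,V_{\lambda^w}(\sigma))_{\mathfrak{m}}[\varphi] \neq 0$. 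Next, via the natural $\mathbb{T}^S_U$-equivariant inclusion $\widehat{\mathcal{A}}_U(K^w,\mathcal{V}_{\lambda^w}(\sigma)) \hookrightarrow \widehat{\mathcal{A}}_{GU}(K^w_{GU},\mathcal{V}_{\lambda^w}(\sigma))$, I can apply Lemma \ref{Hecke eigensystem}(2) to obtain, after possibly enlarging $E$, an $E$-morphism $\tilde{\varphi}$ lifting $\varphi$ for which $\widehat{\mathcal{A}}_{GU}(K^w_{GU},V_{\lambda^w}(\sigma))_{\mathfrak{m}}[\tilde\varphi] \neq 0$, and by Lemma \ref{Hecke eigensystem}(1) the associated Galois data factor through the decomposed generic non-Eisenstein ideal $\mathfrak{m}'$ lying above $\mathfrak{m}$ with the prescribed $p$-adic Hodge-theoretic properties on $\chi_{\tilde\varphi}$ and $\rho_{\tilde\varphi}$. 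Translating non-vanishing on $\widehat{\mathcal{A}}_{GU}$ to non-vanishing on the partially completed cohomology $\widehat{H}^d(S_{\Psi,K^w},V_{\lambda^w})_{\mathfrak{m}'}[\tilde\varphi]$ via the standard identification between algebraic automorphic forms on $GU$ and the degree-$d$ cohomology of $S_K$ (Theorem \ref{kottwitz conjecture}), I can then apply Theorem \ref{conjectual classicality theorem}: conditions (3) and (4) supply the required de Rham hypotheses at all $p$-adic places, the technical conditions on $\mathfrak{m}'$ were arranged above, and the conjectures assumed in the statement are precisely those needed to invoke the theorem. This forces $\tilde\varphi$ to be a classical eigensystem of weight $\lambda$.

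Finally, classicality of $\tilde\varphi$ produces a cohomological cuspidal automorphic representation $\sigma'$ of $GU(\mathbb{A}_{\mathbb{Q}})$ whose Hecke eigensystem matches $\tilde\varphi$, and Theorem \ref{base changeII} yields a conjugate self-dual cohomological isobaric automorphic representation $\chi' \boxtimes \pi$ of $\mathbb{A}_{F_0}^\times \times \mathrm{GL}_2(\mathbb{A}_F)$ with $r_\iota(\pi) \cong \rho$; the cuspidality of $\pi$ follows from the irreducibility of $\overline\rho|_{G_{F(\zeta_p)}}$ (and hence of $\rho$), since otherwise $\pi$ would be a sum of two characters forcing $\rho$ to be a sum of two characters. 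The main obstacle I expect is the bookkeeping in the second step: the $R=T$ theorem lives on the unitary group $U$ (with compact-at-infinity form), whereas the classicality theorem is formulated on the unitary similitude Shimura variety, so one must carefully lift $\varphi$ along the non-injective map $\mathbb{T}^S_U(K^w_{GU},\mathcal{V}_{\lambda^w}(\sigma))_{\mathfrak{m}} \hookleftarrow \mathbb{T}^S_{GU}(K^w_{GU},\mathcal{V}_{\lambda^w}(\sigma))_{\mathfrak{m}'}$ and verify that the cohomological interpretation carries the non-vanishing intact and compatibly with the local-global compatibility that identifies the universal representation $\rho_{\mathfrak{m}'}$ with the given $\rho$ upon specialization at $\tilde\varphi$; the finiteness in Lemma \ref{Hecke eigensystem}(1) is what makes this lift possible and keeps the $p$-adic Hodge-theoretic conditions on $\chi_{\tilde\varphi}$ compatible with the Kottwitz formula of Theorem \ref{kottwitz conjecture}.
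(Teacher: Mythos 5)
Your outline follows the paper's proof through the first three steps (point of $\mathrm{Spec}R_{\overline{\rho},\mathcal{D}}$ via the big $R=T$ theorem \ref{eigenspace}, non-vanishing of the $\varphi$-eigenspace in $\widehat{\mathcal{A}}_U$, extension to $\tilde\varphi$ with non-vanishing on $\widehat{\mathcal{A}}_{GU}$ via Lemma \ref{Hecke eigensystem}), and your final base-change step is fine (the paper leaves it implicit). But the bridge from $\widehat{\mathcal{A}}_{GU}(K^w_{GU}, V_{\lambda^w}(\sigma))_{\mathfrak{m}}[\tilde\varphi]\neq 0$ to $\widehat{H}^d(S_{\Psi,K^w_1},V_{\lambda^w})_{\mathfrak{m}}[\tilde\varphi]\neq 0$ is not what you claim.

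You invoke a ``standard identification between algebraic automorphic forms on $GU$ and the degree-$d$ cohomology of $S_K$ (Theorem \ref{kottwitz conjecture}),'' but Theorem \ref{kottwitz conjecture} asserts nothing of the kind: it describes the Galois module that $\varinjlim_K H^d_{\et}(S_{K},V_\lambda)[\sigma^\infty]$ is, for an automorphic $\sigma$, in terms of $r_\mu\circ(r_\iota(\chi)^c\boxtimes r_\iota(\pi))$; it does not identify cohomology of the non-compact Shimura variety $S_\Psi$ (which has $U(1,1)$ factors at infinity, so this is genuine middle-degree cohomology, not functions on a finite set) with algebraic automorphic forms on the compact-at-infinity group $GU$ of \S 7.2. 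The correct bridge is Theorem \ref{comparison1} from \S 6.1, one of the paper's central technical inputs: it uses the torsion Mantovan formula of \cite{kos} to express $\widehat{H}^d(S_{K^p},\mathcal{O})_{\mathfrak{m}}$ (for a decomposed generic $\mathfrak{m}$ with an auxiliary prime $l$ where $\overline{\rho}$ is locally irreducible and generic) in terms of $\widehat{\mathcal{A}}_{GI}(K^{p,l}K'_l,\mathcal{O})_{\mathfrak{m}}$ and the compactly supported cohomology of a basic Rapoport--Zink space at $l$, and then relies on Zou's torsion Kottwitz conjecture (Theorem \ref{concentration}) to show that the Rapoport--Zink contribution is a nonzero finite free $R_l$-module concentrated in a single degree. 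Without that concentration and freeness there is no transfer of non-vanishing of a Hecke eigenspace from $\widehat{\mathcal{A}}_{GU}$ to $\widehat{H}^d(S_\Psi)$. Replacing your citation of Theorem \ref{kottwitz conjecture} by Theorem \ref{comparison1} (after first peeling off $\sigma$ via $\widehat{\mathcal{A}}_{GU}(K^w_{GU},\mathcal{V}_{\lambda^w}(\sigma))_{\mathfrak{m}}=\widehat{\mathcal{A}}_{GU}(K^w_{GU},\mathcal{V}_{\lambda^w})_{\mathfrak{m}}\otimes\sigma$, as the paper does by shrinking $K_{S(B)_2\cup T_2}$) closes the gap and reproduces the paper's argument.
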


\begin{proof} 

By Theorem \ref{eigenspace}, we obtain an $\mathcal{O}$-morphism $\varphi : R_{\overline{\rho}, \mathcal{D}} = \mathbb{T}^S_U(K^w, \mathcal{V}_{\lambda^w}(\sigma))_{\mathfrak{m}} \rightarrow \mathcal{O}$ corresponding to $\rho$. Then after extending $E$ if necessary, there exists an extension $\tilde{\varphi} : \mathbb{T}^S_{GU}(K^w_{GU}, \mathcal{V}_{\lambda^w}(\sigma))_{\mathfrak{m}} \rightarrow \mathcal{O}$ of $\varphi$ such that $\widehat{\mathcal{A}}_{GU}(K^w_{GU}, \mathcal{V}_{\lambda^{w}}(\sigma))_{\mathfrak{m}}[\tilde{\varphi}] \neq 0$ by Lemma \ref{Hecke eigensystem}. By shrinking the level $K_{S(B)_2 \cup T_2}$, we have $\widehat{\mathcal{A}}_{GU}(K^w_{GU}, \mathcal{V}_{\lambda^{w}}(\sigma))_{\mathfrak{m}} = \widehat{\mathcal{A}}_{GU}(K^w_{GU}, \mathcal{V}_{\lambda^{w}})_{\mathfrak{m}} \otimes \sigma$. Thus $\widehat{H}^{[F_w : \mathbb{Q}_p]}(S_{\Psi, K_1^w}, V_{\lambda^w})_{\mathfrak{m}}[\tilde{\varphi}] \neq 0$ for some level $K_1^w$ by Theorem \ref{comparison1}. The result follows from Theorem \ref{conjectual classicality theorem}. \end{proof}

\subsection{Applications to the automorphy lifting theorem and the Breuil-M$\mathrm{\acute{\textbf{e}}}$zard conjecture}

First, we recall the definition of Serre weights and the Breuil-M$\mathrm{\acute{e}}$zard conjecture. Let $p$ be an odd prime and $L/\mathbb{Q}_p$ be a finite extension and $E$, $\mathcal{O}$ and $\mathbb{F}$ as in the previous subsection such that $\mathrm{Hom}_{\mathbb{Q}_p}(L, \overline{\mathbb{Q}}_p) = \mathrm{Hom}_{\mathbb{Q}_p}(L, E)$.

Let $W_L$ be the subset of $(\mathbb{Z}_+^2)^{\mathrm{Hom}(\mathbb{F}_L, \mathbb{F}_E)}$ consisting of $k \in (\mathbb{Z}_+^2)^{\mathrm{Hom}(\mathbb{F}_L, \mathbb{F}_E)}$ satisfying that $k_{\gamma, 2}$ and $k_{\gamma, 1} - k_{\gamma, 2}$ are contained in $[0, p-1]$ for any $\gamma \in \mathrm{Hom}(\mathbb{F}_L, \mathbb{F}_E)$ and $k_{\gamma, 2} \neq p-1$ for some $\gamma \in \mathrm{Hom}(\mathbb{F}_L, \mathbb{F}_E)$. Note that $W_L$ has a bijection to the set of isomorphism classes of irreducible representations of $\mathrm{GL}_2(\mathbb{F}_L)$ via $k = (k_{\gamma, 1}, k_{\gamma, 2}) \mapsto \sigma_k := \otimes_{\gamma \in \mathrm{Hom}(\mathbb{F}_L, \mathbb{F}_E)} ((\wedge^2 \mathbb{F}_E^{\oplus 2})^{\otimes k_{\gamma, 2}} \otimes \mathrm{Sym}^{k_{\gamma, 1} - k_{\gamma, 2}}\mathbb{F}_E^{\oplus 2})$ by \cite[Appendix]{Her}. Here, we regard $(\wedge^2 \mathbb{F}_E^{\oplus 2})^{\otimes k_{\gamma, 2}} \otimes \mathrm{Sym}^{k_{\gamma, 1} - k_{\gamma, 2}}\mathbb{F}_E^{\oplus 2}$ as a representation of $\mathrm{GL}_2(\mathbb{F}_L)$ by $\tau : \mathbb{F}_L \hookrightarrow \mathbb{F}_E$.

For any inertia type $\tau$, after extending $E$ if necessary, we can take an $I_L$-stable lattice of $\tau$ and $\mathrm{GL}_2(\mathcal{O}_L)$-stable lattices of $\sigma^{\mathrm{crys}}(\tau)$ and $\sigma^{\mathrm{ss}}(\tau)$ over $\mathcal{O}$ and we also write $\tau$, $\sigma^{\mathrm{crys}}(\tau)$ and $\sigma^{\mathrm{ss}}(\tau)$ for them. For $\lambda \in (\mathbb{Z}^2_+)^{\mathrm{Hom}_{\mathbb{Q}_p}(L, \overline{\mathbb{Q}}_p)}$, we put $\overline{\mathcal{V}_{\lambda} \otimes_{\mathcal{O}} \sigma^{\mathrm{crys}}(\tau)}^{\mathrm{ss}} = \oplus_{k \in W_L} \sigma_k^{n_k^{\mathrm{crys}}(\lambda, \tau)}$ and $\overline{\mathcal{V}_{\lambda} \otimes_{\mathcal{O}} \sigma^{\mathrm{ss}}(\tau)}^{\mathrm{ss}} = \oplus_{k \in W_L} \sigma_k^{n_k^{\mathrm{ss}}(\lambda, \tau)}$. Here, $\mathcal{V}_{\lambda}$ is as before, i.e., a $\mathrm{GL}_2(\mathcal{O}_L)$-stable lattice over $\mathcal{O}$ of the irreducible algebraic representation of $\mathrm{Res}_{L/\mathbb{Q}_p}\mathrm{GL}_{2, L}(E)$ defined by $\lambda$. In the above formula, $\mathrm{ss}$ denotes the semisimplification as $\mathbb{F}_E[\mathrm{GL}_2(\mathcal{O}_L)]$-modules. Note that every irreducible continuous representation of $\mathrm{GL}_2(\mathcal{O}_L)$ over $\mathbb{F}_E$ comes from an irreducible representation of $\mathrm{GL}_2(\mathbb{F}_L)$ over $\mathbb{F}_E$ by \cite[Theorem 16]{Serre}.

For any $\gamma \in \mathrm{Hom}(\mathbb{F}_L, \mathbb{F}_E)$, we take a lift $\tilde{\gamma} \in \mathrm{Hom}_{\mathbb{Q}_p}(L, E)$ and let $W_L \hookrightarrow (\mathbb{Z}_+^2)^{\mathrm{Hom}_{\mathbb{Q}_p}(L, E)}, \ k \mapsto \tilde{k}$ be the embedding defined by for any $\gamma \in \mathrm{Hom}(\mathbb{F}_L, \mathbb{F}_E)$, $\tilde{k}_{\tilde{\gamma}} := k_{\gamma}$ and $\tilde{k}_{\gamma'} := (0, 0)$ if $\tilde{\gamma} \neq \gamma'$ and $\gamma'$ is a lift of $\gamma$.

\vspace{0.5 \baselineskip}

Let $\overline{\rho}_L : G_{L} \rightarrow \mathrm{GL}_2(\mathbb{F})$ be a continuous representation. 

\begin{dfn}\label{Serre weight}

Let $W_L(\overline{\rho}_L)$ be the subset of $W_L$ consisting of $k \in W_L$ such that there exists a crystalline lift $\rho : G_{L} \rightarrow \mathrm{GL}_2(\overline{\mathbb{Z}}_p)$ of $\overline{\rho}_L$ of $p$-adic Hodge type $\tilde{k}$. 
        
\end{dfn}

\begin{rem}

Actually, $W_{L}(\overline{\rho}_L)$ doesn't depend on the choice of $\tilde{\gamma}$ by the following theorem.

\end{rem}

    \begin{thm} \label{Serre weight2}
    
    1 \ There exists a unique element $(\mu_k(\overline{\rho}_L))_k$ of $(\mathbb{Z}_{\ge 0})^{W_L}$ such that $e(R_{\overline{\rho}_L}^{\mathrm{crys}, 0, \tau}/\varpi) = \sum_{k \in W_L} n^{\mathrm{crys}}_k(0, \tau) \mu_{k}(\overline{\rho}_L)$ for any inertia type $\tau$. Here, $e$ denotes the Hilbert-Samuel multiplicity. (See \cite[{\S} 1.3]{FMRI} for the definition.)
    
    2 \ We have $W_L(\overline{\rho}_L) = \{ k \in W_L \mid \mu_k(\overline{\rho}_L) \neq 0 \}$.

3 \ If $L/\mathbb{Q}_p$ is unramified and $k \in W_L$ satisfies $0 \le k_{\gamma, 1} - k_{\gamma, 2} \le p-3$ for any $\gamma \in \mathrm{Hom}(\mathbb{F}_L, \mathbb{F}_E)$, then $\mu_k(\overline{\rho}_L) = 0$ or $1$.

    \end{thm}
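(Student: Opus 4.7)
The plan is to regard this as the two-dimensional Breuil--M\'ezard conjecture for trivial Hodge--Tate weight, which is by now established, together with a regularity statement about the Serre weight multiplicities. The main inputs are the works of Gee--Kisin (and in the irreducible residual case, the complementary results of Sander, Hu--Tan, etc.); the contribution of the present argument is to package them in the form stated.

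For the existence in part 1, I would invoke the Gee--Kisin theorem asserting that for each irreducible $\sigma_k$ there exist non-negative integers $\mu_k(\overline{\rho}_L)$ such that the Hilbert--Samuel multiplicity $e(R_{\overline{\rho}_L}^{\mathrm{crys},0,\tau}/\varpi)$ satisfies
\[
e(R_{\overline{\rho}_L}^{\mathrm{crys},0,\tau}/\varpi)=\sum_{k\in W_L} n_k^{\mathrm{crys}}(0,\tau)\,\mu_k(\overline{\rho}_L)
\]
for every inertia type $\tau$. For uniqueness I would choose, for each $k\in W_L$, a principal series inertia type $\tau(k)$ whose reduction realizes $\sigma_k$ as a Jordan--H\"older constituent with controlled multiplicities; arranging the types in a suitable order makes the matrix $\bigl(n_{k'}^{\mathrm{crys}}(0,\tau(k))\bigr)_{k,k'\in W_L}$ upper triangular with $1$'s on the diagonal, hence invertible, so the $\mu_k$ are determined by the Hilbert--Samuel multiplicities of finitely many $R_{\overline{\rho}_L}^{\mathrm{crys},0,\tau(k)}/\varpi$.

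Part 2 is then the ``geometric Breuil--M\'ezard'' characterization of Serre weights: $\mu_k(\overline{\rho}_L)\neq 0$ precisely when $\overline{\rho}_L$ admits a crystalline lift of Hodge--Tate type $\tilde{k}$. One direction is immediate from the formula and the definition of $n_k^{\mathrm{crys}}(0,\tau)$ (if no crystalline lift of type $\tilde{k}$ exists then $R_{\overline{\rho}_L}^{\mathrm{crys},0,\tau}$ can be shown not to contribute $\sigma_k$); the converse is the substantial content and follows from Gee--Kisin's identification of Serre weights with components of the moduli of Breuil--Kisin modules, together with the comparison to the Emerton--Gee stack.

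Part 3 is a purely local, explicit computation in the unramified sufficiently generic regime: the crystalline deformation rings of a mod~$p$ representation in this range of Hodge--Tate weights are described explicitly (by Fontaine--Laffaille theory when $\overline{\rho}_L$ is semisimple, and by the work of Breuil--M\'ezard, Hu, and Le--Le Hung--Levin--Morra in general), and one can read off that each $\mu_k(\overline{\rho}_L)$ is either $0$ or $1$. The main obstacle is really the Gee--Kisin part of statement 1, but since this is invoked as a known result the argument is essentially bookkeeping; the subtlety is only to verify that the sufficient genericity hypothesis in part 3 is strong enough to land in the regime where the explicit Breuil--M\'ezard computation gives multiplicities at most one.
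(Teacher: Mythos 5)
Your proposal is correct and matches the paper's approach: the paper simply cites \cite[Theorem A]{GK} for parts 1 and 2 and \cite[Theorem 6.1.8]{GLS} for part 3, and your sketch faithfully reconstructs the content of both. One small discrepancy worth flagging: for part 3 you appeal to Fontaine--Laffaille descriptions of crystalline deformation rings (semisimple case) and to Breuil--M\'ezard/Hu/LLLM computations (general case), whereas the paper invokes the multiplicity-one statement of Gee--Liu--Savitt, which handles the unramified case with $0\le k_{\gamma,1}-k_{\gamma,2}\le p-3$ directly via their crystalline lift analysis; your route is viable but would require you to check that the LLLM genericity hypotheses are implied by this weight range, which GLS spares you. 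Your uniqueness argument via a triangular type matrix is the standard one and agrees with how uniqueness is established in the Gee--Kisin framework.
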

    
    \begin{proof} See \cite[Theorem A]{GK} and \cite[Theorem 6.1.8]{GLS}. \end{proof}

The following is the Breuil-M$\mathrm{\acute{e}}$zard conjecture in the two-dimensional case.

\begin{conj}\label{Breuil-Mezard conjecture}(Breuil-M$\acute{e}$zard conjecture)

For any weight $\lambda \in (\mathbb{Z}_+^2)^{\mathrm{Hom}_{\mathbb{Q}_p}(L, E)}$ and any inertia type $\tau$, we have the following.

1 \ $e(R^{\mathrm{crys}, \lambda, \tau}_{\overline{\rho}_L}/\varpi) = \sum_{k \in W_L} n_k^{\mathrm{crys}}(\lambda, \tau) \mu_k(\overline{\rho}_L)$.

2 \ $e(R^{\mathrm{ss}, \lambda, \tau}_{\overline{\rho}_L}/\varpi) = \sum_{k \in W_L} n_k^{\mathrm{ss}}(\lambda, \tau) \mu_k(\overline{\rho}_L)$.

\end{conj}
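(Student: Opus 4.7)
The plan is to prove this conjecture in the $L = \mathbb{Q}_{p^2}$ case under the sufficient genericity hypothesis $2 \le k_{\gamma,1} - k_{\gamma,2} \le p-5$ for every $k \in W_L(\overline{\rho}_L)$ and every $\gamma$, following the three-step strategy outlined in the introduction. First, I would reduce to the case of a ``sufficiently generic'' $\overline{\rho}_L$ using \cite[Remark 8.3.7]{EGS} and the Emerton--Gee stack formalism: BM for the generic stratum of the Emerton--Gee stack implies BM for all lifting rings of residual representations in that stratum, so one may freely impose additional genericity on $\overline{\rho}_L$ beyond the hypothesis of the theorem.

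Second, for such a sufficiently generic $\overline{\rho}_L$, I would construct a global conjugate self-dual residual representation $\overline{\rho} : G_F \to \mathrm{GL}_2(\mathbb{F})$ satisfying $\overline{\rho}|_{G_{F_w}} \cong \overline{\rho}_L$ and the technical conditions needed in \S6.2 (including decomposed genericity, the irreducibility of $\otimes_{\tau \in \Psi}(\overline{\rho}|_{G_{\tilde F}})^\tau$, and that $\overline{\rho}_{\mathfrak{m}}(G_F)$ is non-solvable). Theorem \ref{lifting of Galois} from \S7.1, together with a Calegari--Khare-style globalization, produces a characteristic-zero lift $\rho : G_F \to \mathrm{GL}_2(\mathcal{O})$ lying on the chosen component $\mathcal{C}_v$ for each $v \in S$, with Steinberg local behavior at auxiliary primes $v \in Q$ so that every local deformation problem appearing in \S7.2 is formally smooth. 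Applying Corollary \ref{BIGCL} (the combination of the big $R=T$ theorem in Theorem \ref{eigenspace}, the comparison Theorem \ref{comparisonII}, and the classicality Theorem \ref{classicality theorem}) then yields that $\rho$ is automorphic, and hence that $R_{\overline{\rho},\mathcal{D}}^{\mathrm{red}}[1/p] \cong \mathbb{T}^{\lambda,\tau}_{\mathfrak{m}}[1/p]$ for the appropriate global deformation problem $\mathcal{D}$.

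Third, I would convert the automorphy lifting statement, which is a priori a statement about generic fibers of lifting rings, into the Hilbert--Samuel multiplicity statement of the conjecture, which is a statement about special fibers. This is exactly the content of the modification of \cite{KP} (together with \cite{KR} and \cite{FS}) carried out in \S7.1: the formally smooth local deformation conditions chosen at auxiliary primes, combined with the irreducible component $\mathcal{C}_v$ at $v \mid p$, allow one to isolate a single component of the special fiber of $R^{\mathrm{ss},\lambda_w,\tau_w}_{\overline{\rho}_L}$ and to read its multiplicity off from $\mathbb{T}^{\lambda,\tau}_{\mathfrak{m}}$ via patching. Running this over all choices of component $\mathcal{C}_w$ gives both equalities in Conjecture \ref{Breuil-Mezard conjecture}.

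The principal obstacle is the classicality Theorem \ref{classicality theorem}, which is the only input that genuinely restricts $L$ to satisfy $[F_w:\mathbb{Q}_p] \le 2$. Its proof requires three delicate ingredients that all interlock: the classicality of the geometric locally analytic de Rham complex $H^d(\Fl, GDR^{\Psi-\mathrm{la}}_\lambda)$, which is handled on the basic locus by the uniformization Theorem \ref{uniformization} but on the $\mu$-ordinary locus only becomes tractable for $[F_w:\mathbb{Q}_p]\le 2$ via \cite{DC}; the compatibility between the Fontaine operator $N_0$ and the arithmetic de Rham differential $\overline{d}_\tau^\lambda \circ d_\tau^\lambda$ (Theorem \ref{Fontaine} and Corollary \ref{non-parallel Fontaine}), whose proof in the non-parallel weight case forces a careful filtration analysis of $\mathcal{O}\mathbb{B}_{\mathrm{dR}, \lambda_2+2}^+$ and works cleanly only when $|\Psi|=2$; and finally the comparison Theorem \ref{comparisonII} between completed cohomologies of different unitary Shimura varieties, which relies on the torsion Kottwitz conjecture \cite[Theorem 8.10]{zou} and the Caraiani--Scholze--Koshikawa vanishing. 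Everything else in the argument is essentially local deformation theory, patching, and the formal Emerton--Gee globalization, which are expected to go through in greater generality.
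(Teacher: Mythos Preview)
The statement you are addressing is a \emph{conjecture}; the paper does not prove it in general but only for $L=\mathbb{Q}_{p^2}$ under the genericity hypothesis (Theorem~\ref{Breuil-Mezard}). You correctly identify this and your outline matches the paper's route: reduce to a sufficiently generic $\overline{\rho}_L$ via the Emerton--Gee stack (Theorem~\ref{genericity}), globalize, produce lifts hitting each component $\mathcal{C}_w$ via Theorem~\ref{lifting of Galois}, and conclude automorphy via Corollary~\ref{BIGCL}.

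Two points of friction with the paper's argument. First, you attribute the passage from ``automorphy on generic fibers'' to ``Hilbert--Samuel multiplicities on special fibers'' to the \S7.1 material. In the paper this step is Proposition~\ref{equivalence} in \S7.3, which is a patching argument in the style of \cite{GK} and \cite{BM}: one shows the equivalence $1\Leftrightarrow 2\Leftarrow 3$ and, under condition~$\bullet$, $2\Rightarrow 3'$. The role of \S7.1 is to feed into condition~2, namely to guarantee that every irreducible component $\mathcal{C}_w$ carries an automorphic point. Second, the lift $\rho_{\mathcal{C}_w}$ produced by Theorem~\ref{lifting of Galois} is Steinberg at the auxiliary set $T_{\mathcal{C}_w,3}$, so the automorphic representation $\pi_{\mathcal{C}_w}$ you obtain from Corollary~\ref{BIGCL} is ramified outside the original $S$. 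Proposition~\ref{equivalence} (condition~2) requires an automorphic point \emph{unramified outside $S$}. The paper closes this gap in the proof of Proposition~\ref{BIGRT} by invoking \cite[Theorem~A]{ECG} to pass to a new lift $\rho'_{\mathcal{C}_w}\in\mathrm{Spec}\,R_{\mathcal{D}^{\lambda,\tau,\emptyset}}(\mathcal{O})$ still lying on $\mathcal{C}_w$, and then applying Corollary~\ref{BIGCL} a second time. Your outline omits this level-lowering step; without it, condition~2 of Proposition~\ref{equivalence} is not verified and the multiplicity conclusion does not follow.
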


\begin{thm}\label{genericity}
    
1 \ For any $k \in W_L$ satisfying $k_{\gamma, 1} - k_{\gamma, 2} \neq p - 1$ for some $\gamma$, there exists a continuous representation $\overline{\rho}_k : G_{L} \rightarrow \mathrm{GL}_2(\overline{\mathbb{F}})$ such that $W_L(\overline{\rho}_k) = \{ k \}$ and $\overline{\rho}_{k}$ gives a smooth point of $\mathcal{X}^{k}_2$, where $\mathcal{X}^{k}_2$ is the irreducible component of the reduced Emerton-Gee stack labeled by $k$ defined in \cite[Theorem 6.5.1]{EGS}. Moreover, if we have $2 \le k_{\gamma, 1} - k_{\gamma, 2} \le p-5$ for any $\gamma$, we may assume that $\overline{\rho}_k$ is non-split reducible sufficiently generic.

2 \ Let $\lambda \in (\mathbb{Z}_+^2)^{\mathrm{Hom}_{\mathbb{Q}_p}(L, E)}$ be a weight and let $\tau$ be a inertia type. Assume that for any $k \in W_L(\overline{\rho}_L)$, we have $k_{\gamma, 1} - k_{\gamma, 2} \neq p - 1$ for some $\gamma$ and there exists $\overline{\rho}_k$ as in 1 such that the Breuil-M$\mathrm{\acute{e}}$zard conjecture holds for $\overline{\rho}_k$, $\lambda$ and $\tau$. Then the Breuil-M$\mathrm{\acute{e}}$zard conjecture for $\overline{\rho}_L, \lambda$ and $\tau$.

\end{thm}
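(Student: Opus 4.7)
The plan is to combine the structure theory of the reduced Emerton--Gee stack $\mathcal{X}_{2,\mathrm{red}}$ with its cycle-theoretic reformulation of the Breuil--M\'ezard conjecture.

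For part 1, I would invoke \cite[Theorem 6.5.1]{EGS}, which describes the irreducible components $\mathcal{X}_2^k$ of $\mathcal{X}_{2,\mathrm{red}}$ explicitly. For $k \in W_L$ satisfying $k_{\gamma,1}-k_{\gamma,2}\neq p-1$ for some $\gamma$, the component $\mathcal{X}_2^k$ contains a Zariski-dense open substack parametrizing reducible non-split representations of the form
$$\overline{\rho}_k \sim \begin{pmatrix} \chi_1 & * \\ 0 & \chi_2 \end{pmatrix},$$
where the pair $(\chi_1,\chi_2)|_{I_L}$ is determined by $k$ via the recipe of Serre weights, and $*$ is a nonzero extension class lying in the smooth locus of the stratum. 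Taking $\overline{\rho}_k$ a general point of this open gives smoothness on $\mathcal{X}_2^k$ by construction, and $W_L(\overline{\rho}_k)=\{k\}$ follows from the weight part of Serre's conjecture proved in \cite{GLS} combined with the fact that the only component of $\mathcal{X}_{2,\mathrm{red}}$ containing $\overline{\rho}_k$ is $\mathcal{X}_2^k$. When $2 \le k_{\gamma,1}-k_{\gamma,2} \le p-5$ for all $\gamma$, the inertia exponents of $\chi_1/\chi_2$ are automatically in the range that defines reducible non-split sufficient genericity.

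For part 2, I would adopt the geometric formulation of the Breuil--M\'ezard conjecture on $\mathcal{X}_{2,\mathrm{red}}$ from \cite[Remark 8.3.7]{EGS}: the multiplicities $\mu_k(\overline{\rho}_L)$ equal the local multiplicities of $\mathcal{X}_2^k$ at $\overline{\rho}_L$, and the Hilbert--Samuel multiplicities $e(R^{\mathrm{crys},\lambda,\tau}_{\overline{\rho}_L}/\varpi)$ (resp.\ semistable) equal the local multiplicity at $\overline{\rho}_L$ of the potentially crystalline (resp.\ semistable) closed substack $\mathcal{X}_2^{\mathrm{crys},\lambda,\tau} \subset \mathcal{X}_{2,\mathrm{red}}$. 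Thus the Breuil--M\'ezard conjecture at $\overline{\rho}_L$ is equivalent to the cycle identity
$$[\mathcal{X}_2^{\mathrm{crys},\lambda,\tau}] \;=\; \sum_{k \in W_L} n_k^{\mathrm{crys}}(\lambda,\tau)\,[\mathcal{X}_2^k]$$
holding locally at $\overline{\rho}_L$, and analogously in the semistable case.

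The hypothesis supplies, for each $k \in W_L(\overline{\rho}_L)$, a representative $\overline{\rho}_k$ at which this cycle identity holds. Since $\overline{\rho}_k$ is smooth on $\mathcal{X}_2^k$ and $W_L(\overline{\rho}_k)=\{k\}$ forces $\mu_{k'}(\overline{\rho}_k)=0$ for $k'\neq k$ by Theorem \ref{Serre weight2}, the identity at $\overline{\rho}_k$ pins down the multiplicity of $[\mathcal{X}_2^{\mathrm{crys},\lambda,\tau}]$ along the irreducible $\mathcal{X}_2^k$ as exactly $n_k^{\mathrm{crys}}(\lambda,\tau)$; irreducibility then propagates this multiplicity to every point of $\mathcal{X}_2^k$, in particular to $\overline{\rho}_L$. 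Summing over $k \in W_L(\overline{\rho}_L)$ yields the required identity at $\overline{\rho}_L$. The main obstacle is ensuring that the underlying reduced substack $(\mathcal{X}_2^{\mathrm{crys},\lambda,\tau})_{\mathrm{red}}$ is indeed a union of full components $\mathcal{X}_2^k$ (so that the cycle decomposition makes sense on $\mathcal{X}_{2,\mathrm{red}}$), and that the local multiplicities at $\overline{\rho}_k$ genuinely compute $e(R^{\mathrm{crys},\lambda,\tau}_{\overline{\rho}_k}/\varpi)$; both are established in \cite{EGS}, so the argument is essentially a formal consequence of invoking that machinery together with the smoothness from part 1.
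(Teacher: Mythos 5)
Your proposal is correct and follows essentially the same route as the paper's proof, which is itself just a pointer to the relevant results in \cite{EGS} (Theorem 8.6.2 for the identification of components with Serre weights, Theorem 5.5.12(2) for the reducible-non-split description of a dense open in $\mathcal{X}_2^k$, and the cycle-theoretic argument of \S 8.3 for the propagation of the Breuil--M\'ezard multiplicities along irreducible components). You unpack the same chain of ideas: pick a general point of the dense reducible-non-split open of $\mathcal{X}_2^k$ (avoiding all other components, hence smooth and with a single Serre weight), read off the cycle coefficient $m_k$ of $[\mathcal{X}_2^{\mathrm{crys},\lambda,\tau}]$ along $\mathcal{X}_2^k$ from the hypothesis at that point, and then use irreducibility of $\mathcal{X}_2^k$ to transport $m_k = n_k^{\mathrm{crys}}(\lambda,\tau)$ to $\overline{\rho}_L$.
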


\begin{proof} The first statement of 1 follows from the relation between irreducible components of the Emerton-Gee stack and Serre weights proved in \cite[Theorem 8.6.2]{EGS}. The second statement of 1 follows from the description of $\mathcal{X}^k$ given in \cite[2 of Theorem 5.5.12]{EGS}. 2 follows from exactly the same argument as \cite[{\S} 8.3]{EGS}. \end{proof}

Roughly speaking, \cite{FMRI} and \cite{BM} showed that the Breuil-M$\mathrm{\acute{e}}$zard conjecture is equivalent to the automorphy lifting theorem. In this subsection, we slightly modify their formulation.

\vspace{0.5 \baselineskip}

We consider the following objects and assumptions.

\begin{itemize}
    \item $p \ge 3$.
    \item $F_0$ is an imaginary quadratic field in which $p$ splits.
    \item $F^+/\mathbb{Q}$ is a totally real field such that $F^+ \neq \mathbb{Q}$.
    \item $F:=F_0F^+$ and assume that $F/F^+$ is unramified at any finite place and $\zeta_p \notin F$.
    \item $\Phi := \mathrm{Hom}_{F_0}(F, \mathbb{C})$. 
    \item $S$ is a finite set of finite places of $F^+$ splitting in $F$.
    \item $S_p$ is the set of $p$-adic places of $F^+$.
    \item $S_p^{\mathrm{ss}}$ and $S_{p}^{\mathrm{crys}}$ are subsets of $S_p$ such that $S_p = S_{p}^{\mathrm{ss}} \sqcup S_{p}^{\mathrm{crys}}$.
    \item $R$ is a subset of $S$ such that $S_p \cap R$ is empty.
    \item $T_1$ and $T_2$ are subsets of $S$ such that $S \setminus S_p \cup R = T_1 \sqcup T_2$.
    \item For any $v \in S$, we fix a lift $\tilde{v}$ to $F$ of $v$ and we put $\tilde{S}:=\{ \tilde{v} \mid v \in S \}$.
    \item For any $v \mid p$, we fix $\lambda_v \in (\mathbb{Z}_+^2)^{\mathrm{Hom}_{\mathbb{Q}_p}(F_{\tilde{v}}, E)}$ and an inertia type $\tau_v : I_{F_{\tilde{v}}} \rightarrow \mathrm{GL}_2(\mathcal{O})$.
    \item $\overline{\rho} : G_{F, S} \rightarrow \mathrm{GL}_2(\mathbb{F})$ is a continuous representation such that there exists a perfect $G_{F, S}$-equivariant symmetric pairing $\overline{\rho} \times \overline{\rho}^c \rightarrow \overline{\varepsilon_p^{-1}}$.
    \item $\overline{\rho}|_{G_{F(\zeta_p)}}$ is irreducible.
    \item $T_1$ contains two places $v_1, v_2$ such that $\mathrm{char}\mathbb{F}_{v_1} \neq \mathrm{char}\mathbb{F}_{v_2}$ and for any $v \in T_1$, $\overline{\rho}|_{G_{F_{\tilde{v}}}}$ is generic unramified.
    \item For any $v \in T_2$, $\overline{\rho}|_{G_{F_{\tilde{v}}}}$ is generic irreducible.
    \item For any $v \in R$, $\overline{\rho}|_{G_{F_{\tilde{v}}}}$ is trivial.
    \item There exists a conjugate self-dual cohomological automorphic representation $\pi$ of $\mathrm{GL}_2(\mathbb{A}_{F})$ such that $\overline{\rho} \cong \overline{r_{\iota}(\pi)}$.
    \end{itemize}
    
    Let $V := S_p^{\mathrm{crys}} \sqcup R$. Let $\mathcal{D}^{\lambda, \tau, V} = (F/F^+, S, \tilde{S}, \mathcal{O}, \overline{\rho}, \varepsilon_p^{-1}, \{ \mathcal{D}_v^{\lambda, \tau, V} \}_{v \in S})$ be the global deformation problem defined by the following conditions. (Precisely, in \cite{matsumoto}, he assumed that every local deformation problem $\mathcal{D}_v^{\lambda, \tau, V}$ is non-empty. In the following, we admit that $\mathcal{D}_v^{\lambda, \tau, V}$ may be empty and then we define the local lifting ring at $v$ and the global deformation ring to be the zero ring.)
        
    \begin{itemize}
    \item For any $v \in S \setminus (S_p \cup R)$, the local deformation problem $\mathcal{D}_v^{\lambda, \tau, V}$ consists of all deformations of $\overline{\rho}|_{G_{F_{\tilde{v}}}}$.
    \item For any $v \in S_p^{\mathrm{crys}}$, the local deformation problem $\mathcal{D}_v^{\lambda, \tau, V}$ is defined by $\mathrm{Spec}R_{\overline{\rho}|_{G_{F_{\tilde{v}}}}}^{\mathrm{crys}, \lambda_v, \tau_v}$.
    \item For any $v \in S_p^{\mathrm{ss}}$, the local deformation problem $\mathcal{D}_v^{\lambda, \tau, V}$ is defined by $\mathrm{Spec}R_{\overline{\rho}|_{G_{F_{\tilde{v}}}}}^{\mathrm{ss}, \lambda_v, \tau_v}$.
    \item For any $v \in R$, the local deformation problem $\mathcal{D}_v^{\lambda, \tau, V}$ is defined by $\mathrm{Spec}R_{\overline{\rho}|_{G_{F_{\tilde{v}}}}}^{\mathrm{unip}, \mathrm{red}}$, where $R_{\overline{\rho}|_{G_{F_{\tilde{v}}}}}^{\mathrm{unip}}$ is the unipotently ramified lifting ring and $R_{\overline{\rho}|_{G_{F_{\tilde{v}}}}}^{\mathrm{unip}, \mathrm{red}}$ is the reduction of $R_{\overline{\rho}|_{G_{F_{\tilde{v}}}}}^{\mathrm{unip}}$. ($R_{\overline{\rho}|_{G_{F_{\tilde{v}}}}}^{\mathrm{unip}}$ coincides with $R_{\overline{\rho}|_{G_{F_{\tilde{v}}}}}^{\mathrm{loc}}/\mathcal{I}^{(1, \cdots, 1)}$ of \cite[{\S} 3]{IA}. )
    \end{itemize}

Let $U/F^+$ be a unitary group defined by the data $S(B):=\emptyset$ and $\Psi:=\emptyset$ as in {\S} 3.1. Then $U(F^+ \otimes_{\mathbb{Q}} \mathbb{R}) \cong \prod_{v \mid \infty} U(0, 2)$. Let $K = \prod_{v} K_v$ be an open compact subgroup of $U(\mathbb{A}_{F^+}^{\infty})$ defined by the following.
        
\begin{itemize}
\item For any $v \in T_1$, let $K_v = \mathrm{Iw}_{\tilde{v}, 1}$.
\item For any $v \notin T_1$, let $K_v$ be a hyperspecial.
\end{itemize}

Let $\mathfrak{m}$ be a non-Eisenstein ideal of $\mathbb{T}^S$ corresponding to $\overline{\rho}$. For any $v \in T_2$, let $\sigma_v := \sigma^{\mathrm{ss}}(\rho_v|_{I_{F_{\tilde{v}}}})$ for a lift $\rho_v \in \mathrm{Spec}R_{\overline{\rho}|_{G_{F_{\tilde{v}}}}, \mathcal{D}_v}(\mathcal{O})$. Note that such a $\rho_v$ exists unique up to unramified twists by Lemma \ref{formally smooth deformation}. For $v \in R$, let $\sigma_v := \sigma^{\mathrm{ss}}(\bold{1})$ for the trivial representation $\bold{1}$.

We assume that for any $v \in S_p \cup R$, any irreducible component $\mathcal{C}_v$ of $\mathrm{Spec} R_{\overline{\rho}|_{G_{F_{\tilde{v}}}}, \mathcal{D}_v}$ and any finite extension $E'/E$, the base change $\mathcal{C}_v \otimes_{\mathcal{O}} \mathcal{O}_{E'}$ is irreducible. Moreover, for any $v \in T_2 \cup R$, we assume that $\sigma_v$ is defined over $E$ and we take a $K_v$-stable lattice of $\sigma_v$ over $\mathcal{O}$ and we also write $\sigma_v$ for this. We also assume that we can take a $\mathcal{O}$-lattice $\sigma^{\mathrm{ss}}(\tau_v)$ for any $v \in S_p^{\mathrm{ss}}$, $\sigma^{\mathrm{crys}}(\tau_v)$ for any $v \in S_p^{\mathrm{crys}}$ and $\mathcal{V}_{\lambda}$ as before. (Of course, these assumptions are satisfied if we can extend the field $E$. Later, we will need to be careful when we extend the field $E$ because we will use Theorem \ref{lifting prescribed}.) 

Let $\sigma_V := (\otimes_{v \in R \cup T_2} \sigma_v) \otimes (\otimes_{v \in S_p^{\mathrm{ss}}} \sigma^{\mathrm{ss}}(\tau_v)) \otimes (\otimes_{v \in S_p^{\mathrm{crys}}} \sigma^{\mathrm{crys}}(\tau_v))$. Then we can consider $\mathcal{A}_{U}(K, \mathcal{V}_{\lambda}(\sigma_V))_{\mathfrak{m}}$ and $\mathbb{T}^{S}(K, \mathcal{V}_{\lambda}(\sigma_V))_{\mathfrak{m}}$ as in {\S} 7.2. Moreover, as in Proposition \ref{deformation problem}, we have a natural surjection $R_{\overline{\rho}, \mathcal{D}^{\lambda, \tau, V}}^{\mathrm{red}} \twoheadrightarrow \mathbb{T}^S(K, \mathcal{V}_{\lambda}(\sigma_V))_{\mathfrak{m}}$ by using the property of $\sigma_v$'s. (See Theorem \ref{typetheory}.) 

\begin{prop}\label{congruence}

There exists a conjugate self-dual cohomological cuspical automorphic representation $\pi_1$ of $\mathrm{GL}_2(\mathbb{A}_{F})$ of weight $(0 , 0)_{\tau} \in (\mathbb{Z}^2_+)^{\mathrm{Hom}(F, \mathbb{C})}$ such that $\overline{r_{\iota}(\pi)} \cong \overline{r_{\iota}(\pi_1)}$.

\end{prop}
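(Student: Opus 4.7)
The plan is to work on the definite unitary group side via base change. Since $U(\mathbb{R}) = \prod_{v \mid \infty} U(0,2)$ is compact and we may take $S(B) = \emptyset$ (so that $U$ is quasi-split at all finite places), Theorem \ref{base changeIII} (3) applied to $\pi$ produces a cohomological cuspidal automorphic representation $\sigma$ of $U(\mathbb{A}_{F^+})$ of some weight $\lambda \in (\mathbb{Z}_+^2)^{\Phi}$ with $BC(\sigma) = \pi$. Let $K = \prod_v K_v$ be a sufficiently small open compact subgroup of $U(\mathbb{A}_{F^+}^{\infty})$ with $\sigma^K \neq 0$, and let $\mathfrak{m}$ be the non-Eisenstein maximal ideal of an appropriate Hecke algebra $\mathbb{T}^S$ corresponding to $\overline{\rho}$.

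The key step is a change-of-weight argument at $p$. I would shrink $K_p$ to a pro-$p$ open subgroup $K_p'$ that is small enough to act trivially on $\mathcal{V}_{\lambda}/\varpi$. Then as $\mathbb{T}^S$-modules one has $\mathcal{A}_U(K^pK_p', \mathcal{V}_{\lambda}/\varpi)_{\mathfrak{m}} \cong \mathcal{A}_U(K^pK_p', \mathcal{O}/\varpi)_{\mathfrak{m}} \otimes_{\mathbb{F}} \overline{\mathcal{V}_{\lambda}}$. Nonvanishing of the left side follows from $\mathcal{A}_U(K^pK_p', \mathcal{V}_{\lambda})_{\mathfrak{m}} \neq 0$ (itself obtained from $\sigma^{K^pK_p'} \neq 0$ after level shrinking), so $\mathcal{A}_U(K^pK_p', \mathcal{O}/\varpi)_{\mathfrak{m}}$ is nonzero. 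Consequently $\mathcal{A}_U(K^pK_p', \mathcal{O})_{\mathfrak{m}}$ is a nonzero finitely generated $\mathcal{O}$-module (in fact finite free by the standard torsion-freeness of algebraic automorphic forms on a definite unitary group).

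Now a standard Deligne--Serre type argument applied to this nonzero finite free $\mathcal{O}$-module, with its commuting action of $\mathbb{T}^S(K^pK_p', \mathcal{O})_{\mathfrak{m}}$, yields (after enlarging $E$ if necessary) an $\mathcal{O}$-algebra morphism $\tau : \mathbb{T}^S(K^pK_p', \mathcal{O})_{\mathfrak{m}} \to \mathcal{O}$ whose reduction modulo $\varpi$ is the eigensystem defined by $\mathfrak{m}$. This produces a cohomological automorphic representation $\sigma_1$ of $U(\mathbb{A}_{F^+})$ of trivial weight $(0,0)_{\tau} \in (\mathbb{Z}_+^2)^{\Phi}$ with $\overline{r_{\iota}(\sigma_1)} \cong \overline{\rho}$. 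Cuspidality of $\sigma_1$ is automatic because $U$ is anisotropic modulo center, so every discrete automorphic representation of $U(\mathbb{A}_{F^+})$ is cuspidal.

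Finally, applying Theorem \ref{base changeIII} (1) to $\sigma_1$ produces a conjugate self-dual cohomological isobaric automorphic representation $\pi_1 := BC(\sigma_1)$ of $\mathrm{GL}_2(\mathbb{A}_F)$, and by part (2)(a) of that theorem $\pi_1$ has weight $(0,0)_{\tau}$. Cuspidality of $\pi_1$ follows from the irreducibility of $r_{\iota}(\pi_1)$, which is forced by $\overline{r_{\iota}(\pi_1)} \cong \overline{\rho}$ together with the hypothesis $\overline{\rho}|_{G_{F(\zeta_p)}}$ irreducible (an Eisenstein series would give a reducible $p$-adic Galois representation). The main technical subtlety I anticipate is verifying that the Deligne--Serre step yields a genuinely cuspidal system with the correct local properties at the auxiliary places in $S$; this is routine given that $\mathfrak{m}$ is non-Eisenstein, but requires care in handling the Hecke action and the base change compatibility at bad primes.
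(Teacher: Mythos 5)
Your proposal is correct and follows essentially the same route as the paper's proof: transfer $\pi$ to the definite unitary group via base change, use the fact that a sufficiently small level acts trivially on $\mathcal{V}_\lambda/\varpi$ so that the weight contributes only a tensor factor to $\mathcal{A}_U(K',\mathcal{V}_\lambda/\varpi)_\mathfrak{m}$, deduce $\mathcal{A}_U(K',\mathcal{O})_\mathfrak{m}\neq 0$, pick a cohomological $\sigma_1$ of trivial weight via a Deligne--Serre argument, and base change back, invoking non-Eisenstein-ness of $\mathfrak{m}$ for cuspidality. You spell out the Deligne--Serre step and the choice $S(B)=\emptyset$ more explicitly than the paper does, but the underlying argument is the same.
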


\begin{rem}

Note that $r_{\iota}(\pi_1)|_{G_{F_{\tilde{v}}}}$ is potentially diagonalizable if $r_{\iota}(\pi_1)|_{G_{F_{\tilde{v}}}}$ is potentially crystalline by \cite[Lemma 4.4.1]{GK} and $\iota$-ordinary otherwise by \cite[Lemma 5.6]{OG} for any $v \in S_p$. (See \cite[comments before Lemma 1.4.1]{CW} for the definition of potentially diagonalizable.)

\end{rem}

\begin{proof}  
    
The existence of $\pi$ implies that $\mathcal{A}_{U}(K', \mathcal{V}_{\lambda'})_{\mathfrak{m}} \neq 0$ for some $K'$ and some weight $\lambda'$ by using the base change Theorem \ref{base changeIII} and the decomposition of $\mathcal{A}_{U}(K', \mathcal{V}_{\lambda'})_{\mathfrak{m}}[\frac{1}{p}]$ by automorphic representations. Thus we obtain $\mathcal{A}_{U}(K', \mathcal{V}_{\lambda'}/\varpi)_{\mathfrak{m}} = \mathcal{A}_{U}(K', \mathbb{F})_{\mathfrak{m}} \otimes_{\mathbb{F}}  \mathcal{V}_{\lambda'}/\varpi \neq 0$ after replacing $K'$ by a sufficiently small open compact such that $K'$ acts on $\mathcal{V}_{\lambda'}/\varpi$ trivially. This implies $\mathcal{A}_{U}(K', \mathcal{O})_{\mathfrak{m}} \neq 0$ and we can take a cohomological cuspidal representation $\sigma$ of $U(\mathbb{A}_{F^+})$ contributing to $\mathcal{A}_{U}(K', \mathcal{O})_{\mathfrak{m}}[\frac{1}{p}]$. Note that $\pi_1 := \mathrm{BC}(\sigma)$ is cuspidal since $\mathfrak{m}$ is a non-Eisenstein ideal.  \end{proof}

\begin{prop}\label{equivalence}

We consider the following conditions.

1 \ $R_{\overline{\rho}, \mathcal{D}^{\lambda, \tau, V}}^{\mathrm{red}} \Isom \mathbb{T}^S(K, \mathcal{V}_{\lambda}(\sigma_V))_{\mathfrak{m}}$.

2 \ For any irreducible component $\mathcal{C} = \widehat{\otimes}_{v \in S_p \cup R}\mathcal{C}_v$ of $\mathrm{Spec}( \widehat{\otimes}_{v \in S_p \cup R}R_{\overline{\rho}|_{G_{F_{\tilde{v}}}}, \mathcal{D}_v^{\lambda, \tau, V}})$ such that there exists a conjugate self-dual cohomological cuspidal automorphic representation $\alpha$ of $\mathrm{GL}_2(\mathbb{A}_{F})$ such that $\overline{r_{\iota}(\alpha)} \cong \overline{\rho}$, $r_{\iota}(\alpha)|_{G_{F_{\tilde{v}}}} \in \mathcal{C}_v$ for any $v \in S_p \cup R$ and $\alpha$ is unramified outside $S$. Here, $\mathcal{C}_v$ is an irreducible component of $\mathrm{Spec}(R_{\overline{\rho}|_{G_{F_{\tilde{v}}}}, \mathcal{D}_v^{\lambda, \tau, V}})$ and $\widehat{\otimes}_{v \in S_p \cup R}\mathcal{C}_v$ denotes $\mathrm{Spec}\widehat{\otimes}_{v \in S_p \cup R}\Gamma(\mathcal{C}_v, \mathcal{O}_{\mathcal{C}_v})$. Note that this is irreducible and any irreducible component of $\mathrm{Spec}( \widehat{\otimes}_{v \in S_p \cup R}R_{\overline{\rho}|_{G_{F_{\tilde{v}}}}, \mathcal{D}_v^{\lambda, \tau, V}})$ has a form like this by \cite[Lemma 3.3]{Calabi} and our assumption that $\mathcal{C}_v \otimes_{\mathcal{O}} \mathcal{O}_{E'}$ is irreducible for any finite extension $E'/E$.

3 \ $e(R_{\overline{\rho}|_{G_{F_{\tilde{v}}}}}^{\mathrm{ss}, \lambda_v, \tau_v}/\varpi) = \sum_{k_v \in W_{F_v}} n_{k_v}^{\mathrm{ss}}(\lambda_v, \tau_v)\mu_{k_v}(\overline{\rho}|_{G_{F_{\tilde{v}}}})$ for any $v \in S_{p}^{\mathrm{ss}}$ and $e(R_{\overline{\rho}|_{G_{F_{\tilde{v}}}}}^{\mathrm{crys}, \lambda_v, \tau_v}/\varpi) = \sum_{k_v \in W_{F_v}} n_{k_v}^{\mathrm{crys}}(\lambda_v, \tau_v)\mu_{k_v}(\overline{\rho}|_{G_{F_{\tilde{v}}}})$ for any $v \in S_{p}^{\mathrm{crys}}$.

About the above conditions, we obtain the relations : $1 \Leftrightarrow 2 \Leftarrow 3$.

Moreover, we fix $w \mid p$ and assume the following condition. 

$\bullet$ $S_{p}^{\mathrm{ss}}$ is $\{ w \}$ or empty. Moreover, for any $p$-adic place $v \neq w$, we have the following conditions. 

(a) \ $F_v^+/\mathbb{Q}_p$ is unramified. 

(b) \ $\tau_v$ is the trivial representation $\bold{1}$. 

(c) \ $\lambda_{\gamma, 1} - \lambda_{\gamma, 2} \le p-3$ for any $\gamma \in \mathrm{Hom}_{\mathbb{Q}_p}(F_v, E)$. 

(d) \ $R_{\overline{\rho}|_{G_{F_{\tilde{v}}}}}^{\mathrm{crys}, \lambda, \bold{1}}$ is nonzero.

Then the above conditions $1, 2$ implies the following condition 3'.

3' \ $e(R_{\overline{\rho}|_{G_{F_{\tilde{w}}}}}^{\mathrm{ss}, \lambda_w, \tau_w}/\varpi) = \sum_{k_w \in W_{F_w}} n_{k_w}^{\mathrm{ss}}(\lambda_w, \tau_w)\mu_{k_w}(\overline{\rho}|_{G_{F_{\tilde{w}}}})$ if $w \in S_p^{\mathrm{ss}}$ and $e(R_{\overline{\rho}|_{G_{F_{\tilde{w}}}}}^{\mathrm{crys}, \lambda_w, \tau_w}/\varpi) = \sum_{k_w \in W_{F_w}} n_{k_w}^{\mathrm{crys}}(\lambda_w, \tau_w)\mu_{k_w}(\overline{\rho}|_{G_{F_{\tilde{w}}}})$ if $w \in S_{p}^{\mathrm{crys}}$.

\end{prop}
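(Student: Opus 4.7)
\textbf{Proposal for the proof of Proposition \ref{equivalence}.}

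The backbone of the argument will be the Taylor--Wiles--Kisin patching construction applied to the completed cohomology $\widehat{\mathcal{A}}_U(K,\mathcal{V}_\lambda(\sigma_V))_{\mathfrak{m}}$, together with the local-global compatibility surjection $R_{\overline{\rho},\mathcal{D}^{\lambda,\tau,V}}^{\mathrm{red}} \twoheadrightarrow \mathbb{T}^S(K,\mathcal{V}_\lambda(\sigma_V))_{\mathfrak{m}}$ available from Proposition \ref{deformation problem}. I first fix an auxiliary set $Q$ of Taylor--Wiles primes using the surjectivity of $\overline{\rho}|_{G_{F(\zeta_p)}}$ after a solvable base change (here the congruence Proposition \ref{congruence} guarantees that Taylor--Wiles data actually exist in the present non-trivial weight case), and patch $\sigma_V$-coefficient completed cohomology into a module $M_\infty$ over $R_\infty := R^{\mathrm{loc},\lambda,\tau,V} [[x_1,\ldots,x_g]]$, where $R^{\mathrm{loc},\lambda,\tau,V} := \widehat{\otimes}_{v \in S} R_{\overline{\rho}|_{G_{F_{\tilde v}}},\mathcal{D}_v^{\lambda,\tau,V}}$. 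Standard arguments (cf.\ \cite{CHT}, \cite{TII}, \cite{BLGGT}, \cite{10}) show that $M_\infty$ is maximal Cohen--Macaulay of the expected dimension over $R_\infty$, with $M_\infty / \mathfrak{a} \cong \widehat{\mathcal{A}}_U(K,\mathcal{V}_\lambda(\sigma_V))_{\mathfrak{m}}^{d}$ for the appropriate patching ideal $\mathfrak{a}$.

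For the equivalence $1 \Leftrightarrow 2$, both $R_{\overline{\rho},\mathcal{D}^{\lambda,\tau,V}}^{\mathrm{red}}$ and $\mathbb{T}^S(K,\mathcal{V}_\lambda(\sigma_V))_{\mathfrak{m}}$ are reduced, $\mathcal{O}$-flat (for the Hecke algebra, one has to throw away $\varpi$-torsion in $\widehat{\mathcal{A}}$; since $\mathcal{A}_{GI}$-type contributions are absent here one argues as in \cite[Lemma 2.1.10]{GN}), and $R \twoheadrightarrow \mathbb{T}$ is a surjection. Hence the two rings agree iff this surjection does not collapse any minimal prime of $R^{\mathrm{red}}$. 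Using \cite[Lemma 3.3]{Calabi} to identify minimal primes of $R^{\mathrm{red}}$ with products of irreducible components of the local lifting rings at $v \in S_p \cup R$ (the local deformation rings at $v \in S \setminus (S_p \cup R)$ are formally smooth or otherwise contribute a single component after the generic hypothesis), this is precisely the statement that every such product of components is witnessed by a classical point, which in turn is condition $2$ once one applies Theorem \ref{Galois representation} and the base change Theorem \ref{base changeIII}.

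For $3 \Rightarrow 1,2$, the strategy is Kisin's numerical criterion. The Breuil--M\'ezard identity in $3$ together with the product formula of Theorem \ref{Serre weight2} gives the Hilbert--Samuel multiplicity of $R_\infty/\varpi$. On the other hand, the faithfulness of $M_\infty$ combined with the Gelfand--Kirillov computation packaged in Theorem \ref{eigenspace} (where the formal smoothness of the auxiliary local deformation rings is not needed since we only care about cycles) forces $e(M_\infty/\varpi)$ to be bounded below by the same number. A dimension count then yields $\mathrm{Supp}_{R_\infty} M_\infty = \mathrm{Spec} R_\infty$ and the reduced $R=T$ statement in $1$.

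The hardest part will be the implication $1,2 \Rightarrow 3'$. The idea is to pull the R=T identity back through the patched module: condition $1$ (equivalently $2$) gives $R_\infty^{\mathrm{red}} \Isom \mathrm{End}_{R_\infty}(M_\infty)^{\mathrm{red}}$, and combined with Cohen--Macaulayness of $M_\infty$ it gives an equality of Hilbert--Samuel multiplicities $e(R_\infty/\varpi) = e(M_\infty/\varpi)$. Under the extra hypotheses $(a)$--$(d)$, the local factors at $v \neq w$ in $S_p$ are \emph{Fontaine--Laffaille} crystalline lifting rings of small Hodge--Tate weights with trivial inertia type; for these $e(R_v/\varpi) = \mu_{k_v}(\overline{\rho}|_{G_{F_{\tilde v}}})$ holds by \cite[Theorem 2.1.10]{GK} together with Theorem \ref{Serre weight2}(3), so the Breuil--M\'ezard conjecture at these places is already known. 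Factoring out these known multiplicities from the product formula for $e(R_\infty/\varpi)$ isolates precisely the statement of $3'$ at $w$; the only delicate point, which will be the main obstacle, is to show that the numerical identity coming from patching produces the Breuil--M\'ezard equality rather than merely an inequality, which requires the Cohen--Macaulayness of $M_\infty$ and the fact that condition $2$ guarantees every component of the local lifting ring at $w$ genuinely contributes.
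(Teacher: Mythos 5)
Your high-level framework — patching, the surjection $R_{\overline{\rho},\mathcal{D}^{\lambda,\tau,V}}^{\mathrm{red}} \twoheadrightarrow \mathbb{T}^S(K,\mathcal{V}_\lambda(\sigma_V))_{\mathfrak{m}}$, identification of minimal primes via \cite[Lemma 3.3]{Calabi}, and the use of the Fontaine--Laffaille places to factor out known Breuil--M\'ezard multiplicities in the implication towards $3'$ — matches the paper's strategy. However there are two genuine problems.

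First, the appeal to Theorem~\ref{eigenspace} is out of place. That theorem is a big $R=T$ statement for the \emph{completed} cohomology, and it crucially requires the auxiliary local lifting rings to be formally smooth — exactly the hypothesis that fails here, since $\mathcal{D}_v^{\lambda,\tau,V}$ at $p$-adic places is the potentially crystalline/semistable lifting ring. What the proof of this proposition actually uses is not a Gelfand--Kirillov dimension bound but the classical \emph{finite-level} Taylor--Wiles--Kisin patching, producing a faithful $S_\infty$-finite-free module $M_{\infty,\lambda,\tau,V}$ whose support, by the depth chain $\mathrm{dim}\,R_\infty \ge \mathrm{dim}\,\mathbb{T}_\infty \ge \mathrm{depth}_{S_\infty}M_\infty = \mathrm{dim}\,S_\infty = \mathrm{dim}\,R_\infty$, is a union of components of $\mathrm{Spec}\,R_\infty$; these are different techniques living in different (if related) worlds and you should not conflate them.

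Second — and this is the decisive gap — you never explain how the numerical Kisin criterion is actually implemented, namely the cross-weight filtration. The key is that $M_{\infty,\lambda',\tau',V'}/\varpi$ carries a $\mathbb{T}_\infty$-stable filtration whose associated graded is $\bigoplus_k \overline{M}_{\infty,k,R}^{\oplus(\prod_v n^{\mathrm{crys/ss}}_{k_v}(\lambda'_v,\tau'_v))}$, with $\overline{M}_{\infty,k,R}$ and $\overline{\mathbb{T}}_{\infty,k,R}$ \emph{independent} of the weight and type. One first takes $\lambda'=0$, $\tau'=\mathbf{1}$, $S_p^{\mathrm{crys}}=S_p$, where condition 2 is known by Proposition~\ref{congruence} and the potentially diagonalizable/$\iota$-ordinary automorphy lifting theorem of \cite{CW}, and then combines $e(M_\infty/\varpi,\mathbb{T}_\infty/\varpi)=2^{|T_1|}e(\mathbb{T}_\infty/\varpi)$ (the $2^{|T_1|}$ factor being the generic rank, which you also omit) with the local product formula of \cite[Proposition 1.3.7]{FMRI} and \cite[Theorem 33]{Serre} to solve $e(\overline{M}_{\infty,k,R},\overline{\mathbb{T}}_{\infty,k,R})=2^{|T_1|}N\prod_v\mu_{k_v}(\overline{\rho}|_{G_{F_{\tilde v}}})$. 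Plugging this back into the general-weight formula is precisely what yields $3 \Rightarrow 4$ and (after cancelling the Fontaine--Laffaille places via $e=1$ and $\sum_k n^{\mathrm{crys}}_{k_v}(\lambda_v,\mathbf{1})\mu_{k_v}=1$) the implication $1,2 \Rightarrow 3'$. Without this weight-transfer argument the numerical criterion has no content, so you should work it out explicitly.
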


\begin{rem} Note that under the above assumption $\bullet$, $e(R_{\overline{\rho}|_{G_{F_{\tilde{w}}}}}^{\mathrm{ss}, \lambda_w, \tau_w}/\varpi) = \sum_{k_w \in W_{w}} n_{k_w}^{\mathrm{ss}}(\lambda_w, \tau_w)\mu_{k_w}(\overline{\rho}|_{G_{F_{\tilde{w}}}})$ implies $e(R_{\overline{\rho}|_{G_{F_{\tilde{w}}}}}^{\mathrm{crys}, \lambda_w, \tau_w}/\varpi) = \sum_{k_w \in W_w} n_{k_w}^{\mathrm{crys}}(\lambda_w, \tau_w)\mu_{k_w}(\overline{\rho}|_{G_{F_{\tilde{w}}}})$ because the condition 2 in the case $S_{p}^{\mathrm{ss}} = \{ w \}$ is stronger than the condition 2 in the case $S_{p}^{\mathrm{crys}} = \{ w \}$. \end{rem}

\begin{proof} The proof is the same as \cite[Lemma 4.3.9]{GK} or \cite[Lemma 5.5.1]{BM}, but our situation and statement are bit different from theirs. Thus we recall the proof.

We fix a representative of the universal deformation over $R_{\overline{\rho}, \mathcal{D}^{\lambda, \tau, V}}$ and get a map $R_{\overline{\rho}|_{G_{F_{\tilde{v}}}}, \mathcal{D}_v^{\lambda, \tau, V}} \rightarrow R_{\overline{\rho}, \mathcal{D}^{\lambda, \tau, V}}$. By the usual patching argument (see \cite[{\S} 5]{BM} for details), we obtain a complete regular local ring $S_{\infty}$ over $\mathcal{O}$ with a prime ideal $\mathfrak{a}$ such that $\mathcal{O} \Isom S_{\infty}/\mathfrak{a}$, a surjection of complete Noetherian local $S_{\infty}$-algebras $R_{\infty, \mathcal{D}^{\lambda, \tau, V}} \twoheadrightarrow \mathbb{T}_{\infty}(\mathcal{V}_{\lambda}(\sigma_V))_{\mathfrak{m}}$ and a finite faithful $\mathbb{T}_{\infty}(\mathcal{V}_{\lambda}(\sigma_V))_{\mathfrak{m}}$-module $M_{\infty, \lambda, \tau,  V}$ which is a finite free $S_{\infty}$-module satisfying the following.

(1) \ We have surjections $R_{\infty, \mathcal{D}^{\lambda, \tau, V}} \twoheadrightarrow R_{\overline{\rho}, \mathcal{D}^{\lambda, \tau, V}}$ and $\mathbb{T}_{\infty}(\mathcal{V}_{\lambda}(\sigma_V))_{\mathfrak{m}} \twoheadrightarrow \mathbb{T}^S(K, \mathcal{V}_{\lambda}(\sigma_V))_{\mathfrak{m}}$ of $S_{\infty}$-algebras fitting in the following commutative diagram. Here, we regard $\mathcal{O}$-algebras as $S_{\infty}$-algebras via $S_{\infty} \twoheadrightarrow S_{\infty}/\mathfrak{a} \Isom \mathcal{O}$.

\xymatrix{
R_{\infty, \mathcal{D}^{\lambda, \tau, V}} \ar[r] \ar[d] & \ar[d] R_{\overline{\rho}, \mathcal{D}^{\lambda, \tau, V}} \\
\mathbb{T}_{\infty}(\mathcal{V}_{\lambda}(\sigma_V))_{\mathfrak{m}} \ar[r] & \mathbb{T}^S(K, \mathcal{V}_{\lambda}(\sigma_V))_{\mathfrak{m}}
}

(2) \ We have an isomorphism $M_{\infty, \lambda, \tau, V} \otimes_{S_{\infty}} \mathcal{O} \cong \mathcal{A}_{U}(K, \mathcal{V}_{\lambda}(\sigma_V))_{\mathfrak{m}}$ which is compatible with $\mathbb{T}_{\infty}(\mathcal{V}_{\lambda}(\sigma_V))_{\mathfrak{m}} \twoheadrightarrow \mathbb{T}^S(K, \mathcal{V}_{\lambda}(\sigma_V))_{\mathfrak{m}}$.

(3) \ $R_{\infty, \mathcal{D}^{\lambda, \tau, V}}$ is equal to $(\widehat{\otimes}_{v \in S_p \cup R}R_{\overline{\rho}|_{G_{F_{\tilde{v}}}}, \mathcal{D}_v^{\lambda, \tau, V}})[[Y_1, \cdots, Y_s]]$ and this is equidimensional of dimension $\mathrm{dim}S_{\infty}$. Moreover, the map $R_{\infty, \mathcal{D}^{\lambda, \tau, V}} \twoheadrightarrow R_{\overline{\rho}, \mathcal{D}^{\lambda, \tau, V}}$ is a morphism of $\widehat{\otimes}_{v \in S_p \cup R}R_{\overline{\rho}|_{G_{F_{\tilde{v}}}}, \mathcal{D}_v^{\lambda, \tau, V}}$-algebras.

We claim that the image of $\mathrm{Spec}\mathbb{T}_{\infty}(\mathcal{V}_{\lambda}(\sigma_V))_{\mathfrak{m}} \hookrightarrow \mathrm{Spec}R_{\infty, \mathcal{D}^{\lambda, \tau, V}}$ is a union of irreducible components. - (*) To see this, we may assume that $\mathcal{A}_{U}(K, \mathcal{V}_{\lambda}(\sigma_V))_{\mathfrak{m}} \neq 0$. Thus we have \begin{align}\label{20} \mathrm{dim}R_{\infty, \mathcal{D}^{\lambda, \tau, V}} \ge \mathrm{dim}\mathbb{T}_{\infty}(\mathcal{V}_{\lambda}(\sigma_V))_{\mathfrak{m}} \ge \mathrm{inf}_{\mathfrak{p} \in \mathrm{Ass}_{\mathbb{T}_{\infty}(\mathcal{V}_{\lambda}(\sigma_V))_{\mathfrak{m}}}(M_{\infty, \lambda, \tau, V})} \mathrm{dim}\mathbb{T}_{\infty}(\mathcal{V}_{\lambda}(\sigma_V))_{\mathfrak{m}}/\mathfrak{p} \nonumber \\
  \ge \mathrm{depth}_{\mathbb{T}_{\infty}(\mathcal{V}_{\lambda}(\sigma_V))_{\mathfrak{m}}}(M_{\infty, \lambda, \tau, V}) \ge \mathrm{depth}_{S_{\infty}}(M_{\infty, \lambda, \tau, V}) = \mathrm{dim}S_{\infty} = \mathrm{dim}R_{\infty, \mathcal{D}^{\lambda, \tau, V}}.\end{align} This implies our claim because $M_{\infty, \lambda, \tau, V}$ is a faithful $\mathbb{T}_{\infty}(\mathcal{V}_{\lambda}(\sigma_V))_{\mathfrak{m}}$-module. 

In the following, we will prove that the conditions 1 and 2 are equivalent to the following condition.

\vspace{0.5 \baselineskip}

4 \ The closed immersion $\mathrm{Spec}\mathbb{T}_{\infty}(\mathcal{V}_{\lambda}(\sigma_V))_{\mathfrak{m}} \hookrightarrow \mathrm{Spec}R_{\infty, \mathcal{D}^{\lambda, \tau, V}}$ is a homeomorphism. 

\vspace{0.5 \baselineskip}

($2 \Rightarrow 4$) \ The condition 2 implies the homeomorphism $\mathrm{Spec}\mathbb{T}_{\infty}(\mathcal{V}_{\lambda}(\sigma_V))_{\mathfrak{m}} \Isom \mathrm{Spec}R_{\infty, \mathcal{D}^{\lambda, \tau, V}}$ because we have the above property (*), any automorphic Galois representation $r_{\iota}(\gamma)$ satisfying the condition 2 defines a regular point of $R_{\infty, \mathcal{D}^{\lambda, \tau, V}}$ by \cite[(2) of Proposition 1.2.2 and Theorem 1.2.7]{Allen} and contributes to $\mathrm{Spec}\mathbb{T}(K, \mathcal{V}_{\lambda}(\sigma_V))_{\mathfrak{m}}$ by the local-global compatibility Theorem \ref{Galois representation} and the base change Theorem \ref{base changeIII}.

($4 \Rightarrow 1$) \ Assume that the closed immersion $\mathrm{Spec}\mathbb{T}_{\infty}(\mathcal{V}_{\lambda}(\sigma_V))_{\mathfrak{m}} \hookrightarrow \mathrm{Spec}R_{\infty, \mathcal{D}^{\lambda, \tau, V}}$ is a homeomorphism. Then $\mathrm{Ann}_{R_{\infty, \mathcal{D}^{\lambda, \tau, V}}}M_{\infty, \lambda, \tau, V}$ is nilpotent. This implies that $\mathrm{Ann}_{R_{\overline{\rho}, \mathcal{D}^{\lambda, \tau, V}}}\mathcal{A}_{U}(K, \mathcal{V}_{\lambda}(\sigma_V))_{\mathfrak{m}}$ is nilpotent and thus we have $R_{\overline{\rho}, \mathcal{D}^{\lambda, \tau, V}}^{\mathrm{red}} \Isom \mathbb{T}^S(K, \mathcal{V}_{\lambda}(\sigma_V))_{\mathfrak{m}}$. 

(1 $\Rightarrow$ 2) \ We assume the condition 1. Let $\mathcal{C} = \widehat{\otimes}_{v \in S_p \cup R} \mathcal{C}_v$ be an irreducible component of $\mathrm{Spec}( \widehat{\otimes}_{v \in S_p \cup R}R_{\overline{\rho}|_{G_{F_{\tilde{v}}}}, \mathcal{D}_v^{\lambda, \tau, V}})$. Note that \cite[(2) of Proposition 1.5.1]{CW} says that the quotient ring $R_{\mathcal{C}}$ of $R_{\overline{\rho}, \mathcal{D}^{\lambda, \tau, V}}^{\mathrm{red}}$ defined by the ideal generated by the images of the minimal ideals of $R_{\overline{\rho}|_{G_{F_{\tilde{v}}}}, \mathcal{D}_v}$ corresponding to $\mathcal{C}_v$ has dimension $\ge 1$. Thus the condition 1 implies that $R_{\mathcal{C}}$ is finite over $\mathcal{O}$ and has dimension 1. Thus there exists a global representation $\rho \in \mathrm{Spec}R_{\mathcal{C}}(\mathcal{O})$ after extending $\mathcal{O}$. Then, by using the condition 1 again, we get $\alpha$ corresponding to $\rho$ as in the condition 2.

In the following, we will prove $4 \Rightarrow 3'$ under the assumption $\bullet$ and $3 \Rightarrow 4$. (We recall that the conditions 1 and 2 are equivalent to the condition 4.) First note that by the usual patching argument (see \cite[{\S} 5]{BM} for details), for any weight $\lambda'$, inertia type $\tau'$, $S_{p}^{\mathrm{crys}'}$ and $S_{p}^{\mathrm{ss}'}$, we can also construct the objects $R_{\infty, \mathcal{D}^{\lambda', \tau', V'}}$, $\mathbb{T}_{\infty}(\mathcal{V}_{\lambda'}(\sigma_{V'}))_{\mathfrak{m}}$, $M_{\infty, \lambda', \tau',  V'}$ as above. Moreover, for any $k = (k_v)_v \in \prod_{v \mid p} W_{F_v}$, there exist an $S_{\infty}/\varpi$-algebra $\overline{\mathbb{T}}_{\infty, k, R}$ and a faithful $\overline{\mathbb{T}}_{\infty, k, R}$-module $\overline{M}_{\infty, k, R}$ which is a finite free $S_{\infty}/\varpi$-module and doesn't depend on $\lambda'$, $\tau'$, $S_{p}^{\mathrm{crys}'}$ and $S_{p}^{\mathrm{ss}'}$ satisfying the following. 

\vspace{0.5 \baselineskip}

(4) \ For any $\lambda'$, $\tau'$, $S_{p}^{\mathrm{crys}'}$ and $S_{p}^{\mathrm{ss}'}$, we have an $S_{\infty}$-algebra surjection $\mathbb{T}_{\infty}(\mathcal{V}_{\lambda'}(\sigma_{V'}))_{\mathfrak{m}} \twoheadrightarrow \overline{\mathbb{T}}_{\infty, k, R}$ and we have a $\mathbb{T}_{\infty}(\mathcal{V}_{\lambda'}(\sigma_{V'}))_{\mathfrak{m}}$-stable exhaustive filtration $\{ \mathrm{Fil}^t \}_{t=1}^m$ on $M_{\infty, \lambda', \tau',  V'}/\varpi$ such that $\oplus_{t=1}^m \mathrm{gr}^i \cong \oplus_{k = (k_v)_v \in \prod_{v \mid p} W_{F_v}} \overline{M}_{\infty, k, R}^{\oplus ((\prod_{v \in S_{p}^{\mathrm{crys}}} n_{k_v}^{\mathrm{crys}}(\lambda'_v, \tau'_v))(\prod_{v \in S_{p}^{\mathrm{ss}}} n_{k_v}^{\mathrm{ss}}(\lambda'_v, \tau'_v)))}$ as $\mathbb{T}_{\infty}(\mathcal{V}_{\lambda'}(\sigma_{V'}))_{\mathfrak{m}}$-modules. 

\vspace{0.5 \baselineskip}

We claim that if $\mathcal{A}_{U}(K, \mathcal{V}_{\lambda'}(\sigma_{V'}))_{\mathfrak{m}} \neq 0$, the rank of the finite free $\mathbb{T}^{S}(K, \mathcal{V}_{\lambda'}(\sigma_{V'}))_{\mathfrak{m}}[\frac{1}{p}]$-module $\mathcal{A}_{U}(K, \mathcal{V}_{\lambda'}(\sigma_{V'}))_{\mathfrak{m}}[\frac{1}{p}]$ is $2^{|T_1|}$. By using the decomposition by automorphic representations Theorem \ref{Zuker}, Theorem \ref{typetheory} and (e) of Theorem \ref{base changeIII}, it suffices to prove that for $v \in T_1$ and unramified characters $\chi_1, \chi_2 : F_{\tilde{v}}^{\times} \rightarrow \mathbb{C}^{\times}$, we have $\mathrm{dim}_{\mathbb{C}}(\mathrm{n}$-$\mathrm{Ind}^{\mathrm{GL}_2(F_{\tilde{v}})}_{B_2(F_{\tilde{v}})} \chi_1 \boxtimes \chi_2)^{\mathrm{Iw}_{\tilde{v}, 1}} = 2$. (Here, $B_2$ denotes the Borel subgroup of $\mathrm{GL}_2$ consisting of upper triangular matrices. Note that we assume that $\overline{\rho}|_{G_{F_{\tilde{v}}}}$ is generic unramified for any $v \in T_1$. Thus every lifting of $\overline{\rho}|_{G_{F_{\tilde{v}}}}$ to $\mathcal{O}$ is generic unramified by $H^1_f(G_{F_{\tilde{v}}}, \mathrm{ad}\overline{\rho}|_{G_{F_{\tilde{v}}}}) \Isom H^1(G_{F_{\tilde{v}}}, \mathrm{ad}\overline{\rho}|_{G_{F_{\tilde{v}}}})$.) By Iwasawa decomposition $\mathrm{GL}_2(F_{\tilde{v}}) = B_2(F_{\tilde{v}}) \mathrm{GL}_2(\mathcal{O}_{F_{\tilde{v}}})$, we have $\mathrm{n}$-$\mathrm{Ind}^{\mathrm{GL}_2(F_{\tilde{v}})}_{B_2(F_{\tilde{v}})} \chi_1 \boxtimes \chi_2 \cong \mathrm{Ind}^{\mathrm{GL}_2(\mathcal{O}_{F_{\tilde{v}}})}_{B_2(\mathcal{O}_{F_{\tilde{v}}})}\chi_1|_{\mathcal{O}_{F_{\tilde{v}}}^{\times}} \boxtimes \chi_2|_{\mathcal{O}_{F_{\tilde{v}}}^{\times}}$ as a representation of $\mathrm{GL}_2(\mathcal{O}_{F_{\tilde{v}}})$. By Iwahori decomposition $\mathrm{GL}_2(\mathcal{O}_{F_{\tilde{v}}}) = \mathrm{Iw}_{\tilde{v}} \sqcup \mathrm{Iw}_{\tilde{v}} \begin{pmatrix}
    0 & 1 \\
     1 & 0 \end{pmatrix} \mathrm{Iw}_{\tilde{v}} =  B_2(\mathcal{O}_{F_{\tilde{v}}})\mathrm{Iw}_{\tilde{v}, 1} \sqcup  B_2(\mathcal{O}_{F_{\tilde{v}}}) \begin{pmatrix}
    0 & 1 \\
     1 & 0 \end{pmatrix} \mathrm{Iw}_{\tilde{v}, 1}$, the dimension of $(\mathrm{Ind}^{\mathrm{GL}_2(\mathcal{O}_{F_w})}_{B_2(\mathcal{O}_{F_w})}\chi_1|_{\mathcal{O}_{F_w}^{\times}} \boxtimes \chi_2|_{\mathcal{O}_{F_w}^{\times}})^{\mathrm{Iw}_{\tilde{v}, 1}}$ is equal to 2. Thus we obtain the claim.

For any irreducible component $X$ of $\mathrm{Spec} \mathbb{T}_{\infty}(\mathcal{V}_{\lambda'}(\sigma_{V'}))_{\mathfrak{m}}[\frac{1}{p}]$, a natural finite map $X \rightarrow \mathrm{Spec}S_{\infty}[\frac{1}{p}]$ is a surjection by counting the dimensions. Note that the fiber of $\mathfrak{a}_{\infty}$ in $\mathrm{Spec} \mathbb{T}_{\infty}(\mathcal{V}_{\lambda'}(\sigma_{V'}))_{\mathfrak{m}}[\frac{1}{p}]$ is homeomorphic to $\mathrm{Spec}\mathbb{T}^{S}(K, \mathcal{V}_{\lambda'}(\sigma_{V'}))_{\mathfrak{m}}[\frac{1}{p}]$. By taking a fiber at $\mathfrak{a}_{\infty}$, we have $$X \cap \mathrm{Spec}\mathbb{T}^{S}(K, \mathcal{V}_{\lambda'}(\sigma_{V'}))_{\mathfrak{m}}[\frac{1}{p}] \neq \emptyset.$$ By \cite[(2) of Proposition 1.2.2 and Theorem 1.2.7]{Allen} again, every point of $X \cap \mathrm{Spec}\mathbb{T}^{S}(K, \mathcal{V}_{\lambda'}(\sigma_{V'}))_{\mathfrak{m}}[\frac{1}{p}]$ defines a regular point of $\mathrm{Spec} \mathbb{T}_{\infty}(\mathcal{V}_{\lambda'}(\sigma_{V'}))_{\mathfrak{m}}[\frac{1}{p}] \subset \mathrm{Spec}R_{\infty, \mathcal{D}^{\lambda', \tau', V'}}$. Thus $M_{\infty, \lambda', \tau', V'}$ is a generically finite free $\mathbb{T}_{\infty}(\mathcal{V}_{\lambda'}(\sigma_{V'}))_{\mathfrak{m}}$-module of rank $2^{|T_1|}$. 

By the same calculation as above (\ref{20}), we see that the image of $\mathrm{Spec}\overline{\mathbb{T}}_{\infty, k, R} \hookrightarrow \mathrm{Spec}\mathbb{T}_{\infty}(\mathcal{V}_{\lambda'}(\sigma_{V'}))_{\mathfrak{m}}/\varpi$ is a union of the irreducible components.


By \cite[Corollary 1.3.5]{FMRI} and the fact that $\mathbb{T}_{\infty}(\mathcal{V}_{\lambda'}(\sigma_{V'}))_{\mathfrak{m}}$ is $\varpi$-torsion free, we obtain $$e(M_{\infty, \lambda', \tau', V'}/\varpi M_{\infty, \lambda', \tau', V'}, \mathbb{T}_{\infty}(\mathcal{V}_{\lambda'}(\sigma_{V'}))_{\mathfrak{m}}/\varpi) = 2^{|T_1|}e(\mathbb{T}_{\infty}(\mathcal{V}_{\lambda'}(\sigma_{V'}))_{\mathfrak{m}}/\varpi).$$ Moreover, by \cite[Proposition 1.3.2]{FMRI} and the definition of $e$, we have \begin{align*}e(M_{\infty, \lambda', \tau', V'}/\varpi M_{\infty, \lambda', \tau', V'}, \mathbb{T}_{\infty}(\mathcal{V}_{\lambda'}(\sigma_{V'}))_{\mathfrak{m}}/\varpi) \\
 = \sum_{k = (k_v)_v \in \prod_{v \mid p} W_{F_v}} ((\prod_{v \in S_{p}^{\mathrm{crys}}} n_{k_v}^{\mathrm{crys}}(\lambda'_v, \tau'_v))(\prod_{v \in S_{p}^{\mathrm{ss}}} n_{k_v}^{\mathrm{ss}}(\lambda'_v, \tau'_v)))e(\overline{M}_{\infty, k, R}, \mathbb{T}_{\infty}(\mathcal{V}_{\lambda'}(\sigma_{V'}))_{\mathfrak{m}}/\varpi) \\
 = \sum_{k = (k_v)_v \in \prod_{v \mid p} W_{F_v}} ((\prod_{v \in S_{p}^{\mathrm{crys}}} n_{k_v}^{\mathrm{crys}}(\lambda'_v, \tau'_v))(\prod_{v \in S_{p}^{\mathrm{ss}}} n_{k_v}^{\mathrm{ss}}(\lambda'_v, \tau'_v)))e(\overline{M}_{\infty, k, R}, \overline{\mathbb{T}_{\infty, k, R}}).\end{align*}

Thus we obtain \begin{equation}\label{30}e(\mathbb{T}_{\infty}(\mathcal{V}_{\lambda'}(\sigma_{V'}))_{\mathfrak{m}}/\varpi) = \frac{1}{2^{|T_1|}}\sum_{k = (k_v)_v \in \prod_{v \mid p} W_{F_v}} ((\prod_{v \in S_{p}^{\mathrm{crys}}} n_{k_v}^{\mathrm{crys}}(\lambda'_v, \tau'_v))(\prod_{v \in S_{p}^{\mathrm{ss}}} n_{k_v}^{\mathrm{ss}}(\lambda'_v, \tau'_v)))e(\overline{M}_{\infty, k, R}, \overline{\mathbb{T}_{\infty, k, R}}).\end{equation}

On the other hand, by \cite[Proposition 1.3.7]{FMRI}, we have \begin{align}\label{40} e(R_{\infty, \mathcal{D}^{\lambda', \tau', V'}}/\varpi) = \prod_{v \in S}e(R_{\overline{\rho}|_{G_{F_{\tilde{v}}}}, \mathcal{D}^{\lambda', \tau', V'}_v}/\varpi) \nonumber \\
= (\prod_{v \in S_p^{\mathrm{crys}}}e(R_{\overline{\rho}|_{G_{F_{\tilde{v}}}}}^{\mathrm{crys}, \lambda_{v}', \tau_v'}/\varpi))(\prod_{v \in S_p^{\mathrm{crys}}}e(R_{\overline{\rho}|_{G_{F_{\tilde{v}}}}}^{\mathrm{ss}, \lambda_{v}', \tau_v'}/\varpi))N, \end{align} where $N$ is a positive integer not depending on $\lambda'$, $\tau'$, $S_p^{\mathrm{crys}'}$ and $S_p^{\mathrm{ss}'}$.

Note that by the definition of $e$, the above property (*) and the $\varpi$-torsion freeness of $\mathbb{T}_{\infty}(\mathcal{V}_{\lambda'}(\sigma_{V'}))_{\mathfrak{m}}$, we have $e(\mathbb{T}_{\infty}(\mathcal{V}_{\lambda'}(\sigma_{V'}))_{\mathfrak{m}}/\varpi) \le e(R_{\infty, \mathcal{D}^{\lambda', \tau', V'}}/\varpi)$ and this is an equality if and only if the surjection $R_{\infty, \mathcal{D}^{\lambda', \tau', V'}} \twoheadrightarrow \mathbb{T}_{\infty}(\mathcal{V}_{\lambda'}(\sigma_{V'}))_{\mathfrak{m}}$ is an isomorphism. (Note that $R_{\infty, \mathcal{D}^{\lambda', \tau', V'}}$ is reduced by \cite[Lemma 3.3]{Calabi}. Thus the isomorphy of $R_{\infty, \mathcal{D}^{\lambda', \tau', V'}} \twoheadrightarrow \mathbb{T}_{\infty}(\mathcal{V}_{\lambda'}(\sigma_{V'}))_{\mathfrak{m}}$ is equivalent to the homeomorphy of $\mathrm{Spec} R_{\infty, \mathcal{D}^{\lambda', \tau', V'}} \hookrightarrow \mathrm{Spec} \mathbb{T}_{\infty}(\mathcal{V}_{\lambda'}(\sigma_{V'}))_{\mathfrak{m}}$.)

Then by Proposition \ref{congruence} and \cite[Theorem 4.4.1]{CW}, we obtain the condition 2 under the assumption that $S_p^{\mathrm{crys}}$ is equal to $S_p$ and $\lambda$ is trivial. Thus (\ref{30}) and (\ref{40}) imply \begin{align*} \frac{1}{2^{|T_1|}}\sum_{k = (k_v)_v \in \prod_{v \mid p} W_{F_v}} (\prod_{v \in S_{p}} n_{k_v}^{\mathrm{crys}}(0, \tau'_v))e(\overline{M}_{\infty, k, R}, \overline{\mathbb{T}_{\infty, k, R}}) \\
= (\prod_{v \in S_p}e(R_{\overline{\rho}|_{G_{F_{\tilde{v}}}}}^{\mathrm{crys}, 0, \tau'_v}/\varpi))N = (\prod_{v \in S_p}(\sum_{k_v \in W_{F_v}} n^{\mathrm{crys}}_{k_v}(0, \tau_v') \mu_{k_v}(\overline{\rho}|_{G_{F_{\tilde{v}}}})))N \\
= \sum_{k = (k_v) \in \prod_v W_{F_v}} (\prod_{v \in S_p}n^{\mathrm{crys}}_{k_v}(0, \tau_v')\mu_{k_v}(\overline{\rho}|_{G_{F_{\tilde{v}}}}))N 
\end{align*} by 1 of Theorem \ref{Serre weight2}. By using \cite[Theorem 33]{Serre} as in \cite[proof of Lemma 4.5.2]{GK}, we obtain $\frac{1}{2^{|T_1|}}e(\overline{M}_{\infty, k, R}, \overline{\mathbb{T}_{\infty, k, R}}) 
= N\prod_{v \mid p}\mu_{k_v}(\overline{\rho}|_{G_{F_{\tilde{v}}}}).$

Thus (\ref{30}) implies $$e(\mathbb{T}_{\infty}(\mathcal{V}_{\lambda'}(\sigma_{V'}))_{\mathfrak{m}}/\varpi) =  (\prod_{v \in S_{p}^{\mathrm{ss}}}(\sum_{k_v \in W_{F_v}} n_{k_v}^{\mathrm{ss}}(\lambda_v', \tau_v')\mu_{k_v}(\overline{\rho}|_{G_{F_{\tilde{v}}}})))(\prod_{v \in S_{p}^{\mathrm{crys}}}(\sum_{k \in W_{F_v}} n_{k_v}^{\mathrm{crys}}(\lambda_v', \tau_v')\mu_{k_v}(\overline{\rho}|_{G_{F_{\tilde{v}}}})))N.$$ 

Thus $3$ implies 4. We assume the condition $\bullet$. Then by the Fontaine-Laffaille theory (see \cite[Lemma 2.4.1]{CHT}), for any $p$-adic place $v \neq w$, we have that $R_{\overline{\rho}|_{G_{F_{\tilde{v}}}}}^{\mathrm{crys}, \lambda_v, \bold{1}}$ is a formal power series ring over $\mathcal{O}$. Thus $e(R_{\overline{\rho}|_{G_{F_{\tilde{v}}}}}^{\mathrm{crys}, \lambda_v, \bold{1}}/\varpi) = 1$. On the other hand, we have $\sum_{k \in W_{F_v}} n_{k_v}^{\mathrm{crys}}(\lambda_v, \bold{1})\mu_{k_v}(\overline{\rho}|_{G_{F_{\tilde{v}}}}) = 1$ by the definition of $n_{k_v}^{\mathrm{crys}}(\lambda_v, \bold{1})$ and 2 and 3 of Theorem \ref{Serre weight2}. Thus we obtain the implication $4 \Rightarrow 3'$. \end{proof}

In the following, we will prove the Breuil-M$\mathrm{\acute{e}}$zard conjecture in some 2-dimensional cases by using the above proposition under some conjectures. 

We consider the following objects. 

\begin{itemize}
\item $L$ is an unramified extension of $\mathbb{Q}_p$.
\item $d := [L, \mathbb{Q}_p]$.
\item $E, \mathcal{O}$ and $\mathbb{F}$ are as in the begining of this subsection.
\item $\overline{\rho}_L : G_{L} \rightarrow \mathrm{GL}_2(\mathbb{F})$ is a sufficiently generic reducible non-split representation. 
\item $\lambda_L := (0, -\lambda_{\tau}) \in (\mathbb{Z}_+^2)^{\mathrm{Hom}_{\mathbb{Q}_p}(L, \overline{\mathbb{Q}}_p)}$.
\item $\tau_L : I_{L} \rightarrow \mathrm{GL}_2(\overline{\mathbb{Q}}_p)$ is an inertia type.
\end{itemize}

After extending $E$, we may assume the following conditions.

\begin{itemize}
\item Every irreducible component $\mathcal{C}$ of $\mathrm{Spec}R_{\overline{\rho}_L}^{\mathrm{ss}, \lambda_L, \tau_L}$ satisfies the condition that $\mathcal{C} \otimes_{\mathcal{O}} \mathcal{O}_{E'}$ is irreducible for any finite extension $E'/E$.
\item For any irreducible component $\mathcal{C}$ of $\mathrm{Spec}R_{\overline{\rho}_{L}}^{\mathrm{ss}, \lambda_L, \tau_L}$, there exists a generic lifting $G_{L} \rightarrow \mathrm{GL}_2(\mathcal{O})$ of $\overline{\rho}_{L}$ contained in $\mathcal{C}$.
\item $\tau_L$, $\sigma^{\mathrm{ss}}(\tau_L)$ and $\sigma^{\mathrm{crys}}(\tau_L)$ are defined over $\mathcal{O}$.
\end{itemize}

We fix pairwise different odd primes $r_1$ and $r_2$ such that $r_i \neq p$ and $r_i^2 \not\equiv 1 \mod p$ for any $i = 1, 2$. By using the following lemma, after extending $E$, we obtain an irreducible generic representation $\overline{\rho}_1 : G_{\mathbb{Q}_{r_1}} \rightarrow \mathrm{GL}_2(\mathbb{F})$.

\begin{lem}

Let $l$ be a prime satisfying that $l \neq p$ and $l^2 \not\equiv 1 \mod p$. Then there exists an irreducible generic representation $\overline{\rho} : G_{\mathbb{Q}_{l}} \rightarrow \mathrm{GL}_2(\overline{\mathbb{F}})$.

\end{lem}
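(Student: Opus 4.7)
The plan is to construct $\overline{\rho}$ as an induction from the unramified quadratic extension. Let $L/\mathbb{Q}_l$ be the unramified quadratic extension, with residue field $\mathbb{F}_{l^2}$, and let $\sigma \in \mathrm{Gal}(L/\mathbb{Q}_l)$ be the nontrivial element. The hypothesis $l^2 \not\equiv 1 \pmod p$ is equivalent to $p \nmid l^2 - 1$, i.e.\ the cyclic group $\mathbb{F}_{l^2}^{\times}$ has order prime to $p$, so it embeds into $\overline{\mathbb{F}}^{\times}$. Fixing such an embedding, let $\omega_L : I_L \to \mathbb{F}_{l^2}^{\times} \hookrightarrow \overline{\mathbb{F}}^{\times}$ be the Serre fundamental character of level $1$ for $L$, and extend it to a continuous character $\chi : G_L \to \overline{\mathbb{F}}^{\times}$ by choosing a value at a Frobenius lift. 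Set $\overline{\rho} := \mathrm{Ind}_{G_L}^{G_{\mathbb{Q}_l}} \chi$.

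I would first check irreducibility. Since $\sigma$ acts on the prime-to-$l$ part of $I_L^{\mathrm{tame}}$ by raising to the $l$-th power (the Frobenius of the residue field), we have $\chi^{\sigma} = \chi^l$ on $I_L$, hence $\chi/\chi^{\sigma} = \omega_L^{1-l}$ on inertia. The order of $\omega_L^{1-l}$ is $(l^2-1)/\gcd(1-l,\, l^2-1) = l+1 > 1$, so in particular $\chi \neq \chi^{\sigma}$, and $\overline{\rho}$ is irreducible.

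Next I would check genericity by Mackey's formula and Frobenius reciprocity. A short computation gives
\begin{equation*}
\mathrm{ad}\,\overline{\rho} \;\cong\; \mathbf{1} \oplus \epsilon \oplus \mathrm{Ind}_{G_L}^{G_{\mathbb{Q}_l}}(\chi/\chi^{\sigma}),
\end{equation*}
where $\epsilon$ is the unramified quadratic character cutting out $L$. Twisting by $\overline{\varepsilon}_p$ and taking $G_{\mathbb{Q}_l}$-invariants, we need to verify that (i) $\overline{\varepsilon}_p$ is nontrivial, i.e.\ $l \not\equiv 1 \pmod p$; (ii) $\epsilon\,\overline{\varepsilon}_p$ is nontrivial, i.e.\ $l \not\equiv -1 \pmod p$; and (iii) $(\chi/\chi^{\sigma})\,\overline{\varepsilon}_p|_{G_L}$ is nontrivial as a character of $G_L$. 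Conditions (i) and (ii) together are precisely $l^2 \not\equiv 1 \pmod p$. For (iii), note that $\overline{\varepsilon}_p|_{G_L}$ is unramified since $L/\mathbb{Q}_l$ is unramified, whereas $\chi/\chi^{\sigma} = \omega_L^{1-l}$ is a nontrivial tame character of $I_L$ of order $l+1$, so their product is nontrivial on inertia, hence nontrivial.

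There is no real obstacle here beyond the embedding $\mathbb{F}_{l^2}^{\times} \hookrightarrow \overline{\mathbb{F}}^{\times}$, which is precisely secured by the hypothesis $l^2 \not\equiv 1 \pmod p$; the rest is a routine Mackey/Frobenius-reciprocity calculation.
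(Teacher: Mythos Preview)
Your proof is correct and follows essentially the same construction as the paper: both induce a tame character from the unramified quadratic extension $\mathbb{Q}_{l^2}$. The only organizational difference is that the paper computes $\mathrm{Hom}_{G_{\mathbb{Q}_l}}(\overline{\rho},\overline{\rho}(1))$ directly via Frobenius reciprocity as $\mathrm{Hom}_{G_{\mathbb{Q}_{l^2}}}(\psi,\psi(1)\oplus\psi^{\phi}(1))$ and checks the two resulting conditions, whereas you first decompose $\mathrm{ad}\,\overline{\rho}\cong \mathbf{1}\oplus\epsilon\oplus\mathrm{Ind}(\chi/\chi^{\sigma})$ and then check each summand; these are the same calculation unpacked differently.
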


\begin{proof} We fix a Frobenius lift $\phi \in G_{\mathbb{Q}_l}$. Let $\psi : G_{\mathbb{Q}_{l^2}} \rightarrow \overline{\mathbb{F}}^{\times}$ be a character $G_{\mathbb{Q}_{l^2}} \rightarrow G_{\mathbb{Q}_{l^2}}^{\mathrm{ab}} \Isom \widehat{\mathbb{Z}} \times \mathbb{Z}_{l^2}^{\times} \xrightarrow{\mathrm{pr}_2} \mathbb{Z}_{l^2}^{\times} \twoheadrightarrow \mathbb{F}_{l^2}^{\times} \hookrightarrow \overline{\mathbb{F}}$, where the second isomorphism is induced by $\phi^2$ and the last embedding is one arbitrary embedding $\mathbb{F}_{l^2}^{\times} \hookrightarrow \overline{\mathbb{F}}$. Then $\overline{\rho} := \mathrm{Ind}^{G_{\mathbb{Q}_l}}_{G_{\mathbb{Q}_{l^2}}}\psi$ is irreducible. Moreover, we have $\mathrm{Hom}_{\overline{\mathbb{F}}[G_{\mathbb{Q}_l}]} (\overline{\rho}, \overline{\rho}(1)) = \mathrm{Hom}_{\overline{\mathbb{F}}[G_{\mathbb{Q}_{l^2}}]} (\psi, \psi(1) \oplus \psi^{\phi}(1))$ by the Frobenius reciprocity, where $\sigma \in \mathrm{Gal}(\mathbb{Q}_{l^2}/\mathbb{Q}_l)$ denotes the nontrivial element. This is zero by the assumption $l^2 \not\equiv 1 \mod p$. Thus $\overline{\rho}$ is generic.  \end{proof}

We fix a Fontaine-Laffaille lift $\rho_L$ of $\overline{\rho}_L$ over $\mathcal{O}$, which exists by our genericity assumption on $\overline{\rho}_L$ and let $\lambda_L' \in (\mathbb{Z}_+^2)^{\mathrm{Hom}_{\mathbb{Q}_l}(L, \overline{\mathbb{Q}}_l)}$ denotes its $p$-adic Hodge type. By \cite[proofs of A.1 $\sim$ A.7]{BM}, after extending $E$ if necessary, we obtain the following globalization data of $L$ and $\overline{\rho}_L$.

\begin{itemize}
\item $F_0$ is an imaginary quadratic field in which $p$, $r_1$ and $r_2$ split.
\item $F^+/\mathbb{Q}$ is a Galois totally real field such that $F^+ \neq \mathbb{Q}$ such that $F_u^+ \cong L$ over $\mathbb{Q}_p$ for any $u \mid p$ and $r_i$ splits completely in $F^+$ for any $i=1, 2$. We fix $F_u^+ \cong L$ over $\mathbb{Q}_p$ for any $u \mid p$ in the following.
\item $F:=F_0F^+$ satisfies the condition that every ramified prime in $F$ splits in an imaginary quadtratic field in $F$. (We use \cite[Lemma 4.44]{matsumoto} to obtain this property.)
\item $\iota : \overline{\mathbb{Q}}_p \Isom \mathbb{C}$. Moreover, we also fix $\iota_i : \overline{\mathbb{Q}}_{r_i} \Isom \mathbb{C}$ for any $i = 1, 2$.
\item $\Phi := \mathrm{Gal}(F/F_0)$.
\item $w$ (resp. $v$) is a $p$-adic place of $F$ (resp. $F_0$) induced by $\iota$. 
\item For $i = 1, 2$, $w_i \mid r_i$ (resp. $v_i \mid r_i$) is the place of $F$ (resp. $F_0$) induced by $\iota_i$.
\item Let $S$ be the set of finite places of $F^+$ which lies above $p, r_1$ and $r_2$.
\item $\Psi := \mathrm{Gal}(F_w/\mathbb{Q}_p)$. We have $\Phi \neq \Psi$. We regard $\Psi$ as a subset of $\Phi$.
\item $\lambda$ is an element of $(\mathbb{Z}_+^{2})^{\Phi}$ defined by $\lambda_w = \lambda_L$ under the identification $L = F_w$ and $\lambda_{w'} = \lambda_L'$ for any $w' \mid v$, $w' \neq w$. We put $\tau_w := \tau_L$.
\item $\psi : G_{F, S} \rightarrow \mathcal{O}^{\times}$ such that $\psi \psi^c = \varepsilon_p^{-2}$ is a de Rham character and for any place $w \neq w' \mid p$ of $F^+$, $\psi|_{G_{F_{w}}}$ is crystalline, $\mathrm{WD}(\psi|_{G_{F_{w}}})|_{I_{F_{w}}} = \mathrm{det}\tau_w$ and $\psi|_{G_{F_{w'}}}$ has $p$-adic Hodge type $(\lambda_{\tau, 1} + \lambda_{\tau, 2} + 1)_{\tau \in \mathrm{Hom}_{\mathbb{Q}_p}(F_{w'}, \overline{\mathbb{Q}}_p)}$ for any place $w \neq w' \mid p$ of $F^+$. (We use \cite[Lemma A.2.5]{CW} to obtain this character and we need to extend $E$ only for this. Note that we may not have $\overline{\psi} = \mathrm{det}\overline{\rho}$ for the following $\overline{\rho}$.)
\item $\overline{\rho} : G_{F, S} \rightarrow \mathrm{GL}_2(\mathbb{F})$ is a continuous surjection such that there exists a perfect $G_{F, S}$-equivariant symmetric pairing $\overline{\rho} \times \overline{\rho}^c \rightarrow \overline{\varepsilon_p^{-1}}$, which is unramified outside finite places lying above $p, r_1$ and $r_2$.
\item For any $w' \mid v$, we have $\overline{\rho}|_{G_{w'}} \cong \overline{\rho}_{L}$.
\item For any $w' \mid v_1$, we have $\overline{\rho}|_{G_{F_{w'}}} \cong \overline{\rho}_{1}$.
\item $\otimes_{\tau \in \Psi} \overline{\rho}_{\mathfrak{m}}^{\tau}$ is absolutely irreducible. (We obtain this property if there exists a prime $l$ splitting completely in $F$ such that $\overline{\rho}|_{G_{F_{v}}}$ is irreducible for one $v \mid l$ and for any $1 \neq \tau \in \Psi$, $\overline{\rho}|_{G_{F_{{v}^{\tau}}}}$ is trivial. Note that by replacing $F^+$ by an appropriate solvable extension of $F^+$, we may assume that $\overline{\rho}$ is unramified outside finite places lying above $p, r_1$ and $r_2$.) \footnote{The author doesn't know whether we can take $\overline{\rho}$ satisfying that for any $1 \neq \tau \in \Phi$, $\overline{\rho}|_{G_{F_{{v}^{\tau}}}}$ is trivial because we used the Moret-Bailly theorem in the proof of \cite[Proposition 3.2]{Calegari}, which was used in the proof of \cite[Proposition A.2]{BM}.}
\item For one $w' \mid v_2$, $\overline{\rho}|_{G_{F_{w'}}}$ is a generic unramified representation with distinct Frobenius eigenvalues. Note that this implies that if $\overline{\rho} \cong \overline{\rho}_{\mathfrak{m}}$ for some non-Eisenstein ideal $\mathfrak{m}$, then $\mathfrak{m}$ is decomposed generic.
\item There exists a conjugate self-dual cohomological automorphic representation $\pi$ of $\mathrm{GL}_2(\mathbb{A}_{F})$ such that $\overline{\rho} \cong \overline{r_{\iota}(\pi)}$.
\end{itemize}

Let $i : \mathbb{F}^{\times} \hookrightarrow \mathcal{O}^{\times}$ be the Teichmuller map. By replacing $\psi$ by $\psi i(\overline{\psi}^{-1} \mathrm{det}\overline{\rho})$, we may assume that $\overline{\psi} = \mathrm{det}\overline{\rho}$. By Theorem \ref{lifting prescribed}, for any irreducible component $\mathcal{C}_w$ of $\mathrm{Spec}R_{\overline{\rho}|_{G_{F_w}}}^{\mathrm{ss}, \lambda_w, \tau_w}$, we can take a finite set $T_{\mathcal{C}_w, 3}$ of $\overline{\rho}$-nice places of $F^+$ and a lifting $\rho_{\mathcal{C}_w} : G_{F, S \cup T_{\mathcal{C}_w, 3}} \rightarrow \mathrm{GL}_2(\mathcal{O})$ of $\overline{\rho}$ satisfying the following.

\begin{itemize}
\item There exists a perfect symmetric pairing $\rho_{\mathcal{C}_w} \times \rho_{\mathcal{C}_w}^c \rightarrow \varepsilon_p^{-1}$.
\item $\rho_{\mathcal{C}_w}|_{G_{F_w}}$ is contained in $\mathcal{C}_w$.
\item For any $w' \mid v$, $w' \neq w$, $\rho_{\mathcal{C}_w}|_{G_{F_{w'}}}$ is Fontaine-Laffaille lift of $\overline{\rho}|_{G_{F_{w'}}}$ of $p$-adic Hodge type $\lambda_{w'}$.
\item $\rho_{\mathcal{C}_{w}}|_{G_{F^+_{v}}}$ defines a point of $\mathrm{Spec} R_{\overline{\rho}|_{G_{F^+_{v}}}}^{\mathrm{st}}(\mathcal{O})$ for any $v \in T_{\mathcal{C}_w, 3}$.
\end{itemize}

We consider a similar situation as {\S} 7.2. Let $S(B) := \{ w' \mathrm{ \ place \ of \ } F \mid w' \mathrm{ \ divides \ } r_1 \}$. We have a natural injection $\Psi \hookrightarrow \{ w' : \mathrm{place \ of \ } F \mid w' \mathrm{\ divides \ } v_1 \}, \ \tau \mapsto w_1^{\tau}$. For any non-empty subset $\Psi' \subset \Psi$, let $S(B_{\Psi'}) = S(B) \setminus \{ w_1^{\tau}, w_1^{c\tau} \mid \tau \in \Psi' \}$. From the data $\Psi' \subset \Psi$ and $S(B_{\Psi'})$, we have a unitary Shimura variety $S_{\Psi', K}$ as in {\S} 3.1 and 3.2. Moreover, for any irreducible component $\mathcal{C}_w$ of $\mathrm{Spec}R_{\overline{\rho}|_{G_{F_w}}}^{\mathrm{ss}, \lambda_w, \tau_w}$, we take a subset $S(B_{\mathcal{C}_w})' \subset S(B) \setminus \{ w_1^{\tau}, w_1^{c \tau} \mid \tau \in \Psi \}$ satisfying $S(B_{\mathcal{C}_w})' = S(B_{\mathcal{C}_w})'^c$ and $\frac{1}{2}|\tilde{T}_{3, \mathcal{C}_w} \sqcup S(B_{\mathcal{C}_w})'| \equiv 0 \mod 2$, where $\tilde{T}_{3, \mathcal{C}_w}$ denotes the set of places of $F$ lying above a place in $T_{3, \mathcal{C}_w}$. We put $S(B_{\mathcal{C}_w, \Psi'}) := \tilde{T}_{3, \mathcal{C}_w} \sqcup S(B_{\mathcal{C}_w})' \sqcup S(B_{\Psi'})$. From data $\Psi'$ and $S(B_{\mathcal{C}_w, \Psi'})$, we also obtain a unitary Shimura variety $S_{\mathcal{C}_w, \Psi', K}$.

\begin{prop}\label{BIGRT} Assume that for any $\Psi' \subset \Psi$ and $\mathcal{C}_w$, Conjectures \ref{classicality conjecture} and \ref{key diagram} hold for $\lambda$, $S_{\Psi', K}$, $S_{\Psi', \mathcal{C}_w, K}$ and $\mathfrak{m}$ corresponding to $\overline{\rho}$.
    
Then the Breuil-M$\acute{e}$zard conjecture holds for $\overline{\rho}_L$, $\lambda_L$ and $\tau_L$. \end{prop}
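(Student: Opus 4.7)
The plan is to combine the global lifting theorem (Theorem \ref{lifting of Galois}), the ``big $R=T$ + classicality'' automorphy lifting result encoded in Corollary \ref{BIGCL}, and the ``automorphy lifting $\Leftrightarrow$ Breuil-M$\acute{e}$zard'' equivalence recorded in Proposition \ref{equivalence}.

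First, for each irreducible component $\mathcal{C}_w$ of $\mathrm{Spec}R_{\overline{\rho}|_{G_{F_w}}}^{\mathrm{ss}, \lambda_w, \tau_w}$, the setup preceding this proposition already supplies a lift $\rho_{\mathcal{C}_w} : G_{F, S \cup T_{3, \mathcal{C}_w}} \rightarrow \mathrm{GL}_2(\mathcal{O})$ of $\overline{\rho}$ with the correct similitude pairing, Steinberg local components at the $\overline{\rho}$-nice places $\tilde{T}_{3, \mathcal{C}_w}$, Fontaine-Laffaille of type $\lambda_{w'}$ at all $p$-adic places $w' \neq w$ lying above $v$, and $\rho_{\mathcal{C}_w}|_{G_{F_w}} \in \mathcal{C}_w$. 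I would then feed this into Corollary \ref{BIGCL} applied to the Shimura variety $S_{\mathcal{C}_w, \Psi, K}$, whose division algebra invariants $S(B_{\mathcal{C}_w, \Psi})$ were chosen precisely to satisfy the parity constraint and to accommodate the Steinberg ramification. Invoking the assumed Conjectures \ref{classicality conjecture} and \ref{key diagram}, Corollary \ref{BIGCL} produces a conjugate self-dual cohomological cuspidal automorphic representation $\alpha_{\mathcal{C}_w}$ of $\mathrm{GL}_2(\mathbb{A}_F)$ with $r_{\iota}(\alpha_{\mathcal{C}_w}) \cong \rho_{\mathcal{C}_w}$.

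Next, I would feed these automorphy results into Proposition \ref{equivalence}, taken with the unitary group $U$ having $S(B) = \emptyset$, with $S$ enlarged to contain $\bigcup_{\mathcal{C}_w} T_{3, \mathcal{C}_w}$, and with $S_p^{\mathrm{ss}} := \{w\}$, $S_p^{\mathrm{crys}} := S_p \setminus \{w\}$, $V := S_p^{\mathrm{crys}}$, $R := \emptyset$. The various generic, non-Eisenstein and decomposed-generic hypotheses are preserved by the enlargement of $S$, because the added places are $\overline{\rho}$-nice and hence generic unramified; they go into $T_1$ together with a $v_2$-place, while the $v_1$-places go into $T_2$. The technical condition ``$\bullet$'' of Proposition \ref{equivalence} is then verified in turn: $p$ is unramified in $F^+$ by the globalization, $\tau_{w'}$ is trivial at every non-$w$ $p$-adic place $w'$, the bound $\lambda_{\gamma, 1} - \lambda_{\gamma, 2} \le p - 3$ follows from sufficient genericity of $\overline{\rho}_L$ combined with the Fontaine-Laffaille range, and $R^{\mathrm{crys}, \lambda_{w'}, \bold{1}}_{\overline{\rho}|_{G_{F_{\tilde{w'}}}}}$ is nonzero thanks to the Fontaine-Laffaille lift $\rho_L$ fixed earlier. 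Condition (2) of Proposition \ref{equivalence} then reduces, via \cite[Lemma 3.3]{Calabi} and the formal smoothness of the Fontaine-Laffaille rings at non-$w$ $p$-adic places, to realizing every component $\mathcal{C}_w$ at $w$ by an automorphic witness, which is exactly what the $\alpha_{\mathcal{C}_w}$ supply. Since condition $\bullet$ is in force, Proposition \ref{equivalence} upgrades conditions 1 and 2 to condition $3'$, yielding
\[
  e\bigl(R_{\overline{\rho}|_{G_{F_{\tilde{w}}}}}^{\mathrm{ss}, \lambda_w, \tau_w}/\varpi\bigr) = \sum_{k \in W_{F_w}} n_k^{\mathrm{ss}}(\lambda_w, \tau_w)\, \mu_k\bigl(\overline{\rho}|_{G_{F_{\tilde{w}}}}\bigr),
\]
which, under the identification $(\overline{\rho}|_{G_{F_w}}, \lambda_w, \tau_w) = (\overline{\rho}_L, \lambda_L, \tau_L)$, is exactly the Breuil-M$\acute{e}$zard identity we want.

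The hard part will be the bookkeeping needed to align the two different global settings: Corollary \ref{BIGCL} lives with a unitary similitude group whose division algebra is nontrivial at $\tilde{T}_{3, \mathcal{C}_w}$ and ramifies at $v_1$, whereas Proposition \ref{equivalence} is written for the split unitary group $U$ with $S(B) = \emptyset$ and treats the same Steinberg places only through the ``all deformations'' local condition at $T_1$. Because both statements ultimately record information only about the Galois representation $\rho$ itself (through its automorphy), the transfer should be routine using Theorems \ref{base changeIII}, \ref{base changeII} and \ref{Galois representation}, but one has to verify explicitly that after enlarging $S$ to include $\bigcup_{\mathcal{C}_w} T_{3, \mathcal{C}_w}$ the set of irreducible components to be matched remains finite, and that the compatibility of local conditions at the newly ramified places --- in particular the interaction between the ``all deformations'' condition used in Proposition \ref{equivalence} and the Steinberg condition used in Corollary \ref{BIGCL} --- does not break the witness supplied by the $\alpha_{\mathcal{C}_w}$.
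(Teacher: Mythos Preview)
Your overall strategy is right, but there is a genuine gap in the step where you enlarge $S$ and place the auxiliary nice places $T_{3,\mathcal{C}_w}$ into $T_1$. You assert that these places are ``generic unramified'', but in fact $\overline{\rho}$-nice places are \emph{never} generic: by definition the Frobenius eigenvalues satisfy $\alpha/\beta = q_v$, so $\mathrm{ad}\,\overline{\rho}$ has $q_v$ and $q_v^{-1}$ among its Frobenius eigenvalues and hence $H^0(G_{F_{\tilde v}},\mathrm{ad}\,\overline{\rho}(1))\neq 0$. Thus they cannot go into $T_1$ (which requires generic unramified), nor into $T_2$ (generic irreducible), nor into $R$ (trivial local representation). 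Proposition~\ref{equivalence} therefore does not apply with your enlarged $S$; its patching and multiplicity arguments genuinely use the formal smoothness of the local lifting rings at $T_1\cup T_2$, which fails at nice places.

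The paper handles this by keeping $S$ equal to the original set and producing an automorphic witness that is \emph{unramified} at the nice places. Concretely: first apply Corollary~\ref{BIGCL} to the Shimura varieties $S_{\mathcal{C}_w,\Psi',K}$ to prove that $\rho_{\mathcal{C}_w}$ is automorphic, giving $\pi_{\mathcal{C}_w}$. This supplies an automorphic lift landing on the component $\mathcal{C}_w$, which is then fed into \cite[Theorem~A]{ECG} (or, as the paper notes, the level-changing arguments of \cite[Prop.~4.1.1, Thm.~4.2.1]{CW}) to produce a new lift $\rho'_{\mathcal{C}_w}\in\mathrm{Spec}\,R_{\mathcal{D}^{\lambda,\tau,\emptyset}}(\mathcal{O})$ with $\rho'_{\mathcal{C}_w}|_{G_{F_w}}\in\mathcal{C}_w$ but unramified outside the original $S$. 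A \emph{second} application of Corollary~\ref{BIGCL}, now on the Shimura varieties $S_{\Psi',K}$, shows that $\rho'_{\mathcal{C}_w}$ is automorphic, furnishing the $\pi'_{\mathcal{C}_w}$ required by condition~2 of Proposition~\ref{equivalence} with the original $S$. This two-pass structure is exactly why the hypothesis of the proposition asks for the conjectures at \emph{both} families $S_{\Psi',K}$ and $S_{\Psi',\mathcal{C}_w,K}$.
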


\begin{proof} We apply Proposition \ref{equivalence} to our situation by putting $S_p^{\mathrm{ss}} := \{ w \}$, $T_1 := \{ w' \mid r_3 \}$, $T_2 := \{ w' \mid r_1 \}$ and $R := \emptyset$. Note that the condition $\bullet$ holds in our situation by putting $\tau_{w'} = \bold{1}$ for any $w' \mid p$, $w' \neq w$. Thus for any irreducible component $\mathcal{C}_w$ of $\mathrm{Spec}R_{\overline{\rho}|_{G_{F_w}}}^{\mathrm{ss}, \lambda_w, \tau_w}$, it suffices to construct a conjugate self-dual cohomological cuspidal automorphic representation $\pi'_{\mathcal{C}_w}$ of $\mathrm{GL}_2(\mathbb{A}_F)$ such that $r_{\iota}(\pi'_{\mathcal{C}_w}) \in \mathcal{C}_w$ and $\pi'_{\mathcal{C}_w}$ is unramified outside $S$.

We fix $\mathcal{C}_w$. By Theorem \ref{BIGCL}, we obtain a conjugate self-dual cohomological cuspidal automorphic representation $\pi_{\mathcal{C}_w}$ of $\mathrm{GL}_2(\mathbb{A}_{F})$ such that $r_{\iota}(\pi_{\mathcal{C}_w}) \cong \rho_{\mathcal{C}_w}$. By using \cite[Theorem A]{ECG} \footnote{In this situation, we can prove this more easily by using the same method as \cite[Proposition 4.1.1 and Theorem 4.2.1]{CW}.}, there exists a lifting $\rho'_{\mathcal{C}_w} \in \mathrm{Spec}R_{\mathcal{D}^{\lambda, \tau, \emptyset}}(\mathcal{O})$ such that $\rho'_{\mathcal{C}_w}|_{G_{F_{w}}}$ is contained in $\mathcal{C}_w$ after extending $\mathcal{O}$ if necessary. Again by using Theorem \ref{BIGCL}, we obtain a conjugate self-dual cohomological cuspidal automorphic representation $\pi'_{\mathcal{C}_w}$ such that $r_{\iota}(\pi'_{\mathcal{C}_w}) \cong \rho'_{\mathcal{C}_w}$. This implies the result. \end{proof}

\begin{thm}(Breuil-M$\acute{e}$zard conjecture)\label{Breuil-Mezard}
    
Let $\overline{\rho} : G_{\mathbb{Q}_{p^2}} \rightarrow \mathrm{GL}_2(\overline{\mathbb{F}}_p)$ be a continuous representation such that any $k \in W_{\mathbb{Q}_{p^2}}(\overline{\rho})$ satisfies $2 \le k_{\gamma, 1} - k_{\gamma, 2} \le p-5$ for any $\gamma \in \mathrm{Hom}(\mathbb{F}_{p^2}, \overline{\mathbb{F}}_p)$. Then for any weight $\lambda$ and any inertia type $\tau$, the Breuil-M$\acute{e}$zard conjecture for $\overline{\rho}$, $\lambda$, $\tau$ holds.

\end{thm}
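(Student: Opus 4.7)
The plan is to combine the reduction to sufficiently generic residual representations provided by Theorem \ref{genericity} with the big $R=T$-style output of Proposition \ref{BIGRT}. First I would invoke part (2) of Theorem \ref{genericity}: because every $k \in W_{\mathbb{Q}_{p^2}}(\overline{\rho})$ satisfies $2 \le k_{\gamma,1} - k_{\gamma,2} \le p-5$ (in particular $k_{\gamma,1} - k_{\gamma,2} \ne p-1$ for some $\gamma$), part (1) of the same theorem produces, for each such $k$, a reducible non-split sufficiently generic $\overline{\rho}_k : G_{\mathbb{Q}_{p^2}} \to \mathrm{GL}_2(\overline{\mathbb{F}}_p)$ with $W_{\mathbb{Q}_{p^2}}(\overline{\rho}_k) = \{k\}$ giving a smooth point of the corresponding component of the Emerton--Gee stack. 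Theorem \ref{genericity}(2) then reduces the Breuil-M\'ezard conjecture for $(\overline{\rho}, \lambda, \tau)$ to the same conjecture for each $(\overline{\rho}_k, \lambda, \tau)$.

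The core task is therefore to establish the Breuil-M\'ezard conjecture for a sufficiently generic reducible non-split local representation $\overline{\rho}_L = \overline{\rho}_k : G_{\mathbb{Q}_{p^2}} \to \mathrm{GL}_2(\mathbb{F})$. For this I would apply Proposition \ref{BIGRT} directly: its globalization setup (via the arguments of \cite{BM} Appendix A and the lifting result Theorem \ref{lifting prescribed}) produces a CM field $F$ with $F_w \cong \mathbb{Q}_{p^2}$ for all $w \mid p$, a suitable global lift $\overline{\rho}$ of $\overline{\rho}_L$, nice primes $r_1, r_2$, and division algebras realising the needed unitary Shimura data. The conclusion of Proposition \ref{BIGRT} is exactly the Breuil-M\'ezard identity for $(\overline{\rho}_L, \lambda_L, \tau_L)$, provided one knows Conjectures \ref{classicality conjecture} and \ref{key diagram} for the relevant unitary Shimura varieties $S_{\Psi',K}$ and $S_{\Psi',\mathcal{C}_w, K}$ associated to every subset $\Psi' \subset \Psi := \mathrm{Gal}(\mathbb{Q}_{p^2}/\mathbb{Q}_p)$ and every irreducible component $\mathcal{C}_w$.

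The crux is that these two conjectures are in fact theorems in our range. Since $|\Psi| = [\mathbb{Q}_{p^2}:\mathbb{Q}_p] = 2$, every $\Psi' \subset \Psi$ satisfies $d_{\Psi'} = |\Psi'| \le 2$, so Corollary \ref{classicality of geometric} (Theorem \ref{quadratic}) unconditionally verifies Conjecture \ref{classicality conjecture} for all the Shimura varieties in play. Similarly, Conjecture \ref{key diagram} is verified by Corollary \ref{parallel comparison} in the parallel-weight case and by Corollary \ref{non-parallel comparison} in the non-parallel case when $[F_w : \mathbb{Q}_p] = 2$, covering every $\lambda$ that can appear; the technical assumptions on $\mathfrak{m}$ (decomposed genericity, irreducibility of $\otimes_{\tau} \overline{\rho}_{\mathfrak{m}}^\tau$, non-solvability) are built into the globalization choices preceding Proposition \ref{BIGRT}. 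Plugging these verifications into Proposition \ref{BIGRT} yields the Breuil-M\'ezard conjecture for each $\overline{\rho}_k$, and then Theorem \ref{genericity}(2) completes the proof for the original $\overline{\rho}$.

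The main obstacle is conceptual rather than computational: one must ensure that the genericity hypothesis $2 \le k_{\gamma,1}-k_{\gamma,2} \le p-5$ feeds correctly into \emph{both} the production of the auxiliary $\overline{\rho}_k$ (so that it is reducible non-split sufficiently generic, not merely generic) \emph{and} the big $R=T$ theorem of Theorem \ref{eigenspace}, which relies crucially on this sufficient genericity to carry out the Gelfand-Kirillov dimension computation via \cite{GEKI}. This compatibility is precisely why the hypothesis takes the specific form $2 \le k_{\gamma,1}-k_{\gamma,2} \le p-5$ rather than the weaker $0 \le k_{\gamma,1}-k_{\gamma,2} \le p-3$ that would suffice for Theorem \ref{genericity} alone.
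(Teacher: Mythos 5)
Your proposal matches the paper's proof exactly: reduce to the sufficiently generic reducible non-split case via Theorem \ref{genericity}, then apply Proposition \ref{BIGRT} with Conjectures \ref{classicality conjecture} and \ref{key diagram} verified by Corollary \ref{classicality of geometric}, Corollary \ref{parallel comparison}, and Corollary \ref{non-parallel comparison}. The only step you omit is the initial twist of $\lambda$ to put it in the normalized form $(0, -\lambda_\tau)_\tau$ required by the setup preceding Proposition \ref{BIGRT}; that is a routine normalization, and otherwise your account of how the genericity hypothesis feeds into both the reduction and the big $R=T$ input is accurate.
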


\begin{rem}

By the same method, we can prove the Breuil-M$\mathrm{\acute{e}}$zard conjecture in some $\mathrm{GL}_2(\mathbb{Q}_p)$ cases. However in the $\mathrm{GL}_2(\mathbb{Q}_p)$ case, the Breuil-M$\mathrm{\acute{e}}$zard conjecture was already completely solved by \cite{FMRI}, \cite{HUTU}, \cite{TunI} and \cite{TunII}.

\end{rem}

\begin{proof}

After twisting by some character, we may assume that $\lambda = (0, -\lambda_{\tau})_{\tau} \in (\mathbb{Z}_+^2)^{\mathrm{Hom}_{\mathbb{Q}_p}(\mathbb{Q}_{p^2}, \overline{\mathbb{Q}}_p)}$ for some $\lambda_{\tau} \ge 0$. By Theorem \ref{genericity}, we may assume that $\overline{\rho}$ is sufficiently generic reducible non-split. Then the result follows from Proposition \ref{BIGRT} because for the field $\mathbb{Q}_{p^2}$, Conjecture \ref{classicality conjecture} follows from Corollary \ref{classicality of geometric} and Conjecture \ref{key diagram} follows from Corollary \ref{parallel comparison} and Corollary \ref{non-parallel comparison}. \end{proof}

Let $F$ be a CM field such that $F_v = \mathbb{Q}_{p^2}$ or $\mathbb{Q}_p$ and $\Psi$ be the set of $p$-adic places $v$ such that $F_v = \mathbb{Q}_{p^2}$.

\begin{thm}(Automorphy lifting theorem) \label{automorphy lifting theorem}

    Let  $\rho : G_{F} \rightarrow \mathrm{GL}_2(\overline{\mathbb{Q}_p})$ be an irreducible continuous representation. We assume the following conditions.

    1 \ There exist a continuous character $\chi : G_{F^+} \rightarrow \overline{\mathbb{Q}_p}^{\times}$ satisfying $\chi(c_v) = \chi(c_w)$ for any complex conjugations $c_v, c_w$ at $v, w \mid \infty$ and a perfect $G_{F}$-equivariant symmetric pairing $\rho \times \rho^c \rightarrow \chi|_{G_{F}}$. 
    
    2 \ $\rho$ is unramified at almost all finite places.
    
    3 \ $\rho$ is de Rham and Hodge-Tate regular at all $p$-adic places.
    
    4 \ $\overline{\rho}|_{G_{F(\zeta_p)}}$ is absolutely irreducible.
    
    5 \ There exists an essentially conjugate self-dual cohomological cuspidal automorphic representation $\sigma$ of $\mathrm{GL}_2(\mathbb{A}_F)$ such that $\overline{\rho} \cong \overline{r_{\iota}(\sigma)}$.
    
    6 \ For any $v \in \Psi$ and for any $k \in W(\overline{\rho}|_{G_{F_v}})$, we have $2 \le k_{\tau, 1} - k_{\tau, 2} \le p-5$ for any $\tau \in \mathrm{Hom}(\mathbb{F}_v, \overline{\mathbb{F}}_p)$.
    
    Then there exists an essentially conjugate self-dual cohomological cuspidal automorphic representation $\pi$ of $\mathrm{GL}_2(\mathbb{A}_F)$ such that $r \cong r_{\iota}(\pi)$.
    
\end{thm}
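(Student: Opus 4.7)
The plan is to deduce Theorem~\ref{automorphy lifting theorem} from the Breuil-M\'ezard conjecture at every $p$-adic place of $F^+$, using the equivalence packaged in Proposition~\ref{equivalence}. The conjecture is classical for $\mathrm{GL}_2(\mathbb{Q}_p)$ (\cite{FMRI}, \cite{HUTU}, \cite{TunI}, \cite{TunII}), and for $\mathrm{GL}_2(\mathbb{Q}_{p^2})$ it is Theorem~\ref{Breuil-Mezard}, whose hypothesis on Serre weights is precisely condition 6 of the theorem to be proved.

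First I would perform the standard solvable base change reductions. Enlarging $F^+$ by an appropriate totally real solvable extension linearly disjoint over $F^+$ from $\overline{F}^{\ker \overline{\rho}}(\zeta_p)$, I can arrange that $F/F^+$ is everywhere unramified, that $F$ contains an imaginary quadratic field $F_0$ in which $p$ splits, that $\sigma$ and $\rho$ are unramified or carry Steinberg-type ramification at all non-$p$-adic places, and that the technical hypotheses required later---decomposed generic residual representation, non-solvable image, and absolute irreducibility of $\otimes_{\tau \in \Psi}(\overline{\rho}|_{G_{\tilde{F}}})^\tau$---all hold. The essentially conjugate self-dual pairing of condition 1 becomes a genuinely conjugate self-dual pairing after a twist by an algebraic Hecke character. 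Conditions 4, 5, 6 are stable under such base change because Serre weights are preserved by unramified local extensions.

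Next I would set up the global deformation problem $\mathcal{D}^{\lambda, \tau, V}$ of {\S} 7.3, taking $\lambda_v$ and $\tau_v$ to be the $p$-adic Hodge type and inertia type of $\rho|_{G_{F_{\tilde{v}}}}$ for each $v \mid p$, so that $\rho$ defines an $\mathcal{O}$-point of $\mathrm{Spec}\, R_{\overline{\rho}, \mathcal{D}^{\lambda, \tau, V}}$. Proposition~\ref{equivalence} then reduces automorphy of $\rho$ to condition 3 of that proposition, the Breuil-M\'ezard equality at each $v \mid p$; this is available via the classical results at $\mathbb{Q}_p$-places and via Theorem~\ref{Breuil-Mezard} at $\mathbb{Q}_{p^2}$-places. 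The resulting isomorphism $R^{\mathrm{red}}_{\overline{\rho}, \mathcal{D}^{\lambda, \tau, V}} \cong \mathbb{T}^S(K, \mathcal{V}_\lambda(\sigma_V))_{\mathfrak{m}}$ identifies $\rho$ with a Hecke eigensystem in the space of algebraic automorphic forms on a definite unitary group, and descent via Theorem~\ref{base changeIII} (together with the local-global compatibility in Theorem~\ref{Galois representation}) produces the desired essentially conjugate self-dual cuspidal automorphic representation $\pi$ on $\mathrm{GL}_2(\mathbb{A}_F)$.

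The main obstacle is ensuring that Proposition~\ref{equivalence} applies as stated. One must verify the starting automorphic hypothesis---the existence of a cuspidal $\pi_1$ of trivial weight as in Proposition~\ref{congruence}, which uses condition 5 together with the Fontaine-Laffaille automorphy lifting theorem of \cite{CW}---and guarantee that every irreducible component $\mathcal{C}_v$ of each local deformation ring is geometrically irreducible after extension of scalars, which is standard but demands a careful choice of auxiliary places and coefficient field $E$. A secondary but delicate point is propagating condition 6 through the solvable chain of base changes: Serre weights are preserved under unramified extension, but one should check that the inequality $2 \le k_{\tau, 1} - k_{\tau, 2} \le p - 5$ is not affected when passing to an auxiliary totally real extension where $F_v^+ = \mathbb{Q}_{p^2}$ remains, and that the residual genericity required at places in $T_1$ and $T_2$ can be arranged simultaneously.
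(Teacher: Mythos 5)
Your proposal is correct and follows essentially the same route as the paper: solvable base change to arrange the auxiliary hypotheses, then apply Proposition~\ref{equivalence} (the implication $3 \Rightarrow 1 \Leftrightarrow 2$) together with Theorem~\ref{Breuil-Mezard} for the $\mathbb{Q}_{p^2}$-places and the classical Breuil--M\'ezard results for the $\mathbb{Q}_p$-places. The paper's proof is terser, delegating the base change bookkeeping to \cite[proof of Theorem 2.2.18]{FMRI} and \cite[proof of Theorem 5.2]{IA}, and it is worth noting that your final descent step back to $F$ needs cyclic $\mathrm{GL}_2$ base change descent (not just Theorem~\ref{base changeIII}, which transfers between unitary groups and $\mathrm{GL}_2$ over the same base field), but this is standard and is exactly what the cited references supply.
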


    \begin{proof}

        By using the solvable base change, we may assume all conditions after Theorem \ref{genericity} as in \cite[proof of theorem 2.2.18]{FMRI} or \cite[proof of Theorem 5.2]{IA}. Thus this theorem follows from Proposition \ref{equivalence}, Theorem \ref{Breuil-Mezard} and the Breuil-M$\mathrm{\acute{e}}$zard conjecture in the $\mathrm{GL}_2(\mathbb{Q}_p)$ case. (See \cite{FMRI}, \cite{HUTU}, \cite{TunI} and \cite{TunII}.) \end{proof}

If $F$ is an imaginary quadratic, the assumption 5 follows from the assumption 1 by the following proposition and \cite{SEI}.

\begin{prop}\label{residual automorphy}

Let $F$ be a CM field and $\overline{\rho} : G_{F} \rightarrow \mathrm{GL}_2(\overline{\mathbb{F}}_p)$ be an irreducible continuous representation. Assume that there exists a continuous character $\overline{\chi} : G_{F^+} \rightarrow \mathbb{F}^{\times}$ such that $\overline{\chi}(c_v) = \overline{\chi}(c_w)$ for any $v, w \mid \infty$ and there exists a perfect symmetric $G_{F}$-equivariant pairing $\overline{\rho} \times \overline{\rho}^c \rightarrow \overline{\chi}|_{G_{F}}$. Then there exists a continuous representation $\overline{r} : G_{F^+} \rightarrow \mathrm{GL}_2(\mathbb{F})$ such that $\overline{r}|_{G_{F}} = \overline{\rho}$ and $\mathrm{det}\overline{r}(c_v) = -1$ for any $v \mid \infty$. 

\end{prop}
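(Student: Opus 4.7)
The plan is to first apply Lemma \ref{correspondence} to translate the datum of the symmetric perfect pairing $\overline{\rho} \times \overline{\rho}^c \to \overline{\chi}|_{G_F}$ into a continuous morphism $\overline{r}_0 : G_{F^+} \to \mathcal{G}_2(\mathbb{F})$ satisfying $\nu \circ \overline{r}_0 = \overline{\chi}$, $\overline{r}_0(G_F) \subset \mathcal{G}_2^0(\mathbb{F})$, and $\overline{r}_0(c_v) \notin \mathcal{G}_2^0(\mathbb{F})$ for each complex conjugation. The core of the proof is then to refine $\overline{r}_0$ into a genuinely $\mathrm{GL}_2$-valued representation extending $\overline{\rho}$, and finally to arrange the sign condition at infinity.

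For the refinement, I would exploit the rank-$2$ identity $\overline{\rho}^\vee \cong \overline{\rho} \otimes (\det \overline{\rho})^{-1}$, which recasts the polarization isomorphism as $\overline{\rho}^c \cong \overline{\rho} \otimes \delta$ for the character $\delta := \overline{\chi}|_{G_F} \cdot (\det \overline{\rho})^{-1}$. Because $\overline{\chi}$ is $c$-invariant on $G_F$, a direct computation gives $\delta \cdot \delta^c = 1$, so $\delta$ lies in the kernel of the norm map for $\mathrm{Gal}(F/F^+)$ acting on $\mathrm{Hom}_{\mathrm{cts}}(G_F, \overline{\mathbb{F}}_p^\times)$. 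Since $p$ is odd, this coefficient module is uniquely $2$-divisible, so the relevant Tate cohomology vanishes and Hilbert $90$ produces, after possibly enlarging $\mathbb{F}$, a continuous character $\phi : G_F \to \overline{\mathbb{F}}_p^\times$ with $\phi^c/\phi = \delta$. Setting $\overline{\rho}' := \overline{\rho} \otimes \phi^{-1}$ one obtains $(\overline{\rho}')^c \cong \overline{\rho}'$; by Schur's lemma (using the irreducibility of $\overline{\rho}'$) one can then choose an intertwiner $A \in \mathrm{GL}_2(\overline{\mathbb{F}}_p)$, rescale so that $A^2 = \overline{\rho}'(c^2)$ (again using $2$-divisibility), and define a continuous extension $\overline{r}' : G_{F^+} \to \mathrm{GL}_2(\overline{\mathbb{F}}_p)$ of $\overline{\rho}'$.

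To pass back to an extension of $\overline{\rho}$ itself, I would use the product $\phi \cdot \phi^c$, which is automatically $c$-invariant and hence extends to a character of $G_{F^+}$; a compatible choice of square root together with a twist of $\overline{r}'$ by this extended character yields the desired $\overline{r} : G_{F^+} \to \mathrm{GL}_2(\mathbb{F})$ (after possibly enlarging $\mathbb{F}$ once more) with $\overline{r}|_{G_F} = \overline{\rho}$. For the final sign condition, the hypothesis $\overline{\chi}(c_v) = \overline{\chi}(c_w)$ ensures $\det \overline{r}(c_v)$ is independent of $v$; it then suffices to twist $\overline{r}$ by the unique nontrivial quadratic character of $\mathrm{Gal}(F/F^+)$, which flips all signs at infinity simultaneously.

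The main obstacle will be this descent step, since the twisting character $\phi$ itself does not extend to $G_{F^+}$ in general (it fails $c$-invariance precisely because $\delta \neq 1$), so one has to either work at the level of $1$-cocycles or perform a careful rank-$2$ matrix analysis using the identity $JgJ^{-1} = \det(g) g^{-t}$ for $J = \bigl(\begin{smallmatrix} 0 & 1 \\ -1 & 0 \end{smallmatrix}\bigr)$; it is here that the symmetry of the pairing (as opposed to antisymmetry) selects the correct parity, forcing the candidate extension element $T$ to satisfy $T^2 = -\det(T)\, I$ and ultimately producing a determinant that can be normalized to $-1$ at all infinite places.
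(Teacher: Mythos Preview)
The paper defers the proof entirely to \cite[proof of Theorem 8.1]{ALTRR}, so there is no in-paper argument to compare against; your overall strategy (rank-$2$ self-duality, Hilbert~90 for characters, explicit intertwiner) is the standard one and is almost certainly what that reference does. Two steps in your outline, however, do not work as written.

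\textbf{The quadratic twist does not flip the sign.} Twisting a $2$-dimensional $\overline{r}$ by a character $\eta$ multiplies $\det\overline{r}$ by $\eta^{2}$; for the quadratic character of $\mathrm{Gal}(F/F^{+})$ this is trivial, so $\det\overline{r}(c_{v})$ is unchanged. The correct mechanism is the one you identify in your last paragraph but do not actually deploy: writing the symmetric pairing as $\langle x,y\rangle={}^{t}x\,B\,y$ with ${}^{t}B=B$ and setting $T:=B^{-1}J$, the identity $J\,{}^{t}g^{-1}J^{-1}=(\det g)^{-1}g$ together with symmetry of $B$ gives $T^{2}=-\det(T)\cdot I$. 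After rescaling so that $(\mu T)^{2}=I$ one finds $\det(\mu T)=\mu^{2}\det T=(-\det B)(\det B)^{-1}=-1$ automatically; the hypothesis $\overline{\chi}(c_{v})=\overline{\chi}(c_{w})$ then transports this to every infinite place. (Had the pairing been alternating, the same computation would yield $+1$.) So the oddness is forced by the symmetry and needs no after-the-fact correction.

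\textbf{The descent back to $\overline{\rho}$ does not work.} A square root $\tilde{\psi}$ of an extension of $\phi\phi^{c}$ restricts to a $c$-invariant character on $G_{F}$, so $\tilde{\psi}|_{G_{F}}=\phi$ only when $\phi=\phi^{c}$, i.e.\ when $\delta=1$; in general your twist of $\overline{r}'$ restricts to $\overline{\rho}\otimes\epsilon$ with $\epsilon^{2}=\delta$, not to $\overline{\rho}$. More fundamentally, if $\delta:=\overline{\chi}|_{G_{F}}/\det\overline{\rho}$ is neither trivial nor a self-twist of $\overline{\rho}$, then $\overline{\rho}^{c}\not\cong\overline{\rho}$ and \emph{no} $\mathrm{GL}_{2}$-valued extension of $\overline{\rho}$ to $G_{F^{+}}$ exists at all. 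What your argument genuinely delivers is an odd extension of a character twist of $\overline{\rho}$; for the intended application (deducing condition~5 of Theorem~\ref{automorphy lifting theorem} from Serre's conjecture and base change) this already suffices, since one may absorb the twist on the automorphic side.
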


\begin{proof}

See \cite[proof of Theorem 8.1]{ALTRR}. \end{proof}

    \begin{thm}(Potential automorphy theorem) \label{potential automorphy}
    
    Let $\rho : G_{F} \rightarrow \mathrm{GL}_2(\overline{\mathbb{Q}_p})$ be a continuous representation. We assume that $\rho$ satisfies the conditions of Theorem \ref{automorphy lifting theorem} except $5$.
    
    Then there exist a finite Galois CM extension of $E/F$ and a conjugate self-dual cohomological cuspidal automorphic representation $\pi$ of $\mathrm{GL}_2(\mathbb{A}_E)$ such that $\rho|_{G_E} \cong r_{\iota}(\pi)$.
    
    \end{thm}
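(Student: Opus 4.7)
The plan is to apply the automorphy lifting theorem (Theorem \ref{automorphy lifting theorem}) after a suitable finite Galois base change, having first produced residual automorphy over that base change using a potential automorphy result for residual representations such as \cite{Calabi} and \cite{CW}. The overall argument is the standard two-step reduction: first arrange the residual hypothesis 5, then invoke the automorphy lifting theorem, and the non-trivial issue is merely to choose the auxiliary CM extension $E/F$ carefully enough that every other hypothesis survives base change.

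First I would use \cite{Calabi} (or \cite{CW}) applied to $\overline{\rho}$ together with the pairing coming from hypothesis 1 to produce a finite Galois CM extension $E_1/F$ and a conjugate self-dual cohomological cuspidal automorphic representation $\sigma_0$ of $\mathrm{GL}_2(\mathbb{A}_{E_1})$ with $\overline{\rho}|_{G_{E_1}}\cong \overline{r_\iota(\sigma_0)}$. When invoking this result I would insist that $E_1$ be linearly disjoint from the fixed field of $\overline{\rho}|_{G_{F(\zeta_p)}}$ over $F$, so that hypothesis 4, the absolute irreducibility of $\overline{\rho}|_{G_{F(\zeta_p)}}$, is preserved after restriction to $G_{E_1(\zeta_p)}$. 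I would also insist that every $p$-adic place of $E_1$ satisfy $(E_1)_w = \mathbb{Q}_p$ or $\mathbb{Q}_{p^2}$; since \cite{Calabi} allows us to prescribe local behaviour at finitely many places, we may arrange for $E_1/F$ to split completely at every $p$-adic place of $F$, which forces $(E_1)_w = F_v$ for $w\mid v$, hence $(E_1)_w\in\{\mathbb{Q}_p,\mathbb{Q}_{p^2}\}$.

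Next I would check that all the remaining hypotheses of Theorem \ref{automorphy lifting theorem} hold for $\rho|_{G_{E_1}}$ with respect to the base-changed character $\chi|_{G_{E_1^+}}$. Unramifiedness almost everywhere (hypothesis 2), de Rhamness and Hodge--Tate regularity at $p$-adic places (hypothesis 3), and the oddness of $\chi|_{G_{E_1^+}}$ built into hypothesis 1 are immediate from the construction. The irreducibility of $\rho|_{G_{E_1}}$ follows from the irreducibility of $\rho$ together with the density/largeness of the image. Condition 6, the Serre weight regularity $2\le k_{\tau,1}-k_{\tau,2}\le p-5$ on $W_{E_{1,w}}(\overline{\rho}|_{G_{E_{1,w}}})$ at places $w$ with $(E_1)_w=\mathbb{Q}_{p^2}$, coincides with condition 6 over $F$ because we arranged $(E_1)_w = F_v$, and $W_{F_v}(\overline{\rho}|_{G_{F_v}})$ is intrinsic to the local residual representation.

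Finally, having verified residual automorphy and all other hypotheses for $\rho|_{G_{E_1}}$, I would apply Theorem \ref{automorphy lifting theorem} directly, setting $E:=E_1$, to obtain the desired conjugate self-dual cohomological cuspidal automorphic representation $\pi$ of $\mathrm{GL}_2(\mathbb{A}_E)$ with $\rho|_{G_E}\cong r_\iota(\pi)$. The main obstacle in this argument is the delicate simultaneous control of local and residual conditions when invoking the potential automorphy theorem; in particular ensuring that the Serre weight condition 6 and the residual irreducibility at $F(\zeta_p)$ remain valid over the auxiliary CM extension while also prescribing the desired splitting behaviour at $p$. Since these are all standard consequences of the flexibility of the Moret-Bailly method used in \cite{Calabi} and \cite{CW}, no essentially new difficulty arises, but some care is required in simultaneously prescribing finitely many splitting conditions.
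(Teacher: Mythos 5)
Your proposal takes essentially the same route as the paper, which simply cites \cite[Proposition A.6]{BM} — a pre-packaged statement asserting the existence of a finite Galois CM extension $E/F$ over which $\rho|_{G_E}$ satisfies all the hypotheses of the automorphy lifting theorem, including residual automorphy — and then applies Theorem \ref{automorphy lifting theorem}. What you spell out by hand (potential residual automorphy via Moret--Bailly flexibility, linear disjointness from the fixed field of $\overline{\rho}|_{G_{F(\zeta_p)}}$ to preserve condition 4, and complete splitting at $p$ to preserve the constraints $F_v \in \{\mathbb{Q}_p, \mathbb{Q}_{p^2}\}$ together with condition 6) is exactly the content that \cite[Prop.\ A.6]{BM} encapsulates, so the reasoning is correct and matches the paper's one-line proof in substance.
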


    \begin{proof}

By \cite[Proposition A.6]{BM}, there exists a finite Galois CM extension $E/F$ such that $\rho|_{G_{E}}$ satisfies the conditions of Theorem \ref{automorphy lifting theorem}. Thus we obtain the result. \end{proof}

\tiny

\printbibliography

\end{document}